\definecolor{darkgreen}{rgb}{0,0.75,0}
\definecolor{darkred}{rgb}{0.75,0,0}
\definecolor{darkmagenta}{rgb}{0.5,0,0.5}
\newtheorem{thm}{Theorem}[section]
\newtheorem{cor}[thm]{Corollary}
\newtheorem{lem}[thm]{Lemma}
\newtheorem{prop}[thm]{Proposition}
\theoremstyle{definition}
\newtheorem{defn}[thm]{Definition}
\newtheorem{assum}[thm]{Assumption}
\newtheorem{rmk}[thm]{Remark}
\newtheorem{example}[thm]{Example}
\newtheorem{framework}[thm]{Framework}
\newtheorem{notation}[thm]{Notation}
\numberwithin{equation}{section}
\numberwithin{figure}{section}
\newcommand{\norm}[1]{\left\lVert#1\right\rVert}
\newcommand{\trinorm}[1]{{\left\vert\kern-0.25ex\left\vert\kern-0.25ex\left\vert #1
    \right\vert\kern-0.25ex\right\vert\kern-0.25ex\right\vert}}
\newcommand{\indicator}[1]{\mathds{1}_{#1}} 
\newcommand{\closure}[1]{\overline{#1}}
\DeclareMathOperator*{\osc}{osc}
\newcommand{\abs}[1]{\left\lvert#1\right\rvert} 
\newcommand{\id}{\operatorname{id}}
\newcommand{\supp}{\operatorname{supp}}
\newcommand{\sgn}{\operatorname{sgn}}
\newcommand{\contfunc}{C}
\newcommand{\SigmaAlgEM}{\mathcal{B}_{0}}
\DeclareRobustCommand\widecheck[1]{{\mathpalette\@widecheck{#1}}}
\def\@widecheck#1#2{%
   \setbox\z@\hbox{\m@th$#1#2$}%
   \setbox\tw@\hbox{\m@th$#1%
      \widehat{%
         \vrule\@width\z@\@height\ht\z@
         \vrule\@height\z@\@width\wd\z@}$}%
   \dp\tw@-\ht\z@
   \@tempdima\ht\z@ \advance\@tempdima2\ht\tw@ \divide\@tempdima\thr@@
   \setbox\tw@\hbox{%
      \raise\@tempdima\hbox{\scalebox{1}[-1]{\lower\@tempdima\box\tw@}}}%
   {\ooalign{\box\tw@ \cr \box\z@}}}
\newcommand{\diam}{\operatorname{diam}}
\newcommand{\dist}{\operatorname{dist}}
\newcommand{\measure}{m}
\newcommand{\metric}{d}
\newcommand{\pmetric}{\widehat{R}}
\newcommand{\GSC}{\mathrm{GSC}}
\newcommand{\SG}{\mathrm{SG}}
\newcommand{\rweight}{\rho} 
\newcommand{\hdim}{d_{\mathrm{f}}}
\newcommand{\pwalk}{d_{\mathrm{w},p}}
\newcommand{\whdim}{d_{\mathrm{f}}(\bm{\rweight})}
\newcommand{\core}{\mathcal{D}}
\newcommand{\ExtD}{\core^{\#}}
\newcommand{\CoreClosure}{\mathcal{F}^{0}}
\newcommand{\zero}{\mathbf{0}} 
\newcommand{\interior}{\operatorname{int}}
\newcommand{\mr}[1]{{\tt \href{http://mathscinet.ams.org/mathscinet-getitem?mr=#1}{MR#1}}}
\newcommand{\arxiv}[1]{{\tt \href{http://arxiv.org/abs/#1}{arXiv:#1}}}
\begin{document}

	\font\titlefont=cmbx14 scaled\magstep1
	\title{\titlefont Contraction properties and differentiability of \texorpdfstring{$p$-energy}{p-energy} forms with applications to nonlinear potential theory on self-similar sets}

	\author{
	Naotaka Kajino\footnote{\scriptsize Research Institute for Mathematical Sciences, Kyoto University, Kyoto, Japan. \texttt{nkajino@kurims.kyoto-u.ac.jp}}
	\, and \,
	Ryosuke Shimizu\footnote{\scriptsize Waseda Research Institute for Science and Engineering, Waseda University, Tokyo, Japan.}\ \footnote{\scriptsize Graduate School of Informatics, Kyoto University, Kyoto, Japan (current address). \texttt{r.shimizu@acs.i.kyoto-u.ac.jp}}
	}
	\date{May 22, 2026}
	\maketitle
	\vspace{-1.0cm}
	\begin{abstract} 
        We introduce a new contraction property, which we call the \emph{generalized $p$-contraction property}, for $p$-energy forms as generalizations of many well-known inequalities, such as $p$-Clarkson's inequality, the strong subadditivity and the Markov property in the theory of nonlinear Dirichlet forms, and show that any $p$-energy form satisfying $p$-Clarkson's inequality is Fr\'{e}chet differentiable. 
        We also verify the generalized $p$-contraction property for $p$-energy forms on fractals constructed by Kigami [\emph{Mem.\ Eur.\ Math.\ Soc.}\ \textbf{5} (2023)] and by Cao--Gu--Qiu [\emph{Adv.\ Math.}\ \textbf{405} (2022), no.\ 108517]. 
        As a general framework of $p$-energy forms taking the generalized $p$-contraction property into consideration, we introduce the notion of \emph{$p$-resistance form} and investigate fundamental properties of $p$-harmonic functions with respect to $p$-resistance forms. 
        In particular, some new estimates on scaling factors of self-similar $p$-energy forms on self-similar sets are obtained by establishing H\"{o}lder regularity estimates for $p$-harmonic functions, and the $p$-walk dimensions of any generalized Sierpi\'{n}ski carpet and the $D$-dimensional level-$l$ Sierpi\'{n}ski gasket are shown to be strictly greater than $p$. 
		\vskip.2cm
        \noindent {\it Keywords:} generalized $p$-contraction property, $p$-Clarkson's inequality, $p$-energy measure, $p$-resistance form,  $p$-harmonic function, self-similar $p$-energy form, $p$-walk dimension
        \vskip.2cm
        \noindent {\it 2020 Mathematics Subject Classification:} Primary 28A80, 39B62 31E05; secondary  31C45, 31C25, 46E36
		\
	\end{abstract}

\newpage 
\tableofcontents

\newpage 
\section{Introduction}\label{sec:intro}
In the late 1980s, Goldstein \cite{Gol87} and Kusuoka \cite{Kus87} independently constructed a Brownian motion (a canonical diffusion process) on the Sierpi\'{n}ski gasket (the left of Figure \ref{fig.fractals}) as a scaling limit of the simple random walks on pre-gaskets (approximating graphs), and Barlow--Perkins \cite{BP88} established detailed estimates called the sub-Gaussian heat kernel estimates for its transition density.
Subsequently, Kigami \cite{Kig89} directly constructed the Laplacian on the Sierpi\'{n}ski gasket as a scaling limit of the discrete Laplacians on pre-gaskets, and Fukushima--Shima \cite{FS92} indicated that the theory of Dirichlet forms was well-applicable to the field of analysis on fractals; more precisely, Fukushima and Shima gave a direct description of the regular symmetric Dirichlet form $(\mathcal{E}_{2},\mathcal{F}_{2})$ corresponding to the Friedrichs extension of Kigami's Laplacian, which is an analogue of the pair of the Dirichlet $2$-energy $\int\abs{\nabla u}^{2}\,dx \eqqcolon \mathcal{E}_{2}(u)$ and the associated $(1,2)$-Sobolev space $W^{1,2} \eqqcolon \mathcal{F}_{2}$ on smooth spaces, and used it to investigate the eigenvalue problems for Kigami's Laplacian\footnote{The results in \cite{FS92,Kig89} were proved for the $D$-dimensional level-$2$ Sierpi\'{n}ski gasket (Framework \ref{frmwrk:SG}), where $D \in \mathbb{N}$ with $D \ge 2$.}.
Later, Kigami \cite{Kig93} extended the method in \cite{FS92} to \emph{post-critically finite self-similar sets} (Definition \ref{d:V0Vstar}), and Kusuoka--Zhou \cite{KZ92} constructed regular symmetric Dirichlet forms $(\mathcal{E}_{2},\mathcal{F}_{2})$ on a large class of self-similar sets including the Sierpi\'{n}ski carpet (the right of Figure \ref{fig.fractals}) through a subsequential scaling limit of discrete Dirichlet forms.
(The first construction of a Brownian motion on the Sierpi\'{n}ski carpet was done by Barlow--Bass \cite{BB89} by establishing a subsequential convergence of scaled Brownian motions on pre-carpets.)
See, e.g., \cite{Bar13,Kig01} for further background on the field of analysis on fractals.
As another advantage of the theory of Dirichlet forms, once we obtain a regular symmetric Dirichlet form $(\mathcal{E}_{2},\mathcal{F}_{2})$, we can capture the associated energy measure $\Gamma_{2}\langle u \rangle$ playing the role of $\abs{\nabla u}^{2}\,dx$ although the density ``$\abs{\nabla u}$'' usually does not make sense on fractals due to the singularity of $\Gamma_{2}\langle u \rangle$ with respect to the canonical volume measure (see \cite{Hin05,KM20} for details of this singularity of $\Gamma_{2}\langle u \rangle$). 

The main purpose of this article is to develop a general theory of $L^{p}$-analogues of $(\mathcal{E}_{2},\mathcal{F}_{2},\Gamma_{2}\langle \,\cdot\,\rangle)$, where $p \in (1,\infty)$, on the basis of a new contraction property which we call the \emph{generalized $p$-contraction property}.  
For a large class of triples $(K,m,p)$ of a self-similar set $K$, a natural self-similar measure $m$ on $K$ and $p \in (1,\infty)$, an $L^{p}$-analogue of $(\mathcal{E}_{2},\mathcal{F}_{2})$ on $(K,m)$, namely a $p$-energy form $(\mathcal{E}_{p},\mathcal{F}_{p})$ playing the role of $\int\abs{\nabla u}^{p}\,dx$ and the associated $(1,p)$-Sobolev space $W^{1,p}$, where $\mathcal{F}_{p}$ is a linear subspace of $L^{p}(K,m)$ and $\mathcal{E}_{p} \colon \mathcal{F}_{p} \to [0,\infty)$ is such that $\mathcal{E}_{p}^{1/p}$ is a seminorm on $\mathcal{F}_{p}$, has been constructed in several works \cite{CGQ22,HPS04,Kig23,KO+,MS+,Shi24}\footnote{The main difference among these works is the classes of $(K,m,p)$ on which $(\mathcal{E}_{p},\mathcal{F}_{p})$ is constructed. Let us briefly summarize what classes of $(K,m,p)$ are treated in these works (see \cite[Introduction]{KS.survey} for details). In \cite{CGQ22,HPS04}, $K$ is assumed to be a post-critically finite self-similar set (Definition \ref{d:V0Vstar}) so that the Sierpi\'{n}ski gasket is included while the Sierpi\'{n}ski carpet is excluded. The case where $K$ is the Sierpi\'{n}ski carpet is allowed in \cite{Kig23,KO+,MS+,Shi24}, but we need to assume that $p$ is strictly greater than the Ahlfors regular conformal dimension of $K$ (Definition \hyperref[it:defn.VD]{\ref{defn.AR}}-\ref{it:ARCdim}) in \cite{Kig23,KO+,Shi24}.}, most of which are very recent.
Furthermore, the associated $p$-energy measure $\Gamma_{p}\langle u \rangle$, which is a finite Borel measure on $K$ and an analogue of $\abs{\nabla u}^{p}\,dx$, has been introduced in \cite{MS+,Shi24} with the help of the self-similarity of $(\mathcal{E}_{p},\mathcal{F}_{p})$. See Section \ref{sec.ss} for details on the self-similarity of a $p$-energy form, and Example \ref{ex.em} for examples of $p$-energy measures which do not rely on the self-similarity.

Compared with the case of $p = 2$, where the theory of symmetric Dirichlet forms is applicable, very little has been established for $p \in (1,\infty) \setminus \{2\}$ in the direction of dealing with $(\mathcal{E}_{p},\mathcal{F}_{p},\Gamma_{p}\langle \,\cdot\, \rangle)$ in a general framework.
Such general developments that do not rely on an explicit representation of $(\mathcal{E}_{p},\mathcal{F}_{p})$ or the existence of the density ``$\abs{\nabla u}$'' are significant because the $p$-energy measure $\Gamma_{p}\langle u \rangle$ is typically singular with respect to the canonical volume measure in the case of fractals (see \cite{Hin05,KM20,Yan25+.sing}). To develop a reasonably satisfactory theory on $(\mathcal{E}_{p},\mathcal{F}_{p},\Gamma_{p}\langle \,\cdot\, \rangle)$, there are currently two missing pieces: first, useful contraction properties of it, and secondly, the (Fr\'{e}chet) differentiability of $\mathcal{E}_{p}$ and of $\Gamma_{p}$. 
In the first half of this paper (Sections \ref{sec.GC}--\ref{sec.ss}), we aim at establishing general results filling these missing pieces. We shall explain more details of the main results of these sections below.

\begin{figure}[tb]\centering
	\includegraphics[height=120pt]{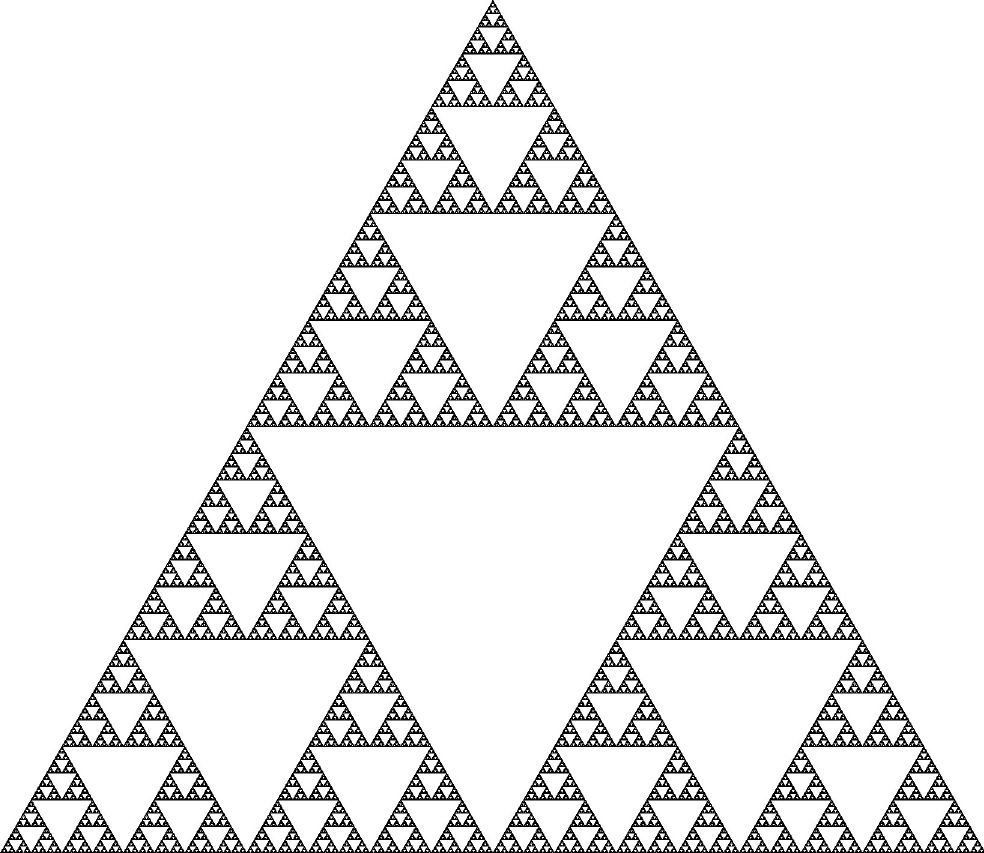}\hspace*{50pt}
	\includegraphics[height=120pt]{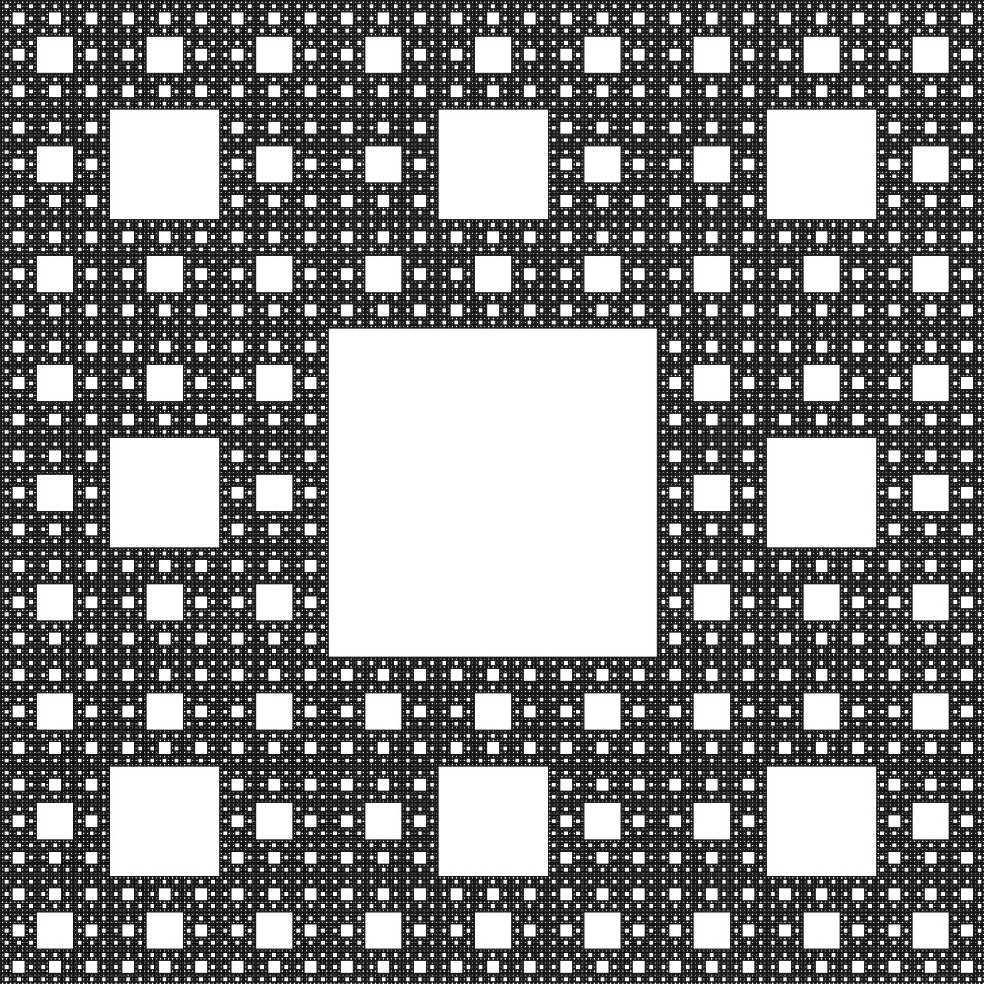}
	\caption{The Sierpi\'{n}ski gasket (left) and the Sierpi\'{n}ski carpet (right)}\label{fig.typicalfractals}
\end{figure}

The first missing piece is contraction properties of $(\mathcal{E}_{p},\mathcal{F}_{p},\Gamma_{p}\langle \,\cdot\, \rangle)$.
Every $p$-energy form $(\mathcal{E}_{p},\mathcal{F}_{p})$ constructed in the previous studies is known to satisfy the following \emph{unit contractivity:}
\begin{equation}\label{intro.unit}
    u^{+} \wedge 1 \in \mathcal{F}_{p} \quad \text{and} \quad \mathcal{E}_{p}(u^{+} \wedge 1) \le \mathcal{E}_{p}(u) \quad \text{for any $u \in \mathcal{F}_{p}$.}
\end{equation}
In the case of $p = 2$, by using some helpful expressions of $\mathcal{E}_{2}$, e.g., \cite[Lemma 1.3.4 and (3.2.12)]{FOT}, \eqref{intro.unit} can be improved to the following \emph{normal contractivity} (see, e.g., \cite[Theorem I.4.12]{MR}): if $n \in \mathbb{N}$ and $T \colon \mathbb{R}^{n} \to \mathbb{R}$ satisfy $\abs{T(x)} \le \sum_{k = 1}^{n}\abs{x_{k}}$ and $\abs{T(x) - T(y)} \le \sum_{k = 1}^{n}\abs{x_{k} - y_{k}}$ for any $x = (x_{1},\dots,x_{n}), y = (y_{1},\dots,y_{n}) \in \mathbb{R}^{n}$, then for any $\bm{u} = (u_{1},\dots,u_{n}) \in \mathcal{F}_{2}^{n}$ we have
\begin{equation}\label{intro.normal}
    T(\bm{u}) \in \mathcal{F}_{2} \quad \text{and} \quad \mathcal{E}_{2}(T(\bm{u}))^{\frac{1}{2}} \le \sum_{k = 1}^{n}\mathcal{E}_{2}(u_{k})^{\frac{1}{2}}. 
\end{equation}
It is natural to expect that $(\mathcal{E}_{p},\mathcal{F}_{p})$ for $p \in (1,\infty) \setminus \{2\}$ also has a similar property to \eqref{intro.normal} since $\mathcal{E}_{p}(u)$ is an analogue of $\int\abs{\nabla u}^{p}\,dx$; nevertheless, it is not clear whether \eqref{intro.unit} can be improved in such a way without going back to the constructions of $(\mathcal{E}_{p},\mathcal{F}_{p})$ in the previous studies.
Not only \eqref{intro.normal} but also other useful inequalities like the following \emph{strong subadditivity} and \emph{$p$-Clarkson's inequality}, were not mentioned in \cite{CGQ22,HPS04,Kig23,MS+,Shi24}: 
\begin{enumerate}[label=\textup{(\arabic*)},align=left,leftmargin=*,topsep=2pt,parsep=0pt,itemsep=2pt]
	\item [\textup{(Strong subadditivity)}] For any $u,v \in \mathcal{F}_{p}$, we have $u \vee v, u \wedge v \in \mathcal{F}_{p}$ and 
		\begin{equation}\label{sadd.intro}
			\mathcal{E}_{p}(u \vee v) + \mathcal{E}_{p}(u \wedge v) \le \mathcal{E}_{p}(u) + \mathcal{E}_{p}(v). 
		\end{equation}
	\item [\textup{($p$-Clarkson's inequality)}] For any $u,v \in \mathcal{F}_{p}$, 
		\begin{equation}\label{Cp.intro}
        \begin{cases}
            \mathcal{E}_{p}(u+v) + \mathcal{E}_{p}(u-v) \ge 2\bigl(\mathcal{E}_{p}(u)^{\frac{1}{p - 1}} + \mathcal{E}_{p}(v)^{\frac{1}{p - 1}}\bigr)^{p - 1} \, &\text{if $p \in (1,2]$,} \\
            \mathcal{E}_{p}(u+v) + \mathcal{E}_{p}(u-v) \le 2\bigl(\mathcal{E}_{p}(u)^{\frac{1}{p - 1}} + \mathcal{E}_{p}(v)^{\frac{1}{p - 1}}\bigr)^{p - 1} \, &\text{if $p \in (2,\infty)$.}
        \end{cases} \tag*{\textup{(Cla)$_p$}}
    \end{equation}
\end{enumerate}  
These inequalities play significant roles in the \emph{nonlinear potential theory} with respect to $(\mathcal{E}_{p},\mathcal{F}_{p})$.
For example, \eqref{sadd.intro} will be important to consider the $p$-capacity associated with $(\mathcal{E}_{p},\mathcal{F}_{p})$; see \cite[(H3)]{BV05}. 
Also, we will frequently use \ref{Cp.intro} in this paper; see Theorem \ref{thm.intro-EpCp} below for one of the most important consequences of \ref{Cp.intro}.
Since it is not known, unlike the case of $p = 2$, whether such desirable inequalities as \eqref{intro.normal}, \eqref{sadd.intro} and \ref{Cp.intro} are implied by the unit contractivity \eqref{intro.unit}, one needs to go back to the constructions of $(\mathcal{E}_{p},\mathcal{F}_{p})$ in the preceding works if one wishes to show them. 
The situation is similar for $p$-energy measures. 
While it is natural to expect that contraction properties of $(\mathcal{E}_{p},\mathcal{F}_{p})$ are inherited by the associated $p$-energy measures, in order to show them for $p$-energy measures, we need to recall how $p$-energy measures are constructed, partially because no canonical way to define $p$-energy measures for a given $p$-energy form $(\mathcal{E}_{p},\mathcal{F}_{p})$ is known (see \cite[Problem 10.4]{MS+}).

To overcome this situation, in this paper we develop a general theory of $p$-energy forms on the basis of the \emph{generalized $p$-contraction property}, which is arguably the strongest possible form of contraction properties of $p$-energy forms and defined as follows.
Throughout the rest of this section, we fix $p \in (1,\infty)$, a measure space $(X,\mathcal{B},m)$, and the pair $(\mathcal{E}_{p},\mathcal{F}_{p})$ of a linear subspace $\mathcal{F}_{p}$ of $L^{0}(X,m)$\footnote{We set $L^{0}(X,m) \coloneqq \{ \text{the $m$-equivalence class of $f$} \mid \text{$f \colon X \to \mathbb{R}$, $f$ is $\mathcal{B}$-measurable} \}$; see \eqref{L0-dfn}.} and a functional $\mathcal{E}_{p} \colon \mathcal{F}_{p} \to [0,\infty)$ which is \emph{$p$-homogeneous}, i.e., satisfies $\mathcal{E}_{p}(au) = \abs{a}^{p} \mathcal{E}_{p}(u)$ for any $u \in \mathcal{F}_{p}$ and any $a \in \mathbb{R}$.
The pair $(\mathcal{E}_{p},\mathcal{F}_{p})$ is said to be a \emph{$p$-energy form} on $(X,m)$ if and only if $\mathcal{E}_{p}^{1/p}$ is a seminorm on $\mathcal{F}_{p}$. 
\begin{defn}[Generalized $p$-contraction property; Definition \ref{defn.GC}] \label{defn.GC.intro}
    We say that $(\mathcal{E}_{p},\mathcal{F}_{p})$ satisfies the \emph{generalized $p$-contraction property}, \ref{intro.GC} for short, if and only if the following holds: if $n_{1},n_{2} \in \mathbb{N}$, $q_{1} \in (0,p]$, $q_{2} \in [p,\infty]$ and $T = (T_{1},\dots,T_{n_{2}}) \colon \mathbb{R}^{n_{1}} \to \mathbb{R}^{n_{2}}$ satisfy $T(0) = 0$ and $\norm{T(x)-T(y)}_{\ell^{q_{2}}} \le \norm{x - y}_{\ell^{q_{1}}}$ for any $x, y \in \mathbb{R}^{n_{1}}$, then for any $\bm{u} = (u_{1},\dots,u_{n_{1}}) \in \mathcal{F}_{p}^{n_{1}}$ we have
    \begin{equation}\label{intro.GC}
        T(\bm{u}) \in \mathcal{F}_{p}^{n_{2}} \quad \text{and} \quad
        \norm{\bigl(\mathcal{E}_{p}(T_{l}(\bm{u}))^{\frac{1}{p}}\bigr)_{l = 1}^{n_{2}}}_{\ell^{q_{2}}} \le \norm{\bigl(\mathcal{E}_{p}(u_{k})^{\frac{1}{p}}\bigr)_{k = 1}^{n_{1}}}_{\ell^{q_{1}}}. \tag*{\textup{(GC)$_{p}$}}
    \end{equation}
\end{defn}

Note that the particular case of \ref{intro.GC} for $(p,n_{1},n_{2},q_{1},q_{2}) = (2,n,1,1,p)$ is nothing but the normal contractivity \eqref{intro.normal}.
As recorded in the following proposition, \ref{intro.GC} is actually a generalization of many useful inequalities like \eqref{intro.normal}, \eqref{sadd.intro} and \ref{Cp.intro}. 
\begin{prop}[Proposition \ref{prop.GC-list}]\label{prop.intro-GClist}
	Let $\varphi \in \contfunc(\mathbb{R})$ satisfy $\varphi(0) = 0$ and $\abs{\varphi(t) - \varphi(s)} \le \abs{t - s}$ for any $s,t \in \mathbb{R}$.
    Assume that $(\mathcal{E}_{p},\mathcal{F}_{p})$ satisfies \ref{intro.GC}. Then the following hold. 
    \begin{enumerate}[label=\textup{(\alph*)},align=left,leftmargin=*,topsep=2pt,parsep=0pt,itemsep=2pt]
        \item\textup{(Triangle inequality and strict convexity)} $\mathcal{E}_{p}^{1/p}$ is a seminorm on $\mathcal{F}_{p}$, and for any $\lambda \in (0,1)$ and any $f,g \in \mathcal{F}_{p}$ with $\mathcal{E}_{p}(f) \wedge \mathcal{E}_{p}(g) \wedge \mathcal{E}_{p}(f-g) > 0$, 
        	\begin{equation*}
        		\mathcal{E}_{p}(\lambda f + (1 - \lambda)g) < \lambda\mathcal{E}_{p}(f) + (1 - \lambda)\mathcal{E}_{p}(g). 
        	\end{equation*}
        \item\textup{(Lipschitz contractivity)} $\varphi(u) \in \mathcal{F}_{p}$ and $\mathcal{E}_{p}(\varphi(u)) \le \mathcal{E}_{p}(u)$ for any $u \in \mathcal{F}_{p}$. 
        \item\textup{(Strong subadditivity)} Assume that $\varphi$ is non-decreasing.         
        Then for any $f,g \in \mathcal{F}_{p}$, 
        \begin{equation*}
            \mathcal{E}_{p}\bigl(f - \varphi(f - g)\bigr) + \mathcal{E}_{p}\bigl(g + \varphi(f - g)\bigr) \le \mathcal{E}_{p}(f) + \mathcal{E}_{p}(g). 
        \end{equation*}
        In particular, \eqref{sadd.intro} holds. 
        \item\textup{(Leibniz rule)} For any $f,g \in \mathcal{F}_{p} \cap L^{\infty}(X,m)$, we have
        \begin{equation*}
            f \cdot g \in \mathcal{F}_{p} \quad \text{and} \quad \mathcal{E}_{p}(f \cdot g)^{\frac{1}{p}} \le \norm{g}_{L^{\infty}(K,m)}\mathcal{E}_{p}(f)^{\frac{1}{p}} + \norm{f}_{L^{\infty}(K,m)}\mathcal{E}_{p}(g)^{\frac{1}{p}}. 
        \end{equation*} 
        \item\textup{($p$-Clarkson's inequality)} Let $f,g \in \mathcal{F}_{p}$. If $p \in (1,2]$, then  
        \[
        2\bigl(\mathcal{E}_{p}(f) + \mathcal{E}_{p}(g)\bigr)
        \ge \mathcal{E}_{p}(f+g) + \mathcal{E}_{p}(f-g)
        \ge 2\bigl(\mathcal{E}_{p}(f)^{\frac{1}{p - 1}} + \mathcal{E}_{p}(g)^{\frac{1}{p - 1}}\bigr)^{p - 1}.
        \]
        If $p \in [2,\infty)$, then 
        \[
        2\bigl(\mathcal{E}_{p}(f) + \mathcal{E}_{p}(g)\bigr)
        \le \mathcal{E}_{p}(f+g) + \mathcal{E}_{p}(f-g)
        \le 2\bigl(\mathcal{E}_{p}(f)^{\frac{1}{p - 1}} + \mathcal{E}_{p}(g)^{\frac{1}{p - 1}}\bigr)^{p - 1}. 
        \]
        In particular, \ref{Cp.intro} holds. 
    \end{enumerate}
\end{prop}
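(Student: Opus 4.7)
The plan is to derive each of (a)--(e) by a judicious choice of $T\colon\mathbb{R}^{n_{1}}\to\mathbb{R}^{n_{2}}$ together with exponents $q_{1}\in(0,p]$, $q_{2}\in[p,\infty]$ in \ref{intro.GC}, so that its conclusion reads off the desired inequality modulo $p$-homogeneity of $\mathcal{E}_{p}$. The triangle inequality in (a) comes from $T(x_{1},x_{2})=x_{1}+x_{2}$ with $(n_{1},n_{2},q_{1},q_{2})=(2,1,1,p)$, since the scalar triangle inequality $\abs{x_{1}+x_{2}-y_{1}-y_{2}}\leq\abs{x_{1}-y_{1}}+\abs{x_{2}-y_{2}}$ supplies the required pointwise Lipschitz bound. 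Part (b) follows from the essentially tautological choice $(T,n_{1},n_{2},q_{1},q_{2})=(\varphi,1,1,p,p)$. For the Leibniz rule (d), writing $M=\norm{f}_{L^{\infty}(X,m)}$, $N=\norm{g}_{L^{\infty}(X,m)}$ and $\psi_{K}(t)=(t\wedge K)\vee(-K)$, I would apply \ref{intro.GC} to $\tilde{T}\colon\mathbb{R}^{2}\to\mathbb{R}$, $\tilde{T}(a,b)=\psi_{M}(a/N)\cdot\psi_{N}(b/M)$: the bounds $\abs{\psi_{K}}\leq K$ and $\abs{\psi_{K}(s)-\psi_{K}(t)}\leq\abs{s-t}$ give the 1-Lipschitz estimate $\abs{\tilde{T}(a,b)-\tilde{T}(a',b')}\leq\abs{a-a'}+\abs{b-b'}$, and since $\tilde{T}(Nf,Mg)=f\cdot g$ pointwise a.e., \ref{intro.GC} with $q_{1}=1$, $q_{2}=p$ combined with $p$-homogeneity yields the claim.

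For strong subadditivity (c), set $T(x,y)=(x-\varphi(x-y),y+\varphi(x-y))$. Where $\varphi$ is differentiable (which holds a.e., $\varphi$ being Lipschitz), the Jacobian of $T$ is $\left(\begin{smallmatrix}1-\varphi' & \varphi' \\ \varphi' & 1-\varphi'\end{smallmatrix}\right)$ with $\varphi'\in[0,1]$; its $\ell^{1}\to\ell^{1}$ and $\ell^{\infty}\to\ell^{\infty}$ operator norms both equal $1$, so by Riesz--Thorin interpolation its $\ell^{p}\to\ell^{p}$ operator norm is at most $1$ for every $p\in[1,\infty]$. Integrating along straight line segments then shows $T$ itself is 1-Lipschitz in $\ell^{p}$ on $\mathbb{R}^{2}$, and \ref{intro.GC} with $n_{1}=n_{2}=2$, $q_{1}=q_{2}=p$ yields the stated inequality; the special case $\varphi=(\cdot)^{+}$, for which $T(x,y)=(x\wedge y,x\vee y)$, produces \eqref{sadd.intro}.

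For the $p$-Clarkson inequality (e), I would apply \ref{intro.GC} to the linear map $T(x,y)=c\,(x+y,x-y)$. For $p\in[2,\infty)$ and the sharp bound, take $c=2^{-1/p}$, $q_{1}=p/(p-1)$, $q_{2}=p$; the pointwise requirement $\norm{T(x)}_{\ell^{p}}\leq\norm{x}_{\ell^{p/(p-1)}}$ reduces to the classical scalar Hanner inequality $\abs{x_{1}+x_{2}}^{p}+\abs{x_{1}-x_{2}}^{p}\leq 2(\abs{x_{1}}^{p/(p-1)}+\abs{x_{2}}^{p/(p-1)})^{p-1}$, giving $\mathcal{E}_{p}(f+g)+\mathcal{E}_{p}(f-g)\leq 2(\mathcal{E}_{p}(f)^{1/(p-1)}+\mathcal{E}_{p}(g)^{1/(p-1)})^{p-1}$. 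For $p\in(1,2]$, the symmetric choice $c=2^{-(p-1)/p}$, $q_{1}=p$, $q_{2}=p/(p-1)$ (with the scalar Hanner inequality at exponent $p/(p-1)\geq 2$ as input) first yields $\mathcal{E}_{p}(f+g)^{1/(p-1)}+\mathcal{E}_{p}(f-g)^{1/(p-1)}\leq 2(\mathcal{E}_{p}(f)+\mathcal{E}_{p}(g))^{1/(p-1)}$, from which the desired $\mathcal{E}_{p}(f+g)+\mathcal{E}_{p}(f-g)\geq 2(\mathcal{E}_{p}(f)^{1/(p-1)}+\mathcal{E}_{p}(g)^{1/(p-1)})^{p-1}$ follows by substituting $f,g\mapsto(u+v)/2,(u-v)/2$ and invoking $p$-homogeneity. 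The remaining bound $2(\mathcal{E}_{p}(f)+\mathcal{E}_{p}(g))\leq\mathcal{E}_{p}(f+g)+\mathcal{E}_{p}(f-g)$ for $p\geq 2$ (and its reverse for $p\leq 2$) follows by the same substitution applied to the already-derived inequality, together with the power-mean inequality.

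Finally, the strict convexity in (a) is deduced from (e). If $\mathcal{E}_{p}(f)\neq\mathcal{E}_{p}(g)$, strict convexity of $t\mapsto t^{p}$ on $[0,\infty)$ (for $p>1$) combined with the triangle inequality gives the claim directly. If $\mathcal{E}_{p}(f)=\mathcal{E}_{p}(g)=a^{p}>0$, plugging $b=a$ into the Clarkson inequality (directly for $p\geq 2$; after substituting $(f,g)\mapsto((f+g)/2,(f-g)/2)$ for $p\in(1,2]$) and using $\mathcal{E}_{p}(f-g)>0$ yields the strict triangle inequality $\mathcal{E}_{p}(f+g)<2^{p}a^{p}$, i.e.\ strict convexity at $\lambda=1/2$; convexity of $\lambda\mapsto\mathcal{E}_{p}(\lambda f+(1-\lambda)g)^{1/p}$ on $[0,1]$ then propagates strictness to every $\lambda\in(0,1)$. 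The main obstacle in the whole argument is the sharp scalar Hanner-type inequality on $\mathbb{R}^{2}$ needed for (e), which is the one non-trivial classical input; everything else is bookkeeping with exponents.
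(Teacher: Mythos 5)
Your overall strategy—reading each inequality off \ref{intro.GC} for a suitable $T$ and admissible exponents—is exactly the paper's, and parts (b) and (d) essentially coincide with the paper's proof (the paper's map $T^{a_{1},a_{2}}$ is your $\tilde{T}$ up to notation, applied to the same rescaled pair $(\norm{g}_{\infty}f,\norm{f}_{\infty}g)$). The genuine divergences are in (c), (e) and the strict convexity in (a). For (c) the paper verifies the pointwise bound $\abs{z_{1}-A}^{p}+\abs{z_{2}+A}^{p}\le\abs{z_{1}}^{p}+\abs{z_{2}}^{p}$ directly by a two-case comparison of integrals of $\psi_{p}'(t)=p\sgn(t)\abs{t}^{p-1}$; your route via the Jacobian $\bigl(\begin{smallmatrix}1-\varphi' & \varphi' \\ \varphi' & 1-\varphi'\end{smallmatrix}\bigr)$ is conceptually cleaner—indeed this matrix is the convex combination $(1-\varphi')I+\varphi'P$ of two $\ell^{p}$-isometries, so its operator norm is at most $1$ without even invoking Riesz--Thorin—but since $\varphi$ is only differentiable a.e.\ you should either smooth $\varphi$ first or note that the a.e.\ chain rule along a segment reduces to the one-variable Lipschitz case (the argument of $\varphi$ is affine in the segment parameter), a small gap that is easily filled. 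For (e), your exponent pairs $(q_{1},q_{2})=(p,p/(p-1))$ for $p\in(1,2]$ and $(p/(p-1),p)$ for $p\in[2,\infty)$ are the ones admissible under Definition \ref{defn.GC} and they do deliver \eqref{Cp.small} and \eqref{Cp.large} after the bookkeeping you describe; the pairs printed in Proposition \ref{prop.GC-list}-\ref{GC.Cpsmall},\ref{GC.Cplarge} appear to have $q_{1}$ and $q_{2}$ transposed, so your reading is the correct one (the scalar input is Clarkson's inequality from \cite{Cla36} rather than Hanner's, a naming quibble only), and your passage to the weak form \ref{Cp-weak} via the substitution $f,g\mapsto\frac{f+g}{2},\frac{f-g}{2}$ and H\"{o}lder matches Remark \ref{rmk:Cp-weak}. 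Finally, your derivation of strict convexity is more robust than the paper's: the paper's one-line argument via strict convexity of $x\mapsto x^{p}$ degenerates to an equality when $\mathcal{E}_{p}(f)=\mathcal{E}_{p}(g)$ and never uses the hypothesis $\mathcal{E}_{p}(f-g)>0$, whereas your case split—Clarkson supplying strictness at $\lambda=1/2$ when the energies agree, and convexity of $\lambda\mapsto\mathcal{E}_{p}(\lambda f+(1-\lambda)g)$ propagating it to every $\lambda\in(0,1)$—closes exactly that case.
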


Since the generalized $p$-contraction property is introduced as arguably the strongest possible formulation of the contraction property of $(\mathcal{E}_{p},\mathcal{F}_{p})$, it is highly non-trivial whether $p$-energy forms constructed in the previous studies satisfy it. 
In Section \ref{sec.constr}, we see that the existing constructions of $p$-energy forms in the previous studies do yield ones satisfying \ref{intro.GC}. 
(See also \cite{KS.lim} for another approach, which is based on Korevaar--Schoen $p$-energy forms, to obtain $p$-energy forms satisfying \ref{intro.GC}.)

In the rest of this section, we assume that $(\mathcal{E}_{p},\mathcal{F}_{p})$ is a $p$-energy form on $(X,m)$.
The other missing piece in the previous studies on $p$-energy forms is their differentiability, which should be useful to study \emph{$p$-harmonic functions} with respect to $\mathcal{E}_{p}$.
(See \cite[Problem 7.7]{KM23} and \cite[Conjecture 10.8]{MS+} for some motivations to investigate $p$-harmonic functions on fractals.)
In \cite{CGQ22,HPS04,Shi24}, $p$-harmonic functions are defined as functions minimizing $\mathcal{E}_{p}$ under prescribed boundary values.
However, it is still unclear how to give an equivalent definition of $p$-harmonic function in a weak sense due to the lack of a ``two-variable version'' $\mathcal{E}_{p}(u; \varphi)$ \cite[Problem 2 in Section 6.3]{Kig23}.
We shall recall the Euclidean case to explain the importance of this object.
Let $D \in \mathbb{N}$ and let $U$ be an open subset of $\mathbb{R}^{D}$.
A function $u \in W^{1,p}(\mathbb{R}^{D})$ is said to be $p$-harmonic on $U$ in the weak sense if and only if
\begin{equation}\label{RD.p-harm.weak}
    \int_{\mathbb{R}^{D}}\abs{\nabla u(x)}^{p - 2}\langle \nabla u(x), \nabla\varphi(x) \rangle_{\mathbb{R}^{D}}\,dx = 0 \quad \text{for every $\varphi \in \contfunc_{c}^{\infty}(U)$,}
\end{equation}
where $\langle \,\cdot\,,\,\cdot\, \rangle_{\mathbb{R}^{D}}$ denotes the inner product of $\mathbb{R}^{D}$.
It is well known that \eqref{RD.p-harm.weak} is equivalent to the variational equality
\begin{equation}\label{RD.p-harm.EL}
    \int_{\mathbb{R}^{D}}\abs{\nabla u(x)}^{p}\,dx = \inf\biggl\{ \int_{\mathbb{R}^{D}}\abs{\nabla v(x)}^{p}\,dx \biggm| \text{$v \in W^{1,p}(\mathbb{R}^{D})$, $v - u \in W_{0}^{1,p}(U)$} \biggr\}.
\end{equation}
The issue in considering an analogue of \eqref{RD.p-harm.weak} for $\mathcal{E}_{p}$ is that we do not have a satisfactory counterpart, $\mathcal{E}_{p}(u;\varphi)$, of $\int\abs{\nabla u}^{p - 2}\langle \nabla u, \nabla\varphi \rangle\,dx$ associated with $\mathcal{E}_{p}$.
As mentioned in \cite[(2.1)]{SW04}, the ideal definition of $\mathcal{E}_{p}(u;\varphi)$\footnote{Strichartz and Wong \cite{SW04} proposed an approach based on the \emph{subderivative} instead of \eqref{intro.diffble}, i.e., they defined $\mathcal{E}_{p}(u;\varphi)$ as the interval $\bigl[\mathcal{E}_{p}^{-}(u;\varphi), \mathcal{E}_{p}^{+}(u;\varphi)\bigr]$, where $\mathcal{E}_{p}^{\pm}(u;\varphi) \coloneqq \frac{d^{\pm}}{dt}\mathcal{E}_{p}(u + t\varphi)\bigr|_{t = 0}$.} is
\begin{equation}\label{intro.diffble}
    \mathcal{E}_{p}(u;\varphi) \coloneqq \frac{1}{p}\left.\frac{d}{dt}\mathcal{E}_{p}(u + t\varphi)\right|_{t = 0},
\end{equation}
but the existence of this derivative is unclear\footnote{The case of $p = 2$ is special because of the parallelogram law.
Indeed, $\mathcal{E}_{2}$ is known to be a quadratic form and hence $\mathcal{E}_{2}(u,v) \coloneqq \frac{1}{4}(\mathcal{E}_{2}(u + v) - \mathcal{E}_{2}(u - v))$ is a symmetric form satisfying \eqref{intro.diffble}.}
because the constructions of $\mathcal{E}_{p}$ in the previous studies include many steps such as the operation of taking a subsequential scaling limit of discrete $p$-energy forms.
Similarly, in respect of $p$-energy measures, no suitable way is known to define a ``two-variable version'' $\Gamma_{p}\langle u; \varphi \rangle$ which plays the role of $\abs{\nabla u}^{p - 2}\langle \nabla u, \nabla \varphi \rangle\,dx$. 
The ideal definition of $\Gamma_{p}\langle u; \varphi \rangle$ is similar to \eqref{intro.diffble}, i.e., for any Borel subset $A$ of $K$, 
\begin{equation}\label{intro.em-diffble}
	\Gamma_{p}\langle u; \varphi \rangle(A) \coloneqq \frac{1}{p}\left.\frac{d}{dt}\Gamma_{p}\langle u + t\varphi \rangle(A)\right|_{t = 0}. 
\end{equation}
Such a signed measure was discussed in \cite[Section 5]{BV05}, but the existence of the derivative in \eqref{intro.em-diffble} (in some uniform manner) was an assumption in \cite{BV05}; see \cite[(H4) and the beginning of Section 5]{BV05} for details. 
Similarly, in \cite{Cap07}, the (scale-invariant) elliptic Harnack inequality for $p$-harmonic functions on \emph{metric fractals} (\cite[Definition 2.3]{Cap07}) was proved under some assumptions including the existence of $\Gamma_{p}\langle u; \varphi \rangle$, which was called the \emph{measure-valued $p$-Lagrangian} and denoted by $\mathcal{L}^{(p)}(u,\varphi)$ in \cite{Cap07}. 
However, for situations where no explicit expression of the $p$-energy measure $\Gamma_{p}\langle u \rangle$ is available unlike the case of the Euclidean spaces, there is no proof of the existence of the derivative in \eqref{intro.em-diffble} in the literature. 
(The $p$-energy form on the Sierpi\'{n}ski gasket constructed in \cite{HPS04} is discussed in \cite[Section 5]{Cap07} as a concrete examples and it is stated in \cite[p.~1315]{Cap07} that ``we can define the corresponding Lagrangian $\mathcal{L}^{(p)}(u,v)$'', but we have been unable to find in the literature a rigorous proof of the existence of the derivatives in \cite[p.~1315]{Cap07} defining $\mathcal{E}_{g}(u,v)$ and in \cite[p.~1303, (L5)]{Cap07} defining $\mathcal{L}^{(p)}(u,v)$ for the $p$-energy form on the Sierpi\'{n}ski gasket obtained in \cite{HPS04}.)

As another main contribution of this paper, we make a key observation that $p$-Clarkson's inequality \ref{Cp.intro} implies the desired differentiability of $\mathcal{E}_{p}$. 
In addition to this result, we record basic properties of $\mathcal{E}_{p}(u; \varphi)$ given by \eqref{intro.diffble} in the following theorem. 
\begin{thm}[Proposition \ref{prop.diffble} and Theorem \ref{thm.p-form}]\label{thm.intro-EpCp}
	Assume that $(\mathcal{E}_{p},\mathcal{F}_{p})$ satisfies \ref{Cp.intro}.  
	Then the function $\mathbb{R} \ni t \mapsto \mathcal{E}_{p}(f + tg) \in [0, \infty)$ is differentiable for any $f,g \in \mathcal{F}_{p}$, and for any $c \in (0,\infty)$, 
	\begin{equation*}
    	\lim_{\delta \downarrow 0}\sup_{f,g \in \mathcal{F}_{p};\, \mathcal{E}_{p}(f) \leq c/(p-2)^{+},\, \mathcal{E}_{p}(g) \leq 1} \abs{\frac{\mathcal{E}_{p}(f + \delta g) - \mathcal{E}_{p}(f)}{\delta} - \frac{d}{dt}\mathcal{E}_{p}(f + tg)\biggr|_{t = 0}} = 0,  
    \end{equation*}
    where $c/0 \coloneqq \infty$. 
    Moreover, define $\mathcal{E}_{p}(\,\cdot\,;\,\cdot\,) \colon \mathcal{F}_{p} \times \mathcal{F}_{p} \to \mathbb{R}$ by $\mathcal{E}_{p}(f;g) \coloneqq \frac{1}{p}\frac{d}{dt}\mathcal{E}_{p}(f + tg)\bigr|_{t = 0}$, and let $a \in \mathbb{R}$, $f, f_{1}, f_{2}, g \in \mathcal{F}_{p}$ and $h \in \mathcal{E}_{p}^{-1}(0)$. 
    Then the following hold. 
    \begin{enumerate}[label=\textup{(\alph*)},align=left,leftmargin=*,topsep=2pt,parsep=0pt,itemsep=2pt]
    	\item $\mathcal{E}_{p}(f; f) = \mathcal{E}_{p}(f)$ and $\mathcal{E}_{p}(af; g) = \sgn(a)\abs{a}^{p - 1}\mathcal{E}_{p}(f; g)$.
    	\item The map $\mathcal{E}_{p}(f; \,\cdot\,) \colon \mathcal{F}_{p} \to \mathbb{R}$ is linear.
    	\item $\mathcal{E}_{p}(f; h) = 0$ and $\mathcal{E}_{p}(f + h; g) = \mathcal{E}_{p}(f;g)$. 
    	\item $\mathbb{R} \ni t \mapsto \mathcal{E}_{p}(f + tg; g) \in \mathbb{R}$ is strictly increasing if and only if $\mathcal{E}_{p}(g) > 0$.
    	\item $\abs{\mathcal{E}_{p}(f; g)} \le \mathcal{E}_{p}(f)^{\frac{p - 1}{p}}\mathcal{E}_{p}(g)^{\frac{1}{p}}$. 
    	\item\label{it:intro.localHol} $\abs{\mathcal{E}_{p}(f_1; g) - \mathcal{E}_{p}(f_2; g)} \le C_{p}\bigl(\mathcal{E}_{p}(f_1) \vee \mathcal{E}_{p}(f_2)\bigr)^{\frac{p - 1 - \alpha_{p}}{p}}\mathcal{E}_{p}(f_1 - f_2)^{\frac{\alpha_{p}}{p}} \mathcal{E}_{p}(g)^{\frac{1}{p}}$, where $\alpha_{p} \coloneqq \frac{1}{p} \wedge \frac{p - 1}{p}$ and $C_{p} \in (0,\infty)$ is a constant determined solely and explicitly by $p$.
    \end{enumerate}
\end{thm}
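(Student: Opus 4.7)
The plan is to reduce the core differentiability claim to the convexity of $\Phi(t):=\mathcal{E}_p(f+tg)$ combined with \ref{Cp.intro}, then derive the algebraic items (a), (c), (d), (e) from elementary manipulations, and finally attack the quantitative Hölder estimate (f), which will feed back into the additivity in (b). Convexity of $\Phi$, which follows from $\mathcal{E}_p^{1/p}$ being a seminorm and $x\mapsto x^p$ being convex on $[0,\infty)$, guarantees that the one-sided derivatives $\Phi'_\pm(0)$ exist with $\Phi'_-(0)\le\Phi'_+(0)$ and $\Phi(\delta)+\Phi(-\delta)\ge 2\Phi(0)$ for every $\delta$. To match $\Phi'_-(0)$ and $\Phi'_+(0)$, I apply \ref{Cp.intro} to $(f,\delta g)$: for $p\in(1,2]$ the bound $\mathcal{E}_p(f+\delta g)+\mathcal{E}_p(f-\delta g)\le 2\mathcal{E}_p(f)+2\delta^p\mathcal{E}_p(g)$ yields $\Phi(\delta)+\Phi(-\delta)-2\Phi(0)\le 2\delta^p\mathcal{E}_p(g)=o(\delta)$ without restricting $\mathcal{E}_p(f)$, while for $p\in[2,\infty)$ the sharper bound $2(\mathcal{E}_p(f)^{1/(p-1)}+\delta^{p/(p-1)}\mathcal{E}_p(g)^{1/(p-1)})^{p-1}$ Taylor-expanded in $\delta$ has remainder of order $\mathcal{E}_p(f)^{(p-2)/(p-1)}\delta^{p/(p-1)}$, which is $o(\delta)$ uniformly on $\{\mathcal{E}_p(f)\le c/(p-2)\}$; this explains the stated restriction $c/(p-2)^+$. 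Since $(\Phi(\delta)-\Phi(0))/\delta-(\Phi(0)-\Phi(-\delta))/\delta=(\Phi(\delta)+\Phi(-\delta)-2\Phi(0))/\delta=o(1)$ uniformly, and both quotients are monotone in $\delta$, I obtain $\Phi'_-(0)=\Phi'_+(0)$ together with the uniform convergence of the difference quotient.

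Define $\mathcal{E}_p(f;g):=\frac{1}{p}\Phi'(0)$. Then (a) is immediate from the chain rule applied to $\mathcal{E}_p(af+tg)=|a|^p\mathcal{E}_p(f+\sgn(a)|a|^{-1}tg)$ (using $p$-homogeneity and $\mathcal{E}_p(-u)=\mathcal{E}_p(u)$), while (c) follows from the fact that $\ker\mathcal{E}_p^{1/p}$ is a linear subspace, so $\mathcal{E}_p(f+h+tg)=\mathcal{E}_p(f+tg)$ identically in $t$ whenever $\mathcal{E}_p(h)=0$. For (e) I raise the seminorm triangle inequality $\mathcal{E}_p(f+tg)^{1/p}\le\mathcal{E}_p(f)^{1/p}+|t|\mathcal{E}_p(g)^{1/p}$ to the $p$-th power, compare with the first-order expansion $\mathcal{E}_p(f)+tp\mathcal{E}_p(f;g)+o(t)$, and let $t\to 0^\pm$. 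For (d), strict monotonicity of $t\mapsto\mathcal{E}_p(f+tg;g)=\Phi'(t)/p$ is equivalent to strict convexity of $\Phi$: if $\mathcal{E}_p(g)=0$ then $\Phi$ is constant by (c); if $\mathcal{E}_p(g)>0$ and every $\mathcal{E}_p(f+sg)$ is positive, Proposition \ref{prop.intro-GClist}(a) applies directly, while otherwise the kernel of $\mathcal{E}_p^{1/p}$ reduces $\Phi$ to $s\mapsto|s-s_0|^p\mathcal{E}_p(g)$, which is strictly convex for $p>1$.

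The principal obstacle is the quantitative estimate (f). My strategy is to apply \ref{Cp.intro} at $(u,v):=((f_1+f_2)/2,(f_1-f_2)/2)$ to extract a second-order bound on the symmetric combination $\mathcal{E}_p(f_1;g)+\mathcal{E}_p(f_2;g)-2\mathcal{E}_p(u;g)$, and to apply it again along the direction $g$ at each base point $f_i$ to compare $\mathcal{E}_p(f_i;g)-\mathcal{E}_p(u;g)$ with a power of $\mathcal{E}_p(f_1-f_2)^{1/p}$. Combining these and cancelling gives the asserted bound with $C_p(\mathcal{E}_p(f_1)\vee\mathcal{E}_p(f_2))^{(p-1-\alpha_p)/p}\mathcal{E}_p(f_1-f_2)^{\alpha_p/p}\mathcal{E}_p(g)^{1/p}$. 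The two regimes of Clarkson interact with the sign of $p-2$ to produce the exponents $\alpha_p=1/p$ for $p\ge 2$ and $\alpha_p=(p-1)/p$ for $p<2$. The delicate points are isolating the correct power of $\mathcal{E}_p(f_1)\vee\mathcal{E}_p(f_2)$ and tracking a universal constant $C_p$ through the Taylor expansion of the Clarkson bound.

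With (f) in hand, the nontrivial direction in (b)—additivity of $\mathcal{E}_p(f;\,\cdot\,)$—follows by a telescoping argument:
\begin{equation*}
\frac{\mathcal{E}_p(f+t(g_1+g_2))-\mathcal{E}_p(f)}{t}=\frac{\mathcal{E}_p((f+tg_1)+tg_2)-\mathcal{E}_p(f+tg_1)}{t}+\frac{\mathcal{E}_p(f+tg_1)-\mathcal{E}_p(f)}{t}.
\end{equation*}
Since $\mathcal{E}_p(f+tg_1)$ stays bounded for $|t|\le 1$, the uniform differentiability from the first part makes the first quotient equal $p\mathcal{E}_p(f+tg_1;g_2)+o(1)$ as $t\to 0$, and the continuity of $\mathcal{E}_p(\,\cdot\,;g_2)$ from (f) passes to the limit to give $p\mathcal{E}_p(f;g_2)$; the second quotient tends to $p\mathcal{E}_p(f;g_1)$. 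Together with the positive homogeneity $\mathcal{E}_p(f;ag)=a\mathcal{E}_p(f;g)$ for $a>0$ (immediate from the definition), this completes linearity.
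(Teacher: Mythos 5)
Your treatment of the differentiability statement and of items (a), (c), (d), (e) is correct and essentially identical to the paper's: the paper likewise bounds the central difference $D_{\delta}(f;g)=\mathcal{E}_{p}(f+\delta g)+\mathcal{E}_{p}(f-\delta g)-2\mathcal{E}_{p}(f)$ via the weak Clarkson inequality for $p\in(1,2]$ and via $2\bigl((a+b)^{p-1}-a^{p-1}\bigr)\le 2(p-1)(a+b)^{p-2}b$ with $a=\mathcal{E}_{p}(f)^{1/(p-1)}$, $b=\delta^{p/(p-1)}\mathcal{E}_{p}(g)^{1/(p-1)}$ for $p>2$, and then uses convexity to convert $D_{\delta}(f;g)/\delta=o(1)$ into uniform convergence of the difference quotient (cf.\ \eqref{c-diff}, \eqref{e:c-diff.unif}, \eqref{e:frechet.diffble.cdiff}); your derivations of (e) from \eqref{e:p-power} and of (d) from strict convexity plus the kernel reduction are also the paper's arguments.

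The genuine gap is item (f), and it propagates to (b). Your plan for (f) — ``apply \ref{Cp} at $\bigl(\frac{f_1+f_2}{2},\frac{f_1-f_2}{2}\bigr)$ to extract a second-order bound on $\mathcal{E}_{p}(f_1;g)+\mathcal{E}_{p}(f_2;g)-2\mathcal{E}_{p}\bigl(\frac{f_1+f_2}{2};g\bigr)$'' — does not work as stated: Clarkson's inequality compares \emph{energies}, not their directional derivatives, and one cannot differentiate an inequality between two functions of $t$ that are not equal at $t=0$. The paper's actual proof of \eqref{ncont} needs substantially more machinery: a central-difference estimate for $\mathcal{E}_{p}^{1/p}$ rather than for $\mathcal{E}_{p}$ on the normalized sphere $\mathcal{E}_{p}(f_1)=\mathcal{E}_{p}(g)=1$ (the estimate \eqref{ncont.1}, obtained by a second-order expansion of $t\mapsto\abs{t}^{1/p}$ together with the perturbation bound \eqref{perturb}), the auxiliary estimate \eqref{ncont.2}, and — crucially — the identity $\mathcal{E}_{p}(f_1;\delta g)-\mathcal{E}_{p}(f_2;\delta g)=\mathcal{E}_{p}(f_1;f_1+\delta g)+\mathcal{E}_{p}(f_2;f_1-\delta g)-\mathcal{E}_{p}(f_1)-\mathcal{E}_{p}(f_2;f_1)$, which converts the difference of derivatives into quantities controllable by (e) and \eqref{ncont.1}. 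That identity uses the \emph{linearity} of $\mathcal{E}_{p}(f_i;\,\cdot\,)$, i.e.\ item (b). Since you propose to deduce (b) \emph{from} (f) by a telescoping argument, your ordering is circular unless you supply a proof of (f) that avoids (b), which your sketch does not do. The fix is cheap: prove (b) first by pure convexity, as the paper does — write $f+t(g_1+g_2)=\frac{1}{2}(f+2tg_1)+\frac{1}{2}(f+2tg_2)$ to get $\mathcal{E}_{p}(f;g_1+g_2)\le\mathcal{E}_{p}(f;g_1)+\mathcal{E}_{p}(f;g_2)$ and apply the same with $-g_1,-g_2$ for the reverse inequality (see \eqref{bdd1}) — and only then attack (f) along the lines of \v{S}mulian's argument.
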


We also establish a similar result for $p$-energy measures as follows, which is the first rigorous result on the existence of the derivative in \eqref{intro.em-diffble} for $p$-energy measures on fractals. 
(Recall that the existence of $p$-energy measures in a general setting not assuming the self-similarity of the space and the $p$-energy form is unknown; see \cite[Problem 10.4]{MS+}.
Added in revision: this existence has recently been proved by Sasaya \cite{Sas26}; see also Example \ref{ex.em}-\ref{EX.EM-fractal} for some more details.)
\begin{thm}[Propositions \ref{prop.c-diff-em}, \ref{prop.em-holder} and Theorem \ref{thm.em-basic}]\label{thm.intro-emCp}
	Let $\SigmaAlgEM$ be a $\sigma$-algebra in $X$, and assume that $\{ \Gamma_{p}\langle u \rangle \}_{u \in \mathcal{F}_{p}}$ is a family of measures on $(X,\SigmaAlgEM)$ such that $\Gamma_{p}\langle f \rangle(X) \leq \mathcal{E}_{p}(f)$ for any $f \in \mathcal{F}_{p}$ and such that $(\Gamma_{p}\langle \,\cdot\, \rangle(A),\mathcal{F}_{p})$ is a $p$-energy form on $(X,m)$ satisfying \ref{Cp.intro} for any $A \in \SigmaAlgEM$. 
	Then $\mathbb{R} \ni t \mapsto \Gamma_{p}\langle f + tg \rangle(A) \in [0, \infty)$ is differentiable for any $f,g \in \mathcal{F}_{p}$ and any $A \in \SigmaAlgEM$, and for any $c \in (0,\infty)$, 
    \begin{equation*}
    	\lim_{\delta \downarrow 0}\sup_{A \in \SigmaAlgEM,\, f,g \in \mathcal{F}_{p};\, \mathcal{E}_{p}(f) \leq c/(p-2)^{+},\, \mathcal{E}_{p}(g) \leq 1} \abs{\frac{\Gamma_{p}\langle f + \delta g \rangle(A) - \Gamma_{p}\langle f \rangle(A)}{\delta} - \frac{d}{dt}\Gamma_{p}\langle f + tg \rangle(A)\biggr|_{t = 0}} = 0. 
    \end{equation*}
    Moreover, the set function $\Gamma_{p}\langle f; g \rangle \colon \SigmaAlgEM \to \mathbb{R}$ defined by $\Gamma_{p}\langle f; g \rangle(A) \coloneqq \frac{1}{p}\frac{d}{dt}\Gamma_{p}\langle f + tg \rangle(A)\bigr|_{t = 0}$ is a signed measure on $(X,\SigmaAlgEM)$ for any $f,g \in \mathcal{F}_{p}$, and the following hold for any $A \in \SigmaAlgEM$, any $a \in \mathbb{R}$ and any $f, f_{1}, f_{2}, g, h \in \mathcal{F}_{p}$ with $\Gamma_{p}\langle h \rangle(A) = 0$: 
    \begin{enumerate}[label=\textup{(\alph*)},align=left,leftmargin=*,topsep=2pt,parsep=0pt,itemsep=2pt]
    	\item $\Gamma_{p}\langle f; f \rangle(A) = \Gamma_{p}\langle f \rangle$ and $\Gamma_{p}\langle af; g \rangle(A) = \sgn(a)\abs{a}^{p - 1}\Gamma_{p}\langle f; g \rangle(A)$.
    	\item The map $\Gamma_{p}\langle f; \,\cdot\,\rangle(A) \colon \mathcal{F}_{p} \to \mathbb{R}$ is linear.
    	\item $\Gamma_{p}\langle f; h \rangle(A) = 0$ and $\Gamma_{p}\langle f + h; g \rangle(A) = \Gamma_{p}\langle f;g \rangle(A)$.  
    	\item $\mathbb{R} \ni t \mapsto \Gamma_{p}\langle f + tg; g \rangle(A) \in \mathbb{R}$ is strictly increasing if and only if $\Gamma_{p}\langle g \rangle(A) > 0$.
    	\item For any $\SigmaAlgEM$-measurable functions $\varphi,\psi \colon X \to [0,\infty]$,
		\begin{equation*}
			\int_{X}\varphi\psi\,d\abs{\Gamma_{p}\langle f; g \rangle}
			\le \left(\int_{X}\varphi^{\frac{p}{p - 1}}\,d\Gamma_{p}\langle f \rangle\right)^{\frac{p - 1}{p}}\left(\int_{X}\psi^{p}\,d\Gamma_{p}\langle g \rangle\right)^{\frac{1}{p}}.
		\end{equation*}
    	\item Let $\alpha_{p} = \frac{1}{p} \wedge \frac{p - 1}{p}$ and $C_{p}$ be the same constants as in Theorem \ref{thm.intro-EpCp}-\ref{it:intro.localHol}. Then 
    		\begin{align*}
    			&\abs{\Gamma_{p}\langle f_1; g \rangle(A) - \Gamma_{p}\langle f_2; g \rangle(A)} \\ 
    			&\le C_{p}\bigl(\Gamma_{p}\langle f_1 \rangle(A) \vee \Gamma_{p}\langle f_2 \rangle(A)\bigr)^{\frac{p - 1 - \alpha_{p}}{p}}\Gamma_{p}\langle f_1 - f_2 \rangle(A)^{\frac{\alpha_{p}}{p}} \Gamma_{p}\langle g \rangle(A)^{\frac{1}{p}}. 
    		\end{align*}
    \end{enumerate}
\end{thm}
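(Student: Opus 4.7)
The plan is to bootstrap Theorem \ref{thm.intro-EpCp} by exploiting the crucial assumption that, for every fixed $A \in \SigmaAlgEM$, the pair $(\Gamma_{p}\langle \,\cdot\, \rangle(A),\mathcal{F}_{p})$ is itself a $p$-energy form on $(X,m)$ satisfying \ref{Cp.intro}. Applying Theorem \ref{thm.intro-EpCp} to this form immediately yields, for each $A$, the differentiability of $t \mapsto \Gamma_{p}\langle f + tg \rangle(A)$ together with the pointwise identities (a)--(d) and the H\"{o}lder-type estimate (f). Since the rate of convergence in the uniform convergence statement of Theorem \ref{thm.intro-EpCp} depends only on $p$ and $c$ (as is visible from the H\"{o}lder bound with exponent $\alpha_{p}$), and since the hypothesis $\mathcal{E}_{p}(f) \leq c/(p-2)^{+}$ forces $\Gamma_{p}\langle f \rangle(A) \leq c/(p-2)^{+}$ via the standing assumption $\Gamma_{p}\langle f \rangle(X) \leq \mathcal{E}_{p}(f)$, the convergence is automatically uniform in $A \in \SigmaAlgEM$ as well.

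The substantive new content is that $A \mapsto \Gamma_{p}\langle f; g \rangle(A)$ is a signed measure on $(X,\SigmaAlgEM)$. Finite additivity follows at once from the linearity of $\frac{d}{dt}\big|_{t=0}$, and $\Gamma_{p}\langle f; g \rangle(\emptyset) = 0$ is immediate. The main step is countable additivity: applying Theorem \ref{thm.intro-EpCp}(e) to $(\Gamma_{p}\langle \,\cdot\, \rangle(A),\mathcal{F}_{p})$ gives the pointwise bound
\begin{equation*}
    \abs{\Gamma_{p}\langle f; g \rangle(A)} \le \Gamma_{p}\langle f \rangle(A)^{(p-1)/p}\Gamma_{p}\langle g \rangle(A)^{1/p},
\end{equation*}
which in particular shows finiteness of the total variation. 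Given a disjoint decomposition $A = \bigsqcup_{n \ge 1} A_{n}$, I would set $B_{N} = \bigsqcup_{n=1}^{N} A_{n}$ and $C_{N} = \bigsqcup_{n > N} A_{n}$ and apply finite additivity to obtain $\Gamma_{p}\langle f; g \rangle(A) = \sum_{n=1}^{N}\Gamma_{p}\langle f; g \rangle(A_{n}) + \Gamma_{p}\langle f; g \rangle(C_{N})$; the same pointwise bound then controls the tail since $\Gamma_{p}\langle f \rangle(C_{N})^{(p-1)/p}\Gamma_{p}\langle g \rangle(C_{N})^{1/p} \to 0$ by the countable additivity of $\Gamma_{p}\langle f \rangle$ and $\Gamma_{p}\langle g \rangle$.

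For the integral H\"{o}lder inequality (e), the first step is to upgrade the pointwise bound to the total variation estimate $\abs{\Gamma_{p}\langle f; g \rangle}(A) \le \Gamma_{p}\langle f \rangle(A)^{(p-1)/p}\Gamma_{p}\langle g \rangle(A)^{1/p}$ by applying the pointwise bound termwise over an arbitrary measurable partition of $A$ and invoking discrete H\"{o}lder on the resulting sums. For $\varphi, \psi$ simple on a common refinement, the inequality (e) then reduces to a weighted finite-sum H\"{o}lder combined with this variation bound; the general case follows by the standard monotone convergence extension. The remaining properties (a)--(d) and (f) are read off directly from the corresponding parts of Theorem \ref{thm.intro-EpCp} applied to $(\Gamma_{p}\langle \,\cdot\, \rangle(A),\mathcal{F}_{p})$. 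I expect the main obstacle to be the countable additivity step, where the interplay between the pointwise H\"{o}lder bound from Theorem \ref{thm.intro-EpCp}(e) and the countable additivity of $\Gamma_{p}\langle f \rangle$ and $\Gamma_{p}\langle g \rangle$ is essential; everything else is essentially formal once this mechanism is secured.
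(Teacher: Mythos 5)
Your proposal is correct and follows essentially the same route as the paper: Propositions \ref{prop.c-diff-em}, \ref{prop.em-holder} and Theorems \ref{thm.em-basic}, \ref{thm.signed} are obtained precisely by applying the one-variable results (Proposition \ref{prop.diffble} and Theorem \ref{thm.p-form}) to the $p$-energy form $(\Gamma_{p}\langle\,\cdot\,\rangle(A),\mathcal{F}_{p})$ for each fixed $A$, deducing uniformity in $A$ from \ref{EM1}, proving countable additivity via the tail bound $\abs{\Gamma_{p}\langle f;g\rangle(B_{N})}\le\Gamma_{p}\langle f\rangle(B_{N})^{(p-1)/p}\Gamma_{p}\langle g\rangle(B_{N})^{1/p}$, and deriving (e) from the total-variation estimate together with simple functions and monotone convergence (the paper realizes the total variation via the Hahn decomposition where you use arbitrary partitions plus discrete H\"{o}lder, which is equivalent). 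One small correction: the uniform modulus in the differentiability statement comes from the central-difference estimate \eqref{c-diff} rather than from the H\"{o}lder continuity in (f) --- for $p\in(1,2]$ the constraint $\mathcal{E}_{p}(f)\le c/(p-2)^{+}$ is vacuous, and the bound obtained from (f) degrades as $\mathcal{E}_{p}(f)\to\infty$, whereas \eqref{c-diff} gives a modulus depending only on $\mathcal{E}_{p}(g)$ in that range.
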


In the second part of this paper (Sections \ref{sec.p-harm} and \ref{sec.compatible}), we aim at developing a general theory of $p$-energy forms taking \ref{intro.GC} into account and focusing on a ``low-dimensional'' setting.
Namely, we introduce the notion of \emph{$p$-resistance form} as defined in Definition \ref{defn.RFp.intro} below and establish fundamental properties of this class of $p$-energy forms, as a natural extension of the theory of resistance forms for $p=2$ introduced in \cite{Kig95} and developed further in \cite{Kig01,Kig12} by Kigami.
In the rest of this section, we consider the situation where $(\mathcal{B},m)$ is the pair of $2^{X} = \{ A \mid A \subseteq X \}$ and the counting measure on $X$, so that $L^{0}(X,m) = \mathbb{R}^{X}$; see also Remark \ref{rmk:wo-measure}. 
\begin{defn}[$p$-Resistance form; Definition \ref{defn.RFp}] \label{defn.RFp.intro}
	We say that $(\mathcal{E}_{p}, \mathcal{F}_{p})$ is a \emph{$p$-resistance form} on $X$ if and only if the following conditions hold:
    \begin{enumerate}[label=\textup{(RF\arabic*)$_p$},align=left,leftmargin=*,topsep=2pt,parsep=0pt,itemsep=2pt]
    \item $\mathcal{F}_{p}$ is a linear subspace of $\mathbb{R}^{X}$ containing $\indicator{X}$ and $\mathcal{E}_{p}(\,\cdot\,)^{1/p}$ is a seminorm on $\mathcal{F}_{p}$ satisfying $\{ u \in \mathcal{F}_{p} \mid \mathcal{E}_{p}(u) = 0 \} = \mathbb{R}\indicator{X}$.
    \item The quotient normed space $(\mathcal{F}_{p}/\mathbb{R}\indicator{X}, \mathcal{E}_{p}(\,\cdot\,)^{1/p})$ is a Banach space.
    \item If $x \neq y \in X$, then there exists $u \in \mathcal{F}_{p}$ such that $u(x) \neq u(y)$.
    \item For any $x, y \in X$,
    \begin{equation*}
        R_{\mathcal{E}_{p}}(x,y) \coloneqq \sup\biggl\{ \frac{\abs{u(x) - u(y)}^{p}}{\mathcal{E}_{p}(u)} \biggm| u \in \mathcal{F}_{p} \setminus \mathbb{R}\indicator{X} \biggr\} < \infty.
    \end{equation*}
    \item $(\mathcal{E}_{p},\mathcal{F}_{p})$ satisfies the generalized $p$-contraction property \ref{intro.GC}.
    \end{enumerate}
\end{defn}

We verify that the $p$-energy forms on \emph{$p$-conductively homogeneous} compact metric spaces $(K,d)$ (Definition \ref{defn.pCH}) constructed by Kigami in \cite[Theorem 3.21]{Kig23}, where $p$ is assumed to be strictly greater than the Ahlfors regular conformal dimension of $(K,d)$ (Definition \ref{defn.AR}-\ref{it:ARCdim}), are $p$-resistance forms. 
In addition, we prove that the $p$-energy forms on post-critically finite self-similar sets constructed by Cao--Gu--Qiu in \cite[Proposition 5.3]{CGQ22} are also $p$-resistance forms for any $p \in (1,\infty)$ under the condition (\textbf{R}) in \cite[p.~18]{CGQ22}.  
See Section \ref{sec.constr} for details of the frameworks treated in \cite{CGQ22,Kig23}.
Similar to the case of $p = 2$, developing a general theory of $p$-resistance forms allows us to investigate $p$-energy forms provided by these broad frameworks in a synthetic manner.

It is immediate that if $(\mathcal{E}_{p},\mathcal{F}_{p})$ is a $p$-resistance form on $X$, then $R_{\mathcal{E}_{p}}(\,\cdot\,,\,\cdot\,)^{1/p}$ is a metric on $X$ and any function in $\mathcal{F}_{p}$ is a Lipschitz function on $K$ with respect to this metric. 
In the theory of resistance forms ($p = 2$), it is well known that $R_{\mathcal{E}_{2}}(\,\cdot\,,\,\cdot\,)$ is a metric, which is called the \emph{resistance metric} of the resistance form $(\mathcal{E}_{2},\mathcal{F}_{2})$; see \cite[Theorem 2.3.4]{Kig01} for a proof.
In view of this fact for $p = 2$, it is natural to seek the largest exponent $q$ such that $R_{\mathcal{E}_{p}}(\,\cdot\,,\,\cdot\,)^{q}$ is a metric.   
The following theorem gives the answer. 
\begin{thm}[Corollary \ref{cor.tri}]\label{thm.intro-pRM}
	If $(\mathcal{E}_{p},\mathcal{F}_{p})$ is a $p$-resistance form on $X$, then $R_{\mathcal{E}_{p}}(\,\cdot\,,\,\cdot\,)^{\frac{1}{p - 1}}$ is a metric on $X$. 
\end{thm}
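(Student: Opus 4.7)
I would first dispose of the easy metric axioms. Symmetry of $R_{\mathcal{E}_{p}}$ is immediate from the definition, $R_{\mathcal{E}_{p}}(x,x)=0$ because the supremand vanishes identically, and for $x\neq y$ the separator guaranteed by $\mathrm{(RF3)}_{p}$ cannot lie in $\mathbb{R}\indicator{X}$, hence has positive $\mathcal{E}_{p}$ by $\mathrm{(RF1)}_{p}$ and witnesses $R_{\mathcal{E}_{p}}(x,y)>0$. The whole content of the theorem is therefore the triangle inequality $R_{\mathcal{E}_{p}}(x,z)^{1/(p-1)}\le R_{\mathcal{E}_{p}}(x,y)^{1/(p-1)}+R_{\mathcal{E}_{p}}(y,z)^{1/(p-1)}$, which I would derive from the single-function bound
\begin{equation*}
    \abs{u(x)-u(z)}^{p}\le\bigl(R_{\mathcal{E}_{p}}(x,y)^{\frac{1}{p-1}}+R_{\mathcal{E}_{p}}(y,z)^{\frac{1}{p-1}}\bigr)^{p-1}\,\mathcal{E}_{p}(u),\qquad u\in\mathcal{F}_{p},
\end{equation*}
by taking the supremum in the definition of $R_{\mathcal{E}_{p}}(x,z)$.

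Fix $u\in\mathcal{F}_{p}$ and assume WLOG that $u(x)\ge u(z)$. The crux is a two-step reduction. Using Lipschitz contractivity (Proposition~\ref{prop.intro-GClist}\,(b), extended to all $1$-Lipschitz maps via the fact that constants are $\mathcal{E}_{p}$-free by $\mathrm{(RF1)}_{p}$), I would replace $u$ by the truncated function $u'\coloneqq (u\vee u(z))\wedge u(x)$, which lies in $\mathcal{F}_{p}$, has $\mathcal{E}_{p}(u')\le\mathcal{E}_{p}(u)$, agrees with $u$ at $x$ and $z$, and, crucially, satisfies $u'(y)\in[u(z),u(x)]$. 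Writing $v\coloneqq u'-u'(y)\indicator{X}$ and splitting $v=v_{1}-v_{2}$ with $v_{1}\coloneqq v^{+}$ and $v_{2}\coloneqq v^{-}$, the strong subadditivity in Proposition~\ref{prop.intro-GClist}\,(c) applied with $f=v$, $g=0$, $\varphi(t)=t^{+}$, combined with the $p$-homogeneity identity $\mathcal{E}_{p}(-v_{2})=\mathcal{E}_{p}(v_{2})$, yields $\mathcal{E}_{p}(v_{1})+\mathcal{E}_{p}(v_{2})\le\mathcal{E}_{p}(v)=\mathcal{E}_{p}(u')\le\mathcal{E}_{p}(u)$. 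Since $v_{1}(y)=v_{2}(y)=0$, $v_{1}(z)=v_{2}(x)=0$, and $\abs{u(x)-u(z)}=v_{1}(x)+v_{2}(z)$, the definition of $R_{\mathcal{E}_{p}}$ applied separately to $v_{1}$ and $v_{2}$ gives
\begin{equation*}
    \abs{u(x)-u(z)}\le R_{\mathcal{E}_{p}}(x,y)^{1/p}\mathcal{E}_{p}(v_{1})^{1/p}+R_{\mathcal{E}_{p}}(y,z)^{1/p}\mathcal{E}_{p}(v_{2})^{1/p}.
\end{equation*}
H\"older's inequality in $\mathbb{R}^{2}$ with conjugate exponents $p/(p-1)$ and $p$ then bounds the right-hand side by $\bigl(R_{\mathcal{E}_{p}}(x,y)^{1/(p-1)}+R_{\mathcal{E}_{p}}(y,z)^{1/(p-1)}\bigr)^{(p-1)/p}\bigl(\mathcal{E}_{p}(v_{1})+\mathcal{E}_{p}(v_{2})\bigr)^{1/p}$, and together with $\mathcal{E}_{p}(v_{1})+\mathcal{E}_{p}(v_{2})\le\mathcal{E}_{p}(u)$ and raising to the $p$-th power this delivers precisely the displayed single-function bound.

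The whole argument hinges on extracting the correct exponent $1/(p-1)$ on the resistances rather than the naive $1/p$. A direct use of the pointwise triangle inequality $\abs{u(x)-u(z)}\le\abs{u(x)-u(y)}+\abs{u(y)-u(z)}$ followed by two applications of the defining bound $\abs{u(a)-u(b)}^{p}\le R_{\mathcal{E}_{p}}(a,b)\,\mathcal{E}_{p}(u)$ only produces the weaker inequality $R_{\mathcal{E}_{p}}(x,z)^{1/p}\le R_{\mathcal{E}_{p}}(x,y)^{1/p}+R_{\mathcal{E}_{p}}(y,z)^{1/p}$, which does not imply the claimed triangle inequality when $p\neq 2$. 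The main obstacle, and the raison d'\^etre of the Lipschitz-truncation step, is precisely to manufacture a clean decomposition $v=v_{1}-v_{2}$ to which strong subadditivity applies, so as to replace the double-counted $\mathcal{E}_{p}(u)+\mathcal{E}_{p}(u)$ by the additive $\mathcal{E}_{p}(v_{1})+\mathcal{E}_{p}(v_{2})\le\mathcal{E}_{p}(u)$; H\"older with exponent $p/(p-1)$ (rather than $p$) then converts this additive energy and the two-term resistance sum into a combination carrying exactly the target exponent $1/(p-1)$.
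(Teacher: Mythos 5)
Your proof is correct, and it takes a genuinely different route from the one in the paper. The paper obtains Corollary \ref{cor.tri} as a consequence of the sharp H\"{o}lder estimate for $\mathcal{E}_{p}$-harmonic functions (Theorem \ref{t:lip-harm}): by \eqref{hB} the harmonic extensions $h_{\{x,z\}}^{\mathcal{E}_{p}}\bigl[\indicator{x}^{\{x,z\}}\bigr]$ and $h_{\{x,z\}}^{\mathcal{E}_{p}}\bigl[\indicator{z}^{\{x,z\}}\bigr]$ are bounded at $y$ by $R_{\mathcal{E}_{p}}(x,y)^{1/(p-1)}/R_{\mathcal{E}_{p}}(x,z)^{1/(p-1)}$ and $R_{\mathcal{E}_{p}}(y,z)^{1/(p-1)}/R_{\mathcal{E}_{p}}(x,z)^{1/(p-1)}$ respectively, and since they sum to $\indicator{X}$, evaluating at $y$ yields the triangle inequality. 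That route passes through the trace construction, the Fr\'{e}chet differentiability of $\mathcal{E}_{p}$ and the monotonicity of $p$-Laplacian values (Proposition \ref{prop.mono}). Your argument --- recentre $u$ at $u(y)$, split into positive and negative parts $v_{1},v_{2}$, invoke \eqref{markov-nonDF} with $\varphi(t)=t^{+}$ together with $p$-homogeneity to obtain the additive energy bound $\mathcal{E}_{p}(v_{1})+\mathcal{E}_{p}(v_{2})\le\mathcal{E}_{p}(u)$, and close with H\"{o}lder's inequality with exponent $p/(p-1)$ --- uses only the contraction properties contained in \ref{RF5} and is essentially the classical finite-graph argument of \cite{ACFP19,Her10} transplanted to the abstract setting; it is self-contained and considerably more elementary. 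What the paper's longer detour buys is Theorem \ref{t:lip-harm} itself, a quantitatively sharp regularity estimate of independent use (e.g.\ in the proof of Theorem \ref{t:KS-mono.pcf}), of which the triangle inequality is then a one-line corollary. Two harmless remarks on your write-up: the truncation $u'=(u\vee u(z))\wedge u(x)$ is not actually needed, since $(u(x)-u(y))^{+}+(u(y)-u(z))^{+}\ge u(x)-u(z)$ holds without it; and, as you note, $\varphi(t)=(t\vee u(z))\wedge u(x)$ must be shifted by $-\varphi(0)$ before Proposition \ref{prop.GC-list}-\ref{GC.lip} applies, which costs nothing because $\mathbb{R}\indicator{X}\subseteq\mathcal{E}_{p}^{-1}(0)$.
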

The power $1/(p - 1)$ in Theorem \ref{thm.intro-pRM} is sharp; see Example \ref{ex:pmet.sharp}. 
Let us call $R_{\mathcal{E}_{p}}(\,\cdot\,,\,\cdot\,)^{\frac{1}{p - 1}}$ the \emph{$p$-resistance metric} of $(\mathcal{E}_{p},\mathcal{F}_{p})$. 
Theorem \ref{thm.intro-pRM} was proved in \cite{ACFP19,Her10} for the canonical $p$-energy forms (i.e., those given by \eqref{eq:p-RF-finite-graph}) on finite weighted graphs $(V,L)$ and in \cite{Shi21} for this class of forms on infinite graphs. 
Theorem \ref{thm.intro-pRM} establishes the same result for the first time for $p$-energy forms which are not of the form \eqref{eq:p-RF-finite-graph} and for ones on continuous spaces. 
Added in revision: in a very recent work \cite{PS25+}, Puchert and Schmidt have given an intriguing proof of Theorem \ref{thm.intro-pRM}. Their proof relies on a duality (a clever use of conjugate convex functions), which seems to capture the essence of why the power $\frac{1}{p-1}$ naturally appears. Moreover, they have introduced the notion of \emph{nonlinear resistance form}, which generalizes that of $p$-resistance form as observed in \cite[Proposition 5.11]{PS25+}, and shown a natural counterpart of Theorem \ref{thm.intro-pRM} for nonlinear resistance forms.

We also investigate $p$-harmonic functions with respect to $p$-resistance forms, which should be considered as part of \emph{nonlinear potential theory} under the condition that each point has a positive $p$-capacity. 
Let us explain some basic results in this introduction. 
The following definition is a natural analogue of \eqref{RD.p-harm.weak} (or of \eqref{RD.p-harm.EL}). 
\begin{defn}[$\mathcal{E}_{p}$-Harmonic function; see Definition \ref{dfn:part-harmonic}]
    Let $(\mathcal{E}_{p},\mathcal{F}_{p})$ be a $p$-resistance form on $X$ and let $B$ be a non-empty subset of $X$.
    A function $h \in \mathcal{F}_{p}$ is said to be \emph{$\mathcal{E}_{p}$-harmonic} on $X \setminus B$ if and only if 
    \[
    \mathcal{E}_{p}(h; \varphi) = 0 \quad \text{for any $\varphi \in \mathcal{F}_{p}$ with $\varphi|_{B} = 0$,}
    \]
    or equivalently (see Proposition \ref{prop.equiv} for this equivalence), 
    \[
    \mathcal{E}_{p}(h) = \inf\{ \mathcal{E}_{p}(u) \mid \text{$u \in \mathcal{F}_{p}$, $u|_{B} = h|_{B}$} \}. 
    \]
\end{defn}

A standard argument in variational analysis ensures the existence and  uniqueness of $\mathcal{E}_{p}$-harmonic functions with given boundary values. 
\begin{prop}[Part of Theorem \ref{thm.RF-exist}]\label{prop.intro-unique}
	Let $(\mathcal{E}_{p},\mathcal{F}_{p})$ be a $p$-resistance form on $X$ and let $B$ be a non-empty subset of $X$. 
	Define $\mathcal{F}_{p}|_{B} \coloneqq \{ u|_{B} \mid u \in \mathcal{F}_{p} \}$. 
	Then for any $u \in \mathcal{F}_{p}|_{B}$, there exists a unique function $h_{B}^{\mathcal{E}_{p}}[u] \in \mathcal{F}_{p}$ satisfying $h_{B}^{\mathcal{E}_{p}}[u]\bigr|_{B} = u$ and $\mathcal{E}_{p}(h_{B}^{\mathcal{E}_{p}}[u]) = \inf\{ \mathcal{E}_{p}(v) \mid \text{$v \in \mathcal{F}_{p}$, $v|_{B} = u$} \}$. 
\end{prop}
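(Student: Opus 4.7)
The plan is to apply the direct method of the calculus of variations. Set
\[
\mathcal{F}_{p;B,u} \coloneqq \{ v \in \mathcal{F}_{p} \mid v|_{B} = u \}, \qquad I \coloneqq \inf\{ \mathcal{E}_{p}(v) \mid v \in \mathcal{F}_{p;B,u} \},
\]
which is well-defined and non-negative because $\mathcal{F}_{p;B,u}$ is non-empty by the assumption $u \in \mathcal{F}_{p}|_{B}$, and convex since it is a translate of the linear subspace $\{ v \in \mathcal{F}_{p} \mid v|_{B} = 0 \}$. Uniqueness will come from strict convexity: if $h_{1}, h_{2} \in \mathcal{F}_{p;B,u}$ both achieve $I$, I would first note that when $I = 0$, (RF1)$_{p}$ gives $h_{1}, h_{2} \in \mathbb{R}\indicator{X}$, so the equality $h_{1}|_{B} = h_{2}|_{B} = u$ with $B \neq \emptyset$ forces $h_{1} = h_{2}$. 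When $I > 0$ and $h_{1} \neq h_{2}$, the condition $(h_{1}-h_{2})|_{B} = 0$ together with (RF1)$_{p}$ and $B \neq \emptyset$ forces $\mathcal{E}_{p}(h_{1}-h_{2}) > 0$; then Proposition \ref{prop.intro-GClist}(a) applied with $\lambda = 1/2$ gives $\mathcal{E}_{p}(\tfrac{1}{2}(h_{1}+h_{2})) < I$, contradicting $\tfrac{1}{2}(h_{1}+h_{2}) \in \mathcal{F}_{p;B,u}$.

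For existence, I would take a minimizing sequence $\{ v_{n} \}_{n \in \mathbb{N}} \subseteq \mathcal{F}_{p;B,u}$ with $\mathcal{E}_{p}(v_{n}) \to I$, and run a uniform convexity argument based on $p$-Clarkson's inequality from Proposition \ref{prop.intro-GClist}(e). Since the midpoints $\tfrac{1}{2}(v_{n}+v_{m})$ also lie in $\mathcal{F}_{p;B,u}$, I have $\mathcal{E}_{p}(\tfrac{1}{2}(v_{n}+v_{m})) \geq I$. Applying \ref{Cp.intro} to $f = \tfrac{1}{2}(v_{n}+v_{m})$ and $g = \tfrac{1}{2}(v_{n}-v_{m})$ in its uniformly convex form — the upper inequality $2(\mathcal{E}_{p}(f)+\mathcal{E}_{p}(g)) \leq \mathcal{E}_{p}(f+g)+\mathcal{E}_{p}(f-g)$ for $p \in [2,\infty)$, and the Hanner-type lower inequality $\mathcal{E}_{p}(f+g)+\mathcal{E}_{p}(f-g) \geq 2(\mathcal{E}_{p}(f)^{1/(p-1)}+\mathcal{E}_{p}(g)^{1/(p-1)})^{p-1}$ for $p \in (1,2]$ — and discarding the midpoint term using $\mathcal{E}_{p}(f) \geq I$ yields $\mathcal{E}_{p}(\tfrac{1}{2}(v_{n}-v_{m})) \to 0$ as $n,m \to \infty$. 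Thus $\{ [v_{n}] \}_{n}$ is Cauchy in the Banach space $(\mathcal{F}_{p}/\mathbb{R}\indicator{X}, \mathcal{E}_{p}^{1/p})$ by (RF2)$_{p}$, and there exists $g \in \mathcal{F}_{p}$ with $\mathcal{E}_{p}(v_{n}-g) \to 0$; the seminorm property of $\mathcal{E}_{p}^{1/p}$ then gives $\mathcal{E}_{p}(g) = I$.

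The main obstacle is to upgrade this quotient-norm convergence to an honest pointwise limit respecting the prescribed boundary datum, since $g$ is determined only up to an additive constant. This is precisely where axiom (RF4)$_{p}$ enters. Fixing any $x_{0} \in B$, for every $y \in X$ the definition of $R_{\mathcal{E}_{p}}$ gives
\[
\abs{(v_{n}-g)(y) - (v_{n}-g)(x_{0})}^{p} \leq R_{\mathcal{E}_{p}}(y,x_{0})\, \mathcal{E}_{p}(v_{n}-g) \to 0.
\]
Because $v_{n}(x_{0}) = u(x_{0})$ for every $n$, the numerical constant $c \coloneqq u(x_{0}) - g(x_{0})$ satisfies $(v_{n}-g)(x_{0}) \to c$, and therefore $v_{n}(y) \to g(y) + c$ pointwise on $X$. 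Setting $h_{B}^{\mathcal{E}_{p}}[u] \coloneqq g + c\indicator{X} \in \mathcal{F}_{p}$, I conclude that $h_{B}^{\mathcal{E}_{p}}[u]|_{B} = u$ by taking pointwise limits at $y \in B$, and that $\mathcal{E}_{p}(h_{B}^{\mathcal{E}_{p}}[u]) = \mathcal{E}_{p}(g) = I$ by the invariance of $\mathcal{E}_{p}$ under adding constants, completing the construction of the unique minimizer.
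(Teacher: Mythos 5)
Your proposal is correct and follows essentially the same route as the paper's proof of Theorem \ref{thm.RF-exist}: a minimizing sequence is shown to be Cauchy in $(\mathcal{F}_{p}/\mathbb{R}\indicator{X},\mathcal{E}_{p}^{1/p})$ via $p$-Clarkson's inequality, \ref{RF2} supplies a limit, and \ref{RF4} upgrades the quotient-norm convergence to pointwise convergence so that the boundary datum is preserved. The only cosmetic difference is in the uniqueness step, where you invoke the strict convexity of Proposition \ref{prop.intro-GClist}(a) while the paper applies Clarkson's inequality once more to the two putative minimizers; both arguments rest on the same convexity input from \ref{Cp.intro}.
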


Using the (nonlinear) operator $h_{B}^{\mathcal{E}_{p}}[\,\cdot\,] \colon \mathcal{F}_{p}|_{B} \to \mathcal{F}_{p}$ given in Proposition \ref{prop.intro-unique}, we can introduce a new $p$-resistance form on the boundary set $B$, which is called the \emph{trace} of $(\mathcal{E}_{p},\mathcal{F}_{p})$ on $B$. 
This notion is at the core of our theory of $p$-resistance forms, and turns out to be a powerful tool especially when we work on post-critically finite self-similar sets; see Subsection \ref{sec.pcf} for example. 
Here we just record fundamental results on traces in the following theorem. 
\begin{thm}[Trace of $p$-resistance form; part of Theorem \ref{thm.RF-exist}]\label{thm.intro-trace}
	Let $(\mathcal{E}_{p},\mathcal{F}_{p})$ be a $p$-resistance form on $X$ and let $B$ be a non-empty subset of $X$. 
	Define $\mathcal{E}_{p}|_{B} \colon \mathcal{F}_{p}|_{B} \to [0,\infty)$ by $\mathcal{E}_{p}|_{B}(u) \coloneqq \mathcal{E}_{p}(h_{B}^{\mathcal{E}_{p}}[u])$ for $u \in \mathcal{F}_{p}|_{B}$. 
    Then $(\mathcal{E}_{p}|_{B}, \mathcal{F}_{p}|_{B})$ is a $p$-resistance form on $B$. 
    Furthermore, $R_{\mathcal{E}_{p}|_{B}} = R_{\mathcal{E}_{p}}|_{B \times B}$ and 
    \[
    \mathcal{E}_{p}|_{B}(u; v) = \mathcal{E}_{p}\bigl(h_{B}^{\mathcal{E}_{p}}[u]; h_{B}^{\mathcal{E}_{p}}[v]\bigr) \quad \text{for any $u,v \in \mathcal{F}_{p}|_{B}$.}
    \]
\end{thm}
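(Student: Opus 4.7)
The plan is to verify each of the axioms \textup{(RF1)$_{p}$}--\textup{(RF5)$_{p}$} for $(\mathcal{E}_{p}|_{B},\mathcal{F}_{p}|_{B})$, to identify $R_{\mathcal{E}_{p}|_{B}}$ with $R_{\mathcal{E}_{p}}|_{B \times B}$, and finally to derive the bi-variate derivative formula. The basic tool throughout is the variational characterization $\mathcal{E}_{p}|_{B}(u) = \mathcal{E}_{p}(h_{B}^{\mathcal{E}_{p}}[u]) \le \mathcal{E}_{p}(w)$ for every lift $w \in \mathcal{F}_{p}$ with $w|_{B} = u$, together with the uniqueness asserted in Proposition~\ref{prop.intro-unique}, which immediately yields $p$-homogeneity of $h_{B}^{\mathcal{E}_{p}}[\,\cdot\,]$ and hence of $\mathcal{E}_{p}|_{B}$. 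The axioms \textup{(RF1)$_{p}$}, \textup{(RF3)$_{p}$}, \textup{(RF4)$_{p}$}, \textup{(RF5)$_{p}$} are then direct: for \textup{(RF1)$_{p}$} the triangle inequality follows from $\mathcal{E}_{p}|_{B}(u + v)^{1/p} \le \mathcal{E}_{p}(h_{B}^{\mathcal{E}_{p}}[u] + h_{B}^{\mathcal{E}_{p}}[v])^{1/p} \le \mathcal{E}_{p}|_{B}(u)^{1/p} + \mathcal{E}_{p}|_{B}(v)^{1/p}$ by minimality, while the null-space is identified via $\mathcal{E}_{p}|_{B}(u) = 0 \iff h_{B}^{\mathcal{E}_{p}}[u] \in \mathbb{R}\indicator{X} \iff u \in \mathbb{R}\indicator{B}$; for \textup{(RF4)$_{p}$} the substitution $u \leftrightarrow h_{B}^{\mathcal{E}_{p}}[u]$ in the supremum defining $R_{\mathcal{E}_{p}|_{B}}$ gives the ``$\le$'' direction, while restriction $v \mapsto v|_{B}$ in the supremum defining $R_{\mathcal{E}_{p}}$ (noting that lifts constant on $B$ contribute $0$) gives the reverse; \textup{(RF3)$_{p}$} is immediate by restricting any separating function from $\mathcal{F}_{p}$ to $B$; and \textup{(RF5)$_{p}$} follows by extending $\bm{u} = (u_{k})_{k=1}^{n_{1}} \in \mathcal{F}_{p}|_{B}^{n_{1}}$ to $\tilde{\bm{u}} = (h_{B}^{\mathcal{E}_{p}}[u_{k}])_{k=1}^{n_{1}}$, applying \ref{intro.GC} for $\mathcal{E}_{p}$ to $T(\tilde{\bm{u}})$, and noting $T_{l}(\tilde{\bm{u}})|_{B} = T_{l}(\bm{u})$ with $\mathcal{E}_{p}|_{B}(T_{l}(\bm{u})) \le \mathcal{E}_{p}(T_{l}(\tilde{\bm{u}}))$ by minimality.

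The substantive work lies in \textup{(RF2)$_{p}$}, completeness of $(\mathcal{F}_{p}|_{B}/\mathbb{R}\indicator{B},\mathcal{E}_{p}|_{B}^{1/p})$. Let $\{u_{n}\}$ be a Cauchy sequence; choosing representatives so that $u_{n}(x_{0}) = 0$ for a fixed $x_{0} \in B$, set $\tilde u_{n} \coloneqq h_{B}^{\mathcal{E}_{p}}[u_{n}]$, so that $\tilde u_{n}(x_{0}) = 0$ and $\mathcal{E}_{p}(\tilde u_{n}) = \mathcal{E}_{p}|_{B}(u_{n})$ is uniformly bounded. The plan is to show that $\{\tilde u_{n}\}$ is itself Cauchy in $(\mathcal{F}_{p}/\mathbb{R}\indicator{X},\mathcal{E}_{p}^{1/p})$; then \textup{(RF2)$_{p}$} for $\mathcal{E}_{p}$ produces a limit $\tilde u \in \mathcal{F}_{p}$, and $u \coloneqq \tilde u|_{B} \in \mathcal{F}_{p}|_{B}$ satisfies $\mathcal{E}_{p}|_{B}(u_{n} - u) \le \mathcal{E}_{p}(\tilde u_{n} - \tilde u) \to 0$ by minimality. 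To prove the Cauchy property for $\{\tilde u_{n}\}$, I would use the variational equation $\mathcal{E}_{p}(\tilde u_{n}; \varphi) = 0$ for every $\varphi \in \mathcal{F}_{p}$ with $\varphi|_{B} = 0$ (the defining property of the $\mathcal{E}_{p}$-harmonic extension). Applying it to $\varphi \coloneqq (\tilde u_{n} - \tilde u_{m}) - h_{B}^{\mathcal{E}_{p}}[u_{n} - u_{m}]$, which vanishes on $B$, reduces
\[
    \mathcal{E}_{p}(\tilde u_{n}; \tilde u_{n} - \tilde u_{m}) - \mathcal{E}_{p}(\tilde u_{m}; \tilde u_{n} - \tilde u_{m}) = \mathcal{E}_{p}(\tilde u_{n}; h_{B}^{\mathcal{E}_{p}}[u_{n} - u_{m}]) - \mathcal{E}_{p}(\tilde u_{m}; h_{B}^{\mathcal{E}_{p}}[u_{n} - u_{m}]),
\]
and Theorem~\ref{thm.intro-EpCp}\ref{it:intro.localHol} bounds the right-hand side from above by a constant multiple of $\mathcal{E}_{p}(\tilde u_{n} - \tilde u_{m})^{\alpha_{p}/p} \mathcal{E}_{p}|_{B}(u_{n} - u_{m})^{1/p}$. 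The left-hand side, in turn, admits a quantitative strong-monotonicity lower bound of the form $c(\mathcal{E}_{p}(\tilde u_{n}), \mathcal{E}_{p}(\tilde u_{m})) \mathcal{E}_{p}(\tilde u_{n} - \tilde u_{m})^{\gamma}$ for some $\gamma > 0$, inherited from $p$-Clarkson (hence from \textup{(RF5)$_{p}$}) via the standard equivalence between uniform convexity of $\mathcal{E}_{p}$ and strong monotonicity of its subgradient. Combining the two bounds forces $\mathcal{E}_{p}(\tilde u_{n} - \tilde u_{m}) \to 0$ as $n, m \to \infty$.

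Finally, for the bi-variate derivative identity, I would compare $F(t) \coloneqq \mathcal{E}_{p}|_{B}(u + tv)$ with $G(t) \coloneqq \mathcal{E}_{p}(h_{B}^{\mathcal{E}_{p}}[u] + t\, h_{B}^{\mathcal{E}_{p}}[v])$: since $h_{B}^{\mathcal{E}_{p}}[u] + t\, h_{B}^{\mathcal{E}_{p}}[v]$ has boundary value $u + tv$, minimality gives $F(t) \le G(t)$ for all $t \in \mathbb{R}$, with equality at $t = 0$; both $F$ and $G$ are differentiable at $0$ by Theorem~\ref{thm.intro-EpCp} (applied to $\mathcal{E}_{p}|_{B}$, which inherits $p$-Clarkson from \textup{(RF5)$_{p}$} as a special case of Proposition~\ref{prop.intro-GClist}(e), and to $\mathcal{E}_{p}$), so $G - F$ attains its minimum at $t = 0$ and therefore $F'(0) = G'(0)$, i.e., $\mathcal{E}_{p}|_{B}(u; v) = \mathcal{E}_{p}(h_{B}^{\mathcal{E}_{p}}[u]; h_{B}^{\mathcal{E}_{p}}[v])$. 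The hardest step will be the strong-monotonicity lower bound in the completeness argument: for $p \ge 2$ it is expected to hold with $\gamma = 1$ fairly straightforwardly, but for $p \in (1, 2)$ the exponent degenerates and one must extract a coercivity estimate from the weaker form of $p$-Clarkson whose scaling is compatible with the Hölder upper bound provided by Theorem~\ref{thm.intro-EpCp}\ref{it:intro.localHol}.
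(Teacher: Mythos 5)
Your proposal is correct, and everything except the completeness axiom \textup{(RF2)$_{p}$} follows essentially the paper's own route: the triangle inequality, null-space identification, \textup{(RF3)$_{p}$}, both inclusions for $R_{\mathcal{E}_{p}|_{B}}$, the lift-and-restrict argument for \textup{(RF5)$_{p}$}, and the ``$F\le G$ with equality and differentiability at $t=0$'' argument for $\mathcal{E}_{p}|_{B}(u;v)=\mathcal{E}_{p}(h_{B}^{\mathcal{E}_{p}}[u];h_{B}^{\mathcal{E}_{p}}[v])$ are all the same as in the paper (the paper phrases the last one as a sandwich of one-sided difference quotients, which is the identical idea).

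Where you genuinely diverge is \textup{(RF2)$_{p}$}. The paper does \emph{not} go through the Euler--Lagrange equations at all: it applies $p$-Clarkson's inequality directly to the pair $\bigl(h_{B}^{\mathcal{E}_{p}}[u_{n}],h_{B}^{\mathcal{E}_{p}}[u_{m}]\bigr)$, using only the two observations that $\mathcal{E}_{p}\bigl(h_{B}^{\mathcal{E}_{p}}[u_{n}]+h_{B}^{\mathcal{E}_{p}}[u_{m}]\bigr)\ge\mathcal{E}_{p}|_{B}(u_{n}+u_{m})$ (the sum is a lift of $u_{n}+u_{m}$) and that $\mathcal{E}_{p}|_{B}(u_{n})^{1/p}\to\beta^{1/p}$ while $\mathcal{E}_{p}|_{B}(u_{n}+u_{m})^{1/p}\to 2\beta^{1/p}$ by the Cauchy hypothesis; Clarkson then forces $\mathcal{E}_{p}\bigl(h_{B}^{\mathcal{E}_{p}}[u_{n}]-h_{B}^{\mathcal{E}_{p}}[u_{m}]\bigr)\to 0$ exactly as in the existence proof of the minimizer. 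This needs nothing beyond \ref{Cp.intro}. Your route through $\mathcal{E}_{p}(\tilde u_{n};\varphi)=0$ and the Hölder/Cauchy--Schwarz bounds on $\mathcal{E}_{p}(\tilde u_{n};h_{B}^{\mathcal{E}_{p}}[u_{n}-u_{m}])$ is also viable, but it hinges on the quantitative coercivity $\mathcal{E}_{p}(f;f-g)-\mathcal{E}_{p}(g;f-g)\ge c\,\mathcal{E}_{p}(f-g)^{1\vee\frac{1}{p-1}}$ (with $c$ depending on an upper energy bound when $p<2$), which appears nowhere in the paper and which you assert rather than prove. It is in fact derivable — set $\psi(t)\coloneqq\mathcal{E}_{p}(g+t(f-g))$, use convexity to get $\psi'(1)-\psi'(0)\ge 2\bigl(\psi(0)+\psi(1)-2\psi(1/2)\bigr)$, and bound the right-hand side below via \ref{Cp.intro} applied to $\frac{f+g}{2},\frac{f-g}{2}$ (for $p<2$ a mean-value estimate on $s\mapsto s^{p-1}$ gives the stated exponent $\frac{1}{p-1}$ with an energy-dependent constant, whose scaling is indeed compatible with the $\mathcal{E}_{p}|_{B}(u_{n}-u_{m})^{1/p}\to 0$ decay of your right-hand side) — so your proof closes, but at the cost of an extra lemma that the paper's argument renders unnecessary.
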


Now let us state results on behavior of $\mathcal{E}_{p}$-harmonic functions. 
We start with \emph{comparison principles} for $\mathcal{E}_{p}$-harmonic functions, namely monotonicity properties of $h_{B}^{\mathcal{E}_{p}}[u]$ with respect to the boundary value $u$. 
Because of the \emph{nonlinearity} of the operator $h_{B}^{\mathcal{E}_{p}}$, a \emph{maximum principle} does not imply a comparison principle unlike the case of $p = 2$. 
Fortunately, by virtue of Proposition \ref{prop.intro-unique} and the strong subadditivity \eqref{sadd.intro}, we can prove the following \emph{weak comparison principle} for $\mathcal{E}_{p}$-harmonic functions (Proposition \ref{prop.cp1}): 
\begin{equation}\label{e:intro-cp}
	\text{If $\emptyset \neq B \subseteq X$ and $u, v \in \mathcal{F}_{p}|_{B}$ satisfy $u \le v$ on $B$, then $h_{B}^{\mathcal{E}_{p}}[u] \le h_{B}^{\mathcal{E}_{p}}[v]$ on $X$.}
\end{equation}
We also show a localized version of \eqref{e:intro-cp} under suitable assumptions (Proposition \ref{prop.cp2}).
Furthermore, by employing the approach in \cite{Cap07}, we show the following \emph{(scale-invariant) elliptic Harnack inequality} for non-negative $\mathcal{E}_{p}$-harmonic functions under some extra assumptions including the existence of nice $p$-energy measures (see Theorem \ref{thm.EHI} for the precise statement): there exists a constant $C \in (0,\infty)$ such that for any $(x,s) \in X \times (0,\infty)$ and any non-negative $h \in \mathcal{F}_{p}$ that is $\mathcal{E}_{p}$-harmonic on $B_{\pmetric_{p}}(x,2s)$, where $\pmetric_{p} \coloneqq R_{\mathcal{E}_{p}}^{1/(p - 1)}$, 
\begin{equation}\label{e:intro-EHI}
	\sup_{B_{\pmetric_{p}}(x,s)}h \le C\inf_{B_{\pmetric_{p}}(x,s)}h, 
\end{equation}
which is well known to imply a local H\"{o}lder continuity of $h$. 
Regarding continuity estimates for $\mathcal{E}_{p}$-harmonic functions, we also obtain the following sharp H\"{o}lder regularity estimate, which in fact implies Theorem \ref{thm.intro-pRM} as an easy corollary. 
\begin{thm}[Theorem \ref{t:lip-harm}]\label{thm.intro-holder}
	Let $(\mathcal{E}_{p},\mathcal{F}_{p})$ be a $p$-resistance form on $X$ and let $B$ be a non-empty subset of $X$. 
	Define $B^{\mathcal{F}_{p}} \coloneqq \bigcap_{u \in \mathcal{F}_{p}; u|_{B} = 0}u^{-1}(0)$ and, for $x \in X \setminus B^{\mathcal{F}_{p}}$, 
	\begin{equation*}
		\pmetric_{p}(x,B) \coloneqq \left(\sup\biggl\{ \frac{\abs{u(x)}^{p}}{\mathcal{E}_{p}(u)} \biggm| \text{$u \in \mathcal{F}_{p}$, $u|_{B} = 0$, $u(x) \neq 0$} \biggr\}\right)^{\frac{1}{p - 1}}. 
	\end{equation*}
	Let $x \in X \setminus B^{\mathcal{F}_{p}}$ and $y \in X$. Then
    \begin{equation*}
        h_{B \cup \{ x \}}^{\mathcal{E}_{p}}\bigl[\indicator{B}^{B \cup \{ x \}}\bigr](y)
        \le \frac{\pmetric_{p}(x,y)}{\pmetric_{p}(x,B)}.
    \end{equation*}
	Moreover, for any $h \in \mathcal{F}_{p}$ that is $\mathcal{E}_{p}$-harmonic on $X \setminus B$ and satisfies $\sup_{B}\abs{h} < \infty$,
    \begin{equation*}
        \abs{h(x) - h(y)} \le \frac{\pmetric_{p}(x,y)}{\pmetric_{p}(x,B)}\sup_{x',y' \in B}\abs{h(x') - h(y')}. 
    \end{equation*}
\end{thm}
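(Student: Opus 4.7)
The plan is to handle the two assertions separately, establishing the first via a variational argument and then deriving the second by a comparison argument.

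For the first assertion, I set $\hat h := h_{B \cup \{x\}}^{\mathcal{E}_{p}}[\indicator{B}^{B \cup \{x\}}]$ and pass to $g := \indicator{X} - \hat h$, which by Proposition \ref{prop.intro-unique} is the unique minimizer of $\mathcal{E}_{p}$ over $\{u \in \mathcal{F}_{p} : u(x) = 1,\, u|_{B} = 0\}$; so by $p$-homogeneity and the definition of $\pmetric_{p}(x, B)$, $\mathcal{E}_{p}(g) = \pmetric_{p}(x, B)^{-(p-1)}$. Applying Lipschitz contractivity (Proposition \ref{prop.intro-GClist}(b)) with the contraction $t \mapsto (t \vee 0) \wedge 1$ and invoking uniqueness of the minimizer yields $g \in [0,1]$, so $\hat h(y) = 1 - g(y) \in [0,1]$. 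Rewriting the desired bound as
\begin{equation*}
    \bigl(1 - g(y)\bigr)^{p-1} \mathcal{E}_{p}(v_{xy}) \le \mathcal{E}_{p}(g),
\end{equation*}
where $v_{xy}$ denotes the unique minimizer (modulo $\mathbb{R}\indicator{X}$) of $\mathcal{E}_{p}$ over $\{u \in \mathcal{F}_{p} : u(x) = 1,\, u(y) = 0\}$ satisfying $\mathcal{E}_{p}(v_{xy}) = R_{\mathcal{E}_{p}}(x, y)^{-1}$, reduces the problem to an estimate involving the derivatives $\mathcal{E}_{p}(\,\cdot\,;\,\cdot\,)$ from Theorem \ref{thm.intro-EpCp}.

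To establish this key inequality, I would exploit the Euler--Lagrange characterizations of both minimizers. Since $v_{xy}$ minimizes under two point constraints, Theorem \ref{thm.intro-EpCp}(b)--(c) and $p$-homogeneity give $\mathcal{E}_{p}(v_{xy}; \phi) = \mathcal{E}_{p}(v_{xy})(\phi(x) - \phi(y))$ for every $\phi \in \mathcal{F}_{p}$; specializing to $\phi = g$ yields $\mathcal{E}_{p}(v_{xy}; g) = \mathcal{E}_{p}(v_{xy})(1 - g(y))$. From the Euler--Lagrange equation for $g$ one analogously writes $\mathcal{E}_{p}(g; \phi) = \mathcal{E}_{p}(g)\phi(x) + \sum_{b \in B}\alpha_{b}\phi(b)$ for Lagrange multipliers $\alpha_{b}$, which are non-positive by strict convexity (Proposition \ref{prop.intro-GClist}(a)) combined with monotone comparison, while $v_{xy} \in [0,1]$ by another Lipschitz-contractivity-plus-uniqueness argument; this produces $\mathcal{E}_{p}(g; v_{xy}) \le \mathcal{E}_{p}(g)$. \textbf{The main obstacle} is bridging the asymmetric pairings $\mathcal{E}_{p}(g; v_{xy})$ and $\mathcal{E}_{p}(v_{xy}; g)$: for $p=2$ the symmetry of the Dirichlet form identifies them and closes the argument at once, but for $p \neq 2$ a finer perturbation argument is needed, exploiting the H\"older regularity of $\mathcal{E}_{p}(\,\cdot\,;\,\cdot\,)$ from Theorem \ref{thm.intro-EpCp}\ref{it:intro.localHol} along a one-parameter family $g - t\,v_{xy}$ to obtain the sharp exponent $p-1$ rather than $p$.

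For the second assertion, set $m := \inf_{B} h$ and $M := \sup_{B} h$; if $M = m$, uniqueness of the harmonic extension forces $h$ to be constant on $X$ and the claim is trivial, so assume $M > m$. Applying the weak comparison principle \eqref{e:intro-cp} to $h$ against the constant functions $m$ and $M$ gives $m \le h \le M$ on $X$, and in particular $h(x) \in [m, M]$. Now let $h^{M}$, $h^{m}$ denote the $\mathcal{E}_{p}$-harmonic extensions from $B \cup \{x\}$ taking the value $h(x)$ at $x$ and the constant $M$, respectively $m$, on $B$; by additive and multiplicative invariance of the harmonic extension operator (which follow from the uniqueness in Proposition \ref{prop.intro-unique} combined with $p$-homogeneity), one has $h^{M} = h(x) + (M - h(x))\hat h$ and $h^{m} = h(x) + (m - h(x))\hat h$. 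Since $h$ is $\mathcal{E}_{p}$-harmonic on $X \setminus B \supseteq X \setminus (B \cup \{x\})$, the comparison principle yields $h^{m} \le h \le h^{M}$ on $X$; subtracting $h(x)$ and using $\hat h \ge 0$ together with the bound $\max(|M - h(x)|, |m - h(x)|) \le M - m$ gives
\begin{equation*}
    |h(y) - h(x)| \le (M - m)\,\hat h(y) \le (M - m)\,\frac{\pmetric_{p}(x, y)}{\pmetric_{p}(x, B)}
\end{equation*}
by the first assertion, completing the proof.
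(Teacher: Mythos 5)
Your treatment of the second assertion is correct and essentially the paper's argument: comparison of $h$ against the two harmonic extensions $h(x)+(M-h(x))\hat h$ and $h(x)+(m-h(x))\hat h$ via the weak comparison principle, then the first assertion. The problem is the first assertion, where your argument has a genuine gap at exactly the step you flag as "the main obstacle." You correctly derive $\mathcal{E}_{p}(v_{xy};g)=\mathcal{E}_{p}(v_{xy})(1-g(y))$ and (modulo the tacit assumption that $B$ is finite in writing $\mathcal{E}_{p}(g;\phi)=\mathcal{E}_{p}(g)\phi(x)+\sum_{b\in B}\alpha_{b}\phi(b)$, and the unproved sign condition $\alpha_{b}\le 0$) the bound $\mathcal{E}_{p}(g;v_{xy})\le\mathcal{E}_{p}(g)$, but for $p\neq 2$ these two facts live in different slots of the non-symmetric pairing and cannot be combined. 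The proposed fix --- a perturbation along $g-t\,v_{xy}$ using the H\"{o}lder estimate of Theorem \ref{thm.intro-EpCp}-\ref{it:intro.localHol} --- is not a proof and is unlikely to become one: that estimate has the non-optimal exponent $\alpha_{p}=\frac1p\wedge\frac{p-1}{p}$ and is a continuity bound, whereas what is needed is an exact algebraic comparison producing the sharp exponent $p-1$. (Note also that applying \eqref{bdd.form} to $\mathcal{E}_{p}(v_{xy};g)$ only yields $\hat h(y)\le(R_{\mathcal{E}_{p}}(x,y)/R_{\mathcal{E}_{p}}(x,B))^{1/p}$, which is weaker than the claim.)

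The missing idea is to never flip the slots. The paper keeps the harmonic extension in the \emph{first} argument of $\mathcal{E}_{p}(\,\cdot\,;\,\cdot\,)$ throughout and compares it there using Proposition \ref{prop.mono} (the monotonicity $\mathcal{E}(u_{1};v)\ge\mathcal{E}(u_{2};v)$ whenever $(u_{2}-u_{1})\wedge v=0$, a consequence of strong subadditivity). Concretely, one computes $-\mathcal{E}|_{B\cup\{x\}}(\indicator{B};\indicator{x})=R_{\mathcal{E}_{p}}(x,B)^{-1}$, passes to the trace on $B\cup\{x,y\}$ via \eqref{harm-compat} and Proposition \ref{prop.trace-comp}, and then replaces $\hat h|_{B\cup\{x,y\}}$ by the pointwise-smaller function $\hat h(y)\cdot h_{\{x,y\}}^{\mathcal{E}_{p}}[\indicator{y}]\big|_{B\cup\{x,y\}}$ (they agree at $x$ and at $y$), with test function $\indicator{x}$; Proposition \ref{prop.mono} applies and the first-slot homogeneity $\mathcal{E}(af;g)=\sgn(a)\abs{a}^{p-1}\mathcal{E}(f;g)$ from \eqref{form.basic} then extracts the factor $\hat h(y)^{p-1}$, while the remaining pairing evaluates to $R_{\mathcal{E}_{p}}(x,y)^{-1}$. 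This is what delivers the exponent $\tfrac{1}{p-1}$ without any need to relate $\mathcal{E}_{p}(g;v_{xy})$ to $\mathcal{E}_{p}(v_{xy};g)$; to repair your proof you would need to import this monotonicity step (or an equivalent substitute) rather than the H\"{o}lder continuity of the derivative.
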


Next let us move to applications of our general theory of $p$-resistance forms.
In their forthcoming papers \cite{KS.scp,KS.sing}, the authors will heavily use this theory to make some essential progress in the setting of post-critically finite self-similar structures; see \cite{KS.survey} for a survey of these results described in the setting of the Sierpi\'{n}ski gasket.
In Section \ref{sec.p-walk} of this paper, we shall give another application to strict inequalities for the $p$-walk dimensions of two classes of self-similar fractals, the generalized Sierpi\'{n}ski carpets and the $D$-dimensional level-$l$ Sierpi\'{n}ski gasket (see Figure \ref{fig.fractals}).  
Let $K$ be a generalized Sierpi\'{n}ski carpet or the $D$-dimensional level-$l$ Sierpi\'{n}ski gasket, equip $K$ with the Euclidean metric $d$, let $p \in (1,\infty)$, and assume in the former case that $p$ is strictly greater than the Ahlfors regular conformal dimension of $(K,d)$. 
Then by Theorem \ref{thm.KSgood-ss} in the former case and by Theorem \ref{thm.ANFsymform} in the latter case, we can construct a canonical $p$-resistance form $(\mathcal{E}_{p},\mathcal{F}_{p})$ on $K$. 
To be more precise, let $\{ F_{i} \}_{i \in S}$, with $S$ a suitable non-empty finite set, be the family of contractive similitudes defining $K$, i.e., such that $K = \bigcup_{i \in S}F_{i}(K)$.
Then there exists a $p$-resistance form $(\mathcal{E}_{p},\mathcal{F}_{p})$ on $K$ which satisfies $\mathcal{F}_{p} \subseteq \contfunc(K)$ and the following \emph{self-similarity} for some $\sigma_{p} \in (1,\infty)$(, which we call the \emph{weight} of $(\mathcal{E}_{p},\mathcal{F}_{p})$): 
\begin{equation}\label{intro.ss}
    \mathcal{E}_{p}(u) = \sigma_{p}\sum_{i \in S}\mathcal{E}_{p}(u \circ F_{i}), \quad u \in \mathcal{F}_{p}. 
\end{equation} 
Letting $r_{\ast} \in (0,1)$ denote the common contraction ratio of the similitudes $\{ F_{i} \}_{i \in S}$, we define the \emph{$p$-walk dimension} $\pwalk$ of $K$ by
\[
\pwalk \coloneqq \frac{\log{\bigl((\#S)\sigma_{p}\bigr)}}{\log(r_{\ast}^{-1})},
\]
which coincides with the walk dimension of $K$ if $p = 2$.
As shown in \cite[Theorem 7.1]{MS+}, the value $\pwalk$ shows up as a space-scaling exponent in the following manner:
\[
\mathcal{E}_{p}(u) \asymp \limsup_{r \downarrow 0}\int_{K}\fint_{\abs{x - y} < r}\frac{\abs{u(x) - u(y)}^{p}}{r^{\pwalk}}\,\mu(dy)\,\mu(dx), \quad u \in \mathcal{F}_{p},
\] 
where $\mu$ denotes the $\log(\#S)/\log(r_{\ast}^{-1})$-dimensional Hausdorff measure on $(K,d)$.
In the case of $p = 2$, the strict inequality $d_{\mathrm{w},2} > 2$ has been verified for various self-similar fractals, and has been shown to imply a number of anomalous features of the diffusion associated with $(\mathcal{E}_{2},\mathcal{F}_{2})$; see, e.g., \cite{Kaj23} and the references therein for further details. 
Compared with the case of $p = 2$, the class of self-similar fractals for which $\pwalk > p$ has been proved in \cite[Theorem 2.27]{Shi24} is limited to the \emph{planar} generalized Sierpi\'{n}ski carpets due to the lack of counterparts of many useful tools available in the case of $p = 2$.
As an application of the differentiability in \eqref{intro.diffble}, in Section \ref{sec.p-walk}, we show $\pwalk > p$ for \emph{any} generalized Sierpi\'{n}ski carpet and for the $D$-dimensional level-$l$ Sierpi\'{n}ski gasket with \emph{any} $D,l \in \mathbb{N} \setminus \{1\}$. 
The proof for the former follows closely the argument in \cite{Kaj23}, whereas for the latter we need a different argument from that in \cite{Kaj23}. 

\begin{figure}\centering
	\includegraphics[height=90pt]{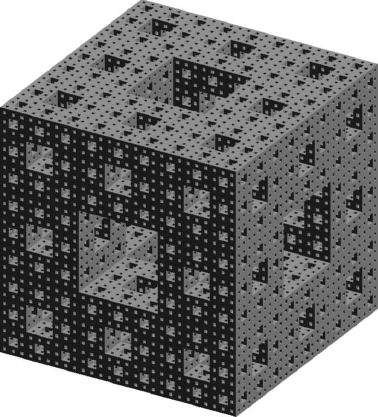}\hspace*{10pt}
	\includegraphics[height=90pt]{fig_SG2_reduced.pdf}\hspace*{10pt}
    \includegraphics[height=90pt]{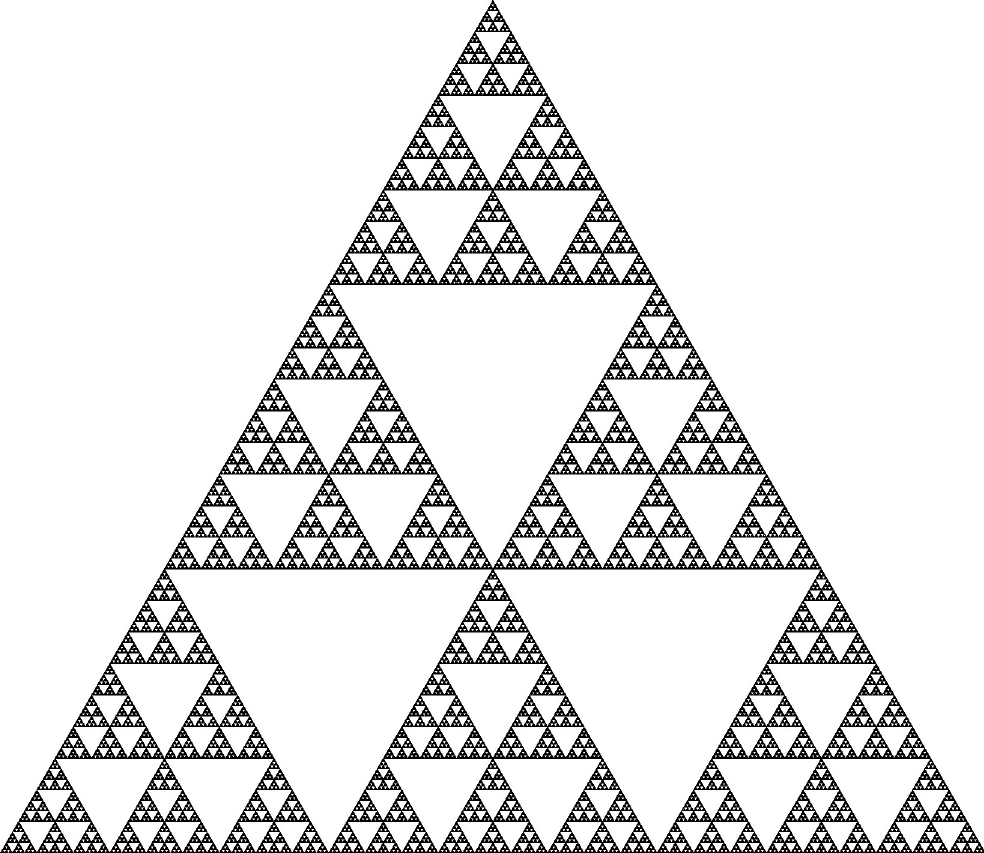}\hspace*{10pt}
    \includegraphics[height=90pt]{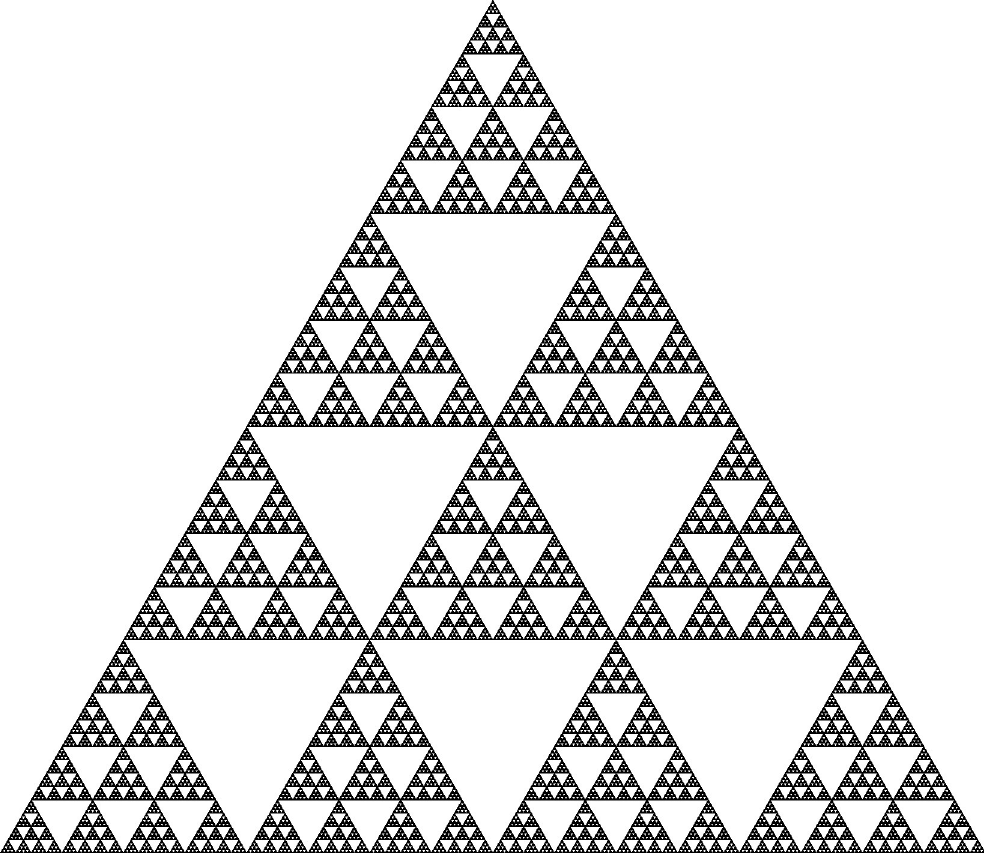}
	\caption{From the left, a non-planar generalized Sierpi\'{n}ski carpet (Menger Sponge) and the $2$-dimensional level-$l$ Sierpi\'{n}ski gaskets ($l = 2,3,4$)}\label{fig.fractals}
\end{figure}

We would also like to mention a geometric role of $\sigma_{p}$ appearing in \eqref{intro.ss}. 
As done in \cite{Kig20,Kig23}, the constant $\sigma_{p}$ is obtained by seeking the behavior of \emph{conductance constants} (\cite[Definition 2.17]{Kig23}) on approximating graphs of $K$; see Theorem \ref{t:pCH} for details.
A remarkable fact is that the behavior of $\sigma_{p}$ as a function of $p$ is deeply related to the \emph{Ahlfors regular conformal dimension} $\dim_{\mathrm{ARC}}(K,d)$ of $(K,d)$ (see Definition \ref{defn.AR}-\ref{it:ARCdim} for its definition); indeed, $\sigma_{p} > 1$ if and only if $p > \dim_{\mathrm{ARC}}(K,d)$ (see, e.g., \cite[Theorem 4.7.6]{Kig20}). 
Therefore, knowing properties of the function $p \mapsto \sigma_{p}$ is very important to understand the Ahlfors regular conformal dimension and related geometric information. 
Nevertheless, we do not know anything other than the following: 
\begin{enumerate}[label=\textup{(\arabic*)},align=left,leftmargin=*,topsep=2pt,parsep=0pt,itemsep=2pt]
	\item [\textup{(Continuity; \cite[Proposition 4.7.5]{Kig20})}] $\sigma_{p}$ is continuous in $p$. 
	\item [\textup{(Simple monotonicity; \cite[Proposition 4.7.5]{Kig20})}] $\sigma_{p}$ is non-decreasing in $p$.
	\item [\textup{(H\"{o}lder-type monotonicity; \cite[Lemma 4.7.4]{Kig20})}] $\pwalk/p$ is non-increasing in $p$.
	\item [\textup{(Relation with $\dim_{\mathrm{ARC}}$; \cite[Theorem 4.7.6]{Kig20})}] $\sigma_{p} > 1$ if and only if $p > \dim_{\mathrm{ARC}}(K,d)$.
\end{enumerate}
As yet another application of our theory of $p$-resistance forms, we prove in Theorems \ref{t:KS-mono} and \ref{t:KS-mono.pcf} the following new monotonicity behavior of $\sigma_{p}$ (in suitably general settings including any generalized Sierpi\'{n}ski carpet and the $D$-dimensional level-$l$ Sierpi\'{n}ski gasket with any $D,l \in \mathbb{N} \setminus \{1\}$)\footnote{It is essentially known to experts that $\dim_{\mathrm{ARC}}(K,d) = 1$ for the $D$-dimensional level-$l$ Sierpi\'{n}ski gasket $K$ equipped with the Euclidean metric $d$. In Theorem \ref{thm.dARC-ANF}, we give a new proof of this fact, based on the existence of self-similar $p$-resistance forms proved in Theorem \ref{thm.eigenform-ANF} as an extension of \cite[Theorem 6.3]{CGQ22}, for a large class of post-critically finite self-similar sets with good geometric symmetry; see Subsection \ref{sec.confdimANF} for details and relevant results in the literature.}: 
\begin{equation}\label{intro.newmono}
	\text{$(\dim_{\mathrm{ARC}}(K,d),\infty) \ni p \mapsto \sigma_{p}^{1/(p - 1)} \in (0,\infty)$ is non-decreasing,} 
\end{equation}
which is good evidence that properties of $p \mapsto \sigma_{p}^{1/(p - 1)}$ are also important to deepen our understanding of $(\mathcal{E}_{p},\mathcal{F}_{p})$ and, possibly, of $\dim_{\mathrm{ARC}}(K,d)$. 

Let us conclude this introduction by mentioning a significant difference between our theory and some recent results \cite{BBR23+,Kuw23+} on $p$-energy forms based on strongly local regular symmetric Dirichlet forms. (Similar $p$-energy forms were considered earlier in \cite[Remark 6.1]{HRT13}.) 
In the settings of \cite{BBR23+,Kuw23+}, the associated $p$-energy measure $\Gamma_{p}^{\mathrm{DF}}\langle u \rangle$ can be explicitly defined by using the ``density'' which plays the role of ``$\abs{\nabla u}$'' and is independent of $p$ (see Example \ref{ex.em}-\ref{exam.DF-em}), whereas it is almost impossible to find a priori such a density on fractals. 
Meanwhile, we can naturally define the \emph{self-similar $p$-energy measure} $\Gamma_{p}\langle u \rangle$ of $u$ by using \eqref{intro.ss}; see Section \ref{sec.ss} for details. (See also \cite{KS.lim} for $p$-energy measures associated with Korevaar--Schoen $p$-energy forms.)
In \cite{KS.sing}, the authors will show that $\Gamma_{p}\langle u_p \rangle$ and $\Gamma_{q}\langle u_q \rangle$ are mutually singular for any $p,q \in (1,\infty)$ with $p \neq q$ and any $(u_{p},u_{q}) \in \mathcal{F}_{p} \times \mathcal{F}_{q}$ for a certain class of post-critically finite self-similar sets including the $D$-dimensional level-$l$ Sierpi\'{n}ski gasket with any $D,l \in \mathbb{N} \setminus \{1\}$, by proving that $(1,\infty) \ni p \mapsto \sigma_{p}^{1/(p - 1)}$ is \emph{strictly} increasing. 
This phenomenon on the singularity of energy measures \emph{never} happens if we consider the energy measures $\Gamma_{p}^{\mathrm{DF}}\langle \,\cdot\, \rangle, \Gamma_{q}^{\mathrm{DF}}\langle \,\cdot\, \rangle$ that naturally show up in the settings of \cite{BBR23+,Kuw23+}. 
This point also motivates us to develop a general theory of $p$-energy forms in an abstract setting in order to deal with fractals. 

This paper is organized as follows.  
In Section \ref{sec.GC}, we collect basic results on the generalized $p$-contraction property \ref{intro.GC}. 
In Section \ref{sec.diffble}, we prove the differentiability of $p$-energy forms satisfying $p$-Clarkson's inequality (Theorem \ref{thm.intro-EpCp}). 
Moreover, we see that the (Fr\'{e}chet) derivative in \eqref{intro.diffble} gives a homeomorphism between $\mathcal{F}_{p}/\mathcal{E}_{p}^{-1}(0)$ and its dual. 
We also discuss regular and strong local properties of $p$-energy forms there.
In Section \ref{sec.pEM}, under the assumption of the existence of $p$-energy measures, we discuss their fundamental properties (Theorem \ref{thm.intro-emCp} for example). 
We also formulate a chain rule for $p$-energy measures and observe some consequences of it.  
In Section \ref{sec.ss}, we recall standard notions on self-similar structures, discuss the self-similarity of $p$-energy forms and see that we can associate self-similar $p$-energy measures to a given self-similar $p$-energy form.
Section \ref{sec.p-harm} is devoted to the study of fundamental nonlinear potential theory for $p$-resistance forms, most of which are mentioned in the introduction (see Theorems \ref{thm.intro-pRM}, \ref{thm.intro-trace}, \ref{thm.intro-holder}, Proposition \ref{prop.intro-unique}, \eqref{e:intro-cp} and \eqref{e:intro-EHI}). 
We further investigate the theory of $p$-resistance forms in the self-similar case in Section \ref{sec.compatible}. 
In particular, we establish a Poincar\'{e}-type inequality in terms of self-similar $p$-energy measures under some geometric assumptions on the $p$-resistance metric. 
In Section \ref{sec.constr}, the generalized $p$-contraction property \ref{intro.GC} is verified for the $p$-energy/$p$-resistance forms constructed in \cite{CGQ22,Kig23}. 
More precisely, in Subsections \ref{sec.Kig} and \ref{sec.Kigss}, we recall the notion of $p$-conductively homogeneous compact metric space and the construction of $p$-energy forms due to \cite{Kig23} and prove \ref{intro.GC} for them.
In Subsection \ref{sec.pcf}, we focus on the case of post-critically finite self-similar structures and show that the \emph{eigenforms} constructed in \cite{CGQ22} are indeed $p$-resistance forms. 
In Subsection \ref{sec.ANF}, we prove the existence of eigenforms for a large class of post-critically finite self-similar sets with good geometric symmetry (Theorem \ref{thm.eigenform-ANF}), extending \cite[Theorem 6.3]{CGQ22} by following the framework of \cite[Theorem 3.8.10]{Kig01}. 
In Section \ref{sec.p-walk}, we prove $\pwalk > p$ for the generalized Sierpi\'{n}ski carpets and the $D$-dimensional level-$l$ Sierpi\'{n}ski gasket by using properties of $p$-harmonic functions established in Section \ref{sec.p-harm}. 
In Appendix \ref{sec.bilinear}, we show that \hyperref[intro.GC]{(GC)$_{2}$} holds for any symmetric Dirichlet form, the ($2$-)energy measures associated with any regular symmetric Dirichlet form, and their densities, and that, under the strong locality, \ref{GC} holds for the $p$-energy form defined as the integral of the $\frac{p}{2}$-th power of those densities, which has recently been studied by Kuwae \cite{Kuw23+}.
Lastly, in Appendix \ref{sec:pcfcollect} we collect some miscellaneous results related to self-similar $p$-resistance forms on post-critically finite self-similar structures. 

\begin{notation}\label{notation.intro}
    Throughout this paper, we use the following notation and conventions.
    \begin{enumerate}[label=\textup{(\arabic*)},align=left,leftmargin=*,widest=99,topsep=2pt,parsep=0pt,itemsep=2pt]\addtocounter{enumi}{-1}
        \item\label{it:N-nonzero} $\mathbb{N} \coloneqq \{ n \in \mathbb{Z} \mid n > 0\}$, i.e., $0\not\in\mathbb{N}$. 
		\item\label{it:ineq-up-to-const} For $[0,\infty]$-valued quantities $A$ and $B$, we write $A \lesssim B$ to mean that there exists an implicit constant $C \in (0,\infty)$ depending on some unimportant parameters such that $A \leq CB$. We write $A \asymp B$ if $A \lesssim B$ and $B \lesssim A$.
	    \item\label{it:cardinality} For a set $A$, we let $\#A \in \mathbb{N} \cup \{ 0,\infty \}$ denote the cardinality of $A$. 
	    \item\label{it:supinf} We set $\sup\emptyset \coloneqq 0$, $\inf\emptyset \coloneqq \infty$, $a/0 \coloneqq \infty$ for $a \in (0,\infty]$ and $0^{0} \coloneqq 1$. We write $a \vee b \coloneqq \max\{ a, b \}$, $a \wedge b \coloneqq \min\{ a, b \}$ and $a^{+} \coloneqq a \vee 0$ for $a, b \in [-\infty,\infty]$, and we use the same notation also for $[-\infty,\infty]$-valued functions and equivalence classes of them. All numerical functions in this paper are assumed to be $[-\infty,\infty]$-valued.
	    \item\label{it:sgn} We define $\sgn\colon\mathbb{R}\to\mathbb{R}$ by $\sgn(a) \coloneqq \abs{a}^{-1}a$ for $a \in \mathbb{R} \setminus \{ 0 \}$ and $\sgn(0) \coloneqq 0$. 
		\item\label{it:ellp-norm} Let $n\in\mathbb{N}$. For $x=(x_{k})_{k=1}^{n}\in\mathbb{R}^{n}$, we set $\norm{x}_{\ell^{p}_{n}} \coloneqq \norm{x}_{\ell^{p}} \coloneqq (\sum_{k=1}^{n}\lvert x_{k}\rvert^{p})^{1/p}$ for $p\in(0,\infty)$, $\norm{x}_{\ell^{\infty}_{n}} \coloneqq \norm{x}_{\ell^{\infty}} \coloneqq \max_{1\leq k\leq n}\abs{x_{k}}$ and $\abs{x} \coloneqq \norm{x}_{\ell^{2}}$. For $\Phi\colon\mathbb{R}^{n}\to\mathbb{R}$ which is differentiable on $\mathbb{R}^{n}$ and for $k\in\{1,\ldots,n\}$, its first-order partial derivative in the $k$-th coordinate is denoted by $\partial_{k}\Phi$ and its gradient is denoted by $\nabla\Phi \coloneqq (\partial_{k}\Phi)_{k = 1}^{n}$. 
		\item\label{it:id-indicator} Let $X$ be a non-empty set. We define $\id_{X}\colon X \to X$ by $\id_{X}(x) \coloneqq x$, $\indicator{A}=\indicator{A}^{X} \in \mathbb{R}^{X}$ for $A \subseteq X$ by $\indicator{A}(x)\coloneqq\indicator{A}^{X}(x)\coloneqq \begin{cases} 1 \quad &\text{if $x \in A$,} \\ 0 \quad &\text{if $x \not\in A$,} \end{cases}$ and set $\norm{u}_{\sup} \coloneqq \norm{u}_{\sup,X} \coloneqq \sup_{x \in X}\abs{u(x)}$ for $u \colon X \to [-\infty,\infty]$ and $\osc_{X}[u] \coloneqq \sup_{x,y\in X}\abs{u(x) - u(y)}$ for $u \colon X \to \mathbb{R}$. We also let $\mathbb{R}\indicator{X} \coloneqq \{a\indicator{X} \mid a \in \mathbb{R}\}$ denote the set of $\mathbb{R}$-valued constant functions on $X$.
		\item\label{it:Borel-contfunc} Let $X$ be a topological space. The Borel $\sigma$-algebra of $X$ is denoted by $\mathcal{B}(X)$, the closure of $A \subseteq X$ in $X$ by $\closure{A}^{X}$, and we say that $A \subseteq X$ is \emph{relatively compact}\index{relatively compact} in $X$ if and only if $\closure{A}^{X}$ is compact. We set $\contfunc(X) \coloneqq \{ u \in \mathbb{R}^{X} \mid \text{$u$ is continuous} \}$, $\supp_{X}[u] \coloneqq \closure{X \setminus u^{-1}(0)}^{X}$ for $u \in \contfunc(X)$, $\contfunc_{b}(X) \coloneqq \{ u \in \contfunc(X) \mid \norm{u}_{\sup} < \infty \}$, and $\contfunc_{c}(X) \coloneqq \{ u \in \contfunc(X) \mid \text{$\supp_{X}[u]$ is compact} \}$. 
		\item\label{it:support} Let $X$ be a topological space having a countable open base. For a measure $m$ on a $\sigma$-algebra $\mathcal{B}$ in $X$ including $\mathcal{B}(X)$, we let $\supp_{X}[m]$ denote the support of $m$ in $X$, i.e., the smallest closed subset $F$ of $X$ such that $m(X \setminus F) = 0$, and set $\supp_{m}[f] \coloneqq \supp_{X}[ \abs{f}\,dm ]$ for a $\mathcal{B}$-measurable function $f \colon X \to [-\infty,\infty]$ or an $m$-equivalence class $f$ of such functions. 
        \item\label{it:metric-sp} Let $(X,d)$ be a metric space. We set $B_{d}(x,r) \coloneqq \{ y \in X \mid d(x,y) < r \}$ for $(x,r) \in X \times (0,\infty)$, and $\diam(A,d) \coloneqq \sup_{x,y \in A}d(x,y)$ and $\dist_{d}(A,B) \coloneqq \inf\{ d(x,y) \mid x \in A, y \in B \}$ for subsets $A,B$ of $X$.  
        \item\label{it:measure-sp} Let $(X,\mathcal{B},m)$ be a measure space. We set $\fint_{A}f\,dm \coloneqq \frac{1}{m(A)}\int_{A}f\,dm$ for $f \in L^{1}(X,m)$ and $A \in \mathcal{B}$ with $m(A) \in (0,\infty)$, and set $m|_{A} \coloneqq m|_{\mathcal{B}|_{A}}$ for $A \in \mathcal{B}$, where $\mathcal{B}|_{A} \coloneqq \{ B \cap A \mid B \in \mathcal{B} \}$. For a measure $\mu$ on $(X,\mathcal{B})$, we write $\mu \ll m$ to mean that $\mu$ is absolutely continuous with respect to $m$. 
    \end{enumerate}
\end{notation}

\section{The generalized \texorpdfstring{$p$}{p}-contraction property}\label{sec.GC}
In this section, we will introduce the generalized $p$-contraction property and establish basic results on this property. 
Throughout this section, we fix $p \in (1,\infty)$, a measure space $(X,\mathcal{B},m)$, a linear subspace $\mathcal{F}$ of $L^{0}(X,m) \coloneqq L^{0}(X,\mathcal{B},m)$, where
\begin{equation}\label{L0-dfn}
L^{0}(X,\mathcal{B},m) \coloneqq \{ \text{the $m$-equivalence class of $f$} \mid \text{$f \colon X \to \mathbb{R}$, $f$ is $\mathcal{B}$-measurable} \},
\end{equation}
and a functional $\mathcal{E} \colon \mathcal{F} \to [0, \infty)$ which is \emph{$p$-homogeneous}, i.e., satisfies $\mathcal{E}(au) = \abs{a}^{p}\mathcal{E}(u)$ for any $(a,u) \in \mathbb{R}\times \mathcal{F}$. 
\begin{rmk}\label{rmk:wo-measure} 
	Note that the pair $(\mathcal{B},m)$ is arbitrary. 
	For example, $(\mathcal{B},m)$ could be the pair of $2^{X} = \{ A \mid A \subseteq X \}$ and the counting measure on $X$, in which case $L^{0}(X,\mathcal{B},m) = \mathbb{R}^{X}$.
	We will make this choice of $(\mathcal{B},m)$ later in Section \ref{sec.p-harm}.
\end{rmk}

Let us introduce the generalized $p$-contraction property as arguably the strongest possible form of contraction properties of $p$-energy forms in the next Definition \ref{defn.GC}. We refer to the discussion preceding Definition \ref{defn.GC.intro} in Section \ref{sec:intro} for motivations for introducing this property. Some examples of $p$-energy forms satisfying it are presented later in Example \ref{ex.Rn}. 

\begin{defn}[Generalized $p$-contraction property]\label{defn.GC}
    The pair $(\mathcal{E},\mathcal{F})$ is said to satisfy the \emph{generalized $p$-contraction property}\index{generalized $p$-contraction property (for $p$-energy form)}, \ref{GC} for short, if and only if the following hold: if $n_{1},n_{2} \in \mathbb{N}$, $q_{1} \in (0,p]$, $q_{2} \in [p,\infty]$ and $T = (T_{1},\dots,T_{n_{2}}) \colon \mathbb{R}^{n_{1}} \to \mathbb{R}^{n_{2}}$ satisfy
    \begin{equation}\label{GC-cond}
        T(0) = 0 \quad \text{and} \quad \norm{T(x)-T(y)}_{\ell^{q_{2}}}
        \le \norm{x - y}_{\ell^{q_{1}}} \quad \text{for any $x, y \in \mathbb{R}^{n_{1}}$,}
    \end{equation}
    then for any $\bm{u} = (u_{1},\dots,u_{n_{1}}) \in \mathcal{F}^{n_{1}}$ we have
    \begin{equation}\label{GC}
        T(\bm{u}) \in \mathcal{F}^{n_{2}} \quad \text{and} \quad
        \norm{\bigl(\mathcal{E}(T_{l}(\bm{u}))^{1/p}\bigr)_{l = 1}^{n_{2}}}_{\ell^{q_{2}}} \le \norm{\bigl(\mathcal{E}(u_{k})^{1/p}\bigr)_{k = 1}^{n_{1}}}_{\ell^{q_{1}}}. \tag*{\textup{(GC)$_{p}$}}
    \end{equation}
\end{defn}

As observed in the following proposition, \ref{GC} includes various known inequalities.
\begin{prop}\label{prop.GC-list}
    Assume that $(\mathcal{E},\mathcal{F})$ satisfies \ref{GC}.
    Let $\varphi \in \contfunc(\mathbb{R})$ satisfy $\varphi(0) = 0$ and $\abs{\varphi(t) - \varphi(s)} \le \abs{t - s}$ for any $s,t \in \mathbb{R}$.\footnote{Note that any such $\varphi$ satisfies $\abs{\varphi \circ f} \leq \abs{f}$ on $X$ for any $f \colon X \to \mathbb{R}$ and hence $\varphi \circ f \in L^{p}(X,m)$ for any $f \in L^{p}(X,m)$.} 
    \begin{enumerate}[label=\textup{(\alph*)},align=left,leftmargin=*,topsep=2pt,parsep=0pt,itemsep=2pt]
        \item\label{GC.tri} $T(x,y) \coloneqq x + y$, $x,y \in \mathbb{R}$, satisfies \eqref{GC-cond} in Definition \ref{defn.GC} with $(q_1,q_2,n_1,n_2) = (1,p,2,1)$. In particular, $\mathcal{E}^{1/p}$ is a seminorm on $\mathcal{F}$, and $\mathcal{E}$ is strictly convex\index{strictly convex} on $\mathcal{F}/\mathcal{E}^{-1}(0)$, i.e., for any $\lambda \in (0,1)$ and any $f,g \in \mathcal{F}$, if $\mathcal{E}(f) \wedge \mathcal{E}(g) \wedge \mathcal{E}(f-g) > 0$, then
        	\begin{equation}\label{e:strictconvex}
        		\mathcal{E}(\lambda f + (1 - \lambda)g) < \lambda\mathcal{E}(f) + (1 - \lambda)\mathcal{E}(g). 
        	\end{equation}
        \item\label{GC.lip} $T \coloneqq \varphi$ satisfies \eqref{GC-cond} with $(q_1,q_2,n_1,n_2) = (1,p,1,1)$.
        In particular, 
        \begin{equation}\label{lipcont}
        	\text{for any $\varphi$ as assumed above, $\varphi(u) \in \mathcal{F}$ and $\mathcal{E}(\varphi(u)) \le \mathcal{E}(u)$ for any $u \in \mathcal{F}$.}
        \end{equation} 
        \item\label{GC.markov} Assume that $\varphi$ is non-decreasing. Define $T = (T_1,T_2) \colon \mathbb{R}^{2} \to \mathbb{R}^{2}$ by
        \[
        T_{1}(x_1,x_2) = x_1 - \varphi(x_1 - x_2) \quad \text{and} \quad T_{2}(x_1,x_2) = x_2 + \varphi(x_1 - x_2), \quad (x_1,x_2) \in \mathbb{R}^{2}.
        \]
        Then $T$ satisfies \eqref{GC-cond} with $(q_1,q_2,n_1,n_2) = (p,p,2,2)$.
        In particular, 
        \begin{equation}\label{markov-nonDF}
            \mathcal{E}\bigl(f - \varphi(f - g)\bigr) + \mathcal{E}\bigl(g + \varphi(f - g)\bigr) \le \mathcal{E}(f) + \mathcal{E}(g) \quad \text{for any $f,g \in \mathcal{F}$.}
        \end{equation}
        In particular, by considering the case of $\varphi(x) = x^{+}$, we have the following \emph{strong subadditivity}\index{strong subadditivity}: for any $f,g \in \mathcal{F}$, $f \vee g, f \wedge g \in \mathcal{F}$ and
        \begin{equation}\label{sadd}
            \mathcal{E}(f \vee g) + \mathcal{E}(f \wedge g) \le \mathcal{E}(f) + \mathcal{E}(g).
        \end{equation} 
        \item\label{GC.leibniz} Let $a_{1},a_{2} \in (0,\infty)$ and define $T^{a_{1},a_{2}} \colon \mathbb{R}^{2} \to \mathbb{R}$ by
        \[
        T^{a_{1},a_{2}}(x_{1},x_{2}) \coloneqq \bigl( (-a_{1}) \vee ((a_{2}^{-1}x_{1}) \wedge a_{1}) \bigr) \cdot \bigl( (-a_{2}) \vee ( (a_{1}^{-1}x_{2}) \wedge a_{2} ) \bigr), \quad (x_{1},x_{2}) \in \mathbb{R}^{2}.
        \]
        Then $T^{a_1,a_2}$ satisfies \eqref{GC-cond} with $(q_1,q_2,n_1,n_2) = (1,p,2,1)$.
        In particular, for any $f,g \in \mathcal{F} \cap L^{\infty}(X,m)$ we have
        \begin{equation}\label{leibniz}
            f \cdot g \in \mathcal{F} \quad \text{and} \quad \mathcal{E}(f \cdot g)^{1/p} \le \norm{g}_{L^{\infty}(X,m)}\mathcal{E}(f)^{1/p} + \norm{f}_{L^{\infty}(X,m)}\mathcal{E}(g)^{1/p}. 
        \end{equation} 
        \item \label{GC.Cpsmall} Assume that $p \in (1,2]$. Define $T = (T_1,T_2) \colon \mathbb{R}^{2} \to \mathbb{R}^{2}$ by
        \[
        T_{1}(x_1,x_2) = 2^{-(p - 1)/p}(x_1 + x_2) \quad \text{and} \quad T_{2}(x_1,x_2) = 2^{-(p - 1)/p}(x_1 - x_2), \quad (x_1,x_2) \in \mathbb{R}^{2}.
        \]
        Then $T$ satisfies \eqref{GC-cond} with $(q_1,q_2,n_1,n_2) = (p/(p - 1),p,2,2)$.
        In particular, $(\mathcal{E}, \mathcal{F})$ satisfies the following $p$-Clarkson's inequality:
        \begin{equation}\label{Cp.small}
            \mathcal{E}(f+g) + \mathcal{E}(f-g)
            \ge 2\bigl(\mathcal{E}(f)^{1/(p - 1)} + \mathcal{E}(g)^{1/(p - 1)}\bigr)^{p - 1} \quad \text{for any $f,g \in \mathcal{F}$.} 
        \end{equation}
        \item \label{GC.Cplarge} Assume that $p \in [2,\infty)$. Define $T = (T_1,T_2) \colon \mathbb{R}^{2} \to \mathbb{R}^{2}$ by
        \[
        T_{1}(x_1,x_2) = 2^{-1/p}(x_1 + x_2) \quad \text{and} \quad T_{2}(x_1,x_2) = 2^{-1/p}(x_1 - x_2), \quad (x_1,x_2) \in \mathbb{R}^{2}.
        \]
        Then $T$ satisfies \eqref{GC-cond} with $(q_1,q_2,n_1,n_2) = (p,p/(p - 1),2,2)$.
        In particular, $(\mathcal{E}, \mathcal{F})$ satisfies the following $p$-Clarkson's inequality:
        \begin{equation}\label{Cp.large}
            \mathcal{E}(f+g) + \mathcal{E}(f-g)
            \le 2\bigl(\mathcal{E}(f)^{1/(p - 1)} + \mathcal{E}(g)^{1/(p - 1)}\bigr)^{p - 1} \quad \text{for any $f,g \in \mathcal{F}$.} 
        \end{equation}
    \end{enumerate} 
\end{prop}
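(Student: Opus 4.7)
The plan is to treat each of the items (a)--(f) by the same two-step template: first identify the pointwise map $T = (T_{l})_{l=1}^{n_{2}} \colon \mathbb{R}^{n_{1}} \to \mathbb{R}^{n_{2}}$ prescribed in the statement and verify by a scalar calculation that it satisfies the hypothesis \eqref{GC-cond} with the indicated $(q_{1},q_{2},n_{1},n_{2})$; then invoke \ref{GC} together with the $p$-homogeneity of $\mathcal{E}$ to read off the stated $\mathcal{E}$-level inequality.

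For (a), the scalar verification $\abs{(x+y) - (x'+y')} \le \abs{x - x'} + \abs{y - y'}$ is the triangle inequality in $\mathbb{R}$, yielding the triangle inequality for $\mathcal{E}^{1/p}$ and hence, with $p$-homogeneity, the seminorm property. The strict convexity \eqref{e:strictconvex} requires more: when $\mathcal{E}(f) \neq \mathcal{E}(g)$ it follows by combining the seminorm triangle with strict convexity of $x \mapsto x^{p}$ on $[0,\infty)$; when $\mathcal{E}(f) = \mathcal{E}(g) = E$ and $\mathcal{E}(f-g) > 0$ it reduces to showing a strict triangle $\mathcal{E}^{1/p}(\tfrac{f+g}{2}) < E^{1/p}$, which I would derive from $p$-Clarkson. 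For $p \in [2,\infty)$ this is direct from \eqref{Cp.large}, which with $\mathcal{E}(f) = \mathcal{E}(g) = E$ gives $\mathcal{E}(f+g) + \mathcal{E}(f-g) \le 2^{p}E$ and hence $\mathcal{E}(f+g) < 2^{p}E$; for $p \in (1,2]$ it follows from the dual form $\mathcal{E}(f+g)^{1/(p-1)} + \mathcal{E}(f-g)^{1/(p-1)} \le 2(\mathcal{E}(f) + \mathcal{E}(g))^{1/(p-1)}$, obtained by substituting $(f,g) \mapsto (\tfrac{f+g}{2}, \tfrac{f-g}{2})$ into \eqref{Cp.small}. This handles $\lambda = 1/2$, and the extension to arbitrary $\lambda \in (0,1)$ follows from the standard fact that a convex function on $[0,1]$ which coincides with its chord at one interior point must be affine on the whole interval, contradicting strict midpoint convexity.

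For (b), \eqref{GC-cond} with $(q_{1},q_{2}) = (1,p)$ is immediate from the hypothesis on $\varphi$. For (c), setting $a := x_{1}-y_{1}$, $b := x_{2}-y_{2}$, and $c := \varphi(x_{1}-x_{2}) - \varphi(y_{1}-y_{2})$, the monotone 1-Lipschitz assumptions on $\varphi$ guarantee that $c$ lies between $0$ and $a - b$; the pointwise estimate $\abs{a-c}^{p} + \abs{b+c}^{p} \le \abs{a}^{p} + \abs{b}^{p}$ then follows because the transformation $(a,b) \mapsto (a-c, b+c)$ preserves the sum while (weakly) contracting the difference, and the even convex function $u \mapsto \abs{\tfrac{a+b}{2} + u}^{p} + \abs{\tfrac{a+b}{2} - u}^{p}$ is nondecreasing in $\abs{u}$. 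For (d), the rescaled truncations $\tilde{x}_{i} := \bigl[(-a_{i}) \vee a_{3-i}^{-1}x_{i}\bigr] \wedge a_{i}$ satisfy $\abs{\tilde{x}_{i}} \le a_{i}$ and $\abs{\tilde{x}_{i} - \tilde{y}_{i}} \le a_{3-i}^{-1}\abs{x_{i} - y_{i}}$, and the product identity $\tilde{x}_{1}\tilde{x}_{2} - \tilde{y}_{1}\tilde{y}_{2} = (\tilde{x}_{1} - \tilde{y}_{1})\tilde{x}_{2} + \tilde{y}_{1}(\tilde{x}_{2} - \tilde{y}_{2})$ then yields \eqref{GC-cond} with $(q_{1},q_{2}) = (1,p)$. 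To derive the Leibniz inequality \eqref{leibniz}, one chooses $a_{1}a_{2} \ge \norm{f}_{L^{\infty}(X,m)} \vee \norm{g}_{L^{\infty}(X,m)}$ so that the inner truncations act trivially and $T^{a_{1},a_{2}}(f,g) = (a_{1}a_{2})^{-1}fg$, applies \ref{GC} to $(\lambda f, g)$ for $\lambda > 0$, and optimizes; the choice $\lambda = \norm{g}_{L^{\infty}(X,m)}/\norm{f}_{L^{\infty}(X,m)}$ with $a_{1}a_{2} = \norm{g}_{L^{\infty}(X,m)}$ produces the asymmetric bound $\mathcal{E}(fg)^{1/p} \le \norm{g}_{L^{\infty}(X,m)}\mathcal{E}(f)^{1/p} + \norm{f}_{L^{\infty}(X,m)}\mathcal{E}(g)^{1/p}$. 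Finally, (e) and (f) reduce to the classical scalar Clarkson inequalities $\abs{x_{1}+x_{2}}^{p} + \abs{x_{1}-x_{2}}^{p} \le 2^{p-1}(\abs{x_{1}}^{p/(p-1)} + \abs{x_{2}}^{p/(p-1)})^{p-1}$ for $p \in (1,2]$ and $\abs{x_{1}+x_{2}}^{p} + \abs{x_{1}-x_{2}}^{p} \le 2^{p-1}(\abs{x_{1}}^{p} + \abs{x_{2}}^{p})$ for $p \in [2,\infty)$, which are precisely \eqref{GC-cond} for the stated $T$ and $(q_{1},q_{2})$.

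The main obstacle is the strict convexity in (a): unlike each of the other ``inequality'' statements, which reduce to verifying a single scalar pointwise estimate and then invoking \ref{GC}, strict convexity in the case $\mathcal{E}(f) = \mathcal{E}(g)$ requires a strict triangle inequality for the seminorm, and that forces one to invoke $p$-Clarkson (itself established only in parts (e)/(f)) together with the midpoint-to-$\lambda$ reduction. A secondary delicate point is the pointwise estimate in (c), which depends crucially on the monotonicity of $\varphi$ to locate $c$ between $0$ and $a-b$ and on the rearrangement interpretation of the map $(a,b) \mapsto (a-c, b+c)$.
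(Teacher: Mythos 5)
Your treatment of (a)--(d) is sound, and in two places it diverges usefully from the paper. For (a), the paper's own argument derives \eqref{e:strictconvex} from the triangle inequality for $\mathcal{E}^{1/p}$ together with strict convexity of $t\mapsto t^{p}$, which only bites when $\mathcal{E}(f)\neq\mathcal{E}(g)$; your separate handling of the case $\mathcal{E}(f)=\mathcal{E}(g)$ via a strict midpoint inequality from Clarkson, followed by the chord argument for general $\lambda$, genuinely covers a case the paper's one-line proof does not (this is legitimate since (e)/(f) do not depend on (a)). For (c), your rearrangement argument --- $c$ lies between $0$ and $a-b$, the map $(a,b)\mapsto(a-c,b+c)$ preserves the sum and weakly contracts the difference, and $u\mapsto\abs{\tfrac{a+b}{2}+u}^{p}+\abs{\tfrac{a+b}{2}-u}^{p}$ is even, convex, hence nondecreasing in $\abs{u}$ --- is a clean alternative to the paper's two-case computation with $\int\psi_{p}'\,dt$. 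Part (d) is, up to rescaling, the paper's substitution $u_{1}=\norm{g}_{L^{\infty}(X,m)}f$, $u_{2}=\norm{f}_{L^{\infty}(X,m)}g$.

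The genuine gap is in (e)--(f). The scalar inequality you quote for $p\in(1,2]$, namely $\abs{x_{1}+x_{2}}^{p}+\abs{x_{1}-x_{2}}^{p}\le 2^{p-1}\bigl(\abs{x_{1}}^{p/(p-1)}+\abs{x_{2}}^{p/(p-1)}\bigr)^{p-1}$, is false: at $(x_{1},x_{2})=(1,0)$ it reads $2\le 2^{p-1}$. You were led there by taking the printed pair $(q_{1},q_{2})=(p/(p-1),p)$ at face value, but for $p<2$ this violates the constraint $q_{1}\in(0,p]$, $q_{2}\in[p,\infty]$ of Definition \ref{defn.GC}, so the roles must be swapped. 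The correct verification for (e) is \eqref{GC-cond} with $(q_{1},q_{2})=(p,p/(p-1))$, i.e.\ $\bigl(\abs{z_{1}+z_{2}}^{p'}+\abs{z_{1}-z_{2}}^{p'}\bigr)^{p-1}\le 2^{p-1}\bigl(\abs{z_{1}}^{p}+\abs{z_{2}}^{p}\bigr)$ with $p'=p/(p-1)$, which is Clarkson's second inequality; for (f) one needs $(q_{1},q_{2})=(p/(p-1),p)$ and $\abs{z_{1}+z_{2}}^{p}+\abs{z_{1}-z_{2}}^{p}\le 2\bigl(\abs{z_{1}}^{p'}+\abs{z_{2}}^{p'}\bigr)^{p-1}$, not the weaker bound $2^{p-1}\bigl(\abs{z_{1}}^{p}+\abs{z_{2}}^{p}\bigr)$ you cite, which cannot produce the sharp form \eqref{Cp.large} and would at best give \ref{Cp-weak}. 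Moreover, you never say how \eqref{Cp.small} is extracted from \ref{GC}: since it is a \emph{lower} bound on $\mathcal{E}(f+g)+\mathcal{E}(f-g)$, one must apply \ref{GC} to the input vector $\bm{u}=\bigl(2^{-1/p}(f+g),\,2^{-1/p}(f-g)\bigr)$, for which $T(\bm{u})=(f,g)$; the $\ell^{p/(p-1)}$-aggregation on the \emph{output} side then yields $\bigl(\mathcal{E}(f)^{1/(p-1)}+\mathcal{E}(g)^{1/(p-1)}\bigr)^{p-1}\le\tfrac12\bigl(\mathcal{E}(f+g)+\mathcal{E}(f-g)\bigr)$. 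This choice of input is the substantive step of (e)--(f) and is missing from your plan; and since your proof of strict convexity in (a) in the equal-energy case invokes Clarkson, that portion of (a) also stands or falls with this repair.
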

\begin{rmk}\label{rmk.GC}
	\begin{enumerate}[label=\textup{(\arabic*)},align=left,leftmargin=*,topsep=2pt,parsep=0pt,itemsep=2pt]
		\item\label{it:Rmk.GC-NDF} The property \eqref{markov-nonDF} in Proposition \ref{prop.GC-list}-\ref{GC.markov} above is inspired by the \emph{nonlinear Dirichlet form theory}\index{nonlinear Dirichlet form} due to Cipriani and Grillo \cite{CG03}. See \cite[Theorem 4.7]{Cla23} and the reference therein for further background. It is easy to see that the inequality \eqref{markov-nonDF} is equivalent to the following property: if $\varphi_{\#} \in \contfunc(\mathbb{R})$ satisfies $\varphi_{\#}(0) = 0$ and $\abs{\varphi_{\#}(t) - \varphi_{\#}(s)} \le \abs{t - s}$ for any $s,t \in \mathbb{R}$, then for any $f,g \in \mathcal{F}$, 
			\begin{equation}\label{markov-nonDF.another}
				\mathcal{E}(f + \varphi_{\#}(g)) + \mathcal{E}(f - \varphi_{\#}(g)) 
				\le \mathcal{E}(f + g) + \mathcal{E}(f - g). 
			\end{equation}
			Indeed, it suffices to consider the transformation $\varphi_{\#}(t) = t - \varphi(2t)$, where $\varphi$ is the same as in \eqref{markov-nonDF}; see \cite[Proof of Theorem 1]{Puc25+} for the details of this argument.
		\item\label{it:Rmk.GC-Cp} There are two versions of $p$-Clarkson's inequality\index{$p$-Clarkson's inequality (for $p$-energy form)}, one of which is stronger than the other. The inequalities \eqref{Cp.small} and \eqref{Cp.large} above are the stronger one for $p \in (1,2]$ and for $p \in [2,\infty)$, respectively; see Remark \ref{rmk:Cp-weak} below for the weaker one. 
	\end{enumerate}
\end{rmk}
\begin{proof}[Proof of Proposition \ref{prop.GC-list}]
	\ref{GC.tri}: 
	It is obvious that $T(x,y) \coloneqq x + y$ satisfies \eqref{GC-cond} with $(q_1,q_2,n_1,n_2) = (1,p,2,1)$ and hence the triangle inequality for $\mathcal{E}^{1/p}$ holds. 
	Since $(0,\infty) \ni x \mapsto x^{p}$ is strictly convex, for any $\lambda \in (0,1)$ and any $f,g \in \mathcal{F}$ with $\mathcal{E}(f) \wedge \mathcal{E}(g) \wedge \mathcal{E}(f-g) > 0$, 
	\begin{align*}
		\mathcal{E}(\lambda f + (1 - \lambda)g) 
		\le \bigl(\lambda\mathcal{E}(f)^{1/p} + (1 - \lambda)\mathcal{E}(g)^{1/p}\bigr)^{p}
		< \lambda\mathcal{E}(f) + (1 - \lambda)\mathcal{E}(g), 
	\end{align*}
	where we used the triangle inequality for $\mathcal{E}^{1/p}$ in the first inequality.
	
	\ref{GC.lip}: This is obvious. 

    \ref{GC.markov}:
    Let $x = (x_{1},x_{2}), y = (y_{1},y_{2}) \in \mathbb{R}^{2}$.
    For ease of notation, set $z_{i} \coloneqq x_{i} - y_{i}$ and $A \coloneqq \varphi(x_{1} - x_{2}) - \varphi(y_{1} - y_{2})$.
    Then $\norm{T(x) - T(y)}_{\ell^p} \le \norm{x - y}_{\ell^p}$ ie equivalent to
    \begin{equation}\label{nonDF-pre}
        \abs{z_{1} - A}^{p} + \abs{z_{2} + A}^{p} \le \abs{z_{1}}^{p} + \abs{z_{2}}^{p},
    \end{equation}
    so we will show \eqref{nonDF-pre}.
    Note that $\abs{A} \le \abs{z_{1} - z_{2}}$ since $\varphi$ is $1$-Lipschitz.
    The desired estimate \eqref{nonDF-pre} is evident when $z_{1} = z_{2}$, so we consider the case of $z_{1} \neq z_{2}$.
    Assume that $z_{1} > z_{2}$ because the remaining case $z_{1} < z_{2}$ is similar.
    Then $(x_{1} - x_{2}) - (y_{1} - y_{2}) = z_{1} - z_{2} > 0$ and thus $0 \le A \le z_{1} - z_{2}$.
    Set $\psi_{p}(t) \coloneqq \abs{t}^{p} \, (t \in \mathbb{R})$ for brevity. 
    If $0 \le A < \frac{z_{1} - z_{2}}{2}$, then $z_{2} \le z_{2} + A < z_{1} - A \le z_{1}$ and we see that 
    \begin{align*}
        \abs{z_{1} - A}^{p} + \abs{z_{2} + A}^{p} - \abs{z_{1}}^{p} - \abs{z_{2}}^{p}
        &= \int_{z_{2}}^{z_{2} + A}\psi_{p}'(t)\,dt - \int_{z_{1} - A}^{z_{1}}\psi_{p}'(t)\,dt \\
        &\le A\bigl(\psi_{p}'(z_{2} + A) - \psi_{p}'(z_{1} - A)\bigr)
        \le 0.
    \end{align*}
	If $A \ge \frac{z_{1} - z_{2}}{2}$, then $z_{2} \le z_{1} - A \le z_{2} + A \le z_{1}$ and thus
    \begin{align*}
        \abs{z_{1} - A}^{p} + \abs{z_{2} + A}^{p} - \abs{z_{1}}^{p} - \abs{z_{2}}^{p}
        &= \int_{z_{2}}^{z_{1} - A}\psi_{p}'(t)\,dt - \int_{z_{2} + A}^{z_{1}}\psi_{p}'(t)\,dt \\
        &\le (z_{1} - z_{2} - A)\bigl(\psi_{p}'(z_{1} - A) - \psi_{p}'(z_{2} + A)\bigr)
        \le 0, 
    \end{align*}
    which proves \eqref{nonDF-pre}. 
    The case of $\varphi(x) = x^{+}$ immediately implies \eqref{sadd}.

    \ref{GC.leibniz}:
    For any $a_1,a_2 \in (0,\infty)$ and $(x_1,x_2), (y_1,y_2) \in \mathbb{R}^{2}$, we see that
    \begin{align*}
        &\abs{T^{a_1,a_2}(x_1,x_2) - T^{a_1,a_2}(y_1,y_2)} \\
        &\le \abs{(-a_1) \vee ((a_{2}^{-1}x_1) \wedge a_1)}\abs{\bigl((-a_2) \vee ((a_{1}^{-1}x_2) \wedge a_2)\bigr) - \bigl((-a_2) \vee ((a_{1}^{-1}y_2) \wedge a_2)\bigr)} \\
        &\quad + \abs{(-a_2) \vee ((a_{1}^{-1}y_2) \wedge a_2)}\abs{\bigl((-a_1) \vee ((a_{2}^{-1}x_1) \wedge a_1)\bigr) - \bigl((-a_1) \vee ((a_{2}^{-1}y_1) \wedge a_1)\bigr)} \\
        &\le a_{1}\abs{a_{1}^{-1}x_{2} - a_{1}^{-1}y_{2}} + a_{2}\abs{a_{2}^{-1}x_{1} - a_{2}^{-1}y_{1}}
        = \abs{x_1 - y_1} + \abs{x_2 - y_2},
    \end{align*}
    whence $T^{a_1,a_2}$ satisfies \eqref{GC-cond} with $(q_1,q_2,n_1,n_2) = (1,p,2,1)$.
    We get \eqref{leibniz} by applying \ref{GC} with $u_1 = \norm{g}_{L^{\infty}(X,m)}f$, $u_2 = \norm{f}_{L^{\infty}(X,m)}g$, $a_1 = \norm{f}_{L^{\infty}(X,m)}$ and $a_2 = \norm{g}_{L^{\infty}(X,m)}$.
    
    \ref{GC.Cpsmall},\ref{GC.Cplarge}: 
    These statements follow from $p$-Clarkson's inequality for the $\ell^p$-norm; see, e.g., \cite[Theorem 2]{Cla36} for the fact that $L^p$-norms satisfy $p$-Clarkson's inequality. 
\end{proof}

The following corollary is easily implied by Proposition \ref{prop.GC-list}-\ref{GC.lip},\ref{GC.leibniz}.
\begin{cor}\label{cor.lip-useful}
    Assume that $(\mathcal{E},\mathcal{F})$ satisfies \ref{GC}.
    \begin{enumerate}[label=\textup{(\alph*)},align=left,leftmargin=*,topsep=2pt,parsep=0pt,itemsep=2pt]
        \item \label{GC.compos} Let $u \in \mathcal{F} \cap L^{\infty}(X,m)$ and let $\Phi \in \contfunc^{1}(\mathbb{R})$ satisfy $\Phi(0) = 0$. Then
        \begin{equation}\label{compos}
    		\Phi(u) \in \mathcal{F} \quad \text{and} \quad \mathcal{E}(\Phi(u)) \le \sup\bigl\{ \abs{\Phi'(t)}^{p} \bigm| t \in \mathbb{R}, \abs{t} \le \norm{u}_{L^{\infty}(X,m)} \bigr\}\mathcal{E}(u).
    	\end{equation}
        \item \label{GC.divide} Let $\delta, M \in (0,\infty)$ and let $f, g \in \mathcal{F}$ satisfy $f \ge 0$, $g \ge 0$, $f \le M$ and $(f + g)|_{\{ f \neq 0 \}} \ge \delta$.
    	Then there exists $C \in (0,\infty)$ depending only on $p, \delta, M$ such that
    	\begin{equation}\label{negative-power}
    		\frac{f}{f + g} \in \mathcal{F} \quad \text{and} \quad \mathcal{E}\biggl(\frac{f}{f + g}\biggr) \le C\bigl(\mathcal{E}(f) + \mathcal{E}(g)\bigr).
    	\end{equation}
        \item\label{it:GC.difffunc} Let $n \in \mathbb{N}$, $q \in [1,p]$, $\bm{u} = (u_{1},\ldots,u_{n}) \in \mathcal{F}^{n}$ and $v \in L^{0}(X,m)$.
		If there exist $m$-versions of $\bm{u},v$ such that $\abs{v(x)} \le \norm{\bm{u}(x)}_{\ell^{q}}$ and $\abs{v(x) - v(y)} \le \norm{\bm{u}(x) - \bm{u}(y)}_{\ell^{q}}$ for any $x,y \in X$, then $v \in \mathcal{F}$ and $\mathcal{E}(v) \le \norm{\bigl(\mathcal{E}(u_{k})^{1/p}\bigr)_{k = 1}^{n}}_{\ell^{q}}$. 
    \end{enumerate}
\end{cor}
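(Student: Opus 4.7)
The plan is to deduce each item from \ref{GC} (or from Proposition \ref{prop.GC-list}) via a Lipschitz extension construction tailored to the item.

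For \ref{GC.compos}, I set $M := \norm{u}_{L^{\infty}(X,m)}$ and $L := \sup\bigl\{\abs{\Phi'(t)} \bigm| t \in \mathbb{R},\, \abs{t} \le M\bigr\}$. The case $L = 0$ is trivial since then $\Phi \equiv 0$ on $[-M, M]$ and $\Phi(u) = 0$ in $L^{0}(X,m)$. Otherwise the mean value theorem makes $\Phi$ $L$-Lipschitz on $[-M, M]$, and extending $\Phi|_{[-M,M]}$ by its boundary values beyond $\pm M$ yields a globally $L$-Lipschitz function $\tilde\Phi \in \contfunc(\mathbb{R})$ with $\tilde\Phi(0) = 0$. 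Since $\abs{u} \le M$ $m$-a.e., we have $\tilde\Phi(u) = \Phi(u)$ in $L^{0}(X,m)$; applying \eqref{lipcont} to the $1$-Lipschitz function $\tilde\Phi/L$ together with the $p$-homogeneity of $\mathcal{E}$ yields $\mathcal{E}(\Phi(u)) \le L^{p}\mathcal{E}(u)$.

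For \ref{GC.divide}, the idea is to realize $u := f/(f+g)$ (under the convention $0/0 := 0$) as $T(f,g)$ for a bounded Lipschitz $T \colon \mathbb{R}^{2} \to \mathbb{R}$ and then apply \ref{GC} with $(q_{1}, q_{2}, n_{1}, n_{2}) = (1, p, 2, 1)$. On the region $R := \{(x,y) \in \mathbb{R}^{2} \mid 0 \le x \le M,\, y \ge 0,\, x > 0 \Rightarrow x+y \ge \delta\}$ define $T_{0}(x,y) := x/(x+y)$ (with $0/0 := 0$). A short case analysis --- using $x/(x+y), y/(x+y) \in [0, 1]$ and $x + y \ge \delta$ whenever $x > 0$, together with the bounds $T_{0}(0, y) = 0$ and $T_{0}(x, y) \le x/\delta$ to handle the points with $x = 0$ --- shows that $T_{0}$ is $(1/\delta)$-Lipschitz on $R$ with respect to $\norm{\,\cdot\,}_{\ell^{1}}$, and $T_{0}(0, 0) = 0$. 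McShane extension then produces a globally $(1/\delta)$-Lipschitz map $\tilde T \colon \mathbb{R}^{2} \to \mathbb{R}$ with $\tilde T(0, 0) = 0$. The hypotheses on $f, g$ force $(f(x), g(x)) \in R$ for $m$-a.e.\ $x$, so $\tilde T(f, g) = f/(f+g)$ in $L^{0}(X,m)$, and applying \ref{GC} to $\delta \tilde T$ gives $f/(f+g) \in \mathcal{F}$ together with $\mathcal{E}(f/(f+g)) \le (2^{p-1}/\delta^{p})(\mathcal{E}(f) + \mathcal{E}(g))$.

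For \ref{it:GC.difffunc}, setting $x = y$ in the second inequality shows that $v(x) = v(y)$ whenever $\bm{u}(x) = \bm{u}(y)$, so $v$ factors through $\bm{u}$ as $v = \tilde v \circ \bm{u}$ for a well-defined $\tilde v \colon \bm{u}(X) \to \mathbb{R}$ that is $1$-Lipschitz in $\norm{\,\cdot\,}_{\ell^{q}}$ and satisfies $\abs{\tilde v(z)} \le \norm{z}_{\ell^{q}}$ on $\bm{u}(X)$. The latter bound allows me to adjoin $0 \in \mathbb{R}^{n}$ with value $0$ while preserving the Lipschitz constant, after which McShane extension yields $T \in \contfunc(\mathbb{R}^{n})$ which is $1$-Lipschitz in $\norm{\,\cdot\,}_{\ell^{q}}$ and satisfies $T(0) = 0$ and $T \circ \bm{u} = v$ (the identity holding pointwise on $X$ for the chosen $m$-versions, hence in $L^{0}(X,m)$). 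Applying \ref{GC} to $T$ with $(q_{1}, q_{2}, n_{1}, n_{2}) = (q, p, n, 1)$ --- legitimate because $q \in [1, p] \subseteq (0, p]$ and the $\ell^{q_{2}}$ norm in $\mathbb{R}^{n_{2}} = \mathbb{R}$ is just the absolute value --- gives the stated membership together with the bound $\mathcal{E}(v)^{1/p} \le \norm{\bigl(\mathcal{E}(u_{k})^{1/p}\bigr)_{k=1}^{n}}_{\ell^{q}}$.

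The main technical point I expect to be delicate is the Lipschitz verification for $T_{0}$ on the non-convex region $R$ in \ref{GC.divide}: a gradient-and-line-segment argument fails because straight segments between points of $R$ may exit $R$ near the origin (for example, the midpoint of $(0, 0)$ and $(\delta, 0)$ lies outside $R$), so the explicit case split involving the points with $x_{i} = 0$ is essential. The remaining work --- McShane extension and the adjustment ensuring $T(0) = 0$ --- is standard, and the reductions to \eqref{lipcont} and \ref{GC} are immediate once the Lipschitz maps are in hand.
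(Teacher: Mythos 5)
Your proofs of \ref{GC.compos} and \ref{it:GC.difffunc} are correct and follow essentially the paper's own route: \ref{GC.compos} is reduced to the Lipschitz contractivity \eqref{lipcont} by truncating $\Phi$ outside $[-M,M]$ and rescaling, and \ref{it:GC.difffunc} is exactly the paper's argument of factoring $v$ through $\bm{u}$, adjoining the origin with value $0$, applying the McShane--Whitney extension, and invoking \ref{GC} with $(q_{1},q_{2},n_{1},n_{2})=(q,p,n,1)$ (your conclusion with $\mathcal{E}(v)^{1/p}$ on the left is the correct form of what \ref{GC} delivers). For \ref{GC.divide}, however, you take a genuinely different route. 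The paper writes $f/(f+g)=f\cdot\varphi(f+g)$ for a one-variable Lipschitz truncation $\varphi$ of $t\mapsto 1/t$ and combines \eqref{lipcont} with the Leibniz rule \eqref{leibniz}; this is where the hypothesis $f\le M$ enters (to ensure $f\in\mathcal{F}\cap L^{\infty}(X,m)$), and the resulting constant depends on $M$. You instead realize $f/(f+g)$ directly as $\tilde{T}(f,g)$ for a $(1/\delta)$-Lipschitz (in $\ell^{1}$) extension of $T_{0}(x,y)=x/(x+y)$ off the region $R$ and apply \ref{GC} with $(q_{1},q_{2},n_{1},n_{2})=(1,p,2,1)$. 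Your Lipschitz verification on the non-convex set $R$ is correct: the identity $x_{1}y_{2}-x_{2}y_{1}=x_{1}(y_{2}-y_{1})+y_{1}(x_{1}-x_{2})$ handles the case where both first coordinates are positive, the bound $T_{0}(x,y)\le x/\delta$ handles the case where one of them vanishes, and you are right that a naive segment-plus-gradient argument fails near the origin. Your approach yields the constant $2^{p-1}\delta^{-p}$, independent of $M$ (so the bound $f\le M$ is not actually used in your argument), at the cost of a more delicate two-variable Lipschitz check; the paper's route is shorter because it can quote \eqref{leibniz}, but its constant genuinely involves $M$. Both arguments are valid.
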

\begin{proof}
    \ref{GC.compos}: This is immediate from Proposition \ref{prop.GC-list}-\ref{GC.lip}.
    
    \ref{GC.divide}: We follow \cite[Proposition 6.26-(ii)]{MS.long}\footnote{The article \cite{MS.long} is a detailed version of \cite{MS+}. Several statements and proofs have been removed from the latter, so we still refer to \cite{MS.long} for those omitted results and arguments.}. Let $\varphi \in \contfunc(\mathbb{R})$ be a Lipschitz map such that $\varphi(x) = \frac{1}{x}$ for $x \ge \delta$ and $\sup_{x \neq y \in \mathbb{R}}\frac{\abs{\varphi(x) - \varphi(y)}}{\abs{x - y}} \le C'$ for some constant $C'$ depending only on $\delta$. Since $f \cdot \varphi(f + g) = \frac{f}{f + g}$, we get \eqref{negative-power} by using \eqref{lipcont} and \eqref{leibniz} in Proposition \ref{prop.GC-list}-\ref{GC.lip},\ref{GC.leibniz}. 
    
    \ref{it:GC.difffunc}:  
    The proof below is similar to \cite[Proof of Corollary I.4.13]{MR}.  
    Fix $m$-versions of $\bm{u},v$ satisfying $\abs{v(x)} \le \norm{\bm{u}(x)}_{\ell^{q}}$ and $\abs{v(x) - v(y)} \le \norm{\bm{u}(x) - \bm{u}(y)}_{\ell^{q}}$ for any $x,y \in X$. 
    We define $T_{0} \colon \bm{u}(X) \cup \{ 0 \} \to \mathbb{R}$ by setting $T_{0}(0) \coloneqq 0$ and $T_{0}(\bm{z}) \coloneqq v(x)$ for each $\bm{z} \in \bm{u}(X)$, where $x \in X$ satisfies $\bm{z} = \bm{u}(x)$. 
    This map $T_{0}$ is well-defined since $v(x) = 0$ for any $x \in X$ with $\bm{u}(x) = 0$ and $\abs{v(x) - v(y)} \le \norm{\bm{u}(x) - \bm{u}(y)}_{\ell^{q}} = 0$ for any $x,y \in X$ with $\bm{u}(x) = \bm{u}(y) \in \bm{u}(X)$. 
    In addition, we easily see that $ \abs{T_{0}(\bm{z}_{1}) - T_{0}(\bm{z}_{2})} \le \norm{\bm{z}_{1} - \bm{z}_{2}}_{\ell^{q}}$ for any $\bm{z}_{1},\bm{z}_{2} \in \bm{u}(X) \cup \{ 0 \}$, i.e., $T_{0} \colon (\bm{u}(X) \cup \{ 0 \},\norm{\,\cdot\,}_{\ell^{q}}) \to \mathbb{R}$ is $1$-Lipschitz. 
    Noting that $(\mathbb{R}^{n},\norm{\,\cdot\,}_{\ell^{q}})$ is a metric space by $q \ge 1$, we obtain a $1$-Lipschitz map $T \colon (\mathbb{R}^{n},\norm{\,\cdot\,}_{\ell^{q}}) \to \mathbb{R}$ satisfying $T(\bm{z}) = T_{0}(\bm{z})$ for any $\bm{z} \in \bm{u}(X) \cup \{ 0 \}$ by applying the McShane--Whitney extension lemma (see, e.g., \cite[p.~99]{HKST} for this extension lemma).
    Since $T$ satisfies \eqref{GC-cond} with $(q_{1},q_{2},n_{1},n_{2}) = (q,p,n,1)$ and $T(\bm{u}) = v$, the assertions follow from \ref{GC}. 
\end{proof}

There is a refinement of $p$-Clarkson's inequality on $L^p$-spaces, known as \emph{$p$-Hanner's inequality} in the literature and introduced in \cite{Han56}.
We note that \ref{GC} implies $p$-Hanner's inequality as shown in the following proposition, which is not used in the rest of this paper but recorded here for potential future applications. 

\begin{prop}[$p$-Hanner's inequality]\label{p:ICp} \index{$p$-Hanner's inequality}
    Define $\psi_{p} \colon (0,\infty) \to (0,\infty)$ by 
    \begin{equation}\label{phi_p}
        \psi_{p}(s) \coloneqq (1 + s)^{p - 1} + \sgn(1 - s)\abs{1 - s}^{p - 1}, \quad s > 0.
    \end{equation}
    \begin{enumerate}[label=\textup{(\alph*)},align=left,leftmargin=*,topsep=2pt,parsep=0pt,itemsep=2pt]
    	\item\label{it:ICp.small} Assume that $p \in (1,2]$. 
    	For $s \in (0,\infty)$, define $T^{s} = (T_{1}^{s},T_{2}^{s}) \colon \mathbb{R}^{2} \to \mathbb{R}^{2}$ by
    	\[
    	T_{1}^{s}(x_1,x_2) \coloneqq 2^{-1}\psi_{p}(s)^{1/p}(x_1 + x_2), \quad T_{2}^{s}(x_1,x_2) \coloneqq 2^{-1}\psi_{p}(s^{-1})^{1/p}(x_1 - x_2).
    	\] 
    	Then $T^{s}$ satisfies \eqref{GC-cond} in Definition \ref{defn.GC} with $(q_1,q_2,n_1,n_2) = (p,p,2,2)$ for any $s \in (0,\infty)$.
        If $(\mathcal{E}, \mathcal{F})$ satisfies \ref{GC}, then
        \begin{equation}\label{Hanner.small}
            \abs{\mathcal{E}(f)^{1/p} + \mathcal{E}(g)^{1/p}}^{p} + \abs{\mathcal{E}(f)^{1/p} - \mathcal{E}(g)^{1/p}}^{p} \le \mathcal{E}(f+g) + \mathcal{E}(f-g) \quad \text{for any $f,g \in \mathcal{F}$}.
        \end{equation}
        \item\label{it:ICp-Cp.small} If $\mathcal{E}$ satisfies \eqref{Hanner.small}, then \eqref{Cp.small} in Proposition \ref{prop.GC-list}-\ref{GC.Cpsmall} holds. 
        \item\label{it:ICp.large} Assume that $p \in [2,\infty)$. 
    	For $s \in (0,\infty)$, define $T^{s} = (T_{1}^{s},T_{2}^{s}) \colon \mathbb{R}^{2} \to \mathbb{R}^{2}$ by
    	\[
    	T_{1}^{s}(x_1,x_2) \coloneqq \psi_{p}(s)^{-1/p}x_1 + \psi_{p}(s^{-1})^{-1/p}x_2, \quad T_{2}^{s}(x_1,x_2) \coloneqq \psi_{p}(s)^{-1/p}x_1 - \psi_{p}(s^{-1})^{-1/p}x_2.
    	\] 
    	Then $T^{s}$ satisfies \eqref{GC-cond} with $(q_1,q_2,n_1,n_2) = (p,p,2,2)$ for any $s \in (0,\infty)$.
        If $p \in [2,\infty)$ and $(\mathcal{E}, \mathcal{F})$ satisfies \ref{GC}, then
        \begin{equation}\label{Hanner.large}
            \mathcal{E}(f+g) + \mathcal{E}(f-g)
            \le \abs{\mathcal{E}(f)^{1/p} + \mathcal{E}(g)^{1/p}}^{p} + \abs{\mathcal{E}(f)^{1/p} - \mathcal{E}(g)^{1/p}}^{p} \quad \text{for any $f,g \in \mathcal{F}$}.
        \end{equation}
        \item\label{it:ICp-Cp.large} If $\mathcal{E}$ satisfies \eqref{Hanner.large}, then \eqref{Cp.large} in Proposition \ref{prop.GC-list}-\ref{GC.Cplarge} holds. 
    \end{enumerate} 
\end{prop}
\begin{proof}
	The proof is heavily inspired by the arguments in \cite[p.~472]{BCL94}, where $p$-Hanner's inequality for $L^p$-norms is shown by using the following characterization of sums of $p$-th powers: for any $x,y \in \mathbb{R}$, 
	\begin{equation}\label{e:BCLlemma}
		\abs{x + y}^{p} + \abs{x - y}^{p} = 
		\begin{cases}
			\sup_{s > 0}\bigl\{ \psi_{p}(s)\abs{x}^{p} + \psi_{p}(s^{-1})\abs{y}^{p} \bigr\} \quad &\text{if $p \in (1,2]$,} \\
			\inf_{s > 0}\bigl\{ \psi_{p}(s)\abs{x}^{p} + \psi_{p}(s^{-1})\abs{y}^{p} \bigr\} \quad &\text{if $p \in [2,\infty)$;}
		\end{cases}
	\end{equation}
	see \cite[Lemma 4]{BCL94} for a proof of \eqref{e:BCLlemma}.
	
	\ref{it:ICp.small}: 
	By considering $x + y, x - y$ in \eqref{e:BCLlemma} instead of $x,y$, we have that for any $s \in (0,\infty)$, 
	\[
	2^{-p}\psi_{p}(s)\abs{x + y}^{p} + 2^{-p}\psi_{p}(s^{-1})\abs{x - y}^{p} \le \abs{x}^{p} + \abs{y}^{p}, 
	\]
	which means that $T^{s}$ satisfies \eqref{GC-cond} with $(q_1,q_2,n_1,n_2) = (p,p,2,2)$. 
	Since $s \in (0,\infty)$ is arbitrary, for any $f,g \in \mathcal{F}$,
	\begin{equation}\label{ICp.small}
    	\sup_{s > 0}\bigl\{\psi_{p}(s)\mathcal{E}(f) +\psi_{p}(s^{-1})\mathcal{E}(g)\bigr\} \le \mathcal{E}(f+g) + \mathcal{E}(f-g). 
    \end{equation}
    Using \eqref{e:BCLlemma} again, we get \eqref{Hanner.small}. 
	
	\ref{it:ICp-Cp.small}: 
	Let $f, g \in \mathcal{F}$ with $\mathcal{E}(f) \wedge \mathcal{E}(g) > 0$, set $a \coloneqq \mathcal{E}(f)^{1/(p - 1)}$ and $b \coloneqq \mathcal{E}(g)^{1/(p - 1)}$.
    Then, 
    \[
    \sup_{s > 0}\bigl\{\psi_{p}(s)\mathcal{E}(f) + \psi_{p}(s^{-1})\mathcal{E}(g)\bigr\}
    \ge \psi_{p}(b/a)a^{p - 1} + \psi_{p}(a/b)b^{p - 1}
    = 2(a + b)^{p - 1},
    \]
    which together with \eqref{ICp.small} yields \eqref{Cp.small}.
    
    The remaining statements, \ref{it:ICp.large} and \ref{it:ICp-Cp.large}, can be shown similarly by using \eqref{e:BCLlemma}.
\end{proof}
	

The property \ref{GC} is stable under taking suitable limits and some algebraic operations like summations. 
To state precise results, we recall the following definition on convergences of functionals.
\begin{defn}[{\cite[Definition 4.1 and Proposition 8.1]{Dal}}]
    Let $\mathcal{X}$ be a topological space, let $F \colon \mathcal{X} \to [-\infty,\infty]$ and let $\{ F_{n} \colon \mathcal{X} \to [-\infty,\infty]\}_{n \in \mathbb{N}}$.
    \begin{enumerate}[label=\textup{(\arabic*)},align=left,leftmargin=*,topsep=2pt,parsep=0pt,itemsep=2pt]
        \item The sequence $\{ F_{n} \}_{n \in \mathbb{N}}$ is said to \emph{converge pointwise}\index{converge pointwise}\index{pointwise convergence} to $F$ if and only if $\lim_{n \to \infty}F_{n}(x) = F(x)$ for any $x \in \mathcal{X}$.
        \item Assume that $\mathcal{X}$ is a first-countable topological space.
        The sequence $\{ F_{n} \}_{n \in \mathbb{N}}$ is said to \emph{$\Gamma$-converge}\index{$\Gamma$-converge}\index{$\Gamma$-convergence} to $F$ (with respect to the topology of $\mathcal{X}$) if and only if the following conditions hold for any $x \in \mathcal{X}$:
        \begin{enumerate}[label=\textup{(\roman*)},align=left,leftmargin=*,topsep=2pt,parsep=0pt,itemsep=2pt]
            \item If $x_{n} \to x$ in $\mathcal{X}$, then $F(x) \le \liminf_{n \to \infty}F_{n}(x_{n})$.
            \item There exists a sequence $\{ x_n \}_{n \in \mathbb{N}}$ in $\mathcal{X}$ such that
            \begin{equation}\label{eq.limsup}
                \text{$x_n \to x$ in $\mathcal{X}$} \quad  \text{and} \quad  \text{$\displaystyle\limsup_{n \to \infty}F_{n}(x_{n}) \le F(x)$.}
            \end{equation}
        \end{enumerate}
        A sequence $\{ x_{n} \}_{n \in \mathbb{N}}$ satisfying \eqref{eq.limsup} is called a \emph{recovery sequence of $\{ F_{n} \}_{n \in \mathbb{N}}$ at $x$}.\index{recovery sequence}
    \end{enumerate}
\end{defn}

We also need the following reverse Minkowski inequality. 
See, e.g., \cite[Theorem 2.12]{AF} for a proof of this standard result.
\begin{prop}[Reverse Minkowski inequality]\label{prop.reverse} \index{reverse Minkowski inequality}
    Let $(Y,\mathcal{A},\mu)$ be a measure space\footnote{In the book \cite{AF}, the reverse Minkowski inequality is stated and proved only for the $L^{r}$-space over non-empty open subsets of the Euclidean space equipped with the Lebesgue measure, but the same proof works for any measure space.} and let $r \in (0,1]$.
    Then for any $\mathcal{A}$-measurable functions $f,g \colon Y \to [0,\infty]$,
    \begin{equation}\label{reverse}
        \biggl(\int_{Y}f^{r}\,d\mu\biggr)^{1/r} + \biggl(\int_{Y}g^{r}\,d\mu\biggr)^{1/r} \le \biggl(\int_{Y}(f + g)^{r}\,d\mu\biggr)^{1/r}.
    \end{equation}
\end{prop}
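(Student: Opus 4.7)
The case $r=1$ is simply the standard triangle inequality in $L^{1}(Y,\mu)$. The substantive content is for $r\in(0,1)$, and the plan is to derive \eqref{reverse} from a reverse H\"older inequality applied to a splitting of $(f+g)^{r}$.

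The first step is to establish the following reverse H\"older inequality: for non-negative $\mathcal{A}$-measurable $u,v \colon Y \to [0,\infty]$,
\begin{equation*}
    \int_{Y} uv \, d\mu \;\ge\; \left(\int_{Y} u^{r}\,d\mu\right)^{1/r} \left(\int_{Y} v^{r/(r-1)}\,d\mu\right)^{(r-1)/r},
\end{equation*}
with the conventions $0\cdot\infty = 0$ and $t^{1/s}\coloneqq 0$ when $t=\infty$ and $s<0$. I would derive this from the \emph{standard} H\"older inequality applied to the factorization $u^{r} = (uv)^{r}\cdot v^{-r}$ with the conjugate exponents $1/r>1$ and $1/(1-r)>1$, giving
\begin{equation*}
    \int_{Y} u^{r}\,d\mu \;\le\; \left(\int_{Y} uv\,d\mu\right)^{r}\left(\int_{Y} v^{-r/(1-r)}\,d\mu\right)^{1-r},
\end{equation*}
and then rearranging using $-r/(1-r)=r/(r-1)$ and $(1-r)/r = -1/(r/(r-1))$.

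The second step is the main estimate. Write
\begin{equation*}
    \int_{Y}(f+g)^{r}\,d\mu \;=\; \int_{Y} f\,(f+g)^{r-1}\,d\mu \;+\; \int_{Y} g\,(f+g)^{r-1}\,d\mu,
\end{equation*}
and apply the reverse H\"older inequality to each summand with the choice $v\coloneqq (f+g)^{r-1}$, noting that $v^{r/(r-1)}=(f+g)^{r}$. This produces
\begin{equation*}
    \int_{Y} f(f+g)^{r-1}\,d\mu \;\ge\; \left(\int_{Y} f^{r}\,d\mu\right)^{1/r}\left(\int_{Y}(f+g)^{r}\,d\mu\right)^{(r-1)/r},
\end{equation*}
and the analogous bound for $g$. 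Summing and, in the generic case where $0<\int_{Y}(f+g)^{r}\,d\mu<\infty$, dividing by $\bigl(\int_{Y}(f+g)^{r}\,d\mu\bigr)^{(r-1)/r}$ yields \eqref{reverse}, since $1-(r-1)/r=1/r$.

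The third step is to dispose of the two degenerate cases. If $\int_{Y}(f+g)^{r}\,d\mu = 0$, then $f+g=0$ $\mu$-a.e.\ and hence $f=g=0$ $\mu$-a.e.\ (by non-negativity), so both sides of \eqref{reverse} vanish; if $\int_{Y}(f+g)^{r}\,d\mu=\infty$, the right-hand side of \eqref{reverse} is $+\infty$ and the inequality is trivial. The principal obstacle is simply the careful book-keeping of negative exponents in the reverse H\"older step and the exceptional cases in which the integrals involved are zero or infinite; once reverse H\"older is in hand, the Minkowski estimate follows by the one-line decomposition above.
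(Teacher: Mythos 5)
Your proof is correct and follows essentially the same route as the argument the paper relies on: the paper gives no proof of its own but cites the standard one in Adams--Fournier, which is exactly your combination of reverse H\"older (obtained from ordinary H\"older via the factorization $u^{r}=(uv)^{r}v^{-r}$) with the decomposition $\int(f+g)^{r}\,d\mu=\int f(f+g)^{r-1}\,d\mu+\int g(f+g)^{r-1}\,d\mu$ and the treatment of the degenerate cases. Nothing further is needed.
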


In the following definition, we introduce the set of $p$-homogeneous functionals on $\mathcal{F}$ which satisfies a contraction property or \ref{GC}. 
\begin{defn}\label{d:GC-cone}
    Recall that $\mathcal{F}$ is a linear subspace of $L^{0}(X,m)$. 
    Let $n_{1},n_{2} \in \mathbb{N}$, $q_{1} \in (0,p]$, $q_{2} \in [p,\infty]$ and $T = (T_{1},\dots,T_{n_{2}})\colon \mathbb{R}^{n_{1}} \to \mathbb{R}^{n_{2}}$ satisfy \eqref{GC-cond}. Assume that $T(\bm{u}) \in \mathcal{F}^{n_2}$ for any $\bm{u} = (u_1,\dots,u_{n_1}) \in \mathcal{F}^{n_1}$.
    Define 
    \begin{equation*}\label{Eachcont-cone}
        \mathcal{U}_{p,T}(\mathcal{F}) \coloneqq \mathcal{U}_{p,T} \coloneqq \biggl\{ \mathcal{E}' \colon \mathcal{F} \to [0,\infty) \biggm| 
        \begin{minipage}{218pt}
        	$\mathcal{E}'$ is $p$-homogeneous and for any $\bm{u} \in \mathcal{F}^{n_1}$, 
        	$\norm{\bigl(\mathcal{E}(T_{l}(\bm{u}))^{1/p}\bigr)_{l = 1}^{n_{2}}}_{\ell^{q_{2}}} \le \norm{\bigl(\mathcal{E}(u_{k})^{1/p}\bigr)_{k = 1}^{n_{1}}}_{\ell^{q_{1}}}$  
        \end{minipage}
        \biggr\}
    \end{equation*}
    and 
    \begin{equation*}\label{GC-cone}
        \mathcal{U}_{p}^{\mathrm{GC}}(\mathcal{F}) \coloneqq \mathcal{U}_{p}^{\mathrm{GC}} \coloneqq \{ \mathcal{E}' \colon \mathcal{F} \to [0,\infty) \mid \text{$\mathcal{E}'$ is $p$-homogeneous, $(\mathcal{E}',\mathcal{F})$ satisfies \ref{GC}} \}.
    \end{equation*}
\end{defn}

Now we can state the desired \emph{stability} of contraction properties under linear combination, pointwise convergence and $\Gamma$-convergence. 
\begin{prop}\label{prop.cone-gen}
	Let $n_{1},n_{2} \in \mathbb{N}$, $q_{1} \in (0,p]$, $q_{2} \in [p,\infty]$ and $T = (T_{1},\dots,T_{n_{2}})\colon \mathbb{R}^{n_{1}} \to \mathbb{R}^{n_{2}}$ satisfy \eqref{GC-cond} in Definition \ref{defn.GC}. Assume that $T(\bm{u}) \in \mathcal{F}^{n_2}$ for any $\bm{u} = (u_1,\dots,u_{n_1}) \in \mathcal{F}^{n_1}$. 
    \begin{enumerate}[label=\textup{(\alph*)},align=left,leftmargin=*,topsep=2pt,parsep=0pt,itemsep=2pt]
        \item \label{GC.cone} $a_{1}\mathcal{E}^{(1)} + a_{2}\mathcal{E}^{(2)} \in \mathcal{U}_{p,T}$ for any $\mathcal{E}^{(1)}, \mathcal{E}^{(2)} \in \mathcal{U}_{p,T}$ and any $a_{1},a_{2} \in [0,\infty)$. In particular, $a_{1}\mathcal{E}^{(1)} + a_{2}\mathcal{E}^{(2)} \in \mathcal{U}_{p}^{\mathrm{GC}}$ for any $\mathcal{E}^{(1)}, \mathcal{E}^{(2)} \in \mathcal{U}_{p}^{\mathrm{GC}}$ and any $a_{1},a_{2} \in [0,\infty)$. 
        \item \label{GC.pwlimit} Let $\bigl\{ \mathcal{E}^{(n)} \bigr\}_{n \in \mathbb{N}} \subseteq \mathcal{U}_{p,T}$, let $\mathcal{E}^{(\infty)} \colon \mathcal{F} \to [0, \infty)$ and assume that $\{ \mathcal{E}^{(n)} \}_{n \in \mathbb{N}}$ converges pointwise to $\mathcal{E}^{(\infty)}$. Then $\mathcal{E}^{(\infty)} \in \mathcal{U}_{p,T}$. In particular, if $\mathcal{E}^{(n)} \in \mathcal{U}_{p}^{\mathrm{GC}}$ for all $n \in \mathbb{N}$ in addition, then $\mathcal{E}^{(\infty)} \in \mathcal{U}_{p}^{\mathrm{GC}}$.
        \item \label{GC.gammalimit} Assume that $\mathcal{F} \subseteq L^{p}(X,m)$, and equip $\mathcal{F}$ with the relative topology inherited from the norm topology of $L^{p}(X,m)$. Let $\bigl\{ \mathcal{E}^{(n)} \bigr\}_{n \in \mathbb{N}} \subseteq \mathcal{U}_{p,T}$, let $\mathcal{E}^{(\infty)} \colon \mathcal{F} \to [0, \infty)$ and assume that $\{ \mathcal{E}^{(n)} \}_{n \in \mathbb{N}}$ $\Gamma$-converges to $\mathcal{E}^{(\infty)}$. Then $\mathcal{E}^{(\infty)} \in \mathcal{U}_{p,T}$. In particular, if $\mathcal{E}^{(n)} \in \mathcal{U}_{p}^{\mathrm{GC}}$ for all $n \in \mathbb{N}$ in addition, then $\mathcal{E}^{(\infty)} \in \mathcal{U}_{p}^{\mathrm{GC}}$.
    \end{enumerate}
\end{prop}
\begin{proof}
    The statement \ref{GC.pwlimit} is trivial, so we will show \ref{GC.cone} and \ref{GC.gammalimit}.

    \ref{GC.cone}:
    Let $\mathcal{E}^{(1)}, \mathcal{E}^{(2)} \in \mathcal{U}_{p,T}$.
    Then $a\mathcal{E}^{(1)} \in \mathcal{U}_{p,T}$ is evident for any $a \in [0,\infty)$.
    Set $E(f) \coloneqq \mathcal{E}^{(1)}(f) + \mathcal{E}^{(2)}(f)$, $f \in \mathcal{F}$, and let $\bm{u} = (u_{1},\dots,u_{n_{1}}) \in \mathcal{F}^{n_{1}}$.
    It suffices to prove $\norm{\bigl(E(T_{l}(\bm{u}))^{1/p}\bigr)_{l = 1}^{n_{2}}}_{\ell^{q_{2}}} \le \norm{\bigl(E(u_{k})^{1/p}\bigr)_{k = 1}^{n_{1}}}_{\ell^{q_{1}}}$.
    For simplicity, we consider the case of $q_{2} < \infty$. (The case of $q_{2} = \infty$ is similar.) 
    Then we have 
    \begin{align}\label{GC.sum}
        &\sum_{l = 1}^{n_{2}}E\bigl(T_{l}(\bm{u})\bigr)^{q_{2}/p} 
            = \sum_{l = 1}^{n_{2}}\Bigl[\mathcal{E}^{(1)}\bigl(T_{l}(\bm{u})\bigr) + \mathcal{E}^{(2)}\bigl(T_{l}(\bm{u})\bigr)\Bigr]^{q_{2}/p} \nonumber \\
        &\le \left(\sum_{i \in \{ 1, 2 \}}\left[\sum_{l = 1}^{n_{2}}\mathcal{E}^{(i)}\bigl(T_{l}(\bm{u})\bigr)^{q_{2}/p}\right]^{p/q_{2}}\right)^{q_{2}/p} \quad \text{(by the triangle inequality for $\norm{\,\cdot\,}_{\ell^{q_{2}/p}}$)} \nonumber \\ 
        &\le \left(\left[\sum_{k = 1}^{n_{1}}\mathcal{E}^{(1)}(u_{k})^{q_{1}/p}\right]^{p/q_{1}} + \left[\sum_{k = 1}^{n_{1}}\mathcal{E}^{(2)}(u_{k})^{q_{1}/p}\right]^{p/q_{1}}\right)^{q_{2}/p} \quad \text{(by  $\mathcal{E}^{(1)}, \mathcal{E}^{(2)} \in \mathcal{U}_{p,T}$)} \nonumber \\
        &\overset{\eqref{reverse}}{\le} \left(\sum_{k = 1}^{n_{1}}\Bigl[\mathcal{E}^{(1)}(u_{k}) + \mathcal{E}^{(2)}(u_{k})\Bigr]^{q_{1}/p}\right)^{\frac{p}{q_{1}} \cdot \frac{q_{2}}{p}}
        = \left(\sum_{k = 1}^{n_{1}}E(u_{k})^{q_{1}/p}\right)^{q_{2}/q_{1}},
    \end{align}
    which implies $E \in \mathcal{U}_{p,T}$.

    \ref{GC.gammalimit}:
    Let $\bm{u} = (u_{1},\dots,u_{n_{1}}) \in \mathcal{F}^{n_{1}}$, and let $\{ \bm{u}_{n} = (u_{1,n},\dots,u_{n_{1},n}) \}_{n \in \mathbb{N}} \subseteq \mathcal{F}^{n_{1}}$ be a recovery sequence of $\{ \mathcal{E}^{(n)} \}_{n \in \mathbb{N}}$ at $\bm{u}$.
    We first show tnat $\norm{T_{l}(\bm{u}) - T_{l}(\bm{u}_{n})}_{L^{p}(X,m)} \to 0$ as $n \to \infty$.
    Indeed, for any $\bm{v} = (v_{1},\dots,v_{n_{1}})$ and any $\bm{z} = (z_{1},\dots,z_{n_{1}}) \in L^{p}(X,m)^{n_{1}}$, we see that
    \begin{align}\label{T.Lp}
    	\max_{l \in \{ 1,\dots,n_{2} \}}\norm{T_{l}(\bm{v}) - T_{l}(\bm{z})}_{L^{p}(X,m)}^{p}
        &\overset{\eqref{GC-cond}}{\le} \int_{X}\norm{\bm{v}(x) - \bm{z}(x)}_{\ell^{q_{1}}}^{p}\,m(dx) \nonumber \\
        &= \int_{X}\left(\sum_{k = 1}^{n_{1}}\abs{v_{k}(x) - z_{k}(x)}^{p \cdot \frac{q_{1}}{p}}\right)^{p/q_{1}}\,m(dx) \nonumber \\
        &\le n_{1}^{(p - q_{1})/q_{1}}\sum_{k = 1}^{n_{1}}\norm{v_{k} - z_{k}}_{L^{p}(X,m)}^{p}, 
    \end{align}
    where we used H\"{o}lder's inequality in the last line. 
    Since $\max_{k}\norm{u_{k} - u_{k,n}}_{L^{p}(X,m)} \to 0$ as $n \to \infty$, \eqref{T.Lp} implies the desired convergence $\norm{T_{l}(\bm{u}) - T_{l}(\bm{u}_{n})}_{L^{p}(X,m)} \to 0$.
    
    We are now ready to prove $\mathcal{E}^{(\infty)} \in \mathcal{U}_{p,T}$.
    It is easy to see that $\mathcal{E}^{(\infty)}$ is $p$-homogeneous (see, e.g., \cite[Proposition 11.6]{Dal} for a proof of this fact). 
    Assume that $q_{2} < \infty$ since the case $q_{2} = \infty$ is similar. 
    Then,
    \begin{align*}
        \sum_{l = 1}^{n_{2}}\mathcal{E}^{(\infty)}\bigl(T_{l}(\bm{u})\bigr)^{q_{2}/p}
        &\le \sum_{l = 1}^{n_{2}}\liminf_{n \to \infty}\mathcal{E}^{(n)}\bigl(T_{l}(\bm{u}_{n})\bigr)^{q_{2}/p}
        \le \liminf_{n \to \infty}\sum_{l = 1}^{n_{2}}\mathcal{E}^{(n)}\bigl(T_{l}(\bm{u}_{n})\bigr)^{q_{2}/p} \nonumber\\
        &\le \liminf_{n \to \infty}\left(\sum_{k = 1}^{n_{1}}\mathcal{E}^{(n)}(u_{k,n})^{q_{1}/p}\right)^{\frac{p}{q_{1}} \cdot \frac{q_{2}}{p}}
        = \left(\sum_{k = 1}^{n_{1}}\mathcal{E}^{(\infty)}(u_{k})^{q_{1}/p}\right)^{\frac{p}{q_{1}} \cdot \frac{q_{2}}{p}},
    \end{align*}
    which proves $\mathcal{E}^{(\infty)} \in \mathcal{U}_{p,T}$.
\end{proof}

\section{Differentiability of \texorpdfstring{$p$}{p}-energy forms and related results}\label{sec.diffble}
In this section, we show the existence of the derivative in \eqref{intro.diffble} for any $p$-energy form $(\mathcal{E},\mathcal{F})$ satisfying $p$-Clarkson's inequality, \eqref{Cp.small} or \eqref{Cp.large} in Proposition \ref{prop.GC-list}, and establish fundamental properties of the ``two-variable version'' of $\mathcal{E}$ defined by \eqref{intro.diffble}, i.e.,
\[
\mathcal{E}(f;g) \coloneqq \frac{1}{p}\left.\frac{d}{dt}\mathcal{E}(f + tg)\right|_{t = 0}.
\]

Throughout this section, we fix $p \in (1,\infty)$, a measure space $(X,\mathcal{B},m)$, and a \emph{$p$-energy form} $(\mathcal{E},\mathcal{F})$ on $(X,m)$ in the following sense: 
\begin{defn}[$p$-Energy form]\label{defn.p-form} 
    Let $\mathcal{F}$ be a linear subspace of $L^{0}(X,m)$ and let $\mathcal{E} \colon \mathcal{F} \to [0,\infty)$.
    The pair $(\mathcal{E},\mathcal{F})$ is said to be a \emph{$p$-energy form}\index{$p$-energy form} on $(X,m)$ if and only if $\mathcal{E}^{1/p}$ is a seminorm on $\mathcal{F}$.
\end{defn}
Note that the same argument as in the proof of Proposition \ref{prop.GC-list}-\ref{GC.tri} implies that $\mathcal{E}$ is strictly convex on $\mathcal{F}/\mathcal{E}^{-1}(0)$ (see \eqref{e:strictconvex}). 

\subsection{\texorpdfstring{$p$}{p}-Clarkson's inequality and differentiability}
In this section, we mainly deal with $p$-energy forms satisfying $p$-Clarkson's inequality in the following sense. 
\begin{defn}[$p$-Clarkson's inequality]\label{d:Cp} 
    The pair $(\mathcal{E},\mathcal{F})$ is said to satisfy \emph{$p$-Clarkson's inequality}\index{$p$-Clarkson's inequality (for $p$-energy form)}, \ref{Cp} for short, if and only if for any $f,g \in \mathcal{F}$,
    \begin{equation}\label{Cp}
        \begin{cases}
            \mathcal{E}(f+g) + \mathcal{E}(f-g) \ge 2\bigl(\mathcal{E}(f)^{\frac{1}{p-1}} + \mathcal{E}(g)^{\frac{1}{p-1}}\bigr)^{p - 1} \quad &\text{if $p \in (1,2]$,} \\ 
            \mathcal{E}(f+g) + \mathcal{E}(f-g) \le 2\bigl(\mathcal{E}(f)^{\frac{1}{p-1}} + \mathcal{E}(g)^{\frac{1}{p-1}}\bigr)^{p - 1} \quad &\text{if $p \in [2,\infty)$.}
        \end{cases} \tag*{\textup{(Cla)$_p$}}
    \end{equation}
\end{defn}
\begin{rmk}\label{rmk:Cp-weak} 
	The following weaker version of $p$-Clarkson's inequality is also well known: for any $f,g \in \mathcal{F}$, 
	\begin{equation}\label{Cp-weak}
        \begin{cases}
            \mathcal{E}(f+g) + \mathcal{E}(f-g) \le 2\bigl(\mathcal{E}(f) + \mathcal{E}(g)\bigr) \quad &\text{if $p \in (1,2]$,} \\ 
            \mathcal{E}(f+g) + \mathcal{E}(f-g) \ge 2\bigl(\mathcal{E}(f) + \mathcal{E}(g)\bigr) \quad &\text{if $p \in [2,\infty)$.}
        \end{cases} \tag*{\textup{(Cla)$^{\prime}_p$}}
    \end{equation}
    Since, for any $a,b \in [0,\infty)$, H\"{o}lder's inequality yields $\bigl(a^{\frac{1}{p-1}}+b^{\frac{1}{p-1}}\bigr)^{p-1} \geq 2^{p-2}(a+b)$ if $p \in (1,2]$ and $\bigl(a^{\frac{1}{p-1}}+b^{\frac{1}{p-1}}\bigr)^{p-1} \leq 2^{p-2}(a+b)$ if $p \in [2,\infty)$, \emph{\ref{Cp} with $\frac{f+g}{2},\frac{f-g}{2}$ in place of $f,g$ implies \ref{Cp-weak}}. 
    In this paper, we will use this implication without further notice. 
\end{rmk} 

To state a consequence of \ref{Cp} on the convexity of $\mathcal{E}^{1/p}$, let us recall the notion of uniform convexity. 
See, e.g., \cite[Definition 1]{Cla36}.
(The notion of uniform convexity is usually defined for normed spaces in the literature. We present the definition for seminormed spaces because we are mainly interested in $(\mathcal{F},\mathcal{E}^{1/p})$.) 
\begin{defn}[Uniformly convex seminormed spaces]
	Let $(\mathcal{X}, \abs{\,\cdot\,})$ be a seminormed space.
	We say that $(\mathcal{X}, \abs{\,\cdot\,})$ is \emph{uniformly convex}\index{uniformly convex} if and only if for any $\varepsilon > 0$ there exists $\delta > 0$ with the property that $\abs{f + g} \le 2(1 - \delta)$ whenever $f, g \in \mathcal{X}$ satisfy $\abs{f} = \abs{g} = 1$ and $\abs{f - g} > \varepsilon$.
\end{defn}

It is well known that \ref{Cp} implies the uniform convexity as follows. 
\begin{prop}\label{p:uc}
	Assume that $(\mathcal{E},\mathcal{F})$ satisfies \ref{Cp}. Then $(\mathcal{F},\mathcal{E}^{1/p})$ is uniformly convex.
\end{prop}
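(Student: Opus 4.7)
The plan is to derive an explicit quantitative upper bound on $\mathcal{E}(f+g)^{1/p}$ under the hypotheses $\mathcal{E}(f) = \mathcal{E}(g) = 1$ and $\mathcal{E}(f-g)^{1/p} > \varepsilon$, by applying \ref{Cp} and reading off $\delta$ from the resulting inequality. The two cases $p \in [2,\infty)$ and $p \in (1,2]$ are handled by the two branches of \ref{Cp}, with a mild substitution in the small-$p$ case.

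First I would treat $p \in [2,\infty)$. Apply \ref{Cp} directly to $f,g$: since $\mathcal{E}(f)^{1/(p-1)} + \mathcal{E}(g)^{1/(p-1)} = 2$, we obtain $\mathcal{E}(f+g) + \mathcal{E}(f-g) \le 2 \cdot 2^{p-1} = 2^{p}$. Combined with $\mathcal{E}(f-g) > \varepsilon^{p}$ this gives $\mathcal{E}(f+g)^{1/p} \le 2\bigl(1 - (\varepsilon/2)^{p}\bigr)^{1/p}$, so one may take $\delta \coloneqq 1 - \bigl(1 - (\varepsilon/2)^{p}\bigr)^{1/p} > 0$.

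For $p \in (1,2]$, apply \ref{Cp} with $F \coloneqq (f+g)/2$ and $G \coloneqq (f-g)/2$ in place of $f,g$; noting $F+G = f$ and $F-G = g$, this yields
\begin{equation*}
\mathcal{E}(f) + \mathcal{E}(g) \ge 2\bigl(\mathcal{E}(F)^{\frac{1}{p-1}} + \mathcal{E}(G)^{\frac{1}{p-1}}\bigr)^{p-1}.
\end{equation*}
Using the $p$-homogeneity of $\mathcal{E}$ to rewrite $\mathcal{E}(F) = 2^{-p}\mathcal{E}(f+g)$ and $\mathcal{E}(G) = 2^{-p}\mathcal{E}(f-g)$, and using $\mathcal{E}(f) = \mathcal{E}(g) = 1$, one obtains
\begin{equation*}
\mathcal{E}(f+g)^{\frac{1}{p-1}} + \mathcal{E}(f-g)^{\frac{1}{p-1}} \le 2^{\frac{p}{p-1}}.
\end{equation*}
Using $\mathcal{E}(f-g)^{1/(p-1)} > \varepsilon^{p/(p-1)}$ then yields $\mathcal{E}(f+g)^{1/p} \le 2\bigl(1 - (\varepsilon/2)^{p/(p-1)}\bigr)^{(p-1)/p}$, so it suffices to take $\delta \coloneqq 1 - \bigl(1 - (\varepsilon/2)^{p/(p-1)}\bigr)^{(p-1)/p} > 0$.

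There is no real obstacle here: once the right substitution is chosen in the $p \in (1,2]$ case so that the ``dual'' direction of \ref{Cp} bounds $\mathcal{E}(f+g)$ from above in terms of $\mathcal{E}(f) + \mathcal{E}(g)$ and $\mathcal{E}(f-g)$, the conclusion is just a matter of rearranging the inequality and invoking strict positivity of $\varepsilon$. The only minor care to be taken is the homogeneity bookkeeping that reduces $\mathcal{E}(F),\mathcal{E}(G)$ to $\mathcal{E}(f\pm g)$ with the correct factor $2^{-p}$.
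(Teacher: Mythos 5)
Your proof is correct and is essentially the paper's own argument: the paper simply cites Clarkson's original proof that $L^p$ is uniformly convex, which is exactly this computation — apply \ref{Cp} directly for $p \in [2,\infty)$, and apply it to $\frac{f+g}{2},\frac{f-g}{2}$ together with $p$-homogeneity for $p \in (1,2]$. The explicit values of $\delta$ you extract are the standard ones and all the bookkeeping checks out.
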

\begin{proof}
	This can be proved by the same argument as in \cite[Proof of Corollary of Theorem 2]{Cla36}, where $L^p$-spaces are shown to be uniformly convex when $p \in (1,\infty)$. 
\end{proof}

Moreover, \ref{Cp} provides the following quantitative estimate for the central difference, which plays a central role in this section.
\begin{prop}\label{prop.diffble}
	Assume that $(\mathcal{E},\mathcal{F})$ satisfies \ref{Cp}.
    Then for any $f,g \in \mathcal{F}$,
    \begin{equation}\label{c-diff}
    	\mathcal{E}(f + g) + \mathcal{E}(f - g) - 2\mathcal{E}(f) \le
        2\bigl(1 \vee (p-1)\bigr)\Bigl[\mathcal{E}(f)^{\frac{1}{p - 1}} + \mathcal{E}(g)^{\frac{1}{p - 1}}\Bigr]^{(p - 2)^{+}}\mathcal{E}(g)^{1 \wedge \frac{1}{p - 1}},
    \end{equation} 
    and the function $\mathbb{R} \ni t \mapsto \mathcal{E}(f + tg) \in [0, \infty)$ is differentiable. Moreover, for any $c \in (0,\infty)$, 
    \begin{equation}\label{e:frechet.diffble}
    	\lim_{\delta \downarrow 0}\sup_{f \in \mathcal{F};\, \mathcal{E}(f) \leq c/(p-2)^{+}} \sup_{g \in \mathcal{F};\, \mathcal{E}(g) \leq 1}\abs{\frac{\mathcal{E}(f + \delta g) - \mathcal{E}(f)}{\delta} - \left.\frac{d}{dt}\mathcal{E}(f + tg)\right|_{t = 0}} = 0. 
    \end{equation}
\end{prop}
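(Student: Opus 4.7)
My plan is to first prove the central-difference estimate \eqref{c-diff}, and then use it together with convex analysis to deduce both pointwise differentiability of $t \mapsto \mathcal{E}(f+tg)$ and the uniform convergence \eqref{e:frechet.diffble}. Throughout, the only delicate point will be the direction of $p$-Clarkson's inequality in the case $p \in (1,2]$, where \ref{Cp} is a lower bound: I must pass through its weaker consequence \ref{Cp-weak}.

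To prove \eqref{c-diff}, set $a \coloneqq \mathcal{E}(f)^{1/(p-1)}$ and $b \coloneqq \mathcal{E}(g)^{1/(p-1)}$. When $p \in [2,\infty)$, \ref{Cp} gives $\mathcal{E}(f+g)+\mathcal{E}(f-g) \le 2(a+b)^{p-1}$, and since $x \mapsto x^{p-2}$ is non-decreasing on $[0,\infty)$, the mean value theorem yields $(a+b)^{p-1}-a^{p-1} \le (p-1)(a+b)^{p-2}b$; subtracting $2\mathcal{E}(f) = 2a^{p-1}$ and using $1\vee(p-1)=p-1$, $1\wedge\frac{1}{p-1}=\frac{1}{p-1}$, $(p-2)^+=p-2$ gives \eqref{c-diff}. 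When $p \in (1,2]$, I apply \ref{Cp} to $\frac{f+g}{2},\frac{f-g}{2}$ and invoke H\"older's inequality (exactly the derivation in Remark \ref{rmk:Cp-weak}) to obtain \ref{Cp-weak}: $\mathcal{E}(f+g)+\mathcal{E}(f-g) \le 2(\mathcal{E}(f)+\mathcal{E}(g))$; this is already \eqref{c-diff} since $1\vee(p-1)=1$, $(p-2)^+=0$ and $1\wedge\frac{1}{p-1}=1$ in this range.

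For differentiability of $\phi(t) \coloneqq \mathcal{E}(f+tg)$, I first note that $\phi$ is convex: $t \mapsto \mathcal{E}(f+tg)^{1/p}$ is convex because $\mathcal{E}^{1/p}$ is a seminorm, and composing with the convex non-decreasing function $x \mapsto x^p$ on $[0,\infty)$ preserves convexity. Hence $\phi'_{\pm}(t)$ exist for every $t$. Applying \eqref{c-diff} with $f+tg$ and $hg$ (for $h>0$) in place of $f,g$, and using $\mathcal{E}(hg) = h^p\mathcal{E}(g)$, the central difference satisfies
\[
\phi(t+h)+\phi(t-h)-2\phi(t) \le 2(1\vee(p-1))\bigl[\mathcal{E}(f+tg)^{\frac{1}{p-1}}+h^{\frac{p}{p-1}}\mathcal{E}(g)^{\frac{1}{p-1}}\bigr]^{(p-2)^+} h^{p \wedge \frac{p}{p-1}}\mathcal{E}(g)^{1 \wedge \frac{1}{p-1}}.
\]
Since $p \wedge \frac{p}{p-1} > 1$, dividing by $h$ and letting $h \downarrow 0$ forces $\phi'_+(t) - \phi'_-(t) = 0$, so $\phi'(t)$ exists.

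For the uniform convergence, convexity and differentiability of $\phi$ at $0$ give the subgradient inequality $\phi(s) \ge \phi(0)+s\phi'(0)$ for all $s\in\mathbb{R}$. Summing the cases $s = \pm\delta$ produces the sandwich $0 \le \phi(\delta) - \phi(0) - \delta\phi'(0) \le \phi(\delta)+\phi(-\delta)-2\phi(0)$ for $\delta > 0$; dividing by $\delta$ and invoking the bound just derived (with $t=0$, $h = \delta$) gives
\[
\left|\frac{\mathcal{E}(f+\delta g)-\mathcal{E}(f)}{\delta} - \phi'(0)\right| \le 2(1\vee(p-1))\bigl[\mathcal{E}(f)^{\frac{1}{p-1}}+\delta^{\frac{p}{p-1}}\mathcal{E}(g)^{\frac{1}{p-1}}\bigr]^{(p-2)^+} \delta^{(p \wedge \frac{p}{p-1})-1}\mathcal{E}(g)^{1\wedge \frac{1}{p-1}}.
\]
For $p \in (1,2]$, the bracket reduces to $1$ and the right-hand side becomes $2\delta^{p-1}\mathcal{E}(g) \le 2\delta^{p-1}$, uniform in $f$ (consistent with the convention $c/0 = \infty$). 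For $p \in [2,\infty)$, the hypothesis $\mathcal{E}(f) \le c/(p-2)^+$ uniformly bounds the bracket while $\delta^{1/(p-1)} \to 0$, giving \eqref{e:frechet.diffble}.
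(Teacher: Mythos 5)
Your proof is correct and follows essentially the same route as the paper's: the central-difference bound \eqref{c-diff} is obtained from \ref{Cp-weak} for $p\in(1,2]$ and from \ref{Cp} plus the elementary estimate $(a+b)^{p-1}-a^{p-1}\le(p-1)(a+b)^{p-2}b$ for $p\in[2,\infty)$, and then convexity of $t\mapsto\mathcal{E}(f+tg)$ together with the sandwich $0\le\phi(\delta)-\phi(0)-\delta\phi'_{\pm}(0)\le\phi(\delta)+\phi(-\delta)-2\phi(0)$ yields both differentiability and the uniform rate in \eqref{e:frechet.diffble}. The only differences from the paper are cosmetic (mean value theorem in place of the integral identity, and a more explicit justification of convexity).
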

\begin{proof}
    Let $f,g \in \mathcal{F}$. If $p \in (1,2]$, then \eqref{c-diff} is immediate from \ref{Cp-weak}.
	If $p \in (2,\infty)$, then setting $a \coloneqq \mathcal{E}(f)^{1/(p - 1)}$ and $b \coloneqq \mathcal{E}(g)^{1/(p - 1)}$, we see from \ref{Cp} that 
    \[
    \mathcal{E}(f + g) + \mathcal{E}(f - g) - 2\mathcal{E}(f)
    \le 2((a + b)^{p - 1} - a^{p - 1})
    = 2(p - 1)\int_{a}^{a + b}s^{p - 2}\,ds
    \le 2(p - 1)(a + b)^{p - 2}b, 
    \]
	proving \eqref{c-diff}.
	For the rest of the proof, we first note that by the convexity of $\mathcal{E}$, 
    \begin{equation} \label{eq:cdiff.right.left}
    \text{the limits} \quad \lim_{\delta \downarrow 0}\frac{\mathcal{E}(f + \delta g) - \mathcal{E}(f)}{\delta} 
    \quad \text{and} \quad 
    \lim_{\delta \downarrow 0}\frac{\mathcal{E}(f - \delta g) - \mathcal{E}(f)}{-\delta} 
    \quad \text{exist in $\mathbb{R}$,}
	\end{equation}
	and for any $\delta \in (0,\infty)$,
    \begin{align}\label{d:cdiff}
	D_{\delta}(f;g)
		\coloneqq \mathcal{E}(f + \delta g) + \mathcal{E}(f - \delta g) - 2\mathcal{E}(f) 
		&\geq 0, \\
	\abs{\frac{\mathcal{E}(f + \delta g) - \mathcal{E}(f)}{\delta} - \lim_{s \downarrow 0}\frac{\mathcal{E}(f + s g) - \mathcal{E}(f)}{s}}
		&\leq \frac{D_{\delta}(f;g)}{\delta}. 
	\label{e:frechet.diffble.cdiff}
    \end{align}
    On the other hand, we see from \eqref{c-diff} that for any $\delta \in (0,\infty)$,
    \begin{equation} \label{e:c-diff.unif}
        \frac{D_{\delta}(f;g)}{\delta} \leq
        \begin{cases}
            2\delta^{p - 1}\mathcal{E}(g) \quad &\text{if $p \in (1,2]$,} \\
            2(p - 1)\delta^{\frac{1}{p - 1}}\Bigl[\mathcal{E}(f)^{\frac{1}{p - 1}} + \delta^{\frac{p}{p - 1}} \mathcal{E}(g)^{\frac{1}{p - 1}}\Bigr]^{p - 2}\mathcal{E}(g)^{\frac{1}{p - 1}} \quad &\text{if $p \in (2,\infty)$.}
        \end{cases}
    \end{equation}
    By \eqref{d:cdiff} and \eqref{e:c-diff.unif}, the limits in \eqref{eq:cdiff.right.left} coincide, so that the function $t \mapsto \mathcal{E}(f + tg)$ is differentiable at $0$ and thereby at any $s \in \mathbb{R}$ by replacing $f$ with $f + sg$,
	and then we obtain \eqref{e:frechet.diffble} by combining this differentiability at $0$ with \eqref{e:frechet.diffble.cdiff} and \eqref{e:c-diff.unif}. 
\end{proof}

Proposition \ref{prop.diffble}, especially \eqref{e:frechet.diffble}, implies the Fr\'{e}chet differentiability of $\mathcal{E}$ on $\mathcal{F}/\mathcal{E}^{-1}(0)$.
We record this fact and basic properties of these derivatives in the following theorem.
\begin{thm}\label{thm.p-form}
    Assume that $(\mathcal{E},\mathcal{F})$ satisfies \ref{Cp}.
    Then $\mathcal{E} \colon \mathcal{F}/\mathcal{E}^{-1}(0) \to [0,\infty)$ is Fr\'{e}chet differentiable on the quotient normed space $\mathcal{F}/\mathcal{E}^{-1}(0)$.
    In particular, for any $f,g \in \mathcal{F}$,
    \begin{equation}\label{exist-deriva}
        \text{the derivative} \quad \mathcal{E}(f; g) \coloneqq \frac{1}{p}\left.\frac{d}{dt}\mathcal{E}(f + tg)\right|_{t = 0} \in \mathbb{R} \quad \text{exists,}
    \end{equation}
    the map $\mathcal{E}(f; \,\cdot\,) \colon \mathcal{F} \to \mathbb{R}$ is linear, $\mathcal{E}(f; f) = \mathcal{E}(f)$ and $\mathcal{E}(f; h) = 0$ for $h \in \mathcal{E}^{-1}(0)$.
    Moreover, for any $f, f_1, f_2, g \in \mathcal{F}$ and any $a \in \mathbb{R}$, the following hold:
    \begin{gather}
        \text{$\mathbb{R} \ni t \mapsto \mathcal{E}(f + tg; g) \in \mathbb{R}$ is strictly increasing if and only if $\mathcal{E}(g) > 0$}. \label{form.mono} \\
        \mathcal{E}(af; g) = \sgn(a)\abs{a}^{p - 1}\mathcal{E}(f; g), \quad \mathcal{E}(f + h; g) = \mathcal{E}(f;g) \quad \text{for $h \in \mathcal{E}^{-1}(0)$.} \label{form.basic} \\
        \abs{\mathcal{E}(f; g)} \le \mathcal{E}(f)^{(p - 1)/p}\mathcal{E}(g)^{1/p}. \label{bdd.form} \\
        \abs{\mathcal{E}(f_1; g) - \mathcal{E}(f_2; g)} \le C_{p}\bigl(\mathcal{E}(f_1) \vee \mathcal{E}(f_2)\bigr)^{(p - 1 - \alpha_{p})/p}\mathcal{E}(f_1 - f_2)^{\alpha_{p}/p} \mathcal{E}(g)^{1/p}, \label{ncont}
    \end{gather}
    where $\alpha_{p} \coloneqq \frac{1}{p} \wedge \frac{p - 1}{p}$ and $C_{p} \in (0,\infty)$ is a constant determined solely and explicitly by $p$.
\end{thm}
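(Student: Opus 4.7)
The plan is to derive all assertions from Proposition \ref{prop.diffble}, the $p$-homogeneity of $\mathcal{E}$, and the triangle inequality for $\mathcal{E}^{1/p}$. The existence of $\mathcal{E}(f;g)$ is immediate from the differentiability of $t \mapsto \mathcal{E}(f+tg)$ in Proposition \ref{prop.diffble}. Linearity of $\mathcal{E}(f;\,\cdot\,)$ follows from a standard convex analysis argument: the directional derivative of a convex function is sublinear in the direction, and two-sided differentiability forces $\mathcal{E}(f;-g) = -\mathcal{E}(f;g)$, which combined with sublinearity yields both $\mathcal{E}(f;g_{1}+g_{2}) \leq \mathcal{E}(f;g_{1}) + \mathcal{E}(f;g_{2})$ and its reverse, hence additivity; homogeneity in $g$ is built into the $p$-homogeneity of $\mathcal{E}$. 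The identities $\mathcal{E}(f;f) = \mathcal{E}(f)$ and $\mathcal{E}(af;g) = \sgn(a)\abs{a}^{p-1}\mathcal{E}(f;g)$ are obtained by differentiating $\mathcal{E}(f+tf) = (1+t)^{p}\mathcal{E}(f)$ and $\mathcal{E}(af+tg) = \abs{a}^{p}\mathcal{E}(f+(t/a)g)$ at $t=0$. For $h \in \mathcal{E}^{-1}(0)$, the triangle inequality for $\mathcal{E}^{1/p}$ forces $\mathcal{E}(f+h+tg) = \mathcal{E}(f+tg)$ for all $t \in \mathbb{R}$, which yields both $\mathcal{E}(f;h) = 0$ and the translation invariance in \eqref{form.basic}.

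The bound \eqref{bdd.form} is proved by taking $p$-th powers of the triangle inequality $\mathcal{E}(f \pm tg)^{1/p} \leq \mathcal{E}(f)^{1/p} + \abs{t}\mathcal{E}(g)^{1/p}$ and differentiating in $t$ at $t = 0$. Combined with the previously established linearity and the uniform estimate \eqref{e:frechet.diffble}, this gives Fr\'{e}chet differentiability on $\mathcal{F}/\mathcal{E}^{-1}(0)$: rescaling $h = \delta g$ with $\delta = \mathcal{E}(h)^{1/p}$ in \eqref{e:frechet.diffble} yields $\mathcal{E}(f+h) = \mathcal{E}(f) + p\mathcal{E}(f;h) + o(\mathcal{E}(h)^{1/p})$ uniformly on bounded sets, with the candidate derivative $p\mathcal{E}(f;\,\cdot\,)$ rendered continuous by \eqref{bdd.form}. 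For the monotonicity \eqref{form.mono}, convexity of $t \mapsto \mathcal{E}(f+tg)$ gives monotonicity of its derivative, and strict monotonicity when $\mathcal{E}(g) > 0$ follows from the strict convexity of $\mathcal{E}$ on $\mathcal{F}/\mathcal{E}^{-1}(0)$ implied by the uniform convexity in Proposition \ref{p:uc}; the case $\mathcal{E}(g) = 0$ is trivial via the translation invariance.

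The main work is \eqref{ncont}. The key observation is that by convexity of $t \mapsto \mathcal{E}(f+tg)$, the remainder
\begin{equation*}
r(f;g,s) \coloneqq \mathcal{E}(f+sg) - \mathcal{E}(f) - ps\mathcal{E}(f;g)
\end{equation*}
is non-negative for every $s \in \mathbb{R}$, and $r(f;g,s) + r(f;g,-s) = D_{s}(f;g)$ in the notation of \eqref{d:cdiff}, so that $0 \leq r(f;g,s) \leq D_{s}(f;g)$. Writing
\begin{equation*}
ps\bigl[\mathcal{E}(f_{1};g) - \mathcal{E}(f_{2};g)\bigr] = \bigl[\mathcal{E}(f_{1}+sg) - \mathcal{E}(f_{2}+sg)\bigr] - \bigl[\mathcal{E}(f_{1}) - \mathcal{E}(f_{2})\bigr] + r(f_{2};g,s) - r(f_{1};g,s),
\end{equation*}
I would bound the bracketed pair by $\abs{\mathcal{E}(a) - \mathcal{E}(b)} \leq p\bigl(\mathcal{E}(a) \vee \mathcal{E}(b)\bigr)^{(p-1)/p}\mathcal{E}(a-b)^{1/p}$ (which follows from $\abs{x^{p} - y^{p}} \leq p(x \vee y)^{p-1}\abs{x-y}$ and the triangle inequality for $\mathcal{E}^{1/p}$), and the remainders using \eqref{c-diff}. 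Setting $M \coloneqq \mathcal{E}(f_{1}) \vee \mathcal{E}(f_{2})$ and choosing $s \in (0,\infty)$ so small that $\mathcal{E}(sg) \lesssim M$, this yields
\begin{equation*}
\abs{\mathcal{E}(f_{1};g) - \mathcal{E}(f_{2};g)} \lesssim s^{-1}M^{(p-1)/p}\mathcal{E}(f_{1}-f_{2})^{1/p} + A_{p}(M,s,\mathcal{E}(g)),
\end{equation*}
where $A_{p}$ scales like $s^{p-1}\mathcal{E}(g)$ for $p \in (1,2]$ and like $M^{(p-2)/(p-1)}s^{1/(p-1)}\mathcal{E}(g)^{1/(p-1)}$ for $p \in [2,\infty)$. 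Balancing the two terms by the optimal choice of $s$ in each case produces, after direct computation, precisely the exponents $\alpha_{p}/p$ and $(p-1-\alpha_{p})/p$ on $\mathcal{E}(f_{1}-f_{2})$ and $M$.

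The main obstacle will be the careful bookkeeping of exponents in this case-split optimization, along with the verification that the optimal $s$ lies in the admissible regime $\mathcal{E}(sg) \lesssim M$; when this fails (namely when $\mathcal{E}(f_{1}-f_{2})$ is much larger than $M$), \eqref{ncont} is instead a direct consequence of the triangle inequality and \eqref{bdd.form}.
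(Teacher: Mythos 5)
Your proposal is correct, and for everything up to and including the Fr\'{e}chet differentiability, the monotonicity \eqref{form.mono}, and the bound \eqref{bdd.form}, it follows essentially the same route as the paper (the convexity/sublinearity argument for linearity is exactly the paper's inequality \eqref{bdd1} applied to $\pm g_{1},\pm g_{2}$, and \eqref{bdd.form} is obtained in the paper precisely by differentiating the $p$-th power of the triangle inequality via \eqref{e:p-power}).

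For the H\"{o}lder estimate \eqref{ncont}, however, you take a genuinely different and arguably more transparent route. The paper follows a \v{S}mulian-type argument: it first proves a second-order estimate \eqref{ncont.1} for the \emph{root} $\mathcal{E}^{1/p}$ on the unit sphere (via a Taylor expansion of $t\mapsto\abs{t}^{1/p}$ around $\mathcal{E}(f)$, which requires the perturbation control \eqref{perturb} and ultimately \eqref{c-diff}), then exploits the identity $\mathcal{E}(f_1;\delta g)-\mathcal{E}(f_2;\delta g)=\mathcal{E}(f_1;f_1+\delta g)+\mathcal{E}(f_2;f_1-\delta g)-\mathcal{E}(f_1)-\mathcal{E}(f_2;f_1)$ together with \eqref{bdd.form}, and finally optimizes over $\delta$ and rescales. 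You instead work directly with the first-order convexity remainder $r(f;g,s)$, observe that $0\le r(f;g,s)\le D_{s}(f;g)$ since $r(f;g,s)+r(f;g,-s)=D_{s}(f;g)$ with both summands non-negative, bound $D_{s}$ by \eqref{c-diff} and the zeroth-order differences by the Lipschitz estimate $\abs{\mathcal{E}(a)-\mathcal{E}(b)}\le p(\mathcal{E}(a)\vee\mathcal{E}(b))^{(p-1)/p}\mathcal{E}(a-b)^{1/p}$, and balance. I checked the exponent bookkeeping in both regimes: for $p\in(1,2]$ the optimal $s$ gives $M^{(p-1)^{2}/p^{2}}E^{(p-1)/p^{2}}G^{1/p}$ and for $p\in[2,\infty)$ it gives $M^{(p^{2}-p-1)/p^{2}}E^{1/p^{2}}G^{1/p}$ (with $M=\mathcal{E}(f_1)\vee\mathcal{E}(f_2)$, $E=\mathcal{E}(f_1-f_2)$, $G=\mathcal{E}(g)$), which are exactly the exponents $(p-1-\alpha_{p})/p$ and $\alpha_{p}/p$ of \eqref{ncont}; moreover in both cases the admissibility constraint $\mathcal{E}(sg)\lesssim M$ at the optimal $s$ reduces precisely to $E\lesssim M$, and your fallback via the triangle inequality and \eqref{bdd.form} covers the complementary regime $E\gtrsim M$. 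Your approach avoids the normalization and the expansion of $\mathcal{E}^{1/p}$ entirely, at the cost of having to carry the constraint $\mathcal{E}(sg)\lesssim M$ explicitly; the paper's approach keeps all quantities normalized, which is what makes its single choice $\delta=\mathcal{E}(f_1-f_2)^{((p-1)\vee 1)/p^{2}}$ work uniformly. The only (trivial) omissions in your write-up are the degenerate cases $\mathcal{E}(g)=0$ and $\mathcal{E}(f_1)\vee\mathcal{E}(f_2)=0$, where both sides of \eqref{ncont} vanish by \eqref{bdd.form}.
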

\begin{rmk}
	The H\"{o}lder continuity exponent $\alpha_{p}$ appearing in \eqref{ncont} is not optimal because this exponent can be improved to $(p - 1) \wedge 1$ in the case of $\mathcal{E}(f; g) = \int_{\mathbb{R}^{n}}\abs{\nabla f}^{p - 2}\langle \nabla f, \nabla g \rangle\,dx$. 
	However, whether such an improved H\"{o}lder continuity holds is unclear even for concrete $p$-energy forms constructed in the previous works \cite{CGQ22,Kig23,MS+,Shi24}. 
	We can see the optimal H\"{o}lder continuity (\eqref{ncont} with $(p - 1) \wedge 1$ in place of $\alpha_{p}$) for $p$-energy forms constructed in \cite{KS.lim}, where a direct construction of $p$-energy forms based on the Korevaar--Schoen type $p$-energy forms is presented. 
\end{rmk}
\begin{proof}[Proof of Theorem \ref{thm.p-form}]
    The existence of $\mathcal{E}(f; g)$ in \eqref{exist-deriva} is already proved in Proposition \ref{prop.diffble}.   
    The properties $\mathcal{E}(f; ag) = a\mathcal{E}(f; g)$, $\mathcal{E}(af; g) = \sgn(a)\abs{a}^{p - 1}\mathcal{E}(f; g)$ and $\mathcal{E}(f; f) = \mathcal{E}(f)$ are obvious from the definition.
    The equalities $\mathcal{E}(f + h; g) = \mathcal{E}(f + g)$ and $\mathcal{E}(f; h) = 0$ for any $h \in \mathcal{E}^{-1}(0)$ follow from the triangle inequality for $\mathcal{E}^{1/p}$, so \eqref{form.basic} holds.
    The property \eqref{form.mono} is a consequence of the strict convexity \eqref{e:strictconvex} of $\mathcal{E}$ in Proposition \ref{prop.GC-list}-\ref{GC.tri} and the differentiability \eqref{exist-deriva} of $\mathcal{E}$. 
    
    To show that $\mathcal{E}(f; \,\cdot\,)$ is linear, it suffices to prove $\mathcal{E}(f; g_{1} + g_{2}) = \mathcal{E}(f; g_{1}) + \mathcal{E}(f; g_{2})$ for any $g_{1}, g_{2} \in \mathcal{F}$.
    For any $t > 0$, the convexity of $\mathcal{E}$ implies that
    \begin{align}\label{bdd1}
        \frac{\mathcal{E}\bigl(f + t(g_1 + g_2)\bigr) - \mathcal{E}(f)}{t}
        &= \frac{\mathcal{E}\bigl(\frac{1}{2}(f + 2tg_{1}) + \frac{1}{2}(f + 2tg_{2})\bigr) - \mathcal{E}(f)}{t} \nonumber \\
        &\le \frac{\mathcal{E}(f + 2tg_{1}) - \mathcal{E}(f)}{2t} + \frac{\mathcal{E}(f + 2tg_{2}) - \mathcal{E}(f)}{2t}.
    \end{align}
    Passing to the limit as $t \downarrow 0$, we get $\mathcal{E}(f; g_{1} + g_{2}) \le \mathcal{E}(f; g_{1}) + \mathcal{E}(f; g_{2})$.
    We obtain the converse inequality by noting that     
    \[
    \frac{\mathcal{E}(f - tg) - \mathcal{E}(f)}{t} \to -\left.\frac{d}{dt}\mathcal{E}(f + tg)\right|_{t = 0} = -p\mathcal{E}(f; g) \quad \text{as $t \downarrow 0$,}
    \]
    and by applying \eqref{bdd1} with $-g_{1}, -g_{2}$ in place of $g_{1},g_{2}$ respectively. 
    
    The H\"{o}lder-type estimate \eqref{bdd.form} follows from the following elementary estimate: 
    \begin{equation}\label{e:p-power}
    	\abs{a^q - b^q}
    	= \abs{\int_{a \wedge b}^{a \vee b}qt^{q - 1}\,dt}
    	\le q(a^{q - 1} \vee b^{q - 1})\abs{a - b} \quad \text{for $q \in (0,\infty)$, $a, b \in [0,\infty)$}.
    \end{equation}
    Indeed, by \eqref{e:p-power} and the triangle inequality for $\mathcal{E}^{1/p}$, for any $t > 0$,
    \begin{equation}\label{e:preHolder}
    \begin{split}
    	\abs{\frac{\mathcal{E}(f + tg) - \mathcal{E}(f)}{t}}
    	&\le p\bigl(\mathcal{E}(f + tg)^{1/p} \vee \mathcal{E}(f)^{1/p}\bigr)^{p - 1}\mathcal{E}(g)^{1/p} \\
    	&\le p\bigl(\mathcal{E}(f)^{1/p} + t \mathcal{E}(g)^{1/p}\bigr)^{p - 1}\mathcal{E}(g)^{1/p}.
    \end{split}
    \end{equation}
    We obtain \eqref{bdd.form} by letting $t \downarrow 0$ in \eqref{e:preHolder}. We conclude that $\mathcal{E}(f;\,\cdot\,)$ is the Fr\'{e}chet derivative of $\mathcal{E}$ at $f$ by \eqref{e:frechet.diffble}, the linearity of $\mathcal{E}(f;\,\cdot\,)$ and \eqref{bdd.form}. 

    In the rest of this proof, we prove \eqref{ncont}.
    Our proof is partially inspired by an argument due to \v{S}mulian in \cite{Smu40}. 
    In this proof, $C_{p,i}$, $i \in \{ 1,\dots,5 \}$, is a constant depending only on $p$.
    We first show an analogue of \eqref{c-diff} for $\mathcal{E}^{1/p}$.
    Using \eqref{e:p-power}, we can show that there exists $c_{\ast} \in (0,2^{-p^{3}})$ depending only on $p$ such that
    \begin{equation}\label{perturb}
        \sup\biggl\{ \frac{\abs{\mathcal{E}(f) - \mathcal{E}(f + \delta g)}}{\mathcal{E}(f)} \biggm| 
		\text{$f,g \in \mathcal{F}$, $\delta \in (0,\infty)$, $\delta < c_{\ast}\mathcal{E}(f)^{1/p}$, $\mathcal{E}(g) = 1$}
        \biggr\} 
        \le \frac{1}{10}. 
    \end{equation}
    Define $\psi \colon \mathbb{R} \to \mathbb{R}$ by $\psi(t) \coloneqq \abs{t}^{1/p}$, and fix $f,g \in \mathcal{F}$ and $\delta \in (0,\infty)$ with $\delta < c_{\ast}\mathcal{E}(f)^{1/p}$ and $\mathcal{E}(g) = 1$.  
    Then there exist $\theta_{1},\theta_{2},\theta \in [0,1]$ such that
    \begin{align}\label{1/p-cdiff}
        0
        &\le \psi(\mathcal{E}(f + \delta g)) + \psi(\mathcal{E}(f - \delta g)) - 2\psi(\mathcal{E}(f)) \nonumber \\
        &= \psi'(A_{1,\delta})\bigl[\mathcal{E}(f + \delta g) - \mathcal{E}(f)\bigr]
        - \psi'(A_{2,\delta})\bigl[\mathcal{E}(f) - \mathcal{E}(f - \delta g)\bigr] \nonumber \\
        &= \psi'(A_{1}(\delta))D_{\delta}(f; g) - \bigl(\psi'(A_{1,\delta}) - \psi'(A_{2,\delta})\bigr)\bigl[\mathcal{E}(f) - \mathcal{E}(f - \delta g)\bigr] \nonumber \\
        &= \psi'(A_{1,\delta})D_{\delta}(f; g) - \psi''\bigl(A_{1,\delta} + \theta(A_{2,\delta} - A_{1,\delta})\bigr)(A_{2,\delta} - A_{1,\delta})\bigl[\mathcal{E}(f) - \mathcal{E}(f - \delta g)\bigr], 
    \end{align}
    where $D_{\delta}(f;g)$ is the same as in \eqref{d:cdiff} in the proof of Proposition \ref{prop.diffble} and 
    \[
    A_{1,\delta} \coloneqq \mathcal{E}(f) + \theta_{1}\bigl[\mathcal{E}(f + \delta g) - \mathcal{E}(f)\bigr], \quad
    A_{2,\delta} \coloneqq \mathcal{E}(f - \delta g) + \theta_{2}\bigl[\mathcal{E}(f) - \mathcal{E}(f - \delta g)\bigr].
    \]
    By \eqref{perturb}, we note that $\abs{A_{1,\delta}} \wedge \abs{A_{1,\delta} + \theta(A_{2,\delta} - A_{1,\delta})} \ge \frac{1}{2}\mathcal{E}(f)$, which together with \eqref{1/p-cdiff} and the estimate \eqref{c-diff} in Proposition \ref{prop.diffble} implies that for any $(\delta,f) \in (0,\infty) \times \mathcal{F}$ with $0 < \delta < c_{\ast}\mathcal{E}(f)^{1/p}$, 
    \begin{align*}
        0
        &\le \psi(\mathcal{E}(f + \delta g)) + \psi(\mathcal{E}(f - \delta g)) - 2\psi(\mathcal{E}(f)) \nonumber \\
        &\le C_{p,1}\Bigl(\mathcal{E}(f)^{\frac{1}{p} - 1 + \frac{(p - 2)^{+}}{p - 1}}\delta^{p \wedge \frac{p}{p - 1}} + \mathcal{E}(f)^{\frac{1}{p} - 2 + \frac{2(p - 1)}{p}}\delta^{2}\Bigr) \nonumber \\
        &\le C_{p,1}\delta \cdot \delta^{(p - 1) \wedge \frac{1}{p - 1}}\Bigl(\mathcal{E}(f)^{\frac{1}{p} - 1 + \frac{(p - 2)^{+}}{p - 1}} + \mathcal{E}(f)^{\frac{1}{p} - 2 + \frac{2(p - 1)}{p}}\Bigr). 
    \end{align*}
    In particular, if $\mathcal{E}(f) = 1$, then
    \begin{equation}\label{ncont.1}
        \mathcal{E}(f + \delta g)^{1/p} + \mathcal{E}(f - \delta g)^{1/p} \le 2 + C_{p,1}\delta^{(p - 1) \wedge \frac{1}{p-1}}\delta \quad \text{for any $\delta \in (0,c_{\ast})$}. 
    \end{equation}
    Next let $f_{1},f_{2} \in \mathcal{F}$.
    Then, by \eqref{bdd.form} and \eqref{e:p-power},
	\begin{align}\label{ncont.2}
		\abs{\mathcal{E}(f_2; f_1) - \mathcal{E}(f_1)}
		&\le \abs{\mathcal{E}(f_2; f_1) - \mathcal{E}(f_2)} + \abs{\mathcal{E}(f_2) - \mathcal{E}(f_1)} \nonumber \\
		&\le \Bigl(\mathcal{E}(f_2)^{(p - 1)/p} + p\Bigl(\mathcal{E}(f_{2})^{(p - 1)/p} \vee \mathcal{E}(f_1)^{(p - 1)/p}\Bigr)\Bigr)\mathcal{E}(f_1 - f_2)^{1/p} \nonumber \\
        &\le C_{p,2}\Bigl(\mathcal{E}(f_1)^{(p - 1)/p} \vee \mathcal{E}(f_2)^{(p - 1)/p}\Bigr)\mathcal{E}(f_1 - f_2)^{1/p}.
	\end{align}
    Now, for any $f_1,f_2,g \in \mathcal{F}$ with $\mathcal{E}(f_1) = \mathcal{E}(g) = 1$ and any $\delta \in (0,c_{\ast})$, we see that
	\begin{align*}
		&\mathcal{E}(f_1; \delta g) - \mathcal{E}(f_2; \delta g) \\
		&= \mathcal{E}(f_1; f_1 + \delta g) + \mathcal{E}(f_2; f_1 - \delta g) - \mathcal{E}(f_1) - \mathcal{E}(f_2; f_1) \\
		&\overset{\eqref{bdd.form}}{\le} \Bigl(\mathcal{E}(f_1)^{(p - 1)/p} \vee \mathcal{E}(f_2)^{(p - 1)/p}\Bigr)\Bigl(\mathcal{E}(f_1 + \delta g)^{1/p} + \mathcal{E}(f_1 - \delta g)^{1/p}\Bigr) - \mathcal{E}(f_1) - \mathcal{E}(f_2; f_1) \\
		&\overset{\eqref{e:p-power},\eqref{ncont.1}}{\le} \Bigl(1 + C_{p,3}\mathcal{E}(f_1 - f_{2})^{1/p}\Bigr)\Bigl(2 + C_{p,1}\delta^{(p - 1) \wedge \frac{1}{p-1}}\delta\Bigr) - \mathcal{E}(f_1) - \mathcal{E}(f_2; f_1). 
	\end{align*}
	Similarly, we can show
	\begin{align*}
		&\mathcal{E}(f_1; \delta g) - \mathcal{E}(f_2; \delta g) \\
		&= -\mathcal{E}(f_1; f_1 - \delta g) - \mathcal{E}(f_2; f_1 + \delta g) + \mathcal{E}(f_1) + \mathcal{E}(f_2; f_1) \\
		&\ge -\Bigl(1 + C_{p,3}\mathcal{E}(f_1 - f_{2})^{1/p}\Bigr)\Bigl(2 + C_{p,1}\delta^{(p - 1) \wedge \frac{1}{p-1}}\delta\Bigr) + \mathcal{E}(f_1) + \mathcal{E}(f_2; f_1).
	\end{align*}
	From these estimates, we have
	\begin{align*}
		&\abs{\mathcal{E}(f_1; g) - \mathcal{E}(f_2; g)}
		= \frac{\abs{\mathcal{E}(f_1; \delta g) - \mathcal{E}(f_2; \delta g)}}{\delta} \\
		&\le \Bigl(1 + C_{p,3}\mathcal{E}(f_1 - f_{2})^{1/p}\Bigr)\Bigl(2\delta^{-1} + C_{p,1}\delta^{(p - 1) \wedge \frac{1}{p-1}}\Bigr) - \delta^{-1}\mathcal{E}(f_1) - \delta^{-1}\mathcal{E}(f_2; f_1) \\
        &= \Bigl(1 + C_{p,3}\mathcal{E}(f_1 - f_{2})^{1/p}\Bigr)\Bigl(2\delta^{-1} + C_{p,1}\delta^{(p - 1) \wedge \frac{1}{p-1}}\Bigr) - 2\delta^{-1}\mathcal{E}(f_1) + \delta^{-1}\bigl(\mathcal{E}(f_1) - \mathcal{E}(f_2; f_1)\bigr) \\
        &\overset{\eqref{ncont.2}}{\le} \Bigl(1 + C_{p,3}\mathcal{E}(f_1 - f_{2})^{1/p}\Bigr)\Bigl(2\delta^{-1} + C_{p,1}\delta^{(p - 1) \wedge \frac{1}{p-1}}\Bigr) - 2\delta^{-1} + C_{p,2}\delta^{-1}\mathcal{E}(f_1 - f_{2})^{1/p} \\
        &\le C_{p,4}\Bigl(\delta^{(p - 1) \wedge \frac{1}{p-1}} + \delta^{-1}\mathcal{E}(f_1 - f_{2})^{1/p}\Bigr).
	\end{align*}
    If $\mathcal{E}(f_1 - f_2) < c_{\ast}^{-p^{2}/((p - 1) \vee 1)}$, then, by choosing $\delta = \mathcal{E}(f_1 - f_{2})^{((p - 1) \vee 1)/p^{2}}$, we obtain
    \begin{equation}\label{ncont.3}
        \abs{\mathcal{E}(f_1; g) - \mathcal{E}(f_2; g)}
        \le C_{p,5}\mathcal{E}(f_1 - f_{2})^{((p - 1) \wedge 1)/p^{2}}.
    \end{equation}
	The same is clearly true if $\mathcal{E}(f_1 - f_2) \ge c_{\ast}^{-p^{2}/((p - 1) \vee 1)}$ since $\mathcal{E}(f_2) \le 2^{p - 1}\bigl(1 + \mathcal{E}(f_1 - f_2)\bigr)$.
    Finally, for any $f_1, f_2, g \in \mathcal{F}$ with $\mathcal{E}(f_1) \wedge \mathcal{E}(g) > 0$, we have
    \begin{align*}
        \abs{\mathcal{E}(f_1; g) - \mathcal{E}(f_2; g)}
        &= \mathcal{E}(f_1)^{(p - 1)/p}\mathcal{E}(g)^{1/p}
        \abs{\mathcal{E}\biggl(\frac{f_1}{\mathcal{E}(f_{1})^{1/p}}; \frac{g}{\mathcal{E}(g)^{1/p}}\biggr) - \mathcal{E}\biggl(\frac{f_2}{\mathcal{E}(f_{1})^{1/p}}; \frac{g}{\mathcal{E}(g)^{1/p}}\biggr)} \\
        &\overset{\eqref{ncont.3}}{\le} C_{p,5}\mathcal{E}(f_1)^{(p - 1)/p}\mathcal{E}(g)^{1/p}\mathcal{E}\biggl(\frac{f_1}{\mathcal{E}(f_{1})^{1/p}} - \frac{f_{2}}{\mathcal{E}(f_{1})^{1/p}}\biggr)^{((p - 1) \wedge 1)/p^{2}} \\
        &\overset{\eqref{ncont.3}}{\le} C_{p,5}\bigl(\mathcal{E}(f_1) \vee \mathcal{E}(f_2)\bigr)^{(p - 1 - \alpha_{p})/p}\mathcal{E}(g)^{1/p}\mathcal{E}(f_1 - f_{2})^{\alpha_{p}/p}.
    \end{align*}
    The same estimate is clearly true if $\mathcal{E}(f_{2}) \wedge \mathcal{E}(g) > 0$.
    Since \eqref{ncont} is obvious when $g \in \mathcal{E}^{-1}(0)$ or $\mathcal{E}(f_1) \vee \mathcal{E}(f_2) = 0$, we obtain \eqref{ncont}. 
\end{proof}

The following theorem gives a quantitative continuity for the inverse map of $f \mapsto \mathcal{E}(f; \,\cdot\,)$. 
\begin{thm}\label{thm.conti-inverse}
	Assume that $(\mathcal{E},\mathcal{F})$ satisfies \ref{Cp}. 
	Then for any $f,g \in \mathcal{F}$,  
	\begin{equation}\label{e:conti.inverse}
		\mathcal{E}(f - g) \le C_{p}' \bigl[\mathcal{E}(f) \vee \mathcal{E}(g)\bigr]^{\frac{1 + (p-1)(2-p)^{+}}{p}} \biggl(\sup_{\varphi \in \mathcal{F}; \mathcal{E}(\varphi) \le 1}\abs{\mathcal{E}(f; \varphi) - \mathcal{E}(g; \varphi)}\biggr)^{(p-1) \wedge 1}, 
	\end{equation}
	where $C_{p}' \in (0,\infty)$ is a constant determined solely and explicitly by $p$.
\end{thm}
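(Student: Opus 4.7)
The plan is to analyze the convex differentiable function $F(t) \coloneqq \mathcal{E}(g + t(f-g))$ on $[0,1]$, whose differentiability and derivative formula $F'(t) = p\mathcal{E}(g + t(f-g);\, f-g)$ come from Proposition \ref{prop.diffble} and Theorem \ref{thm.p-form}. I will compare two bounds on the second-order increment $F(1) + F(0) - 2F(1/2) = \mathcal{E}(f) + \mathcal{E}(g) - 2\mathcal{E}((f+g)/2)$: an easy upper bound coming from monotonicity of $F'$, and a quantitative strong-convexity lower bound extracted from \ref{Cp}. One may assume $\mathcal{E}(f-g) > 0$ and $\mathcal{E}(f) \vee \mathcal{E}(g) > 0$, since the desired estimate is trivial otherwise.

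The upper bound is immediate from monotonicity of $F'$:
\[
\mathcal{E}(f) + \mathcal{E}(g) - 2\mathcal{E}\bigl(\tfrac{f+g}{2}\bigr) = \int_{0}^{1/2}\bigl(F'(t+\tfrac{1}{2}) - F'(t)\bigr)\,dt \le \tfrac{p}{2}\bigl(\mathcal{E}(f;f-g) - \mathcal{E}(g;f-g)\bigr).
\]
For the lower bound, I will apply \ref{Cp} with $h = (f+g)/2$ and $k = (f-g)/2$. When $p \in [2,\infty)$, the implied \ref{Cp-weak} (see Remark \ref{rmk:Cp-weak}) immediately yields $F(1)+F(0)-2F(1/2) \ge 2\mathcal{E}(k) = 2^{1-p}\mathcal{E}(f-g)$. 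When $p \in (1,2]$, setting $A = \mathcal{E}(h)$, $B = \mathcal{E}(k)$ and $q = 1/(p-1) \ge 1$, \ref{Cp} reads $\mathcal{E}(f)+\mathcal{E}(g) \ge 2(A^q+B^q)^{1/q}$, and the elementary calculus estimate
\[
(A^q+B^q)^{1/q} - A = \int_{0}^{B}(A^q+t^q)^{1/q-1}t^{q-1}\,dt \ge q^{-1}(A+B)^{1-q} B^q
\]
(valid because $1/q - 1 \le 0$ and $(A^q+B^q)^{1/q} \le A+B$), combined with $A+B \le 2[\mathcal{E}(f)\vee\mathcal{E}(g)]$ and $B = 2^{-p}\mathcal{E}(f-g)$, will give $F(1)+F(0)-2F(1/2) \ge C_p [\mathcal{E}(f) \vee \mathcal{E}(g)]^{(p-2)/(p-1)} \mathcal{E}(f-g)^{1/(p-1)}$.

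Letting $M$ denote the supremum in \eqref{e:conti.inverse}, the linearity of $\mathcal{E}(f;\,\cdot\,)$ in Theorem \ref{thm.p-form} applied to $\varphi = (f-g)/\mathcal{E}(f-g)^{1/p}$ yields $\mathcal{E}(f;f-g) - \mathcal{E}(g;f-g) \le \mathcal{E}(f-g)^{1/p} M$. Feeding the two-sided estimates into the upper bound gives $\mathcal{E}(f-g)^{(p-1)/p} \le C_p M$ when $p \in [2,\infty)$ and $\mathcal{E}(f-g)^{1/(p(p-1))} \le C_p [\mathcal{E}(f)\vee\mathcal{E}(g)]^{(2-p)/(p-1)} M$ when $p \in (1,2]$. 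To reshape these into the symmetric form of \eqref{e:conti.inverse}, I will use the trivial a priori bounds $\mathcal{E}(f-g)^{1/p} \le 2[\mathcal{E}(f) \vee \mathcal{E}(g)]^{1/p}$ (triangle inequality) and $M \le 2[\mathcal{E}(f) \vee \mathcal{E}(g)]^{(p-1)/p}$ (from \eqref{bdd.form}). For $p \in [2,\infty)$, multiplying the two bounds on $\mathcal{E}(f-g)$ delivers \eqref{e:conti.inverse} directly, noting $(p-1)\wedge 1 = 1$. For $p \in (1,2]$, raising the derived estimate to the $p(p-1)$-th power, splitting $M^{p(p-1)} = M^{p-1} \cdot M^{(p-1)^2}$, and absorbing $M^{(p-1)^2}$ into $[\mathcal{E}(f)\vee\mathcal{E}(g)]^{(p-1)^3/p}$ via the bound on $M$ produces exponent $[(2-p)p^2 + (p-1)^3]/p$ on $\mathcal{E}(f)\vee\mathcal{E}(g)$, which simplifies to $(1 + (p-1)(2-p))/p$ as required.

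The hardest part will be the strong-convexity lower bound for $p \in (1,2]$: no pointwise Lindqvist-type monotonicity inequality is available in this abstract setting, so all the required information must be extracted from \ref{Cp} alone through the one-dimensional calculus bound on $(A^q+B^q)^{1/q} - A$ above. The subsequent exponent bookkeeping, while mechanical, is essential for converting the directly derived estimate into the symmetric form of the theorem, and the a priori control on $M$ furnished by \eqref{bdd.form} is indispensable for this last step.
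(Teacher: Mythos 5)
Your proposal is correct, and it follows the same overall strategy as the paper — exploit the convexity and differentiability of $t \mapsto \mathcal{E}(g + t(f-g))$ to control a midpoint defect by the dual-norm quantity $M \coloneqq \sup_{\mathcal{E}(\varphi)\le 1}\abs{\mathcal{E}(f;\varphi)-\mathcal{E}(g;\varphi)}$, then use \ref{Cp} at the midpoint to convert that defect into a bound on $\mathcal{E}(f-g)$ — but the execution differs in three worthwhile ways. First, you work with the symmetric second difference $\mathcal{E}(f)+\mathcal{E}(g)-2\mathcal{E}\bigl(\frac{f+g}{2}\bigr) = \int_0^{1/2}\bigl(F'(t+\tfrac12)-F'(t)\bigr)\,dt \le \frac{p}{2}\bigl(\mathcal{E}(f;f-g)-\mathcal{E}(g;f-g)\bigr)$ and test the derivative difference against the single normalized function $(f-g)/\mathcal{E}(f-g)^{1/p}$, which yields the factor $\mathcal{E}(f-g)^{1/p}M$ directly; the paper instead bounds $\abs{\psi'(0)}$ and $\abs{\psi'(1)}$ separately, which requires the preliminary estimate $\abs{\mathcal{E}(f)-\mathcal{E}(g)} \le \frac{p}{p-1}\bigl(\mathcal{E}(f)^{1/p}\vee\mathcal{E}(g)^{1/p}\bigr)M$ derived from $\norm{\mathcal{E}(f;\,\cdot\,)}_{\mathcal{F},\ast} = \mathcal{E}(f)^{(p-1)/p}$, a step you bypass entirely. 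Second, for $p \in (1,2]$ your integral estimate $(A^q+B^q)^{1/q}-A \ge q^{-1}(A+B)^{1-q}B^q$ (with $q = 1/(p-1)$) packages the strong-convexity lower bound in one stroke, whereas the paper reaches the same exponents through \eqref{e:p-power} together with a case split on whether $a \le c_pa^{1/p}M$. Third, your sharper intermediate factor $\mathcal{E}(f-g)^{1/p}$ forces the extra bookkeeping at the end (absorbing $\mathcal{E}(f-g)^{1/p}$ and $M^{(p-1)^2}$ into powers of $\mathcal{E}(f)\vee\mathcal{E}(g)$ via the triangle inequality and \eqref{bdd.form}), which the paper avoids by carrying $a^{1/p}$ from the start; I checked the exponent arithmetic $\bigl[(2-p)p^2+(p-1)^3\bigr]/p = \bigl[1+(p-1)(2-p)\bigr]/p$ and it is right. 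Both routes land on the stated estimate with the same quality of constants.
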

\begin{proof}
	For ease of notation, for any linear functional $\Phi \colon \mathcal{F} \to \mathbb{R}$, we set $\norm{\Phi}_{\mathcal{F},\ast} \coloneqq \sup_{u \in \mathcal{F}; \mathcal{E}(u) \le 1}\abs{\Phi(u)}$. 
	Clearly, $\norm{\Phi_1 + \Phi_2}_{\mathcal{F},\ast} \le \norm{\Phi_1}_{\mathcal{F},\ast} + \norm{\Phi_2}_{\mathcal{F},\ast}$ for any linear functionals $\Phi_{1},\Phi_{2} \colon \mathcal{F} \to \mathbb{R}$. 
	Note that $\norm{\mathcal{E}(f; \,\cdot\,)}_{\mathcal{F},\ast} = \mathcal{E}(f)^{(p - 1)/p}$ for any $f \in \mathcal{F}$ by the H\"{o}lder-type estimate \eqref{bdd.form} in Theorem \ref{thm.p-form}.  
	In particular, for any $f,g \in \mathcal{F}$, 
	\begin{equation*}
		\abs{\mathcal{E}(f)^{\frac{p - 1}{p}} - \mathcal{E}(g)^{\frac{p - 1}{p}}}
		 = \abs{\norm{\mathcal{E}(f; \,\cdot\,)}_{\mathcal{F},\ast} - \norm{\mathcal{E}(g; \,\cdot\,)}_{\mathcal{F},\ast}}
		 \le \norm{\mathcal{E}(f; \,\cdot\,) - \mathcal{E}(g; \,\cdot\,)}_{\mathcal{F},\ast}, 
	\end{equation*}
	which together with \eqref{e:p-power} in the proof of Theorem \ref{thm.p-form} with $q = (p - 1)/p$ implies that 
	\begin{equation}\label{e:inverse.preconti}
		\abs{\mathcal{E}(f) - \mathcal{E}(g)} 
		\le \frac{p}{p - 1}\bigl(\mathcal{E}(f)^{1/p} \vee \mathcal{E}(g)^{1/p}\bigr)\norm{\mathcal{E}(f; \,\cdot\,) - \mathcal{E}(g; \,\cdot\,)}_{\mathcal{F},\ast}. 
	\end{equation}
	Let us define $\psi \colon \mathbb{R} \to \mathbb{R}$ by $\psi(t) \coloneqq \frac{1}{p}\mathcal{E}(f + t(g - f))$. 
	Then $\psi \in \contfunc^{1}(\mathbb{R})$ by \eqref{e:frechet.diffble} and \eqref{ncont}; indeed, \eqref{e:frechet.diffble} in Proposition \ref{prop.diffble} implies that $\psi'(t) = \mathcal{E}(f + t(g - f); g - f)$, which is continuous by \eqref{ncont} in Theorem \ref{thm.p-form}.  
	Now we see that 
	\begin{align*}
		&\abs{\psi'(0)}
		= \abs{\mathcal{E}(f; g - f)}
		\le \abs{\mathcal{E}(f; g) - \mathcal{E}(g)} + \abs{\mathcal{E}(g) - \mathcal{E}(f)} \\
		&\overset{\eqref{e:inverse.preconti}}{\le} \norm{\mathcal{E}(f; \,\cdot\,) - \mathcal{E}(g; \,\cdot\,)}_{\mathcal{F},\ast}\mathcal{E}(g)^{1/p} + \frac{p}{p - 1}\bigl(\mathcal{E}(f)^{1/p} \vee \mathcal{E}(g)^{1/p}\bigr)\norm{\mathcal{E}(f; \,\cdot\,) - \mathcal{E}(g; \,\cdot\,)}_{\mathcal{F},\ast} \\
		&\le \left(1 + \frac{p}{p - 1}\right)\bigl(\mathcal{E}(f)^{1/p} \vee \mathcal{E}(g)^{1/p}\bigr)\norm{\mathcal{E}(f; \,\cdot\,) - \mathcal{E}(g; \,\cdot\,)}_{\mathcal{F},\ast}. 
	\end{align*}
	Similarly, 
	\begin{equation*}
		\abs{\psi'(1)}
		= \abs{\mathcal{E}(g; g - f)} 
		\le \left(1 + \frac{p}{p - 1}\right)\bigl(\mathcal{E}(f)^{1/p} \vee \mathcal{E}(g)^{1/p}\bigr)\norm{\mathcal{E}(f; \,\cdot\,) - \mathcal{E}(g; \,\cdot\,)}_{\mathcal{F},\ast}. 
	\end{equation*}
	Since $\psi$ is $\contfunc^{1}$-convex, we obtain 
	\begin{align*}
		\abs{-\mathcal{E}(f) + \mathcal{E}\left(\frac{f + g}{2}\right)}
		&= p\abs{\psi(1/2) - \psi(0)} 
		\le \frac{p}{2}\bigl(\abs{\psi'(0)} \vee \abs{\psi'(1)}\bigr) \\
		&\le c_{p}\bigl(\mathcal{E}(f)^{1/p} \vee \mathcal{E}(g)^{1/p}\bigr)\norm{\mathcal{E}(f; \,\cdot\,) - \mathcal{E}(g; \,\cdot\,)}_{\mathcal{F},\ast}, 
	\end{align*}
	where we put $c_{p} \coloneqq \frac{p}{2}\bigl(1 + \frac{p}{p - 1}\bigr)$. 
	Similarly, 
	\[
	\abs{-\mathcal{E}(g) + \mathcal{E}\left(\frac{f + g}{2}\right)}
	= p\abs{\psi(1/2) - \psi(1)}
	\le c_{p}\bigl(\mathcal{E}(f)^{1/p} \vee \mathcal{E}(g)^{1/p}\bigr)\norm{\mathcal{E}(f; \,\cdot\,) - \mathcal{E}(g; \,\cdot\,)}_{\mathcal{F},\ast}. 
	\]
	Therefore, it follows that 
	\begin{equation}\label{e:inverse.pre-sum}
		\mathcal{E}\left(\frac{f + g}{2}\right) 
		\ge \Bigl(\mathcal{E}(f) \vee \mathcal{E}(g) - c_{p}\bigl(\mathcal{E}(f)^{1/p} \vee \mathcal{E}(g)^{1/p}\bigr)\norm{\mathcal{E}(f; \,\cdot\,) - \mathcal{E}(g; \,\cdot\,)}_{\mathcal{F},\ast}\Bigr)^{+}. 
	\end{equation}
	Next we derive an estimate on $\mathcal{E}(\frac{f - g}{2})$ by using \ref{Cp} and \eqref{e:inverse.pre-sum}. 
	Set $a \coloneqq \mathcal{E}(f) \vee \mathcal{E}(g)$ for simplicity. 
	If $p \in [2,\infty)$, then  
	\begin{align*}
		\mathcal{E}\left(\frac{f - g}{2}\right)
		&\overset{\ref{Cp}}{\le} 2^{1 - p}\bigl(\mathcal{E}(f)^{1/(p - 1)} + \mathcal{E}(g)^{1/(p - 1)}\bigr)^{p - 1} - \mathcal{E}\left(\frac{f + g}{2}\right) \\
		&\overset{\eqref{e:inverse.pre-sum}}{\le} a - \Bigl(a - c_{p}a^{1/p}\norm{\mathcal{E}(f; \,\cdot\,) - \mathcal{E}(g; \,\cdot\,)}_{\mathcal{F},\ast}\Bigr)^{+} \\
		&\le c_{p}a^{1/p}\norm{\mathcal{E}(f; \,\cdot\,) - \mathcal{E}(g; \,\cdot\,)}_{\mathcal{F},\ast}. 
	\end{align*}
	In the rest of the proof, we assume that $p \in (1,2]$.
	We see that 
	\begin{align}\label{e:inverse.small-pre}
		\mathcal{E}\left(\frac{f - g}{2}\right)^{1/(p - 1)}
		&\overset{\ref{Cp}}{\le} \left(\frac{\mathcal{E}(f) + \mathcal{E}(g)}{2}\right)^{1/(p - 1)} - \mathcal{E}\left(\frac{f + g}{2}\right)^{1/(p - 1)} \nonumber \\
		&\overset{\eqref{e:inverse.pre-sum}}{\le} a^{1/(p - 1)} - \biggl[\Bigl(a - c_{p}a^{1/p}\norm{\mathcal{E}(f; \,\cdot\,) - \mathcal{E}(g; \,\cdot\,)}_{\mathcal{F},\ast}\Bigr)^{+}\biggr]^{1/(p - 1)}. 
	\end{align}
	In the case of $a \le c_{p}a^{1/p}\norm{\mathcal{E}(f; \,\cdot\,) - \mathcal{E}(g; \,\cdot\,)}_{\mathcal{F},\ast}$, we have 
	\[
	\mathcal{E}\left(\frac{f - g}{2}\right) 
	\le a 
	= a^{(2 - p) + (p - 1)} 
	\le c_{p}^{p-1}a^{2 - p + \frac{p-1}{p}}\norm{\mathcal{E}(f; \,\cdot\,) - \mathcal{E}(g; \,\cdot\,)}_{\mathcal{F},\ast}^{p-1}. 
	\]
	Let us consider the remaining case $a > c_{p}a^{1/p}\norm{\mathcal{E}(f; \,\cdot\,) - \mathcal{E}(g; \,\cdot\,)}_{\mathcal{F},\ast}$. 
	Then we have from \eqref{e:p-power} in the proof of Theorem \ref{thm.p-form} with $q = 1/(p-1)$ that 
	\begin{align*}
		\mathcal{E}\left(\frac{f - g}{2}\right)^{1/(p - 1)}  
		&= a^{1/(p - 1)} - \Bigl(a - c_{p}a^{1/p}\norm{\mathcal{E}(f; \,\cdot\,) - \mathcal{E}(g; \,\cdot\,)}_{\mathcal{F},\ast}\Bigr)^{1/(p - 1)} \\
		&\le \frac{c_{p}}{p - 1}a^{\frac{2 - p}{p - 1} + \frac{1}{p}}\norm{\mathcal{E}(f; \,\cdot\,) - \mathcal{E}(g; \,\cdot\,)}_{\mathcal{F},\ast}. 
	\end{align*}
	Hence we obtain the desired estimate \eqref{e:conti.inverse}. 
\end{proof}

The following proposition states a kind of monotonicity of the ``$p$-Laplacian'' in the pointwise values of its argument. 
This result will play important roles in Subsection \ref{sec.sharp} later and in the subsequent works \cite{KS.scp,KS.sing}.
\begin{prop}\label{prop.mono}
	Assume that $(\mathcal{E},\mathcal{F})$ satisfies \ref{Cp} and the strong subadditivity \eqref{sadd} in Proposition \ref{prop.GC-list}-\ref{GC.markov}. 
	Let $u_{1},u_{2},v \in \mathcal{F}$ satisfy $((u_{2} - u_{1}) \wedge v)(x) = 0$ for $m$-a.e.\ $x \in X$. 
	Then $\mathcal{E}(u_{1}; v) \ge \mathcal{E}(u_{2}; v)$. 
\end{prop}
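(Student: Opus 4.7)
The plan is to exploit the pointwise structure imposed by the assumption $((u_{2} - u_{1}) \wedge v)(x) = 0$ for $m$-a.e.\ $x$ to compare $(u_{1} + tv) \vee u_{2}$ and $(u_{1} + tv) \wedge u_{2}$ with explicit expressions, and then apply the strong subadditivity \eqref{sadd} together with the differentiability in \eqref{exist-deriva}.

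First I would observe that the hypothesis $(u_{2} - u_{1}) \wedge v = 0$ $m$-a.e.\ forces $u_{2} - u_{1} \ge 0$ and $v \ge 0$ $m$-a.e., with $m$-a.e.\ $x$ satisfying either $v(x) = 0$ or $u_{2}(x) - u_{1}(x) = 0$. Fix $t > 0$. On the set $\{v = 0\}$ we have $u_{1} + tv = u_{1} \le u_{2}$, while on the set $\{u_{2} = u_{1}\}$ we have $u_{1} + tv \ge u_{1} = u_{2}$. A case analysis then shows
\begin{equation*}
(u_{1} + tv) \vee u_{2} = u_{2} + tv \quad \text{and} \quad (u_{1} + tv) \wedge u_{2} = u_{1} \quad \text{$m$-a.e.\ on $X$.}
\end{equation*}

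Next I would apply the strong subadditivity \eqref{sadd} to $f = u_{1} + tv$ and $g = u_{2}$, obtaining
\begin{equation*}
\mathcal{E}(u_{2} + tv) + \mathcal{E}(u_{1}) \le \mathcal{E}(u_{1} + tv) + \mathcal{E}(u_{2}),
\end{equation*}
so that $\mathcal{E}(u_{2} + tv) - \mathcal{E}(u_{2}) \le \mathcal{E}(u_{1} + tv) - \mathcal{E}(u_{1})$ for every $t > 0$. Dividing by $t$ and letting $t \downarrow 0$, the differentiability guaranteed by \eqref{exist-deriva} (which applies because $(\mathcal{E},\mathcal{F})$ satisfies \ref{Cp}) yields $p\,\mathcal{E}(u_{2}; v) \le p\,\mathcal{E}(u_{1}; v)$, which is the desired conclusion.

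There is no real obstacle here: the argument is short and essentially a one-sided derivative comparison. The only step requiring any care is the pointwise identification of $(u_{1} + tv) \vee u_{2}$ and $(u_{1} + tv) \wedge u_{2}$, which has to be done on each of the two complementary sets $\{v = 0\}$ and $\{u_{2} - u_{1} = 0\}$ whose union covers $m$-a.e.\ $X$ by the hypothesis; since $\mathcal{E}$ is defined on $m$-equivalence classes, this $m$-a.e.\ identification is enough to apply \eqref{sadd}.
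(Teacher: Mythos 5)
Your proof is correct and follows essentially the same argument as the paper's: both apply the strong subadditivity \eqref{sadd} to $f = u_{1} + tv$ and $g = u_{2}$ after identifying $f \vee g = u_{2} + tv$ and $f \wedge g = u_{1}$, then divide by $t$ and let $t \downarrow 0$ using the differentiability from Proposition \ref{prop.diffble}. Your explicit case analysis on $\{v = 0\}$ and $\{u_{2} = u_{1}\}$ just spells out what the paper dismisses with ``we easily see that.''
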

\begin{proof}
    Let $t > 0$.
    Define $f,g \in \mathcal{F}$ by $f \coloneqq u_{1} + tv$ and $g \coloneqq u_{2}$.
    Then we easily see that $f \vee g = u_{2} + tv$ and $f \wedge g = u_{1}$.
    By \eqref{sadd}, we have $\mathcal{E}(u_{2} + tv) + \mathcal{E}(u_{1}) \le \mathcal{E}(u_{1} + tv) + \mathcal{E}(u_{2})$, which implies that
    \[
    \frac{\mathcal{E}(u_{2} + tv) - \mathcal{E}(u_{2})}{t} \le \frac{\mathcal{E}(u_{1} + tv) - \mathcal{E}(u_{1})}{t}.
    \]
    Letting $t \downarrow 0$, we get $\mathcal{E}(u_{2}; v) \le \mathcal{E}(u_{1}; v)$.
\end{proof}

We conclude this subsection by viewing typical examples of $p$-energy forms.
\begin{example}\label{ex.Rn}
	\begin{enumerate}[label=\textup{(\arabic*)},align=left,leftmargin=*,topsep=2pt,parsep=0pt,itemsep=2pt]
		\item\label{Ep-Rn} Let $D \in \mathbb{N}$, let $\Omega$ be an open subset of $\mathbb{R}^{D}$, let $\mathcal{B} \coloneqq \mathcal{B}(\Omega)$, let $m$ be the $D$-dimensional Lebesgue measure on $\Omega$ and let $\mathcal{F} = W^{1,p}(\Omega)$ be the usual $(1,p)$-Sobolev space on $\Omega$ (see, e.g., \cite[p.~60]{AF} for details on Sobolev spaces in the Euclidean setting).
        Define $\mathcal{E}(f) \coloneqq \norm{\nabla f}_{L^{p}(\Omega,m)}^{p}$, $f \in \mathcal{F}$, where the gradient operator $\nabla$ is to be interpreted in the sense of distributions.
		Then, by following a similar argument as in the proof of Theorem \ref{thm.GCp-kuwae}, one can show that $(\mathcal{E},\mathcal{F})$ is a $p$-energy form on $(\Omega,m)$ satisfying \ref{GC}.
		In this case, we have
		\[
		\mathcal{E}(f; g) = \int_{\Omega}\abs{\nabla f(x)}^{p - 2}\langle \nabla f(x), \nabla g(x) \rangle_{\mathbb{R}^{D}}\,dx, \quad f,g \in \mathcal{F},
		\] 
        where $\langle\,\cdot\,,\,\cdot\,\rangle_{\mathbb{R}^{D}}$ denotes the inner product on $\mathbb{R}^{D}$. 
		\item\label{Ep-KZ} In the recent work \cite{Kig23, MS+}, a $p$-energy form $(\mathcal{E},\mathcal{F})$ on a compact metrizable space is constructed via discrete approximations under some analytic and geometric assumptions. 
        See \cite{CGQ22,HPS04} for constructions of $p$-energy forms on post-critically finite self-similar sets.
        The construction in \cite{CGQ22} can be seen as a generalization of that in \cite{HPS04}.  
        The $p$-energy forms constructed in these works are regarded as analogues of the $p$-energy form on $\mathbb{R}^{D}$ as in \ref{Ep-Rn} above.
        As will be seen in more detail later in Section \ref{sec.constr}, we can prove that $p$-energy forms constructed in \cite{CGQ22,Kig23,MS+} satisfy \ref{GC}, whereas even \ref{Cp} is not mentioned in \cite{CGQ22,Kig23}. 
        Furthermore, very recently, Kuwae \cite{Kuw23+} introduced a $p$-energy form $(\mathscr{E}^{p},H^{1,p})$ based on a strongly local Dirichlet form $(\mathscr{E},D(\mathscr{E}))$ on $L^{2}(X,m)$. It is shown that $(\mathscr{E}^{p},H^{1,p})$ satisfies \ref{Cp} in \cite[Theorem 1.7]{Kuw23+}. We can also verify \ref{GC} for $(\mathscr{E}^{p},H^{1,p})$ by using some good estimates due to the bilinearity (Theorem \ref{thm.GCp-kuwae}). See Appendix \ref{sec.bilinear} for details. 
        \item\label{Ep-N} There are various ways to define $(1,p)$-Sobolev spaces in the field of analysis on metric spaces. See \cite[Chapter 10]{HKST} for a summary of several known approaches in the literature.
        In those definitions, roughly speaking, we find a counterpart of $\abs{\nabla u}$, e.g., the minimal $p$-weak upper gradient\index{minimal $p$-weak upper gradient} $g_{u} \ge 0$ (see, e.g., \cite[Chapter 6]{HKST} for details on this notion), and consider a $p$-energy form $(\widetilde{\mathcal{E}},\mathcal{F})$ on $(X,m)$ given by $\widetilde{\mathcal{E}}(u) \coloneqq \int_{X}g_{u}^{p}\,dm$ and $\mathcal{F} \coloneqq \{ u \in L^{p}(X,m) \mid g_{u} \in L^{p}(X,m) \}$.
        Unfortunately, this $p$-energy form may not satisfy \ref{Cp} because the map $u \mapsto g_{u}$ is not linear in general (see, e.g., \cite[(6.3.18)]{HKST} for this fact).
        However, in a suitable setting, we can construct a functional which is equivalent to $\widetilde{\mathcal{E}}$ and satisfies \ref{Cp}; see the $p$-energy form denoted by $(\mathcal{F}_{p},W^{1,p})$ in \cite[Theorem 40]{ACD15}.
        Moreover, we can  verify \ref{GC} for $(\mathcal{F}_{p},W^{1,p})$ since the pair $(\mathcal{F}_{\delta_{k},p},W^{1,p})$ defined in \cite[(7.3)]{ACD15} satisfies \ref{GC} and $\mathcal{F}_{p}$ is defined as a $\Gamma$-limit point of $\mathcal{F}_{\delta_{k},p}$ as $k \to \infty$. (We will use a similar argument later in the proof of Theorem \ref{t:Kig-good}.) 
	\end{enumerate}
\end{example}

\subsection{\texorpdfstring{$p$}{p}-Clarkson's inequality and approximations in \texorpdfstring{$p$}{p}-energy forms}
In this subsection, in addition to the setting specified at the beginning of this section, by considering $\mathcal{F} \cap L^{p}(X,m)$ instead of $\mathcal{F}$ if necessary, we also assume for simplicity that $\mathcal{F} \subseteq L^{p}(X,m)$. 

We introduce a family of natural norms on $\mathcal{F}$ in the following definition. 
\begin{defn}[$(\mathcal{E},\alpha)$-norm]\label{d:e1norm}
	Let $\alpha \in (0,\infty)$. 
	We define the norm $\norm{\,\cdot\,}_{\mathcal{E},\alpha}$ on $\mathcal{F}$ by
	\begin{equation}\label{e:defn-e1norm}
		\norm{f}_{\mathcal{E},\alpha} \coloneqq \Bigl(\mathcal{E}(f) + \alpha\norm{f}_{L^{p}(X,m)}^{p}\Bigr)^{1/p}, \quad f \in \mathcal{F}
	\end{equation} 
	We call $\norm{\,\cdot\,}_{\mathcal{E},\alpha}$ the \emph{$(\mathcal{E},\alpha)$-norm}\index{$(\mathcal{E},\alpha)$-norm} on $\mathcal{F}$. 
\end{defn}
Clearly, for any $\alpha,\alpha' \in (0,\infty)$, $\norm{\,\cdot\,}_{\mathcal{E},\alpha}$ and $\norm{\,\cdot\,}_{\mathcal{E},\alpha'}$ are equivalent to each other. 
 
The following proposition states on the convexity of $\norm{\,\cdot\,}_{\mathcal{E},\alpha}$. 
\begin{prop}\label{prop.E1-unifconvex}
	Let $\alpha \in (0,\infty)$ and assume that $(\mathcal{E},\mathcal{F})$ satisfies \ref{Cp}. 
	Then $(\norm{\,\cdot\,}_{\mathcal{E},\alpha}^{p},\mathcal{F})$ is a $p$-energy form on $(X,m)$ satisfying \ref{Cp}, and $(\mathcal{F},\norm{\,\cdot\,}_{\mathcal{E},\alpha})$ is uniformly convex.
	Moreover, if $(\mathcal{F},\norm{\,\cdot\,}_{\mathcal{E},\alpha})$ is a Banach space in addition, then it is reflexive. 
\end{prop}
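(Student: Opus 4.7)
The plan is to establish the four assertions of the proposition in turn.

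First, I would verify that $\norm{\,\cdot\,}_{\mathcal{E},\alpha}$ is a norm on $\mathcal{F}$, so that $(\norm{\,\cdot\,}_{\mathcal{E},\alpha}^{p},\mathcal{F})$ is a $p$-energy form on $(X,m)$ in the sense of Definition \ref{defn.p-form}. The $p$-homogeneity and non-negativity are immediate from \eqref{e:defn-e1norm}. The vanishing property $\norm{f}_{\mathcal{E},\alpha}=0 \Rightarrow f=0$ in $L^{0}(X,m)$ follows from the standing assumption $\mathcal{F}\subseteq L^{p}(X,m)$, since it forces $\norm{f}_{L^{p}(X,m)}=0$. For the triangle inequality, I would express $\norm{f}_{\mathcal{E},\alpha}$ as the $\ell^{p}_{2}$-norm of the pair $\bigl(\mathcal{E}(f)^{1/p},\alpha^{1/p}\norm{f}_{L^{p}(X,m)}\bigr)$ and combine Minkowski's inequality for $\ell^{p}_{2}$ with the triangle inequalities for $\mathcal{E}^{1/p}$ and $\norm{\,\cdot\,}_{L^{p}(X,m)}$, in the spirit of Proposition \ref{prop.GC-list}\ref{GC.tri}.

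Second, to establish \ref{Cp} for $(\norm{\,\cdot\,}_{\mathcal{E},\alpha}^{p},\mathcal{F})$, I would prove the following stability lemma: if $\mathcal{E}_{1},\mathcal{E}_{2}\colon\mathcal{F}\to[0,\infty)$ are $p$-homogeneous functionals each satisfying \ref{Cp}, then so does $\mathcal{E}_{1}+\mathcal{E}_{2}$. Adding the two instances of \ref{Cp} (for $f,g\in\mathcal{F}$), the claim reduces to comparing
\begin{equation*}
    \sum_{i=1}^{2}\bigl(\mathcal{E}_{i}(f)^{\frac{1}{p-1}}+\mathcal{E}_{i}(g)^{\frac{1}{p-1}}\bigr)^{p-1}
    \quad\text{and}\quad
    \bigl((\mathcal{E}_{1}+\mathcal{E}_{2})(f)^{\frac{1}{p-1}}+(\mathcal{E}_{1}+\mathcal{E}_{2})(g)^{\frac{1}{p-1}}\bigr)^{p-1}.
\end{equation*}
Setting $a_{i}=\mathcal{E}_{i}(f)$, $b_{i}=\mathcal{E}_{i}(g)$, $r=\frac{1}{p-1}$ and $A_{i}=(a_{i}^{r}+b_{i}^{r})^{1/r}$, and using the identity $r(p-1)=1$, this becomes a comparison between $A_{1}+A_{2}$ and $\bigl((a_{1}+a_{2})^{r}+(b_{1}+b_{2})^{r}\bigr)^{1/r}$, i.e., a Minkowski-type inequality for the $\ell^{r}_{2}$-norm applied to $(a_{1},b_{1})$ and $(a_{2},b_{2})$. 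For $p\in(1,2]$ we have $r\ge 1$ and the standard Minkowski inequality supplies the required $\ge$ direction; for $p\in[2,\infty)$ we have $r\in(0,1]$ and the reverse Minkowski inequality (Proposition \ref{prop.reverse}, applied on a two-point counting space) supplies the required $\le$ direction. Applying this stability lemma to $\mathcal{E}_{1}=\mathcal{E}$ and $\mathcal{E}_{2}=\alpha\norm{\,\cdot\,}_{L^{p}(X,m)}^{p}$ -- the latter satisfies \ref{Cp} by the classical $p$-Clarkson inequality for $L^{p}$ -- yields \ref{Cp} for $(\norm{\,\cdot\,}_{\mathcal{E},\alpha}^{p},\mathcal{F})$.

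The remaining two assertions are then immediate: uniform convexity of $(\mathcal{F},\norm{\,\cdot\,}_{\mathcal{E},\alpha})$ follows from Proposition \ref{p:uc} applied to the $p$-energy form $(\norm{\,\cdot\,}_{\mathcal{E},\alpha}^{p},\mathcal{F})$, and under the extra hypothesis that $(\mathcal{F},\norm{\,\cdot\,}_{\mathcal{E},\alpha})$ is complete, reflexivity follows from the Milman--Pettis theorem. I do not anticipate any major obstacle; the subtlest point is the stability lemma for \ref{Cp}, where one has to match the exponent $r=1/(p-1)$ with the appropriate Minkowski or reverse Minkowski inequality, exploiting the fortunate identity $r(p-1)=1$ which makes the two sides of the auxiliary inequality directly comparable.
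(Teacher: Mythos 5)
Your proof is correct and follows essentially the same route as the paper: the paper's one-line argument for \ref{Cp} is to apply the computation \eqref{GC.sum} (triangle inequality for $\ell^{q_{2}/p}$ plus reverse Minkowski for $\ell^{q_{1}/p}$) to the Clarkson maps $T$ of Proposition \ref{prop.GC-list}-\ref{GC.Cpsmall},\ref{GC.Cplarge}, which is precisely your two-term stability lemma written out explicitly, exploiting the same identity $\frac{1}{p-1}\cdot(p-1)=1$. The remaining steps (uniform convexity via Clarkson's classical argument, i.e.\ Proposition \ref{p:uc}, and reflexivity via Milman--Pettis) coincide with the paper's.
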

\begin{proof}
	We have \ref{Cp} for the $p$-energy form $(\norm{\,\cdot\,}_{\mathcal{E},\alpha}^{p},\mathcal{F})$ on $(X,m)$ by applying \eqref{GC.sum} to $T \colon \mathbb{R}^{2} \to \mathbb{R}$ given in Proposition \ref{prop.GC-list}-\ref{GC.Cpsmall},\ref{GC.Cplarge}. 
	The uniform convexity of $\norm{\,\cdot\,}_{\mathcal{E},\alpha}$ follows from \cite[Theorem 1]{Cla36}, where a certain product of uniformly convex spaces is shown to be uniformly convex. (Alternatively, this follows also from Proposition \ref{prop.cone-gen}-\ref{GC.cone}.)
	
	Assume that $(\mathcal{F},\norm{\,\cdot\,}_{\mathcal{E},\alpha})$ is a Banach space. 
	Then, being a uniformly convex Banach space, $(\mathcal{F},\norm{\,\cdot\,}_{\mathcal{E},\alpha})$ is reflexive by the Milman--Pettis theorem\index{Milman--Pettis theorem} (see, e.g., \cite[Theorem 2 in Section V.2]{Yos} for a proof of it). 
\end{proof}

We will frequently use the following Mazur's lemma\index{Mazur's lemma}, which is an elementary fact in the theory of Banach spaces. 
\begin{lem}[Mazur's lemma; see, e.g., {\cite[Theorem 2 in Section V.1]{Yos}}]\label{lem.mazur} 
	Let $(v_{n})_{n \in \mathbb{N}}$ be a sequence in a normed space $V$ converging weakly to some element $v \in V$.
	Then there exist $N_{k} \in \mathbb{N}$ with $N_{k} \geq k$ and $\{ \lambda_{k,l} \}_{k \leq l \leq N_{k}} \subseteq [0,1]$ with $\sum_{l = k}^{N_{k}}\lambda_{k,l} = 1$ for each $k \in \mathbb{N}$ such that $\lim_{k \to \infty} \sum_{l = k}^{N_{k}} \lambda_{k, l}v_{l} = v$ in norm in $V$.
\end{lem}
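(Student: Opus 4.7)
The plan is to deduce the conclusion from the classical fact that for convex subsets of a normed space, the weak closure coincides with the norm closure, which in turn rests on the Hahn--Banach separation theorem. For each $k \in \mathbb{N}$, set
\[
C_{k} \coloneqq \Bigl\{ \sum_{l = k}^{N} \lambda_{l} v_{l} \Bigm| N \geq k,\ \lambda_{l} \in [0,1],\ \sum_{l = k}^{N} \lambda_{l} = 1 \Bigr\},
\]
the convex hull in $V$ of the tail $\{ v_{l} \mid l \geq k \}$. Once it is shown that $v \in \closure{C_{k}}^{V}$ for every $k$, one may choose for each $k$ an element $w_{k} \in C_{k}$ with $\norm{w_{k} - v} < 1/k$; writing $w_{k} = \sum_{l = k}^{N_{k}} \lambda_{k,l} v_{l}$ with $N_{k} \geq k$, $\lambda_{k,l} \in [0,1]$ and $\sum_{l = k}^{N_{k}} \lambda_{k,l} = 1$ then produces the required sequence, since $\norm{w_{k} - v} \to 0$ as $k \to \infty$.

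The core step is therefore to verify $v \in \closure{C_{k}}^{V}$. I would argue by contradiction: suppose $v \notin \closure{C_{k}}^{V}$. As $\closure{C_{k}}^{V}$ is a norm-closed convex subset of the normed space $V$ and $\{ v \}$ is a compact convex set disjoint from it, the geometric Hahn--Banach separation theorem provides $\phi \in V^{\ast}$ and $\alpha \in \mathbb{R}$ satisfying
\[
\phi(w) \leq \alpha < \phi(v) \quad \text{for every } w \in \closure{C_{k}}^{V}.
\]
In particular, $\phi(v_{l}) \leq \alpha$ for every $l \geq k$. On the other hand, the weak convergence $v_{n} \to v$ means that $\phi(v_{l}) \to \phi(v)$ as $l \to \infty$, whence $\phi(v) \leq \alpha$, contradicting $\alpha < \phi(v)$.

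The only delicate point, and the main obstacle worth highlighting, is the invocation of the Hahn--Banach separation theorem in the appropriate geometric form for a closed convex subset of a normed space and a disjoint point; the rest of the argument is an essentially routine extraction of an approximating sequence via a $1/k$-diagonal choice. Since the statement is explicitly cited from \cite[Theorem 2 in Section V.1]{Yos}, it is also legitimate to simply refer to that reference rather than reproducing the argument in full.
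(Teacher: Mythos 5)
Your proposal is correct. The paper gives no proof of this lemma at all --- it is stated with a citation to Yosida and used as a black box --- so there is no argument in the paper to compare against. Your proof is the standard one: reduce to showing that $v$ lies in the norm closure of the convex hull $C_{k}$ of the tail $\{v_{l}\}_{l \geq k}$, establish this by strict Hahn--Banach separation of the point $v$ from the norm-closed convex set $\closure{C_{k}}^{V}$ (which would yield a functional $\phi$ with $\sup_{l \geq k}\phi(v_{l}) \leq \alpha < \phi(v)$, contradicting weak convergence), and then extract $w_{k} \in C_{k}$ with $\norm{w_{k}-v} < 1/k$. All steps are sound, and your closing remark is apt: given the explicit citation, deferring to the reference is exactly what the paper does.
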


We also prepare the following two lemmas.
\begin{lem}\label{lem.Lpdense} 
	Assume that $(\mathcal{E},\mathcal{F})$ satisfies \ref{Cp} and that $\mathcal{F}$ equipped with $\norm{\,\cdot\,}_{\mathcal{E},1}$ is a Banach space. 
	For $v \in L^{\frac{p}{p - 1}}(X,m)$, we define a bounded linear map $\Psi_{v} \colon L^{p}(X,\measure) \to \mathbb{R}$ by $\Psi_{v}(u) \coloneqq \int_{X}uv\,d\measure$. 
	Then $\{ \Psi_{v}|_{\mathcal{F}} \mid v \in L^{\frac{p}{p - 1}}(X,m) \}$ is dense in $\mathcal{F}^{\ast}$, and the map $L^{\frac{p}{p - 1}}(X,m) \ni v \mapsto \Psi_{v}|_{\mathcal{F}} \in \mathcal{F}^{\ast}$ is a bounded linear map with operator norm at most $1$. 
\end{lem}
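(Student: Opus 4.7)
The boundedness of $v \mapsto \Psi_{v}|_{\mathcal{F}}$ with operator norm at most $1$ is essentially H\"{o}lder's inequality. For any $v \in L^{p/(p-1)}(X,m)$ and $u \in \mathcal{F}$, H\"{o}lder gives $\abs{\Psi_{v}(u)} \le \norm{u}_{L^{p}(X,m)}\norm{v}_{L^{p/(p-1)}(X,m)} \le \norm{u}_{\mathcal{E},1}\norm{v}_{L^{p/(p-1)}(X,m)}$, so $\norm{\Psi_{v}|_{\mathcal{F}}}_{\mathcal{F}^{\ast}} \le \norm{v}_{L^{p/(p-1)}(X,m)}$, and linearity in $v$ is evident from linearity of the integral.

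For the density assertion, my plan is to argue by contradiction using Hahn--Banach together with the reflexivity of $(\mathcal{F},\norm{\,\cdot\,}_{\mathcal{E},1})$ supplied by Proposition \ref{prop.E1-unifconvex} (which gives uniform convexity; combined with the assumption that $(\mathcal{F},\norm{\,\cdot\,}_{\mathcal{E},1})$ is Banach, the Milman--Pettis theorem yields reflexivity). Let $C$ denote the norm-closure in $\mathcal{F}^{\ast}$ of the linear subspace $V \coloneqq \{\Psi_{v}|_{\mathcal{F}} \mid v \in L^{p/(p-1)}(X,m)\}$. Assuming $C \neq \mathcal{F}^{\ast}$, Hahn--Banach produces $F \in \mathcal{F}^{\ast\ast}$ with $F|_{C} \equiv 0$ and $F \not\equiv 0$. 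By reflexivity, $F$ is evaluation at some $u \in \mathcal{F} \setminus \{0\}$, so $\int_{X} uv\,dm = F(\Psi_{v}|_{\mathcal{F}}) = 0$ for every $v \in L^{p/(p-1)}(X,m)$.

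The concluding step is to extract a contradiction from this vanishing. Taking $v \coloneqq \sgn(u)\abs{u}^{p-1}$, which lies in $L^{p/(p-1)}(X,m)$ since $\int_{X}\abs{v}^{p/(p-1)}\,dm = \int_{X}\abs{u}^{p}\,dm < \infty$ (recall the standing assumption $\mathcal{F} \subseteq L^{p}(X,m)$ in this subsection), one obtains $\int_{X}\abs{u}^{p}\,dm = 0$, forcing $u = 0$ in $L^{p}(X,m)$ and therefore in $\mathcal{F}$, contradicting $u \neq 0$. I do not anticipate any genuine obstacles: the core is a standard duality argument, and the only substantive input beyond Hahn--Banach and the explicit test function $\sgn(u)\abs{u}^{p-1}$ is the reflexivity of $(\mathcal{F},\norm{\,\cdot\,}_{\mathcal{E},1})$ already provided by Proposition \ref{prop.E1-unifconvex}.
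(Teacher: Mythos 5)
Your proof is correct and follows essentially the same route as the paper's: Hahn--Banach applied to the closure of $\{\Psi_{v}|_{\mathcal{F}}\}$, reflexivity of $(\mathcal{F},\norm{\,\cdot\,}_{\mathcal{E},1})$ from Proposition \ref{prop.E1-unifconvex} to represent the separating functional as evaluation at some $u \in \mathcal{F}$, and the conclusion that $u = 0$. The only difference is that you make explicit, via the test function $v = \sgn(u)\abs{u}^{p-1}$, the step that the paper leaves implicit (that $\int_{X}uv\,dm = 0$ for all $v \in L^{p/(p-1)}(X,m)$ forces $u = 0$), which is a welcome addition rather than a deviation.
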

\begin{proof}
	Set $M \coloneqq \{ \Psi_{v}|_{\mathcal{F}} \mid v \in L^{\frac{p}{p - 1}}(X,\measure) \}$. 
	Then $M \subseteq \mathcal{F}^{\ast}$ since $\norm{u}_{L^{p}(X,m)} \le \norm{u}_{\mathcal{E},1}$ for any $u \in \mathcal{F}$. 
	Suppose that $\closure{M}^{\mathcal{F}^{\ast}} \neq \mathcal{F}^{\ast}$. 
	Let $\varphi \in \mathcal{F}^{\ast} \setminus \closure{M}^{\mathcal{F}^{\ast}}$. 
	By the Hahn--Banach theorem, there exists $\Phi \in \mathcal{F}^{\ast\ast}$ such that $\Phi(\varphi) \neq 0$ and $\Phi|_{\closure{M}^{\mathcal{F}^{\ast}}} = 0$. 
	Since $\mathcal{F}$ is reflexive by Proposition \ref{prop.E1-unifconvex}, there exists $u \in \mathcal{F}$ such that $\Phi(\psi) = \psi(u)$ for any $\psi \in \mathcal{F}^{\ast}$. 
	Then for any $\psi \in M$ we have $\psi(u) = \Phi(\psi) = 0$, which implies that $u = 0$. 
	This contradicts $\varphi(u) = \Phi(\varphi) \neq 0$ and hence we obtain $\closure{M}^{\mathcal{F}^{\ast}} = \mathcal{F}^{\ast}$. 
	The map $L^{\frac{p}{p - 1}}(X,m) \ni v \mapsto \Psi_{v}|_{\mathcal{F}} \in \mathcal{F}^{\ast}$ is obviously linear, and is easily seen to have operator norm at most $1$ by H\"{o}lder's inequality and the fact that $\norm{u}_{L^{p}(X,m)} \le \norm{u}_{\mathcal{E},1}$ for any $u \in \mathcal{F}$.  
\end{proof}

\begin{cor}\label{cor:separable}
	Assume that $(\mathcal{E},\mathcal{F})$ satisfies \ref{Cp} and that $\mathcal{F}$ equipped with $\norm{\,\cdot\,}_{\mathcal{E},1}$ is a Banach space. 
	If $L^{p}(X,m)$ is separable, then $\mathcal{F}$ and $\mathcal{F}^{\ast}$ are separable. 
\end{cor}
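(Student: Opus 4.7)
The plan is to combine Lemma \ref{lem.Lpdense} with Proposition \ref{prop.E1-unifconvex} and a standard Banach-space fact. First, observe that if $L^{p}(X,m)$ is separable, then $L^{p/(p-1)}(X,m)$ is also separable; this is a standard measure-theoretic fact (separability of $L^{r}(X,m)$ for one $r \in [1,\infty)$ forces separability of $L^{r'}(X,m)$ for every $r' \in [1,\infty)$, since it essentially amounts to the existence of a countable subalgebra of $\mathcal{B}$ dense modulo $m$-null sets).

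Next, I would apply Lemma \ref{lem.Lpdense}, which says that the linear map $\Psi \colon L^{p/(p-1)}(X,m) \to \mathcal{F}^{\ast}$ defined by $v \mapsto \Psi_{v}|_{\mathcal{F}}$ is continuous and has dense image. Continuous images of separable spaces are separable, so the image $M \coloneqq \Psi\bigl(L^{p/(p-1)}(X,m)\bigr)$ is a separable subset of $\mathcal{F}^{\ast}$. Since $M$ is dense in $\mathcal{F}^{\ast}$, it follows that $\mathcal{F}^{\ast}$ itself is separable.

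Finally, to deduce separability of $\mathcal{F}$, I would invoke the standard fact that the separability of the continuous dual of a Banach space implies the separability of the space itself (pick a countable dense sequence $\{\varphi_{n}\}_{n\in\mathbb{N}}$ in the unit sphere of $\mathcal{F}^{\ast}$, choose $u_{n} \in \mathcal{F}$ with $\norm{u_{n}}_{\mathcal{E},1} \le 1$ and $\varphi_{n}(u_{n}) \ge 1/2$, and verify via a Hahn--Banach argument that the rational linear span of $\{u_{n}\}_{n\in\mathbb{N}}$ is dense in $\mathcal{F}$). Alternatively, since $\mathcal{F}$ is reflexive by Proposition \ref{prop.E1-unifconvex}, separability of $\mathcal{F}^{\ast\ast} \cong \mathcal{F}$ follows by applying this fact once more to the separable Banach space $\mathcal{F}^{\ast}$.

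There is no real obstacle here; the proof is a short chain of citations. The only minor point requiring a sentence of justification is the passage from separability of $L^{p}(X,m)$ to that of $L^{p/(p-1)}(X,m)$, which is why I would flag it explicitly rather than leave it implicit.
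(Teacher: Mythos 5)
Your proof is correct and follows essentially the same route as the paper's: Lemma \ref{lem.Lpdense} gives a bounded linear map with dense image from the separable space $L^{p/(p-1)}(X,m)$ into $\mathcal{F}^{\ast}$, whence $\mathcal{F}^{\ast}$ and then $\mathcal{F}$ are separable. The only cosmetic difference is in justifying the separability of $L^{p/(p-1)}(X,m)$: you argue via the measure algebra, while the paper simply notes that $L^{p/(p-1)}(X,m)^{\ast} = L^{p}(X,m)$ is separable and applies the same ``separable dual implies separable predual'' lemma that it then uses again to pass from $\mathcal{F}^{\ast}$ to $\mathcal{F}$.
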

\begin{proof}
	We first recall a standard fact that a Banach space is separable if its dual is separable (see, e.g., \cite[Lemma in Section V.2]{Yos} for a proof).
	Since $L^{\frac{p}{p-1}}(X,m)$ is separable by the separability of $L^{\frac{p}{p-1}}(X,m)^{\ast} = L^{p}(X,m)$, it follows from Lemma \ref{lem.Lpdense} that $\mathcal{F}^{\ast}$ is separable, which in turn implies that $\mathcal{F}$ is separable.  
\end{proof}

\begin{lem}\label{lem.Lpreduce} 
	Assume that $(\mathcal{E},\mathcal{F})$ satisfies \ref{Cp} and that $\mathcal{F}$ equipped with $\norm{\,\cdot\,}_{\mathcal{E},1}$ is a Banach space. 
	If $\{ u_{n} \}_{n \in \mathbb{N}} \subseteq \mathcal{F}$ converges in norm in $L^{p}(X,m)$ to $u \in L^{p}(X,m)$ and $\sup_{n \in \mathbb{N}}\mathcal{E}(u_{n}) < \infty$, then $u \in \mathcal{F}$ and $\{ u_{n} \}_{n \in \mathbb{N}}$ converges weakly in $(\mathcal{F},\norm{\,\cdot\,}_{\mathcal{E},1})$ to $u$.  
\end{lem}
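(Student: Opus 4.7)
The plan is to combine the reflexivity of $(\mathcal{F},\norm{\,\cdot\,}_{\mathcal{E},1})$ from Proposition \ref{prop.E1-unifconvex} with Mazur's lemma (Lemma \ref{lem.mazur}). First I would observe that $\{u_{n}\}_{n \in \mathbb{N}}$ is bounded in $(\mathcal{F},\norm{\,\cdot\,}_{\mathcal{E},1})$: indeed $\sup_{n}\mathcal{E}(u_{n}) < \infty$ by hypothesis, and $\sup_{n}\norm{u_{n}}_{L^{p}(X,m)} < \infty$ follows from the $L^{p}$-convergence $u_{n} \to u$.

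Next, by Proposition \ref{prop.E1-unifconvex}, the space $(\mathcal{F},\norm{\,\cdot\,}_{\mathcal{E},1})$ is reflexive, so the Eberlein--\v{S}mulian theorem guarantees that every subsequence of $\{u_{n}\}$ admits a further subsequence $\{u_{n_{k}}\}_{k \in \mathbb{N}}$ converging weakly in $\mathcal{F}$ to some $v \in \mathcal{F}$. Applying Mazur's lemma (Lemma \ref{lem.mazur}) to this weakly convergent subsequence produces convex combinations $w_{j} = \sum_{l = j}^{N_{j}} \lambda_{j, l} u_{n_{l}}$ with $\sum_{l = j}^{N_{j}} \lambda_{j, l} = 1$, converging in norm in $(\mathcal{F},\norm{\,\cdot\,}_{\mathcal{E},1})$ to $v$. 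Since $\norm{f}_{L^{p}(X,m)} \leq \norm{f}_{\mathcal{E},1}$ for every $f \in \mathcal{F}$, this forces $w_{j} \to v$ in $L^{p}(X,m)$. On the other hand, the $L^{p}$-convergence $u_{n} \to u$ together with $\sum_{l = j}^{N_{j}} \lambda_{j, l} = 1$ and $j \leq n_{l}$ gives $w_{j} \to u$ in $L^{p}(X,m)$. By uniqueness of $L^{p}$-limits we conclude $v = u$ in $L^{p}(X,m)$, which in particular shows $u \in \mathcal{F}$ and identifies the weak limit of every such subsequence as $u$.

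Finally, to upgrade this from ``every subsequence has a further weakly convergent subsubsequence with limit $u$'' to the weak convergence of the full sequence, I would apply the standard subsubsequence argument separately for each fixed $\varphi \in \mathcal{F}^{\ast}$: the numerical sequence $\{\varphi(u_{n})\}_{n \in \mathbb{N}}$ has the property that every subsequence admits a further subsequence converging to $\varphi(u)$, which implies $\varphi(u_{n}) \to \varphi(u)$ in $\mathbb{R}$. Since $\varphi \in \mathcal{F}^{\ast}$ was arbitrary, $u_{n} \to u$ weakly in $(\mathcal{F},\norm{\,\cdot\,}_{\mathcal{E},1})$.

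I do not anticipate a significant obstacle here; both crucial inputs---reflexivity of $(\mathcal{F},\norm{\,\cdot\,}_{\mathcal{E},1})$ and Mazur's lemma---are already available, and the identification $v = u$ reduces to the elementary fact that norm convergence in $\mathcal{F}$ dominates $L^{p}$-convergence.
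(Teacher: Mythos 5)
Your argument is correct. The first half coincides with the paper's proof: both extract a weakly convergent subsequence from the bounded sequence $\{u_{n}\}_{n\in\mathbb{N}}$ using the reflexivity of $(\mathcal{F},\norm{\,\cdot\,}_{\mathcal{E},1})$ supplied by Proposition \ref{prop.E1-unifconvex}, and both identify the weak limit with $u$ via the $L^{p}$-convergence. (For the identification the paper simply notes that the continuous inclusion $\mathcal{F}\hookrightarrow L^{p}(X,m)$ turns weak convergence in $\mathcal{F}$ into weak convergence in $L^{p}(X,m)$, so the weak $L^{p}$-limit must agree with the norm $L^{p}$-limit $u$; your detour through Mazur's lemma reaches the same conclusion with slightly more work, but is equally valid.) Where you genuinely diverge is in upgrading to weak convergence of the \emph{full} sequence: you run the standard subsequence--subsubsequence principle, applying the extraction argument to an arbitrary subsequence (which still satisfies the hypotheses) and concluding $\varphi(u_{n})\to\varphi(u)$ for each fixed $\varphi\in\mathcal{F}^{\ast}$. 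The paper instead invokes Lemma \ref{lem.Lpdense}, approximating an arbitrary $\varphi\in\mathcal{F}^{\ast}$ in operator norm by a functional $\Psi_{v}|_{\mathcal{F}}$ with $v\in L^{p/(p-1)}(X,m)$, for which $\Psi_{v}(u_{n})\to\Psi_{v}(u)$ follows directly from the $L^{p}$-convergence, and finishes with an $\varepsilon$-argument. Your route is more elementary and avoids Lemma \ref{lem.Lpdense} entirely; the paper's route exploits machinery it has already built (and uses again elsewhere, e.g.\ in Corollary \ref{cor:separable}). Both are complete proofs.
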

\begin{proof}
	Since $\mathcal{F}$ is reflexive and $\sup_{n \in \mathbb{N}}\norm{u_{n}}_{\mathcal{E},1} < \infty$, some subsequence of $\{ u_{n} \}_{n \in \mathbb{N}}$ converges weakly in $(\mathcal{F},\norm{\,\cdot\,}_{\mathcal{E},1})$ to some $f \in \mathcal{F}$ by the Banach--Alaoglu theorem (see, e.g., \cite[Theorem 1 in Section V.2]{Yos}) and hence weakly in $L^{p}(X,m)$ to both $u$ and $f$ by the continuity of the inclusion map of $\mathcal{F}$ into $L^{p}(X,m)$, and thus $u = f \in \mathcal{F}$. 
	For any $\varphi \in \mathcal{F}^{\ast}$ and any $\varepsilon > 0$, by Lemma \ref{lem.Lpdense}, there exists $v \in L^{\frac{p}{p - 1}}(X,m)$ such that $\norm{\varphi - \Psi_{v}|_{\mathcal{F}}}_{\mathcal{F}^{\ast}} < \varepsilon$. 
	Then we easily see that 
	\begin{align*}
		\abs{\varphi(u) - \varphi(u_{n})}
		&\le \abs{\varphi(u) - \Psi_{v}(u)} + \abs{\Psi_{v}(u) - \Psi_{v}(u_{n})} + \abs{\varphi(u_{n}) - \Psi_{v}(u_{n})} \\
		&\le \varepsilon\biggl(\norm{u}_{\mathcal{E},1} + \sup_{n \in \mathbb{N}}\norm{u_{n}}_{\mathcal{E},1}\biggr) + \abs{\Psi_{v}(u) - \Psi_{v}(u_{n})}, 
	\end{align*}
	whence $\limsup_{n \to \infty}\abs{\varphi(u) - \varphi(u_{n})} \le \varepsilon\bigl(\norm{u}_{\mathcal{E},1} + \sup_{n \in \mathbb{N}}\norm{u_{n}}_{\mathcal{E},1}\bigr)$. 
	Since $\varepsilon > 0$ is arbitrary, we obtain $\lim_{n \to \infty}\varphi(u_{n}) = \varphi(u)$. 
	This completes the proof. 
\end{proof}

We collect some useful results on convergence in $\mathcal{E}$ in the following proposition. 
\begin{prop}\label{prop.useful-E1}
	Assume that $(\mathcal{E},\mathcal{F})$ satisfies \ref{Cp} and that $(\mathcal{F},\norm{\,\cdot\,}_{\mathcal{E},1})$ is a Banach space.	
	\begin{enumerate}[label=\textup{(\alph*)},align=left,leftmargin=*,topsep=2pt,parsep=0pt,itemsep=2pt]
		\item\label{it:E1-lsc} If $\{ u_{n} \}_{n \in \mathbb{N}} \subseteq L^{p}(X,m)$ converges in norm in $L^{p}(X,m)$ to $u \in L^{p}(X,m)$, then $\mathcal{E}(u) \le \liminf_{n \to \infty}\mathcal{E}(u_{n})$, where we set $\mathcal{E}(f) \coloneqq \infty$ for $f \in L^{p}(X,m) \setminus \mathcal{F}$.  
		\item\label{it:E1-conv} If $\{ u_{n} \}_{n \in \mathbb{N}} \subseteq \mathcal{F}$ converges in norm in $L^{p}(X,m)$ to $u \in \mathcal{F}$ and $\limsup_{n \to \infty}\mathcal{E}(u_{n}) \leq \mathcal{E}(u)$, then $\lim_{n \to \infty}\norm{u - u_{n}}_{\mathcal{E},1} = 0$. 
	\end{enumerate}
\end{prop}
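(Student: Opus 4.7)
The plan is to handle \ref{it:E1-lsc} via Mazur's lemma applied to a minimizing subsequence, and \ref{it:E1-conv} via the Radon--Riesz (Kadec--Klee) property of uniformly convex Banach spaces.

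For \ref{it:E1-lsc}, I would first reduce to the case $L \coloneqq \liminf_{n \to \infty}\mathcal{E}(u_{n}) < \infty$ and pass to a subsequence $\{u_{n_{k}}\}_{k \in \mathbb{N}}$ with $\mathcal{E}(u_{n_{k}}) \to L$. Since $\sup_{k}\mathcal{E}(u_{n_{k}}) < \infty$ and $u_{n_{k}} \to u$ in norm in $L^{p}(X,m)$, Lemma \ref{lem.Lpreduce} yields $u \in \mathcal{F}$ together with weak convergence of $\{u_{n_{k}}\}$ to $u$ in $(\mathcal{F},\norm{\,\cdot\,}_{\mathcal{E},1})$. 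Mazur's lemma (Lemma \ref{lem.mazur}) then produces convex combinations $v_{k} = \sum_{l = k}^{N_{k}}\lambda_{k,l}u_{n_{l}}$ that converge to $u$ in norm in $(\mathcal{F},\norm{\,\cdot\,}_{\mathcal{E},1})$. Since $\mathcal{E}^{1/p}$ is a seminorm (by Definition \ref{defn.p-form}), the triangle inequality gives
\begin{equation*}
    \mathcal{E}(v_{k})^{1/p} \le \sum_{l = k}^{N_{k}}\lambda_{k,l}\mathcal{E}(u_{n_{l}})^{1/p} \le \sup_{l \ge k}\mathcal{E}(u_{n_{l}})^{1/p},
\end{equation*}
and letting $k \to \infty$ while exploiting $\abs{\mathcal{E}(v_{k})^{1/p} - \mathcal{E}(u)^{1/p}} \le \mathcal{E}(v_{k} - u)^{1/p} \le \norm{v_{k} - u}_{\mathcal{E},1} \to 0$ delivers $\mathcal{E}(u)^{1/p} \le L^{1/p}$.

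For \ref{it:E1-conv}, the hypothesis $\mathcal{E}(u_{n}) \to \mathcal{E}(u) < \infty$ combined with $\norm{u_{n} - u}_{L^{p}(X,m)} \to 0$ (which by the triangle inequality gives $\norm{u_{n}}_{L^{p}(X,m)} \to \norm{u}_{L^{p}(X,m)}$) yields $\norm{u_{n}}_{\mathcal{E},1} \to \norm{u}_{\mathcal{E},1}$ via the decomposition $\norm{f}_{\mathcal{E},1}^{p} = \mathcal{E}(f) + \norm{f}_{L^{p}(X,m)}^{p}$. Moreover $\sup_{n}\mathcal{E}(u_{n}) < \infty$, so Lemma \ref{lem.Lpreduce} gives weak convergence of $\{u_{n}\}$ to $u$ in $(\mathcal{F},\norm{\,\cdot\,}_{\mathcal{E},1})$. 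By Proposition \ref{prop.E1-unifconvex}, $(\mathcal{F},\norm{\,\cdot\,}_{\mathcal{E},1})$ is a uniformly convex Banach space, which enjoys the Radon--Riesz property: weak convergence combined with convergence of norms forces norm convergence. Applying this yields $\norm{u_{n} - u}_{\mathcal{E},1} \to 0$.

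The only non-elementary ingredient is the Radon--Riesz property in uniformly convex Banach spaces, but this is classical: given a Hahn--Banach norming functional $\varphi$ for $u/\norm{u}_{\mathcal{E},1}$, applying uniform convexity to the midpoints of the normalized sequence $u_{n}/\norm{u_{n}}_{\mathcal{E},1}$ and $u/\norm{u}_{\mathcal{E},1}$ (noting $\varphi$ applied to these midpoints tends to $1$, forcing their norms to $1$) delivers strong convergence of the normalized sequence, which combined with convergence of the norms gives the conclusion. Everything else amounts to an application of Lemma \ref{lem.Lpreduce} plus bookkeeping between the two pieces $\mathcal{E}(u_{n})$ and $\norm{u_{n}}_{L^{p}(X,m)}^{p}$ that together define $\norm{\,\cdot\,}_{\mathcal{E},1}^{p}$, so I expect no real obstacle beyond invoking the classical Radon--Riesz fact.
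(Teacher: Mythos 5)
Your proof is correct. Part \ref{it:E1-lsc} is essentially the paper's argument: the paper also passes to a minimizing subsequence, applies Lemma \ref{lem.Lpreduce} to get $u \in \mathcal{F}$ and weak convergence, and then simply cites weak lower semicontinuity of the norm $\norm{\,\cdot\,}_{\mathcal{E},1}$; your Mazur-plus-triangle-inequality computation is just the standard proof of that lower semicontinuity, unpacked, and it has the minor advantage of working directly with the seminorm $\mathcal{E}^{1/p}$ rather than having to subtract off the $L^{p}$ contribution at the end. Part \ref{it:E1-conv} is where you genuinely diverge. The paper first disposes of the case $\mathcal{E}(u)=0$, normalizes to $\mathcal{E}(u)=1$, uses part \ref{it:E1-lsc} applied to $u+u_{n}$ to show $\mathcal{E}(u+u_{n}) \to 2^{p}$, and then feeds this into \ref{Cp} to squeeze $\mathcal{E}(u-u_{n}) \to 0$ directly — a quantitative argument that stays entirely inside the Clarkson inequality and never leaves the seminorm $\mathcal{E}^{1/p}$. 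You instead pass to the full norm $\norm{\,\cdot\,}_{\mathcal{E},1}$, note that both the weak convergence (Lemma \ref{lem.Lpreduce}) and the convergence of norms hold, and invoke the Radon--Riesz property of the uniformly convex space $(\mathcal{F},\norm{\,\cdot\,}_{\mathcal{E},1})$ furnished by Proposition \ref{prop.E1-unifconvex}. This is a valid classical route (your sketch of Radon--Riesz via a norming functional is the standard one; the degenerate case $u=0$ is handled trivially since $\norm{\,\cdot\,}_{\mathcal{E},1}$ is a genuine norm), and it buys brevity at the cost of routing through Hahn--Banach and the qualitative notion of uniform convexity, whereas the paper's Clarkson computation is self-contained and mirrors the Cauchy-sequence estimates \eqref{cauchy1}--\eqref{cauchy2} reused elsewhere in the paper.
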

\begin{proof}
	\ref{it:E1-lsc}: 
	If $\liminf_{n \to \infty}\mathcal{E}(u_{n}) = \infty$, then the desired statement clearly holds.
    So, we assume that $\liminf_{n \to \infty}\mathcal{E}(u_{n}) < \infty$.
    Pick a subsequence $\{ u_{n_{k}} \}_{k \in \mathbb{N}}$ such that $\lim_{k \to \infty}\mathcal{E}(u_{n_{k}}) = \liminf_{n \to \infty}\mathcal{E}(u_{n})$.
    Then $\{ u_{n_{k}} \}_{k \in \mathbb{N}}$ is a bounded sequence in $(\mathcal{F},\norm{\,\cdot\,}_{\mathcal{E},1})$ converging in norm in $L^{p}(X,m)$ to $u$ and hence Lemma \ref{lem.Lpreduce} implies that $u \in \mathcal{F}$ and that $\{ u_{n_{k}} \}_{k \in \mathbb{N}}$ converges weakly in $\mathcal{F}$ to $u$. 
    Since $\norm{\,\cdot\,}_{\mathcal{E},1}$ is lower semicontinuous with respect to the weak topology of $\mathcal{F}$, we have from $\lim_{k \to \infty}\norm{u_{n_{k}}}_{L^{p}(X,m)} = \norm{u}_{L^{p}(X,m)}$ that $\mathcal{E}(u)^{1/p} \le \liminf_{n \to \infty}\mathcal{E}(u_{n})^{1/p}$. 
	
	\ref{it:E1-conv}: 
	Note that $\lim_{n \to \infty}\mathcal{E}(u_{n}) = \mathcal{E}(u)$ by \ref{it:E1-lsc} of the present proposition.
    If $u \in \mathcal{E}^{-1}(0)$, then $\mathcal{E}(u - u_{n}) = \mathcal{E}(u_{n}) \to \mathcal{E}(u) = 0$.
	It suffices to consider the case of $\mathcal{E}(u) = 1$. 
    Since $u + u_{n}$ converges in $L^p(X,m)$ to $2u$ as $n \to \infty$, by \ref{it:E1-lsc} of the present proposition we have
    \begin{align*}
    2 = \mathcal{E}(2u)^{1/p}
    \le \liminf_{n \to \infty}\mathcal{E}\bigl(u + u_{n}\bigr)^{1/p}
    &\le \limsup_{n \to \infty}\mathcal{E}\bigl(u + u_{n}\bigr)^{1/p} \\
    &\le \lim_{n \to \infty}\mathcal{E}(u_{n})^{1/p} + \mathcal{E}(u)^{1/p} = 2,
    \end{align*}
    i.e., $\lim_{n \to \infty}\mathcal{E}(u + u_{n}) = 2^{p}$.
    By \ref{Cp}, if $p \le 2$, then
    \begin{align*}
    	\lim_{n \to \infty}\mathcal{E}(u - u_{n})^{1/(p - 1)}
    	&\le 2\Bigl(\mathcal{E}(u) + \lim_{n \to \infty}\mathcal{E}(u_{n})\Bigr)^{1/(p - 1)} - \lim_{n \to \infty}\mathcal{E}(u + u_{n})^{1/(p - 1)} \\
    	&= 2 \cdot 2^{1/(p - 1)} - 2^{p/(p - 1)} = 0. 
    \end{align*}
    If $p \ge 2$, then
    \begin{align*}
    	\lim_{n \to \infty}\mathcal{E}(u - u_{n})
    	\le 2^{p - 1}\Bigl(\mathcal{E}(u) + \lim_{n \to \infty}\mathcal{E}(u_{n})\Bigr) - \lim_{n \to \infty}\mathcal{E}(u + u_{n}) 
    	= 2^{p - 1} \cdot 2 - 2^{p} = 0. 
    \end{align*}
    Since $\{ u_{n} \}_{n \in \mathbb{N}}$ converges in norm in $L^{p}(X,m)$ to $u$, we obtain the desired convergence. 
\end{proof}

The following convergences in $\mathcal{E}$ are also useful.
These are analogues of \cite[Theorem 1.4.2-(iii),(iv),(v)]{FOT}. 
\begin{cor}\label{cor.approx-measure}
	Assume that $(\mathcal{E},\mathcal{F})$ satisfies \eqref{lipcont} in Proposition \ref{prop.GC-list}-\ref{GC.lip} and \ref{Cp} and that $(\mathcal{F},\norm{\,\cdot\,}_{\mathcal{E},1})$ is a Banach space.
    \begin{enumerate}[label=\textup{(\alph*)},align=left,leftmargin=*,topsep=2pt,parsep=0pt,itemsep=2pt]
      \item\label{it:approx-meas.1} Let $\{ \varphi_{n} \}_{n \in \mathbb{N}} \subseteq \contfunc(\mathbb{R})$ satisfy $\lim_{n' \to \infty}\varphi_{n'}(t) = t$, $\varphi_{n}(0) = 0$ and $\abs{\varphi_{n}(t) - \varphi_{n}(s)} \le \abs{t - s}$ for any $n \in \mathbb{N}$ and any $s,t \in \mathbb{R}$. Then $\{ \varphi_{n}(u) \}_{n \in \mathbb{N}} \subseteq \mathcal{F}$ and $\lim_{n \to \infty}\mathcal{E}(u - \varphi_{n}(u)) = 0$ for any $u \in \mathcal{F}$.
      \item\label{it:approx-meas.2} Let $u \in \mathcal{F}$, $\{ u_{n} \}_{n \in \mathbb{N}} \subseteq \mathcal{F}$ and $\varphi \in \contfunc(\mathbb{R})$ satisfy $\lim_{n \to \infty}\norm{u - u_{n}}_{\mathcal{E},1} = 0$, $\varphi(0) = 0$, $\abs{\varphi(t) - \varphi(s)} \le \abs{t - s}$ for any $s,t \in \mathbb{R}$, and $\varphi(u) = u$.
        Then $\{ \varphi(u_{n}) \}_{n \in \mathbb{N}} \subseteq \mathcal{F}$ and $\lim_{n \to \infty}\mathcal{E}(u - \varphi(u_{n})) = 0$.
    \end{enumerate}
\end{cor}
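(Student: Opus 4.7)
The plan is to reduce both parts to Proposition \ref{prop.useful-E1}-\ref{it:E1-conv}, which upgrades $L^{p}$-convergence together with convergence of $\mathcal{E}$-values to convergence in the $(\mathcal{E},1)$-norm. To apply it, I need two ingredients: $L^{p}$-convergence of the approximants to the target, and the sandwich $\mathcal{E}(u) \leq \liminf \mathcal{E}(\cdot) \leq \limsup \mathcal{E}(\cdot) \leq \mathcal{E}(u)$. The upper bound will always come from the Lipschitz contractivity \eqref{lipcont}, and the lower bound from the lower semicontinuity in Proposition \ref{prop.useful-E1}-\ref{it:E1-lsc}.

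For \ref{it:approx-meas.1}, fix $u \in \mathcal{F}$. First, $\varphi_n(u) \in \mathcal{F}$ with $\mathcal{E}(\varphi_n(u)) \le \mathcal{E}(u)$ is immediate from \eqref{lipcont}. Setting $s = 0$ in the Lipschitz hypothesis on $\varphi_n$ and using $\varphi_n(0) = 0$ gives the pointwise domination $\abs{\varphi_n(u)} \le \abs{u}$, while $\varphi_n(t) \to t$ pointwise combined with $u \in L^{p}(X,m)$ and dominated convergence yields $\varphi_n(u) \to u$ in $L^{p}(X,m)$. Proposition \ref{prop.useful-E1}-\ref{it:E1-lsc} then gives $\mathcal{E}(u) \le \liminf_{n}\mathcal{E}(\varphi_n(u))$, which combined with the upper bound forces $\lim_{n}\mathcal{E}(\varphi_n(u)) = \mathcal{E}(u)$. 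Proposition \ref{prop.useful-E1}-\ref{it:E1-conv} finishes the job, since $\norm{u - \varphi_n(u)}_{\mathcal{E},1} \to 0$ is stronger than the stated conclusion.

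For \ref{it:approx-meas.2}, the scheme is the same. From \eqref{lipcont} we get $\varphi(u_n) \in \mathcal{F}$ and $\mathcal{E}(\varphi(u_n)) \le \mathcal{E}(u_n)$; since $\norm{u - u_n}_{\mathcal{E},1} \to 0$ entails $\mathcal{E}(u_n) \to \mathcal{E}(u)$ (via the continuity of $\mathcal{E}^{1/p}$ as a seminorm on $\mathcal{F}$), this gives $\limsup_{n}\mathcal{E}(\varphi(u_n)) \le \mathcal{E}(u)$. For the $L^{p}$-convergence, the pointwise $1$-Lipschitz bound $\abs{\varphi(u_n) - \varphi(u)} \le \abs{u_n - u}$ together with the hypothesis $\varphi(u) = u$ gives $\norm{u - \varphi(u_n)}_{L^{p}(X,m)} \le \norm{u - u_n}_{L^{p}(X,m)} \to 0$. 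Applying Proposition \ref{prop.useful-E1}-\ref{it:E1-lsc} then yields $\mathcal{E}(u) \le \liminf_{n}\mathcal{E}(\varphi(u_n))$, hence $\lim_{n}\mathcal{E}(\varphi(u_n)) = \mathcal{E}(u)$, and one more appeal to Proposition \ref{prop.useful-E1}-\ref{it:E1-conv} completes the argument.

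There is no real obstacle here; the whole content of the corollary has been packaged into Proposition \ref{prop.useful-E1}, and the only thing to verify in each part is that the Lipschitz contraction \eqref{lipcont} provides both the $L^{p}$-domination needed for $L^{p}$-convergence and the $\mathcal{E}$-contraction needed to match the lower semicontinuity bound. The one minor point worth flagging is the use of $\varphi(u) = u$ in \ref{it:approx-meas.2}: without this hypothesis one would only obtain $\varphi(u_n) \to \varphi(u)$, which is why the statement assumes it in order to identify the limit with $u$.
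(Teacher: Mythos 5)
Your argument is correct and follows essentially the same route as the paper's proof: Lipschitz contractivity \eqref{lipcont} for membership in $\mathcal{F}$ and the upper bound on energies, dominated convergence (resp.\ the pointwise bound $\abs{\varphi(u)-\varphi(u_{n})}\le\abs{u-u_{n}}$) for $L^{p}$-convergence, lower semicontinuity from Proposition \ref{prop.useful-E1}-\ref{it:E1-lsc} to close the sandwich, and Proposition \ref{prop.useful-E1}-\ref{it:E1-conv} to conclude. No gaps.
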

\begin{rmk}\label{rmk:meas-form.approx}
	Let us make the same remark as \cite[Remark 2.21]{KS.survey} for the reader's convenience. 
	Typical choices of $\{\varphi_{n}\}_{n\in\mathbb{N}}\subset\contfunc(\mathbb{R})$ in Corollary \ref{cor.approx-measure}-\ref{it:approx-meas.1} are $\varphi_{n}(t)=(-n)\vee(t\wedge n)$ and $\varphi_{n}(t)=t-(-\frac{1}{n})\vee(t\wedge\frac{1}{n})$.
	A typical use of Corollary \ref{cor.approx-measure}-\ref{it:approx-meas.2} is to obtain a sequence of $I$-valued functions converging to $u$ in $(\mathcal{F},\norm{\,\cdot\,}_{\mathcal{E},1})$ when $I\subset\mathbb{R}$ is a closed interval with $0 \in I$ and $u\in\mathcal{F}$ is $I$-valued, by considering $\varphi\in\contfunc(\mathbb{R})$ given by $\varphi(t)\coloneqq(\inf I)\vee(t\wedge\sup I)$.
\end{rmk}
\begin{proof}[Proof of Corollary \ref{cor.approx-measure}]
    \ref{it:approx-meas.1}: 
    It is immediate from the dominated convergence theorem that $\{ \varphi_{n}(u) \}_{n \in \mathbb{N}}$ converges in norm in $L^{p}(X,m)$ to $u$.
    Moreover, by \eqref{lipcont} we have $\varphi_{n}(u) \in \mathcal{F}$ and $\mathcal{E}(\varphi_{n}(u)) \leq \mathcal{E}(u)$ for any $n \in \mathbb{N}$, and in particular $\limsup_{n\to\infty}\mathcal{E}(\varphi_{n}(u)) \leq \mathcal{E}(u)$.
    Thus $\lim_{n \to \infty}\mathcal{E}(u - \varphi_{n}(u)) = 0$ by Proposition \ref{prop.useful-E1}-\ref{it:E1-conv}. 

    \ref{it:approx-meas.2}:  
	By \eqref{lipcont} we have $\varphi(u_{n}) \in \mathcal{F}$ and $\mathcal{E}(\varphi(u_{n})) \leq \mathcal{E}(u_{n})$ for any $n \in \mathbb{N}$, and therefore $\limsup_{n\to\infty}\mathcal{E}(\varphi(u_{n})) \leq \lim_{n\to\infty}\mathcal{E}(u_{n}) = \mathcal{E}(u)$ by the triangle inequality for $\mathcal{E}^{1/p}$.
    Also, $\{ \varphi(u_{n}) \}_{n \in \mathbb{N}}$ converges in norm in $L^{p}(X,m)$ to $\varphi(u) = u$ since $\abs{\varphi(u) - \varphi(u_{n})} \leq \abs{u - u_{n}}$ on $X$. 
    Thus $\lim_{n \to \infty}\mathcal{E}(u - \varphi(u_{n})) = 0$ by Proposition \ref{prop.useful-E1}-\ref{it:E1-conv}. 
\end{proof}

\subsection{Fr\'{e}chet derivative as a homeomorphism to the dual space}
In many practical situations, the quotient normed space $\mathcal{F}/\mathcal{E}^{-1}(0)$ (equipped with the norm $\mathcal{E}^{1/p}$) becomes a Banach space (see Subsection \ref{sec.trace}).
To state some basic properties of this Banach space, we recall the notion of \emph{uniformly smoothness}.
\begin{defn}[Uniformly smooth normed space]
    Let $(\mathcal{X}, \norm{\,\cdot\,})$ be a normed space.
    The normed space $\mathcal{X}$ is said to be \emph{uniformly smooth}\index{uniformly smooth} if and only if 
    \[
    \lim_{\tau \to 0}\tau^{-1}\sup\biggl\{ \frac{\norm{u + v} + \norm{u - v}}{2} - 1 \biggm| \norm{u} = 1, \norm{v} = \tau \biggr\} = 0.
    \]
\end{defn}

The following duality between uniform convexity and uniform smoothness is well known.
(See also \cite[Lemma 5]{BCL94} for a quantitative version of this theorem.)
\begin{thm}[Day's duality theorem\index{Day's duality theorem}; see, e.g., {\cite[Proposition 1.e.2]{LT}}]\label{thm.Day}
	Let $\mathcal{X}$ be a Banach space. 
	Then $\mathcal{X}$ is uniformly convex if and only if its dual space $\mathcal{X}^{\ast}$ is uniformly smooth. 
\end{thm}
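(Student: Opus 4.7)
The plan is to derive the theorem from the classical Lindenstrauss duality identity between the modulus of convexity of $\mathcal{X}$ and the modulus of smoothness of $\mathcal{X}^{\ast}$. For $\varepsilon \in (0, 2]$ and $\tau \in (0, \infty)$, set
\begin{equation*}
\delta_{\mathcal{X}}(\varepsilon) \coloneqq \inf\bigl\{ 1 - \tfrac{1}{2}\norm{x + y} \bigm| x, y \in \mathcal{X},\ \norm{x} = \norm{y} = 1,\ \norm{x - y} \ge \varepsilon \bigr\},
\end{equation*}
\begin{equation*}
\rho_{\mathcal{X}^{\ast}}(\tau) \coloneqq \sup\bigl\{ \tfrac{1}{2}(\norm{x^{\ast} + y^{\ast}} + \norm{x^{\ast} - y^{\ast}}) - 1 \bigm| x^{\ast}, y^{\ast} \in \mathcal{X}^{\ast},\ \norm{x^{\ast}} = 1,\ \norm{y^{\ast}} = \tau \bigr\}.
\end{equation*}
Then $\mathcal{X}$ is uniformly convex if and only if $\delta_{\mathcal{X}}(\varepsilon) > 0$ for every $\varepsilon \in (0, 2]$, and $\mathcal{X}^{\ast}$ is uniformly smooth if and only if $\lim_{\tau \downarrow 0} \rho_{\mathcal{X}^{\ast}}(\tau)/\tau = 0$. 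It suffices therefore to establish the Lindenstrauss identity
\begin{equation*}
\rho_{\mathcal{X}^{\ast}}(\tau) = \sup_{0 < \varepsilon \le 2}\bigl\{ \tfrac{\tau \varepsilon}{2} - \delta_{\mathcal{X}}(\varepsilon) \bigr\}, \qquad \tau \in (0, \infty).
\end{equation*}

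For the $\le$ direction of the identity, I would fix $x^{\ast}, y^{\ast} \in \mathcal{X}^{\ast}$ with $\norm{x^{\ast}} = 1$ and $\norm{y^{\ast}} = \tau$, choose for arbitrary $\eta > 0$ unit vectors $u, v \in \mathcal{X}$ with $(x^{\ast} + y^{\ast})(u) + (x^{\ast} - y^{\ast})(v) \ge \norm{x^{\ast} + y^{\ast}} + \norm{x^{\ast} - y^{\ast}} - \eta$, rewrite the left-hand side as $x^{\ast}(u + v) + y^{\ast}(u - v) \le \norm{u + v} + \tau\norm{u - v}$, and then use $\norm{u + v} \le 2 - 2\delta_{\mathcal{X}}(\norm{u - v})$ from the definition of $\delta_{\mathcal{X}}$. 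For the $\ge$ direction, given $\varepsilon \in (0, 2]$ and $\eta > 0$ I would pick $x, y \in \mathcal{X}$ with $\norm{x} = \norm{y} = 1$, $\norm{x - y} \ge \varepsilon$ and $1 - \norm{x + y}/2 \le \delta_{\mathcal{X}}(\varepsilon) + \eta$, invoke Hahn--Banach to obtain unit-norm $x^{\ast}, y^{\ast} \in \mathcal{X}^{\ast}$ with $x^{\ast}(x + y) = \norm{x + y}$ and $y^{\ast}(x - y) = \norm{x - y}$, and estimate $\norm{x^{\ast} + \tau y^{\ast}} + \norm{x^{\ast} - \tau y^{\ast}} \ge (x^{\ast} + \tau y^{\ast})(x) + (x^{\ast} - \tau y^{\ast})(y) = \norm{x + y} + \tau \norm{x - y}$, then let $\eta \downarrow 0$.

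Granted the identity, the equivalence is immediate. If $\mathcal{X}$ is uniformly convex, for any $\varepsilon_{0} \in (0, 2]$ I would split the sup defining $\rho_{\mathcal{X}^{\ast}}(\tau)/\tau$ at $\varepsilon_{0}$: the contribution from $\varepsilon \le \varepsilon_{0}$ is at most $\varepsilon_{0}/2$, and by monotonicity of $\delta_{\mathcal{X}}$ the contribution from $\varepsilon > \varepsilon_{0}$ is at most $1 - \delta_{\mathcal{X}}(\varepsilon_{0})/\tau$, which is dominated by $\varepsilon_{0}/2$ once $\tau$ is small. Thus $\limsup_{\tau \downarrow 0} \rho_{\mathcal{X}^{\ast}}(\tau)/\tau \le \varepsilon_{0}/2$ for every $\varepsilon_{0} > 0$, so $\mathcal{X}^{\ast}$ is uniformly smooth. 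Conversely, if $\mathcal{X}^{\ast}$ is uniformly smooth, for each $\varepsilon \in (0, 2]$ choosing $\tau > 0$ with $\rho_{\mathcal{X}^{\ast}}(\tau) < \tau \varepsilon / 4$ gives $\delta_{\mathcal{X}}(\varepsilon) \ge \tau \varepsilon / 2 - \rho_{\mathcal{X}^{\ast}}(\tau) > 0$.

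The main obstacle is the careful bookkeeping inside the duality identity: in the $\le$ direction, the $\eta$-approximation of the two dual norms must be orchestrated in parallel with the unit-sphere characterization of $\delta_{\mathcal{X}}$; in the $\ge$ direction, the Hahn--Banach selection must be made compatibly with a near-extremal pair for $\delta_{\mathcal{X}}(\varepsilon)$. These steps are standard in the Banach space literature but must be performed attentively. A purely contrapositive route, extracting bad sequences from the failure of one property and testing them against the other, is available but less transparent, and it does not yield the quantitative Lindenstrauss--Tzafriri duality that the identity delivers as a by-product.
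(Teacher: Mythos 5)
The paper gives no proof of this classical theorem and simply cites \cite[Proposition 1.e.2]{LT}; your argument is exactly the standard proof of that cited proposition, namely the Lindenstrauss duality formula $\rho_{\mathcal{X}^{\ast}}(\tau)=\sup_{0<\varepsilon\le 2}\{\tau\varepsilon/2-\delta_{\mathcal{X}}(\varepsilon)\}$ followed by the routine deduction of the equivalence. Both halves of your identity and the concluding limit arguments are correct, so the proposal matches the intended (cited) proof.
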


We also recall the notion of duality mapping and fundamental results on it in the following proposition (see, e.g., \cite[Definition 2.1, Lemmas 2.1 and 2.2]{Miya} for details and proofs). 
\begin{prop}[Duality mapping]\label{prop.dualitymap}
	Let $\mathcal{X}$ be a Banach space and let $\mathcal{X}^{\ast}$ be the dual space of $\mathcal{X}$. 
	Let $\norm{\,\cdot\,}_{W}$ be the norm of $W$ for each $W \in \{ \mathcal{X},\mathcal{X}^{\ast} \}$. 
	For $(x,f) \in \mathcal{X} \times \mathcal{X}^{\ast}$, we set $\langle x,f \rangle \coloneqq f(x)$. 
	For $x \in \mathcal{X}$, define $F \colon \mathcal{X} \to 2^{\mathcal{X}^{\ast}}$ by 
	\[
	F(x) \coloneqq \bigl\{ f \in \mathcal{X}^{\ast} \bigm| \langle x,f \rangle = \norm{x}_{\mathcal{X}}^{2} = \norm{f}_{\mathcal{X}^{\ast}}^{2} \bigr\}, 
	\]
	which is called the \emph{duality mapping}\index{duality mapping} of $\mathcal{X}$. 
	Then the following properties hold: 
	\begin{enumerate}[label=\textup{(\alph*)},align=left,leftmargin=*,topsep=2pt,parsep=0pt,itemsep=2pt]
		\item \label{dm.nonempty} $F(x) \neq \emptyset$ for any $x \in \mathcal{X}$. 
		\item \label{dm.sur} If $\mathcal{X}$ is reflexive, then $\bigcup_{x \in \mathcal{X}}F(x) = \mathcal{X}^{\ast}$. 
		\item \label{dm.inj} If $\mathcal{X}$ is strictly convex, i.e., $\norm{\lambda x + (1 - \lambda)y}_{\mathcal{X}} < \lambda\norm{x}_{\mathcal{X}} + (1 - \lambda)\norm{y}_{\mathcal{X}}$ for any $\lambda \in (0,1)$ and any $x,y \in \mathcal{X} \setminus \{ 0 \}$, then $\#(F(x)) = 1$ for any $x \in \mathcal{X}$. 
	\end{enumerate}
\end{prop}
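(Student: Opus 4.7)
The plan is to verify the three properties by classical Banach-space arguments; the entire proof should be purely formal once the right appeals to Hahn--Banach and reflexivity are in place.

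For part \ref{dm.nonempty}, I will treat $x = 0$ separately (where trivially $0 \in F(0)$) and, for $x \neq 0$, construct the norming functional via the Hahn--Banach theorem: define a linear functional on the one-dimensional subspace $\mathbb{R}x$ by $\lambda x \mapsto \lambda\norm{x}_{\mathcal{X}}^{2}$, whose functional norm on $\mathbb{R}x$ equals $\norm{x}_{\mathcal{X}}$, and extend it to an element $f \in \mathcal{X}^{\ast}$ with $\norm{f}_{\mathcal{X}^{\ast}} = \norm{x}_{\mathcal{X}}$. Then $\langle x, f \rangle = \norm{x}_{\mathcal{X}}^{2} = \norm{f}_{\mathcal{X}^{\ast}}^{2}$, so $f \in F(x)$.

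For part \ref{dm.sur}, given $f \in \mathcal{X}^{\ast}$, I will apply part \ref{dm.nonempty} with the Banach space $\mathcal{X}^{\ast}$ in place of $\mathcal{X}$ and $f$ in place of $x$ to obtain $\Phi \in \mathcal{X}^{\ast\ast}$ satisfying $\Phi(f) = \norm{f}_{\mathcal{X}^{\ast}}^{2} = \norm{\Phi}_{\mathcal{X}^{\ast\ast}}^{2}$. Reflexivity of $\mathcal{X}$ then supplies $x \in \mathcal{X}$ with $J_{\mathcal{X}}(x) = \Phi$, where $J_{\mathcal{X}} \colon \mathcal{X} \hookrightarrow \mathcal{X}^{\ast\ast}$ is the canonical isometric embedding. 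Unwinding this identification yields $\langle x, f \rangle = \Phi(f) = \norm{f}_{\mathcal{X}^{\ast}}^{2}$ and $\norm{x}_{\mathcal{X}} = \norm{\Phi}_{\mathcal{X}^{\ast\ast}} = \norm{f}_{\mathcal{X}^{\ast}}$, i.e., $f \in F(x)$.

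For part \ref{dm.inj}, let $x \in \mathcal{X} \setminus \{0\}$ (the case $x=0$ being immediate since $F(0) = \{0\}$) and suppose $f_{1}, f_{2} \in F(x)$. From $\tfrac{1}{2}(f_{1}+f_{2})(x) = \norm{x}_{\mathcal{X}}^{2}$ I get $\norm{\tfrac{1}{2}(f_{1}+f_{2})}_{\mathcal{X}^{\ast}} \geq \norm{x}_{\mathcal{X}}$, while the triangle inequality yields the reverse bound, so that $\norm{f_{1}}_{\mathcal{X}^{\ast}} = \norm{f_{2}}_{\mathcal{X}^{\ast}} = \norm{\tfrac{1}{2}(f_{1}+f_{2})}_{\mathcal{X}^{\ast}} = \norm{x}_{\mathcal{X}}$. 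Rescaling each $f_{i}$ by $\norm{x}_{\mathcal{X}}^{-1}$ reduces the problem to the characterization of equality in the triangle inequality for two unit vectors of $\mathcal{X}^{\ast}$ whose midpoint is again of unit norm, which forces $f_{1} = f_{2}$.

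The principal obstacle I anticipate is in part \ref{dm.inj}: the midpoint argument above naturally exploits strict convexity of $\mathcal{X}^{\ast}$ rather than of $\mathcal{X}$ itself. I would therefore have to translate the hypothesis on $\mathcal{X}$ into the needed property of $\mathcal{X}^{\ast}$, either by invoking the classical duality between strict convexity and smoothness (which in a reflexive setting transfers strict convexity of $\mathcal{X}$ to smoothness of $\mathcal{X}$ and hence to strict convexity of $\mathcal{X}^{\ast}$ via Day's duality in the spirit of Theorem \ref{thm.Day}) or, when Proposition \ref{prop.dualitymap} is applied in this paper, by directly using the uniform convexity of both $\mathcal{X} = (\mathcal{F}/\mathcal{E}^{-1}(0), \mathcal{E}^{1/p})$ and its dual guaranteed by Propositions \ref{p:uc} and \ref{prop.E1-unifconvex} together with Theorem \ref{thm.Day}.
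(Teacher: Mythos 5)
Your proofs of parts \ref{dm.nonempty} and \ref{dm.sur} are correct and standard (Hahn--Banach on the span of $x$, then the same argument in $\mathcal{X}^{\ast}$ pulled back through the canonical embedding); the paper itself gives no proof of this proposition but only cites Miyadera, so there is nothing in-paper to compare against there.

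Part \ref{dm.inj} contains a genuine gap, and you have correctly located it but proposed the wrong repair. Your midpoint argument proves the implication ``$\mathcal{X}^{\ast}$ strictly convex $\Rightarrow$ $F$ single-valued'', and your proposed bridge --- that strict convexity of $\mathcal{X}$ gives smoothness of $\mathcal{X}$ and hence strict convexity of $\mathcal{X}^{\ast}$ --- is false: strict convexity and smoothness are \emph{dual} properties, not equivalent ones, so (for reflexive $\mathcal{X}$) strict convexity of $\mathcal{X}$ yields smoothness of $\mathcal{X}^{\ast}$, not of $\mathcal{X}$. Concretely, take $\mathbb{R}^{2}$ with the norm whose unit ball is the intersection of the two discs of radius $2$ centred at $(\pm 1,0)$: this space is strictly convex (its unit sphere contains no line segment) but not smooth, and at either corner point $x$ of the sphere the set $F(x)$ contains more than one functional --- so the literal conclusion $\#(F(x))=1$ cannot be derived from strict convexity of $\mathcal{X}$ alone. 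What strict convexity of $\mathcal{X}$ \emph{does} give, by running your midpoint computation in $\mathcal{X}$ rather than in $\mathcal{X}^{\ast}$ (if $f \in F(x)\cap F(y)$ with $x \neq y$, then $\norm{x}_{\mathcal{X}}=\norm{y}_{\mathcal{X}}=\norm{f}_{\mathcal{X}^{\ast}}$ and $\langle \frac{x+y}{2},f\rangle=\norm{f}_{\mathcal{X}^{\ast}}^{2}$ force $\norm{\frac{x+y}{2}}_{\mathcal{X}}=\norm{x}_{\mathcal{X}}$, a contradiction), is that $F(x)\cap F(y)=\emptyset$ for $x\neq y$, i.e.\ single-valuedness of the \emph{inverse} duality mapping. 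Your fallback is the right move for the only place Proposition \ref{prop.dualitymap} is used, namely Theorem \ref{thm.banach}: there both $\mathcal{F}/\mathcal{E}^{-1}(0)$ and its dual are uniformly convex by Proposition \ref{p:uc} and Theorem \ref{thm.Day}, hence strictly convex, so both $F$ and $F^{-1}$ are single-valued by the midpoint argument applied on the appropriate side, and both the surjectivity and the injectivity of the map $\mathcal{A}$ in that proof go through.
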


Now we can state a result on the dual space of $\mathcal{F}/\mathcal{E}^{-1}(0)$.  
\begin{thm}\label{thm.banach}
    Assume that $(\mathcal{E},\mathcal{F})$ satisfies \ref{Cp} and that $\mathcal{F}/\mathcal{E}^{-1}(0)$ is a Banach space.
    \begin{enumerate}[label=\textup{(\alph*)},align=left,leftmargin=*,topsep=2pt,parsep=0pt,itemsep=2pt]
        \item \label{conv-smooth} The Banach space $\mathcal{F}/\mathcal{E}^{-1}(0)$ is  uniformly convex and uniformly smooth. In particular, it is reflexive and its dual Banach space $\bigl(\mathcal{F}/\mathcal{E}^{-1}(0)\bigr)^{\ast}$ is also uniformly convex and uniformly smooth.
        \item \label{dual} The map $f \mapsto \mathcal{E}(f;\cdot)$ is a homeomorphism from $\mathcal{F}/\mathcal{E}^{-1}(0)$ to $\bigl(\mathcal{F}/\mathcal{E}^{-1}(0)\bigr)^{\ast}$. In particular, $\bigl(\mathcal{F}/\mathcal{E}^{-1}(0)\bigr)^{\ast} = \{ \mathcal{E}(f;\,\cdot\,) \mid f \in \mathcal{F} \}$.
    \end{enumerate}
\end{thm}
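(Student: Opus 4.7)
The plan is to establish the four convexity/smoothness assertions in \ref{conv-smooth} first and then leverage them in \ref{dual} via the duality mapping of $\mathcal{X} \coloneqq \mathcal{F}/\mathcal{E}^{-1}(0)$ equipped with $\lVert\cdot\rVert \coloneqq \mathcal{E}^{1/p}$.

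For \ref{conv-smooth}, uniform convexity of $(\mathcal{X}, \mathcal{E}^{1/p})$ is an immediate consequence of Proposition \ref{p:uc}, and reflexivity of $\mathcal{X}$ then follows from the Milman--Pettis theorem. Uniform smoothness of $\mathcal{X}$ I would derive directly from the central-difference bound \eqref{c-diff}: for $f,g \in \mathcal{F}$ with $\mathcal{E}(f) = 1$ and $\mathcal{E}(g) = \tau^{p}$, Jensen's inequality applied to the concave function $x \mapsto x^{1/p}$ gives
\begin{equation*}
\frac{\mathcal{E}(f+g)^{1/p} + \mathcal{E}(f-g)^{1/p}}{2} \le \left(\frac{\mathcal{E}(f+g) + \mathcal{E}(f-g)}{2}\right)^{1/p},
\end{equation*}
and \eqref{c-diff} bounds the right-hand side by $\bigl(1 + C_{p}\tau^{p \wedge p/(p-1)}\bigr)^{1/p} \le 1 + C_{p}'\tau^{p \wedge p/(p-1)}$ for $\tau$ in a bounded set. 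Dividing by $\tau$ and letting $\tau \downarrow 0$ yields uniform smoothness because $p \wedge p/(p-1) > 1$ for every $p \in (1,\infty)$. The remaining assertions --- uniform smoothness and uniform convexity of $\mathcal{X}^{\ast}$ --- then follow by applying Day's duality (Theorem \ref{thm.Day}) in both directions, using reflexivity of $\mathcal{X}$ to identify $\mathcal{X}^{\ast\ast}$ with $\mathcal{X}$.

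For \ref{dual}, define $\Phi \colon \mathcal{X} \to \mathcal{X}^{\ast}$ by $\Phi(f) \coloneqq \mathcal{E}(f;\,\cdot\,)$. Linearity of $\mathcal{E}(f;\,\cdot\,)$ (Theorem \ref{thm.p-form}), the bound \eqref{bdd.form}, and the identity $\mathcal{E}(f;h) = 0$ for $h \in \mathcal{E}^{-1}(0)$ from \eqref{form.basic} show $\Phi(f) \in \mathcal{X}^{\ast}$, while $\mathcal{E}(f+h;g) = \mathcal{E}(f;g)$ in \eqref{form.basic} ensures $\Phi$ descends to $\mathcal{X}$. Continuity of $\Phi$ is a direct restatement of \eqref{ncont}, and Theorem \ref{thm.conti-inverse} supplies both injectivity and a modulus of continuity for $\Phi^{-1}$ on bounded subsets of $\Phi(\mathcal{X})$. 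For surjectivity I would invoke the duality mapping $F$ of $\mathcal{X}$: from $\Phi(f)(f) = \mathcal{E}(f) = \lVert f\rVert^{p}$ and $\lVert\Phi(f)\rVert_{\mathcal{X}^{\ast}} = \lVert f\rVert^{p-1}$ (the upper bound from \eqref{bdd.form}, the lower bound from testing $g = f$), one checks for $f \neq 0$ that $\lVert f\rVert^{2-p}\Phi(f)$ lies in $F(f)$. Strict convexity of $\mathcal{X}$ makes $F(f)$ a singleton by Proposition \ref{prop.dualitymap}-\ref{dm.inj}, so $\Phi(f) = \lVert f\rVert^{p-2}F(f)$, while reflexivity of $\mathcal{X}$ gives $\bigcup_{f \in \mathcal{X}}F(f) = \mathcal{X}^{\ast}$ by Proposition \ref{prop.dualitymap}-\ref{dm.sur}. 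Given $\eta \in \mathcal{X}^{\ast} \setminus \{0\}$, the latter furnishes $\hat{f}$ with $\lVert\hat{f}\rVert = 1$ and $F(\hat{f}) = \{\eta/\lVert\eta\rVert_{\mathcal{X}^{\ast}}\}$, and then the $p$-homogeneity in \eqref{form.basic} yields $\Phi(t\hat{f}) = \eta$ for $t = \lVert\eta\rVert_{\mathcal{X}^{\ast}}^{1/(p-1)}$.

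The main subtlety is avoiding a circular use of Day's duality: neither uniform convexity of $\mathcal{X}^{\ast}$ nor uniform smoothness of $\mathcal{X}$ is a formal consequence of uniform convexity of $\mathcal{X}$ alone, and this is what forces the direct quantitative argument from \eqref{c-diff} to break the symmetry. Beyond that, the proof of \ref{dual} reduces to the identification $\Phi(f) = \lVert f\rVert^{p-2}F(f)$ together with routine rescaling via $p$-homogeneity.
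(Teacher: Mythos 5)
Your proposal is correct and follows essentially the same route as the paper: uniform convexity via Proposition \ref{p:uc} and Milman--Pettis, a direct quantitative modulus-of-smoothness estimate (the paper cites \eqref{ncont.1}, which is itself derived from \eqref{c-diff}, so your Jensen argument is the same estimate unwound), Day's theorem for the dual space, and the duality mapping of Proposition \ref{prop.dualitymap} for bijectivity of $f \mapsto \mathcal{E}(f;\,\cdot\,)$, with \eqref{ncont} and \eqref{e:conti.inverse} giving continuity of the map and its inverse. The only cosmetic differences are that the paper works with the normalized map $\mathcal{A}(u) = \mathcal{E}(u)^{2/p-1}\mathcal{E}(u;\,\cdot\,) \in F(u)$ rather than your equivalent identity $\Phi(f) = \lVert f\rVert^{p-2}F(f)$, and that it deduces injectivity from single-valuedness of the inverse duality mapping rather than from Theorem \ref{thm.conti-inverse}.
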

\begin{proof}
    For ease of notation, set $\mathcal{X} \coloneqq \mathcal{F}/\mathcal{E}^{-1}(0)$ and $\norm{u}_{\mathcal{X}} \coloneqq \mathcal{E}(u)^{1/p}$ for any $u \in \mathcal{X}$.

    \ref{conv-smooth}:
    The uniform convexity of $\mathcal{X}$ is immediate from Proposition \ref{p:uc}, whence $\mathcal{X}$ is reflexive by the Milman--Pettis theorem\index{Milman--Pettis theorem} (see, e.g., \cite[Theorem 2 in Section V.2]{Yos}).
    Also, we easily see from \eqref{ncont.1} in the proof of Theorem \ref{thm.p-form} that $\mathcal{X}$ is uniformly smooth. 
    The same properties for $\mathcal{X}^{\ast}$ follow from Theorem \ref{thm.Day}.

    \ref{dual}:
    Let $u \in \mathcal{X}$ and define $\mathcal{A}(u) \coloneqq \mathcal{E}(u)^{\frac{2}{p} - 1}\mathcal{E}(u; \,\cdot\,) \in \mathcal{X}^{\ast}$. (We define $\mathcal{A}(u) = 0$ if $\mathcal{E}(u) = 0$.)
    We will show that $\mathcal{A} \colon \mathcal{X} \to \mathcal{X}^{\ast}$ is a bijection.
    By the H\"{o}lder-type estimate \eqref{bdd.form} in Theorem \ref{thm.p-form}, we have
    \[
    \norm{\mathcal{A}(u)}_{\mathcal{X}^{\ast}} = \mathcal{E}(u)^{\frac{2}{p} - 1}\norm{\mathcal{E}(u; \,\cdot\,)}_{\mathcal{X}^{\ast}} = \mathcal{E}(u)^{\frac{2}{p} - 1 + \frac{p-1}{p}} = \norm{u}_{\mathcal{X}}.
    \]
    Then $\langle u, \mathcal{A}(u) \rangle = \mathcal{E}(u)^{\frac{2}{p}} = \norm{u}_{\mathcal{X}}^{2} = \norm{\mathcal{A}(u)}_{\mathcal{X}^{\ast}}^{2}$ and hence
    \[
    \mathcal{A}(u) \in \{ f \in \mathcal{X}^{\ast} \mid \langle u, f \rangle = \norm{u}_{\mathcal{X}}^{2} = \norm{f}_{\mathcal{X}^{\ast}}^{2} \} = F(u),
    \]
    where $F \colon \mathcal{X} \to \mathcal{X}^{\ast}$ is the duality mapping. 
    We see from Proposition \ref{prop.dualitymap} and \ref{conv-smooth} of the present theorem that $\mathcal{A} \colon \mathcal{X} \to \mathcal{X}^{\ast}$ is a surjection.
    Note that the mapping $F^{-1} \colon \mathcal{X}^{\ast} \to \mathcal{X}^{\ast\ast} = \mathcal{X}$ defined by $F^{-1}(f) = \{ u \in \mathcal{X} \mid \langle u, f \rangle = \norm{u}_{\mathcal{X}}^{2} = \norm{f}_{\mathcal{X}^{\ast}}^{2} \}$ for $f \in \mathcal{X}^{\ast}$ is the duality mapping from $\mathcal{X}^{\ast}$ to $\mathcal{X}$.
    By Proposition \ref{prop.dualitymap} and \ref{conv-smooth} again, we conclude that $\mathcal{A}$ is injective. 
    The map $f \mapsto \mathcal{E}(f;\cdot)$ and its inverse are continuous by \eqref{ncont} in Theorem \ref{thm.p-form} and \eqref{e:conti.inverse} in Theorem \ref{thm.conti-inverse}, respectively. 
\end{proof}

We also present a similar statement for $(\mathcal{F},\norm{\,\cdot\,}_{\mathcal{E},\alpha})$.  
\begin{cor}\label{cor.banach-E1}
    Let $\alpha \in (0,\infty)$. 
    Assume that $\mathcal{F} \subseteq L^{p}(X,m)$, that $(\mathcal{E},\mathcal{F})$ satisfies \ref{Cp} and that $\mathcal{X}_{\alpha} \coloneqq (\mathcal{F},\norm{\,\cdot\,}_{\mathcal{E},\alpha})$ is a Banach space.
    \begin{enumerate}[label=\textup{(\alph*)},align=left,leftmargin=*,topsep=2pt,parsep=0pt,itemsep=2pt]
        \item \label{conv-smooth-E1} The Banach space $\mathcal{X}_{\alpha}$ is  uniformly convex and uniformly smooth. In particular, it is reflexive and its dual space $\mathcal{X}_{\alpha}^{\ast}$ is also uniformly convex and uniformly smooth.
        \item \label{dual-E1} For each $f \in \mathcal{F}$, define a linear map $\Psi_{p,\alpha}^{f} \colon \mathcal{F} \to \mathbb{R}$ by  
        \begin{equation}
        	\Psi_{p,\alpha}^{f}(g) \coloneqq \mathcal{E}(f; g) + \alpha\int_{X}\sgn(f)\abs{f}^{p - 1}g\,dm, \quad g \in \mathcal{F}. 
        \end{equation}
		Then the map $f \mapsto \Psi_{p,\alpha}^{f}$ is a homeomorphism from $\mathcal{X}_{\alpha}$ to $\mathcal{X}_{\alpha}^{\ast}$. In particular, $\mathcal{X}_{\alpha}^{\ast} = \{ \Psi_{p,\alpha}^{f} \mid f \in \mathcal{F} \}$.
    \end{enumerate}
\end{cor}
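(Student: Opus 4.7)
The plan is to deduce both parts from Theorem~\ref{thm.banach} applied to the auxiliary $p$-energy form $(\widetilde{\mathcal{E}}, \mathcal{F})$ on $(X,m)$ defined by $\widetilde{\mathcal{E}}(f) \coloneqq \norm{f}_{\mathcal{E},\alpha}^{p} = \mathcal{E}(f) + \alpha\norm{f}_{L^{p}(X,m)}^{p}$. By Proposition~\ref{prop.E1-unifconvex}, $(\widetilde{\mathcal{E}},\mathcal{F})$ is indeed a $p$-energy form satisfying \ref{Cp}, and by hypothesis $\mathcal{X}_{\alpha} = (\mathcal{F},\widetilde{\mathcal{E}}^{1/p})$ is a Banach space. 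Crucially, $\norm{\cdot}_{\mathcal{E},\alpha}$ is a genuine norm (not merely a seminorm) on $\mathcal{F}$, so $\widetilde{\mathcal{E}}^{-1}(0) = \{0\}$ and thus $\mathcal{F}/\widetilde{\mathcal{E}}^{-1}(0) = \mathcal{X}_{\alpha}$ as Banach spaces. Part \ref{conv-smooth-E1} will then follow immediately from Theorem~\ref{thm.banach}-\ref{conv-smooth} applied to $(\widetilde{\mathcal{E}},\mathcal{F})$.

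For part \ref{dual-E1}, Theorem~\ref{thm.banach}-\ref{dual} applied to $(\widetilde{\mathcal{E}},\mathcal{F})$ will give that $f \mapsto \widetilde{\mathcal{E}}(f;\cdot)$ is a homeomorphism from $\mathcal{X}_{\alpha}$ onto $\mathcal{X}_{\alpha}^{\ast}$, where by definition $\widetilde{\mathcal{E}}(f;g) = \tfrac{1}{p}\tfrac{d}{dt}\widetilde{\mathcal{E}}(f+tg)\bigr|_{t=0}$ (cf.\ \eqref{exist-deriva} applied to $\widetilde{\mathcal{E}}$). It thus remains to identify this derivative with the explicit functional $\Psi_{p,\alpha}^{f}$. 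Since $\widetilde{\mathcal{E}}(f+tg) = \mathcal{E}(f+tg) + \alpha\int_{X}\abs{f+tg}^{p}\,dm$, the derivative splits into two parts: the first contributes $\mathcal{E}(f;g)$ directly by \eqref{exist-deriva}, so only the differentiation of $t \mapsto \int_{X}\abs{f+tg}^{p}\,dm$ at $t=0$ needs to be carried out.

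For this remaining piece I will invoke dominated convergence: the pointwise $t$-derivative of $\abs{f+tg}^{p}$ exists a.e.\ and equals $p\sgn(f+tg)\abs{f+tg}^{p-1}g$, and for $t \in [-1,1]$ its modulus is bounded by $p(\abs{f}+\abs{g})^{p-1}\abs{g}$, which lies in $L^{1}(X,m)$ by H\"older's inequality since $f,g \in L^{p}(X,m)$. Interchanging derivative and integral and evaluating at $t=0$ yields $p\int_{X}\sgn(f)\abs{f}^{p-1}g\,dm$; dividing by $p$ and combining with the $\mathcal{E}(f;g)$ term produces exactly $\Psi_{p,\alpha}^{f}(g)$, completing the identification. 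No substantive obstacle is anticipated here; the only analytic input beyond Theorem~\ref{thm.banach} and Proposition~\ref{prop.E1-unifconvex} is this routine dominated-convergence step.
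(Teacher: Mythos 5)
Your proposal is correct and follows essentially the same route as the paper: the paper likewise introduces the auxiliary $p$-energy form $\mathcal{E}_{\alpha}(u) \coloneqq \norm{u}_{\mathcal{E},\alpha}^{p}$, invokes Proposition \ref{prop.E1-unifconvex} for \ref{Cp}, and applies Theorem \ref{thm.banach}. The only difference is that you spell out the dominated-convergence identification of the derivative of $t \mapsto \int_{X}\abs{f+tg}^{p}\,dm$, which the paper leaves implicit by directly defining $\mathcal{E}_{\alpha}(u;v)$ as the sum of the two terms.
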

\begin{proof}
	Recalling the definition \eqref{d:e1norm} of $\norm{\,\cdot\,}_{\mathcal{E},\alpha}$ in Definition \ref{e:defn-e1norm}, we define $\mathcal{E}_{\alpha} \colon \mathcal{F} \to [0,\infty)$ by $\mathcal{E}_{\alpha}(u) \coloneqq \norm{u}_{\mathcal{E},\alpha}^{p}$.
	Then $(\mathcal{E}_{\alpha},\mathcal{F})$ is a $p$-energy form on $(X,m)$ satisfying \ref{Cp} by Proposition \ref{prop.E1-unifconvex}, and by the differentiability \eqref{exist-deriva} of $\mathcal{E}$ in Theorem \ref{thm.p-form} and the dominated convergence theorem we have
	\[
	\mathcal{E}_{\alpha}(f; g) \coloneqq \frac{1}{p}\left.\frac{d}{dt}\mathcal{E}_{\alpha}(f + tg)\right|_{t = 0} = \Psi_{p,\alpha}^{f}(g) \quad \textrm{for any $f,g \in \mathcal{F}$.}
	\]
	We therefore obtain the desired result by applying Theorem \ref{thm.banach} to $(\mathcal{E}_{\alpha},\mathcal{F})$. 
\end{proof}

\subsection{Regularity and strong locality}\label{sec.regularlocal}
In this subsection, in addition to the setting specified at the beginning of this section, we make the same topological assumptions as \cite[(1.1.7)]{FOT}, i.e.,
\begin{gather}
	\text{$X$ is a locally compact separable metrizable topological space,} \label{a:loccpt} \\
	\text{$m$ is a (positive) Radon measure on $X$ with $\supp_{X}[m] = X$} \label{a:fullRadon}
\end{gather}
(it is implicit in \eqref{a:fullRadon} that the $\sigma$-algebra $\mathcal{B}$ which $X$ is equipped with is assumed to be the Borel $\sigma$-algebra $\mathcal{B}(X)$ of $X$).   
Here, as usual, by a \emph{(positive) Radon measure}\index{Radon measure} on $X$ we mean a Borel measure on $X$ which is finite on any compact subset of $X$. 
Under this setting, the map from $\contfunc(X)$ to $L^{0}(X,m) = L^{0}(X,\mathcal{B}(X),m)$ defined by taking $u \in \contfunc(X)$ to its $m$-equivalence class is injective and hence gives a canonical embedding of $\contfunc(X)$ into $L^{0}(X,m)$ as a subalgebra, and we will consider $\contfunc(X)$ as a subset of $L^{0}(X,m)$ through this embedding without further notice. 

The following definitions are analogues of the notions in the theory of regular symmetric Dirichlet forms (see, e.g., \cite[p.~6]{FOT}). 
\begin{defn}[Core]\label{d:core}
	Let $\mathscr{C}$ be a subset of $\mathcal{F} \cap \contfunc_{c}(X)$. 
	\begin{enumerate}[label=\textup{(\arabic*)},align=left,leftmargin=*,topsep=2pt,parsep=0pt,itemsep=2pt]
		\item\label{it:d-core} $\mathscr{C}$ is said to be a \emph{core}\index{core} of $(\mathcal{E},\mathcal{F})$ if and only if $\mathscr{C}$ is dense both in $(\mathcal{F},\norm{\,\cdot\,}_{\mathcal{E},1})$ and in $(\contfunc_{c}(X),\norm{\,\cdot\,}_{\sup})$.
		\item\label{it:d-special} A core $\mathscr{C}$ is said to be \emph{special}\index{special (core)} if and only if $\mathscr{C}$ is a linear subspace of $\mathcal{F} \cap \contfunc_{c}(X)$, $\mathscr{C}$ is a dense subalgebra of $(\contfunc_{c}(X),\norm{\,\cdot\,}_{\sup})$, and for any compact subset $K$ of $X$ and any relatively compact open subset $G$ of $X$ with $K \subseteq G$, there exists $\varphi \in \mathscr{C}$ such that $\varphi \ge 0$, $\varphi = 1$ on $K$ and $\varphi = 0$ on $X \setminus G$. 
	\end{enumerate}
\end{defn}

\begin{defn}[Regularity]\label{d:regularity.p-form}
	We say that $(\mathcal{E},\mathcal{F})$ is \emph{regular}\index{regular ($p$-energy form)} if and only if there exists a core $\mathscr{C}$ of $(\mathcal{E},\mathcal{F})$. 
\end{defn}

We can show the following result on regular $p$-energy forms, which is an analogue of \cite[Exercise 1.4.1]{FOT}.
\begin{prop}\label{prop.canonicalcore}
	Assume that $(\mathcal{E},\mathcal{F})$ is regular and that $\mathcal{F}$ has the following properties: 
	\begin{equation}\label{e:dom-unitcontractive}
		u^{+} \wedge 1 \in \mathcal{F} \quad \text{for any $u \in \mathcal{F}$,}
	\end{equation}
	\begin{equation}\label{e:dom-bddleibniz}
		uv \in \mathcal{F} \quad \text{for any $u, v \in \mathcal{F} \cap \contfunc_{b}(X)$.}
	\end{equation}
	Then $\mathcal{F} \cap \contfunc_{c}(X)$ is a special core of $(\mathcal{E},\mathcal{F})$. 
\end{prop}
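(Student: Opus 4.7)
The plan is to verify that $\mathscr{C}_{0} \coloneqq \mathcal{F} \cap \contfunc_{c}(X)$ satisfies all three conditions of Definition \ref{d:core}-\ref{it:d-special}. Since $(\mathcal{E},\mathcal{F})$ is regular, any core $\mathscr{C}$ given by Definition \ref{d:regularity.p-form} is by definition a subset of $\mathscr{C}_{0}$ and is dense both in $(\mathcal{F}, \norm{\,\cdot\,}_{\mathcal{E},1})$ and in $(\contfunc_{c}(X), \norm{\,\cdot\,}_{\sup})$, so $\mathscr{C}_{0}$ inherits both density properties and is itself a core. That $\mathscr{C}_{0}$ is a linear subspace is trivial, and the subalgebra property of $\mathscr{C}_{0}$ in $\contfunc_{c}(X)$ is immediate from \eqref{e:dom-bddleibniz} together with the observation that $\mathscr{C}_{0} \subseteq \mathcal{F} \cap \contfunc_{b}(X)$ and products in $\contfunc_{c}(X)$ stay in $\contfunc_{c}(X)$. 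The essential work is the Urysohn-type cutoff.

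For the Urysohn property, fix compact $K$ and relatively compact open $G$ with $K \subseteq G$. By local compactness (see \eqref{a:loccpt}), choose a compact $K_{1}$ with $K \subseteq \interior K_{1} \subseteq K_{1} \subseteq G$, and by Urysohn's lemma in the locally compact Hausdorff space $X$ pick $f \in \contfunc_{c}(X)$ with $0 \leq f \leq 1$, $f = 1$ on $K_{1}$ and $\supp_{X}[f] \subseteq G$. Using density of a core $\mathscr{C}$ in $(\contfunc_{c}(X), \norm{\,\cdot\,}_{\sup})$, pick $g \in \mathscr{C}$ with $\norm{g - f}_{\sup} < 1/4$, and set $h \coloneqq g^{+} \wedge 1 \in \mathcal{F}$ by \eqref{e:dom-unitcontractive}; since $\supp_{X}[h] \subseteq \supp_{X}[g]$ is compact, $h \in \mathscr{C}_{0}$, $0 \leq h \leq 1$, $h \geq 3/4$ on $K_{1}$, and $h \leq 1/4$ on $X \setminus G$. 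Combining \eqref{e:dom-unitcontractive} with the linearity of $\mathcal{F}$ yields the scaled unit contractivity $u^{+} \wedge a \in \mathcal{F}$ for any $u \in \mathcal{F}$ and $a > 0$. Applying this with $a = 1/4$ and using $h \geq 0$, define
\begin{equation*}
h_{1} \coloneqq h - (h^{+} \wedge 1/4) = (h - 1/4)^{+} \in \mathcal{F},
\end{equation*}
and finally $\varphi \coloneqq (2h_{1})^{+} \wedge 1 \in \mathcal{F}$, again by \eqref{e:dom-unitcontractive} and linearity. Then $\varphi \in \mathscr{C}_{0}$ (continuous, with $\supp_{X}[\varphi] \subseteq \supp_{X}[h_{1}] \subseteq \closure{G}^{X}$ compact), $\varphi \geq 0$, $\varphi = 1$ on $K \subseteq K_{1}$ (since $h_{1} \geq 1/2$ there), and $\varphi = 0$ on $X \setminus G$ (since $h_{1} = 0$ there), as required.

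The main technical obstacle is that with only \eqref{e:dom-unitcontractive} at our disposal, rather than a full Lipschitz-contractivity property, arbitrary affine or Lipschitz truncations of elements of $\mathcal{F}$ are not available; in particular, constants need not lie in $\mathcal{F}$, so $h - 1/4$ itself need not be in $\mathcal{F}$. The identity $(h - a)^{+} = h - (h^{+} \wedge a)$ (valid for $h \geq 0$) sidesteps this by re-expressing the affine cutoff as a difference of two elements of $\mathcal{F}$ constructed via \eqref{e:dom-unitcontractive} and linearity alone, and the final rescaling $(2h_{1})^{+}\wedge 1$ then promotes the value $1/2$ on $K_{1}$ to the exact value $1$ on $K$ while preserving the exact vanishing outside $G$.
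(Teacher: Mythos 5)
Your proof is correct, and the overall strategy is the same as the paper's: take a Urysohn function adapted to $K \subseteq G$, approximate it in $\norm{\,\cdot\,}_{\sup}$ by an element of a core, and then repair the approximation error using \eqref{e:dom-unitcontractive} so that the result is exactly $1$ on $K$ and exactly $0$ off $G$. The one genuine difference is the device used to subtract the small constant: the paper first constructs an auxiliary $\psi \in \mathcal{F} \cap \contfunc_{c}(X)$ with $\psi = 1$ on $\closure{G}^{X}$ and forms $(\widetilde{\varphi} - \varepsilon\psi)^{+} \wedge 1$, so that the constant shift is realized inside $\mathcal{F}$ by linearity against $\psi$; you instead use the pointwise identity $(h-a)^{+} = h - (h^{+} \wedge a)$ for $h \geq 0$, which realizes the same affine truncation using only \eqref{e:dom-unitcontractive} (in its rescaled form $u^{+}\wedge a = a\,((u/a)^{+}\wedge 1)$) and linearity, with no auxiliary cutoff needed. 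Your variant is marginally more self-contained, since the existence of the paper's $\psi$ itself requires a preliminary instance of the same approximate-and-truncate argument; the paper's variant has the minor advantage of working in one step for a general (not necessarily non-negative) approximant. Both are valid, and all the remaining verifications in your write-up (core property, linear subspace, subalgebra via \eqref{e:dom-bddleibniz}, compactness of supports) match the paper.
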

\begin{proof}
	It is clear that $\mathcal{F} \cap \contfunc_{c}(X)$ is a core of $(\mathcal{E},\mathcal{F})$.  
	By \eqref{e:dom-bddleibniz}, $\mathcal{F} \cap \contfunc_{c}(X)$ is a subalgebra of $\contfunc_{c}(X)$. 
	Let $K$ be a compact subset of $X$ and $G$ be a relatively compact open subset $G$ of $X$ with $K \subseteq G$. 
	By Urysohn's lemma, there exists $\varphi_{0} \in \contfunc_{c}(X)$ such that $\varphi_{0} = 2$ on $K$ and $\varphi_{0} = 0$ on $X \setminus G$. 
	Let $\varepsilon \in (0,1/2)$.
	Fix $\psi \in \mathcal{F} \cap \contfunc_{c}(X)$ satisfying $\psi = 1$ on $\closure{G}^{X}$, which exists by the regularity of $(\mathcal{E},\mathcal{F})$, the locally compactness of $X$ and \eqref{e:dom-unitcontractive}. 
	Since $\mathcal{F} \cap \contfunc_{c}(X)$ is a core of $(\mathcal{E},\mathcal{F})$, there exists $\widetilde{\varphi} \in \mathcal{F} \cap \contfunc_{c}(X)$ such that $\norm{\varphi_{0} - \widetilde{\varphi}}_{\sup} < \varepsilon$. 
	Now we define $\varphi \in \contfunc_{c}(X)$ by $\varphi \coloneqq (\widetilde{\varphi} - \varepsilon\psi)^{+} \wedge 1$. 
	(Note that $\supp_{X}[\varphi]$ is compact since $\closure{G}^{X}$ is compact.)
	Then $\varphi \in \mathcal{F} \cap \contfunc_{c}(X)$ by \eqref{e:dom-unitcontractive}. 
	Clearly, $\varphi = 1$ on $K$ and $\varphi = 0$ on $X \setminus G$, so the proof is completed. 
\end{proof}

The proposition above ensures when there exist \emph{cutoff} functions in $\mathcal{F}$. \index{cutoff function}
We also introduce the following condition stating the existence of cutoff functions in a weaker sense. 
\begin{defn}\label{defn.CF-measure}
	We say that a $p$-energy form $(\mathcal{E},\mathcal{F})$ on $(X,m)$ satisfies the property \hyperref[defn.CF-measure]{\textup{(CF)$_{m}$}} if and only if, for any open subset $U$ of $X$ and any compact subset $K$ of $U$, there exists $\varphi \in \mathcal{F} \cap L^{\infty}(X,m)$ such that $\varphi(x) = 1$ for $m$-a.e.\ $x \in K$ and $\varphi(x) = 0$ for $m$-a.e.\ $x \in X \setminus U$.
\end{defn}

We could consider variants of \hyperref[defn.CF-measure]{\textup{(CF)$_{m}$}} such as one requiring $\varphi \in \mathcal{F} \cap \contfunc(K)$ in addition, but we do not discuss those in this paper.
Note that \hyperref[defn.CF-measure]{\textup{(CF)$_{m}$}} holds if $(\mathcal{E},\mathcal{F})$ admits a special core. 

Next we introduce two formulations of the notion of strong locality for $(\mathcal{E},\mathcal{F})$. 
\begin{defn}[Strong locality\index{strong locality (of $p$-energy form)}]\label{defn.Epsl}
	\begin{enumerate}[label=\textup{(\arabic*)},align=left,leftmargin=*,topsep=2pt,parsep=0pt,itemsep=2pt]
		\item\label{it:SL1} We say that $(\mathcal{E},\mathcal{F})$ has the strong local property\index{strong local property (of $p$-energy form)} \hyperref[it:SL1]{\textup{(SL1)}} if and only if, for any $f_{1},f_{2},g \in \mathcal{F}$ with either $\supp_{m}[f_{1} - \alpha_{1}]$ or $\supp_{m}[f_{2} - \alpha_{2}]$ compact and $\supp_{m}[f_{1} - \alpha_{1}] \cap \supp_{m}[f_{2} - \alpha_{2}] = \emptyset$ for some $\alpha_{1},\alpha_{2} \in \mathcal{E}^{-1}(0)$, 
			\begin{equation}\label{e:defn.sl1}
				\mathcal{E}(f_{1} + f_{2} + g) + \mathcal{E}(g) = \mathcal{E}(f_{1} + g) + \mathcal{E}(f_{2} + g). 
			\end{equation}
		\item\label{it:SL2} Assume that $(\mathcal{E},\mathcal{F})$ satisfies \ref{Cp}. We say that $(\mathcal{E},\mathcal{F})$ has the strong local property \hyperref[it:SL2]{\textup{(SL2)}} if and only if, for any $f_{1},f_{2},g \in \mathcal{F}$ with either $\supp_{m}[f_{1} - f_{2} - \alpha]$ or $\supp_{m}[g - \beta]$ compact and $\supp_{m}[f_{1} - f_{2} - \alpha] \cap \supp_{m}[g - \beta] = \emptyset$ for some $\alpha,\beta \in \mathcal{E}^{-1}(0)$, 
			\begin{equation}\label{e:defn.sl2}
				\mathcal{E}(f_{1}; g) = \mathcal{E}(f_{2}; g).  
			\end{equation}
	\end{enumerate}
\end{defn}

In the following propositions, we collect basic results on \hyperref[it:SL1]{\textup{(SL1)}} and \hyperref[it:SL2]{\textup{(SL2)}}. 
\begin{prop}\label{prop.sl-baisc}
	Assume that $(\mathcal{E},\mathcal{F})$ satisfies \ref{Cp}. 
	\begin{enumerate}[label=\textup{(\alph*)},align=left,leftmargin=*,topsep=2pt,parsep=0pt,itemsep=2pt]
		\item\label{it:SL1-conseq} If $(\mathcal{E},\mathcal{F})$ satisfies \hyperref[it:SL1]{\textup{(SL1)}}, then for any $f_{1},f_{2},g \in \mathcal{F}$ with either $\supp_{m}[f_{1} - \alpha_{1}]$ or $\supp_{m}[f_{2} - \alpha_{2}]$ compact and $\supp_{m}[f_{1} - \alpha_{1}] \cap \supp_{m}[f_{2} - \alpha_{2}] = \emptyset$ for some $\alpha_{1},\alpha_{2} \in \mathcal{E}^{-1}(0)$,
			\begin{equation}\label{e:sl1-conseq}
				\mathcal{E}(f_{1} + f_{2}; g) = \mathcal{E}(f_{1}; g) + \mathcal{E}(f_{2}; g). 
			\end{equation}
		\item\label{it:SL2-conseq} If $(\mathcal{E},\mathcal{F})$ satisfies \hyperref[it:SL2]{\textup{(SL2)}}, then for any $f_{1},f_{2},g \in \mathcal{F}$ with either $\supp_{m}[f_{1} - f_{2} - \alpha]$ or $\supp_{m}[g - \beta]$ compact and $\supp_{m}[f_{1} - f_{2} - \alpha] \cap \supp_{m}[g - \beta] = \emptyset$ for some $\alpha,\beta \in \mathcal{E}^{-1}(0)$, 
			\begin{equation}\label{e:sl2-conseq}
				\mathcal{E}(g; f_{1}) = \mathcal{E}(g; f_{2}).  
			\end{equation}
	\end{enumerate}
\end{prop}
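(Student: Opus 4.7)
The plan is as follows. The main idea for part (a) is to differentiate the (SL1) identity in the direction of $g$. Since the hypothesis of (SL1) constrains only $f_{1}-\alpha_{1}$ and $f_{2}-\alpha_{2}$ and leaves $g$ free, (SL1) applied with $tg$ in place of $g$ yields
\[
\mathcal{E}(f_{1}+f_{2}+tg) + \abs{t}^{p}\mathcal{E}(g) = \mathcal{E}(f_{1}+tg) + \mathcal{E}(f_{2}+tg)\qquad\text{for every }t\in\mathbb{R},
\]
where I used $p$-homogeneity to write $\mathcal{E}(tg)=\abs{t}^{p}\mathcal{E}(g)$. Subtracting the $t=0$ instance (which reads $\mathcal{E}(f_{1}+f_{2})=\mathcal{E}(f_{1})+\mathcal{E}(f_{2})$), dividing by $t$, and letting $t\downarrow 0$, the term $\sgn(t)\abs{t}^{p-1}\mathcal{E}(g)$ vanishes because $p>1$, while the remaining difference quotients converge by the differentiability shown in Proposition \ref{prop.diffble}. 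This gives $p\mathcal{E}(f_{1}+f_{2};g) = p\mathcal{E}(f_{1};g)+p\mathcal{E}(f_{2};g)$, as desired.

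For part (b), the essential difficulty is that (SL2) only controls changes in the \emph{first} slot of $\mathcal{E}(\,\cdot\,;\,\cdot\,)$, whereas the assertion concerns changes in the \emph{second} slot; this is the main obstacle, given the asymmetry of the bracket (linearity holds in the second slot but only $p$-homogeneity in the first). The trick is to use the linearity of $\mathcal{E}(g;\,\cdot\,)$ from Theorem \ref{thm.p-form} to transfer the difference into the first slot. Concretely, by that linearity together with the identities $\mathcal{E}(g;\alpha)=0$ and $\mathcal{E}(g+\beta;\,\cdot\,)=\mathcal{E}(g;\,\cdot\,)$ for $\alpha,\beta\in\mathcal{E}^{-1}(0)$ recorded in \eqref{form.basic}, I can rewrite
\[
\mathcal{E}(g;f_{1}) - \mathcal{E}(g;f_{2}) = \mathcal{E}(g;f_{1}-f_{2}) = \mathcal{E}\bigl(g-\beta;(f_{1}-f_{2})-\alpha\bigr).
\]
Now I invoke (SL2) with $F_{1}\coloneqq g-\beta$, $F_{2}\coloneqq 0$, $G\coloneqq (f_{1}-f_{2})-\alpha$, choosing the null elements in the quantifier of (SL2) to be zero: the required disjointness $\supp_{m}[g-\beta]\cap\supp_{m}[f_{1}-f_{2}-\alpha]=\emptyset$ (with one side compact) is precisely the standing hypothesis of part (b). The conclusion of (SL2) then gives $\mathcal{E}(g-\beta;G) = \mathcal{E}(0;G)$, and the latter vanishes because $\mathcal{E}(0+sG)=\abs{s}^{p}\mathcal{E}(G)$ has derivative $0$ at $s=0$ for $p>1$. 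Combining these steps yields \eqref{e:sl2-conseq}.
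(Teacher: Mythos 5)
Your proof is correct and follows essentially the same route as the paper: part \ref{it:SL1-conseq} by applying \hyperref[it:SL1]{\textup{(SL1)}} with $tg$ in place of $g$, subtracting the $t=0$ identity, and letting $t\downarrow 0$ (the $t^{p-1}\mathcal{E}(g)$ term vanishing since $p>1$); part \ref{it:SL2-conseq} by reducing via linearity of $\mathcal{E}(g;\,\cdot\,)$ to showing $\mathcal{E}(g;f_{1}-f_{2})=0$ and applying \hyperref[it:SL2]{\textup{(SL2)}} with second function $0$. Your handling of the shifts $\alpha,\beta$ in (b) is just a slightly more explicit version of the paper's choice of null elements in the quantifier of \hyperref[it:SL2]{\textup{(SL2)}}.
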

\begin{proof}
	\ref{it:SL1-conseq}: 
	Note that \eqref{e:defn.sl1} in the definition of \hyperref[it:SL1]{\textup{(SL1)}} (Definition \ref{defn.Epsl}-\ref{it:SL1}) with $g = 0$ implies that $\mathcal{E}(f_1 + f_2) = \mathcal{E}(f_1)+ \mathcal{E}(f_2)$. 
	For any $t \in (0,\infty)$, we have from \eqref{e:defn.sl1} that 
	\[
	\frac{\mathcal{E}(f_1 + f_2 + tg) - \mathcal{E}(f_1 + f_2)}{t} + t^{p - 1}\mathcal{E}(g) = \frac{\mathcal{E}(f_1 + tg) - \mathcal{E}(f_1)}{t} + \frac{\mathcal{E}(f_2 + tg) - \mathcal{E}(f_2)}{t}. 
	\]
	We obtain \eqref{e:sl1-conseq} by letting $t \downarrow 0$ in this equality. 
	
	\ref{it:SL2-conseq}: 
	Since $\mathcal{E}(g; \,\cdot\,)$ is linear by Theorem \ref{thm.p-form}, it suffices to prove $\mathcal{E}(g; f_1 - f_2) = 0$, which follows from \eqref{e:defn.sl2} in the definition of \hyperref[it:SL2]{\textup{(SL2)}} (Definition \ref{defn.Epsl}-\ref{it:SL2}) with $g,0,f_1 - f_2$ in place of $f_1,f_2,g$. 
\end{proof}
 
\begin{prop}\label{prop.sl-other}
	Assume that $(\mathcal{E},\mathcal{F})$ satisfies \ref{Cp}. 
	\begin{enumerate}[label=\textup{(\alph*)},align=left,leftmargin=*,topsep=2pt,parsep=0pt,itemsep=2pt]
		\item\label{it:SL1-SL2} If $(\mathcal{E},\mathcal{F})$ satisfies \hyperref[it:SL1]{\textup{(SL1)}}, then $(\mathcal{E},\mathcal{F})$ also satisfies \hyperref[it:SL2]{\textup{(SL2)}}.
		\item\label{it:SL2-SL1} Assume that $(\mathcal{E},\mathcal{F})$ satisfies \hyperref[it:SL2]{\textup{(SL2)}} and the following three conditions:
		\begin{gather}
			\text{$uv \in \mathcal{F}$ for any $u,v \in \mathcal{F} \cap L^{\infty}(X,m)$.} \label{e:sl-leibniz}\\
			\hspace*{-18pt}\text{For any $u \in \mathcal{F}$, $\{(-n) \vee (u \wedge n)\}_{n \in \mathbb{N}} \subseteq \mathcal{F}$ and $\lim_{n \to \infty}\mathcal{E}\bigl(u - (-n) \vee (u \wedge n)\bigr) = 0$.} \label{e:sl-bddapprox}\\
			\text{$(\mathcal{E},\mathcal{F})$ satisfies \hyperref[defn.CF-measure]{\textup{(CF)$_{m}$}}.} \label{e:sl-special}
		\end{gather}
		Then $(\mathcal{E},\mathcal{F})$ satisfies \hyperref[it:SL1]{\textup{(SL1)}}. 
	\end{enumerate}
\end{prop}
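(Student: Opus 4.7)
My plan is to handle both parts via direct differentiation. Throughout, I will use the invariance $\mathcal{E}(u+h) = \mathcal{E}(u)$ for $h \in \mathcal{E}^{-1}(0)$ (triangle inequality for $\mathcal{E}^{1/p}$) and, from Theorem \ref{thm.p-form}, the identities $\mathcal{E}(u+h;v) = \mathcal{E}(u;v) = \mathcal{E}(u;v+h)$ to reduce both parts to the case where all $\mathcal{E}^{-1}(0)$-shifts vanish. In (a) the hypothesis becomes $\supp_{m}[f_{1}-f_{2}] \cap \supp_{m}[g] = \emptyset$ with one set compact; in (b) it becomes $\supp_{m}[f_{1}] \cap \supp_{m}[f_{2}] = \emptyset$ with one set compact. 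The target conclusions \eqref{e:defn.sl1} and \eqref{e:defn.sl2} are preserved under these shifts.

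For part (a), I would apply \eqref{e:defn.sl1} to the triple $(f_{1}-f_{2},\,tg,\,f_{2})$ with $t > 0$: the $m$-supports of the first two entries are contained in $\supp_{m}[f_{1}-f_{2}]$ and $\supp_{m}[g]$ respectively, hence disjoint with one compact, yielding
\[
\mathcal{E}(f_{1}+tg) + \mathcal{E}(f_{2}) = \mathcal{E}(f_{1}) + \mathcal{E}(f_{2}+tg).
\]
Subtracting $\mathcal{E}(f_{1}) + \mathcal{E}(f_{2})$, dividing by $t$, and letting $t \downarrow 0$ via Proposition \ref{prop.diffble} produces $p\mathcal{E}(f_{1};g) = p\mathcal{E}(f_{2};g)$, namely \eqref{e:defn.sl2}.

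For part (b), set $F(s,t) \coloneqq \mathcal{E}(g + sf_{1} + tf_{2})$ on $\mathbb{R}^{2}$. By Theorem \ref{thm.p-form}, the partial derivative $\partial_{s}F(s,t) = p\,\mathcal{E}(g + sf_{1} + tf_{2};\,f_{1})$ exists and is continuous in $s$ by \eqref{ncont}. The key step is to invoke \hyperref[it:SL2]{(SL2)} to establish $\mathcal{E}(g+sf_{1}+tf_{2};\,f_{1}) = \mathcal{E}(g+sf_{1};\,f_{1})$: the difference of the first arguments is $tf_{2}$, whose $m$-support is contained in $\supp_{m}[f_{2}]$ and therefore disjoint from $\supp_{m}[f_{1}]$ with one of the two compact. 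Hence $\partial_{s}F(s,t) = p\,\mathcal{E}(g + sf_{1};\,f_{1})$ is independent of $t$, and the fundamental theorem of calculus (applicable because $s \mapsto F(s,t)$ is convex with continuous derivative) gives
\[
F(1,t) - F(0,t) = \int_{0}^{1} p\,\mathcal{E}(g+sf_{1};\,f_{1})\,ds,
\]
an expression independent of $t$. Equating its values at $t=0$ and $t=1$ rearranges to \eqref{e:defn.sl1}.

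The main technical point is verifying that \hyperref[it:SL2]{(SL2)} applies admissibly to the intermediate functions $g + sf_{1} + tf_{2}$; this reduces to checking that the compactness and disjoint-support hypotheses survive under linear combinations, which is routine bookkeeping. Notably, the auxiliary hypotheses \eqref{e:sl-leibniz}--\eqref{e:sl-special} do not appear to enter this direct argument; the authors may invoke them in order to present an alternative proof based on cutoff localization from \hyperref[defn.CF-measure]{\textup{(CF)$_{m}$}}, truncation of unbounded elements via \eqref{e:sl-bddapprox}, and products of bounded functions \eqref{e:sl-leibniz}.
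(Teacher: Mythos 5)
Your proof is correct. Part \ref{it:SL1-SL2} is essentially the paper's own argument: the authors apply \eqref{e:defn.sl1} to the triple $(f_{2}-f_{1},tg,f_{1})$ where you use $(f_{1}-f_{2},tg,f_{2})$, and both then pass to the limit of the difference quotients via Proposition \ref{prop.diffble}; the difference is cosmetic. Part \ref{it:SL2-SL1}, however, is a genuinely different and in fact stronger argument. The paper proves \eqref{e:defn.sl1} by expanding $\mathcal{E}(f_{1}+f_{2}+g)=\mathcal{E}(f_{1}+f_{2}+g;f_{1})+\mathcal{E}(f_{1}+f_{2}+g;f_{2})+\mathcal{E}(f_{1}+f_{2}+g;g)$ and replacing first arguments via \hyperref[it:SL2]{\textup{(SL2)}}; the obstruction is the term $\mathcal{E}(\,\cdot\,;g)$, since $\supp_{m}[g-\beta]$ may meet both $\supp_{m}[f_{1}-\alpha_{1}]$ and $\supp_{m}[f_{2}-\alpha_{2}]$, and this forces them to split $g=(1-\varphi)g+\varphi g$ with a cutoff $\varphi$ from \hyperref[defn.CF-measure]{\textup{(CF)$_{m}$}} — whence the need for \eqref{e:sl-leibniz} and \eqref{e:sl-special} — and then to remove the boundedness of $g$ via \eqref{e:sl-bddapprox}. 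Your interpolation argument differentiates only in the $f_{1}$-direction: the function $\phi(s)\coloneqq\mathcal{E}(g+sf_{1}+f_{2})-\mathcal{E}(g+sf_{1})$ is differentiable with $\phi'(s)=p\bigl(\mathcal{E}(g+sf_{1}+f_{2};f_{1})-\mathcal{E}(g+sf_{1};f_{1})\bigr)=0$ by \hyperref[it:SL2]{\textup{(SL2)}} applied with difference $f_{2}$ and test direction $f_{1}$, whose hypotheses are exactly those of \hyperref[it:SL1]{\textup{(SL1)}}, so $\phi(0)=\phi(1)$ gives \eqref{e:defn.sl1} directly (the mean value theorem suffices; you do not even need continuity of $\phi'$, though it holds by \eqref{ncont}). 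This bypasses the cutoff entirely and shows that, under \ref{Cp}, \hyperref[it:SL2]{\textup{(SL2)}} alone implies \hyperref[it:SL1]{\textup{(SL1)}} — the auxiliary hypotheses \eqref{e:sl-leibniz}, \eqref{e:sl-bddapprox}, \eqref{e:sl-special} are superfluous for this implication, which improves the stated proposition. The only bookkeeping worth writing out explicitly is the one you flag: that $\supp_{m}$ of the relevant difference $(g+sf_{1}+f_{2})-(g+sf_{1})=f_{2}$ (shifted by $\alpha_{2}$) and of $f_{1}$ (shifted by $\alpha_{1}$) satisfy the disjointness-plus-compactness hypothesis of \hyperref[it:SL2]{\textup{(SL2)}} verbatim, which they do.
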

\begin{proof}
	\ref{it:SL1-SL2}: 
	Let $f_{1},f_{2},g \in \mathcal{F}$, $\alpha_{1},\alpha_{2} \in \mathcal{E}^{-1}(0)$ and $t \in \mathbb{R} \setminus \{0\}$, and assume that either $\supp_{m}[f_{1} - f_{2} - \alpha]$ or $\supp_{m}[g - \beta]$ is compact and that $\supp_{m}[f_{1} - f_{2} - \alpha] \cap \supp_{m}[g - \beta] = \emptyset$. 
	By \eqref{e:defn.sl1} in the definition of \hyperref[it:SL1]{\textup{(SL1)}} with $f_{2} - f_{1},tg,f_{1}$ in place of $f_{1},f_{2},g$ we have
	\[
	\mathcal{E}\bigl((f_{2} - f_{1}) + tg + f_{1}\bigr) + \mathcal{E}(f_{1}) 
	= \mathcal{E}\bigl((f_{2} - f_{1}) + f_{1}\bigr) + \mathcal{E}(tg + f_{1}), 
	\]
	whence 
	\[
	\mathcal{E}(f_{1};g) = \frac{1}{p} \lim_{t \to 0}\frac{\mathcal{E}(f_{1} + tg) - \mathcal{E}(f_{1})}{t} = \frac{1}{p} \lim_{t \to 0}\frac{\mathcal{E}(f_{2} + tg) - \mathcal{E}(f_{2})}{t} = \mathcal{E}(f_{2};g),
	\]
	proving \hyperref[it:SL2]{\textup{(SL2)}}.  
	
	\ref{it:SL2-SL1}:
	We first consider the case $g \in \mathcal{F} \cap L^{\infty}(X,m)$.
	Let $f_{1},f_{2} \in \mathcal{F}$ and $\alpha_{1},\alpha_{2} \in \mathcal{E}^{-1}(0)$, and assume that $\supp_{m}[f_{1} - \alpha_{1}]$ is compact and that $\supp_{m}[f_{1} - \alpha_{1}] \cap \supp_{m}[f_{2} - \alpha_{2}] = \emptyset$. 
	Let $U$ be an open neighborhood of $\supp_{m}[f_{1} - \alpha_{1}]$ such that $U \subseteq X \setminus \supp_{m}[f_{2} - \alpha_{2}]$. 
	By \eqref{e:sl-special} and the local compactness of $K$, there exists $\varphi \in \mathcal{F} \cap L^{\infty}(X,m)$ such that $\varphi(x) = 1$ for $m$-a.e.\ $x \in U$, $\supp_{m}[\varphi]$ is compact and $\supp_{m}[\varphi] \cap \supp_{m}[f_{2} - \alpha_{2}] = \emptyset$.   
	Note that $\varphi g \in \mathcal{F}$ by \eqref{e:sl-leibniz}. 
	Then we see from \hyperref[it:SL2]{\textup{(SL2)}} that 
	\begin{align}\label{e:SL2decomp}
		\mathcal{E}(f_{1} + f_{2} + g) + \mathcal{E}(g)
		&= \mathcal{E}(f_{1} + f_{2} + g; f_{1}) + \mathcal{E}(f_{1} + f_{2} + g; f_{2}) + \mathcal{E}(f_{1} + f_{2} + g; g) + \mathcal{E}(g) \nonumber \\
		&\overset{\hyperref[it:SL2]{\textup{(SL2)}}}{=} \mathcal{E}(f_{1} + g; f_{1}) + \mathcal{E}(f_{2} + g; f_{2}) + \mathcal{E}(f_{1} + f_{2} + g; g) + \mathcal{E}(g) \nonumber \\
		&= \mathcal{E}(f_{1} + g; f_{1}) + \mathcal{E}(f_{2} + g; f_{2})  \nonumber \\
		&\quad + \mathcal{E}(f_{1} + f_{2} + g; (1 - \varphi)g) + \mathcal{E}(f_{1} + f_{2} + g; \varphi g) + \mathcal{E}(g). 
	\end{align}
	Since $\supp_{m}[\varphi g]$ and $\supp_{m}[f_{1} - \alpha_{1}]$ are compact, $\supp_{m}[f_{1} - \alpha_{1}] \cap \supp_{m}[(1 - \varphi)g] = \emptyset$ and $\supp_{m}[f_{2} - \alpha_{2}] \cap \supp_{m}[\varphi g] = \emptyset$, we have the following equalities by \hyperref[it:SL2]{\textup{(SL2)}}: 
	\begin{align*}
		\mathcal{E}(f_{1} + f_{2} + g; (1 - \varphi)g) &= \mathcal{E}(f_{2} + g; (1 - \varphi)g), \\
		\mathcal{E}(f_{1} + f_{2} + g; \varphi g) &= \mathcal{E}(f_{1} + g; \varphi g), \\
		\mathcal{E}(g)
		= \mathcal{E}(g; (1 - \varphi)g) + \mathcal{E}(g; \varphi g)
		&= \mathcal{E}(f_{1} + g; (1 - \varphi)g) + \mathcal{E}(f_{2} + g; \varphi g). 
	\end{align*}
	By combining these equalities and \eqref{e:SL2decomp}, we obtain 
	\begin{align*}
		\mathcal{E}(f_{1} + f_{2} + g) + \mathcal{E}(g)
		&= \mathcal{E}(f_{1} + g; f_{1}) + \mathcal{E}(f_{2} + g; f_{2}) + \mathcal{E}(f_{1} + g; g) + \mathcal{E}(f_{2} + g; g) \\
		&= \mathcal{E}(f_{1} + g) + \mathcal{E}(f_{2} + g). 
	\end{align*}
	The proof for the case where $\supp_{m}[f_{2} - \alpha_{2}]$ instead of $\supp_{m}[f_{1} - \alpha_{1}]$ is compact is similar, so \hyperref[it:SL1]{\textup{(SL1)}} holds if $g \in \mathcal{F} \cap L^{\infty}(X,m)$. 
	
	Lastly, we prove \hyperref[it:SL1]{\textup{(SL1)}} without assuming the boundedness of $g$.  
	Let $g \in \mathcal{F}$, $n \in \mathbb{N}$ and set $g_{n} \coloneqq (-n) \vee (g \wedge n)$. 
	Then $g_{n} \in \mathcal{F}$ by \eqref{e:sl-bddapprox}, and the statement proved in the previous paragraph yields that 
	\[
	\mathcal{E}(f_{1} + f_{2} + g_{n}) + \mathcal{E}(g_{n})
	= \mathcal{E}(f_{1} + g_{n}) + \mathcal{E}(f_{2} + g_{n}).
	\]
	Thanks to \eqref{e:sl-bddapprox} and the triangle inequality for $\mathcal{E}^{1/p}$, we obtain the desired equality \eqref{e:defn.sl2} in the definition of \hyperref[it:SL1]{\textup{(SL1)}} by letting $n \to \infty$ in the last equality. 
\end{proof}

\section{\texorpdfstring{$p$}{p}-Energy measures and their basic properties}\label{sec.pEM}
In this section, we discuss $p$-energy measures dominated by a $p$-energy form.
Similar to the case of $p$-energy forms, we introduce the two-variable version of $p$-energy measures and prove their basic properties.

As in the previous section, throughout this section we fix $p \in (1,\infty)$, a measure space $(X,\mathcal{B},m)$ and a $p$-energy form $(\mathcal{E}, \mathcal{F})$ on $(X,m)$. 

\subsection{\texorpdfstring{$p$}{p}-Energy measures and \texorpdfstring{$p$}{p}-Clarkson's inequality}
The following definition specifies the class of families of measures which we call $p$-energy measures and consider in this section. 

\begin{defn}[$p$-Energy measures dominated by a $p$-energy form]\label{defn.em-Cp}
	Let $\SigmaAlgEM$ be a $\sigma$-algebra in $X$,\footnote{While we typically take $\SigmaAlgEM = \mathcal{B} = \mathcal{B}(X)$ for a prescribed topology on $X$, we allow $\SigmaAlgEM \not= \mathcal{B}$ here. This formulation is suitable in the setting of a $p$-resistance form on $X$ considered in Section \ref{sec.p-harm} and later, where we choose $(\mathcal{B},m)$ to be the pair of $2^{X}$ and the counting measure on $X$ as mentioned in Remark \ref{rmk:wo-measure} but may take $\SigmaAlgEM = \mathcal{B}(X)$ for the topology on $X$ induced by the associated $p$-resistance metric.} and let $\{ \Gamma\langle f \rangle \}_{f \in \mathcal{F}}$ be a family of measures on $(X,\SigmaAlgEM)$. 
	We say that $\{ \Gamma\langle f \rangle \}_{f \in \mathcal{F}}$ is a \emph{family of $p$-energy measures\index{$p$-energy measure}\index{family of $p$-energy measures dominated by $p$-energy form} on $(X,\SigmaAlgEM)$ dominated by $(\mathcal{E},\mathcal{F})$} if and only if the following hold: 
	\begin{enumerate}[label=\textup{(EM\arabic*)$_p$},align=left,leftmargin=*,topsep=2pt,parsep=0pt,itemsep=2pt]
    	\item\label{EM1} $\Gamma\langle f \rangle(X) \le \mathcal{E}(f)$ for any $f \in \mathcal{F}$.
    	\item\label{EM2} $\Gamma\langle \,\cdot\, \rangle(A)^{1/p}$ is a seminorm on $\mathcal{F}$ for any $A \in \SigmaAlgEM$.
	\end{enumerate}
	We then see that $(\Gamma\langle \,\cdot\, \rangle(A),\mathcal{F})$ is a $p$-energy form on $(X,m)$ for each $A \in \SigmaAlgEM$ by \ref{EM2}. 
	
	We say that $\{ \Gamma\langle f \rangle \}_{f \in \mathcal{F}}$ satisfies \emph{$p$-Clarkson's inequality}\index{$p$-Clarkson's inequality (for $p$-energy measures)}, \ref{Cp-em} for short, if and only if $(\Gamma\langle \,\cdot\, \rangle(A),\mathcal{F})$ satisfies \ref{Cp} for any $A \in \SigmaAlgEM$, i.e.,  for any $f, g \in \mathcal{F}$, 
	\begin{align*}\label{Cp-em}
    \mspace{-4mu}\begin{cases}
        \Gamma\langle f + g \rangle(A) + \Gamma\langle f - g \rangle(A) \ge 2\bigl(\Gamma\langle f \rangle(A)^{\frac{1}{p - 1}} + \Gamma\langle g \rangle(A)^{\frac{1}{p - 1}}\bigr)^{p - 1} &\textrm{if $p \in (1,2]$,} \\
        \Gamma\langle f + g \rangle(A) + \Gamma\langle f - g \rangle(A) \le 2\bigl(\Gamma\langle f \rangle(A)^{\frac{1}{p - 1}} + \Gamma\langle g \rangle(A)^{\frac{1}{p - 1}}\bigr)^{p - 1} &\textrm{if $p \in [2,\infty)$.} \tag*{\textup{(Cla)$^{\textup{EM}}_{p}$}}
    \end{cases}
	\end{align*}
	We also say that $\{ \Gamma\langle f \rangle \}_{f \in \mathcal{F}}$ satisfies the \emph{generalized $p$-contraction property}\index{generalized $p$-contraction property (for $p$-energy measures)}, \hypertarget{GC-em}{\textup{(GC)$^{\textup{EM}}_{p}$}} for short, if and only if $(\Gamma\langle \,\cdot\, \rangle(A),\mathcal{F})$ satisfies \ref{GC} for any $A \in \SigmaAlgEM$. 
\end{defn}

\begin{example}\label{ex.em}
	\begin{enumerate}[label=\textup{(\arabic*)},align=left,leftmargin=*,topsep=2pt,parsep=0pt,itemsep=2pt]
		\item\label{EM-Rn} Consider the same setting as in Example \ref{ex.Rn}-\ref{Ep-Rn}.
		Then the family $\{ \Gamma\langle f \rangle \}_{f \in W^{1,p}(\Omega)}$ of Borel measures on $\Omega$ given by 
		\[
		\Gamma\langle f \rangle(A) \coloneqq \int_{A}\abs{\nabla f(x)}^{p}\,dx \quad \text{for $f \in W^{1,p}(\Omega)$ and $A \in \mathcal{B}(\Omega)$,}
		\]
		is easily seen to be a family of $p$-energy measures on $(\Omega,\mathcal{B}(\Omega))$ dominated by the $p$-energy form $(\mathcal{E},W^{1,p}(\Omega))$ given by $\mathcal{E}(f) \coloneqq \int_{\Omega}\abs{\nabla f(x)}^{p}\,dx$. 
		Similar to Example \ref{ex.Rn}-\ref{Ep-Rn}, one can show \hyperlink{GC-em}{\textup{(GC)$^{\textup{EM}}_{p}$}} for $\{ \Gamma\langle f \rangle \}_{f \in W^{1,p}(\Omega)}$ by following an argument in the proof of Theorem \ref{thm.GCp-kuwae}. 
		Recall that $\mathcal{E}(f; g) = \int_{\Omega}\abs{\nabla f(x)}^{p - 2}\langle \nabla f(x), \nabla g(x) \rangle_{\mathbb{R}^{D}}\,dx$. 
		Then we can see that, by the Leibniz and the chain rule for $\nabla$, for any $u,\varphi \in W^{1,p}(\Omega) \cap C^{1}(\Omega)$,  
		\begin{equation}\label{pEM-Euclid}
			\int_{\Omega}\varphi\,d\Gamma\langle u \rangle
			= \mathcal{E}(u; u\varphi) - \left(\frac{p - 1}{p}\right)^{p - 1}\mathcal{E}\bigl(\abs{u}^{\frac{p}{p - 1}}; \varphi\bigr). 
		\end{equation}
		\item\label{EX.EM-fractal} Although $p$-energy forms have been constructed on compact metric spaces \cite{Kig23, MS+}, we do not know how to construct the associated $p$-energy measures because of the lack of the density ``$\abs{\nabla u(x)}^{p}$''.
		(As described in \ref{exam.DF-em} below, the theory of Dirichlet forms gives $2$-energy measures $\{ \mu_{\langle u \rangle} \}_{u \in \mathcal{F}_{2}}$ associated with a given nice Dirichlet form $(\mathcal{E}_{2},\mathcal{F}_{2})$.
		On a large class of self-similar sets, however, it is known that $\mu_{\langle u \rangle}$ is singular with respect to the natural Hausdorff measure on the underlying fractal \cite{Hin05,KM20}.)
		In the case of self-similar sets, under suitable assumptions, \emph{self-similar $p$-energy forms} are constructed in \cite{CGQ22,Kig23,MS+,Shi24}, and we can introduce $p$-energy measures satisfying \ref{EM1}, \ref{EM2} and \hyperlink{GC-em}{\textup{(GC)$^{\textup{EM}}_{p}$}} by using the self-similarity of $p$-energy forms. See Section \ref{sec.ss} for details. 
		
		In \cite{KS.lim}, under the assumption called the \emph{weak monotonicity condition}, the authors construct a good $p$-energy form $\mathcal{E}_{p}^{\mathrm{KS}}$, which is called a \emph{Korevaar--Shoen $p$-energy form}, on a locally compact separable metric space $(X,d)$ equipped with a $\sigma$-finite Borel measure $m$ with full topological support. As an advantage of $\mathcal{E}_{p}^{\mathrm{KS}}$, the right-hand side of \eqref{pEM-Euclid} with $\mathcal{E}_{p}^{\mathrm{KS}}$ in place of $\mathcal{E}$ can be extended to a bounded positive linear functional in $\varphi \in \contfunc_{c}(X)$ and the $p$-energy measure $\Gamma_{p}^{\mathrm{KS}}\langle u \rangle$ associated with $\mathcal{E}_{p}^{\mathrm{KS}}$ is constructed as the unique Radon measure corresponding to this functional through the Riesz--Markov--Kakutani representation theorem. A notable fact is that this approach does not rely on the self-similarity of the underlying space or of the $p$-energy form. In \cite[Sections 3 and 4]{KS.lim}, basic properties of $\Gamma_{p}^{\mathrm{KS}}\langle \,\cdot\, \rangle$ like \ref{EM1}, \ref{EM2} and \hyperlink{GC-em}{\textup{(GC)$^{\textup{EM}}_{p}$}} are also shown. 
		
		Added in revision: in a very recent result \cite{Sas26}, the $p$-energy measures associated to general strongly local $p$-energy forms satisfying \ref{Cp} and certain natural conditions have been constructed without using the self-similarity. 
		\item\label{exam.DF-em} The case of $p = 2$ is very special thanks to the theory of symmetric Dirichlet forms. If $(\mathscr{E}, D(\mathscr{E}))$ is a strongly local regular symmetric Dirichlet form on $L^{2}(X, m)$, where $X$ and $m$ are as specified in \eqref{a:loccpt} and \eqref{a:fullRadon} at the beginning of Subsection \ref{sec.regularlocal}, then $\mathscr{E}(u) \coloneqq \mathscr{E}(u,u)$ is a $2$-energy form on $(X,m)$ and satisfies \hyperref[GC]{\textup{(GC)$_2$}} (see Proposition \ref{prop.GC.DF}).
        In addition, the Dirichlet form theory provides us with a Borel measure $\mu_{\langle u \rangle}$ on $X$, called the \emph{$\mathscr{E}$-energy measure} of $u \in D(\mathscr{E})$ associated with $(\mathscr{E}, D(\mathscr{E}))$, through the following formula\footnote{To be precise, the definition of $\mu_{\langle u \rangle}$ through \eqref{d:2-EM} is valid only for $u \in D(\mathscr{E}) \cap L^{\infty}(X,m)$, but we can still define $\mu_{\langle u \rangle}$ for any $u \in D(\mathscr{E})$ by setting $\mu_{\langle u \rangle}(A) \coloneqq \lim_{n\to\infty}\mu_{\langle (-n) \vee (u \wedge n) \rangle}(A)$ for each $A \in \mathcal{B}(X)$.}:
        \begin{equation}\label{d:2-EM}
            \int_{X}\varphi\,d\mu_{\langle u \rangle} = \mathscr{E}(u,u\varphi) - \frac{1}{2}\mathscr{E}(u^{2}, \varphi) \quad \text{for any $\varphi \in D(\mathscr{E}) \cap \contfunc_{c}(X)$}
        \end{equation}
        (recall \eqref{pEM-Euclid}, and see \cite[Section 3.2]{FOT} for details on energy measures associated with regular symmetric Dirichlet forms).
        We easily see that $\{ \mu_{\langle u \rangle}\}_{u \in D(\mathscr{E})}$ satisfies \hyperref[EM1]{(\textup{EM1})$_{2}$} and the parallelogram law, which implies \hyperref[EM2]{(\textup{EM2})$_{2}$} and \hyperref[Cp-em]{\textup{(Cla)$^{\textup{EM}}_{2}$}}. 
        We can also verify \hyperlink{GC-em}{\textup{(GC)$^{\textup{EM}}_{2}$}} for $\{ \mu_{\langle u \rangle}\}_{u \in D(\mathscr{E})}$ (Proposition \ref{prop.GCkuwae}). 
        As discussed in \cite{Kuw23+}, under the additional assumption of a suitable closability in $L^{p}(X,m)$ formulated as \eqref{assum.closability} in Definition \ref{defn.kuwae-form}, we can introduce a family of $p$-energy measures on $(X,\mathcal{B}(X))$ satisfying \ref{EM1}, \ref{EM2} and \hyperlink{GC-em}{\textup{(GC)$^{\textup{EM}}_{p}$}} by setting $\Gamma\langle u \rangle(A) \coloneqq \int_{A}\Gamma_{\mu}(u)^{\frac{p}{2}}\,d\mu$, where $\mu$ is an $\mathscr{E}$-dominant measure (i.e., $\mu_{\langle u \rangle} \ll \mu$ for any $u \in D(\mathscr{E})$) and $\Gamma_{\mu}(u) \coloneqq d\mu_{\langle u \rangle}/d\mu$; see Theorem \ref{thm.GCp-kuwae} for the details of this family of $p$-energy measures. 
        \item Let $(X,d)$ be a separable metric space and $m$ a Borel measure on $X$ such that $m(X) > 0$ and $m(B_{d}(x,r)) < \infty$ for some $r \in (0,\infty)$ for any $x \in X$. 
		Let $g_{u}$ be the minimal $p$-weak upper gradient of $u \in N^{1,p}(X,m)$, where $N^{1,p}(X,m) \coloneqq \{ u \in L^{p}(X,m) \mid g_{u} \in L^{p}(X,m) \}$ is the Newton--Sobolev space\index{Newton--Sobolev space} (see \cite[Section 7.1]{HKST} for details on the Newton--Sobolev spaces). Then $\Gamma\langle u \rangle(A) \coloneqq \int_{A}g_{u}^{p}\,dm$ defines $p$-energy measures satisfying \ref{EM1} and \ref{EM2}.
        Indeed, we have \ref{EM2} by the sublinearity of the map $u \mapsto g_{u}$ \cite[(6.3.18)]{HKST}.
        However, \ref{Cp-em} for these measures is unclear because the map $u \mapsto g_{u}$ is not linear in general. 
	\end{enumerate}
\end{example}

In the rest of this subsection, we assume that $\SigmaAlgEM$ is a $\sigma$-algebra in $X$ and that $\{ \Gamma\langle f \rangle \}_{f \in \mathcal{F}}$ is a family of $p$-energy measures on $(X,\SigmaAlgEM)$ dominated by $(\mathcal{E},\mathcal{F})$. 
The same argument as in the proof of Proposition \ref{prop.diffble} yields the following result.
\begin{prop}\label{prop.c-diff-em}
	Assume that $\{ \Gamma\langle f \rangle \}_{f \in \mathcal{F}}$ satisfies \ref{Cp-em}.
	Then for any $f,g \in \mathcal{F}$ and any $A \in \SigmaAlgEM$,
    \begin{align}\label{c-diff-em}
    	&\Gamma\langle f + g \rangle(A) + \Gamma\langle f - g \rangle(A) - 2\Gamma\langle f \rangle(A) \nonumber \\
    	&\le 2\bigl(1 \vee (p-1)\bigr)\Bigl[\Gamma\langle f \rangle(A)^{\frac{1}{p - 1}} + \Gamma\langle g \rangle(A)^{\frac{1}{p - 1}}\Bigr]^{(p - 2)^{+}}\Gamma\langle f \rangle(A)^{1 \wedge \frac{1}{p-1}},
    \end{align} 
    and the function $\mathbb{R} \ni t \mapsto \Gamma\langle f + tg \rangle(A) \in [0,\infty)$ is differentiable. Moreover, for any $c \in (0,\infty)$,
	\begin{equation}\label{e:frechet.diffble.em}
    	\lim_{\delta \downarrow 0}\sup_{A \in \SigmaAlgEM,\, f,g \in \mathcal{F};\, \mathcal{E}(f) \leq c/(p-2)^{+},\, \mathcal{E}(g) \leq 1} \abs{\frac{\Gamma\langle f + \delta g \rangle(A) - \Gamma\langle f \rangle(A)}{\delta} - \frac{d}{dt}\Gamma\langle f + tg \rangle(A)\biggr|_{t = 0}} = 0. 
    \end{equation}
\end{prop}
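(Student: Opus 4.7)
My plan is to reduce the entire statement to Proposition \ref{prop.diffble} applied to the one-parameter family of $p$-energy forms indexed by $A \in \SigmaAlgEM$. The key observation is that, by \ref{EM2} and the assumption \ref{Cp-em}, for each fixed $A \in \SigmaAlgEM$ the pair $(\Gamma\langle\,\cdot\,\rangle(A),\mathcal{F})$ is a $p$-energy form on $(X,m)$ satisfying \ref{Cp}. So Proposition \ref{prop.diffble} applies verbatim to it, which immediately yields the pointwise inequality \eqref{c-diff-em} and the differentiability in $t$ of $t \mapsto \Gamma\langle f+tg\rangle(A)$ for every fixed $A$.

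The only real content beyond this is the uniformity over $A \in \SigmaAlgEM$ in \eqref{e:frechet.diffble.em}. For this I would revisit the proof of Proposition \ref{prop.diffble}, and in particular the estimate \eqref{e:c-diff.unif}, which bounded $D_{\delta}(f;g)/\delta$ uniformly in terms of $\mathcal{E}(f)$ and $\mathcal{E}(g)$ alone (with constants depending only on $p$). Applied to $(\Gamma\langle\,\cdot\,\rangle(A),\mathcal{F})$, the very same proof gives, for every $\delta \in (0,\infty)$, $A \in \SigmaAlgEM$ and $f,g \in \mathcal{F}$,
\begin{equation*}
	\frac{\Gamma\langle f+\delta g\rangle(A) + \Gamma\langle f-\delta g\rangle(A) - 2\Gamma\langle f\rangle(A)}{\delta}
	\leq
	\begin{cases}
		2\delta^{p-1}\Gamma\langle g\rangle(A) & \text{if $p\in(1,2]$,}\\
		2(p-1)\delta^{1/(p-1)}\bigl[\Gamma\langle f\rangle(A)^{1/(p-1)}+\delta^{p/(p-1)}\Gamma\langle g\rangle(A)^{1/(p-1)}\bigr]^{p-2}\Gamma\langle g\rangle(A)^{1/(p-1)} & \text{if $p\in(2,\infty)$,}
	\end{cases}
\end{equation*}
together with the analogue of \eqref{e:frechet.diffble.cdiff} with $\Gamma\langle\,\cdot\,\rangle(A)$ in place of $\mathcal{E}$. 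The crucial point is now the domination \ref{EM1}: $\Gamma\langle f\rangle(A) \leq \mathcal{E}(f)$ and $\Gamma\langle g\rangle(A) \leq \mathcal{E}(g)$ for every $A \in \SigmaAlgEM$. Thus under the hypothesis $\mathcal{E}(f) \leq c/(p-2)^{+}$ and $\mathcal{E}(g) \leq 1$, the right-hand side above is bounded by a quantity depending only on $p,c,\delta$ and going to $0$ as $\delta \downarrow 0$, uniformly in $A$, $f$ and $g$.

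Combining this uniform estimate with the analogue of \eqref{e:frechet.diffble.cdiff} for $\Gamma\langle\,\cdot\,\rangle(A)$ then gives \eqref{e:frechet.diffble.em}, completing the proof. I do not expect any genuine obstacle: the only subtle point is to observe that every constant appearing in the proof of Proposition \ref{prop.diffble} is absolute in $p$ and that the relevant upper bounds depend monotonically on the energy values of $f$ and $g$, so that replacing $\mathcal{E}$ by $\Gamma\langle\,\cdot\,\rangle(A)$ and invoking \ref{EM1} delivers uniformity over $A \in \SigmaAlgEM$ for free.
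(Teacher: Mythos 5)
Your proposal is correct and is essentially the paper's own proof: the paper simply observes that ``the same argument as in the proof of Proposition \ref{prop.diffble} yields the following result,'' and the uniformity over $A \in \SigmaAlgEM$ in \eqref{e:frechet.diffble.em} comes exactly from combining the analogue of \eqref{e:c-diff.unif} for $(\Gamma\langle\,\cdot\,\rangle(A),\mathcal{F})$ with the domination \ref{EM1}, as you describe. The only point worth flagging is that the final factor in \eqref{c-diff-em} as printed reads $\Gamma\langle f\rangle(A)^{1\wedge\frac{1}{p-1}}$, whereas the argument (and the analogue of \eqref{c-diff}) produces $\Gamma\langle g\rangle(A)^{1\wedge\frac{1}{p-1}}$ --- a typo in the statement rather than a gap in your proof.
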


\begin{defn}\label{defn.emss}
    Assume that $\{ \Gamma\langle f \rangle \}_{f \in \mathcal{F}}$ satisfies \ref{Cp-em}.
    For each $f,g \in \mathcal{F}$, we define $\Gamma\langle f; g \rangle \colon \SigmaAlgEM \to \mathbb{R}$ by
    \begin{equation}\label{e:defn.emss}
    	\Gamma\langle f; g \rangle(A) \coloneqq \frac{1}{p}\left.\frac{d}{dt}\Gamma\langle f + tg \rangle(A)\right|_{t = 0}, \qquad A \in \SigmaAlgEM,
    \end{equation}
    which exists by Proposition \ref{prop.c-diff-em}. 
\end{defn}

The following properties of $\Gamma\langle f; g \rangle$ can be shown in a similar way as Theorem \ref{thm.p-form}.
\begin{thm}\label{thm.em-basic}
    Assume that $\{ \Gamma\langle f \rangle \}_{f \in \mathcal{F}}$ satisfies \ref{Cp-em}.
    Let $A \in \SigmaAlgEM$. 
    Then $\Gamma\langle f;\,\cdot\, \rangle(A)$ is the Fr\'{e}chet derivative of $\Gamma\langle \,\cdot\, \rangle(A) \colon \mathcal{F}/\mathcal{E}^{-1}(0) \to [0,\infty)$ at $f \in \mathcal{F}$.
    In particular, the map $\Gamma\langle f; \,\cdot\, \rangle(A) \colon \mathcal{F} \to \mathbb{R}$ is linear, $\Gamma\langle f; f\rangle(A) = \Gamma\langle f \rangle(A)$ and $\Gamma\langle f; h \rangle(A) = 0$ if $h \in \mathcal{F}$ satisfies $\Gamma\langle h \rangle(A) = 0$.
    Moreover, for any $f, f_1, f_2, g \in \mathcal{F}$ and any $a \in \mathbb{R}$, the following hold:
    \begin{gather}
        \text{$\mathbb{R} \ni t \mapsto \Gamma\langle f + tg; g\rangle(A) \in \mathbb{R}$ is strictly increasing if and only if $\Gamma\langle g \rangle(A) > 0$.} \label{em-form.mono} \\
        \Gamma\langle af; g\rangle(A) = \sgn(a)\abs{a}^{p - 1}\Gamma\langle f; g \rangle(A), \quad \Gamma\langle f + h; g \rangle(A) = \Gamma\langle f;g \rangle(A) \text{ if $\Gamma\langle h \rangle(A) = 0$.} \label{em-form.basic} \\
        \abs{\Gamma\langle f; g \rangle(A)} \le \Gamma\langle f \rangle(A)^{(p - 1)/p}\Gamma\langle g \rangle(A)^{1/p}. \label{em-bdd.form} \\
        \abs{\Gamma\langle f_1; g \rangle(A) - \Gamma\langle f_2; g \rangle(A)} \le C_{p}\bigl(\Gamma\langle f_1 \rangle(A) \vee \Gamma\langle f_2 \rangle(A)\bigr)^{\frac{p - 1 - \alpha_{p}}{p}}\Gamma\langle f_1 - f_2 \rangle(A)^{\frac{\alpha_{p}}{p}} \Gamma\langle g \rangle(A)^{\frac{1}{p}}, \label{em-ncont}
    \end{gather}
    where $\alpha_{p}, C_{p}$ are the same as in Theorem \ref{thm.p-form}.
\end{thm}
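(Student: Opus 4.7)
The plan is to reduce Theorem \ref{thm.em-basic} to Theorem \ref{thm.p-form} by the simple observation that for each fixed $A \in \SigmaAlgEM$, the pair $(\Gamma\langle \,\cdot\, \rangle(A), \mathcal{F})$ is itself a $p$-energy form on $(X,m)$ satisfying \ref{Cp}: the seminorm property is exactly \ref{EM2}, and \ref{Cp} is built into the standing hypothesis that $\{\Gamma\langle f \rangle\}_{f \in \mathcal{F}}$ satisfies \ref{Cp-em}. Thus the entire quantitative machinery developed for Fr\'{e}chet differentiability of $p$-energy forms can be applied verbatim to each fixed $A$, and the theorem becomes a parametric corollary of Theorem \ref{thm.p-form}.

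The first step is to note that the existence of the derivative $\Gamma\langle f; g \rangle(A)$ in \eqref{e:defn.emss} and the uniform convergence \eqref{e:frechet.diffble.em} have already been established in Proposition \ref{prop.c-diff-em}. Then I would apply Theorem \ref{thm.p-form} to the $p$-energy form $(\Gamma\langle \,\cdot\, \rangle(A), \mathcal{F})$ to read off each individual property: the linearity of $\Gamma\langle f; \,\cdot\, \rangle(A)$, the identity $\Gamma\langle f; f \rangle(A) = \Gamma\langle f \rangle(A)$, the vanishing $\Gamma\langle f; h \rangle(A) = 0$ when $\Gamma\langle h \rangle(A) = 0$, and the properties \eqref{em-form.mono}--\eqref{em-ncont}. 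The bounds \eqref{em-bdd.form} and \eqref{em-ncont} are direct translations of \eqref{bdd.form} and \eqref{ncont}, with exactly the same constants $\alpha_{p}$ and $C_{p}$, since the proof of Theorem \ref{thm.p-form} uses only the structure of a $p$-energy form satisfying \ref{Cp}.

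For the assertion that $\Gamma\langle f; \,\cdot\,\rangle(A)$ is the Fr\'{e}chet derivative of the map $\Gamma\langle \,\cdot\, \rangle(A) \colon \mathcal{F}/\mathcal{E}^{-1}(0) \to [0,\infty)$, I would combine \eqref{e:frechet.diffble.em} with \ref{EM1}, which yields $\mathcal{E}^{-1}(0) \subseteq \{f \in \mathcal{F} \mid \Gamma\langle f \rangle(A) = 0\}$, so that $\Gamma\langle \,\cdot\, \rangle(A)$ descends to a well-defined functional on the quotient $\mathcal{F}/\mathcal{E}^{-1}(0)$; the $\mathcal{E}^{1/p}$-controlled remainder in \eqref{e:frechet.diffble.em}, together with the linearity of $\Gamma\langle f;\,\cdot\,\rangle(A)$ and the bound \eqref{em-bdd.form} (which shows $\Gamma\langle f;\,\cdot\,\rangle(A)$ is bounded with respect to $\mathcal{E}^{1/p}$), gives Fr\'{e}chet differentiability with respect to the quotient norm $\mathcal{E}^{1/p}$.

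I do not anticipate any substantive obstacle. The main point to verify carefully is that the proof of Theorem \ref{thm.p-form} nowhere invokes features of $(\mathcal{E},\mathcal{F})$ beyond the axioms of a $p$-energy form satisfying \ref{Cp}, so that its conclusions transfer cleanly and uniformly to the family $(\Gamma\langle \,\cdot\, \rangle(A), \mathcal{F})$ parametrized by $A \in \SigmaAlgEM$. Once this is confirmed, the proof is essentially just a pointwise-in-$A$ application of the earlier theorem.
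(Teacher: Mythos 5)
Your proposal is correct and matches the paper's approach: the paper gives no separate argument for this theorem beyond stating that it "can be shown in a similar way as Theorem \ref{thm.p-form}," which is precisely your reduction of applying Theorem \ref{thm.p-form} to the $p$-energy form $(\Gamma\langle \,\cdot\, \rangle(A),\mathcal{F})$ for each fixed $A$, with Proposition \ref{prop.c-diff-em} supplying the uniform differentiability and \ref{EM1} guaranteeing that $\Gamma\langle \,\cdot\, \rangle(A)$ descends to $\mathcal{F}/\mathcal{E}^{-1}(0)$ with remainder controlled by the quotient norm $\mathcal{E}^{1/p}$.
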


The set function $\Gamma\langle f; g \rangle$ is a signed measure as shown in the following theorem. 
\begin{thm}\label{thm.signed}
	Assume that $\{ \Gamma\langle f \rangle \}_{f \in \mathcal{F}}$ satisfies \ref{Cp-em}.
	Then for any $f, g \in \mathcal{F}$, the set function $\Gamma\langle f; g \rangle$ is a signed measure on $(X, \SigmaAlgEM)$.
    Moreover, for any $\SigmaAlgEM$-measurable function $\varphi \colon X \to [0,\infty)$ with $\norm{\varphi}_{\sup} < \infty$, $\int_{X}\varphi\,d\Gamma \langle\,\cdot\, \rangle \colon \mathcal{F}/\mathcal{E}^{-1}(0) \to \mathbb{R}$ is Fr\'{e}chet differentiable and has the same properties as those of $\Gamma\langle \,\cdot\, \rangle(A)$ in Theorem \ref{thm.em-basic} with ``$\Gamma\langle g \rangle(A) > 0$'' in \eqref{em-form.mono} replaced by ``$\int_{X}\varphi\,d\Gamma\langle g \rangle > 0$'', and for any $f,g \in \mathcal{F}$,
    \begin{equation}\label{emint.frechet}
        \int_{X}\varphi\,d\Gamma\langle f; g \rangle = \frac{1}{p}\left.\frac{d}{dt}\int_{X}\varphi\,d\Gamma\langle f + tg \rangle\right|_{t = 0}.
    \end{equation}
\end{thm}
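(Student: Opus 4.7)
The plan is to prove the theorem in three steps, leaning on the uniform differentiability \eqref{e:frechet.diffble.em} from Proposition \ref{prop.c-diff-em} and the H\"{o}lder-type estimate \eqref{em-bdd.form} from Theorem \ref{thm.em-basic}.

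First, I would verify that $\Gamma\langle f;g\rangle$ is a signed measure on $(X,\SigmaAlgEM)$. Clearly $\Gamma\langle f;g\rangle(\emptyset)=0$, and finite additivity follows by differentiating the identity $\Gamma\langle f+tg\rangle(A_{1}\cup A_{2})=\Gamma\langle f+tg\rangle(A_{1})+\Gamma\langle f+tg\rangle(A_{2})$ (valid for disjoint $A_{1},A_{2}\in\SigmaAlgEM$ since each $\Gamma\langle f+tg\rangle$ is a measure) at $t=0$. For countable additivity, given disjoint $\{A_{n}\}_{n\in\mathbb{N}}\subseteq\SigmaAlgEM$ with union $A$, the bound \eqref{em-bdd.form} combined with H\"{o}lder's inequality on $\ell^{p/(p-1)}\times\ell^{p}$ gives
\[
\sum_{n}\abs{\Gamma\langle f;g\rangle(A_{n})}\le\sum_{n}\Gamma\langle f\rangle(A_{n})^{\frac{p-1}{p}}\Gamma\langle g\rangle(A_{n})^{\frac{1}{p}}\le\Gamma\langle f\rangle(A)^{\frac{p-1}{p}}\Gamma\langle g\rangle(A)^{\frac{1}{p}}<\infty,
\]
so the series converges absolutely. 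Writing $A\setminus\bigcup_{n=1}^{N}A_{n}=\bigsqcup_{n>N}A_{n}$ and applying \eqref{em-bdd.form} once more, the remainder $\bigl|\Gamma\langle f;g\rangle\bigl(A\setminus\bigcup_{n\le N}A_{n}\bigr)\bigr|$ vanishes as $N\to\infty$, so finite additivity upgrades to $\sigma$-additivity.

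Next, I would prove \eqref{emint.frechet}. Fix $f,g\in\mathcal{F}$ and a bounded non-negative $\SigmaAlgEM$-measurable $\varphi$, and set $\nu_{t}\coloneqq t^{-1}[\Gamma\langle f+tg\rangle-\Gamma\langle f\rangle]-p\,\Gamma\langle f;g\rangle$, which is a signed measure by the first step. By the Hahn decomposition $X=P\cup N$ of $\nu_{t}$,
\[
\abs{\nu_{t}}(X)=\nu_{t}(P)-\nu_{t}(N)\le 2\sup_{A\in\SigmaAlgEM}\abs{\nu_{t}(A)},
\]
and \eqref{e:frechet.diffble.em} forces $\sup_{A}\abs{\nu_{t}(A)}\to 0$ as $t\to 0$. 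Hence $\abs{\nu_{t}}(X)\to 0$, and therefore $\bigl|\int_{X}\varphi\,d\nu_{t}\bigr|\le\norm{\varphi}_{\sup}\abs{\nu_{t}}(X)\to 0$, which is exactly \eqref{emint.frechet}. The uniformity in $g$ (over $\mathcal{E}(g)\le 1$) and in $f$ (over $\mathcal{E}(f)\le c/(p-2)^{+}$) in \eqref{e:frechet.diffble.em} is preserved, so the convergence is uniform in the same sense.

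Finally, for the list of properties, I would set $E_{\varphi}(f)\coloneqq\int_{X}\varphi\,d\Gamma\langle f\rangle$ and show that $(E_{\varphi},\mathcal{F})$ is a $p$-energy form on $(X,m)$ satisfying \ref{Cp}. For simple $\varphi=\sum_{i=1}^{N}c_{i}\indicator{A_{i}}$ with $c_{i}\ge 0$, this is a non-negative linear combination of the $p$-energy forms $(\Gamma\langle\,\cdot\,\rangle(A_{i}),\mathcal{F})$ satisfying \ref{Cp}, so it inherits \ref{Cp} (the reverse-Minkowski argument of \eqref{GC.sum} in the proof of Proposition \ref{prop.cone-gen}-\ref{GC.cone} applies verbatim with $q_{1}=p/(p-1)$, $q_{2}=p$ or vice versa). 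A general bounded non-negative $\varphi$ is the pointwise monotone limit of simple ones, so monotone convergence yields $E_{\varphi}$ as a pointwise limit of $p$-energy forms satisfying \ref{Cp}, and \ref{Cp} passes to the pointwise limit since it is a pointwise inequality. Applying Theorem \ref{thm.p-form} to $(E_{\varphi},\mathcal{F})$ delivers Fr\'{e}chet differentiability on $\mathcal{F}/\mathcal{E}^{-1}(0)$ and all the listed properties for the derivative $E_{\varphi}(f;g)\coloneqq\frac{1}{p}\frac{d}{dt}E_{\varphi}(f+tg)\bigr|_{t=0}$; the identification $E_{\varphi}(f;g)=\int_{X}\varphi\,d\Gamma\langle f;g\rangle$ is precisely \eqref{emint.frechet}. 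The modified monotonicity clause ($t\mapsto E_{\varphi}(f+tg;g)$ strictly increasing iff $\int\varphi\,d\Gamma\langle g\rangle>0$) follows from \eqref{form.mono} applied to $E_{\varphi}$ together with $E_{\varphi}(g)=\int\varphi\,d\Gamma\langle g\rangle$. The main obstacle is the passage in step two: interchanging the limit $t\to 0$ with integration against $\varphi$, which is effected by the Hahn-decomposition bound reducing total variation to supremum over measurable sets, a supremum controlled precisely by the uniform-in-$A$ convergence in \eqref{e:frechet.diffble.em}.
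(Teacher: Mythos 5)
Your proof is correct, and its overall architecture (signed measure first, then \eqref{emint.frechet}, then the remaining properties by applying Theorem \ref{thm.p-form} to $E_{\varphi}$) matches the paper's; your first and third steps are essentially what the paper does. The genuine divergence is in the proof of \eqref{emint.frechet}. The paper approximates $\varphi$ from below by simple functions $s_{n}$, notes that \eqref{emint.frechet} is immediate for each $s_{n}$ by linearity, and then interchanges the limits $n \to \infty$ and $t \to 0$ using the weighted H\"older bound \eqref{e:pemintHolder} with $\psi = \varphi - s_{n}$ together with the dominated convergence theorem; this route only needs the pointwise-in-$A$ differentiability from Proposition \ref{prop.c-diff-em}. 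You instead bound the total variation of the error measure $\nu_{t}$ by twice its supremum over measurable sets via the Hahn decomposition and invoke the uniform-in-$A$ statement \eqref{e:frechet.diffble.em}; this avoids simple-function approximation in that step entirely and yields the convergence of the difference quotients uniformly over $\norm{\varphi}_{\sup} \le 1$ (and over $f,g$ in the ranges appearing in \eqref{e:frechet.diffble.em}) for free. Both arguments are sound. Two small points worth making explicit: the $t \uparrow 0$ side of your limit requires applying \eqref{e:frechet.diffble.em} with $-g$ in place of $g$, and your verification that $(E_{\varphi},\mathcal{F})$ satisfies \ref{Cp} (reverse Minkowski for simple $\varphi$, then passage to the monotone pointwise limit) is a useful addition, since the paper leaves this prerequisite implicit when it says the remaining properties follow "by the arguments in the proof of Theorem \ref{thm.p-form}".
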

\begin{proof}
	The equalities $\Gamma\langle f; g \rangle(\emptyset) = 0$ and $\abs{\Gamma\langle f; g \rangle(X)} = \abs{\mathcal{E}(f;g)} < \infty$ are clear from the definition.
	We will show the countable additivity of $\Gamma\langle f; g \rangle$ .
    The finite additivity of $\Gamma\langle f; g \rangle$ is obvious.
    Let $\{ A_{n} \}_{n \in \mathbb{N}} \subseteq \SigmaAlgEM$ be a family of disjoint measurable sets.
    Set $B_{N} \coloneqq \bigcup_{n = N +1}^{\infty}A_{n}$ for each $N \in \mathbb{N}$.
    Then we see that
    \begin{align*}
        \abs{\Gamma\langle f;g \rangle\left(\bigcup_{n \in \mathbb{N}}A_{n}\right) - \sum_{n = 1}^{N}\Gamma\langle f;g \rangle(A_{n})}
        &= \abs{\Gamma\langle f;g \rangle(B_{N})} \\
        &\overset{\eqref{em-bdd.form}}{\le} \Gamma\langle f \rangle(B_{N})^{(p - 1)/p}\Gamma\langle g \rangle(B_{N})^{1/p}
        \xrightarrow[N \to \infty]{} 0,
    \end{align*}
    which shows that $\Gamma\langle f;g \rangle$ is a signed measure on $(X,\SigmaAlgEM)$.

    The other properties except for \eqref{emint.frechet} can be proved by following the arguments in the proof of Theorem \ref{thm.p-form}, so we shall prove \eqref{emint.frechet}.
    By the finite additivity of $\int_{X}\varphi\,d\Gamma\langle f; g \rangle$ and $\frac{1}{p}\left.\frac{d}{dt}\int_{X}\varphi\,d\Gamma\langle f + tg \rangle\right|_{t = 0}$ in $\varphi$, we can assume that $\varphi \ge 0$.
    Let $s_{n} = \sum_{k = 1}^{l_{n}}a_{k}\indicator{A_{k}}$ with $a_{k} \ge 0$ and $A_{k} \in \SigmaAlgEM$ be a sequence of simple functions so that $s_{n} \uparrow \varphi$ $m$-a.e.\ as $n \to \infty$. 
    Then we immediately have \eqref{emint.frechet} with $\varphi = s_{n}$. 
    Since $\lim_{n \to \infty}\int_{X}s_{n}\,d\Gamma\langle f; g \rangle = \int_{X}\varphi\,d\Gamma\langle f; g \rangle$ by the dominated convergence theorem, it suffices to prove 
    \begin{equation}\label{intderiv.lim}
    	\lim_{n \to \infty}\left.\frac{d}{dt}\int_{X}s_{n}\,d\Gamma\langle f + tg \rangle\right|_{t = 0} 
    	= \left.\frac{d}{dt}\int_{X}\varphi\,d\Gamma\langle f + tg \rangle\right|_{t = 0}.
    \end{equation}
    Since \eqref{e:preHolder} in the proof of Theorem \ref{thm.p-form} with $\int_{X}\varphi\,d\Gamma\langle \,\cdot\, \rangle$ in place of $\mathcal{E}$ holds by the fact that $(\int_{X}\varphi\,d\Gamma\langle \,\cdot\, \rangle, \mathcal{F})$ is a $p$-energy form on $(X,m)$, we know that for any $\SigmaAlgEM$-measurable function $\psi \colon X \to [0,\infty)$ with $\norm{\psi}_{\sup} < \infty$, 
    \begin{equation}\label{e:pemintHolder}
    	\abs{\left.\frac{d}{dt}\int_{X}\psi\,d\Gamma\langle f + tg \rangle\right|_{t = 0}} 
    	\le \left(\int_{X}\psi\,d\Gamma\langle f \rangle\right)^{(p - 1)/p}\left(\int_{X}\psi\,d\Gamma\langle g \rangle\right)^{1/p}.
    \end{equation} 
    By \eqref{e:pemintHolder} with $\psi = \varphi - s_{n}$ and the dominated convergence theorem, we obtain \eqref{intderiv.lim}. 
\end{proof}

\begin{rmk}
	As mentioned in the introduction, a signed measure corresponding to $\Gamma\langle f; g \rangle$ is discussed in \cite[Section 5]{BV05} under some non-trivial assumptions, which have not been verified for fractals like the Sierpi\'{n}ski gasket and the Sierpi\'{n}ski carpet in the literature. 
\end{rmk}

The following proposition gives a natural H\"{o}lder-type inequality for the total variation measure $\abs{\Gamma\langle f;g \rangle}$ of $\Gamma\langle f;g \rangle$. 
\begin{prop}\label{prop.em-holder}
    Assume that $\{ \Gamma\langle f \rangle \}_{f \in \mathcal{F}}$ satisfies \ref{Cp-em}.
    Then for any $f,g \in \mathcal{F}$ and any $\SigmaAlgEM$-measurable functions $\varphi,\psi \colon X \to [0,\infty]$,
	\begin{equation}\label{em.holder}
		\int_{X}\varphi\psi\,d\abs{\Gamma\langle f; g \rangle}
		\le \left(\int_{X}\varphi^{\frac{p}{p - 1}}\,d\Gamma\langle f \rangle\right)^{(p - 1)/p}\left(\int_{X}\psi^{p}\,d\Gamma\langle g \rangle\right)^{1/p}.
	\end{equation}
\end{prop}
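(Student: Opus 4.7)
The plan is to first extract a set-wise H\"older-type inequality for the total variation measure from \eqref{em-bdd.form}, then promote it to \eqref{em.holder} by the standard measure-theoretic route of treating simple functions followed by monotone convergence.

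\textbf{Step 1: Total-variation bound.} I would first show that for every $A\in\SigmaAlgEM$,
\begin{equation*}
\abs{\Gamma\langle f;g\rangle}(A)\le\Gamma\langle f\rangle(A)^{(p-1)/p}\Gamma\langle g\rangle(A)^{1/p}.
\end{equation*}
For any finite measurable partition $\{A_{i}\}_{i=1}^{n}$ of $A$, apply \eqref{em-bdd.form} to each $A_{i}$ and then H\"older's inequality for sums with exponents $\tfrac{p}{p-1}$ and $p$:
\begin{equation*}
\sum_{i=1}^{n}\abs{\Gamma\langle f;g\rangle(A_{i})}
\le\sum_{i=1}^{n}\Gamma\langle f\rangle(A_{i})^{(p-1)/p}\Gamma\langle g\rangle(A_{i})^{1/p}
\le\Gamma\langle f\rangle(A)^{(p-1)/p}\Gamma\langle g\rangle(A)^{1/p}.
\end{equation*}
Taking the supremum over all such partitions gives the claim, since $\Gamma\langle f\rangle,\Gamma\langle g\rangle$ are finitely additive on $\SigmaAlgEM$ (indeed countably additive measures).

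\textbf{Step 2: Simple functions.} Next, take a common finite measurable partition $\{A_{i}\}_{i=1}^{n}$ of $X$ and non-negative simple functions $\varphi=\sum_{i}a_{i}\indicator{A_{i}}$, $\psi=\sum_{i}b_{i}\indicator{A_{i}}$. Using Step~1 on each $A_{i}$ and then H\"older's inequality with exponents $\tfrac{p}{p-1}$ and $p$:
\begin{align*}
\int_{X}\varphi\psi\,d\abs{\Gamma\langle f;g\rangle}
&\le\sum_{i=1}^{n}\bigl[a_{i}\Gamma\langle f\rangle(A_{i})^{(p-1)/p}\bigr]\bigl[b_{i}\Gamma\langle g\rangle(A_{i})^{1/p}\bigr]\\
&\le\left(\sum_{i=1}^{n}a_{i}^{\frac{p}{p-1}}\Gamma\langle f\rangle(A_{i})\right)^{\!(p-1)/p}\!\left(\sum_{i=1}^{n}b_{i}^{p}\Gamma\langle g\rangle(A_{i})\right)^{\!1/p}\\
&=\left(\int_{X}\varphi^{\frac{p}{p-1}}\,d\Gamma\langle f\rangle\right)^{\!(p-1)/p}\!\left(\int_{X}\psi^{p}\,d\Gamma\langle g\rangle\right)^{\!1/p}.
\end{align*}

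\textbf{Step 3: Extension.} Finally, for general $\SigmaAlgEM$-measurable $\varphi,\psi\colon X\to[0,\infty]$, choose sequences of non-negative simple functions $\{\varphi_{n}\}_{n\in\mathbb{N}}$, $\{\psi_{n}\}_{n\in\mathbb{N}}$ with $\varphi_{n}\uparrow\varphi$ and $\psi_{n}\uparrow\psi$ pointwise on $X$, which may be chosen to be defined on a common refining sequence of partitions so that Step~2 applies to each pair $(\varphi_{n},\psi_{n})$. Monotone convergence applied separately to the three measures $\abs{\Gamma\langle f;g\rangle}$, $\Gamma\langle f\rangle$, and $\Gamma\langle g\rangle$ then passes the inequality to the limit, yielding \eqref{em.holder}; the convention $0\cdot\infty=0$ and the usual handling of $\infty$ factors on the right cover the degenerate cases.

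There is no real obstacle here: the only point that requires any care is the passage from the pointwise bound \eqref{em-bdd.form} to the bound on the total variation in Step~1, which is resolved by a routine partition/refinement argument together with H\"older's inequality. Once that is in hand, Steps~2 and~3 are entirely standard.
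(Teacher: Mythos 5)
Your proposal is correct and follows essentially the same route as the paper: a set-wise bound $\abs{\Gamma\langle f;g\rangle}(A)\le\Gamma\langle f\rangle(A)^{(p-1)/p}\Gamma\langle g\rangle(A)^{1/p}$ via \eqref{em-bdd.form} and H\"{o}lder's inequality, then simple functions on a common partition, then monotone convergence. The only (immaterial) difference is that the paper obtains the total-variation bound from the Hahn decomposition, i.e.\ the optimal two-set partition, whereas you take the supremum over all finite partitions; both are standard and equivalent here.
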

\begin{proof}
	Let $X = \mathcal{P} \sqcup \mathcal{N}$ be the Hahn decomposition with respect to $\Gamma\langle f;g \rangle$, i.e., $\mathcal{P},\mathcal{N} \in \SigmaAlgEM$, $\Gamma\langle f;g \rangle(A \cap \mathcal{P}) \ge 0$ and $\Gamma\langle f;g \rangle(A \cap \mathcal{N}) \le 0$ for any $A \in \SigmaAlgEM$.   
	Then the total variation measure $\abs{\Gamma\langle f;g \rangle}$ is given by
	\[
	\abs{\Gamma\langle f; g \rangle}(A) = \Gamma\langle f; g \rangle(\mathcal{P} \cap A) - \Gamma\langle f; g \rangle(\mathcal{N} \cap A), \quad A \in \SigmaAlgEM. 
	\]
	Therefore, by the H\"{o}lder-type estimate \eqref{em-bdd.form} in Theorem \ref{thm.em-basic},
	\begin{align}\label{em.TV}
		\abs{\Gamma\langle f; g \rangle}(A)
		&\overset{\eqref{em-bdd.form}}{\le} \Gamma\langle f \rangle(\mathcal{P} \cap A)^{(p - 1)/p}\Gamma\langle g \rangle(\mathcal{P} \cap A)^{1/p} + \Gamma\langle f \rangle(\mathcal{N} \cap A)^{(p - 1)/p}\Gamma\langle g \rangle(\mathcal{N} \cap A)^{1/p} \nonumber \\
		&\le \bigl(\Gamma\langle f \rangle(\mathcal{P} \cap A) + \Gamma\langle f \rangle(\mathcal{N} \cap A)\bigr)^{(p - 1)/p}\bigl(\Gamma\langle g \rangle(\mathcal{P} \cap A) + \Gamma\langle g \rangle(\mathcal{N} \cap A)\bigr)^{1/p} \nonumber \\
		&= \Gamma\langle f \rangle(A)^{(p - 1)/p}\Gamma\langle g \rangle(A)^{1/p},
	\end{align}
	where we used H\"{o}lder's inequality in the second inequality.

    Now we prove \eqref{em.holder}.
    First, we consider the case where $\varphi$ and $\psi$ are given by
    \[
    \varphi = \sum_{k = 1}^{N_{1}}\widetilde{a}_{k}\indicator{A_{k}}, \quad \psi = \sum_{k = 1}^{N_{2}}\widetilde{b}_{k}\indicator{B_{k}}, \text{ where $\widetilde{a}_{k}, \widetilde{b}_{k} \in [0,\infty)$ and $A_{k}, B_{k} \in \SigmaAlgEM$.}
    \]
    Then we can assume that there exist $N \in \mathbb{N}$, $\{ a_{k} \}_{k = 1}^{N}, \{ b_{k} \}_{k = 1}^{N} \subseteq [0,\infty)$ and a disjoint family of measurable sets $\{ E_{k} \}_{k = 1}^{N} \subseteq \SigmaAlgEM$ such that $\varphi = \sum_{k = 1}^{N}a_{k}\indicator{E_{k}}$ and $\psi = \sum_{k = 1}^{N}b_{k}\indicator{E_{k}}$. 
    Since $\varphi \psi = \sum_{k = 1}^{N}a_{k}b_{k}\indicator{E_{k}}$, combining \eqref{em.TV} and H\"{o}lder's inequality yields
    \begin{align}\label{em-holder.simple}
        \int_{X}\varphi\psi\,d\abs{\Gamma\langle f; g \rangle}
        &= \sum_{k = 1}^{N}a_{k}b_{k}\abs{\Gamma\langle f; g \rangle(E_{k})} \nonumber \\
        &\le \Biggl(\sum_{k = 1}^{N}a_{k}^{p/(p - 1)}\Gamma\langle f \rangle(E_{k})\Biggr)^{(p - 1)/p}\Biggl(\sum_{k = 1}^{N}b_{k}^{p}\,\Gamma\langle g \rangle(E_{k})\Biggr)^{1/p} \nonumber \\
        &= \biggl(\int_{X}\varphi^{p/(p - 1)}\,d\Gamma\langle f \rangle\biggr)^{(p - 1)/p}\biggl(\int_{X}\psi^{p}\,d\Gamma\langle g \rangle\biggr)^{1/p}. 
    \end{align} 
    Next, assume that $\varphi$ and $\psi$ are $[0,\infty]$-valued $\SigmaAlgEM$-measurable functions, and for each $w \in \{ \varphi,\psi \}$ let $\{ s_{n, w} \}_{n \in \mathbb{N}}$ be a non-decreasing sequence of non-negative $\SigmaAlgEM$-measurable simple functions such that $\lim_{n\to\infty}s_{n, w}(x) = w(x)$ for any $x \in X$. 
    Then by \eqref{em-holder.simple} we have \eqref{em.holder} with $s_{n, \varphi},s_{n, \psi}$ in place of $\varphi,\psi$ for any $n \in \mathbb{N}$, and letting $n\to\infty$ yields \eqref{em.holder} by the monotone convergence theorem. 
\end{proof}

In the following proposition, we show that integrals of non-negative bounded $\SigmaAlgEM$-measurable functions with respect to $p$-energy measures satisfying \hyperlink{GC-em}{\textup{(GC)$^{\textup{EM}}_{p}$}} give $p$-energy forms on $(X,m)$ that satisfy \ref{GC}. 
\begin{prop}\label{prop:EMGC.intver}
	Assume that $\{ \Gamma\langle f \rangle \}_{f \in \mathcal{F}}$ satisfies \hyperlink{GC-em}{\textup{(GC)$^{\textup{EM}}_{p}$}}. 
	Then for any $\SigmaAlgEM$-measurable function $\varphi \colon X \to [0,\infty)$ with $\norm{\varphi}_{\sup} < \infty$, $(\int_{X}\varphi\,d\Gamma\langle \,\cdot\, \rangle,\mathcal{F})$ is a $p$-energy form on $(X,m)$ satisfying \ref{GC}. 
\end{prop}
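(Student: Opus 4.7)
The plan is to reduce the general statement to the case of simple functions via pointwise approximation, invoking the stability results for the generalized $p$-contraction property established in Proposition \ref{prop.cone-gen}.

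First I would verify that $\mathcal{E}_{\varphi}(u) \coloneqq \int_{X} \varphi \,d\Gamma\langle u \rangle$ indeed defines a $p$-homogeneous $[0,\infty)$-valued functional on $\mathcal{F}$. Finiteness follows from \ref{EM1}, since $\mathcal{E}_{\varphi}(u) \leq \norm{\varphi}_{\sup}\Gamma\langle u \rangle(X) \leq \norm{\varphi}_{\sup}\mathcal{E}(u) < \infty$. The $p$-homogeneity $\mathcal{E}_{\varphi}(au) = \abs{a}^{p}\mathcal{E}_{\varphi}(u)$ follows from \ref{EM2}, which gives $\Gamma\langle au \rangle(A) = \abs{a}^{p}\Gamma\langle u \rangle(A)$ for every $A \in \SigmaAlgEM$.

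Next I would treat the case when $\varphi = \sum_{k=1}^{N} a_{k}\indicator{A_{k}}$ is a non-negative $\SigmaAlgEM$-measurable simple function, where $a_{k} \in [0,\infty)$ and $A_{k} \in \SigmaAlgEM$. Then
\begin{equation*}
\mathcal{E}_{\varphi}(u) = \sum_{k=1}^{N} a_{k}\,\Gamma\langle u \rangle(A_{k}).
\end{equation*}
By the hypothesis \hyperref[defn.em-Cp]{\textup{(GC)$_{p}$}} for $\{\Gamma\langle f \rangle\}_{f \in \mathcal{F}}$, each $\Gamma\langle \,\cdot\, \rangle(A_{k}) \in \mathcal{U}_{p}^{\mathrm{GC}}(\mathcal{F})$ (Definition \ref{d:GC-cone}). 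Applying Proposition \ref{prop.cone-gen}-\ref{GC.cone} inductively, we obtain $\mathcal{E}_{\varphi} \in \mathcal{U}_{p}^{\mathrm{GC}}(\mathcal{F})$, so $(\mathcal{E}_{\varphi},\mathcal{F})$ satisfies \ref{GC}.

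Finally, for a general $\SigmaAlgEM$-measurable $\varphi \colon X \to [0,\infty)$ with $\norm{\varphi}_{\sup} < \infty$, choose a non-decreasing sequence $\{s_{n}\}_{n\in\mathbb{N}}$ of non-negative $\SigmaAlgEM$-measurable simple functions with $s_{n} \uparrow \varphi$ pointwise on $X$ and $\norm{s_{n}}_{\sup} \leq \norm{\varphi}_{\sup}$. For each $u \in \mathcal{F}$, the monotone convergence theorem yields $\mathcal{E}_{s_{n}}(u) \uparrow \mathcal{E}_{\varphi}(u)$ as $n \to \infty$; that is, $\{\mathcal{E}_{s_{n}}\}_{n\in\mathbb{N}}$ converges pointwise to $\mathcal{E}_{\varphi}$ on $\mathcal{F}$. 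Since each $\mathcal{E}_{s_{n}} \in \mathcal{U}_{p}^{\mathrm{GC}}(\mathcal{F})$ by the previous step, Proposition \ref{prop.cone-gen}-\ref{GC.pwlimit} gives $\mathcal{E}_{\varphi} \in \mathcal{U}_{p}^{\mathrm{GC}}(\mathcal{F})$. In particular, applying \ref{GC} with $(q_{1},q_{2},n_{1},n_{2}) = (1,p,2,1)$ and $T(x,y) = x+y$ yields the triangle inequality for $\mathcal{E}_{\varphi}^{1/p}$ (cf.\ Proposition \ref{prop.GC-list}-\ref{GC.tri}), confirming that $(\mathcal{E}_{\varphi},\mathcal{F})$ is a $p$-energy form on $(X,m)$ satisfying \ref{GC}. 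There is no serious obstacle here; the proof is essentially a packaging of the stability results already established, and the only minor care needed is the pointwise (rather than uniform) convergence of the simple-function approximations, which is precisely what Proposition \ref{prop.cone-gen}-\ref{GC.pwlimit} is designed to handle.
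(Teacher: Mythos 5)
Your proof is correct and follows essentially the same route as the paper: the paper likewise establishes the inequality for simple functions by the argument of \eqref{GC.sum} (i.e., the triangle inequality for the $\ell^{q_{2}/p}$-norm and the reverse Minkowski inequality for the $\ell^{q_{1}/p}$-norm, which is precisely the content of Proposition \ref{prop.cone-gen}-\ref{GC.cone}) and then passes to general $\varphi$ via monotone convergence. Your packaging through the stability results of Proposition \ref{prop.cone-gen} is a clean and faithful rendering of the same argument.
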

\begin{proof}
	Let $n_{1},n_{2} \in \mathbb{N}$, $q_{1} \in (0,p]$, $q_{2} \in [p,\infty]$ and $T = (T_{1},\dots,T_{n_{2}}) \colon \mathbb{R}^{n_{1}} \to \mathbb{R}^{n_{2}}$ satisfy \eqref{GC-cond}, and let $\bm{u} = (u_{1},\dots,u_{n_{1}}) \in \mathcal{F}^{n_{1}}$. 
	Similar to \eqref{GC.sum}, by using the triangle inequality for the $\ell^{q_{2}/p}$-norm and the reverse Minkowski inequality (Proposition \ref{prop.reverse}) for the $\ell^{q_{1}/p}$-norm, we see that for any non-negative $\SigmaAlgEM$-measurable simple function $\varphi$ on $X$, 
	\begin{equation}\label{e:em.int.GC}
		\norm{\left(\left(\int_{X}\varphi\,d\Gamma\langle T_{l}(\bm{u}) \rangle\right)^{1/p}\right)_{l = 1}^{n_{2}}}_{\ell^{q_{2}}} 
    	\le \norm{\left(\left(\int_{X}\varphi\,d\Gamma\langle u_{k} \rangle\right)^{1/p}\right)_{k = 1}^{n_{1}}}_{\ell^{q_{1}}}. 
	\end{equation}
	We can extend \eqref{e:em.int.GC} to any $\SigmaAlgEM$-measurable function $\varphi \colon X \to [0,\infty]$ by taking a non-decreasing sequence of non-negative $\SigmaAlgEM$-measurable simple functions converging pointwise to $\varphi$ and applying the monotone convergence theorem, which completes the proof. 
\end{proof}

The following Fatou-type property is useful. 
\begin{prop}\label{prop.em-int-fatou}
	Assume that $\mathcal{F} \subseteq L^p(X,m)$. 
	Let $\varphi \colon X \to [0,\infty)$ be $\SigmaAlgEM$-measurable and satisfy $\norm{\varphi}_{\sup} < \infty$. 
	If $\{ u_{n} \}_{n \in \mathbb{N}} \subseteq \mathcal{F}$ converges weakly in $(\mathcal{F},\norm{\,\cdot\,}_{\mathcal{E},1})$ to $u \in \mathcal{F}$, then 
	\begin{equation}\label{e:em-int-fatou}
		\int_{X}\varphi\,d\Gamma\langle u \rangle \le \liminf_{n \to \infty}\int_{X}\varphi\,d\Gamma\langle u_{n} \rangle.
	\end{equation}
\end{prop}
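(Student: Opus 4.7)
The plan is to recognize that $E_{\varphi}(\,\cdot\,) \coloneqq \int_{X}\varphi\,d\Gamma\langle \,\cdot\, \rangle$ defines a continuous seminorm (up to $p$-th power) on $(\mathcal{F},\norm{\,\cdot\,}_{\mathcal{E},1})$, so that the desired lower semicontinuity along weakly convergent sequences reduces, via Mazur's lemma (Lemma \ref{lem.mazur}), to the triangle inequality applied to a strongly convergent sequence of convex combinations.

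First I would verify that $(E_{\varphi},\mathcal{F})$ is a $p$-energy form on $(X,m)$, i.e., that $E_{\varphi}^{1/p}$ is a seminorm. For a simple function $\varphi_{0} = \sum_{k=1}^{N}a_{k}\indicator{A_{k}}$ with $a_{k} \geq 0$ and disjoint $A_{k} \in \SigmaAlgEM$, the triangle inequality $\Gamma\langle u+v \rangle(A_{k})^{1/p} \leq \Gamma\langle u \rangle(A_{k})^{1/p} + \Gamma\langle v \rangle(A_{k})^{1/p}$ from \ref{EM2} combined with Minkowski's inequality for the weighted $\ell^{p}$-norm gives
\begin{equation*}
E_{\varphi_{0}}(u+v)^{1/p} = \biggl(\sum_{k=1}^{N} a_{k}\Gamma\langle u+v \rangle(A_{k})\biggr)^{1/p} \leq E_{\varphi_{0}}(u)^{1/p} + E_{\varphi_{0}}(v)^{1/p}.
\end{equation*}
Approximating a general bounded non-negative $\SigmaAlgEM$-measurable $\varphi$ from below by simple $\varphi_{m} \uparrow \varphi$ and invoking the monotone convergence theorem extends the triangle inequality to $E_{\varphi}$. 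Combined with the obvious $p$-homogeneity, this yields the seminorm property, and \ref{EM1} further gives $E_{\varphi}(u) \leq \norm{\varphi}_{\sup}\Gamma\langle u \rangle(X) \leq \norm{\varphi}_{\sup}\mathcal{E}(u) \leq \norm{\varphi}_{\sup}\norm{u}_{\mathcal{E},1}^{p}$. Hence $E_{\varphi}^{1/p}$ is Lipschitz on $(\mathcal{F},\norm{\,\cdot\,}_{\mathcal{E},1})$, since $|E_{\varphi}(u)^{1/p} - E_{\varphi}(v)^{1/p}| \leq E_{\varphi}(u-v)^{1/p} \leq \norm{\varphi}_{\sup}^{1/p}\norm{u-v}_{\mathcal{E},1}$.

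Next I would finish via Mazur's lemma. Set $L \coloneqq \liminf_{n \to \infty} E_{\varphi}(u_{n})$; if $L = \infty$, there is nothing to prove, so assume $L < \infty$. Extract a subsequence $\{u_{n_{j}}\}_{j \in \mathbb{N}}$ with $E_{\varphi}(u_{n_{j}}) \to L$; since this subsequence still converges weakly to $u$ in $(\mathcal{F},\norm{\,\cdot\,}_{\mathcal{E},1})$, Lemma \ref{lem.mazur} produces integers $N_{k} \geq k$ and coefficients $\{\lambda_{k,j}\}_{k \leq j \leq N_{k}} \subseteq [0,1]$ with $\sum_{j=k}^{N_{k}}\lambda_{k,j} = 1$ such that the convex combinations $v_{k} \coloneqq \sum_{j=k}^{N_{k}}\lambda_{k,j}u_{n_{j}}$ converge in norm in $(\mathcal{F},\norm{\,\cdot\,}_{\mathcal{E},1})$ to $u$. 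The seminorm property yields
\begin{equation*}
E_{\varphi}(v_{k})^{1/p} \leq \sum_{j=k}^{N_{k}} \lambda_{k,j} E_{\varphi}(u_{n_{j}})^{1/p}.
\end{equation*}
The continuity established above gives $E_{\varphi}(v_{k})^{1/p} \to E_{\varphi}(u)^{1/p}$ as $k \to \infty$, whereas the right-hand side tends to $L^{1/p}$ because $E_{\varphi}(u_{n_{j}})^{1/p} \to L^{1/p}$ and any convex combination of tail values shares that limit. Therefore $E_{\varphi}(u) \leq L$, as desired.

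There is no single hard step; the only mild subtlety is justifying the seminorm property of $E_{\varphi}^{1/p}$ for a general bounded $\varphi$ (rather than just simple ones), which requires the monotone-convergence passage described above, and choosing the subsequence realizing the $\liminf$ \emph{before} applying Mazur's lemma so that the convex-combination bound collapses to $L^{1/p}$ in the limit.
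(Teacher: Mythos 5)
Your proof is correct and follows essentially the same route as the paper's: extract a subsequence realizing the $\liminf$, apply Mazur's lemma to get norm-convergent convex combinations, and use the triangle inequality for $\bigl(\int_{X}\varphi\,d\Gamma\langle \,\cdot\, \rangle\bigr)^{1/p}$ together with its continuity under $\norm{\,\cdot\,}_{\mathcal{E},1}$. The only difference is that you spell out the seminorm property of $E_{\varphi}^{1/p}$ for general bounded $\varphi$ and the Lipschitz continuity, which the paper leaves implicit (having established the former elsewhere, e.g.\ in the proof of Proposition \ref{prop.em-ext}).
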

\begin{proof}
    Let $\{ u_{n_{k}} \}_{k}$ be a subsequence with $\lim_{k \to \infty}\int_{X}\varphi\,d\Gamma\langle u_{n_{k}} \rangle = \liminf_{n \to \infty}\int_{X}\varphi\,d\Gamma\langle u_{n} \rangle$. 
    By Mazur's lemma (Lemma \ref{lem.mazur}), there exist $N(l) \in \mathbb{N}$ and $\{ \alpha_{l,k} \}_{k = l}^{N(l)} \subseteq [0,1]$ such that $N(l) > l$, $\sum_{k = l}^{N(l)}\alpha_{l,k} = 1$ and $v_{l} \coloneqq \sum_{k = l}^{N(l)}\alpha_{l,k}u_{n_{k}}$ converges to $u$ in $\mathcal{F}$ as $l \to \infty$. 
    We see from the triangle inequality for $\left(\int_{X}\varphi\,d\Gamma\langle \,\cdot\, \rangle\right)^{1/p}$ that 
    \[
    \left(\int_{X}\varphi\,d\Gamma\langle v_{l} \rangle\right)^{1/p} 
    \le \sum_{k = l}^{N(l)}\alpha_{l,k}\left(\int_{X}\varphi\,d\Gamma\langle u_{n_{k}} \rangle\right)^{1/p},  
    \]
    which implies \eqref{e:em-int-fatou} by letting $l \to \infty$.
\end{proof}

\subsection{Extensions of \texorpdfstring{$p$}{p}-energy measures}
Throughout this subsection, we fix a linear subspace $\core$ of $\mathcal{F}$ and assume that $\SigmaAlgEM$ is a $\sigma$-algebra in $X$ and that $\{ \Gamma\langle f \rangle \}_{f \in \core}$ is a family of $p$-energy measures on $(X,\SigmaAlgEM)$ dominated by $(\mathcal{E},\core)$. 
In the following proposition, we extend $\{ \Gamma\langle f \rangle \}_{f \in \core}$ to $f \in \ExtD$, where $\ExtD$ is a linear subspace of $\mathcal{F}$ defined as 
\begin{equation}\label{e:defn.ExtD}
	\ExtD \coloneqq \Bigl\{u \in \mathcal{F} \Bigm| \text{$\lim_{n\to\infty}\mathcal{E}(u-u_{n})=0$ for some $\{u_{n}\}_{n\in\mathbb{N}} \subseteq \core$}\Bigr\}; 
\end{equation} 
note that, if $\mathcal{F} \subseteq L^{p}(X,m)$ and $\mathcal{F}$ is equipped with the norm $\norm{\,\cdot\,}_{\mathcal{E},1}$, then $\overline{\core}^{\mathcal{F}} \subseteq \ExtD$, where the inclusion can be strict in general. 

\begin{prop}\label{prop.em-ext}
	For any $u \in \ExtD$, there exists a unique measure $\Gamma\langle u \rangle$ on $(X,\SigmaAlgEM)$ such that for any $\{ u_{n} \}_{n \in \mathbb{N}} \subseteq \core$ with $\lim_{n \to \infty}\mathcal{E}(u - u_{n}) = 0$ and any $\SigmaAlgEM$-measurable function $\varphi \colon X \to [0,\infty)$ with $\norm{\varphi}_{\sup} < \infty$, 
	\begin{equation}\label{e:em.extension}
		\int_{X}\varphi\,d\Gamma\langle u \rangle
		= \lim_{n \to \infty}\int_{X}\varphi\,d\Gamma\langle u_{n} \rangle, 
	\end{equation}
	and $\Gamma\langle u \rangle$ further satisfies $\Gamma\langle u \rangle(X) \le \mathcal{E}(u)$. 
	Moreover, for each such $\varphi$, $(\int_{X}\varphi\,d\Gamma\langle \,\cdot\, \rangle,\ExtD)$ is a $p$-energy form on $(X,m)$.
\end{prop}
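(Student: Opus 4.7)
The plan is to define $\Gamma\langle u\rangle(A)$ as a pointwise limit on $A \in \SigmaAlgEM$ of $\Gamma\langle u_n\rangle(A)$ for an approximating sequence $\{u_n\}_{n \in \mathbb{N}} \subseteq \core$ of $u$, and to verify measure-theoretic properties by exploiting a uniform-in-$A$ Cauchy estimate. The key inequality, which follows from \ref{EM1} and \ref{EM2}, is
\[
\bigl|\Gamma\langle u_n\rangle(A)^{1/p} - \Gamma\langle u_m\rangle(A)^{1/p}\bigr| \leq \Gamma\langle u_n - u_m\rangle(A)^{1/p} \leq \mathcal{E}(u_n - u_m)^{1/p}, \quad A \in \SigmaAlgEM,
\]
so $\{\Gamma\langle u_n\rangle(A)^{1/p}\}_{n \in \mathbb{N}}$ converges uniformly in $A$ to some $\mu(A)^{1/p}$, where $\mu(A) \coloneqq \lim_{n \to \infty}\Gamma\langle u_n\rangle(A)$. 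Finite additivity of $\mu$ is inherited from that of $\Gamma\langle u_n\rangle$, and the bound $\mu(X) \leq \mathcal{E}(u)$ follows from \ref{EM1} together with the continuity of $\mathcal{E}^{1/p}$ (applied to $\{u_n\}$).

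The main technical step, and what I expect to be the principal obstacle, is countable additivity of $\mu$, since a priori the $\Gamma\langle u_n\rangle$ need not be absolutely continuous with respect to a single reference $\sigma$-finite measure. The approach is to fix pairwise disjoint $\{A_k\}_{k \in \mathbb{N}} \subseteq \SigmaAlgEM$, set $B_N \coloneqq \bigcup_{k > N} A_k$, and show $\mu(B_N) \to 0$ as $N \to \infty$. Given $\varepsilon > 0$, by the uniform convergence we fix $n$ with $\sup_{A \in \SigmaAlgEM}\bigl|\mu(A)^{1/p} - \Gamma\langle u_n\rangle(A)^{1/p}\bigr| < \varepsilon$, so that $\mu(B_N)^{1/p} \leq \Gamma\langle u_n\rangle(B_N)^{1/p} + \varepsilon$. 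The fixed measure $\Gamma\langle u_n\rangle$ is $\sigma$-additive, so $\Gamma\langle u_n\rangle(B_N) \to 0$ as $N \to \infty$, yielding $\limsup_{N \to \infty}\mu(B_N)^{1/p} \leq \varepsilon$. Since $\varepsilon > 0$ is arbitrary, $\mu(B_N) \to 0$ and $\mu$ is $\sigma$-additive; note how this genuinely uses the uniform-in-$A$ convergence, rather than mere pointwise-in-$A$ convergence, in order to interchange the two limits.

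Uniqueness of the measure $\Gamma\langle u\rangle \coloneqq \mu$ satisfying \eqref{e:em.extension} is immediate by taking $\varphi = \indicator{A}$, and independence of the approximating sequence follows by interleaving two such sequences to form a single approximating sequence. The identity \eqref{e:em.extension} is then obtained first for nonnegative simple $\varphi$ by linearity, and for general bounded nonnegative $\SigmaAlgEM$-measurable $\varphi$ by uniform approximation by simple functions $\varphi_{k}$ (valid since $\varphi$ is bounded) together with the bound $\bigl|\int_{X}(\varphi - \varphi_{k})\,d\Gamma\langle u_n\rangle\bigr| \leq \norm{\varphi - \varphi_{k}}_{\sup}\mathcal{E}(u_n)$, which is uniform in $n$ because $\sup_{n}\mathcal{E}(u_n) < \infty$ for any Cauchy $\{u_n\}$. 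Finally, for each such $\varphi$, the fact that $\bigl(\int_{X}\varphi\,d\Gamma\langle\,\cdot\,\rangle, \ExtD\bigr)$ is a $p$-energy form follows by passing to the limit along approximating sequences in the corresponding property on $\core$: for simple $\varphi = \sum_{k}a_{k}\indicator{A_{k}}$ with disjoint $A_{k}$, the triangle inequality for $\bigl(\int_{X}\varphi\,d\Gamma\langle\,\cdot\,\rangle\bigr)^{1/p}$ on $\core$ follows from \ref{EM2} applied to each $A_{k}$ combined with the triangle inequality for $\norm{\,\cdot\,}_{\ell^{p}}$ applied to the vectors $\bigl(a_{k}^{1/p}\Gamma\langle f\rangle(A_{k})^{1/p}\bigr)_{k}$ and $\bigl(a_{k}^{1/p}\Gamma\langle g\rangle(A_{k})^{1/p}\bigr)_{k}$, and the general case is handled by uniform simple approximation; $p$-homogeneity on $\ExtD$ passes through the limit directly.
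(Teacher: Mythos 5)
Your proposal is correct and follows essentially the same route as the paper's proof: both rest on the uniform-in-$A$ Cauchy estimate $\bigl|\Gamma\langle u_{n}\rangle(A)^{1/p}-\Gamma\langle u_{m}\rangle(A)^{1/p}\bigr|\le\mathcal{E}(u_{n}-u_{m})^{1/p}$ coming from \ref{EM1} and \ref{EM2}, define $\Gamma\langle u\rangle$ as the setwise limit, and prove countable additivity by the identical $\varepsilon$-argument that uses the uniformity to interchange the limits in $n$ and $N$. The only difference is organizational (you build up from indicators to simple functions to general bounded $\varphi$, whereas the paper works with $I_{u}(\varphi)$ for bounded $\varphi$ throughout), which does not change the substance.
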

\begin{proof}
	By \ref{EM2} in Definition \ref{defn.em-Cp} and the monotone convergence theorem, for any $\SigmaAlgEM$-measurable function $\varphi \colon X \to [0,\infty]$ and any $u,v \in \core$, 
	\begin{equation}\label{e:intem.triangle}
		\left(\int_{X}\varphi\,d\Gamma\langle u + v \rangle\right)^{1/p} \le \left(\int_{X}\varphi\,d\Gamma\langle u \rangle\right)^{1/p} + \left(\int_{X}\varphi\,d\Gamma\langle v \rangle\right)^{1/p}. 
	\end{equation}
    In the rest of this proof, let $\varphi \colon X \to [0,\infty)$ be $\SigmaAlgEM$-measurable and satisfy $\norm{\varphi}_{\sup} < \infty$. 
    Let $u \in \ExtD$ and $\{ u_{n} \}_{n \in \mathbb{N}} \subseteq \core$ satisfy $\lim_{n \to \infty}\mathcal{E}(u - u_{n}) = 0$. 
    By \eqref{e:intem.triangle}, $\bigl\{ \int_{X}\varphi\,d\Gamma\langle u_{n} \rangle \bigr\}_{n \in \mathbb{N}}$ is a Cauchy sequence in $[0,\infty)$ and $\lim_{n \to \infty}\int_{X}\varphi\,d\Gamma\langle u_{n} \rangle \eqqcolon I_{u}(\varphi)$ is independent of the choice of $\{ u_{n} \}_{n}$. 
    In addition, we have that 
    \begin{equation}\label{e:emint.unif}
    	\abs{\left(\int_{X}\varphi\,d\Gamma\langle u_{n} \rangle\right)^{1/p} - I_{u}(\varphi)^{1/p}}
    	\le \norm{\varphi}_{\sup}^{1/p}\mathcal{E}(u_{n} - u)^{1/p}, 
    \end{equation}
    that $0 \le I_{u}(\varphi) \le \norm{\varphi}_{\sup}\mathcal{E}(u)$ and that $I_{n}$ is linear in the sense that $I_{u}\bigl(\sum_{k = 1}^{N}a_{k}\varphi_{k}\bigr) = \sum_{k = 1}^{N}a_{k}I_{u}(\varphi_{k})$ for any $N \in \mathbb{N}$, $(a_{k})_{k = 1}^{N} \subseteq [0,\infty)$ and $\SigmaAlgEM$-measurable functions $\varphi_{k} \colon X \to [0,\infty)$ with $\norm{\varphi_{k}}_{\sup} < \infty$, $k \in \{ 1,\dots, N \}$.  
    Now we define $\Gamma\langle u \rangle(A) \coloneqq I_{u}(\indicator{A}) \in [0,\infty)$ for $A \in \SigmaAlgEM$, and show that $\Gamma\langle u \rangle$ is a finite measure on $(X,\SigmaAlgEM)$.
    Clearly, $\Gamma\langle u \rangle$ is finitely additive and $\Gamma\langle u \rangle(X) \le \mathcal{E}(u) < \infty$.  
    Let us show the countable additivity of $\Gamma\langle u \rangle$. 
	By \eqref{e:emint.unif}, for any $\varepsilon > 0$ there exists $N_{0} \in \mathbb{N}$ such that $\sup_{A \in \SigmaAlgEM}\abs{\Gamma\langle u \rangle(A)^{1/p} - \Gamma\langle u_{n} \rangle(A)^{1/p}} < \varepsilon$ for any $n \ge N_{0}$. 
    Let $\{ A_{k} \}_{k \in \mathbb{N}} \subseteq \SigmaAlgEM$ be a sequence of disjoint measurable sets, and set $B_{N} \coloneqq \bigcup_{k = N +1}^{\infty}A_{k}$ for each $N \in \mathbb{N}$.
    Then we see that for any $N \in \mathbb{N}$ and any $n \ge N_{0}$, 
    \[
    \abs{\Gamma\langle u \rangle\left(\bigcup_{k \in \mathbb{N}}A_{k}\right) - \sum_{k = 1}^{N}\Gamma\langle u \rangle(A_{k})}^{1/p}
    = \Gamma\langle u \rangle(B_{N})^{1/p} 
        \le \varepsilon + \Gamma\langle u_{n} \rangle(B_{N})^{1/p}, 
    \]
    whence $\lim_{N \to \infty} \abs{\Gamma\langle u \rangle\left(\bigcup_{k \in \mathbb{N}}A_{k}\right) - \sum_{k = 1}^{N}\Gamma\langle u \rangle(A_{k})} = 0$, proving the desired countable additivity. 
    
    Note that $I_{u + v}(\varphi)^{1/p} \le I_{u}(\varphi)^{1/p} + I_{v}(\varphi)^{1/p}$ for any $u,v \in \ExtD$ by \eqref{e:intem.triangle} and the definition of $I_{\bullet}(\varphi)$. 
    This together with the monotone convergence theorem implies the triangle inequality for $\left(\int_{X}\varphi\,d\Gamma\langle \,\cdot\, \rangle\right)^{1/p}$ on $\ExtD$; in particular, $(\int_{X}\varphi\,d\Gamma\langle \,\cdot\, \rangle,\ExtD)$ is a $p$-energy form on $(X,m)$. 
    Next we show \eqref{e:em.extension}.   
    Let $\{ u_{n} \}_{n \in \mathbb{N}} \subseteq \core$ be a sequence satisfying $\lim_{n \to \infty}\mathcal{E}(u - u_{n}) = 0$. 
    By the triangle inequality for $(\int_{X}\varphi\,d\Gamma\langle \,\cdot\, \rangle, \ExtD)$, 
    \[
    \abs{\left(\int_{X}\varphi\,d\Gamma\langle u \rangle\right)^{1/p} - \left(\int_{X}\varphi\,d\Gamma\langle u_{n} \rangle\right)^{1/p}}
    \le \left(\int_{X}\varphi\,d\Gamma\langle u - u_{n} \rangle\right)^{1/p}
    \le \norm{\varphi}_{\sup}^{1/p}\mathcal{E}(u - u_{n})^{1/p}, 
    \]
    which together with \eqref{e:emint.unif} implies \eqref{e:em.extension}; indeed,  
    \begin{align*}
    	&\abs{I_{u}(\varphi)^{1/p} - \left(\int_{X}\varphi\,d\Gamma\langle u \rangle\right)^{1/p}} \\
    	&\le \abs{I_{u}(\varphi)^{1/p} - \left(\int_{X}\varphi\,d\Gamma\langle u_{n} \rangle\right)^{1/p}} + \abs{\left(\int_{X}\varphi\,d\Gamma\langle u_{n} \rangle\right)^{1/p} - \left(\int_{X}\varphi\,d\Gamma\langle u \rangle\right)^{1/p}} \\
    	&\le 2\norm{\varphi}_{\sup}^{1/p}\mathcal{E}(u - u_{n})^{1/p} 
    	\xrightarrow[n \to \infty]{} 0.  \qedhere 
    \end{align*}
\end{proof}

If, in addition, $\{ \Gamma\langle f \rangle \}_{f \in \core}$ satisfies \ref{Cp-em}, then we can easily see that $\{ \Gamma\langle f \rangle \}_{f \in \ExtD}$ also satisfies \ref{Cp-em}.
We record this fact in the following proposition. 
\begin{prop}\label{prop.em-Cp-ext}
	If $\{ \Gamma\langle f \rangle \}_{f \in \core}$ satisfies \ref{Cp-em}, then so does $\{ \Gamma\langle f \rangle \}_{f \in \ExtD}$. 
\end{prop}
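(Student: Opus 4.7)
The plan is to prove the inequality by approximation from $\core$, using the extension formula \eqref{e:em.extension} with $\varphi = \indicator{A}$. First, given $f, g \in \ExtD$, I would pick sequences $\{f_n\}_{n \in \mathbb{N}}, \{g_n\}_{n \in \mathbb{N}} \subseteq \core$ with $\lim_{n \to \infty}\mathcal{E}(f - f_n) = 0$ and $\lim_{n \to \infty}\mathcal{E}(g - g_n) = 0$, which exist by the definition \eqref{e:defn.ExtD} of $\ExtD$. Because $\core$ is a linear subspace of $\mathcal{F}$, both $\{f_n + g_n\}_{n \in \mathbb{N}}$ and $\{f_n - g_n\}_{n \in \mathbb{N}}$ lie in $\core$, and the triangle inequality for $\mathcal{E}^{1/p}$ on $\mathcal{F}$ gives $\lim_{n \to \infty}\mathcal{E}\bigl((f \pm g) - (f_n \pm g_n)\bigr) = 0$. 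Hence these four sequences qualify as approximating sequences in \eqref{e:em.extension} for the limits $f, g, f+g, f-g \in \ExtD$, respectively.

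Next, I would fix $A \in \SigmaAlgEM$ and apply \eqref{e:em.extension} with $\varphi = \indicator{A}$ to obtain
\begin{equation*}
\lim_{n \to \infty}\Gamma\langle f_n \rangle(A) = \Gamma\langle f \rangle(A), \qquad \lim_{n \to \infty}\Gamma\langle g_n \rangle(A) = \Gamma\langle g \rangle(A),
\end{equation*}
and similarly $\lim_{n \to \infty}\Gamma\langle f_n \pm g_n \rangle(A) = \Gamma\langle f \pm g \rangle(A)$. Since $\{\Gamma\langle h \rangle\}_{h \in \core}$ satisfies \ref{Cp-em}, for every $n \in \mathbb{N}$ the corresponding inequality holds with $f_n, g_n$ in place of $f, g$. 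Passing to the limit as $n \to \infty$, using the continuity of $[0,\infty)^{2} \ni (a,b) \mapsto \bigl(a^{1/(p-1)} + b^{1/(p-1)}\bigr)^{p-1}$, yields \ref{Cp-em} for $\Gamma\langle \,\cdot\, \rangle(A)$ with respect to $f, g \in \ExtD$. Since $A \in \SigmaAlgEM$ was arbitrary, $\{\Gamma\langle f \rangle\}_{f \in \ExtD}$ satisfies \ref{Cp-em}.

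The argument is essentially a routine limit passage; there is no substantive obstacle because the hard work of constructing $\Gamma\langle u \rangle$ for $u \in \ExtD$ and verifying \eqref{e:em.extension} has already been carried out in Proposition \ref{prop.em-ext}, and each of the four terms in $p$-Clarkson's inequality converges separately to its extended counterpart. The only small point to keep track of is that the right-hand side involves the nonlinear combination $(a^{1/(p-1)} + b^{1/(p-1)})^{p-1}$, whose continuity on $[0,\infty)^{2}$ (including boundary values, with the convention $0^{1/(p-1)} = 0$) makes the limit harmless.
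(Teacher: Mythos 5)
Your argument is correct and is exactly what the paper intends: the paper's proof consists of the single line ``This is immediate from \eqref{e:em.extension},'' and your write-up simply fills in the routine details (that $f_n \pm g_n$ are approximating sequences for $f \pm g$ and that each of the four terms converges). No issues.
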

\begin{proof}
	This is immediate from \eqref{e:em.extension}. 
\end{proof}

If $\mathcal{F} \subseteq L^p(X,m)$ and $\mathcal{F}$ equipped with $\norm{\,\cdot\,}_{\mathcal{E},1}$ is a Banach space, then $\closure{\core}^{\mathcal{F}} \subseteq \ExtD$ as remarked after the definition of $\ExtD$ in \eqref{e:defn.ExtD}, and \hyperlink{GC-em}{\textup{(GC)$^{\textup{EM}}_{p}$}} for $\{ \Gamma\langle f \rangle \}_{f \in \core}$ also extends to $\{ \Gamma\langle f \rangle \}_{f \in \closure{\core}^{\mathcal{F}}}$ as follows. 
\begin{prop}\label{prop.em-GC-ext}
	Assume that $\mathcal{F} \subseteq L^{p}(X,m)$, that $\mathcal{F}$ equipped with $\norm{\,\cdot\,}_{\mathcal{E},1}$ is a Banach space, that $(\mathcal{E},\core)$ satisfies \ref{GC} and that $\{ \Gamma\langle f \rangle \}_{f \in \core}$ satisfies \hyperlink{GC-em}{\textup{(GC)$^{\textup{EM}}_{p}$}}. 
	Then for any $\SigmaAlgEM$-measurable function $\varphi \colon X \to [0,\infty)$ with $\norm{\varphi}_{\sup} < \infty$, $\bigl(\int_{X}\varphi\,d\Gamma\langle \,\cdot\, \rangle, \closure{\core}^{\mathcal{F}}\bigr)$ is a $p$-energy form on $(X,m)$ satisfying \ref{GC}. 
\end{prop}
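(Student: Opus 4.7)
The plan is to reduce the claim to the already-established case on $\core$ via Proposition~\ref{prop:EMGC.intver} and then extend by approximation in the Banach space $(\mathcal{F}, \norm{\,\cdot\,}_{\mathcal{E},1})$, using weak convergence together with the Fatou-type lower semicontinuity of $\mathcal{E}_{\varphi} \coloneqq \int_{X}\varphi\,d\Gamma\langle \,\cdot\, \rangle$. Applying Proposition~\ref{prop:EMGC.intver} with $\core$ in place of $\mathcal{F}$---its proof uses only the linearity of the indexing space and \hyperref[defn.em-Cp]{\textup{(GC)$_{p}$}} for the family---yields that $(\mathcal{E}_{\varphi}, \core)$ satisfies \ref{GC}. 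Since $\norm{\,\cdot\,}_{\mathcal{E},1}$-convergence entails $\mathcal{E}^{1/p}$-convergence, $\closure{\core}^{\mathcal{F}} \subseteq \ExtD$, so Proposition~\ref{prop.em-ext} already establishes that $(\mathcal{E}_{\varphi}, \closure{\core}^{\mathcal{F}})$ is a $p$-energy form on $(X,m)$ satisfying $\mathcal{E}_{\varphi}(\,\cdot\,) \le \norm{\varphi}_{\sup}\mathcal{E}(\,\cdot\,)$; only \ref{GC} remains to be verified.

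Fix $n_{1},n_{2},q_{1},q_{2}$ and $T = (T_{1},\dots,T_{n_{2}}) \colon \mathbb{R}^{n_{1}} \to \mathbb{R}^{n_{2}}$ satisfying \eqref{GC-cond}, and let $\bm{u} = (u_{1},\dots,u_{n_{1}}) \in (\closure{\core}^{\mathcal{F}})^{n_{1}}$. Choose $\{u_{k,n}\}_{n \in \mathbb{N}} \subseteq \core$ with $\norm{u_{k} - u_{k,n}}_{\mathcal{E},1} \to 0$, and set $\bm{u}_{n} \coloneqq (u_{1,n},\dots,u_{n_{1},n})$. By \ref{GC} for $(\mathcal{E},\core)$, $T(\bm{u}_{n}) \in \core^{n_{2}}$ and $\sup_{n}\mathcal{E}(T_{l}(\bm{u}_{n})) < \infty$ for every $l$. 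The pointwise Lipschitz bound $\abs{T_{l}(\bm{u}_{n}(x)) - T_{l}(\bm{u}(x))} \le \norm{\bm{u}_{n}(x) - \bm{u}(x)}_{\ell^{q_{1}}} \le n_{1}^{1/q_{1}-1/p}\norm{\bm{u}_{n}(x) - \bm{u}(x)}_{\ell^{p}}$ (valid since $q_{1} \le p$) combined with $\sum_{k}\norm{u_{k,n} - u_{k}}_{L^{p}}^{p} \to 0$ gives $T_{l}(\bm{u}_{n}) \to T_{l}(\bm{u})$ in $L^{p}(X,m)$. Lemma~\ref{lem.Lpreduce} then yields $T_{l}(\bm{u}) \in \mathcal{F}$ together with weak convergence $T_{l}(\bm{u}_{n}) \to T_{l}(\bm{u})$ in $(\mathcal{F},\norm{\,\cdot\,}_{\mathcal{E},1})$, and Mazur's lemma (Lemma~\ref{lem.mazur}) produces convex combinations of $\{T_{l}(\bm{u}_{n})\}_{n} \subseteq \core$ converging in norm to $T_{l}(\bm{u})$, so $T_{l}(\bm{u}) \in \closure{\core}^{\mathcal{F}}$ and hence $T(\bm{u}) \in (\closure{\core}^{\mathcal{F}})^{n_{2}}$.

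Finally, applying \ref{GC} for $(\mathcal{E}_{\varphi},\core)$ to $\bm{u}_{n}$ gives, for each $n$,
\begin{equation*}
    \norm{\bigl(\mathcal{E}_{\varphi}(T_{l}(\bm{u}_{n}))^{1/p}\bigr)_{l=1}^{n_{2}}}_{\ell^{q_{2}}} \le \norm{\bigl(\mathcal{E}_{\varphi}(u_{k,n})^{1/p}\bigr)_{k=1}^{n_{1}}}_{\ell^{q_{1}}}.
\end{equation*}
The right-hand side converges to $\norm{(\mathcal{E}_{\varphi}(u_{k})^{1/p})_{k=1}^{n_{1}}}_{\ell^{q_{1}}}$ since $\mathcal{E}_{\varphi}(u_{k,n} - u_{k}) \le \norm{\varphi}_{\sup}\mathcal{E}(u_{k,n} - u_{k}) \to 0$; and for the left-hand side, Proposition~\ref{prop.em-int-fatou}---whose proof extends verbatim to the closed linear subspace $\closure{\core}^{\mathcal{F}}$ of $\mathcal{F}$, since Mazur combinations remain in $\closure{\core}^{\mathcal{F}}$ and the seminorm property of $\mathcal{E}_{\varphi}^{1/p}$ holds on $\ExtD$---yields the coordinatewise estimates $\mathcal{E}_{\varphi}(T_{l}(\bm{u})) \le \liminf_{n}\mathcal{E}_{\varphi}(T_{l}(\bm{u}_{n}))$, which combine via Fatou's lemma on the finite index set $\{1,\dots,n_{2}\}$ (or directly by monotonicity when $q_{2} = \infty$) to produce the desired bound. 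The main obstacle is showing $T(\bm{u}) \in (\closure{\core}^{\mathcal{F}})^{n_{2}}$ without any a priori control on the $\mathcal{E}^{1/p}$-seminorm of $T_{l}(\bm{u}_{n}) - T_{l}(\bm{u}_{m})$---the obvious translation trick $z \mapsto T(z+c) - T(c)$ requires $c \in \mathbb{R}^{n_{1}}$ to be constant rather than a function of $x$, and thus does not apply---which is bypassed by upgrading $L^{p}$-convergence to weak convergence in $\norm{\,\cdot\,}_{\mathcal{E},1}$ via Lemma~\ref{lem.Lpreduce} and then extracting norm-convergent convex combinations via Lemma~\ref{lem.mazur}.
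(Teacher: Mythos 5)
Your proposal follows essentially the same route as the paper's proof: reduce \ref{GC} for $\bigl(\int_{X}\varphi\,d\Gamma\langle\,\cdot\,\rangle,\core\bigr)$ to Proposition \ref{prop:EMGC.intver}, push $T_{l}(\bm{u}_{n})$ from $L^{p}$-convergence up to weak convergence in the energy norm, and close the estimate with Proposition \ref{prop.em-int-fatou} on the left and \eqref{e:em.extension} (equivalently, the triangle inequality from Proposition \ref{prop.em-ext}) on the right. The one imprecision is your invocation of Lemma \ref{lem.Lpreduce} for the pair $(\mathcal{E},\mathcal{F})$: that lemma requires \ref{Cp} for $(\mathcal{E},\mathcal{F})$, whereas the hypotheses of the proposition only give \ref{GC} (hence \ref{Cp}) on $\core$, so the step as written is not justified. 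The paper instead applies Lemma \ref{lem.Lpreduce} to $\bigl(\mathcal{E},\closure{\core}^{\mathcal{F}}\bigr)$ --- \ref{Cp} extends from $\core$ to its closure by continuity of $\mathcal{E}^{1/p}$, and $\bigl(\closure{\core}^{\mathcal{F}},\norm{\,\cdot\,}_{\mathcal{E},1}\bigr)$ is complete as a closed subspace of the Banach space $\mathcal{F}$ --- which yields $T_{l}(\bm{u})\in\closure{\core}^{\mathcal{F}}$ and the weak convergence there in one stroke, making your Mazur step unnecessary (though it is a valid alternative way to see that the weak limit lands in $\closure{\core}^{\mathcal{F}}$ once weak convergence is available). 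With that substitution your argument coincides with the paper's.
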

\begin{proof}
    Since $(\mathcal{E},\core)$ satisfies \ref{Cp} by \ref{GC} for $(\mathcal{E},\core)$ and Proposition \ref{prop.GC-list}-\ref{GC.Cpsmall},\ref{GC.Cplarge}, so does $\bigl(\mathcal{E},\closure{\core}^{\mathcal{F}}\bigr)$, which together with the completeness of $\bigl(\closure{\core}^{\mathcal{F}},\norm{\,\cdot\,}_{\mathcal{E},1}\bigr)$ guarantees that Lemma \ref{lem.Lpreduce} is applicable to $\bigl(\mathcal{E},\closure{\core}^{\mathcal{F}}\bigr)$.  
	
	Let $n_{1},n_{2} \in \mathbb{N}$, $q_{1} \in (0,p]$, $q_{2} \in [p,\infty]$ and $T = (T_{1},\dots,T_{n_{2}}) \colon \mathbb{R}^{n_{1}} \to \mathbb{R}^{n_{2}}$ satisfy \eqref{GC-cond} in Definition \ref{defn.GC}. 
    Let $\bm{u} = (u_{1},\dots,u_{n_{1}}) \in \bigl(\closure{\core}^{\mathcal{F}}\bigr)^{n_{1}}$.
    For each $k \in \{ 1,\dots,n_{1} \}$, choose $\{ u_{k,n} \}_{n \in \mathbb{N}} \subseteq \core$ so that $\lim_{n \to \infty}\norm{u_{k} - u_{k,n}}_{\mathcal{F},1} = 0$, set $\bm{u}_{n} \coloneqq (u_{1,n},\dots,u_{n_{1},n})$, and let $l \in \{ 1,\dots,n_{2} \}$. 
    Then by \ref{GC} for $(\mathcal{E},\core)$ and \eqref{GC-cond} we have $\{ T_{l}(\bm{u}_{n}) \}_{n \in \mathbb{N}} \subseteq \core$, $\sup_{n \in \mathbb{N}}\mathcal{E}(T_{l}(\bm{u}_{n})) < \infty$ and $\lim_{n \to \infty}\norm{T_{l}(\bm{u}_{n}) - T_{l}(\bm{u})}_{L^{p}(X,m)} = 0$, and therefore Lemma \ref{lem.Lpreduce} applied to $\bigl(\mathcal{E},\closure{\core}^{\mathcal{F}}\bigr)$ implies that $T_{l}(\bm{u}) \in \closure{\core}^{\mathcal{F}}$ and that $\{ T_{l}(\bm{u}_{n}) \}_{n \in \mathbb{N}}$ converges weakly in $\bigl(\closure{\core}^{\mathcal{F}},\norm{\,\cdot\,}_{\mathcal{E},1}\bigr)$ to $T_{l}(\bm{u})$.  
    If $q_{2} < \infty$, then we see from Proposition \ref{prop.em-int-fatou}, which is applicable to $\{ \Gamma\langle f \rangle \}_{f \in \closure{\core}^{\mathcal{F}}}$ by Proposition \ref{prop.em-ext}, and \ref{GC} for $(\int_{X}\varphi\,d\Gamma\langle \,\cdot\, \rangle,\core)$, which is implied by \hyperlink{GC-em}{\textup{(GC)$^{\textup{EM}}_{p}$}} for $\{ \Gamma\langle f \rangle \}_{f \in \core}$ and Proposition \ref{prop:EMGC.intver}, that 
    \begin{align}\label{eq:em-GC-ext}
    	\norm{\Biggl(\biggl(\int_{X}\varphi\,d\Gamma\langle T_{l}(\bm{u}) \rangle\biggr)^{1/p}\Biggr)_{l = 1}^{n_{2}}}_{\ell^{q_{2}}} 
    	&\overset{\eqref{e:em-int-fatou}}{\leq} \Biggl(\sum_{l = 1}^{n_{2}}\liminf_{n \to \infty}\biggl(\int_{X}\varphi\,d\Gamma\langle T_{l}(\bm{u}_{n}) \rangle\biggr)^{q_{2}/p}\Biggr)^{1/q_{2}} \nonumber \\
    	&\leq \liminf_{n \to \infty}\Biggl(\sum_{l = 1}^{n_{2}}\biggl(\int_{X}\varphi\,d\Gamma\langle T_{l}(\bm{u}_{n}) \rangle\biggr)^{q_{2}/p}\Biggr)^{1/q_{2}} \nonumber \\
    	&\overset{\ref{GC}}{\leq} \liminf_{n \to \infty}\Biggl(\sum_{k = 1}^{n_{1}}\biggl(\int_{X}\varphi\,d\Gamma\langle u_{k,n} \rangle\biggr)^{q_{1}/p}\Biggr)^{1/q_{1}} \nonumber \\
    	&\overset{\eqref{e:em.extension}}{=} \norm{\Biggl(\biggl(\int_{X}\varphi\,d\Gamma\langle u_{k} \rangle\biggr)^{1/p}\Biggr)_{k = 1}^{n_{1}}}_{\ell^{q_{1}}}. 
    \end{align} 
    The case of $q_{2} = \infty$ is similar, so $\bigl(\int_{X}\varphi\,d\Gamma\langle \,\cdot\, \rangle, \closure{\core}^{\mathcal{F}}\bigr)$ satisfies \ref{GC}.
\end{proof}

\subsection{Chain rule and strong locality of \texorpdfstring{$p$}{p}-energy measures}
In this subsection, we see that strong local properties for $p$-energy measures hold if $p$-energy measures satisfy a chain rule (see Definition \ref{defn.chainrule} below). 
In this subsection, we assume that $(X,m)$ satisfies the conditions \eqref{a:loccpt} and \eqref{a:fullRadon} at the beginning of Subsection \ref{sec.regularlocal}, that $\core$ is a linear subspace of $\mathcal{F} \cap \contfunc(X)$, that $\{ \Gamma\langle f \rangle \}_{f \in \core}$ is a family of $p$-energy measures on $(X,\mathcal{B}(X))$ dominated by $(\mathcal{E},\core)$, and that $\mathcal{F} \subseteq L^{p}(X,m)$, and we equip $\mathcal{F}$ with the norm $\norm{\,\cdot\,}_{\mathcal{E},1}$. 

\begin{defn}[Chain rules\index{chain rule} for $p$-energy measures]\label{defn.chainrule}
	\begin{enumerate}[label=\textup{(\arabic*)},align=left,leftmargin=*,topsep=2pt,parsep=0pt,itemsep=2pt]
		\item\label{it:CL1} We say that $\{ \Gamma\langle f \rangle \}_{f \in \core}$ satisfies the chain rule \hyperref[it:CL1]{\textup{(CL1)}} if and only if for any $u \in \core$ and any $\Phi \in C^{1}(\mathbb{R})$ with $\Phi(0) = 0$, we have $\Phi(u) \in \core$ and 
		\begin{equation}\label{e:CL1}
			d\Gamma\langle \Phi(u) \rangle
        	= \abs{\Phi'(u)}^{p}\,d\Gamma\langle u \rangle. 
		\end{equation}
		\item\label{it:CL2} Assume that $\{ \Gamma\langle f \rangle \}_{f \in \core}$ satisfies \ref{Cp-em}. We say that $\{ \Gamma\langle f \rangle \}_{f \in \core}$ satisfies the chain rule \hyperref[it:CL2]{\textup{(CL2)}} if and only if for any $n \in \mathbb{N}$, $u \in \core$, $\bm{v} = (v_{1},\dots,v_{n}) \in \core^{n}$, $\Phi \in C^{1}(\mathbb{R})$ and $\Psi \in C^{1}(\mathbb{R}^{n})$ with $\Phi(0) = \Psi(\bm{0}) = 0$, we have $\Phi(u),\Psi(\bm{v}) \in \core$ and 
		\begin{equation}\label{e:CL2}
			d\Gamma\langle \Phi(u); \Psi(\bm{v}) \rangle
        	= \sum_{k = 1}^{n}\sgn\bigl(\Phi'(u)\bigr)\abs{\Phi'(u)}^{p - 1}\partial_{k}\Psi(\bm{v})\,d\Gamma\langle u; v_{k} \rangle. 
		\end{equation}
	\end{enumerate}
\end{defn}

\begin{thm}\label{thm.chain-basic}
	Assume that $\{ \Gamma\langle f \rangle \}_{f \in \core}$ satisfies \ref{Cp-em} and \hyperref[it:CL2]{\textup{(CL2)}}. 
	\begin{enumerate}[label=\textup{(\alph*)},align=left,leftmargin=*,topsep=2pt,parsep=0pt,itemsep=2pt]
		\item\label{it:CL2-CL1} $\{ \Gamma\langle f \rangle \}_{f \in \core}$ satisfies \hyperref[it:CL1]{\textup{(CL1)}}. 
		\item\label{it:em.leibniz} \textup{(Leibniz rule)\index{Leibniz rule}} For any $u,v,w \in \core$, we have $vw \in \core$ and 
   		\begin{equation}\label{eq:em-leib}
        	d\Gamma\langle u; vw \rangle = v\,d\Gamma\langle u; w \rangle + w\,d\Gamma\langle u; v \rangle.
    	\end{equation}
    	\item\label{it:em.chain-GC} \textup{(\hyperlink{GC-em}{\textup{(GC)$^{\textup{EM}}_{p}$}} for $\mathbb{R}$-valued $T$)} 
		Let $n \in \mathbb{N}$, $q \in (0,p]$ and $T \colon \mathbb{R}^{n} \to \mathbb{R}$ satisfy $T(0) = 0$ and $\abs{T(x)-T(y)} \le \norm{x-y}_{\ell^{q}}$ for any $x, y \in \mathbb{R}^{n}$, and let $\varphi \colon X \to [0,\infty]$ be Borel measurable. 
			\begin{enumerate}[label=\textup{(\arabic*)},align=left,leftmargin=*,topsep=2pt,parsep=0pt,itemsep=2pt]
			\item\label{it:em.chain-GC-C1} If $T \in C^{1}(\mathbb{R}^{n})$, then for any $\bm{u} = (u_{1},\dots,u_{n}) \in \core^{n}$, \textup{($T(\bm{u}) \in \core$ by \hyperref[it:CL2]{\textup{(CL2)}}, and)} 
			\begin{equation}\label{e:chain-GC}
				\biggl(\int_{X}\varphi\,d\Gamma\langle T(\bm{u}) \rangle\biggr)^{1/p} 
				\le \norm{\biggl(\int_{X}\varphi\,d\Gamma\langle u_{k} \rangle\biggr)^{1/p}}_{\ell^{q}}. 
			\end{equation} 
			\item\label{it:em.chain-GC-gen} If $\Gamma\langle u \rangle(X) = \mathcal{E}(u)$ for any $u \in \core$, and if $\mathcal{F}$ equipped with $\norm{\,\cdot\,}_{\mathcal{E},1}$ is a Banach space, then for any $\bm{u} = (u_{1},\dots,u_{n}) \in \bigl(\closure{\core}^{\mathcal{F}}\bigr)^{n}$, $T(\bm{u}) \in \closure{\core}^{\mathcal{F}}$ and \eqref{e:chain-GC} holds. 
			\end{enumerate}
    	\item\label{it:em.chain-Markov} Let $\psi \colon \mathbb{R} \to \mathbb{R}$ satisfy $\psi(0) = 0$ and $0 \leq \psi(t) - \psi(s) \leq t - s$ for any $s,t \in \mathbb{R}$ with $s \leq t$, and let $\varphi \colon X \to [0,\infty]$ be Borel measurable. 
			\begin{enumerate}[label=\textup{(\arabic*)},align=left,leftmargin=*,topsep=2pt,parsep=0pt,itemsep=2pt]
			\item\label{it:em.chain-Markov-C1} If $\psi \in C^{1}(\mathbb{R})$, then for any $u,v \in \core$, \textup{($\psi(u - v) \in \core$ by \hyperref[it:CL1]{\textup{(CL1)}} from \ref{it:CL2-CL1}, and)} 
			\begin{equation}\label{e:chain-Markov}
				\int_{X}\varphi\,d\Gamma\langle u - \psi(u - v) \rangle + \int_{X}\varphi\,d\Gamma\langle v + \psi(u - v) \rangle
				\leq \int_{X}\varphi\,d\Gamma\langle u \rangle + \int_{X}\varphi\,d\Gamma\langle v \rangle.
			\end{equation} 
			\item\label{it:em.chain-Markov-gen} If $\Gamma\langle u \rangle(X) = \mathcal{E}(u)$ for any $u \in \core$, and if $\mathcal{F}$ equipped with $\norm{\,\cdot\,}_{\mathcal{E},1}$ is a Banach space, then for any $u,v \in \closure{\core}^{\mathcal{F}}$, $\psi(u - v) \in \closure{\core}^{\mathcal{F}}$ and \eqref{e:chain-Markov} holds. 
			\end{enumerate}
	\end{enumerate}
\end{thm}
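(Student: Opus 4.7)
The plan for (a) and (b) is to obtain them by direct substitution in CL2. For (a), I would take $n = 1$, $\bm{v} = u$, $\Psi = \Phi$ in CL2 and combine with $\Gamma\langle \Phi(u)\rangle = \Gamma\langle \Phi(u); \Phi(u)\rangle$ from Theorem \ref{thm.em-basic}, observing that $\sgn(\Phi'(u))|\Phi'(u)|^{p-1}\Phi'(u) = |\Phi'(u)|^{p}$. For (b), I would take $\Phi = \id$, $n = 2$, $\bm{v} = (v,w)$, $\Psi(x_{1},x_{2}) = x_{1}x_{2}$ in CL2, which simultaneously supplies $vw \in \core$. The substantive parts are (c) and (d), for which my overarching strategy will be to reduce each to a pointwise inequality for Radon--Nikodym densities of the relevant (possibly signed) $p$-energy measures with respect to a common dominating measure $\mu$---a dual counterpart of the Euclidean proof of $\ell^{q}$--$\ell^{p}$ gradient bounds via pointwise H\"older and Minkowski.

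For (c)(1), applying CL2 with $\Phi = \id$, $\bm{v} = \bm{u}$, $\Psi = T$ to $\Gamma\langle T(\bm{u}); T(\bm{u})\rangle = \Gamma\langle T(\bm{u})\rangle$ (Theorem \ref{thm.em-basic}) yields
\begin{equation*}
d\Gamma\langle T(\bm{u})\rangle = \sum_{l}\partial_{l}T(\bm{u})\,d\Gamma\langle T(\bm{u}); u_{l}\rangle.
\end{equation*}
I would then introduce $\mu := \Gamma\langle T(\bm{u})\rangle + \sum_{l}\Gamma\langle u_{l}\rangle$ with densities $G := d\Gamma\langle T(\bm{u})\rangle/d\mu$, $g_{l} := d\Gamma\langle u_{l}\rangle/d\mu$ and $h_{l} := d\Gamma\langle T(\bm{u}); u_{l}\rangle/d\mu$. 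Applying Proposition \ref{prop.em-holder} on arbitrary measurable sets, followed by the standard Radon--Nikodym argument, translates into the pointwise bound $|h_{l}| \leq G^{(p-1)/p}g_{l}^{1/p}$ $\mu$-a.e., so the displayed formula gives $G^{1/p} \leq \sum_{l}|\partial_{l}T(\bm{u})|g_{l}^{1/p}$ $\mu$-a.e. The $\ell^{q}$-Lipschitz hypothesis on $T$ yields the pointwise bound $\sum_{l}|\partial_{l}T|^{q^{\ast}} \leq 1$ (with $1/q + 1/q^{\ast} = 1$) for $q \in [1,p]$, so a pointwise H\"older inequality produces $G \leq (\sum_{l}g_{l}^{q/p})^{p/q}$ $\mu$-a.e.; integrating against $\varphi$ and applying Minkowski's inequality in $L^{p/q}(\varphi\,d\mu)$ (valid since $p/q \geq 1$) delivers \eqref{e:chain-GC}. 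For $q \in (0,1]$, the weaker pointwise bound $|\partial_{l}T| \leq 1$ still gives an $\ell^{1}$ estimate, which implies the $\ell^{q}$ one via subadditivity of $t \mapsto t^{q}$.

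For (d)(1), I would set $w := u - \psi(u-v)$, $z := v + \psi(u-v)$, $\alpha := 1 - \psi'(u-v)$ and $\beta := \psi'(u-v)$; CL2 then yields $d\Gamma\langle w\rangle = \alpha\,d\Gamma\langle w;u\rangle + \beta\,d\Gamma\langle w;v\rangle$ together with a symmetric formula for $\Gamma\langle z\rangle$, and the same Radon--Nikodym/H\"older step as in (c)(1) gives $g_{w}^{1/p} \leq \alpha g_{u}^{1/p} + \beta g_{v}^{1/p}$ and $g_{z}^{1/p} \leq \beta g_{u}^{1/p} + \alpha g_{v}^{1/p}$ $\mu$-a.e. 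The elementary scalar inequality $(\alpha a + \beta b)^{p} + (\beta a + \alpha b)^{p} \leq a^{p} + b^{p}$ for $a,b \geq 0$ with $\alpha, \beta \in [0,1]$ and $\alpha + \beta = 1$ (a direct consequence of convexity of $t \mapsto t^{p}$) then yields $g_{w} + g_{z} \leq g_{u} + g_{v}$ $\mu$-a.e., and integration against $\varphi$ concludes. For (c)(2) and (d)(2), I would approximate $T$ (resp.\ $\psi$) by standard mollification to obtain $C^{1}$ maps inheriting the 1-Lipschitz $\ell^{q}$ (resp.\ the $\psi' \in [0,1]$) property, corrected by a constant so they vanish at the origin, and simultaneously approximate $\bm{u} \in (\closure{\core}^{\mathcal{F}})^{n}$ by $\bm{u}_{k} \in \core^{n}$ in $\norm{\,\cdot\,}_{\mathcal{E},1}$. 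The $C^{1}$ case applied to $T_{\epsilon}(\bm{u}_{k})$ provides uniform $\mathcal{E}$-control, and two successive passes to the limit via Lemma \ref{lem.Lpreduce} (weak $\mathcal{F}$-convergence from $L^{p}$- and $\mathcal{E}$-boundedness, using reflexivity from Proposition \ref{prop.E1-unifconvex}), Mazur's lemma (to place the weak limits in $\closure{\core}^{\mathcal{F}}$), Proposition \ref{prop.em-ext} (for the extension of $\Gamma$, where the assumption $\Gamma\langle u\rangle(X) = \mathcal{E}(u)$ secures compatibility of the extension with $\mathcal{E}$-convergence) and Proposition \ref{prop.em-int-fatou} (for the liminf inequality) transfer the bound to $T(\bm{u})$. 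The main technical obstacle I anticipate is coordinating these two layers of approximation within the weak-convergence framework; the dominated-convergence-type bound $|T(\bm{u})| \leq \|\bm{u}\|_{\ell^{q}} \in L^{p}(X,m)$ is essential for the $L^{p}$-convergence of $T_{\epsilon}(\bm{u})$ to $T(\bm{u})$.
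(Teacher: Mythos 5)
Your proposal is correct, and for parts (a), (b), (c) and the approximation steps (c)(2), (d)(2) it follows essentially the same route as the paper: (a) and (b) are read off from \hyperref[it:CL2]{\textup{(CL2)}}; (c)(1) is proved by passing to Radon--Nikodym densities with respect to $\mu = \Gamma\langle T(\bm{u})\rangle + \sum_{k}\Gamma\langle u_{k}\rangle$, establishing the pointwise H\"{o}lder bound for the mixed densities (the paper does this via a martingale argument over a countable open base so that the exceptional null set is independent of the measurable set --- your ``standard Radon--Nikodym argument'' should be fleshed out to exactly this), and then combining the duality bound on $\nabla T$ coming from the $\ell^{q}$-Lipschitz hypothesis with Minkowski's inequality in $L^{p/q}(\varphi\,d\mu)$; whether one divides out $G^{(p-1)/p}$ pointwise, as you do, or keeps it and applies an integral H\"{o}lder at the end, as the paper does, is immaterial. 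The two-layer limiting argument in (c)(2) and (d)(2) --- mollification of $T$ resp.\ $\psi$, approximation of $\bm{u}$ by elements of $\core^{n}$, uniform $\mathcal{E}$-control from $\Gamma\langle\,\cdot\,\rangle(X)=\mathcal{E}(\,\cdot\,)$, and Lemma \ref{lem.Lpreduce} plus Proposition \ref{prop.em-int-fatou} to pass to the limit --- is also the paper's. The one place where you genuinely diverge is (d)(1): the paper never touches densities there, but instead expands $\Gamma\langle u-\psi(u-v)\rangle(A)+\Gamma\langle v+\psi(u-v)\rangle(A)$ via \hyperref[it:CL2]{\textup{(CL2)}} into four integrals weighted by $1-\psi'(u-v)$ and $\psi'(u-v)$, bounds each by Proposition \ref{prop.em-holder}, and closes with one discrete H\"{o}lder over the four terms, which reproduces the left-hand side to the power $(p-1)/p$. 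Your alternative --- the pointwise bounds $g_{w}^{1/p}\le\alpha g_{u}^{1/p}+\beta g_{v}^{1/p}$ and $g_{z}^{1/p}\le\beta g_{u}^{1/p}+\alpha g_{v}^{1/p}$ followed by the scalar inequality $(\alpha a+\beta b)^{p}+(\beta a+\alpha b)^{p}\le a^{p}+b^{p}$, which indeed follows by applying $(\lambda s+(1-\lambda)t)^{p}\le\lambda s^{p}+(1-\lambda)t^{p}$ to each summand and adding --- is equally valid and arguably more transparent, at the cost of re-invoking the density machinery from (c); it also yields the density-level version of \eqref{e:chain-Markov}, which is slightly stronger than the integrated statement.
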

\begin{proof}
	\ref{it:CL2-CL1},\ref{it:em.leibniz}: 
	These are immediate from \hyperref[it:CL2]{\textup{(CL2)}}.  
	
	\ref{it:em.chain-GC}-\ref{it:em.chain-GC-C1}: 
	Assume $T \in C^{1}(\mathbb{R}^{n})$, and let $\bm{u} = (u_{1},\dots,u_{n}) \in \core^{n}$. 
	It suffices to prove that 
	\begin{equation}\label{e:chain-GC.setwise}
		\Gamma\langle T(\bm{u}) \rangle(A)^{1/p} \le \norm{\bigl(\Gamma\langle u_{k} \rangle(A)^{1/p}\bigr)_{k = 1}^{n}}_{\ell^{q}} \quad \text{for any $A \in \mathcal{B}(X)$;} 
	\end{equation}
	indeed, it is routine to extend \eqref{e:chain-GC.setwise} to \eqref{e:chain-GC} (see the proof of Proposition \ref{prop:EMGC.intver}).
	To show \eqref{e:chain-GC.setwise}, we first construct a good $\mu$-version of $\Upsilon\langle v_{1}; v_{2}\rangle \coloneqq \frac{d\Gamma\langle v_{1}; v_{2} \rangle}{d\mu}$ for each $v_{1},v_{2} \in \{ T(\bm{u}), u_{1},\dots, u_{n} \}$, where $\mu \coloneqq \Gamma\langle T(\bm{u}) \rangle + \sum_{k=1}^{n}\Gamma\langle u_{k} \rangle$.  
	Let $\{ A_{k} \}_{k \in \mathbb{N}}$ be a countable open base for the topology of $X$. 
	Set $A_{k}^{0} \coloneqq X \setminus A_{k}$ and $A_{k}^{1} \coloneqq A_{k}$ for each $k \in \mathbb{N}$, and define 
	\begin{equation}\label{e:defn.cylinder}
		\mathcal{A}_{k} \coloneqq \Biggl\{ \bigcup_{\alpha \in \mathcal{I}}A_{k}^{\alpha} \Biggm| \mathcal{I} \subseteq \{ 0,1 \}^{k} \Biggr\}, \quad k \in \mathbb{N}, 
	\end{equation} 
	where $A_{k}^{\alpha} \coloneqq \bigcap_{i = 1}^{k}A_{k}^{\alpha_{i}}$ for $\alpha = (\alpha_{i})_{i = 1}^{k} \in \{ 0,1 \}^{k}$.
	Note that $\bigcup_{\alpha \in \mathcal{I}}A_{k}^{\alpha} = \emptyset$ if $\mathcal{I} = \emptyset$. 
	Then $\{ \mathcal{A}_{k} \}_{k \in \mathbb{N}}$ is a non-decreasing sequence of $\sigma$-algebras on $X$ with $\bigcup_{k \in \mathbb{N}}\mathcal{A}_{k}$ generating $\mathcal{B}(X)$.  
	Note that $\bigcup_{\alpha \in \{ 0,1 \}^{k}}A_{k}^{\alpha} = X$ and that $A_{k}^{\alpha} \cap A_{k}^{\beta} = \emptyset$ for $\alpha,\beta \in \{ 0,1 \}^{k}$ with $\alpha \neq \beta$. 
	For $v_{1},v_{2} \in \{ T(\bm{u}), u_{1},\dots, u_{n} \}$, $k \in \mathbb{N}$, $\alpha \in \{ 0,1 \}^{k}$, define $\Upsilon_{k}\langle v_{1}; v_{2}\rangle \colon X \to [0,\infty)$ by, for $x \in A_{k}^{\alpha}$, 
	\begin{equation}\label{e:gooddensity}
		\Upsilon_{k}\langle v_{1}; v_{2}\rangle(x) 
		\coloneqq \mu(A_{k}^{\alpha})^{-1}\Gamma\langle v_{1}; v_{2} \rangle(A_{k}^{\alpha}). 
	\end{equation} 
	Then $\mathbb{E}_{\mu}[\Upsilon\langle v_{1}; v_{2}\rangle \mid \mathcal{A}_{k}] = \Upsilon_{k}\langle v_{1}; v_{2}\rangle$ $\mu$-a.e.\ on $X$ and hence $\lim_{k \to \infty}\Upsilon_{k}\langle v_{1}; v_{2}\rangle = \Upsilon\langle v_{1}; v_{2}\rangle$ $\mu$-a.e.\ on $X$ by the martingale convergence theorem (see, e.g., \cite[Theorem 10.5.1]{Dud}) and the fact that $\bigcup_{k \in \mathbb{N}}\mathcal{A}_{k}$ generates $\mathcal{B}(X)$. 
	From this convergence together with \eqref{e:gooddensity} and the H\"{o}lder-type estimate \eqref{em-bdd.form} in Theorem \ref{thm.em-basic}, we obtain 
	\begin{equation}\label{e:holder.density}
		\abs{\frac{d\Gamma\langle v_{1}; v_{2} \rangle}{d\mu}} 
		\le \biggl(\frac{d\Gamma\langle v_{1} \rangle}{d\mu}\biggr)^{\frac{p-1}{p}}\biggl(\frac{d\Gamma\langle v_{2} \rangle}{d\mu}\biggr)^{\frac{1}{p}} \quad \text{$\mu$-a.e.\ on $X$.}
	\end{equation} 
	
	Now we prove \eqref{e:chain-GC.setwise} on the basis of \hyperref[it:CL2]{\textup{(CL2)}} and \eqref{e:holder.density}. 
	Recalling that we have assumed $T \in \contfunc^{1}(\mathbb{R}^{n})$, we see from the assumption on $T$ that for any $x, y = (y_{1},\ldots,y_{n}) \in \mathbb{R}^{n}$, 
	\begin{equation}\label{e:GC-operatornorm.special}
		\abs{\sum_{k = 1}^{n}\partial_{k}T(x)y_{k}}
		= \lim_{\varepsilon \downarrow 0}\varepsilon^{-1}\abs{T(x) - T(x + \varepsilon y)}
		\le \norm{y}_{\ell^{q}}. 
	\end{equation}
	Then for any $A \in \mathcal{B}(X)$, from \hyperref[it:CL2]{\textup{(CL2)}}, \eqref{e:holder.density}, \eqref{e:GC-operatornorm.special}, H\"{o}lder's inequality, and the triangle inequality for the $L^{p/q}(A,\mu|_{A})$-norm, we obtain 
	\begin{align*}
		\Gamma\langle T(\bm{u}) \rangle(A) 
		&\overset{\hyperref[it:CL2]{\textup{(CL2)}}}{=} \int_{A}\sum_{k=1}^{n}\partial_{k}T(\bm{u}(x))\frac{\Gamma\langle T(\bm{u}); u_{k} \rangle}{d\mu}(x)\,\mu(dx) \\ 
		&\overset{\eqref{e:holder.density}}{\le} \int_{A}\sum_{k=1}^{n}\abs{\partial_{k}T(\bm{u}(x))}\biggl(\frac{d\Gamma\langle T(\bm{u}) \rangle}{d\mu}(x)\biggr)^{\frac{p-1}{p}}\biggl(\frac{d\Gamma\langle u_{k} \rangle}{d\mu}(x)\biggr)^{\frac{1}{p}}\,\mu(dx) \\
		&\overset{\eqref{e:GC-operatornorm.special}}{\le} \int_{A}\biggl(\frac{d\Gamma\langle T(\bm{u}) \rangle}{d\mu}(x)\biggr)^{\frac{p-1}{p}}
		\norm{\biggl(\sgn(\partial_{k}T(\bm{u}(x)))\biggl(\frac{d\Gamma\langle u_{k} \rangle}{d\mu}(x)\biggr)^{\frac{1}{p}}\biggr)_{k=1}^{n}}_{\ell^{q}}\,\mu(dx) \\
		&\le \biggl(\int_{A}\frac{d\Gamma\langle T(\bm{u}) \rangle}{d\mu}\,d\mu\biggr)^{\frac{p-1}{p}}\Biggl(\int_{A}\Biggl[\sum_{k=1}^{n}\biggl(\frac{d\Gamma\langle u_{k} \rangle}{d\mu}\biggr)^{\frac{q}{p}}\Biggr]^{\frac{p}{q}}\,d\mu\Biggr)^{\frac{1}{p}} \\
		&\le \Gamma\langle T(\bm{u}) \rangle(A)^{\frac{p-1}{p}}\Biggl[\sum_{k = 1}^{n}\biggl(\int_{A}\frac{d\Gamma\langle u_{k} \rangle}{d\mu}\,d\mu\biggr)^{\frac{q}{p}}\Biggr]^{\frac{1}{q}} \\
		&= \Gamma\langle T(\bm{u}) \rangle(A)^{\frac{p-1}{p}}\norm{\bigl(\Gamma\langle u_{k} \rangle(A)^{1/p}\bigr)_{k = 1}^{n}}_{\ell^{q}},
	\end{align*}
	proving \eqref{e:chain-GC.setwise} and thereby \ref{it:em.chain-GC}-\ref{it:em.chain-GC-C1}. 
	
	\ref{it:em.chain-GC}-\ref{it:em.chain-GC-gen}: 
	Recall that $\{ \Gamma\langle f \rangle \}_{f \in \closure{\core}^{\mathcal{F}}}$ is uniquely defined through \eqref{e:em.extension} by Proposition \ref{prop.em-ext}, and note that the equality $\Gamma\langle u \rangle(X) = \mathcal{E}(u)$ extends from $u \in \core$ to $u \in \closure{\core}^{\mathcal{F}}$ by \eqref{e:em.extension} and the triangle inequality for $\mathcal{E}^{1/p}$, and hence that $\{ \Gamma\langle f \rangle \}_{f \in \closure{\core}^{\mathcal{F}}}$ and $\bigl(\mathcal{E},\closure{\core}^{\mathcal{F}}\bigr)$ satisfy \ref{Cp-em} by Proposition \ref{prop.em-Cp-ext}. 
	In particular, in view of the completeness of $\bigl(\closure{\core}^{\mathcal{F}},\norm{\,\cdot\,}_{\mathcal{E},1}\bigr)$, Lemma \ref{lem.Lpreduce} is applicable to $\bigl(\mathcal{E},\closure{\core}^{\mathcal{F}}\bigr)$.  
	Now, to see $T(\bm{u}) \in \closure{\core}^{\mathcal{F}}$ and \eqref{e:chain-GC.setwise} for $\bm{u} = (u_{1},\ldots,u_{n}) \in \core^{n}$, we will follow an argument in \cite[Proof of Theorem 1.8]{Kuw23+}. Define $j \colon \mathbb{R}^{n} \to \mathbb{R}$ by $j(x) \coloneqq \exp{\bigl(-\frac{1}{1 - \norm{x}^{2}}\bigr)}$ for $\norm{x} \le 1$ and $j(x) \coloneqq 0$ for $\norm{x} > 1$, and set $j_{l}(x) \coloneqq l^{n}j(lx)$ for each $l \in \mathbb{N}$. 
	We further define $T_{l}(x) \coloneqq \int_{\mathbb{R}^{n}}(j_{l}(x - y) - j_{l}(y))T(y)\,dy = \int_{\mathbb{R}^{n}}j_{l}(y)(T(x - y) - T(y))\,dy$ so that $T_{l} \in \contfunc^{\infty}(\mathbb{R}^{n})$, $T_{l}(0) = 0$ and $\lim_{l \to \infty}T_{l}(x) = T(x)$ for any $x \in \mathbb{R}^{n}$. 
	Moreover, for any $x,y \in \mathbb{R}^{n}$, 
	\begin{equation}\label{e:chain-GC.mollifier}
		\abs{T_{l}(x) - T_{l}(y)}
		= \abs{\int_{\mathbb{R}^{n}}j_{l}(z)(T(x - z) - T(y - z))\,dz} 
		\le \norm{x - y}_{\ell^{q}}. 
	\end{equation}
	Therefore, letting $\bm{u} = (u_{1},\ldots,u_{n}) \in \core^{n}$, by \ref{it:em.chain-GC}-\ref{it:em.chain-GC-C1} we have \eqref{e:chain-GC.setwise} with $T_{l}$ in place of $T$, which together with $\mathcal{E}(T_{l}(\bm{u})) = \Gamma\langle T_{l}(\bm{u}) \rangle(X)$ implies that $\sup_{l \in \mathbb{N}}\mathcal{E}(T_{l}(\bm{u})) < \infty$. 
	Since $\{ T_{l}(\bm{u}) \}_{l \in \mathbb{N}}$ converges in $L^{p}(X,m)$ to $T(\bm{u})$ as $l \to \infty$ by $T_{l}(0) = 0$, \eqref{e:chain-GC.mollifier} and the dominated convergence theorem, we conclude from Lemma \ref{lem.Lpreduce} that $T(\bm{u}) \in \closure{\core}^{\mathcal{F}}$ and that $\{ T_{l}(\bm{u}) \}_{l \in \mathbb{N}}$ converges weakly in $\bigl(\closure{\core}^{\mathcal{F}},\norm{\,\cdot\,}_{\mathcal{E},1}\bigr)$ to $T(\bm{u})$. 
	Now we obtain \eqref{e:chain-GC.setwise} by combining Proposition \ref{prop.em-int-fatou} applied to $\{ \Gamma\langle f \rangle \}_{f \in \closure{\core}^{\mathcal{F}}}$ and \eqref{e:chain-GC.setwise} with $T_{l}$ in place of $T$. 
	
	Lastly, let $\bm{u} = (u_{1},\ldots,u_{n}) \in \bigl(\closure{\core}^{\mathcal{F}}\bigr)^{n}$, and choose $\bigl\{ \bm{u}^{(l)} = \bigl(u^{(l)}_{1},\ldots,u^{(l)}_{n}\bigr) \bigr\}_{l \in \mathbb{N}} \subseteq \core^{n}$ so that $\bigl\{ u^{(l)}_{k} \bigr\}_{l \in \mathbb{N}}$ converges in norm in $\mathcal{F}$ to $u_{k}$ for any $k \in \{1,\ldots,n\}$. 
	Then by the result of the previous paragraph we have $\{ T(\bm{u}^{(l)}) \}_{l \in \mathbb{N}} \subseteq \closure{\core}^{\mathcal{F}}$ and \eqref{e:chain-GC.setwise} with $\bm{u}^{(l)}$ in place of $\bm{u}$, which together with $\mathcal{E}(T(\bm{u}^{(l)})) = \Gamma\langle T(\bm{u}^{(l)}) \rangle(X)$ and the assumption on $T$ implies that $\{ T(\bm{u}^{(l)}) \}_{l \in \mathbb{N}}$ is a bounded sequence in $\bigl(\closure{\core}^{\mathcal{F}},\norm{\,\cdot\,}_{\mathcal{E},1}\bigr)$ converging in norm in $L^{p}(X,m)$ to $T(\bm{u})$. 
	Thus $T(\bm{u}) \in \closure{\core}^{\mathcal{F}}$ and $\{ T(\bm{u}^{(l)}) \}_{l \in \mathbb{N}}$ converges weakly in $\bigl(\closure{\core}^{\mathcal{F}},\norm{\,\cdot\,}_{\mathcal{E},1}\bigr)$ to $T(\bm{u})$ by Lemma \ref{lem.Lpreduce}, and hence combining Proposition \ref{prop.em-int-fatou} applied to $\{ \Gamma\langle f \rangle \}_{f \in \closure{\core}^{\mathcal{F}}}$ and \eqref{e:chain-GC.setwise} with $\bm{u}^{(l)}$ in place of $\bm{u}$ yields \eqref{e:chain-GC.setwise} for $\bm{u} = (u_{1},\ldots,u_{n}) \in \bigl(\closure{\core}^{\mathcal{F}}\bigr)^{n}$, proving \ref{it:em.chain-GC}-\ref{it:em.chain-GC-gen}.
	
	\ref{it:em.chain-Markov}-\ref{it:em.chain-Markov-C1}: 
	Assume $\psi \in C^{1}(\mathbb{R})$, and let $u,v \in \core$. 
	Again, in view of the proof of Proposition \ref{prop:EMGC.intver} it suffices to show that 
	\begin{equation}\label{e:chain-Markov.setwise}
		\Gamma\langle u - \psi(u - v) \rangle(A) + \Gamma\langle v + \psi(u - v) \rangle(A)
		\leq \Gamma\langle u \rangle(A) + \Gamma\langle v \rangle(A) \quad \text{for any $A \in \mathcal{B}(X)$.}
	\end{equation} 
	Indeed, since $\int_{X} \varphi\,d\Gamma\langle f; \,\cdot\, \rangle$ is linear for any $f \in \core$ by Theorem \ref{thm.signed} if $\norm{\varphi}_{\sup} < \infty$, we see from \hyperref[it:CL2]{\textup{(CL2)}}, Proposition \ref{prop.em-holder}, $0 \leq \psi' \leq 1$ on $\mathbb{R}$, and H\"{o}lder's inequality that for any $A \in \mathcal{B}(X)$, 
	\begin{align*}
		&\Gamma\langle u - \psi(u - v) \rangle(A) + \Gamma\langle v + \psi(u - v) \rangle(A) \\
		&\overset{\hyperref[it:CL2]{\textup{(CL2)}}}{=} \Gamma\langle u - \psi(u - v); u \rangle(A) - \int_{A} \psi'(u - v)\,d\Gamma\langle u - \psi(u - v); u - v \rangle \\
		&\qquad + \Gamma\langle v + \psi(u - v); v \rangle(A) + \int_{A} \psi'(u - v)\,d\Gamma\langle v + \psi(u - v); u - v \rangle \\
		&= \int_{A} (1 - \psi'(u-v))\,d\Gamma\langle u - \psi(u - v); u \rangle + \int_{A} \psi'(u - v)\,d\Gamma\langle u - \psi(u - v); v \rangle \\
		&\qquad + \int_{A} (1 - \psi'(u-v))\,d\Gamma\langle v + \psi(u - v); v \rangle + \int_{A} \psi'(u - v)\,d\Gamma\langle v + \psi(u - v); u \rangle \\
		&\overset{\eqref{em.holder}}{\leq} \biggl(\int_{A} (1 - \psi'(u-v))\,d\Gamma\langle u - \psi(u - v) \rangle\biggr)^{\frac{p-1}{p}} \biggl(\int_{A} (1 - \psi'(u-v))\,d\Gamma\langle u \rangle\biggr)^{\frac{1}{p}} \\
		&\qquad + \biggl(\int_{A} \psi'(u - v)\,d\Gamma\langle u - \psi(u - v) \rangle\biggr)^{\frac{p-1}{p}} \biggl(\int_{A} \psi'(u - v)\,d\Gamma\langle v \rangle\biggr)^{\frac{1}{p}} \\
		&\qquad + \biggl(\int_{A} (1 - \psi'(u-v))\,d\Gamma\langle v + \psi(u - v) \rangle\biggr)^{\frac{p-1}{p}} \biggl(\int_{A} (1 - \psi'(u-v))\,d\Gamma\langle v \rangle\biggr)^{\frac{1}{p}} \\
		&\qquad + \biggl(\int_{A} \psi'(u - v)\,d\Gamma\langle v + \psi(u - v) \rangle\biggr)^{\frac{p-1}{p}} \biggl(\int_{A} \psi'(u - v)\,d\Gamma\langle u \rangle\biggr)^{\frac{1}{p}} \\
		&\overset{\text{H\"{o}lder}}{\leq} \bigl(\Gamma\langle u - \psi(u - v) \rangle(A) + \Gamma\langle v + \psi(u - v) \rangle(A)\bigr)^{\frac{p-1}{p}} \bigl(\Gamma\langle u \rangle(A) + \Gamma\langle v \rangle(A)\bigr)^{\frac{1}{p}},
	\end{align*}
	proving \eqref{e:chain-Markov.setwise} and thereby \ref{it:em.chain-Markov}-\ref{it:em.chain-Markov-C1}.  
	
	\ref{it:em.chain-Markov}-\ref{it:em.chain-Markov-gen}: 
	This is proved by following closely the above proof of \ref{it:em.chain-GC}-\ref{it:em.chain-GC-gen} on the basis of \ref{it:em.chain-Markov}-\ref{it:em.chain-Markov-C1} and arguing as in \eqref{eq:em-GC-ext} upon applying Proposition \ref{prop.em-int-fatou} to conclude \eqref{e:chain-Markov.setwise}. 
\end{proof}
 
We also have the following representation formula (see also \cite[Theorem 4.1]{Cap03}).
\begin{prop}[Representation formula\index{representation formula (for $p$-energy measures)}]\label{prop.em-express}
	Assume that $\{ \Gamma\langle f \rangle \}_{f \in \core}$ satisfies \ref{Cp-em} and \hyperref[it:CL2]{\textup{(CL2)}} and that $\Gamma\langle f \rangle(X) = \mathcal{E}(f)$ for any $f \in \core$. 
    Then for any $u, \varphi \in \core$,
    \begin{equation}\label{e:em.functional}
    	\int_{X}\varphi\,d\Gamma\langle u \rangle
    	=
    	\mathcal{E}(u; u\varphi) - \biggl(\frac{p - 1}{p}\biggr)^{p - 1}\mathcal{E}\bigl(\abs{u}^{\frac{p}{p - 1}}; \varphi\bigr).
    \end{equation}
\end{prop}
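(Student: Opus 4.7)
The plan is to express both terms on the right-hand side of \eqref{e:em.functional} as integrals of $\varphi$ and $u$ against the measures $\Gamma\langle u\rangle$ and $\Gamma\langle u;\varphi\rangle$ using the chain rule \hyperref[it:CL2]{\textup{(CL2)}}, and then observe that the contributions involving $d\Gamma\langle u;\varphi\rangle$ cancel, leaving precisely $\int_{X}\varphi\,d\Gamma\langle u\rangle$. Throughout I will use that the hypothesis $\Gamma\langle f\rangle(X)=\mathcal{E}(f)$ for $f\in\core$ extends via Proposition \ref{prop.c-diff-em} and differentiation in $t$ at $t=0$ to the identity $\mathcal{E}(f;g)=\Gamma\langle f;g\rangle(X)$ for all $f,g\in\core$.

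First, I apply \hyperref[it:CL2]{\textup{(CL2)}} with $\Phi(t)=t$, $n=2$, $\bm{v}=(u,\varphi)$ and $\Psi(v_{1},v_{2})=v_{1}v_{2}$; here $\Psi(\bm{v})=u\varphi\in\core$ by the Leibniz rule (Theorem \ref{thm.chain-basic}\ref{it:em.leibniz}), $\partial_{1}\Psi(\bm{v})=\varphi$ and $\partial_{2}\Psi(\bm{v})=u$. Since $\sgn(\Phi')|\Phi'|^{p-1}\equiv 1$, \hyperref[it:CL2]{\textup{(CL2)}} yields
\[
d\Gamma\langle u;u\varphi\rangle=\varphi\,d\Gamma\langle u;u\rangle+u\,d\Gamma\langle u;\varphi\rangle=\varphi\,d\Gamma\langle u\rangle+u\,d\Gamma\langle u;\varphi\rangle,
\]
using $\Gamma\langle u;u\rangle=\Gamma\langle u\rangle$ from Theorem \ref{thm.em-basic}. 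Integrating over $X$ gives
\begin{equation*}
\mathcal{E}(u;u\varphi)=\int_{X}\varphi\,d\Gamma\langle u\rangle+\int_{X}u\,d\Gamma\langle u;\varphi\rangle.
\end{equation*}

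Next, I apply \hyperref[it:CL2]{\textup{(CL2)}} with $\Phi(t)=\abs{t}^{p/(p-1)}$, $n=1$, $\bm{v}=\varphi$ and $\Psi(v)=v$. The function $\Phi$ lies in $\contfunc^{1}(\mathbb{R})$ since $p/(p-1)>1$, with $\Phi'(t)=\tfrac{p}{p-1}\sgn(t)\abs{t}^{1/(p-1)}$, so $\Phi(u)=\abs{u}^{p/(p-1)}\in\core$ by \hyperref[it:CL1]{\textup{(CL1)}} (Theorem \ref{thm.chain-basic}\ref{it:CL2-CL1}). Computing
\[
\sgn(\Phi'(u))\abs{\Phi'(u)}^{p-1}=\sgn(u)\biggl(\frac{p}{p-1}\biggr)^{p-1}\abs{u}^{(p-1)/(p-1)}=\biggl(\frac{p}{p-1}\biggr)^{p-1}u,
\]
\hyperref[it:CL2]{\textup{(CL2)}} yields $d\Gamma\langle\abs{u}^{p/(p-1)};\varphi\rangle=\bigl(\tfrac{p}{p-1}\bigr)^{p-1}u\,d\Gamma\langle u;\varphi\rangle$, and integrating gives
\begin{equation*}
\biggl(\frac{p-1}{p}\biggr)^{p-1}\mathcal{E}\bigl(\abs{u}^{p/(p-1)};\varphi\bigr)=\int_{X}u\,d\Gamma\langle u;\varphi\rangle.
\end{equation*}

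Subtracting the two displayed identities gives \eqref{e:em.functional}. The steps are essentially direct computations once one has \hyperref[it:CL2]{\textup{(CL2)}}, so there is no real obstacle; the only minor point worth checking carefully is the $\contfunc^{1}$-regularity of $t\mapsto\abs{t}^{p/(p-1)}$ at $t=0$, which follows from $1/(p-1)>0$ ensuring $\lim_{t\to 0}\Phi'(t)=0$.
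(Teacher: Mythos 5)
Your proposal is correct and follows essentially the same route as the paper: the paper's proof likewise converts $\mathcal{E}(u;u\varphi)$ via the Leibniz rule \eqref{eq:em-leib} into $\int_{X}\varphi\,d\Gamma\langle u\rangle + \int_{X}u\,d\Gamma\langle u;\varphi\rangle$ and uses \hyperref[it:CL2]{\textup{(CL2)}} with $\Phi(t)=\abs{t}^{p/(p-1)}$ to identify the second right-hand term with $\int_{X}u\,d\Gamma\langle u;\varphi\rangle$, the two computations being combined there in a single chain of equalities rather than subtracted as two separate displays.
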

\begin{proof}
	Note that $(\mathcal{E},\core)$ satisfies \ref{Cp-em} by $\Gamma\langle f \rangle(X) = \mathcal{E}(f)$. 
    Define $\Phi \in \contfunc^{1}(\mathbb{R})$ by $\Phi(x) \coloneqq \abs{x}^{\frac{p}{p-1}}$.
    Note that $\Phi'(x) = \frac{p}{p - 1}\sgn(x)\abs{x}^{\frac{1}{p - 1}}$.
    By the Leibniz rule \eqref{eq:em-leib} in Theorem \ref{thm.chain-basic}-\ref{it:em.leibniz} and \hyperref[it:CL2]{\textup{(CL2)}}, we see that      
    \begin{align}\label{eq:em-express-proof}
        &\mathcal{E}(u; u\varphi) - \biggl(\frac{p - 1}{p}\biggr)^{p - 1}\mathcal{E}(\Phi(u); \varphi) \nonumber \\
        &= \int_{X}u\,d\Gamma\langle u; \varphi \rangle + \int_{X}\varphi\,d\Gamma\langle u \rangle - \biggl(\frac{p - 1}{p}\biggr)^{p - 1}\int_{X}\sgn\bigl(\Phi'(u)\bigr)\abs{\Phi'(u)}^{p - 1}\,d\Gamma\langle u; \varphi \rangle \nonumber \\
        &= \int_{X}u\,d\Gamma\langle u; \varphi \rangle + \int_{X}\varphi\,d\Gamma\langle u \rangle - \biggl(\frac{p - 1}{p}\biggr)^{p - 1}\biggl(\frac{p}{p - 1}\biggr)^{p - 1}\int_{X}\sgn(u)\abs{u}\,d\Gamma\langle u; \varphi \rangle \nonumber \\
        &= \int_{X}\varphi\,d\Gamma\langle u \rangle, 
    \end{align}
    proving \eqref{e:em.functional}. 
\end{proof}
 
We have the following theorem as a consequence of \hyperref[it:CL1]{\textup{(CL1)}}. 
\begin{thm}[Image density property\index{image density property}]\label{thm.EIDP}
	Assume that $(\mathcal{E},\core)$ satisfies \eqref{lipcont} in Proposition \ref{prop.GC-list}-\ref{GC.lip} and \ref{Cp}, that $(\mathcal{F},\norm{\,\cdot\,}_{\mathcal{E},1})$ is a Banach space, and that $\{ \Gamma\langle f \rangle \}_{f \in \core}$ satisfies \hyperref[it:CL1]{\textup{(CL1)}}. 
    Then for any $u \in \core$, the Borel measure $\Gamma\langle u \rangle \circ u^{-1}$ on $\mathbb{R}$ defined by $(\Gamma\langle u \rangle \circ u^{-1})(A) \coloneqq \Gamma\langle u \rangle(u^{-1}(A))$, $A \in \mathcal{B}(\mathbb{R})$, is absolutely continuous with respect to the Lebesgue measure on $\mathbb{R}$. 
\end{thm}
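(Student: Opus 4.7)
The plan is to show that the finite Borel measure $\mu \coloneqq \Gamma\langle u \rangle \circ u^{-1}$ on $\mathbb{R}$ vanishes on every compact set $K \subset \mathbb{R}$ with $\lvert K\rvert = 0$; this suffices since $\mu$ is Radon (being a finite Borel measure on $\mathbb{R}$), hence inner regular. So I fix such a $K$ and aim to prove $\mu(K) = 0$.

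First, I would use outer regularity of Lebesgue measure together with compactness of $K$ to build, for each $n \in \mathbb{N}$, a nested pair of open neighborhoods $K \subset W_n \subset \overline{W_n} \subset V_n$ with $\lvert V_n\rvert < 1/n$ (the compactness of $K$ guarantees $\mathrm{dist}(K,V_n^c) > 0$, which is what allows the buffer $W_n$ to exist). A smooth version of Urysohn's lemma then produces $\psi_n \in \contfunc^{\infty}(\mathbb{R})$ with $0 \le \psi_n \le 1$, $\psi_n \equiv 0$ on $\overline{W_n}$ (hence on $K$), and $\psi_n \equiv 1$ on $\mathbb{R}\setminus V_n$. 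Setting $\Phi_n(x) \coloneqq \int_0^x \psi_n(t)\,dt$, I obtain $\Phi_n \in \contfunc^{\infty}(\mathbb{R})$ with $\Phi_n(0) = 0$, $\Phi_n' = \psi_n$ vanishing on $K$ with $\lvert \Phi_n'\rvert \le 1$, and the uniform bound $\lVert \Phi_n - \mathrm{id}\rVert_{\sup,\mathbb{R}} \le \lvert V_n\rvert < 1/n$.

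Next, I would apply \hyperref[it:CL1]{\textup{(CL1)}} to the $\contfunc^{1}$-map $\mathrm{id} - \Phi_n$: since $(\mathrm{id} - \Phi_n)(0) = 0$ and $(\mathrm{id}-\Phi_n)' = 1-\psi_n$, this gives $u - \Phi_n(u) \in \core$ and $d\Gamma\langle u - \Phi_n(u)\rangle = (1-\psi_n(u))^{p}\,d\Gamma\langle u \rangle$. Pushing forward via $u$ to $\mathbb{R}$ and using $\psi_n|_K \equiv 0$ yields
\begin{equation*}
\Gamma\langle u - \Phi_n(u)\rangle(X) = \int_{\mathbb{R}}(1-\psi_n(y))^{p}\,\mu(dy) \ge \mu(K).
\end{equation*}
Combined with $\Gamma\langle u - \Phi_n(u)\rangle(X) \le \mathcal{E}(u-\Phi_n(u))$ from \ref{EM1}, the task reduces to proving $\mathcal{E}(u - \Phi_n(u)) \to 0$.

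For this last step, $\lvert \Phi_n(u)\rvert \le \lvert u\rvert$ (as $\Phi_n(0) = 0$ and $\Phi_n$ is $1$-Lipschitz) together with $\lvert \Phi_n(u) - u\rvert \le 1/n$ pointwise gives, via dominated convergence (using $u \in \mathcal{F} \subseteq L^{p}(X,m)$, which is built into the Banach space hypothesis on $\norm{\,\cdot\,}_{\mathcal{E},1}$), that $\Phi_n(u) \to u$ in $L^{p}(X,m)$. Since $\Phi_n$ is $1$-Lipschitz with $\Phi_n(0)=0$, \eqref{lipcont} yields $\mathcal{E}(\Phi_n(u)) \le \mathcal{E}(u)$, which combined with the $L^{p}$-lower semicontinuity in Proposition \ref{prop.useful-E1}\ref{it:E1-lsc} forces $\mathcal{E}(\Phi_n(u)) \to \mathcal{E}(u)$. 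Proposition \ref{prop.useful-E1}\ref{it:E1-conv} then delivers $\norm{\Phi_n(u) - u}_{\mathcal{E},1} \to 0$, so in particular $\mathcal{E}(u-\Phi_n(u)) \to 0$, contradicting $\mu(K) > 0$ unless $\mu(K) = 0$. The only delicate point is the simultaneous construction of $\Phi_n$ whose derivative kills $K$ yet which is uniformly close to $\mathrm{id}$, which is precisely what the reduction to compact $K$ and the buffer pair $W_n \Subset V_n$ secure.
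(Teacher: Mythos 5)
Your proof is correct, and it reaches the conclusion by a noticeably different mechanism from the paper's. Both arguments share the same skeleton: reduce to compact Lebesgue-null sets by inner regularity, approximate the relevant indicator by derivatives of $\contfunc^{1}$ functions, transfer via \hyperref[it:CL1]{\textup{(CL1)}}, and dominate by the energy via \ref{EM1}. The difference is in how the energy is made to vanish. The paper sends $\Phi_{n}(u)$ to $0$: it chooses $\varphi_{n} \to \indicator{F}$ with $\int_{0}^{\infty}\varphi_{n} = \int_{-\infty}^{0}\varphi_{n} = 0$ so that $\Phi_{n}$ is compactly supported, gets only \emph{weak} convergence of $\Phi_{n}(u)$ to $0$ in $\bigl(\closure{\core}^{\mathcal{F}},\norm{\,\cdot\,}_{\mathcal{E},1}\bigr)$ via Lemma \ref{lem.Lpreduce}, and must then invoke Mazur's lemma plus Fatou to conclude. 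You instead send $\Phi_{n}(u)$ to $u$ itself: since $\Phi_{n}$ is a $1$-Lipschitz contraction fixing $0$ and uniformly close to $\id_{\mathbb{R}}$, the combination of \eqref{lipcont}, lower semicontinuity and uniform convexity (Proposition \ref{prop.useful-E1}) upgrades directly to \emph{norm} convergence $\mathcal{E}(u - \Phi_{n}(u)) \to 0$ --- this is precisely the mechanism of Corollary \ref{cor.approx-measure}-\ref{it:approx-meas.1}, and it lets you dispense with Mazur's lemma entirely, at the modest cost of the buffer construction $W_{n} \Subset V_{n}$ and the estimate $\norm{\Phi_{n} - \id_{\mathbb{R}}}_{\sup} \le \abs{V_{n}}$. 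One point of precision: Propositions \ref{prop.useful-E1}-\ref{it:E1-lsc},\ref{it:E1-conv} are stated for a pair satisfying \ref{Cp} on a complete space, whereas the theorem only assumes \ref{Cp} for $(\mathcal{E},\core)$; you should apply them to $\bigl(\mathcal{E},\closure{\core}^{\mathcal{F}}\bigr)$, which inherits \ref{Cp} by continuity and is complete as a closed subspace --- exactly the device the paper uses when it applies Lemma \ref{lem.Lpreduce}. Since $u$ and all $\Phi_{n}(u)$ lie in $\core$, this repair is immediate and your argument goes through.
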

\begin{proof}
	This is proved, on the basis of \eqref{e:CL1} in the definition of \hyperref[it:CL1]{\textup{(CL1)}}, in exactly the same way as \cite[Proposition 7.6]{Shi24}, which is a simple adaptation of \cite[Theorem 4.3.8]{CF}, but we present the details because in \cite{Shi24} the underlying topological space $X$ is assumed to be a generalized Sierpi\'{n}ski carpet. 
    It suffices to prove that $(\Gamma\langle u \rangle \circ u^{-1})(F) = 0$ for any $u \in \core$ and any compact subset $F$ of $\mathbb{R}$ such that $\mathscr{L}^{1}(F) = 0$, where $\mathscr{L}^{1}$ denotes the $1$-dimensional Lebesgue measure on $\mathbb{R}$.
    Let $\{ \varphi_{n} \}_{n \in \mathbb{N}} \subseteq \contfunc_{c}(\mathbb{R})$ satisfy $\abs{\varphi_{n}} \le 1$, $\lim_{n \to \infty}\varphi_{n}(x) = \indicator{F}(x)$ for any $x \in \mathbb{R}$ and
    \[
    \int_{0}^{\infty}\varphi_{n}(t)\,dt = \int_{-\infty}^{0}\varphi_{n}(t)\,dt = 0 \quad \text{for any $n \in \mathbb{N}$.}
    \]
    We define $\Phi_{n}(x) \coloneqq \int_{0}^{x}\varphi_{n}(t)\,dt$, $x \in \mathbb{R}$, and $u_{n} \coloneqq \Phi_{n} \circ u$ for any $n \in \mathbb{N}$.
    Then we easily see that $\Phi_{n} \in \contfunc^{1}(\mathbb{R}) \cap \contfunc_{c}(\mathbb{R})$, $\Phi_{n}(0) = 0$, and $\Phi_{n}' = \varphi_{n}$ for any $n \in \mathbb{N}$.
    Also, $\{ u_{n} \}_{n \in \mathbb{N}}$ converges in norm in $L^{p}(X,m)$ to $0$ by the dominated convergence theorem, and by \eqref{lipcont} for $(\mathcal{E},\core)$ we have $\{ u_{n} \}_{n \in \mathbb{N}} \subseteq \core$ and $\sup_{n \in \mathbb{N}}\mathcal{E}(u_{n}) < \infty$. 
    Since \ref{Cp} for $(\mathcal{E},\core)$ yields \ref{Cp} for $\bigl(\mathcal{E},\closure{\core}^{\mathcal{F}}\bigr)$ and $\bigl(\closure{\core}^{\mathcal{F}},\norm{\,\cdot\,}_{\mathcal{E},1}\bigr)$ is complete, Lemma \ref{lem.Lpreduce} is applicable to $\bigl(\mathcal{E},\closure{\core}^{\mathcal{F}}\bigr)$ and implies that $\{ u_{n} \}_{n \in \mathbb{N}}$ converges weakly in $\bigl(\closure{\core}^{\mathcal{F}},\norm{\,\cdot\,}_{\mathcal{E},1}\bigr)$ to $0$.  
    By Mazur's lemma (Lemma \ref{lem.mazur}), there exist $N(l) \in \mathbb{N}$ and $\{ a_{l,k} \}_{k = l}^{N(l)} \subseteq [0,1]$ such that $N(l) > l$, $\sum_{k = l}^{N(l)}a_{l,k} = 1$ and $\sum_{k = l}^{N(l)}a_{l,k}u_{n_{k}}$ converges in norm in $\mathcal{F}$ to $0$ as $l \to \infty$. 
    Let us define $\Psi_{l} \in \contfunc^{1}(\mathbb{R})$ by $\Psi_{l} \coloneqq \sum_{k = l}^{N(l)}a_{l,k}\Phi_{n_k}$. 
    Then $\Psi_{l}(0) = 0$ and $\lim_{l \to \infty}\Psi_{l}'(x) = \indicator{F}(x)$ for any $x \in \mathbb{R}$.
    Furthermore, by Fatou's lemma, \eqref{e:CL1} from \hyperref[it:CL1]{\textup{(CL1)}}, and \ref{EM1} in Definition \ref{defn.em-Cp}, 
    \begin{align*}
        (\Gamma\langle u \rangle \circ u^{-1})(F)
        &= \int_{\mathbb{R}}\lim_{l \to \infty}\abs{\Psi_{l}'(t)}^{p}\,(\Gamma\langle u \rangle \circ u^{-1})(dt) \\
        &\le \liminf_{l \to \infty}\int_{X}\abs{\Psi_{l}'(u(x))}^{p}\,\Gamma\langle u \rangle(dx) \\
        &= \liminf_{l \to \infty}\Gamma\langle \Psi_{l}(u) \rangle(X)
        \le \liminf_{l \to \infty}\mathcal{E}(\Psi_{l}(u)) = 0, 
    \end{align*}
    which completes the proof. 
\end{proof}

The following theorem gives arguably the strongest possible forms of the strong locality of $p$-energy measures.
\begin{thm}[Strong locality\index{strong locality (of $p$-energy measures)} of $p$-energy measures]\label{thm.slocal}
    Assume that $(\mathcal{E},\core)$ satisfies \eqref{lipcont} in Proposition \ref{prop.GC-list}-\ref{GC.lip} and \ref{Cp}, that $(\mathcal{F},\norm{\,\cdot\,}_{\mathcal{E},1})$ is a Banach space, and that $\{ \Gamma\langle f \rangle \}_{f \in \core}$ satisfies \hyperref[it:CL1]{\textup{(CL1)}}. 
    Let $u,u_{1},u_{2},v \in \core$, $a,a_{1},a_{2},b \in \mathbb{R}$ and $A \in \mathcal{B}(X)$. 
    \begin{enumerate}[label=\textup{(\alph*)},align=left,leftmargin=*,topsep=2pt,parsep=0pt,itemsep=2pt]
        \item\label{it:SLbasic.em}  If $A \subseteq u^{-1}(a)$, then $\Gamma\langle u \rangle(A) = 0$.
        \item\label{it:SL0.em} If $A \subseteq (u - v)^{-1}(a)$, then $\Gamma\langle u \rangle(A) = \Gamma\langle v \rangle(A)$.
        \item\label{it:SL1.em} If $A \subseteq u_{1}^{-1}(a_{1}) \cup u_{2}^{-1}(a_{2})$, then
			\begin{equation}\label{e:pem-sl1}
				\Gamma_{\mathcal{E}}\langle u_{1} + u_{2} + v \rangle(A) + \Gamma_{\mathcal{E}}\langle v \rangle(A) = \Gamma_{\mathcal{E}}\langle u_{1} + v \rangle(A) + \Gamma_{\mathcal{E}}\langle u_{2} + v \rangle(A).
			\end{equation}
			If $\{ \Gamma\langle f \rangle \}_{f \in \core}$ satisfies \ref{Cp-em} and $A \subseteq u_{1}^{-1}(a_{1}) \cup u_{2}^{-1}(a_{2})$, then
			\begin{equation}\label{e:pem-sl1-cor}
				\Gamma_{\mathcal{E}}\langle u_{1} + u_{2}; v \rangle(A) = \Gamma_{\mathcal{E}}\langle u_{1}; v \rangle(A) + \Gamma_{\mathcal{E}}\langle u_{2}; v \rangle(A). 
			\end{equation}
        \item\label{it:SL2.em} If $\{ \Gamma\langle f \rangle \}_{f \in \core}$ satisfies \ref{Cp-em} and $A \subseteq (u_1 - u_2)^{-1}(a) \cup v^{-1}(b)$, then
			\begin{equation}\label{e:pem-sl2}
				\Gamma_{\mathcal{E}}\langle u_{1}; v \rangle(A) = \Gamma_{\mathcal{E}}\langle u_{2}; v \rangle(A)
				\quad\text{and}\quad
				\Gamma_{\mathcal{E}}\langle v; u_{1} \rangle(A) = \Gamma_{\mathcal{E}}\langle v; u_{2} \rangle(A).
			\end{equation}
    \end{enumerate}
\end{thm}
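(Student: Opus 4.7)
The plan is to bootstrap from part \ref{it:SLbasic.em}, which is essentially a direct consequence of the image density property (Theorem \ref{thm.EIDP}), and then derive \ref{it:SL0.em}--\ref{it:SL2.em} by suitable decompositions of $A$ combined with the triangle, H\"{o}lder and linearity properties of $p$-energy measures already established in Subsection \ref{sec.em-basic}. Precisely, since the hypotheses of Theorem \ref{thm.slocal} match those of Theorem \ref{thm.EIDP}, we have $\Gamma\langle u\rangle(u^{-1}(a)) = (\Gamma\langle u\rangle\circ u^{-1})(\{a\}) = 0$ for any $u\in\core$ and $a\in\mathbb{R}$, so $\Gamma\langle u\rangle(A)\le\Gamma\langle u\rangle(u^{-1}(a))=0$, giving \ref{it:SLbasic.em}. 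For \ref{it:SL0.em}, note $u-v\in\core$ and apply \ref{it:SLbasic.em} to $u-v$ to obtain $\Gamma\langle u-v\rangle(A)=0$; the triangle inequality for the seminorm $\Gamma\langle\,\cdot\,\rangle(A)^{1/p}$ (from \ref{EM2}) then yields $\bigl\lvert\Gamma\langle u\rangle(A)^{1/p}-\Gamma\langle v\rangle(A)^{1/p}\bigr\rvert\le\Gamma\langle u-v\rangle(A)^{1/p}=0$.

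For \ref{it:SL1.em}, decompose $A=A_{1}\sqcup A_{2}$ with $A_{1}\coloneqq A\cap u_{1}^{-1}(a_{1})$ and $A_{2}\coloneqq A\setminus A_{1}\subseteq u_{2}^{-1}(a_{2})$. On $A_{1}$, since $(u_{1}+w)-w\equiv a_{1}$ on $A_{1}$ for any $w\in\core$, applying \ref{it:SL0.em} with the choices $w=u_{2}+v$ and $w=v$ gives $\Gamma\langle u_{1}+u_{2}+v\rangle(A_{1})=\Gamma\langle u_{2}+v\rangle(A_{1})$ and $\Gamma\langle u_{1}+v\rangle(A_{1})=\Gamma\langle v\rangle(A_{1})$, which together make \eqref{e:pem-sl1} a trivial identity on $A_{1}$. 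The case of $A_{2}$ is symmetric, and summing proves \eqref{e:pem-sl1}. To derive \eqref{e:pem-sl1-cor} under \ref{Cp-em}, substitute $tv$ for $v$ in \eqref{e:pem-sl1}, subtract the $t=0$ case (which yields the additivity $\Gamma\langle u_{1}+u_{2}\rangle(A)=\Gamma\langle u_{1}\rangle(A)+\Gamma\langle u_{2}\rangle(A)$), divide by $t$, observe that $t^{-1}\Gamma\langle tv\rangle(A)=t^{p-1}\Gamma\langle v\rangle(A)\to0$ as $t\downarrow 0$, and invoke the differentiability guaranteed by Proposition \ref{prop.c-diff-em}.

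For \ref{it:SL2.em}, decompose $A=A_{1}\sqcup A_{2}$ with $A_{1}\coloneqq A\cap(u_{1}-u_{2})^{-1}(a)$ and $A_{2}\coloneqq A\setminus A_{1}\subseteq v^{-1}(b)$, so that \ref{it:SLbasic.em} gives $\Gamma\langle u_{1}-u_{2}\rangle(A_{1})=0$ and $\Gamma\langle v\rangle(A_{2})=0$. For the first equation of \eqref{e:pem-sl2}, the local H\"{o}lder estimate \eqref{em-ncont} forces $\bigl\lvert\Gamma\langle u_{1};v\rangle(A_{1})-\Gamma\langle u_{2};v\rangle(A_{1})\bigr\rvert=0$, while \eqref{em-bdd.form} gives $\bigl\lvert\Gamma\langle u_{i};v\rangle(A_{2})\bigr\rvert\le\Gamma\langle u_{i}\rangle(A_{2})^{(p-1)/p}\Gamma\langle v\rangle(A_{2})^{1/p}=0$ for $i=1,2$. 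For the second equation, linearity of $\Gamma\langle v;\,\cdot\,\rangle(A)$ (Theorem \ref{thm.em-basic}) reduces the claim to $\Gamma\langle v;u_{1}-u_{2}\rangle(A)=0$, and \eqref{em-bdd.form} applied on each piece yields $\bigl\lvert\Gamma\langle v;u_{1}-u_{2}\rangle(A_{i})\bigr\rvert\le\Gamma\langle v\rangle(A_{i})^{(p-1)/p}\Gamma\langle u_{1}-u_{2}\rangle(A_{i})^{1/p}=0$ for $i=1,2$.

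The only real substance lies in \ref{it:SLbasic.em}: the whole proof rests on the image density property, which in turn depends crucially on the completeness of $(\mathcal{F},\norm{\,\cdot\,}_{\mathcal{E},1})$, on \ref{Cp} being inherited from $(\mathcal{E},\core)$ by its closure, and on the chain rule \hyperref[it:CL1]{\textup{(CL1)}} providing the smooth-image domination needed in the Mazur-type argument of Theorem \ref{thm.EIDP}. Once \ref{it:SLbasic.em} is granted, the remaining parts become essentially routine combinations of the ``constancy'' of one argument with either the triangle inequality, the H\"{o}lder bound \eqref{em-bdd.form}, the H\"{o}lder continuity \eqref{em-ncont}, or linearity in the second argument.
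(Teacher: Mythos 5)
Your proof is correct, and parts \ref{it:SLbasic.em}, \ref{it:SL0.em} and \ref{it:SL1.em} follow essentially the same route as the paper: \ref{it:SLbasic.em} from the image density property of Theorem \ref{thm.EIDP}, \ref{it:SL0.em} from the triangle inequality for $\Gamma\langle\,\cdot\,\rangle(A)^{1/p}$, \eqref{e:pem-sl1} by splitting $A$ into the pieces where $u_{1}$ resp.\ $u_{2}$ is constant and applying \ref{it:SL0.em} on each (the paper uses the possibly overlapping sets $A\cap u_{i}^{-1}(a_{i})$ where you use a genuinely disjoint decomposition, which is slightly cleaner), and \eqref{e:pem-sl1-cor} by the identical difference-quotient limit. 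Where you genuinely diverge is part \ref{it:SL2.em}: the paper deduces it from \eqref{e:pem-sl1} applied with $u_{2}-u_{1},tv,u_{1}$ in place of $u_{1},u_{2},v$ and then lets $t\downarrow 0$, whereas you prove it directly by decomposing $A=A_{1}\sqcup A_{2}$ and killing each piece with the H\"{o}lder bound \eqref{em-bdd.form} and the local H\"{o}lder continuity \eqref{em-ncont}, together with linearity in the second argument for the second identity. Both arguments are valid under the stated hypotheses (note that \eqref{em-ncont} and the linearity of $\Gamma\langle v;\,\cdot\,\rangle(A)$ are available because \ref{Cp-em} is assumed in \ref{it:SL2.em}); your version avoids a second limiting argument and gives the extra information that $\Gamma\langle u_{i};v\rangle$ vanishes identically on the piece where $v$ is constant, at the cost of invoking the quantitative estimates of Theorem \ref{thm.em-basic} rather than only the additivity identity \eqref{e:pem-sl1}.
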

\begin{proof}
	\ref{it:SLbasic.em}: 
	This is immediate from Theorem \ref{thm.EIDP}. 
	
	\ref{it:SL0.em}: 
	This follows from \ref{it:SLbasic.em} and the triangle inequality for $\Gamma_{\mathcal{E}}\langle \,\cdot\,\rangle(A)^{1/p}$. 
	
	\ref{it:SL1.em}: 
	Set $A_{i} \coloneqq A \cap u_{i}^{-1}(a_{i})$, $i \in \{ 1,2 \}$. 
    We see from \ref{it:SL0.em} that 
    \begin{align*}
        &\Gamma_{\mathcal{E}}\langle u_{1} + u_{2} + v \rangle(A) + \Gamma_{\mathcal{E}}\langle v \rangle(A) \\
        &= \Gamma_{\mathcal{E}}\langle u_{2} + v \rangle(A_{1}) + \Gamma_{\mathcal{E}}\langle u_{1} + v \rangle(A_{2}) + \Gamma_{\mathcal{E}}\langle v \rangle(A) \\
        &= \Gamma_{\mathcal{E}}\langle u_{2} + v \rangle(A_{1}) + \Gamma_{\mathcal{E}}\langle u_{1} + v \rangle(A_{2}) + \Gamma_{\mathcal{E}}\langle u_{1} + v \rangle(A_{1}) + \Gamma_{\mathcal{E}}\langle u_{2} + v \rangle(A_{2}) \\
        &= \Gamma_{\mathcal{E}}\langle u_{1} + v \rangle(A) + \Gamma_{\mathcal{E}}\langle u_{2} + v \rangle(A), 
    \end{align*}
    which proves \eqref{e:pem-sl1}.
    Note that $\Gamma_{\mathcal{E}}\langle u_{1} + u_{2} \rangle(A) = \Gamma_{\mathcal{E}}\langle u_{1} \rangle(A) + \Gamma_{\mathcal{E}}\langle u_{2} \rangle(A)$ by \eqref{e:pem-sl1} in the case $v = 0$. 
    Next assume that $\{ \Gamma\langle f \rangle \}_{f \in \core}$ satisfies \ref{Cp-em}.
    By using this equality and applying \eqref{e:pem-sl1} with $v$ replaced by $tv$ for $t \in (0,\infty)$, we have 
    \begin{align*}
    	&\frac{\Gamma_{\mathcal{E}}\langle u_{1} + u_{2} + tv \rangle(A) - \Gamma_{\mathcal{E}}\langle u_{1} + u_{2} \rangle(A)}{t} + t^{p - 1}\Gamma_{\mathcal{E}}\langle v \rangle(A) \\
    	&= \frac{\Gamma_{\mathcal{E}}\langle u_{1} + tv \rangle(A) - \Gamma_{\mathcal{E}}\langle u_{1} \rangle(A)}{t} + \frac{\Gamma_{\mathcal{E}}\langle u_{2} + tv \rangle(A) - \Gamma_{\mathcal{E}}\langle u_{2} \rangle(A)}{t}, 
    \end{align*}
    which implies \eqref{e:pem-sl1-cor} by letting $t \downarrow 0$. 
    
    \ref{it:SL2.em}: 
	The proof will be very similar to that of Proposition \ref{prop.sl-other}-\ref{it:SL1-SL2}.
	By applying \eqref{e:pem-sl1} with $u_{2} - u_{1}, tv, u_{1}$ for $t \in (0,\infty)$ in place of $u_{1},u_{2},v$, we have 
	\[
	\frac{\Gamma_{\mathcal{E}}\langle u_{1} + tv \rangle(A) - \Gamma_{\mathcal{E}}\langle u_{1} \rangle(A)}{t} 
	= \frac{\Gamma_{\mathcal{E}}\langle u_{2} + tv \rangle(A) - \Gamma_{\mathcal{E}}\langle u_{2} \rangle(A)}{t}, 
	\]
	which implies the former equality in \eqref{e:pem-sl2} by letting $t \downarrow 0$. 
	This equality in turn with $v,0,u_{1}-u_{2}$ in place of $u_{1},u_{2},v$ yields the latter equality in \eqref{e:pem-sl2} by the linearity of $\Gamma_{\mathcal{E}}\langle v; \,\cdot\, \rangle(A)$.
\end{proof}

We conclude this section by showing an upper bound for $p$-energy measures of products of functions. Recall \eqref{leibniz} in Definition \ref{prop.GC-list}-\ref{GC.leibniz} for a similar inequality under \ref{GC}.
\begin{prop}\label{prop:leibniz-CL}
	Assume that $\{ \Gamma\langle f \rangle \}_{f \in \core}$ satisfies \ref{Cp-em} and the Leibniz rule as in Theorem \ref{thm.chain-basic}-\ref{it:em.leibniz}.
	Then for any $u,v \in \core$ and any $A \in \mathcal{B}(X)$, \textup{($uv \in \mathcal{D}$ by the Leibniz rule, and)}
	\begin{equation}\label{eq:leibniz-CL}
		\Gamma\langle uv \rangle(A)^{1/p} \le \norm{u}_{\sup,A}\Gamma\langle v \rangle(A)^{1/p} + \norm{v}_{\sup,A}\Gamma\langle u \rangle(A)^{1/p}.
	\end{equation}
\end{prop}
\begin{proof}
	By the Leibniz rule \eqref{eq:em-leib} and the H\"{o}lder-type estimate \eqref{em.holder} in Proposition \ref{prop.em-holder}, 
	\begin{align*}
		&\Gamma\langle uv \rangle(A) 
		\overset{\eqref{eq:em-leib}}{=} \int_{A}u\,d\Gamma\langle uv; v \rangle + \int_{A}v\,d\Gamma\langle uv; u \rangle \\
		&\overset{\eqref{em.holder}}{\le} \left(\int_{A}\,d\Gamma\langle uv \rangle\right)^{(p-1)/p}\left(\int_{A}\abs{u}^{p}\,d\Gamma\langle v \rangle\right)^{1/p} + \left(\int_{A}\,d\Gamma\langle uv \rangle\right)^{(p-1)/p}\left(\int_{A}\abs{v}^{p}\,d\Gamma\langle u \rangle\right)^{1/p} \\
		&\le \Gamma\langle uv \rangle(A)^{(p-1)/p}\Bigl[\norm{u}_{\sup,A}\Gamma\langle v \rangle(A)^{1/p} + \norm{v}_{\sup,A}\Gamma\langle u \rangle(A)^{1/p}\Bigr], 
	\end{align*}
	which shows \eqref{eq:leibniz-CL}. 
\end{proof}

\section{\texorpdfstring{$p$}{p}-Energy measures associated with self-similar \texorpdfstring{$p$}{p}-energy forms}\label{sec.ss}
In this section, we focus on the self-similar case.
We will introduce the self-similarity of $p$-energy forms and construct $p$-energy measures with respect to self-similar $p$-energy forms.
Some fundamental properties of $p$-energy measures will be shown. 
 
\subsection{Self-similar structure and related notions}
We first recall standard notation and terminology on self-similar structures (see \cite[Chapter 1]{Kig01} for example).
Throughout this section, we fix a compact metrizable space $K$, a finite set $S$ with $\#S\geq 2$ and a continuous injective map $F_{i}\colon K\to K$ for each $i\in S$. We set $\mathcal{L} \coloneqq (K,S,\{F_{i}\}_{i\in S})$.
\begin{defn}\label{d:shift}
\begin{enumerate}[label=\textup{(\arabic*)},align=left,leftmargin=*,topsep=5pt,parsep=0pt,itemsep=2pt]
\item\label{it:word} Let $W_{0} \coloneqq \{\emptyset\}$, where $\emptyset$ is an element
	called the \emph{empty word}\index{empty word}, let
	$W_{n} \coloneqq S^{n}=\{w_{1}\dots w_{n}\mid w_{i}\in S\textrm{ for }i\in\{1,\dots,n\}\}$
	for $n\in\mathbb{N}$ and let $W_{\ast} \coloneqq \bigcup_{n\in\mathbb{N}\cup\{0\}}W_{n}$.
	For $w\in W_{\ast}$, the unique $n\in\mathbb{N}\cup\{0\}$ with $w\in W_{n}$
	is denoted by $\lvert w\rvert$ and called the \emph{length of $w$}.\index{length (of a word)}
	For $w,v \in W_{\ast}$, $w = w_{1} \dots w_{n_{1}}$, $v = v_{1} \dots v_{n_{2}}$, we define $wv \in W_{\ast}$ by $wv \coloneqq w_{1} \dots w_{n_{1}}v_{1} \dots v_{n_{2}} \, (w\emptyset \coloneqq w, \emptyset v \coloneqq v)$.
\item\label{it:shift} We set
	$\Sigma \coloneqq S^{\mathbb{N}}=\{\omega_{1}\omega_{2}\omega_{3}\ldots\mid \omega_{i}\in S\textrm{ for }i\in\mathbb{N}\}$,
	which is always equipped with the product topology of the discrete topology on $S$,
	and define the \emph{shift map}\index{shift map} $\sigma\colon\Sigma\to\Sigma$ by
	$\sigma(\omega_{1}\omega_{2}\omega_{3}\dots)\coloneqq\omega_{2}\omega_{3}\omega_{4}\dots$.
	For $i\in S$ we define $\sigma_{i}\colon\Sigma\to\Sigma$  by
	$\sigma_{i}(\omega_{1}\omega_{2}\omega_{3}\dots) \coloneqq i\omega_{1}\omega_{2}\omega_{3}\dots$.
	For $\omega=\omega_{1}\omega_{2}\omega_{3}\ldots\in\Sigma$ and
	$n\in\mathbb{N}\cup\{0\}$, we write $[\omega]_{n} \coloneqq \omega_{1}\dots\omega_{n}\in W_{n}$.
\item\label{it:FwKw} For $w=w_{1}\dots w_{n}\in W_{\ast}$, we set
	$F_{w} \coloneqq F_{w_{1}}\circ\dots\circ F_{w_{n}}$ ($F_{\emptyset} \coloneqq \id_{K}$),
	$K_{w} \coloneqq F_{w}(K)$, $\sigma_{w} \coloneqq \sigma_{w_{1}}\circ\dots\circ \sigma_{w_{n}}$
	($\sigma_{\emptyset} \coloneqq \id_{\Sigma}$) and $\Sigma_{w} \coloneqq \sigma_{w}(\Sigma)$.
\item\label{it:partition} A finite subset $\Lambda$ of $W_{\ast}$ is called a \emph{partition}\index{partition (of $\Sigma = S^{\mathbb{N}}$)} of $\Sigma$ if and only if $\Sigma_{w} \cap \Sigma_{v} = \emptyset$ for any $w,v \in \Lambda$ with $w \neq v$ and $\Sigma = \bigcup_{w \in \Lambda}\Sigma_{w}$.
	For partitions $\Lambda_{1},\Lambda_{2}$ of $\Sigma$, we say that $\Lambda_{1}$ is a \emph{refinement}\index{refinement (for partitions of $\Sigma = S^{\mathbb{N}}$)} of $\Lambda_{2}$, and write $\Lambda_{1}\leq\Lambda_{2}$, if and only if for each $w^{1}\in\Lambda_{1}$ there exist $w^{2}\in\Lambda_{2}$ and $\tau \in W_{\ast}$ such that $w^{1} = w^{2}\tau$.
\end{enumerate}
\end{defn}
\begin{defn}\label{d:sss}
$\mathcal{L}=(K,S,\{F_{i}\}_{i\in S})$ is called a \emph{self-similar structure}\index{self-similar structure}
if and only if there exists a continuous surjective map $\chi\colon\Sigma\to K$ such that
$F_{i}\circ\chi=\chi\circ\sigma_{i}$ for any $i\in S$.
Note that such $\chi$, if it exists, is unique and satisfies
$\{\chi(\omega)\}=\bigcap_{n\in\mathbb{N}}K_{[\omega]_{n}}$ for any $\omega\in\Sigma$.
\end{defn}

In the following definition, we recall the definition of \emph{post-critically finite self-similar structures}\index{post-critically finite (p.-c.f.)} introduced by Kigami in \cite{Kig93}, which is mainly dealt with in Subsection \ref{sec.pcf}. 
\begin{defn}\label{d:V0Vstar}
Let $\mathcal{L} = (K,S,\{ F_{i} \}_{i \in S})$ be a self-similar structure.
\begin{enumerate}[label=\textup{(\arabic*)},align=left,leftmargin=*,topsep=5pt,parsep=0pt,itemsep=2pt]
\item\label{it:C-P} We define the \emph{critical set}\index{critical set} $\mathcal{C}_{\mathcal{L}}$ and the
	\emph{post-critical set}\index{post-critical set} $\mathcal{P}_{\mathcal{L}}$ of $\mathcal{L}$ by
	\begin{equation}\label{e:C-P}\textstyle
		\mathcal{C}_{\mathcal{L}}\coloneqq\chi^{-1}\bigl(\bigcup_{i,j\in S,\,i\not=j}K_{i}\cap K_{j}\bigr)
		\qquad\textrm{and}\qquad
		\mathcal{P}_{\mathcal{L}}\coloneqq\bigcup_{n\in\mathbb{N}}\sigma^{n}(\mathcal{C}_{\mathcal{L}}).
	\end{equation}
	$\mathcal{L}$ is called \emph{post-critically finite}, or \emph{p.-c.f.}\ for short,
	if and only if $\mathcal{P}_{\mathcal{L}}$ is a finite set.
\item\label{it:V0Vstar} We set $V_{0}\coloneqq\chi(\mathcal{P}_{\mathcal{L}})$, $V_{n}\coloneqq\bigcup_{w\in W_{n}}F_{w}(V_{0})$
	for $n\in\mathbb{N}$ and $V_{\ast}\coloneqq\bigcup_{n\in \mathbb{N} \cup \{ 0 \}}V_{n}$.
\end{enumerate}
\end{defn}
The set $V_{0}$ should be considered as the \emph{``boundary"} of the self-similar set $K$;
indeed, by \cite[Proposition 1.3.5-(2)]{Kig01}, we have 
\begin{equation}\label{V0bdry}
	\text{$K_{w}\cap K_{v}=F_{w}(V_{0})\cap F_{v}(V_{0})$ for any $w,v\in W_{\ast}$ with
$\Sigma_{w}\cap\Sigma_{v}=\emptyset$.}	
\end{equation}
According to \cite[Lemma 1.3.11]{Kig01}, $V_{n-1}\subseteq V_{n}$ for any $n\in\mathbb{N}$,
and $V_{\ast}$ is dense in $K$ if $V_{0} \neq \emptyset$. 

The family of cells $\{ K_{w} \}_{w \in W_{\ast}}$ describes the local topology of a self-similar structure. 
Indeed, $\{ K_{n,x} \}_{n \ge 0}$, where $K_{n,x} \coloneqq \bigcup_{w \in W_{n}; x \in K_{w}}K_{w}$, forms a fundamental system of neighborhoods of $x \in K$ \cite[Proposition 1.3.6]{Kig01}. 
Moreover, the proof of \cite[Proposition 1.3.6]{Kig01} implies that any metric $d$ on $K$ giving the original topology of $K$ satisfies 
\begin{equation}\label{ss-diam}
    \lim_{n \to \infty}\max_{w \in W_{n}}\diam(K_{w},d) = 0.
\end{equation}

Let us recall the notion of self-similar measure.
\begin{defn}[Self-similar measure]\label{dfn:ssmeas}
    Let $\mathcal{L} = (K,S,\{ F_{i} \}_{i \in S})$ be a self-similar structure and let $(\theta_{i})_{i \in S} \in (0,1)^{S}$ satisfy $\sum_{i \in S}\theta_{i} = 1$.
    A Borel probability measure $m$ on $K$ is said to be a \emph{self-similar measure}\index{self-similar measure} on $\mathcal{L}$ with weight $(\theta_{i})_{i \in S}$ if and only if the following equality (of Borel measures on $K$) holds:
    \begin{equation}\label{ea:ss-meas}
        m = \sum_{i \in S}\theta_{i}(m \circ F_{i}^{-1}), 
    \end{equation}
    where $m \circ F_{i}^{-1}$ denotes the image measure of $m$ by $F_{i}$, i.e., $(m \circ F_{i}^{-1})(A) \coloneqq m(F_{i}^{-1}(A))$ for $A \in \mathcal{B}(K)$. 
\end{defn}

\begin{rmk}\label{rmk:pullback}
	Let $\mathcal{L} = (K,S,\{ F_{i} \}_{i \in S})$ be a self-similar structure, $m$ a self-similar measure on $\mathcal{L}$, and $w \in W_{\ast}$. 
	We then easily see from \eqref{ea:ss-meas} that $u \circ F_{w} = v \circ F_{w}$ $m$-a.e.\ on $K$ for any Borel measurable functions $u,v \colon K \to [-\infty,\infty]$ satisfying $u = v$ $m$-a.e.\ on $K$, thereby that we can define a map $F_{w}^{\ast} \colon L^{0}(K,m) \to L^{0}(K,m)$ by setting $F_{w}^{\ast}u \coloneqq u \circ F_{w}$, and further that $F_{w}^{\ast} \colon L^{p}(K,m) \to L^{p}(K,m)$ is a bounded linear operator for any $p \in [1,\infty]$. 
\end{rmk}

Let us describe a standard way to construct a self-similar measure on a self-similar structure $\mathcal{L} = (K,S,\{ F_{i} \}_{i \in S})$. 
Let $(\theta_{i})_{i \in S} \in (0,1)^{S}$ satisfy $\sum_{i \in S}\theta_{i} = 1$, and let $\nu$ be the Bernoulli measure on $\Sigma$ with weight $(\theta_{i})_{i \in S}$, i.e., the unique Borel measure on $\Sigma$ such that 
\begin{equation}\label{eq:Bernoulli.meas}
\nu(\Sigma_{w}) = \theta_{w} \quad \textrm{for all $w \in W_{\ast}$,}
\end{equation}
where $\theta_{w} \coloneqq \theta_{w_1}\theta_{w_2} \cdots \theta_{w_n}$ for $w = w_{1}w_{2}\ldots w_{n} \in W_{\ast}$ ($\theta_{\emptyset} \coloneqq 1$).
Then the Borel measure $m$ on $K$ given by 
\begin{equation}\label{eq:ssmeas.canonical}
	m \coloneqq \nu \circ \chi^{-1}
\end{equation}
turns out to be a self-similar measure on $\mathcal{L}$ with weight $(\theta_{i})_{i \in S}$; see \cite[Section 1.4]{Kig01} for further details on the self-similar measures obtained in this way. 

The uniqueness of a self-similar measure with given weight appears to be delicate. The following proposition provides a simple sufficient condition for a self-similar measure on $\mathcal{L}$ with weight $(\theta_{i})_{i \in S}$ to assign $K_{w}$ the same mass as in \eqref{eq:Bernoulli.meas} for any $w \in W_{\ast}$ and thereby to be unique. 
\begin{prop}[{\cite[Theorem 1.2.7]{Kig09}}]\label{p:ss-meas}
    Let $\mathcal{L} = (K,S,\{ F_{i} \}_{i \in S})$ be a self-similar structure satisfying $K \neq \closure{V_{0}}^{K}$.
    Then for any $(\theta_{i})_{i \in S} \in (0,1)^{S}$ with $\sum_{i \in S}\theta_{i} = 1$, a self-similar measure $m$ on $\mathcal{L}$ with weight $(\theta_{i})_{i \in S}$ is unique and satisfies $m(K_{w}) = \theta_{w}$ and $m(F_{w}(\closure{V_{0}}^{K})) = 0$ for any $w \in W_{\ast}$.
\end{prop}

\subsection{Self-similar \texorpdfstring{$p$}{p}-energy forms and \texorpdfstring{$p$}{p}-energy measures}
In this subsection, we introduce the notion of self-similar $p$-energy form and define the $p$-energy measures associated with a given self-similar $p$-energy form.  
Throughout this subsection, we fix a self-similar structure $\mathcal{L} = (K,S,\{ F_{i} \}_{i \in S})$ with $K$ connected, a $\sigma$-algebra $\mathcal{B}$ in $K$ including $\mathcal{B}(K)$, a measure $m$ on $(K,\mathcal{B})$ with $\supp_{K}[m] = K$, $p \in (1,\infty)$, and a $p$-energy form $(\mathcal{E},\mathcal{F})$ on $(K,m)$. 

\begin{defn}[Self-similar $p$-energy form\index{self-similar $p$-energy form}]\label{defn.ssform}
    Let $\bm{\rweight} = (\rweight_{i})_{i \in S} \in (0, \infty)^{S}$.
    A $p$-energy form $(\mathcal{E}, \mathcal{F})$ on $(K,m)$ is said to be \emph{self-similar on $(\mathcal{L},m)$ with weight $\bm{\rweight}$} if and only if the following hold:
    \begin{gather}
		\mathcal{F} \cap \contfunc(K) = \{ f \in \contfunc(K) \mid \text{$f \circ F_{i} \in \mathcal{F}$ for any $i \in S$} \}, \label{SSE1}\\
		\mathcal{E}(f) = \sum_{i \in S}\rweight_{i}\mathcal{E}(f \circ F_{i}) \quad \text{for any $f \in \mathcal{F} \cap \contfunc(K)$.} \label{SSE2}
	\end{gather}
    Note that for any partition $\Lambda$ of $\Sigma$, \eqref{SSE2} implies
    \begin{equation}\label{ss.partition}
        \mathcal{E}(f) = \sum_{w \in \Lambda}\rweight_{w}\mathcal{E}(f \circ F_{w}), \quad f \in \mathcal{F} \cap \contfunc(K),
    \end{equation}
    where $\rweight_{w} \coloneqq \rweight_{w_1} \cdots \rweight_{w_n}$ for $w = w_{1} \dots w_{n} \in W_{\ast}$ ($\rweight_{\emptyset} \coloneqq 1$).
    Indeed, \eqref{ss.partition} follows from an induction with respect to $\max_{w \in \Lambda}\abs{w}$.
\end{defn}

In the rest of this subsection, we assume that $(\mathcal{E}, \mathcal{F})$ is a self-similar $p$-energy form on $\mathcal{L}$ with weight $\bm{\rweight} = (\rweight_{i})_{i \in S}$.
We can see that the two-variable version $\mathcal{E}(f;g)$ also has the following self-similarity.
Recall $p$-Clarkson's inequality \ref{Cp} from Definition \ref{d:Cp} and that it implies the existence \eqref{exist-deriva} of the derivative $\mathcal{E}(f; g) \coloneqq \frac{1}{p}\left.\frac{d}{dt}\mathcal{E}(f + tg)\right|_{t = 0} \in \mathbb{R}$.

\begin{prop}\label{prop.ss-pform}
    Assume that $(\mathcal{E},\mathcal{F} \cap \contfunc(K))$ satisfies \ref{Cp}.
    Then
    \begin{equation}\label{ss-pform}
        \mathcal{E}(f; g) = \sum_{i \in S}\rweight_{i}\mathcal{E}(f \circ F_{i}; g \circ F_{i}) \quad \text{for any $f, g \in \mathcal{F} \cap \contfunc(K)$.}
    \end{equation}
\end{prop}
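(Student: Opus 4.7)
The plan is to deduce \eqref{ss-pform} directly from the self-similarity identity \eqref{SSE2} by differentiating at $t=0$, invoking Proposition \ref{prop.diffble} (which applies once we know \ref{Cp} holds on the relevant domain) to guarantee that the derivative defining $\mathcal{E}(\,\cdot\,;\,\cdot\,)$ exists and matches the definition in \eqref{exist-deriva}. Since the sum over $S$ is finite, interchanging the derivative and the sum is trivial, so no uniform control is actually needed.

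More concretely, I would fix $f,g \in \mathcal{F} \cap \contfunc(K)$. For every $t \in \mathbb{R}$ we have $f + tg \in \mathcal{F} \cap \contfunc(K)$ by linearity and continuity, and by \eqref{SSE1} together with the continuity of $F_i$, for each $i \in S$ the function $(f+tg) \circ F_i = f \circ F_i + t\,(g \circ F_i)$ belongs to $\mathcal{F} \cap \contfunc(K)$. Applying Proposition \ref{prop.diffble} to the $p$-energy form $(\mathcal{E},\mathcal{F} \cap \contfunc(K))$ (which satisfies \ref{Cp} by hypothesis), the maps
\begin{equation*}
t \mapsto \mathcal{E}(f+tg) \qquad \text{and} \qquad t \mapsto \mathcal{E}(f \circ F_i + t\,(g \circ F_i)), \quad i \in S,
\end{equation*}
are differentiable on $\mathbb{R}$, with derivatives at $t=0$ equal to $p\,\mathcal{E}(f;g)$ and $p\,\mathcal{E}(f \circ F_i; g \circ F_i)$ respectively, by \eqref{exist-deriva}.

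Now I apply \eqref{SSE2} to $f + tg \in \mathcal{F} \cap \contfunc(K)$ for each $t \in \mathbb{R}$ to obtain
\begin{equation*}
\mathcal{E}(f+tg) = \sum_{i \in S} \rweight_i\, \mathcal{E}\bigl(f \circ F_i + t\,(g \circ F_i)\bigr).
\end{equation*}
Since $S$ is finite and each summand on the right is differentiable at $t=0$, so is the sum, and differentiating both sides at $t=0$ yields
\begin{equation*}
p\,\mathcal{E}(f;g) = \sum_{i \in S} \rweight_i \cdot p\,\mathcal{E}(f \circ F_i; g \circ F_i),
\end{equation*}
which gives \eqref{ss-pform} upon dividing by $p$. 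There is essentially no obstacle here; the only point worth noting is that the argument uses Proposition \ref{prop.diffble} on $(\mathcal{E},\mathcal{F} \cap \contfunc(K))$ rather than on $(\mathcal{E},\mathcal{F})$, but the hypothesis of the proposition is precisely that \ref{Cp} holds on $\mathcal{F} \cap \contfunc(K)$, so this is immediate.
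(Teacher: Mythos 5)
Your proof is correct and follows essentially the same route as the paper: the paper applies \eqref{SSE2} to $f+tg$, forms the difference quotient, and lets $t \downarrow 0$ term by term over the finite index set $S$, with the existence of the limits guaranteed by Proposition \ref{prop.diffble} under \ref{Cp}. Your explicit appeal to Proposition \ref{prop.diffble} on $(\mathcal{E},\mathcal{F}\cap\contfunc(K))$ and to \eqref{SSE1} for $f\circ F_i, g\circ F_i \in \mathcal{F}\cap\contfunc(K)$ is just a slightly more detailed writing of the same argument.
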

\begin{proof}
    For any $f, g \in \mathcal{F} \cap \contfunc(K)$ and any $t > 0$, we have
    \[
    \frac{\mathcal{E}(f + tg) - \mathcal{E}(f)}{t}
    = \sum_{i \in S}\rweight_{i}\frac{\mathcal{E}\bigl(f \circ F_{i} + t(g \circ F_{i})\bigr) - \mathcal{E}(f \circ F_{i})}{t}.
    \]
    Letting $t \downarrow 0$ yields \eqref{ss-pform}.
\end{proof}

Next we see that $p$-energy measures are naturally introduced by virtue of the self-similarity of $(\mathcal{E},\mathcal{F})$ (see also \cite{Hin05,MS+}).
For $f \in \mathcal{F} \cap \contfunc(K)$, we define a finite measure $\mathfrak{m}_{\mathcal{E}}^{(n)}\langle f \rangle$ on $W_{n} = S^{n}$ by putting $\mathfrak{m}_{\mathcal{E}}^{(n)}\langle f \rangle(\{w\}) \coloneqq \rweight_{w}\mathcal{E}(f \circ F_{w})$ for each $w \in W_{n}$.
Then $\{ \mathfrak{m}_{\mathcal{E}}^{(n)}\langle f \rangle \}_{n \ge 0}$ satisfies the consistency condition by \eqref{ss.partition}, and hence Kolmogorov's extension theorem (see, e.g., \cite[Theorem 12.1.2]{Dud}) guarantees that there exists a unique Borel measure $\mathfrak{m}_{\mathcal{E}}\langle f \rangle$ on $\Sigma = S^{\mathbb{N}}$ such that $\mathfrak{m}_{\mathcal{E}}\langle f \rangle(\Sigma_{w}) = \rweight_{w}\mathcal{E}(f \circ F_{w})$ for any $w \in W_{\ast}$.
In particular, $\mathfrak{m}_{\mathcal{E}}\langle f \rangle(\Sigma) = \mathcal{E}(f)$.
Basic properties of $\mathfrak{m}_{\mathcal{E}}\langle \,\cdot\, \rangle$ are collected in the following proposition.
Recall the generalized $p$-contraction property \ref{GC} from Definition \ref{defn.GC}.
\begin{prop}\label{prop.sspem-pre}
	\begin{enumerate}[label=\textup{(\alph*)},align=left,leftmargin=*,topsep=2pt,parsep=0pt,itemsep=2pt]
		\item\label{it:sspem-pre.GC} Assume that $(\mathcal{E},\mathcal{F} \cap \contfunc(K))$ satisfies \ref{GC}. Then for any $A \in \mathcal{B}(\Sigma)$, $(\mathfrak{m}_{\mathcal{E}}\langle \,\cdot\, \rangle(A),\mathcal{F} \cap \contfunc(K))$ is a $p$-energy form on $(K,m)$ satisfying \ref{GC}. 
		\item\label{it:sspem-pre.signed} Assume that $(\mathcal{E},\mathcal{F} \cap \contfunc(K))$ satisfies \ref{Cp}. Then for any $A \in \mathcal{B}(\Sigma)$, $(\mathfrak{m}_{\mathcal{E}}\langle \,\cdot\, \rangle(A),\mathcal{F} \cap \contfunc(K))$ is a $p$-energy form on $(K,m)$ satisfying \ref{Cp}, and in particular, for any $f,g \in \mathcal{F} \cap \contfunc(K)$, the following derivative exists in $\mathbb{R}$: 
    	\begin{equation}\label{e:defn.emss.pre}
    		\mathfrak{m}_{\mathcal{E}}\langle f; g \rangle(A) \coloneqq \frac{1}{p}\left.\frac{d}{dt}\mathfrak{m}_{\mathcal{E}}\langle f + tg \rangle(A)\right|_{t = 0}. 
    	\end{equation}
    	Moreover, $\mathfrak{m}_{\mathcal{E}}\langle f; g \rangle$ is a signed Borel measure on $\Sigma$. 
	\end{enumerate}
\end{prop}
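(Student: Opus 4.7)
The plan is a monotone class argument. Set
\[
\mathcal{A} \coloneqq \bigl\{ A \in \mathcal{B}(\Sigma) \bigm| (\mathfrak{m}_{\mathcal{E}}\langle\,\cdot\,\rangle(A), \mathcal{F}\cap\contfunc(K)) \text{ has the desired property} \bigr\},
\]
where the ``desired property'' is being a $p$-energy form satisfying \ref{GC} in case \ref{it:sspem-pre.GC} and being a $p$-energy form satisfying \ref{Cp} in case \ref{it:sspem-pre.signed}. The cylinders $\{\Sigma_{w}\}_{w\in W_{\ast}}$ form a semi-algebra generating $\mathcal{B}(\Sigma)$, and their finite disjoint unions form the corresponding algebra; by the monotone class theorem it is enough to show that $\mathcal{A}$ contains this algebra and is closed under monotone limits.

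For a single cylinder $\Sigma_{w}$, the identity $\mathfrak{m}_{\mathcal{E}}\langle f\rangle(\Sigma_{w}) = \rweight_{w}\mathcal{E}(f\circ F_{w})$ exhibits $\mathfrak{m}_{\mathcal{E}}\langle\,\cdot\,\rangle(\Sigma_{w})$ as a positive multiple of $\mathcal{E}$ precomposed with the linear substitution $f\mapsto f\circ F_{w}$, and this substitution maps $\mathcal{F}\cap\contfunc(K)$ into itself by \eqref{SSE1} and the continuity of $F_{w}$. Since any map $T=(T_{l})$ satisfying \eqref{GC-cond} is automatically continuous and commutes with the substitution in the pointwise sense $T_{l}\bigl((u_{k}\circ F_{w})_{k}\bigr) = T_{l}(\bm{u})\circ F_{w}$, both \ref{GC} and \ref{Cp} for $(\mathcal{E},\mathcal{F}\cap\contfunc(K))$ transfer verbatim---up to the positive factor $\rweight_{w}$---to $\mathfrak{m}_{\mathcal{E}}\langle\,\cdot\,\rangle(\Sigma_{w})$. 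Passage to finite disjoint unions of cylinders is handled by Proposition \ref{prop.cone-gen}-\ref{GC.cone} in case \ref{it:sspem-pre.GC}; in case \ref{it:sspem-pre.signed}, the same summation computation \eqref{GC.sum} specialized to the Clarkson maps of Proposition \ref{prop.GC-list}-\ref{GC.Cpsmall},\ref{GC.Cplarge} yields \ref{Cp} for the sum, exactly as in the proof of Proposition \ref{prop.E1-unifconvex}. For monotone sequences $A_{n}\uparrow A$ or $A_{n}\downarrow A$ in $\mathcal{B}(\Sigma)$, the $\sigma$-additivity of each finite measure $\mathfrak{m}_{\mathcal{E}}\langle f\rangle$ gives $\mathfrak{m}_{\mathcal{E}}\langle f\rangle(A_{n}) \to \mathfrak{m}_{\mathcal{E}}\langle f\rangle(A)$ for every $f\in\mathcal{F}\cap\contfunc(K)$, so Proposition \ref{prop.cone-gen}-\ref{GC.pwlimit} (together with the trivial pointwise stability of \ref{Cp}) places the limit in $\mathcal{A}$. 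The monotone class theorem then yields $\mathcal{A}=\mathcal{B}(\Sigma)$.

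For the remaining assertions in \ref{it:sspem-pre.signed}, the family $\{\mathfrak{m}_{\mathcal{E}}\langle f\rangle\}_{f\in\mathcal{F}\cap\contfunc(K)}$ on $(\Sigma,\mathcal{B}(\Sigma))$ satisfies \ref{EM1} (with equality), \ref{EM2}, and \ref{Cp-em}, so Proposition \ref{prop.c-diff-em} provides the pointwise-in-$A$ differentiability needed to define $\mathfrak{m}_{\mathcal{E}}\langle f;g\rangle(A)$ via \eqref{e:defn.emss.pre}, and Theorem \ref{thm.signed} upgrades $\mathfrak{m}_{\mathcal{E}}\langle f;g\rangle$ to a signed Borel measure on $\Sigma$. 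Neither proof uses that the space carrying the $p$-energy measures coincides with the space on which the underlying $p$-energy form lives, so both apply with $\Sigma$ in place of $X$ without modification. The main obstacle I anticipate is the case-\ref{it:sspem-pre.signed} verification that \ref{Cp} is preserved by positive linear combinations of $p$-energy forms with a common domain; this is not recorded as a stand-alone lemma in the excerpt, but the computation \eqref{GC.sum} with the Clarkson maps substituted for the general $T$ gives the inequality directly, so I would invoke that computation rather than formulate a separate result.
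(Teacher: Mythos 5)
Your proposal is correct and follows essentially the same route as the paper's proof: verify the inequality on cylinders via the identity $\mathfrak{m}_{\mathcal{E}}\langle f\rangle(\Sigma_{w})=\rweight_{w}\mathcal{E}(f\circ F_{w})$, pass to the algebra of finite disjoint unions of cylinders by the Minkowski/reverse-Minkowski summation argument of \eqref{GC.sum}, extend to all of $\mathcal{B}(\Sigma)$ by the monotone class theorem using continuity of the finite measures $\mathfrak{m}_{\mathcal{E}}\langle f\rangle$ along monotone sequences together with the pointwise stability of \ref{GC} and \ref{Cp}, and then invoke Proposition \ref{prop.diffble} (equivalently Proposition \ref{prop.c-diff-em}) and Theorem \ref{thm.signed} for the differentiability and signed-measure claims in \ref{it:sspem-pre.signed}. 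The only cosmetic difference is that the paper runs the monotone class argument on the collection of sets satisfying a fixed instance of the inequality rather than on the collection satisfying the full property, and your anticipated ``obstacle'' about \ref{Cp} being stable under positive linear combinations is resolved exactly as you suggest, via \eqref{GC.sum} specialized to the Clarkson maps.
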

\begin{proof}
	\ref{it:sspem-pre.GC}: 
	Let $n_{1},n_{2} \in \mathbb{N}$, $q_{1} \in (0,p]$, $q_{2} \in [p,\infty]$ and $T = (T_{1},\dots,T_{n_{2}}) \colon \mathbb{R}^{n_1} \to \mathbb{R}^{n_2}$ satisfy \eqref{GC-cond} in Definition \ref{defn.GC}, and let $\bm{u} = (u_{1},\dots,u_{n_{1}}) \in \bigl(\mathcal{F} \cap \contfunc(K)\bigr)^{n_{1}}$. We are to show that  
    \begin{equation}\label{ss-em.GCpre}
        \norm{\bigl( \mathfrak{m}_{\mathcal{E}}\langle T_{l}(\bm{u}) \rangle(A)^{1/p}\bigr)_{l = 1}^{n_{2}}}_{\ell^{q_{2}}} \le \norm{\bigl(\mathfrak{m}_{\mathcal{E}}\langle u_{k} \rangle(A)^{1/p}\bigr)_{k = 1}^{n_{1}}}_{\ell^{q_{1}}}, \quad A \in \mathcal{B}(\Sigma). 
    \end{equation}
    If $A = \Sigma_{w}$ for some $w \in W_{\ast}$, then \eqref{ss-em.GCpre} is clearly true by \ref{GC} for $(\mathcal{E},\mathcal{F})$. 
    By using the reverse Minkowski inequality on $\ell^{q_{1}/p}$ and the Minkowski inequality on $\ell^{q_{2}/p}$ in the same way as \eqref{GC.sum} in the proof of Proposition \ref{prop.cone-gen}, we obtain \eqref{ss-em.GCpre} for any $A$ belonging to the algebra in $\Sigma$ generated by $\{ \Sigma_{w} \}_{w \in W_{\ast}}$. 
    Hence the monotone class theorem (see, e.g., \cite[Theorem 4.4.2]{Dud}) implies that \eqref{ss-em.GCpre} holds for any $A \in \mathcal{B}(\Sigma)$. 
    
    \ref{it:sspem-pre.signed}: 
    Note that a special case of \eqref{ss-em.GCpre} proves \ref{Cp} for $(\mathfrak{m}_{\mathcal{E}}\langle \,\cdot\, \rangle(A),\mathcal{F} \cap \contfunc(K))$; see also Proposition \ref{prop.GC-list}-\ref{GC.Cpsmall},\ref{GC.Cplarge}. 
    Then the derivative in \eqref{e:defn.emss.pre} exists by Proposition \ref{prop.diffble} and \eqref{ss-em.GCpre}.
    In addition, $\mathfrak{m}_{\mathcal{E}}\langle f; g \rangle$ turns out to be a signed Borel measure on $\Sigma$ by Theorem \ref{thm.signed}. 
    (Even if $(\mathcal{E},\mathcal{F})$ does not satisfy \ref{GC}, the above proof of \ref{it:sspem-pre.GC} together with the triangle inequality for $\mathcal{E}^{1/p}$ shows \eqref{ss-em.GCpre} with $(n_{1},n_{2},q_{1},q_{2}) = (2,1,p,p)$ and $T_{1}(x,y) = x + y$, namely the triangle inequality on $\mathcal{F} \cap \contfunc(K)$ for $\mathfrak{m}_{\mathcal{E}}\langle \,\cdot\, \rangle(A)^{1/p}$.) 
\end{proof}

We now define a family $\{ \Gamma_{\mathcal{E}}\langle f \rangle \}_{f \in \mathcal{F} \cap \contfunc(K)}$ of finite Borel measures on $K$, which we call the \emph{self-similar $p$-energy measures} \index{self-similar $p$-energy measure} associated with $(\mathcal{E}, \mathcal{F})$, by 
\begin{equation}\label{e:defn.em.one}
   \Gamma_{\mathcal{E}}\langle f \rangle(A) \coloneqq (\mathfrak{m}_{\mathcal{E}}\langle f \rangle \circ \chi^{-1})(A) \coloneqq \mathfrak{m}_{\mathcal{E}}\langle f \rangle(\chi^{-1}(A)), \quad A \in \mathcal{B}(K)
\end{equation}
for $f \in \mathcal{F} \cap \contfunc(K)$, where $\chi \colon \Sigma \to K$ is the same map as in Definition \ref{d:sss}.
The following proposition states basic properties and the self-similarity of $\{ \Gamma_{\mathcal{E}}\langle f \rangle \}_{f \in \mathcal{F} \cap \contfunc(K)}$. 
\begin{prop}\label{prop.ss-pform-em} 
    \begin{enumerate}[label=\textup{(\alph*)},align=left,leftmargin=*,topsep=2pt,parsep=0pt,itemsep=2pt]
        \item \label{ssem} $\{ \Gamma_{\mathcal{E}}\langle f \rangle \}_{f \in \mathcal{F} \cap \contfunc(K)}$ satisfies $\Gamma_{\mathcal{E}}\langle f \rangle(K) = \mathcal{E}(f)$ for any $f \in \mathcal{F} \cap \contfunc(K)$, in particular \ref{EM1}, and \ref{EM2}. 
        \item \label{it:ssem-eachcell} For any $f \in \mathcal{F} \cap \contfunc(K)$, any $w \in W_{\ast}$ and any $n \in \mathbb{N} \cup \{ 0 \}$, 
        \begin{equation}\label{e:eachcell}
        	\rweight_{w}\mathcal{E}(f \circ F_w) \le \Gamma_{\mathcal{E}}\langle f \rangle(K_{w}) \le \sum_{v \in W_{n}; K_{v} \cap K_{w} \neq \emptyset}\rweight_{v}\mathcal{E}(f \circ F_{v}).
        \end{equation}
        \item \label{ssem-GC} Assume that $(\mathcal{E},\mathcal{F} \cap \contfunc(K))$ satisfies \ref{GC}, let $\varphi \colon K \to [0,\infty]$ be Borel measurable, and let $n_{1},n_{2} \in \mathbb{N}$, $q_{1} \in (0,p]$, $q_{2} \in [p,\infty]$ and $T = (T_{1},\dots,T_{n_{2}}) \colon \mathbb{R}^{n_1} \to \mathbb{R}^{n_2}$ satisfy \eqref{GC-cond} in Definition \ref{defn.GC}.  
        	Then for any $\bm{u} = (u_{1},\dots,u_{n_{1}}) \in \bigl(\mathcal{F} \cap \contfunc(K)\bigr)^{n_{1}}$, 
        \begin{equation}\label{ss-em.GCint}
            \norm{\left(\left(\int_{K}\varphi\,d\Gamma_{\mathcal{E}}\langle T_{l}(\bm{u}) \rangle\right)^{1/p}\right)_{l = 1}^{n_{2}}}_{\ell^{q_{2}}} \le \norm{\left(\left(\int_{K}\varphi\,d\Gamma_{\mathcal{E}}\langle u_{k} \rangle\right)^{1/p}\right)_{k = 1}^{n_{1}}}_{\ell^{q_{1}}}.
        \end{equation}
        In particular Proposition \ref{prop.GC-list} with $(\int_{K}\varphi\,d\Gamma_{\mathcal{E}}\langle \,\cdot\, \rangle, \mathcal{F} \cap \contfunc(K))$ in place of $(\mathcal{E},\mathcal{F})$ holds provided $\norm{\varphi}_{\sup} < \infty$.  
        \item \label{ssem-ss} The following equality (of Borel measures on $K$) holds:
        \begin{equation}\label{ss-em}
            \Gamma_{\mathcal{E}}\langle f \rangle = \sum_{i \in S}\rweight_{i}(\Gamma_{\mathcal{E}}\langle f \circ F_{i} \rangle \circ F_{i}^{-1}) \quad \text{for any $f \in \mathcal{F} \cap \contfunc(K)$.}
        \end{equation}
        \item \label{ssem-Cp} Assume that $(\mathcal{E},\mathcal{F} \cap \contfunc(K))$ satisfies \ref{Cp}. Then $\{ \Gamma_{\mathcal{E}}\langle f \rangle \}_{f \in \mathcal{F} \cap \contfunc(K)}$ also satisfies \ref{Cp-em}, and the following equality (of signed Borel measures on $K$) holds:
        \begin{equation}\label{ss-emform}
            \Gamma_{\mathcal{E}}\langle f; g \rangle = \sum_{i \in S}\rweight_{i}(\Gamma_{\mathcal{E}}\langle f \circ F_{i}; g \circ F_{i} \rangle \circ F_{i}^{-1}) \quad \text{for any $f,g \in \mathcal{F} \cap \contfunc(K)$.}
        \end{equation}
        \item \label{it:ssem-two-chi} Assume that $(\mathcal{E},\mathcal{F} \cap \contfunc(K))$ satisfies \ref{Cp}. Then $\mathfrak{m}_{\mathcal{E}}\langle f;g \rangle \circ \chi^{-1} = \Gamma_{\mathcal{E}}\langle f;g \rangle$ for any $f,g \in \mathcal{F} \cap \contfunc(K)$. 
    \end{enumerate}
\end{prop}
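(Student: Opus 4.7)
My plan is to derive everything from the single key identity
\[
\mathfrak{m}_{\mathcal{E}}\langle f \rangle \circ \sigma_{i} = \rweight_{i}\,\mathfrak{m}_{\mathcal{E}}\langle f \circ F_{i} \rangle \quad \text{as measures on } \mathcal{B}(\Sigma)
\]
together with the defining intertwining $F_{i}\circ\chi=\chi\circ\sigma_{i}$. I would first verify the above identity on cylinder sets $\Sigma_{w}$, where both sides equal $\rweight_{iw}\mathcal{E}(f\circ F_{iw})$, and then upgrade it to all Borel sets by the uniqueness assertion of Kolmogorov's extension theorem (equivalently, by the $\pi$--$\lambda$ theorem applied to $\{\Sigma_{w}\}_{w\in W_{\ast}}$, which generates $\mathcal{B}(\Sigma)$). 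For \ref{ssem}, the value $\Gamma_{\mathcal{E}}\langle f \rangle(K)=\mathfrak{m}_{\mathcal{E}}\langle f \rangle(\Sigma)=\mathcal{E}(f)$ is immediate; the property \ref{EM2} transfers from $\mathfrak{m}_{\mathcal{E}}$ to $\Gamma_{\mathcal{E}}$ via Proposition \ref{prop.sspem-pre}\ref{it:sspem-pre.GC} applied to $\chi^{-1}(A)\in\mathcal{B}(\Sigma)$ (which is Borel since $\chi$ is continuous). For \ref{it:ssem-eachcell} I would use the evident inclusions
\[
\Sigma_{w} \;\subseteq\; \chi^{-1}(K_{w}) \;\subseteq\; \bigsqcup_{v\in W_{n},\, K_{v}\cap K_{w}\neq\emptyset}\Sigma_{v},
\]
where the right-hand inclusion holds because if $\chi(\omega)\in K_{w}$ and $\omega\in\Sigma_{v}$ with $v\in W_{n}$ then $\chi(\omega)\in K_{v}\cap K_{w}$; taking $\mathfrak{m}_{\mathcal{E}}\langle f\rangle$-measure then yields \eqref{e:eachcell}.

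For the self-similarity \eqref{ss-em}, I would combine the pushforward identity with $\chi^{-1}\circ F_{i}^{-1}=\sigma_{i}^{-1}\circ\chi^{-1}$ and the disjoint decomposition $\chi^{-1}(A)=\bigsqcup_{i\in S}\bigl(\chi^{-1}(A)\cap\Sigma_{i}\bigr)$ (noting that $\sigma_{i}\colon\Sigma\to\Sigma_{i}$ is a homeomorphism) to compute
\[
\mathfrak{m}_{\mathcal{E}}\langle f\rangle\bigl(\chi^{-1}(A)\cap\Sigma_{i}\bigr)
=\rweight_{i}\,\mathfrak{m}_{\mathcal{E}}\langle f\circ F_{i}\rangle\bigl(\sigma_{i}^{-1}(\chi^{-1}(A))\bigr)
=\rweight_{i}\,\Gamma_{\mathcal{E}}\langle f\circ F_{i}\rangle\bigl(F_{i}^{-1}(A)\bigr)
\]
for $A\in\mathcal{B}(K)$, and summing over $i\in S$ gives \eqref{ss-em}. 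For \ref{ssem-GC}, Proposition \ref{prop.sspem-pre}\ref{it:sspem-pre.GC} applied with $A$ replaced by $\chi^{-1}(A)$ supplies \ref{GC} for $(\Gamma_{\mathcal{E}}\langle\cdot\rangle(A),\mathcal{F}\cap C(K))$ for every $A\in\mathcal{B}(K)$, and the passage to the integral inequality \eqref{ss-em.GCint} is the routine approximation argument in Proposition \ref{prop:EMGC.intver} (simple functions, then monotone convergence combined with the reverse Minkowski inequality on $\ell^{q_{1}/p}$ and Minkowski on $\ell^{q_{2}/p}$).

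For \ref{ssem-Cp} and \ref{it:ssem-two-chi}, \ref{Cp-em} for $\{\Gamma_{\mathcal{E}}\langle f\rangle\}$ is inherited from $\{\mathfrak{m}_{\mathcal{E}}\langle f\rangle\}$ in the same way as \ref{EM2}, so Proposition \ref{prop.c-diff-em} applies to both families. Since $\chi^{-1}(A)\in\mathcal{B}(\Sigma)$ is independent of $t$, the equality $\Gamma_{\mathcal{E}}\langle f+tg\rangle(A)=\mathfrak{m}_{\mathcal{E}}\langle f+tg\rangle(\chi^{-1}(A))$ (valid by definition for all $t\in\mathbb{R}$) can be differentiated in $t$ at $t=0$, using the existence of both derivatives from Proposition \ref{prop.c-diff-em}; this yields $\Gamma_{\mathcal{E}}\langle f;g\rangle(A)=\mathfrak{m}_{\mathcal{E}}\langle f;g\rangle(\chi^{-1}(A))$, which is \ref{it:ssem-two-chi}. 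For the signed-measure identity \eqref{ss-emform}, I would differentiate \eqref{ss-em} with $f$ replaced by $f+tg$ at $t=0$: each summand $\rweight_{i}\Gamma_{\mathcal{E}}\langle(f+tg)\circ F_{i}\rangle(F_{i}^{-1}(A))$ is differentiable by Proposition \ref{prop.c-diff-em} applied on $F_{i}^{-1}(A)\in\mathcal{B}(K)$, producing $\rweight_{i}\Gamma_{\mathcal{E}}\langle f\circ F_{i};g\circ F_{i}\rangle(F_{i}^{-1}(A))$ on the right, and the identity of signed measures follows since $A\in\mathcal{B}(K)$ is arbitrary. The only technical care needed throughout is the Borel measurability of $\chi^{-1}(A)$ and the uniform interchange of differentiation with the finite sum over $S$, both of which are straightforward; no single step looks delicate, but the cleanest exposition will be to prove \ref{ssem-ss} first via the cylinder identity and then derive everything else from it.
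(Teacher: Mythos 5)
Your proposal is correct and follows essentially the same route as the paper's proof: verification of the self-similar identity on cylinder sets followed by Dynkin's $\pi$--$\lambda$ theorem and the pushforward via $\chi\circ\sigma_{i}=F_{i}\circ\chi$ for \ref{ssem-ss}, the inclusion $\Sigma_{w}\subseteq\chi^{-1}(K_{w})\subseteq\bigcup_{v\in W_{n};K_{v}\cap K_{w}\neq\emptyset}\Sigma_{v}$ for \ref{it:ssem-eachcell}, Proposition \ref{prop.sspem-pre} applied to $\chi^{-1}(A)$ plus the simple-function/monotone-convergence upgrade for \ref{ssem-GC}, and differentiation at $t=0$ of the $t$-parametrized identities for \ref{ssem-Cp} and \ref{it:ssem-two-chi}. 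The only (immaterial) difference is that you organize the argument around the per-index pushforward identity $\mathfrak{m}_{\mathcal{E}}\langle f\rangle\circ\sigma_{i}=\rweight_{i}\,\mathfrak{m}_{\mathcal{E}}\langle f\circ F_{i}\rangle$ and prove \ref{ssem-ss} first, whereas the paper verifies the summed identity directly on cylinders.
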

\begin{proof}
    \ref{ssem}: 
    We easily have $\Gamma_{\mathcal{E}}(K) = \mathfrak{m}_{\mathcal{E}}\langle f \rangle(\chi^{-1}(K)) = \mathfrak{m}_{\mathcal{E}}\langle f \rangle(\Sigma) = \mathcal{E}(f)$. 
    The proof of \ref{EM2} is included in the proof of \ref{ssem-GC} below. 
    
    \ref{it:ssem-eachcell}: 
    This statement is the same as \cite[Lemma 9.15]{MS.long}, which is easily proved by noting that $\Sigma_{w} \subseteq \chi^{-1}(K_{w}) \subseteq \bigcup_{v \in W_{n}; K_{v} \cap K_{w} \neq \emptyset}\Sigma_{v}$. 

    \ref{ssem-GC}:
    Assume that $(\mathcal{E},\mathcal{F})$ satisfies \ref{GC}.
    Let us fix $T = (T_{1},\dots,T_{n_{2}}) \colon \mathbb{R}^{n_1} \to \mathbb{R}^{n_2}$ satisfying \eqref{GC-cond} and $\bm{u} = (u_{1},\dots,u_{n_{1}}) \in \bigl(\mathcal{F} \cap \contfunc(K)\bigr)^{n_{1}}$. 
    For any $B \in \mathcal{B}(K)$, by \ref{GC} for $(\mathfrak{m}_{\mathcal{E}}\langle \,\cdot\, \rangle(\chi^{-1}(B)),\mathcal{F} \cap \contfunc(K))$ (see Proposition \ref{prop.sspem-pre}-\ref{it:sspem-pre.GC}), we obtain 
    \begin{equation}\label{ss-em.GC}
        \norm{\bigl( \Gamma_{\mathcal{E}}\langle T_{l}(\bm{u}) \rangle(B)^{1/p}\bigr)_{l = 1}^{n_{2}}}_{\ell^{q_{2}}} \le \norm{\bigl( \Gamma_{\mathcal{E}}\langle u_{k} \rangle(B)^{1/p}\bigr)_{k = 1}^{n_{1}}}_{\ell^{q_{1}}}.
    \end{equation}
  	Again by the same argument as \eqref{GC.sum} in the proof of Proposition \ref{prop.cone-gen}, we see that \eqref{ss-em.GCint} holds for any non-negative Borel measurable simple function $\varphi$ on $K$. We get the desired extension, \eqref{ss-em.GCint} for any Borel measurable function $\varphi \colon K \to [0,\infty]$, by the monotone convergence theorem. 
  	
  	\ref{ssem-ss}:
  	The proof is very similar to \cite[Proof of Theorem 7.5]{Shi24}. 
  	Let $k \in \mathbb{N}$, $w = w_{1} \dots w_{k} \in W_{k}$ and $n \in \mathbb{N}$.
  	We see that 
  	\begin{align*}
      	\sum_{i \in S}\rweight_{i}\mathfrak{m}_{\mathcal{E}}\langle f \circ F_{i} \rangle(\sigma_{i}^{-1}(\Sigma_{w}))
      	&= \rweight_{w_{1}}\mathfrak{m}_{\mathcal{E}}\langle f \circ F_{w_{1}} \rangle(\sigma_{w_{1}}^{-1}(\Sigma_{w})) 
      	= \rweight_{w_{1}}\mathfrak{m}_{\mathcal{E}}\langle f \circ F_{w_{1}} \rangle(\Sigma_{w_{2} \dots w_{k}}) \\ 
      	&= \rweight_{w_{1}}\rweight_{w_{2} \dots w_{k}}\mathcal{E}((f \circ F_{w_1}) \circ F_{w_{2} \dots w_{k}})
      	= \mathfrak{m}_{\mathcal{E}}\langle f \rangle(\Sigma_{w})
  	\end{align*}
  	Since $w \in W_{\ast}$ is arbitrary, by Dynkin's $\pi$-$\lambda$ theorem, we deduce that
  	\[
  	\mathfrak{m}_{\mathcal{E}}\langle f \rangle(A)
  	= \sum_{i \in S}\rweight_{i}(\mathfrak{m}_{\mathcal{E}}\langle f \circ F_{i} \rangle \circ \sigma_{i}^{-1})(A), \quad A \in \mathcal{B}(\Sigma). 
  	\]
  	We obtain \eqref{ss-em} by $\chi \circ \sigma_{i} = F_{i} \circ \chi$.  
    
    \ref{ssem-Cp}:
    Assume that $(\mathcal{E},\mathcal{F})$ satisfies \ref{Cp}.
    Then $\{ \Gamma_{\mathcal{E}}\langle f \rangle \}_{f \in \mathcal{F} \cap \contfunc(K)}$ satisfies \ref{Cp-em} by \eqref{ss-em.GC} (see also Proposition \ref{prop.GC-list}-\ref{GC.Cpsmall},\ref{GC.Cplarge}).
    Now we obtain \eqref{ss-emform} by letting $t \downarrow 0$ in
    \[
    \Gamma_{\mathcal{E}}\langle f + tg \rangle(A) = \sum_{i \in S}\rweight_{i}\Gamma_{\mathcal{E}}\langle f \circ F_{i} +  t(g \circ F_{i}) \rangle\bigl(F_{i}^{-1}(A)\bigr). 
    \]
    
    \ref{it:ssem-two-chi}: 
    This is immediate from \eqref{e:defn.em.one}, the definition \eqref{e:defn.emss} of $\Gamma_{\mathcal{E}}\langle f;g \rangle$ and that \eqref{e:defn.emss.pre} of $\mathfrak{m}_{\mathcal{E}}\langle f; g \rangle$. 
\end{proof}

We next prove the chain rules \hyperref[it:CL1]{\textup{(CL1)}} and \hyperref[it:CL2]{\textup{(CL2)}} (recall Definition \ref{defn.chainrule}) for $\Gamma_{\mathcal{E}}\langle \,\cdot\,\rangle$. 
Such chain rules have been obtained also in \cite{BV05}, but we provide here self-contained proofs because our present framework is different from that of \cite{BV05} and our version \hyperref[it:CL2]{\textup{(CL2)}} is stronger than the chain rule proved in \cite{BV05}. 
\begin{thm}\label{thm.em-chain-one}
    Assume that $\mathbb{R}\indicator{K} \subseteq \mathcal{E}^{-1}(0)$ and that $(\mathcal{E},\mathcal{F} \cap \contfunc(K))$ satisfies \eqref{lipcont} in Proposition \ref{prop.GC-list}-\ref{GC.lip}. 
    Then $\{ \Gamma_{\mathcal{E}}\langle f \rangle \}_{f \in \mathcal{F} \cap \contfunc(K)}$ satisfies \hyperref[it:CL1]{\textup{(CL1)}}, i.e., for any $u \in \mathcal{F} \cap \contfunc(K)$ and any $\Phi \in C^{1}(\mathbb{R})$, we have $\Phi(u) \in \mathcal{F} \cap \contfunc(K)$ and
    \begin{equation}\label{e:chain.one}
        d\Gamma_{\mathcal{E}}\langle \Phi(u) \rangle
        = \abs{\Phi'(u)}^{p}\,d\Gamma_{\mathcal{E}}\langle u \rangle.
    \end{equation}
\end{thm}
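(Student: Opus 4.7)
The plan is to first verify that $\Phi(u) \in \mathcal{F} \cap \contfunc(K)$ and then to establish the measure identity by lifting it to the shift space $\Sigma$ via $\chi$ and exploiting self-similarity together with a local linearization of $\Phi$ on small cells.

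\textbf{Membership.} Since $K$ is compact and $u \in \contfunc(K)$ is bounded with $M \coloneqq \norm{u}_{\sup} < \infty$, we pick $\tilde{\Phi} \in C^{1}(\mathbb{R})$ agreeing with $\Phi$ on $[-M,M]$ and globally $L$-Lipschitz for some $L \in (0,\infty)$ (via a standard cutoff/gluing). Applying \eqref{lipcont} to the $1$-Lipschitz map $L^{-1}(\tilde{\Phi} - \tilde{\Phi}(0))$, which vanishes at $0$, yields $L^{-1}(\tilde{\Phi}(u) - \tilde{\Phi}(0)\indicator{K}) \in \mathcal{F}$; since $\indicator{K} \in \mathcal{F}$ by hypothesis, $\Phi(u) = \tilde{\Phi}(u) \in \mathcal{F}$, and continuity is immediate.

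\textbf{Local linearization.} For each $v \in W_{\ast}$, set $m_v \coloneqq \min_{K_v} u$, $M_v \coloneqq \max_{K_v} u$, and $L_v \coloneqq \Phi'(m_v)$, and let $\omega$ denote the modulus of continuity of $\Phi'|_{[-M,M]}$. Then $\Phi - L_v \cdot \id$ is $\omega(M_v - m_v)$-Lipschitz on $[m_v, M_v]$, so the McShane extension lemma produces a globally $\omega(M_v - m_v)$-Lipschitz $\tilde{\eta}_v \colon \mathbb{R} \to \mathbb{R}$ coinciding with $\Phi - L_v \cdot \id$ on $u(K_v) \subseteq [m_v, M_v]$. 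Applying \eqref{lipcont} to the $1$-Lipschitz map $\omega(M_v - m_v)^{-1}(\tilde{\eta}_v - \tilde{\eta}_v(0))$ composed with $u \circ F_v \in \mathcal{F} \cap \contfunc(K)$, and absorbing the additive constant $\tilde{\eta}_v(0)\indicator{K} \in \mathcal{E}^{-1}(0)$ via the triangle inequality for $\mathcal{E}^{1/p}$, we obtain $\mathcal{E}(\Phi(u \circ F_v) - L_v(u \circ F_v)) \le \omega(M_v - m_v)^{p}\mathcal{E}(u \circ F_v)$, whence
\[
\bigl|\mathcal{E}(\Phi(u \circ F_v))^{1/p} - \abs{L_v}\mathcal{E}(u \circ F_v)^{1/p}\bigr| \le \omega(M_v - m_v)\mathcal{E}(u \circ F_v)^{1/p}.
\]
(The degenerate case $M_v = m_v$, in which $u \circ F_v$ is constant, is trivial.)

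\textbf{Passage to the identity.} Fix $w \in W_{\ast}$ and $n \in \mathbb{N}$. Using \eqref{ss.partition} applied to $u \circ F_w$ together with the defining property $\mathfrak{m}_{\mathcal{E}}\langle f \rangle(\Sigma_{w'}) = \rweight_{w'}\mathcal{E}(f \circ F_{w'})$,
\[
\mathfrak{m}_{\mathcal{E}}\langle \Phi(u) \rangle(\Sigma_w) = \rweight_w \mathcal{E}(\Phi(u \circ F_w)) = \sum_{v \in W_n} \rweight_{wv} \mathcal{E}(\Phi(u \circ F_{wv})),
\]
and the local estimate sandwiches this sum between $\sum_{v \in W_n}(\abs{L_{wv}} \pm \omega(M_{wv} - m_{wv}))^{p}\mathfrak{m}_{\mathcal{E}}\langle u \rangle(\Sigma_{wv})$ (the lower bound understood as its positive part). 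By uniform continuity of $u$ and \eqref{ss-diam}, $\max_{v \in W_n}(M_{wv} - m_{wv}) \to 0$ as $n \to \infty$, and by uniform continuity of $\Phi'$ on $[-M,M]$ and of $u$, the simple functions $\sum_{v \in W_n}\abs{L_{wv}}^{p}\indicator{\Sigma_{wv}}$ converge uniformly on $\Sigma_w$ to $\abs{\Phi'(u \circ \chi)}^{p}$. Since $\mathfrak{m}_{\mathcal{E}}\langle u \rangle$ is finite and $\abs{L_{wv}}$ is uniformly bounded by $\norm{\Phi'}_{\sup,[-M,M]}$, passing to the limit yields $\mathfrak{m}_{\mathcal{E}}\langle \Phi(u) \rangle(\Sigma_w) = \int_{\Sigma_w}\abs{\Phi'(u \circ \chi)}^{p}\,d\mathfrak{m}_{\mathcal{E}}\langle u \rangle$ for every $w \in W_{\ast}$. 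Since $\{\Sigma_w\}_{w \in W_{\ast}}$ is a $\pi$-system generating $\mathcal{B}(\Sigma)$, Dynkin's $\pi$-$\lambda$ theorem extends this equality to all Borel sets of $\Sigma$; pushing it forward by $\chi$ via the standard change-of-variables formula (together with $\Phi'(u) \circ \chi = \Phi'(u \circ \chi)$) gives \eqref{e:chain.one}. The main obstacle is the local linearization step: one must modify $\Phi$ outside $[-M,M]$ to a globally Lipschitz function in order to apply \eqref{lipcont}, and carefully absorb the additive constants that arise when translating $\tilde{\eta}_v$ to vanish at $0$, which is precisely where the hypothesis $\mathbb{R}\indicator{K} \subseteq \mathcal{E}^{-1}(0)$ is essential.
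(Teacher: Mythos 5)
Your proof is correct, but the key estimate is obtained by a genuinely different (and arguably leaner) route than the paper's. The paper writes $\Phi(u \circ F_{z}) - \Phi'(u(F_{z}(x_{0})))\,(u \circ F_{z})$ as a constant plus a product $D_{z}I_{z}$ of an increment and an integral remainder, and then needs a weak Leibniz rule (derived from \eqref{lipcont} via the polarization identity $ab = \tfrac{1}{4}[(a+b)^{2}-(a-b)^{2}]$ and \eqref{compos}) to bound $\mathcal{E}(D_{z}I_{z})$ by $\norm{I_{z}}_{\sup}^{p}\mathcal{E}(D_{z}) + \norm{D_{z}}_{\sup}^{p}\mathcal{E}(I_{z})$; the convergence of the resulting sum then comes from the uniform smallness of $\norm{I_{w\tau}}_{\sup}$ and $\norm{D_{w\tau}}_{\sup}$. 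You instead observe that $\Phi - L_{v}\cdot\id$ is $\omega(M_{v}-m_{v})$-Lipschitz on the range of $u \circ F_{v}$ and apply \eqref{lipcont} directly (after a McShane extension and normalization), which yields the multiplicative sandwich $\bigl|\mathcal{E}(\Phi(u \circ F_{v}))^{1/p} - \abs{L_{v}}\mathcal{E}(u \circ F_{v})^{1/p}\bigr| \le \omega(M_{v}-m_{v})\mathcal{E}(u \circ F_{v})^{1/p}$ with no product rule at all; the rest (uniform convergence of the simple functions $\sum_{v}\abs{L_{wv}}^{p}\indicator{\Sigma_{wv}}$, Dynkin's $\pi$-$\lambda$ theorem, pushforward by $\chi$) coincides with the paper. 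Your argument is more elementary in that it uses only \eqref{lipcont}, $p$-homogeneity and the triangle inequality for $\mathcal{E}^{1/p}$; what the paper's Taylor-with-remainder decomposition buys is that it carries over almost verbatim to the two-variable chain rule \hyperref[it:CL2]{\textup{(CL2)}} in Theorem \ref{thm.em-chain}, where one must control $\mathcal{E}(\Phi(u); \Psi(\bm{v}))$ via \eqref{bdd.form} and \eqref{ncont} and a squeeze on the one-variable energy no longer suffices. Two trivial loose ends you should still mention: the case $\omega(M_{v}-m_{v}) = 0$ with $M_{v} > m_{v}$ (where the normalization by $\omega^{-1}$ degenerates but $\Phi(u\circ F_{v}) - L_{v}(u\circ F_{v})$ is constant, so $\mathcal{E}$ of it vanishes anyway), and the fact that the modulus $\omega$ may be taken nondecreasing so that $\abs{\Phi'(r)-\Phi'(m_{v})} \le \omega(M_{v}-m_{v})$ for all $r \in [m_{v},M_{v}]$.
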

\begin{proof}
	First, let us observe a few consequences of \eqref{lipcont}. 
	The proof of Corollary \ref{cor.lip-useful}-\ref{GC.compos} works even if we assume \eqref{lipcont} instead of \ref{GC}, so we have \eqref{compos}.
    We then obtain $\Phi(u) \in \mathcal{F} \cap \contfunc(K)$ by \eqref{compos} and $\mathbb{R}\indicator{K} \subseteq \mathcal{E}^{-1}(0)$. 
    Also, by \eqref{compos}, the identity $ab = \frac{1}{4}\bigl[(a+b)^2 - (a-b)^2\bigr]$ for $a,b \in \mathbb{R}$, and the triangle inequality for $\mathcal{E}^{1/p}$, there exists a constant $c_{p} \in (0,\infty)$ depending only on $p$ such that for any $u,v \in \mathcal{F} \cap \contfunc(K)$, 
    \begin{equation}\label{e:weak.leibniz}
    	uv \in \mathcal{F} \cap \contfunc(K) \quad \text{and} \quad \mathcal{E}(uv) \le c_{p}\bigl( \norm{v}_{\sup}^{p} \mathcal{E}(u) + \norm{u}_{\sup}^{p} \mathcal{E}(v)\bigr); 
    \end{equation} 
    indeed, \eqref{e:weak.leibniz} for $u,v \in \mathcal{F} \cap \contfunc(K)$ with $\norm{u}_{\sup} = \norm{v}_{\sup} = 1$ is easily verified, and this special case applied to $\norm{u}_{\sup}^{-1}u,\norm{v}_{\sup}^{-1}v$ yields \eqref{e:weak.leibniz} for general $u,v \in \mathcal{F} \cap \contfunc(K)$. 
    
    Next we will prove that
    \begin{equation}\label{stp-chain.CL1}
        \lim_{l \to \infty}\abs{\rweight_{w}\mathcal{E}\bigl(\Phi(u \circ F_{w})\bigr) - \mathcal{S}_{l}^{(1)}(w)} = 0 \quad \text{for any $w \in W_{\ast}$,}
    \end{equation}
    where for $w \in W_{\ast}$ and $l \in \mathbb{N} \cup \{ 0 \}$ we set, with an arbitrarily fixed $x_{0} \in K$,  
    \begin{equation}\label{stp-chain.CL1-S1lw}
    \mathcal{S}_{l}^{(1)}(w) \coloneqq \sum_{\tau \in W_{l}}\rweight_{w\tau}\mathcal{E}\bigl(\Phi'\bigl(u(F_{w\tau}(x_0))\bigr) \cdot (u \circ F_{w\tau})\bigr).
    \end{equation}
    We need some preparations to prove \eqref{stp-chain.CL1}. 
    Note that, for any $z \in W_{\ast}$ and any $x \in K$, 
    \begin{align*}
        &\Phi\bigl(u(F_{z}(x))\bigr) - \Phi\bigl(u(F_{z}(x_0))\bigr) \\
        &= \bigl[u(F_{z}(x)) - u(F_{z}(x_0))\bigr]\Biggl(\Phi'\bigl(u(F_{z}(x_0))\bigr) \\
        &\hspace*{30pt}+ \int_{0}^{1}\Bigl[\Phi'\bigl(u(F_{z}(x_0)) + t\bigl(u(F_{z}(x)) - u(F_{z}(x_0))\bigr)\bigr) - \Phi'(u(F_{z}(x_0)))\Bigr]\,dt\Biggr).
    \end{align*}
    In particular,
    \begin{equation}\label{e:diff.expression}
        \Phi(u \circ F_{z}) - \widehat{u}_{z}
        = \Phi\bigl(u(F_{z}(x_0)\bigr) - \Phi'\bigl(u(F_{z}(x_0))\bigr)u(F_{z}(x_0))
        + D_{z}I_{z},
    \end{equation}
    where $\widehat{u}_{z}, D_{z}, I_{z} \in \contfunc(K)$ are given by 
    \begin{align*}
        \widehat{u}_{z}(x) &\coloneqq \Phi'\bigl(u(F_{z}(x_0))\bigr) \cdot (u \circ F_{z})(x), \\
        D_{z}(x) &\coloneqq u(F_{z}(x)) -  u(F_{z}(x_0)), \\
        I_{z}(x) &\coloneqq \int_{0}^{1}\Bigl[\Phi'\bigl(u(F_{z}(x_0)) + tD_{z}(x)\bigr) - \Phi'\bigl(u(F_{z}(x_0))\bigr)\Bigr]\,dt, \quad x \in K.
    \end{align*}  
    Note that $\widehat{u}_{z} \in \mathcal{F}$ by the self-similarity \eqref{SSE1} of $\mathcal{F}$. 
    By \eqref{compos}, we have that $I_{z} \in \mathcal{F}$ and that there exists a constant $C_{u,\Phi} \in (0,\infty)$ depending only on $p,\norm{u}_{\sup},\norm{\Phi'}_{\sup,[-2\norm{u}_{\sup},2\norm{u}_{\sup}]}$ such that $\mathcal{E}(I_{z}) \le C_{u,\Phi}\mathcal{E}(u \circ F_{z})$ and $\mathcal{E}\bigl(\Phi(u \circ F_{z})\bigr) \le C_{u,\Phi}\mathcal{E}(u \circ F_{z})$.
    Therefore, for any $w \in W_{\ast}$ and any $l \in \mathbb{N} \cup \{ 0 \}$,
    \begin{align}\label{e:chain.diff.piecewise}
        &\sum_{\tau \in W_{l}}\rweight_{w\tau}\mathcal{E}\bigl(\Phi(u \circ F_{w\tau}) - \widehat{u}_{w\tau}\bigr) 
            \overset{\eqref{e:diff.expression}}{=} \sum_{\tau \in W_{l}}\rweight_{w\tau}\mathcal{E}(D_{w\tau}I_{w\tau}) \nonumber \\
        &\overset{\eqref{e:weak.leibniz}}{\le} c_{p}\sum_{\tau \in W_{l}}\rweight_{w\tau}\bigl( \norm{I_{w\tau}}_{\sup}^{p} \mathcal{E}(D_{w\tau}) + \norm{D_{w\tau}}_{\sup}^{p} \mathcal{E}(I_{w\tau}) \bigr) \nonumber \\
        &\le c_{p}\biggl(\max_{\tau \in W_{l}}\norm{I_{w\tau}}_{\sup} + \max_{\tau \in W_{l}}\norm{D_{w\tau}}_{\sup}\biggr)^{p}\sum_{\tau \in W_{l}}\rweight_{w\tau}\Bigl(\mathcal{E}(D_{w\tau}) + C_{u, \Phi}\mathcal{E}(u \circ F_{w\tau})\Bigr) \nonumber \\
        &\le c_{p}(1 + C_{u,\Phi})\mathcal{E}(u)\biggl(\max_{\tau \in W_{l}}\norm{I_{w\tau}}_{\sup} + \max_{\tau \in W_{l}}\norm{D_{w\tau}}_{\sup}\biggr)^{p}.
    \end{align}
    Since $u$ and $\Phi'$ are uniformly continuous on $K$ and $\lim_{l \to \infty}\max_{\tau \in W_{l}}\diam(K_{w\tau},d) = 0$ by \eqref{ss-diam}, both $\max_{\tau \in W_{l}}\norm{I_{w\tau}}_{\sup}$ and $\max_{\tau \in W_{l}}\norm{D_{w\tau}}_{\sup}$ converge to $0$ as $l \to \infty$, and hence by \eqref{e:chain.diff.piecewise} we get 
    \begin{equation}\label{Rsum.u}
        \lim_{l \to \infty}\sum_{\tau \in W_{l}}\rweight_{w\tau}\mathcal{E}\bigl(\Phi(u \circ F_{w\tau}) - \widehat{u}_{w\tau}\bigr) = 0. 
    \end{equation} 
    Now, it follows from \eqref{SSE2}, \eqref{stp-chain.CL1-S1lw}, \eqref{e:preHolder} with $t=1$, H\"{o}lder's inequality and \eqref{Rsum.u} that for any $w \in W_{\ast}$ and any $l \in \mathbb{N} \cup \{ 0 \}$,
    \begin{align*}
        &\abs{\rweight_{w}\mathcal{E}\bigl(\Phi(u \circ F_{w})\bigr) - \mathcal{S}_{l}^{(1)}(w)}
            \overset{\eqref{SSE2},\eqref{stp-chain.CL1-S1lw}}{=} \abs{\sum_{\tau \in W_{l}} \rweight_{w\tau} \bigl( \mathcal{E}\bigl(\Phi(u \circ F_{w\tau})\bigr) - \mathcal{E}(\widehat{u}_{w\tau}) \bigr) } \\
        &\overset{\eqref{e:preHolder}}{\leq} 2^{p-1} p \sum_{\tau \in W_{l}} \rweight_{w\tau} \Bigl( \mathcal{E}\bigl(\Phi(u \circ F_{w\tau})\bigr)^{\frac{p-1}{p}}\mathcal{E}\bigl( \Phi(u \circ F_{w\tau}) - \widehat{u}_{w\tau} \bigr)^{\frac{1}{p}} + \mathcal{E}\bigl( \Phi(u \circ F_{w\tau}) - \widehat{u}_{w\tau} \bigr) \Bigr) \\
        &\overset{\textrm{H\"{o}lder}}{\leq} 2^{p-1} p \biggl(\sum_{\tau \in W_{l}} \rweight_{w\tau} \mathcal{E}\bigl(\Phi(u \circ F_{w\tau})\bigr) \biggr)^{(p-1)/p} \biggl(\sum_{\tau \in W_{l}} \rweight_{w\tau} \mathcal{E}\bigl( \Phi(u \circ F_{w\tau}) - \widehat{u}_{w\tau} \bigr) \biggr)^{1/p} \\
        &\qquad\qquad + 2^{p-1} p \sum_{\tau \in W_{l}} \rweight_{w\tau} \mathcal{E}\bigl( \Phi(u \circ F_{w\tau}) - \widehat{u}_{w\tau} \bigr) \\
        &\overset{\eqref{SSE2}}{=} 2^{p-1} p \Bigl(\rweight_{w}\mathcal{E}\bigl(\Phi(u \circ F_{w})\bigr)\Bigr)^{(p-1)/p} \biggl(\sum_{\tau \in W_{l}} \rweight_{w\tau} \mathcal{E}\bigl( \Phi(u \circ F_{w\tau}) - \widehat{u}_{w\tau} \bigr) \biggr)^{1/p} \\
        &\qquad\qquad + 2^{p-1} p \sum_{\tau \in W_{l}} \rweight_{w\tau} \mathcal{E}\bigl( \Phi(u \circ F_{w\tau}) - \widehat{u}_{w\tau} \bigr) \\
        &\xrightarrow[l \to \infty]{\eqref{Rsum.u}}0,
    \end{align*}
    proving \eqref{stp-chain.CL1}. 

	By the uniform continuity of $\Phi'$ and the fact that $\mathfrak{m}_{\mathcal{E}}\langle f \rangle(\Sigma_{w}) = \rweight_{w}\mathcal{E}(f \circ F_{w})$ for any $f \in \mathcal{F} \cap \contfunc(K)$ and any $w \in W_{\ast}$, we easily observe that 
	\[
    \lim_{l \to \infty}\abs{\sum_{k = 1}^{n}\int_{\Sigma_{w}}\abs{\Phi'(u \circ \chi)}^{p}\,d\mathfrak{m}_{\mathcal{E}}\langle u \rangle - \mathcal{S}_{l}^{(1)}(w)} = 0.   
    \]
    Hence, by \eqref{stp-chain.CL1} and Dynkin's $\pi$-$\lambda$ theorem, 
	\begin{equation}\label{chain-symbol.CL1}
        d\mathfrak{m}_{\mathcal{E}}\langle \Phi(u) \rangle
        = \sum_{k = 1}^{n}\abs{\Phi'(u \circ \chi)}^{p}\,d\mathfrak{m}_{\mathcal{E}}\langle u \rangle.
    \end{equation}
    Then we obtain the desired equality \eqref{e:chain.one} by \eqref{chain-symbol.CL1} and Proposition \ref{prop.ss-pform-em}-\ref{it:ssem-two-chi}. 
\end{proof}

To prove \hyperref[it:CL2]{\textup{(CL2)}}, in addition to \ref{Cp}, we need to assume the closedness of $(\mathcal{E},\mathcal{F} \cap L^{p}(K,m))$ in $L^{p}(K,m)$.
Recall the definition \eqref{e:defn-e1norm} of the norm $\norm{\,\cdot\,}_{\mathcal{E},1}$, which we here define on $\mathcal{F} \cap L^{p}(K,m)$ without assuming that $\mathcal{F} \subseteq L^{p}(K,m)$. 
\begin{thm}[Chain rule\index{chain rule}]\label{thm.em-chain}
    Assume that $\mathbb{R}\indicator{K} \subseteq \mathcal{E}^{-1}(0)$, that $(\mathcal{E},\mathcal{F} \cap \contfunc(K))$ satisfies \eqref{lipcont} in Proposition \ref{prop.GC-list}-\ref{GC.lip} and \ref{Cp}, and that $(\mathcal{F} \cap L^{p}(K,m),\norm{\,\cdot\,}_{\mathcal{E},1})$ is a Banach space. 
    Then $\{ \Gamma_{\mathcal{E}}\langle f \rangle \}_{f \in \mathcal{F} \cap \contfunc(K)}$ satisfies \hyperref[it:CL2]{\textup{(CL2)}}, i.e., for any $n \in \mathbb{N}$, $u \in \mathcal{F} \cap \contfunc(K)$, $\bm{v} = (v_{1},\ldots,v_{n}) \in (\mathcal{F} \cap \contfunc(K))^{n}$, $\Phi \in C^{1}(\mathbb{R})$ and $\Psi \in C^{1}(\mathbb{R}^{n})$, we have $\Phi(u), \Psi(\bm{v}) \in \mathcal{F} \cap \contfunc(K)$ and
    \begin{equation}\label{chain}
        d\Gamma_{\mathcal{E}}\langle \Phi(u); \Psi(\bm{v}) \rangle
        = \sum_{k = 1}^{n}\sgn\bigl(\Phi'(u)\bigr)\abs{\Phi'(u)}^{p - 1}\partial_{k}\Psi(\bm{v})\,d\Gamma_{\mathcal{E}}\langle u; v_{k} \rangle.
    \end{equation}
\end{thm}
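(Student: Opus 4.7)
The plan is to mimic the argument for Theorem \ref{thm.em-chain-one}, replacing the scalar linearization by a multivariable one and exploiting the H\"{o}lder continuity \eqref{ncont} of the Fr\'{e}chet derivative $\mathcal{E}(\,\cdot\,;\,\cdot\,)$ from Theorem \ref{thm.p-form}. For the membership claim, $\Phi(u)\in\mathcal{F}\cap\contfunc(K)$ is obtained exactly as in the proof of Theorem \ref{thm.em-chain-one}. For $\Psi(\bm{v})$, use Weierstrass to choose polynomials $P_{l}$ with $P_{l}(0)=0$, $P_{l}\to\Psi-\Psi(0)$ and $\nabla P_{l}\to\nabla\Psi$ uniformly on a compact neighborhood of $\bm{v}(K)\subseteq\mathbb{R}^{n}$; iterated Leibniz \eqref{e:weak.leibniz} gives $P_{l}(\bm{v})\in\mathcal{F}\cap\contfunc(K)$, and the piecewise-linearization bound \eqref{e:chain.diff.piecewise} applied componentwise to differences $(P_{l_{1}}-P_{l_{2}})(\bm{v})$ produces a Cauchy sequence in $L^{p}(K,m)$ with uniformly bounded energy, so Lemma \ref{lem.Lpreduce} yields $\Psi(\bm{v})\in\mathcal{F}\cap\contfunc(K)$.

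By iterating \eqref{ss-emform} and Proposition \ref{prop.ss-pform-em}-\ref{it:ssem-two-chi}, for every $w\in W_{\ast}$ one has $\mathfrak{m}_{\mathcal{E}}\langle\Phi(u);\Psi(\bm{v})\rangle(\Sigma_{w})=\rweight_{w}\mathcal{E}(\Phi(u)\circ F_{w};\Psi(\bm{v})\circ F_{w})$ and $\rweight_{w\tau}\mathcal{E}(u\circ F_{w\tau};v_{k}\circ F_{w\tau})=\mathfrak{m}_{\mathcal{E}}\langle u;v_{k}\rangle(\Sigma_{w\tau})$ for $\tau\in W_{l}$. Since $\{\Sigma_{w}\}_{w\in W_{\ast}}$ is a $\pi$-system generating $\mathcal{B}(\Sigma)$ and both sides of \eqref{chain} are finite signed Borel measures on $K$, it suffices to show that for every $w\in W_{\ast}$,
\begin{equation*}
\lim_{l\to\infty}\sum_{\tau\in W_{l}}\rweight_{w\tau}\mathcal{E}\bigl(\Phi(u)\circ F_{w\tau};\Psi(\bm{v})\circ F_{w\tau}\bigr)=\sum_{k=1}^{n}\int_{\Sigma_{w}}\sgn(\Phi'(u\circ\chi))\abs{\Phi'(u\circ\chi)}^{p-1}\partial_{k}\Psi(\bm{v}\circ\chi)\,d\mathfrak{m}_{\mathcal{E}}\langle u;v_{k}\rangle.
\end{equation*}

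Fix $x_{0}\in K$, and for each $\tau\in W_{l}$ set $a_{\tau}\coloneqq\Phi'(u(F_{w\tau}(x_{0})))$, $b_{k,\tau}\coloneqq\partial_{k}\Psi(\bm{v}(F_{w\tau}(x_{0})))$, $\widehat{u}_{w\tau}\coloneqq a_{\tau}(u\circ F_{w\tau})$ and $\widehat{v}_{w\tau}\coloneqq\sum_{k=1}^{n}b_{k,\tau}(v_{k}\circ F_{w\tau})$. By the linearity of $\mathcal{E}(f;\,\cdot\,)$ and $\mathcal{E}(af;g)=\sgn(a)\abs{a}^{p-1}\mathcal{E}(f;g)$ from Theorem \ref{thm.p-form},
\begin{equation*}
\rweight_{w\tau}\mathcal{E}(\widehat{u}_{w\tau};\widehat{v}_{w\tau})=\sum_{k=1}^{n}\sgn(a_{\tau})\abs{a_{\tau}}^{p-1}b_{k,\tau}\,\mathfrak{m}_{\mathcal{E}}\langle u;v_{k}\rangle(\Sigma_{w\tau}),
\end{equation*}
and summing over $\tau\in W_{l}$ yields a Riemann-type sum whose limit as $l\to\infty$ equals the claimed right-hand side of the displayed identity in the previous paragraph, by uniform continuity of $\Phi'$, $\partial_{k}\Psi$, $u$, $\bm{v}$ together with \eqref{ss-diam} and the finiteness of the total-variation measures $\abs{\mathfrak{m}_{\mathcal{E}}\langle u;v_{k}\rangle}$.

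The main technical step is to verify that
\begin{equation*}
E_{l}\coloneqq\sum_{\tau\in W_{l}}\rweight_{w\tau}\abs{\mathcal{E}(\Phi(u)\circ F_{w\tau};\Psi(\bm{v})\circ F_{w\tau})-\mathcal{E}(\widehat{u}_{w\tau};\widehat{v}_{w\tau})}\xrightarrow[l\to\infty]{}0.
\end{equation*}
Split each $\tau$-th difference as $[\mathcal{E}(\Phi(u)\circ F_{w\tau};\Psi(\bm{v})\circ F_{w\tau})-\mathcal{E}(\widehat{u}_{w\tau};\Psi(\bm{v})\circ F_{w\tau})]+\mathcal{E}(\widehat{u}_{w\tau};\Psi(\bm{v})\circ F_{w\tau}-\widehat{v}_{w\tau})$; bound the first bracket by the H\"{o}lder-type estimate \eqref{ncont} with exponent $\alpha_{p}=\tfrac{1}{p}\wedge\tfrac{p-1}{p}$, and the second via \eqref{bdd.form} combined with linearity in the second argument. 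Applying H\"{o}lder's inequality over $\tau\in W_{l}$ with exponents such as $p/(p-1),p,p/\alpha_{p}$ gives
\begin{equation*}
E_{l}\lesssim\mathcal{E}(u)^{(p-1-\alpha_{p})/p}\bigl(\max_{k}\mathcal{E}(v_{k})\bigr)^{1/p}\Bigl(\sum_{\tau\in W_{l}}\rweight_{w\tau}\mathcal{E}(\Phi(u)\circ F_{w\tau}-\widehat{u}_{w\tau})\Bigr)^{\alpha_{p}/p}+(\text{analogous term for }\Psi(\bm{v})),
\end{equation*}
and both linearization-error sums vanish as $l\to\infty$ by \eqref{Rsum.u} (applied componentwise via Taylor's formula to handle $\Psi(\bm{v})\circ F_{w\tau}-\widehat{v}_{w\tau}$ using \eqref{e:chain.diff.piecewise}). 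The principal obstacle is the careful bookkeeping of these H\"{o}lder exponents so that the uniformly bounded prefactors and the vanishing linearization-error factors combine correctly; because $\alpha_{p}$ changes regime at $p=2$, the cases $p\in(1,2]$ and $p\in[2,\infty)$ should be treated separately.
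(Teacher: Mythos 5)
Your proposal is correct in outline, and for the identity \eqref{chain} itself it is essentially the paper's proof: the same reduction to cylinder sets via Dynkin's $\pi$-$\lambda$ theorem, the same Riemann-sum limit $\mathcal{S}_{l}^{(2)}(w)$, and the same splitting of the linearization error into a first-argument perturbation controlled by \eqref{ncont} and a second-argument perturbation controlled by \eqref{bdd.form} together with linearity, followed by H\"{o}lder over $\tau \in W_{l}$ and \eqref{Rsum.u}, \eqref{Rsum.v}. (Your worry about splitting the cases $p \in (1,2]$ and $p \in [2,\infty)$ is unnecessary: the exponents $\frac{p-1-\alpha_{p}}{p}, \frac{\alpha_{p}}{p}, \frac{1}{p}$ sum to $1$ for every $p$, so the bookkeeping is uniform in $p$.) Where you genuinely diverge is the proof that $\Psi(\bm{v}) \in \mathcal{F} \cap \contfunc(K)$. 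The paper first establishes \eqref{chain} for polynomials $\Psi_{l}$ (which lie in $\mathcal{F} \cap \contfunc(K)$ by \eqref{e:weak.leibniz}) and then \emph{bootstraps}: combining \eqref{chain} for $\Psi_{l}$ with $\Gamma_{\mathcal{E}}\langle \cdot \rangle(K) = \mathcal{E}(\cdot)$ and the H\"{o}lder inequality of Proposition \ref{prop.em-holder} yields $\mathcal{E}(\Psi_{l}(\bm{v}))^{1/p} \le \sum_{k}\norm{\partial_{k}\Psi_{l}}_{\sup,Q(\bm{v})}\mathcal{E}(v_{k})^{1/p}$, hence a uniform energy bound, and Lemma \ref{lem.Lpreduce} finishes. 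You instead propose to prove an energy-Cauchy estimate for $(P_{l_{1}}-P_{l_{2}})(\bm{v})$ directly from a multivariable analogue of \eqref{e:chain.diff.piecewise}. This can be made to work and gives the stronger conclusion of norm convergence, but it is not a literal "componentwise" application of \eqref{e:chain.diff.piecewise}: for a multivariable polynomial $Q$ the quantities $I_{k,z}$ are compositions of \emph{all} the $v_{j}\circ F_{z}$ with polynomial maps, so the a priori bounds $\mathcal{E}(I_{k,z}) \lesssim \sum_{j}\mathcal{E}(v_{j}\circ F_{z})$ come from iterated Leibniz and carry constants depending on the degree and coefficients of $Q$, which are \emph{not} uniform over the family $\{P_{l_{1}}-P_{l_{2}}\}$. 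The argument survives only because, for each \emph{fixed} polynomial $Q$, those constants multiply a factor tending to $0$ as the cell level tends to $\infty$, so that in the limit one obtains the clean bound $\mathcal{E}(Q(\bm{v}))^{1/p} \le \sum_{k}\norm{\partial_{k}Q}_{\sup,Q(\bm{v})}\mathcal{E}(v_{k})^{1/p}$ free of $Q$-dependent constants, which is then applied to $Q = P_{l_{1}}-P_{l_{2}}$. You must take the limits in this order; state this explicitly, since a reader of your sketch could wrongly conclude that the linearization-error constants are uniformly bounded over $l_{1},l_{2}$.

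One further point to keep in mind in either approach: the first part of the argument (the proof of \eqref{stp-chain} and of \eqref{Rsum.v}) must be carried out for general $C^{1}$ functions $\Psi$ only under the \emph{additional hypothesis} that $\Psi(\bm{v}) \in \mathcal{F} \cap \contfunc(K)$, exactly as in the paper; the membership step is logically posterior and may only use \eqref{chain} for polynomials. Your write-up respects this ordering implicitly, but it is worth making the dependency explicit.
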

\begin{proof} 
	Let $n \in \mathbb{N}$, $u \in \mathcal{F} \cap \contfunc(K)$, $\bm{v} = (v_{1},\ldots,v_{n}) \in (\mathcal{F} \cap \contfunc(K))^{n}$, $\Phi \in C^{1}(\mathbb{R})$ and $\Psi \in C^{1}(\mathbb{R}^{n})$, so that $\Phi(u) \in \mathcal{F} \cap \contfunc(K)$ as observed at the beginning of the proof of Theorem \ref{thm.em-chain-one}. 
	\emph{We fix these $n,u,\bm{v} = (v_{1},\ldots,v_{n}),\Phi,\Psi$ throughout this proof}, and first, under the additional assumption that $\Psi(\bm{v}) \in \mathcal{F} \cap \contfunc(K)$, we will prove that 
    \begin{equation}\label{stp-chain}
        \lim_{l \to \infty}\abs{\rweight_{w}\mathcal{E}\bigl(\Phi(u \circ F_{w}); \Psi(\bm{v} \circ F_{w})\bigr) - \mathcal{S}_{l}^{(2)}(w)} = 0 \quad \text{for any $w \in W_{\ast}$,}
    \end{equation}
    where for $w \in W_{\ast}$ and $l \in \mathbb{N} \cup \{ 0 \}$ we set, with an arbitrarily fixed $x_{0} \in K$,  
    \[
    \mathcal{S}_{l}^{(2)}(w) \coloneqq \sum_{\tau \in W_{l}}\rweight_{w\tau}\mathcal{E}\biggl(\Phi'(u \circ F_{w\tau}(x_0)) \cdot (u \circ F_{w\tau}); \sum_{k = 1}^{n}\partial_{k}\Psi(v \circ F_{w\tau}(x_0)) \cdot (v_{k} \circ F_{w\tau})\biggr). 
    \]
    To prove \eqref{stp-chain}, we observe that $\abs{\rweight_{w}\mathcal{E}\bigl(\Phi(u \circ F_{w}); \Psi(v \circ F_{w})\bigr) - \mathcal{S}_{l}^{(2)}(w)} \le A_{1,l} + A_{2,l}$, where 
    \begin{align*}
    	\widehat{u}_{z}(x) &\coloneqq \Phi'\bigl(u(F_{z}(x_0))\bigr) \cdot (u \circ F_{z})(x), \\
    	\widehat{v}_{z}(x) &\coloneqq \sum_{k = 1}^{n}\partial_{k}\Psi\bigl(\bm{v}(F_{z}(x_0))\bigr) \cdot (v_{k} \circ F_{z})(x) \quad \text{for } z \in W_{\ast}, x \in K, \\
        A_{1,l} &\coloneqq \sum_{\tau \in W_{l}}\rweight_{w\tau} \abs{\mathcal{E}\bigl(\Phi(u \circ F_{w\tau}); \Psi(\bm{v} \circ F_{w\tau})\bigr) - \mathcal{E}\bigl(\Phi(u \circ F_{w\tau}); \widehat{v}_{w\tau}\bigr)}, \\
        A_{2,l} &\coloneqq \sum_{\tau \in W_{l}}\rweight_{w\tau}\abs{\mathcal{E}\bigl(\Phi(u \circ F_{w\tau}); \widehat{v}_{w\tau}\bigr) - \mathcal{E}\bigl(\widehat{u}_{w\tau}; \widehat{v}_{w\tau}\bigr)}.
    \end{align*}
    Similar to \eqref{Rsum.u}, we can show that 
    \begin{equation}\label{Rsum.v}
        \lim_{l \to \infty}\sum_{\tau \in W_{l}}\rweight_{w\tau}\mathcal{E}\bigl(\Psi(\bm{v} \circ F_{w\tau}) - \widehat{v}_{w\tau}\bigr) = 0. 
    \end{equation}
    By the H\"{o}lder-type estimate \eqref{bdd.form} and the continuity estimate \eqref{ncont} from Theorem \ref{thm.p-form} and H\"{o}lder's inequality, we have 
    \[
    A_{1,l} \lesssim \mathcal{E}(u \circ F_{w})^{(p - 1)/p}\left(\sum_{\tau \in W_{l}}\rweight_{w\tau}\mathcal{E}(\Psi(\bm{v} \circ F_{w\tau}) - \widehat{v}_{w\tau})\right)^{1/p},
    \]
    and
    \begin{align*}
        A_{2,l}
        &\lesssim \sum_{\tau \in W_{l}}\rweight_{w\tau}\mathcal{E}(u \circ F_{w\tau})^{(p - 1 - \alpha_{p})/p} \mathcal{E}\bigl(\Phi(u \circ F_{w\tau}) - \widehat{u}_{w\tau}\bigr)^{\alpha_{p}/p}\mathcal{E}\bigl(\widehat{v}_{w\tau}\bigr)^{1/p} \\
        &\le \mathcal{E}(u \circ F_{w})^{(p - 1 - \alpha_{p})/p}\left(\sum_{\tau \in W_{l}}\rweight_{w\tau}\mathcal{E}(\Phi(u \circ F_{w\tau}) - \widehat{u}_{w\tau})\right)^{\alpha_{p}/p}\left(\sum_{\tau \in W_{l}}\rweight_{w\tau}\mathcal{E}\bigl(\widehat{v}_{w\tau}\bigr)\right)^{1/p} \\
        &\lesssim \mathcal{E}(u \circ F_{w})^{(p - 1 - \alpha_{p})/p}\left(\sum_{\tau \in W_{l}}\rweight_{w\tau}\mathcal{E}(\Phi(u \circ F_{w\tau}) - \widehat{u}_{w\tau})\right)^{\alpha_{p}/p}\max_{k \in \{ 1,\dots,n \}}\mathcal{E}(v_{k} \circ F_{w})^{1/p}.
    \end{align*}
    Combining these estimates with \eqref{Rsum.u}, which was shown in the proof of Theorem \ref{thm.em-chain-one}, and \eqref{Rsum.v}, we obtain $\lim_{l \to \infty}A_{i,l} = 0$ and thus \eqref{stp-chain} holds.

	Continuing to assume that $\Psi(\bm{v}) \in \mathcal{F} \cap \contfunc(K)$, by the uniform continuities of $\Phi',\partial \Psi_{k}$ and the fact that $\mathfrak{m}_{\mathcal{E}}\langle f; g \rangle(\Sigma_{w}) = \rweight_{w}\mathcal{E}(f \circ F_{w}; g \circ F_{w})$ for any $f,g \in \mathcal{F} \cap \contfunc(K)$ and any $w \in W_{\ast}$, we easily observe that 
	\[
    \lim_{l \to \infty}\abs{\sum_{k = 1}^{n}\int_{\Sigma_{w}}\sgn\bigl(\Phi'(u \circ \chi)\bigr)\abs{\Phi'(u \circ \chi)}^{p - 1}\partial_{k}\Psi(\bm{v} \circ \chi)\,d\mathfrak{m}_{\mathcal{E}}\langle u; v_{k} \rangle - \mathcal{S}_{l}^{(2)}(w)} = 0.   
    \]
    Hence, by \eqref{stp-chain} and Dynkin's $\pi$-$\lambda$ theorem,  
	\begin{equation}\label{chain-symbol}
        d\mathfrak{m}_{\mathcal{E}}\langle \Phi(u); \Psi(\bm{v}) \rangle
        = \sum_{k = 1}^{n}\sgn\bigl(\Phi'(u \circ \chi)\bigr)\abs{\Phi'(u \circ \chi)}^{p - 1}\partial_{k}\Psi(\bm{v} \circ \chi)\,d\mathfrak{m}_{\mathcal{E}}\langle u; v_{k} \rangle.
    \end{equation}
    Then by \eqref{chain-symbol} and Proposition \ref{prop.ss-pform-em}-\ref{it:ssem-two-chi}, we obtain the desired equality \eqref{chain} under the additional assumption that $\Psi(\bm{v}) \in \mathcal{F} \cap \contfunc(K)$. 
	We stress here that \emph{the arguments in this and the last paragraphs do NOT require the assumption that $(\mathcal{F} \cap L^{p}(K,m),\norm{\,\cdot\,}_{\mathcal{E},1})$ is a Banach space}. 
	(Note also that $u \in \mathcal{F} \cap \contfunc(K)$ and $\Phi \in C^{1}(\mathbb{R})$ are arbitrary here, and hence can be chosen to be $u = \Psi(\bm{v})$ and $\Phi = \id_{\mathbb{R}}$ as long as $\Psi(\bm{v}) \in \mathcal{F} \cap \contfunc(K)$.)  
    
    Thus it remains to prove that $\Psi(\bm{v}) \in \mathcal{F} \cap \contfunc(K)$. 
    We can assume that $\Psi(0) = 0$ since $\mathbb{R}\indicator{K} \subseteq \mathcal{E}^{-1}(0)$. 
	Define $Q(\bm{v}) \subseteq \mathbb{R}^{n}$ by  
	\[
    Q(\bm{v}) \coloneqq \Bigl[-\norm{v_{1}}_{\sup},\norm{v_{1}}_{\sup}\Bigr] \times \Bigl[-\norm{v_{2}}_{\sup},\norm{v_{2}}_{\sup}\Bigr] \times \cdots \times \Bigl[-\norm{v_{n}}_{\sup},\norm{v_{n}}_{\sup}\Bigr]. 
    \]
    Then there exists a sequence $\{ \Psi_{l} \}_{l \in \mathbb{N}}$ of polynomials in $n$ variables with real coefficients such that $\Psi_{l}(0) = 0$, $\norm{\Psi - \Psi_{l}}_{\sup, Q(\bm{v})} \to 0$ and $\norm{\partial_{k}\Psi - \partial_{k}\Psi_{l}}_{\sup, Q(\bm{v})} \to 0$ for each $k \in \{ 1,\dots,n \}$ as $l \to \infty$ (see \cite[Section II.4.3]{CH} for a proof of the existence of such polynomials). 
    Let $l \in \mathbb{N}$. 
    By the mean value theorem, for any $x = (x_{1},\ldots,x_{n}), y = (y_{1},\ldots,y_{n}) \in Q(\bm{v})$, 
    \begin{equation}\label{e:approx.poly.diff}
    	\abs{\Psi_{l}(x) - \Psi_{l}(y)} \le \sum_{k = 1}^{n}\norm{\partial_{k}\Psi_{l}}_{\sup, Q(\bm{v})}\abs{x_{k} - y_{k}}. 
    \end{equation} 
	Noting that $\Psi_{l}(\bm{v}) \in \mathcal{F} \cap \contfunc(K)$ by $\Psi_{l}$ being a polynomial and \eqref{e:weak.leibniz} and hence that \eqref{chain} with $\Psi_{l}$ in place of $\Psi$ holds by the result of the previous paragraph, we see from Propositions \ref{prop.ss-pform-em}-\ref{ssem} and \ref{prop.em-holder} that 
	\begin{align*}
		\mathcal{E}(\Psi_{l}(\bm{v})) 
		&= \Gamma_{\mathcal{E}}\langle \Psi_{l}(\bm{v}) \rangle(K) \qquad \text{(by Proposition \hyperref[ssem]{\ref{prop.ss-pform-em}}-\ref{ssem})} \\
		&= \int_{K}\sum_{k=1}^{n}\partial_{k}\Psi_{l}(\bm{v}(x))\,\Gamma_{\mathcal{E}}\langle \Psi_{l}(\bm{v}); v_{k} \rangle(dx) \qquad \text{(by \eqref{chain} with $\Psi_{l}$ in place of $\Psi$)} \\ 
		&\leq \sum_{k=1}^{n} \norm{\partial_{k}\Psi_{l}}_{\sup, Q(\bm{v})} \Gamma_{\mathcal{E}}\langle \Psi_{l}(\bm{v}) \rangle(K)^{\frac{p-1}{p}} \Gamma_{\mathcal{E}}\langle v_{k} \rangle(K)^{\frac{1}{p}} \qquad \text{(by Proposition \ref{prop.em-holder})} \\
		&= \mathcal{E}(\Psi_{l}(\bm{v}))^{\frac{p-1}{p}}\sum_{k = 1}^{n}\norm{\partial_{k}\Psi_{l}}_{\sup, Q(\bm{v})}\mathcal{E}(v_{k})^{1/p} \qquad \text{(by Proposition \ref{prop.ss-pform-em}-\ref{ssem})}, 
	\end{align*}
	which implies that $\sup_{l \in \mathbb{N}}\mathcal{E}(\Psi_{l}(\bm{v})) < \infty$. 
    Also, by $\Psi_{l}(0) = 0$, \eqref{e:approx.poly.diff} and the dominated convergence theorem, $\{ \Psi_{l}(\bm{v}) \}_{l \in \mathbb{N}}$ converges in $L^{p}(K,m)$ to $\Psi(\bm{v})$ as $l \to \infty$. 
    Now we conclude from Lemma \ref{lem.Lpreduce} applied to $\bigl(\mathcal{E},\closure{\mathcal{F} \cap \contfunc(K)}^{\mathcal{F} \cap L^{p}(K,m)}\bigr)$, which clearly satisfies \ref{Cp}, that $\Psi(\bm{v}) \in \closure{\mathcal{F} \cap \contfunc(K)}^{\mathcal{F} \cap L^{p}(K,m)} \cap \contfunc(K) = \mathcal{F} \cap \contfunc(K)$, completing the proof. 
\end{proof}

In the following corollaries, we recall useful consequences of the chain rule in Theorem \ref{thm.em-chain}, which are immediate from Proposition \ref{prop.em-express} (or more precisely, \eqref{eq:em-express-proof} in its proof), Theorems \ref{thm.EIDP} and \ref{thm.slocal}. 
\begin{cor}\label{cor:ssEMrepresent} 
	Assume that $\mathbb{R}\indicator{K} \subseteq \mathcal{E}^{-1}(0)$ and that $(\mathcal{E},\mathcal{F} \cap \contfunc(K))$ satisfies \eqref{lipcont} in Proposition \ref{prop.GC-list}-\ref{GC.lip} and \ref{Cp}. 
    Then for any $u, \varphi \in \mathcal{F} \cap \contfunc(K)$,
    \begin{equation}\label{e:ssem.functional}
    	\int_{K}\varphi\,d\Gamma_{\mathcal{E}}\langle u \rangle
    	= \mathcal{E}(u; u\varphi) - \biggl(\frac{p - 1}{p}\biggr)^{p - 1}\mathcal{E}\bigl(\abs{u}^{\frac{p}{p - 1}}; \varphi\bigr).
    \end{equation} 
\end{cor}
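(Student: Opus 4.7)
The plan is to apply the algebraic identity \eqref{eq:em-express-proof} underlying the proof of Proposition \ref{prop.em-express} to the self-similar $p$-energy measures $\{\Gamma_{\mathcal{E}}\langle f\rangle\}_{f \in \mathcal{F} \cap \contfunc(K)}$, which satisfy \ref{Cp-em} by Proposition \ref{prop.ss-pform-em}-\ref{ssem-Cp}. The identity requires two specific instances of the bivariate chain rule \hyperref[it:CL2]{\textup{(CL2)}}, namely the Leibniz rule and the $C^{1}$-chain rule for $\Phi(x)=|x|^{p/(p-1)}$. Although Theorem \ref{thm.em-chain} established \hyperref[it:CL2]{\textup{(CL2)}} only under the additional hypothesis that $(\mathcal{F} \cap L^{p}(K,m),\norm{\,\cdot\,}_{\mathcal{E},1})$ is a Banach space—an assumption absent from the corollary—the emphasis in the proof of Theorem \ref{thm.em-chain} makes clear that this Banach space assumption is invoked solely to force $\Psi(\bm{v}) \in \mathcal{F}\cap\contfunc(K)$ via Mazur's lemma; once the relevant memberships are established by other means, the chain rule formula \eqref{chain} is valid.

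First, I would verify the required memberships under the corollary's weaker hypotheses: $u\varphi \in \mathcal{F}\cap\contfunc(K)$ by the elementary Leibniz estimate \eqref{e:weak.leibniz} (whose proof uses only \eqref{lipcont} and $\mathbb{R}\indicator{K} \subseteq \mathcal{E}^{-1}(0)$), and $|u|^{p/(p-1)} \in \mathcal{F}\cap\contfunc(K)$ by applying Theorem \ref{thm.em-chain-one} to $\Phi(x)=|x|^{p/(p-1)}$, which belongs to $C^{1}(\mathbb{R})$ since $p/(p-1)>1$. With these memberships in hand, the chain rule formula \eqref{chain} applies in two specific configurations:
\begin{enumerate}[label=\textup{(\alph*)},align=left,leftmargin=*,topsep=2pt,parsep=0pt,itemsep=2pt]
\item $\Phi = \id_{\mathbb{R}}$ and $\Psi(x_{1},x_{2})=x_{1}x_{2}$ with $\bm{v}=(u,\varphi)$, yielding the Leibniz identity $d\Gamma_{\mathcal{E}}\langle u;u\varphi\rangle = \varphi\,d\Gamma_{\mathcal{E}}\langle u\rangle + u\,d\Gamma_{\mathcal{E}}\langle u;\varphi\rangle$;
\item $\Phi(x)=|x|^{p/(p-1)}$ and $\Psi=\id_{\mathbb{R}}$ with $v_{1}=\varphi$. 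Since $\Phi'(x)=\tfrac{p}{p-1}\sgn(x)|x|^{1/(p-1)}$, a direct computation using $(q-1)(p-1)=1$ for $q=p/(p-1)$ gives $\sgn(\Phi'(u))|\Phi'(u)|^{p-1}=\bigl(\tfrac{p}{p-1}\bigr)^{p-1}u$, whence $d\Gamma_{\mathcal{E}}\langle |u|^{p/(p-1)};\varphi\rangle = \bigl(\tfrac{p}{p-1}\bigr)^{p-1} u\,d\Gamma_{\mathcal{E}}\langle u;\varphi\rangle$.
\end{enumerate}

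Finally, I would integrate each identity over $K$, invoking $\Gamma_{\mathcal{E}}\langle f;g\rangle(K) = \mathcal{E}(f;g)$ (which follows from Proposition \ref{prop.ss-pform-em}-\ref{ssem} by differentiation in $t$ at $t=0$), and combine as in \eqref{eq:em-express-proof}:
\begin{align*}
&\mathcal{E}(u;u\varphi) - \Bigl(\tfrac{p-1}{p}\Bigr)^{p-1}\mathcal{E}\bigl(|u|^{p/(p-1)};\varphi\bigr) \\
&\qquad= \int_{K}\varphi\,d\Gamma_{\mathcal{E}}\langle u\rangle + \int_{K}u\,d\Gamma_{\mathcal{E}}\langle u;\varphi\rangle - \int_{K}u\,d\Gamma_{\mathcal{E}}\langle u;\varphi\rangle = \int_{K}\varphi\,d\Gamma_{\mathcal{E}}\langle u\rangle,
\end{align*}
giving \eqref{e:ssem.functional}. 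The principal difficulty—securing the bivariate chain rule \hyperref[it:CL2]{\textup{(CL2)}} without the Banach space completeness assumption that powers the general proof of Theorem \ref{thm.em-chain}—is bypassed by restricting to the two particular $(\Phi,\Psi)$ for which the required memberships admit direct verification, making the remainder of the argument a mechanical algebraic cancellation.
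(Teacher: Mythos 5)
Your proposal is correct and follows essentially the same route as the paper's own proof: establish the memberships $u\varphi,\,\abs{u}^{p/(p-1)} \in \mathcal{F}\cap\contfunc(K)$ from \eqref{e:weak.leibniz} and \eqref{compos}, note (as the paper itself stresses in the second paragraph of the proof of Theorem \ref{thm.em-chain}) that the chain rule \eqref{chain} then holds for these two configurations without the Banach space hypothesis, and conclude by the cancellation in \eqref{eq:em-express-proof} together with $\Gamma_{\mathcal{E}}\langle f;g\rangle(K)=\mathcal{E}(f;g)$. The only cosmetic slip is attributing the role of the completeness assumption to Mazur's lemma rather than to Lemma \ref{lem.Lpreduce}, which does not affect the argument.
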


\begin{proof}
For any $u,\varphi \in \mathcal{F} \cap \contfunc(K)$, since $u\varphi \in \mathcal{F} \cap \contfunc(K)$ by \eqref{e:weak.leibniz}, we have \eqref{chain} with either of $(u,u\varphi)$ and $\bigl(\abs{u}^{\frac{p}{p - 1}},\varphi\bigr)$ in place of $(\Phi(u),\Psi(\bm{v}))$ by the second paragraph of the above proof of Theorem \ref{thm.em-chain}, and therefore \eqref{e:ssem.functional} follows from Proposition \ref{prop.ss-pform-em}-\ref{ssem} and the argument in \eqref{eq:em-express-proof}. 
\end{proof}

\begin{cor}\label{cor.EIDP.ss}
	Assume that $\mathbb{R}\indicator{K} \subseteq \mathcal{E}^{-1}(0)$, that $(\mathcal{E},\mathcal{F} \cap \contfunc(K))$ satisfies \eqref{lipcont} in Proposition \ref{prop.GC-list}-\ref{GC.lip} and \ref{Cp}, and that $(\mathcal{F} \cap L^p(K,m),\norm{\,\cdot\,}_{\mathcal{E},1})$ is a Banach space. 
	Then, for any $u \in \mathcal{F} \cap \contfunc(K)$, the Borel measure $\Gamma_{\mathcal{E}}\langle u \rangle \circ u^{-1}$ on $\mathbb{R}$ defined by $(\Gamma_{\mathcal{E}}\langle u \rangle \circ u^{-1})(A) \coloneqq \Gamma_{\mathcal{E}}\langle u \rangle(u^{-1}(A))$, $A \in \mathcal{B}(\mathbb{R})$, is absolutely continuous with respect to the Lebesgue measure on $\mathbb{R}$.
\end{cor}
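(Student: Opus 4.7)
The plan is to recognize this as a direct specialization of Theorem~\ref{thm.EIDP} to the family of self-similar $p$-energy measures $\{\Gamma_{\mathcal{E}}\langle f\rangle\}_{f \in \mathcal{F} \cap \contfunc(K)}$ defined in \eqref{e:defn.em.one}. The role of the abstract $(\mathcal{E},\core,\mathcal{F})$ in Theorem~\ref{thm.EIDP} will be played by $(\mathcal{E},\,\mathcal{F} \cap \contfunc(K),\,\mathcal{F} \cap L^{p}(K,m))$, with the last of these serving as the ambient Banach space under the norm $\norm{\,\cdot\,}_{\mathcal{E},1}$.

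First I would verify the input hypotheses of Theorem~\ref{thm.EIDP}. The properties \eqref{lipcont} and \ref{Cp} for $(\mathcal{E},\mathcal{F} \cap \contfunc(K))$ and the completeness of $(\mathcal{F} \cap L^{p}(K,m),\norm{\,\cdot\,}_{\mathcal{E},1})$ are assumed outright. Since $K$ is compact metrizable (hence locally compact separable metrizable) and $m$ has full support with $m(K) < \infty$ (implicit in having $L^{p}(K,m) \supseteq \contfunc(K)$), the topological assumptions \eqref{a:loccpt}--\eqref{a:fullRadon} of Subsection~\ref{sec.regularlocal} are automatic and the inclusion $\mathcal{F} \cap \contfunc(K) \subseteq \mathcal{F} \cap L^{p}(K,m)$ holds.

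The remaining substantive hypotheses concern the family $\{\Gamma_{\mathcal{E}}\langle f\rangle\}_{f \in \mathcal{F} \cap \contfunc(K)}$. That it is a family of $p$-energy measures on $(K,\mathcal{B}(K))$ dominated by $(\mathcal{E},\mathcal{F} \cap \contfunc(K))$ — namely \ref{EM1} and \ref{EM2} — is precisely Proposition~\hyperref[ssem]{\ref{prop.ss-pform-em}}-\ref{ssem}. The chain rule \hyperref[it:CL1]{\textup{(CL1)}} for this family is the content of Theorem~\ref{thm.em-chain-one}, whose hypotheses ($\mathbb{R}\indicator{K} \subseteq \mathcal{E}^{-1}(0)$ and \eqref{lipcont} for $(\mathcal{E},\mathcal{F} \cap \contfunc(K))$) are included in the standing assumptions of the corollary.

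With every hypothesis in place, I would invoke Theorem~\ref{thm.EIDP} directly: for any $u \in \mathcal{F} \cap \contfunc(K)$, the push-forward $\Gamma_{\mathcal{E}}\langle u \rangle \circ u^{-1}$ is absolutely continuous with respect to the Lebesgue measure on $\mathbb{R}$, which is exactly the claim. I do not anticipate any serious obstacle: the entire argument is a reduction that uses the self-similar construction of Section~\ref{sec.ss} to produce $p$-energy measures satisfying \hyperref[it:CL1]{\textup{(CL1)}} and then quotes the abstract image density property of Section~\ref{sec.pEM}; the only mild care required is the identification of the abstract $\mathcal{F}$ with $\mathcal{F} \cap L^{p}(K,m)$ so that Lemma~\ref{lem.Lpreduce}, which underlies Theorem~\ref{thm.EIDP}, is applicable.
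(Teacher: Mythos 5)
Your proposal is correct and is essentially the paper's own argument: the paper states this corollary as an immediate consequence of Theorem \ref{thm.EIDP} applied with $\core = \mathcal{F} \cap \contfunc(K)$ and ambient space $\mathcal{F} \cap L^{p}(K,m)$, using Proposition \hyperref[ssem]{\ref{prop.ss-pform-em}}-\ref{ssem} for \ref{EM1}--\ref{EM2} and Theorem \ref{thm.em-chain-one} for \hyperref[it:CL1]{\textup{(CL1)}}, exactly as you do. No gaps.
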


\begin{cor}\label{cor.slocal.ss}
	Assume that $\mathbb{R}\indicator{K} \subseteq \mathcal{E}^{-1}(0)$, that $(\mathcal{E},\mathcal{F} \cap \contfunc(K))$ satisfies \eqref{lipcont} in Proposition \ref{prop.GC-list}-\ref{GC.lip} and \ref{Cp}, and that $(\mathcal{F} \cap L^p(K,m),\norm{\,\cdot\,}_{\mathcal{E},1})$ is a Banach space. 
	Let $u,u_{1},u_{2},v \in \mathcal{F} \cap \contfunc(K)$, $a,a_{1},a_{2},b \in \mathbb{R}$ and $A \in \mathcal{B}(K)$. 
    \begin{enumerate}[label=\textup{(\alph*)},align=left,leftmargin=*,topsep=2pt,parsep=0pt,itemsep=2pt]
        \item\label{it:SLbasic.em.ss}  If $A \subseteq u^{-1}(a)$, then $\Gamma_{\mathcal{E}}\langle u \rangle(A) = 0$.
        \item\label{it:SL0.em.ss} If $A \subseteq (u - v)^{-1}(a)$, then $\Gamma_{\mathcal{E}}\langle u \rangle(A) = \Gamma_{\mathcal{E}}\langle v \rangle(A)$.
        \item\label{it:SL1.em.ss} If $A \subseteq u_{1}^{-1}(a_{1}) \cup u_{2}^{-1}(a_{2})$, then
			\begin{align}\label{e:pem-sl1.ss}
			\Gamma_{\mathcal{E}}\langle u_1 + u_2 + v \rangle(A) + \Gamma_{\mathcal{E}}\langle v \rangle(A) &= \Gamma_{\mathcal{E}}\langle u_1 + v \rangle(A) + \Gamma_{\mathcal{E}}\langle u_2 + v \rangle(A), \\
			\Gamma_{\mathcal{E}}\langle u_{1} + u_{2}; v \rangle(A) &= \Gamma_{\mathcal{E}}\langle u_{1}; v \rangle(A) + \Gamma_{\mathcal{E}}\langle u_{2}; v \rangle(A). 
			\label{e:pem-sl1-cor.ss}
			\end{align}
        \item\label{it:SL2.em.ss} If $A \subseteq (u_1 - u_2)^{-1}(a) \cup v^{-1}(b)$, then
			\begin{equation}\label{e:pem-sl2.ss}
				\Gamma_{\mathcal{E}}\langle u_1; v \rangle(A) = \Gamma_{\mathcal{E}}\langle u_2; v \rangle(A)
				\quad\text{and}\quad
				\Gamma_{\mathcal{E}}\langle v; u_1 \rangle(A) = \Gamma_{\mathcal{E}}\langle v; u_2 \rangle(A).
			\end{equation}
    \end{enumerate}
\end{cor}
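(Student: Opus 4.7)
The plan is to derive this corollary directly from Theorem \ref{thm.slocal} by verifying its hypotheses with the ambient space $(\mathcal{F} \cap L^{p}(K,m),\norm{\,\cdot\,}_{\mathcal{E},1})$ and with the ``core'' $\core \coloneqq \mathcal{F} \cap \contfunc(K)$. First I would record the underlying topological setup: $K$ is compact metrizable (hence locally compact separable metrizable as required by \eqref{a:loccpt}), and the measure $m$ is a Borel measure on $K$ with $\supp_{K}[m] = K$, which is automatically finite on compact sets, giving \eqref{a:fullRadon}. Moreover $\mathcal{F} \cap \contfunc(K) \subseteq L^{\infty}(K,m) \subseteq L^{p}(K,m)$ when $m(K) < \infty$ (and otherwise one reduces to the $\sigma$-finite pieces; in any case $\contfunc(K) \subseteq L^{p}_{\loc}$ suffices to identify $\mathcal{F} \cap \contfunc(K)$ as a subspace of $L^{p}(K,m)$ when combined with the assumed Banach-space structure on $\mathcal{F} \cap L^{p}(K,m)$).

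Next I would verify each structural hypothesis that Theorem \ref{thm.slocal} requires. The properties \eqref{lipcont} and \ref{Cp} for $(\mathcal{E},\mathcal{F} \cap \contfunc(K))$ are stated among the assumptions of the corollary. The completeness of $\mathcal{F} \cap L^{p}(K,m)$ under $\norm{\,\cdot\,}_{\mathcal{E},1}$ is also assumed, which gives the required Banach-space hypothesis. By Proposition \ref{prop.ss-pform-em}-\ref{ssem}, the family $\{\Gamma_{\mathcal{E}}\langle f\rangle\}_{f \in \mathcal{F} \cap \contfunc(K)}$ is indeed a family of $p$-energy measures on $(K,\mathcal{B}(K))$ dominated by $(\mathcal{E},\mathcal{F} \cap \contfunc(K))$, so \ref{EM1} and \ref{EM2} are in place. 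The crucial chain-rule property \hyperref[it:CL1]{\textup{(CL1)}} for these measures, which is the input Theorem \ref{thm.slocal} really depends on through Theorem \ref{thm.EIDP}, is precisely the content of Theorem \ref{thm.em-chain-one}; note that the hypothesis $\mathbb{R}\indicator{K} \subseteq \mathcal{E}^{-1}(0)$ of that theorem is assumed here.

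With the hypotheses checked, statements \ref{it:SLbasic.em.ss} and \ref{it:SL0.em.ss} follow immediately from Theorem \ref{thm.slocal}-\ref{it:SLbasic.em}, \ref{it:SL0.em}. For \ref{it:SL1.em.ss} and \ref{it:SL2.em.ss}, which involve the signed measures $\Gamma_{\mathcal{E}}\langle f;g\rangle$ and thus require \ref{Cp-em} for $\{\Gamma_{\mathcal{E}}\langle f\rangle\}_{f \in \mathcal{F} \cap \contfunc(K)}$, I would invoke Proposition \ref{prop.ss-pform-em}-\ref{ssem-Cp}, which promotes \ref{Cp} for $(\mathcal{E},\mathcal{F} \cap \contfunc(K))$ to \ref{Cp-em} for the associated $p$-energy measures. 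Equations \eqref{e:pem-sl1.ss}, \eqref{e:pem-sl1-cor.ss} and \eqref{e:pem-sl2.ss} then come directly from \eqref{e:pem-sl1}, \eqref{e:pem-sl1-cor} and \eqref{e:pem-sl2} in Theorem \ref{thm.slocal}.

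There is essentially no hard step: all the genuine work has been done in Theorem \ref{thm.em-chain-one} (chain rule), Theorem \ref{thm.EIDP} (image density), Theorem \ref{thm.slocal} (abstract strong locality) and Proposition \ref{prop.ss-pform-em}. The only point requiring slight care is that Theorem \ref{thm.slocal} is stated with the ambient domain $\mathcal{F}$ assumed to lie in $L^{p}(X,m)$ and to be Banach under $\norm{\,\cdot\,}_{\mathcal{E},1}$, whereas here this is only assumed for $\mathcal{F} \cap L^{p}(K,m)$; thus I would apply Theorem \ref{thm.slocal} with $\mathcal{F} \cap L^{p}(K,m)$ playing the role of ``$\mathcal{F}$'' and $\mathcal{F} \cap \contfunc(K) \subseteq \mathcal{F} \cap L^{p}(K,m)$ playing the role of $\core$, an entirely routine substitution.
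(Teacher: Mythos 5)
Your proposal is correct and follows essentially the same route as the paper, which declares the corollary immediate from Theorem \ref{thm.slocal} once \hyperref[it:CL1]{\textup{(CL1)}} is supplied by Theorem \ref{thm.em-chain-one} and \ref{Cp-em} by Proposition \ref{prop.ss-pform-em}-\ref{ssem-Cp}, with $\core = \mathcal{F} \cap \contfunc(K)$ inside the Banach space $(\mathcal{F} \cap L^{p}(K,m),\norm{\,\cdot\,}_{\mathcal{E},1})$. Your verification of the hypotheses, including the distinction between which parts need \ref{Cp-em}, matches what the paper intends.
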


\subsection{Extensions of self-similar \texorpdfstring{$p$}{p}-energy measures}\label{subsec:ext-ssem}
In this subsection, we fix a self-similar structure $\mathcal{L} = (K,S,\{ F_{i} \}_{i \in S})$ with $K$ connected, a self-similar measure $m$ on $\mathcal{L}$, $p \in (1,\infty)$, and a self-similar $p$-energy form $(\mathcal{E},\mathcal{F})$ on $(\mathcal{L},m)$ with weight $(\rweight_{i})_{i \in S} \in (0,\infty)^{S}$, and further assume that $\mathcal{F} \subseteq L^{p}(K,m)$.  
In this setting, we first discuss the extension of self-similar $p$-energy measures to $\closure{\mathcal{F} \cap \contfunc(K)}^{\mathcal{F}} \eqqcolon \CoreClosure$. 
Recall the feature noted in Remark \ref{rmk:pullback} of $m$ as a self-similar measure on $\mathcal{L}$. 

\begin{lem}\label{lem.ssform-ext}
	Assume that $(\mathcal{F},\norm{\,\cdot\,}_{\mathcal{E},1})$ is a Banach space.  
	Let $u \in \mathcal{F}$ and $\{ u_{n} \}_{n \in \mathbb{N}} \subseteq \mathcal{F} \cap \contfunc(K)$. 
	If $\{ u_{n} \}_{n \in \mathbb{N}}$ converges in $\mathcal{F}$ to $u$, then $\{ u_{n} \circ F_{w} \}_{n \in \mathbb{N}}$ converges in $\mathcal{F}$ to $u \circ F_{w}$ for any $w \in W_{\ast}$. 
	In particular, 
	\begin{gather}
		u \circ F_{w} \in \CoreClosure \quad \text{for any $u \in \CoreClosure$ and any $w \in W_{\ast}$.} \label{e:ssdom-ext}\\
		\mathcal{E}(u) = \sum_{i \in S}\rweight_{i}\mathcal{E}(u \circ F_{i}) \quad \text{for any $u \in \CoreClosure$.} \label{e:ssform-ext}
	\end{gather}
\end{lem}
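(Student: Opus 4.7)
The plan is to deduce the convergence $u_{n} \circ F_{w} \to u \circ F_{w}$ in $(\mathcal{F},\norm{\,\cdot\,}_{\mathcal{E},1})$ by combining the self-similarity identity \eqref{SSE2} (which controls the $\mathcal{E}^{1/p}$-seminorm of the pullbacks) with the boundedness of the pullback operator on $L^{p}(K,m)$ recorded in Remark \ref{rmk:wo-measure} and Remark \ref{rmk:pullback}, and then to invoke the completeness of $(\mathcal{F},\norm{\,\cdot\,}_{\mathcal{E},1})$.

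First, I would fix $w \in W_{\ast}$ and show that $\{u_{n} \circ F_{w}\}_{n \in \mathbb{N}}$ is Cauchy in $(\mathcal{F},\norm{\,\cdot\,}_{\mathcal{E},1})$. Note that $u_{n} - u_{m} \in \mathcal{F} \cap \contfunc(K)$, so by iterating \eqref{SSE1} and applying \eqref{ss.partition} to the partition $W_{\abs{w}}$,
\begin{equation*}
  u_{n} \circ F_{w} - u_{m} \circ F_{w} = (u_{n} - u_{m}) \circ F_{w} \in \mathcal{F} \cap \contfunc(K)
  \quad\text{and}\quad
  \rweight_{w}\mathcal{E}\bigl((u_{n} - u_{m}) \circ F_{w}\bigr) \leq \mathcal{E}(u_{n} - u_{m}).
\end{equation*}
The right-hand side tends to $0$ as $n,m \to \infty$, showing that $\{u_{n} \circ F_{w}\}_{n \in \mathbb{N}}$ is Cauchy with respect to $\mathcal{E}^{1/p}$. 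Simultaneously, the $L^{p}(K,m)$-boundedness of $F_{w}^{\ast}$ (Remark \ref{rmk:pullback}) gives a constant $C_{w} \in (0,\infty)$ with $\norm{u_{n} \circ F_{w} - u_{m} \circ F_{w}}_{L^{p}(K,m)} \leq C_{w}\norm{u_{n} - u_{m}}_{L^{p}(K,m)}$, so the sequence is also $L^{p}$-Cauchy. Hence it is $\norm{\,\cdot\,}_{\mathcal{E},1}$-Cauchy and, by the assumed completeness of $(\mathcal{F},\norm{\,\cdot\,}_{\mathcal{E},1})$, converges in $\mathcal{F}$ to some $v_{w} \in \mathcal{F}$.

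Next I would identify $v_{w}$ with $u \circ F_{w}$. Since convergence in $\norm{\,\cdot\,}_{\mathcal{E},1}$ implies convergence in $L^{p}(K,m)$, we have both $u_{n} \circ F_{w} \to v_{w}$ and $u_{n} \circ F_{w} \to u \circ F_{w}$ in $L^{p}(K,m)$ (the latter via the $L^{p}$-continuity of $F_{w}^{\ast}$ applied to $u_{n} \to u$ in $L^{p}(K,m)$), so $v_{w} = u \circ F_{w}$ in $L^{0}(K,m)$, proving the first assertion. For \eqref{e:ssdom-ext}, given $u \in \CoreClosure$, pick $\{u_{n}\}_{n \in \mathbb{N}} \subseteq \mathcal{F} \cap \contfunc(K)$ with $u_{n} \to u$ in $\mathcal{F}$; the iterative use of \eqref{SSE1} shows $u_{n} \circ F_{w} \in \mathcal{F} \cap \contfunc(K)$, and the convergence just established yields $u \circ F_{w} \in \CoreClosure$. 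Finally, for \eqref{e:ssform-ext}, apply \eqref{SSE2} to each $u_{n}$ and pass to the limit using the continuity of $\mathcal{E}^{1/p}$ with respect to itself together with the convergence $u_{n} \circ F_{i} \to u \circ F_{i}$ in $\mathcal{F}$ for every $i \in S$.

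I do not anticipate a serious obstacle; the only points requiring care are the stability of $\mathcal{F} \cap \contfunc(K)$ under composition with $F_{w}$ (handled by iterating \eqref{SSE1}) and the identification of the $\mathcal{F}$-limit $v_{w}$ with $u \circ F_{w}$ as equivalence classes in $L^{0}(K,m)$, which relies on the $L^{p}$-boundedness of $F_{w}^{\ast}$ guaranteed by the self-similarity of $m$ (Remark \ref{rmk:pullback}).
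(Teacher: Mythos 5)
Your proof is correct and follows essentially the same route as the paper: the $\mathcal{E}$-Cauchy estimate $\rweight_{w}\mathcal{E}\bigl((u_{n}-u_{m})\circ F_{w}\bigr)\le\mathcal{E}(u_{n}-u_{m})$ from \eqref{ss.partition}, the $L^{p}$-identification of the limit via the self-similarity of $m$ (Remark \ref{rmk:pullback}), completeness of $(\mathcal{F},\norm{\,\cdot\,}_{\mathcal{E},1})$, and passage to the limit in \eqref{SSE2}. The only blemish is the spurious citation of Remark \ref{rmk:wo-measure}, which concerns the counting-measure convention and is irrelevant here.
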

\begin{proof}
	Let $\{ u_{n} \}_{n \in \mathbb{N}}$ satisfy $\lim_{n \to \infty}\norm{u - u_{n}}_{\mathcal{E},1} = 0$.
	Then we easily see from the self-similarity of $m$ that $\{ u_{n} \circ F_{w} \}_{n \in \mathbb{N}}$ converges in $L^{p}(K,m)$ to $u \circ F_{w}$ for any $w \in W_{\ast}$. 
	Since $\mathcal{E}(u_{n} \circ F_{w} - u_{k} \circ F_{w}) \le \rweight_{w}^{-1}\mathcal{E}(u_{n} - u_{k})$ for any $n,k \in \mathbb{N}$ by the self-similarity \eqref{SSE2} of $\mathcal{E}$, $\{ u_{n} \circ F_{w} \}_{n \in \mathbb{N}}$ is a Cauchy sequence in $\mathcal{F}$. 
	Therefore, it has to converge to $u \circ F_{w}$ in $\mathcal{F}$, which shows \eqref{e:ssdom-ext}. 
	By letting $n \to \infty$ in \eqref{SSE2} for $u_{n}$, we obtain \eqref{e:ssform-ext}. 
\end{proof}

Now that we have obtained the identity \eqref{e:ssform-ext}, in a similar way using Kolmogorov's extension theorem as in the previous subsection, for each $u \in \CoreClosure$ we get a unique Borel measure $\mathfrak{m}_{\mathcal{E}}\langle u \rangle$ on $\Sigma$ such that $\mathfrak{m}_{\mathcal{E}}\langle u \rangle(\Sigma_{w}) = \rweight_{w}\mathcal{E}(u \circ F_{w})$ for any $w \in W_{\ast}$. 
The following lemma states the triangle inequality for $\mathfrak{m}_{\mathcal{E}}\langle \,\cdot\, \rangle(A)^{1/p}$ on $\CoreClosure$.
\begin{lem}\label{lem.prepEMconti}
	Assume that $(\mathcal{F},\norm{\,\cdot\,}_{\mathcal{E},1})$ is a Banach space.  
	Then for any $u,v \in \CoreClosure$ and any $A \in \mathcal{B}(\Sigma)$,
	\[
	\mathfrak{m}_{\mathcal{E}}\langle u + v \rangle(A)^{1/p} \le \mathfrak{m}_{\mathcal{E}}\langle u \rangle(A)^{1/p} + \mathfrak{m}_{\mathcal{E}}\langle v \rangle(A)^{1/p}. 
	\]
\end{lem}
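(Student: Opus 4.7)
The plan is a three-step approach proceeding from cylinder sets to the algebra they generate and finally to all of $\mathcal{B}(\Sigma)$. First, for any $w \in W_{\ast}$ and $u,v \in \CoreClosure$, the extension property \eqref{e:ssdom-ext} gives $u \circ F_{w}, v \circ F_{w}, (u+v) \circ F_{w} \in \CoreClosure \subseteq \mathcal{F}$, and since $\mathcal{E}^{1/p}$ is a seminorm on $\mathcal{F}$, the defining identity $\mathfrak{m}_{\mathcal{E}}\langle \,\cdot\, \rangle(\Sigma_{w}) = \rweight_{w}\mathcal{E}(\,\cdot\, \circ F_{w})$ immediately yields the desired inequality on each cylinder set $\Sigma_{w}$.

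Next, any set $A$ in the algebra $\mathcal{A}$ generated by $\{\Sigma_{w}\}_{w \in W_{\ast}}$ can be expressed as a finite disjoint union $A = \bigsqcup_{i=1}^{N}\Sigma_{w_{i}}$ (using that for any $w,v \in W_{\ast}$ either $\Sigma_{w}$ and $\Sigma_{v}$ are disjoint or one contains the other, and that $\Sigma \setminus \Sigma_{w}$ is a finite disjoint union of cylinders). Applying the cylinder-set inequality to each $\Sigma_{w_{i}}$ and then Minkowski's inequality for the weighted $\ell^{p}$-norm with weights $(\rweight_{w_{i}})_{i=1}^{N}$ extends the triangle inequality from cylinders to the whole of $\mathcal{A}$.

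Finally, let $\mathcal{D}$ denote the collection of $A \in \mathcal{B}(\Sigma)$ satisfying $\mathfrak{m}_{\mathcal{E}}\langle u+v \rangle(A)^{1/p} \le \mathfrak{m}_{\mathcal{E}}\langle u \rangle(A)^{1/p} + \mathfrak{m}_{\mathcal{E}}\langle v \rangle(A)^{1/p}$. Since each of $\mathfrak{m}_{\mathcal{E}}\langle u \rangle, \mathfrak{m}_{\mathcal{E}}\langle v \rangle, \mathfrak{m}_{\mathcal{E}}\langle u+v \rangle$ is a finite Borel measure on $\Sigma$, it is continuous from both above and below, so the inequality passes through monotone increasing and decreasing limits of sets; hence $\mathcal{D}$ is a monotone class containing $\mathcal{A}$. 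Because the cylinder sets $\{\Sigma_{w}\}_{w \in W_{\ast}}$ form a countable base for the compact metrizable topology on $\Sigma$, we have $\sigma(\mathcal{A}) = \mathcal{B}(\Sigma)$, and the monotone class theorem yields $\mathcal{D} = \mathcal{B}(\Sigma)$. The argument is essentially routine; the only delicate point is ensuring that $\mathfrak{m}_{\mathcal{E}}\langle u \rangle$ is well-defined as a Borel measure on $\CoreClosure$ satisfying the prescribed values on cylinder sets, but this has already been supplied by Kolmogorov's extension theorem in the paragraph preceding the lemma once consistency has been verified via \eqref{e:ssform-ext}.
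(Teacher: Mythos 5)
Your proof is correct and follows essentially the same route as the paper: the paper's proof simply invokes the triangle inequality for $\mathcal{E}^{1/p}$ together with the argument of Proposition \ref{prop.sspem-pre}-\ref{it:sspem-pre.GC}, which is exactly your three-step passage from cylinder sets to the generated algebra (via Minkowski's inequality on finite disjoint unions) and then to $\mathcal{B}(\Sigma)$ by the monotone class theorem.
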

\begin{proof}
	This follows from the triangle inequality for $\mathcal{E}^{1/p}$ and the argument in the proof of Proposition \ref{prop.sspem-pre}-\ref{it:sspem-pre.GC}.  
\end{proof}

Now we identify the $p$-energy measures $\{ \Gamma_{\mathcal{E}}\langle u \rangle \}_{u \in \CoreClosure}$, obtained by applying Proposition \ref{prop.em-ext} to the measures defined in \eqref{e:defn.em.one}, as $\{ \mathfrak{m}_{\mathcal{E}}\langle u \rangle \circ \chi^{-1} \}_{u \in \CoreClosure}$. 
\begin{prop}\label{prop.sspEM-ext}
	Assume that $(\mathcal{F},\norm{\,\cdot\,}_{\mathcal{E},1})$ is a Banach space. 
	Then for any $u \in \CoreClosure$, 
	\begin{equation}\label{e:pEMext.coincidence}
		\Gamma_{\mathcal{E}}\langle u \rangle = \mathfrak{m}_{\mathcal{E}}\langle u \rangle \circ \chi^{-1} \quad \text{(as Borel measures on $K$).} 
	\end{equation}
\end{prop}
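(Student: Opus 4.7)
The plan is to identify both sides of \eqref{e:pEMext.coincidence} as the same limit of $\Gamma_{\mathcal{E}}\langle u_{n} \rangle$ along an approximating sequence $\{u_{n}\} \subseteq \mathcal{F} \cap \contfunc(K)$ with $\lim_{n \to \infty}\norm{u - u_{n}}_{\mathcal{E},1} = 0$. First, I would observe that, thanks to Proposition \hyperref[ssem]{\ref{prop.ss-pform-em}}-\ref{ssem}, the family $\{\Gamma_{\mathcal{E}}\langle f \rangle\}_{f \in \mathcal{F} \cap \contfunc(K)}$ from \eqref{e:defn.em.one} is a family of $p$-energy measures on $(K,\mathcal{B}(K))$ dominated by the $p$-energy form $(\mathcal{E},\mathcal{F} \cap \contfunc(K))$ on $(K,m)$. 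Taking $\core = \mathcal{F} \cap \contfunc(K)$ in the setup preceding Proposition \ref{prop.em-ext} yields $\ExtD \supseteq \CoreClosure$, and thus Proposition \ref{prop.em-ext} furnishes, for each $u \in \CoreClosure$, a measure $\Gamma_{\mathcal{E}}\langle u \rangle$ on $(K,\mathcal{B}(K))$ characterized by
\begin{equation*}
	\Gamma_{\mathcal{E}}\langle u \rangle(A) = \lim_{n \to \infty}\Gamma_{\mathcal{E}}\langle u_{n} \rangle(A) = \lim_{n \to \infty}\mathfrak{m}_{\mathcal{E}}\langle u_{n} \rangle(\chi^{-1}(A)), \quad A \in \mathcal{B}(K),
\end{equation*}
where the second equality uses \eqref{e:defn.em.one} for $u_{n} \in \mathcal{F} \cap \contfunc(K)$.

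Next, I would transfer this convergence to the Borel measures on $\Sigma$. By Lemma \ref{lem.ssform-ext} (more precisely \eqref{e:ssform-ext}), the Borel measure $\mathfrak{m}_{\mathcal{E}}\langle v \rangle$ on $\Sigma$ is well defined for every $v \in \CoreClosure$ via Kolmogorov's extension theorem, and satisfies $\mathfrak{m}_{\mathcal{E}}\langle v \rangle(\Sigma) = \mathcal{E}(v)$. Applying Lemma \ref{lem.prepEMconti} with $v = u - u_{n}$ then yields
\begin{equation*}
	\abs{\mathfrak{m}_{\mathcal{E}}\langle u \rangle(B)^{1/p} - \mathfrak{m}_{\mathcal{E}}\langle u_{n} \rangle(B)^{1/p}}
	\le \mathfrak{m}_{\mathcal{E}}\langle u - u_{n} \rangle(B)^{1/p}
	\le \mathcal{E}(u - u_{n})^{1/p}
\end{equation*}
uniformly in $B \in \mathcal{B}(\Sigma)$, so $\lim_{n \to \infty}\mathfrak{m}_{\mathcal{E}}\langle u_{n} \rangle(B) = \mathfrak{m}_{\mathcal{E}}\langle u \rangle(B)$ for every $B \in \mathcal{B}(\Sigma)$.

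Choosing $B = \chi^{-1}(A)$ for $A \in \mathcal{B}(K)$ and combining the two displays gives
\begin{equation*}
	\Gamma_{\mathcal{E}}\langle u \rangle(A)
	= \lim_{n \to \infty}\mathfrak{m}_{\mathcal{E}}\langle u_{n} \rangle(\chi^{-1}(A))
	= \mathfrak{m}_{\mathcal{E}}\langle u \rangle(\chi^{-1}(A))
	= (\mathfrak{m}_{\mathcal{E}}\langle u \rangle \circ \chi^{-1})(A),
\end{equation*}
which is the desired equality. The only nontrivial point is the passage of convergence from $\mathcal{F}$-norm to a uniform estimate on cylinder measures, and this is precisely what Lemma \ref{lem.prepEMconti} was prepared for; the rest is a bookkeeping exercise matching two definitions. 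No genuine obstacle is expected.
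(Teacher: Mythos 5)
Your proof is correct and takes essentially the same route as the paper's: both identify the two sides as the common setwise limit of $\Gamma_{\mathcal{E}}\langle u_{n} \rangle$ along an approximating sequence, using \eqref{e:em.extension} (via Proposition \ref{prop.em-ext}) on one side and Lemma \ref{lem.prepEMconti} together with the bound $\mathfrak{m}_{\mathcal{E}}\langle u - u_{n} \rangle(B) \le \mathcal{E}(u - u_{n})$ on the other. The paper merely states this more tersely, starting from the observation that the identity is definitional for $u \in \mathcal{F} \cap \contfunc(K)$.
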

\begin{proof}
	The equality \eqref{e:pEMext.coincidence} for $u \in \mathcal{F} \cap \contfunc(K)$ is obvious from the definition of $\Gamma_{\mathcal{E}}\langle u \rangle$ in \eqref{e:defn.em.one}. 
	Then the desired assertion immediately follows from \eqref{e:em.extension} in Proposition \ref{prop.em-ext}, Lemma \ref{lem.prepEMconti} and $\sup_{A \in \mathcal{B}(\Sigma)}\mathfrak{m}_{\mathcal{E}}\langle u \rangle(A) \le \mathcal{E}(u)$. 
\end{proof}

We conclude this subsection by seeing that self-similar $p$-energy measures can be extended to \emph{functions belonging locally to $\CoreClosure$} in Definition \ref{defn.Flocal-ss} below. 
To this end, we need the following lemma. 
\begin{lem}[Weak locality\index{weak locality (self-similar $p$-energy measures)} of self-similar $p$-energy measures; {\cite[Lemma 9.6]{MS.long}}]\label{lem.pEMwloc}
	Assume that $(\mathcal{F},\norm{\,\cdot\,}_{\mathcal{E},1})$ is a Banach space. 
	Let $U$ be an open subset of $K$.
	If $u,v \in \CoreClosure$ satisfy $u = v$ $\measure$-a.e.\ on $U$, then $\Gamma_{\mathcal{E}}\langle u \rangle|_{U} = \Gamma_{\mathcal{E}}\langle v \rangle|_{U}$.
\end{lem}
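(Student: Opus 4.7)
My plan is to decompose the open set $\chi^{-1}(U) \subseteq \Sigma$ into a disjoint countable union of cylinder sets, on which $\mathfrak{m}_{\mathcal{E}}\langle u \rangle$ is expressible via the $\mathcal{E}$-values of the pullbacks $u \circ F_w$, and then to use the self-similarity of $m$ to convert the hypothesis $u = v$ $m$-a.e.\ on $U$ into $u \circ F_w = v \circ F_w$ $m$-a.e.\ on $K$ for each cell appearing in the decomposition.

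First, I would construct $W_U \subseteq W_*$ as the set of those $w$ that are minimal with respect to prefix order among $\{w \in W_* \mid K_w \subseteq U\}$. Since $U$ is open and $\max_{w \in W_n}\diam(K_w, d) \to 0$ as $n \to \infty$ by \eqref{ss-diam} (fixing any metric $d$ inducing the topology of $K$), every $\omega \in \chi^{-1}(U)$ lies in $\Sigma_w$ for a unique $w \in W_U$, yielding the disjoint decomposition $\chi^{-1}(U) = \bigsqcup_{w \in W_U}\Sigma_w$. Then Proposition \ref{prop.sspEM-ext} together with the defining property $\mathfrak{m}_{\mathcal{E}}\langle u \rangle(\Sigma_w) = \rweight_w \mathcal{E}(u \circ F_w)$ gives $\Gamma_{\mathcal{E}}\langle u \rangle(U) = \mathfrak{m}_{\mathcal{E}}\langle u \rangle(\chi^{-1}(U)) = \sum_{w \in W_U}\rweight_w\mathcal{E}(u \circ F_w)$ and the analogous identity for $v$, reducing the claim to $\mathcal{E}(u \circ F_w) = \mathcal{E}(v \circ F_w)$ for each $w \in W_U$.

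For the final step, fix $w \in W_U$. Iterating the self-similarity \eqref{ea:ss-meas} of $m$ yields $m = \sum_{\tau \in W_{|w|}}\theta_\tau(m \circ F_\tau^{-1})$, so for any Borel set $N \subseteq K_w$ with $m(N) = 0$ we have $\theta_w m(F_w^{-1}(N)) \le m(N) = 0$ and hence $m(F_w^{-1}(N)) = 0$. Applying this with $N$ chosen so that $u = v$ pointwise on $K_w \setminus N$ (such $N$ exists since $K_w \subseteq U$ and $u = v$ $m$-a.e.\ on $U$) gives $u \circ F_w = v \circ F_w$ $m$-a.e.\ on $K$. Since $u \circ F_w, v \circ F_w \in \CoreClosure \subseteq \mathcal{F} \subseteq L^p(K,m)$ by \eqref{e:ssdom-ext}, they coincide as elements of $\mathcal{F}$ and therefore take equal values under $\mathcal{E}$.

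The only mildly subtle point I foresee is verifying the decomposition $\chi^{-1}(U) = \bigsqcup_{w \in W_U}\Sigma_w$ — disjointness is immediate from prefix-minimality of $W_U$, while the covering of $\chi^{-1}(U)$ uses openness of $U$ combined with \eqref{ss-diam}; but this is routine, and no further analytic input is required beyond Proposition \ref{prop.sspEM-ext} and the self-similarity of $m$.
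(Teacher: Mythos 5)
Your argument is correct, but it takes a different route from the paper's. The paper (following \cite[Lemma 9.6]{MS.long}) reduces to closed sets $A \subseteq U$ via inner regularity, covers $\chi^{-1}(A)$ by the decreasing sets $\Sigma_{C_{n}}$ with $C_{n} = \{ w \in W_{n} \mid \Sigma_{w} \cap \chi^{-1}(A) \neq \emptyset \}$, uses $\dist_{d}(A, K \setminus U) > 0$ and \eqref{ss-diam} to guarantee $K_{w} \subseteq U$ for $w \in C_{n}$ with $n$ large, and passes to the limit along $\bigcap_{n}\Sigma_{C_{n}} = \chi^{-1}(A)$. You instead decompose $\chi^{-1}(U)$ \emph{exactly} as the disjoint countable union $\bigsqcup_{w \in W_{U}}\Sigma_{w}$ over the prefix-minimal words with $K_{w} \subseteq U$ and invoke countable additivity of $\mathfrak{m}_{\mathcal{E}}\langle \,\cdot\, \rangle$; your covering and disjointness claims both check out (covering from openness of $U$ plus \eqref{ss-diam}, disjointness from prefix-minimality). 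The analytic core is identical in both proofs — the identity $\mathfrak{m}_{\mathcal{E}}\langle u \rangle(\Sigma_{w}) = \rweight_{w}\mathcal{E}(u \circ F_{w})$ together with the transfer of ``$u = v$ $m$-a.e.\ on $K_{w}$'' to ``$u \circ F_{w} = v \circ F_{w}$ $m$-a.e.\ on $K$'' via the self-similarity \eqref{ea:ss-meas} of $m$ (which you spell out; the paper takes it for granted, cf.\ Remark \ref{rmk:pullback}). What your version buys is a slightly stronger and more explicit conclusion, namely the exact formula $\Gamma_{\mathcal{E}}\langle u \rangle(U) = \sum_{w \in W_{U}}\rweight_{w}\mathcal{E}(u \circ F_{w})$ for open $U$, with no appeal to inner regularity; the paper's monotone-approximation scheme is the one that transfers more directly to closed sets and to settings where no exact cylinder decomposition is available.
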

\begin{proof}
	The proof is exactly the same as \cite[Lemma 9.6]{MS.long}, but we recall the details here for the reader's convenience. 
	By the triangle inequality for $\Gamma_{\mathcal{E}}\langle \,\cdot\, \rangle$ from Proposition \ref{prop.em-ext} and the inner regularity of $\Gamma_{\mathcal{E}}\langle u - v \rangle$ (see, e.g., \cite[Theorem 7.1.3]{Dud} for the fact that any finite Borel measure on $K$ is inner regular), it suffices to show $\Gamma_{\mathcal{E}}\langle u - v \rangle(A) = 0$ for any compact subset $A$ of $U$. 
	Let $d$ be a metric on $K$ giving the original topology of $K$. 
	By \eqref{ss-diam}, we can choose $\delta \in (0, \dist_{\metric}(A, K \setminus U))$ and $N \in \mathbb{N}$ so that $\max_{w \in W_{n}}\diam(K_{w},d) < \delta$ for any $n \ge N$.
	For $n \in \mathbb{N}$, define $C_{n} \coloneqq \{ w \in W_{n} \mid \Sigma_{w} \cap \chi^{-1}(A) \neq \emptyset \}$.
	Since $(u - v) \circ F_{w} = 0$ $\measure$-a.e.\ on $K$ for any $n \ge N$ and any $w \in C_{n}$, we have
	\begin{align*}
		\mathfrak{m}_{\mathcal{E}}\langle u - v \rangle(\Sigma_{C_{n}})
		= \sum_{w \in C_{n}}\rweight_{w}\mathcal{E}((u - v) \circ F_w)
		= 0.
	\end{align*}
	Now, recalling \eqref{e:pEMext.coincidence} and noting that $\{ \Sigma_{C_{n}} \}_{n \in \mathbb{N}}$ is non-increasing and satisfies $\bigcap_{n \in \mathbb{N}}\Sigma_{C_{n}} = \chi^{-1}(A)$ (see \cite[Proof of Lemma 4.1]{Hin05} or \cite[Proof of Proposition 9.3]{MS.long} for a proof of this fact), we obtain $\Gamma_{\mathcal{E}}\langle u - v \rangle(A) = \mathfrak{m}_{\mathcal{E}}\langle u - v \rangle(\chi^{-1}(A)) = \lim_{n\to\infty} \mathfrak{m}_{\mathcal{E}}\langle u - v \rangle(\Sigma_{C_{n}}) = 0$.
\end{proof}

\begin{defn}\label{defn.Flocal-ss}
    Let $U$ be a non-empty open subset of $K$. 
    \begin{enumerate}[label=\textup{(\arabic*)},align=left,leftmargin=*,topsep=2pt,parsep=0pt,itemsep=2pt]
    	\item\label{it:Flocal-ss} For each linear subspace $\mathcal{D}$ of $\mathcal{F}$, we define a linear subspace $\mathcal{D}_{\mathrm{loc}}(U)$ of $L^{0}(U,m|_{U})$ by 
    	\begin{equation}\label{e:defn.Floc}
    		\mathcal{D}_{\mathrm{loc}}(U) \coloneqq
        	\biggl\{ f \in L^{0}(U,m|_{U}) \biggm|
        	\begin{minipage}{220pt}
            	$f = f^{\#}$ $m$-a.e.\ on $V$ for some $f^{\#} \in \mathcal{D}$ for each relatively compact open subset $V$ of $U$ 
        	\end{minipage}
        	\biggr\}.
    	\end{equation}
    	\item\label{it:Flocal-ss-sspem} Assume that $(\mathcal{F},\norm{\,\cdot\,}_{\mathcal{E},1})$ is a Banach space. In this setting, for each $f \in (\CoreClosure)_{\mathrm{loc}}(U) \eqqcolon \CoreClosure_{\mathrm{loc}}(U)$, we further define a Radon measure $\Gamma_{\mathcal{E}}\langle f \rangle$ on $U$ as follows.
			We first define $\Gamma_{\mathcal{E}}\langle f \rangle(E) \coloneqq \Gamma_{\mathcal{E}}\langle f^{\#} \rangle(E)$ for each relatively compact Borel subset $E$ of $U$, with $A \subseteq U$ and $f^{\#} \in \CoreClosure$ as in \eqref{e:defn.Floc} chosen so that $E \subseteq A$; this definition of $\Gamma_{\mathcal{E}}\langle f \rangle(E)$ is independent of a particular choice of such $A$ and $f^{\#}$ by Lemma \ref{lem.pEMwloc}.
			We then define $\Gamma_{\mathcal{E}}\langle f \rangle(E) \coloneqq \lim_{n \to \infty}\Gamma_{\mathcal{E}}\langle f \rangle(E \cap A_{n})$ for each $E \in \mathcal{B}|_{U}$, where $\{ A_{n} \}_{n \in \mathbb{N}}$ is a non-decreasing sequence of relatively compact open subsets of $U$ such that $\bigcup_{n \in \mathbb{N}}A_{n} = U$; it is clear that this definition of $\Gamma_{\mathcal{E}}\langle f \rangle(E)$ is independent of a particular choice of $\{ A_{n} \}_{n \in \mathbb{N}}$, coincides with the previous one when $E$ is relatively compact in $U$, and gives a Radon measure on $U$. 
    \end{enumerate}
\end{defn}

\subsection{Self-similar \texorpdfstring{$p$}{p}-energy form as a fixed point}
This subsection is devoted to presenting a standard method to construct a self-similar $p$-energy form. 
The main result of this subsection (Theorem \ref{thm.ssenergy-fix}) is essentially the same as the fixed point theorem in \cite[Theorem 1.5]{Kig00}, but we present the details to show a useful version of this fixed point theorem where a fixed point is explicitly given as a limit.  

In this subsection, we fix a self-similar structure $\mathcal{L} = (K,S,\{ F_{i} \}_{i \in S})$ with $K$ connected, a self-similar measure $m$ on $\mathcal{L}$, $p \in (1,\infty)$, and a linear subspace $\mathcal{F}$ of $L^{p}(K,m)$ with the following property:  
\begin{equation}\label{eq.SSdomain-whole}
	u \circ F_{w} \in \mathcal{F} \quad \text{for any $u \in \mathcal{F}$ and any $w \in W_{\ast}$}
\end{equation}
(recall Remark \ref{rmk:pullback}). 
We define 
\[
\mathfrak{E}_{p}(\mathcal{F}) \coloneqq \{ \mathcal{E} \colon \mathcal{F} \to [0,\infty) \mid \text{$(\mathcal{E},\mathcal{F})$ is a $p$-energy form on $(K,m)$} \}. 
\]

\begin{defn}\label{defn.ssenergyoperator}
	Let $\bm{\rweight} = (\rweight_{i})_{i \in S}$. 
	For $n \in \mathbb{N} \cup \{ 0 \}$, we define $\mathcal{S}_{\bm{\rweight},n} \colon \mathfrak{E}_{p}(\mathcal{F}) \to \mathfrak{E}_{p}(\mathcal{F})$ by 
    \begin{equation}\label{e:defn.renorm}
        \mathcal{S}_{\bm{\rweight},n}(E)(u) \coloneqq \sum_{w \in W_{n}}\rweight_{w}E(u \circ F_{w}) \quad \text{for $E \in \mathfrak{E}_{p}(\mathcal{F})$ and $u \in \mathcal{F}$.}
    \end{equation}
    (Note that the triangle inequality for $\mathcal{S}_{\bm{\rweight},n}(E)^{1/p}$ can be shown easily.) 
    Set $\mathcal{S}_{\bm{\rweight}} \coloneqq \mathcal{S}_{\bm{\rweight},1}$ and $\mathcal{S}_{\bm{\rweight},0} \coloneqq \id_{\mathfrak{E}_{p}(\mathcal{F})}$ for simplicity. 
    Clearly, $\mathcal{S}_{\bm{\rweight},n} = \mathcal{S}_{\bm{\rweight}}^{n} \coloneqq \underbrace{\mathcal{S}_{\bm{\rweight}} \circ \mathcal{S}_{\bm{\rweight}} \circ \cdots \circ \mathcal{S}_{\bm{\rweight}}}_{n}$.  
\end{defn}

The desired self-similar $p$-energy form with weight $\bm{\rweight}$ will be constructed as a non-trivial fixed point of $\mathcal{S}_{\bm{\rweight}}$. 
The following theorem, which can be regarded as a version of \cite[Theorem 1.5]{Kig00} in a specific situation, describes when we can find such a fixed point and how it is obtained. 
\begin{thm}\label{thm.ssenergy-fix}
	Let $\bm{\rweight} = (\rweight_{i})_{i \in S}$ and let $\mathcal{E}^{0} \in \mathfrak{E}_{p}(\mathcal{F})$. 
	Assume that the quotient normed space $\mathcal{F}/(\mathcal{E}^{0})^{-1}(0)$ (equipped with the norm $\mathcal{E}^{0}(\,\cdot\,)^{1/p}$) is separable and that there exists a constant $C \in [1,\infty)$ such that   
	\begin{equation}\label{e:assum.PSS}
		C^{-1}\mathcal{E}^{0}(u) \le \mathcal{S}_{\bm{\rweight},n}(\mathcal{E}^{0})(u) \le C\mathcal{E}^{0}(u) \quad \text{for any $u \in \mathcal{F}$ and any $n \in \mathbb{N}$.} 
	\end{equation}
	Then there exists $\{ n_{k} \}_{k \in \mathbb{N}} \subseteq \mathbb{N}$ with $n_{k} < n_{k + 1}$ for any $k \in \mathbb{N}$ such that the following limit exists in $[0,\infty)$ for any $u \in \mathcal{F}$: 
	\begin{equation}\label{e:fixpt.explicit}
		\mathcal{E}(u) \coloneqq \lim_{k \to \infty}\frac{1}{n_{k}}\sum_{j = 0}^{n_{k} - 1}\mathcal{S}_{\bm{\rweight},j}(\mathcal{E}^{0})(u).   
	\end{equation}
	Furthermore, $(\mathcal{E},\mathcal{F})$ is a $p$-energy form on $(K,m)$ satisfying 
	\begin{equation}\label{e:comparable.ss}
	C^{-1}\mathcal{E}^{0}(u) \le \mathcal{E}(u) \le C\mathcal{E}^{0}(u) \quad \text{for any $u \in \mathcal{F}$ and any $n \in \mathbb{N} \cup \{ 0 \}$,} 
	\end{equation}
	where $C$ is the constant in \eqref{e:assum.PSS}, and 
	\begin{equation}\label{e:fixed.ss}
		\mathcal{E}(u) = \sum_{w \in W_{n}}\rweight_{w}\mathcal{E}(u \circ F_{w}) \quad \text{for any $u \in \mathcal{F}$ and any $n \in \mathbb{N} \cup \{ 0 \}$}. 
	\end{equation} 
\end{thm}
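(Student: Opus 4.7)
The plan is to extract a subsequence along which the Cesàro averages of the iterates $\mathcal{S}_{\bm{\rweight},j}(\mathcal{E}^{0})$ converge pointwise on $\mathcal{F}$, and then verify that the resulting limit is a $p$-energy form satisfying the renormalization identity \eqref{e:fixed.ss}. Set $E_{j} \coloneqq \mathcal{S}_{\bm{\rweight},j}(\mathcal{E}^{0}) \in \mathfrak{E}_{p}(\mathcal{F})$ for $j \in \mathbb{N} \cup \{0\}$, and set $A_{n}(u) \coloneqq \frac{1}{n}\sum_{j=0}^{n-1}E_{j}(u)$ for $n \in \mathbb{N}$, $u \in \mathcal{F}$; note that $A_{n} \in \mathfrak{E}_{p}(\mathcal{F})$ since $\mathfrak{E}_{p}(\mathcal{F})$ is clearly closed under non-negative linear combinations (cf.\ Proposition \hyperref[GC.cone]{\ref{prop.cone-gen}}-\ref{GC.cone}). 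By \eqref{e:assum.PSS} applied to every $E_{j}$, which is itself of the form $\mathcal{S}_{\bm{\rweight},j}(\mathcal{E}^{0})$, we obtain the uniform comparability $C^{-1}\mathcal{E}^{0}(u) \le A_{n}(u) \le C\mathcal{E}^{0}(u)$ for every $u \in \mathcal{F}$ and $n \in \mathbb{N}$.

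First, I would use separability to extract the subsequence. Pick a countable dense subset $\{ u_{l} \}_{l \in \mathbb{N}}$ of $\mathcal{F}/(\mathcal{E}^{0})^{-1}(0)$. For each fixed $l$, the sequence $\{ A_{n}(u_{l}) \}_{n \in \mathbb{N}}$ lies in the compact interval $[C^{-1}\mathcal{E}^{0}(u_{l}), C\mathcal{E}^{0}(u_{l})]$, so a standard diagonal (Cantor) argument produces $\{ n_{k} \}_{k \in \mathbb{N}} \subseteq \mathbb{N}$ with $n_{k} < n_{k+1}$ such that $\mathcal{E}(u_{l}) \coloneqq \lim_{k \to \infty} A_{n_{k}}(u_{l})$ exists in $[0,\infty)$ for every $l \in \mathbb{N}$.

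Second, I would extend $\mathcal{E}$ to all of $\mathcal{F}$ using an equicontinuity argument. Since each $A_{n_{k}}^{1/p}$ is a seminorm on $\mathcal{F}$ and $A_{n_{k}} \le C\mathcal{E}^{0}$, for any $u, v \in \mathcal{F}$ we have $|A_{n_{k}}(u)^{1/p} - A_{n_{k}}(v)^{1/p}| \le A_{n_{k}}(u-v)^{1/p} \le C^{1/p}\mathcal{E}^{0}(u-v)^{1/p}$; combining this equicontinuity with convergence on $\{ u_{l} \}_{l \in \mathbb{N}}$ and the density of this set in $\mathcal{F}/(\mathcal{E}^{0})^{-1}(0)$, a routine $\varepsilon/3$-argument shows that $\mathcal{E}(u) \coloneqq \lim_{k \to \infty} A_{n_{k}}(u)$ exists in $[0,\infty)$ for every $u \in \mathcal{F}$. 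Passing to the limit in the $p$-homogeneity, the triangle inequality for $A_{n_{k}}^{1/p}$, and the two-sided bound yields that $\mathcal{E} \in \mathfrak{E}_{p}(\mathcal{F})$ and that \eqref{e:comparable.ss} holds.

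Finally, I would establish the self-similarity \eqref{e:fixed.ss}. The key identity is the telescoping
\[
\mathcal{S}_{\bm{\rweight}}(A_{n})(u) - A_{n}(u) = \frac{1}{n}\sum_{j=0}^{n-1}\bigl(E_{j+1}(u) - E_{j}(u)\bigr) = \frac{E_{n}(u) - \mathcal{E}^{0}(u)}{n},
\]
whose absolute value is bounded by $(C+1)n^{-1}\mathcal{E}^{0}(u)$ by \eqref{e:assum.PSS}, hence tends to $0$ as $n = n_{k} \to \infty$. Since $\mathcal{S}_{\bm{\rweight}}(A_{n_{k}})(u) = \sum_{i \in S}\rweight_{i} A_{n_{k}}(u \circ F_{i}) \to \sum_{i \in S}\rweight_{i}\mathcal{E}(u \circ F_{i})$ by the just-proved pointwise convergence applied to each $u \circ F_{i} \in \mathcal{F}$ (note \eqref{eq.SSdomain-whole}), we conclude $\mathcal{E}(u) = \sum_{i \in S}\rweight_{i}\mathcal{E}(u \circ F_{i})$, which gives the $n = 1$ case of \eqref{e:fixed.ss}; the general $n \in \mathbb{N}$ case follows by iteration, exactly as in the derivation of \eqref{ss.partition} from \eqref{SSE2}, and the $n = 0$ case is trivial.

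The main obstacle is the separate treatment of $\mathcal{F}$ versus $\mathcal{F} \cap \contfunc(K)$: the definition of self-similar $p$-energy form in Definition \ref{defn.ssform} imposes \eqref{SSE1}--\eqref{SSE2} on continuous elements, but here the theorem only requires \eqref{e:fixed.ss} on all of $\mathcal{F}$, which is exactly what the averaging argument delivers without additional topological input. The rest of the argument is standard machinery of Banach/Cesàro limits of equicontinuous seminorms, and the $p$-homogeneity together with the triangle inequality for $A_{n_{k}}^{1/p}$ are manifestly preserved under pointwise limits.
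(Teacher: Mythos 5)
Your proposal is correct and follows essentially the same route as the paper's proof: Cesàro averages of the iterates, a diagonal extraction over a countable dense subset of $\mathcal{F}/(\mathcal{E}^{0})^{-1}(0)$, an $\varepsilon$-argument via the seminorm triangle inequality and the uniform bound \eqref{e:assum.PSS} to get convergence on all of $\mathcal{F}$, and then the identity $\mathcal{S}_{\bm{\rweight}}(A_{n}) - A_{n} = n^{-1}\bigl(\mathcal{S}_{\bm{\rweight},n}(\mathcal{E}^{0}) - \mathcal{E}^{0}\bigr)$ (which is exactly the paper's \eqref{e:fix.algebraic} rearranged) to pass to the fixed-point equation. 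No substantive differences.
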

\begin{proof} 
	Set $\mathcal{E}^{n} \coloneqq \frac{1}{n}\sum_{j = 0}^{n - 1}\mathcal{S}_{\bm{\rweight},j}(\mathcal{E}^{0})$ for $n \in \mathbb{N}$ for ease of notation. 
	Then it is clear that $\mathcal{E}^{n} \in \mathfrak{E}_{p}(\mathcal{F})$. 
	Let $\mathscr{C}$ be a countable dense subset of $\mathcal{F}/(\mathcal{E}^{0})^{-1}(0)$. 
	Since $\{ \mathcal{E}^{n}(u) \}_{n \in \mathbb{N}}$ is bounded in $[0,\infty)$ for any $u \in \mathcal{F}$ by \eqref{e:assum.PSS}, by a standard diagonal procedure, there exists $\{ n_{k} \}_{k \in \mathbb{N}} \subseteq \mathbb{N}$ with $n_{k} < n_{k + 1}$ for any $k \in \mathbb{N}$ such that $\{ \mathcal{E}^{n_{k}}(u') \}_{k \in \mathbb{N}}$ is convergent in $[0,\infty)$ for any $u' \in \mathscr{C}$. 
	Let $u \in \mathcal{F}$, $\varepsilon > 0$ and $u_{\ast} \in \mathscr{C}$ satisfy $\mathcal{E}^{0}(u - u_{\ast})^{1/p} < \varepsilon$.
	Then for any $k,l \in \mathbb{N}$, by the triangle inequality for $\mathcal{E}^{n}(\,\cdot\,)^{1/p}$ and \eqref{e:assum.PSS}, 
	\begin{align*}
		&\abs{\mathcal{E}^{n_k}(u)^{1/p} - \mathcal{E}^{n_l}(u)^{1/p}} \\ 
		&\le \abs{\mathcal{E}^{n_k}(u)^{1/p} - \mathcal{E}^{n_k}(u_{\ast})^{1/p}}  + \abs{\mathcal{E}^{n_k}(u_{\ast})^{1/p} - \mathcal{E}^{n_l}(u_{\ast})^{1/p}}  + \abs{\mathcal{E}^{n_l}(u)^{1/p} - \mathcal{E}^{n_l}(u_{\ast})^{1/p}} \\
		&\le 2C^{1/p}\varepsilon + \abs{\mathcal{E}^{n_k}(u)^{1/p} - \mathcal{E}^{n_l}(u)^{1/p}}, 
	\end{align*}
	whence $\limsup_{k \wedge l \to \infty}\abs{\mathcal{E}^{n_k}(u)^{1/p} - \mathcal{E}^{n_l}(u)^{1/p}} \le 2C^{1/p}\varepsilon$. 
	Therefore $\{ \mathcal{E}^{n_k}(u) \}_{k \in \mathbb{N}}$ is convergent in $[0,\infty)$ for any $u \in \mathcal{F}$, so the limit in \eqref{e:fixpt.explicit} exists. 
	It is clear that $(\mathcal{E},\mathcal{F})$ is a $p$-energy form on $(K,m)$ satisfying \eqref{e:comparable.ss}. 
	
	Let us show \eqref{e:fixed.ss}. 
	For any $n \in \mathbb{N}$ and any $u \in \mathcal{F}$, we easily see that 
	\begin{equation}\label{e:fix.algebraic}
		\frac{1}{n}\mathcal{E}^{0}(u) + \mathcal{S}_{\bm{\rweight}}(\mathcal{E}^{n})(u)
		= \frac{1}{n}\mathcal{E}^{0}(u) + \frac{1}{n}\sum_{l = 0}^{n - 1}\mathcal{S}_{\bm{\rweight},l + 1}(\mathcal{E}^{0})(u)
		= \mathcal{E}^{n}(u) + \frac{1}{n}\mathcal{S}_{\bm{\rweight},n}(\mathcal{E}^{0})(u). 
	\end{equation}
	Since $\lim_{k \to \infty}\mathcal{S}_{\rweight}(\mathcal{E}^{n_{k}})(u) = \mathcal{S}_{\rweight}(\mathcal{E})(u)$ and $\lim_{k \to \infty}n_{k}^{-1}\mathcal{S}_{\bm{\rweight},n_{k}}(\mathcal{E}^{0})(u) = 0$ by \eqref{e:assum.PSS}, we obtain $\mathcal{S}_{\bm{\rweight}}(\mathcal{E}) = \mathcal{E}$ by letting $n \to \infty$ along $\{ n_{k} \}_{k \in \mathbb{N}}$ in \eqref{e:fix.algebraic}. 
	Hence \eqref{e:fixed.ss} holds. 	
\end{proof}

By virtue of the explicit representation \eqref{e:fixed.ss}, the resulting $p$-energy form $(\mathcal{E},\mathcal{F})$ inherits some nice properties of $(\mathcal{E}^{0},\mathcal{F})$. 
In the following proposition, we see that \ref{GC} and the invariance under good transformations are examples of such properties. 
\begin{prop}\label{prop.ssenergy-GCinv}
	Assume the same conditions as in Theorem \ref{thm.ssenergy-fix} and let $\mathcal{E}$ be given by \eqref{e:fixpt.explicit}. 
	\begin{enumerate}[label=\textup{(\alph*)},align=left,leftmargin=*,topsep=2pt,parsep=0pt,itemsep=2pt]
        \item \label{it:ssenergy.GC} If $(\mathcal{E}^{0},\mathcal{F})$ satisfies \ref{GC}, then $(\mathcal{E},\mathcal{F})$ also satisfies \ref{GC}. 
        \item \label{it:ssenergy.inv} Let $\mathscr{T}$ be a family of Borel measurable maps from $K$ to $K$. Assume that $u \circ T \in \mathcal{F}$ and $\mathcal{E}^{0}(u \circ T) = \mathcal{E}^{0}(u)$ for any $u \in \mathcal{F}$ and any $T \in \mathscr{T}$. Furthermore, we assume that for any $T \in \mathscr{T}$, there exists a bijection $\tau_{T} \colon W_{\ast} \to W_{\ast}$ such that
        \begin{equation}\label{e:trans.bi-level}
        	\text{$\tau_{T}|_{W_{n}}$ is a bijection from $W_{n}$ to itself for each $n \in \mathbb{N} \cup \{ 0 \}$,}
        \end{equation}
        \begin{equation}\label{e:trans.closed}
        	T(K_{w}) \subseteq K_{\tau_{T}(w)} \quad \text{and} \quad F_{\tau_{T}(w)}^{-1} \circ T \circ F_{w} \in \mathscr{T} \quad \text{for any $w \in W_{\ast}$,}
        \end{equation}
        and  
        \begin{equation}\label{e:trans.weightinv}
        	\rweight_{w} = \rweight_{\tau_{T}(w)} \quad \text{for any $w \in W_{\ast}$.}
        \end{equation}
        Then $\mathcal{E}(u \circ T) = \mathcal{E}(u)$ for any $u \in \mathcal{F}$ and any $T \in \mathscr{T}$. 
    \end{enumerate}
\end{prop}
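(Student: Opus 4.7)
My plan is to exploit the explicit representation \eqref{e:fixpt.explicit} of $\mathcal{E}$ as a Cesàro-type pointwise limit of $\bigl\{\mathcal{S}_{\bm{\rweight},n}(\mathcal{E}^{0})\bigr\}_{n}$. Both assertions will be proved by first showing that the relevant property (being in $\mathcal{U}_{p}^{\mathrm{GC}}(\mathcal{F})$ for (a), being invariant under $\mathscr{T}$ for (b)) is preserved by the renormalization operator $\mathcal{S}_{\bm{\rweight}}$, hence by each $\mathcal{S}_{\bm{\rweight},n}$ and by their averages, and is finally passed to the pointwise limit.

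For \ref{it:ssenergy.GC}, the key observation is that the map $E \mapsto E(\,\cdot\,\circ F_{i})$ preserves $\mathcal{U}_{p}^{\mathrm{GC}}(\mathcal{F})$: if $E \in \mathcal{U}_{p}^{\mathrm{GC}}(\mathcal{F})$ and $T=(T_{1},\ldots,T_{n_{2}})\colon\mathbb{R}^{n_{1}}\to\mathbb{R}^{n_{2}}$ satisfies \eqref{GC-cond}, then for $\bm{u}\in\mathcal{F}^{n_{1}}$ the composition $T(\bm{u})\circ F_{i}=T(\bm{u}\circ F_{i})$ lies in $\mathcal{F}^{n_{2}}$ by \eqref{eq.SSdomain-whole} and \ref{GC} applied to $\bm{u}\circ F_{i}$, and the corresponding $\ell^{q_{2}}$-$\ell^{q_{1}}$ inequality is exactly \ref{GC} for $E$ on $\bm{u}\circ F_{i}$. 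Combining this with Proposition \ref{prop.cone-gen}-\ref{GC.cone} yields $\mathcal{S}_{\bm{\rweight},n}(\mathcal{E}^{0})\in\mathcal{U}_{p}^{\mathrm{GC}}(\mathcal{F})$ for every $n$, and a further application of \ref{GC.cone} gives the same for each Cesàro average $\mathcal{E}^{n_{k}}\coloneqq n_{k}^{-1}\sum_{j=0}^{n_{k}-1}\mathcal{S}_{\bm{\rweight},j}(\mathcal{E}^{0})$. Since $\mathcal{E}^{n_{k}}\to\mathcal{E}$ pointwise on $\mathcal{F}$, Proposition \ref{prop.cone-gen}-\ref{GC.pwlimit} concludes $\mathcal{E}\in\mathcal{U}_{p}^{\mathrm{GC}}(\mathcal{F})$.

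For \ref{it:ssenergy.inv}, let $T\in\mathscr{T}$ and $u\in\mathcal{F}$; note $u\circ T\in\mathcal{F}$ by hypothesis. The plan is to prove $\mathcal{S}_{\bm{\rweight},j}(\mathcal{E}^{0})(u\circ T)=\mathcal{S}_{\bm{\rweight},j}(\mathcal{E}^{0})(u)$ for every $j\in\mathbb{N}\cup\{0\}$, whence the same identity transfers to each $\mathcal{E}^{n_{k}}$ and, by \eqref{e:fixpt.explicit}, to $\mathcal{E}$. Fix $w\in W_{j}$ and set $T_{w}\coloneqq F_{\tau_{T}(w)}^{-1}\circ T\circ F_{w}$, which is well-defined as a map $K\to K$ by \eqref{e:trans.closed} and belongs to $\mathscr{T}$ by the second part of \eqref{e:trans.closed}. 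Then $T\circ F_{w}=F_{\tau_{T}(w)}\circ T_{w}$, so
\begin{equation*}
\mathcal{E}^{0}\bigl((u\circ T)\circ F_{w}\bigr)=\mathcal{E}^{0}\bigl((u\circ F_{\tau_{T}(w)})\circ T_{w}\bigr)=\mathcal{E}^{0}(u\circ F_{\tau_{T}(w)}),
\end{equation*}
where the last equality uses the $\mathscr{T}$-invariance of $\mathcal{E}^{0}$ applied to $u\circ F_{\tau_{T}(w)}\in\mathcal{F}$ (which is in $\mathcal{F}$ by \eqref{eq.SSdomain-whole}). Summing over $w\in W_{j}$, using $\rweight_{w}=\rweight_{\tau_{T}(w)}$ from \eqref{e:trans.weightinv}, and reindexing by the bijection $\tau_{T}|_{W_{j}}\colon W_{j}\to W_{j}$ from \eqref{e:trans.bi-level} gives the claimed identity at level $j$.

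Neither step presents a serious obstacle; the only delicate point is checking the well-definedness of $T_{w}$ and the compatibility $T\circ F_{w}=F_{\tau_{T}(w)}\circ T_{w}$, which follow immediately from $T(K_{w})\subseteq K_{\tau_{T}(w)}$ and the injectivity of $F_{\tau_{T}(w)}$. No additional analytic input (closability, separability, completeness) is needed beyond what Theorem \ref{thm.ssenergy-fix} already provides.
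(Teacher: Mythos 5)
Your proposal is correct and follows essentially the same route as the paper: establish the relevant property for each $\mathcal{S}_{\bm{\rweight},n}(\mathcal{E}^{0})$ (via the composition structure $T_{l}(\bm{u})\circ F_{w}=T_{l}(\bm{u}\circ F_{w})$ in (a), and the reindexing by $\tau_{T}|_{W_{n}}$ together with $T\circ F_{w}=F_{\tau_{T}(w)}\circ T_{w}$ in (b)), then transfer to the Ces\`aro averages and the pointwise limit \eqref{e:fixpt.explicit} using Proposition \ref{prop.cone-gen}. The only cosmetic difference is that in (a) you delegate the $\ell^{q_{2}}$--$\ell^{q_{1}}$ estimate for $\mathcal{S}_{\bm{\rweight}}(\mathcal{E}^{0})$ to Proposition \ref{prop.cone-gen}-\ref{GC.cone} rather than redoing the reverse Minkowski computation as the paper does; note also that the membership $T_{l}(\bm{u})\in\mathcal{F}$ is most directly obtained from \ref{GC} for $(\mathcal{E}^{0},\mathcal{F})$ applied to $\bm{u}$ itself.
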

\begin{proof}
	\ref{it:ssenergy.GC}: 
	Let $n_{1},n_{2} \in \mathbb{N}$, $q_{1} \in (0,p]$, $q_{2} \in [p,\infty]$ and $T = (T_{1},\dots,T_{n_{2}})\colon \mathbb{R}^{n_{1}} \to \mathbb{R}^{n_{2}}$ satisfy \eqref{GC-cond} in Definition \ref{defn.GC}. 
	Let $\bm{u} = (u_{1},\dots,u_{n_{1}}) \in \mathcal{F}$. 
	Then $T_{l}(u_{k} \circ F_{w}) = T_{l}(u_{k}) \circ F_{w} \in \mathcal{F}$ for any $k \in \{ 1,\dots,n_{1} \}$ and any $w \in W_{\ast}$ by \ref{GC} for $(\mathcal{E}^{0},\mathcal{F})$ and Lemma \ref{e:assum.PSS}. 
	If $q_{2} < \infty$, then by the same argument as \eqref{GC.sum} in the proof of Proposition \ref{prop.cone-gen},
	 \begin{align*}
        &\sum_{l = 1}^{n_{2}}\mathcal{S}_{\rweight}(\mathcal{E}^{0})\bigl(T_{l}(\bm{u})\bigr)^{q_{2}/p} 
	    = \sum_{l = 1}^{n_{2}}\Biggl[\sum_{i \in S}\rweight_{i}\mathcal{E}^{0}(T_{l}(\bm{u}) \circ F_{i})\Biggr]^{q_{2}/p} \nonumber \\
        &\le \left(\sum_{i \in S}\rweight_{i}\left[\sum_{l = 1}^{n_{2}}\mathcal{E}^{0}\bigl(T_{l}(\bm{u}) \circ F_{i}\bigr)^{q_{2}/p}\right]^{p/q_{2}}\right)^{q_{2}/p} \quad \text{(by the triangle inequality for $\norm{\,\cdot\,}_{\ell^{q_{2}/p}}$)} \nonumber \\ 
        &\overset{\ref{GC}}{\le} \left(\sum_{i \in S}\rweight_{i}\left[\sum_{k = 1}^{n_{1}}\mathcal{E}^{0}(u_{k} \circ F_{i})^{q_{1}/p}\right]^{p/q_{1}}\right)^{q_{2}/p} \nonumber \\
        &\overset{\eqref{reverse}}{\le} \left(\sum_{k = 1}^{n_{1}}\Biggl[\sum_{i \in S}\rweight_{i}\mathcal{E}^{0}(u_{k} \circ F_{i})\Biggr]^{q_{1}/p}\right)^{\frac{p}{q_{1}} \cdot \frac{q_{2}}{p}}
        = \left(\sum_{k = 1}^{n_{1}}\mathcal{S}_{\rweight}(\mathcal{E}^{0})(u_{k})^{q_{1}/p}\right)^{q_{2}/q_{1}},
    \end{align*}
	whence $\norm{\bigl(\mathcal{S}_{\rweight}(\mathcal{E}^{0})(T_{l}(\bm{u}))^{1/p}\bigr)_{l = 1}^{n_{2}}}_{\ell^{q_{2}}} \le \norm{\bigl(\mathcal{S}_{\rweight}(\mathcal{E}^{0})(u_{k})^{1/p}\bigr)_{k = 1}^{n_{1}}}_{\ell^{q_{1}}}$. 
	The case of $q_{2} = \infty$ is similar, so $(\mathcal{S}_{\rweight}(\mathcal{E}^{0}),\mathcal{F})$ satisfies \ref{GC}. 
	Similarly, one can easily show that $(\mathcal{S}_{\rweight,n}(\mathcal{E}^{0}),\mathcal{F})$ satisfies \ref{GC} for any $n \in \mathbb{N}$. 
	Hence \ref{GC} for $(\mathcal{E},\mathcal{F})$ holds by \eqref{e:fixed.ss} and Proposition \ref{prop.cone-gen}-\ref{GC.pwlimit}. 
	
	\ref{it:ssenergy.inv}: 
	By \eqref{e:fixed.ss}, it suffices to prove $\mathcal{S}_{\bm{\rweight},n}(\mathcal{E}^{0})(u \circ T) = \mathcal{S}_{\bm{\rweight},n}(\mathcal{E}^{0})(u)$ for any $u \in \mathcal{F}$, any $n \in \mathbb{N} \cup \{ 0 \}$ and any $T \in \mathscr{T}$. 
	We immediately see that 
	\begin{align*}
		\mathcal{S}_{\bm{\rweight},n}(\mathcal{E}^{0})(u \circ T)
		&= \sum_{w \in W_{n}}\rweight_{w}\mathcal{E}^{0}((u \circ T) \circ F_{w}) \\
		&= \sum_{w \in W_{n}}\rweight_{w}\mathcal{E}^{0}\bigl((u \circ F_{\tau_{T}(w)}) \circ F_{\tau_{T}(w)}^{-1} \circ T \circ F_{w}\bigr) \\
		&\overset{\eqref{e:trans.closed}}{=} \sum_{w \in W_{n}}\rweight_{w}\mathcal{E}^{0}(u \circ F_{\tau_{T}(w)}) 
		\overset{\eqref{e:trans.bi-level},\eqref{e:trans.weightinv}}{=} \mathcal{S}_{\bm{\rweight},n}(\mathcal{E}^{0})(u), 
	\end{align*}
	which completes the proof. 
\end{proof}

Also, $(\mathcal{E},\mathcal{F})$ in Theorem \ref{thm.ssenergy-fix} turns out to have the strong local property \hyperref[it:SL1]{\textup{(SL1)}} (recall Definition \ref{defn.Epsl}) under a mild condition. 
\begin{prop}\label{prop.ssenergy-sl}
	Assume the same conditions as in Theorem \ref{thm.ssenergy-fix} and let $\mathcal{E}$ be given by \eqref{e:fixpt.explicit}. 
	If $\{ u \in \mathcal{F} \mid \mathcal{E}^{0}(u) = 0 \} = \mathbb{R}\indicator{K}$, then $\{ u \in \mathcal{F} \mid \mathcal{E}(u) = 0 \} = \mathbb{R}\indicator{K}$ and $(\mathcal{E},\mathcal{F})$ satisfies the strong local property \hyperref[it:SL1]{\textup{(SL1)}}. 
\end{prop}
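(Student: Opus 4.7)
The plan is to establish the two assertions in turn. For the first, the comparability in \eqref{e:comparable.ss} immediately yields $\mathcal{E}^{-1}(0) = (\mathcal{E}^{0})^{-1}(0) = \mathbb{R}\indicator{K}$; in particular, every $\alpha \in \mathcal{E}^{-1}(0)$ is the $m$-equivalence class of a real constant function on $K$.

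For \hyperref[it:SL1]{\textup{(SL1)}}, let $f_{1}, f_{2}, g \in \mathcal{F}$ and $\alpha_{1}, \alpha_{2} \in \mathcal{E}^{-1}(0)$ be such that the sets $A_{i} \coloneqq \supp_{m}[f_{i} - \alpha_{i}]$ satisfy $A_{1} \cap A_{2} = \emptyset$; note that $A_{1}$ and $A_{2}$ are then automatically disjoint compact subsets of $K$, since $K$ is compact and $\supp_{m}[\cdot]$ is closed by definition. Fix a metric $d$ on $K$ compatible with its topology, so that $\delta \coloneqq \dist_{d}(A_{1}, A_{2}) > 0$. By \eqref{ss-diam}, I would choose $n \in \mathbb{N}$ large enough that $\max_{w \in W_{n}} \diam(K_{w}, d) < \delta$, ensuring that no cell $K_{w}$ with $w \in W_{n}$ meets both $A_{1}$ and $A_{2}$, and partition $W_{n} = W_{n}^{(0)} \sqcup W_{n}^{(1)} \sqcup W_{n}^{(2)}$, where $W_{n}^{(i)} \coloneqq \{ w \in W_{n} \mid K_{w} \cap A_{i} \neq \emptyset \}$ for $i \in \{ 1, 2 \}$ and $W_{n}^{(0)}$ contains the remaining words.

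The key technical fact is that, for each $i \in \{ 1, 2 \}$ and each $w \in W_{n} \setminus W_{n}^{(i)}$, the relation $f_{i} - \alpha_{i} = 0$ $m$-a.e.\ on $K_{w}$, together with $F_{w}(K) \subseteq K_{w}$ and the well-definedness of $F_{w}^{\ast}$ on $m$-equivalence classes (Remark \ref{rmk:pullback}), forces $f_{i} \circ F_{w} = \alpha_{i}$ in $L^{0}(K, m)$; since $\alpha_{i}$ is a real constant, it then lies in $\mathcal{E}^{-1}(0)$, and hence $\mathcal{E}(f_{i} \circ F_{w} + h) = \mathcal{E}(h)$ for every $h \in \mathcal{F}$. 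Equipped with this, I would apply the self-similarity \eqref{e:fixed.ss} at level $n$ to each of the four energies in \eqref{e:defn.sl1} and compare contributions cell by cell. For $w \in W_{n}^{(0)}$ all four pullbacks reduce to $g \circ F_{w}$ modulo a constant and both sides contribute $2\rweight_{w}\mathcal{E}(g \circ F_{w})$; for $w \in W_{n}^{(1)}$ the pullbacks $(f_{1} + f_{2} + g) \circ F_{w}$ and $(f_{1} + g) \circ F_{w}$ coincide modulo a constant (and similarly $(f_{2} + g) \circ F_{w}$ with $g \circ F_{w}$), giving the common contribution $\rweight_{w}[\mathcal{E}((f_{1} + g) \circ F_{w}) + \mathcal{E}(g \circ F_{w})]$ on both sides; the case $w \in W_{n}^{(2)}$ is symmetric. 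Summation then yields \eqref{e:defn.sl1}. The argument has no deep obstacle; the only point requiring care is the translation, via Remark \ref{rmk:pullback}, of the pointwise-a.e.\ identity $(f_{i} - \alpha_{i})|_{K_{w}} \equiv 0$ into the global $L^{0}(K, m)$-identity $f_{i} \circ F_{w} \equiv \alpha_{i}$, together with the bookkeeping across the three cell types.
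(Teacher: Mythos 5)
Your proposal is correct and follows essentially the same route as the paper's proof: both deduce $\mathcal{E}^{-1}(0)=\mathbb{R}\indicator{K}$ from \eqref{e:comparable.ss}, choose $n$ via \eqref{ss-diam} so that no level-$n$ cell meets both supports, observe that $f_{i}\circ F_{w}$ is constant for $w$ whose cell misses $\supp_{m}[f_{i}-\alpha_{i}]$, and then verify \eqref{e:defn.sl1} by applying \eqref{e:fixed.ss} and matching contributions over the three groups of cells. Your explicit appeal to Remark \ref{rmk:pullback} to justify the $m$-a.e.\ pullback identity is a point the paper leaves implicit, but it is the right justification and introduces no gap.
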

\begin{proof}
	It is immediate from \eqref{e:comparable.ss} that $\{ u \in \mathcal{F} \mid \mathcal{E}(u) = 0 \} = \mathbb{R}\indicator{K}$. 
	We will show \hyperref[it:SL1]{\textup{(SL1)}} for $(\mathcal{E},\mathcal{F})$. 
	Let $u_{1},u_{2},v \in \mathcal{F}$ and $a_{1},a_{2} \in \mathbb{R}$.
	Set $A_{i} \coloneqq \supp_{m}[u_{i} - a_{i}\indicator{K}]$ for $i \in \{ 1,2 \}$ and assume that $A_{1} \cap A_{2} = \emptyset$. 
	By \eqref{ss-diam}, there exists $n \in \mathbb{N}$ such that $(\bigcup_{w \in W_{n}[A_{1}]}K_{w}) \cap (\bigcup_{w \in W_{n}[A_{2}]}K_{w}) = \emptyset$, where $W_{n}[A_{i}] \coloneqq \{ w \in W_{n} \mid K_{w} \cap A_{i} \neq \emptyset \}$. 
	Note that $u_{i} \circ F_{w} = a_{i}\indicator{K}$ for $w \in W_{n} \setminus W_{n}[A_{i}]$. 
	This together with $\mathcal{E}(\indicator{K}) = 0$ and \eqref{e:fixed.ss} yields that 
	\begin{align*}
		&\quad \mathcal{E}(u_{1} + u_{2} + v) \\
		&\begin{multlined}
			= \sum_{w \in W_{n}[A_{1}]}\rweight_{w}\mathcal{E}(u_{1} \circ F_{w} + v \circ F_{w})
			+ \sum_{w \in W_{n}[A_{2}]}\rweight_{w}\mathcal{E}(u_{2} \circ F_{w} + v \circ F_{w}) \\
			+ \sum_{w \in W_{n} \setminus (W_{n}[A_{1}] \cup W_{n}[A_{2}])}\rweight_{w}\mathcal{E}(v \circ F_{w})
		\end{multlined}\\
		&\begin{multlined}
			= \mathcal{E}(u_{1} + v) + \mathcal{E}(u_{2} + v) - \sum_{w \in W_{n} \setminus W_{n}[A_{1}]}\rweight_{w}\mathcal{E}(v \circ F_{w}) - \sum_{w \in W_{n} \setminus W_{n}[A_{2}]}\rweight_{w}\mathcal{E}(v \circ F_{w}) \\
			+ \sum_{w \in W_{n} \setminus (W_{n}[A_{1}] \cup W_{n}[A_{2}])}\rweight_{w}\mathcal{E}(v \circ F_{w})
		\end{multlined}\\
		&= \mathcal{E}(u_{1} + v) + \mathcal{E}(u_{2} + v) - \mathcal{E}(v), 
	\end{align*}
	which shows \hyperref[it:SL1]{\textup{(SL1)}}. 
\end{proof}

\section{\texorpdfstring{$p$}{p}-Resistance forms and nonlinear potential theory}\label{sec.p-harm}
In this section, we will introduce the notion of $p$-resistance form as a special class of $p$-energy forms, and investigate harmonic functions with respect to a $p$-resistance form.
In particular, we prove fundamental results on taking the operation of traces of $p$-resistance forms, weak comparison principle and H\"{o}lder continuity estimates for harmonic functions. 
We also show the elliptic Harnack inequality for non-negative harmonic functions under some assumptions, and introduce the notion of $p$-resistance metric with respect to a given $p$-resistance form. 

Throughout this section, we fix $p \in (1, \infty)$, a non-empty set $X$, a linear subspace $\mathcal{F}$ of $\mathbb{R}^{X}$ and $\mathcal{E} \colon \mathcal{F} \to [0,\infty)$.
(This setting corresponds to choosing as $(\mathcal{B},m)$ the pair of $2^{X}$ and the counting measure on $X$ in the previous sections; recall Remark \ref{rmk:wo-measure}.) 

\subsection{Basics of \texorpdfstring{$p$}{p}-resistance forms}
The following definition is an $L^{p}$-analogue of the notion of \emph{resistance form} introduced by Kigami in \cite{Kig95}; see \cite{Kig01,Kig03,Kig12} for details on resistance forms. 
Recall that $\mathbb{R}\indicator{X} \coloneqq \{a\indicator{X} \mid a \in \mathbb{R}\}$ denotes the set of $\mathbb{R}$-valued constant functions on $X$.
\begin{defn}[$p$-Resistance form\index{$p$-resistance form}]\label{defn.RFp}
    The pair $(\mathcal{E}, \mathcal{F})$ of $\mathcal{F} \subseteq \mathbb{R}^{X}$ and $\mathcal{E} \colon \mathcal{F} \to [0,\infty)$ is said to be a \emph{$p$-resistance form} on $X$ if and only if it satisfies the following conditions \ref{RF1}--\ref{RF5}:
    \begin{enumerate}[label=\textup{(RF\arabic*)$_{p}$},align=left,leftmargin=*,topsep=2pt,parsep=0pt,itemsep=2pt]
    \item\label{RF1} $\mathcal{F}$ is a linear subspace of $\mathbb{R}^{X}$ including $\mathbb{R}\indicator{X}$ and $\mathcal{E}(\,\cdot\,)^{1/p}$ is a seminorm on $\mathcal{F}$ satisfying $\{ u \in \mathcal{F} \mid \mathcal{E}(u) = 0 \} = \mathbb{R}\indicator{X}$.
    \item\label{RF2} The quotient normed space $(\mathcal{F}/\mathbb{R}\indicator{X}, \mathcal{E}^{1/p})$ is a Banach space.
    \item\label{RF3} For any $x,y \in X$ with $x \neq y$, there exists $u \in \mathcal{F}$ such that $u(x) \neq u(y)$.
    \item\label{RF4} For any $x, y \in X$,
    \begin{equation}\label{R-def}
        R_{\mathcal{E}}(x,y) \coloneqq R_{(\mathcal{E},\mathcal{F})}(x,y) \coloneqq \sup\biggl\{ \frac{\abs{u(x) - u(y)}^{p}}{\mathcal{E}(u)} \biggm| u \in \mathcal{F} \setminus \mathbb{R}\indicator{X} \biggr\} < \infty.
    \end{equation}
    \item\label{RF5} $(\mathcal{E},\mathcal{F})$ satisfies \ref{GC}.
    \end{enumerate}
\end{defn}
\begin{rmk}
	\begin{enumerate}[label=\textup{(\arabic*)},align=left,leftmargin=*,topsep=2pt,parsep=0pt,itemsep=2pt]
		\item The notion of $2$-resistance form coincides with the original notion of resistance form as defined in \cite[Definition 2.3.1]{Kig01}, although the requirement \textup{(RF5)} there is weaker than \hyperref[RF5]{\textup{(RF5)$_{2}$}}.
			Indeed, assume first that $(\mathcal{E},\mathcal{F})$ is a $2$-resistance form on $X$. Then it satisfies \textup{(RF1)}, \textup{(RF2)} and \textup{(RF4)} in \cite[Definition 2.3.1]{Kig01} by \hyperref[RF1]{\textup{(RF1)$_{2}$}}, \hyperref[RF2]{\textup{(RF2)$_{2}$}} and \hyperref[RF4]{\textup{(RF4)$_{2}$}}, \textup{(RF5)} there by Proposition \ref{prop.GC-list}-\ref{GC.lip}, and \textup{(RF3)} there by Proposition \ref{prop.trace-dom} below.
			Moreover, since $(\mathcal{E},\mathcal{F})$ satisfies \hyperref[Cp]{\textup{(Cla)$_{2}$}} in Definition \ref{d:Cp} by Proposition \ref{prop.GC-list}-\ref{GC.Cpsmall},\ref{GC.Cplarge} and \hyperref[Cp-weak]{\textup{(Cla)$^{\prime}_{2}$}} by Remark \ref{rmk:Cp-weak}, we have $\mathcal{E}(u+v) + \mathcal{E}(u-v) = 2(\mathcal{E}(u) + \mathcal{E}(v))$ for any $u,v \in \mathcal{F}$, so that $\mathcal{F} \times \mathcal{F} \ni (u,v) \mapsto \frac{1}{4}\bigl( \mathcal{E}(u+v) - \mathcal{E}(u-v) \bigr)$ is easily seen to be a symmetric bilinear form with associated quadratic form $\mathcal{E}$ and hence a resistance form on $X$ in the sense of \cite[Definition 2.3.1]{Kig01}.
			
			Conversely, assume that $(\mathcal{E},\mathcal{F})$ is a resistance form on $X$ in the sense of \cite[Definition 2.3.1]{Kig01}, so that by \textup{(RF1)}--\textup{(RF4)} there it satisfies \hyperref[RF1]{\textup{(RF1)$_{2}$}}--\hyperref[RF4]{\textup{(RF4)$_{2}$}}.
			To see \hyperref[RF5]{\textup{(RF5)$_{2}$}}, let $n_{1},n_{2} \in \mathbb{N}$, $q_{1} \in (0,2]$, $q_{2} \in [2,\infty]$ and $T = (T_{1},\dots,T_{n_{2}}) \colon \mathbb{R}^{n_{1}} \to \mathbb{R}^{n_{2}}$ satisfy \eqref{GC-cond} in Definition \ref{defn.GC}, let $\bm{u} = (u_{1},\dots,u_{n_1}) \in \mathcal{F}^{n_{1}}$ and $\varepsilon \in (0,\infty)$.
			Then by a characterization of $(\mathcal{E},\mathcal{F})$ as the supremum over certain resistance forms on the non-empty finite subsets of $X$ in \cite[Corollary 2.37]{Kaj.ln} (see also \cite[Theorems 2.3.6 2.3.7 and Lemma 2.3.8]{Kig01} and Theorem \ref{thm.inductive} below), we easily see that $T(\bm{u}) \in \mathcal{F}^{n_{2}}$ and that there exists a non-empty finite subset $V$ of $X$ such that 
			\begin{equation}\label{eq:RF-GC-proof}
			\norm{\bigl(\mathcal{E}(T_{l}(\bm{u}),T_{l}(\bm{u}))^{1/2}\bigr)_{l = 1}^{n_{2}}}_{\ell^{q_{2}}} - \varepsilon
			\le \norm{\bigl(\mathcal{E}|_{V}(T_{l}(\bm{u})|_{V},T_{l}(\bm{u})|_{V})^{1/2}\bigr)_{l = 1}^{n_{2}}}_{\ell^{q_{2}}}, 
			\end{equation}
			where $\mathcal{E}|_{V}$ denotes the trace of $\mathcal{E}$ on $V$ (see \cite[Lemma 2.3.5]{Kig01} or Theorem \ref{thm.RF-exist} below for the definition of $\mathcal{E}|_{V}$). 
			By \cite[Lemma 2.3.5 and Proposition 2.1.3]{Kig01}, there exists $(L_{xy})_{x,y \in V} \in [0,\infty)^{V \times V}$ such that $L_{xy} = L_{yx}$ for any $x,y \in V$ and 
			\[
			\mathcal{E}|_{V}(u,u) = \frac{1}{2}\sum_{x,y \in V}L_{xy}\abs{u(x) - u(y)}^{2} \quad \textrm{for any $u \in \mathbb{R}^{V}$.} 
			\]
			It follows from this expression and Proposition \ref{prop.cone-gen}-\ref{GC.cone} that $\mathbb{R}^{V} \ni u \mapsto \mathcal{E}|_{V}(u,u)$ satisfies \hyperref[GC]{(GC)$_{2}$}, and therefore the right-hand side of \eqref{eq:RF-GC-proof} is bounded from above by
			\[
				\norm{\bigl(\mathcal{E}|_{V}(u_{k}|_{V},u_{k}|_{V})^{1/2}\bigr)_{k = 1}^{n_{1}}}_{\ell^{q_{1}}} 
				\le \norm{\bigl(\mathcal{E}(u_{k},u_{k})^{1/2}\bigr)_{k = 1}^{n_{1}}}_{\ell^{q_{1}}},
			\]
			where we used \cite[(2.53) in Corollary 2.37]{Kaj.ln} (see also \eqref{inductive.ene} below) in the last inequality. 
			Since $\varepsilon \in (0,\infty)$ is arbitrary, we obtain \hyperref[RF5]{(RF5)$_{2}$} for the functional $\mathcal{F} \ni u \mapsto \mathcal{E}(u,u)$, which is thereby a $2$-resistance form on $X$.  
		\item Let $V$ be a non-empty finite set. In this case, any $p$-resistance form $(\mathcal{E},\mathcal{F})$ on $V$ satisfies $\mathcal{F} = \mathbb{R}^{V}$ by Proposition \ref{prop.trace-dom} below, and in view of this observation we say simply that $\mathcal{E}$ is a $p$-resistance form on $V$.  
	\end{enumerate}
\end{rmk}
\begin{example}\label{ex.pRF}
	\begin{enumerate}[label=\textup{(\arabic*)},align=left,leftmargin=*,topsep=2pt,parsep=0pt,itemsep=2pt]
		\item\label{RF-Rn} Consider the same setting as in Example \ref{ex.Rn}-\ref{Ep-Rn} and assume that $\Omega$ is a bounded domain satisfying the strong local Lipschitz condition (see \cite[Paragraph 4.9]{AF}). Then the $p$-energy form $(\int_{\Omega}\abs{\nabla f}^{p}\,dx,W^{1,p}(\Omega))$ is a $p$-resistance form on $\Omega$ if and only if $p > D$. Indeed, \ref{RF1} and \ref{RF5} are clear from the definition (we use the boundedness of $\Omega$ to ensure $\mathbb{R}\indicator{\Omega} \subseteq L^{p}(\Omega)$), and \ref{RF2} and \ref{RF3} follow from the well-known fact that $W^{1,p}(\Omega)$ is a Banach space including $\contfunc_{c}^{\infty}(\Omega) \coloneqq \contfunc^{\infty}(\Omega) \cap \contfunc_{c}(\Omega)$ for any $p \in (1,\infty)$ (see \cite[Theorem 3.3 and Corollary 3.4]{AF} for proofs). If $p > D$, then we can use the Morrey-type inequality \cite[Lemma 4.28]{AF} to verify \ref{RF4}. Conversely, the supremum in \eqref{R-def} is not finite when $p \le D$. To see it, we can assume that $x = 0 \in \Omega$. Let $\delta \in (0,\infty)$ be small enough so that $\closure{B(0,\delta)} \subseteq \Omega$ and $y \not\in \closure{B(0,\delta)}$. For all large $n \in \mathbb{N}$ so that $n^{-1} < \delta$, define $u_{n} \in \contfunc(\Omega)$ by 
			\begin{align*}
				u_{n}(z) \coloneqq \left(\frac{\log{\abs{z}^{-1}} - \log{\delta^{-1}}}{\log{n} - \log{\delta^{-1}}}\right)^{+} \wedge 1, \quad z \in \Omega. 
			\end{align*}
			Then we easily see that $u_{n}(0) = 1$, $u_{n}(y) = 1$ and $u_{n} \in W^{1,p}(\Omega)$ with 
			\begin{align*}
				\int_{\Omega}\abs{\nabla u_{n}}^{p}\,dz 
				&\le \abs{\frac{1}{\log{(n\delta)}}}^{p}\int_{B(0,\delta) \setminus B(0,n^{-1})}\abs{z}^{-p}\,dz 
				= \abs{S_{D - 1}}\abs{\frac{1}{\log{(n\delta)}}}^{p}\int_{\frac{1}{n}}^{\delta}r^{- p + D - 1}\,dr \\
				&= 
				\begin{cases}
					\abs{S_{D - 1}}\abs{\log{(n\delta)}}^{-(p - 1)} \quad &\text{if $p = D$,} \\
					\frac{\abs{S_{D - 1}}}{D - p}\abs{\log{(n\delta)}}^{-p}\Bigl(\delta^{D - p} - n^{-(D - p)}\Bigr) \quad &\text{if $p < D$,}
				\end{cases}
			\end{align*}
			where $\abs{S_{D - 1}}$ is the volume of the $(D - 1)$-dimensional unit sphere. 
			In particular, $\frac{\abs{u_{n}(x) - u_{n}(y)}^{p}}{\norm{\abs{\nabla u_{n}}}_{L^{p}(\Omega)}^{p}} \to \infty$ as $n \to \infty$, so \ref{RF4} does not hold. 
		\item\label{RF-kigami} The construction of a regular $p$-energy form on a $p$-conductively homogeneous compact metric space $(K,d)$ in \cite[Theorem 3.21]{Kig23} requires the assumption $p > \dim_{\mathrm{ARC}}(K,d)$, where $\dim_{\mathrm{ARC}}(K,d)$ is the Ahlfors regular conformal dimension of $(K,d)$. (See Definition \ref{defn.AR}-\ref{it:ARCdim} for the definition of $\dim_{\mathrm{ARC}}(K,d)$. The same condition $p > \dim_{\mathrm{ARC}}(K,d)$ is also assumed in \cite{Shi24}.) This condition $p > \dim_{\mathrm{ARC}}(K,d)$ plays the same role as $p > D$ in \ref{RF-Rn} above (see also \cite[Theorem 1.1]{CCK23+}). In Theorem \ref{t:Kig-good}, we will see that $p$-energy forms constructed in \cite[Theorem 3.21]{Kig23} are indeed $p$-resistance forms. We also show that $p$-energy forms on p.-c.f.\ self-similar sets in \cite[Theorem 5.1]{CGQ22} under the condition (\textbf{R}) in \cite[p.~18]{CGQ22} are $p$-resistance forms in Theorem \ref{thm.CGQ}. 
		\item\label{RF-graph} Here we recall typical $p$-resistance forms on finite sets given in \cite[Example 2.2-(1)]{KS.survey} because these examples are important to construct self-similar $p$-resistance forms on p.-c.f.\ self-similar structures in Subsection \ref{sec.pcf}. 
		Let $V$ be a non-empty finite set. Note that in this case $\mathcal{E}$ is a $p$-resistance form on $V$ if and only if $\mathcal{E} \colon \mathbb{R}^{V}\to[0,\infty)$ satisfies \ref{RF1} and \ref{RF5}; indeed, \ref{RF3} is obvious for $\mathcal{F} = \mathbb{R}^{V}$, and \ref{RF2} and \ref{RF4} are easily implied by \ref{RF1} and $\dim(\mathcal{F}/\mathbb{R}\indicator{V})<\infty$. Now, consider any functional $\mathcal{E}\colon\mathbb{R}^{V}\to[0,\infty)$ of the form
		\begin{equation}\label{eq:p-RF-finite-graph}
			\mathcal{E}(u) = \frac{1}{2}\sum_{x,y\in V}L_{xy}\abs{u(x)-u(y)}^{p}
		\end{equation}
		for some $L=(L_{xy})_{x,y\in V}\in[0,\infty)^{V\times V}$ such that $L_{xy}=L_{yx}$ for any $x,y\in V$. It is obvious that $\mathcal{E}$ satisfies \ref{RF1} if and only if the graph $(V,E_{L})$ is connected, where $E_{L} \coloneqq \{\{x,y\}\mid\textrm{$x,y\in V$, $x\not=y$, $L_{xy}>0$}\}$. It is also easy to see from Proposition \ref{prop.cone-gen}-\ref{GC.cone} that $\mathcal{E}$ satisfies \ref{RF5}. It thus follows that $\mathcal{E}$ is a $p$-resistance form on $V$ if and only if $(V,E_{L})$ is connected.
	
		Note that, while any $2$-resistance form on $V$ is of the form \eqref{eq:p-RF-finite-graph}
		with $p=2$, \emph{the counterpart of this fact for $p\not=2$ is NOT true} unless $\#V\leq 2$.
	\end{enumerate}
\end{example}

In the rest of this section, we assume that $(\mathcal{E},\mathcal{F})$ is a $p$-resistance form on $X$.
Then the following proposition is immediate from the definition \eqref{R-def} of $R_{\mathcal{E}}$ and Theorem \ref{thm.banach}. 
\begin{prop}\label{prop.R-conseq}
    \begin{enumerate}[label=\textup{(\arabic*)},align=left,leftmargin=*,topsep=2pt,parsep=0pt,itemsep=2pt]
        \item\label{it:RF.basic-ineq} For any $u \in \mathcal{F}$ and any $x,y \in X$,
        \begin{equation}\label{R-basic}
            \abs{u(x) - u(y)}^{p} \le R_{\mathcal{E}}(x,y)\mathcal{E}(u).
        \end{equation}
        \item $R_{\mathcal{E}}^{1/p}$ is a metric on $X$.
        \item\label{it:RF.unif-conv} $(\mathcal{F}/\mathbb{R}\indicator{X}, \mathcal{E}^{1/p})$ is a uniformly convex Banach space, and thus it is reflexive.
    \end{enumerate}
\end{prop}
In particular, the metric $R_{\mathcal{E}}^{1/p}$ induces a topology on $X$. 
Throughout the rest of this section, we consider $X$ as a topological space with the topology induced by $R_{\mathcal{E}}^{1/p}$. 
Note that then $\mathcal{F} \subseteq \contfunc(X)$ by \eqref{R-basic}. 

We introduce the notion of regularity of $p$-resistance forms as follows.
\begin{defn}[Regularity of $p$-resistance form]\label{dfn:regRF}
	Assume that $X$ is locally compact. We say that $(\mathcal{E}, \mathcal{F})$ is \emph{regular}\index{regular ($p$-resistance form)} if and only if $\mathcal{F} \cap \contfunc_{c}(X)$ is dense in $(\contfunc_{c}(X),\norm{\,\cdot\,}_{\sup})$. 
\end{defn}

The regularity ensures the existence of cutoff functions.
\begin{prop}\label{prop.regular}
    Assume that $X$ is locally compact and that $(\mathcal{E},\mathcal{F})$ is regular.
    Then for any open subset $U$ of $X$ and any compact subset $K$ of $U$, there exists $\psi \in \mathcal{F} \cap \contfunc_{c}(X)$ such that $0 \le \psi \le 1$, $\psi = 1$ on an open neighborhood of $K$ and $\supp_{X}[\psi] \subseteq U$. 
    In particular, $\mathcal{F} \cap \contfunc_{c}(X)$ is a special core. 
\end{prop}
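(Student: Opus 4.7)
The plan is to produce $\psi$ by starting from a standard Urysohn continuous cutoff, approximating it uniformly by an element of $\mathcal{F} \cap \contfunc_c(X)$ via regularity, and then correcting the approximation error by the contraction and lattice operations coming from \ref{RF5}.

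Using local compactness of $X$, I first choose relatively compact open sets $V, W$ with $K \subseteq V \subseteq \closure{V}^{X} \subseteq W \subseteq \closure{W}^{X} \subseteq U$, and apply Urysohn's lemma to obtain $\varphi \in \contfunc_c(X)$ with $0 \le \varphi \le 1$, $\varphi = 1$ on $\closure{V}^{X}$, and $\supp_X[\varphi] \subseteq W$. For a fixed $\varepsilon \in (0, 1/4)$, regularity supplies $\widetilde{\varphi} \in \mathcal{F} \cap \contfunc_c(X)$ with $\norm{\varphi - \widetilde{\varphi}}_{\sup} < \varepsilon$, and I define
\[
	\psi \coloneqq (1 - 2\varepsilon)^{-1}\bigl((\widetilde{\varphi} - \varepsilon \indicator{X})^{+} \wedge (1 - 2\varepsilon)\indicator{X}\bigr).
\]
From $\widetilde{\varphi} > 1 - \varepsilon$ on $\closure{V}^{X}$ and $\widetilde{\varphi} < \varepsilon$ on $X \setminus W$, a direct check shows that $\psi = 1$ on the open neighborhood $V$ of $K$, $\psi = 0$ on $X \setminus W$, $0 \le \psi \le 1$, and $\supp_X[\psi] \subseteq \closure{W}^{X} \subseteq U$ is compact.

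The heart of the argument is to check that $\psi \in \mathcal{F}$. Since $\indicator{X} \in \mathcal{F}$ by \ref{RF1} and $\mathcal{F}$ is a linear subspace, $\widetilde{\varphi} - \varepsilon \indicator{X} \in \mathcal{F}$; the Lipschitz contractivity in Proposition \ref{prop.GC-list}-\ref{GC.lip} applied to $t \mapsto t^{+}$ places $(\widetilde{\varphi} - \varepsilon \indicator{X})^{+}$ in $\mathcal{F}$; and the lattice closedness in Proposition \ref{prop.GC-list}-\ref{GC.markov} allows me to meet this with $(1 - 2\varepsilon)\indicator{X} \in \mathcal{F}$ while remaining in $\mathcal{F}$. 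Linearity handles the final scaling by $(1 - 2\varepsilon)^{-1}$, and continuity of $\psi$ is inherited from $\widetilde{\varphi}$.

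For the ``in particular'' clause, the cutoff construction above supplies the existence part of Definition \ref{d:core}-\ref{it:d-special}. Combining it with the density of $\mathcal{F} \cap \contfunc_c(X)$ in $(\contfunc_c(X), \norm{\cdot}_{\sup})$ coming from regularity, and with the Leibniz property from Proposition \ref{prop.GC-list}-\ref{GC.leibniz} (whose $L^{\infty}(X,m)$ hypothesis reduces to mere boundedness under the counting-measure convention in force in Section \ref{sec.p-harm}, and so applies to all of $\mathcal{F} \cap \contfunc_c(X)$), one sees that $\mathcal{F} \cap \contfunc_c(X)$ is a dense subalgebra of $\contfunc_c(X)$ with cutoff functions. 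The main delicacy I anticipate is that the scalar truncation $t \mapsto t \wedge (1 - 2\varepsilon)$ does not fix $0$ and cannot be applied via a single-function Lipschitz contraction; realizing it instead as a binary lattice operation against $(1 - 2\varepsilon)\indicator{X}$ is precisely what forces the essential use of $\indicator{X} \in \mathcal{F}$ from \ref{RF1}.
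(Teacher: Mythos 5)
Your construction is essentially identical to the paper's: the same Urysohn cutoff, the same uniform approximation via regularity, and the same function $\psi$ (your $(1-2\varepsilon)^{-1}\bigl((\widetilde{\varphi}-\varepsilon\indicator{X})^{+}\wedge(1-2\varepsilon)\indicator{X}\bigr)$ is algebraically the paper's $\bigl[(1-2\varepsilon)^{-1}(\psi_{\varepsilon}-\varepsilon)^{+}\bigr]\wedge 1$), so the proof is correct and follows the paper's route. The one remark worth making is that the "delicacy" you flag at the end is illusory: since $1-2\varepsilon>0$, the map $t\mapsto t\wedge(1-2\varepsilon)$ does fix $0$ and is $1$-Lipschitz, so the paper dispenses with the binary lattice step and handles the truncation by \ref{RF1} and Proposition \ref{prop.GC-list}-\ref{GC.lip} alone; your detour through Proposition \ref{prop.GC-list}-\ref{GC.markov} is harmless but unnecessary.
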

\begin{proof}
    Since $X$ is locally compact, we can pick an open subset $\Omega$ of $X$ such that $K \subseteq \Omega$, $\closure{\Omega}^{X} \subseteq U$ and $\closure{\Omega}^{X}$ is compact.
    By Urysohn's lemma, there exists $\psi_{0} \in \contfunc_{c}(X)$ such that $0 \le \psi_{0} \le 1$, $\psi_{0} = 1$ on $\Omega$ and $\supp_{X}[\psi_{0}] \subseteq U$. 
    Since $(\mathcal{E},\mathcal{F})$ is regular, for any $\varepsilon \in (0,1/2)$ there exists $\psi_{\varepsilon} \in \mathcal{F} \cap \contfunc_{c}(X)$ such that $\norm{\psi_{0} - \psi_{\varepsilon}}_{\mathrm{sup}} < \varepsilon$, and then the function $\psi \coloneqq \bigl[(1 - 2\varepsilon)^{-1}(\psi_{\varepsilon} - \varepsilon)^{+}\bigr] \wedge 1$ belongs to $\mathcal{F} \cap \contfunc_{c}(X)$ by \ref{RF1} and Proposition \ref{prop.GC-list}-\ref{GC.lip} and has the desired properties. 
\end{proof}

We need the following notation to define traces of $(\mathcal{E},\mathcal{F})$ on subsets of $X$ later. 
\begin{defn}\label{dfn:restrdom}
    Let $B$ be a non-empty subset of $X$.
    We define a linear subspace $\mathcal{F}|_{B}$ of $\mathbb{R}^{B}$ by $\mathcal{F}|_{B} \coloneqq \bigl\{ u|_{B} \bigm| u \in \mathcal{F} \bigr\}$.
\end{defn}

The following proposition is useful to discuss boundary conditions on finite sets.
\begin{prop}[{\cite[Proposition 3.2]{Kig12}}]\label{prop.trace-dom}
    Let $B$ be a non-empty finite subset of $X$. Then $\mathcal{F}|_{B} = \mathbb{R}^{B}$.
\end{prop}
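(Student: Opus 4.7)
The plan is to construct, for each $x\in B$, an element $u_{x}\in\mathcal{F}$ that serves as an ``indicator-like'' function at $x$ relative to $B$: $u_{x}(x)=1$ and $u_{x}(z)=0$ for every $z\in B\setminus\{x\}$. Once we have such $\{u_{x}\}_{x\in B}\subseteq\mathcal{F}$, any prescribed $f\in\mathbb{R}^{B}$ can be realized as the restriction of $\sum_{x\in B}f(x)u_{x}\in\mathcal{F}$ (using that $\mathcal{F}$ is a linear subspace of $\mathbb{R}^{X}$), establishing $\mathcal{F}|_{B}=\mathbb{R}^{B}$.

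The construction proceeds in three steps. First, for each pair of distinct points $x,y\in B$, use \ref{RF3} to pick $v_{x,y}\in\mathcal{F}$ with $v_{x,y}(x)\neq v_{x,y}(y)$, and normalize it into
\[
w_{x,y}\coloneqq\frac{v_{x,y}-v_{x,y}(y)\indicator{X}}{v_{x,y}(x)-v_{x,y}(y)}\in\mathcal{F},
\]
which is in $\mathcal{F}$ by \ref{RF1} (linearity and $\mathbb{R}\indicator{X}\subseteq\mathcal{F}$) and satisfies $w_{x,y}(x)=1$, $w_{x,y}(y)=0$. Second, truncate using $\varphi(t)\coloneqq(-1)\vee(t\wedge1)$, which is $1$-Lipschitz with $\varphi(0)=0$; by \ref{RF5} together with Proposition \ref{prop.GC-list}-\ref{GC.lip}, $\widetilde{w}_{x,y}\coloneqq\varphi(w_{x,y})\in\mathcal{F}\cap L^{\infty}(X,m)$ and retains the values $\widetilde{w}_{x,y}(x)=1$, $\widetilde{w}_{x,y}(y)=0$ with $\norm{\widetilde{w}_{x,y}}_{\sup}\leq1$ (recall that in this section $m$ is the counting measure on $X$, so $L^{\infty}(X,m)$ coincides with bounded real-valued functions on $X$). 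Third, define
\[
u_{x}\coloneqq\prod_{y\in B\setminus\{x\}}\widetilde{w}_{x,y};
\]
since $B$ is finite, this is a finite product of bounded elements of $\mathcal{F}$, and iterated application of the Leibniz rule \eqref{leibniz} in Proposition \ref{prop.GC-list}-\ref{GC.leibniz} (valid by \ref{RF5}) keeps it in $\mathcal{F}$. By construction $u_{x}(x)=1$ and, for any $z\in B\setminus\{x\}$, the factor $\widetilde{w}_{x,z}$ vanishes at $z$, so $u_{x}(z)=0$.

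The only potential subtlety is that Proposition \ref{prop.GC-list}-\ref{GC.leibniz} requires the factors to lie in $\mathcal{F}\cap L^{\infty}(X,m)$, which is precisely why the truncation step is needed; without it, the pointwise separating functions coming from \ref{RF3} need not be bounded. Apart from this minor bookkeeping, the argument is purely algebraic and uses nothing beyond \ref{RF1}, \ref{RF3}, and the consequences of \ref{RF5} already established in Section \ref{sec.GC}.
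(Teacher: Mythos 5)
Your proof is correct. You make the same initial reduction as the paper — it suffices to produce, for each $x\in B$, some $u_{x}\in\mathcal{F}$ with $u_{x}|_{B}=\indicator{x}^{B}$, using \ref{RF3} to separate points and \ref{RF1} to normalize — but you assemble the indicator differently. You truncate the normalized separating functions and multiply them together, so the key ingredient is the Leibniz rule \eqref{leibniz} from Proposition \ref{prop.GC-list}-\ref{GC.leibniz}, and the truncation step you flag is indeed necessary to make that rule applicable. The paper instead avoids products entirely: it forms $f=\sum_{y\in B\setminus\{x\}}(u_{y}^{+}\wedge 1)$ and $g=\sum_{y\in B\setminus\{x\}}\bigl((1-u_{y})^{+}\wedge 1\bigr)$ and then takes $h=\bigl(f-(\#B-2)(g^{+}\wedge 1)\bigr)^{+}\wedge 1$, so it only needs linearity plus single-variable Lipschitz contractions \eqref{lipcont}, at the cost of a small arithmetic verification of the values of $f$ and $g$ on $B$. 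Your route is more transparent pointwise (a product of indicator-like functions is obviously indicator-like), while the paper's uses a strictly weaker consequence of \ref{RF5} and so would survive under hypotheses where only unit/Lipschitz contractivity, not the Leibniz rule, is available. One cosmetic remark: for $\#B=1$ your product is empty; either invoke the convention that it equals $\indicator{X}\in\mathcal{F}$ or note that this case is immediate from \ref{RF1}.
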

\begin{proof}
    This is a special case of \cite[Proposition 3.2]{Kig12}, but we give a detailed proof here for the reader's convenience.
    By virtue of \ref{RF1}, it suffices to show that $\indicator{x}^{B} \in \mathcal{F}|_{B}$ for any $x \in B$ under the assumption that $\#B \geq 2$.  
    Let $x \in B$.
    For each $y \in B \setminus \{ x \}$, by \ref{RF3} and \ref{RF1}, there exists $u_{y} \in \mathcal{F}$ satisfying $u_{y}(x) = 1$ and $u_{y}(y) = 0$.
    Let $f \coloneqq \sum_{y \in B \setminus \{ x \}}(u_{y}^{+} \wedge 1)$ and $g \coloneqq \sum_{y \in B \setminus \{ x \}}\bigl((1 - u_{y})^{+} \wedge 1\bigr)$.
    Then $f, g \in \mathcal{F}$ by \ref{RF1} and \ref{RF5}.
    Since $f(x) = \#B - 1$, $f|_{B \setminus \{ x \}} \le \#B - 2$, $g(x) = 0$ and $g|_{B \setminus \{ x \}} \ge 1$, the function $h \in \mathcal{F}$ given by
    \[
    h \coloneqq \bigl(f - (\#B - 2)(g^{+} \wedge 1)\bigr)^{+} \wedge 1
    \]
    satisfies $h|_{B} = \indicator{x}^{B}$ and hence $\indicator{x}^{B} \in \mathcal{F}|_{B}$.
\end{proof}

The next definition is introduced to deal with Dirichlet-type boundary conditions.
\begin{defn}\label{defn.RFp-general}
    For each subset $B$ of $X$, we define
    \[
    \mathcal{F}^{0}(B) \coloneqq \{ u \in \mathcal{F} \mid \text{$u(x) = 0$ for any $x \in X \setminus B$} \}, \quad B^{\mathcal{F}} \coloneqq \bigcap_{u \in \mathcal{F}^{0}(X \setminus B)}u^{-1}(0).
    \]
\end{defn}

See \cite[Chapters 2, 5 and 6]{Kig12} for basic properties of $B^{\mathcal{F}}$.
Here we recall just the following results, which will be used later.
\begin{prop}[{\cite[Theorem 2.4, Lemma 2.5 and Theorem 6.3]{Kig12}}]\label{p:genbord}\strut
    \begin{enumerate}[label=\textup{(\alph*)},align=left,leftmargin=*,topsep=2pt,parsep=0pt,itemsep=2pt]
        \item \label{fcn-top} $\mathbb{C}_{\mathcal{F}} \coloneqq \bigl\{ B \bigm| \textrm{$B \subseteq X$, $B^{\mathcal{F}} = B$} \bigr\}$ satisfies the axiom of closed sets and it defines a topology on $X$. Moreover, $\{ x \} \in \mathbb{C}_{\mathcal{F}}$ for any $x \in X$.
        \item \label{fcn-sep} For any subset $B$ of $X$ and any $x \in X \setminus B^{\mathcal{F}}$, there exists $u \in \mathcal{F}^{0}(X \setminus B)$ such that $u(x) = 1$ and $0 \le u(y) \le 1$ for any $y \in X$.
        \item \label{fcn-same} Assume that $X$ is locally compact and that $(\mathcal{E},\mathcal{F})$ is regular. Then $B^{\mathcal{F}} = B$ for any closed subset $B$ of $X$.
    \end{enumerate}
\end{prop}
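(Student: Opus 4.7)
The plan is to prove the three parts in the order (b), (a), (c), treating (b) as a lemma that will be used to establish the finite union axiom in (a). Throughout, I will use the elementary monotonicity observation that if $B_1 \subseteq B_2 \subseteq X$, then $\mathcal{F}^{0}(X \setminus B_2) \subseteq \mathcal{F}^{0}(X \setminus B_1)$, and hence $B_1^{\mathcal{F}} \subseteq B_2^{\mathcal{F}}$; also, by definition, $B \subseteq B^{\mathcal{F}}$ always.

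For part (b), let $B \subseteq X$ and $x \notin B^{\mathcal{F}}$. By definition of $B^{\mathcal{F}}$, there exists $v \in \mathcal{F}^{0}(X \setminus B)$ with $v(x) \neq 0$. Replacing $v$ by $v(x)^{-1}v$ we may assume $v(x) = 1$, and then $u \coloneqq v^{+} \wedge 1$ belongs to $\mathcal{F}$ by Proposition \ref{prop.GC-list}-\ref{GC.lip} applied to the $1$-Lipschitz map $\varphi(t) = t^{+} \wedge 1$; it clearly satisfies $u|_{B} = 0$, $u(x) = 1$ and $0 \leq u \leq 1$.

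For part (a), I first check that $\emptyset, X \in \mathbb{C}_{\mathcal{F}}$: $X^{\mathcal{F}} = X$ is immediate, and $\emptyset^{\mathcal{F}} = \bigcap_{u \in \mathcal{F}}u^{-1}(0) = \emptyset$ because $\indicator{X} \in \mathcal{F}$ by \ref{RF1}. Arbitrary intersections are easy from monotonicity: if $\{B_{\alpha}\}_{\alpha} \subseteq \mathbb{C}_{\mathcal{F}}$, then for each $\beta$, $\bigl(\bigcap_{\alpha}B_{\alpha}\bigr)^{\mathcal{F}} \subseteq B_{\beta}^{\mathcal{F}} = B_{\beta}$, so $\bigl(\bigcap_{\alpha}B_{\alpha}\bigr)^{\mathcal{F}} \subseteq \bigcap_{\alpha}B_{\alpha}$, and the reverse inclusion is automatic. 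The main obstacle, as expected, is the finite union axiom, because $B \mapsto B^{\mathcal{F}}$ is defined via an intersection and does not interact directly with unions. Here, given $B_1, B_2 \in \mathbb{C}_{\mathcal{F}}$ and $x \notin B_1 \cup B_2$, I will apply (b) to get $u_1, u_2 \in \mathcal{F}$ with $u_i|_{B_i} = 0$, $u_i(x) = 1$ and $0 \le u_i \le 1$, and then the key point is that $u_1 u_2 \in \mathcal{F}$ by the Leibniz rule \eqref{leibniz} (which is available since $u_1, u_2 \in \mathcal{F} \cap L^{\infty}(X,m)$ for the counting measure $m$, as guaranteed by $0 \le u_i \le 1$). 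The function $u_1 u_2$ then vanishes on $B_1 \cup B_2$ and equals $1$ at $x$, proving $x \notin (B_1 \cup B_2)^{\mathcal{F}}$. Finally, for the closedness of singletons, given $x \in X$ and $y \neq x$, \ref{RF3} provides $v \in \mathcal{F}$ with $v(x) \neq v(y)$, and $u \coloneqq v - v(x)\indicator{X} \in \mathcal{F}$ satisfies $u(x) = 0$ and $u(y) \neq 0$, showing $y \notin \{x\}^{\mathcal{F}}$.

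For part (c), assume $X$ is locally compact and $(\mathcal{E},\mathcal{F})$ is regular. Given a closed subset $B$ of $X$ and $x \notin B$, the set $X \setminus B$ is an open neighborhood of $x$, so by local compactness we can find a relatively compact open neighborhood $U$ of $x$ with $\closure{U}^{X} \subseteq X \setminus B$. Applying Proposition \ref{prop.regular} with $K = \{x\}$ yields $\psi \in \mathcal{F} \cap \contfunc_{c}(X)$ with $\supp_{X}[\psi] \subseteq U \subseteq X \setminus B$ and $\psi = 1$ on an open neighborhood of $x$; in particular $\psi|_{B} = 0$ and $\psi(x) = 1$, so $x \notin B^{\mathcal{F}}$, which combined with the trivial inclusion $B \subseteq B^{\mathcal{F}}$ gives $B = B^{\mathcal{F}}$.
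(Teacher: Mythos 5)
Your proof is correct. Note, though, that the paper does not actually prove this proposition itself: its ``proof'' consists of the two-sentence citation that parts \ref{fcn-top} and \ref{fcn-sep} follow from \cite[Theorem 2.4 and Lemma 2.5]{Kig12} and that part \ref{fcn-same} follows from the argument showing (R1) $\Rightarrow$ (R2) in \cite[Proof of Theorem 6.3]{Kig12}, i.e., it outsources everything to Kigami's $p=2$ resistance-form theory. What you have done is supply a self-contained verification that those arguments go through in the $p$-resistance form setting, and you have correctly identified the only non-trivial inputs: the unit contraction $v \mapsto v^{+} \wedge 1$ from \ref{RF5} for part \ref{fcn-sep}, the Leibniz rule \eqref{leibniz} (legitimate here since $m$ is the counting measure, so $0 \le u_{i} \le 1$ puts $u_{i}$ in $\mathcal{F} \cap L^{\infty}(X,m)$) to handle the finite-union axiom in part \ref{fcn-top}, \ref{RF1} and \ref{RF3} for $\emptyset^{\mathcal{F}} = \emptyset$ and the closedness of singletons, and the cutoff function of Proposition \ref{prop.regular} for part \ref{fcn-same}. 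One cosmetic remark: in part \ref{fcn-same} the intermediate relatively compact neighborhood $U$ is not needed, since Proposition \ref{prop.regular} applies directly with $K = \{x\}$ and the open set $X \setminus B$; but this extra step is harmless.
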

\begin{proof}
    The statements \ref{fcn-top} and \ref{fcn-sep} are included in \cite[Theorem 2.4 and Lemma 2.5]{Kig12}.
    The argument showing $\textup{(R1)} \Rightarrow \textup{(R2)}$ in \cite[Proof of Theorem 6.3]{Kig12} proves \ref{fcn-same}.
\end{proof}

For a non-empty subset $B$ of $X$ and $x \in X \setminus B^{\mathcal{F}}$, we define
\begin{equation}\label{R-def.gen}
    R_{\mathcal{E}}(x,B) \coloneqq R_{(\mathcal{E},\mathcal{F})}(x,B) \coloneqq \sup\biggl\{ \frac{\abs{u(x)}^{p}}{\mathcal{E}(u)} \biggm| \textrm{$u \in \mathcal{F}^{0}(X \setminus B)$, $u(x) \not= 0$} \biggr\} \in (0,\infty). 
\end{equation}
Note that $R_{\mathcal{E}}(x,\{ y \}) = R_{\mathcal{E}}(x,y)$ for any $x,y \in X$ with $x \not= y$ by Proposition \ref{p:genbord}-\ref{fcn-top}. 

\subsection{Harmonic functions and traces of \texorpdfstring{$p$}{p}-resistance forms}\label{sec.trace}
In this subsection, we consider harmonic functions with respect to $p$-resistance forms and traces of $p$-resistance forms to subsets of the original domains. 

The following proposition states that the variational and distributional formulations of harmonic functions coincide for $p$-resistance forms.
\begin{prop}\label{prop.equiv}
    Let $h \in \mathcal{F}$ and $B \subseteq X$.
    Then the following conditions are equivalent:
    \begin{enumerate}[label=\textup{(\arabic*)},align=left,leftmargin=*,topsep=2pt,parsep=0pt,itemsep=2pt]
        \item \label{harm.1} $\mathcal{E}(h) = \inf\{ \mathcal{E}(u) \mid \text{$u \in \mathcal{F}$, $u|_{B} = h|_{B}$} \}$.
        \item \label{harm.2} $\mathcal{E}(h; \varphi) = 0$ for any $\varphi \in \mathcal{F}^{0}(X \setminus B)$.
    \end{enumerate}
\end{prop}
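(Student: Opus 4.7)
The plan is to exploit the convexity and differentiability of $\mathcal{E}$ provided by \ref{RF5} and Theorem \ref{thm.p-form}. First I would observe that since $(\mathcal{E},\mathcal{F})$ satisfies \ref{GC} by \ref{RF5}, Proposition \ref{prop.GC-list}-\ref{GC.Cpsmall},\ref{GC.Cplarge} yields \ref{Cp}, so the derivative $\mathcal{E}(h;\varphi) = \frac{1}{p}\frac{d}{dt}\mathcal{E}(h+t\varphi)|_{t=0}$ from Theorem \ref{thm.p-form} is available. I would also reformulate the admissible class in \ref{harm.1} via the identity
\[
\{ u \in \mathcal{F} \mid u|_{B} = h|_{B} \} = h + \mathcal{F}^{0}(X \setminus B),
\]
which holds because $\mathcal{F}^{0}(X \setminus B) = \{ v \in \mathcal{F} \mid v|_{B} = 0 \}$ by Definition \ref{defn.RFp-general}.

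For \ref{harm.1} $\Rightarrow$ \ref{harm.2}, fix $\varphi \in \mathcal{F}^{0}(X \setminus B)$ and consider the function $\psi(t) \coloneqq \mathcal{E}(h + t\varphi)$, $t \in \mathbb{R}$. By the reformulation above, $h + t\varphi$ is admissible in \ref{harm.1} for every $t \in \mathbb{R}$, so $\psi$ attains its minimum at $t = 0$. Since $\psi$ is differentiable at $0$ with $\psi'(0) = p\,\mathcal{E}(h;\varphi)$ by Theorem \ref{thm.p-form}, the first-order condition $\psi'(0) = 0$ yields $\mathcal{E}(h;\varphi) = 0$.

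For \ref{harm.2} $\Rightarrow$ \ref{harm.1}, the key input is the convex subgradient inequality
\begin{equation}\label{eq:prop.equiv-subgrad}
	\mathcal{E}(f + g) \ge \mathcal{E}(f) + p\,\mathcal{E}(f; g) \quad \text{for any } f, g \in \mathcal{F},
\end{equation}
which I would derive by noting that $t \mapsto \mathcal{E}(f + tg)$ is convex on $\mathbb{R}$ (because $\mathcal{E}^{1/p}$ is a seminorm by \ref{RF1}, so $\mathcal{E} = (\mathcal{E}^{1/p})^{p}$ is convex as the composition of a non-negative convex function with the non-decreasing convex map $s \mapsto s^{p}$ on $[0,\infty)$) and differentiable at $0$ with derivative $p\,\mathcal{E}(f; g)$, so that the tangent line at $0$ lies below the graph. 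Given any $u \in \mathcal{F}$ with $u|_{B} = h|_{B}$, apply \eqref{eq:prop.equiv-subgrad} with $f = h$ and $g = u - h \in \mathcal{F}^{0}(X \setminus B)$ to obtain
\[
\mathcal{E}(u) \ge \mathcal{E}(h) + p\,\mathcal{E}(h; u - h) = \mathcal{E}(h),
\]
where the last equality uses \ref{harm.2}. Taking the infimum over such $u$ yields \ref{harm.1}.

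The routine but slightly delicate step is justifying \eqref{eq:prop.equiv-subgrad}, though it is an immediate consequence of convexity and differentiability; there is no genuine obstacle here, as all nonlinear potential-theoretic subtleties have been absorbed into Theorem \ref{thm.p-form}.
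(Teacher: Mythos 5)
Your proposal is correct and follows essentially the same route as the paper: both directions rest on the differentiability of $t \mapsto \mathcal{E}(h+t\varphi)$ (available since \ref{RF5} gives \ref{Cp} via Proposition \ref{prop.GC-list}) together with its convexity, with the first-order condition at a minimizer giving \ref{harm.1} $\Rightarrow$ \ref{harm.2} and the tangent-line (subgradient) inequality giving the converse. The paper phrases the latter simply as "convexity of $E_{u-h}$ plus zero derivative at $0$ implies $E_{u-h}(1)\geq E_{u-h}(0)$," which is exactly your inequality \eqref{eq:prop.equiv-subgrad} specialized to $f=h$, $g=u-h$.
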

\begin{proof}
    For $\varphi \in \mathcal{F}$, define $E_{\varphi} \colon \mathbb{R} \to \mathbb{R}$ by $E_{\varphi}(t) \coloneqq \mathcal{E}(h + t\varphi)$, so that $E_{\varphi}$ is differentiable by Proposition \ref{prop.diffble}.
	If $\mathcal{E}(h) = \inf\{ \mathcal{E}(u) \mid \text{$u \in \mathcal{F}$, $u|_{B} = h|_{B}$} \}$ and $\varphi \in \mathcal{F}^{0}(X \setminus B)$, then $E_{\varphi}$ takes its minimum at $t = 0$, and thus $\mathcal{E}(h;\varphi) = \frac{1}{p} \frac{d}{dt}E_{\varphi}(t)\bigr|_{t = 0} = 0$, proving \ref{harm.1} $\Rightarrow$ \ref{harm.2}. 

	Conversely, assume that $\mathcal{E}(h; \varphi) = 0$ for any $\varphi \in \mathcal{F}^{0}(X \setminus B)$, and let $u \in \mathcal{F}$ satisfy $u|_{B} = h|_{B}$. 
	Then by $u - h \in \mathcal{F}^{0}(X \setminus B)$ we have $\frac{d}{dt}\bigr|_{t=0}E_{u-h}(t) = p \mathcal{E}(h; u - h) = 0$, which together with the convexity of $E_{u-h}$ implies that $\mathcal{E}(u) = E_{u-h}(1) \geq E_{u-h}(0) = \mathcal{E}(h)$, proving \ref{harm.2} $\Rightarrow$ \ref{harm.1}. 
\end{proof}

\begin{defn}[$\mathcal{E}$-(sub,super)harmonic function\index{$\mathcal{E}$-harmonic function}\index{$\mathcal{E}$-subharmonic function}\index{$\mathcal{E}$-superharmonic function}]\label{dfn:part-harmonic}
    Let $B \subseteq X$ and $h \in \mathcal{F}$.  
    We say that $h$ is \emph{$\mathcal{E}$-subharmonic} on $X \setminus B$ if and only if 
    \begin{equation}\label{e:defn.subharm}
		\mathcal{E}(h; \varphi) \le 0 \quad \text{for any $\varphi \in \mathcal{F}^{0}(X \setminus B)$ with $\varphi \ge 0$.} 
	\end{equation}
    We say that $h$ is \emph{$\mathcal{E}$-superharmonic} on $X \setminus B$ if and only if $-h$ is $\mathcal{E}$-subharmonic on $X \setminus B$, and say that $h$ is \emph{$\mathcal{E}$-harmonic} on $X \setminus B$ if and only if $h$ is both $\mathcal{E}$-subharmonic and $\mathcal{E}$-superharmonic on $X \setminus B$, i.e., $h$ satisfies either (and hence both) of \ref{harm.1} and \ref{harm.2} in Proposition \ref{prop.equiv}. 
    We set $\mathcal{H}_{\mathcal{E},B} \coloneqq \{ h \in \mathcal{F} \mid \text{$h$ is $\mathcal{E}$-harmonic on $X \setminus B$} \}$.
\end{defn}

$\mathcal{E}$-harmonic functions with any boundary values $u \in \mathcal{F}|_{B}$ on a given non-empty subset $B$ of $X$ uniquely exist, and their energies under $\mathcal{E}$ define a new $p$-resistance form $(\mathcal{E}|_{B}, \mathcal{F}|_{B})$ on the boundary set $B$, as follows.
This new $p$-resistance form $(\mathcal{E}|_{B}, \mathcal{F}|_{B})$ on $B$ is called the \emph{trace}\index{trace (of $p$-resistance form)} of $(\mathcal{E},\mathcal{F})$ on $B$. 
\begin{thm}\label{thm.RF-exist}
    Let $B \subseteq X$ be non-empty, and define $\mathcal{E}|_{B} \colon \mathcal{F}|_{B} \to [0,\infty)$ by
    \begin{equation} \label{eq:dfn-trace}
        \mathcal{E}|_{B}(u) \coloneqq \inf\{ \mathcal{E}(v) \mid v \in \mathcal{F}, v|_{B} = u \}, \quad u \in \mathcal{F}|_{B}.
    \end{equation}
    Then $(\mathcal{E}|_{B}, \mathcal{F}|_{B})$ is a $p$-resistance form on $B$ and $R_{\mathcal{E}|_{B}} = R_{\mathcal{E}}|_{B \times B}$.
    Moreover, for any $u \in \mathcal{F}|_{B}$ there exists a unique $h_{B}^{\mathcal{E}}[u] \in \mathcal{F}$ such that $h_{B}^{\mathcal{E}}[u]\bigr|_{B} = u$ and $\mathcal{E}\bigl(h_{B}^{\mathcal{E}}[u]\bigr) = \mathcal{E}|_{B}(u)$, so that $h_{B}^{\mathcal{E}}(\mathcal{F}|_{B}) = \mathcal{H}_{\mathcal{E},B}$, and
    \begin{align}
        h_{B}^{\mathcal{E}}[au + b\indicator{B}] &= ah_{B}^{\mathcal{E}}[u] + b\indicator{X} \quad \text{for any $u \in \mathcal{F}|_{B}$ and any $a,b \in \mathbb{R}$,} \label{harm-const} \\
        \mathcal{E}|_{B}(u; v) &= \mathcal{E}\bigl(h_{B}^{\mathcal{E}}[u]; h_{B}^{\mathcal{E}}[v]\bigr) \quad \text{for any $u,v \in \mathcal{F}|_{B}$,} \label{diff-compat} \\
        \mathcal{E}|_{B}(f|_{B}; g|_{B}) &= \mathcal{E}(f; g) \quad \text{for any $f \in \mathcal{H}_{\mathcal{E},B}$ and any $g \in \mathcal{F}$,} \label{harm-compat}
    \end{align}
    where $\mathcal{E}|_{B}(u; v) \coloneqq \frac{1}{p}\left.\frac{d}{dt}\mathcal{E}|_{B}(u + tv)\right|_{t = 0}$ for $u,v \in \mathcal{F}|_{B}$ \textup{(recall \eqref{exist-deriva} in Theorem \ref{thm.p-form} for this definition)}.
\end{thm}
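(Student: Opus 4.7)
The plan is to build everything from the variational characterization of $h_{B}^{\mathcal{E}}[u]$. First, I would fix $u \in \mathcal{F}|_{B}$ and minimize $\mathcal{E}$ over the non-empty affine set $\mathcal{A}_{u} \coloneqq \{v \in \mathcal{F} \mid v|_{B} = u\}$. Since \ref{RF5} implies \ref{Cp} for $(\mathcal{E},\mathcal{F})$ through Proposition \ref{prop.GC-list}-\ref{GC.Cpsmall},\ref{GC.Cplarge}, the quotient $\mathcal{F}/\mathbb{R}\indicator{X}$ is uniformly convex by Proposition \ref{p:uc} and complete by \ref{RF2}, so a standard direct-method argument applied to a minimizing sequence in $\mathcal{A}_{u}$ produces a minimizer $h_{B}^{\mathcal{E}}[u]$; strict convexity gives uniqueness modulo $\mathbb{R}\indicator{X}$, and the constraint $h_{B}^{\mathcal{E}}[u]|_{B} = u$ combined with $B \neq \emptyset$ eliminates even the additive constant. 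Property \eqref{harm-const} is then immediate from uniqueness, and the identification $h_{B}^{\mathcal{E}}(\mathcal{F}|_{B}) = \mathcal{H}_{\mathcal{E},B}$ follows from Proposition \ref{prop.equiv}.

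Next I would verify the $p$-resistance form axioms for $(\mathcal{E}|_{B}, \mathcal{F}|_{B})$. The $p$-homogeneity of $\mathcal{E}|_{B}$ is obvious from $\mathcal{E}(av) = \abs{a}^{p}\mathcal{E}(v)$; the triangle inequality follows from $(h_{B}^{\mathcal{E}}[u] + h_{B}^{\mathcal{E}}[w])|_{B} = u + w$; and $\mathcal{E}|_{B}(u) = 0$ implies $h_{B}^{\mathcal{E}}[u] \in \mathbb{R}\indicator{X}$, hence $u \in \mathbb{R}\indicator{B}$, giving \ref{RF1}. For \ref{RF5}, given $T$ satisfying \eqref{GC-cond} and $\bm{u} \in (\mathcal{F}|_{B})^{n_{1}}$, observe that $T_{l}(h_{B}^{\mathcal{E}}[\bm{u}])|_{B} = T_{l}(\bm{u})$, so $\mathcal{E}|_{B}(T_{l}(\bm{u})) \leq \mathcal{E}(T_{l}(h_{B}^{\mathcal{E}}[\bm{u}]))$, and applying \ref{GC} for $\mathcal{E}$ together with the monotonicity of $\ell^{q_{2}}$-norms in positive entries yields \ref{GC} for $\mathcal{E}|_{B}$. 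Property \ref{RF3} transfers trivially from $\mathcal{F}$ to $\mathcal{F}|_{B}$. For \ref{RF4} and the identity $R_{\mathcal{E}|_{B}} = R_{\mathcal{E}}|_{B \times B}$, the bound $R_{\mathcal{E}|_{B}}(x,y) \le R_{\mathcal{E}}(x,y)$ is immediate, and the converse follows from the observation that replacing any competitor $v$ in the supremum defining $R_{\mathcal{E}}(x,y)$ by $h_{B}^{\mathcal{E}}[v|_{B}]$ preserves the numerator (for $x, y \in B$) while lowering the denominator.

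The principal obstacle is the completeness \ref{RF2} of $(\mathcal{F}|_{B}/\mathbb{R}\indicator{B}, \mathcal{E}|_{B}^{1/p})$, since the extension map $u \mapsto h_{B}^{\mathcal{E}}[u]$ is nonlinear for $p \neq 2$ and is therefore not an isometry. My strategy is to fix $x_{0} \in B$, normalize a Cauchy sequence $\{u_{n}\} \subseteq \mathcal{F}|_{B}$ so that $u_{n}(x_{0}) = 0$, and set $f_{n} \coloneqq h_{B}^{\mathcal{E}}[u_{n}]$. Then $\{\mathcal{E}(f_{n}) = \mathcal{E}|_{B}(u_{n})\}$ is bounded, so $\{f_{n}\}$ is a bounded sequence in the closed subspace $\mathcal{F}^{x_{0}} \coloneqq \{f \in \mathcal{F} \mid f(x_{0}) = 0\}$, which by Proposition \ref{prop.R-conseq} is isometric to $\mathcal{F}/\mathbb{R}\indicator{X}$ and hence is a reflexive Banach space. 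Extracting a weakly convergent subsequence $f_{n_{k}} \rightharpoonup f^{\ast}$ and applying Mazur's lemma (Lemma \ref{lem.mazur}) produces convex combinations $v_{k} = \sum_{l \geq k}\alpha_{k,l}f_{n_{l}}$ with $\mathcal{E}(v_{k} - f^{\ast}) \to 0$. Writing $w_{k} \coloneqq v_{k}|_{B}$ and using
\begin{equation*}
\mathcal{E}|_{B}(u_{n} - f^{\ast}|_{B})^{1/p} \leq \mathcal{E}|_{B}(u_{n} - w_{k})^{1/p} + \mathcal{E}(v_{k} - f^{\ast})^{1/p},
\end{equation*}
together with $\mathcal{E}|_{B}(u_{n} - w_{k})^{1/p} \leq \sup_{l \geq k}\mathcal{E}|_{B}(u_{n} - u_{n_{l}})^{1/p}$ and the Cauchy hypothesis, one concludes $u_{n} \to f^{\ast}|_{B}$ in $\mathcal{E}|_{B}^{1/p}$.

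Finally, having established that $(\mathcal{E}|_{B},\mathcal{F}|_{B})$ is a $p$-resistance form, I get \ref{Cp} for $\mathcal{E}|_{B}$ from its \ref{GC}, hence differentiability of $\mathcal{E}|_{B}$ via Proposition \ref{prop.diffble}, so \eqref{diff-compat} reduces to comparing the two-sided derivatives using the minimization bound $\mathcal{E}|_{B}(u + tv) \leq \mathcal{E}(h_{B}^{\mathcal{E}}[u] + t h_{B}^{\mathcal{E}}[v])$ valid for all $t \in \mathbb{R}$: dividing by $t > 0$ and $t < 0$ respectively and passing to the limit via the Fr\'{e}chet differentiability of $\mathcal{E}$ (Theorem \ref{thm.p-form}) yields both $p\mathcal{E}|_{B}(u;v) \leq p\mathcal{E}(h_{B}^{\mathcal{E}}[u]; h_{B}^{\mathcal{E}}[v])$ and the reverse inequality. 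Then \eqref{harm-compat} follows by decomposing $g = h_{B}^{\mathcal{E}}[g|_{B}] + (g - h_{B}^{\mathcal{E}}[g|_{B}])$ with the second summand in $\mathcal{F}^{0}(X \setminus B)$, invoking linearity of $\mathcal{E}(f; \,\cdot\,)$, the harmonicity $\mathcal{E}(f; g - h_{B}^{\mathcal{E}}[g|_{B}]) = 0$, and the already-established \eqref{diff-compat} applied to $u = f|_{B}$, $v = g|_{B}$.
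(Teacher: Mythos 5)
Your proposal is correct and, apart from one step, follows the same architecture as the paper's proof: existence and uniqueness of $h_{B}^{\mathcal{E}}[u]$ via minimization over $\{v \in \mathcal{F} \mid v|_{B}=u\}$ using \ref{Cp} and \ref{RF2}, the axioms \ref{RF1}, \ref{RF3}--\ref{RF5} and $R_{\mathcal{E}|_{B}}=R_{\mathcal{E}}|_{B\times B}$ transferred through the harmonic extension, \eqref{diff-compat} by sandwiching one-sided difference quotients between $\mathcal{E}|_{B}$ and $\mathcal{E}\circ h_{B}^{\mathcal{E}}$, and \eqref{harm-compat} from the decomposition $g = h_{B}^{\mathcal{E}}[g|_{B}] + (g - h_{B}^{\mathcal{E}}[g|_{B}])$. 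The genuine divergence is in the completeness \ref{RF2} of $(\mathcal{F}|_{B}/\mathbb{R}\indicator{B},\mathcal{E}|_{B}^{1/p})$. You pass to the reflexive space $\mathcal{F}^{x_{0}}$, extract a weak limit $f^{\ast}$ of the bounded sequence $h_{B}^{\mathcal{E}}[u_{n}]$, and use Mazur's lemma to produce norm-convergent convex combinations whose restrictions approximate $u_{n}$ via the triangle inequality for $\mathcal{E}|_{B}^{1/p}$; this is correct (the two estimates $\mathcal{E}|_{B}(w_{k}-f^{\ast}|_{B}) \le \mathcal{E}(v_{k}-f^{\ast})$ and $\mathcal{E}|_{B}(u_{n}-w_{k})^{1/p} \le \sup_{l\ge k}\mathcal{E}|_{B}(u_{n}-u_{n_{l}})^{1/p}$ both hold), but it imports reflexivity and weak compactness. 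The paper instead shows \emph{directly} that $\{h_{B}^{\mathcal{E}}[u_{n}]\}_{n}$ is $\mathcal{E}^{1/p}$-Cauchy by applying $p$-Clarkson's inequality to $f = h_{B}^{\mathcal{E}}[u_{n}]$, $g = h_{B}^{\mathcal{E}}[u_{m}]$ and bounding $\mathcal{E}(f+g)$ from below by $\mathcal{E}|_{B}(u_{n}+u_{m})$, exactly as in its existence argument (\eqref{cauchy1}/\eqref{cauchy2}); this is more elementary (no weak topology) and yields the slightly stronger conclusion that the harmonic extensions themselves converge in $(\mathcal{F}/\mathbb{R}\indicator{X},\mathcal{E}^{1/p})$, not only the boundary data in $\mathcal{E}|_{B}^{1/p}$. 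Either route establishes \ref{RF2}.
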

\begin{rmk}\label{rmk.nonlin}
    The map $h_{B}^{\mathcal{E}}[\,\cdot\,]$ does \emph{not} satisfy either $h_{B}^{\mathcal{E}}[u + v] \le h_{B}^{\mathcal{E}}[u] + h_{B}^{\mathcal{E}}[u]$ for any $u,v \in \mathcal{F}|_{B}$ or $h_{B}^{\mathcal{E}}[u + v] \ge h_{B}^{\mathcal{E}}[u] + h_{B}^{\mathcal{E}}[u]$ for any $u,v \in \mathcal{F}|_{B}$ in general, unless $p = 2$ or $\#B \le 2$.
\end{rmk}
\begin{proof}[Proof of Theorem \ref{thm.RF-exist}]
    We first show the desired existence of $h_{B}^{\mathcal{E}}[u]$ for any $u \in \mathcal{F}|_{B}$.
    Let us fix $y_{\ast} \in B$ and let $\alpha \coloneqq \inf\bigl\{ \mathcal{E}(v) \bigm| \text{$v \in \mathcal{F}$ with $v|_{B} = u$} \bigr\} \in [0, \infty)$.
    Then there exists $\{ v_{n} \}_{n \in \mathbb{N}}$ such that $v_{n} \in \mathcal{F}$, $v_{n}|_{B} = u$ and $\mathcal{E}(v_{n}) \le \alpha + n^{-1}$ for any $n \in \mathbb{N}$.
    Note that $\frac{v_{k} + v_{l}}{2} \in \mathcal{F}$ also satisfies $\bigl(\frac{v_{k} + v_{l}}{2}\bigr)\bigr|_{B} = u$ for any $k, l \in \mathbb{N}$.
    In the case of $p \in (1,2]$, we see that
    \begin{align}\label{cauchy1}
        \mathcal{E}(v_k - v_l)^{1/(p - 1)}
        &\overset{\eqref{Cp.small}}{\le} 2\bigl(\mathcal{E}(v_k) + \mathcal{E}(v_l)\bigr)^{1/(p - 1)} - \mathcal{E}(v_k + v_l)^{1/(p - 1)} \nonumber \\
        &\le 2\bigl(2\alpha + k^{-1} + l^{-1}\bigr)^{1/(p - 1)} - 2^{p/(p - 1)}\alpha^{1/(p - 1)} \nonumber \\
        &\xrightarrow[k \wedge l \to \infty]{}
        2(2\alpha)^{1/(p - 1)} - 2^{p/(p - 1)}\alpha^{1/(p - 1)} = 0.
    \end{align}
    Similarly, in the case of $p \in [2,\infty)$, we have
    \begin{align}\label{cauchy2}
        \mathcal{E}(v_k - v_l)
        &\overset{\eqref{Cp.large}}{\le} 2\bigl(\mathcal{E}(v_k)^{1/(p - 1)} + \mathcal{E}(v_l)^{1/(p - 1)}\bigr)^{p - 1} - \mathcal{E}(v_k + v_l) \nonumber \\
        &\le 2\bigl((\alpha + k^{-1})^{1/(p - 1)} + (\alpha + l^{-1})^{1/(p - 1)}\bigr)^{p - 1} - 2^{p}\alpha \nonumber \\
        &\xrightarrow[k \wedge l \to \infty]{}
        2\bigl(2\alpha^{1/(p - 1)}\bigr)^{p - 1} - 2^{p}\alpha = 0.
    \end{align}
    Consequently, $\{ v_{n} \}_{n \in \mathbb{N}}$ is a Cauchy sequence in $(\mathcal{F}/\mathbb{R}\indicator{X}, \mathcal{E}^{1/p})$.
    By \ref{RF2}, there exists $h \in \mathcal{F}$ such that $h(y_{\ast}) = u(y_{\ast})$ and $\lim_{n \to \infty}\mathcal{E}(h - v_{n}) = 0$.
    For any $y \in B$, by \ref{RF4},
    \[
    \abs{h(y) - u(y)}^{p}
    = \abs{h(y) - v_{n}(y)}^{p}
    = \abs{(h - v_{n})(y) - (h - v_{n})(y_{\ast})}^{p}
    \le R_{\mathcal{E}}(y, y_{\ast})\mathcal{E}(h - v_{n}) \to 0 
    \]
    as $n\to\infty$, and hence $h|_{B} = u$.
    In particular, $h$ is a minimizer of $\alpha$.
    Assume that $g \in \mathcal{F}$ also satisfies $g|_{B} = u$ and $\mathcal{E}(g) = \alpha$.
    Then a similar estimate to \eqref{cauchy1} or to \eqref{cauchy2} imply that $\mathcal{E}(h - g) = 0$.
    Since $h - g \in \mathcal{F}^{0}(X \setminus B)$ and $B \neq \emptyset$, we have $h = g \eqqcolon h_{B}^{\mathcal{E}}[u]$ by \ref{RF1}.
    The property \eqref{harm-const} immediately follows from \ref{RF1} for $(\mathcal{E},\mathcal{F})$.

    Next we prove that $(\mathcal{E}|_{B},\mathcal{F}|_{B})$ is a $p$-resistance form on $B$.
    It is clear that $\mathcal{E}|_{B}(au) = \abs{a}^{p}\mathcal{E}|_{B}(u)$ for any $u \in \mathcal{F}|_{B}$.
	Let us show the triangle inequality for $\mathcal{E}|_{B}(\,\cdot\,)^{1/p}$,
    Since $\bigl.(h_{B}^{\mathcal{E}}[u] + h_{B}^{\mathcal{E}}[v])\bigr|_{B} = u + v$ for any $u, v \in \mathcal{F}|_{B}$, we see that
    \begin{align*}
    	\mathcal{E}|_{B}(u + v)^{1/p}
    	&= \mathcal{E}\bigl(h_{B}^{\mathcal{E}}[u + v]\bigr)^{1/p} 
    	\le \mathcal{E}\bigl(h_{B}^{\mathcal{E}}[u] + h_{B}^{\mathcal{E}}[v]\bigr)^{1/p} \\
    	&\le \mathcal{E}\bigl(h_{B}^{\mathcal{E}}[u]\bigr)^{1/p} + \mathcal{E}\bigl(h_{B}^{\mathcal{E}}[v]\bigr)^{1/p}
    	= \mathcal{E}|_{B}(u)^{1/p} + \mathcal{E}|_{B}(v)^{1/p}.
    \end{align*}
    By \eqref{harm-const}, we easily see that $\mathcal{F}|_{B}$ contains $\mathbb{R}\indicator{B}$.
    If $u \in \mathcal{F}|_{B}$ satisfies $\mathcal{E}|_{B}(u) = 0$, then $h_{B}^{\mathcal{E}}[u] \in \mathbb{R}\indicator{X}$ and hence $\bigl.h_{B}^{\mathcal{E}}[u]\bigr|_{B} = u \in \mathbb{R}\indicator{B}$.
    Thus \ref{RF1} for $(\mathcal{E}|_{B}, \mathcal{F}|_{B})$ holds.
    To prove \ref{RF2} for $(\mathcal{E}|_{B}, \mathcal{F}|_{B})$, let $\{ u_{n} \} \subseteq \mathcal{F}|_{B}$ satisfy $\lim_{n \wedge m \to \infty}\mathcal{E}|_{B}(u_{n} - u_{m}) = 0$.
    Then, by the triangle inequality for $\mathcal{E}|_{B}(\,\cdot\,)^{1/p}$, we easily see that $\{ \mathcal{E}|_{B}(u_{n}) \}_{n \in \mathbb{N}}$ is a Cauchy sequence in $[0,\infty)$.
    By \ref{Cp} for $(\mathcal{E}, \mathcal{F})$ and a similar argument to \eqref{cauchy1} (or to \eqref{cauchy2}), we have $\lim_{n \wedge m \to \infty}\mathcal{E}\bigl(h_{B}^{\mathcal{E}}[u_{n}] - h_{B}^{\mathcal{E}}[u_{m}]\bigr) = 0$.
    Hence there exists $h \in \mathcal{F}$ such that $\lim_{n \to \infty}\mathcal{E}(h - h_{B}^{\mathcal{E}}[u_{n}]) = 0$ by \ref{RF2} for $(\mathcal{E}, \mathcal{F})$.
    Then $\mathcal{E}|_{B}(h|_{B} - u_{n}) \le \mathcal{E}(h - h_{B}^{\mathcal{E}}[u_{n}]) \to 0$, which proves the completeness of $\bigl(\mathcal{F}|_{B}/\mathbb{R}\indicator{B}, \mathcal{E}|_{B}(\,\cdot\,)^{1/p}\bigr)$.
    The condition \ref{RF3} for $\mathcal{F}|_{B}$ is clear from that of $\mathcal{F}$.
    The inequality $R_{\mathcal{E}|_{B}} \le R_{\mathcal{E}}|_{B \times B}$ (and hence \ref{RF4} for $(\mathcal{E}|_{B}, \mathcal{F}|_{B})$) follows from the following estimate:
    \[
    \frac{\abs{u(x) - u(y)}^{p}}{\mathcal{E}|_{B}(u)} = \frac{\abs{h_{B}^{\mathcal{E}}[u](x) - h_{B}^{\mathcal{E}}[u](y)}^{p}}{\mathcal{E}(h_{B}^{\mathcal{E}}[u])} \le R_{\mathcal{E}}(x,y) \quad \text{for any $x,y \in B$, $u \in \mathcal{F}|_{B}$.}
    \]
    To show the converse inequality $R_{\mathcal{E}|_{B}} \ge R_{\mathcal{E}}|_{B \times B}$, let $x, y \in B$ and let $u \in \mathcal{F} \setminus \mathbb{R}\indicator{X}$ be such that $u(x) \neq u(y)$. 
    Then $u|_{B} \in \mathcal{F}|_{B} \setminus \mathbb{R}\indicator{B}$ and $\mathcal{E}(u) \ge \mathcal{E}|_{B}(u|_{B}) > 0$.
    Therefore,
    \[
    \frac{\abs{u(x) - u(y)}^{p}}{\mathcal{E}(u)} \le \frac{\abs{u|_{B}(x) - u|_{B}(y)}^{p}}{\mathcal{E}|_{B}(u|_{B})} \le R_{\mathcal{E}|_{B}}(x,y).
    \]
    The same estimate is clear if $u(x) = u(y)$, so taking the supremum over $u \in \mathcal{F} \setminus \mathbb{R}\indicator{X}$ yields $R_{\mathcal{E}}(x,y) \le R_{\mathcal{E}|_{B}}(x,y)$.
    Lastly, we prove \ref{RF5} for $(\mathcal{E}|_{B},\mathcal{F}|_{B})$.
    Let $n_{1},n_{2} \in \mathbb{N}$, $q_{1} \in (0,p]$, $q_{2} \in [p,\infty]$ and $T = (T_{1},\dots,T_{n_{2}}) \colon \mathbb{R}^{n_{1}} \to \mathbb{R}^{n_{2}}$ satisfy \eqref{GC-cond} in Definition \ref{defn.GC}, and let $\bm{u} = (u_{1},\dots,u_{n_{1}}) \in \bigl(\mathcal{F}|_{B}\bigr)^{n_{1}}$. 
    Note that $T_{l}(\bm{u}) =  T_{l}\bigl.\bigl(h_{B}^{\mathcal{E}}[u_{1}],\dots,h_{B}^{\mathcal{E}}[u_{n_{1}}]\bigr)\bigr|_{B} \in \mathcal{F}|_{B}$.
    Therefore, if $q_{2} < \infty$, then
    \begin{align*}
        \left(\sum_{l = 1}^{n_{2}}\mathcal{E}|_{B}\bigl(T_{l}(\bm{u})\bigr)^{q_{2}/p}\right)^{1/q_{2}}
        &\le \left(\sum_{l = 1}^{n_{2}}\mathcal{E}\Bigl(T_{l}\bigl.\bigl(h_{B}^{\mathcal{E}}[u_{1}],\dots,h_{B}^{\mathcal{E}}[u_{n_{1}}]\bigr)\Bigr)^{q_{2}/p}\right)^{1/q_{2}} \\
        &\le \left(\sum_{k = 1}^{n_{1}}\mathcal{E}\bigl(h_{B}^{\mathcal{E}}[u_{k}]\bigr)^{q_{1}/p}\right)^{1/q_{1}}
        = \left(\sum_{k = 1}^{n_{1}}\mathcal{E}|_{B}(u_{k})^{q_{1}/p}\right)^{1/q_{1}}.
    \end{align*}
    The case $q_{2} = \infty$ is similar, so $(\mathcal{E}|_{B},\mathcal{F}|_{B})$ satisfies \ref{GC}.

    We conclude the proof by showing \eqref{diff-compat} and \eqref{harm-compat}.
    By Proposition \ref{prop.diffble}, we know that
    \[
    \lim_{t \downarrow 0}\frac{\mathcal{E}|_{B}(u \pm tv) - \mathcal{E}|_{B}(u)}{\pm t} = \left.\frac{d}{dt}\mathcal{E}|_{B}(u + tv)\right|_{t = 0},
    \]
    and
    \[
    \lim_{t \downarrow 0}\frac{\mathcal{E}\bigl(h_{B}^{\mathcal{E}}[u] \pm th_{B}^{\mathcal{E}}[v]\bigr) - \mathcal{E}\bigl(h_{B}^{\mathcal{E}}[u]\bigr)}{\pm t} = p\mathcal{E}\bigl(h_{B}^{\mathcal{E}}[u]; h_{B}^{\mathcal{E}}[v]\bigr).
    \]
    For any $t > 0$, we have
    \begin{align*}
        \frac{\mathcal{E}\bigl(h_{B}^{\mathcal{E}}[u] - th_{B}^{\mathcal{E}}[v]\bigr) - \mathcal{E}\bigl(h_{B}^{\mathcal{E}}[u]\bigr)}{-t}
        &\le \frac{\mathcal{E}|_{B}(u - tv) - \mathcal{E}|_{B}(u)}{-t} \\
        &\hspace*{-30pt}\le \frac{\mathcal{E}|_{B}(u + tv) - \mathcal{E}|_{B}(u)}{t}
        \le \frac{\mathcal{E}\bigl(h_{B}^{\mathcal{E}}[u] + th_{B}^{\mathcal{E}}[v]\bigr) - \mathcal{E}\bigl(h_{B}^{\mathcal{E}}[u]\bigr)}{t},
    \end{align*}
    and hence we obtain \eqref{diff-compat} by letting $t \downarrow 0$.
    If $f \in \mathcal{H}_{\mathcal{E},B}$, i.e., $h_{B}^{\mathcal{E}}[f|_{B}] = f$, then $\mathcal{E}(f; g) = \mathcal{E}(f; h_{B}^{\mathcal{E}}[g]) = \mathcal{E}|_{B}(f|_{B}; g|_{B})$ since $g - h_{B}^{\mathcal{E}}[g|_{B}] \in \mathcal{F}^{0}(X \setminus B)$ for any $g \in \mathcal{F}$.
    This proves \eqref{harm-compat}.
\end{proof}

The following proposition states a compatibility of the operation of taking traces.
\begin{prop}\label{prop.trace-comp}
    Let $A,B$ be subsets of $X$ such that $\emptyset \neq A \subseteq B$.
	Then $(\mathcal{E}|_{B}|_{A}, \mathcal{F}|_{B}|_{A}) = (\mathcal{E}|_{A}, \mathcal{F}|_{A})$ and $h_{B}^{\mathcal{E}} \circ h_{A}^{\mathcal{E}|_{B}} = h_{A}^{\mathcal{E}}$. 
    In particular, $h_{A}^{\mathcal{E}|_{B}}[u] = h_{A}^{\mathcal{E}}[u]\bigr|_{B}$ for any $u \in \mathcal{F}|_{A}$.
\end{prop}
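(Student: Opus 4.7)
The plan is to unwind the definitions of trace and harmonic extension from Theorem \ref{thm.RF-exist} and verify everything by direct computation; no delicate analysis is needed, since $A \subseteq B$ makes restrictions compose trivially.

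First I would dispose of the equality of domains. For any $v \in \mathcal{F}$ we have $(v|_{B})|_{A} = v|_{A}$ because $A \subseteq B$, so $\mathcal{F}|_{B}|_{A} = \{(v|_{B})|_{A} \mid v \in \mathcal{F}\} = \mathcal{F}|_{A}$.

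Next I would prove the equality of trace functionals $\mathcal{E}|_{B}|_{A}(u) = \mathcal{E}|_{A}(u)$ for each $u \in \mathcal{F}|_{A}$ by checking both inequalities from the variational definition \eqref{eq:dfn-trace}. For the $\leq$ direction, given any $v \in \mathcal{F}$ with $v|_{A} = u$, set $w \coloneqq v|_{B} \in \mathcal{F}|_{B}$; then $w|_{A} = u$ and $\mathcal{E}|_{B}(w) \leq \mathcal{E}(v)$ by \eqref{eq:dfn-trace}, so $\mathcal{E}|_{B}|_{A}(u) \leq \mathcal{E}|_{B}(w) \leq \mathcal{E}(v)$; taking the infimum over $v$ gives $\mathcal{E}|_{B}|_{A}(u) \leq \mathcal{E}|_{A}(u)$. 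Conversely, for any $w \in \mathcal{F}|_{B}$ with $w|_{A} = u$, the function $h_{B}^{\mathcal{E}}[w] \in \mathcal{F}$ satisfies $h_{B}^{\mathcal{E}}[w]|_{B} = w$, hence $h_{B}^{\mathcal{E}}[w]|_{A} = w|_{A} = u$, and $\mathcal{E}(h_{B}^{\mathcal{E}}[w]) = \mathcal{E}|_{B}(w)$, whence $\mathcal{E}|_{A}(u) \leq \mathcal{E}|_{B}(w)$; taking the infimum yields $\mathcal{E}|_{A}(u) \leq \mathcal{E}|_{B}|_{A}(u)$.

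Finally, for the composition identity, I would show that $f \coloneqq h_{B}^{\mathcal{E}}\bigl[h_{A}^{\mathcal{E}|_{B}}[u]\bigr]$ satisfies the two characterizing properties of $h_{A}^{\mathcal{E}}[u]$ in Theorem \ref{thm.RF-exist} and then invoke uniqueness. Indeed, $f|_{A} = \bigl(f|_{B}\bigr)|_{A} = h_{A}^{\mathcal{E}|_{B}}[u]\bigr|_{A} = u$, and combining the defining equalities gives
\begin{equation*}
\mathcal{E}(f) = \mathcal{E}|_{B}\bigl(h_{A}^{\mathcal{E}|_{B}}[u]\bigr) = \mathcal{E}|_{B}|_{A}(u) = \mathcal{E}|_{A}(u),
\end{equation*}
so $f = h_{A}^{\mathcal{E}}[u]$ by the uniqueness clause of Theorem \ref{thm.RF-exist}. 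Restricting the identity $h_{B}^{\mathcal{E}}\bigl[h_{A}^{\mathcal{E}|_{B}}[u]\bigr] = h_{A}^{\mathcal{E}}[u]$ to $B$ and using $h_{B}^{\mathcal{E}}[w]|_{B} = w$ with $w = h_{A}^{\mathcal{E}|_{B}}[u]$ yields the ``In particular'' assertion. There is no serious obstacle here; the argument is a formal consequence of the uniqueness of energy minimizers already established.
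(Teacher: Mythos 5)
Your proof is correct and follows essentially the same route as the paper's: both arguments rest on the variational definition \eqref{eq:dfn-trace} and the uniqueness clause of Theorem \ref{thm.RF-exist}. The only difference is organizational — you prove $\mathcal{E}|_{B}|_{A} = \mathcal{E}|_{A}$ via two separate inequalities before handling the composition, whereas the paper strings all the quantities (including $\mathcal{E}\bigl(h_{B}^{\mathcal{E}}\bigl[h_{A}^{\mathcal{E}|_{B}}[u]\bigr]\bigr)$) into a single circular chain of inequalities beginning and ending with $\mathcal{E}|_{A}(u)$, which yields both conclusions at once.
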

\begin{proof}
    Clearly, we have $\mathcal{F}|_{B}|_{A} = \mathcal{F}|_{A}$.
	For any $u \in \mathcal{F}|_{A}$, we see that
	\begin{align*}
		\mathcal{E}|_{A}(u)
		= \mathcal{E}\bigl(h_{A}^{\mathcal{E}}[u]\bigr)
		&\ge \min\bigl\{ \mathcal{E}(v) \bigm| \text{$v \in \mathcal{F}$ such that $v|_{B} = \bigl.h_{A}^{\mathcal{E}}[u]\bigr|_{B}$} \bigr\} \\
		&= \mathcal{E}|_{B}\bigl(\bigl.h_{A}^{\mathcal{E}}[u]\bigr|_{B}\bigr) \\
		&\ge \min\bigl\{ \mathcal{E}|_{B}(w) \bigm| \text{$w \in \mathcal{F}|_{B}$ such that $w|_{A} = \bigl.h_{A}^{\mathcal{E}}[u]\bigr|_{A} = u$} \bigr\} \\
		&= \mathcal{E}|_{B}|_{A}(u)
		= \mathcal{E}|_{B}\bigl(h_{A}^{\mathcal{E}|_{B}}[u]\bigr)
		= \mathcal{E}\Bigl(h_{B}^{\mathcal{E}}\bigl[h_{A}^{\mathcal{E}|_{B}}[u]\bigr]\Bigr) \\
		&\ge \min\Bigl\{ \mathcal{E}(v) \Bigm| \text{$v \in \mathcal{F}$ such that $v|_{A} = \bigl(h_{B}^{\mathcal{E}} \circ \bigl.h_{A}^{\mathcal{E}|_{B}}\bigr)[u]\bigr|_{A} = u$} \Bigr\} = \mathcal{E}|_{A}(u),
	\end{align*}
	which implies $\mathcal{E}|_{A}(u) = \mathcal{E}|_{B}|_{A}(u)$ and $\mathcal{E}\bigl(h_{A}^{\mathcal{E}}[u]\bigr) = \mathcal{E}\bigl((h_{B}^{\mathcal{E}} \circ h_{A}^{\mathcal{E}|_{B}})[u]\bigr)$.
	Since the restrictions to $A$ of both functions $h_{A}^{\mathcal{E}}[u]$ and $(h_{B}^{\mathcal{E}} \circ h_{A}^{\mathcal{E}|_{B}})[u]$ are $u$, the uniqueness in Theorem \ref{thm.RF-exist} implies $h_{A}^{\mathcal{E}}[u] = \bigl(h_{B}^{\mathcal{E}} \circ h_{A}^{\mathcal{E}|_{B}} \bigr)[u]$.
    Taking their restrictions to $B$ yields $h_{A}^{\mathcal{E}|_{B}}[u] = h_{A}^{\mathcal{E}}[u]\bigr|_{B}$.
\end{proof}

The following theorem, which is a straightforward generalization of its counterpart for resistance forms given in \cite[Corollary 2.37]{Kaj.ln}, presents an expression of $(\mathcal{E},\mathcal{F})$ as the ``inductive limit'' of its traces $\{ \mathcal{E}|_{V} \}_{V \subseteq X, \, 1 \le \#V < \infty}$ on the non-empty finite subsets of $X$.
This expression can be applied to get some useful results on convergence of the seminorm $\mathcal{E}^{1/p}$.
\begin{thm}\label{thm.inductive}
    It holds that
    \begin{gather}
        \mathcal{F} = \biggl\{ u \in \mathbb{R}^{X} \biggm| \sup_{V \subseteq X; \, 1 \le \#V < \infty}\mathcal{E}|_{V}(u|_{V}) < \infty \biggr\}, \label{inductive.dom} \\
        \mathcal{E}(u) = \sup_{V \subseteq X; \, 1 \le \#V < \infty}\mathcal{E}|_{V}(u|_{V}) \quad \text{for any $u \in \mathcal{F}$.} \label{inductive.ene}
    \end{gather}
\end{thm}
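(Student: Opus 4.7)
The inclusion ``$\subseteq$'' in \eqref{inductive.dom} and the inequality ``$\geq$'' in \eqref{inductive.ene} are immediate from \eqref{eq:dfn-trace}, since for any $u \in \mathcal{F}$ and any finite non-empty $V \subseteq X$, $u$ itself is a competitor in the infimum defining $\mathcal{E}|_{V}(u|_{V})$, giving $\mathcal{E}|_{V}(u|_{V}) \leq \mathcal{E}(u)$. So the content of the theorem lies in the reverse direction: if $u \in \mathbb{R}^{X}$ with $\alpha \coloneqq \sup_{V} \mathcal{E}|_{V}(u|_{V}) < \infty$, then $u \in \mathcal{F}$ and $\mathcal{E}(u) \leq \alpha$. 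The plan is to construct $u$ as a limit in $\mathcal{F}$ of the minimizers $h_{V} \coloneqq h_{V}^{\mathcal{E}}[u|_{V}]$ from Theorem \ref{thm.RF-exist}, indexed by finite non-empty $V \ni x_{0}$ for a fixed base point $x_{0} \in X$, and ordered by inclusion. Recall $h_{V}|_{V} = u|_{V}$ and $\mathcal{E}(h_{V}) = \mathcal{E}|_{V}(u|_{V})$, and note that Proposition \ref{prop.trace-comp} yields the monotonicity $\mathcal{E}(h_{V_{1}}) \leq \mathcal{E}(h_{V_{2}})$ for $V_{1} \subseteq V_{2}$, since $\mathcal{E}|_{V_{1}}(u|_{V_{1}}) = \mathcal{E}|_{V_{2}}|_{V_{1}}(u|_{V_{2}}|_{V_{1}}) \leq \mathcal{E}|_{V_{2}}(u|_{V_{2}})$; in particular $\lim_{V} \mathcal{E}(h_{V}) = \alpha$ along the net.

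The key technical step is to show that $\{h_{V}\}$ is Cauchy in the seminormed space $(\mathcal{F}, \mathcal{E}^{1/p})$. For nested $V_{1} \subseteq V_{2}$, the midpoint $\frac{1}{2}(h_{V_{1}} + h_{V_{2}})$ also equals $u$ on $V_{1}$, so the minimization property forces $\mathcal{E}(\frac{1}{2}(h_{V_{1}} + h_{V_{2}})) \geq \mathcal{E}(h_{V_{1}})$, i.e., $\mathcal{E}(h_{V_{1}} + h_{V_{2}}) \geq 2^{p}\mathcal{E}(h_{V_{1}})$. Exactly as in \eqref{cauchy1} and \eqref{cauchy2} in the proof of Theorem \ref{thm.RF-exist}, I would apply \ref{Cp} via \eqref{Cp.small} for $p \in (1,2]$ and via \eqref{Cp.large} for $p \in [2,\infty)$ to obtain
\[
\mathcal{E}(h_{V_{1}} - h_{V_{2}})^{1/(p-1)} \leq 2\bigl(\mathcal{E}(h_{V_{1}}) + \mathcal{E}(h_{V_{2}})\bigr)^{1/(p-1)} - 2^{p/(p-1)} \mathcal{E}(h_{V_{1}})^{1/(p-1)} \quad (p \leq 2),
\]
and analogously for $p \geq 2$; in both cases the right-hand side tends to $0$ as $\mathcal{E}(h_{V_{1}}), \mathcal{E}(h_{V_{2}}) \to \alpha$. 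For general (not necessarily nested) $V_{1}, V_{2}$, I would introduce $V_{3} \coloneqq V_{1} \cup V_{2}$ and apply the triangle inequality for $\mathcal{E}^{1/p}$ together with the nested case.

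It remains to extract a limit and identify it with $u$. Take a nested sequence $\{V_{n}\}$ with $x_{0} \in V_{1}$ and $\mathcal{E}(h_{V_{n}}) \to \alpha$; by the Cauchy property and \ref{RF2}, $\{h_{V_{n}}\}$ converges in $\mathcal{F}/\mathbb{R}\indicator{X}$ to some class whose representative $h \in \mathcal{F}$ can be normalized by $h(x_{0}) = u(x_{0})$, so $\mathcal{E}(h_{V_{n}} - h) \to 0$ with $(h_{V_{n}} - h)(x_{0}) = 0$. To see $h = u$ pointwise, fix $x \in X$ and set $\widetilde{V}_{n} \coloneqq V_{n} \cup \{x\}$; the same Cauchy estimate gives $\mathcal{E}(h_{V_{n}} - h_{\widetilde{V}_{n}}) \to 0$, hence $h_{\widetilde{V}_{n}} \to h$ in $\mathcal{F}/\mathbb{R}\indicator{X}$ as well, and the normalization at $x_{0}$ combined with \eqref{R-basic} yields $|h_{\widetilde{V}_{n}}(x) - h(x)|^{p} \leq R_{\mathcal{E}}(x,x_{0})\mathcal{E}(h_{\widetilde{V}_{n}} - h) \to 0$; since $x \in \widetilde{V}_{n}$ gives $h_{\widetilde{V}_{n}}(x) = u(x)$, we obtain $h(x) = u(x)$. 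Thus $u = h \in \mathcal{F}$, and the triangle inequality for $\mathcal{E}^{1/p}$ yields $\mathcal{E}(u) = \lim_{n \to \infty} \mathcal{E}(h_{V_{n}}) = \alpha$, which together with the trivial direction forces \eqref{inductive.ene}. The main (though not severe) obstacle is the Cauchy estimate, which is a purely algebraic consequence of \ref{Cp} exactly parallel to the proof of Theorem \ref{thm.RF-exist}; the pointwise identification step via introducing $\widetilde{V}_{n}$ is a minor technicality needed because $X$ is not assumed separable, so a single approximating sequence cannot be expected to exhaust $X$.
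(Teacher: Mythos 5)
Your proof is correct and follows essentially the same route as the paper's: both take the minimizers $h_{V}^{\mathcal{E}}[u|_{V}]$, use the minimization property together with $p$-Clarkson's inequality (exactly as in \eqref{cauchy1} and \eqref{cauchy2}) to obtain the Cauchy property, and then invoke \ref{RF2} and \eqref{R-basic} to produce and identify the limit. The only cosmetic difference is in the identification of the limit with $u$: you augment the finite sets by individual points $x$, whereas the paper shows $\sup_{V}\mathcal{E}|_{V}\bigl((u-v)|_{V}\bigr)=0$ for the limit $v$ and concludes $u-v\in\mathbb{R}\indicator{X}$; both are fine.
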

\begin{proof}
    Define $(\mathcal{E}_{\ast},\mathcal{F}_{\ast})$ by
    \[
    \mathcal{E}_{\ast}(u) \coloneqq \sup_{V \subseteq X;  1 \le \#V < \infty}\mathcal{E}|_{V}(u|_{V}), \quad u \in \mathbb{R}^{X},
    \]
    and $\mathcal{F}_{\ast} \coloneqq \{ u \in \mathbb{R}^{X} \mid \mathcal{E}_{\ast}(u) < \infty \}$.
    Then $\mathcal{E}_{\ast}^{1/p}$ is clearly a seminorm on $\mathcal{F}_{\ast}$ and $\{ u \in \mathcal{F}_{\ast} \mid \mathcal{E}_{\ast}(u) = 0 \} = \mathbb{R}\indicator{X}$.
    We first show that, for any $V \subseteq X$ with $1 \le \#V < \infty$ and any $u \in \mathbb{R}^{V}$,
    \begin{equation}\label{sametrace}
        h_{V}^{\mathcal{E}}[u] \in \mathcal{F}_{\ast} \quad \text{and} \quad \mathcal{E}|_{V}(u) = \min\{ \mathcal{E}_{\ast}(v) \mid v \in \mathcal{F}, v|_{V} = u \} = \mathcal{E}_{\ast}\bigl(h_{V}^{\mathcal{E}}[u]\bigr),
    \end{equation}
    both of which are obtained by seeing that, for any $U \subseteq X$ with $1 \le \#U < \infty$,
    \[
    \mathcal{E}|_{U}\bigl(h_{V}^{\mathcal{E}}[u]\bigr|_{U}\bigr)
    \le \mathcal{E}\bigl(h_{V}^{\mathcal{E}}[u]\bigr)
    = \mathcal{E}|_{V}(u).
    \]
    Indeed, taking the supremum over $U$, we get $\mathcal{E}_{\ast}\bigl(h_{V}^{\mathcal{E}}[u]\bigr) \le \mathcal{E}|_{V}(u)$ and hence \eqref{sametrace} holds.
    (The converse $\mathcal{E}|_{V}(u) \le \mathcal{E}_{\ast}\bigl(h_{V}^{\mathcal{E}}[u]\bigr)$ is clear from the definition.)
    We also note that $\mathcal{E}_{\ast}$ satisfies \ref{Cp} since $(\mathcal{E}|_{Y},\mathcal{F}|_{Y})$ is a $p$-resistance form for each $Y \subseteq X$ and $\mathcal{E}|_{V}(u|_{V}) \le \mathcal{E}|_{U}(u|_{U})$ for any $U,V \subseteq X$ with $\emptyset \neq V \subseteq U$ and $u \in \mathbb{R}^{U}$.

    The inclusion $\mathcal{F} \subseteq \mathcal{F}_{\ast}$ and the estimate $\mathcal{E}_{\ast} \le \mathcal{E}$ (on $\mathcal{F}$) easily follow from the following estimate:
    \[
    \mathcal{E}|_{V}(u|_{V})
    = \mathcal{E}\bigl(h_{V}^{\mathcal{E}}[u|_{V}]\bigr)
    \le \mathcal{E}(u) \quad \text{for any $u \in \mathcal{F}$ and any $V \subseteq X$ with $1 \le \#V < \infty$}.
    \]
    To show $\mathcal{F}_{\ast} \subseteq \mathcal{F}$ and $\mathcal{E} \le \mathcal{E}_{\ast}$, let $u \in \mathcal{F}_{\ast}$, let us choose a subset $V_{n} \subseteq X$ for each $n \in \mathbb{N}$ such that $1 \le \#V_{n} < \infty$ and $\mathcal{E}|_{V_{n}}(u|_{V_{n}}) \ge \mathcal{E}_{\ast}(u) - n^{-1}$, and set $u_{n} \coloneqq h_{V_{n}}^{\mathcal{E}}[u|_{V_{n}}]$.
    Then
    \[
    \mathcal{E}_{\ast}(u) - n^{-1}
    \le \mathcal{E}|_{V_{n}}(u|_{V_{n}})
    \overset{\eqref{sametrace}}{=} \mathcal{E}_{\ast}(u_{n})
    \overset{\eqref{sametrace}}{\le} \mathcal{E}_{\ast}(u),
    \]
    which implies that $\lim_{n \to \infty}\mathcal{E}_{\ast}(u_{n}) = \lim_{n \to \infty}\mathcal{E}(u_{n}) = \mathcal{E}_{\ast}(u)$.
    Using \ref{Cp} for $\mathcal{E}_{\ast}$ and $\mathcal{E}_{\ast}\bigl(\frac{u + u_{n}}{2}\bigr) \ge \mathcal{E}_{\ast}(u_{n})$, we easily obtain $\lim_{n \to \infty}\mathcal{E}_{\ast}(u - u_{n}) = 0$ similarly as \eqref{cauchy1} or \eqref{cauchy2}.
    We next show that $\{ u_{n} \}_{n \in \mathbb{N}}$ is a Cauchy sequence in $(\mathcal{F}/\mathbb{R}\indicator{X}, \mathcal{E}^{1/p})$.
    From \ref{Cp} for $\mathcal{E}$, $\lim_{n \to \infty}\mathcal{E}(u_{n}) = \lim_{n \to \infty}\mathcal{E}_{\ast}(u_{n}) = \mathcal{E}_{\ast}(u)$ and
    \[
    \mathcal{E}(u_{k} + u_{l}) \ge \mathcal{E}\bigl(h_{V_{k} \cup V_{l}}^{\mathcal{E}}[(u_{k} + u_{l})|_{V_{k} \cup V_{l}}]\bigr)
    \ge 2^{p}\mathcal{E}|_{V_{k} \cup V_{l}}(u|_{V_{k} \cup V_{l}})
    \overset{\eqref{sametrace}}{=} 2^{p}\mathcal{E}_{\ast}(u_{k + l}),
    \]
    we can obtain $\lim_{k \wedge l \to \infty}\mathcal{E}(u_{k} - u_{l}) = 0$ similarly as \eqref{cauchy1} or \eqref{cauchy2} in the proof of Theorem \ref{thm.RF-exist}. 
    Hence, by \ref{RF1} for $(\mathcal{E},\mathcal{F})$, there exists $v \in \mathcal{F}$ such that $\lim_{n \to \infty}\mathcal{E}(v - u_{n}) = 0$.
    By $\mathcal{E}_{\ast} \le \mathcal{E}$ on $\mathcal{F}$, we conclude that $\lim_{n \to \infty}\mathcal{E}_{\ast}(v - u_{n}) = 0$, which together with the triangle inequality for $\mathcal{E}_{\ast}^{1/p}$ and $\lim_{n \to \infty}\mathcal{E}_{\ast}(u - u_{n}) = 0$ implies that $\mathcal{E}_{\ast}(u - v) = 0$ and thus $u - v \in \mathbb{R}\indicator{X}$.
    In particular, $u = (u - v) + v \in \mathcal{F}_{\ast}$ and $\mathcal{E}(u) = \lim_{n \to \infty}\mathcal{E}(u_{n}) = \mathcal{E}_{\ast}(u)$, completing the proof.
\end{proof}

\begin{cor}\label{cor.approx1}
    Let $u \in \mathbb{R}^{X}$ and $\{ u_{n} \}_{n \in \mathbb{N}} \subseteq \mathcal{F}$.
    \begin{enumerate}[label=\textup{(\alph*)},align=left,leftmargin=*,topsep=2pt,parsep=0pt,itemsep=2pt]
        \item \label{approx1-1} Assume that $\lim_{n \to \infty}(u_{n}(x) - u_{n}(y)) = u(x) - u(y)$ for any $x,y \in X$.
        Then $\mathcal{E}(u) \le \liminf_{n \to \infty}\mathcal{E}(u_{n})$, where we set $\mathcal{E}(f) \coloneqq \infty$ for $f \in \mathbb{R}^{X} \setminus \mathcal{F}$. In particular, if $\liminf_{n \to \infty}\mathcal{E}(u_{n}) < \infty$, then $u \in \mathcal{F}$.
        \item \label{approx1-2} Assume that $u \in \mathcal{F}$. Then $\lim_{n \to \infty}\mathcal{E}(u - u_{n}) = 0$ if and only if $\limsup_{n \to \infty}\mathcal{E}(u_{n}) \le \mathcal{E}(u)$ and $\lim_{n \to \infty}(u_{n}(x) - u_{n}(y)) = u(x) - u(y)$ for any $x,y \in X$.
    \end{enumerate}
\end{cor}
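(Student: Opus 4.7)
The plan is to deduce \ref{approx1-1} from the inductive-limit formula \eqref{inductive.ene} established in Theorem \ref{thm.inductive}, and then use \ref{approx1-1} together with $p$-Clarkson's inequality to prove \ref{approx1-2}.

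For \ref{approx1-1}, I would fix an arbitrary non-empty finite subset $V \subseteq X$ and show $\mathcal{E}|_{V}(u|_{V}) \le \liminf_{n \to \infty} \mathcal{E}(u_{n})$; taking the supremum over such $V$ and invoking \eqref{inductive.ene} then finishes the proof. Since $V$ is finite, Proposition \ref{prop.trace-dom} gives $\mathcal{F}|_{V} = \mathbb{R}^{V}$, so $\mathcal{E}|_{V}^{1/p}$ is a seminorm on the finite-dimensional space $\mathbb{R}^{V}$ with $\mathbb{R}\indicator{V}$ in its kernel, and $\mathcal{E}|_{V}$ is continuous on $\mathbb{R}^{V}$. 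Fixing any $x_{0} \in V$, the hypothesis that $u_{n}(x) - u_{n}(y) \to u(x) - u(y)$ for all $x,y \in V$ yields $u_{n}|_{V} - u_{n}(x_{0})\indicator{V} \to u|_{V} - u(x_{0})\indicator{V}$ in $\mathbb{R}^{V}$, and by continuity together with the fact that $\mathcal{E}|_{V}$ is invariant under additive constants, $\mathcal{E}|_{V}(u_{n}|_{V}) \to \mathcal{E}|_{V}(u|_{V})$. Combined with the obvious bound $\mathcal{E}|_{V}(u_{n}|_{V}) \le \mathcal{E}(u_{n})$ coming from the definition \eqref{eq:dfn-trace}, this gives the desired liminf inequality.

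For \ref{approx1-2}, the forward direction is immediate: the triangle inequality for $\mathcal{E}^{1/p}$ gives $\bigl|\mathcal{E}(u)^{1/p} - \mathcal{E}(u_{n})^{1/p}\bigr| \le \mathcal{E}(u - u_{n})^{1/p} \to 0$, while \eqref{R-basic} applied to $u - u_{n}$ yields $\abs{(u-u_{n})(x) - (u-u_{n})(y)}^{p} \le R_{\mathcal{E}}(x,y)\mathcal{E}(u-u_{n}) \to 0$. For the converse, combining \ref{approx1-1} with the hypothesis $\limsup_{n} \mathcal{E}(u_{n}) \le \mathcal{E}(u)$ gives $\lim_{n} \mathcal{E}(u_{n}) = \mathcal{E}(u)$. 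Since $(u+u_{n})(x) - (u+u_{n})(y) \to 2(u(x) - u(y))$, \ref{approx1-1} applied to $\{u+u_{n}\}_{n}$ gives $2^{p}\mathcal{E}(u) = \mathcal{E}(2u) \le \liminf_{n} \mathcal{E}(u+u_{n})$, while the triangle inequality yields $\limsup_{n} \mathcal{E}(u+u_{n})^{1/p} \le 2\mathcal{E}(u)^{1/p}$, so $\lim_{n}\mathcal{E}(u+u_{n}) = 2^{p}\mathcal{E}(u)$.

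The rest of the converse then mirrors the calculation in \eqref{cauchy1}--\eqref{cauchy2}: for $p \in (1,2]$, applying \eqref{Cp.small} with $f = (u+u_{n})/2$, $g = (u-u_{n})/2$ (so that $f+g = u$ and $f-g = u_{n}$) gives $\mathcal{E}(u-u_{n})^{1/(p-1)} \le 2(\mathcal{E}(u) + \mathcal{E}(u_{n}))^{1/(p-1)} - \mathcal{E}(u+u_{n})^{1/(p-1)}$, whose right-hand side tends to $2(2\mathcal{E}(u))^{1/(p-1)} - (2^{p}\mathcal{E}(u))^{1/(p-1)} = 0$ by $1 + \tfrac{1}{p-1} = \tfrac{p}{p-1}$; for $p \in [2,\infty)$, \eqref{Cp.large} directly gives $\mathcal{E}(u-u_{n}) \le 2(\mathcal{E}(u)^{1/(p-1)} + \mathcal{E}(u_{n})^{1/(p-1)})^{p-1} - \mathcal{E}(u+u_{n})$, whose right-hand side tends to $2 \cdot 2^{p-1}\mathcal{E}(u) - 2^{p}\mathcal{E}(u) = 0$. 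The main, though modest, obstacle is just making sure that the hypothesis in \ref{approx1-1} is genuinely pointwise-in-differences rather than pointwise, so that it transfers cleanly to the sequence $\{u + u_{n}\}$; this is immediate from linearity of the difference operation.
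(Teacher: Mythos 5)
Your proposal is correct and follows essentially the same route as the paper's proof: part \ref{approx1-1} is reduced via Theorem \ref{thm.inductive} to the continuity of the finite-dimensional trace seminorms $\mathcal{E}|_{V}(\,\cdot\,)^{1/p}$ on $\mathbb{R}^{V}/\mathbb{R}\indicator{V}$ together with $\mathcal{E}|_{V}(u_{n}|_{V}) \le \mathcal{E}(u_{n})$, and part \ref{approx1-2} combines \ref{approx1-1} applied to $\{u+u_{n}\}_{n}$ with the Clarkson computation of \eqref{cauchy1}--\eqref{cauchy2} (the paper phrases this with $\frac{u+u_{n}}{2}$, which is equivalent by $p$-homogeneity) and deduces the forward direction from the triangle inequality and \eqref{R-basic}.
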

\begin{proof}
    \ref{approx1-1}: If $\liminf_{n \to \infty}\mathcal{E}(u_{n}) = \infty$, then the desired statement clearly holds.
    Assume that $\liminf_{n \to \infty}\mathcal{E}(u_{n}) < \infty$, choose a strictly increasing sequence $\{ n_{k} \}_{k \in \mathbb{N}} \subseteq \mathbb{N}$ so that $\lim_{k\to\infty}\mathcal{E}(u_{n_{k}}) = \liminf_{n\to\infty}\mathcal{E}(u_{n})$, and let $V$ be a non-empty finite subset of $X$.
    Then since $\mathcal{E}|_{V}(\,\cdot\,)^{1/p}$ is a seminorm on the finite-dimensional vector space $\mathbb{R}^{V}$ with $\mathcal{E}|_{V}(\indicator{V})^{1/p} = 0$ by \ref{RF1}, and since $\lim_{k \to \infty}(u_{n_{k}}(x) - u_{n_{k}}(y)) = u(x) - u(y)$ for any $x,y \in V$, it follows that
    \[
    \mathcal{E}|_{V}(u|_{V}) = \lim_{k \to \infty}\mathcal{E}|_{V}(u_{n_{k}}|_{V}) \leq \lim_{k \to \infty}\mathcal{E}(u_{n_{k}}) = \liminf_{n\to\infty}\mathcal{E}(u_{n}),
    \]
    where the inequality holds by the definition \eqref{eq:dfn-trace} of $\mathcal{E}|_{V}$ in Theorem \ref{thm.RF-exist}.
    We thus obtain $\sup_{V \subseteq X; \, 1 \le \#V < \infty}\mathcal{E}|_{V}(u|_{V}) \leq \liminf_{n\to\infty}\mathcal{E}(u_{n}) < \infty$, hence $u \in \mathcal{F}$ by \eqref{inductive.dom} and $\mathcal{E}(u) \leq \liminf_{n \to \infty}\mathcal{E}(u_{n})$ by \eqref{inductive.ene}, proving \ref{approx1-1}.
    
    \ref{approx1-2}: Assume $\limsup_{n \to \infty}\mathcal{E}(u_{n}) \leq \mathcal{E}(u)$ and that $\lim_{n \to \infty}(u_{n}(x) - u_{n}(y)) = u(x) - u(y)$ for any $x,y \in X$.
    Then $\lim_{n \to \infty}\mathcal{E}(u_{n}) = \mathcal{E}(u)$ by \ref{approx1-1} of the present corollary, and we also have $\lim_{n \to \infty}\mathcal{E}\bigl(\frac{u + u_{n}}{2}\bigr) = \mathcal{E}(u)$ since $\{ \frac{u + u_{n}}{2} \}_{n \in \mathbb{N}}$ satisfies the same conditions as $\{ u_{n} \}_{n \in \mathbb{N}}$ by the triangle inequality for $\mathcal{E}^{1/p}$.
    Similar to \eqref{cauchy1} or \eqref{cauchy2} in the proof of Theorem \ref{thm.RF-exist}, we now conclude by $p$-Clarkson's inequality \ref{Cp} for $\mathcal{E}$ from Proposition \ref{prop.GC-list}-\ref{GC.Cpsmall},\ref{GC.Cplarge} that $\lim_{n \to \infty}\mathcal{E}(u - u_{n}) = 0$.
    The converse part of \ref{approx1-2} is immediate from the triangle inequality for $\mathcal{E}^{1/p}$ and the estimate \eqref{R-basic} in Proposition \ref{prop.R-conseq}-\ref{it:RF.basic-ineq}.
\end{proof}

\begin{cor}\label{cor.approx2}
    \begin{enumerate}[label=\textup{(\alph*)},align=left,leftmargin=*,topsep=2pt,parsep=0pt,itemsep=2pt]
        \item \label{approx2-1} Let $\{ \varphi_{n} \}_{n \in \mathbb{N}} \subseteq \contfunc(\mathbb{R})$ satisfy $\lim_{n \to \infty}\varphi_{n}(t) = t$ for any $t \in \mathbb{R}$ and $\abs{\varphi_{n}(t) - \varphi_{n}(s)} \le \abs{t - s}$ for any $n \in \mathbb{N}$ and any $s,t \in \mathbb{R}$.
        Then $\{ \varphi_{n}(u) \}_{n \in \mathbb{N}} \subseteq \mathcal{F}$ and $\lim_{n \to \infty}\mathcal{E}(u - \varphi_{n}(u)) = 0$ for any $u \in \mathcal{F}$.
        \item \label{approx2-2} Let $u \in \mathcal{F}$, $\{ u_{n} \}_{n \in \mathbb{N}} \subseteq \mathcal{F}$ and $\varphi \in \contfunc(\mathbb{R})$ satisfy $\lim_{n \to \infty}\mathcal{E}(u - u_{n}) = 0$, $\lim_{n \to \infty}u_{n}(x) = u(x)$ for some $x \in X$, $\abs{\varphi(t) - \varphi(s)} \le \abs{t - s}$ for any $s,t \in \mathbb{R}$, and $\varphi(u) = u$.
        Then $\{ \varphi(u_{n}) \}_{n \in \mathbb{N}} \subseteq \mathcal{F}$ and $\lim_{n \to \infty}\mathcal{E}(u - \varphi(u_{n})) = 0$.
    \end{enumerate}
\end{cor}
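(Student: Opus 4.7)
The plan is to derive both statements from Corollary \ref{cor.approx1}-\ref{approx1-2}, which characterises $\mathcal{E}$-convergence by the combination of a $\limsup$-type energy bound together with pointwise convergence of all pairwise differences. To apply it we need (i) that the functions produced by composition with the relevant Lipschitz map actually lie in $\mathcal{F}$, (ii) control on their $\mathcal{E}$-energies, and (iii) pointwise convergence of their differences.

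The membership in $\mathcal{F}$ will be handled uniformly as follows. Given a $1$-Lipschitz $\psi \in \contfunc(\mathbb{R})$ (not necessarily satisfying $\psi(0)=0$) and $v \in \mathcal{F}$, set $\widetilde{\psi}(t) \coloneqq \psi(t) - \psi(0)$, so that $\widetilde{\psi}(0) = 0$ and $\widetilde{\psi}$ is still $1$-Lipschitz. Then \eqref{lipcont} (from Proposition \ref{prop.GC-list}-\ref{GC.lip}, available via \ref{RF5}) gives $\widetilde{\psi}(v) \in \mathcal{F}$ with $\mathcal{E}(\widetilde{\psi}(v)) \le \mathcal{E}(v)$, and since $\mathbb{R}\indicator{X} \subseteq \mathcal{F}$ by \ref{RF1} we conclude $\psi(v) = \widetilde{\psi}(v) + \psi(0)\indicator{X} \in \mathcal{F}$ with $\mathcal{E}(\psi(v)) = \mathcal{E}(\widetilde{\psi}(v)) \le \mathcal{E}(v)$. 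Apply this to $\psi = \varphi_n$ in \ref{approx2-1} and $\psi = \varphi$ in \ref{approx2-2} to get $\{\varphi_n(u)\}_{n} \subseteq \mathcal{F}$ and $\{\varphi(u_n)\}_{n} \subseteq \mathcal{F}$ respectively.

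For \ref{approx2-1}, the estimate $\mathcal{E}(\varphi_n(u)) \le \mathcal{E}(u)$ just established yields $\limsup_{n \to \infty}\mathcal{E}(\varphi_n(u)) \le \mathcal{E}(u)$, while the pointwise convergence $\varphi_n(t) \to t$ on $\mathbb{R}$ immediately gives $\varphi_n(u)(x) - \varphi_n(u)(y) = \varphi_n(u(x)) - \varphi_n(u(y)) \to u(x) - u(y)$ for all $x,y \in X$. Corollary \ref{cor.approx1}-\ref{approx1-2} then delivers $\lim_{n\to\infty}\mathcal{E}(u - \varphi_n(u)) = 0$.

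For \ref{approx2-2}, I first upgrade the one-point convergence $u_n(x) \to u(x)$ to pointwise convergence everywhere: applying \eqref{R-basic} to $u - u_n$ gives $|(u-u_n)(y) - (u-u_n)(x)|^{p} \le R_{\mathcal{E}}(x,y)\,\mathcal{E}(u-u_n)$ for every $y \in X$, and since both $\mathcal{E}(u-u_n) \to 0$ and $(u-u_n)(x) \to 0$, we obtain $u_n(y) \to u(y)$ for all $y$. Continuity of $\varphi$ combined with the hypothesis $\varphi(u) = u$ then yields $\varphi(u_n)(y) \to \varphi(u(y)) = u(y)$ pointwise, so the difference condition of Corollary \ref{cor.approx1}-\ref{approx1-2} holds with $u$ and $\varphi(u_n)$. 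For the energy bound, the reverse triangle inequality for $\mathcal{E}^{1/p}$ gives $\mathcal{E}(u_n) \to \mathcal{E}(u)$, and $\mathcal{E}(\varphi(u_n)) \le \mathcal{E}(u_n)$ from the preparation above yields $\limsup_{n\to\infty}\mathcal{E}(\varphi(u_n)) \le \mathcal{E}(u)$. Corollary \ref{cor.approx1}-\ref{approx1-2} then gives $\mathcal{E}(u - \varphi(u_n)) \to 0$. There is no substantial obstacle; the only point that requires a brief comment is the promotion of one-point convergence to pointwise convergence in \ref{approx2-2}, which is what makes the seemingly weak hypothesis of convergence at a single point sufficient.
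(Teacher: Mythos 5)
Your proof is correct and follows essentially the same route as the paper: both parts are reduced to Corollary \ref{cor.approx1}-\ref{approx1-2} via the energy bound $\mathcal{E}(\psi(v))\le\mathcal{E}(v)$ from \ref{RF5} (with the harmless shift by $\psi(0)\indicator{X}$, which the paper leaves implicit) and, for \ref{approx2-2}, the upgrade of one-point convergence to pointwise convergence via \eqref{R-basic} exactly as in the paper's proof. No gaps.
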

\begin{proof}
    \ref{approx2-1}:
	This is immediate from \ref{RF5} and Corollary \ref{cor.approx1}-\ref{approx1-2}.
	
	\ref{approx2-2}:
    For any $y \in X$, by \eqref{R-basic} in Proposition \ref{prop.R-conseq}-\ref{it:RF.basic-ineq} we have
    \[
    \abs{u(y) - u_{n}(y)}
    \le R_{\mathcal{E}}(x,y)^{1/p}\mathcal{E}(u - u_{n})^{1/p} + \abs{u(x) - u_{n}(x)}
    \xrightarrow[n \to \infty]{}
    0,
    \]
    and hence $\lim_{n \to \infty}\varphi(u_{n}(y)) = \varphi(u(y)) = u(y)$. 
    By \ref{RF5} and the triangle inequality for $\mathcal{E}^{1/p}$ we also have $\{ \varphi(u_{n}) \}_{n \in \mathbb{N}} \subseteq \mathcal{F}$ and $\limsup_{n \to \infty}\mathcal{E}(\varphi(u_{n})) \leq \lim_{n \to \infty}\mathcal{E}(u_{n}) = \mathcal{E}(u)$. 
	Thus $\lim_{n \to \infty}\mathcal{E}(u - \varphi(u_{n})) = 0$ by Corollary \ref{cor.approx1}-\ref{approx1-2}.  
\end{proof}

In the following proposition, we record a useful variant of Theorem \ref{thm.inductive}.  
\begin{prop}\label{prop.harmapprox}
    Let $\{ \mathcal{V}_{n} \}_{n \in \mathbb{N} \cup \{ 0 \}}$ be a non-decreasing sequence of non-empty finite subsets of $X$, and set $\mathcal{V}_{\ast} \coloneqq \bigcup_{n \in \mathbb{N} \cup \{ 0 \}}\mathcal{V}_{n}$. 
    If 
	\begin{equation}\label{e:lineariso.Vast}
    	\text{the map $\mathcal{F} \ni u \mapsto  u\vert_{\mathcal{V}_{\ast}} \in \mathcal{F}\vert_{\mathcal{V}_{\ast}}$ is injective (and hence a linear isomorphism),} 
    \end{equation}
    then \textup{(note that $\{ \mathcal{E}\vert_{\mathcal{V}_{n}}(u\vert_{\mathcal{V}_{n}}) \}_{n \in \mathbb{N} \cup \{ 0 \}}$ is non-decreasing since $\{ \mathcal{V}_{n} \}_{n \in \mathbb{N} \cup \{ 0 \}}$ is non-decreasing),} 
    \begin{align}
    \mathcal{F}|_{\mathcal{V}_{\ast}} 
    &= \Bigl\{ u \in \mathbb{R}^{\mathcal{V}_{\ast}} \Bigm\vert \lim_{n\to\infty}\mathcal{E}\vert_{\mathcal{V}_{n}}(u\vert_{\mathcal{V}_{n}})<\infty \Bigr\}, \label{monolim-dom}\\
		\mathcal{E}(u; v) &= \lim_{n \to \infty}\mathcal{E}|_{\mathcal{V}_{n}}(u|_{\mathcal{V}_{n}}; v|_{\mathcal{V}_{n}}) \quad \text{for any $u,v \in \mathcal{F}$,} \label{compat-conv} \\
		\lim_{n \to \infty}&\mathcal{E}\bigl(u - h_{\mathcal{V}_{n}}^{\mathcal{E}}[u|_{\mathcal{V}_{n}}]\bigr) = 0 \quad \text{for any $u \in \mathcal{F}$.}\label{harmapprox} 
    \end{align}
	In particular, if $\closure{\mathcal{V}_{\ast}}^{X} = X$, then \eqref{e:lineariso.Vast}, \eqref{monolim-dom}, \eqref{compat-conv} and \eqref{harmapprox} hold, and 
	\begin{equation}\label{monolim-dom-dense}
    	\mathcal{F} = \Bigl\{ u \in \contfunc(X) \Bigm\vert \lim_{n\to\infty}\mathcal{E}\vert_{\mathcal{V}_{n}}(u\vert_{\mathcal{V}_{n}})<\infty \Bigr\}. 
	\end{equation}
\end{prop}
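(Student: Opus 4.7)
The plan is to prove \eqref{harmapprox} first (under \eqref{e:lineariso.Vast}), then derive \eqref{compat-conv} from the H\"{o}lder continuity \eqref{ncont} of the Fr\'{e}chet derivative, then use essentially the same ``harmonic-approximation'' sequence to establish \eqref{monolim-dom}, and finally address the dense-$\mathcal{V}_{\ast}$ case.

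For \eqref{harmapprox}, let $u \in \mathcal{F}$ and set $u_{n} \coloneqq h_{\mathcal{V}_{n}}^{\mathcal{E}}[u|_{\mathcal{V}_{n}}]$. Proposition \ref{prop.trace-comp} applied to $\mathcal{V}_{n} \subseteq \mathcal{V}_{m}$ for $m \ge n$ yields $u_{m}|_{\mathcal{V}_{n}} = u|_{\mathcal{V}_{n}}$, whence the minimizing property gives $\mathcal{E}(u_{n}) \leq \mathcal{E}(u_{m}) \leq \mathcal{E}(u)$; in particular $L \coloneqq \lim_{n \to \infty}\mathcal{E}(u_{n}) \in [0,\infty)$ exists. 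Since $\frac{u_{n}+u_{m}}{2}\bigr|_{\mathcal{V}_{n}} = u|_{\mathcal{V}_{n}}$, we also have $\mathcal{E}\bigl(\frac{u_{n}+u_{m}}{2}\bigr) \ge \mathcal{E}(u_{n})$, and hence \ref{Cp} combined with this lower bound (via the argument in \eqref{cauchy1}, \eqref{cauchy2}) shows $\mathcal{E}(u_{n}-u_{m}) \to 0$ as $n \wedge m \to \infty$. Fixing $x_{0} \in \mathcal{V}_{0}$, we have $u_{n}(x_{0}) = u(x_{0})$ for all $n$, so by \ref{RF2} there exists $u_{\infty} \in \mathcal{F}$ with $u_{\infty}(x_{0}) = u(x_{0})$ and $\mathcal{E}(u_{\infty}-u_{n}) \to 0$; then \eqref{R-basic} forces $u_{n}(y) \to u_{\infty}(y)$ pointwise. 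For $y \in \mathcal{V}_{\ast}$ we have $y \in \mathcal{V}_{n}$ for all large $n$, so $u_{n}(y) = u(y)$, giving $u_{\infty}|_{\mathcal{V}_{\ast}} = u|_{\mathcal{V}_{\ast}}$; the injectivity assumption \eqref{e:lineariso.Vast} then yields $u_{\infty} = u$, proving \eqref{harmapprox}.

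From \eqref{harmapprox}, \eqref{compat-conv} is essentially immediate: since $u_{n} \in \mathcal{H}_{\mathcal{E},\mathcal{V}_{n}}$, \eqref{harm-compat} gives $\mathcal{E}(u_{n};v) = \mathcal{E}|_{\mathcal{V}_{n}}(u|_{\mathcal{V}_{n}}; v|_{\mathcal{V}_{n}})$ for any $v \in \mathcal{F}$, while \eqref{ncont} gives
\begin{equation*}
\abs{\mathcal{E}(u;v) - \mathcal{E}(u_{n};v)} \leq C_{p}\bigl(\mathcal{E}(u) \vee \mathcal{E}(u_{n})\bigr)^{(p-1-\alpha_{p})/p} \mathcal{E}(u-u_{n})^{\alpha_{p}/p} \mathcal{E}(v)^{1/p} \xrightarrow[n\to\infty]{} 0.
\end{equation*}
For \eqref{monolim-dom}, one inclusion $\subseteq$ is clear since $\mathcal{E}|_{\mathcal{V}_{n}}(u|_{\mathcal{V}_{n}}) \leq \mathcal{E}(w)$ for any extension $w \in \mathcal{F}$ of $u$. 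Conversely, given $u \in \mathbb{R}^{\mathcal{V}_{\ast}}$ with $\lim_{n}\mathcal{E}|_{\mathcal{V}_{n}}(u|_{\mathcal{V}_{n}}) < \infty$, repeat the construction of the previous paragraph using $u_{n} \coloneqq h_{\mathcal{V}_{n}}^{\mathcal{E}}[u|_{\mathcal{V}_{n}}]$ (which makes sense by Proposition \ref{prop.trace-dom}), normalizing at some $x_{0} \in \mathcal{V}_{0}$; the same Clarkson argument yields a limit $u_{\infty} \in \mathcal{F}$ whose restriction to $\mathcal{V}_{\ast}$ coincides with $u$.

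For the final clause: when $\closure{\mathcal{V}_{\ast}}^{X} = X$, the hypothesis \eqref{e:lineariso.Vast} is automatic because $\mathcal{F} \subseteq \contfunc(X)$ by \eqref{R-basic}, so any two elements of $\mathcal{F}$ agreeing on $\mathcal{V}_{\ast}$ agree everywhere. Thus \eqref{monolim-dom}, \eqref{compat-conv}, \eqref{harmapprox} hold. For \eqref{monolim-dom-dense}, the inclusion $\mathcal{F} \subseteq \{u \in \contfunc(X) \mid \lim_{n} \mathcal{E}|_{\mathcal{V}_{n}}(u|_{\mathcal{V}_{n}}) < \infty\}$ is clear; conversely, for such $u$, \eqref{monolim-dom} yields $v \in \mathcal{F}$ with $v|_{\mathcal{V}_{\ast}} = u|_{\mathcal{V}_{\ast}}$, and since both $u$ and $v$ are continuous on $X$ and agree on the dense set $\mathcal{V}_{\ast}$, we obtain $u = v \in \mathcal{F}$. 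The main technical point throughout is the Cauchy-property argument for the harmonic approximants, which relies essentially on combining the monotonicity $\mathcal{E}(u_{n}) \le \mathcal{E}(u_{m})$ (from the trace-compatibility of harmonic extensions) with the uniform convexity provided by \ref{Cp}; identifying the resulting limit with the original function $u$ (or with an extension of the prescribed boundary data) is then exactly where the injectivity hypothesis \eqref{e:lineariso.Vast} enters.
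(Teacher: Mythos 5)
Your proof is correct and follows essentially the same route as the paper's: harmonic approximants $u_{n} = h_{\mathcal{V}_{n}}^{\mathcal{E}}[u|_{\mathcal{V}_{n}}]$, monotonicity of their energies, the Clarkson/uniform-convexity argument of \eqref{cauchy1}--\eqref{cauchy2} to get a Cauchy sequence, completeness \ref{RF2} plus \eqref{R-basic} to identify the limit on $\mathcal{V}_{\ast}$, and the injectivity hypothesis to conclude. The only (immaterial) differences are that you prove \eqref{harmapprox} before \eqref{monolim-dom} and argue the easy inclusion of \eqref{monolim-dom} directly rather than citing Theorem \ref{thm.inductive}.
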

\begin{proof}
    Assume \eqref{e:lineariso.Vast}.  
	By Theorem \ref{thm.inductive}, we have $\mathcal{F}\vert_{\mathcal{V}_{\ast}} \subseteq \bigl\{ u \in \mathbb{R}^{\mathcal{V}_{\ast}} \bigm\vert \lim_{n\to\infty}\mathcal{E}\vert_{\mathcal{V}_{n}}(u\vert_{\mathcal{V}_{n}})<\infty \bigr\}$ and $\mathcal{E}(u) \ge \lim_{n \to \infty}\mathcal{E}\vert_{\mathcal{V}_{n}}(u\vert_{\mathcal{V}_{n}})$ for any $u \in \mathcal{F}$. 
    To show the converse, let $u \in \mathbb{R}^{\mathcal{V}_{\ast}}$ satisfy $\lim_{n \to \infty}\mathcal{E}\vert_{\mathcal{V}_{n}}(u\vert_{\mathcal{V}_{n}}) < \infty$, set $u_{n} \coloneqq h_{\mathcal{V}_{n}}^{\mathcal{E}}(u\vert_{\mathcal{V}_{n}}) \in \mathcal{F}$ for each $n \in \mathbb{N} \cup \{ 0 \}$ and fix $x_{0} \in \mathcal{V}_{0}$. 
    We can assume that $u(x_{0}) = 0$ by considering $u - u(x_{0})$ instead of $u$.
    A similar estimate to \eqref{cauchy1} or \eqref{cauchy2} for $\mathcal{E}$ and \ref{RF2} together imply that $\lim_{n \to \infty}\mathcal{E}(v - u_{n}) = 0$ for some $v \in \mathcal{F}$ with $v(x_0) = 0$. 
    Since $\abs{v(x) - u(x)}^{p} \le R_{\mathcal{E}}(x,x_{0})\mathcal{E}(v - u_{n})$ for any $x \in \mathcal{V}_{\ast}$ and any $n \in \mathbb{N}$ with $x \in \mathcal{V}_{n}$ by \eqref{R-basic}, we get $u = v\vert_{\mathcal{V}_{\ast}} \in \mathcal{F}\vert_{\mathcal{V}_{\ast}}$, proving $\mathcal{F}\vert_{\mathcal{V}_{\ast}} \supseteq \bigl\{ u \in \mathbb{R}^{\mathcal{V}_{\ast}} \bigm\vert \lim_{n\to\infty}\mathcal{E}\vert_{\mathcal{V}_{n}}(u\vert_{\mathcal{V}_{n}})<\infty \bigr\}$ and thereby \eqref{monolim-dom}. 
    We then have \eqref{harmapprox} by \eqref{e:lineariso.Vast} and $\lim_{n \to \infty}\mathcal{E}(v - u_{n}) = 0$, and obtain \eqref{compat-conv} from \eqref{harmapprox}, the H\"{o}lder-type estimate \eqref{bdd.form}, the continuity estimate \eqref{ncont} in Theorem \ref{thm.p-form}, and the expression \eqref{diff-compat} for $\mathcal{E}|_{\mathcal{V}_{n}}(\,\cdot\,;\,\cdot\,)$ in Theorem \ref{thm.RF-exist}.
	
	Lastly, if $\closure{\mathcal{V}_{\ast}}^{X} = X$, then since $\mathcal{F} \subseteq \contfunc(X)$ by the estimate \eqref{R-basic} in Proposition \ref{prop.R-conseq}-\ref{it:RF.basic-ineq}, we have \eqref{e:lineariso.Vast}, hence \eqref{monolim-dom}--\eqref{harmapprox} hold, and \eqref{monolim-dom-dense} follows from \eqref{monolim-dom} and $\mathcal{F} \subseteq \contfunc(X)$.  
\end{proof}

Based on Proposition \ref{prop.harmapprox}, standard machinery for constructing the ``inductive limit'' of $p$-energy forms on p.-c.f.\ self-similar structures can be stated in Theorems \ref{thm.Epcountable} and \ref{thm.Rpcompletion} below, which are extensions of the counterpart for resistance forms given in \cite[Lemma 2.24, Theorem 2.25 and Corollary 2.43]{Kaj.ln} to $p$-resistance forms. 
This approach will be used in Subsection \ref{sec.pcf}, where the construction of $p$-energy forms due to \cite{CGQ22} is reviewed. See also \cite[Sections 2.2, 2.3 and 3.3]{Kig01} for the details in the case of $p = 2$. 
\begin{defn}[Compatible sequence\index{compatible sequence} of $p$-resistance forms on finite sets]\label{defn.compatseq} 
	Let $\mathcal{V}_{n}$ be a non-empty finite set and let $\mathcal{E}^{(n)}$ be a $p$-resistance form on $\mathcal{V}_{n}$ for each $n \in \mathbb{N} \cup \{ 0 \}$. 
	We say that the sequence $\mathcal{S} \coloneqq \{ (\mathcal{V}_{n}, \mathcal{E}^{(n)}) \}_{n \in \mathbb{N} \cup \{ 0 \}}$ is a \emph{compatible sequence of $p$-resistance forms} if and only if $\mathcal{V}_{n} \subseteq \mathcal{V}_{n + 1}$ and $\mathcal{E}^{(n + 1)}|_{\mathcal{V}_{n}} = \mathcal{E}^{(n)}$ for any $n \in \mathbb{N} \cup \{ 0 \}$. 
\end{defn}

\begin{thm}\label{thm.Epcountable}
	Let $\mathcal{S} = \{ (\mathcal{V}_{n}, \mathcal{E}^{(n)}) \}_{n \in \mathbb{N} \cup \{ 0 \}}$ be a compatible sequence of $p$-resistance forms. 
	We define 
	$\mathcal{V}_{\ast} \coloneqq \bigcup_{n \in \mathbb{N} \cup \{ 0 \}}\mathcal{V}_{n}$, 
	\begin{gather}
	    \mathcal{F}_{\mathcal{S}} \coloneqq \Bigl\{ u \in \mathbb{R}^{\mathcal{V}_{\ast}} \Bigm| \lim_{n \to \infty}\mathcal{E}^{(n)}(u|_{\mathcal{V}_{n}}) < \infty \Bigr\},  \quad \text{and} \label{e:defn.compat.dom} \\
        \mathcal{E}_{\mathcal{S}}(u) \coloneqq \lim_{n \to \infty}\mathcal{E}^{(n)}(u|_{\mathcal{V}_{n}}), \quad u \in \mathcal{F}_{\mathcal{S}}.  \label{e:defn.compat.form}
    \end{gather}
	Then $(\mathcal{E}_{\mathcal{S}},\mathcal{F}_{\mathcal{S}})$ is a $p$-resistance form on $\mathcal{V}_{\ast}$ and $\mathcal{E}_{\mathcal{S}}|_{\mathcal{V}_{n}} = \mathcal{E}^{(n)}$ for any $n \in \mathbb{N} \cup \{ 0 \}$. 
\end{thm}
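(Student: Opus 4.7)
The key structural observation is that by iterating the compatibility $\mathcal{E}^{(n+1)}|_{\mathcal{V}_n} = \mathcal{E}^{(n)}$ via Proposition \ref{prop.trace-comp}, one obtains $\mathcal{E}^{(m)}|_{\mathcal{V}_n} = \mathcal{E}^{(n)}$ for all $m \geq n$; in particular, for any $u \in \mathbb{R}^{\mathcal{V}_*}$ the sequence $\{\mathcal{E}^{(n)}(u|_{\mathcal{V}_n})\}_{n}$ is non-decreasing, so the limit in \eqref{e:defn.compat.form} always exists in $[0,\infty]$. I will also need an \emph{iterated harmonic extension} construction: given $n_0 \in \mathbb{N} \cup \{0\}$ and $v \in \mathbb{R}^{\mathcal{V}_{n_0}}$, I define $u_n \in \mathbb{R}^{\mathcal{V}_n}$ inductively for $n \geq n_0$ by $u_{n_0} \coloneqq v$ and $u_{n+1} \coloneqq h_{\mathcal{V}_n}^{\mathcal{E}^{(n+1)}}[u_n]$ via Theorem \ref{thm.RF-exist} applied to the $p$-resistance form $\mathcal{E}^{(n+1)}$ on $\mathcal{V}_{n+1}$, and set $u_n \coloneqq v|_{\mathcal{V}_n}$ for $n \leq n_0$. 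Compatibility and the defining property of the harmonic extension give $\mathcal{E}^{(n+1)}(u_{n+1}) = \mathcal{E}^{(n+1)}|_{\mathcal{V}_n}(u_n) = \mathcal{E}^{(n)}(u_n)$, so the energies stabilize at $\mathcal{E}^{(n_0)}(v)$; since $u_{n+1}|_{\mathcal{V}_n} = u_n$ by construction, the $u_n$ glue to a function $\mathsf{H}[v] \in \mathcal{F}_{\mathcal{S}}$ satisfying $\mathsf{H}[v]|_{\mathcal{V}_{n_0}} = v$ and $\mathcal{E}_{\mathcal{S}}(\mathsf{H}[v]) = \mathcal{E}^{(n_0)}(v)$.

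This immediately yields the trace identity $\mathcal{E}_{\mathcal{S}}|_{\mathcal{V}_n} = \mathcal{E}^{(n)}$ (the reverse inequality $\mathcal{E}_{\mathcal{S}}(w) \geq \mathcal{E}^{(n)}(w|_{\mathcal{V}_n})$ for $w \in \mathcal{F}_{\mathcal{S}}$ is trivial from monotonicity), and also \ref{RF3} and \ref{RF4} upon promoting witnesses from a finite level $\mathcal{V}_{n_0} \ni x, y$; indeed, for \ref{RF4}, any $u \in \mathcal{F}_{\mathcal{S}}$ with $u(x) \neq u(y)$ satisfies $\abs{u(x) - u(y)}^{p}/\mathcal{E}_{\mathcal{S}}(u) \leq \abs{u(x) - u(y)}^{p}/\mathcal{E}^{(n_0)}(u|_{\mathcal{V}_{n_0}}) \leq R_{\mathcal{E}^{(n_0)}}(x,y)$. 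For \ref{RF1}, linearity of $\mathcal{F}_{\mathcal{S}}$, the triangle and $p$-homogeneity properties of $\mathcal{E}_{\mathcal{S}}^{1/p}$, and $\mathbb{R}\indicator{\mathcal{V}_*} \subseteq \mathcal{F}_{\mathcal{S}}$ transfer from each finite level via the triangle inequality for $(\mathcal{E}^{(n)})^{1/p}$; and if $\mathcal{E}_{\mathcal{S}}(u) = 0$, then every $u|_{\mathcal{V}_n}$ is constant and these constants coincide by nestedness, so $u \in \mathbb{R}\indicator{\mathcal{V}_*}$. For \ref{RF5}, given $T = (T_1, \ldots, T_{n_2})$ and $\bm{u} = (u_1, \ldots, u_{n_1}) \in \mathcal{F}_{\mathcal{S}}^{n_1}$ satisfying \eqref{GC-cond}, I would use $T_l(\bm{u})|_{\mathcal{V}_n} = T_l(\bm{u}|_{\mathcal{V}_n})$ and \ref{GC} for $\mathcal{E}^{(n)}$ to get at each level
\[
\norm{\bigl(\mathcal{E}^{(n)}(T_l(\bm{u})|_{\mathcal{V}_n})^{1/p}\bigr)_{l=1}^{n_2}}_{\ell^{q_2}} \leq \norm{\bigl(\mathcal{E}^{(n)}(u_k|_{\mathcal{V}_n})^{1/p}\bigr)_{k=1}^{n_1}}_{\ell^{q_1}} \leq \norm{\bigl(\mathcal{E}_{\mathcal{S}}(u_k)^{1/p}\bigr)_{k=1}^{n_1}}_{\ell^{q_1}};
\]
this simultaneously gives $T_l(\bm{u}) \in \mathcal{F}_{\mathcal{S}}$, and letting $n \to \infty$ (using monotonicity and continuity of $\ell^{q_1}$- and $\ell^{q_2}$-quasinorms coordinatewise) yields \ref{GC} for $\mathcal{E}_{\mathcal{S}}$.

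The main technical step is \ref{RF2}. Given a Cauchy sequence $\{u_k\} \subseteq \mathcal{F}_{\mathcal{S}}$ in $\mathcal{E}_{\mathcal{S}}^{1/p}$, I would normalize so that $u_k(x_0) = 0$ for a fixed $x_0 \in \mathcal{V}_0$. For each $n$, the sequence $\{u_k|_{\mathcal{V}_n}\}$ is Cauchy in the finite-dimensional Banach space $(\mathbb{R}^{\mathcal{V}_n}/\mathbb{R}\indicator{\mathcal{V}_n}, (\mathcal{E}^{(n)})^{1/p})$ and satisfies $u_k|_{\mathcal{V}_n}(x_0) = 0$, so by \eqref{R-basic} applied to $\mathcal{E}^{(n)}$ it converges pointwise on $\mathcal{V}_n$ and in $\mathcal{E}^{(n)}$-seminorm to some $v_n \in \mathbb{R}^{\mathcal{V}_n}$. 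Pointwise convergence at each $x \in \mathcal{V}_n \subseteq \mathcal{V}_{n+1}$ forces $v_{n+1}|_{\mathcal{V}_n} = v_n$, so the $v_n$ glue to a function $u \in \mathbb{R}^{\mathcal{V}_*}$, and $\mathcal{E}^{(n)}(u|_{\mathcal{V}_n}) = \lim_k \mathcal{E}^{(n)}(u_k|_{\mathcal{V}_n}) \leq \sup_k \mathcal{E}_{\mathcal{S}}(u_k) < \infty$ for every $n$, giving $u \in \mathcal{F}_{\mathcal{S}}$. Finally, given $\varepsilon > 0$, choose $K$ with $\mathcal{E}_{\mathcal{S}}(u_k - u_\ell) < \varepsilon$ for $k, \ell \geq K$; monotonicity yields $\mathcal{E}^{(n)}((u_k - u_\ell)|_{\mathcal{V}_n}) < \varepsilon$ for every $n$, and letting $\ell \to \infty$ (using pointwise convergence and continuity of $\mathcal{E}^{(n)}$ on the finite-dimensional space $\mathbb{R}^{\mathcal{V}_n}$) gives $\mathcal{E}^{(n)}((u_k - u)|_{\mathcal{V}_n}) \leq \varepsilon$, whence $\mathcal{E}_{\mathcal{S}}(u_k - u) \leq \varepsilon$. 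The delicate point here is ensuring the pointwise convergence needed to glue the $v_n$ consistently into a single function on $\mathcal{V}_*$, which relies on propagating a single normalization $u_k(x_0) = 0$ through the resistance estimate \eqref{R-basic} at each level.
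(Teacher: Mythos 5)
Your proposal is correct and follows essentially the same route as the paper's proof: the monotonicity of $n \mapsto \mathcal{E}^{(n)}(u|_{\mathcal{V}_n})$, the consistent harmonic extension built from Proposition \ref{prop.trace-comp} (your one-level-at-a-time iteration coincides with the paper's direct extension $h_{\mathcal{V}_{n_0}}^{\mathcal{E}^{(k)}}[v]$ by that same proposition), the level-wise application of \ref{GC} for \ref{RF5}, and the finite-dimensional Cauchy argument with a base-point normalization and \eqref{R-basic} for \ref{RF2}. No gaps.
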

\begin{proof}
	Noting that $\{ \mathcal{E}^{(n)}(u|_{\mathcal{V}_{n}}) \}_{n \in \mathbb{N} \cup \{ 0 \}}$ is non-decreasing for any $u \in \mathbb{R}^{\mathcal{V}_{\ast}}$, we easily obtain \ref{RF1} for $(\mathcal{E}_{\mathcal{S}},\mathcal{F}_{\mathcal{S}})$. 
	To see \ref{RF5} for $(\mathcal{E}_{\mathcal{S}},\mathcal{F}_{\mathcal{S}})$, let $n_{1},n_{2} \in \mathbb{N}$, $q_{1} \in (0,p]$, $q_{2} \in [p,\infty]$ and $T = (T_{1},\dots,T_{n_{2}}) \colon \mathbb{R}^{n_{1}} \to \mathbb{R}^{n_{2}}$ satisfy \eqref{GC-cond} in Definition \ref{defn.GC}, and let $\bm{u} = (u_{1},\dots,u_{n_{1}}) \in \mathcal{F}_{\mathcal{S}}^{n_{1}}$.
    Then, for any $l \in \{ 1,\dots,n_{2} \}$, \ref{GC} for $\mathcal{E}^{(n)}$ implies that 
    \begin{align*}
    \mathcal{E}^{(n)}(T_{l}(\bm{u})|_{\mathcal{V}_{n}})^{1/p}
    &\le \norm{\bigl(\mathcal{E}^{(n)}(T_{l}(\bm{u}|_{\mathcal{V}_{n}}))^{1/p}\bigr)_{l = 1}^{n_{2}}}_{\ell^{q_{2}}} \\
    &\le \norm{\bigl(\mathcal{E}^{(n)}(u_{k}|_{\mathcal{V}_{n}})^{1/p}\bigr)_{k = 1}^{n_{1}}}_{\ell^{q_{1}}}
    \le \norm{\bigl(\mathcal{E}_{\mathcal{S},\ast}(u_{k})^{1/p}\bigr)_{k = 1}^{n_{1}}}_{\ell^{q_{1}}} < \infty.
    \end{align*}
    By letting $n \to \infty$, we obtain \ref{GC} for $(\mathcal{E}_{\mathcal{S}},\mathcal{F}_{\mathcal{S}})$, i.e., \ref{RF5} for $(\mathcal{E}_{\mathcal{S}},\mathcal{F}_{\mathcal{S}})$ holds. 
	Before proving \ref{RF2}--\ref{RF4} for $(\mathcal{E}_{\mathcal{S}},\mathcal{F}_{\mathcal{S}})$, we shall show the following claim: 
	\begin{equation}\label{e:harmextVstar}
		\begin{minipage}{400pt}
			For any $n \in \mathbb{N} \cup \{ 0 \}$ and any $u \in \mathbb{R}^{\mathcal{V}_{n}}$, there exists a unique $h_{\mathcal{V}_{n}}^{\mathcal{S}}[u] \in \mathcal{F}_{\mathcal{S}}$ such that $h_{\mathcal{V}_{n}}^{\mathcal{S}}[u]\bigr|_{\mathcal{V}_{n}} = u$ and $\mathcal{E}_{\mathcal{S}}\bigl(h_{\mathcal{V}_{n}}^{\mathcal{S}}[u]\bigr) = \min\{ \mathcal{E}_{\mathcal{S}}(v) \mid v \in \mathcal{F}_{\mathcal{S}}, v|_{\mathcal{V}_{n}} = u \} = \mathcal{E}^{(n)}(u)$.  
		\end{minipage}
	\end{equation}
	To prove \eqref{e:harmextVstar}, by \ref{RF1} and \ref{RF5} for $(\mathcal{E}_{\mathcal{S}},\mathcal{F}_{\mathcal{S}})$, we first note that $\#\{ v \in \mathcal{F}_{\mathcal{S}} \mid \mathcal{E}_{\mathcal{S}}(v) = \alpha \} \le 1$,   where $\alpha \coloneqq \min\{ \mathcal{E}_{\mathcal{S}}(v) \mid v \in \mathcal{F}_{\mathcal{S}}, v|_{\mathcal{V}_{n}} = u \}$. 
	(Recall the arguments in \eqref{cauchy1} and \eqref{cauchy2} in the proof of Theorem \ref{thm.RF-exist}.)
	Hence it suffices to show the existence of the minimizer realizing $\alpha$. 
	For any $k_{2} \ge k_{1} \ge n$, we have $h_{\mathcal{V}_{n}}^{\mathcal{E}^{(k_{2})}}[u]\bigr|_{\mathcal{V}_{k_{1}}} = h_{\mathcal{V}_{n}}^{\mathcal{E}^{(k_{1})}}[u]$ by $\mathcal{E}^{(k_{2})}|_{\mathcal{V}_{k_{1}}} = \mathcal{E}^{(k_{1})}$ and Proposition \ref{prop.trace-comp}, which implies that $u_{\ast}(x) \coloneqq h_{\mathcal{V}_{n}}^{\mathcal{E}^{(k)}}[u](x)$ for $x \in \mathcal{V}_{k}$ with $k \ge n$ is well-defined. 
	Clearly, $u_{\ast}|_{\mathcal{V}_{n}} = u$. 
	For any $k \ge n$, we have $\mathcal{E}^{(k)}(u_{\ast}|_{\mathcal{V}_{k}}) = \mathcal{E}^{(k + 1)}(u_{\ast}|_{\mathcal{V}_{k + 1}})$ by Proposition \ref{prop.trace-comp} again, whence $u_{\ast} \in \mathcal{F}_{\mathcal{S}}$ and $\mathcal{E}_{\mathcal{S}}(u_{\ast}) = \mathcal{E}^{(n)}(u)$. 
	Since $\mathcal{E}^{(n)}(u) \le \mathcal{E}_{\mathcal{S}}(v)$ for any $v \in \mathcal{F}_{\mathcal{S}}$ with $v|_{\mathcal{V}_{n}} = u$, we also get $\mathcal{E}_{\mathcal{S}}(u_{\ast}) = \alpha$, so $h_{\mathcal{V}_{n}}^{\mathcal{S}}[u] \coloneqq u_{\ast}$ is the desired function. 
	
	Now let us go back to the proof of \ref{RF2}--\ref{RF4}. 
	\begin{enumerate}[label=\textup{(RF\arabic*)$_{p}$:},align=left,leftmargin=*,topsep=2pt,parsep=0pt,itemsep=2pt]
		\item[\ref{RF3}:] This is immediate since $\mathcal{F}_{\mathcal{S}}|_{\mathcal{V}_{n}} = \mathbb{R}^{V_{n}}$ for any $n \in \mathbb{N} \cup \{ 0 \}$ by \eqref{e:harmextVstar}. 
		\item[\ref{RF4}:] Let $x,y \in \mathcal{V}_{\ast}$ with $x \neq y$ and let $n \in \mathbb{N} \cup \{ 0 \}$ satisfy $x,y \in \mathcal{V}_{n}$. 
    	Let $u \coloneqq h_{\{ x,y \}}^{\mathcal{E}^{n}}\bigl[\indicator{x}^{\{ x,y \}}\bigr] \in \mathbb{R}^{\mathcal{V}_{n}}$. 
    	Then for any $v \in \mathcal{F}_{\mathcal{S}}$ with $v|_{\{ x,y \}} = \indicator{x}^{\{ x,y \}}$, 
    	\begin{align*}
    		\mathcal{E}_{\mathcal{S}}(v) 
    		\overset{\eqref{e:harmextVstar}}{\ge} \mathcal{E}^{(n)}(v|_{\mathcal{V}_{n}}) 
    		\ge R_{\mathcal{E}^{n}}(x,y)^{-1} = \mathcal{E}^{(n)}(u) 
    		\overset{\eqref{e:harmextVstar}}{=} \mathcal{E}_{\mathcal{S}}\bigl(h_{\mathcal{V}_{n}}^{\mathcal{S}}[u]\bigr). 
    	\end{align*} 
   		Therefore, we have 
   		\begin{equation}\label{e:pRMinductive}
   			R_{\mathcal{E}_{\mathcal{S}}}(x,y) 
   			= \mathcal{E}_{\mathcal{S}}\bigl(h_{\mathcal{V}_{n}}^{\mathcal{S}}[u]\bigr)^{-1} 
   			= R_{\mathcal{E}^{(n)}}(x,y) < \infty. 
   		\end{equation}
   		\item[\ref{RF2}:] Fix $x_{\ast} \in \mathcal{V}_{\ast}$, and let $\{ u_{k} \}_{k \in \mathbb{N}} \subseteq \mathcal{F}_{\mathcal{S}}$ satisfy $u_{k}(x_{\ast}) = 0$ for any $k \in \mathbb{N}$ and $\lim_{k \wedge l \to \infty}\mathcal{E}_{\mathcal{S}}(u_{k} - u_{l}) = 0$. 
    	From \ref{RF4}, $\{ u_{k}(x) \}_{k \in \mathbb{N}}$ is a Cauchy sequence in $\mathbb{R}$ for any $x \in \mathcal{V}_{\ast}$, so we can define $u \in \mathbb{R}^{V_{\ast}}$ by $u(x) \coloneqq \lim_{k \to \infty}u_{k}(x)$.
    	Let $\varepsilon \in (0,\infty)$. Then there exists $N_{0} \in \mathbb{N}$ such that $\sup_{k,l \ge N_{0}}\mathcal{E}_{\mathcal{S}}(u_{k} - u_{l}) \le \varepsilon$.
    	Since $\mathcal{E}^{(n)}(\,\cdot\,)^{1/p}$ is a norm on the finite-dimensional vector space $\mathbb{R}^{\mathcal{V}_{n}}/\mathbb{R}\indicator{\mathcal{V}_{n}}$, we obtain
    	\[
    	\mathcal{E}^{(n)}(u|_{\mathcal{V}_{n}} - u_{l}|_{\mathcal{V}_{n}})
    	\le \liminf_{k \to \infty}\mathcal{E}_{\mathcal{S}}(u_{k} - u_{l}) \le \varepsilon \quad \text{for any $l \ge N_{0}$ and any $n \in \mathbb{N} \cup \{ 0 \}$.}
    	\]
    	Letting $n \to \infty$ here, for any $l \ge N_{0}$ we obtain $u - u_{l} \in \mathcal{F}_{\mathcal{S}}$, therefore $u = (u - u_{l}) + u_{l} \in \mathcal{F}_{\mathcal{S}}$, also $\mathcal{E}_{\mathcal{S}}(u - u_{l}) \leq \varepsilon$, and thus $\lim_{l \to \infty}\mathcal{E}_{\mathcal{S}}(u - u_{l}) = 0$, which proves that $(\mathcal{F}_{\mathcal{S}}/\mathbb{R}\indicator{\mathcal{V}_{\ast}},\mathcal{E}_{\mathcal{S}}^{1/p})$ is a Banach space. 
	\end{enumerate}

	Now we know that $(\mathcal{E}_{\mathcal{S}},\mathcal{F}_{\mathcal{S}})$ is a $p$-resistance form on $\mathcal{V}_{\ast}$. 
	Then \eqref{e:harmextVstar} means that $h_{\mathcal{V}_{n}}^{\mathcal{S}} = h_{\mathcal{V}_{n}}^{\mathcal{E}_{\mathcal{S}}}[u]$ for any $u \in \mathbb{R}^{\mathcal{V}_{n}}$, whence $\mathcal{E}_{\mathcal{S}}|_{\mathcal{V}_{n}} = \mathcal{E}^{(n)}$ by \eqref{e:harmextVstar} again. 
\end{proof}

The following theorem yields a $p$-resistance form on the completion of $(X,R_{\mathcal{E}}^{1/p})$. 
\begin{thm}\label{thm.Rpcompletion} 
	Let $(Y,\theta)$ be the completion of the metric space $(X,R_{\mathcal{E}}^{1/p})$. 
	Define $\overline{\mathcal{F}} \subseteq \mathbb{R}^{Y}$ and $\overline{\mathcal{E}} \colon \overline{\mathcal{F}} \to [0,\infty)$ by  
	\begin{gather}
	    \overline{\mathcal{F}} \coloneqq \bigl\{ u \in \contfunc(Y) \bigm| u|_{X} \in \mathcal{F} \bigr\}, \label{e:defn.completion.dom} \\
        \overline{\mathcal{E}}(u) \coloneqq \mathcal{E}(u|_{X}), \quad u \in \overline{\mathcal{F}}.  \label{e:defn.completion.form}
    \end{gather}
    Then $(\overline{\mathcal{E}}, \overline{\mathcal{F}})$ is a $p$-resistance form on $Y$, $R_{\overline{\mathcal{E}}}^{1/p} = \theta$, and the map $\overline{\mathcal{F}} \ni u \mapsto u|_{X} \in \mathcal{F}$ is a linear isomorphism. 
\end{thm}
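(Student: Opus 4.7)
The key observation is that, by \eqref{R-basic}, every $u \in \mathcal{F}$ satisfies $\abs{u(x) - u(y)} \le \mathcal{E}(u)^{1/p} R_{\mathcal{E}}(x,y)^{1/p}$ for $x,y \in X$, so $u$ is Lipschitz (in fact $1$-Lipschitz after division by $\mathcal{E}(u)^{1/p}$) on the metric space $(X,R_{\mathcal{E}}^{1/p})$, and hence extends uniquely to a continuous function $\widehat{u}$ on the completion $\widehat{X}$. I would first use this together with the density of $X$ in $\widehat{X}$ to establish that the restriction map $\iota \colon \widehat{\mathcal{F}} \ni u \mapsto u|_{X} \in \mathcal{F}$ is a linear bijection: surjectivity follows from the extension just described, while injectivity is immediate from continuity and density.

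Next, I would transfer \ref{RF1}, \ref{RF2}, and \ref{RF5} from $(\mathcal{E},\mathcal{F})$ to $(\widehat{\mathcal{E}},\widehat{\mathcal{F}})$ through $\iota$. For \ref{RF1}, the linearity of $\widehat{\mathcal{F}}$, the inclusion $\mathbb{R}\indicator{\widehat{X}}\subseteq\widehat{\mathcal{F}}$, and the seminorm property of $\widehat{\mathcal{E}}^{1/p}$ follow from the identity $\widehat{\mathcal{E}}(u) = \mathcal{E}(\iota(u))$ and the fact that $\iota(\indicator{\widehat{X}}) = \indicator{X}$; the identity $\{u\in\widehat{\mathcal{F}}\mid\widehat{\mathcal{E}}(u)=0\}=\mathbb{R}\indicator{\widehat{X}}$ holds because $\iota(u)\in\mathbb{R}\indicator{X}$ forces $u$ to be constant on the dense subset $X$ of $\widehat{X}$ and hence on $\widehat{X}$ by continuity. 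For \ref{RF2}, $\iota$ descends to an isometric isomorphism between the quotients $\widehat{\mathcal{F}}/\mathbb{R}\indicator{\widehat{X}}$ and $\mathcal{F}/\mathbb{R}\indicator{X}$, so completeness is preserved. For \ref{RF5}, given $T=(T_1,\dots,T_{n_2})$ satisfying \eqref{GC-cond} and $\bm{u}\in\widehat{\mathcal{F}}^{n_1}$, each $T_l$ is continuous on $\mathbb{R}^{n_1}$ (by \eqref{GC-cond}), so $T_l(\bm{u})\in\contfunc(\widehat{X})$; moreover $T_l(\bm{u})|_X = T_l(\iota(\bm{u})) \in \mathcal{F}$ by \ref{GC} for $(\mathcal{E},\mathcal{F})$, so $T_l(\bm{u})\in\widehat{\mathcal{F}}$, and the required $\ell^{q_2}$-inequality transfers immediately through $\iota$.

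The main step, and the only one that is more than routine transfer, is the identification $R_{\widehat{\mathcal{E}}}^{1/p}=\widehat{d}$, which at the same time yields \ref{RF3} and \ref{RF4}. Let $x,y\in\widehat{X}$ and pick $\{x_n\},\{y_n\}\subseteq X$ with $x_n\to x$, $y_n\to y$ in $(\widehat{X},\widehat{d})$. For one direction: for any $u\in\widehat{\mathcal{F}}$, \eqref{R-basic} applied to $\iota(u)\in\mathcal{F}$ gives
\begin{equation*}
\abs{u(x_n)-u(y_n)}^{p} \le R_{\mathcal{E}}(x_n,y_n)\,\mathcal{E}(\iota(u)) = \widehat{d}(x_n,y_n)^{p}\,\widehat{\mathcal{E}}(u),
\end{equation*}
and passing to the limit via continuity of $u$ on $\widehat{X}$ yields $\abs{u(x)-u(y)}^{p}\le \widehat{d}(x,y)^{p}\widehat{\mathcal{E}}(u)$, hence $R_{\widehat{\mathcal{E}}}(x,y)\le \widehat{d}(x,y)^{p}$. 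For the reverse direction (the step I expect to require the most care), assume $x\ne y$, so that $\widehat{d}(x,y)>0$ and hence $x_n\ne y_n$ for all sufficiently large $n$. For each such $n$, by \ref{RF4} for $(\mathcal{E},\mathcal{F})$ and the definition \eqref{R-def}, one can choose $u_n\in\mathcal{F}\setminus\mathbb{R}\indicator{X}$ with $\mathcal{E}(u_n)=1$ and
\begin{equation*}
\abs{u_n(x_n)-u_n(y_n)}^{p} \ge R_{\mathcal{E}}(x_n,y_n) - 1/n = \widehat{d}(x_n,y_n)^{p} - 1/n.
\end{equation*}
Letting $\widehat{u}_n\in\widehat{\mathcal{F}}$ denote the extension with $\widehat{\mathcal{E}}(\widehat{u}_n)=1$, the Lipschitz bound gives $\abs{\widehat{u}_n(x)-\widehat{u}_n(x_n)}\le \widehat{d}(x,x_n)$ and similarly at $y$, so by the reverse triangle inequality $\abs{\widehat{u}_n(x)-\widehat{u}_n(y)} \to \widehat{d}(x,y)$, and therefore $R_{\widehat{\mathcal{E}}}(x,y)\ge \widehat{d}(x,y)^{p}$. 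Combining the two inequalities gives $R_{\widehat{\mathcal{E}}}^{1/p}=\widehat{d}$; in particular, this quantity is finite (which is \ref{RF4}) and strictly positive whenever $x\ne y$, producing a function $\widehat{u}_n\in\widehat{\mathcal{F}}$ separating $x$ and $y$ (which is \ref{RF3}). The final assertion that $\iota$ is a linear isomorphism has already been established in the first step.
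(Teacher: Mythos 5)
Your proof is correct, and its overall skeleton (unique continuous extension giving the linear isomorphism, then transferring \ref{RF1}, \ref{RF2}, \ref{RF5}, then identifying $R_{\widehat{\mathcal{E}}}^{1/p}$ with $\widehat{d}$ to get \ref{RF3} and \ref{RF4}) matches the paper's. The one step where you genuinely diverge is the lower bound $R_{\widehat{\mathcal{E}}}(x,y)\ge\widehat{d}(x,y)^{p}$. The paper takes the functions $u_{n}$ with $u_{n}|_{X}=h_{\{x_{n},y_{n}\}}^{\mathcal{E}}\bigl[\indicator{x_{n}}^{\{x_{n},y_{n}\}}\bigr]$, notes $\widehat{\mathcal{E}}(u_{n})=R_{\mathcal{E}}(x_{n},y_{n})^{-1}$ and $0\le u_{n}\le 1$, and then invokes the Arzel\'{a}--Ascoli theorem together with the completeness argument from \ref{RF2} of Theorem \ref{thm.Epcountable} to extract a limit $u_{*}\in\widehat{\mathcal{F}}$ with $\widehat{\mathcal{E}}(u_{*}-u_{n_{k}})\to 0$; this produces an actual extremal function $u$ with $u(x)=1$, $u(y)=0$ and $\widehat{\mathcal{E}}(u)=\widehat{R}(x,y)^{-1}$, which simultaneously yields \ref{RF3}, \ref{RF4} and $R_{\widehat{\mathcal{E}}}=\widehat{d}^{\,p}$. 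You instead take near-maximizers $u_{n}$ normalized by $\mathcal{E}(u_{n})=1$ at the approximating points and control the error purely by the Lipschitz bound $\abs{\widehat{u}_{n}(x)-\widehat{u}_{n}(x_{n})}\le\widehat{d}(x,x_{n})$, so that $\liminf_{n}\abs{\widehat{u}_{n}(x)-\widehat{u}_{n}(y)}\ge\widehat{d}(x,y)$ already forces $R_{\widehat{\mathcal{E}}}(x,y)\ge\widehat{d}(x,y)^{p}$ and, for large $n$, gives a separating function for \ref{RF3}. Your route is more elementary — it needs no compactness and no completeness of the quotient at this step — at the cost of not producing the extremal function that the paper's argument yields as a by-product (and which is not needed for the statement). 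One cosmetic point: the claim $\abs{\widehat{u}_{n}(x)-\widehat{u}_{n}(y)}\to\widehat{d}(x,y)$ is only a $\liminf$ lower bound from your estimate alone; the matching upper bound comes from the first direction you proved, and in any case the $\liminf$ suffices.
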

\begin{proof}
	Set $\overline{R}(x,y) \coloneqq \theta(x,y)^{p}$ for ease of notation, so that $\overline{R}\,\big|_{X \times X} = R_{\mathcal{E}}$. 
	For any $u \in \mathcal{F}$, we know that $u$ is uniformly continuous with respect to $\theta$ by the estimate \eqref{R-basic} for $(\mathcal{E},\mathcal{F})$, so there exists a unique $u_{\ast} \in \contfunc(Y)$ satisfying $u_{\ast}|_{X} = u$ and then $u_{\ast} \in \overline{\mathcal{F}}$. 
	This implies that the map $\overline{\mathcal{F}} \ni u \mapsto u|_{X} \in \mathcal{F}$ is a bijection and thus it is a linear isomorphism.
	Moreover, for any $u \in \overline{\mathcal{F}}$, the function $\eta_{u} \colon Y \times Y \to \mathbb{R}$ defined by $\eta_{u}(x,y) \coloneqq \abs{u(x) - u(y)}^{p} - \overline{R}(x,y)\overline{\mathcal{E}}(u)$ is $(-\infty,0]$-valued on $X \times X$ by \eqref{R-basic} for $(\mathcal{E},\mathcal{F})$ and hence also on $Y \times Y$ by its continuity, i.e.,
	\begin{equation}\label{e:R-basic.ext}
		\abs{u(x) - u(y)}^{p} \le \overline{R}(x,y)\overline{\mathcal{E}}(u) \quad \textrm{for any $x,y \in Y$.} 
	\end{equation}
	Now we show \ref{RF1}--\ref{RF5} for $(\overline{\mathcal{E}}, \overline{\mathcal{F}})$ and $R_{\overline{\mathcal{E}}} = \overline{R}$. 
	
	\ref{RF1}: 
	Clearly, $\overline{\mathcal{F}}$ is a linear subspace of $\mathbb{R}^{Y}$ including $\mathbb{R}\indicator{Y}$ and $\overline{\mathcal{E}}(\,\cdot\,)^{1/p}$ is a seminorm on $\overline{\mathcal{F}}$. By $\indicator{Y}|_{X} = \indicator{X}$ and \ref{RF1} for $(\mathcal{E},\mathcal{F})$, it holds that $\{ u \in \overline{\mathcal{F}} \mid \overline{\mathcal{E}}(u) = 0 \} = \mathbb{R}\indicator{Y}$. 
	
	\ref{RF2}: 
	This is immediate from \ref{RF2} for $(\mathcal{E},\mathcal{F})$ since $\overline{\mathcal{F}} \ni u \mapsto u|_{X} \in \mathcal{F}$ is a linear isomorphism. 
	
	\ref{RF4} and $R_{\overline{\mathcal{E}}} \leq \overline{R}$:
	This is immediate from \eqref{e:R-basic.ext} and the definition of $R_{\overline{\mathcal{E}}}$ in \eqref{R-def}.
	
	\ref{RF5}:
	This is immediate from \ref{RF5} for $(\mathcal{E},\mathcal{F})$.
	
	\ref{RF3} and $R_{\overline{\mathcal{E}}} = \overline{R}$:
	Let $x,y \in Y$ satisfy $x \neq y$, and choose $\{ x_{n} \}_{n \in \mathbb{N}}, \{ y_{n} \}_{n \in \mathbb{N}} \subseteq X$ so that $\lim_{n \to \infty}\overline{R}(x,x_{n}) = 0 = \lim_{n \to \infty}\overline{R}(y,y_{n})$. 
	We can assume that $x_{n} \neq y_{n}$ for any $n \in \mathbb{N}$. 
	For each $n \in \mathbb{N}$, let $u_{n} \in \overline{\mathcal{F}}$ be the unique function satisfying $u_{n}|_{X} = h_{\{ x_{n},y_{n} \}}^{\mathcal{E}}\bigl[\indicator{x_{n}}^{\{ x_{n}, y_{n} \}}\bigr]$, so that by \eqref{e:R-basic.ext} we have
	\begin{align}
	\overline{\mathcal{E}}(u_{n}) = \mathcal{E}(u_{n}|_{X}) = R_{\mathcal{E}}(x_{n},y_{n})^{-1} &= \overline{R}(x_{n},y_{n})^{-1} \xrightarrow[n \to \infty]{} \overline{R}(x,y)^{-1}, \label{eq:Rpcompletion-RF3-norm} \\
	\abs{1 - u_{n}(x)} = \abs{u_{n}(x_{n}) - u_{n}(x)} &\overset{\eqref{e:R-basic.ext}}{\leq} \overline{R}(x_{n},x)^{1/p} \overline{\mathcal{E}}(u_{n})^{1/p} \xrightarrow[n \to \infty]{} 0, \label{eq:Rpcompletion-RF3-x} \\
	\abs{u_{n}(y)} = \abs{u_{n}(y_{n}) - u_{n}(y)} &\overset{\eqref{e:R-basic.ext}}{\leq} \overline{R}(y_{n},y)^{1/p} \overline{\mathcal{E}}(u_{n})^{1/p} \xrightarrow[n \to \infty]{} 0. \label{eq:Rpcompletion-RF3-y} 
	\end{align}
	In particular, $\sup_{n \in \mathbb{N}}\overline{\mathcal{E}}(u_{n}) < \infty$ by \eqref{eq:Rpcompletion-RF3-norm}, and since $(\overline{\mathcal{F}}/\mathbb{R}\indicator{Y}, \overline{\mathcal{E}}^{1/p})$ is isometrically isomorphic to $(\mathcal{F}/\mathbb{R}\indicator{X}, \mathcal{E}^{1/p})$ and hence is a reflexive Banach space by Proposition \ref{prop.R-conseq}-\ref{it:RF.unif-conv}, some subsequence $\{ u_{n_{k}} \}_{k \in \mathbb{N}}$ of $\{ u_{n} \}_{n \in \mathbb{N}}$ converges weakly in $(\overline{\mathcal{F}}/\mathbb{R}\indicator{Y}, \overline{\mathcal{E}}^{1/p})$ to some $u \in \overline{\mathcal{F}}$ with $u(y) = 0$ by the Banach--Alaoglu theorem (see, e.g., \cite[Theorem 1 in Section V.2]{Yos}).
	Noting that $\overline{\mathcal{F}} \ni v \mapsto v(x) - v(y)$ defines a bounded linear functional on $(\overline{\mathcal{F}}/\mathbb{R}\indicator{Y}, \overline{\mathcal{E}}^{1/p})$ by \eqref{e:R-basic.ext}, we now see from the weak convergence of $\{ u_{n_{k}} \}_{k \in \mathbb{N}}$ to $u$ in $(\overline{\mathcal{F}}/\mathbb{R}\indicator{Y}, \overline{\mathcal{E}}^{1/p})$, together with $u(y) = 0$, \eqref{eq:Rpcompletion-RF3-x}, \eqref{eq:Rpcompletion-RF3-y}, the definition of $R_{\overline{\mathcal{E}}}(x,y)$ in \eqref{R-def}, \eqref{eq:Rpcompletion-RF3-norm} and \eqref{e:R-basic.ext}, that
	\begin{align*}
	u(x) = u(x) - u(y) &= \lim_{k \to \infty}(u_{n_{k}}(x) - u_{n_{k}}(y)) \overset{\eqref{eq:Rpcompletion-RF3-x},\eqref{eq:Rpcompletion-RF3-y}}{=} 1 \not= 0 = u(y), \\
	R_{\overline{\mathcal{E}}}(x,y)^{-1} \overset{\eqref{R-def}}{\leq} \overline{\mathcal{E}}(u) &\leq \lim_{k \to \infty}\overline{\mathcal{E}}(u_{n_{k}}) \overset{\eqref{eq:Rpcompletion-RF3-norm}}{=} \overline{R}(x,y)^{-1} \overset{\eqref{e:R-basic.ext},\eqref{R-def}}{\leq} R_{\overline{\mathcal{E}}}(x,y)^{-1},
	\end{align*}
	proving \ref{RF3} and $R_{\overline{\mathcal{E}}} = \overline{R}$.
\end{proof}

\begin{cor}\label{cor.Epext} 
	Let $\mathcal{S} = \{ (\mathcal{V}_{n}, \mathcal{E}^{(n)}) \}_{n \in \mathbb{N} \cup \{ 0 \}}$ be a compatible sequence of $p$-resistance forms and let $(K,d)$ be the completion of $(\mathcal{V}_{\ast},R_{\mathcal{E}_{\mathcal{S}}}^{1/p})$, where $(\mathcal{E}_{\mathcal{S}},\mathcal{F}_{\mathcal{S}})$ is the $p$-resistance form on $\mathcal{V}_{\ast} = \bigcup_{n \in \mathbb{N} \cup \{ 0 \}}\mathcal{V}_{n}$ given in Theorem \ref{thm.Epcountable}.
	Define $\overline{\mathcal{F}}_{\mathcal{S}} \subseteq \mathbb{R}^{K}$ and $\overline{\mathcal{E}}_{\mathcal{S}} \colon \overline{\mathcal{F}}_{\mathcal{S}} \to [0,\infty)$ by  
	\begin{gather}
	    \overline{\mathcal{F}}_{\mathcal{S}} \coloneqq \bigl\{ u \in \contfunc(K) \bigm| u|_{\mathcal{V}_{\ast}} \in \mathcal{F}_{\mathcal{S}} \bigr\} =  \Bigl\{ u \in \contfunc(K) \Bigm| \lim_{n \to \infty}\mathcal{E}^{(n)}(u|_{\mathcal{V}_{n}}) < \infty \Bigr\}, \label{e:defn.compatext.dom} \\
        \overline{\mathcal{E}}_{\mathcal{S}}(u) \coloneqq \mathcal{E}_{\mathcal{S}}(u|_{\mathcal{V}_{\ast}}) =  \lim_{n \to \infty}\mathcal{E}^{(n)}(u|_{\mathcal{V}_{n}}), \quad u \in \overline{\mathcal{F}}_{\mathcal{S}}.  \label{e:defn.compatext.form}
    \end{gather}
    Then $(\overline{\mathcal{E}}_{\mathcal{S}}, \overline{\mathcal{F}}_{\mathcal{S}})$ is a $p$-resistance form on $K$, $R_{\overline{\mathcal{E}}_{\mathcal{S}}}^{1/p} = d$, and the map $\overline{\mathcal{F}}_{\mathcal{S}} \ni u \mapsto u|_{\mathcal{V}_{\ast}} \in \mathcal{F}_{\mathcal{S}}$ is a linear isomorphism. 
    In particular, $\overline{\mathcal{E}}_{\mathcal{S}}\big|_{\mathcal{V}_{n}} = \mathcal{E}^{(n)}$ for any $n \in \mathbb{N} \cup \{ 0 \}$. 
\end{cor}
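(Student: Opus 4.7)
The proof is essentially a direct composition of Theorems~\ref{thm.Epcountable} and~\ref{thm.Rpcompletion}: Theorem~\ref{thm.Epcountable} produces the $p$-resistance form $(\mathcal{E}_{\mathcal{S}},\mathcal{F}_{\mathcal{S}})$ on the dense subset $\mathcal{V}_{\ast}$, and Theorem~\ref{thm.Rpcompletion} promotes it to a $p$-resistance form on the metric completion. My plan is therefore to apply these two results in succession and then reconcile the notation.

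First I would invoke Theorem~\ref{thm.Epcountable} to obtain that $(\mathcal{E}_{\mathcal{S}},\mathcal{F}_{\mathcal{S}})$ is a $p$-resistance form on $\mathcal{V}_{\ast}$ with $\mathcal{E}_{\mathcal{S}}|_{\mathcal{V}_{n}} = \mathcal{E}^{(n)}$ for every $n$; this is exactly the hypothesis needed to apply Theorem~\ref{thm.Rpcompletion} with the role of $(X,\mathcal{E},\mathcal{F})$ played by $(\mathcal{V}_{\ast},\mathcal{E}_{\mathcal{S}},\mathcal{F}_{\mathcal{S}})$. Since $(K,d)$ is defined as the completion of $(\mathcal{V}_{\ast},R_{\mathcal{E}_{\mathcal{S}}}^{1/p})$, Theorem~\ref{thm.Rpcompletion} delivers precisely that $(\widehat{\mathcal{E}}_{\mathcal{S}},\widehat{\mathcal{F}}_{\mathcal{S}})$, as defined by the first equalities in \eqref{e:defn.compatext.dom} and \eqref{e:defn.compatext.form}, is a $p$-resistance form on $K$, that $R_{\widehat{\mathcal{E}}_{\mathcal{S}}}^{1/p} = d$, and that $\widehat{\mathcal{F}}_{\mathcal{S}} \ni u \mapsto u|_{\mathcal{V}_{\ast}} \in \mathcal{F}_{\mathcal{S}}$ is a linear isomorphism.

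Next I would verify the second equalities in \eqref{e:defn.compatext.dom} and \eqref{e:defn.compatext.form}: these amount to the characterization \eqref{e:defn.compat.dom} and definition \eqref{e:defn.compat.form} of $\mathcal{F}_{\mathcal{S}}$ and $\mathcal{E}_{\mathcal{S}}$ from Theorem~\ref{thm.Epcountable}, applied to $u|_{\mathcal{V}_{\ast}}$; that is, for $u \in \contfunc(K)$, the condition $u|_{\mathcal{V}_{\ast}} \in \mathcal{F}_{\mathcal{S}}$ is by \eqref{e:defn.compat.dom} equivalent to $\lim_{n\to\infty}\mathcal{E}^{(n)}(u|_{\mathcal{V}_{n}}) < \infty$, and in that case $\mathcal{E}_{\mathcal{S}}(u|_{\mathcal{V}_{\ast}}) = \lim_{n\to\infty}\mathcal{E}^{(n)}(u|_{\mathcal{V}_{n}})$ by \eqref{e:defn.compat.form}.

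Finally, for the identity $\widehat{\mathcal{E}}_{\mathcal{S}}|_{\mathcal{V}_{n}} = \mathcal{E}^{(n)}$, I would apply the trace-compatibility identity of Proposition~\ref{prop.trace-comp} to the inclusion $\mathcal{V}_{n} \subseteq \mathcal{V}_{\ast} \subseteq K$ to obtain $\widehat{\mathcal{E}}_{\mathcal{S}}|_{\mathcal{V}_{n}} = \widehat{\mathcal{E}}_{\mathcal{S}}|_{\mathcal{V}_{\ast}}|_{\mathcal{V}_{n}}$. Because the restriction map $\widehat{\mathcal{F}}_{\mathcal{S}} \to \mathcal{F}_{\mathcal{S}}$ is an energy-preserving linear isomorphism, the infimum in the definition \eqref{eq:dfn-trace} of $\widehat{\mathcal{E}}_{\mathcal{S}}|_{\mathcal{V}_{\ast}}$ is achieved by the unique extension, so $\widehat{\mathcal{E}}_{\mathcal{S}}|_{\mathcal{V}_{\ast}} = \mathcal{E}_{\mathcal{S}}$, and then Theorem~\ref{thm.Epcountable} gives $\mathcal{E}_{\mathcal{S}}|_{\mathcal{V}_{n}} = \mathcal{E}^{(n)}$. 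No genuine obstacle arises; the only point requiring a moment's care is this reduction of $\widehat{\mathcal{E}}_{\mathcal{S}}|_{\mathcal{V}_{\ast}}$ to $\mathcal{E}_{\mathcal{S}}$ via the isomorphism, everything else being bookkeeping.
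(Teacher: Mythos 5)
Your proposal is correct and follows essentially the same route as the paper: the paper likewise obtains everything by applying Theorem \ref{thm.Rpcompletion} with $(\mathcal{V}_{\ast},\mathcal{E}_{\mathcal{S}},\mathcal{F}_{\mathcal{S}})$ in place of $(X,\mathcal{E},\mathcal{F})$ and then deduces $\widehat{\mathcal{E}}_{\mathcal{S}}\big|_{\mathcal{V}_{n}} = \mathcal{E}^{(n)}$ from $\mathcal{E}_{\mathcal{S}}|_{\mathcal{V}_{n}} = \mathcal{E}^{(n)}$ and the fact that restriction to $\mathcal{V}_{\ast}$ is an energy-preserving linear isomorphism. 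Your slightly more explicit unwinding of the last step via Proposition \ref{prop.trace-comp} is just a more detailed rendering of the same argument.
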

\begin{proof}
	We obtain the desired assertions by applying Theorem \ref{thm.Rpcompletion} with $\mathcal{V}_{\ast}$, $(\mathcal{E}_{\mathcal{S}},\mathcal{F}_{\mathcal{S}})$ in place of $X$, $(\mathcal{E},\mathcal{F})$. 
	Also, by $\mathcal{E}_{\mathcal{S}}|_{\mathcal{V}_{n}} = \mathcal{E}^{(n)}$ (see Theorem \ref{thm.Epcountable}) and the fact that $\overline{\mathcal{F}}_{\mathcal{S}} \ni u \mapsto u|_{\mathcal{V}_{\ast}} \in \mathcal{F}_{\mathcal{S}}$ is a linear isomorphism, we have $\overline{\mathcal{E}}_{\mathcal{S}}\big|_{\mathcal{V}_{n}} = \mathcal{E}^{(n)}$. 
\end{proof}

We conclude this subsection with a discussion of strong locality of $p$-resistance forms.
\begin{defn}[Strong locality\index{strong locality (of $p$-resistance form)} of $p$-resistance form]\label{defn.RF-sl}
	\begin{enumerate}[label=\textup{(\arabic*)},align=left,leftmargin=*,topsep=2pt,parsep=0pt,itemsep=2pt]
		\item\label{it:SL1s.pRF} We say that $(\mathcal{E},\mathcal{F})$ has the strong local property\index{strong local property (of $p$-resistance form)} \hyperref[it:SL1s.pRF]{\textup{(SL1)$_{\mathrm{s}}$}} if and only if 
			\begin{equation}\label{e:defn.sl1.pRF}
				\mathcal{E}(u_{1} + u_{2} + v) + \mathcal{E}(v) = \mathcal{E}(u_{1} + v) + \mathcal{E}(u_{2} + v). 
			\end{equation}
			for any $u_{1},u_{2},v \in \mathcal{F}$ with either $\supp_{X}[u_{1} - a_{1}\indicator{X}]$ or $\supp_{X}[u_{2} - a_{2}\indicator{X}]$ compact and $(u_{1}(x) - a_{1})(u_{2}(x) - a_{2}) = 0$ for any $x \in X$ for some $a_{1},a_{2} \in \mathbb{R}$. 
		\item\label{it:SL2s.pRF} We say that $(\mathcal{E}, \mathcal{F})$ has the strong local property \hyperref[it:SL2s.pRF]{\textup{(SL2)$_{\mathrm{s}}$}}, or $(\mathcal{E}, \mathcal{F})$ is \emph{strongly local}\index{strongly local ($p$-resistance form)}, if and only if 
			\begin{equation}\label{e:defn.sl2.pRF}
				\mathcal{E}(u_{1}; v) = \mathcal{E}(u_{2}; v)
			\end{equation}
			for any $u_{1}, u_{2}, v \in \mathcal{F}$ with either $\supp_{X}[u_{1} - u_{2} - a\indicator{X}]$ or $\supp_{X}[v - b\indicator{X}]$ compact and $(u_{1}(x) - u_{2}(x) - a)(v(x) - b) = 0$ for any $x \in X$ for some $a,b \in \mathbb{R}$.  
		\item\label{it:SL1w.pRF} We say that $(\mathcal{E},\mathcal{F})$ has the strong local property \hyperref[it:SL1w.pRF]{\textup{(SL1)$_{\mathrm{w}}$}} if and only if \hyperref[it:SL1s.pRF]{\textup{(SL1)$_{\mathrm{s}}$}} with ``$(u_{1}(x) - a_{1})(u_{2}(x) - a_{2}) = 0$ for any $x \in X$'' replaced by ``$\supp_{X}[u_{1} - a_{1}\indicator{X}] \cap \supp_{X}[u_{2} - a_{2}\indicator{X}] = \emptyset$'' holds. 
		\item\label{it:SL2w.pRF} We say that $(\mathcal{E},\mathcal{F})$ has the strong local property \hyperref[it:SL2w.pRF]{\textup{(SL2)$_{\mathrm{w}}$}} if and only if \hyperref[it:SL2s.pRF]{\textup{(SL2)$_{\mathrm{s}}$}} with ``$(u_{1}(x) - u_{2}(x) - a)(v(x) - b) = 0$ for any $x \in X$'' replaced by ``$\supp_{X}[u_{1} - u_{2} - a\indicator{X}] \cap \supp_{X}[v - b\indicator{X}] = \emptyset$'' holds. 
	\end{enumerate}
\end{defn}

Note that \hyperref[it:SL1w.pRF]{\textup{(SL1)$_{\mathrm{w}}$}} and \hyperref[it:SL2w.pRF]{\textup{(SL2)$_{\mathrm{w}}$}} are exactly \hyperref[it:SL1]{\textup{(SL1)}} and \hyperref[it:SL2]{\textup{(SL2)}}, respectively, in Definition \ref{defn.Epsl} with ``$\supp_{m}$'' replaced by ``$\supp_{X}$''. 
In the following proposition, we discuss relations among the strong local properties \hyperref[it:SL1s.pRF]{\textup{(SL1)$_{\mathrm{s}}$}}, \hyperref[it:SL2s.pRF]{\textup{(SL2)$_{\mathrm{s}}$}}, \hyperref[it:SL1w.pRF]{\textup{(SL1)$_{\mathrm{w}}$}} and \hyperref[it:SL2w.pRF]{\textup{(SL2)$_{\mathrm{w}}$}} introduced in Definition \ref{defn.RF-sl}. 
\begin{prop}\label{prop.RF.SL}
	\begin{enumerate}[label=\textup{(\alph*)},align=left,leftmargin=*,topsep=2pt,parsep=0pt,itemsep=2pt]
		\item\label{it:RF.SL2w1w} If $X$ is locally compact and $(\mathcal{E},\mathcal{F})$ is regular and satisfies \hyperref[it:SL2w.pRF]{\textup{(SL2)$_{\mathrm{w}}$}}, then $(\mathcal{E},\mathcal{F})$ satisfies \hyperref[it:SL1w.pRF]{\textup{(SL1)$_{\mathrm{w}}$}}. 
		\item\label{it:RF.SL1w2w} If $(\mathcal{E},\mathcal{F})$ satisfies \hyperref[it:SL1w.pRF]{\textup{(SL1)$_{\mathrm{w}}$}}, then $(\mathcal{E},\mathcal{F})$ satisfies \hyperref[it:SL2w.pRF]{\textup{(SL2)$_{\mathrm{w}}$}}. 
		\item\label{it:RF.SL1sw} $(\mathcal{E},\mathcal{F})$ satisfies \hyperref[it:SL1s.pRF]{\textup{(SL1)$_{\mathrm{s}}$}} if and only if $(\mathcal{E},\mathcal{F})$ satisfies \hyperref[it:SL1w.pRF]{\textup{(SL1)$_{\mathrm{w}}$}}. 
		\item\label{it:RF.SL2sw} $(\mathcal{E},\mathcal{F})$ satisfies \hyperref[it:SL2s.pRF]{\textup{(SL2)$_{\mathrm{s}}$}} if and only if $(\mathcal{E},\mathcal{F})$ satisfies \hyperref[it:SL2w.pRF]{\textup{(SL2)$_{\mathrm{w}}$}}. 
	\end{enumerate}
	
	In particular, if $X$ is locally compact and $(\mathcal{E},\mathcal{F})$ is regular, then \hyperref[it:SL1s.pRF]{\textup{(SL1)$_{\mathrm{s}}$}}, \hyperref[it:SL2s.pRF]{\textup{(SL2)$_{\mathrm{s}}$}}, \hyperref[it:SL1w.pRF]{\textup{(SL1)$_{\mathrm{w}}$}} and \hyperref[it:SL2w.pRF]{\textup{(SL2)$_{\mathrm{w}}$}} are equivalent to each other. 
\end{prop}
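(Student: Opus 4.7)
The proof consists of verifying the four claims (a)--(d) separately; the concluding equivalence in the regular, locally compact case then follows at once from (a)--(d). Throughout, I regard $(\mathcal{E},\mathcal{F})$ as a $p$-energy form on $(X,m)$ with $m$ the counting measure on $X$. Since $\mathcal{F} \subseteq \contfunc(X)$ by \eqref{R-basic} and $m$ has full support, $\supp_{m}[f] = \supp_{X}[f]$ for every $f \in \mathcal{F}$, so \hyperref[it:SL1w.pRF]{\textup{(SL1)$_{\mathrm{w}}$}} and \hyperref[it:SL2w.pRF]{\textup{(SL2)$_{\mathrm{w}}$}} coincide with \hyperref[it:SL1]{\textup{(SL1)}} and \hyperref[it:SL2]{\textup{(SL2)}} of Definition \ref{defn.Epsl} respectively, and \ref{Cp} holds by \ref{RF5} together with Proposition \ref{prop.GC-list}-\ref{GC.Cpsmall},\ref{GC.Cplarge}. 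Under this identification, claim (b) is an immediate application of Proposition \ref{prop.sl-other}-\ref{it:SL1-SL2}, and claim (a) follows from Proposition \ref{prop.sl-other}-\ref{it:SL2-SL1} once its three auxiliary hypotheses are verified: the Leibniz closure \eqref{e:sl-leibniz} is \eqref{leibniz} (via \ref{RF5} and Proposition \ref{prop.GC-list}-\ref{GC.leibniz}); the truncation approximation \eqref{e:sl-bddapprox} is Corollary \ref{cor.approx2}-\ref{approx2-1} applied to $\varphi_{n}(t) = (-n) \vee (t \wedge n)$; and the cutoff condition \hyperref[defn.CF-measure]{\textup{(CF)$_{m}$}} is provided by Proposition \ref{prop.regular}, whose proof uses both the local compactness of $X$ and the regularity of $(\mathcal{E},\mathcal{F})$.

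The implications \hyperref[it:SL1s.pRF]{\textup{(SL1)$_{\mathrm{s}}$}} $\Rightarrow$ \hyperref[it:SL1w.pRF]{\textup{(SL1)$_{\mathrm{w}}$}} and \hyperref[it:SL2s.pRF]{\textup{(SL2)$_{\mathrm{s}}$}} $\Rightarrow$ \hyperref[it:SL2w.pRF]{\textup{(SL2)$_{\mathrm{w}}$}} in (c) and (d) are trivial because $\supp_{X}[f] \cap \supp_{X}[g] = \emptyset$ clearly implies $f(x)g(x) = 0$ for every $x \in X$. The reverse directions constitute the main difficulty: pointwise vanishing of a product is strictly weaker than disjointness of the topological supports at boundary points where both functions vanish while belonging to the closures of their respective non-zero sets. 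The remedy is a truncation trick: for $f \in \mathcal{F}$ and $\varepsilon \in (0,\infty)$, set $f_{\varepsilon} \coloneqq \varphi_{\varepsilon}(f)$ with $\varphi_{\varepsilon}(t) \coloneqq \sgn(t)(\abs{t} - \varepsilon)^{+}$. Then $\varphi_{\varepsilon}$ is $1$-Lipschitz with $\varphi_{\varepsilon}(0) = 0$ and $\lim_{\varepsilon \downarrow 0}\varphi_{\varepsilon}(t) = t$ for each $t \in \mathbb{R}$, so $f_{\varepsilon} \in \mathcal{F}$ and $\lim_{\varepsilon \downarrow 0}\mathcal{E}(f - f_{\varepsilon}) = 0$ by Corollary \ref{cor.approx2}-\ref{approx2-1}, and $\supp_{X}[f_{\varepsilon}] \subseteq \{ x \in X : \abs{f(x)} \ge \varepsilon \} \subseteq \supp_{X}[f]$. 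Consequently, if $f, g \in \mathcal{F}$ satisfy $fg = 0$ pointwise on $X$, then $\supp_{X}[f_{\varepsilon}] \cap \supp_{X}[g_{\varepsilon}] = \emptyset$ for every $\varepsilon > 0$ (a common point would satisfy $\abs{f}, \abs{g} \ge \varepsilon$, forcing $fg \neq 0$), and compactness of one of the supports is preserved by truncation.

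For the reverse direction of (c), given $u_{1}, u_{2}, v \in \mathcal{F}$ and $a_{1}, a_{2} \in \mathbb{R}$ as in the hypothesis of \hyperref[it:SL1s.pRF]{\textup{(SL1)$_{\mathrm{s}}$}}, set $f_{i} \coloneqq u_{i} - a_{i}\indicator{X}$ and apply \hyperref[it:SL1w.pRF]{\textup{(SL1)$_{\mathrm{w}}$}} to $a_{1}\indicator{X} + f_{1,\varepsilon}$, $a_{2}\indicator{X} + f_{2,\varepsilon}$ and $v$; passing to the limit $\varepsilon \downarrow 0$ by the triangle inequality for $\mathcal{E}^{1/p}$ together with $\mathcal{E}(\indicator{X}) = 0$ yields \hyperref[it:SL1s.pRF]{\textup{(SL1)$_{\mathrm{s}}$}}. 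For the reverse direction of (d), given $u_{1}, u_{2}, v \in \mathcal{F}$ and $a, b \in \mathbb{R}$ as in the hypothesis of \hyperref[it:SL2s.pRF]{\textup{(SL2)$_{\mathrm{s}}$}}, set $g \coloneqq u_{1} - u_{2} - a\indicator{X}$ and $h \coloneqq v - b\indicator{X}$; the linearity of $\mathcal{E}(\,\cdot\, ; \,\cdot\,)$ in the second argument and the invariance \eqref{form.basic} reduce the goal to $\mathcal{E}(u_{2} + g; h) = \mathcal{E}(u_{2}; h)$. Apply \hyperref[it:SL2w.pRF]{\textup{(SL2)$_{\mathrm{w}}$}} to $u_{2} + g_{\varepsilon}, u_{2}, h_{\varepsilon}$ to obtain $\mathcal{E}(u_{2} + g_{\varepsilon}; h_{\varepsilon}) = \mathcal{E}(u_{2}; h_{\varepsilon})$, and pass to the limit $\varepsilon \downarrow 0$ using the local H\"{o}lder continuity \eqref{ncont} in the first argument and the bound \eqref{bdd.form} in the second. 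The concluding equivalence then combines (a)--(d) immediately: (a) and (b) give \hyperref[it:SL1w.pRF]{\textup{(SL1)$_{\mathrm{w}}$}} $\Leftrightarrow$ \hyperref[it:SL2w.pRF]{\textup{(SL2)$_{\mathrm{w}}$}}, while (c) and (d) give the equivalences between each weak version and its strong counterpart.
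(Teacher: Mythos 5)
Your proof is correct and follows essentially the same route as the paper: parts (a) and (b) are reduced to Proposition \ref{prop.sl-other} after identifying $\supp_{m}$ with $\supp_{X}$ for the counting measure, and parts (c) and (d) use the same truncation $t \mapsto t - (-\varepsilon)\vee(t\wedge\varepsilon)$ together with Corollary \ref{cor.approx2}-\ref{approx2-1} and a passage to the limit. The only cosmetic difference is that in (d) the paper truncates only $v - b\indicator{X}$ (using continuity of $u_{1}-u_{2}-a\indicator{X}$ to get disjointness of supports), whereas you truncate both factors; both variants work.
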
 
\begin{proof}
	The proofs of \ref{it:RF.SL2w1w} and \ref{it:RF.SL1w2w} are very similar to those of Proposition \ref{prop.sl-other}-\ref{it:SL2-SL1},\ref{it:SL1-SL2}.

	\ref{it:RF.SL2w1w}: Since $(\mathcal{E},\mathcal{F})$ satisfies \eqref{e:sl-leibniz}, \eqref{e:sl-bddapprox} and \eqref{e:sl-special} with $m$ the counting measure on $X$ by Proposition \ref{prop.GC-list}-\ref{GC.leibniz}, Corollary \ref{cor.approx2}-\ref{approx2-1} and Proposition \ref{prop.regular}, the implication from \hyperref[it:SL2w.pRF]{\textup{(SL2)$_{\mathrm{w}}$}} to \hyperref[it:SL1w.pRF]{\textup{(SL1)$_{\mathrm{w}}$}} is proved in exactly the same way as the proof of Proposition \ref{prop.sl-other}-\ref{it:SL2-SL1} (note that the separability of $X$ is used there only to define $\supp_{m}[\,\cdot\,]$). 
	
	\ref{it:RF.SL1w2w}: This is proved in exactly the same way as the proof of Proposition \ref{prop.sl-other}-\ref{it:SL1-SL2}. 
	
	\ref{it:RF.SL1sw}: The implication from \hyperref[it:SL1s.pRF]{\textup{(SL1)$_{\mathrm{s}}$}} to \hyperref[it:SL1w.pRF]{\textup{(SL1)$_{\mathrm{w}}$}} is obvious. 
	Conversely, assume \hyperref[it:SL1w.pRF]{\textup{(SL1)$_{\mathrm{w}}$}}, let $u_{1},u_{2},v \in \mathcal{F}$, $a_{1},a_{2} \in \mathbb{R}$ and assume that either $\supp_{X}[u_{1} - a_{1}\indicator{X}]$ or $\supp_{X}[u_{2} - a_{2}\indicator{X}]$ is compact and $(u_{1}(x) - a_{1})(u_{2}(x) - a_{2}) = 0$ for any $x \in X$. 
	For $n \in \mathbb{N}$, let $\varphi_{n} \in \contfunc(\mathbb{R})$ be given by $\varphi_{n}(t) \coloneqq t-(-\frac{1}{n})\vee(t\wedge\frac{1}{n})$ and set $u_{1,n} \coloneqq \varphi_{n}(u_{1} - a_{1}\indicator{X})$ and $u_{2,n} \coloneqq \varphi_{n}(u_{2} - a_{2}\indicator{X})$, so that $u_{i,n} \in \mathcal{F}$ and $\lim_{n \to \infty}\mathcal{E}(u_{i} - u_{i,n}) = 0$ for $i \in \{ 1,2 \}$ by Corollary \ref{cor.approx2}-\ref{approx2-1} and \ref{RF1}. 
	Then for each $n \in \mathbb{N}$, since $\supp_{X}[u_{1,n}] \cap \supp_{X}[u_{2,n}] = \emptyset$ and either $\supp_{X}[u_{1,n}]$ or $\supp_{X}[u_{2,n}]$ is compact by the assumptions on $u_{1},u_{2}$, it follows from \hyperref[it:SL1w.pRF]{\textup{(SL1)$_{\mathrm{w}}$}} that $\mathcal{E}(u_{1,n} + u_{2,n} + v) + \mathcal{E}(v) = \mathcal{E}(u_{1,n} + v) + \mathcal{E}(u_{2,n} + v)$, and we obtain $\mathcal{E}(u_{1} + u_{2} + v) + \mathcal{E}(v) = \mathcal{E}(u_{1} + v) + \mathcal{E}(u_{2} + v)$ by letting $n \to \infty$, proving \hyperref[it:SL1s.pRF]{\textup{(SL1)$_{\mathrm{s}}$}}. 
	
	\ref{it:RF.SL2sw}: The implication from \hyperref[it:SL2s.pRF]{\textup{(SL2)$_{\mathrm{s}}$}} to \hyperref[it:SL2w.pRF]{\textup{(SL2)$_{\mathrm{w}}$}} is obvious. 
	Conversely, assume \hyperref[it:SL2w.pRF]{\textup{(SL2)$_{\mathrm{w}}$}}, let $u_{1},u_{2},v \in \mathcal{F}$, $a,b \in \mathbb{R}$ and assume that either $\supp_{X}[u_{1} - u_{2} - a\indicator{X}]$ or $\supp_{X}[v - b\indicator{X}]$ is compact and $(u_{1}(x) - u_{2}(x) - a)(v(x) - b) = 0$ for any $x \in X$. 
	For $n \in \mathbb{N}$, set $v_{n} \coloneqq \varphi_{n}(v - b\indicator{X})$ , where $\varphi_{n}$ is the same as in the proof of \ref{it:RF.SL1sw}, so that $v_{n} \in \mathcal{F}$ and $\lim_{n \to \infty}\mathcal{E}(v - v_{n}) = 0$ by Corollary \ref{cor.approx2}-\ref{approx2-1} and \ref{RF1}. 
	Then for each $n \in \mathbb{N}$, since $\supp_{X}[u_{1} - u_{2} - a\indicator{X}] \cap \supp_{X}[v_{n}] = \emptyset$ and either $\supp_{X}[u_{1} - u_{2} - a\indicator{X}]$ or $\supp_{X}[v_{n}]$ is compact by the assumptions on $u_{1},u_{2},v$, it follows from \hyperref[it:SL2w.pRF]{\textup{(SL2)$_{\mathrm{w}}$}} that $\mathcal{E}(u_{1}; v_{n}) = \mathcal{E}(u_{2}; v_{n})$, and we obtain $\mathcal{E}(u_{1}; v) = \mathcal{E}(u_{2}; v)$ by letting $n \to \infty$, proving \hyperref[it:SL2s.pRF]{\textup{(SL2)$_{\mathrm{s}}$}}. 
\end{proof}

\subsection{Weak comparison principles}
In this subsection, we show some weak comparison principles in this context.
The first one is obtained as an application of the strong subadditivity.
\begin{prop}[Weak comparison principle I\index{weak comparison principle}]\label{prop.cp1}
	Let $B$ be a non-empty subset of $X$.
	Then, for any $u, v \in \mathcal{F}|_{B}$ satisfying $u(y) \le v(y)$ for any $y \in B$, it holds that
	\begin{equation}\label{e:comparison}
        h_{B}^{\mathcal{E}}[u](x) \le h_{B}^{\mathcal{E}}[v](x) \quad \text{for any $x \in X$.}
    \end{equation}
    In particular,
    \begin{equation}\label{mp}
	       \inf_{B}u \le h_{B}^{\mathcal{E}}[u](x) \le \sup_{B}u \quad \text{for any $x \in X$.}
    \end{equation}
\end{prop}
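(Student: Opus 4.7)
The plan is to reduce the comparison principle to the strong subadditivity inequality \eqref{sadd}, which is available via Proposition \ref{prop.GC-list}-\ref{GC.markov} thanks to \ref{RF5}. Set $f \coloneqq h_{B}^{\mathcal{E}}[u]$ and $g \coloneqq h_{B}^{\mathcal{E}}[v]$. First, I would observe that $f \vee g, f \wedge g \in \mathcal{F}$ and
\[
\mathcal{E}(f \vee g) + \mathcal{E}(f \wedge g) \le \mathcal{E}(f) + \mathcal{E}(g),
\]
and that $(f \vee g)|_{B} = v$ and $(f \wedge g)|_{B} = u$ since $u \le v$ on $B$. By the minimality characterization of $h_{B}^{\mathcal{E}}[\,\cdot\,]$ (Theorem \ref{thm.RF-exist}), this forces
\[
\mathcal{E}(f \vee g) \ge \mathcal{E}|_{B}(v) = \mathcal{E}(g) \quad \text{and} \quad \mathcal{E}(f \wedge g) \ge \mathcal{E}|_{B}(u) = \mathcal{E}(f),
\]
so combining these with strong subadditivity gives $\mathcal{E}(f \vee g) = \mathcal{E}(g)$ and $\mathcal{E}(f \wedge g) = \mathcal{E}(f)$.

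Next, I would invoke the \emph{uniqueness} clause of Theorem \ref{thm.RF-exist}: since $f \vee g \in \mathcal{F}$ satisfies $(f \vee g)|_{B} = v$ and realizes the infimum $\mathcal{E}|_{B}(v)$, it must equal $h_{B}^{\mathcal{E}}[v] = g$; similarly $f \wedge g = f$. Either equality gives $f \le g$ pointwise on $X$, which is \eqref{e:comparison}.

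For the ``in particular'' part, I would apply \eqref{e:comparison} twice. If $\sup_{B} u < \infty$, then $(\sup_{B} u)\indicator{B} \in \mathcal{F}|_{B}$ (using $\indicator{X} \in \mathcal{F}$ from \ref{RF1}) and dominates $u$ on $B$; by \eqref{harm-const} with $a = 0$ and $b = \sup_{B} u$ we have $h_{B}^{\mathcal{E}}[(\sup_{B} u)\indicator{B}] = (\sup_{B} u)\indicator{X}$, so \eqref{e:comparison} yields $h_{B}^{\mathcal{E}}[u](x) \le \sup_{B} u$; the case $\sup_{B} u = \infty$ is trivial. The lower bound is symmetric, applying the same argument to $(\inf_{B} u)\indicator{B} \le u$.

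No serious obstacle is anticipated: the whole argument is the nonlinear counterpart of the classical two-line proof from the $p = 2$ case, and each ingredient (strong subadditivity from \ref{RF5}, minimality, uniqueness of the minimizer) is already established in the text. The only subtle point worth checking carefully is that uniqueness of the minimizer really holds in the nonlinear setting; this is not immediate from convexity alone, but it is precisely what Theorem \ref{thm.RF-exist} asserts, with the underlying mechanism being the strict convexity on $\mathcal{F}/\mathbb{R}\indicator{X}$ coming from \ref{Cp} via \ref{RF5} (cf.\ the arguments in \eqref{cauchy1}--\eqref{cauchy2}).
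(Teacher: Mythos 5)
Your proposal is correct and follows essentially the same route as the paper: define $f = h_{B}^{\mathcal{E}}[u]$, $g = h_{B}^{\mathcal{E}}[v]$, combine the strong subadditivity \eqref{sadd} with the minimality of the traces to force $\mathcal{E}(f \wedge g) = \mathcal{E}(f)$, and conclude $f \wedge g = f$ by the uniqueness in Theorem \ref{thm.RF-exist}. The ``in particular'' step via constant boundary data and \eqref{harm-const} is also exactly as intended.
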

\begin{proof}
    Let $f \coloneqq h_{B}^{\mathcal{E}}[u]$ and $g \coloneqq h_{B}^{\mathcal{E}}[v]$.
    We will prove $f \wedge g = f$, which precisely means \eqref{e:comparison}.
    Since $(f \wedge g)|_{B} = u$ and $(f \vee g)|_{B} = v$, we have
    \begin{equation*}
        \mathcal{E}(f) \le \mathcal{E}(f \wedge g) \quad \text{and} \quad \mathcal{E}(g) \le \mathcal{E}(f \vee g).
    \end{equation*}
    By the strong subadditivity \eqref{sadd} in Proposition \ref{prop.GC-list}-\ref{GC.markov}, we obtain $\mathcal{E}(f \wedge g) = \mathcal{E}(f)$ (and $\mathcal{E}(f \vee g) = \mathcal{E}(g)$), which together with the uniqueness of $f = h_{B}^{\mathcal{E}}[u]$ in Theorem \ref{thm.RF-exist} yields $f \wedge g = f$ and proves \eqref{e:comparison}.
    Then \eqref{mp} follows from \eqref{e:comparison} and the fact that $h_{B}^{\mathcal{E}}[a\indicator{B}] = a\indicator{X}$ for any $a \in \mathbb{R}$ by \eqref{harm-const} in Theorem \ref{thm.RF-exist}.
\end{proof}

We can extend the weak comparison principle above to arbitrary open subsets if $X$ is locally compact and $(\mathcal{E},\mathcal{F})$ is regular and strongly local; see Proposition \ref{prop.cp2} below.
This version of weak comparison principle will be used to prove the \emph{strong comparison principle} on p.-c.f.\ self-similar structures in a forthcoming paper \cite{KS.scp}. 
We begin with some preparations. Recall \hyperref[it:SL2s.pRF]{\textup{(SL2)$_{\mathrm{s}}$}} in Definition \ref{defn.RF-sl}-\ref{it:SL2s.pRF} for the definition of $(\mathcal{E},\mathcal{F})$ being strongly local.
\begin{defn}\label{defn.RFlocal}
    Let $U$ be a non-empty open subset of $X$.
    \begin{enumerate}[label=\textup{(\arabic*)},align=left,leftmargin=*,topsep=2pt,parsep=0pt,itemsep=2pt]
        \item \label{it:RF.localDir} We define
        \begin{equation*}
            \mathcal{F}_{\mathrm{loc}}(U) \coloneqq
            \biggl\{ f \in \mathbb{R}^{U} \biggm|
            \begin{minipage}{193pt}
                $f|_{V} = f^{\#}|_{V}$ for some $f^{\#} \in \mathcal{F}$ for each relatively compact open subset $V$ of $U$ 
            \end{minipage}
            \biggr\}.
        \end{equation*}
        \item \label{it:RF.local.harm} Assume that $(\mathcal{E},\mathcal{F})$ is strongly local, and let $V$ be an open subset of $U$.
        A function $h \in \mathcal{F}_{\mathrm{loc}}(U)$ is said to be \emph{$\mathcal{E}$-harmonic}\index{$\mathcal{E}$-harmonic} on $V$ if and only if $\mathcal{E}(h^{\#}; \varphi) = 0$ for any $\varphi \in \mathcal{F}$ such that $\supp_{X}[\varphi]$ is a compact subset of $V$, where $h^{\#} \in \mathcal{F}$ is chosen so as to satisfy $h|_{\supp_{X}[\varphi]} = h^{\#}|_{\supp_{X}[\varphi]}$.
    \end{enumerate}
\end{defn}
\begin{rmk}\label{rmk.RFlocal}
	\begin{enumerate}[label=\textup{(\arabic*)},align=left,leftmargin=*,topsep=2pt,parsep=0pt,itemsep=2pt]
		\item\label{it:rmk.RF.localDir} If $X \eqqcolon K$ comes from a self-similar structure and the topology induced by $R_{\mathcal{E}}^{1/p}$ coincides with the original topology of $K$, then the definition of $\mathcal{F}_{\mathrm{loc}}(U)$ above agrees with the one introduced in \eqref{e:defn.Floc} by virtue of $\mathcal{F} \subseteq \contfunc(K)$. 
		\item\label{it:rmk.RF.local.harm} Assume the situation of Definition \ref{defn.RFlocal}-\ref{it:RF.local.harm}. Then the strong locality \hyperref[it:SL2s.pRF]{\textup{(SL2)$_{\mathrm{s}}$}} ensures that the value $\mathcal{E}(h^{\#}; \varphi)$ is independent of a particular choice of $h^{\#}$. 
			Moreover, \emph{if $V$ is relatively compact in $U$, $h \in \mathcal{F}_{\mathrm{loc}}(U)$ is $\mathcal{E}$-harmonic on $V$ and $h^{\#} \in \mathcal{F}$ satisfies $h|_{V} = h^{\#}|_{V}$, then $h^{\#} \in \mathcal{H}_{\mathcal{E},X \setminus V}$};
			indeed, for any $v \in \mathcal{F}^{0}(V)$, since $v_{n} \coloneqq v - \bigl(-\frac{1}{n}\bigr) \vee \bigl(v \wedge \frac{1}{n}\bigr)$ satisfies $\supp_{X}[v_{n}] \subseteq V$ for any $n \in \mathbb{N}$,
			we see from Corollary \ref{cor.approx2}-\ref{approx2-1} and the H\"{o}lder-type estimate \eqref{bdd.form} in Theorem \ref{thm.p-form} that $\mathcal{E}(h^{\#}; v) = \lim_{n\to\infty}\mathcal{E}(h^{\#}; v_{n}) = 0$.
	\end{enumerate} 
\end{rmk}

We need the following proposition to prove the desired weak comparison principle. 
\begin{prop}\label{prop.localcut}
    Assume that $X$ is locally compact and that $(\mathcal{E},\mathcal{F})$ is regular and strongly local.
    Let $U$ be a relatively compact non-empty open subset of $X$ and let $u \in \mathcal{F}$ satisfy $u(x) = 0$ for any $x \in \partial_{X}U$, where $\partial_{X}U \coloneqq \closure{U}^{X}\setminus U$.
    Then $u\indicator{U} \in \mathcal{F}$.
\end{prop}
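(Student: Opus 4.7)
The plan is to write $u\indicator{U}$ as a limit in $(\mathcal{F}/\mathbb{R}\indicator{X}, \mathcal{E}^{1/p})$ of truncation-restrictions $u_{\varepsilon}\indicator{U}$, where $u_{\varepsilon} \coloneqq \sgn(u)(\abs{u} - \varepsilon)^{+}$ are functions whose supports stay a positive distance from $\partial_{X}U$, then exploit strong locality to control energy differences.

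First I set $u_{\varepsilon} \coloneqq \sgn(u)(\abs{u} - \varepsilon)^{+}$ for $\varepsilon > 0$; then $u_{\varepsilon} \in \mathcal{F}$ by \ref{RF5} and $\mathcal{E}(u - u_{\varepsilon}) \to 0$ as $\varepsilon \downarrow 0$ by Corollary \ref{cor.approx2}-\ref{approx2-1}. Since $u \in \contfunc(X)$ by \eqref{R-basic} and $u|_{\partial_{X}U} = 0$, the open set $N_{\varepsilon} \coloneqq \{x \in X : \abs{u(x)} < \varepsilon\}$ is a neighborhood of $\partial_{X}U$ on which $u_{\varepsilon}$ vanishes. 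Consequently, the closed set $F_{\varepsilon} \coloneqq \{x \in X : \abs{u(x)} \ge \varepsilon\}$, which contains $\supp_{X}[u_{\varepsilon}]$, is disjoint from $\partial_{X}U$ and hence splits as the disjoint union of the closed subsets $F_{\varepsilon} \cap U$ and $F_{\varepsilon} \setminus \closure{U}^{X}$.

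Next I show $u_{\varepsilon}\indicator{U} \in \mathcal{F}$; for clarity I present the argument when $F_{\varepsilon}$ is compact, the general case being handled by a further cutoff-and-approximate step. When $F_{\varepsilon}$ is compact, $F_{\varepsilon} \cap U$ is a compact subset of the open set $U$, so by regularity (Proposition \ref{prop.regular}) there exists $\chi \in \mathcal{F} \cap \contfunc_{c}(X)$ with $0 \le \chi \le 1$, $\chi = 1$ on a neighborhood of $F_{\varepsilon} \cap U$, and $\supp_{X}[\chi] \subseteq U$. A pointwise check gives $u_{\varepsilon}\chi = u_{\varepsilon}\indicator{U}$ (using that $\chi = 0$ on $F_{\varepsilon} \setminus \closure{U}^{X}$ and $u_{\varepsilon} = 0$ outside $F_{\varepsilon}$), and the Leibniz rule (Proposition \ref{prop.GC-list}-\ref{GC.leibniz}), applied to $\chi$ together with a suitable truncation of $u_{\varepsilon}$ (bounded on the compact set $\supp_{X}[\chi]$), shows $u_{\varepsilon}\indicator{U} = u_{\varepsilon}\chi \in \mathcal{F}$. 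Symmetrically $u_{\varepsilon}\indicator{X \setminus U} \in \mathcal{F}$, and $u_{\varepsilon} = u_{\varepsilon}\indicator{U} + u_{\varepsilon}\indicator{X \setminus U}$ since $u_{\varepsilon}$ vanishes on $\partial_{X}U$.

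For Cauchyness: for $0 < \varepsilon' < \varepsilon$, the differences $(u_{\varepsilon} - u_{\varepsilon'})\indicator{U}$ and $(u_{\varepsilon} - u_{\varepsilon'})\indicator{X \setminus U}$ lie in $\mathcal{F}$ by subtraction, sum to $u_{\varepsilon} - u_{\varepsilon'}$, have pointwise product zero, and (assuming compact $F_{\varepsilon'}$) at least one has compact support. The strong local property \hyperref[it:SL1s.pRF]{\textup{(SL1)$_{\mathrm{s}}$}} with $v = 0$ and $a_{1} = a_{2} = 0$ then gives
\begin{equation*}
\mathcal{E}((u_{\varepsilon} - u_{\varepsilon'})\indicator{U}) + \mathcal{E}((u_{\varepsilon} - u_{\varepsilon'})\indicator{X\setminus U}) = \mathcal{E}(u_{\varepsilon} - u_{\varepsilon'}),
\end{equation*}
whence $\mathcal{E}(u_{\varepsilon}\indicator{U} - u_{\varepsilon'}\indicator{U}) \le \mathcal{E}(u_{\varepsilon} - u_{\varepsilon'}) \to 0$ as $\varepsilon, \varepsilon' \downarrow 0$. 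So $\{u_{\varepsilon}\indicator{U}\}_{\varepsilon > 0}$ is Cauchy in $(\mathcal{F}/\mathbb{R}\indicator{X}, \mathcal{E}^{1/p})$, and by \ref{RF2} there is $w \in \mathcal{F}$ with $\mathcal{E}(w - u_{\varepsilon}\indicator{U}) \to 0$. By \eqref{R-basic}, for any $x, y \in X$, $(w - u_{\varepsilon}\indicator{U})(x) - (w - u_{\varepsilon}\indicator{U})(y) \to 0$; combined with $u_{\varepsilon}\indicator{U} \to u\indicator{U}$ pointwise this forces $w - u\indicator{U}$ to be constant, so $u\indicator{U} \in \mathcal{F}$.

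The main obstacle is the compact-support hypothesis in strong locality \hyperref[it:SL1s.pRF]{\textup{(SL1)$_{\mathrm{s}}$}}: when $X$ is not compact and $u$ (hence $u_{\varepsilon}$) is not compactly supported, one must additionally multiply $u_{\varepsilon}$ by exhausting cutoffs $\psi_{n} \in \mathcal{F} \cap \contfunc_{c}(X)$ from Proposition \ref{prop.regular}, apply the above argument to $u_{\varepsilon}\psi_{n}$, and then pass to the limit $n \to \infty$. This step requires showing $u_{\varepsilon}\psi_{n} \to u_{\varepsilon}$ in $\mathcal{E}$, which in turn invokes $\sigma$-compactness of $X$ (or a suitable adaptation); this is a routine but nontrivial extension once the compact-support case is established.
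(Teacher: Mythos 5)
Your argument is correct and is essentially the paper's proof: the paper also truncates (its $\varphi_{n}(u)=\sgn(u)(\abs{u}-1/n)^{+}$ is exactly your $u_{1/n}$), obtains $\varphi_{n}(u)\indicator{U}=\varphi_{n}(u)v_{n}\in\mathcal{F}$ from a cutoff $v_{n}$ supplied by Proposition \ref{prop.regular} together with the Leibniz rule, deduces Cauchyness of $\{\varphi_{n}(u)\indicator{U}\}_{n}$ from the strong locality, and concludes via \ref{RF2} and \eqref{R-basic}. The compactness you hedge on is handled in the paper by observing that the relevant set is not all of your $F_{\varepsilon}$ but only $A_{n}\coloneqq U\cap\supp_{X}[\varphi_{n}(u)]$, which equals $\closure{U}^{X}\cap\supp_{X}[\varphi_{n}(u)]$ because $u$ vanishes on $\partial_{X}U$; the paper simply asserts that $A_{n}$ is compact (which is clear when $\closure{U}^{X}$ is compact, as in the sole application in Proposition \ref{prop.cp2}), so the ``general case'' you defer is precisely the point on which the paper's own proof is equally terse.
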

\begin{proof}
    Define $\varphi_{n} \in \contfunc(\mathbb{R})$ by $\varphi_{n}(t) \coloneqq t - \bigl(-\frac{1}{n}\bigr) \vee \bigl(t \wedge \frac{1}{n}\bigr)$ and set $A_{n} \coloneqq U \cap \supp_{X}[\varphi_{n}(u)]$ for each $n \in \mathbb{N}$.
    Since $u|_{\partial_{X}U} = 0$, $A_{n} = \closure{U}^{X} \cap \supp_{X}[\varphi_{n}(u)]$ and thus $A_{n}$ is a compact subset of $U$.
    By Proposition \ref{prop.regular}, there exists $v_{n} \in \mathcal{F}$ such that $\indicator{A_{n}} \le v_{n} \le \indicator{U}$.
    Then we easily obtain $\varphi_{n}(u)\indicator{U} = \varphi_{n}(u)\indicator{A_{n}} = \varphi_{n}(u)v_{n}$, and hence by Corollary \ref{cor.approx2}-\ref{approx2-1} and Proposition \ref{prop.GC-list}-\ref{GC.leibniz} we have $\varphi_{n}(u)\indicator{U} \in \mathcal{F}$.
    Now, recalling that \hyperref[it:SL1w.pRF]{\textup{(SL1)$_{\mathrm{w}}$}} in Definition \ref{defn.RF-sl}-\ref{it:SL1w.pRF} holds by \hyperref[it:SL2s.pRF]{\textup{(SL2)$_{\mathrm{s}}$}} and Proposition \ref{prop.RF.SL}-\ref{it:RF.SL2sw},\ref{it:RF.SL2w1w}, we see from \hyperref[it:SL1w.pRF]{\textup{(SL1)$_{\mathrm{w}}$}} and Corollary \ref{cor.approx2}-\ref{approx2-1} that $\mathcal{E}(\varphi_{k}(u)\indicator{U} - \varphi_{l}(u)\indicator{U}) \leq \mathcal{E}(\varphi_{k}(u) - \varphi_{l}(u)) \xrightarrow[k \wedge l \to \infty]{} 0$,
	and thus by \ref{RF2} and the estimate \eqref{R-basic} in Proposition \ref{prop.R-conseq}-\ref{it:RF.basic-ineq}, $\{ \varphi_{n}(u)\indicator{U} \}_{n \in \mathbb{N}}$ converges in norm in $(\mathcal{F}/\mathbb{R}\indicator{X}, \mathcal{E}^{1/p})$ to its pointwise limit $u\indicator{U}$, whence $u\indicator{U} \in \mathcal{F}$.
\end{proof}

Now we can state the desired version of the weak comparison principle.
\begin{prop}[Weak comparison principle II\index{weak comparison principle}]\label{prop.cp2}
    Assume that $X$ is locally compact and that $(\mathcal{E},\mathcal{F})$ is regular and strongly local.
    Let $U$ be a relatively compact non-empty open subset of $X$ such that $U \not= X$.
	If $u,v \in \contfunc(\closure{U}^{X}) \cap \mathcal{F}_{\mathrm{loc}}(U)$ are $\mathcal{E}$-harmonic on $U$ and $u(x) \le v(x)$ for any $x \in \partial_{X}U$, then $u(x) \le v(x)$ for any $x \in \closure{U}^{X}$.
\end{prop}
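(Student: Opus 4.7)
The plan is to argue by contradiction. Suppose there exists $x_0 \in \closure{U}^X$ with $u(x_0) > v(x_0)$; since $u \leq v$ on $\partial_{X}U$, continuity forces $x_0 \in U$. Fix $\varepsilon \in (0, u(x_0) - v(x_0))$ and set $W \coloneqq \{x \in \closure{U}^X : u(x) - v(x) > \varepsilon/2\}$. Because $u - v \leq 0$ on $\partial_{X}U$, the set $W$ is open in $X$ and $\closure{W}^X$ is a compact subset of $U$. By regularity (Proposition \ref{prop.regular}) pick $\chi \in \mathcal{F} \cap \contfunc_c(X)$ with $0 \leq \chi \leq 1$, $\chi \equiv 1$ on $\closure{W}^X$, and $\supp_X[\chi] \subseteq U$, and by the definition of $\mathcal{F}_{\mathrm{loc}}(U)$ pick $u^{\#}, v^{\#} \in \mathcal{F}$ with $u = u^{\#}$ and $v = v^{\#}$ on a relatively compact open neighborhood of $\supp_X[\chi]$ in $U$.

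Next I would build the test function and an auxiliary pair. Set $\phi_\varepsilon \coloneqq (u^{\#} - v^{\#} - \varepsilon\indicator{X})^+ \in \mathcal{F}$ (via \ref{RF5}) and $w_\varepsilon \coloneqq \chi \phi_\varepsilon \in \mathcal{F}$ (via the Leibniz rule, Proposition \ref{prop.GC-list}-\ref{GC.leibniz}, after a routine truncation of $\phi_\varepsilon$ to a bounded function agreeing with $\phi_\varepsilon$ on the compact set $\supp_X[\chi]$). A direct check gives that $w_\varepsilon$ coincides with $(u-v-\varepsilon)^+$ on $\closure{U}^X$ and vanishes elsewhere, is supported in $\closure{W}^X \subseteq U$, belongs to $\mathcal{F}^{0}(U)$, and satisfies $\mathcal{E}(w_\varepsilon) > 0$ (by $w_\varepsilon(x_0) > 0$, $w_\varepsilon = 0$ on the nonempty set $X \setminus U$, and \ref{RF1}). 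For an arbitrary constant $C$, define
\[
u^{\#}_{\ast} \coloneqq \chi u^{\#} + C(1-\chi), \qquad v^{\#}_{\ast} \coloneqq \chi v^{\#} + (C-\varepsilon)(1-\chi),
\]
both in $\mathcal{F}$ by the same Leibniz argument. A short computation yields the key identity $u^{\#}_{\ast} - v^{\#}_{\ast} - \varepsilon\indicator{X} = \chi(u^{\#} - v^{\#} - \varepsilon\indicator{X})$, so that $(u^{\#}_{\ast} - v^{\#}_{\ast} - \varepsilon\indicator{X})^+ = \chi \phi_\varepsilon = w_\varepsilon$ \emph{globally} on $X$. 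Applying the strong subadditivity \eqref{sadd} to $f \coloneqq u^{\#}_{\ast}$ and $g \coloneqq v^{\#}_{\ast} + \varepsilon\indicator{X}$, one finds $f\wedge g = u^{\#}_{\ast} - w_\varepsilon$ and $f\vee g = v^{\#}_{\ast} + w_\varepsilon + \varepsilon\indicator{X}$, whence (using $\mathbb{R}\indicator{X} \subseteq \mathcal{E}^{-1}(0)$)
\[
\mathcal{E}(u^{\#}_{\ast} - w_\varepsilon) + \mathcal{E}(v^{\#}_{\ast} + w_\varepsilon) \leq \mathcal{E}(u^{\#}_{\ast}) + \mathcal{E}(v^{\#}_{\ast}).
\]

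Finally I would close the argument using $\mathcal{E}$-harmonicity and strict convexity. Since $u^{\#}_{\ast} = u$ and $v^{\#}_{\ast} = v$ on $\closure{W}^X \supseteq \supp_X[w_\varepsilon]$ and $w_\varepsilon \in \mathcal{F}^{0}(U)$ has compact support in $U$, Definition \ref{defn.RFlocal}-\ref{it:RF.local.harm} (well defined across choices of extension by strong locality) yields $\mathcal{E}(u^{\#}_{\ast}; w_\varepsilon) = 0 = \mathcal{E}(v^{\#}_{\ast}; w_\varepsilon)$. The strict monotonicity \eqref{form.mono} in Theorem \ref{thm.p-form}, combined with $\mathcal{E}(w_\varepsilon) > 0$, shows that $t \mapsto \mathcal{E}(u^{\#}_{\ast} + tw_\varepsilon)$ is strictly convex with vanishing derivative at $t = 0$; hence $\mathcal{E}(u^{\#}_{\ast} - w_\varepsilon) > \mathcal{E}(u^{\#}_{\ast})$, and the same reasoning gives $\mathcal{E}(v^{\#}_{\ast} + w_\varepsilon) > \mathcal{E}(v^{\#}_{\ast})$. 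Summing contradicts the previous displayed inequality, forcing $w_\varepsilon \equiv 0$ for every admissible $\varepsilon > 0$ and thereby $u \leq v$ on $\closure{U}^X$. The main obstacle is engineering $u^{\#}_{\ast}, v^{\#}_{\ast}$ so that the positive-part identity $(u^{\#}_{\ast} - v^{\#}_{\ast} - \varepsilon\indicator{X})^+ = w_\varepsilon$ holds \emph{on all of $X$}: without this global match, the functions $f\wedge g$ and $f\vee g$ produced by strong subadditivity would differ from $u^{\#}_{\ast} \mp w_\varepsilon$ and $v^{\#}_{\ast} \pm w_\varepsilon$ outside $\supp_X[w_\varepsilon]$, and the resulting inequality could not be paired against $\mathcal{E}$-harmonicity. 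The cutoff $\chi$ together with the specific choice of constants $C$ and $C-\varepsilon$ outside $\supp_X[\chi]$ is precisely what enforces the desired global identity.
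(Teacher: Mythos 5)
Your proof is correct, but it takes a genuinely different route from the paper's. The paper argues by an $\varepsilon$-approximation followed by a reduction to the global weak comparison principle: using uniform continuity of $u,v$ on $\closure{U}^{X}$ it produces a relatively compact open $V \subseteq U$ with $u \le v + \varepsilon$ on $\closure{U}^{X}\setminus V$, replaces the local extensions by $f = u^{\#} - (u^{\#}-v^{\#})^{+}\indicator{X\setminus V}$ and $g = v^{\#} + (u^{\#}-v^{\#})^{+}\indicator{X\setminus V}$ (membership in $\mathcal{F}$ coming from Proposition \ref{prop.localcut}), observes that $f,g \in \mathcal{H}_{\mathcal{E},X\setminus V}$ by strong locality and $f \le g$ on $X\setminus V$, and then invokes Proposition \ref{prop.cp1} before letting $\varepsilon \downarrow 0$. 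You instead run a direct variational contradiction: the cutoff $\chi$ together with the shifted constants $C$ and $C-\varepsilon$ makes the positive-part identity $(u^{\#}_{\ast}-v^{\#}_{\ast}-\varepsilon\indicator{X})^{+} = w_{\varepsilon}$ hold on all of $X$, so that strong subadditivity \eqref{sadd} applied to $u^{\#}_{\ast}$ and $v^{\#}_{\ast}+\varepsilon\indicator{X}$ can be played off against the harmonicity identities $\mathcal{E}(u^{\#}_{\ast};w_{\varepsilon}) = \mathcal{E}(v^{\#}_{\ast};w_{\varepsilon}) = 0$ and the strict monotonicity \eqref{form.mono}, yielding a contradiction from $\mathcal{E}(w_{\varepsilon})>0$. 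Your approach buys a self-contained argument that bypasses both Proposition \ref{prop.cp1} and Proposition \ref{prop.localcut} and avoids the distance-to-boundary/uniform-continuity construction; the price is the bookkeeping needed to enforce the global positive-part identity, which you handle correctly. Two details worth spelling out in a final write-up: the truncation argument that lets you apply the Leibniz rule (Proposition \ref{prop.GC-list}-\ref{GC.leibniz}) to the possibly unbounded $\phi_{\varepsilon}$, $u^{\#}$, $v^{\#}$ (legitimate since they are bounded on the compact set $\supp_{X}[\chi]$), and the fact that $\mathcal{E}(w_{\varepsilon})>0$ uses $U \neq X$ through \ref{RF1}.
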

\begin{proof}
	We first observe that $\partial_{X}O \neq \emptyset$ for any relatively compact non-empty open subset $O$ of $X$ such that $O \not= X$.  
	To this end, suppose that $\partial_{X}O = \emptyset$, and we will show $O = X$. 
	Since $O = \closure{O}^{X}$ is compact, we see from Proposition \ref{prop.regular} that $\indicator{O} \in \mathcal{F} \cap \contfunc_{c}(X)$,
	then $\mathcal{E}(\indicator{O}) = \mathcal{E}(\indicator{O}; \indicator{O}) = \mathcal{E}(0; \indicator{O}) = 0$ by the strong locality \hyperref[it:SL2s.pRF]{\textup{(SL2)$_{\mathrm{s}}$}},
	and hence $\indicator{O} \in \mathbb{R}\indicator{X}$ by \ref{RF1}, which together with $O \not= \emptyset$ shows $O = X$. 
	
	Let us go back to the proof. Let $\varepsilon \in (0,\infty)$.
    Since $u$ and $v$ are uniformly continuous on $\closure{U}^{X}$ and $\partial_{X}U \neq \emptyset$, there exists $\delta \in (0,\infty)$ such that 
    \[
    V \coloneqq \Bigl\{ x \in U \Bigm| \dist_{R_{\mathcal{E}}^{1/p}}(x, \partial_{X}U) > \delta \Bigr\} \neq \emptyset
    \]
    and $u(x) \le v(x) + \varepsilon$ for any $x \in \closure{U}^{X} \setminus V$. 
    Then $V$ is a relatively compact open subset of $U$ and hence there exist $u^{\#}, v^{\#} \in \mathcal{F}$ such that $u|_{V} = u^{\#}|_{V}$ and $v|_{V} = v^{\#}|_{V}$. 
    Define $v^{\#}_{\varepsilon} \coloneqq v^{\#} + \varepsilon \indicator{X} \in \mathcal{F}$, $f \coloneqq u^{\#} - (u^{\#} - v^{\#}_{\varepsilon})^{+}\indicator{X \setminus V}$ and $g \coloneqq v^{\#}_{\varepsilon} + (u^{\#} - v^{\#}_{\varepsilon})^{+}\indicator{X \setminus V}$.
    Then since $u^{\#}(x) = u(x) \leq v(x) + \varepsilon = v^{\#}_{\varepsilon}(x)$ for any $x \in \partial_{X}V \subseteq \closure{U}^{X} \setminus V$, we have $f,g \in \mathcal{F}$ by Propositions \ref{prop.GC-list}-\ref{GC.lip} and \ref{prop.localcut}.
    We also have $f,g \in \mathcal{H}_{\mathcal{E},X \setminus V}$ by Remark \ref{rmk.RFlocal}-\ref{it:rmk.RF.local.harm},
    $f(x) = (u^{\#} \wedge v^{\#}_{\varepsilon})(x) \leq (u^{\#} \vee v^{\#}_{\varepsilon})(x) = g(x)$ for any $x \in X \setminus V$, and thus Proposition \ref{prop.cp1} implies that $u(x) = u^{\#}(x) = f(x) \le g(x) = v^{\#}_{\varepsilon}(x) = v(x) + \varepsilon$ for any $x \in V$.
    Therefore, we conclude that $u(x) \le v(x) + \varepsilon$ for any $x \in \closure{U}^{X}$, which completes the proof since $\varepsilon \in (0,\infty)$ is arbitrary.
\end{proof}

\subsection{Sharp H\"{o}lder regularity of harmonic functions}\label{sec.sharp}
In this subsection, we present a sharp H\"{o}lder regularity estimate on $\mathcal{E}$-harmonic functions and prove that $R_{\mathcal{E}}^{1/(p - 1)}$ is a metric on $X$.

As an application of Proposition \ref{prop.mono}, we can show the following H\"{o}lder continuity estimate for $\mathcal{E}$-harmonic functions. 
Recall Definition \ref{defn.RFp-general} for $B^{\mathcal{F}}$ and \eqref{R-def.gen} for $R_{\mathcal{E}}(x,B)$.
\begin{thm}\label{t:lip-harm}
    Let $B$ be a non-empty subset of $X$, $x \in X \setminus B^{\mathcal{F}}$ and $y \in X$. Then 
    \begin{equation}\label{hB}
        h_{B \cup \{ x \}}^{\mathcal{E}}\bigl[\indicator{B}^{B \cup \{ x \}}\bigr](y)
        \le \frac{R_{\mathcal{E}}(x,y)^{1/(p - 1)}}{R_{\mathcal{E}}(x,B)^{1/(p - 1)}}.
    \end{equation}
    Moreover, for any $h \in \mathcal{H}_{\mathcal{E},B}$ with $\sup_{B}\abs{h} < \infty$, 
    \begin{equation}\label{lip-harm}
        \abs{h(x) - h(y)} \le \frac{R_{\mathcal{E}}(x,y)^{1/(p - 1)}}{R_{\mathcal{E}}(x,B)^{1/(p - 1)}}\osc_{B}[h]. 
    \end{equation}
\end{thm}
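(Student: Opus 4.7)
I observe that \eqref{lip-harm} will follow from \eqref{hB} via a weak-comparison argument: with $M := \sup_B h$, $m := \inf_B h$, Proposition \ref{prop.cp1} applied to the boundary set $B \cup \{x\}$ sandwiches $h$ between $h^{+} := h(x) + (M - h(x)) u$ and $h^{-} := h(x) - (h(x) - m) u$ (using \eqref{harm-const} with $u := h_{B \cup \{x\}}^{\mathcal{E}}\bigl[\indicator{B}^{B \cup \{x\}}\bigr]$), and substituting \eqref{hB} for $u(y)$ together with $h(x) \in [m,M]$ yields the desired H\"{o}lder estimate. So the crux is to establish \eqref{hB}.

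Set $\alpha := u(y)$ and assume $\alpha > 0$ (else \eqref{hB} is trivial). Let $v^{\ast} := h_{\{x,y\}}^{\mathcal{E}}\bigl[\indicator{\{x\}}^{\{x,y\}}\bigr]$, so that $v^{\ast}(x) = 1$, $v^{\ast}(y) = 0$, $\mathcal{E}(v^{\ast}) = R_{\mathcal{E}}(x,y)^{-1}$, and $v^{\ast}|_B \subseteq [0,1]$ by Proposition \ref{prop.cp1}. The plan is to apply the monotonicity principle of Proposition \ref{prop.mono} on the trace $p$-resistance form $(\mathcal{E}|_V, \mathcal{F}|_V)$ with $V := B \cup \{x, y\}$, taking $u_1 := \alpha(1 - v^{\ast})|_V$, $u_2 := u|_V$, and test function $\indicator{\{x\}}^V \in \mathcal{F}|_V = \mathbb{R}^V$. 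The crucial design choice is that $u_2 - u_1$ vanishes at both $x$ (both sides equal $0$) and $y$ (both sides equal $\alpha$) and is nonnegative on $B$ because $\alpha(1 - v^{\ast}|_B) \le \alpha \le 1 = u|_B$; the pointwise condition $(u_2 - u_1) \wedge \indicator{\{x\}}^V \equiv 0$ on $V$ is then automatic. Since $\mathcal{E}|_V$ is a $p$-resistance form by Theorem \ref{thm.RF-exist} and so satisfies \ref{GC}, hence \ref{Cp} and the strong subadditivity \eqref{sadd} (Proposition \ref{prop.GC-list}), Proposition \ref{prop.mono} applies and yields
\[
\mathcal{E}|_V\bigl(\alpha(1-v^{\ast})|_V;\indicator{\{x\}}^V\bigr) \ge \mathcal{E}|_V\bigl(u|_V;\indicator{\{x\}}^V\bigr).
\]

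To evaluate both sides, introduce the Poisson-type function $\zeta := h_V^{\mathcal{E}}\bigl[\indicator{\{x\}}^V\bigr] \in \mathcal{F}$. Since $u \in \mathcal{H}_{\mathcal{E}, B \cup \{x\}} \subseteq \mathcal{H}_{\mathcal{E}, V}$, \eqref{harm-compat} gives $\mathcal{E}|_V(u|_V;\indicator{\{x\}}^V) = \mathcal{E}(u;\zeta)$; because $\zeta|_{B\cup\{x\}} = \indicator{\{x\}}^{B \cup \{x\}} = (1-u)|_{B \cup \{x\}}$, we have $\zeta - (1-u) \in \mathcal{F}^0(X \setminus (B \cup \{x\}))$ and so $\mathcal{E}(u;\zeta) = \mathcal{E}(u;1-u) = -\mathcal{E}(u) = -R_{\mathcal{E}}(x,B)^{-1}$ by \eqref{form.basic} and the linearity of $\mathcal{E}(u;\,\cdot\,)$. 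Symmetrically, because $\alpha(1 - v^{\ast}) \in \mathcal{H}_{\mathcal{E}, \{x,y\}} \subseteq \mathcal{H}_{\mathcal{E}, V}$ and $\zeta|_{\{x,y\}} = (1,0) = v^{\ast}|_{\{x,y\}}$, we get $\zeta - v^{\ast} \in \mathcal{F}^0(X \setminus \{x,y\})$, so that $\mathcal{E}(v^{\ast};\zeta) = \mathcal{E}(v^{\ast}) = R_{\mathcal{E}}(x,y)^{-1}$, and hence $\mathcal{E}|_V(\alpha(1-v^{\ast})|_V;\indicator{\{x\}}^V) = -\alpha^{p-1}R_{\mathcal{E}}(x,y)^{-1}$ by \eqref{form.basic}. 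Substituting into the monotonicity inequality yields $-\alpha^{p-1}R_{\mathcal{E}}(x,y)^{-1} \ge -R_{\mathcal{E}}(x,B)^{-1}$, which rearranges to exactly \eqref{hB}.

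The main obstacle I foresee is navigating the two trace reductions---one using the $\mathcal{E}$-harmonicity of $u$ off $B \cup \{x\}$, the other using that of $v^{\ast}$ off $\{x,y\}$---and verifying that the exponent of $\alpha$ coming from \eqref{form.basic} lands correctly at $p-1$ rather than the weaker $p$ one would obtain from a naive scaling argument such as using $(\alpha - u)/\alpha$ as test. The entire argument pivots on the design $u_1 = \alpha(1-v^{\ast})$, which both makes the pointwise hypothesis of Proposition \ref{prop.mono} automatic on $V$ and produces the critical factor $\alpha^{p-1}$ on the left-hand side of the resulting inequality via the $(p-1)$-homogeneity in the first slot of $\mathcal{E}(\,\cdot\,;\,\cdot\,)$.
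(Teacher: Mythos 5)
Your proof is correct and follows essentially the same route as the paper: the key step for \eqref{hB} is precisely the paper's application of Proposition \ref{prop.mono} on the trace to $B \cup \{x,y\}$ with the comparison function $\alpha\,h_{\{x,y\}}^{\mathcal{E}}\bigl[\indicator{y}\bigr] = \alpha(1-v^{\ast})$ against $u = h_{B\cup\{x\}}^{\mathcal{E}}\bigl[\indicator{B}\bigr]$ and test function $\indicator{\{x\}}$, with the two sides evaluated as $-\alpha^{p-1}R_{\mathcal{E}}(x,y)^{-1}$ and $-R_{\mathcal{E}}(x,B)^{-1}$ via \eqref{harm-const}, \eqref{form.basic} and the trace identities, and \eqref{lip-harm} is then deduced by the same weak-comparison barrier argument. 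The only differences are cosmetic (your evaluations go through the auxiliary function $\zeta = h_{V}^{\mathcal{E}}\bigl[\indicator{\{x\}}^{V}\bigr]$ rather than the paper's chain of applications of Proposition \ref{prop.trace-comp} and \eqref{harm-compat}).
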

\begin{proof}
    Since \eqref{hB} and \eqref{lip-harm} are obvious if $x = y$, we may and do assume that $x \neq y$.
	
	To show \eqref{hB}, on one hand, we see that
    \begin{align}\label{hB1}
        -\mathcal{E}|_{B \cup \{ x \}}(\indicator{B}; \indicator{x})
        &= \mathcal{E}|_{B \cup \{ x \}}(\indicator{B}; \indicator{B \cup \{ x \}}) -\mathcal{E}|_{B \cup \{ x \}}(\indicator{B}; \indicator{x}) \nonumber \\
        &= \mathcal{E}|_{B \cup \{ x \}}(\indicator{B}; \indicator{B})
        = R_{\mathcal{E}}(x,B)^{-1}.
    \end{align}
    On the other hand,
    \begin{align}\label{hB2}
        &-\mathcal{E}|_{B \cup \{ x \}}(\indicator{B}; \indicator{x}) \nonumber \\
        &\quad = -\mathcal{E}\Bigl(h_{B \cup \{ x \}}^{\mathcal{E}}[\indicator{B}]; h_{B \cup \{ x, y \}}^{\mathcal{E}}[\indicator{x}]\Bigr) \quad \text{(by \eqref{harm-compat} in Theorem \ref{thm.RF-exist})} \nonumber \\
        &\quad = -\mathcal{E}|_{B \cup \{ x, y \}}\Bigl(\bigl.h_{B \cup \{ x \}}^{\mathcal{E}}[\indicator{B}]\bigr|_{B \cup \{ x, y \}}; \indicator{x}\Bigr) \quad \text{(by Proposition \ref{prop.trace-comp} and \eqref{harm-compat})} \nonumber \\
        &\quad \ge -\mathcal{E}|_{B \cup \{ x, y \}}\Bigl(\bigl.\bigl(h_{B \cup \{ x \}}^{\mathcal{E}}[\indicator{B}](y) \cdot h_{\{x,y\}}^{\mathcal{E}}[\indicator{y}]\bigr)\bigr|_{B \cup \{ x,y \}}; \indicator{x}\Bigr) \quad \text{(by Proposition \ref{prop.mono})} \nonumber \\
        &\quad = -h_{B \cup \{ x \}}^{\mathcal{E}}[\indicator{B}](y)^{p - 1}\mathcal{E}|_{B \cup \{ x, y \}}\Bigl(\bigl.h_{\{x,y\}}^{\mathcal{E}}[\indicator{y}]\bigr|_{B \cup \{ x,y \}}; \indicator{x}\Bigr) \nonumber \\
        &\quad = -h_{B \cup \{ x \}}^{\mathcal{E}}[\indicator{B}](y)^{p - 1}\mathcal{E}|_{\{ x, y \}}(\indicator{y}; \indicator{\{x,y\}} - \indicator{y}) \quad \text{(by Proposition \ref{prop.trace-comp} and \eqref{harm-compat})} \nonumber \\
		&\quad = h_{B \cup \{ x \}}^{\mathcal{E}}[\indicator{B}](y)^{p - 1}R_{\mathcal{E}}(x,y)^{-1}.
    \end{align}
    We obtain \eqref{hB} by combining \eqref{hB1} and \eqref{hB2}.

    Next we prove \eqref{lip-harm}.
    Let $h \in \mathcal{H}_{\mathcal{E},B}$ satisfy $\sup_{B}\abs{h} < \infty$.
    Then we see that 
    \begin{align*}
        h - h(x)
        &\le h_{B \cup \{ x \}}^{\mathcal{E}}\Bigl[\bigl.(h - h(x))^{+}\bigr|_{B \cup \{ x \}}\Bigr] \quad \text{(by Propositions \ref{prop.cp1} and \ref{prop.trace-comp})} \nonumber \\
        &\le h_{B \cup \{ x \}}^{\mathcal{E}}\Bigl[\osc_{B}[h]\cdot\indicator{B}^{B \cup \{x\}}\Bigr] \quad \text{(by Proposition \ref{prop.cp1} and $(h - h(x))^{+}(x) = 0$)} \nonumber \\
        &= \osc_{B}[h] \cdot h_{B \cup \{ x \}}^{\mathcal{E}}\Bigl[\indicator{B}^{B \cup \{x\}}\Bigr].
    \end{align*}
    Similarly, we have
    \begin{equation*}
        h - h(x)
        \ge -h_{B \cup \{ x \}}^{\mathcal{E}}\Bigl[\bigl.(h - h(x))^{-}\bigr|_{B \cup \{ x \}}\Bigr]
        \ge -\osc_{B}[h] \cdot h_{B \cup \{ x \}}^{\mathcal{E}}\Bigl[\indicator{B}^{B \cup \{x\}}\Bigr].
    \end{equation*}
    Hence, by combining these estimates with \eqref{hB}, we get \eqref{lip-harm}. 
\end{proof}

Using Theorem \ref{t:lip-harm}, we can show the triangle inequality for $R_{\mathcal{E}}^{1/(p - 1)}$. 
\begin{cor}\label{cor.tri}
    $R_{\mathcal{E}}^{1/(p - 1)} \colon X \times X \to [0, \infty)$ is a metric on $X$.
\end{cor}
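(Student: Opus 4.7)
The non-negativity, symmetry, and identity of indiscernibles for $R_{\mathcal{E}}^{1/(p-1)}$ are immediate from the corresponding properties of $R_{\mathcal{E}}^{1/p}$, which is already known to be a metric by Proposition \ref{prop.R-conseq}. Thus the only nontrivial task is the triangle inequality, and the plan is to deduce it from the sharp H\"{o}lder estimate \eqref{hB} of Theorem \ref{t:lip-harm}. Writing the three points as $a,b,c$ to avoid clashing with the variables in Theorem \ref{t:lip-harm}, and noting that the case where two of them coincide is trivial, it will suffice to show that for any three distinct $a,b,c \in X$,
\begin{equation*}
R_{\mathcal{E}}(a,c)^{1/(p-1)} \leq R_{\mathcal{E}}(a,b)^{1/(p-1)} + R_{\mathcal{E}}(b,c)^{1/(p-1)}.
\end{equation*}

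The main step will be to apply \eqref{hB} twice: once with $B = \{c\}$, $x = a$, $y = b$, and once with $B = \{a\}$, $x = c$, $y = b$. Proposition \ref{p:genbord}-\ref{fcn-top} guarantees that $\{c\}^{\mathcal{F}} = \{c\}$ and $\{a\}^{\mathcal{F}} = \{a\}$, so the hypothesis $x \notin B^{\mathcal{F}}$ in \eqref{hB} is satisfied in both cases. Setting $h_{1} \coloneqq h_{\{a,c\}}^{\mathcal{E}}\bigl[\indicator{c}^{\{a,c\}}\bigr]$ and $h_{2} \coloneqq h_{\{a,c\}}^{\mathcal{E}}\bigl[\indicator{a}^{\{a,c\}}\bigr]$, the two applications of \eqref{hB} will yield
\begin{equation*}
h_{1}(b) \leq \frac{R_{\mathcal{E}}(a,b)^{1/(p-1)}}{R_{\mathcal{E}}(a,c)^{1/(p-1)}}
\quad \text{and} \quad
h_{2}(b) \leq \frac{R_{\mathcal{E}}(b,c)^{1/(p-1)}}{R_{\mathcal{E}}(a,c)^{1/(p-1)}}.
\end{equation*}
Adding these two inequalities and multiplying through by $R_{\mathcal{E}}(a,c)^{1/(p-1)}$ will complete the proof, provided the identity $h_{1}(b) + h_{2}(b) = 1$ is known.

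The main obstacle is precisely this last identity, because the harmonic-extension operator $h_{\{a,c\}}^{\mathcal{E}}[\,\cdot\,]$ fails to be additive for $p \neq 2$ by Remark \ref{rmk.nonlin}, so $h_{1}+h_{2}$ is not a priori the harmonic extension of $\indicator{\{a,c\}}^{\{a,c\}}$. The saving grace will be the partial linearity \eqref{harm-const} of $h_{B}^{\mathcal{E}}$ under affine maps of the form $u \mapsto \alpha u + \beta\indicator{B}$. Applying \eqref{harm-const} with $B = \{a,c\}$, $u = \indicator{c}^{\{a,c\}}$, and $(\alpha,\beta) = (-1,1)$ gives $\alpha u + \beta\indicator{B} = \indicator{a}^{\{a,c\}}$, hence $h_{2} = -h_{1} + \indicator{X}$ and thus $h_{1}(b) + h_{2}(b) = 1$ as required, concluding the argument.
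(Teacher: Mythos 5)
Your proof is correct and follows essentially the same route as the paper: two applications of \eqref{hB} at the middle point, combined with the identity $h_{\{a,c\}}^{\mathcal{E}}[\indicator{a}^{\{a,c\}}] + h_{\{a,c\}}^{\mathcal{E}}[\indicator{c}^{\{a,c\}}] = \indicator{X}$, which you rightly justify via \eqref{harm-const} rather than additivity of the (nonlinear) harmonic extension. Your bookkeeping of which indicator function is bounded by which ratio is in fact slightly more careful than the paper's own wording.
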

\begin{defn}[$p$-Resistance metric\index{$p$-resistance metric}]\label{defn:pResmet}
	We define $\pmetric_{p,\mathcal{E}} \coloneqq R_{\mathcal{E}}^{1/(p - 1)}$ and call $\pmetric_{p,\mathcal{E}}$ the \emph{$p$-resistance metric} of $(\mathcal{E},\mathcal{F})$. 
\end{defn}
\begin{proof}[Proof of Corollary \ref{cor.tri}]
    It suffices to prove the triangle inequality $R_{\mathcal{E}}(x,z)^{1/(p - 1)} \le R_{\mathcal{E}}(x,y)^{1/(p - 1)} + R_{\mathcal{E}}(y,z)^{1/(p - 1)}$ for any $x,y,z \in X$ with $\#\{ x,y,z \} = 3$.
    By \eqref{hB} with $B = \{ z \}$ we have $h_{\{ x, z \}}^{\mathcal{E}}\bigl[\indicator{x}^{\{ x, z \}}\bigr](y) \le \frac{R_{\mathcal{E}}(x,y)^{1/(p - 1)}}{R_{\mathcal{E}}(x,z)^{1/(p - 1)}}$, and interchanging the roles of $x$ and $z$ yields $h_{\{ x, z \}}^{\mathcal{E}}\bigl[\indicator{z}^{\{ x, z \}}\bigr](y) \le \frac{R_{\mathcal{E}}(y,z)^{1/(p - 1)}}{R_{\mathcal{E}}(x,z)^{1/(p - 1)}}$.
    Since $\indicator{X} = h_{\{ x, z \}}^{\mathcal{E}}\bigl[\indicator{x}^{\{ x, z \}}\bigr] + h_{\{ x, z \}}^{\mathcal{E}}\bigl[\indicator{z}^{\{ x, z \}}\bigr]$ by \eqref{harm-const} in Theorem \ref{thm.RF-exist}, we obtain
    \[
    1 \le \frac{R_{\mathcal{E}}(x,y)^{1/(p - 1)}}{R_{\mathcal{E}}(x,z)^{1/(p - 1)}} + \frac{R_{\mathcal{E}}(y,z)^{1/(p - 1)}}{R_{\mathcal{E}}(x,z)^{1/(p - 1)}},
    \]
    which proves the desired triangle inequality for $R_{\mathcal{E}}^{1/(p - 1)}$.
\end{proof}

\begin{example}\label{ex:pmet.sharp}
	Let $p \in (1,\infty)$ and $(\mathcal{E},\mathcal{F})$ be a $p$-resistance form on the unit open interval $(0,1)$ given by 
	\[
	\mathcal{F} \coloneqq W^{1,p}(0,1) \quad \text{and} \quad \mathcal{E}(u) \coloneqq \int_{0}^{1}\abs{\nabla u}^{p}\,dx 
	\]
	(recall Example \ref{ex.pRF}-\ref{RF-Rn}). 
	For any $x,y \in (0,1)$ with $0 < x < y < 1$, we easily see that $u \in W^{1,p}(0,1)$ defined by $u(t) \coloneqq (y - x)^{-1}(t - x)\indicator{[x,y]}(t)$, $t \in (0,1)$, is $\mathcal{E}$-harmonic on $(0,1) \setminus \{ x,y \}$. 
	Therefore we have $R_{\mathcal{E}}(x,y) = (y - x)^{p - 1}$ and the $p$-resistance metric $\widehat{R}_{p,\mathcal{E}}$ coincides with the Euclidean metric on $(0,1)$. 
	In particular, the H\"{o}lder regularity estimate \eqref{lip-harm} is sharp. 
	This example also shows that exponent $1/(p - 1)$ in the $p$-resistance metric is sharp, that is, $R_{\mathcal{E}}^{\alpha}$ is not a metric for $\alpha > 1/(p - 1)$ in general.   
\end{example}

\subsection{Elliptic Harnack inequality for non-negative harmonic functions}\label{sec.EHI}
Throughout this subsection, we assume that $\{ \Gamma\langle u \rangle \}_{u \in \mathcal{F}}$ is a family of $p$-energy measures on $(X,\mathcal{B}(X))$ dominated by $(\mathcal{E},\mathcal{F})$ and satisfies \ref{Cp-em}.   
For ease of the notation, we set $\pmetric_{p} \coloneqq \pmetric_{p,\mathcal{E}} = R_{\mathcal{E}}^{1/(p - 1)}$. 

In this subsection, we establish the elliptic Harnack inequality for non-negative $\mathcal{E}$-superharmonic functions under some extra analytic conditions (Theorem \ref{thm.EHI}).
We mainly follow the argument in \cite{Cap07}, but we assume the two-point estimate \eqref{e:TPE.RF} instead of the Poincar\'{e} inequality \cite[(2.4)]{Cap07} (see also Remark \ref{rmk:TPE-PI}).
Let us start with proving the following log-Caccioppoli inequality under the assumption of the chain rule \hyperref[it:CL2]{\textup{(CL2)}} (recall Definition \ref{defn.chainrule}-\ref{it:CL2}). 

\begin{lem}[Log-Caccioppoli type inequality\index{log-Caccioppoli inequality}]\label{lem.logC}
	Assume that $\{ \Gamma\langle u \rangle \}_{u \in \mathcal{F}}$ satisfies the chain rule \hyperref[it:CL2]{\textup{(CL2)}}.
	Then there exists $C \in (0,\infty)$ (depending only on $p$) such that for any $A,\varepsilon \in (0,\infty)$ with $A > 1$, any $(x,s) \in X \times (0,\infty)$ and any $u \in \mathcal{F}$ such that $u \ge 0$ on $X$, $u$ is $\mathcal{E}$-superharmonic on $B_{\pmetric_{p}}(x,As)$ and $\Gamma\langle u \rangle(X) = \mathcal{E}(u)$, it holds that 
	\begin{equation}\label{e:logC}
		\int_{B_{\pmetric_{p}}(x,s)}\,d\Gamma\langle \Phi_{\varepsilon}(u) \rangle \le C\inf\bigl\{ \mathcal{E}(\varphi) \bigm| \varphi \in \mathcal{F}, \varphi|_{B_{\pmetric_{p}}(x,s)} = 1, \supp_{X}[\varphi] \subseteq B_{\pmetric_{p}}(x,As) \bigr\},  
	\end{equation}	
	where $\Phi_{\varepsilon} \in C^{1}(\mathbb{R})$ is any function satisfying $\Phi_{\varepsilon}(x) = \log{(x + \varepsilon)} - \log{\varepsilon}$ for any $x \in [0,\infty)$.
\end{lem}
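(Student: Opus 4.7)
The plan is to test the $\mathcal{E}$-superharmonicity of $u$ on $B_{\pmetric_{p}}(x,As)$ against the standard nonlinear $p$-Laplace test function $\psi \coloneqq \varphi^{p}(u+\varepsilon)^{1-p}$, where $\varphi \in \mathcal{F}$ is an arbitrary competitor in the infimum on the right-hand side of \eqref{e:logC}, and then to combine the chain rule \hyperref[it:CL2]{\textup{(CL2)}} with the H\"{o}lder-type inequality for signed $p$-energy measures (Proposition \ref{prop.em-holder}). By the Lipschitz contractivity of $\mathcal{E}^{1/p}$ (Proposition \ref{prop.GC-list}-\ref{GC.lip}) I may assume $0 \leq \varphi \leq 1$ without enlarging the infimum; together with $u \geq 0$ and $\varepsilon>0$, this ensures that $(a,b)\mapsto b^{p}(a+\varepsilon)^{1-p}$ is smooth on the range of $(u,\varphi)$ and admits a $C^{1}$-extension $\Psi\colon\mathbb{R}^{2}\to\mathbb{R}$ with $\Psi(u,\varphi)=\psi$ on $X$, so that $\psi \in \core$ by \hyperref[it:CL2]{\textup{(CL2)}}. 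Since $\psi\geq 0$ and $\psi$ vanishes outside $B_{\pmetric_{p}}(x,As)$ (because $\supp_{X}[\varphi]\subseteq B_{\pmetric_{p}}(x,As)$), Definition \ref{dfn:part-harmonic} gives $\mathcal{E}(u;\psi)\geq 0$.

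Next I would use the identity $\mathcal{E}(u;\psi)=\Gamma\langle u;\psi\rangle(X)$, which follows by differentiating $\mathcal{E}(u+t\psi) - \Gamma\langle u+t\psi\rangle(X) \geq 0$ at $t=0$: the hypothesis $\Gamma\langle u\rangle(X)=\mathcal{E}(u)$ makes $t=0$ a minimum of this non-negative, differentiable function (Propositions \ref{prop.diffble} and \ref{prop.c-diff-em}), forcing the derivative to vanish. Applying \hyperref[it:CL2]{\textup{(CL2)}} with $\Phi=\id_{\mathbb{R}}$ and $\bm{v}=(u,\varphi)$ then yields
\begin{align*}
0 \leq \Gamma\langle u;\psi\rangle(X)
	&= -(p-1)\int_{X}\varphi^{p}(u+\varepsilon)^{-p}\,d\Gamma\langle u\rangle \\
	&\quad + p\int_{X}\varphi^{p-1}(u+\varepsilon)^{1-p}\,d\Gamma\langle u;\varphi\rangle.
\end{align*}
Bounding the second integral above via Proposition \ref{prop.em-holder} (with the weights $\varphi^{p-1}(u+\varepsilon)^{1-p}$ and $1$) and using $\Gamma\langle\varphi\rangle(X)\leq\mathcal{E}(\varphi)$ produces
\begin{equation*}
(p-1)\int_{X}\varphi^{p}(u+\varepsilon)^{-p}\,d\Gamma\langle u\rangle \leq p\biggl(\int_{X}\varphi^{p}(u+\varepsilon)^{-p}\,d\Gamma\langle u\rangle\biggr)^{(p-1)/p}\mathcal{E}(\varphi)^{1/p},
\end{equation*}
whence $\int_{X}\varphi^{p}(u+\varepsilon)^{-p}\,d\Gamma\langle u\rangle \leq \bigl(p/(p-1)\bigr)^{p}\mathcal{E}(\varphi)$.

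Finally, since $u \geq 0$ and $\Phi_{\varepsilon}'(t)=(t+\varepsilon)^{-1}$ on $[0,\infty)$, the chain rule \hyperref[it:CL1]{\textup{(CL1)}} (available from \hyperref[it:CL2]{\textup{(CL2)}} via Theorem \ref{thm.chain-basic}-\ref{it:CL2-CL1}) identifies $d\Gamma\langle\Phi_{\varepsilon}(u)\rangle=(u+\varepsilon)^{-p}\,d\Gamma\langle u\rangle$, and restricting the previous inequality to $B_{\pmetric_{p}}(x,s)$, where $\varphi=1$, produces $\int_{B_{\pmetric_{p}}(x,s)}d\Gamma\langle\Phi_{\varepsilon}(u)\rangle\leq \bigl(p/(p-1)\bigr)^{p}\mathcal{E}(\varphi)$. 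Taking the infimum over admissible $\varphi$ concludes the proof with $C=\bigl(p/(p-1)\bigr)^{p}$. The main technical obstacle is the first step, namely verifying that the nonlinear test function $\psi$ genuinely belongs to $\core$ so that \hyperref[it:CL2]{\textup{(CL2)}} legitimately applies; this is what forces the preliminary reduction to $0\leq\varphi\leq 1$ and the use of a $C^{1}$-extension of $(a,b)\mapsto b^{p}(a+\varepsilon)^{1-p}$ from the region where the explicit formula is smooth.
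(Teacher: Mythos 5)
Your proof is correct and follows essentially the same route as the paper's: both test the superharmonicity against $\varphi^{p}(u+\varepsilon)^{1-p}$, expand via \hyperref[it:CL2]{\textup{(CL2)}}, and absorb the cross term with Proposition \ref{prop.em-holder}. The only differences are cosmetic — you work with an arbitrary competitor $\varphi$ normalized to $0\le\varphi\le 1$ instead of the minimizer, you obtain $\mathcal{E}(u;\psi)=\Gamma\langle u;\psi\rangle(X)$ by the first-derivative test at the minimum of $t\mapsto\mathcal{E}(u+t\psi)-\Gamma\langle u+t\psi\rangle(X)$ (a clean alternative to the paper's appeal to strong locality), and you divide by $I^{(p-1)/p}$ where the paper uses Young's inequality.
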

\begin{proof}
	Let $\varphi \in \mathcal{F}$ satisfy $\varphi|_{B_{\pmetric_{p}}(x,s)} = 1$, $\supp_{X}[\varphi] \subseteq B_{\pmetric_{p}}(x,As)$ and 
	\[
	\mathcal{E}(\varphi) = \inf\bigl\{ \mathcal{E}(\varphi) \bigm| \varphi \in \mathcal{F}, \varphi|_{B_{\pmetric_{p}}(x,s)} = 1, \supp_{X}[\varphi] \subseteq B_{\pmetric_{p}}(x,As) \bigr\}, 
	\]
	which exists by Theorem \ref{thm.RF-exist}. 
	Let $\varepsilon > 0$ and set $u_{\varepsilon} \coloneqq u + \varepsilon$. 
	Note that $\varphi^{p}u_{\varepsilon}^{1 - p} \in \mathcal{F}$ by Proposition \ref{prop.GC-list}-\ref{GC.leibniz} and Corollary \ref{cor.lip-useful}-\ref{GC.compos}. 
	We see that  
	\begin{align*}
		\int_{B_{\pmetric_{p}}(x,s)}\,d\Gamma\langle \Phi_{\varepsilon}(u) \rangle 
		&\le \int_{B_{\pmetric_{p}}(x,As)}\varphi^{p}\,d\Gamma\langle \Phi_{\varepsilon}(u) \rangle \\
		&\overset{\hyperref[it:CL2]{\textup{(CL2)}}}{=} \frac{1}{1 - p}\int_{B_{\pmetric_{p}}(x,As)}\varphi^{p}\,d\Gamma\langle u_{\varepsilon}; u_{\varepsilon}^{1 - p} \rangle \\
		&\overset{\hyperref[it:CL2]{\textup{(CL2)}}}{=} \frac{1}{1 - p}\biggl(\int_{B_{\pmetric_{p}}(x,As)}\,d\Gamma\langle u_{\varepsilon}; \varphi^{p}u_{\varepsilon}^{1 - p} \rangle - \int_{B_{\pmetric_{p}}(x,As)}u_{\varepsilon}^{1 - p}\,d\Gamma\langle u_{\varepsilon}; \varphi^{p} \rangle\biggr) \\
		&\overset{\textup{($\ast$)}}{\le} \frac{1}{1 - p}\biggl(\mathcal{E}(u_{\varepsilon}; \varphi^{p}u_{\varepsilon}^{1 - p}) - \int_{B_{\pmetric_{p}}(x,As)}u_{\varepsilon}^{1 - p}\,d\Gamma\langle u_{\varepsilon}; \varphi^{p} \rangle\biggr) \\ 
		&\overset{\textup{($\ast\ast$)}}{\le} \frac{-1}{1 - p}\int_{B_{\pmetric_{p}}(x,As)}u_{\varepsilon}^{1 - p}\,d\Gamma\langle u_{\varepsilon}; \varphi^{p} \rangle \\
		&\overset{\hyperref[it:CL2]{\textup{(CL2)}}}{=} \frac{p}{p - 1}\int_{B_{\pmetric_{p}}(x,As)}\varphi^{p - 1}\,d\Gamma\langle \Phi_{\varepsilon}(u); \varphi \rangle \\
		&\overset{\eqref{em.holder}}{\le} \frac{p}{p - 1}\biggl(\frac{1}{2}\int_{B_{\pmetric_{p}}(x,As)}\varphi^{p}\,d\Gamma\langle \Phi_{\varepsilon}(u) \rangle\biggr)^{\frac{p - 1}{p}}\biggl(2^{p - 1}\int_{B_{\pmetric_{p}}(x,As)}\,d\Gamma\langle \varphi \rangle\biggr)^{\frac{1}{p}} \\
		&\le \frac{p}{p - 1}\biggl(\frac{p - 1}{2p}\int_{B_{\pmetric_{p}}(x,As)}\varphi^{p}\,d\Gamma\langle \Phi_{\varepsilon}(u) \rangle + \frac{2^{p - 1}}{p}\int_{B_{\pmetric_{p}}(x,As)}\,d\Gamma\langle \varphi \rangle\biggr), 
	\end{align*}
	where we used Theorem \ref{thm.slocal} and $\Gamma\langle u_{\varepsilon} \rangle(X) = \mathcal{E}(u_{\varepsilon})$ in \textup{($\ast$)}, the fact that $u_{\varepsilon}$ is $\mathcal{E}$-superharmonic on $B_{\pmetric_{p}}(x,As)$ in  \textup{($\ast\ast$)}, and Young's inequality in the last inequality.
	Hence we obtain $\int_{B_{\pmetric_{p}}(x,s)}\,d\Gamma\langle \Phi_{\varepsilon}(u) \rangle \le p^{-1}2^{p}\mathcal{E}(\varphi)$. 
\end{proof}
 
Now we can prove the desired elliptic Harnack inequality as in the following theorem. 
We will see later in Theorem \ref{thm:EHI.pcf} that Theorem \ref{thm.EHI} is applicable to p.-c.f.\ self-similar structures equipped with good self-similar $p$-resistance forms (see Subsection \ref{subsec:TPE} for the precise setting).  
\begin{thm}[Elliptic Harnack inequality\index{elliptic Harnack inequality}]\label{thm.EHI}
	Assume that there exist $\Upsilon \colon X \times (0,\infty) \to (0,\infty)$ and $A_{1}, A_{2},C \in (0,\infty)$ with $A_{1} \ge 1$ and $A_{2} > 1$ such that the following hold:
	\begin{enumerate}[label=\textup{(\roman*)},align=left,leftmargin=*,topsep=2pt,parsep=0pt,itemsep=2pt]
		\item\label{it:EHI.doubling} For any $(x,s) \in X \times (0,\infty)$, 
		\begin{equation}\label{e:VD.upsilon}
			\Upsilon(x,2s) \le C\Upsilon(x,s). 
		\end{equation}
		\item\label{it:EHI.TPE} For any $(x,s) \in X \times (0,\infty)$ and any $u \in \mathcal{F}$, 
		\begin{equation}\label{e:TPE.RF}
			\sup_{y,z \in B_{\pmetric_{p}}(x,s)}\abs{u(y) - u(z)}^{p} \le C\Upsilon(x,s)^{-1}\Gamma\langle u \rangle\bigl(B_{\pmetric_{p}}(x,A_{1}s)\bigr). 
		\end{equation}
		\item\label{it:EHI.capu} For any $(x,s) \in X \times (0,\infty)$ with $B_{\pmetric_{p}}(x,A_{2}s) \neq X$, 
		\begin{equation}\label{e:capu.RF}
			\inf\bigl\{ \mathcal{E}(\varphi) \bigm| \text{$\varphi \in \mathcal{F}$, $\varphi|_{B_{\pmetric_{p}}(x,s)} = 1$, $\supp_{X}[\varphi] \subseteq B_{\pmetric_{p}}(x,A_{2}s)$} \bigr\}
			\le C\Upsilon(x,s).  
		\end{equation}
		\item\label{it:EHI.CL} $\{ \Gamma\langle u \rangle \}_{u \in \mathcal{F}}$ satisfies the chain rule \hyperref[it:CL2]{\textup{(CL2)}}.
	\end{enumerate}
	Then there exist $C_{\mathrm{H}} \in (0,\infty)$ and $\delta_{\mathrm{H}} \in (0,1)$ such that for any $(x,s) \in X \times (0,\infty)$ with $B_{\pmetric_{p}}(x,\delta_{\mathrm{H}}^{-1}s) \neq X$ and any $u \in \mathcal{F}$ such that $u \ge 0$ on $X$, $u$ is $\mathcal{E}$-superharmonic on $B_{\pmetric_{p}}(x,\delta_{\mathrm{H}}^{-1}s)$ and $\Gamma\langle u \rangle(X) = \mathcal{E}(u)$, it holds that 
	\begin{equation}\label{e:EHI}
		\sup_{B_{\pmetric_{p}}(x,s)}u \le C_{\mathrm{H}}\inf_{B_{\pmetric_{p}}(x,s)}u. 
	\end{equation}
\end{thm}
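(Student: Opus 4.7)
The plan is to establish the elliptic Harnack inequality by a Moser iteration scheme in the style of Capitanelli \cite{Cap07}, adapted so that the two-point estimate \eqref{e:TPE.RF} takes the role normally played by a Sobolev--Poincar\'{e} inequality. After reducing to strictly positive $u$ via the regularization $u \mapsto u + \varepsilon$ (passing $\varepsilon \downarrow 0$ at the end), I aim to establish three complementary estimates on a nested family of balls $B_{k} \coloneqq B_{\pmetric_{p}}(x, s_{k})$ with $s = s_{0} < s_{1} < \cdots < \delta_{\mathrm{H}}^{-1}s$: a sup-mean inequality of the form $(\sup_{B_{k}} u^{\beta})^{1/\beta} \lesssim M_{k+1}(u^{\beta})^{1/\beta}$ for small $\beta > 0$, its analogue for small $\beta < 0$ (with $\sup$ replaced by $\inf^{-1}$), and a bounded-mean-oscillation estimate for $\log u$, where $M_{k}$ denotes a surrogate ``average'' functional built from the $\Upsilon$-structure. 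An abstract Bombieri--Giusti lemma then combines these estimates to yield \eqref{e:EHI}.

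First I would derive Caccioppoli-type estimates for nonlinear powers of $u + \varepsilon$. Exploiting the chain rule \hyperref[it:CL2]{\textup{(CL2)}}, testing the superharmonicity inequality against a nonlinear test function of the form $\psi = \varphi^{p}(u + \varepsilon)^{\alpha}$ with $\varphi$ a cutoff between $B_{k}$ and $B_{k+1}$ yields, after manipulations via Proposition \ref{prop.em-holder} and Young's inequality, bounds of the shape $\Gamma\langle (u+\varepsilon)^{\beta} \rangle(B_{k}) \lesssim C(\beta) \int_{B_{k+1}}(u+\varepsilon)^{\beta p}\, d\Gamma\langle \varphi \rangle$ with $\beta = (\alpha + p - 1)/p$ in the appropriate range. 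Substituting into \eqref{e:TPE.RF} applied to $(u+\varepsilon)^{\beta}$ on $B_{k}$, and invoking the capacity bound \eqref{e:capu.RF} together with \eqref{e:VD.upsilon} to cancel the $\Upsilon^{-1}$ factor, produces an iterable oscillation inequality. Moser iteration along the radii $s_{k}$ with geometrically varying exponents then yields the sup-mean inequalities in both the positive- and negative-$\beta$ regimes.

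For the log estimate, Lemma \ref{lem.logC} gives $\Gamma\langle \Phi_{\varepsilon}(u) \rangle(B_{k}) \lesssim \Upsilon(x, s_{k+1})$ via \eqref{e:capu.RF}, and inserting this into \eqref{e:TPE.RF} yields $\osc_{B_{k}} \log(u + \varepsilon) \leq C$ with a constant independent of $\varepsilon$ and of the scale (the $\Upsilon$ factors cancel exactly). A John--Nirenberg-style chaining argument along a dyadic tree of balls, whose combinatorial complexity is controlled by the doubling \eqref{e:VD.upsilon}, upgrades this uniform oscillation bound to the BMO-type inequality required as input for Bombieri--Giusti. Passing $\varepsilon \downarrow 0$ then concludes the proof of \eqref{e:EHI}.

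The main obstacle is the absence of a canonical ambient Borel measure against which to form the ``means'' in the sup-mean inequality: these must be interpreted through a surrogate functional built from $\Upsilon$ and the energy measures (for example via $M_{k}(f) \approx \Upsilon(x,s_{k})^{-1} \int f \, d\Gamma\langle \varphi_{k} \rangle$ for a suitable cutoff $\varphi_{k}$), and one must verify that the abstract Bombieri--Giusti machinery still applies to this surrogate. A secondary technical difficulty is the nonlinear bookkeeping: tracking how $C(\beta)$ depends on $\beta$ and ensuring that the Moser product $\prod_{k} C(\beta_{k})^{1/\beta_{k}}$ converges requires the geometric summability of the cutoff capacities $\mathcal{E}(\varphi_{k}) \lesssim \Upsilon(x, s_{k})$, which is exactly what the combination of \eqref{e:capu.RF} and \eqref{e:VD.upsilon} supplies.
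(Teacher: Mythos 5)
Your third paragraph already contains the entire proof, and the surrounding Moser-iteration scaffolding is both unnecessary and the source of the only real gaps in your argument. Once you observe that Lemma \ref{lem.logC} together with \eqref{e:capu.RF} gives $\Gamma\langle \Phi_{\varepsilon}(u)\rangle\bigl(B_{\pmetric_{p}}(x,A_{1}s)\bigr) \lesssim \Upsilon(x,A_{1}s) \lesssim \Upsilon(x,s)$ (the last step by \eqref{e:VD.upsilon}), and that feeding this into the two-point estimate \eqref{e:TPE.RF} makes the $\Upsilon$ factors cancel, you obtain $\sup_{B_{\pmetric_{p}}(x,s)}\log u_{\varepsilon} - \inf_{B_{\pmetric_{p}}(x,s)}\log u_{\varepsilon} \le C_{0}$ with $C_{0}$ independent of $x,s,u,\varepsilon$ — \emph{on the full ball $B_{\pmetric_{p}}(x,s)$, not merely on small sub-balls}. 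That is literally the Harnack inequality: exponentiating gives $M_{\varepsilon}/m_{\varepsilon} \le e^{C_{0}}$, and letting $\varepsilon \downarrow 0$ yields \eqref{e:EHI} with $C_{\mathrm{H}} = e^{C_{0}}$ and $\delta_{\mathrm{H}} = (A_{1}A_{2})^{-1}$ (the factor $A_{1}A_{2}$ being exactly what is needed so that the log-Caccioppoli inequality at scale $A_{1}s$ can be applied inside the region of superharmonicity). This is precisely the paper's proof. The point you seem to have missed is that \eqref{e:TPE.RF} is a genuinely \emph{pointwise} oscillation (Morrey-type) estimate rather than a Poincar\'{e} inequality, so there is no need to pass from an energy bound on $\log u$ through BMO, John--Nirenberg, or Bombieri--Giusti to reach a pointwise bound — the two-point estimate does that conversion in one step.

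The parts of your plan that go beyond this are where the argument would actually get stuck. As you yourself note, there is no ambient Borel measure here against which to form the means $M_{k}$, and it is far from clear that the Bombieri--Giusti lemma applies to the surrogate functional $\Upsilon(x,s_{k})^{-1}\int f\,d\Gamma\langle\varphi_{k}\rangle$ (which is not even monotone in the ball in any obvious sense, nor do you have the sup-mean inequalities for $\pm\beta$ that the lemma requires, since the Caccioppoli estimates for general powers $(u+\varepsilon)^{\alpha}$ are only sketched and their combination with \eqref{e:TPE.RF} produces sup-sup rather than sup-mean bounds). None of this machinery can be salvaged cheaply in the present setting — but none of it is needed, because the hypotheses \eqref{e:TPE.RF}--\eqref{e:capu.RF} are deliberately strong enough (reflecting the ``low-dimensional'' $p$-resistance-form regime) to make the log-oscillation estimate alone close the argument.
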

\begin{proof}
	Let $\varepsilon \in (0,\infty)$ and set $\delta_{\mathrm{H}} \coloneqq (A_{1}A_{2})^{-1}$. 
	Let $(x,s) \in X \times (0,\infty)$ satisfy $B_{\pmetric_{p}}(x,\delta_{\mathrm{H}}^{-1}s) \neq X$, and let $u \in \mathcal{F}$ be such that $u \ge 0$ on $X$, $u$ is $\mathcal{E}$-superharmonic on $B_{\pmetric_{p}}(x,\delta_{\mathrm{H}}^{-1}s)$ and $\Gamma\langle u \rangle(X) = \mathcal{E}(u)$. 
	Set $u_{\varepsilon} \coloneqq u + \varepsilon$, $M_{\varepsilon} \coloneqq \sup_{B_{\pmetric_{p}}(x,s)}u_{\varepsilon}$ and $m_{\varepsilon} \coloneqq \inf_{B_{\pmetric_{p}}(x,s)}u_{\varepsilon}$. 
	From \ref{RF1}, the invariance of $\mathcal{E}(\,\cdot\,;\,\cdot\,)$ under adding constants as in \eqref{form.basic}, \ref{EM1} and \ref{EM2}, $u_{\varepsilon}$ is $\mathcal{E}$-superharmonic on $B_{\pmetric_{p}}(x,\delta_{\mathrm{H}}^{-1}s)$ and $\Gamma\langle u_{\varepsilon} \rangle(X) = \mathcal{E}(u_{\varepsilon})$. 
	By \eqref{e:logC}--\eqref{e:capu.RF}, there exists $C_{0} \in (0,\infty)$ independent of $x,s,u,\varepsilon$ such that 
	\begin{align*}
		\sup_{B_{\pmetric_{p}}(x,s)}\log{u_{\varepsilon}} - \inf_{B_{\pmetric_{p}}(x,s)}\log{u_{\varepsilon}}
		\le C_{0}, 
	\end{align*}
	whence $\log{\bigl(\frac{M_{\varepsilon}}{m_{\varepsilon}}\bigr)} \le C_{0}$. 
	In particular, $M_{\varepsilon}/m_{\varepsilon} \le e^{C_{0}}$. 
	We obtain \eqref{e:EHI} by letting $\varepsilon \downarrow 0$. 
\end{proof}

\section{Self-similar \texorpdfstring{$p$}{p}-resistance forms and \texorpdfstring{$p$}{p}-energy measures}\label{sec.compatible}
In this section, we investigate $p$-resistance forms by focusing on the self-similar case as in Section \ref{sec.ss}. 
Throughout this section, we fix $p \in (1,\infty)$ and a self-similar structure $\mathcal{L} = (K,S,\{ F_{i} \}_{i \in S})$ with $\#S \ge 2$ and $K$ connected. 

\subsection{Self-similar \texorpdfstring{$p$}{p}-resistance forms}
We first introduce the notion of \emph{self-similar $p$-resistance form}. \index{self-similar $p$-resistance form} 
\begin{defn}[Self-similar $p$-resistance form] \label{dfn.pRFss}
	Let $\bm{\rweight} = (\rweight_{i})_{i \in S} \in (0,\infty)^{S}$ and let $(\mathcal{E},\mathcal{F})$ be a $p$-resistance form on $K$. 
	We say that $(\mathcal{E},\mathcal{F})$ is a \emph{self-similar $p$-resistance form on $\mathcal{L}$ with weight $\bm{\rweight}$} if and only if $\mathcal{F} \subseteq \contfunc(K)$ and $(\mathcal{E},\mathcal{F})$ satisfies the self-similarity conditions \eqref{SSE1} and \eqref{SSE2} in Definition \ref{defn.ssform} (under the original topology of $K$ implicit in $\mathcal{L} = (K,S,\{ F_{i} \}_{i \in S})$ being a self-similar structure). 
\end{defn}
Throughout the rest of this section except Proposition \ref{prop.RFrenorm} and Theorem \ref{t:KS-mono.pcf}, we fix a self-similar $p$-resistance form $(\mathcal{E},\mathcal{F})$ on $\mathcal{L}$ with weight $\bm{\rweight} = (\rweight_{i})_{i \in S} \in (0,\infty)^{S}$. 
Note that the topology induced by the $p$-resistance metric $\pmetric_{p,\mathcal{E}}$ of $(\mathcal{E},\mathcal{F})$ may be different from the original topology of $K$ implicit in $\mathcal{L} = (K,S,\{ F_{i} \}_{i \in S})$ being a self-similar structure. 
Under the present setting, in referring to a topology of $K$ we always consider its original topology. 
Note also that then $\mathcal{F}$ is dense in $(\contfunc(K),\norm{\,\cdot\,}_{\sup})$ by the Stone--Weierstrass theorem (see, e.g., \cite[Theorem 2.4.11]{Dud}), which applies to $K$ and $\mathcal{F}$ by the compactness of $K$, \ref{RF1}, \ref{RF3} and the fact that $\mathcal{F}$ is an algebra by \eqref{leibniz} in Proposition \ref{prop.GC-list}-\ref{GC.leibniz}. 

The following properties of the $p$-resistance metric are elementary.
\begin{prop}\label{prop.pRMss} 
	\begin{enumerate}[label=\textup{(\arabic*)},align=left,leftmargin=*,topsep=4pt,itemsep=2pt]
		\item\label{pRMcontraction} For any $w \in W_{\ast}$ and any $x,y \in K$,
    	\begin{equation}\label{pRMss}
        	R_{\mathcal{E}}(F_{w}(x),F_{w}(y)) \le \rweight_{w}^{-1}R_{\mathcal{E}}(x,y). 
    	\end{equation}
    	\item\label{pRMcompatible}  If $\min_{i \in S}\rweight_{i} > 1$ and if either $\diam(K,\pmetric_{p,\mathcal{E}}) < \infty$ or $\mathcal{L}$ is a p.-c.f.\ self-similar structure, then $\pmetric_{p,\mathcal{E}}$ is compatible with the original topology of $K$, and in particular, $V_{\ast}$ is dense in $(K,\pmetric_{p,\mathcal{E}})$.  
	\end{enumerate}
\end{prop}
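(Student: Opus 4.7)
The plan for part \ref{pRMcontraction} is a direct unpacking of the self-similarity. Iterating \eqref{SSE1} gives $u \circ F_{w} \in \mathcal{F}$ for any $u \in \mathcal{F}$ and $w \in W_\ast$ (using $\mathcal{F} \subseteq \contfunc(K)$), and applying \eqref{ss.partition} to the partition $W_{|w|}$ of $\Sigma$ yields $\rweight_{w}\mathcal{E}(u \circ F_{w}) \leq \mathcal{E}(u)$. Substituting into the defining supremum \eqref{R-def} of $R_{\mathcal{E}}(F_{w}(x), F_{w}(y))$, rewriting the increments as $u(F_{w}(x)) - u(F_{w}(y)) = (u \circ F_{w})(x) - (u \circ F_{w})(y)$, and bounding $1/\mathcal{E}(u)$ by $\rweight_{w}^{-1}/\mathcal{E}(u \circ F_{w})$ gives \eqref{pRMss}.

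For part \ref{pRMcompatible}, my strategy is to prove that the identity map from $K$ with its original topology to $(K, \pmetric_{p,\mathcal{E}})$ is continuous; since the domain is compact Hausdorff and the target is Hausdorff, this continuity automatically upgrades to a homeomorphism. The central estimate is that, assuming $D \coloneqq \diam(K, \pmetric_{p,\mathcal{E}}) < \infty$, part \ref{pRMcontraction} applied to $K_{w} = F_{w}(K)$ yields
\[
\diam(K_{w}, \pmetric_{p,\mathcal{E}}) \leq \rweight_{w}^{-1/(p-1)} D \leq \Bigl(\min_{i \in S} \rweight_{i}\Bigr)^{-|w|/(p-1)} D,
\]
so the cell $\pmetric_{p,\mathcal{E}}$-diameters shrink uniformly to zero as $|w| \to \infty$ because $\min_{i \in S} \rweight_{i} > 1$. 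Combined with \cite[Proposition 1.3.6]{Kig01}, which says $\{K_{n,x}\}_{n \geq 0}$ is a fundamental neighborhood basis of $x$ in the original topology, this forces $\pmetric_{p,\mathcal{E}}(x_{k}, x) \to 0$ whenever $x_{k} \to x$ in the original topology, giving the required continuity.

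The main obstacle is verifying $D < \infty$ in the p.c.f.\ case (in the other case it is assumed). My plan here is to invoke the closed graph theorem for the canonical linear inclusion $\bar{\Psi}\colon (\mathcal{F}/\mathbb{R}\indicator{K}, \mathcal{E}^{1/p}) \hookrightarrow (\contfunc(K)/\mathbb{R}\indicator{K}, \osc_{K})$, which is well-defined because $\mathcal{F} \subseteq \contfunc(K)$. Both spaces are Banach (the domain by \ref{RF2}; the target as a quotient of the Banach space $\contfunc(K)$), and the graph is closed: if $u_{n} \to u$ in $\mathcal{E}^{1/p}$ then \eqref{R-basic} forces $u_{n}(x) - u_{n}(y) \to u(x) - u(y)$ for all $x, y \in K$, and if also $\bar{\Psi}(u_{n}) \to [v]$ in $\contfunc(K)/\mathbb{R}\indicator{K}$ then the same increments converge to $v(x) - v(y)$, so $u - v$ is constant. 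The resulting operator-norm estimate $\osc_{K}(u)^{p} \leq C\mathcal{E}(u)$ translates into $R_{\mathcal{E}}(x, y) \leq C$ uniformly in $x, y \in K$, giving $D < \infty$.

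For the final claim about density of $V_\ast$ in $(K, \pmetric_{p,\mathcal{E}})$, once compatibility of the two topologies on $K$ is established it reduces to density of $V_\ast$ in the original topology, which holds by \cite[Lemma 1.3.11]{Kig01} whenever $V_0 \neq \emptyset$. In the p.c.f.\ case this non-emptiness is automatic under the standing assumptions $\#S \geq 2$ and $K$ connected, since some $K_{i} \cap K_{j}$ with $i \neq j$ must be non-empty, forcing $\mathcal{C}_{\mathcal{L}} \neq \emptyset$ and hence $V_{0} = \chi(\mathcal{P}_{\mathcal{L}}) \neq \emptyset$.
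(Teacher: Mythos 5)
Your proof is correct, and for part \ref{pRMcontraction} it is the same computation the paper has in mind (the paper just says ``immediate from \eqref{SSE2}''; your route via $\mathcal{E}(u)\ge\rweight_{w}\mathcal{E}(u\circ F_{w})$ and \eqref{R-basic} is the standard one, with the only cosmetic gap being the degenerate case $\mathcal{E}(u\circ F_{w})=0$, where the numerator vanishes too). For part \ref{pRMcompatible} the paper delegates entirely to references: it follows \cite[Proof of Proposition B.1]{Kig09} when $\diam(K,\pmetric_{p,\mathcal{E}})<\infty$ and \cite[Proof of Theorem 3.3.4]{Kig01} in the p.-c.f.\ case. Your treatment of the bounded-diameter case --- cell diameters shrink like $\rweight_{w}^{-1/(p-1)}$, the sets $K_{n,x}$ form a neighbourhood basis, and a continuous bijection from a compact space onto a Hausdorff space is a homeomorphism --- is essentially the content of the first reference. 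Where you genuinely diverge is the p.-c.f.\ case: instead of reproducing Kigami's argument, you reduce it to the bounded-diameter case by proving $\diam(K,\pmetric_{p,\mathcal{E}})<\infty$ outright via the closed graph theorem for the inclusion $(\mathcal{F}/\mathbb{R}\indicator{K},\mathcal{E}^{1/p})\hookrightarrow(\contfunc(K)/\mathbb{R}\indicator{K},\osc_{K})$; the ingredients are exactly \ref{RF2} (completeness of the domain), compactness of $K$ together with $\mathcal{F}\subseteq\contfunc(K)$ (so the target is Banach, $\osc_{K}$ being twice the quotient norm), and \ref{RF4} via \eqref{R-basic} (closedness of the graph). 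This softer functional-analytic argument is valid and in fact buys more than the statement asks for: it nowhere uses post-critical finiteness, so under the standing assumptions of Section \ref{sec.compatible} it shows the dichotomy ``either $\diam(K,\pmetric_{p,\mathcal{E}})<\infty$ or $\mathcal{L}$ is p.-c.f.'' is automatically satisfied in its first alternative. Two small polishing remarks: your verification that $V_{0}\neq\emptyset$ (connectedness of $K$ with $\#S\geq 2$ forces some $K_{i}\cap K_{j}\neq\emptyset$) works verbatim outside the p.-c.f.\ case as well, which is needed for the density claim in the bounded-diameter alternative; and sequential continuity suffices for continuity here because the original topology of $K$ is metrizable.
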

\begin{rmk}
	It is known that, if $p = 2$, $\min_{i \in S}\rweight_{i} > 1$ and $\mathcal{L}$ is a p.-c.f.\ self-similar structure, then there exists $c \in (0,\infty)$ such that for any $w \in W_{\ast}$ and any $x,y \in K$, 
	\begin{equation}\label{e:pRMcontraction}
        R_{\mathcal{E}}(F_{w}(x),F_{w}(y)) \ge c\rweight_{w}^{-1}R_{\mathcal{E}}(x,y);  
    \end{equation}
    see \cite[Theorem A.1]{Kig03}.
	We extend this result to the case of $p \in (1,\infty) \setminus \{2\}$ in Subsection \ref{sec:pcf-contraction}; see Theorem \ref{thm.pcf-contraction}. 
\end{rmk}
\begin{proof}[Proof of Proposition \ref{prop.pRMss}]
	\ref{pRMcontraction}: This is immediate from the self-similarity \eqref{SSE2} of $\mathcal{E}$. (See \cite[Lemma 3.3.5]{Kig01} for the case of $p = 2$.) 
	
	\ref{pRMcompatible}: This is proved by modifying the known arguments for the case of $p = 2$ in \cite{Kig01,Kig09}. Indeed, we can show that $\pmetric_{p,\mathcal{E}}$ is compatible with the original topology of $K$, by following \cite[Proof of Proposition B.1]{Kig09} if $\diam(K,\pmetric_{p,\mathcal{E}}) < \infty$, and by following \cite[Proof of Theorem 3.3.4]{Kig01} if $\mathcal{L}$ is a p.-c.f.\ self-similar structure (see also Lemma \ref{lem.identify} below). 
	Then $V_{\ast}$ is dense in $(K,\pmetric_{p,\mathcal{E}})$ since $\closure{V_{\ast}}^{K} = K$ by \cite[Lemma 1.3.11]{Kig01}.  
\end{proof}

The following proposition presents compatible sequences of $p$-resistance forms having a self-similarity. 
\begin{prop}\label{prop.compatible}
    Let $n \in \mathbb{N} \cup \{ 0 \}$, let $\Lambda$ be a partition of $\Sigma$ and set $V_{n,\Lambda} \coloneqq \bigcup_{w \in \Lambda}F_{w}(V_{n})$. 
    Then for any $u \in \mathcal{F}|_{V_{n,\Lambda}}$,
    \begin{align}\label{eq:compat-seq.partition}
        \mathcal{E}|_{V_{n,\Lambda}}(u) &= \sum_{w \in \Lambda}\rweight_{w}\mathcal{E}|_{V_{n}}(u \circ F_{w}), \\
        h_{V_{n,\Lambda}}^{\mathcal{E}}[u] \circ F_{w} &= h_{V_{n}}^{\mathcal{E}}[u \circ F_{w}] \quad \text{for any $w \in \Lambda$.} 
	\label{ss.harmext}
    \end{align}
    In particular, for any $m \in \mathbb{N} \cup \{ 0 \}$ and any $u \in \mathcal{F}|_{V_{n + m}}$,
    \begin{equation}\label{eq:compat-seq}
        \mathcal{E}|_{V_{n + m}}(u) = \sum_{w \in W_{m}}\rweight_{w}\mathcal{E}|_{V_{n}}(u \circ F_{w}).
    \end{equation}
\end{prop}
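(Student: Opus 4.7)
The plan is to establish the harmonic-extension identity \eqref{ss.harmext} first by exhibiting the candidate extension explicitly and then to deduce \eqref{eq:compat-seq.partition} via the self-similarity identity \eqref{ss.partition}; the ``in particular'' claim \eqref{eq:compat-seq} is just the special case $\Lambda=W_{m}$ together with the observation that $V_{n,W_{m}}=V_{n+m}$.

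For each $w\in\Lambda$, any $f\in\mathcal{F}$ with $f|_{V_{n,\Lambda}}=u$ satisfies $f\circ F_{w}\in\mathcal{F}$ by \eqref{SSE1} (applicable because $\mathcal{F}\subseteq\contfunc(K)$ for a $p$-resistance form), so $u\circ F_{w}=(f\circ F_{w})|_{V_{n}}\in\mathcal{F}|_{V_{n}}$ and hence $h_{w}\coloneqq h_{V_{n}}^{\mathcal{E}}[u\circ F_{w}]\in\mathcal{F}$ is well defined. Since $\Lambda$ is a partition of $\Sigma$, one has $K=\bigcup_{w\in\Lambda}K_{w}$, and I define $g\colon K\to\mathbb{R}$ by $g|_{K_{w}}\coloneqq h_{w}\circ F_{w}^{-1}$ for each $w\in\Lambda$. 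Consistency on overlaps is guaranteed by \eqref{V0bdry}: for distinct $w,v\in\Lambda$ one has $K_{w}\cap K_{v}=F_{w}(V_{0})\cap F_{v}(V_{0})\subseteq V_{n,\Lambda}$ since $V_{0}\subseteq V_{n}$, and if $x=F_{w}(y)=F_{v}(z)$ with $y,z\in V_{0}$, then both $h_{w}(y)$ and $h_{v}(z)$ equal $u(x)$ because $h_{w}|_{V_{n}}=u\circ F_{w}$ and $h_{v}|_{V_{n}}=u\circ F_{v}$. Finiteness of $\Lambda$, compactness of each $K_{w}$, and the continuity of $F_{w}^{-1}\colon K_{w}\to K$ then yield $g\in\contfunc(K)$.

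The main obstacle is to show that $g\in\mathcal{F}$. I plan to iterate \eqref{SSE1} to prove the auxiliary claim that for every $k\in\mathbb{N}$ and every $\psi\in\contfunc(K)$, $\psi\in\mathcal{F}$ if and only if $\psi\circ F_{v}\in\mathcal{F}$ for all $v\in W_{k}$; this follows by induction on $k$ via $\psi\circ F_{iv}=(\psi\circ F_{i})\circ F_{v}$ and the fact that every intermediate composition is again in $\contfunc(K)$. Applying the claim with $k\coloneqq\max_{w\in\Lambda}|w|$, each $v\in W_{k}$ admits a unique factorization $v=w\tilde{v}$ with $w\in\Lambda$ (because $\Lambda$ partitions $\Sigma$), whence $g\circ F_{v}=h_{w}\circ F_{\tilde{v}}$, and this lies in $\mathcal{F}$ by the same iterated \eqref{SSE1} applied to $h_{w}\in\mathcal{F}\cap\contfunc(K)$. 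Hence $g\in\mathcal{F}$, and then \eqref{ss.partition} yields $\mathcal{E}(g)=\sum_{w\in\Lambda}\rweight_{w}\mathcal{E}(h_{w})=\sum_{w\in\Lambda}\rweight_{w}\mathcal{E}|_{V_{n}}(u\circ F_{w})$. For any $f\in\mathcal{F}$ with $f|_{V_{n,\Lambda}}=u$, the same identity \eqref{ss.partition} combined with the trace lower bound $\mathcal{E}(f\circ F_{w})\geq\mathcal{E}|_{V_{n}}(u\circ F_{w})$ gives $\mathcal{E}(f)\geq\mathcal{E}(g)$, so the uniqueness part of Theorem \ref{thm.RF-exist} identifies $g$ with $h_{V_{n,\Lambda}}^{\mathcal{E}}[u]$, establishing \eqref{ss.harmext}, and reading off $\mathcal{E}|_{V_{n,\Lambda}}(u)=\mathcal{E}(g)$ completes \eqref{eq:compat-seq.partition}.
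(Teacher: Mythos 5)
Your proof is correct and follows essentially the same route as the paper's: both glue the cell-wise harmonic extensions $h_{V_{n}}^{\mathcal{E}}[u\circ F_{w}]$ into a global continuous function (well-defined on overlaps by \eqref{V0bdry}, shown to lie in $\mathcal{F}$ by iterating \eqref{SSE1}), apply \eqref{ss.partition}, and invoke the uniqueness in Theorem \ref{thm.RF-exist}. The only difference is organizational: you identify the glued function directly as the minimizer, obtaining \eqref{ss.harmext} and \eqref{eq:compat-seq.partition} in one stroke, whereas the paper first proves \eqref{eq:compat-seq.partition} via a two-sided inequality and then derives \eqref{ss.harmext} by a separate sandwich argument.
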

\begin{proof}
    Note that \eqref{eq:compat-seq} follows from \eqref{eq:compat-seq.partition} by choosing $\Lambda = W_{m}$ and that the sequence $\mathcal{S} \coloneqq \bigl\{ (V_{n,\Lambda},\mathcal{E}_{V_{n,\Lambda}}) \bigr\}_{n \in \mathbb{N} \cup \{ 0 \}}$ is a compatible sequence of $p$-resistance forms by Proposition \ref{prop.trace-comp}. 
    Let $u \in \mathcal{F}|_{V_{n,\Lambda}}$. 
    Then we see that
    \begin{align*}
        \mathcal{E}|_{V_{n,\Lambda}}(u)
        &= \min\bigl\{ \mathcal{E}(v) \bigm| \text{$v \in \mathcal{F}$ with $v|_{V_{n,\Lambda}} = u$} \bigr\} \\
        &\overset{\eqref{ss.partition}}{=} \min\Biggl\{ \sum_{w \in \Lambda}\rweight_{w}\mathcal{E}(v \circ F_{w}) \Biggm| \text{$v \in \mathcal{F}$ with $v|_{V_{n,\Lambda}} = u$} \Biggr\} \\
        &\ge \sum_{w \in \Lambda}\rweight_{w}\min\bigl\{ \mathcal{E}(v) \bigm| \text{$v \in \mathcal{F}$ with $v|_{V_{n}} = u \circ F_{w}$} \bigr\}
        = \sum_{w \in \Lambda}\rweight_{w}\mathcal{E}|_{V_{n}}(u \circ F_{w}).
    \end{align*}
    To prove the converse, define $v \in \contfunc(K)$ so that $v \circ F_{w} = h_{V_{n}}^{\mathcal{E}}[u \circ F_{w}]$ for any $w \in \Lambda$; note that such $v$ is well-defined by virtue of \eqref{V0bdry}. 
    Then $v|_{V_{n,\Lambda}} = u$ and $v \in \mathcal{F}_{\mathcal{S}}$ by the self-similarity \eqref{SSE1} of $\mathcal{F}$. 
    Since
    \begin{align*}
        \mathcal{E}|_{V_{n,\Lambda}}(u)
        \le \mathcal{E}(v)
        &\overset{\eqref{ss.partition}}{=} \sum_{w \in \Lambda}\rweight_{w}\mathcal{E}(v \circ F_{w})
        = \sum_{w \in \Lambda}\rweight_{w}\mathcal{E}\bigl(h_{V_{n}}^{\mathcal{E}}[u \circ F_{w}]\bigr)
        =
        \sum_{w \in \Lambda}\rweight_{w}\mathcal{E}|_{V_{n}}(u \circ F_{w}), 
    \end{align*}
    we have \eqref{eq:compat-seq.partition}. 
    Next we prove \eqref{ss.harmext}.
    We have $\mathcal{E}\bigl(h_{V_{n,\Lambda}}^{\mathcal{E}}[u] \circ F_{w}\bigr) \ge \mathcal{E}\bigl(h_{V_{n}}^{\mathcal{E}}[u \circ F_{w}]\bigr)$ for any $w \in \Lambda$.
    Since 
    \begin{align*}
        \mathcal{E}|_{V_{n,\Lambda}}(u)
        = \mathcal{E}\bigl(h_{V_{n,\Lambda}}^{\mathcal{E}}[u]\bigr)
        &= \sum_{w \in \Lambda}\rweight_{w}\mathcal{E}\bigl(h_{V_{n,\Lambda}}^{\mathcal{E}}[u] \circ F_{w}\bigr) \\
        &\ge \sum_{w \in \Lambda}\rweight_{w}\mathcal{E}\bigl(h_{V_{n}}^{\mathcal{E}}[u \circ F_{w}]\bigr)
        = \sum_{w \in \Lambda}\rweight_{w}\mathcal{E}|_{V_{n}}(u \circ F_{w})
        = \mathcal{E}|_{V_{n,\Lambda}}(u),
    \end{align*}
    we obtain $\mathcal{E}\bigl(h_{V_{n,\Lambda}}^{\mathcal{E}}[u] \circ F_{w}\bigr) = \mathcal{E}\bigl(h_{V_{n}}^{\mathcal{E}}[u \circ F_{w}]\bigr)$ for any $w \in \Lambda$.
    The uniqueness of $h_{V_{n}}^{\mathcal{E}}[u \circ F_{w}]$ in Theorem \ref{thm.RF-exist} implies $h_{V_{n,\Lambda}}^{\mathcal{E}}[u] \circ F_{w} = h_{V_{n}}^{\mathcal{E}}[u \circ F_{w}]$.
\end{proof}

The following corollary is an immediate consequence of Proposition \ref{prop.harmapprox}. 
\begin{cor}\label{cor:sspRF.compat} 
    Assume that $\mathcal{L} = (K,S,\{ F_{i} \}_{i \in S})$ is a p.-c.f.\ self-similar structure.
    Then 
    \begin{align}
    	\mathcal{F} &= \Bigl\{ u \in \contfunc(K) \Bigm| \lim_{n \to \infty}\mathcal{E}|_{V_{n}}(u|_{V_{n}}) < \infty \Bigr\}, \label{sspRF:monolim.domain}\\
    	\mathcal{E}(u; v) &= \lim_{n \to \infty}\mathcal{E}|_{V_{n}}(u|_{V_{n}}; v|_{V_{n}}) \quad \text{for any $u,v \in \mathcal{F}$,} \label{sspRF:monolim}\\
    	\lim_{n \to \infty}&\mathcal{E}\bigl(u - h_{V_{n}}^{\mathcal{E}}[u|_{V_{n}}]\bigr) = 0 \quad \text{for any $u \in \mathcal{F}$.} \label{sspRF:monoapprox} 
    \end{align}
\end{cor}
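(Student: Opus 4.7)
The plan is to apply Proposition \ref{prop.harmapprox} directly with $\mathcal{V}_{n} \coloneqq V_{n}$ for each $n \in \mathbb{N} \cup \{0\}$, so the task reduces to verifying the hypotheses of that proposition in the present setting. There are three points to check, all essentially routine. First, each $V_{n}$ is a non-empty finite subset of $K$: finiteness of $V_{0} = \chi(\mathcal{P}_{\mathcal{L}})$ follows from the post-critical finiteness of $\mathcal{L}$, and then $V_{n} = \bigcup_{w \in W_{n}} F_{w}(V_{0})$ is finite as a finite union of finite sets; non-emptiness, i.e.\ $V_{0} \neq \emptyset$, holds because if $V_{0} = \emptyset$ then by \eqref{V0bdry} we would have $K_{i} \cap K_{j} = \emptyset$ for all distinct $i,j \in S$, so that $K = \bigcup_{i \in S} K_{i}$ would be a disjoint union of the non-empty closed sets $\{K_{i}\}_{i \in S}$, contradicting connectedness of $K$ together with $\#S \geq 2$. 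Second, $V_{n} \subseteq V_{n+1}$ for every $n$ and $V_{\ast} = \bigcup_{n \in \mathbb{N} \cup \{0\}} V_{n}$ is dense in $K$, both by \cite[Lemma 1.3.11]{Kig01} as recalled immediately after Definition \ref{d:V0Vstar} (using $V_{0} \neq \emptyset$ just established).

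Third, since $(\mathcal{E},\mathcal{F})$ is a self-similar $p$-resistance form on $\mathcal{L}$, Definition \ref{dfn.pRFss} gives $\mathcal{F} \subseteq \contfunc(K)$, and combined with density of $V_{\ast}$ in $K$ this yields injectivity of the restriction map $\mathcal{F} \ni u \mapsto u|_{V_{\ast}}$, which is exactly \eqref{e:lineariso.Vast}. Moreover $\closure{V_{\ast}}^{K} = K$, so the ``in particular'' clause of Proposition \ref{prop.harmapprox} applies with $X = K$ and $\mathcal{V}_{n} = V_{n}$; its conclusions \eqref{monolim-dom-dense}, \eqref{compat-conv}, and \eqref{harmapprox} are literally \eqref{sspRF:monolim.domain}, \eqref{sspRF:monolim}, and \eqref{sspRF:monoapprox} respectively. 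There is no serious obstacle: the only mild subtlety is the verification that $V_{0}$ cannot be empty under the standing hypotheses, and this is handled by the short connectedness argument above.
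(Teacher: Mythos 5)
Your verification of the hypotheses of Proposition \ref{prop.harmapprox} is fine as far as it goes: $V_{n}$ is non-empty and finite, the sequence is non-decreasing, $\closure{V_{\ast}}^{K} = K$, and $\mathcal{F} \subseteq \contfunc(K)$ together with this density gives \eqref{e:lineariso.Vast}. From \eqref{e:lineariso.Vast} the main part of Proposition \ref{prop.harmapprox} yields \eqref{compat-conv} and \eqref{harmapprox}, which are \eqref{sspRF:monolim} and \eqref{sspRF:monoapprox}; that much of your argument is correct and matches the paper.

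The gap is in your treatment of \eqref{sspRF:monolim.domain}. You invoke the ``in particular'' clause of Proposition \ref{prop.harmapprox} and assert that \eqref{monolim-dom-dense} is ``literally'' \eqref{sspRF:monolim.domain}. But throughout Section \ref{sec.p-harm} the set $X$ is equipped with the topology induced by $R_{\mathcal{E}}^{1/p}$, so the hypothesis $\closure{\mathcal{V}_{\ast}}^{X} = X$ of that clause refers to closure in the $R_{\mathcal{E}}^{1/p}$-topology, and the $\contfunc(X)$ appearing in \eqref{monolim-dom-dense} means continuity with respect to $R_{\mathcal{E}}^{1/p}$. In the present setting the weight $\bm{\rweight}$ is only assumed to lie in $(0,\infty)^{S}$, so by the remark after Definition \ref{dfn.pRFss} the $R_{\mathcal{E}}^{1/p}$-topology may differ from the original topology of $K$ (Proposition \ref{prop.pRMss}-\ref{pRMcompatible} requires $\min_{i}\rweight_{i} > 1$ to rule this out). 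Hence your density check, carried out in the original topology, does not verify the hypothesis of the ``in particular'' clause, and even if it did, $\contfunc(X)$ and $\contfunc(K)$ need not be the same class of functions, so \eqref{monolim-dom-dense} is not literally \eqref{sspRF:monolim.domain}. The paper explicitly avoids this: it derives \eqref{sspRF:monolim.domain} from \eqref{monolim-dom} directly, arguing that any $u \in \contfunc(K)$ with $\lim_{n}\mathcal{E}|_{V_{n}}(u|_{V_{n}}) < \infty$ has $u|_{V_{\ast}} = v|_{V_{\ast}}$ for some $v \in \mathcal{F} \subseteq \contfunc(K)$ by \eqref{monolim-dom}, whence $u = v$ by density of $V_{\ast}$ in the \emph{original} topology and continuity of both functions for that topology. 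Your proof can be repaired by replacing the appeal to \eqref{monolim-dom-dense} with this argument.
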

\begin{proof}
The condition \eqref{e:lineariso.Vast} in Proposition \ref{prop.harmapprox} holds by $\mathcal{F} \subseteq \contfunc(K)$ and $\closure{V_{\ast}}^{K} = K$. Therefore \eqref{sspRF:monolim.domain} follows from \eqref{monolim-dom}, $\mathcal{F} \subseteq \contfunc(K)$ and $\closure{V_{\ast}}^{K} = K$, \eqref{sspRF:monolim} from \eqref{compat-conv}, and \eqref{sspRF:monoapprox} from \eqref{harmapprox}. 
(Note that \eqref{monolim-dom-dense} may not be applicable to the present situation because the topology considered in \eqref{monolim-dom-dense} is that induced by $R_{\mathcal{E}}^{1/p}$ and may be different from the original topology of $K$.) 
\end{proof}

The following proposition gives characterizations of $\mathcal{E}$-harmonic functions on $K \setminus V_{n}$.
\begin{prop}\label{prop.harmVn} 
    Let $n\in\mathbb{N} \cup \{ 0 \}$ and $h\in\contfunc(K)$.
    Then the following two conditions are equivalent to each other:
    \begin{enumerate}[label=\textup{(\arabic*)},align=left,leftmargin=*,topsep=4pt,itemsep=2pt]
        \item\label{harmVn} $h\in\mathcal{H}_{\mathcal{E},V_{n}}$.
        \item\label{harmVn-each} $h \circ F_{w} \in \mathcal{H}_{\mathcal{E},V_{0}}$ for any $w \in W_{n}$.
    \end{enumerate}
    If in addition $\mathcal{L}$ is a p.-c.f.\ self-similar structure, then each of \ref{harmVn} and \ref{harmVn-each} above is equivalent also to the following condition:
    \begin{enumerate}[label=\textup{(\arabic*)},align=left,leftmargin=*,topsep=4pt,itemsep=2pt]\setcounter{enumi}{2}
        \item\label{harmVn-test} For any $m\in\mathbb{N}$ with $m>n$ and any $x\in V_{m} \setminus V_{n}$,
	       \begin{equation}\label{eq:harmVn-test}
	              \sum_{w \in W_{m};\,x\in F_{w}(V_{0})}\rweight_{w}\mathcal{E}|_{V_{0}}\Bigl(h \circ F_{w}\vert_{V_{0}};\indicator{F_{w}^{-1}(x)}^{V_{0}}\Bigr)=0.
	       \end{equation}
    \end{enumerate}
\end{prop}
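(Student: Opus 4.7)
My plan is to prove $\ref{harmVn}\Leftrightarrow\ref{harmVn-each}$ by iterating the self-similarity of the two-variable form \eqref{ss-pform}, and (in the p.-c.f.\ case) to prove $\ref{harmVn}\Leftrightarrow\ref{harmVn-test}$ by recognizing \eqref{eq:harmVn-test} as a discrete harmonicity statement for the trace form on $V_{m}$. Throughout, $(\mathcal{E},\mathcal{F})$ satisfies \ref{RF5} and hence \ref{Cp} by Proposition \ref{prop.GC-list}-\ref{GC.Cpsmall},\ref{GC.Cplarge}, so Proposition \ref{prop.ss-pform} applies and iterating \eqref{ss-pform} yields $\mathcal{E}(h;\varphi)=\sum_{w\in W_{n}}\rweight_{w}\mathcal{E}(h\circ F_{w};\varphi\circ F_{w})$ for all $h,\varphi\in\mathcal{F}$.

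For $\ref{harmVn-each}\Rightarrow\ref{harmVn}$ I would take $\varphi\in\mathcal{F}^{0}(K\setminus V_{n})$: since $F_{w}(V_{0})\subseteq V_{n}$ for each $w\in W_{n}$, one has $\varphi\circ F_{w}\in\mathcal{F}^{0}(K\setminus V_{0})$, and every term on the right-hand side vanishes by hypothesis. For $\ref{harmVn}\Rightarrow\ref{harmVn-each}$, fix $w_{0}\in W_{n}$ and $\psi\in\mathcal{F}^{0}(K\setminus V_{0})$; I would build a localized test function $\varphi\colon K\to\mathbb{R}$ by setting $\varphi\circ F_{w_{0}}\coloneqq\psi$ and $\varphi\circ F_{w}\coloneqq 0$ for $w\in W_{n}\setminus\{w_{0}\}$. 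Well-definedness on overlaps $K_{w}\cap K_{v}=F_{w}(V_{0})\cap F_{v}(V_{0})$ (see \eqref{V0bdry}, applicable since $\Sigma_{w}\cap\Sigma_{v}=\emptyset$ for distinct $w,v\in W_{n}$) follows from $\psi|_{V_{0}}=0$, whence $\varphi$ is continuous. Membership $\varphi\in\mathcal{F}$ is then obtained by a finite downward induction on word length, using \eqref{SSE1} and the continuity of $\varphi\circ F_{u}$ for $|u|\le n$ at each step; also $\varphi|_{V_{n}}=0$, so $\varphi\in\mathcal{F}^{0}(K\setminus V_{n})$. Substituting $\varphi$ into the decomposition collapses it to $\rweight_{w_{0}}\mathcal{E}(h\circ F_{w_{0}};\psi)=\mathcal{E}(h;\varphi)=0$.

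For the p.-c.f.\ equivalence $\ref{harmVn}\Leftrightarrow\ref{harmVn-test}$, I would first differentiate \eqref{eq:compat-seq} (with $n=0$ there) in the direction of $tv$ to obtain, for any $u,v\in\mathcal{F}|_{V_{m}}=\mathbb{R}^{V_{m}}$ (Proposition \ref{prop.trace-dom}),
\begin{equation*}
    \mathcal{E}|_{V_{m}}(u;v)=\sum_{w\in W_{m}}\rweight_{w}\mathcal{E}|_{V_{0}}(u\circ F_{w}|_{V_{0}};v\circ F_{w}|_{V_{0}}).
\end{equation*}
With $v=\indicator{x}^{V_{m}}$ for $x\in V_{m}\setminus V_{n}$, only summands with $x\in F_{w}(V_{0})$ survive, and the right-hand side coincides with the sum in \eqref{eq:harmVn-test}. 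Since $\mathcal{E}|_{V_{m}}(h|_{V_{m}};\cdot)$ is linear (Theorem \ref{thm.p-form}) and $\{\indicator{x}^{V_{m}}\}_{x\in V_{m}\setminus V_{n}}$ spans $\{\psi\in\mathbb{R}^{V_{m}}\mid\psi|_{V_{n}}=0\}$, condition \ref{harmVn-test} is equivalent to $\mathcal{E}|_{V_{m}}(h|_{V_{m}};\psi)=0$ for every such $\psi$ and every $m>n$, equivalently (by Proposition \ref{prop.equiv}) to $h|_{V_{m}}=h_{V_{n}}^{\mathcal{E}|_{V_{m}}}[h|_{V_{n}}]$ for every $m>n$. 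Proposition \ref{prop.trace-comp} identifies the right-hand side with $h_{V_{n}}^{\mathcal{E}}[h|_{V_{n}}]|_{V_{m}}$, so \ref{harmVn-test} asserts that $h$ and $h_{V_{n}}^{\mathcal{E}}[h|_{V_{n}}]$ agree on $V_{*}=\bigcup_{m}V_{m}$, which is dense in $K$ by \cite[Lemma 1.3.11]{Kig01}; continuity then upgrades this to equality on $K$, i.e., $h\in\mathcal{H}_{\mathcal{E},V_{n}}$.

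The main technical obstacle I anticipate is the localization step in $\ref{harmVn}\Rightarrow\ref{harmVn-each}$: verifying $\varphi\in\mathcal{F}$ requires propagating the cell-wise definition back up through \eqref{SSE1} at every intermediate word length, which is clean only once one exploits $\psi|_{V_{0}}=0$ and \eqref{V0bdry} to establish continuity of $\varphi\circ F_{u}$ for each $u$ with $|u|<n$.
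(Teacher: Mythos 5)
Your proposal is correct and follows essentially the same route as the paper: both directions of \ref{harmVn}$\Leftrightarrow$\ref{harmVn-each} use the cell-wise decomposition of $\mathcal{E}(\,\cdot\,;\,\cdot\,)$ together with a test function supported on a single $n$-cell (well-defined via \eqref{V0bdry} and $\psi|_{V_{0}}=0$), and \ref{harmVn}$\Leftrightarrow$\ref{harmVn-test} rests on the identity $\mathcal{E}|_{V_{m}}(u;v)=\sum_{w\in W_{m}}\rweight_{w}\mathcal{E}|_{V_{0}}(u\circ F_{w}|_{V_{0}};v\circ F_{w}|_{V_{0}})$ coming from \eqref{eq:compat-seq}. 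The only divergence is the last step of \ref{harmVn-test}$\Rightarrow$\ref{harmVn}: the paper lets $m\to\infty$ in $\mathcal{E}|_{V_{m}}(h|_{V_{m}};\varphi|_{V_{m}})=0$ via \eqref{sspRF:monolim}, whereas you identify $h$ with $h_{V_{n}}^{\mathcal{E}}[h|_{V_{n}}]$ on the dense set $V_{\ast}$ and invoke continuity --- both are valid, and yours has the small advantage of making $h\in\mathcal{F}$ explicit.
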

\begin{proof}
    To see \ref{harmVn} $\Rightarrow$ \ref{harmVn-each}, let $w \in W_{n}$, $\varphi \in \mathcal{F}^{0}(K \setminus V_{0})$ and define $(F_{w})_{\ast}\varphi \colon K \to \mathbb{R}$ by 
    \[
    (F_{w})_{\ast}\varphi \coloneqq 
    \begin{cases}
    	 \varphi \circ F_{w}^{-1} &\text{on $K_{w}$,} \\
    	 0 &\text{on $K \setminus K_{w}$.}
    \end{cases} 
    \]
  	Then since $(F_{w})_{\ast}\varphi \in \contfunc(K)$ by $\varphi|_{V_{0}}=0$ and \eqref{V0bdry}, it follows from the self-similarity \eqref{SSE1} of $\mathcal{F}$ that $(F_{w})_{\ast}\varphi \in \mathcal{F}^{0}(K \setminus V_{n})$, and then from \ref{harmVn} and the self-similarity \eqref{SSE2} of $\mathcal{E}$ that $0 = \mathcal{E}(h; (F_{w})_{\ast}\varphi) = \rweight_{w}\mathcal{E}(h \circ F_{w}; \varphi)$, proving $h \circ F_{w} \in \mathcal{H}_{\mathcal{E},V_{0}}$, namely \ref{harmVn-each}. 
    The converse implication \ref{harmVn-each} $\Rightarrow$ \ref{harmVn} is obvious from \eqref{SSE2}.

    Next we prove the equivalence between \ref{harmVn} and \ref{harmVn-test} for a p.-c.f.\ self-similar structure $\mathcal{L}$.
    We first show \ref{harmVn} $\Rightarrow$ \ref{harmVn-test}.
    For any $m > n$ and any $\varphi \in \mathcal{F}^{0}(K \setminus V_{n})$, we note that $h_{V_{m}}^{\mathcal{E}}[\varphi|_{V_{m}}]\bigr|_{V_{n}} = 0$.
    Then, for any $h \in \mathcal{H}_{\mathcal{E},V_{n}}$, we have from \eqref{eq:compat-seq} that
    \begin{equation*}
        0 = \mathcal{E}|_{V_{m}}(h|_{V_{m}}; \varphi|_{V_{m}}) = \sum_{w \in W_{m}}\rweight_{w}\mathcal{E}|_{V_{0}}\bigl(h \circ F_{w}|_{V_{0}}; \varphi \circ F_{w}|_{V_{0}}\bigr) \quad \text{for any $\varphi \in \mathcal{F}^{0}(K \setminus V_{0})$.}
    \end{equation*}
    By choosing $\varphi \in \mathcal{F}^{0}(K \setminus V_{n})$ so that $\varphi|_{V_{m}} = \indicator{x}^{V_{m}}$ for $x \in V_{m} \setminus V_{n}$, we obtain \ref{harmVn-test}.
    We next assume that $h \in \contfunc(K)$ satisfies \eqref{eq:harmVn-test} and fix $\varphi \in \mathcal{F}^{0}(K \setminus V_{n})$ in order to show the converse implication \ref{harmVn-test} $\Rightarrow$ \ref{harmVn}.
    For $m > n$, we see from \eqref{eq:compat-seq}, $\varphi|_{V_{n}} = 0$ and \eqref{eq:harmVn-test} that
    \begin{align*}
        \mathcal{E}|_{V_{m}}(h|_{V_{m}}; \varphi|_{V_{m}})
        &= \sum_{w \in W_{m}}\rweight_{w}\mathcal{E}|_{V_{0}}\bigl(h \circ F_{w}|_{V_{0}}; \varphi \circ F_{w}|_{V_{0}}\bigr) \\
        &= \sum_{w \in W_{m}}\sum_{y \in V_{0}}\varphi(F_{w}(y))\rweight_{w}\mathcal{E}|_{V_{0}}\bigl(h \circ F_{w}|_{V_{0}}; \indicator{y}^{V_{0}}\bigr) \\
        &= \sum_{x \in V_{m} \setminus V_{n}}\varphi(x)\sum_{w \in W_{m}; x \in F_{w}(V_{0})}\rweight_{w}\mathcal{E}|_{V_{0}}\Bigl(h \circ F_{w}|_{V_{0}}; \indicator{F_{w}^{-1}(x)}^{V_{0}}\Bigr)
        = 0.
    \end{align*}
    Letting $m \to \infty$ here on the basis of \eqref{sspRF:monolim}, we obtain $\mathcal{E}(h; \varphi) = 0$, and hence $h \in \mathcal{H}_{\mathcal{E},V_n}$.
\end{proof}

Thanks to the self-similarity, we can get the following localized version of the weak comparison principle (recall Proposition \ref{prop.cp1}).
\begin{prop}[A localized weak comparison principle]\label{prop:localcp} 
    Let $n\in\mathbb{N} \cup \{ 0 \}$, $w \in W_{n}$, and let $u,v\in \mathcal{H}_{\mathcal{E},V_{n}}$ satisfy $u(x)\leq v(x)$ for any $x\in F_{w}(V_{0})$. Then $u(x) \leq v(x)$ for any $x \in K_{w}$.
\end{prop}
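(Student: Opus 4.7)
The plan is to reduce the statement to the global weak comparison principle (Proposition \ref{prop.cp1}) by pulling back via the contraction $F_{w}$. The self-similarity packaged in Proposition \ref{prop.harmVn} converts $\mathcal{E}$-harmonicity on $K \setminus V_{n}$ into $\mathcal{E}$-harmonicity on $K \setminus V_{0}$ for each cell.

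First, since $u,v \in \mathcal{H}_{\mathcal{E},V_{n}} \subseteq \mathcal{F} \cap \contfunc(K)$, the equivalence \ref{harmVn} $\Leftrightarrow$ \ref{harmVn-each} in Proposition \ref{prop.harmVn} yields $u \circ F_{w}, v \circ F_{w} \in \mathcal{H}_{\mathcal{E},V_{0}}$ (note that $u \circ F_{w}, v \circ F_{w} \in \mathcal{F}$ by \eqref{SSE1}). By Theorem \ref{thm.RF-exist}, we have $\mathcal{H}_{\mathcal{E},V_{0}} = h_{V_{0}}^{\mathcal{E}}(\mathcal{F}|_{V_{0}})$ together with the uniqueness of the harmonic extension, so
\begin{equation*}
u \circ F_{w} = h_{V_{0}}^{\mathcal{E}}\bigl[(u \circ F_{w})\vert_{V_{0}}\bigr], \qquad v \circ F_{w} = h_{V_{0}}^{\mathcal{E}}\bigl[(v \circ F_{w})\vert_{V_{0}}\bigr].
\end{equation*}

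Next, the hypothesis that $u(x) \leq v(x)$ for every $x \in F_{w}(V_{0})$ translates, by composition with $F_{w}$, into $(u \circ F_{w})\vert_{V_{0}} \leq (v \circ F_{w})\vert_{V_{0}}$ pointwise on $V_{0}$. Applying Proposition \ref{prop.cp1} (with $B = V_{0}$) to the $p$-resistance form $(\mathcal{E},\mathcal{F})$ on $K$ now gives
\begin{equation*}
(u \circ F_{w})(y) = h_{V_{0}}^{\mathcal{E}}\bigl[(u \circ F_{w})\vert_{V_{0}}\bigr](y) \leq h_{V_{0}}^{\mathcal{E}}\bigl[(v \circ F_{w})\vert_{V_{0}}\bigr](y) = (v \circ F_{w})(y) \quad \text{for every } y \in K.
\end{equation*}
Since $K_{w} = F_{w}(K)$, this is precisely $u(x) \leq v(x)$ for every $x \in K_{w}$, which is the desired conclusion.

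There is no real obstacle here: the self-similarity of $(\mathcal{E},\mathcal{F})$ via \eqref{SSE1}--\eqref{SSE2} has already been leveraged in Proposition \ref{prop.harmVn} to localize harmonicity to a single cell, and Proposition \ref{prop.cp1} (which rested on the strong subadditivity \eqref{sadd} coming from \ref{RF5}) provides the comparison on the model cell. The argument is therefore a clean two-step reduction. If anything, one should double-check that the pulled-back functions are indeed the harmonic extensions of their own boundary values on $V_{0}$, which is immediate from the uniqueness clause in Theorem \ref{thm.RF-exist}.
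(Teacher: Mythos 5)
Your proof is correct and follows essentially the same route as the paper: the paper likewise invokes the implication \ref{harmVn} $\Rightarrow$ \ref{harmVn-each} of Proposition \ref{prop.harmVn} to get $u \circ F_{w}, v \circ F_{w} \in \mathcal{H}_{\mathcal{E},V_{0}}$ and then applies Proposition \ref{prop.cp1} with $B = V_{0}$. Your extra remark identifying the pulled-back functions with the harmonic extensions of their boundary values via the uniqueness in Theorem \ref{thm.RF-exist} is exactly the (implicit) step that makes the application of Proposition \ref{prop.cp1} legitimate.
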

\begin{proof}
    Since $h \circ F_{w} \in \mathcal{H}_{\mathcal{E},V_{0}}$ by the implication from \ref{harmVn} to \ref{harmVn-each} in Proposition \ref{prop.harmVn}, the assertion follows by applying Proposition \ref{prop.cp1} to $h \circ F_{w}$.  
\end{proof}

Next we show the monotonicity in $p$ of the $\frac{1}{p-1}$-th power of the weight of a self-similar $p$-resistance form with constant weight on a p.-c.f.\ self-similar structure (Theorem \ref{t:KS-mono.pcf} below); see also Theorem \ref{t:KS-mono} for a similar result in another framework including the generalized Sierpi\'{n}ski carpets. 
The proof of Theorem \ref{t:KS-mono.pcf} requires the following basic result, which is immediate from \eqref{V0bdry} and Proposition \ref{prop.cone-gen}-\ref{GC.cone}. 
\begin{prop}\label{prop.RFrenorm}
	Assume that $\mathcal{L}$ is a p.-c.f.\ self-similar structure. 
	Let $k,n \in \mathbb{N} \cup \{ 0 \}$ and let $E$ be a $p$-resistance form on $V_{k}$. 
	Let $\rweight = (\rweight_{i})_{i \in S} \in (0,\infty)^{S}$ and define $\Lambda_{\rweight}^{n}(E) \colon \mathbb{R}^{V_{k + n}}\to [0,\infty)$ by 
	\begin{equation}\label{RFrenorm}
		\Lambda_{\rweight}^{n}(E)(u) \coloneqq \sum_{w \in W_{n}}\rweight_{w}E(u \circ F_{w}|_{V_{k}}), \quad u \in \mathbb{R}^{V_{k + n}}.  
	\end{equation}
	Then $\Lambda_{\rweight}^{n}(E)$ is a $p$-resistance form on $V_{k + n}$. 
\end{prop}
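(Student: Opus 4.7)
The plan is to check \ref{RF1} and \ref{RF5} for $\Lambda_{\rweight}^{n}(E)$ on $\mathbb{R}^{V_{k+n}}$; this suffices by the observation in Example~\hyperref[RF-graph]{\ref{ex.pRF}}-\ref{RF-graph}, since $V_{k+n}$ is finite (because $\mathcal{L}$ is post-critically finite, so $V_{0}$, and hence $V_{k+n} = \bigcup_{w \in W_{k+n}} F_{w}(V_{0})$, is finite). Axioms \ref{RF2}, \ref{RF3} and \ref{RF4} then follow automatically from \ref{RF1} and the finite-dimensionality of $\mathbb{R}^{V_{k+n}}$, as recalled in that example.

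For \ref{RF5}, I would write $\Lambda_{\rweight}^{n}(E) = \sum_{w \in W_{n}} E_{w}$ with $E_{w}(u) \coloneqq \rweight_{w} E(u \circ F_{w}|_{V_{k}})$ and show that each $E_{w}$ lies in $\mathcal{U}_{p}^{\mathrm{GC}}(\mathbb{R}^{V_{k+n}})$. Each $E_{w}$ is $p$-homogeneous, and for any $T = (T_{1},\ldots,T_{n_{2}}) \colon \mathbb{R}^{n_{1}} \to \mathbb{R}^{n_{2}}$ satisfying \eqref{GC-cond} and any $\bm{u} \in (\mathbb{R}^{V_{k+n}})^{n_{1}}$, the pointwise identity $T_{l}(\bm{u}) \circ F_{w}|_{V_{k}} = T_{l}(\bm{u} \circ F_{w}|_{V_{k}})$ (valid on $V_{k}$) reduces \ref{GC} for $E_{w}$ to \ref{GC} for $E$ applied to $\bm{u} \circ F_{w}|_{V_{k}} \in (\mathbb{R}^{V_{k}})^{n_{1}}$, up to the common factor $\rweight_{w}^{1/p}$ appearing on both sides. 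Hence $E_{w} \in \mathcal{U}_{p}^{\mathrm{GC}}(\mathbb{R}^{V_{k+n}})$ for each $w \in W_{n}$, and iterating Proposition~\ref{prop.cone-gen}-\ref{GC.cone} $\#W_{n}-1$ times yields $\Lambda_{\rweight}^{n}(E) \in \mathcal{U}_{p}^{\mathrm{GC}}(\mathbb{R}^{V_{k+n}})$, i.e.\ \ref{RF5}.

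The seminorm property of $\Lambda_{\rweight}^{n}(E)^{1/p}$ is then an immediate consequence of \ref{RF5} via Proposition~\ref{prop.GC-list}-\ref{GC.tri}, and the inclusion $\mathbb{R}\indicator{V_{k+n}} \subseteq \Lambda_{\rweight}^{n}(E)^{-1}(0)$ is clear from \ref{RF1} for $E$. The one substantive step, which I regard as the main obstacle, is the reverse inclusion: if $\Lambda_{\rweight}^{n}(E)(u) = 0$, then $E(u \circ F_{w}|_{V_{k}}) = 0$ for every $w \in W_{n}$ (since each $\rweight_{w}>0$), so by \ref{RF1} for $E$, the restriction $u \circ F_{w}|_{V_{k}}$ is constant for each $w$; I must then glue these per-cell constants into a single global constant on $V_{k+n} = \bigcup_{w \in W_{n}} F_{w}(V_{k})$. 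This is where \eqref{V0bdry} enters: since $V_{0} \subseteq V_{k}$, for distinct $w_{1}, w_{2} \in W_{n}$ one has
\[
F_{w_{1}}(V_{k}) \cap F_{w_{2}}(V_{k}) \supseteq F_{w_{1}}(V_{0}) \cap F_{w_{2}}(V_{0}) = K_{w_{1}} \cap K_{w_{2}},
\]
so any two cells whose closures meet must carry the same constant. Finally, since $K = \bigcup_{w \in W_{n}} K_{w}$ is connected and each $K_{w} = F_{w}(K)$ is a connected compact set, the adjacency graph on $W_{n}$ with edges $\{w_{1},w_{2}\}$ whenever $K_{w_{1}} \cap K_{w_{2}} \neq \emptyset$ is connected; propagating along this graph identifies all the per-cell constants, giving $u \in \mathbb{R}\indicator{V_{k+n}}$ and completing \ref{RF1}.
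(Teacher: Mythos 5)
Your proof is correct and follows exactly the route the paper intends: the paper gives no written proof, merely declaring the proposition "immediate from \eqref{V0bdry} and Proposition \ref{prop.cone-gen}-\ref{GC.cone}", and your argument is a faithful expansion of precisely those two ingredients (the cone property of \ref{GC} for \ref{RF5}, and \eqref{V0bdry} together with connectedness of $K$ to glue the per-cell constants for \ref{RF1}). Nothing to add.
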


\begin{thm}\label{t:KS-mono.pcf}
    Assume that $\mathcal{L}$ is a p.-c.f.\ self-similar structure. 
	Let $p_{1},p_{2} \in (1,\infty)$ satisfy $p_{1} \leq p_{2}$, and for each $s \in \{ 1, 2 \}$, let $\rweight_{s} \in (1,\infty)$ and let $(\mathcal{E}_{s},\mathcal{F}_{s})$ be a self-similar $p_{s}$-resistance form on $\mathcal{L}$ with weight $(\rweight_{s})_{i \in S}$. 
    Then
    \begin{equation}\label{KS-mono.pcf}
        \rweight_{1}^{1/(p_{1}-1)} \leq \rweight_{2}^{1/(p_{2}-1)}. 
    \end{equation}
\end{thm}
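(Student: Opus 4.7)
Plan. By iterating the self-similarity identity \eqref{SSE2} and using the constancy of the weight $(\rweight_{s})_{i \in S}$, one obtains $\mathcal{E}_{s}(u) = \rweight_{s}^{n}\sum_{w \in W_{n}}\mathcal{E}_{s}(u \circ F_{w})$ for all $u \in \mathcal{F}_{s}$ and all $n \in \mathbb{N}$. Passing to the trace on $V_{n}$ via Proposition \ref{prop.compatible}, and combining with the compatibility identity $\mathcal{E}_{s}|_{V_{n}}|_{V_{0}} = \mathcal{E}_{s}|_{V_{0}}$ from Proposition \ref{prop.trace-comp}, gives the following key identity: for any distinct $x, y \in V_{0}$,
\[
\rweight_{s}^{n} \,=\, \frac{R_{\mathcal{E}_{s}|_{V_{0}}}(x,y)^{-1}}{B_{s}^{(n)}(x,y)},
\qquad B_{s}^{(n)}(x,y) \coloneqq \inf\biggl\{\sum_{w \in W_{n}}\mathcal{E}_{s}|_{V_{0}}(u \circ F_{w}|_{V_{0}}) \;\biggm|\; u \in \mathbb{R}^{V_{n}},\, u(x) = 1,\, u(y) = 0 \biggr\}.
\]
Thus the proof of the claim reduces to establishing, for some fixed pair $x \neq y$ in $V_{0}$, the asymptotic inequality $B_{1}^{(n)}(x,y)^{1/(p_{1}-1)} \leq C\, B_{2}^{(n)}(x,y)^{1/(p_{2}-1)}$ up to a multiplicative constant $C$ independent of $n$, which upon substitution into the displayed formula above and taking $n$-th roots yields the desired monotonicity.

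For this asymptotic comparison, I would fix a near-minimizer $u_{s}^{\ast} \in \mathbb{R}^{V_{n}}$ of $B_{s}^{(n)}(x,y)$ and employ $u_{1}^{\ast}$ as an admissible test function for $B_{2}^{(n)}(x,y)$. By the weak comparison principle (Proposition \ref{prop.cp1}) applied to the $\mathcal{E}_{1}|_{V_{n}}$-harmonic extension, $0 \leq u_{1}^{\ast} \leq 1$ on $V_{n}$, so each cell restriction $u_{1}^{\ast} \circ F_{w}|_{V_{0}}$ is a $[0,1]$-valued function on $V_{0}$. The strategy is then to establish a cell-wise estimate of the form
\[
\mathcal{E}_{2}|_{V_{0}}(v)^{1/(p_{2}-1)} \;\leq\; C\, \mathcal{E}_{1}|_{V_{0}}(v)^{1/(p_{1}-1)} \quad \text{for all nonconstant $v : V_{0} \to [0,1]$,}
\]
and then to aggregate these local inequalities across $w \in W_{n}$ using the reverse Minkowski inequality (Proposition \ref{prop.reverse}) with exponent $r = (p_{1}-1)/(p_{2}-1) \in (0,1]$, which exploits precisely that $p_{1} \leq p_{2}$. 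Substituting the resulting aggregated estimate into the identity of the first paragraph would deliver the required bound.

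The main obstacle lies in the cell-wise comparison displayed above: the forms $\mathcal{E}_{1}|_{V_{0}}$ and $\mathcal{E}_{2}|_{V_{0}}$ are two \emph{a priori unrelated} $p$-resistance forms on the same finite set $V_{0}$, with no pointwise relationship imposed between their values. Bridging this gap likely requires invoking additional structural input: either an explicit description of the traces $\mathcal{E}_{s}|_{V_{0}}$ coming from the eigenform construction (as in the setting of Theorem \ref{thm.eigenform-ANF}, where geometric symmetries compatible with $\mathcal{L}$ pin down the forms on $V_{0}$ up to a scalar), or a refinement of the argument that avoids cell-wise comparison entirely by working with the two-variable energy $\mathcal{E}(\,\cdot\,;\,\cdot\,)$ of Theorem \ref{thm.p-form} and the quantitative uniform convexity/smoothness estimates in Theorems \ref{thm.p-form} and \ref{thm.banach} to compare harmonic extensions across different values of $p$ directly. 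I anticipate that the proof given in the paper proceeds along one of these two routes, and the key technical input is a clean way to convert the uniform contraction ratio $\rweight_{s}^{1/(p_{s}-1)}$ between cells (from Proposition \ref{prop.pRMss}-\ref{pRMcontraction}) into a quantitative comparison across the two parameters $p_{1}$ and $p_{2}$ at finite levels.
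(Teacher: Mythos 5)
Your setup (the variational identity $\rweight_{s}^{n} = R_{\mathcal{E}_{s}|_{V_{0}}}(x,y)^{-1}/B_{s}^{(n)}(x,y)$, transferring a near-minimizer from the $p_{1}$-problem to the $p_{2}$-problem, and aggregating cell-wise bounds via $\ell^{1} \hookrightarrow \ell^{(p_{2}-1)/(p_{1}-1)}$) is sound in outline, apart from the direction of the displayed target inequality in your first paragraph being reversed (you need $(B_{2}^{(n)})^{1/(p_{2}-1)} \lesssim (B_{1}^{(n)})^{1/(p_{1}-1)}$; your second paragraph makes clear this is what you actually intend). The genuine gap is the cell-wise estimate you reduce to: the inequality $\mathcal{E}_{2}|_{V_{0}}(v)^{1/(p_{2}-1)} \leq C\,\mathcal{E}_{1}|_{V_{0}}(v)^{1/(p_{1}-1)}$ for all nonconstant $[0,1]$-valued $v$ is \emph{false}, and not merely because the two forms are a priori unrelated. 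Even for the comparable explicit forms $\sum_{x,y}\abs{v(x)-v(y)}^{p_{s}}$, taking $v$ with oscillation $\varepsilon$ makes the left side of order $\varepsilon^{p_{2}/(p_{2}-1)}$ and the right side of order $\varepsilon^{p_{1}/(p_{1}-1)}$; since $t \mapsto t/(t-1)$ is decreasing, the ratio blows up as $\varepsilon \downarrow 0$. This is precisely the relevant regime, because the cell oscillations of your minimizer $u_{1}^{\ast}$ at level $n$ decay geometrically in $n$, so no $n$-independent constant can exist and your scheme cannot close as stated.

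The paper's proof supplies exactly the missing quantitative input. It takes as test function the single $\mathcal{E}_{1}$-harmonic function $h_{1} = h_{B}^{\mathcal{E}_{1}}[\indicator{x_{0}}]$ with $B = \{x_{0},y_{0}\}$, restricted to a cell $K_{w}$ disjoint from $B$ on which it is nonconstant, and writes each $p_{2}$-th power of a cell increment as $\abs{\cdot}^{p_{2}-p_{1}}\cdot\abs{\cdot}^{p_{1}}$. The first factor is then controlled by the sharp H\"{o}lder regularity estimate \eqref{lip-harm} of Theorem \ref{t:lip-harm} together with \eqref{pRMss}:
\begin{equation*}
\abs{h_{1}(F_{wv}(x)) - h_{1}(F_{wv}(y))}^{p_{2}-p_{1}} \leq \biggl(\frac{R_{\mathcal{E}_{1}}(F_{wv}(x),F_{wv}(y))}{R_{\mathcal{E}_{1}}(F_{wv}(x),B)}\biggr)^{\frac{p_{2}-p_{1}}{p_{1}-1}} \lesssim \rweight_{1}^{-n(p_{2}-p_{1})/(p_{1}-1)},
\end{equation*}
where the denominator is bounded below via the localized comparison principle because $K_{w} \cap B = \emptyset$. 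The exponent $1/(p_{1}-1)$ in \eqref{lip-harm} is what converts the oscillation decay into precisely the power of $\rweight_{1}$ needed, giving $\mathcal{E}_{2}|_{V_{0}}(h_{1}\circ F_{w}|_{V_{0}}) \lesssim \bigl(\rweight_{2}\rweight_{1}^{-(p_{2}-1)/(p_{1}-1)}\bigr)^{n}\mathcal{E}_{1}(h_{1}\circ F_{w})$; since the left side is a fixed positive number, letting $n \to \infty$ forces $\rweight_{2}\rweight_{1}^{-(p_{2}-1)/(p_{1}-1)} \geq 1$. Neither of the two routes you anticipated (eigenform rigidity on $V_{0}$, or uniform convexity/smoothness of the two-variable energy) is used; the decisive tool is the H\"{o}lder estimate for $\mathcal{E}_{1}$-harmonic functions, which your proposal gestures at in its final sentence but does not deploy.
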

\begin{proof} 
    Let $s \in \{ 1, 2 \}$, $n \in \mathbb{N} \cup \{0\}$, and let $E_{s,n}$ be the $p_{s}$-resistance form on $V_{n}$ given by
    \[
    E_{s,n}(u) \coloneqq \rweight_{s}^{n} \sum_{v \in W_{n}} \sum_{x,y \in V_{0}} \abs{u(F_{v}(x)) - u(F_{v}(y))}^{p_{s}}, \quad u \in \mathbb{R}^{V_{n}}, 
    \]
    so that $\Lambda_{(\rweight_{s})_{i \in S}}^{n}(E_{s,0}) = E_{s,n}$.
    Since both $E_{s,0}(\,\cdot\,)^{1/p_{s}}$ and $\mathcal{E}_{s}|_{V_{0}}(\,\cdot\,)^{1/p_{s}}$ are norms on the finite-dimensional vector space $\mathbb{R}^{V_{0}}/\mathbb{R}\indicator{V_{0}}$, there exists $C_{s} \in [1,\infty)$ such that 
    \begin{equation}\label{comp.level0}
        C_{s}^{-1}E_{s,0}(u) \le \mathcal{E}_{s}|_{V_{0}}(u) \le C_{s}E_{s,0}(u) \quad \text{for any $u \in \mathbb{R}^{V_{0}}$.}
    \end{equation}
    Since $\Lambda_{(\rweight_{s})_{i \in S}}^{n}(\mathcal{E}_{s}|_{V_{0}}) = \mathcal{E}_{s}|_{V_{n}}$ by \eqref{eq:compat-seq} in Proposition \ref{prop.compatible}, we see from \eqref{comp.level0} that
    \begin{equation}\label{comp.leveln}
        C_{s}^{-1}E_{s,n}(u) \le \mathcal{E}_{s}|_{V_{n}}(u) \le C_{s}E_{s,n}(u) \quad \text{for any $n \in \mathbb{N} \cup \{ 0 \}$ and any $u \in \mathbb{R}^{V_{n}}$.}
    \end{equation}
	
    Now we move to the proof of \eqref{KS-mono.pcf}.
    Let us fix $x_{0},y_{0} \in V_{0}$ with $x_{0} \neq y_{0}$ and set $B \coloneqq \{ x_{0},y_{0} \}$.
    Then we can find $w \in W_{\ast}$ so that $B \cap K_{w} = \emptyset$ and $h_{1,w} \coloneqq h_{1} \circ F_{w} \not\in \mathbb{R}\indicator{K}$, where $h_{1} \coloneqq h_{B}^{\mathcal{E}_{1}}[\indicator{x_{0}}]$. 
    (Supposing that $h_{1} \circ F_{w} \in \mathbb{R}\indicator{K}$ for any $w \in W_{\ast}$ with $B \cap K_{w} = \emptyset$, we would easily get a contradiction to $h_{1}(x_{0}) \not= h_{1}(y_{0})$ by combining the arcwise connectedness of $K$ implied by its connectedness and \cite[Theorem 1.6.2]{Kig01} with the continuity of $h_{1} \in \mathcal{F}_{1} \subseteq \contfunc(K)$.) 
    Noting that $c \coloneqq \inf_{x \in K_{w}}R_{\mathcal{E}_{1}}(x, B) \geq \mathcal{E}_{1}\bigl( h_{V_{\lvert w\rvert}}^{\mathcal{E}_{1}}[\indicator{B}] \bigr)^{-1} > 0$ by Proposition \ref{prop:localcp} and the definition of $R_{\mathcal{E}_{1}}(x, B)$ in \eqref{R-def.gen} and that $0 \leq h_{1} \leq 1$ by the weak comparison principle \eqref{mp}, for any $n \in \mathbb{N} \cup \{0\}$ we obtain 
    \begin{align}\label{diff.harm}
        &\mathcal{E}_{2}|_{V_{0}}(h_{1,w}|_{V_{0}}) \leq \mathcal{E}_{2}|_{V_{n}}(h_{1,w}|_{V_{n}}) \quad \text{(by Proposition \ref{prop.trace-comp} and  \eqref{eq:dfn-trace})} \nonumber \\
        &\overset{\eqref{comp.leveln}}{\leq} C_{2}E_{2,n}(h_{1,w}|_{V_{n}}) \nonumber \\
        &= C_{2}\rweight_{2}^{n}\sum_{v \in W_{n}}\sum_{x,y \in V_{0}}\abs{h_{1}(F_{wv}(x)) - h_{1}(F_{wv}(y))}^{p_{2} - p_{1}} \cdot \abs{h_{1,w}(F_{v}(x)) - h_{1,w}(F_{v}(y))}^{p_{1}} \nonumber \\
        &\overset{\eqref{lip-harm}}{\leq} C_{2}\rweight_{2}^{n}\sum_{v \in W_{n}}\sum_{x,y \in V_{0}}\biggl(\frac{R_{\mathcal{E}_{1}}(F_{wv}(x),F_{wv}(y))}{R_{\mathcal{E}_{1}}(F_{wv}(x),B)}\biggr)^{\frac{p_{2} - p_{1}}{p_{1} - 1}} \cdot \abs{h_{1,w}(F_{v}(x)) - h_{1,w}(F_{v}(y))}^{p_{1}} \nonumber \\
        &\overset{\eqref{pRMss}}{\leq} C_{2} \Bigl( c^{-1} \sup_{x,y \in K}R_{\mathcal{E}_{1}}(x,y) \Bigr)^{(p_{2} - p_{1})/(p_{1} - 1)} \bigl(\rweight_{2}\rweight_{1}^{-(p_{2} - 1)/(p_{1} - 1)}\bigr)^{n}E_{1,n}(h_{1,w}|_{V_{n}}) \nonumber \\
        &\overset{\eqref{comp.leveln}}{\leq} C_{1}C_{2} \Bigl( c^{-1} \sup_{x,y \in K}R_{\mathcal{E}_{1}}(x,y)\Bigr)^{(p_{2} - p_{1})/(p_{1} - 1)} \bigl(\rweight_{2}\rweight_{1}^{-(p_{2} - 1)/(p_{1} - 1)}\bigr)^{n}\mathcal{E}_{1}(h_{1,w}).
    \end{align}
    Since $\sup_{x,y \in K}R_{\mathcal{E}_{1}}(x,y) < \infty$ by Proposition \ref{prop.pRMss}-\ref{pRMcompatible} and $\mathcal{E}_{2}|_{V_{0}}(h_{1,w}|_{V_{0}}), \mathcal{E}_{1}(h_{1,w}) \in (0,\infty)$, we conclude by letting $n \to \infty$ in \eqref{diff.harm} that $\rweight_{2}\rweight_{1}^{-(p_{2} - 1)/(p_{1} - 1)} \geq 1$, proving \eqref{KS-mono.pcf}. 
\end{proof}

\subsection{Two-point estimate and capacity upper estimate}\label{subsec:TPE}
This subsection is devoted to proving the two-point estimate and the $(p,p)$-Poincar\'{e} inequality in terms of self-similar $p$-energy measures, and showing also the capacity upper estimate under the additional assumption that $\mathcal{L}$ is a p.-c.f.\ self-similar structure. 

Recall that we fix a self-similar $p$-resistance form $(\mathcal{E},\mathcal{F})$ on $\mathcal{L}$ with weight $\bm{\rweight} = (\rweight_{i})_{i \in S} \in (0,\infty)^{S}$. 
In this subsection, we further assume that $\min_{i \in S}\rweight_{i} > 1$ and that the $p$-resistance metric $\pmetric_{p} \coloneqq \pmetric_{p,\mathcal{E}}$ of $(\mathcal{E},\mathcal{F})$ is compatible with the original topology of $K$, which is, in view of Propositions \ref{prop.R-conseq}-\ref{it:RF.basic-ineq} and \ref{prop.pRMss}-\ref{pRMcompatible}, equivalent to assuming $\min_{i \in S}\rweight_{i} > 1$ and $\diam(K,\pmetric_{p}) < \infty$.  
(Also by Proposition \ref{prop.pRMss}-\ref{pRMcompatible}, the assumption of $\diam(K,\pmetric_{p}) < \infty$ can be dropped when $\mathcal{L}$ is a p.-c.f.\ self-similar structure.) 
We also let $\{ \Gamma_{\mathcal{E}}\langle u \rangle \}_{u \in \mathcal{F}}$ be the associated self-similar $p$-energy measures defined by \eqref{e:defn.em.one} in the paragraph before Proposition \ref{prop.ss-pform-em}.  
In the following definition, we introduce natural \emph{scales} $\{ \Lambda_{s} \}_{s \in (0,1]}$ with respect to $\pmetric_{p}$.   
See \cite{Kig09,Kig20} for further details on scales. 
\begin{defn}\label{defn.scale-Rp}
    \begin{enumerate}[label=\textup{(\arabic*)},align=left,leftmargin=*,topsep=2pt,parsep=0pt,itemsep=2pt]
        \item\label{it:scale.res} We define $\Lambda_{1}^{\pmetric_{p}} \coloneqq \{ \emptyset \}$,
        \begin{equation*}
            \Lambda_{s}^{\pmetric_{p}} \coloneqq \Bigl\{ w \Bigm| w = w_{1} \dots w_{n} \in W_{\ast} \setminus \{ \emptyset \}, (\rweight_{w_{1} \dots w_{n - 1}})^{-1/(p - 1)} > s \ge \rweight_{w}^{-1/(p - 1)} \Bigr\}
        \end{equation*}
        for each $s \in (0,1)$.
        (Note that $\{ \Lambda_{s}^{\pmetric_{p}} \}_{s \in (0,1]}$ is the scale associated with the weight function $g(w) \coloneqq \rweight_{w}^{-1/(p - 1)}$; see \cite[Definition 2.3.1]{Kig20}.)
        \item\label{it:scale-Rp-UMxs} For each $(s,x) \in (0,1] \times K$, we define $\Lambda_{s,0}^{\pmetric_{p}}(x) \coloneqq \{ w \in \Lambda_{s}^{\pmetric_{p}} \mid x \in K_{w} \}$ and $U_{0}^{\pmetric_{p}}(x,s) \coloneqq \bigcup_{w \in \Lambda_{s,0}^{\pmetric_{p}}(x)}K_{w}$. Inductively, for $M \in \mathbb{N}$, define $\Lambda_{s,M}^{\pmetric_{p}}(x) \coloneqq \{ w \in \Lambda_{s}^{\pmetric_{p}} \mid K_{w} \cap U_{M - 1}^{\pmetric_{p}}(x,s) \neq \emptyset \}$ and $U_{M}^{\pmetric_{p}}(x,s) \coloneqq \bigcup_{w \in \Lambda_{s,M}^{\pmetric_{p}}(x)}K_{w}$.
    \end{enumerate}
\end{defn}
It is easy to see that $\lim_{s \downarrow 0}\min\{ \abs{w} \mid w \in \Lambda_{s}^{\pmetric_{p}} \} = \infty$, that $\Lambda_{s}^{\pmetric_{p}}$ is a partition of $\Sigma$ for any $s \in (0,1]$, and that $\Lambda_{s_{1}}^{\pmetric_{p}} \le \Lambda_{s_{2}}^{\pmetric_{p}}$ for any $s_1, s_2 \in (0,1]$ with $s_{1} \le s_{2}$.
Moreover, for any $x \in K$ and any $M \in \mathbb{N} \cup \{ 0 \}$, $\bigl\{ U_{M}^{\pmetric_{p}}(x,s) \bigr\}_{s \in (0,1]}$ is non-decreasing in $s$ and forms a fundamental system of neighborhoods of $x$ in $K$ (see, e.g., \cite[Proposition 2.3.7]{Kig20}).

If $\bigl\{ U_{M_{\ast}}^{\pmetric_{p}}(x,s) \bigr\}_{(x,s) \in K \times (0,1]}$ is comparable to the metric balls with respect to $\pmetric_{p}$ (in the sense of \eqref{e:Rp-PI.adapted} below) for some $M_{\ast} \in \mathbb{N}$, then we have the following two-point estimate.
\begin{prop}[Two-point estimate\index{two-point estimate}]\label{prop.Rp-TPE}
	Assume that there exist $\alpha_{1},\alpha_{2} \in (0,\infty)$ such that for any $(x,s) \in K \times (0,1]$, 
	\begin{equation}\label{e:Rp-PI.adapted}
		B_{\pmetric_{p}}(x,\alpha_{1}s) \subseteq U_{M_{\ast}}^{\pmetric_{p}}(x,s) \subseteq B_{\pmetric_{p}}(x,\alpha_{2}s). 
	\end{equation} 
	Then there exist $C,A \in (0,\infty)$ with $A \geq 1$ such that for any $(x,s) \in K \times (0,\infty)$ and any $u \in \mathcal{F}_{\mathrm{loc}}\bigl(B_{\pmetric_{p}}(x,As)\bigr)$, 
    \begin{equation}\label{e:sspRF-TPE}
    	\sup_{y,z \in B_{\pmetric_{p}}(x,s)}\abs{u(y) - u(z)}^{p} \le Cs^{p - 1}\Gamma_{\mathcal{E}}\langle u \rangle\bigl(B_{\pmetric_{p}}(x,As)\bigr).
    \end{equation}
\end{prop}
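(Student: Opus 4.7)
My approach is to reduce the two-point bound to a chain argument through cells of the scale $\Lambda_{s'}^{\pmetric_{p}}$ (with $s' \coloneqq s/\alpha_{1}$), transferring oscillations of $u$ on each cell into mass of $\Gamma_{\mathcal{E}}\langle u \rangle$ via the self-similarity of $(\mathcal{E},\mathcal{F})$ and the contraction property \eqref{pRMss}. I would choose $A$ large enough that, for every $s$ with $s' \leq 1$, the set $U_{M_{\ast}}^{\pmetric_{p}}(x,s')$ is relatively compact in $B_{\pmetric_{p}}(x,As)$ (which requires $A > \alpha_{2}/\alpha_{1}$), and moreover, for every $s$ with $s' > 1$, $B_{\pmetric_{p}}(x,As) = K$ (possible thanks to $\diam(K,\pmetric_{p}) < \infty$). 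In the latter, large-$s$ regime the required estimate follows directly from the global $p$-resistance inequality \eqref{R-basic} applied to any representative $u^{\#} \in \mathcal{F}$ of $u$ on $K$, using $\mathcal{E}(u^{\#}) = \Gamma_{\mathcal{E}}\langle u \rangle(K)$ from Proposition \hyperref[ssem]{\ref{prop.ss-pform-em}}-\ref{ssem} together with $\diam(K,\pmetric_{p})^{p-1} \lesssim s^{p-1}$.

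In the main regime $s' \leq 1$, \eqref{e:Rp-PI.adapted} gives $B_{\pmetric_{p}}(x,s) \subseteq U_{M_{\ast}}^{\pmetric_{p}}(x,s') \subseteq B_{\pmetric_{p}}(x,\alpha_{2}s')$, and one may choose $u^{\#} \in \mathcal{F}$ with $u = u^{\#}$ on $U_{M_{\ast}}^{\pmetric_{p}}(x,s')$ using the relative compactness of the latter finite union of cells. For given $y, z \in B_{\pmetric_{p}}(x,s)$, pick $w_{y},w_{z} \in \Lambda_{s',M_{\ast}}^{\pmetric_{p}}(x)$ with $y \in K_{w_{y}}$ and $z \in K_{w_{z}}$. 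Iterating the recursive definition of $\Lambda_{s',M}^{\pmetric_{p}}(x)$ produces cells $v_{0},v_{1},\ldots,v_{N} \in \Lambda_{s',M_{\ast}}^{\pmetric_{p}}(x)$ with $v_{0} = w_{y}$, $v_{N} = w_{z}$, $N \leq 2M_{\ast}$, and $K_{v_{j-1}} \cap K_{v_{j}} \neq \emptyset$ for each $j$, by linking $w_{y}$ and $w_{z}$ via a common element of $\Lambda_{s',0}^{\pmetric_{p}}(x)$ (any two of whose cells intersect at $x$). Fix $y_{0} \coloneqq y$, $y_{N} \coloneqq z$, and $y_{j} \in K_{v_{j-1}} \cap K_{v_{j}}$ for $1 \leq j \leq N-1$, so that each consecutive pair $(y_{j-1},y_{j})$ lies inside a single cell of the chain.

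The per-cell estimate is obtained as follows: for $v \in \Lambda_{s'}^{\pmetric_{p}}$ and $a,b \in K_{v}$, writing $a = F_{v}(a')$, $b = F_{v}(b')$ with $a',b' \in K$, combining \eqref{R-basic} applied to $u^{\#} \circ F_{v} \in \mathcal{F}$, the diameter bound $R_{\mathcal{E}}(a',b') \leq \diam(K,\pmetric_{p})^{p-1}$, the per-cell inequality $\rho_{v}\mathcal{E}(u^{\#} \circ F_{v}) \leq \Gamma_{\mathcal{E}}\langle u^{\#} \rangle(K_{v}) = \Gamma_{\mathcal{E}}\langle u \rangle(K_{v})$ from Proposition \hyperref[ssem]{\ref{prop.ss-pform-em}}-\ref{it:ssem-eachcell} (the equality by Lemma \ref{lem.pEMwloc} and Definition \ref{defn.Flocal-ss}), and $\rho_{v}^{-1} \leq (s')^{p-1}$ from the definition of $\Lambda_{s'}^{\pmetric_{p}}$, yields
\[
|u^{\#}(a) - u^{\#}(b)|^{p} \leq \diam(K,\pmetric_{p})^{p-1} (s')^{p-1} \Gamma_{\mathcal{E}}\langle u \rangle(K_{v}).
\]
Summing over the chain using the triangle inequality and the power-mean bound $\bigl(\sum_{j=1}^{N}a_{j}\bigr)^{p} \leq N^{p-1} \sum_{j=1}^{N} a_{j}^{p}$, and crudely estimating $\sum_{j} \Gamma_{\mathcal{E}}\langle u \rangle(K_{v_{j}}) \leq N \Gamma_{\mathcal{E}}\langle u \rangle(B_{\pmetric_{p}}(x,As))$ (each $K_{v_{j}} \subseteq B_{\pmetric_{p}}(x,As)$), gives \eqref{e:sspRF-TPE} with $C$ depending only on $M_{\ast}$, $\alpha_{1}$, $\diam(K,\pmetric_{p})$, and $p$. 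The main obstacle is the bookkeeping needed to fit both regimes of $s$ under one uniform constant $A \geq 1$, and to ensure that the chosen local representative $u^{\#}$ faithfully represents $u$ on every cell of the chain so that Proposition \hyperref[ssem]{\ref{prop.ss-pform-em}}-\ref{it:ssem-eachcell} can be invoked on each $K_{v_{j}}$.
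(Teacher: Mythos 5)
Your proposal is correct and follows essentially the same route as the paper's proof: the same split into a large-$s$ regime (where $B_{\pmetric_{p}}(x,As)=K$ and \eqref{R-basic} plus $\Gamma_{\mathcal{E}}\langle u \rangle(K)=\mathcal{E}(u)$ finish the job) and a small-$s$ regime handled by chaining through at most $2M_{\ast}+1$ cells of $\Lambda_{\alpha_{1}^{-1}s,M_{\ast}}^{\pmetric_{p}}(x)$, with the identical per-cell estimate combining \eqref{R-basic} for $u^{\#}\circ F_{v}$, the bound $\rweight_{v}^{-1}\le(\alpha_{1}^{-1}s)^{p-1}$, and \eqref{e:eachcell}. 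The choice of $A$ and the final summation via the power-mean inequality also match the paper.
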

\begin{proof}
	We can assume that $\alpha_{1} \le \alpha_{2}$ and $\alpha_{1} \le 1$ without loss of generality.  
	Throughout this proof, we fix $x \in K$ and set $A \coloneqq \alpha_{1}^{-1}(\alpha_{2} \vee \diam(K,\pmetric_{p}))$. 
	We first consider the case of $s \in (\alpha_{1},\infty)$. 
	Note that $B_{\pmetric_{p}}(x,As) = K$. 
	By the estimate \eqref{R-basic} in Proposition \ref{prop.R-conseq}-\ref{it:RF.basic-ineq} and Proposition \ref{prop.ss-pform-em}-\ref{ssem}, for any $y,z \in B_{\pmetric_{p}}(x,s)$ and any $u \in \mathcal{F}$, 
	\[
	\abs{u(y) - u(z)}^{p} \le \diam(K,\pmetric_{p})^{p - 1}\mathcal{E}(u) = C_{1}\alpha_{1}^{p-1}\Gamma_{\mathcal{E}}\langle u \rangle(K), 
	\]
	where $C_{1} \coloneqq \alpha_{1}^{-(p-1)}\diam(K,\pmetric_{p})^{p - 1}$. 
	This shows \eqref{e:sspRF-TPE} in the case of $s \in (\alpha_{1},\infty)$.  
    
    Next let $s \in (0,\alpha_{1}]$. 
    Let $U$ be a relatively compact open subset of $K$ such that $U \supseteq U_{M_{\ast}}^{\pmetric_{p}}(x,\alpha_{1}^{-1}s)$ and let $u^{\#} \in \mathcal{F}$ satisfy $u = u^{\#}$ on $U$. 
	For any $y,z \in B_{\pmetric_{p}}(x,s)$, there exists $\{ v(i) \}_{i = 1}^{2M_{\ast} + 1} \subseteq \Lambda_{\alpha_{1}^{-1}s,M_{\ast}}^{\pmetric_{p}}(x)$ such that $y \in K_{v(1)}$, $z \in K_{v(2M_{\ast} + 1)}$ and $K_{v(i)} \cap K_{v(i + 1)} \neq \emptyset$ for each $i \in \{ 1,2,\dots,2M_{\ast} \}$.
	Let us fix $x_{i} \in K_{v(i)} \cap K_{v(i + 1)}$ and $q_{i} \in V_{0}$ that satisfy $x_{i} = F_{v(i)}(q_{i})$.
	We note that, for any $y',z' \in K_{v(i)}$, 
	\begin{align*}
		\abs{u(y') - u(z')}^{p} 
		&= \abs{u(F_{v(i)}(F_{v(i)}^{-1}(y'))) - u(F_{v(i)}(F_{v(i)}^{-1}(z')))}^{p} \\
		&\le R_{\mathcal{E}}(F_{v(i)}^{-1}(y'),F_{v(i)}^{-1}(z'))\mathcal{E}(u^{\#} \circ F_{v(i)}) \\
		&\overset{\eqref{e:eachcell}}{\le} \diam(K,\pmetric_{p})^{p - 1}\rweight_{v(i)}^{-1}\Gamma_{\mathcal{E}}\langle u^{\#} \rangle(K_{v(i)})
		= \diam(K,\pmetric_{p})^{p - 1}\rweight_{v(i)}^{-1}\Gamma_{\mathcal{E}}\langle u \rangle(K_{v(i)}). 
	\end{align*}
	Hence 
	\begin{align*}\label{e:pRF-PI-Holder}
        &\abs{u(y) - u(z)}^{p} \nonumber \\
        &\le (2M_{\ast} + 1)^{p - 1}\Biggl(\abs{u(y) - u(x_{1})}^{p} + \sum_{i = 1}^{2M_{\ast} - 1}\abs{u(x_{i}) - u(x_{i + 1})}^{p} + \abs{u(x_{2M_{\ast}}) - u(z)}^{p}\Biggr) \nonumber \\ 
        &\overset{\eqref{R-basic}}{\le} \bigl((2M_{\ast} + 1)\diam(K,\pmetric_{p})\bigr)^{p - 1}\sum_{i = 1}^{2M_{\ast} + 1}\rweight_{v(i)}^{-1}\Gamma_{\mathcal{E}}\langle u \rangle(K_{v(i)}) \nonumber \\
        &\le C_{2}s^{p - 1}\Gamma_{\mathcal{E}}\langle u \rangle\Biggl(\bigcup_{i = 1}^{2M_{\ast} + 1}K_{v(i)}\Biggr)
        \le C_{2}s^{p - 1}\Gamma_{\mathcal{E}_{p}}\langle u \rangle(B_{\pmetric}(x,\alpha_{1}^{-1}\alpha_{2}s)), 
    \end{align*}
    where $C_{2} \coloneqq \bigl((2M_{\ast} + 1)\alpha_{1}^{-1}\diam(K,\pmetric_{p})\bigr)^{p - 1}$.  
    This proves \eqref{e:sspRF-TPE} for $s \in (0,\alpha_{1}]$.  
\end{proof}

From \eqref{e:sspRF-TPE}, we easily obtain the following $(p,p)$-Poincar\'{e} inequality.
\begin{prop}[$(p,p)$-Poincar\'{e} inequality\index{$(p,p)$-Poincar\'{e} inequality}]\label{prop:PI.RF}
	Assume that there exist $\alpha_{1},\alpha_{2} \in (0,\infty)$ such that \eqref{e:Rp-PI.adapted} in Proposition \ref{prop.Rp-TPE} holds for any $(x,s) \in K \times (0,1]$. 
	Let $\mu$ be a Radon measure on $K$ with $\supp_{K}[\mu] = K$. 
	Then there exist $C,A \in (0,\infty)$ with $A \ge 1$ such that for any $(x,s) \in K \times (0,\infty)$ and any $u \in \mathcal{F}_{\mathrm{loc}}\bigl(B_{\pmetric_{p}}(x,As)\bigr)$,
	\begin{equation}\label{e:ssRF.pp-PI}
		\fint_{B_{\pmetric_{p}}(x,s)}\abs{u - \fint_{B_{\pmetric_{p}}(x,s)}u\,d\mu}^{p}\,d\mu 
		\le Cs^{p-1}\Gamma_{\mathcal{E}}\langle u \rangle\bigl(B_{\pmetric_{p}}(x,As)\bigr). 
	\end{equation} 
\end{prop}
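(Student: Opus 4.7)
The plan is to derive \eqref{e:ssRF.pp-PI} as an immediate consequence of the two-point estimate \eqref{e:sspRF-TPE} from Proposition \ref{prop.Rp-TPE}; the real analytic content is already contained in that result, and the present statement is a purely formal upgrade from a pointwise oscillation bound to an $L^{p}$-centered average bound via Jensen's inequality. Let $A \ge 1$ and $C_{\mathrm{TPE}} \in (0,\infty)$ denote the constants furnished by Proposition \ref{prop.Rp-TPE}.

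First I would check that the averages appearing in \eqref{e:ssRF.pp-PI} make sense. Since we have standing assumptions that $\min_{i \in S}\rweight_{i} > 1$ and that $\pmetric_{p}$ is compatible with the original topology of $K$, each $B_{\pmetric_{p}}(x,s)$ is a non-empty open subset of $K$ (it contains $x$), and the hypothesis $\supp_{K}[\mu] = K$ then gives $\mu(B_{\pmetric_{p}}(x,s)) \in (0,\infty)$, since $\mu$ is Radon and $K$ is compact. Thus $\fint_{B_{\pmetric_{p}}(x,s)}\,d\mu$ is a well-defined probability average.

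Next, for any $(x,s) \in K \times (0,\infty)$ and any $u \in \mathcal{F}_{\mathrm{loc}}\bigl(B_{\pmetric_{p}}(x,As)\bigr)$, I would apply Jensen's inequality to the convex function $t \mapsto \abs{t}^{p}$ twice:
\begin{align*}
\fint_{B_{\pmetric_{p}}(x,s)}\abs{u(y) - \fint_{B_{\pmetric_{p}}(x,s)}u\,d\mu}^{p}\,\mu(dy)
&= \fint_{B_{\pmetric_{p}}(x,s)}\abs{\fint_{B_{\pmetric_{p}}(x,s)}(u(y)-u(z))\,\mu(dz)}^{p}\,\mu(dy) \\
&\le \fint_{B_{\pmetric_{p}}(x,s)}\fint_{B_{\pmetric_{p}}(x,s)}\abs{u(y)-u(z)}^{p}\,\mu(dz)\,\mu(dy) \\
&\le \sup_{y,z \in B_{\pmetric_{p}}(x,s)}\abs{u(y)-u(z)}^{p}.
\end{align*}
Applying the two-point estimate \eqref{e:sspRF-TPE} to the last quantity then yields \eqref{e:ssRF.pp-PI} with the constants $C \coloneqq C_{\mathrm{TPE}}$ and $A$ taken directly from Proposition \ref{prop.Rp-TPE}. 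There is no genuine obstacle in the argument: all the structural work (the $p$-resistance-form estimate \eqref{R-basic}, the self-similar chaining across cells of scale $s$, and the absorption of the enlargement factor into $A$) was carried out in the proof of Proposition \ref{prop.Rp-TPE}, and here we merely pay the standard Jensen price to pass from $\sup$ to the $L^{p}$-mean deviation.
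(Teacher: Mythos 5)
Your proposal is correct and follows essentially the same route as the paper: the paper likewise deduces \eqref{e:ssRF.pp-PI} directly from the two-point estimate \eqref{e:sspRF-TPE} together with the elementary bound $\fint_{B}\bigl|u - \fint_{B}u\,d\mu\bigr|^{p}\,d\mu \le \sup_{y,z \in B}\abs{u(y)-u(z)}^{p}$, which is exactly what your double application of Jensen's inequality establishes.
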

\begin{proof} 
	This is immediate from \eqref{e:sspRF-TPE} and the obvious inequality 
	\[
	\fint_{B_{\pmetric_{p}}(x,s)}\abs{u - \fint_{B_{\pmetric_{p}}(x,s)}u\,d\mu}^{p}\,d\mu 
	\le \sup_{y,z \in B_{\pmetric_{p}}(x,s)}\abs{u(y) - u(z)}^{p}. \qedhere 
	\]
\end{proof}
\begin{rmk}\label{rmk:TPE-PI} 
	In \cite[Theorem 2.4]{Cap07}, Capitanelli obtained an oscillation estimate like the two-point estimate \eqref{e:sspRF-TPE} from the $(p,p)$-Poincar\'{e} inequality \cite[(2.4)]{Cap07} under a suitable volume growth condition for the measure $\mu$. 
	This implication can be seen by a well-known telescopic sum argument (see, e.g., \cite[Proof of Lemma 5.17]{HK98} for such an argument).  
\end{rmk}

As shown in \cite[Lemma 6.7 and Proposition 6.9]{KS.lim}, if $\mathcal{L}$ is a p.-c.f.\ self-similar structure, then the condition \eqref{e:Rp-PI.adapted} and the capacity upper estimate hold. 
Furthermore by \cite[Lemma 6.8]{KS.lim}, there exists a self-similar measure on $\mathcal{L}$ which is Ahlfors regular with respect to $\pmetric_{p}$ (see Definition \ref{defn.AR}-\ref{it:AR}). 
We record these results in the following proposition. 
\begin{prop}\label{prop:Rp-geom} 
	Assume that $\mathcal{L}$ is a p.-c.f.\ self-similar structure.  
	\begin{enumerate}[label=\textup{(\alph*)},align=left,leftmargin=*,topsep=2pt,parsep=0pt,itemsep=2pt]
		\item\label{it:Rp.adapted} 
			There exist $\alpha_{1},\alpha_{2} \in (0,\infty)$ such that for any $(s,x) \in (0,1] \times K$,
    		\begin{equation}\label{Rp-adapted}
        		B_{\pmetric_{p}}(x,\alpha_{1}s) \subseteq U_{1}^{\pmetric_{p}}(x,s) \subseteq B_{\pmetric_{p}}(x,\alpha_{2}s). 
    		\end{equation}
    		(Equivalently, $\pmetric_{p}$ is $1$-adapted to the weight function $g(w) \coloneqq \rweight_{w}^{-1/(p - 1)}$; see \cite[Definition 2.4.1]{Kig20}.)
    	\item\label{it:Rp.AR} Let $\whdim \in (0,\infty)$ be such that $\sum_{i \in S}\rweight_{i}^{-\whdim/(p - 1)} = 1$, and let $m$ be the self-similar measure on $\mathcal{L}$ with weight $\bigl(\rweight_{i}^{-\whdim/(p - 1)})_{i \in S}$. Then there exist $c_{1},c_{2} \in (0,\infty)$ such that for any $(x,s) \in K \times (0,2\diam(K,\pmetric_{p}))$,
    		\begin{equation}\label{Rp-AR}
        		c_{1}s^{\whdim} \le m\bigl(B_{\pmetric_{p}}(x,s)\bigr) \le c_{2}s^{\whdim}.
    		\end{equation}
    		In particular, $\pmetric_{p}$ is metric doubling. \index{metric doubling} \textup{(Recall that a metric space $(X,d)$ is said to be \emph{metric doubling} if and only if there exists $N \in \mathbb{N}$ such that any $(x,r) \in X \times (0,\infty)$ satisfies $B_{d}(x,r) \subseteq \bigcup_{i=1}^{N}B_{d}(x_i,r/2)$ for some $\{ x_i \}_{i = 1}^{N} \subseteq X$.)}  
    	\item\label{it:Rp.capu} There exists $C \in (0,\infty)$ such that for any $(x,s) \in K \times (0,\infty)$,
    		\begin{equation}\label{Rp-capu}
        		\inf\bigl\{ \mathcal{E}(u) \bigm| \text{$u \in \mathcal{F}$, $u|_{B_{\pmetric_{p}}(x,\alpha_{1}s)} = 1$, $\supp[u] \subseteq B_{\pmetric_{p}}(x,2\alpha_{2}s)$} \bigr\} \le Cs^{-(p - 1)},
    		\end{equation}
    		where $\alpha_{1},\alpha_{2}$ are the constants in \eqref{Rp-adapted}. \index{capacity upper estimate}
		\end{enumerate}
\end{prop}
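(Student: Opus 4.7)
The plan is to follow the strategy sketched in \cite{KS.lim}, handling (a), (b), (c) in that order, since (b) and (c) rely on (a).

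\textbf{Part (a): $1$-adaptedness of $\pmetric_{p}$.} Let $r_{\min} \coloneqq \min_{i \in S} \rweight_{i}^{-1/(p-1)} \in (0,1)$ and $D \coloneqq \diam(K,\pmetric_{p}) < \infty$, which is finite by Proposition \ref{prop.pRMss}-\ref{pRMcompatible} using the assumption that $\mathcal{L}$ is p.-c.f.\ with $\min_{i\in S}\rweight_{i}>1$. The upper inclusion $U_{1}^{\pmetric_{p}}(x,s) \subseteq B_{\pmetric_{p}}(x,\alpha_{2}s)$ with $\alpha_{2} = 2D$ is immediate: \eqref{pRMss} gives $\diam(K_{w},\pmetric_{p}) \le \rweight_{w}^{-1/(p-1)}D \le sD$ for any $w \in \Lambda_{s}^{\pmetric_{p}}$, and a two-cell chain argument using cells $K_{v}$ containing $x$ and $K_{w}$ containing $y$ with $K_{v}\cap K_{w}\neq\emptyset$ yields the bound. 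The lower inclusion $B_{\pmetric_{p}}(x,\alpha_{1}s)\subseteq U_{1}^{\pmetric_{p}}(x,s)$ is where the main work lies. Given $y \notin U_{1}^{\pmetric_{p}}(x,s)$, the plan is to construct $\varphi \in \mathcal{F}$ with $\varphi(x) = 1$, $\varphi(y) = 0$, and $\mathcal{E}(\varphi) \le C s^{-(p-1)}$, so that $\pmetric_{p}(x,y)^{p-1}=R_{\mathcal{E}}(x,y)\geq \mathcal{E}(\varphi)^{-1} \geq C^{-1}s^{p-1}$. Set $B \coloneqq \bigcup_{v \in \Lambda_{s,0}^{\pmetric_{p}}(x)} F_{v}(V_{0})$ and $B' \coloneqq \bigcup_{w\in\Lambda_{s}^{\pmetric_{p}};\,K_{w}\cap U_{0}^{\pmetric_{p}}(x,s)=\emptyset} F_{w}(V_{0})$, both finite subsets of $V_{\ast}$; the p.-c.f.\ assumption guarantees $B\cap B' = \emptyset$ via \eqref{V0bdry}, and $y\in K_{w}$ for some $w$ contributing to $B'$. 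Define $\varphi$ on $B\cup B'$ by $\varphi|_{B}\equiv 1$, $\varphi|_{B'}\equiv 0$, and extend it via $\bigl.h^{\mathcal{E}|_{V_{|w|}}}_{B\cup B'}[\,\cdot\,]\bigr.$ (viewed through \eqref{ss.harmext}, so the extension is cell-wise harmonic). Proposition \ref{prop:localcp} yields $\varphi(x)=1$. Estimating cell-by-cell via \eqref{SSE2} and using that each $\mathcal{E}(\varphi\circ F_{v}) \le \mathcal{E}|_{V_{0}}(\varphi \circ F_{v}|_{V_{0}})$ is bounded by a constant depending only on $\mathcal{E}|_{V_0}$ (a form on the finite set $V_{0}$), together with $\rweight_{v}\lesssim s^{-(p-1)}$, produces $\mathcal{E}(\varphi) \lesssim s^{-(p-1)} \cdot \#\{v:\text{contributing}\}$. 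The number of contributing cells is bounded uniformly, which follows from a combinatorial property of the p.-c.f.\ structure (finiteness of neighbor counts at each scale, since $V_{0}$ is finite).

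\textbf{Part (b): Ahlfors regularity.} With $m$ the self-similar measure of weight $(\rweight_{i}^{-\whdim/(p-1)})_{i\in S}$, Proposition \ref{p:ss-meas} gives $m(K_{w}) = \rweight_{w}^{-\whdim/(p-1)}$, and for $w\in\Lambda_{s}^{\pmetric_{p}}$ we have $m(K_{w})\in(r_{\min}^{\whdim}s^{\whdim},\,s^{\whdim}]$. Combining with part (a): for the upper bound, $B_{\pmetric_{p}}(x,s)\subseteq U_{1}^{\pmetric_{p}}(x,s/\alpha_{1})$, so $m(B_{\pmetric_{p}}(x,s))\le \#\Lambda_{s/\alpha_{1},1}^{\pmetric_{p}}(x) \cdot (s/\alpha_{1})^{\whdim}$, and the cardinality is uniformly bounded (same combinatorial fact as in (a)). For the lower bound, pick any $v \in \Lambda^{\pmetric_p}_{s/\alpha_2}$ with $x\in K_{v}$; then $K_{v}\subseteq U_{0}^{\pmetric_{p}}(x,s/\alpha_{2})\subseteq B_{\pmetric_{p}}(x,s)$ and $m(K_{v})\ge r_{\min}^{\whdim}(s/\alpha_{2})^{\whdim}$. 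Metric doubling then follows from \eqref{Rp-AR} in the standard way.

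\textbf{Part (c): Capacity upper estimate.} The plan is essentially the same as the construction in (a): build $u\in\mathcal{F}$ with $u\equiv 1$ on $U_{0}^{\pmetric_{p}}(x,s)$ and $u\equiv 0$ on $K\setminus U_{1}^{\pmetric_{p}}(x,s)$ by prescribing these boundary values on an appropriate finite union of $F_{v}(V_{0})$'s and extending cell-wise harmonically through \eqref{ss.harmext}. By part (a), $B_{\pmetric_{p}}(x,\alpha_{1}s)\subseteq U_{0}^{\pmetric_{p}}(x,s)$ (after rescaling $s$) and $U_{1}^{\pmetric_{p}}(x,s)\subseteq B_{\pmetric_{p}}(x,2\alpha_{2}s)$, giving $u$ the right support. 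The cell-wise energies $\mathcal{E}(u\circ F_{w})$ for $w\in\Lambda_{s}^{\pmetric_{p}}$ are again bounded uniformly by a constant depending only on $\mathcal{E}|_{V_{0}}$, and summing via \eqref{SSE2} with $\rweight_{w}\le r_{\min}^{-(p-1)}s^{-(p-1)}$ together with a bound on the number of contributing cells yields \eqref{Rp-capu}.

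The main obstacle will be the lower inclusion in (a) and, closely related, the construction in (c): both hinge on producing explicit test functions in $\mathcal{F}$ whose energy scales like $s^{-(p-1)}$, which requires careful use of the p.-c.f.\ separation property \eqref{V0bdry}, the localized weak comparison principle (Proposition \ref{prop:localcp}), and a uniform combinatorial bound on the number of cells in $\Lambda_{s,k}^{\pmetric_{p}}(x)$ for fixed $k$.
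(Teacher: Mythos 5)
Your strategy is essentially the paper's: both arguments construct explicit scale-$s$ cut-off functions supported on unions of cells of the partition $\Lambda_{s}^{\pmetric_{p}}$, bound their energies cell-by-cell via the self-similarity \eqref{ss.partition} together with the uniform neighbour-count bound $\#\{\tau\in\Lambda_{s}^{\pmetric_{p}}\mid K_{\tau}\cap K_{w}\neq\emptyset\}\le(\#\mathcal{C}_{\mathcal{L}})(\#V_{0})$ from \cite[Lemma 4.2.3]{Kig01}, and then read off the lower bound on $R_{\mathcal{E}}$ and the capacity estimate from these test functions. (The paper builds one function $u_{w}$ per cell out of $\psi_{q}=h_{V_{0}}^{\mathcal{E}}\bigl[\indicator{q}^{V_{0}}\bigr]$ and, for (c), takes $\max_{w\in\Lambda_{s,1}^{\pmetric_{p}}(x)}u_{w}$ using \eqref{sadd}; your device of prescribing boundary data on a finite vertex set and extending harmonically is equivalent.)

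There are, however, two points to repair. First, in (c) you assert $B_{\pmetric_{p}}(x,\alpha_{1}s)\subseteq U_{0}^{\pmetric_{p}}(x,s)$ ``after rescaling $s$''. This is false: part (a) only gives $B_{\pmetric_{p}}(x,\alpha_{1}s)\subseteq U_{1}^{\pmetric_{p}}(x,s)$, and no rescaling helps, since when $x$ is arbitrarily close to the boundary set $F_{w}(V_{0})$ of its cell, $U_{0}^{\pmetric_{p}}(x,s)$ need not contain any ball $B_{\pmetric_{p}}(x,cs)$ with a uniform $c$. Your test function, equal to $1$ only on $U_{0}^{\pmetric_{p}}(x,s)$, therefore fails to be identically $1$ on $B_{\pmetric_{p}}(x,\alpha_{1}s)$. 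The fix is the paper's choice: make the function equal to $1$ on all of $U_{1}^{\pmetric_{p}}(x,s)$ and supported in $U_{2}^{\pmetric_{p}}(x,s)\subseteq B_{\pmetric_{p}}(x,2\alpha_{2}s)$, the last inclusion following from $\diam(K_{\tau},\pmetric_{p})<\alpha_{2}s$ for $\tau\in\Lambda_{s}^{\pmetric_{p}}$. Second, the inequality ``$\mathcal{E}(\varphi\circ F_{v})\le\mathcal{E}|_{V_{0}}(\varphi\circ F_{v}|_{V_{0}})$'' points the wrong way: by \eqref{eq:dfn-trace} the trace is a minimum, so in general $\mathcal{E}|_{V_{0}}(\varphi\circ F_{v}|_{V_{0}})\le\mathcal{E}(\varphi\circ F_{v})$, with equality exactly when $\varphi\circ F_{v}$ is the $\mathcal{E}$-harmonic extension of its $V_{0}$-values; you need that equality (hence a two-step extension, first to the full vertex set $V_{\Lambda}$ and then cell-wise via \eqref{ss.harmext}), or else an explicit competitor as in the paper. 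With these adjustments the argument goes through and coincides with the paper's proof.
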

\begin{proof}
	Although the proof is the same as \cite[Lemmas 6.7, 6.8 and Proposition 6.9]{KS.lim}, we recall the proof below for the reader's convenience. 
	Throughout this proof, we set $\Lambda_{s} \coloneqq \Lambda_{s}^{\pmetric_{p}}$ for ease of notation.  
	Note that $K \neq \closure{V_0}^{K}$ since $\#V_{0} < \infty$ and $K$ is connected.
	
	\ref{it:Rp.adapted}: 
    By \eqref{pRMss} in Proposition \ref{prop.pRMss}-\ref{pRMcontraction}, we have $\diam(K_{w},\pmetric_{p}) \le \rweight_{w}^{-1/(p - 1)}\diam(K,\pmetric_{p})$ for any $w \in W_{\ast}$, which implies the latter inclusion in \eqref{Rp-adapted} with $\alpha_{2} \in (2\diam(K,\pmetric_{p}),\infty)$ arbitrary.
    (In particular, $\diam(K_{w},\pmetric_{p}) < \alpha_{2}s$ for any $w \in \Lambda_{s}$.)
    We will show the former inclusion in \eqref{Rp-adapted}.
    It suffices to prove that there exists $\alpha_{1} \in (0,\infty)$ such that $\pmetric_{p}(x,y) \ge \alpha_{1}s$ for any $s \in(0,1]$, any $w,v \in \Lambda_{s}$ with $K_{w} \cap K_{v} = \emptyset$ and any $(x,y) \in K_{w} \times K_{v}$.
    Let $\psi_{q} \coloneqq h_{V_{0}}^{\mathcal{E}}\bigl[\indicator{q}^{V_{0}}\bigr]$ for any $q \in V_{0}$.
    Fix $w \in \Lambda_{s}$ and let $u_{w} \in \contfunc(K)$ be such that, for $\tau \in \Lambda_{s}$,
    \begin{align}\label{d:uw}
        u_{w} \circ F_{\tau} =
        \begin{cases}
            1 \quad &\text{if $\tau = w$,} \\
            \sum_{q \in V_{0}; F_{\tau}(q) \in F_{w}(V_{0})}\psi_{q} \quad &\text{if $\tau \neq w$ and $K_{\tau} \cap K_{w} \neq \emptyset$,} \\
            0 \quad &\text{if $K_{\tau} \cap K_{w} = \emptyset$.}
        \end{cases}
    \end{align}
    By the self-similarity \eqref{SSE1}--\eqref{SSE2} of $(\mathcal{E},\mathcal{F})$, we have $u_{w} \in \mathcal{F}$ and
    \begin{equation}\label{uw.ss}
        \mathcal{E}(u_{w})
        = \sum_{\tau \in \Lambda_{s}}\rweight_{\tau}\mathcal{E}(u_{w} \circ F_{\tau}) 
        = \sum_{\tau \in \Lambda_{s} \setminus \{ w \}; K_{\tau} \cap K_{w} \neq \emptyset}\rweight_{\tau}\mathcal{E}\left(\sum_{q \in V_{0}; F_{\tau}(q) \in F_{w}(V_{0})}\psi_{q}\right). 
    \end{equation}
    (Note that $\Lambda_{s}$ is a partition of $\Sigma$.)
    Set $\overline{\rweight} \coloneqq \max_{i \in S}\rweight_{i} \in (1,\infty)$ and $c_{1} \coloneqq \max_{q \in V_{0}}\mathcal{E}(\psi_{q}) \in (0,\infty)$.
    Then $\rweight_{\tau}^{-1} \ge \overline{\rweight}^{-1}s^{p - 1}$ for any $\tau \in \Lambda_{s}$.
    Since $\#\{ \tau \in \Lambda_{s} \mid K_{\tau} \cap K_{w} \neq \emptyset \} \le (\#\mathcal{C}_{\mathcal{L}})(\#V_{0})$ by \cite[Lemma 4.2.3]{Kig01}, \eqref{uw.ss} together with H\"{o}lder's inequality implies that
    \begin{equation}\label{uw.upper}
        \mathcal{E}(u_{w}) \le (\#\mathcal{C}_{\mathcal{L}})(\#V_{0})\overline{\rweight}s^{-p + 1}(\#V_{0})^{p - 1}c_{1} \eqqcolon (\alpha_{1}s)^{-(p - 1)}.
    \end{equation}
    For any $v \in \Lambda_{s}$ with $K_{w} \cap K_{v} = \emptyset$ and any $(x,y) \in K_{w} \times K_{v}$, we clearly have $u_{w}(x) = 1$ and $u_{w}(y) = 0$.
    Hence
    \[
    \pmetric_{p}(x,y) \ge \mathcal{E}(u)^{-1/(p - 1)} \ge \alpha_{1}s,
    \]
    which proves the desired result.
    
    \ref{it:Rp.AR}: 
    This is immediate from \eqref{Rp-adapted}, $\#\{ \tau \in \Lambda_{s} \mid K_{\tau} \cap K_{w} \neq \emptyset \} \le (\#\mathcal{C}_{\mathcal{L}})(\#V_{0})$ (\cite[Lemma 4.2.3]{Kig01}) and $m(K_{w}) = \rweight_{w}^{-\whdim/(p - 1)}$ (Proposition \ref{p:ss-meas}).
    
    \ref{it:Rp.capu}: 
    It suffices to consider the case of $s \in (0,1]$ since $B_{\pmetric_{p}}(x,2\alpha_{2}s) = K$ for any $(x,s) \in K \times (1,\infty)$ and $\mathcal{E}^{-1}(0) = \mathbb{R}\indicator{K}$ by \ref{RF1}. 
    Let $u_{w} \in \mathcal{F}$ be the same function as in the proof of \ref{it:Rp.adapted} of the present proposition for each $w \in \Lambda_{s}$.
    Then $\varphi \coloneqq \max_{w \in \Lambda_{s,1}(x)}u_{w}$ satisfies $\varphi|_{U_{1}^{\pmetric_{p}}(x,s)} = 1$.
    Since $\diam(K_{w},\pmetric_{p}) < \alpha_{2}s$, we see from \eqref{Rp-adapted} that $\supp[\varphi] \subseteq B_{\pmetric_{p}}(x,2\alpha_{2}s)$.
    By the strong subadditivity \eqref{sadd} for $(\mathcal{E},\mathcal{F})$, \eqref{uw.upper} and $\#\Lambda_{s,1}(x) \le (\#\mathcal{C}_{\mathcal{L}})(\#V_{0})$ (\cite[Lemma 4.2.3]{Kig01}), we have $\varphi \in \mathcal{F}$ and
    \[
    \mathcal{E}(\varphi) \le \sum_{w \in \Lambda_{s,1}(x)}\mathcal{E}(u_{w}) \le (\alpha_{1}s)^{-(p - 1)}(\#\mathcal{C}_{\mathcal{L}})(\#V_{0}) \eqqcolon Cs^{-(p - 1)}. 
    \qedhere\]
\end{proof}

Combining Propositions \ref{prop.Rp-TPE}, \ref{prop:Rp-geom} and Theorem \ref{thm.EHI}, we obtain the elliptic Harnack inequality for self-similar $p$-resistance forms on p.-c.f.\ self-similar structures.
\begin{thm}\label{thm:EHI.pcf}
	Assume that $\mathcal{L}$ is a p.-c.f.\ self-similar structure. 
	Then there exist $C_{\mathrm{H}} \in (0,\infty)$ and $\delta_{\mathrm{H}} \in (0,1)$ such that for any $(x,s) \in K \times (0,\infty)$ with $B_{\pmetric_{p}}(x,\delta_{\mathrm{H}}^{-1}s) \neq K$ and any $u \in \mathcal{F}$ such that $u \ge 0$ on $K$ and $u$ is $\mathcal{E}$-superharmonic on $B_{\pmetric_{p}}(x,\delta_{\mathrm{H}}^{-1}s)$, it holds that 
	\begin{equation}\label{e:EHI.pcf}
		\sup_{B_{\pmetric_{p}}(x,s)}u \le C_{\mathrm{H}}\inf_{B_{\pmetric_{p}}(x,s)}u. 
	\end{equation}
\end{thm}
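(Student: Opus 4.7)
The plan is to apply Theorem \ref{thm.EHI} with $X = K$, with the choice $\Upsilon(x,s) \coloneqq s^{-(p-1)}$, and with the family $\{\Gamma_{\mathcal{E}}\langle u \rangle\}_{u \in \mathcal{F}}$ of self-similar $p$-energy measures, which satisfies $\Gamma_{\mathcal{E}}\langle u \rangle(K) = \mathcal{E}(u)$ for any $u \in \mathcal{F}$ by Proposition \ref{prop.ss-pform-em}-\ref{ssem}. Condition \ref{it:EHI.doubling} is trivial since $\Upsilon(x,2s) = 2^{-(p-1)}\Upsilon(x,s) \leq \Upsilon(x,s)$. For \ref{it:EHI.TPE}, I would invoke Proposition \ref{prop:Rp-geom}-\ref{it:Rp.adapted} to see that \eqref{e:Rp-PI.adapted} holds with $M_{\ast} = 1$, and then Proposition \ref{prop.Rp-TPE} directly yields the required two-point estimate with some $A_{1} \geq 1$ and $C \in (0,\infty)$. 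For \ref{it:EHI.capu}, I would take the capacity upper estimate \eqref{Rp-capu} from Proposition \ref{prop:Rp-geom}-\ref{it:Rp.capu} and rescale by replacing $s$ with $s/\alpha_{1}$, producing \eqref{e:capu.RF} with $A_{2} = 2\alpha_{2}/\alpha_{1} \geq 2$ and a constant proportional to $\alpha_{1}^{p-1}$.

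To verify the chain rule hypothesis \ref{it:EHI.CL}, I would apply Theorem \ref{thm.em-chain} to $(\mathcal{E},\mathcal{F})$ equipped with the self-similar measure $m$ on $\mathcal{L}$ from Proposition \ref{prop:Rp-geom}-\ref{it:Rp.AR}. Its three hypotheses are: $\mathbb{R}\indicator{K} \subseteq \mathcal{E}^{-1}(0)$, which is immediate from \ref{RF1}; \eqref{lipcont} and \ref{Cp} for $(\mathcal{E},\mathcal{F})$, which follow from \ref{RF5} via Proposition \ref{prop.GC-list}; and the completeness of $(\mathcal{F},\norm{\,\cdot\,}_{\mathcal{E},1})$, where I use $\mathcal{F} \subseteq \contfunc(K) \subseteq L^{p}(K,m)$ (by compactness of $K$) to identify $\mathcal{F} \cap L^{p}(K,m)$ with $\mathcal{F}$. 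The completeness is the main technical step and I expect it to go as follows: given a $\norm{\,\cdot\,}_{\mathcal{E},1}$-Cauchy sequence $\{u_{n}\} \subseteq \mathcal{F}$, \ref{RF2} provides $u \in \mathcal{F}$ with $\mathcal{E}(u - u_{n}) \to 0$; then \eqref{R-basic} combined with $\diam(K,\pmetric_{p}) < \infty$ gives constants $c_{n} \in \mathbb{R}$ with $u_{n} + c_{n} \to u$ uniformly on $K$; matching this with the $L^{p}$-limit $v$ of $\{u_{n}\}$ forces $c_{n}$ to converge to some $c_{\infty} \in \mathbb{R}$, and the desired limit is then $v = u - c_{\infty} \in \mathcal{F}$, for which $\norm{u_{n} - v}_{\mathcal{E},1} \to 0$.

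With all four hypotheses of Theorem \ref{thm.EHI} verified, the conclusion \eqref{e:EHI} specializes directly to the desired \eqref{e:EHI.pcf}. The main obstacle is the Banach space check sketched above; its subtlety lies in combining the completeness of $\mathcal{F}/\mathbb{R}\indicator{K}$ under $\mathcal{E}^{1/p}$ from \ref{RF2} with $L^{p}$-completeness in order to pin down the correct representative in $\mathcal{F}$ modulo the constant ambiguity of the seminorm $\mathcal{E}^{1/p}$.
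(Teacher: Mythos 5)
Your proposal is correct and follows essentially the same route as the paper's proof: verifying the four hypotheses of Theorem \ref{thm.EHI} via Propositions \ref{prop.Rp-TPE}, \ref{prop:Rp-geom} and Theorem \ref{thm.em-chain}, and using $\Gamma_{\mathcal{E}}\langle u \rangle(K) = \mathcal{E}(u)$ from Proposition \ref{prop.ss-pform-em}. Your explicit verification of the completeness of $(\mathcal{F},\norm{\,\cdot\,}_{\mathcal{E},1})$ is a correct elaboration of a step the paper leaves implicit.
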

\begin{proof}
	We have Theorem \ref{thm.EHI}-\ref{it:EHI.doubling},\ref{it:EHI.TPE},\ref{it:EHI.capu} with $\Upsilon(x,s) \coloneqq s^{-(p-1)}$ by Propositions \ref{prop.Rp-TPE} and \ref{prop:Rp-geom}, and Theorem \ref{thm.EHI}-\ref{it:EHI.CL} holds by $\mathcal{F} \subseteq \contfunc(K)$ and Theorem \ref{thm.em-chain}. 
	Since $\Gamma_{\mathcal{E}}\langle u \rangle(K) = \mathcal{E}(u)$ for any $u \in \mathcal{F}$ by Proposition \ref{prop.ss-pform-em}-\ref{ssem}, the desired estimate \eqref{e:EHI.pcf} follows from Theorem \ref{thm.EHI}. 
\end{proof}

\begin{rmk}\label{rmk:EHI.pcf}
The results in this subsection, Propositions \ref{prop.Rp-TPE}, \ref{prop:PI.RF}, \ref{prop:Rp-geom} and Theorem \ref{thm:EHI.pcf}, are applicable to a large class of p.-c.f.\ self-similar structures. 
Indeed, their assumptions are all satisfied in the situation of Theorem \ref{thm.CGQ}, which summarizes the construction of regular self-similar $p$-resistance forms on p.-c.f.\ self-similar structures due to \cite{CGQ22}, and the assumptions of Theorem \ref{thm.CGQ} in turn hold for strongly symmetric p.-c.f.\ self-similar sets (see Framework \ref{frmwrk:ANF} and Definition \ref{defn.ANF}) as proved in Theorem \ref{thm.eigenform-ANF} below. 
\end{rmk} 

\section{Constructions of \texorpdfstring{$p$}{p}-energy forms satisfying the generalized \texorpdfstring{$p$}{p}-contraction property}\label{sec.constr}
In the preceding sections, we have established fundamental results on $p$-energy forms satisfying the generalized $p$-contraction property \ref{GC}, in particular $p$-Clarkson's inequality \ref{Cp}.
In this section, we would like to describe how to get a good $p$-energy form satisfying these properties in a few settings inspired by \cite{Kig23} and \cite{CGQ22}. 
(See also \cite{KS.lim} for another approach toward such a construction.)

\subsection{\texorpdfstring{$p$}{p}-Energy forms on \texorpdfstring{$p$}{p}-conductively homogeneous compact metric spaces}\label{sec.Kig}
In this subsection, we verify that $p$-energy forms on \emph{$p$-conductively homogeneous} compact metric spaces constructed in \cite{Kig23} satisfy \ref{GC}. 
We mainly follow the notation and terminology of \cite{Kig23} in this and the next subsections. 
We refer to \cite[Chapter 2]{Kig23} and \cite[Chapters 2 and 3]{Kig20} for further details. 

Throughout this subsection, we fix a locally finite, non-directed infinite tree $(T,E_{T})$ in the usual sense (see \cite[Definition 2.1]{Kig23} for example), and fix a \emph{root} $\phi \in T$ of $T$.
(Here $T$ is the set of vertices and $E_{T}$ is the set of edges.)
For any $w \in T \setminus \{ \phi \}$, we use $\overline{\phi w}$ to denote the unique simple path in $(T,E_{T})$ from $\phi$ to $w$. \index{rooted tree}
\begin{defn}[{\cite[Definition 2.2]{Kig23}}]\label{defn.tree-notation}
	\begin{enumerate}[label=\textup{(\arabic*)},align=left,leftmargin=*,topsep=2pt,parsep=0pt,itemsep=2pt]
		\item\label{it:tree-vertices} For $w \in T$, define $\pi \colon T \to T$ by
		\begin{equation*}\label{defn.pi}
			\pi(w) \coloneqq 
			\begin{cases}
				w_{n - 1} \quad &\text{if $w \neq \phi$ and $\overline{\phi w} = (w_{0}, \dots, w_{n})$,} \\
				\phi \quad &\text{if $w = \phi$.}
			\end{cases}
		\end{equation*}
		Set $S(w) \coloneqq \{ v \in T \mid \pi(v) = w \} \setminus \{ w \}$.
		Moreover, for $k \in \mathbb{N}$, we define $S^{k}(w)$ inductively as
		\[
		S^{k + 1}(w) = \bigcup_{v \in S(w)}S^{k}(v).
		\]
		For $A \subseteq T$, define $S^{k}(A) \coloneqq \bigcup_{w \in A}S^{k}(A)$.
		\item\label{it:tree-levels} For $w \in T$ and $n \in \mathbb{N} \cup \{ 0 \}$, define $\abs{w} \coloneqq \min\{ n \ge 0 \mid \pi^{n}(w) = \phi \}$ and $T_{n} \coloneqq \{ w \in T \mid \abs{w} = n \}$.
		\item\label{it:tree-bdry-sections} Define $\Sigma \coloneqq \{ (\omega_{n})_{n \ge 0} \mid \text{$\omega_{n} \in T_{n}$ and $\omega_{n} = \pi(\omega_{n + 1})$ for all $n \in \mathbb{N} \cup \{ 0 \}$} \}$. 
		For $\omega = (\omega_{n})_{n \ge 0} \in \Sigma$, we write $[\omega]_{n}$ for $\omega_{n} \in T_{n}$.
		Define $\Sigma_{w} \coloneqq \{ (\omega_{n})_{n \ge 0} \in \Sigma \mid \text{$\omega_{\abs{w}} = w$} \}$ for $w \in T$, and
		$\Sigma_{A} \coloneqq \bigcup_{w \in A}\Sigma_{w}$ for $A \subseteq T$.  
	\end{enumerate}
\end{defn}

Let us recall the definition of a partition parametrized by a rooted tree.
\begin{defn}[Partition parametrized by a tree\index{partition parametrized by a tree}; {\cite[Definition 2.2.1]{Kig20} and {\cite[Lemma 3.6]{Sas23}}}]\label{defn.partition}
	Let $K$ be a compact metrizable topological space without isolated points. 
	A family of non-empty compact subsets $\{ K_{w} \}_{w \in T}$ of $K$ is called a \emph{partition of $K$ parametrized by the rooted tree $(T, E_{T}, \phi)$} if and only if it satisfies the following conditions:
	\begin{enumerate}[label=\textup{(P\arabic*)},align=left,leftmargin=*,topsep=2pt,parsep=0pt,itemsep=2pt]
		\item $K_{\phi} = K$ and for any $w \in T$, $\#K_{w} \ge 2$ and $K_{w} = \bigcup_{v \in S(w)}K_{v}$.
		\item For any $w \in \Sigma$, $\bigcap_{n \ge 0}K_{[\omega]_{n}}$ is a single point.
	\end{enumerate}
\end{defn}

In the rest of this subsection, we fix a compact metrizable topological space without isolated points $K$, a locally finite rooted tree $(T, E_{T}, \phi)$ satisfying $\#\{ v \in T \mid \{v,w\} \in E_{T} \} \ge 2$ for any $w \in T$, a partition $\{ K_{w} \}_{w \in T}$ parametrized by $(T,E_{T},\phi)$, a metric $d$ on $K$ with $\diam(K, d) = 1$, and a Borel probability measure $m$ on $K$.
Now we introduce a graph approximation $\{ (T_{n},E_{n}^{\ast}) \}_{n \in \mathbb{N} \cup \{ 0 \}}$ of $K$.
\begin{defn}[{\cite[Proposition 2.8 and Definition 2.5-(3)]{Kig23}}]\label{defn.h-networks}
	For $n \in \mathbb{N} \cup \{ 0 \}$ and $A \subseteq T_{n}$, define
	\begin{equation*}\label{defn.h-edge}
		E_{n}^{\ast} \coloneqq \bigl\{ \{ v, w \} \bigm| v, w \in T_{n}, v \neq w, K_{v} \cap K_{w} \neq \emptyset \bigr\}, 
	\end{equation*}
	and $E_{n}^{\ast}(A) = \bigl\{ \{ v, w \} \in E_{n}^{\ast} \bigm| v, w \in A \bigr\}$. 
	Let $d_{n}$ be the graph distance of $(T_{n}, E_{n}^{\ast})$.
	For $M \in \mathbb{N} \cup \{ 0 \}$ and $w \in T_{n}$, define
	\begin{equation*}\label{defn.h-nbd}
		\Gamma_{M}(w) \coloneqq \{ v \in T_{n} \mid d_{n}(v, w) \le M \} \quad \text{and} \quad U_{M}(x; n) \coloneqq \bigcup_{w \in T_{n}; x \in K_{w}}\bigcup_{v \in \Gamma_{M}(w)}K_{v}.
	\end{equation*}
\end{defn}

To state geometric assumptions in \cite{Kig23}, we need the following definition. 
\begin{defn}[{\cite[Definitions 2.2.1 and 3.1.15]{Kig20}}]
	\begin{enumerate}[label=\textup{(\arabic*)},align=left,leftmargin=*,topsep=2pt,parsep=0pt,itemsep=2pt]
        \item The partition $\{ K_{w} \}_{w \in T}$ is said to be \emph{minimal}\index{minimal (partition)} if and only if $K_{w} \setminus \bigcup_{v \in T_{\abs{w}} \setminus \{ w \}} \neq \emptyset$ for any $w \in T$.
        \item The partition $\{ K_{w} \}_{w \in T}$ is said to be \emph{uniformly finite}\index{uniformly finite (partition)} if and only if $\sup_{w \in T}\#\Gamma_{1}(w) < \infty$. We set $L_{\ast} \coloneqq \sup_{w \in T}\#\Gamma_{1}(w)$. 
    \end{enumerate}
\end{defn}

We also recall the following standard notion on metric measure spaces; see, e.g., \cite{Hei,Kig20,MT} for further background.  
\begin{defn}\label{defn.AR}
    \begin{enumerate}[label=\textup{(\arabic*)},align=left,leftmargin=*,topsep=2pt,parsep=0pt,itemsep=2pt]
    	\item\label{it:defn.VD} The measure $m$ is said to be \emph{volume doubling}\index{volume doubling} with respect to the metric $d$ if and only if there exists $C_{\mathrm{D}} \in (0,\infty)$ such that
   		    \begin{equation}\label{VD-growth}
        		m(B_{d}(x,2r)) \le C_{\mathrm{D}}\,m(B_{d}(x,r)) \quad \text{for any $(x,r) \in K \times (0,\infty)$.}
    		\end{equation}
    		The constant $C_{\mathrm{D}}$ is called the doubling constant of $m$. 
    	\item\label{it:AR} Let $Q \in (0,\infty)$. The measure $m$ is said to be \emph{$Q$-Ahlfors regular}\index{Ahlfors regular} with respect to the metric $d$ if and only if there exists $C_{\mathrm{AR}} \in [1,\infty)$ such that
    		\begin{equation}\label{AR}
        		C_{\mathrm{AR}}^{-1}\,r^{Q} \le m(B_{d}(x,r)) \le C_{\mathrm{AR}}\,r^{Q} \quad \text{for any $(x,r) \in K \times (0,2\diam(K,d))$.}
    		\end{equation}
    		The measure $m$ is simply said to be \emph{Ahlfors regular} (with respect to $d$) if there exists $Q \in (0,\infty)$ such that $m$ is $Q$-Ahlfors regular. 
    		Also, the metric $d$ is said to be $Q$-Ahlfors regular if there exists a Borel measure $\mu$ on $K$ which is $Q$-Ahlfors regular with respect to $d$. 
    	\item\label{it:QS} A metric $\theta$ on $K$ is said to be \emph{quasisymmetric}\index{quasisymmetry} to $d$, $\theta \underset{\textrm{QS}}{\sim} d$ for short, if and only if there exists a homeomorphism $\eta \colon [0,\infty) \to [0,\infty)$ such that 
    	\begin{equation*}
    		\frac{\theta(x,b)}{\theta(x,a)} \le \eta\biggl(\frac{d(x,b)}{d(x,a)}\biggr) \quad \text{for any $x,a,b \in K$ with $x \neq a$.}
    	\end{equation*}
    	\item\label{it:ARCdim} The \emph{Ahlfors regular conformal dimension}\index{Ahlfors regular conformal dimension} of $(K,d)$ is the value $\dim_{\mathrm{ARC}}(K,d)$ defined as 
    	\begin{equation*}
    		\dim_{\mathrm{ARC}}(K,d) \coloneqq 
    		\inf\biggl\{ Q \in (0,\infty) \biggm|
    		\begin{minipage}{190pt}
    		there exists a metric $\theta$ on $K$ such that $\theta \underset{\textrm{QS}}{\sim} d$ and $\theta$ is $Q$-Ahlfors regular
    		\end{minipage}
   			\biggr\}.	
    	\end{equation*} 
    \end{enumerate}
\end{defn}
If $m$ is Ahlfors regular, then it is clearly volume doubling. 
It is well known that the existence of a $Q$-Ahlfors regular $m$ on $(K,d)$ implies that the Hausdorff dimension of $(K,d)$ is $Q$. 

Now we recall basic geometric conditions in \cite{Kig23}. 
The conditions \ref{BF1}, \ref{BF2} and \ref{BF3} below are important to follow the rest of this paper. 
\begin{assum}[{\cite[Assumption 2.15]{Kig23}}]\label{assum.BF}
	Let $(K, \mathcal{O})$ be a connected compact metrizable space, $\{ K_{w} \}_{w \in T}$ a partition parametrized by the rooted tree $(T, \phi)$, let $d$ be a metric on $K$ that is compatible with the topology $\mathcal{O}$ and $\diam(K, d) = 1$ and let $\measure$ be a Borel probability measure on $K$. 
	There exist $M_{\ast} \in \mathbb{N}$ and $r_{\ast} \in (0, 1)$ such that the following conditions \ref{BF1}--\ref{BF5} hold. 
	\begin{enumerate}[label=\textup{(\arabic*)},align=left,leftmargin=*,topsep=2pt,parsep=0pt,itemsep=2pt]
		\item\label{BF1} $K_{w}$ is connected for any $w \in T$, $\{ K_{w} \}_{w \in T}$ is minimal and uniformly finite, and $\inf_{m \ge 0}\min_{w \in T_{m}}\#S(w) \ge 2$.
		\item\label{BF2} There exist $c_{i} \in (0,\infty)$, $i \in \{ 1,\dots,5 \}$, such that the following conditions (2A)-(2C) are true.
		\begin{itemize}
			\item [(2A)]\label{BF2A} For any $w \in T$,
			\begin{equation}\label{BF.bi-Lip}
				c_{1}\,r_{\ast}^{\abs{w}} \le \diam(K_{w}, \metric) \le c_{2}\,r_{\ast}^{\abs{w}}.
			\end{equation}
			\item [(2B)]\label{BF2B} For any $n \in \mathbb{N}$ and any $x \in K$,
			\begin{equation}\label{BF.adapted}
				B_{d}(x, c_{3}\,r_{\ast}^{n}) \subseteq U_{M_{\ast}}(x; n) \subseteq B_{d}(x, c_{4}\,r_{\ast}^{n}).
			\end{equation}
			(In \cite{Kig20}, the metric $d$ is called \emph{$M_{\ast}$-adapted}\index{adapted (metric)} if the condition \eqref{BF.adapted} holds.)
			\item [(2C)]\label{BF2C} For any $n \in \mathbb{N}$ and $w \in T_{n}$, there exists $x_{w} \in K_{w}$ satisfying
			\begin{equation}\label{BF.thick}
				K_{w} \supseteq B_{d}(x_{w}, c_{5}\,r_{\ast}^{n}).
			\end{equation}
		\end{itemize}
		\item\label{BF3} There exist $m_{1} \in \mathbb{N}$, $\gamma_{1} \in (0, 1)$ and $\gamma \in (0, 1)$ such that
		\begin{equation}\label{BF.super-exp}
			\measure(K_{w}) \ge \gamma\,\measure(K_{\pi(w)}) \quad \text{for any $w \in T$,}
		\end{equation}
		and
		\begin{equation}\label{BF.sub-exp}
			\measure(K_{v}) \le \gamma_{1}\,\measure(K_{w}) \quad \text{for any $w \in T$ and any $v \in S^{m_{1}}(w)$.}
		\end{equation}
		Furthermore, $\measure$ is volume doubling with respect to $\metric$ and  
		\begin{equation}\label{BF-nooverlap}
			\measure(K_{w}) = \sum_{v \in S(w)}\measure(K_{v}) \quad \text{for any $w \in T$.}
		\end{equation}
		\item\label{BF4} There exists $M_{0} \ge M_{\ast}$ such that for any $w \in T$, any $k \ge 1$ and any $v \in S^{k}(w)$,
		\[
		\Gamma_{M_{\ast}}(v) \cap S^{k}(w) \subseteq \biggl\{ v' \in T_{\abs{v}} \biggm| 
		\begin{minipage}{240pt}
            there exist $l \le M_{0}$ and $(v_{0},\dots,v_{l}) \in S^{k}(w)^{l + 1}$ such that $(v_{j - 1},v_{j}) \in E_{\abs{v}}^{\ast}$ for any $j \in \{ 1,\dots,l \}$ 
        \end{minipage}
		\biggr\}. 
		\]
		\item\label{BF5} For any $w \in T$, $\pi(\Gamma_{M_{\ast} + 1}(w)) \subseteq \Gamma_{M_{\ast}}(\pi(w))$.
	\end{enumerate}
\end{assum}

We record a simple consequence of \eqref{BF-nooverlap} in the next proposition. 
\begin{prop}\label{prop.nointersection}
	Assume that the Borel probability measure $m$ satisfies \eqref{BF-nooverlap} in Assumption \ref{assum.BF}-\ref{BF3}. 
	Then $m(K_{v} \cap K_{w}) = 0$ for any $v,w \in T$ with $v \neq w$ and $\abs{v} = \abs{w}$. 
\end{prop}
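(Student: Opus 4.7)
The plan is to reduce the statement, via the nested structure of the partition, to the ``sibling case'' in which $v$ and $w$ are both children of a common parent, where the hypothesis \eqref{BF-nooverlap} directly forces pairwise intersections to be $m$-null. The local finiteness of $T$ fixed throughout the subsection ensures that $S(w)$ is finite for every $w \in T$, which keeps the sibling step clean.

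First I would handle the sibling case. Fix $w \in T$ and distinct $v, v' \in S(w)$. By (P1) in Definition \ref{defn.partition} we have $K_w = K_v \cup \bigcup_{u \in S(w) \setminus \{v\}} K_u$, so subadditivity of $m$ gives
\[
m(K_w) \leq m(K_v) + m\Bigl(\bigcup_{u \in S(w) \setminus \{v\}} K_u\Bigr) \leq \sum_{u \in S(w)} m(K_u),
\]
and the outermost terms agree by \eqref{BF-nooverlap}, forcing both inequalities to be equalities. Combining the first equality with the identity $m(A \cup B) = m(A) + m(B) - m(A \cap B)$ (applicable since all measures involved are finite, as $m$ is a probability measure) yields $m\bigl(K_v \cap \bigcup_{u \neq v} K_u\bigr) = 0$, and in particular $m(K_v \cap K_{v'}) = 0$.

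Next, for general $v, w \in T_n$ with $v \neq w$, I would consider the ancestor sequences $v_k \coloneqq \pi^{n-k}(v)$ and $w_k \coloneqq \pi^{n-k}(w)$ for $k \in \{0, 1, \ldots, n\}$. Since $v_0 = w_0 = \phi$ while $v_n \neq w_n$, there is a smallest $k^\ast \in \{1, \ldots, n\}$ with $v_{k^\ast} \neq w_{k^\ast}$; these are then distinct elements of $S(v_{k^\ast - 1}) = S(w_{k^\ast - 1})$, i.e., siblings. Iterating (P1) gives $K_v \subseteq K_{v_{k^\ast}}$ and $K_w \subseteq K_{w_{k^\ast}}$, so
\[
m(K_v \cap K_w) \leq m(K_{v_{k^\ast}} \cap K_{w_{k^\ast}}) = 0
\]
by the sibling case, completing the proof.

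The main obstacle is essentially nil: the result is an elementary measure-theoretic consequence of finite additivity applied to the covering relation built into the partition. The only substantive content beyond that is the ancestor-chain reduction to siblings, which is a routine use of the tree structure and the nestedness property $K_v \subseteq K_{\pi(v)}$.
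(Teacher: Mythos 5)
Your proof is correct, but it takes a genuinely different route from the paper's. The paper enumerates the whole level $T_{n}$ as $\{z(1),\dots,z(l_{n})\}$ with $z(1)=v$, $z(2)=w$, forms the disjointification $\widetilde{K}_{z(j)} \coloneqq K_{z(j)}\setminus\bigcup_{i<j}\widetilde{K}_{z(i)}$, and compares the two identities $\sum_{j}m\bigl(\widetilde{K}_{z(j)}\bigr)=m(K)=1$ and $\sum_{j}m\bigl(K_{z(j)}\bigr)=m(K_{\phi})=1$ (the latter obtained by iterating \eqref{BF-nooverlap} down to level $n$), concluding $m\bigl(K_{z(j)}\setminus\widetilde{K}_{z(j)}\bigr)=0$ for every $j$ and in particular $m(K_{v}\cap K_{w})=0$. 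You instead localize: the subadditivity sandwich $m(K_{w})\le m(K_{v})+m\bigl(\bigcup_{u\neq v}K_{u}\bigr)\le\sum_{u\in S(w)}m(K_{u})=m(K_{w})$ forces $m\bigl(K_{v}\cap\bigcup_{u\neq v}K_{u}\bigr)=0$ for siblings, and the least-common-ancestor reduction via the nestedness $K_{v}\subseteq K_{\pi(v)}$ handles the general case. Your argument invokes \eqref{BF-nooverlap} only at the single branching ancestor rather than iterated over all levels, and it isolates the pairwise statement cleanly; the paper's global disjointification delivers in one stroke the slightly stronger fact that each cell of $T_{n}$ meets the union of all the others in an $m$-null set. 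Both are elementary and complete.
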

\begin{proof}
	Let $n \in \mathbb{N} \cup \{ 0 \}$ and $v, w \in T_{n}$ satisfy $v \neq w$. 
	Enumerate $T_{n}$ as $\{ z(1), z(2), \dots, z(l_{n}) \}$ such that $z(1) = v$ and $z(2) = w$, where $l_{n} = \#T_{n}$. 
	Inductively, define $\widetilde{K}_{z(j)}$ by 
	\[
	\widetilde{K}_{z(1)} = K_{z(1)}
	\]
	and 
	\[
	\widetilde{K}_{z(j + 1)} = K_{z(j + 1)} \setminus \left(\bigcup_{i = 1}^{k}\widetilde{K}_{z(i)}\right). 
	\]
	Then $\bigl\{ \widetilde{K}_{z(j)} \bigr\}_{j = 1}^{l_{n}}$ is a disjoint family of Borel sets and $\bigcup_{j = 1}^{l_{n}}\widetilde{K}_{z(j)} = K$. 
	Therefore, 
	\[
	1 = m(K) = \sum_{j = 1}^{l_{n}}m\Bigl(\widetilde{K}_{z(j)}\Bigr). 
	\]
	On the other hand, \eqref{BF-nooverlap} implies that 
	\[
	1 = m(K_{\phi}) = \sum_{j = 1}^{l_{n}}m\bigl(K_{z(j)}\bigr).
	\]
	Therefore, we conclude that $m\bigl(K_{z(j)} \setminus \widetilde{K}_{z(j)}\bigr) = 0$ for any $j \in \{ 1, \dots, l_{n} \}$. 
	In particular, 
	\[
	0 = m\Bigl(K_{z(2)} \setminus \widetilde{K}_{z(2)}\Bigr) = m\Bigl(K_{w} \setminus \bigl(K_{w} \setminus (K_{v} \cap K_{w})\bigr)\Bigr) = m(K_{v} \cap K_{w}), 
	\]
	which completes the proof.
\end{proof}

Next we introduce conductance, neighbor disparity constants and the notion of $p$-conductive homogeneity in Definitions \ref{defn.nei-const}, \ref{defn.con-const} and \ref{defn.pCH}, following \cite[Sections 2.2, 2.3 and 3.3]{Kig23}.
We will state some definitions and statements below for any $p \in (0,\infty)$ or $p \in [1,\infty)$, but on each such occasion we will explicitly declare that we let $p \in (0,\infty)$ or $p \in [1,\infty)$. Our main interest lies in the case $p \in (1,\infty)$. 
\begin{defn}[{\cite[Definitions 2.17 and 3.4]{Kig23}}]\label{defn.con-const}
    Let $p \in (0,\infty)$, $n \in \mathbb{N} \cup \{ 0 \}$ and $A \subseteq T_{n}$.
    \begin{enumerate}[label=\textup{(\arabic*)},align=left,leftmargin=*,topsep=2pt,parsep=0pt,itemsep=2pt]
        \item\label{it:sub.discreteform} Define $\mathcal{E}_{p,A}^{n} \colon \mathbb{R}^{A} \to [0,\infty)$ by
        \[
        \mathcal{E}_{p,A}^{n}(f) \coloneqq \sum_{\{u,v\} \in E_{n}^{\ast}(A)}\abs{f(u) - f(v)}^{p}, \quad f \in \mathbb{R}^{A}.
        \]
        We write $\mathcal{E}_{p}^{n}(f)$ for $\mathcal{E}_{p,T_{n}}^{n}(f)$. 
        \item\label{it:concap} For $A_{0},A_{1} \subseteq A$, define $\mathrm{cap}_{p}^{n}(A_{0},A_{1};A)$ by
        \[
        \mathrm{cap}_{p}^{n}(A_{0},A_{1};A) \coloneqq \inf\bigl\{ \mathcal{E}_{p,A}^{n}(f) \bigm| f \in \mathbb{R}^{A}, \text{$f|_{A_{i}} = i$ for $i \in \{ 0,1 \}$} \bigr\}.
        \]
        \item\label{it:con-const} (Conductance constant\index{conductance constant}) For $A_{1},A_{2} \subseteq A$ and $k \in \mathbb{N} \cup \{ 0 \}$, define 
        \[
        \mathcal{E}_{p,k}(A_{1},A_{2},A) \coloneqq \mathrm{cap}_{p}^{n + k}\bigl(S^{k}(A_{1}), S^{k}(A_{2}); S^{k}(A)\bigr).
        \]
    	For $M \in \mathbb{N}$, define $\mathcal{E}_{M, p, k} \coloneqq \sup_{w \in T}\mathcal{E}_{p,k}(\{w\},T_{\abs{w}} \setminus \Gamma_{M}(w),T_{\abs{w}})$.
    \end{enumerate}
\end{defn}

Let us recall the notion of \emph{covering system}\index{covering system}, which will be used to define neighbor disparity constants and the notion of conductive homogeneity. 
\begin{defn}[{\cite[Definitions 2.26-(3) and 2.29]{Kig23}}]\label{defn.covering}
	Let $N_{T},N_{E} \in \mathbb{N}$. 
	\begin{enumerate}[label=\textup{(\arabic*)},align=left,leftmargin=*,topsep=2pt,parsep=0pt,itemsep=2pt]
	\item Let $n \in \mathbb{N} \cup \{ 0 \}$ and $A \subseteq T_{n}$. A collection $\{ G_{i} \}_{i = 1}^{k}$ with $G_{i} \subseteq T_{n}$ is called a \emph{covering of $(A,E_{n}^{\ast}(A))$ with covering numbers $(N_T,N_E)$} if and only if $A = \bigcup_{i = 1}^{k}G_{k}$, $\max_{x \in A}\#\{ i \mid x \in G_{i} \} \le N_T$ and for any $(u,v) \in E_{n}^{\ast}(A)$, there exists $l \le N_E$ and $\{ w(1),\dots,w(l + 1)\} \subseteq A$ such that $w(1) = u$, $w(l + 1) = v$ and $(w(i),w(i + 1)) \in \bigcup_{j = 1}^{k}E_{n}^{\ast}(G_{j})$ for any $i \in \{ 1,\dots,l \}$. 
	\item Let $\mathscr{J} \subseteq \bigcup_{n \in \mathbb{N} \cup \{ 0 \}}\{ A \mid A \subseteq T_{n} \}$. The collection $\mathscr{J}$ is called a \emph{covering system with covering number $(N_T,N_E)$} if and only if the following conditions are satisfied: 
		\begin{enumerate}[label=\textup{(\roman*)},align=left,leftmargin=*,topsep=2pt,parsep=0pt,itemsep=2pt]
       		\item $\sup_{A \in \mathscr{J}}\#A < \infty$. 
        	\item For any $w \in T$ and any $k \in \mathbb{N}$, there exists a finite subset $\mathscr{N} \subseteq \mathscr{J} \cap T_{\abs{w} + k}$ such that $\mathscr{N}$ is a covering of $\bigl(S^{k}(w),E_{\abs{w} + k}^{\ast}(S^{k}(w))\bigr)$ with covering numbers $(N_T,N_E)$. 
        	\item For any $G \in \mathscr{J}$ and any $k \in \mathbb{N} \cup \{ 0 \}$, if $G \subseteq T_{n}$, then there exists a finite subset $\mathscr{N} \subseteq \mathscr{J} \cap T_{n + k}$ such that $\mathscr{N}$ is a covering of $\bigl(S^{k}(G),E_{n + k}^{\ast}(S^{k}(G))\bigr)$ with covering numbers $(N_T,N_E)$.
    	\end{enumerate}
    	The collection $\mathscr{J}$ is simply said to be a \emph{covering system} if and only if there exist $(N_T, N_E) \in \mathbb{N}^{2}$ such that $\mathscr{J}$ is a covering system with covering number $(N_T,N_E)$. 
    \end{enumerate}
\end{defn}

\begin{defn}[{\cite[Definitions 2.26-(1),(2) and 2.29]{Kig23}}]\label{defn.nei-const}
    Let $p \in (0,\infty)$, $n \in \mathbb{N}$ and $A \subseteq T_{n}$.
    \begin{enumerate}[label=\textup{(\arabic*)},align=left,leftmargin=*,topsep=2pt,parsep=0pt,itemsep=2pt]
        \item\label{it:Pnkf} For $k \in \mathbb{N} \cup \{ 0 \}$ and $f \colon T_{n + k} \to \mathbb{R}$, define $P_{n,k}f \colon T_{n} \to \mathbb{R}$ by
        	\[
        	(P_{n,k}f)(w) \coloneqq \frac{1}{\sum_{v \in S^{k}(w)}\measure(K_{v})}\sum_{v \in S^{k}(w)}f(v)\measure(K_{v}), \quad w \in T_{n}.
        	\]
        (Note that $P_{n,k}f$ depends on the measure $\measure$.)
        \item\label{it:nei-const} (Neighbor disparity constant\index{neighbor disparity constant}) For $k \in \mathbb{N} \cup \{ 0 \}$, define
        \[
        \sigma_{p,k}(A) \coloneqq \sup_{f \colon S^{k}(A) \to \mathbb{R}}\frac{\mathcal{E}_{p,A}^{n}(P_{n,k}f)}{\mathcal{E}_{p,S^{k}(A)}^{n + k}(f)}. 
        \]
        \item\label{it:sigmaJpk}  Let $\mathscr{J} \subseteq \bigcup_{n \ge 0}\{ A \mid A \subseteq T_{n}\}$ be a covering system. Define
        \[
        \sigma_{p,k,n}^{\mathscr{J}} \coloneqq \max\{ \sigma_{p,k}(A) \mid A \in \mathscr{J}, A \subseteq T_{n} \} \quad \text{and} \quad \sigma_{p,k}^{\mathscr{J}} \coloneqq \sup_{n \in \mathbb{N} \cup \{ 0 \}}\sigma_{p,k,n}^{\mathscr{J}}.
        \]
    \end{enumerate}
\end{defn}

\begin{defn}[{\cite[Definition 3.4]{Kig23}}]\label{defn.pCH}
    Let $p \in [1,\infty)$.
    The compact metric space $K$ (with a partition $\{ K_{w} \}_{w \in T}$ and a measure $\measure$) is said to be \emph{$p$-conductively homogeneous}\index{$p$-conductively homogeneous} if and only if there exists a covering system $\mathscr{J}$ such that
    \begin{equation}\label{d:pCH}
        \sup_{k \in \mathbb{N} \cup \{ 0 \}}\sigma_{p,k}^{\mathscr{J}}\mathcal{E}_{M_{\ast},p,k} < \infty.
    \end{equation}
    When we would like to clarify which partition is considered, we also say that $K$ is $p$-conductively homogeneous with respect to $\{ K_{w} \}_{w \in T}$.   
\end{defn}

For our purposes, the following consequence of the $p$-conductive homogeneity is more important than its original definition. 
\begin{thm}[Part of {\cite[Theorem 3.30]{Kig23}}]\label{t:pCH}
    Let $p \in [1,\infty)$ and assume that Assumption \ref{assum.BF} holds.
    If $K$ is $p$-conductively homogeneous, then there exist $\alpha_0,\alpha_1 \in (0,\infty)$, $\sigma_{p} \in (0,\infty)$ and a covering system $\mathscr{J}$ such that for any $k \in \mathbb{N} \cup \{ 0 \}$, 
    \begin{equation}\label{pCH.1}
        \alpha_{0}\sigma_{p}^{-k} \le \mathcal{E}_{M_{\ast},p,k} \le \alpha_{1}\sigma_{p}^{-k} \quad \text{and} \quad \alpha_{0}\sigma_{p}^{k} \le \sigma^{\mathscr{J}}_{p,k} \le \alpha_{1}\sigma_{p}^{k}. 
    \end{equation}
    In particular, the constant $\sigma_{p}$ is determined by the following limit: 
    \begin{equation}\label{p-factor}
        \sigma_{p} = \lim_{k \to \infty}\bigl(\mathcal{E}_{M_{\ast},p,k}\bigr)^{-1/k}.
    \end{equation}
\end{thm}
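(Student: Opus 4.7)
The plan is to reduce \eqref{pCH.1} to submultiplicativity (up to multiplicative constants) of the sequences $\{\sigma^{\mathscr{J}}_{p,k}\}_{k\geq 0}$ and $\{\mathcal{E}_{M_\ast,p,k}^{-1}\}_{k\geq 0}$, combined with a two-sided comparison of their product, and then to extract $\sigma_p$ by a Fekete-type argument.

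\textbf{Step 1 (two-sided multiplicativity of $\sigma^{\mathscr{J}}_{p,k}$).} First I would show that there exists $C_1 \in [1,\infty)$ such that, after replacing $\mathscr{J}$ by a suitably enlarged covering system if necessary,
\begin{equation}\label{submult-sigma}
	C_1^{-1}\sigma^{\mathscr{J}}_{p,k}\sigma^{\mathscr{J}}_{p,l}
	\le \sigma^{\mathscr{J}}_{p,k+l}
	\le C_1 \sigma^{\mathscr{J}}_{p,k}\sigma^{\mathscr{J}}_{p,l}
	\quad \text{for any } k,l \in \mathbb{N}\cup\{0\}.
\end{equation}
The upper bound uses the tower decomposition $P_{n,k+l}=P_{n,k}\circ P_{n+k,l}$ (which follows from \eqref{BF-nooverlap}) together with the covering-system property of $\mathscr{J}$: a function $f\colon S^{k+l}(A)\to\mathbb{R}$ on $A\in\mathscr{J}\subseteq T_n$ is first averaged to $P_{n+k,l}f$ on $S^k(A)$, which is covered by finitely many elements of $\mathscr{J}\cap T_{n+k}$, and then averaged again to $P_{n,k+l}f$ on $A$. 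Applying the definition of $\sigma_{p,k}(A)$ at each step and controlling the multiplicities by the covering numbers $(N_T,N_E)$ yields the $\leq$ part. The lower bound is produced by extending an extremal function for $\sigma_{p,l}(B)$ on some $B\in\mathscr{J}\cap T_{n+k}$ constantly on the corresponding $S^k$-descendants and then combining with an extremal function for $\sigma_{p,k}(A)$ upstairs, using \eqref{BF-nooverlap} once more to keep track of weights.

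\textbf{Step 2 (comparison with the conductance constants).} Next I would establish that there exist $C_2,C_3\in(0,\infty)$, independent of $k$, with
\begin{equation}\label{prod-bound}
	C_2 \le \sigma^{\mathscr{J}}_{p,k}\mathcal{E}_{M_\ast,p,k} \le C_3 \quad \text{for every } k\in\mathbb{N}\cup\{0\}.
\end{equation}
The upper bound in \eqref{prod-bound} is precisely the definition \eqref{d:pCH} of $p$-conductive homogeneity. For the lower bound, the idea is to test the neighbor-disparity ratio defining $\sigma^{\mathscr{J}}_{p,k}$ against a near-minimizer of the capacity in $\mathcal{E}_{M_\ast,p,k}$: given $w\in T$ and a function $f$ on $S^k(T_{\abs{w}})$ realizing (up to $1+\varepsilon$) the infimum in $\mathrm{cap}_p^{n+k}(S^k(\{w\}),S^k(T_{\abs{w}}\setminus\Gamma_{M_\ast}(w));S^k(T_{\abs{w}}))$, one checks that $P_{\abs{w},k}f$ separates $\{w\}$ from $T_{\abs{w}}\setminus\Gamma_{M_\ast}(w)$ at the unit-scale graph $(T_{\abs{w}},E_{\abs{w}}^{\ast})$, so that $\mathcal{E}_{p,A}^{\abs{w}}(P_{\abs{w},k}f) \gtrsim 1$ for an appropriate $A\in\mathscr{J}\cap T_{\abs{w}}$ meeting $\{w\}$; dividing by $\mathcal{E}_{p,S^k(A)}^{\abs{w}+k}(f)\lesssim\mathcal{E}_{M_\ast,p,k}$ gives \eqref{prod-bound}.

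\textbf{Step 3 (Fekete extraction of $\sigma_p$).} Setting $a_k \coloneqq \log \sigma^{\mathscr{J}}_{p,k}$, \eqref{submult-sigma} reads $|a_{k+l}-a_k-a_l|\le \log C_1$. A standard Fekete-type argument (apply the classical subadditive lemma to $a_k+\log C_1$ and the superadditive analogue to $a_k-\log C_1$) produces a finite limit
\[
	\log\sigma_p \coloneqq \lim_{k\to\infty}\frac{a_k}{k} = \inf_{k\ge 1}\frac{a_k+\log C_1}{k} = \sup_{k\ge 1}\frac{a_k-\log C_1}{k}\in\mathbb{R},
\]
and hence constants $\alpha_0,\alpha_1\in(0,\infty)$ with $\alpha_0\sigma_p^k\le \sigma^{\mathscr{J}}_{p,k}\le \alpha_1\sigma_p^k$ for all $k\ge 0$. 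Combining this with \eqref{prod-bound} yields the matching two-sided estimate on $\mathcal{E}_{M_\ast,p,k}$ and in turn \eqref{p-factor}, after possibly enlarging $\alpha_1$ and shrinking $\alpha_0$.

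\textbf{Main obstacle.} The delicate step is Step 1, particularly the upper bound in \eqref{submult-sigma}: one needs the tower property of $P_{n,k+l}$ to interact cleanly with the combinatorial $p$-energy $\mathcal{E}_{p,A}^{n}$, which is a nonlinear quantity for $p\ne 2$. The known route, due to Kigami, is to use a discrete Poincar\'e-type inequality on each element of $\mathscr{J}$ so that edge-differences of $P_{n+k,l}f$ can be dominated by edge-differences of $f$ on the refined graph; the covering-system axioms then permit summation without combinatorial blow-up. Executing this uniformly in $n$ requires choosing $\mathscr{J}$ to be stable under the $S^k$ operation, which is precisely what Definition \ref{defn.covering}\,(2)(iii) guarantees, and to be compatible with $(N_T,N_E)$-counting bounds; this is where the full strength of Assumption \ref{assum.BF} (especially \ref{BF1}, \ref{BF3}, \ref{BF4}, \ref{BF5}) is used.
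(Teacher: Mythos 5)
First, note that the paper does not prove Theorem \ref{t:pCH} at all: it is stated as a citation of \cite[Theorem 3.30]{Kig23} (with the existence of the limit \eqref{p-factor} traced in Remark \ref{rmk.p-factor} to \cite[Theorem 2.23]{Kig23} plus Fekete's lemma). So your proposal is being measured against Kigami's argument rather than anything in this paper. The overall architecture you describe (multiplicativity estimates, a two-sided bound on the product $\sigma^{\mathscr{J}}_{p,k}\mathcal{E}_{M_{\ast},p,k}$, then Fekete) is the right one, and your Step 2 and Step 3 are sound in outline: the upper product bound is the definition \eqref{d:pCH}, and the lower product bound via testing the disparity ratio against a near-minimizer of the capacity is essentially Kigami's lemma.

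The genuine gap is the \emph{lower} (supermultiplicative) half of your Step 1, and your proposed justification for it does not work. Extending a near-extremal function $g$ for $\sigma_{p,l}(B)$ constantly on its $S^{k}$-descendants produces a function $f$ with $P_{n+k,l}f=g$ but with $\mathcal{E}^{n+k+l}_{p,S^{k+l}(B)}(f)$ potentially much \emph{larger} than $\mathcal{E}^{n+k}_{p,S^{k}(B)}(g)$: every coarse edge of $E^{\ast}_{n+k}$ is refined into a number of crossing edges in $E^{\ast}_{n+k+l}$ that grows with $l$, so the ratio $\mathcal{E}(P_{n+k,l}f)/\mathcal{E}(f)$ you produce can be far below $\sigma_{p,l}$, and the supermultiplicative inequality does not follow. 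Note also that one-sided subadditivity of $a_{k}=\log\sigma^{\mathscr{J}}_{p,k}$ only yields the lower bound $\sigma^{\mathscr{J}}_{p,k}\geq C_{1}^{-1}\sigma_{p}^{k}$, never the upper bound in \eqref{pCH.1}, so this half cannot simply be dropped. The standard repair, and what \cite{Kig23} actually does, is to avoid supermultiplicativity of $\sigma^{\mathscr{J}}_{p,k}$ entirely: prove one-sided submultiplicativity (up to a constant) of \emph{both} sequences, namely $\sigma^{\mathscr{J}}_{p,k+l}\leq C\sigma^{\mathscr{J}}_{p,k}\sigma^{\mathscr{J}}_{p,l}$ (your tower argument for $P_{n,k+l}=P_{n,k}\circ P_{n+k,l}$, which is the legitimately delicate step) and $\mathcal{E}_{M_{\ast},p,k+l}\leq C\,\mathcal{E}_{M_{\ast},p,k}\mathcal{E}_{M_{\ast},p,l}$ (by gluing near-optimal potentials across scales, using Assumption \ref{assum.BF}-\ref{BF4},\ref{BF5} to control the combinatorics). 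Fekete applied to each gives geometric lower bounds $\sigma^{\mathscr{J}}_{p,k}\geq cL_{\sigma}^{k}$ and $\mathcal{E}_{M_{\ast},p,k}\geq cL_{E}^{k}$; the two-sided product bound forces $L_{\sigma}L_{E}=1$ and then converts each lower bound into the matching upper bound for the other sequence, yielding \eqref{pCH.1} and \eqref{p-factor}. A minor additional caveat: enlarging $\mathscr{J}$ increases $\sigma^{\mathscr{J}}_{p,k}$, so any replacement of the covering system must be checked to preserve \eqref{d:pCH}.
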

\begin{rmk}\label{rmk.p-factor}
    The existence of the limit in \eqref{p-factor} is true without the $p$-conductive homogeneity.
    Indeed, if $(K,d,\{ K_{w} \}_{w \in T})$ satisfies the conditions Assumption \ref{assum.BF}-\ref{BF1},\ref{BF2},\ref{BF4},\ref{BF5}, then the sub-multiplicativity inequality for $\{ \mathcal{E}_{M_{\ast},p,k} \}_{k \in \mathbb{N} \cup \{ 0 \}}$ from \cite[Theorem 2.23]{Kig23} together with Fekete's lemma implies the existence of the limit in \eqref{p-factor} for \emph{any} $p \in (0,\infty)$.
    For convenience, we call $\sigma_{p}$ the \emph{$p$-scaling factor} of $(K,d,\{ K_{w} \}_{w \in T})$. 
\end{rmk}

We also recall the ``Sobolev space'' $\mathcal{W}^{p}$ introduced in \cite[Lemma 3.13]{Kig23}.
\begin{defn}\label{d:Kig-sob}
    Let $p \in [1,\infty)$.
    Assume that Assumption \ref{assum.BF}-\ref{BF1},\ref{BF2},\ref{BF4},\ref{BF5} hold and let $\sigma_{p}$ be the constant introduced in \eqref{p-factor} of Theorem \ref{t:pCH}.
    \begin{enumerate}[label=\textup{(\arabic*)},align=left,leftmargin=*,topsep=2pt,parsep=0pt,itemsep=2pt]
    	\item\label{it:Pnf} For $n \in \mathbb{N} \cup \{ 0 \}$, define $P_{n} \colon L^{1}(K,m) \to  \mathbb{R}^{T_{n}}$ by $P_{n}f(w) \coloneqq \fint_{K_{w}}f\,dm$, $w \in T_{n}$. 
        \item\label{it:NpWp} Define $\mathcal{N}_{p} \colon L^{p}(K,m) \to [0,\infty]$ and a linear subspace $\mathcal{W}^{p}$ of $L^{p}(K,m)$ by
        \begin{align*}
            \mathcal{N}_{p}(f) &\coloneqq \left(\sup_{n \in \mathbb{N} \cup \{ 0 \}}\sigma_{p}^{n}\mathcal{E}_{p}^{n}(P_{n}f)\right)^{1/p}, \quad f \in L^{p}(K,m), \\
            \mathcal{W}^{p} &\coloneqq \bigl\{ f \in L^{p}(K,\measure) \bigm| \mathcal{N}_{p}(f) < \infty \bigr\},
        \end{align*}
        and we equip $\mathcal{W}^{p}$ with the norm $\norm{\,\cdot\,}_{\mathcal{W}^{p}}$ defined by 
        \[
        \norm{f}_{\mathcal{W}^{p}} \coloneqq \Bigl(\norm{f}_{L^{p}(K,m)}^{p} + \mathcal{N}_{p}(f)^{p}\Bigr)^{1/p}, \quad f \in \mathcal{W}^{p}. 
        \]
        \item\label{it:defn.basecone} For a linear subspace $\mathcal{D}$ of $\mathcal{W}^{p}$, we define
        \[
        \mathcal{U}_{p}(\mathcal{D}) \coloneqq \biggl\{ \mathscr{E} \colon \mathcal{D} \to [0,\infty) \biggm|
        	\begin{minipage}{278pt}
			$\mathscr{E}^{1/p}$ is a seminorm on $\mathcal{D}$, there exist $\alpha_{0},\alpha_{1} \in (0,\infty)$ such that $\alpha_{0}\mathcal{N}_{p}(f) \le \mathscr{E}(f)^{1/p} \le \alpha_{1}\mathcal{N}_{p}(f)$ for any $f \in \mathcal{D}$
			\end{minipage}
        \biggr\}.
        \]
        For ease of notation, set $\mathcal{U}_{p} \coloneqq \mathcal{U}_{p}(\mathcal{W}^{p})$. 
        \item\label{it:defn.rescaled-discrete-p-energy} For $n \in \mathbb{N} \cup \{ 0 \}$ and $A \subseteq T_{n}$, we define $\widetilde{\mathcal{E}}_{p,A}^{n} \colon L^{p}(K,m) \to [0,\infty)$ by
        \begin{equation*}
            \widetilde{\mathcal{E}}_{p,A}^{n}(f) \coloneqq \sigma_{p}^{n}\mathcal{E}_{p,A}^{n}(P_{n}f), \quad f \in L^{p}(K,m).
        \end{equation*}
        We also set $\widetilde{\mathcal{E}}_{p}^{n}(f) \coloneqq \widetilde{\mathcal{E}}_{p,T_{n}}^{n}(f)$. 
    \end{enumerate}
\end{defn}

We have the following irreducibility property of $\mathcal{N}_{p}$ thanks to the connectedness of $K$ and Assumption \ref{assum.BF}-\ref{BF3}.  
\begin{prop}\label{prop.Np-zero}
	Let $p \in [1,\infty)$. 
	Assume that Assumption \ref{assum.BF} holds, and let $f \in L^{p}(K,\measure)$.
	Then $\mathcal{N}_{p}(f) = 0$ if and only if there exists $c \in \mathbb{R}$ such that $f(x) = c$ for $m$-a.e.\ $x \in K$. 
\end{prop}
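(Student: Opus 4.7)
The ``if'' direction is immediate: if $f = c$ $m$-a.e., then $P_{n}f(w) = c$ for every $n \in \mathbb{N} \cup \{0\}$ and every $w \in T_{n}$, hence $\mathcal{E}_{p}^{n}(P_{n}f) = 0$ for all $n$, whence $\mathcal{N}_{p}(f) = 0$. For the converse, my plan is to suppose $\mathcal{N}_{p}(f) = 0$, deduce that $P_{n}f$ is constant on $T_{n}$ for every $n$, identify the constant as $c \coloneqq \int_{K}f\,dm$, and then conclude $f = c$ $m$-a.e.\ by a martingale-type argument along the filtration generated by the partitions $\{K_{w}\}_{w \in T_{n}}$.

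The first step is that $\mathcal{N}_{p}(f) = 0$ forces $\mathcal{E}_{p}^{n}(P_{n}f) = 0$ for each $n$, so $P_{n}f$ is constant on each connected component of the graph $(T_{n}, E_{n}^{\ast})$. To see that this graph is connected I will argue by contradiction: if $C_{1}, C_{2}$ were distinct connected components, then the sets $K_{C_{i}} \coloneqq \bigcup_{w \in C_{i}}K_{w}$ would both be closed (finite unions of compact sets) and their union would be $K$; they are moreover disjoint, for a common point $x \in K_{u} \cap K_{v}$ with $u \in C_{1}, v \in C_{2}$ would force $\{u,v\} \in E_{n}^{\ast}$ and thereby $u,v$ into the same component. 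Since each $K_{w}$, and in particular $K_{\phi} = K$, is connected by Assumption \ref{assum.BF}-\ref{BF1}, this is impossible, so $(T_{n}, E_{n}^{\ast})$ is connected and $P_{n}f \equiv c_{n}$ for some $c_{n} \in \mathbb{R}$. Using \eqref{BF-nooverlap} and Proposition \ref{prop.nointersection} to compute $\int_{K}f\,dm = \sum_{w \in T_{n}}\int_{K_{w}}f\,dm = c_{n}\sum_{w \in T_{n}}m(K_{w}) = c_{n}$, I obtain $c_{n} = c$ for every $n$.

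It remains to upgrade $P_{n}f \equiv c$ to $f = c$ $m$-a.e. To do so I will invoke Proposition \ref{prop.nointersection} to replace $\{K_{w}\}_{w \in T_{n}}$ by a genuine measurable partition $\{K_{w}^{\circ}\}_{w \in T_{n}}$ with $m(K_{w} \triangle K_{w}^{\circ}) = 0$, and let $\mathcal{B}_{n}$ be the $\sigma$-algebra on $K$ generated by $\{K_{w}^{\circ}\}_{w \in T_{n}}$; the partition property (P1) yields $\mathcal{B}_{n} \subseteq \mathcal{B}_{n+1}$. Then the conditional expectation satisfies $\mathbb{E}_{m}[f \mid \mathcal{B}_{n}] = P_{n}f = c$ identically modulo $m$-null sets. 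Since the diameters $\diam(K_{w}, d)$ for $w \in T_{n}$ tend to $0$ uniformly by \eqref{BF.bi-Lip}, for any continuous $g \in \contfunc(K)$ one has $\mathbb{E}_{m}[g \mid \mathcal{B}_{n}] \to g$ pointwise on $K$, so every continuous function is $\sigma\bigl(\bigcup_{n}\mathcal{B}_{n}\bigr)$-measurable modulo $m$-null sets, which is enough to conclude $\sigma\bigl(\bigcup_{n}\mathcal{B}_{n}\bigr) \supseteq \mathcal{B}(K)$ modulo $m$-null sets. Doob's martingale convergence theorem then gives $f = \lim_{n\to\infty}\mathbb{E}_{m}[f \mid \mathcal{B}_{n}] = c$ $m$-a.e.

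The only non-trivial point in the whole argument is the connectedness of the approximating graph $(T_{n}, E_{n}^{\ast})$, which is what lets us promote ``constant on each component'' to ``constant''; the remaining pieces (identifying the constant and passing to the pointwise $m$-a.e.\ limit) are essentially bookkeeping once one has the filtration structure in hand. I do not expect any genuine obstacle beyond handling the slight measure-theoretic subtleties created by the fact that the cells $K_{w}$ overlap on an $m$-null set.
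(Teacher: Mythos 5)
Your proof is correct, and the first half (connectedness of $(T_{n},E_{n}^{\ast})$ forcing $P_{n}f \equiv c_{n}$, and \eqref{BF-nooverlap} forcing $c_{n} = c$) coincides with the paper's argument, except that the paper simply cites \cite[Proposition 2.8]{Kig23} for the connectedness of the approximating graphs where you reprove it; your phrasing there has a small slip (you should take $C_{2} \coloneqq T_{n} \setminus C_{1}$ rather than a second component, so that $K_{C_{1}} \cup K_{C_{2}} = K$), but the intended argument is clearly right. The genuine difference is in the final step. The paper upgrades $P_{n}f \equiv c$ to $f = c$ $m$-a.e.\ via the Lebesgue differentiation theorem on the volume doubling space $(K,d,m)$, estimating $\abs{f(x) - c}$ at a Lebesgue point $x$ by $C\fint_{B_{d}(x,c_{4}r_{\ast}^{n})}\abs{f(x)-f(y)}\,m(dy)$ using Assumption \ref{assum.BF}-\ref{BF2},\ref{BF3} (adaptedness, thickness and volume doubling). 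You instead run a martingale argument along the filtration $\mathcal{B}_{n} \coloneqq \sigma(\{K_{w}\}_{w \in T_{n}})$, which is nested by (P1), has atoms equal to the cells modulo $m$-null sets by Proposition \ref{prop.nointersection}, and generates $\mathcal{B}(K)$ modulo null sets because $\max_{w \in T_{n}}\diam(K_{w},d) \to 0$; L\'evy's upward theorem then gives $f = \lim_{n}\mathbb{E}_{m}[f \mid \mathcal{B}_{n}] = c$ $m$-a.e. Your route is more elementary in that it bypasses volume doubling and the metric structure almost entirely (only \eqref{BF.bi-Lip} enters, to see that the cells shrink), and in fact it is the same device the paper itself uses later in the proof of Theorem \ref{thm.chain-basic}; the paper's route has the advantage of reusing machinery (the maximal function and Lebesgue point estimates) that it needs anyway in the proof of Theorem \ref{t:Kig-good}. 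The measure-theoretic subtleties you flag are real but harmless: one can work directly with $\sigma(\{K_{w}\}_{w \in T_{n}})$ without modifying the cells, since the overlaps are $m$-null.
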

\begin{proof}
	It is clear that $\mathcal{N}_{p}(f) = 0$ if $f$ is constant. 
	Assume that $\mathcal{N}_{p}(f) = 0$. 
	Note that $(T_{n},E_{n}^{\ast})$ is a connected graph for each $n \in \mathbb{N} \cup \{ 0 \}$ (\cite[Proposition 2.8]{Kig23}). 
	Therefore, $\mathcal{N}_{p}(f) = 0$ implies that for each $n \in \mathbb{N} \cup \{ 0 \}$ there exists $c_{n} \in \mathbb{R}$ such that $P_{n}f(w) = c_{n}$ for any $w \in T_{n}$. 
	By \eqref{BF-nooverlap} in Assumption \ref{assum.BF}-\ref{BF3}, we have $c_{n} = c_{n + 1}$ and hence there exists $c \in \mathbb{R}$ such that $c_{n} = c$ for any $n \in \mathbb{N} \cup \{ 0 \}$. 
	Now we let $\mathscr{L}_{f} \subseteq K$ denote the set of \emph{Lebesgue points of $f$}\index{Lebesgue point}, i.e., 
	\begin{equation}\label{e:defn.Lebpt}
		\mathscr{L}_{f} \coloneqq \biggl\{ x \in K \biggm| \lim_{r \downarrow 0}\fint_{B_{d}(x,r)}\abs{f(x) - f(y)}\,m(dy) = 0 \biggr\}. 
	\end{equation}
	Then, by the volume doubling property of $m$ and the Lebesgue differentiation theorem (see, e.g., \cite[Theorem 1.8]{Hei}), we have $\mathscr{L}_{f} \in \mathcal{B}(K)$ and $m(K \setminus \mathscr{L}_{f}) = 0$. 
	For any $x \in \mathscr{L}_{f}$ and any $n \in \mathbb{N} \cup \{ 0 \}$, by Proposition \ref{prop.nointersection} and Assumption \ref{assum.BF}-\ref{BF2},\ref{BF3}, 
	\begin{align*}
		\abs{f(x) - c}
		= \abs{f(x) - \fint_{U_{M_{\ast}}(x; n)}f\,dm}
		&\le \frac{1}{m(U_{M_{\ast}}(x; n))}\int_{B_{d}(x,c_{4}r_{\ast}^{n})}\abs{f(x) - f(y)}\,m(dy) \\
		&\le C\fint_{B_{d}(x,c_{4}r_{\ast}^{n})}\abs{f(x) - f(y)}\,m(dy), 
	\end{align*}
	where we used \eqref{BF.adapted} in Assumption \ref{assum.BF}-\ref{BF2} and the volume doubling property of $m$ in the last inequality. 
	Here $C \in (0,\infty)$ is a constant that is independent of $x,f$ and $n$. 
	By letting $n \to \infty$ in the estimate above, we obtain $f(x) = c$ for any $x \in \mathscr{L}_{f}$, which completes the proof. 
\end{proof}

As shown in \cite{Kig23,Shi24}, $\mathcal{W}^{p}$ is a nice Banach space embedded in $\contfunc(K)$ if $K$ is $p$-conductively homogeneous and $p > \dim_{\mathrm{ARC}}(K,d)$. 
More generally, we can show the following theorem.
\begin{thm}\label{thm.Wp}
	Let $p \in [1,\infty)$. 
    Assume that $(K,d,\{ K_{w} \}_{w \in T},m)$ satisfies Assumption \ref{assum.BF} and that $K$ is $p$-conductively homogeneous.
    Then $\mathcal{W}^{p}$ is a Banach space and $\mathcal{W}^{p}$ is dense in $L^{p}(K,m)$. 
    If $p \in (1,\infty)$, then $\mathcal{W}^{p}$ is reflexive and separable. 
    Moreover, if in addition $p > \dim_{\mathrm{ARC}}(K,d)$, then $\mathcal{W}^{p}$ is a dense linear subspace of $(\contfunc(K),\norm{\,\cdot\,}_{\sup})$. 
\end{thm}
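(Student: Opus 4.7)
I would prove the four conclusions in order. The Banach space property follows from the lower semicontinuity of $\mathcal{N}_{p}$ on $L^{p}(K,m)$: for each $n$, the map $f\mapsto\widetilde{\mathcal{E}}_{p}^{n}(f)^{1/p}$ is continuous on $L^{p}(K,m)$ since $P_{n}\colon L^{p}(K,m)\to\mathbb{R}^{T_{n}}$ is bounded linear into a finite-dimensional space, and $\mathcal{N}_{p}$ is the supremum of this countable family of continuous seminorms, hence lower semicontinuous. For a Cauchy sequence $\{f_{k}\}$ in $\mathcal{W}^{p}$, I would extract the $L^{p}$-limit $f$ and apply this LSC to the differences to obtain $\mathcal{N}_{p}(f-f_{k})\leq\liminf_{l\to\infty}\mathcal{N}_{p}(f_{l}-f_{k})\to 0$, yielding $f\in\mathcal{W}^{p}$ and $\norm{f-f_{k}}_{\mathcal{W}^{p}}\to 0$.

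Density of $\mathcal{W}^{p}$ in $L^{p}(K,m)$ requires constructing enough cutoffs: for each $w\in T$ I would build a sequence $\{\phi_{w,j}\}_{j\in\mathbb{N}}\subseteq\mathcal{W}^{p}\cap L^{\infty}(K,m)$ with $\phi_{w,j}\to\indicator{K_{w}}$ in $L^{p}(K,m)$ and $\sup_{j}\mathcal{N}_{p}(\phi_{w,j})<\infty$. This is where $p$-conductive homogeneity enters essentially: Theorem \ref{t:pCH} provides, at each scale $n\geq\abs{w}$, a function on $T_{n}$ that equals $1$ on $S^{n-\abs{w}}(w)$ and $0$ outside a bounded graph neighborhood, with $\mathcal{E}_{p}^{n}$-norm comparable to $\sigma_{p}^{-(n-\abs{w})}$, so that the corresponding $\widetilde{\mathcal{E}}_{p}^{n}$-value is $O(\sigma_{p}^{\abs{w}})$ uniformly in $n$; these scale-$n$ minimizers can be pieced together across scales using the neighbor disparity bounds $\sigma_{p,k}^{\mathscr{J}}\asymp\sigma_{p}^{k}$ to produce the required $\phi_{w,j}$. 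Since simple functions of the form $\sum_{w\in A}a_{w}\indicator{K_{w}}$, with $A$ a partition of $\Sigma$ by elements of $\bigcup_{n}T_{n}$, are dense in $L^{p}(K,m)$ by Assumption \ref{assum.BF}-\ref{BF3} and Proposition \ref{prop.nointersection}, density of $\mathcal{W}^{p}$ in $L^{p}(K,m)$ follows. For reflexivity when $p\in(1,\infty)$ I would verify Kakutani's criterion by extracting weak $L^{p}$-limits of bounded sequences in $\mathcal{W}^{p}$, observing that the limit lies in $\mathcal{W}^{p}$ via the LSC already established, and promoting weak $L^{p}$-convergence on bounded sets to weak $\mathcal{W}^{p}$-convergence through a Hahn--Banach characterization of $(\mathcal{W}^{p})^{\ast}$ by $L^{p/(p-1)}$-pairings encoding the gradient structure. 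Separability of $\mathcal{W}^{p}$ then follows from Corollary \ref{cor:separable} once a usable $p$-Clarkson-type inequality has been extracted for the ambient norm on $\mathcal{W}^{p}$.

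For density of $\mathcal{W}^{p}$ in $(\contfunc(K),\norm{\,\cdot\,}_{\sup})$ under $p>\dim_{\mathrm{ARC}}(K,d)$, I would invoke a Morrey-type embedding $\mathcal{W}^{p}\hookrightarrow\contfunc^{0,\beta}(K,d)$ for some $\beta>0$ (which follows from the relation $p>\dim_{\mathrm{ARC}}(K,d)$ translating, via \cite[Theorem 4.7.6]{Kig20}, into a uniform Hölder estimate for $\mathcal{W}^{p}$-functions under the adaptedness of $d$ from Assumption \ref{assum.BF}-\ref{BF2}), together with the Stone--Weierstrass theorem applied to the subalgebra of $\mathcal{W}^{p}\cap\contfunc(K)$ generated by the $\phi_{w,j}$ from the previous step (which separate points of $K$ and whose products remain in $\mathcal{W}^{p}$ by a Leibniz estimate analogous to Proposition \ref{prop.GC-list}-\ref{GC.leibniz}). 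The principal obstacle is reflexivity: since $\mathcal{N}_{p}$ is a supremum of a countable family of seminorms, uniform convexity is not inherited from its components and a direct $p$-Clarkson inequality for $\norm{\,\cdot\,}_{\mathcal{W}^{p}}$ is not automatic; one must exploit the uniform comparability $\alpha_{0}\sigma_{p}^{k}\leq\sigma_{p,k}^{\mathscr{J}}\leq\alpha_{1}\sigma_{p}^{k}$ from Theorem \ref{t:pCH}, which constrains $\{\widetilde{\mathcal{E}}_{p}^{n}(f)\}_{n}$ to oscillate within a bounded multiplicative factor of its supremum for every $f\in\mathcal{W}^{p}$, so that up to equivalence of norms $\mathcal{W}^{p}$ behaves like a single $p$-Clarkson norm, restoring the uniform convexity needed for Milman--Pettis.
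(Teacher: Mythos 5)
Your arguments for completeness, for density in $L^{p}(K,m)$, and for density in $\contfunc(K)$ when $p > \dim_{\mathrm{ARC}}(K,d)$ are in the right spirit (the paper delegates these to \cite[Lemmas 3.24, 3.28, 3.15, 3.16 and 3.19]{Kig23}), but your reflexivity argument has a genuine gap, and you have in fact proposed two routes that both fall short. The first route is circular: extracting weak $L^{p}$-limits of bounded sequences in $\mathcal{W}^{p}$ and noting that the limit lies in $\mathcal{W}^{p}$ by lower semicontinuity only gives compactness of the unit ball in the weak topology of $L^{p}(K,m)$, not in $\sigma(\mathcal{W}^{p},(\mathcal{W}^{p})^{\ast})$; to promote one to the other you appeal to a characterization of $(\mathcal{W}^{p})^{\ast}$ by $L^{p/(p-1)}$-pairings, but in this paper that density statement (Lemma \ref{lem.Lpdense}) is \emph{proved using} reflexivity of the space, so it cannot be used as an input to establish reflexivity. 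The second route correctly identifies the target --- an equivalent norm satisfying \ref{Cp}, hence uniform convexity and Milman--Pettis --- but the mechanism you offer does not produce such a norm: knowing that $\{\widetilde{\mathcal{E}}_{p}^{n}(f)\}_{n}$ stays within a bounded multiplicative factor of $\mathcal{N}_{p}(f)^{p}$ uniformly in $f$ (the weak monotonicity estimate \eqref{e:wm}) does not make the supremum $\mathcal{N}_{p}^{p}$ satisfy Clarkson's inequality, and uniform convexity is not inherited by a norm merely because it is \emph{equivalent} to a uniformly convex one.

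The missing step is an actual limiting construction. Each $\widetilde{\mathcal{E}}_{p}^{n}$ is a weighted $\ell^{p}$-sum of differences and therefore satisfies \ref{Cp}; the paper extracts a subsequential $\Gamma$-limit $E_{p}$ of $\{\widetilde{\mathcal{E}}_{p}^{n}\}_{n}$ in $L^{p}(K,m)$ (existence by \cite[Theorem 8.5]{Dal}), observes that \ref{Cp} is stable under $\Gamma$-limits (as in Proposition \ref{prop.cone-gen}), and then uses \eqref{e:wm} together with the continuity of $f \mapsto P_{n}f(w)$ under $L^{p}$-convergence to show that $\trinorm{\,\cdot\,} = (\norm{\,\cdot\,}_{L^{p}}^{p} + E_{p})^{1/p}$ is a norm on $\mathcal{W}^{p}$ equivalent to $\norm{\,\cdot\,}_{\mathcal{W}^{p}}$. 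Reflexivity then follows for this new norm by Proposition \ref{p:uc} and Milman--Pettis, and transfers to the original norm since reflexivity (unlike uniform convexity) is invariant under equivalent renorming; separability is then Corollary \ref{cor:separable} as you say. Without some such limit (a $\Gamma$-limit, or a Cesàro/Banach-limit construction playing the same role), your plan does not close.
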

\begin{rmk}
	By \cite[Theorem 4.6.9]{Kig20}, the condition $p > \dim_{\mathrm{ARC}}(K,d)$ is equivalent to $\sigma_{p} > 1$. 
\end{rmk}
\begin{proof}[Proof of Theorem \ref{thm.Wp}]
	Note that $\mathcal{W}^{p}$ is a Banach space by \cite[Lemma 3.24]{Kig23} and that $\mathcal{W}^{p}$ is dense in $L^{p}(K,m)$ by \cite[Lemma 3.28]{Kig23}.
    
    In the rest of this proof, we assume that $p \in (1,\infty)$.
    Let us show that $\mathcal{W}^{p}$ is reflexive.
    The upper estimate for $\widetilde{\mathcal{E}}_{p, A}^{k}(P_{k,l}f)$ from \cite[Lemma 2.27]{Kig23} and \eqref{pCH.1} in Theorem \ref{t:pCH} together imply that there exists a constant $C \in (0,\infty)$ such that for any $k,l \in \mathbb{N}$, any $A \subseteq T_{k}$ and any $f \in \mathbb{R}^{S^{l}(A)}$,
    \begin{equation}\label{e:wm}
        \widetilde{\mathcal{E}}_{p, A}^{k}(P_{k,l}f) \le C\widetilde{\mathcal{E}}_{p, S^{l}(A)}^{k + l}(f).
    \end{equation}
    The rest of the proof is very similar to \cite[Proof of Theorem 6.17(ii)]{MS+}, so we give only a sketch (see also \cite[Theorem 5.9]{Shi24} and the proof of Theorem \ref{t:Kig-good}-\ref{Kig.equiv} below). 
    Define $\norm{\,\cdot\,}_{p, n} \coloneqq \Bigl(\norm{\,\cdot\,}_{L^p(K,m)}^{p} + \widetilde{\mathcal{E}}_{p}^{n}(\,\cdot\,)\Bigr)^{1/p}$, which can be regarded as the $L^{p}$-norm on $K \sqcup E_{n}^{\ast}$.
    Also, we consider $\widetilde{\mathcal{E}}_{p}^{n}$ as a $[0,\infty]$-valued functional on $L^{p}(K,m)$.
	From \cite[Theorem 8.5 and Proposition 11.6]{Dal}, by extracting a subsequence of $\{ \widetilde{\mathcal{E}}_{p}^{n} \}_{n \in \mathbb{N}}$ if necessary, we can assume that $\{ \widetilde{\mathcal{E}}_{p}^{n} \}_{n \in \mathbb{N}}$ $\Gamma$-converges to some $p$-homogeneous functional $E_{p} \colon L^{p}(K,m) \to [0,\infty]$ as $n \to \infty$.
    Then $\{ \norm{\,\cdot\,}_{p, n} \}_{n \in \mathbb{N}}$ $\Gamma$-converges to $\trinorm{\,\cdot\,} \coloneqq \bigl(\norm{\,\cdot\,}_{L^{p}(K,m)}^{p} + E_{p}\bigr)^{1/p}$ as $n \to \infty$, and hence $(\trinorm{\,\cdot\,}^{p},\mathcal{W}^{p})$ is a $p$-energy form on $(K,m)$ satisfying \ref{Cp}.  
    By using \eqref{e:wm} and noting that $\lim_{k \to \infty}P_{n}f_{k}(w) = P_{n}f(w)$ for any $n \in \mathbb{N} \cup \{ 0 \}$, any $w \in T_{n}$ and any $f,f_{k} \in L^{p}(K,m)$ with $\lim_{k \to \infty}\norm{f - f_{k}}_{L^{p}(K,m)} = 0$, we can show that $\trinorm{\,\cdot\,}$ is a norm on $\mathcal{W}^{p}$ that is equivalent to $\norm{\,\cdot\,}_{\mathcal{W}^{p}}$.
    Thus, $\mathcal{W}^{p}$ is reflexive by Proposition \ref{p:uc} and the Milman--Pettis theorem.
    The separability of $\mathcal{W}^{p}$ immediately follows from Corollary \ref{cor:separable} (see also \cite[Proposition 4.1]{AHM23}).
    
    In the case of $p > \dim_{\mathrm{ARC}}(K,d)$, $\mathcal{W}^{p}$ can be identified with a subspace of $\contfunc(K)$ and is dense in $(\contfunc(K),\norm{\,\cdot\,}_{\sup})$ by \cite[Lemmas 3.15, 3.16 and 3.19]{Kig23}. 
\end{proof}

Let us introduce an important value, $p$-walk dimension, which will be a main topic in Section \ref{sec.p-walk}.
\begin{defn}[$p$-Walk dimension\index{$p$-walk dimension}]\label{d:values}
    Let $p \in (0,\infty)$.
    Assume that $(K,d,\{ K_{w} \}_{w \in T})$ satisfies Assumption \ref{assum.BF}-\ref{BF1},\ref{BF2},\ref{BF4},\ref{BF5}.
    Let $r_{\ast} \in (0,1)$ be the constant in \eqref{BF.adapted}, let $\sigma_{p}$ be the $p$-scaling factor of $(K,d,\{ K_w \}_{w \in T})$ (see \eqref{p-factor} and Remark \ref{rmk.p-factor}). 
    We define $\tau_{p} \in \mathbb{R}$ by 
    \begin{equation}\label{e:defn.taup}
    	\tau_{p} \coloneqq \frac{\log{\sigma_{p}}}{\log{r_{\ast}^{-1}}}. 
    \end{equation}
    If in addition $m$ is Ahlfors regular with respect to $d$, then we define $\pwalk \in \mathbb{R}$ by  
    \begin{equation} \label{eq:pwalk}
        \pwalk \coloneqq \hdim + \tau_{p}, 
    \end{equation}
    where $\hdim$ denotes the Hausdorff dimension of $(K,d)$. 
    We call $\pwalk$ the \emph{$p$-walk dimension} of $(K,d,\{ K_{w} \}_{w \in T})$.
\end{defn}

Now we prove the main result in this subsection, which is an improvement of \cite[Theorem 3.21]{Kig23}. 
\begin{thm}\label{t:Kig-good} 
	Let $p \in (1,\infty)$. 
    Assume that $(K,d,\{ K_{w} \}_{w \in T},m)$ satisfies Assumption \ref{assum.BF} and that $K$ is $p$-conductively homogeneous.
    Then there exist $\widehat{\mathcal{E}}_{p} \colon \mathcal{W}^{p} \to [0,\infty)$ and $c \in (0,\infty)$ such that the following hold:
    \begin{enumerate}[label=\textup{(\alph*)},align=left,leftmargin=*,topsep=2pt,parsep=0pt,itemsep=2pt]
        \item \label{Kig.equiv} $(\widehat{\mathcal{E}}_{p})^{1/p}$ is a seminorm on $\mathcal{W}^{p}$ and
        \begin{equation}\label{e:Kiggamma.equiv}
        	c\,\mathcal{N}_{p}(f) \le \widehat{\mathcal{E}}_{p}(f)^{1/p} \le \mathcal{N}_{p}(f) \quad \text{for any $f \in \mathcal{W}^{p}$.}
        \end{equation}
        \item \label{Kig.GC} $(\widehat{\mathcal{E}}_{p},\mathcal{W}^{p})$ is a $p$-energy form on $(K,m)$ satisfying \ref{GC}. 
        \item \label{Kig.inv} \textup{(Invariance)} Let $\mathsf{T} \colon (K,\mathcal{B}(K),m) \to (K,\mathcal{B}(K),m)$ be Borel measurable and satisfy $\widetilde{\mathcal{E}}_{p}^{n}(f \circ \mathsf{T}) = \widetilde{\mathcal{E}}_{p}^{n}(f)$ for any $n \in \mathbb{N}$ and any $f \in L^{p}(K,m)$. Then $f \circ \mathsf{T} \in \mathcal{W}^{p}$ and $\widehat{\mathcal{E}}_{p}(f \circ \mathsf{T}) = \widehat{\mathcal{E}}_{p}(f)$ for any $f \in \mathcal{W}^{p}$.
        \item \label{Kig.RF} If in addition $p > \dim_{\mathrm{ARC}}(K,d)$, then $(\widehat{\mathcal{E}}_{p},\mathcal{W}^{p})$ is a regular $p$-resistance form on $K$ and there exist $C \in [1,\infty)$ such that 
    	\begin{equation}\label{RM.comp}
        	C^{-1}d(x,y)^{\tau_{p}}
        	\le R_{\widehat{\mathcal{E}}_{p}}(x,y)
        	\le Cd(x,y)^{\tau_{p}} \quad \text{for any $x,y \in K$}.
    	\end{equation}
    \end{enumerate}
\end{thm}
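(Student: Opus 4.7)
The plan is to construct $\widehat{\mathcal{E}}_p$ as a $\Gamma$-limit on $L^{p}(K,m)$ of discrete $p$-energy functionals satisfying \ref{GC}, following the outline of the proof of Theorem \ref{thm.Wp} but replacing the discrete energies $\widetilde{\mathcal{E}}_{p}^{n}$ by their ``pointwise-smoothed'' counterparts
\begin{equation*}
\mathcal{G}_{n}(f) \coloneqq \sigma_{p}^{n} \sum_{\{u,v\}\in E_{n}^{\ast}} \fint_{K_{u}}\fint_{K_{v}}\abs{f(x)-f(y)}^{p}\, m(dx)\,m(dy).
\end{equation*}
The functionals $\widetilde{\mathcal{E}}_{p}^{n}$ themselves fail \ref{GC} (the averaging $P_{n}$ does not commute with nonlinear contractions, as one can check on small examples), whereas $(\mathcal{G}_{n}, L^{p}(K,m))$ does satisfy \ref{GC}: each edge contribution is an $L^{p}$-norm (on $K\times K$ with the normalized restriction of $m\otimes m$ to $K_{u}\times K_{v}$) of the \emph{linear} expression $f(x)-f(y)$, and the standard calculation combining pointwise $\ell^{q_{1}}\to\ell^{q_{2}}$ contraction of $T$ with Minkowski in $L^{p/q_{1}}$ and reverse Minkowski in $L^{p/q_{2}}$ (using $q_{1}\le p\le q_{2}$) yields \ref{GC} for each edge term; summation over edges preserves \ref{GC} by Proposition \ref{prop.cone-gen}-\ref{GC.cone}.

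By Jensen's inequality, $\widetilde{\mathcal{E}}_{p}^{n} \le \mathcal{G}_{n}$, and I would derive the reverse comparison $\mathcal{G}_{n} \le C\,\widetilde{\mathcal{E}}_{p}^{n+k_{0}}$ (for some uniform $k_{0},C$) from the weak monotonicity \eqref{e:wm} granted by the $p$-conductive homogeneity, by telescoping pointwise differences $\abs{f(x)-f(y)}$ for $x\in K_{u}$, $y\in K_{v}$ through chains of finer sub-cells. This yields $\sup_{n}\mathcal{G}_{n}(f)\asymp \sup_{n}\widetilde{\mathcal{E}}_{p}^{n}(f)=\mathcal{N}_{p}(f)^{p}$. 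Next, using compactness of $\Gamma$-convergence on the separable space $L^{p}(K,m)$ (cf.\ \cite[Theorem 8.5 and Proposition 11.6]{Dal}, as in the proof of Theorem \ref{thm.Wp}), I would extract a subsequence along which $\{\mathcal{G}_{n_{k}}\}$ $\Gamma$-converges to some $p$-homogeneous $\widehat{E}_{p}$; define $\widehat{\mathcal{E}}_{p}$ as its restriction to $\mathcal{W}^{p}$, rescaled so that the upper bound in \eqref{e:Kiggamma.equiv} holds. Then \ref{Kig.GC} is Proposition \ref{prop.cone-gen}-\ref{GC.gammalimit}, and \ref{Kig.equiv} follows by combining the $\Gamma$-$\liminf$ inequality applied to the constant sequence $f_{k}\equiv f$, the $L^{p}$-continuity of each $\mathcal{G}_{m}$, and the weak monotonicity in $m$ from the previous step.

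For \ref{Kig.inv}, the difficulty is that invariance of $\{\widetilde{\mathcal{E}}_{p}^{n}\}$ under $\mathsf{T}$ does not directly imply invariance of $\{\mathcal{G}_{n}\}$. I would address this by simultaneously extracting a subsequence along which \emph{both} $\{\mathcal{G}_{n_{k}}\}$ and $\{\widetilde{\mathcal{E}}_{p}^{n_{k}}\}$ $\Gamma$-converge; the two limits are comparable by the equivalence from the previous paragraph, the hypothesis passes to the $\widetilde{\mathcal{E}}_{p}^{n_{k}}$-limit by a standard $\Gamma$-convergence argument, and a symmetrization/convex-combination construction (using Proposition \ref{prop.cone-gen}-\ref{GC.cone},\ref{GC.pwlimit} to preserve \ref{GC}) produces a single $\widehat{\mathcal{E}}_{p}$ inheriting both \ref{Kig.GC} and $\mathsf{T}$-invariance. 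Finally, for \ref{Kig.RF}, under $p>\dim_{\mathrm{ARC}}(K,d)$ the continuous dense embedding $\mathcal{W}^{p}\hookrightarrow \contfunc(K)$ is \cite[Lemmas 3.15--3.19]{Kig23}, while the resistance comparison \eqref{RM.comp} is deduced from the conductance estimates \eqref{pCH.1} and the $M_{\ast}$-adaptedness \eqref{BF.adapted} via matching upper and lower test-function constructions at each scale; \ref{RF1}--\ref{RF4} then follow, and \ref{RF5} is just \ref{Kig.GC}. The principal obstacle I expect is reconciling the choice of $\Gamma$-convergence sequence with both \ref{Kig.GC} and \ref{Kig.inv} simultaneously, since the natural approximants for \ref{GC} ($\mathcal{G}_{n}$) and for the hypothesis of \ref{Kig.inv} ($\widetilde{\mathcal{E}}_{p}^{n}$) differ; bridging them cleanly will require careful use of the equivalence estimate and of the stability properties of $\mathcal{U}_{p}^{\mathrm{GC}}$ under pointwise limits and positive linear combinations.
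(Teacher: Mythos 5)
Your overall strategy (a subsequential $\Gamma$-limit of discrete energies on $L^{p}(K,m)$, with \ref{GC} inherited via Proposition \ref{prop.cone-gen}) is the right one, and your diagnosis that the functionals $\widetilde{\mathcal{E}}_{p}^{n}$ do not themselves satisfy \ref{GC} is correct. However, the detour through the double-averaged functionals $\mathcal{G}_{n}$ introduces two genuine gaps. First, the comparison $\mathcal{G}_{n}\le C\,\widetilde{\mathcal{E}}_{p}^{\,n+k_{0}}$ for a \emph{fixed} $k_{0}$ is false: $\widetilde{\mathcal{E}}_{p}^{\,n+k_{0}}$ is blind to oscillation of $f$ inside cells of generation $n+k_{0}$, so a function that averages to the same value on every such cell but oscillates wildly within them has $\widetilde{\mathcal{E}}_{p}^{\,m}(f)=0$ for all $m\le n+k_{0}$ while $\mathcal{G}_{n}(f)$ is arbitrarily large. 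What is true is $\mathcal{G}_{n}(f)\lesssim\sup_{m}\widetilde{\mathcal{E}}_{p}^{\,m}(f)=\mathcal{N}_{p}(f)^{p}$, but proving it requires the cell Poincar\'{e} inequality of Lemma \ref{lem.Kig-prePI1} (equivalently \cite[Theorem 5.11]{Kig23}), which controls $\fint_{K_{w}}\lvert f-P_{n}f(w)\rvert^{p}\,dm$ by $\sigma_{p}^{-n}\liminf_{k}\widetilde{\mathcal{E}}_{p,S^{k}(w)}^{\,n+k}(f)$; the weak monotonicity \eqref{e:wm} alone goes in the wrong direction and a naive telescoping over finitely many finer scales cannot close this. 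Second, and more seriously, part \ref{Kig.inv} is a statement about a \emph{single} form $\widehat{\mathcal{E}}_{p}$ that must be invariant under \emph{every} Borel $\mathsf{T}$ preserving all the $\widetilde{\mathcal{E}}_{p}^{n}$; you cannot tailor the construction to a particular $\mathsf{T}$, and invariance of $\{\widetilde{\mathcal{E}}_{p}^{n}\}$ does not transfer to $\{\mathcal{G}_{n}\}$. Your proposed repair --- extracting a common subsequence and then ``symmetrizing'' two comparable limits, one satisfying \ref{GC} and one $\mathsf{T}$-invariant --- does not produce a single functional with both properties, and no averaging over $\mathsf{T}$ is available since $\mathsf{T}$ ranges over an a priori unstructured family.

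The missing idea that makes the direct $\Gamma$-limit of $\{\widetilde{\mathcal{E}}_{p}^{n}\}$ work (and removes the need for $\mathcal{G}_{n}$ altogether) is Proposition \ref{prop.nointersection}: since $m(K_{v}\cap K_{w})=0$ for distinct $v,w\in T_{n}$ by \eqref{BF-nooverlap}, the piecewise-constant projection $Q_{n}f=\sum_{w\in T_{n}}P_{n}f(w)\indicator{K_{w}}$ satisfies $P_{n}\bigl(T_{l}(Q_{n}\bm{u})\bigr)=T_{l}(P_{n}\bm{u})$ \emph{exactly}. One then takes recovery sequences $\bm{u}_{n'}$ for $\bm{u}$, replaces them by $Q_{n'}\bm{u}_{n'}$ (which still converge to $\bm{u}$ in $L^{p}$ since $\norm{Q_{n}}_{L^{p}\to L^{p}}\le 1$ and $Q_{n}f\to f$), and the inequality to be proved for $\widetilde{\mathcal{E}}_{p}^{n'}\bigl(T_{l}(Q_{n'}\bm{u}_{n'})\bigr)$ becomes a purely discrete computation on $(T_{n'},E_{n'}^{\ast})$ using \eqref{GC-cond}, Minkowski in $\ell^{p/q_{1}}$ and reverse Minkowski in $\ell^{q_{2}/p}$ over edges; \ref{Kig.equiv} then follows from \eqref{e:wm} applied to recovery sequences, \ref{Kig.inv} is immediate because the approximants are the $\widetilde{\mathcal{E}}_{p}^{n}$ themselves, and \ref{Kig.RF} is handled as you describe. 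I recommend abandoning $\mathcal{G}_{n}$ and rebuilding the argument around this commutation identity.
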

\begin{proof}
	The most part of the proof will be very similar to that in \cite[Theorem 3.21]{Kig23}, but we present the details because we do not assume $p > \dim_{\mathrm{ARC}}(K,d)$ unlike \cite[Theorem 3.21]{Kig23}. 
    Let $\widehat{\mathcal{E}}_{p}$ be a subsequential $\Gamma$-limit of $\{ \widetilde{\mathcal{E}}_{p}^{n} \}_{n}$ with respect to the topology of $L^{p}(K,m)$ as in \cite[Proof of Theorem 3.21]{Kig23}, i.e., there exists a subsequence $\{ \widetilde{\mathcal{E}}_{p}^{n'} \}_{n'}$ $\Gamma$-converging to $\widehat{\mathcal{E}}_{p}$ with respect to $L^{p}(K,m)$ as $n' \to \infty$.
    (Note that such a subsequential $\Gamma$-limit exists by \cite[Theorem 8.5]{Dal}.)
    
    \ref{Kig.equiv}: 
    $\widehat{\mathcal{E}}_{p}$ is $p$-homogeneous by \cite[Proposition 11.6]{Dal}.  The triangle inequality for $\widehat{\mathcal{E}}_{p}(\,\cdot\,)^{1/p}$ will be included in the proof of \ref{Kig.GC}, so we shall prove \eqref{e:Kiggamma.equiv}.
    From the definition of the $\Gamma$-convergence, it is immediate that $\widehat{\mathcal{E}}_{p}(f) \le \liminf_{n \to \infty}\widetilde{\mathcal{E}}_{p}^{n}(f) \le \mathcal{N}_{p}(f)^{p}$.
    Let us show the former inequality in \eqref{e:Kiggamma.equiv}.
    Let $f \in \mathcal{W}^{p}$ and let $\{ f_{n'} \}_{n'}$ be a recovery sequence of $\{ \widetilde{\mathcal{E}}_{p}^{n} \}_{n'}$ at $f$, i.e., $\lim_{n' \to \infty}\norm{f - f_{n'}}_{L^{p}(K,m)} = 0$ and $\widehat{\mathcal{E}}_{p}(f) = \lim_{n' \to \infty}\widetilde{\mathcal{E}}_{p}^{n'}(f_{n'})$. 
    Since $\lim_{n' \to \infty}P_{k}f_{n'}(w) = P_{k}f(w)$ for any $k \in \mathbb{N}$ and any $w \in T_{k}$, by the estimate \eqref{e:wm} in the proof of Theorem \ref{thm.Wp}, 
    \[
    \widetilde{\mathcal{E}}_{p}^{k}(f)
    = \lim_{n' \to \infty}\widetilde{\mathcal{E}}_{p}^{k}(f_{n'})
    \le C\lim_{n' \to \infty}\widetilde{\mathcal{E}}_{p}^{n'}(f_{n'}) 
    = C\widehat{\mathcal{E}}_{p}(f), 
    \]
    where $C \in (0,\infty)$ is the constant in \eqref{e:wm}. 
    We obtain the desired estimate by taking the supremum over $k \in \mathbb{N} \cup \{ 0 \}$. 

    \ref{Kig.GC}: 
    Let $n_{1},n_{2} \in \mathbb{N}$, $q_{1} \in (0,p]$, $q_{2} \in [p,\infty]$ and $T = (T_{1},\dots,T_{n_{2}}) \colon \mathbb{R}^{n_{1}} \to \mathbb{R}^{n_{2}}$ satisfy \eqref{GC-cond} in Definition \ref{defn.GC}.  
    Define $Q_{n} \colon L^{1}(K,m) \to L^{1}(K,m)$ by 
    \begin{equation}\label{defn.Qn}
    	Q_{n}f \coloneqq \sum_{w \in T_{n}}P_{n}f(w)\indicator{K_{w}} \quad \text{for $f \in L^{1}(K,m)$.} 
    \end{equation}
    Note that $\norm{Q_{n}}_{L^p(K,m) \to L^p(K,m)} \le 1$ by \eqref{BF-nooverlap} in Assumption \ref{assum.BF}-\ref{BF3} and H\"{o}lder's inequality. 
    Let us show $\norm{f - Q_{n}f}_{L^{p}(K,m)} \to 0$ as $n \to \infty$ for any $f \in L^{p}(K,m)$. 
	Define the Hardy--Littlewood maximal operator $\mathscr{M} \colon L^{p}(K, \measure) \to L^{0}(K, \measure)$ by 
	\[
	\mathscr{M}f(x) = \sup_{r > 0}\fint_{B_{\metric}(x, r)}\abs{f(y)}\,\measure(dy), \quad x \in K.
	\]
	Since $m$ is volume doubling with respect to $d$ by Assumption \ref{assum.BF}-\ref{BF3}, there exists a constant $C \in (0,\infty)$ such that $\norm{\mathscr{M}f}_{L^{p}(K,m)} \le C\norm{f}_{L^{p}(K,m)}$ for any $f \in L^{p}(K, m)$ (see, e.g., \cite[Theorem 3.5.6]{HKST}).
	We also easily see that for any $f \in L^{p}(K,m)$ and any $x \in K$, 
	\begin{align*}
		\abs{Q_{n}f(x)} 
		\le \sum_{w \in T_{n}; x \in K_{w}}\abs{P_{n}f(w)} 
		&\le \sum_{w \in T_{n}; x \in K_{w}}\frac{m(B_{d}(x,2c_{2}r_{\ast}^{n}))}{m(K_{w})}\fint_{B_{d}(x,2c_{2}r_{\ast}^{n})}\abs{f}\,dm \\
		&\le \sum_{w \in T_{n}; x \in K_{w}}\frac{m(B_{d}(x,2c_{2}r_{\ast}^{n}))}{m(B_{d}(x_{w},c_{5}r_{\ast}^{n}))}\mathscr{M}f(x)
		\le C_{1}\mathscr{M}f(x), 
	\end{align*}
	where $x_{w} \in K_{w}$ and $c_{2}, c_{5}$ are the same as in Assumption \ref{assum.BF}-\ref{BF2} and we used the volume doubling property in the last inequality, and $C_{1} \in (0,\infty)$ is a constant depending only on $\sup_{w \in T}\#\Gamma_{1}(w)$, $c_{2},c_{5}$ and the doubling constant of $m$. 
	Let $f \in L^{p}(K,m)$ and let $\mathscr{L}_{f} \subseteq K$ denote the set of Lebesgue points of $f$ as in \eqref{e:defn.Lebpt}. 
	Then $\mathscr{L}_{f} \in \mathcal{B}(K)$ and $m(K \setminus \mathscr{L}_{f}) = 0$ by the Lebesgue differentiation theorem for a volume doubling metric measure space (see, e.g., \cite[(2.8)]{Hei}). 
	Since 
	\begin{align*}
		\abs{f(x) - Q_{n}f(x)}
		&\le \sum_{w \in T_{n}; x \in K_{w}}\fint_{K_{w}}\abs{f(x) - f(y)}\,m(dy) \\
		&\le C_{1}\fint_{B_{d}(x,2c_{2}r_{\ast}^{n})}\abs{f(x) - f(y)}\,m(dy), 
	\end{align*}
	we have $\abs{f(x) - Q_{n}f(x)} \to 0$ as $n \to \infty$ for any $x \in \mathscr{L}_{f}$. 
	Now the dominated convergence theorem implies $\norm{f - Q_{n}f}_{L^{p}(K,m)} \to 0$.
    
    Let $\bm{u} = (u_{1},\dots,u_{n_{1}}) \in (\mathcal{W}^{p})^{n_{1}}$ and choose a recovery sequence $\{ u_{k,n'} \}_{n'}$ of $\{ \widetilde{\mathcal{E}}_{p}^{n'} \}_{n'}$ at $u_{k}$ for each $k \in \{ 1,\dots,n_{1} \}$.
    For brevity, we write $\bm{u}_{n'} = (u_{1,n'},\dots,u_{n_{1},n'})$ and 
    \begin{align*}
        P_{n'}\bm{u}_{n'}(v)
        &= \bigl(P_{n'}u_{1,n'}(v),\dots,P_{n'}u_{n_1,n'}(v)\bigr) \in \mathbb{R}^{n_{1}}, \quad v \in T_{n'},  \\
        Q_{n'}\bm{u}_{n'}(v)
        &= \bigl(Q_{n'}u_{1,n'}(v),\dots,Q_{n'}u_{n_1,n'}(v)\bigr) \in \mathbb{R}^{n_{1}}, \quad v \in T_{n'}.
    \end{align*}
    Note that $\norm{u_{n'} - Q_{n'}u_{k,n'}}_{L^{p}(K,m)} \to 0$ as $n' \to \infty$ by the fact proved in the previous paragraph.
    By using $\norm{Q_{n}}_{L^p(K,m) \to L^p(K,m)} \le 1$ and the estimate \eqref{T.Lp}, one can show 
    \begin{equation}\label{Lp.approx}
        \norm{T_{l}(\bm{u}) - T_{l}(Q_{n'}\bm{u}_{n'})}_{L^{p}(K,m)} \xrightarrow[n' \to \infty]{} 0 \quad \text{for any $l \in \{ 1,\dots,n_{2} \}$};
    \end{equation}
    see \cite[Proof of Theorem 3.21 on p.~46]{Kig23} for the details.
    Also, by Proposition \ref{prop.nointersection}, we note that 
    \begin{equation}\label{commutative}
         P_{n'}\bigl(T_{l}(Q_{n'}\bm{u}_{n'})\bigr) = T_{l}(P_{n'}\bm{u}_{n'}) \in \mathbb{R}^{T_{n'}} \quad \text{for any $l \in \{ 1,\dots,n_{2} \}$}.
    \end{equation}
    With these preparations, we prove \ref{GC} for $(\widehat{\mathcal{E}}_{p},\mathcal{W}^{p})$.
    We consider the case of $q_{2} < \infty$ since the case of $q_{2} = \infty$ is similar.
    By \eqref{Lp.approx} and \eqref{commutative}, we see that
    \begin{align}\label{improve}
        \sum_{l = 1}^{n_{2}}\widehat{\mathcal{E}}_{p}\bigl(T_{l}(\bm{u})\bigr)^{q_{2}/p}
        &\overset{\eqref{Lp.approx}}{\le} \sum_{l = 1}^{n_{2}}\liminf_{n' \to \infty}\widetilde{\mathcal{E}}_{p}^{n'}\bigl(T_{l}(Q_{n'}\bm{u}_{n'})\bigr)^{q_{2}/p} \nonumber\\
        &\overset{\eqref{commutative}}{\le} \liminf_{n' \to \infty}\sum_{l = 1}^{n_{2}}\Biggl[\frac{\sigma_{p}^{n'}}{2}\sum_{(v,w) \in E_{n'}^{\ast}}\abs{T_{l}(P_{n'}\bm{u}_{n'}(v)) - T_{l}(P_{n'}\bm{u}_{n'}(w))}^{q_{2}\cdot\frac{p}{q_{2}}}\Biggr]^{q_{2}/p} \nonumber\\
        &\overset{\eqref{reverse}}{\le} \liminf_{n' \to \infty}\left(\frac{\sigma_{p}^{n'}}{2}\sum_{(v,w) \in E_{n'}^{\ast}}\norm{T(P_{n'}\bm{u}_{n'}(v)) - T(P_{n'}\bm{u}_{n'}(v))}_{\ell^{q_{2}}}^{p}\right)^{q_{2}/p} \nonumber\\ 
        &\overset{\eqref{GC-cond}}{\le} \liminf_{n' \to \infty}\left(\frac{\sigma_{p}^{n'}}{2}\sum_{(v,w) \in E_{n'}^{\ast}}\norm{P_{n'}\bm{u}(v) - P_{n'}\bm{u}(v)}_{\ell^{q_{1}}}^{p}\right)^{q_{2}/p} \nonumber\\
        &\le \liminf_{n' \to \infty}\left(\frac{\sigma_{p}^{n'}}{2}\sum_{(v,w) \in E_{n'}^{\ast}}\left[\sum_{k = 1}^{n_{1}}\abs{P_{n'}u_{k,n'}(v) - P_{n'}u_{k,n'}(w)}^{p \cdot \frac{q_{1}}{p}}\right]^{p/q_{1}}\right)^{q_{2}/p} \nonumber\\
        &\overset{\textup{($\ast$)}}{\le} \liminf_{n' \to \infty}\left(\sum_{k = 1}^{n_{1}}\Biggl[\frac{\sigma_{p}^{n'}}{2}\sum_{(v,w) \in E_{n'}^{\ast}}\abs{P_{n'}u_{k,n'}(v) - P_{n'}u_{k,n'}(w)}^{p}\Biggr]^{q_{1}/p}\right)^{\frac{p}{q_{1}} \cdot \frac{q_{2}}{p}} \nonumber\\
        &\le \left(\sum_{k = 1}^{n_{1}}\limsup_{n' \to \infty}\widetilde{\mathcal{E}}_{p}^{n'}(u_{k,n'})^{q_{1}/p}\right)^{\frac{p}{q_{1}} \cdot \frac{q_{2}}{p}}
        \le \left(\sum_{k = 1}^{n_{1}}\widehat{\mathcal{E}}_{p}(u_{k})^{q_{1}/p}\right)^{\frac{p}{q_{1}} \cdot \frac{q_{2}}{p}}, 
    \end{align}
    where we used the triangle inequality for the $\ell^{p/q_{1}}$-norm on $E_{n}^{\ast}$ in ($\ast$). 
    Hence $(\widehat{\mathcal{E}}_{p},\mathcal{W}^{p})$ satisfies \ref{GC}.

    \ref{Kig.inv}:
    This is clear from the definitions of $\mathcal{W}^{p}$ and of $\widehat{\mathcal{E}}_{p}$.
    
    \ref{Kig.RF}: 
    In the case of $p > \dim_{\mathrm{ARC}}(K,d)$, a combination of \ref{Kig.GC}, \cite[Lemmas 3.13, 3.16, 3.19 and Theorem 3.21]{Kig23} and Theorem \ref{thm.Wp} implies that $(\widehat{\mathcal{E}}_{p},\mathcal{W}^{p})$ is a regular $p$-resistance form on $K$. 
    Then the estimate \eqref{RM.comp} is exactly the same as  \cite[(3.21) in Lemma 3.34]{Kig23}, so we complete the proof. 
\end{proof}
\begin{rmk}\label{rmk.others}
    The construction of $\mathcal{E}_{p}^{\Gamma}$ in \cite[Theorem 6.22]{MS+} is very similar to that of $\widehat{\mathcal{E}}_{p}$ in the proof above although the setting and assumption on a partition in \cite{MS+} is slightly different from ours.
    Thanks to Proposition \ref{prop.nointersection}, the operators $M_{n}$ and $J_{n}$ defined in \cite[(6.8) and (6.9)]{MS+} correspond to $P_{n}$ and $Q_{n}$ respectively. 
    In particular, \eqref{Lp.approx} and \eqref{commutative} for $M_{n}$ and $J_{n}$ are also true. 
    Hence we can easily see that the $p$-energy form $(\mathcal{E}_{p}^{\Gamma},\mathcal{F}_{p})$ in \cite[Theorem 6.22]{MS+} also satisfies \ref{GC}. 
\end{rmk}

Before concluding this subsection, we deal with the capacity upper estimate and a Poincar\'{e}-type inequality under the additional assumption on the Ahlfors regularity of $m$. 
In addition to the density of $\mathcal{W}^{p}$ in $\contfunc(K)$, we can obtain the following capacity upper bound under the $p$-conductive homogeneity of $K$ if $p > \dim_{\mathrm{ARC}}(K,d)$ and $m$ is Ahlfors regular. 
\begin{prop}[Capacity upper estimate\index{capacity upper estimate}]\label{prop.Kig-capu}
	Let $p \in (1,\infty)$ and $\lambda \in (1,\infty)$. 
	Assume that Assumption \ref{assum.BF} holds, that $K$ is $p$-conductively homogeneous, that $p > \dim_{\mathrm{ARC}}(K,d)$ and that $m$ is Ahlfors regular. 
	Then there exists $C \in (0,\infty)$ such that for any $(x,r) \in K \times (0,1]$,
	\begin{equation}\label{e:Kig.capu}
		\inf\bigl\{ \mathcal{N}_{p}(u)^{p} \bigm| \text{$u \in \mathcal{W}^{p}$, $u|_{B_{d}(x,r)} = 1$, $\supp_{K}[u] \subseteq B_{d}(x,\lambda r)$} \bigr\} \le C\frac{m(B_{d}(x,r))}{r^{\pwalk}}. 
	\end{equation} 
\end{prop}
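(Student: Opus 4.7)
The plan is to use the norm equivalence $\mathcal{N}_{p} \asymp \widehat{\mathcal{E}}_{p}^{1/p}$ from Theorem \ref{t:Kig-good}-\ref{Kig.equiv} to reduce \eqref{e:Kig.capu} to the corresponding capacity upper estimate for the regular $p$-resistance form $(\widehat{\mathcal{E}}_{p},\mathcal{W}^{p})$ provided by Theorem \ref{t:Kig-good}-\ref{Kig.RF}. Because $m$ is $Q$-Ahlfors regular and $\pwalk = Q + \tau_{p}$ with $\sigma_{p} = r_{\ast}^{-\tau_{p}}$, one has $m(B_{d}(x,r))/r^{\pwalk} \asymp \sigma_{p}^{n}$ whenever $r \asymp r_{\ast}^{n}$. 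Fix a constant $C_{\ast} > 0$ depending on $c_{2}, M_{\ast}, \lambda$, choose $n \in \mathbb{N}$ with $C_{\ast} r_{\ast}^{n} < r \le C_{\ast} r_{\ast}^{n-1}$, and set $A_{1} \coloneqq \{ w \in T_{n} \mid K_{w} \cap B_{d}(x,r) \ne \emptyset \}$. Ahlfors regularity together with \eqref{BF.bi-Lip} bounds $\#A_{1}$ by a constant independent of $(x,r)$, while the choice of $C_{\ast}$ combined with \eqref{BF.bi-Lip}--\eqref{BF.adapted} secures $\bigcup_{v \in \Gamma_{M_{\ast}}(w)} K_{v} \subseteq B_{d}(x,\lambda r)$ for every $w \in A_{1}$. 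By strong subadditivity of $\widehat{\mathcal{E}}_{p}$ (from \ref{GC} via Proposition \ref{prop.GC-list}-\ref{GC.markov}), the task reduces to producing, for each $w \in A_{1}$, a function $u_{w} \in \mathcal{W}^{p}$ with $u_{w} \equiv 1$ on $K_{w}$, $\supp_{K} u_{w} \subseteq \bigcup_{v \in \Gamma_{M_{\ast}}(w)} K_{v}$, and $\widehat{\mathcal{E}}_{p}(u_{w}) \le C \sigma_{p}^{n}$; the cutoff is then $u \coloneqq \max_{w \in A_{1}} u_{w}$.

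The construction of $u_{w}$ uses the conductance bound $\mathcal{E}_{M_{\ast},p,k} \le \alpha_{1} \sigma_{p}^{-k}$ from Theorem \ref{t:pCH}. For each $k \in \mathbb{N}$, pick a near-optimal $\varphi_{w}^{k} \colon T_{n+k} \to [0,1]$ equal to $1$ on $S^{k}(w)$, vanishing outside $S^{k}(\Gamma_{M_{\ast}}(w))$, with $\mathcal{E}_{p}^{n+k}(\varphi_{w}^{k}) \le (\alpha_{1} + 2^{-k}) \sigma_{p}^{-k}$, and lift it to the cell-wise constant function $f_{w}^{k} \coloneqq \sum_{v \in T_{n+k}} \varphi_{w}^{k}(v) \indicator{K_{v}} \in L^{p}(K,m)$, which is well-defined $m$-a.e.\ by Proposition \ref{prop.nointersection} and satisfies $P_{n+k} f_{w}^{k} = \varphi_{w}^{k}$, so that $\widetilde{\mathcal{E}}_{p}^{n+k}(f_{w}^{k}) \le (\alpha_{1} + 2^{-k}) \sigma_{p}^{n}$. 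Pass to the subsequence $\{n'_{j}\}$ along which $\widehat{\mathcal{E}}_{p}$ is the $\Gamma$-limit of $\widetilde{\mathcal{E}}_{p}^{n'_{j}}$ and set $k_{j} \coloneqq n'_{j} - n$. The sequence $\{f_{w}^{k_{j}}\}$ is $L^{p}$-bounded, $[0,1]$-valued, equals $1$ on $K_{w}$, and is supported in the fixed closed set $\bigcup_{v \in \Gamma_{M_{\ast}}(w)} K_{v}$. Extract a weakly $L^{p}$-convergent subsequence with limit $u_{w} \in L^{p}$, and apply Mazur's lemma (Lemma \ref{lem.mazur}) to produce convex combinations $\widetilde{f}_{j} \coloneqq \sum_{i \ge j} \lambda_{j,i} f_{w}^{k_{i}}$ converging strongly in $L^{p}$ to $u_{w}$. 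The key monotonicity estimate \eqref{e:wm}, extended to $T_{n'_{j}}$ via the covering system $\mathscr{J}$ of Definition \ref{defn.covering} as in the proof of Theorem \ref{t:Kig-good}-\ref{Kig.equiv}, yields $\widetilde{\mathcal{E}}_{p}^{n'_{j}}(f_{w}^{k_{i}}) \le C \widetilde{\mathcal{E}}_{p}^{n+k_{i}}(f_{w}^{k_{i}}) \le C' \sigma_{p}^{n}$ uniformly for $i \ge j$; combined with the triangle inequality for $(\widetilde{\mathcal{E}}_{p}^{n'_{j}})^{1/p}$ (valid by \ref{GC}), this gives $\widetilde{\mathcal{E}}_{p}^{n'_{j}}(\widetilde{f}_{j}) \le C' \sigma_{p}^{n}$, and the lim-inf inequality for the $\Gamma$-convergence produces $\widehat{\mathcal{E}}_{p}(u_{w}) \le C' \sigma_{p}^{n}$. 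Since $\mathcal{W}^{p} \subseteq \contfunc(K)$ under $p > \dim_{\mathrm{ARC}}(K,d)$ by Theorem \ref{t:Kig-good}-\ref{Kig.RF}, the $m$-a.e.\ boundary identities for $u_{w}$ upgrade to pointwise ones.

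The main obstacle lies precisely in the Mazur-lemma convex-combination step: the step functions $f_{w}^{k}$ themselves typically fail to lie in $\mathcal{W}^{p}$, because their $\mathcal{N}_{p}$-norms --- supremums over all levels $l$ of $\sigma_{p}^{l} \mathcal{E}_{p}^{l}(P_{l} f_{w}^{k})$ --- may be infinite due to uncontrolled boundary-edge contributions at refinement levels $l > n+k$. Hence one cannot obtain $u_{w}$ directly by reflexivity inside $(\mathcal{W}^{p}, \norm{\cdot}_{\mathcal{W}^{p}})$; instead one must operate on $L^{p}(K,m)$ through the $\Gamma$-convergence defining $\widehat{\mathcal{E}}_{p}$. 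The delicate point is arranging the convex combinations $\widetilde{f}_{j}$ from later terms $\{f_{w}^{k_{i}}\}_{i \ge j}$ so that the monotonicity \eqref{e:wm} transfers the bound $\widetilde{\mathcal{E}}_{p}^{n+k_{i}}(f_{w}^{k_{i}}) \le (\alpha_{1}+2^{-k_{i}})\sigma_{p}^{n}$ back down to $\widetilde{\mathcal{E}}_{p}^{n'_{j}}(f_{w}^{k_{i}})$ at the fixed subsequence level $n'_{j}$, uniformly in $i \ge j$, so that the $\Gamma$-lim-inf inequality can be successfully invoked.
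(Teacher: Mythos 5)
Your proof is correct, and its outer architecture coincides with the paper's: choose $n$ with $r \asymp r_{\ast}^{n}$ so that the $M_{\ast}$-neighbourhoods of the level-$n$ cells meeting $B_{d}(x,r)$ stay inside $B_{d}(x,\lambda r)$, note that there are boundedly many such cells, build one cutoff per cell with energy $\lesssim \sigma_{p}^{n}$, combine, and convert $\sigma_{p}^{n}$ into $m(B_{d}(x,r))/r^{\pwalk}$ via Ahlfors regularity. The difference lies entirely in the key lemma. The paper simply cites \cite[Lemma 3.18]{Kig23} for the existence of $h_{M_{\ast},w}\in\mathcal{W}^{p}$ with $h_{M_{\ast},w}|_{K_{w}}=1$, $\supp_{K}[h_{M_{\ast},w}]\subseteq U_{M_{\ast}}(w)$ and $\mathcal{N}_{p}(h_{M_{\ast},w})^{p}\lesssim\sigma_{p}^{n}$, then takes the \emph{sum} over $w$, bounds it by $N^{p-1}\max_{w}\mathcal{N}_{p}(h_{M_{\ast},w})^{p}$, and truncates by $\wedge 1$ at the end; you instead re-derive that lemma from scratch, lifting near-optimal discrete potentials realizing $\mathcal{E}_{M_{\ast},p,k}\le\alpha_{1}\sigma_{p}^{-k}$ to cell-wise constant functions and passing to the limit through the $\Gamma$-convergence defining $\widehat{\mathcal{E}}_{p}$, Mazur's lemma and the monotonicity \eqref{e:wm}, and you combine the cutoffs by $\max_{w}u_{w}$ using strong subadditivity. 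Your diagnosis of the obstruction --- that the step functions $f_{w}^{k}$ need not lie in $\mathcal{W}^{p}$, so one must work in $L^{p}(K,m)$ and invoke the $\Gamma$-liminf inequality on convex combinations indexed consistently with the subsequence $\{n'_{j}\}$ --- is exactly the right point, and the index bookkeeping does close (each $j$-th Mazur combination uses only terms of level $\ge n'_{j}$, so \eqref{e:wm} and the triangle inequality for $(\widetilde{\mathcal{E}}_{p}^{n'_{j}})^{1/p}$ give the uniform bound). What your route buys is a proof self-contained within the tools already developed in the paper, at the cost of reproducing the content of Kigami's cited lemma; what the paper's buys is brevity. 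One small point worth making explicit in your write-up: $\widehat{\mathcal{E}}_{p}(u_{w})<\infty$ does force $u_{w}\in\mathcal{W}^{p}$, because the recovery-sequence argument in the proof of Theorem \ref{t:Kig-good}-\ref{Kig.equiv} shows $\mathcal{N}_{p}(f)^{p}\le C\widehat{\mathcal{E}}_{p}(f)$ for every $f\in L^{p}(K,m)$ with finite $\Gamma$-limit energy, not only for $f$ already known to lie in $\mathcal{W}^{p}$; this is needed before you may invoke $\mathcal{W}^{p}\subseteq\contfunc(K)$ to upgrade the $m$-a.e.\ identities on $K_{w}$ and on the complement of the support to pointwise ones.
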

\begin{proof}
    Let $r_{\ast} \in (0,1)$ and $M_{\ast} \in \mathbb{N}$ be the constants in Assumption \ref{assum.BF}.
    For $r \in (0,1]$, choose $n \in \mathbb{N}$ as the minimal positive integer such that $c_{2}(M_{\ast} + 1)r_{\ast}^{n} < (\lambda - 1)r$, where $c_{2}$ is the constant in \eqref{BF.bi-Lip}. 
    Let $x \in K$ and set $T_{n}(x,r) \coloneqq T_{n}[B_{d}(x,r)]$ for ease of notation.
    Then, by the metric doubling property of $(K,d)$, there exists $N \in \mathbb{N}$ which is independent of $x$ and $r$ such that $\#T_{n}(x,r) \le N$. 
    By \cite[Lemma 3.18]{Kig23} and its proof, where a partition of unity consisting of elements of $\mathcal{W}^{p}$ is constructed, for any $w \in T_{n}(x,r)$ there exists $h_{M_{\ast},w} \in \mathcal{W}^{p}$ such that $h_{M_{\ast},w}|_{K_{w}} = 1$, $\supp_{K}[h_{M_{\ast},w}] \subseteq U_{M_{\ast}}(w)$ and $\mathcal{N}_{p}(h_{M_{\ast},w})^{p} \lesssim \sigma_{p}^{n}$.
    Now we define $\psi_{x,r} \coloneqq \sum_{w \in T_{n}(x,r)}h_{M_{\ast},w} \in \mathcal{W}^{p}$.
    Then $\psi_{x,r}|_{B_{d}(x,r)} \ge 1$, $\supp_{K}[\psi_{x,r}] \subseteq B_{d}(x,\lambda r)$ and 
    \[
    \mathcal{N}_{p}(\psi_{x,r})^{p}
    \le N^{p - 1}\max_{w \in T_{n}(x,r)}\mathcal{N}_{p}(h_{M_{\ast},w})^{p} 
    \lesssim \sigma_{p}^{n} = r_{\ast}^{n(\hdim - \pwalk)} \lesssim r^{\hdim - \pwalk}.  
    \]
    Since $m$ is Ahlfors regular and $\mathcal{N}_{p}(\psi_{x,r} \wedge 1) \lesssim \mathcal{N}_{p}(\psi_{x,r})$ by \cite[Theorem 3.21]{Kig23}, we obtain \eqref{e:Kig.capu}.
\end{proof}

The following Poincar\'{e}-type inequality for cells can be shown by using a result on relations between neighbor disparity and Poincar\'e constants from \cite[Theorem 5.11]{Kig23}.
\begin{lem}\label{lem.Kig-prePI1}
	Let $p \in (1,\infty)$. 
	Assume that Assumption \ref{assum.BF} holds, that $K$ is $p$-conductively homogeneous, and that $m$ is Ahlfors regular. 
	Then there exists a constant $C \in (0,\infty)$ such that for any $f \in L^{p}(K,m)$ and any $w \in T$,
	\begin{equation}\label{e:Kig-prePI1}
		\int_{K_{w}}\abs{f(x) - \fint_{K_{w}}f\,dm}^{p}\,m(dx)
        \le Cr_{\ast}^{\abs{w}\pwalk}\liminf_{n \to \infty}\widetilde{\mathcal{E}}_{p,S^{n}(w)}^{n + \abs{w}}(f).
	\end{equation}
\end{lem}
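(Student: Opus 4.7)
The plan is to use a telescoping decomposition of $f$ along its level-by-level piecewise constant approximations. Fix $w \in T$, set $k \coloneqq \abs{w}$, and for $n \in \mathbb{N} \cup \{0\}$ put $f_{n} \coloneqq \sum_{v \in T_{k+n}} P_{k+n}f(v)\indicator{K_{v}}$. By the argument used in the proof of Theorem \ref{t:Kig-good}-\ref{Kig.GC} (bounding $\abs{f_{n}} \lesssim \mathscr{M}f$ via the Hardy--Littlewood maximal operator and invoking Lebesgue differentiation), $\norm{f - f_{n}}_{L^{p}(K,m)} \to 0$ as $n \to \infty$, while Proposition \ref{prop.nointersection} together with \eqref{BF-nooverlap} yields $f_{0} = (\fint_{K_{w}}f\,dm)\indicator{K_{w}}$ $m$-a.e.\ on $K_{w}$. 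Writing $f - f_{0} = \lim_{n\to\infty}\sum_{j=0}^{n-1}(f_{j+1}-f_{j})$ in $L^{p}(K_{w},m)$ and applying Minkowski's inequality, it suffices to produce a summable bound on $\norm{f_{j+1} - f_{j}}_{L^{p}(K_{w},m)}$.

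For each $v \in S^{j}(w)$, on $K_{v}$ the function $f_{j} \equiv P_{k+j}f(v)$ is, by \eqref{BF-nooverlap}, the $(m(K_{u}))_{u \in S(v)}$-weighted mean of $\{P_{k+j+1}f(u)\}_{u \in S(v)}$, and $f_{j+1}$ equals $P_{k+j+1}f(u)$ on $K_{u}$. Hence $\int_{K_{v}}\abs{f_{j+1} - f_{j}}^{p}\,dm$ is a weighted $L^{p}$-variance on the finite graph $(S(v), E_{k+j+1}^{\ast}(S(v)))$, which is connected since $K_{v} = \bigcup_{u \in S(v)} K_{u}$ is connected by Assumption \ref{assum.BF}-\ref{BF1}. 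Ahlfors regularity together with Assumption \ref{assum.BF}-\ref{BF1} further ensures that the vertex count, maximum degree, diameter, and pairwise weight ratios of this graph are bounded by constants depending only on the data of the setup, so the standard $L^{p}$-Poincar\'{e} inequality on such a finite weighted graph produces a universal $C_{0}$ with
\[
\int_{K_{v}}\abs{f_{j+1} - f_{j}}^{p}\,dm \le C_{0}\,m(K_{v})\!\!\sum_{\{u,u'\} \in E_{k+j+1}^{\ast}(S(v))}\!\!\abs{P_{k+j+1}f(u) - P_{k+j+1}f(u')}^{p}.
\]
Summing over $v \in S^{j}(w)$ and using $m(K_{v}) \asymp r_{\ast}^{(k+j)\hdim}$, the inclusion $\bigcup_{v \in S^{j}(w)}E_{k+j+1}^{\ast}(S(v)) \subseteq E_{k+j+1}^{\ast}(S^{j+1}(w))$, and the identities $\sigma_{p}^{-1} = r_{\ast}^{\tau_{p}}$ and $\pwalk = \hdim + \tau_{p}$, one obtains $\norm{f_{j+1} - f_{j}}_{L^{p}(K_{w},m)}^{p} \lesssim r_{\ast}^{(k+j)\pwalk}\widetilde{\mathcal{E}}_{p,S^{j+1}(w)}^{k+j+1}(f)$.

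To close the argument I would invoke the $p$-conductive-homogeneity inequality \eqref{e:wm} (recalled in the proof of Theorem \ref{t:Kig-good}-\ref{Kig.equiv}) with $A = S^{j+1}(w)$ and $l = n - j - 1$ to get $\widetilde{\mathcal{E}}_{p,S^{j+1}(w)}^{k+j+1}(f) \le C\widetilde{\mathcal{E}}_{p,S^{n}(w)}^{k+n}(f)$ for every $n > j$, hence $\widetilde{\mathcal{E}}_{p,S^{j+1}(w)}^{k+j+1}(f) \le C\liminf_{n\to\infty}\widetilde{\mathcal{E}}_{p,S^{n}(w)}^{k+n}(f)$. Plugging back, taking $p$-th roots, and summing the convergent geometric series $\sum_{j \ge 0}r_{\ast}^{j\pwalk/p}$ (which converges because $\pwalk > 0$ in the setting of the lemma), then raising to the $p$-th power, yields the asserted inequality. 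The main obstacle is the uniform Poincar\'{e} constant $C_{0}$ on the local graphs $(S(v), E_{k+j+1}^{\ast}(S(v)))$: this requires carefully combining the connectedness of $K_{v}$ (for graph connectedness), the uniform-finiteness condition of Assumption \ref{assum.BF}-\ref{BF1} together with Ahlfors regularity (to bound $\#S(v)$, the maximum degree, and the weight ratios uniformly in $v$), and an elementary finite-graph estimate. All these are finite-dimensional and essentially combinatorial; the only genuinely analytic input is \eqref{e:wm}, which is precisely where $p$-conductive homogeneity enters.
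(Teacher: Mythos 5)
Your argument takes a genuinely different route from the paper's. The paper's proof is essentially a two-step citation: it identifies the left-hand side of \eqref{e:Kig-prePI1} as the limit of the discrete weighted variances of $P_{n+k}f$ over $S^{n}(w)$ (using Proposition \ref{prop.nointersection} and $\lVert f - Q_{n+k}f\rVert_{L^{p}} \to 0$), and then bounds those variances in one shot by \cite[(5.11) in Theorem 5.11]{Kig23} combined with \eqref{pCH.1}. You instead reprove the key estimate from scratch: telescoping $f - Q_{k}f$ through the successive conditional expectations $Q_{k+j}f$, controlling each increment by a uniform Poincar\'{e} inequality on the finite connected graphs $(S(v),E_{k+j+1}^{\ast}(S(v)))$ (whose size, connectedness and weight ratios are indeed uniformly controlled -- note that $\#S(v) \le \gamma^{-1}$ already follows from \eqref{BF.super-exp} and \eqref{BF-nooverlap}, so Ahlfors regularity is only needed for $m(K_{v}) \asymp r_{\ast}^{\abs{v}\hdim}$), and then pushing each level up to level $n$ via \eqref{e:wm}. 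This is more self-contained and makes transparent exactly where conductive homogeneity enters; the price is the extra summation over scales, which the paper avoids by outsourcing the multi-scale bookkeeping to Kigami's Theorem 5.11.

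There is one step you assert without justification, and it is not automatic: the convergence of $\sum_{j \ge 0} r_{\ast}^{j\pwalk/p}$ requires $\pwalk > 0$, whereas Definition \ref{d:values} only gives $\pwalk = \hdim + \tau_{p} \in \mathbb{R}$, and for $p \le \dim_{\mathrm{ARC}}(K,d)$ one has $\sigma_{p} \le 1$, hence $\tau_{p} \le 0$, so positivity of $\pwalk$ is not obvious (the crude bound $\sigma_{p} \ge (\sup_{v}\#S(v))^{-1}$ only yields $\pwalk \ge 0$ up to the Ahlfors regularity constants). The claim is true, but to justify it you should invoke the H\"{o}lder-type monotonicity of $\pwalk/p$ in $p$ (\cite[Lemma 4.7.4]{Kig20}, restated in Propositions \ref{p:pwalk-mono.GSC} and \ref{p:pwalk-mono.SGs} for the examples) together with \cite[Theorem 4.7.6]{Kig20}: choosing $q > p \vee \dim_{\mathrm{ARC}}(K,d)$ gives $\sigma_{q} > 1$, hence $d_{\mathrm{w},q} > \hdim > 0$, and then $\pwalk/p \ge d_{\mathrm{w},q}/q > 0$. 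With that supplement (and it genuinely needs one, since your proof breaks down entirely if $\pwalk \le 0$, unlike the paper's, which never sums a geometric series), your argument is correct.
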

\begin{proof}
	Set $k \coloneqq \abs{w}$. 
	Recall that $\lim_{n \to \infty}\norm{Q_{n}f - f}_{L^{p}(K,m)} = 0$ as shown in the proof of Theorem \ref{t:Kig-good}-\ref{Kig.GC}.
	Hence, for any $n \in \mathbb{N}$, we see that
	\begin{align}\label{e:lambda-limit}
		&\frac{1}{m(K_{w})}\sum_{v \in S^{n}(w)}\abs{P_{n + k}f(v) - P_{k}f(w)}^{p}m(K_{v}) \nonumber \\
		&= \frac{1}{m(K_{w})}\sum_{v \in S^{n}(w)}\int_{K_{v}}\abs{Q_{n + k}f(x) - P_{k}f(w)}^{p}\,m(dx) \nonumber \\
		&= \fint_{K_{w}}\abs{Q_{n + k}f(x) - P_{k}f(w)}^{p}\,m(dx)
		\xrightarrow[n \to \infty]{} \fint_{K_{w}}\abs{f(x) - P_{k}f(w)}^{p}\,m(dx), 
	\end{align}
	where we used Proposition \ref{prop.nointersection} in the second equality. 
	By \cite[(5.11) in Theorem 5.11]{Kig23} and \eqref{pCH.1} in Theorem \ref{t:pCH}, there exists $C \in (0,\infty)$ which is independent of $f$ and $n$ such that
	\begin{equation}\label{e:lambda}
		\frac{1}{m(K_{w})}\sum_{v \in S^{n}(w)}\abs{P_{n + k}f(v) - P_{k}f(w)}^{p}m(K_{v})
		\le Cr_{\ast}^{k(\pwalk - \hdim)}\widetilde{\mathcal{E}}_{p,S^{n}(w)}^{n + k}(f).
	\end{equation}
	We obtain \eqref{e:Kig-prePI1} by combining \eqref{e:lambda-limit}, \eqref{e:lambda}, \eqref{BF.thick} in Assumption \ref{assum.BF}-\ref{BF2}, and the Ahlfors regularity of $m$.
\end{proof}

To upgrade \eqref{e:Kig-prePI1} in Lemma \ref{lem.Kig-prePI1} to a Poincar\'{e} inequality for metric balls in $K$, we need the following standard fact. 
\begin{lem}[{\cite[Lemma 4.17]{BjBj}}]\label{lem:varcomp} 
	Let $q \in [1,\infty)$ and let $(Y,\mathcal{A},\mu)$ be a measure space. 
	For any $f \in L^{1}(Y,\mu)$ and any $E \in \mathcal{A}$ with $\mu(E) \in (0,\infty)$, 
	\begin{equation}\label{e:varcomp}
    	\fint_{E}\abs{f - \fint_{E}f\,d\mu}^{q}\,d\mu \le 2^{q}\inf_{a \in \mathbb{R}}\fint_{E}\abs{f - a}^{q}\,d\mu. 
    \end{equation}
\end{lem}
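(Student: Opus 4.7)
The plan is to reduce the statement to a straightforward application of the triangle and Jensen/Minkowski inequalities, which suffices because $L^{q}(E,\mu|_{E})$ is a normed space once we divide by $\mu(E)^{1/q}$. Since the inequality \eqref{e:varcomp} is homogeneous of degree $q$ in the $L^{q}$-seminorm, I will work with the normalized $L^{q}(E,\mu|_{E})$-seminorm $\trinorm{g}_{q} \coloneqq (\fint_{E}\abs{g}^{q}\,d\mu)^{1/q}$ and prove
\begin{equation*}
	\trinorm{f - \fint_{E}f\,d\mu}_{q} \le 2\trinorm{f - a}_{q} \quad \text{for any $a \in \mathbb{R}$,}
\end{equation*}
from which \eqref{e:varcomp} follows by taking the $q$-th power and then the infimum over $a \in \mathbb{R}$.

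For the first step, fix $a \in \mathbb{R}$ and write
\begin{equation*}
	f(x) - \fint_{E}f\,d\mu = \bigl(f(x) - a\bigr) + \biggl(a - \fint_{E}f\,d\mu\biggr) = \bigl(f(x) - a\bigr) - \fint_{E}(f - a)\,d\mu.
\end{equation*}
Then the triangle inequality for $\trinorm{\,\cdot\,}_{q}$ yields
\begin{equation*}
	\trinorm{f - \fint_{E}f\,d\mu}_{q} \le \trinorm{f - a}_{q} + \abs{\fint_{E}(f - a)\,d\mu},
\end{equation*}
since the second term on the right-hand side is a constant function on $E$ whose $\trinorm{\,\cdot\,}_{q}$-seminorm is just its absolute value.

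For the second step, estimate the constant term by Jensen's inequality applied to the convex function $t \mapsto \abs{t}^{q}$ (or equivalently H\"{o}lder's inequality with exponents $q$ and $q/(q - 1)$ when $q > 1$, and trivially when $q = 1$):
\begin{equation*}
	\abs{\fint_{E}(f - a)\,d\mu} \le \fint_{E}\abs{f - a}\,d\mu \le \trinorm{f - a}_{q}.
\end{equation*}
Combining this with the previous display gives $\trinorm{f - \fint_{E}f\,d\mu}_{q} \le 2\trinorm{f - a}_{q}$, as desired. There is no real obstacle here; the argument is entirely elementary and the factor $2^{q}$ in \eqref{e:varcomp} corresponds exactly to the factor $2$ produced by the triangle inequality in this normalized setup.
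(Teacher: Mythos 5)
Your proof is correct and is exactly the standard argument behind the cited \cite[Lemma 4.17]{BB}, which the paper itself quotes without proof: decompose $f - \fint_{E}f\,d\mu = (f - a) - \fint_{E}(f - a)\,d\mu$, apply the triangle inequality in the normalized $L^{q}$-seminorm, and bound the constant term via Jensen's inequality. (The only point worth noting is that for $f$ merely in $L^{1}$ the right-hand side of \eqref{e:varcomp} may be infinite for every $a$, in which case the inequality is trivial, and otherwise your argument applies verbatim to any $a$ with $\fint_{E}\abs{f-a}^{q}\,d\mu<\infty$.)
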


Now we prove a Poincar\'{e}-type inequality in terms of discrete $p$-energy forms.
\begin{prop}\label{prop.Kig-prePI2}
	Let $p \in (1,\infty)$. 
	Assume that Assumption \ref{assum.BF} holds, that $K$ is $p$-conductively homogeneous, and that $m$ is Ahlfors regular. 
    Then there exist $C,\alpha \in (0,\infty)$ such that for any $(x,r) \in K \times (0,1]$ and any $f \in L^{p}(K,m)$,
    \begin{equation}\label{e:Kig-prePI2}
    	\int_{B_{d}(x,r)}\abs{f - \fint_{B_{d}(x,r)}f\,dm}^{p}\,dm \le Cr^{\pwalk}\liminf_{k \to \infty}\widetilde{\mathcal{E}}_{p,T_{k}[B_{d}(x,\alpha r)]}^{k}(f). 
    \end{equation}
\end{prop}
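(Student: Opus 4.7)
The plan is to combine Lemma \ref{lem.Kig-prePI1} (cellwise variance) with a discrete chaining argument in the graph $(T_n,E_n^*)$, in the spirit of the proof of Proposition \ref{prop.Kig-capu}. By Lemma \ref{lem:varcomp} it suffices to produce a single constant $a\in\mathbb{R}$ with $\int_{B_d(x,r)}|f-a|^p\,dm$ bounded by the desired right-hand side (up to a factor $2^p$). Choose $n=n(r)\in\mathbb{N}$ so that $r_\ast^n\asymp r$ (concretely, $n$ minimal with $c_2 r_\ast^n\le r$) and set $A\coloneqq T_n[B_d(x,r)]=\{w\in T_n\mid K_w\cap B_d(x,r)\neq\emptyset\}$. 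By Ahlfors regularity, the metric $d$ is doubling, so by \eqref{BF.bi-Lip} the cardinality $\#A$ is uniformly bounded. Fix $w_0\in A$ and take $a\coloneqq P_n f(w_0)$. Using $B_d(x,r)\subseteq\bigcup_{w\in A}K_w$ together with Proposition \ref{prop.nointersection}, write
\begin{equation*}
\int_{B_d(x,r)}|f-a|^p\,dm\le 2^{p-1}\sum_{w\in A}\int_{K_w}|f-P_n f(w)|^p\,dm+2^{p-1}\sum_{w\in A}|P_n f(w)-a|^p\,m(K_w).
\end{equation*}

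Applying Lemma \ref{lem.Kig-prePI1} cellwise to the first sum and using that $\#A$ is uniformly bounded, $r_\ast^{n\pwalk}\asymp r^{\pwalk}$, and the sets $\{S^k(w)\}_{w\in A}$ are disjoint, one obtains the bound $C r^{\pwalk}\liminf_{k\to\infty}\widetilde{\mathcal{E}}_{p,S^k(A)}^{k+n}(f)$ for the first sum.

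The second sum is the main technical point and requires a chaining step. Enlarge $A$ to a set $A'\subseteq T_n$ satisfying (i) $A'\subseteq T_n[B_d(x,\alpha r)]$ for a universal constant $\alpha>1$, and (ii) every $w\in A$ can be joined to $w_0$ by a path in $(T_n,E_n^\ast)$ of length at most a universal constant $L$ with all vertices in $A'$. Such $A'$ and $L$ exist because, by \eqref{BF.adapted} and metric doubling, the set $A$ sits inside $\Gamma_M(w_0)$ for uniformly bounded $M$, and paths within such neighborhoods are controlled by Assumptions \ref{assum.BF}-\ref{BF1},\ref{BF4} (one may, for instance, take $A'=\Gamma_{M}(w_0)$ for a slightly larger $M$). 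A telescoping of the chain followed by exchanging the order of summation (each edge of $E_n^\ast(A')$ is traversed at most a uniformly bounded number of times) gives
\begin{equation*}
\sum_{w\in A}|P_n f(w)-a|^p m(K_w)\le C r_\ast^{n\hdim}\!\!\sum_{\{u,v\}\in E_n^\ast(A')}\!\!|P_n f(u)-P_n f(v)|^p \le C r_\ast^{n\hdim}\sigma_p^{-n}\widetilde{\mathcal{E}}_{p,A'}^n(f)\asymp r^{\pwalk}\widetilde{\mathcal{E}}_{p,A'}^n(f),
\end{equation*}
where we used $m(K_w)\asymp r_\ast^{n\hdim}$ (Ahlfors regularity and \eqref{BF.bi-Lip},\eqref{BF.thick}), $\sigma_p^n=r_\ast^{-n\tau_p}$, and $\pwalk=\hdim+\tau_p$.

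To finish, lift from the fixed level $n$ to an arbitrarily deep level via the key estimate \eqref{e:wm} already invoked in the proof of Theorem \ref{thm.Wp}. Using the identity $P_{n,l}\circ P_{n+l}=P_n$, which follows from \eqref{BF-nooverlap} and Proposition \ref{prop.nointersection}, one has for every $l\ge 0$
\begin{equation*}
\widetilde{\mathcal{E}}_{p,A'}^n(f)=\sigma_p^n\mathcal{E}_{p,A'}^n\bigl(P_{n,l}(P_{n+l}f)\bigr)\le C\widetilde{\mathcal{E}}_{p,S^l(A')}^{n+l}(f)\le C\widetilde{\mathcal{E}}_{p,T_{n+l}[B_d(x,\alpha r)]}^{n+l}(f),
\end{equation*}
where the last inclusion holds because each $K_v$ with $v\in S^l(A')$ is contained in some $K_{w}$ with $w\in A'$, hence in $B_d(x,\alpha r)$ (after enlarging $\alpha$ by a bounded factor). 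An identical argument controls $\liminf_k\widetilde{\mathcal{E}}_{p,S^k(A)}^{k+n}(f)$ by the same $\liminf_{k\to\infty}\widetilde{\mathcal{E}}_{p,T_k[B_d(x,\alpha r)]}^k(f)$. Taking $\liminf_{l\to\infty}$ and combining with Lemma \ref{lem:varcomp} concludes the proof. The main obstacle is verifying the chaining claim (property (ii) for $A'$) with $L$ and $\alpha$ independent of $(x,r)$; this is exactly where Assumptions \ref{assum.BF}-\ref{BF1},\ref{BF2},\ref{BF4} combine with metric doubling to produce bounded-length paths inside a slightly enlarged neighborhood.
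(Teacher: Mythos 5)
Your architecture is the same as the paper's: reduce to a single reference value via Lemma \ref{lem:varcomp}, bound the cellwise variances by Lemma \ref{lem.Kig-prePI1}, control the discrete oscillation of $P_{n}f$ by chaining, and lift the level-$n$ discrete energy to arbitrarily deep levels using $P_{n,l}\circ P_{n+l}=P_{n}$ together with \eqref{e:wm}. The cellwise part, the lifting step, and the exponent bookkeeping ($r_{\ast}^{n\hdim}\sigma_{p}^{-n}\asymp r^{\pwalk}$) are all fine. The problem is the chaining claim, which you flag but do not close, and which is genuinely nontrivial at your choice of scale. You take $n$ minimal with $c_{2}r_{\ast}^{n}\le r$ and $A=T_{n}[B_{d}(x,r)]$. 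The uniform bound on $\#A$ (which you do get from Ahlfors regularity, \eqref{BF.bi-Lip} and \eqref{BF.thick}) does not by itself yield paths in $(T_{n},E_{n}^{\ast})$ of uniformly bounded length staying inside $T_{n}[B_{d}(x,\alpha r)]$: two cells meeting $B_{d}(x,r)$ could a priori be connected only through long detours. Your proposed $A'=\Gamma_{M}(w_{0})$ presupposes $A\subseteq\Gamma_{M}(w_{0})$ with $M$ uniform, which at level $n$ would need $r\lesssim c_{3}r_{\ast}^{n}$; but \eqref{BF.adapted} only gives $B_{d}(x,c_{3}r_{\ast}^{n})\subseteq U_{M_{\ast}}(x;n)$ and nothing forces $c_{2}\le c_{3}$, so with your normalization $B_{d}(x,r)$ need not be covered by $U_{M_{\ast}}(x;n)$. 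Passing from a containment at a coarser level $m<n$ to bounded-length paths at level $n$ would then require iterating \ref{BF4}, i.e.\ re-deriving a piece of \cite[Section 2.2]{Kig23} that is not available off the shelf here.

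The paper sidesteps this by choosing $n$ with $c_{3}r_{\ast}^{n}\ge r>c_{3}r_{\ast}^{n+1}$, so that $B_{d}(x,r)\subseteq U_{M_{\ast}}(x;n)$ by \eqref{BF.adapted}, and by integrating over $U_{M_{\ast}}(x;n)$ with covering index set $\Gamma_{M_{\ast}}(x;n)\coloneqq\bigcup_{w\in T_{n},\,x\in K_{w}}\Gamma_{M_{\ast}}(w)$. Any two members of that set are joined, through the mutually intersecting cells containing $x$, by a path of length at most $2M_{\ast}+1$ whose vertices all lie in $\Gamma_{M_{\ast}}(x;n)$, so the chaining is immediate and no separate connectivity lemma is needed; the reference value is then the average over $U_{M_{\ast}}(x;n)$ handled via the maximizing-neighbour identity \eqref{e:maxavediff} rather than your $P_{n}f(w_{0})$, a cosmetic difference. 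If you adopt the paper's choice of scale and index set, the rest of your argument goes through essentially verbatim.
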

\begin{proof}
    Throughout this proof, $M_{\ast} \in \mathbb{N}$ and $r_{\ast} \in (0,1)$ are the same constants as in Assumption \ref{assum.BF}.
    Let $(x,r) \in K \times (0,1]$. 
    We first consider the case of $r \in (c_{3}r_{\ast},1]$, where $c_{3}$ is the constant in \eqref{BF.adapted} in Assumption \ref{assum.BF}-\ref{BF2}. 
    By the Poincar\'e inequality \eqref{e:Kig-prePI1} for cells in Lemma \ref{lem.Kig-prePI1} with $w = \phi$, 
    \begin{align*}
    	\int_{B_{d}(x,r)}\abs{f - \fint_{B_{d}(x,r)}f\,dm}^{p}\,dm 
    	&\overset{\eqref{e:varcomp}}{\le} 2^{p}\int_{B_{d}(x,r)}\abs{f - \fint_{K}f\,dm}^{p}\,dm \\
    	&\le 2^{p}\int_{K}\abs{f - \fint_{K}f\,dm}^{p}\,dm 
    	\overset{\eqref{e:Kig-prePI1}}{\le} 2^{p}C\liminf_{n \to \infty}\widetilde{\mathcal{E}}_{p}^{n}(f), 
    \end{align*}
    where $C \in (0,\infty)$ is the constant in \eqref{e:Kig-prePI1}. 
    Since $\diam(K,d) = 1$, this shows \eqref{e:Kig-prePI2} for any $A \ge (c_{3}r_{\ast})^{-1}$. 
    Hence it suffices to consider the remaining case, i.e., the case of $r \in (0, c_{3}r_{\ast}]$. 
    Let $n \in \mathbb{N}$ satisfy $c_{3}r_{\ast}^{n} \ge r > c_{3}r_{\ast}^{n + 1}$. 
    Set $\Gamma_{M_{\ast}}(x; n) \coloneqq \{ v \in T \mid \text{$v \in \Gamma_{M_{\ast}}(w)$ for some $w \in T_{n}$ such that $x \in K_{w}$} \}$.
    Then we see that
    \begin{align}\label{e:Kig.var}
        &\int_{U_{M_{\ast}}(x; n)}\abs{f(y) - \fint_{U_{M_{\ast}}(x; n)}f\,dm}^{p}\,m(dy) \nonumber \\
        &\le 2^{p - 1}\sum_{w \in \Gamma_{M_{\ast}}(x; n)}\left(\int_{K_{w}}\abs{f(y) - P_{n}f(w)}^{p}\,m(dy) + m(K_{w})\abs{P_{n}f(w) - \fint_{U_{M_{\ast}}(x; n)}f\,dm}^{p}\right)  \nonumber \\
        &\lesssim \sum_{w \in \Gamma_{M_{\ast}}(x; n)}\left(r^{\pwalk}\liminf_{k \to \infty}\widetilde{\mathcal{E}}_{p,S^{k}(w)}^{n + k}(f) + r^{\hdim}\abs{P_{n}f(w) - \fint_{U_{M_{\ast}}(x; n)}f\,dm}^{p}\right).
    \end{align}
    Note that, by Proposition \ref{prop.nointersection},  
    \begin{equation}\label{e:maxavediff}
    	P_{n}f(w) - \fint_{U_{M_{\ast}}(x; n)}f\,dm
    	= \frac{1}{m(U_{M_{\ast}}(x; n))}\sum_{v \in \Gamma_{M_{\ast}}(x; n)}(P_{n}f(w) - P_{n}f(v))m(K_{v}). 
    \end{equation}
    For any $w \in \Gamma_{M_{\ast}}(x; n)$, by choosing $w' \in \Gamma_{M_{\ast}}(x; n) \setminus \{ w \}$ so that $P_{n}f(w) - P_{n}f(w') = \max_{v \in \Gamma_{M_{\ast}}(x; n)}\abs{P_{n}f(w) - P_{n}f(v)}$, we have from \eqref{e:maxavediff} and Proposition \ref{prop.nointersection} that 
    \[
    \abs{P_{n}f(w) - \fint_{U_{M_{\ast}}(x; n)}f\,dm} \le \abs{P_{n}f(w) - P_{n}f(w')}. 
    \]
    Hence, by H\"{o}lder's inequality, \eqref{pCH.1} in Theorem \ref{t:pCH}, and a weak monotonicity estimate for discrete $p$-energy forms from \cite[Lemma 2.27]{Kig23}, 
    \begin{align}\label{e:avediff.2}
        \abs{P_{n}f(w) - \fint_{U_{M_{\ast}}(x; n)}f\,dm}^{p}
        &\le (2M_{\ast} + 1)^{p - 1}\mathcal{E}_{p,\Gamma_{M_{\ast}}(x; n)}^{n}(P_{n}f) \nonumber \\
        &\lesssim r^{\pwalk - \hdim}\liminf_{k \to \infty}\widetilde{\mathcal{E}}_{p,S^{k}(\Gamma_{M_{\ast}}(x; n))}^{n + k}(f). 
    \end{align}
    Note that $\#\Gamma_{M_{\ast}}(x; n) \le L_{\ast}^{M_{\ast} + 2}$ by Assumption \ref{assum.BF}-\ref{BF1} and that $S^{k}(\Gamma_{M_{\ast}}(x; n)) \subseteq T_{n + k}[B_{d}(x,c_{4}r_{\ast}^{n})] \subseteq T_{n + k}[B_{d}(x,c_{3}^{-1}r_{\ast}^{-1}c_{4}r)]$ by Assumption \ref{assum.BF}-\ref{BF2}, where $c_{4}$ is the same as in \eqref{BF.adapted}. 
    Now we set $A \coloneqq (1 \vee c_{4})c_{3}^{-1}r_{\ast}^{-1}$. 
    Then, by \eqref{e:Kig.var} and \eqref{e:avediff.2}, 
    \begin{align*}
        &\int_{U_{M_{\ast}}(x; n)}\abs{f(y) - \fint_{U_{M_{\ast}}(x; n)}f\,dm}^{p}\,m(dy)  \\
        &\overset{\eqref{e:avediff.2}}{\lesssim} r^{\pwalk}\liminf_{k \to \infty}\sum_{w \in \Gamma_{M_{\ast}}(x; n)}\widetilde{\mathcal{E}}_{p,S^{k}(\Gamma_{M_{\ast}}(x; n))}^{n + k}(f)
        \le L_{\ast}^{M_{\ast} + 2}r^{\pwalk}\liminf_{k \to \infty}\widetilde{\mathcal{E}}_{p,T_{k}[B_{d}(x,Ar)]}^{k}(f). 
    \end{align*}
    Since 
    \begin{align*}
    	\int_{B_{d}(z,s)}\abs{f(y) - \fint_{B_{d}(x,r)}f\,dm}^{p}\,m(dy)
    	&\overset{\eqref{e:varcomp}}{\le} 2^{p}\int_{B_{d}(z,s)}\abs{f(y) - \fint_{U_{M_{\ast}}(x; n)}f\,dm}^{p}\,m(dy) \\
    	&\overset{\eqref{BF.adapted}}{\le} 2^{p}\int_{U_{M_{\ast}}(x; n)}\abs{f(y) - \fint_{U_{M_{\ast}}(x; n)}f\,dm}^{p}\,m(dy), 
    \end{align*}
    we obtain \eqref{e:Kig-prePI2}.
\end{proof}

\subsection{Self-similar \texorpdfstring{$p$}{p}-energy forms on \texorpdfstring{$p$}{p}-conductively homogeneous self-similar structures}\label{sec.Kigss}
In this subsection, we construct a self-similar $p$-resistance form on self-similar structures under suitable assumptions.
Our main result in this subsection, Theorem \ref{thm.KSgood-ss}, implies that self-similar $p$-energy forms constructed in \cite[Theorem 4.6]{Kig23} satisfy \ref{GC}.

We start with some preparations before constructing self-similar $p$-resistance forms.
In the following definition, we introduce a good partition parametrized by a rooted tree.  
\begin{defn}[{\cite[Definition 4.2]{Kig23}}]\label{d:scale}
    Let $\mathcal{L} = (K,S,\{ F_{i} \}_{i \in S})$ be a self-similar structure, let $r \in (0,1)$ and let $(j_{s})_{s \in S} \in \mathbb{N}^{S}$.
    Define
    \[
    j(w) \coloneqq \sum_{i = 1}^{n}j_{w_{i}} \quad \textrm{and} \quad g(w) \coloneqq r^{j(w)} \quad \textrm{for $w = w_{1} \ldots w_{n} \in W_{\ast}$ ($j(\emptyset) \coloneqq 0$).}
    \]
    Define $\widetilde{\pi}(w_{1} \ldots w_{n}) \coloneqq w_{1} \ldots w_{n-1}$ for $w = w_{1} \ldots w_{n} \in W_{\ast}$ ($\widetilde{\pi}(\emptyset) \coloneqq \emptyset$), $\Lambda_{1}^{g} \coloneqq \{ \emptyset \}$ and
    \[
    \Lambda_{r^{k}}^{g} \coloneqq \{ w = w_{1} \ldots w_{n} \in W_{\ast} \mid g(\widetilde{\pi}(w)) > r^{k} \ge g(w) \}, \quad k \in \mathbb{N}.
    \]
    Set $T_{k}^{(r)} \coloneqq \{ (k,w) \mid w \in \Lambda_{r^{k}}^{g} \}$, $T^{(r)} \coloneqq \bigcup_{k \in \mathbb{N} \cup \{ 0 \}}T_{k}^{(r)}$ and define $\iota \colon T^{(r)} \to W_{\ast}$ as $\iota(k,w) \coloneqq w$.
    Moreover, define $E_{T^{(r)}} \subseteq T^{(r)} \times T^{(r)}$ by
    \[
    E_{T^{(r)}} \coloneqq \Bigl\{ ((k,v), (k + 1,w)) \in T_{k}^{(r)} \times T_{k + 1}^{(r)} \Bigm| \textrm{$k \in \mathbb{N} \cup \{ 0 \}$, $v = w$ or $v = \widetilde{\pi}(w)$} \Bigr\}, 
    \]
    so that $(T^{(r)},E_{T^{(r)}})$ is a rooted tree (\cite[Proposition 4.3]{Kig23}). 
\end{defn}

In the rest of this subsection, we presume the following assumption on the geometry of our self-similar structure. 
\begin{assum}\label{assum.BFss} 
    Let $\mathcal{L} = (K,S,\{ F_{i} \}_{i \in S})$ be a self-similar structure with $\#S \ge 2$ and $K$ connected.
    There exist $r_{\ast} \in (0,1)$ and a metric $d$ on $K$ giving the original topology of $K$ with $\diam(K,d) = 1$ such that $(K,d,\{ K_{w} \}_{w \in T^{(r_{\ast})}},m)$ satisfies Assumption \ref{assum.BF}, where $K_{w} \coloneqq K_{\iota(w)}$ for $w \in T^{(r_{\ast})}$ and $m$ is the Borel measure on $K$ defined by \eqref{eq:ssmeas.canonical} with $(\theta_{i})_{i \in S} \coloneqq (r_{\ast}^{j_{s}\hdim})_{s \in S}$ for the unique $\hdim \in \mathbb{R}$ satisfying $\sum_{s \in S}r_{\ast}^{j_{s}\hdim} = 1$; note that $\hdim > 0$ by $\#S \ge 2$. 
\end{assum}

Under Assumption \ref{assum.BFss}, we have the $\hdim$-Ahlfors regularity of $m$ as follows.
\begin{prop}[{\cite[Proposition 4.5]{Kig23}}]\label{prop.ARss}
	The value $\hdim$ coincides with the Hausdorff dimension of $(K,d)$ and $m$ is $\hdim$-Ahlfors regular with respect to $d$. 
\end{prop}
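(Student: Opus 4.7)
The plan is to first pin down the $m$-measure of the cells $K_{w}$, $w \in T^{(r_{\ast})}$, and then deduce Ahlfors regularity from this together with the geometric control \eqref{BF.adapted}, \eqref{BF.thick} built into Assumption~\ref{assum.BF}; the statement about Hausdorff dimension follows from Ahlfors regularity by the well-known fact recorded after Definition~\ref{defn.AR}.

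First I would establish the precise identity $m(K_{w}) = r_{\ast}^{j(\iota(w))\hdim}$ for every $w \in T^{(r_{\ast})}$. Since $m$ is the self-similar measure on $\mathcal{L}$ with weight $(r_{\ast}^{j_{s}\hdim})_{s\in S}$, iterating \eqref{ea:ss-meas} and using \eqref{BF-nooverlap} (or the consequence of it proved in Proposition~\ref{prop.nointersection}, which gives $m(K_{v}\cap K_{w})=0$ for distinct $v,w\in W_{n}$) yields $m(K_{\iota(w)}) = \prod_{k}r_{\ast}^{j_{\iota(w)_{k}}\hdim} = r_{\ast}^{j(\iota(w))\hdim}$. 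The definition of $\Lambda^{g}_{r_{\ast}^{n}}$ in Definition~\ref{d:scale} forces $n \leq j(\iota(w)) < n + \max_{s\in S}j_{s}$ for $w \in T^{(r_{\ast})}_{n}$, so
\begin{equation*}
    r_{\ast}^{(\max_{s}j_{s})\hdim}\,r_{\ast}^{n\hdim} \leq m(K_{w}) \leq r_{\ast}^{n\hdim}, \quad w \in T^{(r_{\ast})}_{n}.
\end{equation*}

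Next I would derive the two-sided Ahlfors bound from this. For the upper bound, given $(x,r)\in K\times(0,1]$, I would pick the largest $n\in\mathbb{N}\cup\{0\}$ with $r \leq c_{3}r_{\ast}^{n}$, so that $r \asymp r_{\ast}^{n}$; the right inclusion in \eqref{BF.adapted} then gives $B_{d}(x,r)\subseteq U_{M_{\ast}}(x;n)$, which is a union of at most $L_{\ast}^{M_{\ast}+1}$ cells $K_{v}$ with $v\in T_{n}^{(r_{\ast})}$ by the uniform finiteness in Assumption~\ref{assum.BF}-\ref{BF1}, so $m(B_{d}(x,r)) \lesssim r_{\ast}^{n\hdim} \asymp r^{\hdim}$. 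For the lower bound, I would pick the smallest $n$ with $c_{2}r_{\ast}^{n}\leq r$, choose any $w\in T^{(r_{\ast})}_{n}$ with $x\in K_{w}$, and observe that $K_{w}\subseteq B_{d}(x,c_{2}r_{\ast}^{n})\subseteq B_{d}(x,r)$ by \eqref{BF.bi-Lip}, yielding $m(B_{d}(x,r))\geq m(K_{w})\gtrsim r_{\ast}^{n\hdim}\asymp r^{\hdim}$. The ranges $r\in[1,2\diam(K,d))=[1,2)$ are handled trivially since $m(K)=1$. This gives \eqref{AR} with $Q=\hdim$.

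Finally, the Hausdorff dimension assertion follows at once: the existence of an $\hdim$-Ahlfors regular measure on $(K,d)$ implies, by a standard covering/mass distribution argument (noted in the paragraph after Definition~\ref{defn.AR}), that the $\hdim$-dimensional Hausdorff measure of $K$ is positive and finite, and hence $\dim_{H}(K,d)=\hdim$. The only real bookkeeping obstacle is the mismatch between the tree height $n = \abs{w}$ in $T^{(r_{\ast})}$ and the word-length-based exponent $j(\iota(w))$ appearing in the self-similar measure, but this is controlled by the uniform bound $n \leq j(\iota(w)) < n + \max_{s}j_{s}$ identified in the first step.
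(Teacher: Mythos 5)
Your proof is correct and is essentially the only natural argument here: the paper gives no proof of its own for Proposition \ref{prop.ARss}, deferring to \cite[Proposition 4.5]{Kig23}, and the proof there runs exactly as yours does — identify $m(K_{w})$ with the weight $r_{\ast}^{j(\iota(w))\hdim}$, note that $j(\iota(w))$ differs from $\abs{w}$ by at most $\max_{s\in S}j_{s}$, and then sandwich metric balls between boundedly many comparable cells using \eqref{BF.bi-Lip}, \eqref{BF.adapted} and the uniform finiteness of the partition. The only points worth tightening are routine: the identity $m(K_{w})=\theta_{w}$ needs the one-line argument that $m(K_{i})\geq\theta_{i}$ from \eqref{ea:ss-meas} together with $\sum_{i\in S}m(K_{i})=1$ (the latter from Proposition \ref{prop.nointersection}), or alternatively a direct appeal to Proposition \ref{p:ss-meas}, and in the lower bound one should choose $n$ with $c_{2}r_{\ast}^{n}<r$ so that $K_{w}$ lands inside the \emph{open} ball.
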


To obtain a self-similar $p$-energy form on $\mathcal{L}$, we first discuss the self-similarity of $\mathcal{W}^{p}$ (recall \eqref{SSE1}).
The following lemma can be shown in exactly the same way as \cite[Theorem 4.6-(1)]{Kig23} although the condition $p > \dim_{\mathrm{ARC}}(K,d)$ is assumed in \cite[Theorem 4.6]{Kig23}. 
\begin{lem}\label{lem.PSS-easy.pre}
	For any $u \in L^{p}(K,m)$, any $k \in \mathbb{N} \cup \{ 0 \}$ and any $n \in \mathbb{N} \cup \{ 0 \}$ with $n \ge \max_{w \in W_{k}}j(w)$, 
	\begin{equation}\label{e:PSS-easy.pre}
		\sum_{w \in W_{k}}\mathcal{E}_{p}^{n - j(w)}(P_{n - j(w)}(u \circ F_{w})) \le \mathcal{E}_{p}^{n}(P_{n}u). 
	\end{equation}
	In particular, if in addition $K$ is $p$-conductively homogeneous (with respect to $\{ K_{w} \}_{w \in T^{(r_{\ast})}}$), then $u \circ F_{w} \in \mathcal{W}^{p}$ for any $u \in \mathcal{W}^{p}$ and any $w \in W_{\ast}$, and hence 
	\begin{equation}\label{e:PSS.easyside}
		\mathcal{W}^{p} \cap \contfunc(K) \subseteq \{ u \in \contfunc(K) \mid \textrm{$u \circ F_{i} \in \mathcal{W}^{p}$ for any $i \in S$} \}. 
	\end{equation}
\end{lem}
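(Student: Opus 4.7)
The plan is to establish \eqref{e:PSS-easy.pre} by combining a cell-averaging identity, derived from the self-similarity of $m$, with a straightforward combinatorial comparison of the edges in the graphs $(T_n^{(r_{\ast})}, E_n^{\ast})$. First I would show that, for any $w \in W_{k}$ and any $v \in \Lambda_{r_{\ast}^{n-j(w)}}^{g}$, the concatenation $wv$ belongs to $\Lambda_{r_{\ast}^{n}}^{g}$ and
\begin{equation} \label{e:plan.celleq}
    P_{n - j(w)}(u \circ F_{w})(v) = P_{n} u(wv).
\end{equation}
The first claim is immediate from $g(w v') = g(w)\,g(v')$ applied to $v' \in \{v, \widetilde{\pi}(v)\}$. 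For the second, note that by Assumption \ref{assum.BFss} and Propositions \ref{p:ss-meas} and \ref{prop.nointersection}, the self-similar measure $m$ with weights $\theta_{i} = r_{\ast}^{j_{i}\hdim}$ satisfies $m(K_{\tau}) = \theta_{\tau}$ for every $\tau \in W_{\ast}$ and charges intersections of distinct cells of the same level by zero, so the standard change-of-variables identity $\int_{K_{\tau}} \varphi\, dm = \theta_{\tau} \int_{K} \varphi \circ F_{\tau}\, dm$, valid for any Borel $\varphi \geq 0$, yields
\begin{equation*}
    \fint_{K_{v}} (u \circ F_{w})\, dm
    = \int_{K} u \circ F_{wv}\, dm
    = \fint_{K_{wv}} u\, dm.
\end{equation*}

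Next, because each $F_{w}$ is injective, $K_{v} \cap K_{v'} \neq \emptyset$ forces $K_{wv} \cap K_{wv'} = F_{w}(K_{v} \cap K_{v'}) \neq \emptyset$, so the map $\{v,v'\} \mapsto \{wv,wv'\}$ carries $E_{n - j(w)}^{\ast}$ injectively into $E_{n}^{\ast}$. Moreover, since $W_{k}$ partitions $\Sigma$, words beginning with distinct $w, w' \in W_{k}$ are themselves distinct, so the images of $E_{n - j(w)}^{\ast}$ associated to different $w$ are pairwise disjoint subsets of $E_{n}^{\ast}$. Combining these observations with \eqref{e:plan.celleq} gives
\begin{equation*}
    \sum_{w \in W_{k}} \mathcal{E}_{p}^{n - j(w)}\bigl(P_{n - j(w)}(u \circ F_{w})\bigr)
    = \sum_{w \in W_{k}} \sum_{\{v,v'\} \in E_{n - j(w)}^{\ast}} \abs{P_{n} u(wv) - P_{n} u(wv')}^{p}
    \leq \mathcal{E}_{p}^{n}(P_{n} u),
\end{equation*}
which is \eqref{e:PSS-easy.pre}.

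For the final assertion, assume $K$ is $p$-conductively homogeneous, so that $\sigma_{p} > 0$ is defined through Theorem \ref{t:pCH}. Fix $u \in \mathcal{W}^{p}$ and $w \in W_{\ast}$, and set $k = \abs{w}$. The self-similarity of $m$ gives $\norm{u \circ F_{w}}_{L^{p}(K,m)}^{p} = \theta_{w}^{-1} \int_{K_{w}} \abs{u}^{p}\, dm < \infty$; also, for $l \in \mathbb{N} \cup \{0\}$ with $l + j(w) \geq \max_{v \in W_{k}} j(v)$, discarding the nonnegative terms with $v \in W_{k} \setminus \{w\}$ in \eqref{e:PSS-easy.pre} applied with $n = l + j(w)$ yields
\begin{equation*}
    \sigma_{p}^{l}\mathcal{E}_{p}^{l}\bigl(P_{l}(u \circ F_{w})\bigr)
    \leq \sigma_{p}^{-j(w)} \cdot \sigma_{p}^{l + j(w)}\mathcal{E}_{p}^{l + j(w)}(P_{l + j(w)} u)
    \leq \sigma_{p}^{-j(w)}\mathcal{N}_{p}(u)^{p}.
\end{equation*}
Since $\sigma_{p}^{l}\mathcal{E}_{p}^{l}(P_{l}(u \circ F_{w}))$ is trivially finite for each of the finitely many remaining small values of $l$ (as $T_{l}^{(r_{\ast})}$ is finite and $u \circ F_{w} \in L^{p}(K,m)$), taking the supremum in $l$ gives $\mathcal{N}_{p}(u \circ F_{w}) < \infty$, hence $u \circ F_{w} \in \mathcal{W}^{p}$; the inclusion \eqref{e:PSS.easyside} follows at once from the continuity of $F_{i}$. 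The only conceptual input is the cell-averaging identity \eqref{e:plan.celleq} together with its edge-level counterpart; everything else is bookkeeping, and I anticipate no serious obstacle.
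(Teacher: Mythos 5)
Your argument is correct and is essentially the expected one: the paper itself gives no proof here but defers to \cite[Theorem 4.6-(1)]{Kig23}, whose argument rests on exactly the two ingredients you isolate, namely the cell-averaging identity $P_{n-j(w)}(u\circ F_{w})(v)=P_{n}u(wv)$ (valid since $m(K_{\tau})=\theta_{\tau}$ and distinct same-level cells have $m$-null overlap) and the injective, pairwise-disjoint embedding of the edge sets $E_{n-j(w)}^{\ast}$ into $E_{n}^{\ast}$ via $\{v,v'\}\mapsto\{wv,wv'\}$. The deduction of $u\circ F_{w}\in\mathcal{W}^{p}$ by discarding the other summands in \eqref{e:PSS-easy.pre} and absorbing the factor $\sigma_{p}^{-j(w)}$ is likewise the standard route, so no changes are needed.
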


Similar to the case of $p = 2$ (see, e.g., \cite{Kig00,KZ92}), we will obtain a self-similar $p$-energy form on $(\mathcal{L},m)$ with weight $\bm{\sigma}_{p} \coloneqq (\sigma_{p}^{j_{s}})_{s \in S}$ as a fixed point obtained by applying Theorem \ref{thm.ssenergy-fix}. 
To this end, we need the converse inclusion of \eqref{e:PSS.easyside} and uniform estimates on $\mathcal{S}_{\bm{\sigma_{p}},n}(E)$ for any/some $E \in \mathcal{U}_{p}(\mathcal{W}^{p} \cap \contfunc(K))$; recall the definition of $\mathcal{S}_{\bm{\sigma}_{p},n}$ in Definition \ref{defn.ssenergyoperator}. 
These conditions are true if $K$ is $p$-conductively homogeneous and $p > \dim_{\mathrm{ARC}}(K,d)$ as described in the following proposition. (This result is essentially proved in \cite[Proof of Theorem 4.6]{Kig23}.) 

\begin{prop}\label{prop.PSS}
	Let $p \in (1,\infty)$ and assume that $K$ is $p$-conductively homogeneous (with respect to $\{ K_{w} \}_{w \in T^{(r_{\ast})}}$).
	If $p > \dim_{\mathrm{ARC}}(K,d)$, then 
	\begin{equation}\label{e:PSS.domain.lowdim}
		\mathcal{W}^{p} = \{ u \in \contfunc(K) \mid \textrm{$u \circ F_{i} \in \mathcal{W}^{p}$ for any $i \in S$} \}, 
	\end{equation}
	and there exists $C \in [1,\infty)$ such that for any $E \in \mathcal{U}_{p}$ \textup{(recall Definition \ref{d:Kig-sob}-\ref{it:defn.basecone})}, any $u \in \mathcal{W}^{p}$ and any $n \in \mathbb{N}$, 
	\begin{equation}\label{e:PSS.form}
		C^{-1}\mathcal{N}_{p}(u)^{p} \le \mathcal{S}_{\bm{\sigma_{p}},n}(E)(u) \le C\mathcal{N}_{p}(u)^{p}. 
	\end{equation}
\end{prop}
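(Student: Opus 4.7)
Since $p > \dim_{\mathrm{ARC}}(K,d)$, Theorem \ref{thm.Wp} guarantees $\mathcal{W}^{p} \subseteq \contfunc(K)$, so both sides of \eqref{e:PSS.domain.lowdim} consist of continuous functions and the inclusion $\subseteq$ is exactly \eqref{e:PSS.easyside} from Lemma \ref{lem.PSS-easy.pre}. The remaining inclusion $\supseteq$ will be a direct consequence of the two-sided estimate \eqref{e:PSS.form}: once \eqref{e:PSS.form} is established, picking any fixed $E \in \mathcal{U}_{p}$ (for instance $E \coloneqq \mathcal{N}_{p}^{p}$) and any $u \in \contfunc(K)$ with $u \circ F_{i} \in \mathcal{W}^{p}$ for all $i \in S$, the quantity $\mathcal{S}_{\bm{\sigma}_{p},1}(E)(u) = \sum_{i \in S}\sigma_{p}^{j_{i}}E(u \circ F_{i})$ is finite, and the lower bound in \eqref{e:PSS.form} forces $\mathcal{N}_{p}(u) < \infty$, i.e.\ $u \in \mathcal{W}^{p}$. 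Thus the whole task reduces to proving \eqref{e:PSS.form}, and since $\alpha_{0}\mathcal{N}_{p}(v) \le E(v)^{1/p} \le \alpha_{1}\mathcal{N}_{p}(v)$ for constants depending only on $E$, it is enough to establish
\[
\mathcal{N}_{p}(u)^{p} \;\asymp\; \sum_{w \in W_{n}}\sigma_{p}^{j(w)}\mathcal{N}_{p}(u \circ F_{w})^{p}
\]
uniformly in $n \in \mathbb{N}$ and $u \in \mathcal{W}^{p}$.

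The upper estimate $\sum_{w \in W_{n}}\sigma_{p}^{j(w)}\mathcal{N}_{p}(u \circ F_{w})^{p} \lesssim \mathcal{N}_{p}(u)^{p}$ is a direct consequence of Lemma \ref{lem.PSS-easy.pre}. Multiplying \eqref{e:PSS-easy.pre} by $\sigma_{p}^{m}$ with $m$ sufficiently large and recognizing each summand as $\sigma_{p}^{j(w)}\widetilde{\mathcal{E}}_{p}^{m - j(w)}(u \circ F_{w})$, we obtain
\[
\sum_{w \in W_{n}}\sigma_{p}^{j(w)}\widetilde{\mathcal{E}}_{p}^{m - j(w)}(u \circ F_{w}) \;\le\; \widetilde{\mathcal{E}}_{p}^{m}(u) \;\le\; \mathcal{N}_{p}(u)^{p}.
\]
The $p$-conductive homogeneity, via the comparison \eqref{e:wm}, yields the useful fact that $\mathcal{N}_{p}(v)^{p} \asymp \liminf_{k \to \infty}\widetilde{\mathcal{E}}_{p}^{k}(v)$ for any $v \in L^{p}(K,m)$, so taking $\liminf$ as $m \to \infty$ inside the (finite) sum through Fatou's lemma yields the desired upper bound.

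The lower estimate $\mathcal{N}_{p}(u)^{p} \lesssim \sum_{w \in W_{n}}\sigma_{p}^{j(w)}\mathcal{N}_{p}(u \circ F_{w})^{p}$ is the crux of the argument and is essentially the content of \cite[Proof of Theorem 4.6]{Kig23}. For each sufficiently large level $m$, one decomposes $E_{m}^{\ast}$ into \emph{internal edges} (those connecting two cells both contained in a single $K_{w}$ for some $w \in W_{n}$) and \emph{boundary edges} (those straddling two distinct $K_{w}$'s). The internal part sums exactly to $\sum_{w \in W_{n}}\mathcal{E}_{p}^{m - j(w)}(P_{m - j(w)}(u \circ F_{w}))$, which after multiplication by $\sigma_{p}^{m}$ is controlled by $\sum_{w \in W_{n}}\sigma_{p}^{j(w)}\mathcal{N}_{p}(u \circ F_{w})^{p}$. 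The boundary part — a sum over a uniformly bounded number of pairs owing to the uniform finiteness in Assumption \ref{assum.BF}-\ref{BF1} together with \ref{BF4},\ref{BF5} — is handled by the conductance estimate $\mathcal{E}_{M_{\ast},p,k} \asymp \sigma_{p}^{-k}$ from Theorem \ref{t:pCH}: a boundary hop in $E_{m}^{\ast}$ can be replaced by a path lying inside one of the adjacent $S^{m-j(w)}(w)$'s of cost at most $C \sigma_{p}^{-(m - j(w))} \widetilde{\mathcal{E}}_{p}^{m - j(w)}(u \circ F_{w})^{1/p}$ for each relevant $w$, and assembling these yields absorption of the boundary contribution into the internal terms uniformly in $m$. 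Taking $\sup_{m}$ then gives the lower bound.

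The main obstacle is therefore the boundary estimate in the lower bound: quantifying how the chain $p$-capacities across adjacent first-level cells $K_{w}, K_{w'}$ ($w, w' \in W_{n}$) can be dominated by the self-similar sum. This is precisely where both $p > \dim_{\mathrm{ARC}}(K,d)$ (equivalently $\sigma_{p} > 1$, see \cite[Theorem 4.7.6]{Kig20}) and the $p$-conductive homogeneity enter in an essential way — the first ensures the boundary $p$-capacity is small enough to be controllable, and the second supplies the uniform sub-multiplicative comparison $\mathcal{E}_{M_{\ast},p,k+l} \lesssim \mathcal{E}_{M_{\ast},p,k}\mathcal{E}_{M_{\ast},p,l}$ that propagates the absorption through all levels.
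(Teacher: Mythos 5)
Your overall architecture matches the paper's: the inclusion $\subseteq$ in \eqref{e:PSS.domain.lowdim} comes from Lemma \ref{lem.PSS-easy.pre}, the upper half of \eqref{e:PSS.form} comes from \eqref{e:PSS-easy.pre} combined with the consequence $\mathcal{N}_{p}(v)^{p} \asymp \liminf_{k}\widetilde{\mathcal{E}}_{p}^{k}(v)$ of \eqref{e:wm}, and the lower half is Kigami's internal/boundary decomposition (the paper simply cites \cite[(4.6) and (4.8)]{Kig23} for \eqref{e:PSS.form} rather than reproducing it, so your sketch of the absorption step is at the same level of detail as the source it reconstructs; the one inaccuracy there is attributing the submultiplicativity of $\mathcal{E}_{M_{\ast},p,k}$ to conductive homogeneity --- that is generic, cf.\ Remark \ref{rmk.p-factor}, whereas what conductive homogeneity actually buys is the two-sided comparison \eqref{pCH.1} and the weak monotonicity \eqref{e:wm}).

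The genuine gap is in your deduction of the inclusion $\supseteq$ in \eqref{e:PSS.domain.lowdim}. You apply the lower bound of \eqref{e:PSS.form} to a function $u$ that is only known to satisfy $u \circ F_{i} \in \mathcal{W}^{p}$ for all $i \in S$, but \eqref{e:PSS.form} --- and the comparison $\mathcal{N}_{p}(u)^{p} \asymp \sum_{w \in W_{n}}\sigma_{p}^{j(w)}\mathcal{N}_{p}(u \circ F_{w})^{p}$ to which you reduce it --- is quantified only over $u \in \mathcal{W}^{p}$, which is exactly what you are trying to prove about $u$. As written the argument is circular. The repair is to observe (as the paper does via \eqref{pre-domss}) that the one-sided estimate
\[
\widetilde{\mathcal{E}}_{p}^{n}(u) \le C'\sum_{w \in W_{n}}\sigma_{p}^{j(w)}\mathcal{N}_{p}(u \circ F_{w})^{p}
\]
holds for \emph{every} continuous $u$ with $u \circ F_{i} \in \mathcal{W}^{p}$ for all $i$, since the internal/boundary decomposition only uses finiteness of $\mathcal{N}_{p}(u \circ F_{w})$ for $w \in W_{n}$ and never $\mathcal{N}_{p}(u) < \infty$; taking the supremum over $n$ on the left-hand side then yields $u \in \mathcal{W}^{p}$. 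You should state explicitly that the lower-bound argument is carried out on this larger class, after which your reduction becomes legitimate.
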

\begin{proof}
	The uniform estimate \eqref{e:PSS.form} is proved in \cite[(4.6) and (4.8)]{Kig23} 
	(the proof of \cite[(4.8)]{Kig23} requires the assumption $p > \dim_{\mathrm{ARC}}(K,d)$). 
	It suffices to prove
	\[
	\mathcal{W}^{p} \supseteq \{ u \in \contfunc(K) \mid \text{$u \circ F_{i} \in \mathcal{W}^{p}$ for any $i \in S$} \} \eqqcolon \mathcal{W}^{p}_{S} 
	\]
	(since the converse inclusion follows by combining \eqref{e:PSS.easyside} in Lemma \ref{lem.PSS-easy.pre} with $\mathcal{W}^{p} \subseteq \contfunc(K)$ from Theorem \ref{thm.Wp}).
    We note the following estimate obtained in \cite[p.~61, lines 8--9]{Kig23} under the assumption $p > \dim_{\mathrm{ARC}}(K,d)$: there exists a constant $C' \in (0,\infty)$ such that
    \begin{equation}\label{pre-domss}
        \widetilde{\mathcal{E}}_{p}^{n}(u)
        \le C'\sum_{w \in W_{n}}\sigma_{p}^{j(w)}\mathcal{N}_{p}(u \circ F_{w})^{p}
        = C'\mathcal{S}_{\bm{\sigma}_{p},n}(\mathcal{N}_{p}^{p})(u) \quad \textrm{for any $u \in \mathcal{W}^{p}_{S}$ and $n \in \mathbb{N}$.}
    \end{equation}
    Taking the supremum over $n \in \mathbb{N}$ in the left-hand side of \eqref{pre-domss}, we have $\mathcal{W}^{p}_{S} \subseteq \mathcal{W}^{p}$.
\end{proof}

Now we can obtain a desired self-similar $p$-energy form. 
The following theorem is a generalization of \cite[Theorem 4.6]{Kig23} taking into account the case of $p \le \dim_{\mathrm{ARC}}(K,d)$.
\begin{thm}\label{thm.KSgood-ss}
	Let $p \in (1,\infty)$. 
	Assume that Assumption \ref{assum.BFss} holds, that $K$ is $p$-conductively homogeneous (with respect to $\{ K_{w} \}_{w \in T^{(r_{\ast})}}$) and that the following \emph{pre-self-similarity conditions}\index{pre-self-similarity conditions} hold: 
	\begin{gather}
		\mathcal{W}^{p} \cap \contfunc(K) = \{ u \in \contfunc(K) \mid \text{$u \circ F_{i} \in \mathcal{W}^{p}$ for any $i \in S$} \}. \label{e:PSS.dom} \\
		\textrm{There exists $C \in [1,\infty)$ such that \eqref{e:PSS.form} holds for any $u \in \mathcal{W}^{p} \cap \contfunc(K)$, $n \in \mathbb{N}$.} \label{e:PSS.general} 
	\end{gather}
	Let $\sigma_{p}$ be the constant introduced in \eqref{p-factor} of Theorem \ref{t:pCH}, set $\bm{\sigma}_{p} \coloneqq (\sigma_{p}^{j_s})_{s \in S}$, let $(\widehat{\mathcal{E}}_{p},\mathcal{W}^{p})$ be any $p$-energy form on $(K,m)$ given in Theorem \ref{t:Kig-good} and set $\mathcal{F}_{p} \coloneqq \closure{\mathcal{W}^{p} \cap \contfunc(K)}^{\mathcal{W}^{p}}$. 
	Then there exists $\{ n_{k} \}_{k \in \mathbb{N}} \subseteq \mathbb{N}$ with $n_{k} < n_{k + 1}$ for any $k \in \mathbb{N}$ such that the following limit exists in $[0,\infty)$ for any $u \in \mathcal{F}_{p}$: 
	\begin{equation}\label{e:defn.Kigss}
		\mathcal{E}_{p}(u) \coloneqq \lim_{k \to \infty}\frac{1}{n_{k}}\sum_{j = 0}^{n_{k} - 1}\mathcal{S}_{\bm{\sigma}_{p},j}(\widehat{\mathcal{E}}_{p})(u). 
	\end{equation}
    Moreover, the following properties hold: 
    \begin{enumerate}[label=\textup{(\alph*)},align=left,leftmargin=*,topsep=2pt,parsep=0pt,itemsep=2pt]
        \item \label{Kigss.equiv} $(\mathcal{E}_{p},\mathcal{F}_{p})$ is a self-similar $p$-energy form on $(\mathcal{L},m)$ with weight $\bm{\sigma}_{p}$, and there exist $\alpha_{0},\alpha_{1} \in (0,\infty)$ such that $\alpha_{0}\,\mathcal{N}_{p}(u)^{p} \le \mathcal{E}_{p}(u) \le \alpha_{1}\,\mathcal{N}_{p}(u)^{p}$ for any $u \in \mathcal{F}_{p}$.  
        \item\label{Kigss.GC} $(\mathcal{E}_{p},\mathcal{F}_{p})$ satisfies \ref{GC}.
        \item \label{Kigss.slocal} $(\mathcal{E}_{p},\mathcal{F}_{p})$ satisfies the strong local property \hyperref[it:SL1]{\textup{(SL1)}} \textup{(recall Definition \ref{defn.Epsl})}.
        \item \label{Kigss.RF} If in addition $p > \dim_{\mathrm{ARC}}(K,d)$, then $\mathcal{F}_{p} = \mathcal{W}^{p}$ and $(\mathcal{E}_{p},\mathcal{F}_{p})$ is a regular self-similar $p$-resistance form on $\mathcal{L}$ with weight $\bm{\sigma}_{p}$ and there exist $\alpha_{0},\alpha_{1} \in (0,\infty)$ such that
        \begin{equation}\label{RM-ss.comp}
            \alpha_{0}\,d(x,y)^{\tau_p} \le R_{\mathcal{E}_{p}}(x,y) \le \alpha_{1}\,d(x,y)^{\tau_p} \quad \text{for any $x,y \in K$.}
        \end{equation}
    \end{enumerate}
\end{thm}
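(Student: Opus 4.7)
The plan is to invoke the fixed-point Theorem~\ref{thm.ssenergy-fix} with $\mathcal{E}^{0} \coloneqq \widehat{\mathcal{E}}_{p}$, with $\mathcal{F} \coloneqq \mathcal{F}_{p} = \closure{\mathcal{W}^{p} \cap \contfunc(K)}^{\mathcal{W}^{p}}$, and with weight $\bm{\sigma}_{p} = (\sigma_{p}^{j_{s}})_{s \in S}$. The ergodic limit in \eqref{e:defn.Kigss} is then exactly the output of that theorem (so $\mathcal{E}_{p}$ satisfies \eqref{e:fixed.ss} and \eqref{e:comparable.ss}), and properties \ref{Kigss.GC} and \ref{Kigss.slocal} will follow essentially for free from Propositions~\ref{prop.ssenergy-GCinv} and~\ref{prop.ssenergy-sl}.

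Before the fixed-point theorem can be applied, however, I need to check three things on $\mathcal{F}_{p}$. First, $\mathcal{F}_{p}$ must satisfy \eqref{eq.SSdomain-whole}: for any $u \in \mathcal{F}_{p}$ and $w \in W_{\ast}$, we need $u \circ F_{w} \in \mathcal{F}_{p}$. Lemma~\ref{lem.PSS-easy.pre} gives this on $\mathcal{W}^{p} \cap \contfunc(K)$ together with the quantitative bound $\mathcal{N}_{p}(u \circ F_{w})^{p} \le \sigma_{p}^{-j(w)}\mathcal{N}_{p}(u)^{p}$. Since $F_{w}^{\ast}$ is bounded on $L^{p}(K,m)$ by the self-similarity of $m$ (Remark~\ref{rmk:pullback}), approximating a given $u \in \mathcal{F}_{p}$ by a sequence $\{u_{n}\} \subseteq \mathcal{W}^{p} \cap \contfunc(K)$ yields $\{u_{n} \circ F_{w}\}$ bounded in $\mathcal{W}^{p}$ and $L^{p}$-convergent to $u \circ F_{w}$; Lemma~\ref{lem.Lpreduce} applies since $\mathcal{W}^{p}$ is a reflexive Banach space by Theorem~\ref{thm.Wp}, and Mazur's lemma (Lemma~\ref{lem.mazur}) then gives $u \circ F_{w}$ as a norm-limit of convex combinations from $\mathcal{W}^{p} \cap \contfunc(K)$, hence $u \circ F_{w} \in \mathcal{F}_{p}$. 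Second, $\mathcal{F}_{p}/\widehat{\mathcal{E}}_{p}^{-1}(0)$ is separable since $\mathcal{W}^{p}$ is separable (Theorem~\ref{thm.Wp}). Third, the uniform bound \eqref{e:assum.PSS}: by the equivalence $\widehat{\mathcal{E}}_{p}^{1/p} \asymp \mathcal{N}_{p}$ from Theorem~\ref{t:Kig-good}\ref{Kig.equiv}, it suffices to show $\mathcal{S}_{\bm{\sigma}_{p},n}(\widehat{\mathcal{E}}_{p})(u) \asymp \mathcal{N}_{p}(u)^{p}$ uniformly in $n$; on $\mathcal{W}^{p} \cap \contfunc(K)$ this is exactly the pre-self-similarity hypothesis~\eqref{e:PSS.general}, and both sides extend continuously in the $\mathcal{W}^{p}$-norm to $\mathcal{F}_{p}$ by the boundedness of each $F_{w}^{\ast}$ and of $\widehat{\mathcal{E}}_{p}$ on $\mathcal{W}^{p}$.

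With the fixed point $\mathcal{E}_{p}$ thus obtained, I would derive \ref{Kigss.equiv}--\ref{Kigss.RF} as follows. For \ref{Kigss.equiv}, \eqref{SSE2} is \eqref{e:fixed.ss}, while \eqref{SSE1} reduces to $\mathcal{F}_{p} \cap \contfunc(K) = \mathcal{W}^{p} \cap \contfunc(K)$ (which holds since $\mathcal{W}^{p} \cap \contfunc(K) \subseteq \mathcal{F}_{p} \subseteq \mathcal{W}^{p}$) combined with the pre-self-similarity condition~\eqref{e:PSS.dom}; the two-sided bound $\alpha_{0}\mathcal{N}_{p}^{p} \le \mathcal{E}_{p} \le \alpha_{1}\mathcal{N}_{p}^{p}$ follows from \eqref{e:comparable.ss} and Theorem~\ref{t:Kig-good}\ref{Kig.equiv}. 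Property~\ref{Kigss.GC} is immediate from Proposition~\ref{prop.ssenergy-GCinv}\ref{it:ssenergy.GC} and Theorem~\ref{t:Kig-good}\ref{Kig.GC}. For~\ref{Kigss.slocal}, Proposition~\ref{prop.ssenergy-sl} requires $\{u \in \mathcal{F}_{p} \mid \widehat{\mathcal{E}}_{p}(u)=0\} = \mathbb{R}\indicator{K}$: by the equivalence of seminorms and Proposition~\ref{prop.Np-zero}, such $u$ is $m$-a.e.\ constant, hence equals $c\indicator{K}$ as an element of $L^{p}(K,m)$. Finally, for~\ref{Kigss.RF}, when $p > \dim_{\mathrm{ARC}}(K,d)$ we have $\mathcal{W}^{p} \subseteq \contfunc(K)$ (Theorem~\ref{thm.Wp}), so $\mathcal{F}_{p} = \mathcal{W}^{p}$; the properties \ref{RF1}--\ref{RF5} and regularity transfer from $(\widehat{\mathcal{E}}_{p},\mathcal{W}^{p})$ (Theorem~\ref{t:Kig-good}\ref{Kig.RF}) through the equivalence of the seminorms $\widehat{\mathcal{E}}_{p}^{1/p}$ and $\mathcal{E}_{p}^{1/p}$, and the metric comparison~\eqref{RM-ss.comp} follows by combining this equivalence with \eqref{RM.comp}.

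The main obstacle is the rigorous handling of the low-dimensional regime $p \le \dim_{\mathrm{ARC}}(K,d)$, where $\mathcal{W}^{p}$ is strictly larger than $\mathcal{F}_{p}$ and functions need not have continuous representatives; one cannot simply quote Proposition~\ref{prop.PSS} from the previous subsection, and the pre-self-similarity assumptions~\eqref{e:PSS.dom}--\eqref{e:PSS.general} on $\mathcal{W}^{p} \cap \contfunc(K)$ must be propagated to the abstract closure $\mathcal{F}_{p}$ via the density and reflexivity arguments outlined above, keeping careful track of the fact that the pullback $F_{w}^{\ast}$ is defined a priori only on the measure-theoretic completion and not on continuous representatives.
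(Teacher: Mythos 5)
Your proposal is correct and follows essentially the same route as the paper: both reduce the theorem to the fixed-point Theorem~\ref{thm.ssenergy-fix} applied to $\mathcal{E}^{0}=\widehat{\mathcal{E}}_{p}$ on $\mathcal{F}_{p}$ (after checking \eqref{eq.SSdomain-whole}, separability, and \eqref{e:assum.PSS} via the pre-self-similarity hypotheses), then obtain \ref{Kigss.GC} and \ref{Kigss.slocal} from Propositions~\ref{prop.ssenergy-GCinv} and~\ref{prop.ssenergy-sl}, and \ref{Kigss.RF} from $\mathcal{W}^{p}\subseteq\contfunc(K)$ together with Theorem~\ref{t:Kig-good}. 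The only (immaterial) difference is that you extend the $F_{w}^{\ast}$-invariance to $\mathcal{F}_{p}$ via reflexivity, Lemma~\ref{lem.Lpreduce} and Mazur's lemma, whereas the paper cites the direct Cauchy-sequence argument of Lemma~\ref{lem.ssform-ext} based on the bound $\mathcal{N}_{p}(v\circ F_{w})^{p}\le\sigma_{p}^{-j(w)}\mathcal{N}_{p}(v)^{p}$.
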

\begin{rmk}\label{rmk.CGQ-KigforANF}
	\begin{enumerate}[label=\textup{(\arabic*)},align=left,leftmargin=*,topsep=2pt,parsep=0pt,itemsep=2pt]
		\item In the case of $p > \dim_{\mathrm{ARC}}(K,d)$, the pre-self-similarity conditions, \eqref{e:PSS.dom} and \eqref{e:PSS.general}, can be dropped by virtue of Proposition \ref{prop.PSS}. 
		\item On p.-c.f.\ self-similar structures, self-similar $p$-energy forms have been constructed also in \cite{CGQ22}; we show in Subsection \ref{sec.pcf} below that the self-similar $p$-energy forms considered in \cite{CGQ22} are all $p$-resistance forms (on $V_{\ast}$, and ones on $K$ if the weight $\bm{\rweight} = (\rweight_{i})_{i \in S}$ of the form satisfies $\min_{i \in S}\rweight_{i} > 1$). 
		Note that \emph{any} $p \in (1,\infty)$ is allowed in the framework of \cite{CGQ22} unlike that of \cite{Kig23} (see Theorem \ref{thm.KSgood-ss}-\ref{Kigss.RF} above). 
		While it is extremely hard to determine the value $\dim_{\mathrm{ARC}}(K,d)$ in general, for a p.-c.f.\ self-similar set $K$ typically $\dim_{\mathrm{ARC}}(K,d) = 1$; a result in this spirit can be found in \cite[Theorem 1.2]{CP14}. 
		In Appendix \ref{sec.confdimANF}, we prove that the Ahlfors regular conformal dimension of any \emph{strongly symmetric p.-c.f.\ self-similar set} (see Framework \ref{frmwrk:ANF} and Definition \ref{defn.ANF} below) is one when it is equipped with the $p$-resistance metric of a nice self-similar $p$-resistance form; the proof is based on the existence of self-similar $p$-resistance forms on strongly symmetric p.-c.f.\ self-similar sets proved in Theorem \ref{thm.eigenform-ANF} as an extension of \cite[Theorem 6.3]{CGQ22}. 
	\end{enumerate}
\end{rmk}
\begin{proof}[Proof of Theorem \ref{thm.KSgood-ss}]
	The existence of the limit in \eqref{e:defn.Kigss} and its properties \ref{Kigss.equiv}, \ref{Kigss.GC} and \ref{Kigss.slocal} are immediate from \eqref{e:PSS.dom}, \eqref{e:PSS.form}, Lemma \ref{lem.ssform-ext}, Theorem \ref{thm.ssenergy-fix}, Propositions \ref{prop.ssenergy-GCinv}-\ref{it:ssenergy.GC} and \ref{prop.ssenergy-sl}. 
	Let us verify \ref{Kigss.RF}. 
	Recall that $\mathcal{W}^{p} \subseteq \contfunc(K)$ by $p > \dim_{\mathrm{ARC}}(K,d)$ (Theorem \ref{thm.Wp}), whence $\mathcal{F}_{p} = \mathcal{W}^{p}$. 
    A similar argument as in the proof of Theorem \ref{t:Kig-good}-\ref{Kig.RF} shows that $(\mathcal{E}_{p},\mathcal{W}^{p})$ is a regular $p$-resistance form on $K$ satisfying \eqref{RM-ss.comp}. 
    This completes the proof. 
\end{proof}

Similar to Theorem \ref{t:KS-mono.pcf}, we can obtain the monotonicity of $\sigma_{p}^{1/(p - 1)}$ in $p > \dim_{\mathrm{ARC}}(K,d)$. \index{monotonicity of $\sigma_{p}^{1/(p - 1)}$}
Note that the following result is \emph{not} restricted to p.-c.f.\ self-similar structures.
\begin{thm}\label{t:KS-mono}
	Assume that Assumption \ref{assum.BFss} holds. 
    Let $p,q \in (\dim_{\mathrm{ARC}}(K,d),\infty)$ satisfy $p \le q$, and assume that $K$ is $s$-conductively homogeneous (with respect to $\{ K_{w} \}_{w \in T^{(r_{\ast})}}$) for each $s \in \{ p,q \}$. 
    Then
    \begin{equation}\label{KS-mono}
        \sigma_{p}^{1/(p - 1)} \le \sigma_{q}^{1/(q - 1)}.
    \end{equation}
\end{thm}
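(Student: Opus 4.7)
The plan is to adapt the proof of Theorem \ref{t:KS-mono.pcf} to the conductively homogeneous setting by replacing the finite sets $V_n$ and the explicit forms $E_{s,n}$ with the approximating graphs $(T_n, E_n^{\ast})$ and the rescaled discrete functionals $\widetilde{\mathcal{E}}_s^n$ from Definition \ref{d:Kig-sob}. Arguing by contradiction, I assume $\sigma_q^{1/(q-1)} < \sigma_p^{1/(p-1)}$ and aim to produce a non-constant continuous function whose $\mathcal{N}_q$ must nevertheless vanish, forcing a contradiction.

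First I would set up the test function. By Theorem \ref{thm.KSgood-ss}-\ref{Kigss.RF}, $(\mathcal{E}_p, \mathcal{F}_p)$ is a regular self-similar $p$-resistance form on $\mathcal{L}$ with weight $(\sigma_p^{j_i})_{i \in S}$ and $R_{\mathcal{E}_p} \asymp d^{\tau_p}$, and analogously for $q$. Choose distinct $x_0, y_0 \in K$, set $B = \{x_0, y_0\}$, and let $h_p = h_B^{\mathcal{E}_p}\bigl[\indicator{x_0}^{B}\bigr] \in \mathcal{F}_p \subseteq \contfunc(K)$. Arguing exactly as at the beginning of the proof of Theorem \ref{t:KS-mono.pcf}, the connectedness of $K$ together with $h_p(x_0) \neq h_p(y_0)$ and \eqref{R-basic} allow one to pick $w_0 \in W_\ast$ with $K_{w_0} \cap B = \emptyset$ such that $\phi \coloneqq h_p \circ F_{w_0} \in \mathcal{F}_p$ is non-constant on $K$. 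Applying Proposition \ref{prop.regular} to the open set $K \setminus B$ and the compact set $K_{w_0}$ yields a cutoff $\psi \in \mathcal{F}_p$ with $\psi|_{K_{w_0}} = 1$ and $\psi|_{B} = 0$, so $R_{\mathcal{E}_p}(x, B) \geq \mathcal{E}_p(\psi)^{-1}$ for every $x \in K_{w_0}$; combining this uniform lower bound with the sharp H\"older estimate \eqref{lip-harm} applied to $h_p$, the contraction \eqref{pRMss}, and \eqref{RM-ss.comp} yields the global H\"older bound $\abs{\phi(x) - \phi(y)} \leq C_{w_0}\, d(x,y)^{\tau_p/(p-1)}$ valid for all $x, y \in K$.

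The key estimate is then obtained edge-by-edge on $(T_n, E_n^{\ast})$. Since $\diam(K_v \cup K_w, d) \lesssim r_\ast^n$ for every $\{v,w\} \in E_n^\ast$ by Assumption \ref{assum.BF}-\ref{BF2}, averaging together with the H\"older bound gives $\max_{\{v,w\} \in E_n^\ast}\abs{P_n \phi(v) - P_n \phi(w)} \lesssim r_\ast^{n\tau_p/(p-1)} = \sigma_p^{-n/(p-1)}$, using $r_\ast^{\tau_p} = \sigma_p^{-1}$. Factoring each $\abs{P_n \phi(v) - P_n \phi(w)}^q$ as the $(q-p)$-th power times the $p$-th power and bounding the former uniformly then yields
\begin{equation*}
\widetilde{\mathcal{E}}_q^n(\phi) \leq C \sigma_q^n \sigma_p^{-n(q-p)/(p-1)} \mathcal{E}_p^n(P_n \phi) = C \bigl(\sigma_q \sigma_p^{-(q-1)/(p-1)}\bigr)^n \widetilde{\mathcal{E}}_p^n(\phi) \leq C' \bigl(\sigma_q \sigma_p^{-(q-1)/(p-1)}\bigr)^n \mathcal{N}_p(\phi)^p.
\end{equation*}
Moreover, the quasi-monotonicity $\widetilde{\mathcal{E}}_q^{n_0}(\phi) \leq C_1 \widetilde{\mathcal{E}}_q^n(\phi)$ valid for all $n \geq n_0$ can be extracted from \eqref{e:wm} applied to the exponent $q$ with $A = T_{n_0}$ and $l = n - n_0$, upon noting the identity $P_{n_0, n - n_0}(P_n \phi) = P_{n_0} \phi$; this propagates the decay from the top scale $n$ down to any fixed scale $n_0$.

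Once these ingredients are in place, the assumption $\sigma_q \sigma_p^{-(q-1)/(p-1)} < 1$ lets one conclude $\widetilde{\mathcal{E}}_q^{n_0}(\phi) = 0$ for every $n_0$ by letting $n \to \infty$, hence $\mathcal{N}_q(\phi) = 0$; Proposition \ref{prop.Np-zero} together with the continuity of $\phi$ and $\supp_K[m] = K$ then forces $\phi$ to be constant on $K$, contradicting the choice of $w_0$. The main technical hurdle will be establishing the global H\"older bound on $\phi$ with a constant uniform across all of $(K, d)$: the estimate of Theorem \ref{t:lip-harm} only controls $h_p$ locally away from $B$, so the self-similar scaling \eqref{pRMss} of the $p$-resistance metric and the uniform positivity of $R_{\mathcal{E}_p}(\,\cdot\,, B)$ on $K_{w_0}$ must be used in tandem to transfer it into a H\"older estimate for $\phi = h_p \circ F_{w_0}$ valid on the whole space $K$.
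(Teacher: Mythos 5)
Your proposal is correct and follows essentially the same route as the paper's proof: the same test function $h_{p}\circ F_{w}$ built from an $\mathcal{E}_{p}$-harmonic function, the same edge-by-edge H\"older estimate via \eqref{lip-harm} and \eqref{pRMss} leading to $\widetilde{\mathcal{E}}_{q}^{n}(\phi) \lesssim \bigl(\sigma_{q}\sigma_{p}^{-(q-1)/(p-1)}\bigr)^{n}\mathcal{N}_{p}(\phi)^{p}$, and the same use of \eqref{e:wm} together with Proposition \ref{prop.Np-zero} to conclude. The only (cosmetic) difference is that the paper argues directly, bounding $\mathcal{N}_{q}(h_{p,w})^{q}\limsup_{n\to\infty}\bigl(\sigma_{q}^{-1}\sigma_{p}^{(q-1)/(p-1)}\bigr)^{n}$ by a finite quantity and dividing by $\mathcal{N}_{q}(h_{p,w})^{q}>0$, rather than deriving a contradiction.
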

\begin{proof}
    The proof is very similar to that of Theorem \ref{t:KS-mono.pcf}.
    We see from Proposition \ref{prop.PSS} that \eqref{e:PSS.dom} and \eqref{e:PSS.form} with $s \in \{ p,q \}$ in place of $p$ hold.   
    Let $(\mathcal{E}_{s},\mathcal{W}^{s})$ be a self-similar $s$-resistance form on $\mathcal{L}$ given in Theorem \ref{thm.KSgood-ss} for each $s \in \{ p,q \}$.
    Fix two distinct points $x_{0}, y_{0} \in K$, set $B \coloneqq \{ x_{0}, y_{0} \}$ and define $h_{p} \coloneqq h_{B}^{\mathcal{E}_{p}}\bigl[\indicator{x_{0}}^{B}\bigr] \in \mathcal{W}^{p}$.
    Then $0 \le h_{p} \le 1$ by the weak comparison principle (Proposition \ref{prop.cp1}) and we can find $w \in W_{\ast}$ satisfying $K_{w} \cap B = \emptyset$ and $h_{p,w} \coloneqq h_{p} \circ F_{w} \not\in \mathbb{R}\indicator{K}$.
    Similar to \eqref{diff.harm}, by using \eqref{lip-harm} and \eqref{pRMss}, we can show that for any $\{ u,v \} \in E_{n}^{\ast}$,
    \begin{equation*}
        \abs{P_{n}h_{q,w}(u) - P_{n}h_{q,w}(v)}^{q - p}
        \le Cr_{\ast}^{n(\pwalk - \hdim)\frac{q - p}{p - 1}},
    \end{equation*}
    where $C \in (0,\infty)$ is independent of $n$.
    Hence we have
    \[
    \mathcal{E}_{q}^{n}(h_{p,w})
    = \sum_{\{ u,v \} \in E_{n}^{\ast}}\abs{P_{n}h_{q,w}(u) - P_{n}h_{q,w}(v)}^{q}
    \le Cr_{\ast}^{n(\pwalk - \hdim)\frac{q - p}{p - 1}}\mathcal{E}_{p}^{n}(h_{p,w}),
    \]
    which implies that
    \begin{equation}\label{diff.harm-Kig}
        \Bigl(\sigma_{q}^{-1}\sigma_{p}^{(q - 1)/(p - 1)}\Bigr)^{n}\widetilde{\mathcal{E}}_{q}^{n}(h_{p,w})
        \le C\widetilde{\mathcal{E}}_{p}^{n}(h_{p,w})
        \le C\mathcal{N}_{p}(h_{p,w})^{p}.
    \end{equation}
    By \eqref{e:wm} in the proof of Theorem \ref{thm.Wp}, there exists $C_{q} \in (0,\infty)$ such that $\mathcal{N}_{q}(f)^{q} \le C_{q}\liminf_{n \to \infty}\widetilde{\mathcal{E}}_{q}^{n}(f)$ for any $f \in L^{q}(K,m)$.
    This together with \eqref{diff.harm-Kig} implies that 
    \[
    \mathcal{N}_{q}(h_{p,w})^{q}\limsup_{n \to \infty}\Bigl(\sigma_{q}^{-1}\sigma_{p}^{(q - 1)/(p - 1)}\Bigr)^{n}
    \le C'\mathcal{N}_{p}(h_{p,w})^{p}
    < \infty.
    \]
    Since $\mathcal{N}_{q}(h_{p,w}) > 0$, we obtain $\sigma_{q}^{-1}\sigma_{p}^{(q - 1)/(p - 1)} \le 1$, which means \eqref{KS-mono}.
\end{proof}

We conclude this subsection by applying Theorem \ref{thm.EHI} (elliptic Harnack inequality) to the $p$-energy form $(\mathcal{E}_{p},\mathcal{F}_{p})$ in Theorem \ref{thm.KSgood-ss} in the case of $p > \dim_{\mathrm{ARC}}(K,d)$.
We immediately obtain the following corollary by combining Propositions \ref{prop.ss-pform-em}, \ref{prop.Rp-TPE}, \ref{prop.Kig-capu}, \ref{prop.ARss}, \eqref{BF.adapted} in Assumption \ref{assum.BF}-\ref{BF2}, \eqref{RM-ss.comp} in Theorem \ref{thm.KSgood-ss}-\ref{Kigss.RF}, and Theorem \ref{thm.EHI}. 
Recall \eqref{e:defn.em.one} in the paragraph before Proposition \ref{prop.ss-pform-em} for the self-similar $p$-energy measures associated with a self-similar $p$-energy form.
\begin{cor}[Elliptic Harnack inequality\index{elliptic Harnack inequality} for self-similar $p$-resistance form]\label{cor.Kig-EHI}
	Let $p \in (1,\infty)$. 
	Assume that Assumption \ref{assum.BFss} holds, that $K$ is $p$-conductively homogeneous (with respect to $\{ K_{w} \}_{w \in T^{(r_{\ast})}}$) and that $p > \dim_{\mathrm{ARC}}(K,d)$.
	Then $(\mathcal{E}_{p},\mathcal{W}^{p})$ given in Theorem \ref{thm.KSgood-ss} and the self-similar $p$-energy measures $\{ \Gamma_{\mathcal{E}_{p}}\langle u \rangle \}_{u \in \mathcal{W}^{p}}$ associated with $(\mathcal{E}_{p},\mathcal{W}^{p})$ satisfy the assumptions, and thereby the property in the conclusion, of Theorem \ref{thm.EHI} with $K,m,\frac{m(B_{d}(x,s))}{s^{\pwalk}}$ in place of $X,\mu,\Upsilon(x,s)$. 
\end{cor}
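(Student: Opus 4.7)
The plan is to verify the four hypotheses \ref{it:EHI.doubling}--\ref{it:EHI.CL} of Theorem \ref{thm.EHI} for the regular self-similar $p$-resistance form $(\mathcal{E}_p, \mathcal{W}^p)$ provided by Theorem \ref{thm.KSgood-ss}-\ref{Kigss.RF} together with its self-similar $p$-energy measures $\{\Gamma_{\mathcal{E}_p}\langle u\rangle\}_{u \in \mathcal{W}^p}$ from \eqref{e:defn.em.one}, with the choice $\Upsilon(x,s) = m(B_d(x,s))/s^{\pwalk}$. The key technical tool is the comparison \eqref{RM-ss.comp}: $\pmetric_p(x,y) \asymp d(x,y)^{\tau_p/(p-1)}$, which lets us exchange $\pmetric_p$-balls and $d$-balls via the reparametrization $r \asymp s^{(p-1)/\tau_p}$ of the radius.

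Hypotheses \ref{it:EHI.doubling} and \ref{it:EHI.CL} are immediate. For \ref{it:EHI.doubling}, Proposition \ref{prop.ARss} says $m$ is $\hdim$-Ahlfors regular with respect to $d$, so $\Upsilon(x,s) \asymp s^{\hdim - \pwalk}$ at small scales and doubling follows by trivial algebra. For \ref{it:EHI.CL}, the \ref{GC} property from Theorem \ref{thm.KSgood-ss}-\ref{Kigss.GC} entails \eqref{lipcont} and \ref{Cp}, $\mathcal{W}^p \subseteq \contfunc(K)$ holds by Theorem \ref{thm.Wp}, and $\mathbb{R}\indicator{K} \subseteq \mathcal{E}_p^{-1}(0)$ is part of \ref{RF1} for the $p$-resistance form $(\mathcal{E}_p, \mathcal{W}^p)$; thus Theorem \ref{thm.em-chain} applies and yields the chain rule \hyperref[it:CL2]{\textup{(CL2)}} for $\{\Gamma_{\mathcal{E}_p}\langle u\rangle\}_{u \in \mathcal{W}^p}$.

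For the two-point estimate \ref{it:EHI.TPE}, I would first verify the adaptedness hypothesis \eqref{e:Rp-PI.adapted} of Proposition \ref{prop.Rp-TPE}: since $\rweight_w^{-1/(p-1)} = \sigma_p^{-j(w)/(p-1)} = r_*^{j(w)\tau_p/(p-1)}$, the $\pmetric_p$-scale $\Lambda_s^{\pmetric_p}$ coincides, after the reindexing $s = r_*^{k\tau_p/(p-1)}$, with the $d$-scale $\Lambda_{r_*^k}^{g}$ of Definition \ref{d:scale}, so the $M_\ast$-adaptedness \eqref{BF.adapted} of $d$ in Assumption \ref{assum.BFss} combined with \eqref{RM-ss.comp} yields \eqref{e:Rp-PI.adapted}. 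Proposition \ref{prop.Rp-TPE} then gives $\sup_{y,z \in B_{\pmetric_p}(x,s)} |u(y)-u(z)|^p \lesssim s^{p-1} \Gamma_{\mathcal{E}_p}\langle u\rangle(B_{\pmetric_p}(x,As))$, and converting to the corresponding $d$-radius $r$ via \eqref{RM-ss.comp} and Ahlfors regularity rewrites the prefactor as $s^{p-1} \asymp r^{\tau_p} \asymp r^{\pwalk}/m(B_d(x,r)) = \Upsilon(x,r)^{-1}$, establishing \eqref{e:TPE.RF}. For the capacity upper bound \ref{it:EHI.capu}, Proposition \ref{prop.Kig-capu} directly provides $\inf\{\mathcal{N}_p(\varphi)^p : \varphi|_{B_d(x,r)} = 1,\ \supp_K[\varphi] \subseteq B_d(x,\lambda r)\} \lesssim m(B_d(x,r))/r^{\pwalk} = \Upsilon(x,r)$; the equivalence $\mathcal{E}_p \asymp \mathcal{N}_p^p$ from Theorem \ref{thm.KSgood-ss}-\ref{Kigss.equiv} converts this to a bound for $\mathcal{E}_p$, and \eqref{RM-ss.comp} transfers cutoffs between $d$-balls and $\pmetric_p$-balls.

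The main obstacle is essentially bookkeeping: the hypotheses of Theorem \ref{thm.EHI} are stated in the $\pmetric_p$-scale, whereas Proposition \ref{prop.Kig-capu} and Ahlfors regularity naturally live in the $d$-scale, and $\Upsilon$ straddles both. Aligning the exponents $p-1$ (from the resistance two-point estimate) and $\tau_p$ (from Ahlfors regularity combined with $\pwalk = \hdim + \tau_p$) via the nonlinear change of radius $r \leftrightarrow s^{(p-1)/\tau_p}$ is the one step requiring careful tracking; once this dictionary is in place, all four hypotheses of Theorem \ref{thm.EHI} are direct consequences of the cited results, and the EHI \eqref{e:EHI} on $\pmetric_p$-balls follows.
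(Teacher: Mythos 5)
Your proposal is correct and follows essentially the same route as the paper, whose proof is a one-line combination of Propositions \ref{prop.ss-pform-em}, \ref{prop.Rp-TPE}, \ref{prop.Kig-capu}, \ref{prop.ARss}, \eqref{BF.adapted}, \eqref{RM-ss.comp} and Theorem \ref{thm.EHI} --- exactly the ingredients you assemble, including the radius dictionary $r \leftrightarrow s^{(p-1)/\tau_{p}}$ implicit in \eqref{RM-ss.comp}. The only item you leave tacit is Proposition \ref{prop.ss-pform-em}-\ref{ssem}, which gives $\Gamma_{\mathcal{E}_{p}}\langle u \rangle(K) = \mathcal{E}_{p}(u)$ for every $u \in \mathcal{W}^{p}$ and hence makes the normalization hypothesis in the conclusion of Theorem \ref{thm.EHI} (and in Lemma \ref{lem.logC}) automatic.
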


\subsection{Self-similar \texorpdfstring{$p$}{p}-resistance forms on p.-c.f.\ self-similar structures}\label{sec.pcf}
In this subsection, under the condition (\textbf{R}) in \cite[p.~18]{CGQ22}, we see that the construction of $p$-energy forms on p.-c.f.\ self-similar structures constructed due to \cite{CGQ22} yields $p$-resistance forms.
The framework in \cite{CGQ22} focuses only on p.-c.f.\ self-similar structures, but allows any $p \in (1,\infty)$ throughout, and also the choice of the weights of self-similar $p$-resistance forms is flexible there so that non-arithmetic weights can arise unlike Theorem \ref{thm.KSgood-ss}; see Subsection \ref{sec:gap} for a proof that non-arithmetic weights do arise in the framework of Subsection \ref{sec.ANF} under a mild condition on the p.-c.f.\ self-similar structure $\mathcal{L}$. 

In the following definitions, we recall some classes of $p$-energy forms on finite sets considered in \cite{CGQ22}. 
\begin{defn}[{\cite[Definition 2.1]{CGQ22}}]\label{defn.CGQ-Mp}
	Let $A$ be a finite set with $\#A \ge 2$. 
	Let $E \colon \mathbb{R}^{A} \to [0,\infty)$ and consider the following conditions. 
	\begin{enumerate}[label=\textup{(\roman*)},align=left,leftmargin=*,topsep=2pt,parsep=0pt,itemsep=2pt]
		\item\label{it:Mp1} $E(tf + (1-t)g) \le tE(f) + (1 - t)E(g)$ for any $f,g \in \mathbb{R}^{A}$ and any $t \in [0,1]$. 
		\item\label{it:Mp2} $E(tf) = \abs{t}^{p}E(f)$ for any $f \in \mathbb{R}^{A}$ and any $t \in \mathbb{R}$. 
		\item\label{it:Mp3} $E(f + t\indicator{A}) = E(f)$ for any $f \in \mathbb{R}^{A}$ and any $t \in \mathbb{R}$. 
		\item\label{it:Mp4} $E(f^{+} \wedge 1) \le E(f)$ for any $f \in \mathbb{R}^{A}$. 
		\item\label{it:Mp5} $\{ f \in \mathbb{R}^{A} \mid E(f) = 0 \} = \mathbb{R}\indicator{A}$. 
	\end{enumerate}
	We define $\mathcal{M}_{p}(A)$ and $\widetilde{\mathcal{M}}_{p}(A)$ by 
	\begin{align}
		\mathcal{M}_{p}(A) 
		&\coloneqq \{ E \colon \mathbb{R}^{A} \to [0,\infty) \mid \text{$E$ satisfies \ref{it:Mp1}-\ref{it:Mp5}} \}, \label{e:defn.Mp} \\
		\widetilde{\mathcal{M}}_{p}(A) 
		&\coloneqq \{ E \colon \mathbb{R}^{A} \to [0,\infty) \mid \text{$E$ satisfies \ref{it:Mp1}-\ref{it:Mp4}} \}. \label{e:defn.tildeMp} 
	\end{align}
\end{defn}

\begin{defn}[{\cite[Definition 2.8]{CGQ22}}]\label{defn.CGQ-Qp}
	Let $A$ be a finite set with $\#A \ge 2$. 
	For $E_{1},E_{2} \in \widetilde{\mathcal{M}}_{p}(A)$, define a metric $d_{\widetilde{\mathcal{M}}_{p}(A)}$ on $\widetilde{\mathcal{M}}_{p}(A)$ by 
	\begin{align}\label{e:defn.tildeMpdist}
		d_{\widetilde{\mathcal{M}}_{p}(A)}(E_{1},E_{2}) 
		&\coloneqq \sup\Bigl\{ \abs{E_{1}(u) - E_{2}(u)} \Bigm| \textrm{$u \in \mathbb{R}^{A}$, $\osc_{A}[u] = 1$} \Bigr\} \nonumber \\
		&\,= \sup\Bigl\{ \abs{E_{1}(u) - E_{2}(u)} \Bigm| \textrm{$u \in \mathbb{R}^{A}$, $\osc_{A}[u] \leq 1$} \Bigr\}.
	\end{align}
	For ease of notation, we set $\abs{E}_{\widetilde{\mathcal{M}}_{p}(A)} \coloneqq d_{\widetilde{\mathcal{M}}_{p}(A)}(E,0)$ for $E \in \widetilde{\mathcal{M}}_{p}(A)$.  
	\begin{enumerate}[label=\textup{(\arabic*)},align=left,leftmargin=*,topsep=2pt,parsep=0pt,itemsep=2pt]
		\item\label{it:Sp} We define $\mathcal{S}_{p}(A) \subseteq \mathcal{M}_{p}(A)$ by 
		\begin{equation}\label{e:defn.Sp}
			\mathcal{S}_{p}(A) \coloneqq \biggl\{ E \in \mathcal{M}_{p}(A) \biggm| 
			\begin{minipage}{246pt}
				there exists $(c_{xy})_{x,y \in A} \subseteq [0,\infty)$ such that\\
				$E(f) = \sum_{x,y \in A}c_{xy}\abs{f(x) - f(y)}^{p}$ for any $f \in \mathbb{R}^{A}$
			\end{minipage}
			\biggr\}. 
		\end{equation}
		Note that any $E \in \mathcal{S}_{p}(A)$ is a $p$-resistance form on $A$ as observed in Example \ref{ex.pRF}-\ref{RF-graph}. 
		\item\label{it:Qp} We define $\mathcal{Q}_{p}'(A) \subseteq \mathcal{M}_{p}(A)$ by 
		\begin{equation}\label{e:defn.preQp}
			\mathcal{Q}_{p}'(A) \coloneqq \biggl\{ E \in \mathcal{M}_{p}(A) \biggm| 
			\begin{minipage}{182pt}
				there exist a finite set $B$ with $B \supseteq A$ and $\widetilde{E} \in \mathcal{S}_{p}(B)$ such that $\widetilde{E}\big|_{A} = E$ 
			\end{minipage}
			\biggr\}, 
		\end{equation}
		where $\widetilde{E}\big|_{A}$ denotes the trace of $\widetilde{E}$ on $A$ (recall \eqref{eq:dfn-trace} in Theorem \ref{thm.RF-exist}).
		We further define $\mathcal{Q}_{p}(A) \subseteq \mathcal{M}_{p}(A)$ as the closure of $\mathcal{Q}_{p}'(A)$ in $(\mathcal{M}_{p}(A),d_{\widetilde{\mathcal{M}}_{p}(A)})$, i.e., 
		\begin{equation}\label{e:defn.Qp}
			\mathcal{Q}_{p}(A) \coloneqq \biggl\{ E \in \mathcal{M}_{p}(A) \biggm| 
			\begin{minipage}{178pt}
				there exists $\{ E_{n} \}_{n \in \mathbb{N}} \subseteq \mathcal{Q}_{p}'(A)$ such that $\lim_{n \to \infty}d_{\widetilde{\mathcal{M}}_{p}(A)}(E, E_{n}) = 0$
			\end{minipage}
			\biggr\}. 
		\end{equation} 
	\end{enumerate}
\end{defn} 

Then any $E \in \mathcal{Q}_{p}(A)$ is a $p$-resistance form on $A$, as stated in the following proposition.
\begin{prop}\label{prop.RF-newQp}
	Let $A$ be a finite set with $\#A \ge 2$ and let $E \in \mathcal{Q}_{p}(A)$. 
	Then $E$ is a $p$-resistance form on $A$. 
\end{prop}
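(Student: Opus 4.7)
The plan is to verify \ref{RF1} and \ref{RF5} for $E$, since on a finite set the remaining axioms \ref{RF2}--\ref{RF4} are automatic as noted in Example \hyperref[RF-graph]{\ref{ex.pRF}}-\ref{RF-graph}. Since $E \in \mathcal{M}_{p}(A)$, the $p$-homogeneity and the identity $\{u \in \mathbb{R}^{A} \mid E(u) = 0\} = \mathbb{R}\indicator{A}$ are built in via conditions \ref{it:Mp2} and \ref{it:Mp5} of Definition \ref{defn.CGQ-Mp}, so the nontrivial tasks reduce to the triangle inequality for $E^{1/p}$ (which is not implied by the mere convexity of $E$) and the generalized $p$-contraction property \ref{GC}.

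First I would handle the case $E \in \mathcal{Q}_{p}'(A)$. Let $B \supseteq A$ and $\widetilde{E} \in \mathcal{S}_{p}(B)$ satisfy $\widetilde{E}|_{A} = E$. The membership $\widetilde{E} \in \mathcal{M}_{p}(B)$ forces the graph associated with the defining coefficients of $\widetilde{E}$ to be connected (otherwise a non-constant function taking a distinct constant value on each component would witness $\{\widetilde{E} = 0\} \supsetneq \mathbb{R}\indicator{B}$), so $\widetilde{E}$ is a $p$-resistance form on $B$ by Example \hyperref[RF-graph]{\ref{ex.pRF}}-\ref{RF-graph}. Theorem \ref{thm.RF-exist} then yields that its trace $E = \widetilde{E}|_{A}$ is a $p$-resistance form on $A$.

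For the general case $E \in \mathcal{Q}_{p}(A)$, fix $\{E_{n}\}_{n \in \mathbb{N}} \subseteq \mathcal{Q}_{p}'(A)$ with $\lim_{n \to \infty} d_{\widetilde{\mathcal{M}}_{p}(A)}(E, E_{n}) = 0$. I would first upgrade this metric convergence to pointwise convergence $\lim_{n \to \infty} E_{n}(u) = E(u)$ for every $u \in \mathbb{R}^{A}$: the case $u \in \mathbb{R}\indicator{A}$ is trivial by the $p$-homogeneity and condition \ref{it:Mp3}, while for $u \notin \mathbb{R}\indicator{A}$, setting $\alpha \coloneqq \osc_{A}[u] > 0$ and using the $p$-homogeneity of both $E_{n}$ and $E$ together with the very definition of $d_{\widetilde{\mathcal{M}}_{p}(A)}$ gives
\[
	|E_{n}(u) - E(u)| = \alpha^{p}\bigl|E_{n}(\alpha^{-1}u) - E(\alpha^{-1}u)\bigr| \le \alpha^{p}\,d_{\widetilde{\mathcal{M}}_{p}(A)}(E, E_{n}) \xrightarrow[n \to \infty]{} 0.
\]
Both the seminorm property of $E^{1/p}$ and \ref{GC} for $E$ are then obtained by passing to the limit $n \to \infty$ in the corresponding inequalities for the $p$-resistance forms $E_{n}$ established in the previous paragraph, since the $\ell^{q}$-norm and the function $t \mapsto t^{1/p}$ are continuous.

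The argument is essentially the transfer of closed properties from $\mathcal{Q}_{p}'(A)$ to its closure $\mathcal{Q}_{p}(A)$ via the trace theorem, and no serious obstacle is anticipated; the only mild care needed is in deducing pointwise convergence on all of $\mathbb{R}^{A}$ from metric convergence in $d_{\widetilde{\mathcal{M}}_{p}(A)}$, which controls $E_{n} - E$ only on oscillation-one functions, and this is handled directly by $p$-homogeneity as displayed above.
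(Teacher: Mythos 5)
Your proof is correct and follows essentially the same route as the paper's: reduce to \ref{RF1} and \ref{RF5}, obtain them for elements of $\mathcal{Q}_{p}'(A)$ from Example \ref{ex.pRF}-\ref{RF-graph} together with the trace theorem (Theorem \ref{thm.RF-exist}), and then pass to the limit using the pointwise convergence $E_{n}(u) \to E(u)$, which is exactly the stability statement Proposition \ref{prop.cone-gen}-\ref{GC.pwlimit} invoked in the paper. One small quibble: the triangle inequality for $E^{1/p}$ is in fact already implied by the convexity and $p$-homogeneity built into Definition \ref{defn.CGQ-Mp} (these make $E^{1/p}$ a $1$-homogeneous convex function, hence subadditive), which is why the paper can declare \ref{RF1}--\ref{RF4} clear and verify only \ref{GC}; your limiting argument for the seminorm property is nevertheless valid, just not needed.
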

\begin{proof}
	Note that \ref{RF1}--\ref{RF4} for $E \in \mathcal{Q}_{p}(A)$ are clear, so we shall prove \ref{RF5}, i.e., \ref{GC}. 
	Choose $\{ E_{n} \}_{n \in \mathbb{N}} \subseteq \mathcal{Q}_{p}'(A)$ so that $\lim_{n \to \infty}d_{\widetilde{\mathcal{M}}_{p}(A)}(E, E_{n}) = 0$. 
	Then clearly $\lim_{n \to \infty}E_{n}(u) = E(u)$ for any $u \in \mathbb{R}^{A}$ (see also \cite[Lemma A.1]{CGQ22} for a refined version of this convergence), and since $E_{n}$ satisfies \ref{GC} by Theorem \ref{thm.RF-exist} for any $n \in \mathbb{N}$, we have \ref{GC} for $E$ by the stability of \ref{GC} under pointwise convergence from Proposition \ref{prop.cone-gen}-\ref{GC.pwlimit}. 
\end{proof}

Next we introduce renormalization operators playing central roles in the construction of $p$-energy forms on p.-c.f.\ self-similar structures. 
In the rest of this subsection, we always assume that $\mathcal{L} = (K,S,\{ F_{i} \}_{i \in S})$ is a p.-c.f.\ self-similar structure with $\#S \ge 2$ and $K$ connected.
\begin{defn}[Renormalization operator\index{renormalization operator}; {\cite[Definition 3.1]{CGQ22}}] \label{defn.renorm-op}
    Let $\bm{\rweight}_{p} = (\rweight_{p,i})_{i \in S} \in (0,\infty)^{S}$ and $k \in \mathbb{N} \cup \{ 0 \}$. 
    For a $p$-resistance form $E$ on $V_{k}$, define $p$-resistance forms
     $\Lambda_{\bm{\rweight}_{p}}(E) \colon \mathbb{R}^{V_{k + 1}} \to [0,\infty)$ and $\mathcal{R}_{\bm{\rweight}_{p}}(E) \colon \mathbb{R}^{V_{k}} \to [0,\infty)$ by\footnote{We use different symbols from \cite{CGQ22}.}
    \begin{equation}\label{defn.renorm}
    	\Lambda_{\bm{\rweight}_{p}}(E)(u) \coloneqq \sum_{i \in S}\rweight_{p,i}E(u \circ F_{i}), \quad u \in \mathbb{R}^{V_{k + 1}}, \quad \text{and} \quad \mathcal{R}_{\bm{\rweight}_{p}}(E) \coloneqq \Lambda_{\bm{\rweight}_{p}}(E)\bigr|_{V_{k}} 
    \end{equation}
    (recall Proposition \ref{prop.RFrenorm} and Theorem \ref{thm.RF-exist}).
    To be precise, $\Lambda_{\bm{\rweight}_{p}},\mathcal{R}_{\bm{\rweight}_{p}}$ depend on $k$, but we omit it for ease of the notation.
    By \cite[Lemma 3.2-(b)]{CGQ22}, we have $\Lambda_{\bm{\rweight}_{p}}^{n}(E)\bigr|_{V_{k}} = \mathcal{R}_{\bm{\rweight}_{p}}^{n}(E)$ for any $n \in \mathbb{N} \cup \{ 0 \}$, i.e.,
    \[
    \mathcal{R}_{\bm{\rweight}_{p}}^{n}(E)(u) = \inf\Biggl\{ \sum_{w \in W_{n}}\rweight_{p,w}\,E(v \circ F_{w}) \Biggm| v \in \mathbb{R}^{V_{n + k}}, v|_{V_{k}} = u \Biggr\}, \quad u \in \mathbb{R}^{V_{k}}.
    \] 
\end{defn}

The following theorem, which is an adaptation of \cite[Theorem 4.2]{CGQ22}, gives a necessary and sufficient condition for the existence of an eigenform with respect to $\mathcal{R}_{\bm{\rweight}_{p}}$. 
This theorem can be shown by combining \cite[Lemma 4.4 and Proof of Theorem 4.2]{CGQ22} with Proposition \ref{prop.RF-newQp}, so we omit the proof. 
\begin{thm}[Condition for the existence of an eigenform\index{eigenform}; cf.\ {\cite[Theorem 4.2]{CGQ22}}]\label{thm.eigenform} 
    Let $\bm{\rweight}_{p} = (\rweight_{p,i})_{i \in S} \in (0,\infty)^{S}$. 
    Let us consider the following condition \eqref{condA}: there exist $c \in (0,\infty)$ and a $p$-resistance form $E$ on $V_{0}$ such that
    \begin{equation}\label{condA}
    	\min_{x,y \in V_{0}; x \neq y}R_{\mathcal{R}_{\bm{\rweight}_{p}}^{n}(E)}(x,y) \ge c\max_{x,y \in V_{0}; x \neq y}R_{\mathcal{R}_{\bm{\rweight}_{p}}^{n}(E)}(x,y) \quad \text{for any $n \in \mathbb{N} \cup \{ 0 \}$}. \tag{\textbf{A}}
    \end{equation}
    \begin{enumerate}[label=\textup{(\alph*)},align=left,leftmargin=*,topsep=2pt,parsep=0pt,itemsep=2pt]
    	\item\label{it:CGQ.eigenvalue} Assume that \eqref{condA} holds. Then there exists a unique number $\lambda = \lambda(\bm{\rweight}_{p}) \in (0,\infty)$ such that the following hold: for any $E' \in \mathcal{M}_{p}(V_{0})$, there exists $C \in [1,\infty)$ such that 
    	\begin{equation}\label{e:charact.eigenvalue}
    		C^{-1}\lambda^{n}E'(u) \le \mathcal{R}_{\bm{\rweight}_{p}}^{n}(E')(u) \le C\lambda^{n}E'(u) \quad \text{for any $n \in \mathbb{N} \cup \{ 0 \}$ and any $u \in \mathbb{R}^{V_{0}}$.}
    	\end{equation}
    	\item\label{it:CGQ.newinitial} Assume that \eqref{condA} holds. Let $E_{0} \in \mathcal{S}_{p}(V_{0})$. For $n \in \mathbb{N}$, define $E_{n} \in \mathcal{Q}_{p}'(V_{0})$ by 
    	\begin{equation}\label{e:defn.CGGQ-En}
    		E_{n}(u) \coloneqq \inf\biggl\{ \frac{1}{n + 1}\sum_{j = 0}^{n}\lambda^{-j}\Lambda_{\bm{\rweight}_{p}}^{j}(E_{0})(v|_{V_{j}}) \biggm| v \in \mathbb{R}^{V_{n}}, v|_{V_{0}} = u \biggr\}, \quad u \in \mathbb{R}^{V_{0}},  
    	\end{equation}
    	where $\lambda$ is the number given in \ref{it:CGQ.eigenvalue}. 
    	Then there exists a subsequence $\{ E_{n_{k}} \}_{k \in \mathbb{N}}$ such that it converges in the topology induced by $d_{\widetilde{\mathcal{M}}_{p}}$. 
    	In particular, there exists $E_{\ast} \in \mathcal{Q}_{p}(V_{0})$ such that 
    	\begin{equation}\label{e:CGQlimit}
    		E_{\ast}(u) = \lim_{k \to \infty}E_{n_{k}}(u), \quad u \in \mathbb{R}^{V_{0}}. 
    	\end{equation}
    	\item\label{it:CGQ.eigenform} Assume that \eqref{condA} holds. Let $E_{0} \in \mathcal{S}_{p}(V_{0})$, let $E_{\ast} \in \mathcal{Q}_{p}(V_{0})$ be given by \eqref{e:CGQlimit} and let $\lambda$ be the number given in \ref{it:CGQ.eigenvalue}. Then $\{ \lambda^{-l}\mathcal{R}_{\bm{\rweight}_{p}}^{l}(E_{\ast})(u) \}_{l \in \mathbb{N} \cup \{ 0 \}}$ is non-decreasing for any $u \in \mathbb{R}^{V_{0}}$ and $\mathcal{R}_{\bm{\rweight}_{p}}(\mathcal{E}_{p}^{(0)}) = \lambda\mathcal{E}_{p}^{(0)}$, where $\mathcal{E}_{p}^{(0)} \in \mathcal{Q}_{p}(V_{0})$ is given by
    	\begin{equation}\label{e:eigenform.concrete}
    		\mathcal{E}_{p}^{(0)}(u) \coloneqq \lim_{l \to \infty}\lambda^{-l}\mathcal{R}_{\bm{\rweight}_{p}}^{l}(E_{\ast})(u), \quad u \in \mathbb{R}^{V_{0}}. 
   		\end{equation}
   		\item\label{it:CGQ.converse} Assume that there exist $\lambda \in (0,\infty)$ and a $p$-resistance form $E$ on $V_{0}$ such that $\mathcal{R}_{\bm{\rweight}_{p}}(E) = \lambda E$. Then \eqref{condA} holds.
    \end{enumerate}
\end{thm}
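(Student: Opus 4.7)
The plan is to follow the approach of \cite[Theorem 4.2]{CGQ22}, which adapts the classical fixed-point construction of self-similar resistance forms due to Kigami \cite[Theorem 3.8.10]{Kig01} to the nonlinear $p$-energy setting; the only addition beyond \cite{CGQ22} is the observation, supplied by Proposition \ref{prop.RF-newQp}, that limit forms in $\mathcal{Q}_{p}(V_{0})$ are automatically $p$-resistance forms. For part \ref{it:CGQ.eigenvalue}, I would first show that any two forms $E, E' \in \mathcal{M}_{p}(V_{0})$ are pointwise comparable, $C^{-1}E \leq E' \leq C E$ for some $C = C(E,E') \in [1,\infty)$, a consequence of finite-dimensionality of $\mathbb{R}^{V_{0}}$, $p$-homogeneity, and the characterization $\{E = 0\} = \mathbb{R}\indicator{V_{0}}$ for elements of $\mathcal{M}_{p}(V_{0})$. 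Combining the semigroup property $\mathcal{R}_{\bm{\rweight}_{p}}^{n+m} = \mathcal{R}_{\bm{\rweight}_{p}}^{n} \circ \mathcal{R}_{\bm{\rweight}_{p}}^{m}$ with \eqref{condA} then yields submultiplicative estimates for $a_{n} \coloneqq R_{\mathcal{R}_{\bm{\rweight}_{p}}^{n}(E)}(x_{0},y_{0})^{-1}$ for a fixed pair $x_{0} \neq y_{0}$ in $V_{0}$, and Fekete's lemma applied to $\log a_{n}$ produces $\lambda \coloneqq \lim_{n \to \infty} a_{n}^{1/n} \in (0,\infty)$; the two-sided bound \eqref{e:charact.eigenvalue} and uniqueness of $\lambda$ then follow from the uniform comparability and $p$-homogeneity.

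For part \ref{it:CGQ.newinitial}, I would verify $E_{n} \in \mathcal{Q}_{p}'(V_{0})$ by realizing $E_{n}$ as the trace to $V_{0}$ of an explicit form in $\mathcal{S}_{p}$ on a larger vertex set: since $\Lambda_{\bm{\rweight}_{p}}^{j}(E_{0}) \in \mathcal{S}_{p}(V_{j})$ and the class $\mathcal{S}_{p}$ is stable under weighted sums on disjoint unions of vertex sets, the convex combination $\frac{1}{n+1}\sum_{j=0}^{n}\lambda^{-j}\Lambda_{\bm{\rweight}_{p}}^{j}(E_{0})$ defined on the disjoint union $\bigsqcup_{j=0}^{n} V_{j}$ lies in $\mathcal{S}_{p}$ of this union, and its trace to $V_{0}$ via the diagonal embedding coincides with $E_{n}$. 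Part \ref{it:CGQ.eigenvalue} ensures $\sup_{n \in \mathbb{N}}\abs{E_{n}}_{\widetilde{\mathcal{M}}_{p}(V_{0})} < \infty$; bounded subsets of $\widetilde{\mathcal{M}}_{p}(V_{0})$ consist of equi-Lipschitz convex $p$-homogeneous functionals on the finite-dimensional unit-oscillation sphere of $\mathbb{R}^{V_{0}}/\mathbb{R}\indicator{V_{0}}$, so the Arzel\`{a}--Ascoli theorem yields a subsequence $\{E_{n_{k}}\}_{k}$ converging to some $E_{\ast} \in \mathcal{Q}_{p}(V_{0})$.

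For part \ref{it:CGQ.eigenform}, unfolding the nested infima in \eqref{e:defn.CGGQ-En} (using that post-critical finiteness together with the gluing identity \eqref{V0bdry} makes reassembly of $\tilde{v} \in \mathbb{R}^{V_{n+1}}$ from its restrictions $\{\tilde{v} \circ F_{i}|_{V_{n}}\}_{i \in S}$ automatically compatible once only $\tilde{v}|_{V_{0}} = u$ is imposed) yields the exact identity
\begin{equation*}
(n+2)\,E_{n+1}(u) = E_{0}(u) + (n+1)\,\lambda^{-1}\mathcal{R}_{\bm{\rweight}_{p}}(E_{n})(u) \quad \text{for any $u \in \mathbb{R}^{V_{0}}$.}
\end{equation*}
Hence $\lambda^{-1}\mathcal{R}_{\bm{\rweight}_{p}}(E_{n}) - E_{n+1} \to 0$ uniformly on bounded sets, so the set of $d_{\widetilde{\mathcal{M}}_{p}(V_{0})}$-accumulation points of $\{E_{n}\}_{n \in \mathbb{N}}$ is invariant under $\lambda^{-1}\mathcal{R}_{\bm{\rweight}_{p}}$. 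Taking $E_{\ast}$ to be a \emph{pointwise-minimal} accumulation point (obtained by a diagonal compactness argument exploiting finite-dimensionality), one obtains $\lambda^{-1}\mathcal{R}_{\bm{\rweight}_{p}}(E_{\ast}) \geq E_{\ast}$; the monotonicity of $\mathcal{R}_{\bm{\rweight}_{p}}$ (manifest from its variational definition) then yields $\{\lambda^{-l}\mathcal{R}_{\bm{\rweight}_{p}}^{l}(E_{\ast})\}_{l \geq 0}$ pointwise non-decreasing and, by part \ref{it:CGQ.eigenvalue}, uniformly bounded, so the limit $\mathcal{E}_{p}^{(0)}$ in \eqref{e:eigenform.concrete} exists and lies in $\mathcal{Q}_{p}(V_{0})$ by closure of $\mathcal{Q}_{p}(V_{0})$ under bounded monotone pointwise limits (which on the finite-dimensional space $\mathbb{R}^{V_{0}}$ reduces to $d_{\widetilde{\mathcal{M}}_{p}(V_{0})}$-convergence via Dini's theorem), and satisfies $\mathcal{R}_{\bm{\rweight}_{p}}(\mathcal{E}_{p}^{(0)}) = \lambda \mathcal{E}_{p}^{(0)}$ by monotone convergence. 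Part \ref{it:CGQ.converse} is then immediate: if $\mathcal{R}_{\bm{\rweight}_{p}}(E) = \lambda E$ then $\mathcal{R}_{\bm{\rweight}_{p}}^{n}(E) = \lambda^{n}E$ by iteration, so $R_{\mathcal{R}_{\bm{\rweight}_{p}}^{n}(E)} = \lambda^{-n}R_{E}$ by $p$-homogeneity, and \eqref{condA} holds with $c \coloneqq (\min_{x \neq y}R_{E}(x,y))/(\max_{x \neq y} R_{E}(x,y))$.

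The main obstacle is the selection of a \emph{pointwise-minimal} accumulation point $E_{\ast}$ with $\lambda^{-1}\mathcal{R}_{\bm{\rweight}_{p}}(E_{\ast}) \geq E_{\ast}$: unlike the $p = 2$ case, where symmetric bilinear structure permits a direct argument via bilinear traces, here $\mathcal{Q}_{p}(V_{0})$ is not obviously a lattice under pointwise minima within $\mathcal{M}_{p}(V_{0})$, and care is needed to ensure both that the infimum over accumulation points remains in $\mathcal{Q}_{p}(V_{0})$ and that it is itself an accumulation point. The technical workaround is to replace $E_{\ast}$ by $\inf_{l \geq 0} \lambda^{-l}\mathcal{R}_{\bm{\rweight}_{p}}^{l}(E_{\ast}')$ for an initial accumulation point $E_{\ast}'$, using the monotonicity of $\mathcal{R}_{\bm{\rweight}_{p}}$ iteratively, an argument requiring one to combine part \ref{it:CGQ.eigenvalue} with Proposition \ref{prop.RF-newQp} to keep the iterates inside $\mathcal{Q}_{p}(V_{0})$ throughout.
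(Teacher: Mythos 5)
Your overall route is the one the paper intends: the paper gives no proof of Theorem \ref{thm.eigenform} beyond the citation to \cite[Lemma 4.4 and Proof of Theorem 4.2]{CGQ22} combined with Proposition \ref{prop.RF-newQp}, and your parts \ref{it:CGQ.eigenvalue}, \ref{it:CGQ.newinitial} and \ref{it:CGQ.converse}, as well as the recursion $(n+2)E_{n+1}=E_{0}+(n+1)\lambda^{-1}\mathcal{R}_{\bm{\rweight}_{p}}(E_{n})$ in part \ref{it:CGQ.eigenform}, are essentially correct reconstructions of that argument. One minor slip in \ref{it:CGQ.newinitial}: realizing $E_{n}$ as a trace of a form on the disjoint union $\bigsqcup_{j=0}^{n}V_{j}$ glued along $V_{0}$ decouples the minimizations over the different levels and produces $\frac{1}{n+1}\sum_{j=0}^{n}\lambda^{-j}\mathcal{R}_{\bm{\rweight}_{p}}^{j}(E_{0})$ rather than $E_{n}$; the correct witness for $E_{n}\in\mathcal{Q}_{p}'(V_{0})$ is simply $B=V_{n}$ with $\widetilde{E}(v)\coloneqq\frac{1}{n+1}\sum_{j=0}^{n}\lambda^{-j}\Lambda_{\bm{\rweight}_{p}}^{j}(E_{0})(v|_{V_{j}})\in\mathcal{S}_{p}(V_{n})$, using the nesting $V_{j}\subseteq V_{n}$.

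The genuine gap is the step you yourself flag as the main obstacle in part \ref{it:CGQ.eigenform}: the inequality $\lambda^{-1}\mathcal{R}_{\bm{\rweight}_{p}}(E_{\ast})\ge E_{\ast}$, and neither of your two devices delivers it. A pointwise-minimal accumulation point $E_{\ast}$, even if it exists, only excludes that $\lambda^{-1}\mathcal{R}_{\bm{\rweight}_{p}}(E_{\ast})$ lies strictly below $E_{\ast}$; since the pointwise order on $\mathcal{M}_{p}(V_{0})$ is only a partial order, $\lambda^{-1}\mathcal{R}_{\bm{\rweight}_{p}}(E_{\ast})$ and $E_{\ast}$ may simply be incomparable, and minimality gives nothing. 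The workaround $H\coloneqq\inf_{l\ge0}\lambda^{-l}\mathcal{R}_{\bm{\rweight}_{p}}^{l}(E_{\ast}')$ fares no better: a pointwise infimum of convex functionals need not be convex, so $H$ can fail condition \ref{it:Mp1} of Definition \ref{defn.CGQ-Mp} and leave $\mathcal{M}_{p}(V_{0})$ (hence $\mathcal{Q}_{p}(V_{0})$) altogether; and monotonicity of $\mathcal{R}_{\bm{\rweight}_{p}}$ only yields $\lambda^{-1}\mathcal{R}_{\bm{\rweight}_{p}}(H)\le\inf_{l\ge1}\lambda^{-l}\mathcal{R}_{\bm{\rweight}_{p}}^{l}(E_{\ast}')$, i.e.\ an upper bound on $\lambda^{-1}\mathcal{R}_{\bm{\rweight}_{p}}(H)$ by the same quantity that bounds $H$ from above, which gives no comparison between the two (and if $E_{\ast}'\ge\inf_{l\ge1}\lambda^{-l}\mathcal{R}_{\bm{\rweight}_{p}}^{l}(E_{\ast}')$ it even points to the reverse inequality). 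The mechanism that actually produces the sub-eigenform inequality is that $\mathcal{R}_{\bm{\rweight}_{p}}$ is an \emph{infimum}, whence $\limsup_{n}\mathcal{R}_{\bm{\rweight}_{p}}(G_{n})\le\mathcal{R}_{\bm{\rweight}_{p}}(\limsup_{n}G_{n})$ pointwise for any bounded sequence $\{G_{n}\}$ in $\mathcal{M}_{p}(V_{0})$; feeding the recursion into this gives $\overline{E}\le\lambda^{-1}\mathcal{R}_{\bm{\rweight}_{p}}(\overline{E})$ for the pointwise limit superior $\overline{E}$ of $\{E_{n}\}$ (which stays convex, a limit superior of convex functions being convex), and the subsequence in \ref{it:CGQ.newinitial} must then be chosen so that the $d_{\widetilde{\mathcal{M}}_{p}}$-limit $E_{\ast}$ realizes this limit superior. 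This is the content of the cited proof in \cite{CGQ22} and cannot be replaced by a lattice or minimality argument over the set of accumulation points.
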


\begin{rmk}\label{rmk.condA}
	\begin{enumerate}[label=\textup{(\arabic*)},align=left,leftmargin=*,topsep=2pt,parsep=0pt,itemsep=2pt]
		\item\label{it:A-weight} If $\bm{\rweight}_{p}$ satisfies \eqref{condA} for some $p$-resistance form $E$ on $V_{0}$, then by \cite[Lemma 4.4-(a)]{CGQ22}, for any $p$-resistance form $\widetilde{E}$ on $V_{0}$ there exists $\tilde{c} \in (0,\infty)$ such that \eqref{condA} with $\widetilde{E},\tilde{c}$ in place of $E,c$ holds. Hence \eqref{condA} is a condition relying only on $\bm{\rweight}_{p}$. 
		\item The assertion $\mathcal{E}_{p}^{(0)} \in \mathcal{Q}_{p}(V_{0})$ follows from \eqref{e:eigenform.concrete}. Indeed, for any $n,l \in \mathbb{N} \cup \{ 0 \}$, by \eqref{V0bdry} and Proposition \ref{prop.trace-comp}, one can see that 
			\begin{align*}
				\mathcal{R}_{\bm{\rweight}_{p}}^{l}(E_{n}) 
				= \Biggl(\frac{1}{n+1}\sum_{j = 0}^{n}\lambda^{-j}\Lambda_{\bm{\rweight}_{p}}^{l + j}(E_{0})\Biggr)\Bigg|_{V_{0}} 
				\in \mathcal{Q}_{p}'(V_{0}). 
			\end{align*}
			Let $\varepsilon > 0$. 
			Then for all large enough $k \in \mathbb{N}$, we have 
			\[
			\abs{E'(u) - E_{n_{k}}(u)} \le \varepsilon \quad \text{whenever $u \in \mathbb{R}^{V_{0}}$ satisfies $\osc_{V_{0}}[u] \le 1$.}
			\]
			For such $k \in \mathbb{N}$ and $u \in \mathbb{R}^{V_{0}}$, since \eqref{mp} implies 
			\[
			\osc_{V_{l}}\Bigl[h_{V_{0}}^{\Lambda_{\bm{\rweight}_{p}}^{l}(E')}[u]\Bigr] \le \osc_{V_{0}}[u], 
			\quad \osc_{V_{l}}\Bigl[h_{V_{0}}^{\Lambda_{\bm{\rweight}_{p}}^{l}(E_{n_{k}})}[u]\Bigr] \le \osc_{V_{0}}[u], 
			\]
			we have 
			\[
			\abs{\mathcal{R}_{\bm{\rweight}_{p}}^{l}(E')(u) - \mathcal{R}_{\bm{\rweight}_{p}}^{l}(E_{n_{k}})(u)} \le \varepsilon\sum_{w \in W_{l}}\rweight_{p,w} \quad \text{whenever $u \in \mathbb{R}^{V_{0}}$ satisfies $\osc_{V_{0}}[u] \le 1$.}
			\]
			This shows that $d_{\widetilde{\mathcal{M}}_{p}(V_{0})}(\mathcal{R}_{\bm{\rweight}_{p}}^{l}(E'),\mathcal{R}_{\bm{\rweight}_{p}}^{l}(E_{n_k})) \to 0$ as $k \to \infty$, whence it follows that $\mathcal{R}_{\bm{\rweight}_{p}}^{l}(E') \in \mathcal{Q}_{p}(V_{0})$. Therefore, $\mathcal{E}_{p}^{(0)} \in \mathcal{Q}_{p}(V_{0})$. 
	\end{enumerate}
\end{rmk} 

In the rest of this subsection, we fix $\bm{\rweight}_{p} = (\rweight_{p,i})_{i \in S} \in (0,\infty)^{S}$. 
Let us introduce two important conditions on $\bm{\rweight}_{p}$, following \cite[Section 5]{CGQ22}: 
\begin{enumerate}[,align=left,leftmargin=*,topsep=2pt,parsep=0pt,itemsep=2pt]
	\item[(\textbf{A}')]\label{it:condA'} There exists a $p$-resistance form $\mathcal{E}_{p}^{(0)}$ on $V_{0}$ such that $\mathcal{R}_{\bm{\rweight}_{p}}(\mathcal{E}_{p}^{(0)}) = \mathcal{E}_{p}^{(0)}$. 
	\item[(\textbf{R})]\label{it:condR} (\textup{(\hyperref[it:condA']{\textbf{A}'})} holds and) $\min_{i \in S}\rweight_{p,i} > 1$. 
\end{enumerate}
Note that by Theorem \ref{thm.eigenform}, \textup{(\hyperref[it:condA']{\textbf{A}'})} implies \eqref{condA}, and \eqref{condA} implies that $\lambda^{-1} \bm{\rweight}_{p}$ satisfies (\hyperref[it:condA']{\textbf{A}'}) for some $\lambda \in (0,\infty)$.  

The following proposition is important to construct a self-similar $p$-resistance form as an ``inductive limit'' of discrete $p$-resistance forms as presented in \cite[Proposition 5.3]{CGQ22}, which is an adaptation of the relevant pieces of the theory of resistance forms due to \cite[Sections 2.2, 2.3 and 3.3]{Kig01}.
\begin{prop}\label{prop.inductive}
	Assume that \textup{(\hyperref[it:condA']{\textbf{A}'})} holds. 
    Define $\mathcal{E}_{p}^{(n)} \coloneqq \Lambda_{\bm{\rweight}_{p}}^{n}(\mathcal{E}_{p}^{(0)})$, i.e.,
    \begin{equation}\label{d:compatible}
        \mathcal{E}_{p}^{(n)}(u) \coloneqq \sum_{w \in W_{n}}\rweight_{p,w}\mathcal{E}_{p}^{(0)}(u \circ F_{w}), \quad u \in \mathbb{R}^{V_{n}}.
    \end{equation}
    Then $\mathcal{E}_{p}^{(n)}$ is a $p$-resistance form on $V_{n}$ and $\bigl.\mathcal{E}_{p}^{(n + m)}\bigr|_{V_{n}} = \mathcal{E}_{p}^{(n)}$ for any $n,m \in \mathbb{N} \cup \{ 0 \}$, i.e., $\bigl\{ (V_{n},\mathcal{E}_{p}^{(n)}) \bigr\}_{n \ge 0}$ is a compatible sequence of $p$-resistance forms. 
\end{prop}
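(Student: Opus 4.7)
\medskip

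\noindent\textbf{Proof proposal.} The plan is to handle the two assertions separately. For the first, that $\mathcal{E}_{p}^{(n)}$ is a $p$-resistance form on $V_{n}$, I will simply apply Proposition \ref{prop.RFrenorm} with $k = 0$ and the given $p$-resistance form $\mathcal{E}_{p}^{(0)}$ on $V_{0}$: by definition $\mathcal{E}_{p}^{(n)} = \Lambda_{\bm{\rweight}_{p}}^{n}(\mathcal{E}_{p}^{(0)})$, and Proposition \ref{prop.RFrenorm} directly yields that it is a $p$-resistance form on $V_{n}$.

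The second assertion reduces, via induction on $m$ together with the trace-compatibility Proposition \ref{prop.trace-comp} (which gives $(E|_{V_{n+m}})|_{V_{n}} = E|_{V_{n}}$ for any $p$-resistance form $E$ on a superset of $V_{n+m}$), to the base case
\[
\mathcal{E}_{p}^{(n+1)}\bigr|_{V_{n}} = \mathcal{E}_{p}^{(n)} \quad \text{for every } n \in \mathbb{N} \cup \{0\}.
\]
The key step is to decompose and factorize the minimization defining the trace. For $u \in \mathbb{R}^{V_{n}}$, writing each $\tau \in W_{n+1}$ as $\tau = wi$ with $w \in W_{n}$ and $i \in S$ and using $\rweight_{p,\tau} = \rweight_{p,w}\rweight_{p,i}$, I will rewrite any $v \in \mathbb{R}^{V_{n+1}}$ with $v|_{V_{n}} = u$ as
\[
\mathcal{E}_{p}^{(n+1)}(v) = \sum_{w \in W_{n}}\rweight_{p,w}\,\Lambda_{\bm{\rweight}_{p}}(\mathcal{E}_{p}^{(0)})\bigl(v \circ F_{w}|_{V_{1}}\bigr),
\]
where $v \circ F_{w}|_{V_{1}} \in \mathbb{R}^{V_{1}}$ has its restriction to $V_{0}$ fixed, equal to $u \circ F_{w}|_{V_{0}}$. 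The crucial geometric fact, a consequence of the post-critical finiteness of $\mathcal{L}$ via \eqref{V0bdry}, is that for distinct $w,w' \in W_{n}$ the cells satisfy $F_{w}(V_{1}) \cap F_{w'}(V_{1}) \subseteq F_{w}(V_{0}) \cap F_{w'}(V_{0}) \subseteq V_{n}$, so the values of $v$ on $V_{n+1}\setminus V_{n}$ are free and decouple across $w \in W_{n}$.

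Thanks to this decoupling, the infimum over $v$ factors term by term, yielding
\[
\mathcal{E}_{p}^{(n+1)}\bigr|_{V_{n}}(u)
= \sum_{w \in W_{n}} \rweight_{p,w}\,\Lambda_{\bm{\rweight}_{p}}(\mathcal{E}_{p}^{(0)})\bigr|_{V_{0}}\bigl(u \circ F_{w}|_{V_{0}}\bigr)
= \sum_{w \in W_{n}} \rweight_{p,w}\,\mathcal{R}_{\bm{\rweight}_{p}}(\mathcal{E}_{p}^{(0)})\bigl(u \circ F_{w}|_{V_{0}}\bigr).
\]
Applying the eigenform identity $\mathcal{R}_{\bm{\rweight}_{p}}(\mathcal{E}_{p}^{(0)}) = \mathcal{E}_{p}^{(0)}$ from (\hyperref[it:condA']{\textbf{A}'}) then collapses the right-hand side to $\mathcal{E}_{p}^{(n)}(u)$. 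The induction step for $m \to m+1$ is immediate from Proposition \ref{prop.trace-comp} and the base case applied at level $n+m$.

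The only genuine subtlety will be the factorization step, which relies on the p.-c.f.\ property of $\mathcal{L}$ to ensure that the overlaps $F_{w}(V_{1}) \cap F_{w'}(V_{1})$ are all contained in $V_{n}$; this is where the Dirichlet boundary data imposed by $v|_{V_{n}} = u$ sits, and where the minimization separates. Everything else is a straightforward accounting of the self-similarity and the fixed-point property of $\mathcal{E}_{p}^{(0)}$ under $\mathcal{R}_{\bm{\rweight}_{p}}$.
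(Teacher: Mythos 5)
Your proof is correct, and it is organized differently from the paper's. The paper reduces, exactly as you do, to showing $\mathcal{E}_{p}^{(n+1)}\bigr|_{V_{n}} = \mathcal{E}_{p}^{(n)}$ via Proposition \ref{prop.trace-comp}, but then proceeds by induction on $n$: it peels off the \emph{first} letter, writing $\mathcal{E}_{p}^{(n+1)}(v) = \sum_{i \in S}\rweight_{p,i}\mathcal{E}_{p}^{(n)}(v \circ F_{i})$, invokes the inductive hypothesis $\mathcal{E}_{p}^{(m)}\bigr|_{V_{m-1}} = \mathcal{E}_{p}^{(m-1)}$ to rewrite each summand as a minimum, and then uses \eqref{V0bdry} to interchange the sum and the minimum; the eigenform identity enters only in the base case $n=0$. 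You instead peel off the whole level-$n$ word, factor the trace minimization over the cells $\{K_{w}\}_{w \in W_{n}}$ at once, and apply the eigenform identity $\mathcal{R}_{\bm{\rweight}_{p}}(\mathcal{E}_{p}^{(0)}) = \mathcal{E}_{p}^{(0)}$ in every cell simultaneously, which eliminates the induction entirely. Both arguments rest on the same decoupling fact from \eqref{V0bdry}; your version is marginally more direct (and is in fact closer in spirit to the paper's later Proposition \ref{prop.compatible}). One small point to make explicit in your factorization: besides $F_{w}(V_{1}) \cap F_{w'}(V_{1}) \subseteq V_{n}$ for $w \neq w'$, you also need $F_{w}(V_{1}) \cap V_{n} = F_{w}(V_{0})$, so that the Dirichlet constraint $v|_{V_{n}} = u$ imposes nothing on $v \circ F_{w}|_{V_{1}}$ beyond its restriction to $V_{0}$; this follows from the same \eqref{V0bdry} argument together with the injectivity of $F_{w}$, and the paper glosses over the analogous point in its own interchange of sum and minimum.
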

\begin{proof}
    We will show $\bigl.\mathcal{E}_{p}^{(n + m)}\bigr|_{V_{n}} = \mathcal{E}_{p}^{(n)}$.
    (See \cite[Proposition 3.1.3]{Kig01} for the case of $p = 2$.)
    It suffices to prove $\bigl.\mathcal{E}_{p}^{(n + 1)}\bigr|_{V_{n}} = \mathcal{E}_{p}^{(n)}$ for any $n \in \mathbb{N} \cup \{ 0 \}$ by virtue of Proposition \ref{prop.trace-comp}.
    Note that the case of $n = 0$ is true by $\mathcal{R}_{\bm{\rweight}_{p}}(\mathcal{E}_{p}^{(0)}) = \mathcal{E}_{p}^{(0)}$, and that
    \begin{equation}\label{CGQ.press}
        \mathcal{E}_{p}^{(n + 1)}(u) = \sum_{i \in S}\rweight_{p,i}\,\mathcal{E}_{p}^{(n)}(u \circ F_{i}), \quad \text{for any $n \in \mathbb{N} \cup \{ 0 \}$ and $u \in \mathbb{R}^{V_{n + 1}}$.}
    \end{equation}
    Assume that $\bigl.\mathcal{E}_{p}^{(m)}\bigr|_{V_{m - 1}} = \mathcal{E}_{p}^{(m - 1)}$ for some $m \in \mathbb{N}$.
    Then for any $u \in \mathbb{R}^{V_{m}}$,
    \begin{align*}
        \mathcal{E}_{p}^{(m)}(u)
        &\overset{\eqref{CGQ.press}}{=} \sum_{i \in S}\rweight_{p,i}\,\mathcal{E}_{p}^{(m - 1)}(u \circ F_{i}) \\
        &= \sum_{i \in S}\rweight_{p,i}\min\Bigl\{ \mathcal{E}_{p}^{(m)}(v \circ F_{i}) \Bigm| v \in \mathbb{R}^{K_{i} \cap V_{m + 1}}, v|_{K_{i} \cap V_{m}} = u|_{K_{i}} \Bigr\} \\
        &\overset{\eqref{V0bdry}}{=} \min\Biggl\{ \sum_{i \in S}\rweight_{p,i}\,\mathcal{E}_{p}^{(m)}(v \circ F_{i}) \Biggm| v \in \mathbb{R}^{V_{m + 1}}, v|_{V_{m}} = u \Biggr\} \\
        &\overset{\eqref{CGQ.press}}{=} \min\Bigl\{ \mathcal{E}_{p}^{(m + 1)}(v) \Bigm| v \in \mathbb{R}^{V_{m + 1}}, v|_{V_{m}} = u \Bigr\}
        = \bigl.\mathcal{E}_{p}^{(m + 1)}\bigr|_{V_{m}}(u),
    \end{align*}
    which completes the proof.
\end{proof}

We can naturally construct a $p$-resistance form as an inductive limit on the countable set $V_{\ast}$ as described in the following proposition. 
\begin{prop}\label{prop.RFVast}
	Assume that \textup{(\hyperref[it:condA']{\textbf{A}'})} holds and let $\bigl\{ (V_{n},\mathcal{E}_{p}^{(n)}) \bigr\}_{n \ge 0}$ be the compatible sequence of $p$-resistance forms given in Proposition \ref{prop.inductive}.  
    Define a linear subspace $\mathcal{F}_{p,\ast}$ of $\mathbb{R}^{V_{\ast}}$ and $\mathcal{E}_{p,\ast} \colon \mathcal{F}_{p,\ast} \to [0,\infty)$ by
    \begin{align}
        \mathcal{F}_{p,\ast} &\coloneqq \Bigl\{ u \in \mathbb{R}^{V_{\ast}} \Bigm| \lim_{n \to \infty}\mathcal{E}_{p}^{(n)}(u|_{V_{n}}) < \infty \Bigr\}, \quad \text{and} \label{defn:Fpast}\\
            \mathcal{E}_{p,\ast}(u) &\coloneqq \lim_{n \to \infty}\mathcal{E}_{p}^{(n)}(u|_{V_{n}}), \quad u \in \mathcal{F}_{p,\ast}. \label{defn:Epast}
    \end{align}
    Then $(\mathcal{E}_{p,\ast},\mathcal{F}_{p,\ast})$ is a $p$-resistance form on $V_{\ast}$ satisfying $\mathcal{E}_{p,\ast}|_{V_{n}} = \mathcal{E}_{p}^{(n)}$ for any $n \in \mathbb{N} \cup \{ 0 \}$.
    Moreover, the following self-similarity properties hold:
    \begin{gather}
        \mathcal{F}_{p,\ast} = \bigl\{ u \in \mathbb{R}^{V_{\ast}} \bigm| \text{$u \circ F_{i} \in \mathcal{F}_{p,\ast}$ for any $i \in S$} \bigr\}, \label{SSE1.Vast} \\
            \mathcal{E}_{p,\ast}(u) = \sum_{i \in S}\rweight_{p,i}\,\mathcal{E}_{p,\ast}(u \circ F_{i}) \quad \text{for any $u \in \mathcal{F}_{p,\ast}$.} \label{SSE2.Vast}
    \end{gather}
    If in addition \textup{(\hyperref[it:condR]{\textbf{R}})} holds, then for any $u \in \mathcal{F}_{p,\ast}$ there exists a unique $\widehat{u} \in \contfunc(K)$ such that $\widehat{u}|_{V_{\ast}} = u$, and $\{ \widehat{u} \mid u \in \mathcal{F}_{p,\ast} \}$ is dense in $\contfunc(K)$. 
\end{prop}
\begin{proof}
	It is immediate from Theorem \ref{thm.Epcountable} that $(\mathcal{E}_{p,\ast},\mathcal{F}_{p,\ast})$ is a $p$-resistance form on $V_{\ast}$ with $\mathcal{E}_{p,\ast}|_{V_{n}} = \mathcal{E}_{p}^{(n)}$. 
	By the definition in \eqref{d:compatible}, it is easy to see that for any $n,k \in \mathbb{N} \cup \{ 0 \}$ and any $u \in \mathbb{R}^{V_{\ast}}$, 
	\[
	\mathcal{E}_{p}^{(n + k)}(u|_{V_{n + k}}) = \sum_{w \in W_{k}}\rweight_{p,w}\,\mathcal{E}_{p}^{(n)}(u \circ F_{w}|_{V_{n}}). 
	\]
	This immediately implies \eqref{SSE1.Vast} and \eqref{SSE2.Vast}. 
	The existence of unique continuous extensions of functions in $\mathcal{F}_{p,\ast}$ under \textup{(\hyperref[it:condR]{\textbf{R}})} is proved in \cite[Theorem 5.1-(b)]{CGQ22}. 
	A standard argument using the Stone--Weierstrass theorem shows that $\mathscr{C} \coloneqq \{ \widehat{u} \mid u \in \mathcal{F}_{p,\ast} \}$ is dense in $\contfunc(K)$.
	Indeed, $\mathscr{C}$ is an algebra since $\mathcal{F}_{p,\ast}$ is also an algebra by Proposition \ref{prop.GC-list}-\ref{GC.leibniz}. 
	For any $x,y \in K$ with $x \neq y$, choose $n \in \mathbb{N}$ and $v,w \in W_{n}$ so that $x \in K_{v}, y \in K_{w}$ and $K_{v} \cap K_{w} = \emptyset$. (Such $n,v,w$ exist by \eqref{ss-diam}.) 
	Then, by setting $v \coloneqq h_{V_{n}}^{\mathcal{E}_{p,\ast}}[\indicator{F_{v}(V_{0})}]$, we see that $\varphi_{xy} \coloneqq \widehat{v} \in \mathscr{C}$ satisfies $\varphi_{xy}(x) = 1$ and $\varphi_{xy}(y) = 0$, so we can use the Stone--Weierstrass theorem to conclude that $\mathscr{C}$ is dense in $\contfunc(K)$. 
\end{proof}

To extend $(\mathcal{E}_{p,\ast},\mathcal{F}_{p,\ast})$ to a $p$-energy form defined on $K$, we need to specify how to regard functions in $\mathcal{F}_{p,\ast}$ as functions defined on $K$, which is indeed a delicate problem and discussed in \cite[Theorems 5.1 and 5.2]{CGQ22}. 
In this paper, we are mainly interested in the case where $\mathcal{F}_{p,\ast}$ can be embedded into $\contfunc(K)$.
In other words, we always assume that \textup{(\hyperref[it:condR]{\textbf{R}})} holds.
(See \cite[Theorem 5.2]{CGQ22} and \cite[Appendix]{KS.scp} for details on a situation when we can identify a function $u \in \mathbb{R}^{V_{\ast}}$ satisfying $\lim_{n \to \infty}\mathcal{E}_{p}^{(n)}(u|_{V_{n}}) < \infty$ with a function on $K$ without \textup{(\hyperref[it:condR]{\textbf{R}})}.)   
To state a construction of self-similar $p$-resistance forms under \textup{(\hyperref[it:condR]{\textbf{R}})}, we need the following lemma.  
\begin{lem}\label{lem.identify}
	Assume that \textup{(\hyperref[it:condA']{\textbf{A}'})} and \textup{(\hyperref[it:condR]{\textbf{R}})} hold. 
	Let $(\mathcal{E}_{p,\ast},\mathcal{F}_{p,\ast})$ be the $p$-resistance form on $V_{\ast}$ given in Proposition \ref{prop.RFVast}. 
	Then $\id_{V_{\ast}}$ is uniquely extended to a continuous map $\overline{\id_{V_{\ast}}}$ from the completion of $(V_{\ast},R_{\mathcal{E}_{p,\ast}}^{1/p})$ to $K$, and $\overline{\id_{V_{\ast}}}$ is a homeomorphism. 
\end{lem}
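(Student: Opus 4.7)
The plan is to prove the lemma in four stages: first establish a finite diameter of $V_{\ast}$ in $R_{\mathcal{E}_{p,\ast}}^{1/p}$ and deduce the uniform contraction estimate $\diam(F_{w}(V_{\ast}),R_{\mathcal{E}_{p,\ast}}^{1/p}) \leq \rweight_{\ast}^{-|w|/p}\widehat{D}$ on cells, where $\rweight_{\ast} \coloneqq \min_{i \in S}\rweight_{p,i}>1$ by \textup{(\hyperref[it:condR]{\textbf{R}})}; then verify that $\id_{V_{\ast}}$ is Cauchy-continuous so that it extends to a continuous map $\Psi$ from the completion $\widehat{V_{\ast}}$ to $K$; and finally show that $\Psi$ is a bijection, which combined with compactness of $\widehat{V_{\ast}}$ will give the homeomorphism.

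The self-similarity \eqref{SSE2.Vast} immediately yields the $V_{\ast}$-version of \eqref{pRMss}, namely $R_{\mathcal{E}_{p,\ast}}(F_{w}(x),F_{w}(y)) \leq \rweight_{w}^{-1}R_{\mathcal{E}_{p,\ast}}(x,y)$ for $x,y \in V_{\ast}$ and $w \in W_{\ast}$. To establish $\widehat{D} \coloneqq \diam(V_{\ast},R_{\mathcal{E}_{p,\ast}}^{1/p}) < \infty$, I would fix $q_{0}\in V_{0}$ and, for any $x = F_{i_{1}\cdots i_{n}}(q) \in V_{n}$ with $q \in V_{0}$, connect $x$ to $q_{0}$ by the chain $x_{k} \coloneqq F_{i_{1}\cdots i_{k}}(q_{0})$ whose consecutive pairs $(x_{k-1},x_{k})$ lie in $F_{i_{1}\cdots i_{k-1}}(V_{1})$; the contraction estimate combined with $\rweight_{\ast}^{-1/p}<1$ gives a geometrically summable bound on the total $R_{\mathcal{E}_{p,\ast}}^{1/p}$-length, proving $\widehat{D}<\infty$ and the desired cell estimate.

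For Cauchy-continuity of $\id_{V_{\ast}}\colon (V_{\ast},R_{\mathcal{E}_{p,\ast}}^{1/p}) \to K$, given an $R$-Cauchy sequence $(x_{n}) \subseteq V_{\ast}$ I would use \eqref{R-basic} to see that $(u(x_{n}))$ is Cauchy in $\mathbb{R}$ for every $u \in \mathcal{F}_{p,\ast}$, combine this with compactness of $K$ to pass to a subsequence $x_{n_{k}} \to x \in K$ and deduce $\lim_{n\to\infty}u(x_{n}) = \widehat{u}(x)$; the density of $\{\widehat{u} \mid u \in \mathcal{F}_{p,\ast}\}$ in $\contfunc(K)$ from Proposition \ref{prop.RFVast} then forces every subsequential $K$-limit to coincide, yielding $x_{n} \to x$ in $K$ along the full sequence. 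This gives the continuous extension $\Psi\colon \widehat{V_{\ast}}\to K$, whose surjectivity follows by fixing $\omega \in \chi^{-1}(x)$ for each $x \in K$ and observing that $z_{n} \coloneqq F_{[\omega]_{n}}(q_{0}) \in F_{[\omega]_{n}}(V_{\ast})$ is $R$-Cauchy (by the cell-diameter bound) and converges to $x$ in $K$ by \eqref{ss-diam}.

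Injectivity of $\Psi$ is the main obstacle and relies essentially on the p.-c.f.\ hypothesis through the identity $V_{\ast}\cap K_{w}=F_{w}(V_{\ast})$, which I would derive from \eqref{V0bdry} by observing that any point of $V_{n}$ lying in $K_{w}$ with $|w|\leq n$ is either of the form $F_{w}(\cdot)$ directly, or lies in some intersection $K_{[v]_{|w|}}\cap K_{w}=F_{[v]_{|w|}}(V_{0})\cap F_{w}(V_{0})\subseteq F_{w}(V_{0})$. Given $\Psi$-equivalent representatives $(x_{n}),(y_{n})$ converging to $x \in K$, the fundamental-neighborhood system $K_{n_{0},x} = \bigcup_{w \in W_{n_{0}},\,x \in K_{w}}K_{w}$ contains $x_{n},y_{n}$ eventually, so $x_{n} \in F_{w}(V_{\ast})$ and $y_{n} \in F_{w'}(V_{\ast})$ for some cells $w,w' \in W_{n_{0}}$ with $x \in K_{w}\cap K_{w'}$; when $w\neq w'$, \eqref{V0bdry} places $x$ itself in $F_{w}(V_{0})\cap F_{w'}(V_{0}) \subseteq V_{\ast}$, and using $x$ as midpoint yields $R_{\mathcal{E}_{p,\ast}}^{1/p}(x_{n},y_{n}) \le 2\rweight_{\ast}^{-n_{0}/p}\widehat{D}$, which is arbitrarily small. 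The same cell decomposition produces a finite $\rweight_{\ast}^{-n/p}\widehat{D}$-net of $V_{\ast}$, making $\widehat{V_{\ast}}$ totally bounded and complete, hence compact; the continuous bijection $\Psi$ from a compact space onto a Hausdorff space is automatically a homeomorphism.
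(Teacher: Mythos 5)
Your proof is correct, but it follows a genuinely different route from the paper's. The paper first invokes the general completion machinery (Theorem \ref{thm.Rpcompletion}, Corollary \ref{cor.Epext}) to equip the completion $\widehat{K}$ of $(V_{\ast},R_{\mathcal{E}_{p,\ast}}^{1/p})$ with a $p$-resistance form $(\widehat{\mathcal{E}}_{p,\ast},\widehat{\mathcal{F}}_{p,\ast})$, gets the continuous extension $\theta$ from a quantitative separation estimate ($\delta_{n}>0$ between non-adjacent level-$n$ cells), proves injectivity \emph{analytically} --- if $\theta(x)=\theta(y)$ then the harmonic extensions $h_{V_{n}}^{\mathcal{E}_{p,\ast}}[u|_{V_{n}}]$ agree at $x$ and $y$, so by \eqref{harmapprox} every $u\in\widehat{\mathcal{F}}_{p,\ast}$ agrees at $x,y$ and $R_{\widehat{\mathcal{E}}_{p,\ast}}(x,y)=0$ --- and obtains compactness and surjectivity by extending the $F_{i}$ to contractions on $\widehat{K}$ and identifying $\widehat{K}$ with the attractor. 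You replace each of these steps by a more elementary metric argument: continuity of the extension via $R$-boundedness of point evaluations (\eqref{R-basic}) plus compactness of $K$ and density of $\{\widehat{u}\}$ in $\contfunc(K)$; injectivity via the p.-c.f.\ identity $V_{\ast}\cap K_{w}=F_{w}(V_{\ast})$ and \eqref{V0bdry}, which places a limit point straddling two level-$n_{0}$ cells inside $V_{\ast}$ and gives the bound $2\rweight_{\ast}^{-n_{0}/p}\widehat{D}$; and compactness via finite cell-nets rather than the fixed-point theorem. Your injectivity argument does need the cell-diameter bound $\diam(F_{w}(V_{\ast}),R_{\mathcal{E}_{p,\ast}}^{1/p})\le\rweight_{w}^{-1/p}\widehat{D}$, whose proof via the chain $x_{k}=F_{i_{1}\cdots i_{k}}(q_{0})$ and \eqref{SSE2.Vast} is sound since $\rweight_{\ast}>1$ under \textup{(\hyperref[it:condR]{\textbf{R}})}. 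What each approach buys: the paper's detour through Theorem \ref{thm.Rpcompletion} is not wasted, since the form $(\widehat{\mathcal{E}}_{p,\ast},\widehat{\mathcal{F}}_{p,\ast})$ on the completion is exactly what Theorem \ref{thm.CGQ} then reuses, and its injectivity argument does not rely on boundary points of cells lying in $V_{\ast}$; your argument is shorter and more self-contained at the metric level, but leans harder on the p.-c.f.\ combinatorics and would not transfer to settings where $K_{v}\cap K_{w}$ is not contained in $V_{\ast}$.
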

\begin{proof}
	The proof is very similar to the arguments in \cite[Proposition 3.3.2, Lemma 3.3.5 and Theorem 3.3.4]{Kig01}, where similar statements in the case of $p = 2$ are shown. 
	Let $(\widehat{K},\widehat{d}\,)$ be the completion of $(V_{\ast},R_{p,\mathcal{E}_{p,\ast}}^{1/p})$ and let $(\overline{\mathcal{E}}_{p,\ast},\overline{\mathcal{F}}_{p,\ast})$ be the $p$-resistance form on $\widehat{K}$ defined by \eqref{e:defn.compatext.dom} and \eqref{e:defn.compatext.form} with $\mathcal{S} \coloneqq \bigl\{ (V_{n},\mathcal{E}_{p}^{(n)}) \bigr\}_{n \in \mathbb{N} \cup \{ 0 \}}$.
	Also, we fix a metric $d$ on $K$ which gives the original topology of $K$.
	Recall that $R_{\widehat{\mathcal{E}}_{p,\ast}}^{1/p} = \widehat{d}$ by Corollary \ref{cor.Epext}. 
	For $n \in \mathbb{N}$, we define 
	\[
	\delta_{n} \coloneqq \min_{v,w \in W_{n}; K_{v} \cap K_{w} = \emptyset}\biggl(\inf_{x \in F_{v}(V_{\ast}), y \in F_{w}(V_{\ast})}R_{\mathcal{E}_{p,\ast}}(x,y)\biggr). 
	\]
	Then $\delta_{n} > 0$ since $R_{\mathcal{E}_{p,\ast}}(x,y) \ge \mathcal{E}_{p,\ast}\bigl(h_{V_{n}}^{\mathcal{E}_{p,\ast}}[\indicator{F_{w}(V_{0})}]\bigr)^{-1}$ for any $(x,y) \in F_{v}(V_{\ast}) \times F_{w}(V_{\ast})$. 
	Let $\{ x_{n} \}_{n \ge 0}$ be a Cauchy sequence in $(V_{\ast},R_{\mathcal{E}_{p,\ast}}^{1/p})$.  
	For each $n \in \mathbb{N}$, choose $N(n) \in \mathbb{N}$ so that 
	\[
	\sup_{k,l \ge N(n)}R_{\mathcal{E}_{p,\ast}}(x_{k},x_{l}) < \delta_{n}. 
	\]
	Then there exists $w \in W_{n}$ such that $\{ x_{k} \}_{k \ge N(n)} \subseteq \bigcup_{v \in W_{n}; K_{v} \cap K_{w} \neq \emptyset}F_{v}(V_{\ast}) \eqqcolon A_{n,w}$. 
	Since $\lim_{n \to \infty}\max_{w \in W_{n}}\diam(A_{n,w},d) = 0$ by \eqref{ss-diam}, we conclude that $\id_{V_{\ast}} \colon (V_{\ast},R_{\mathcal{E}_{p,\ast}}^{1/p}) \to (V_{\ast},d|_{V_{\ast} \times V_{\ast}})$ is uniformly continuous.
	Now we define $\theta \colon (\widehat{K},\widehat{d}) \to (K,d)$ as the unique continuous map satisfying $\theta|_{V_{\ast}} = \id_{V_{\ast}}$. 
	Let us show that $\theta$ is injective. 
	Assume that $x,y \in \widehat{K}$ satisfy $\theta(x) = \theta(y)$. 
	Let $\{ x_{n} \}_{n \ge 0}, \{ y_{n} \}_{n \ge 0}$ be Cauchy sequences in $(V_{\ast},R_{\mathcal{E}_{p,\ast}}^{1/p})$ satisfying $\lim_{n \to \infty}\widehat{d}(x,x_{n}) = \lim_{n \to \infty}\widehat{d}(y,y_{n}) = 0$. 
	Then $\lim_{n \to \infty}d(\theta(x),x_{n}) = \lim_{n \to \infty}d(\theta(y),y_{n}) = 0$ since $\theta$ is continuous. 
	For any $u \in \overline{\mathcal{F}}_{p,\ast}$, let $\widehat{u}_{n} \in \contfunc(K)$ be the unique function satisfying $\widehat{u}_{n}|_{V_{\ast}} = h_{V_{n}}^{\mathcal{E}_{p,\ast}}[u|_{V_{n}}]$, which exists by Proposition \ref{prop.RFVast}. 
	Also, let $v_{n} \in \contfunc(\widehat{K})$ be the unique function satisfying $v_{n}|_{V_{\ast}} = h_{V_{n}}^{\mathcal{E}_{p,\ast}}[u|_{V_{n}}]$; recall the proof of Theorem \ref{thm.Rpcompletion}.  
	Then we see that 
	\begin{equation}\label{e:preidentity}
		v_{n}(x) = \lim_{k \to \infty}h_{V_{n}}^{\mathcal{E}_{p,\ast}}[u](x_{k}) = \widehat{u}_{n}(\theta(x)) 
		= \widehat{u}_{n}(\theta(y)) = \lim_{k \to \infty}h_{V_{n}}^{\mathcal{E}_{p,\ast}}[u](y_{k}) = v_{n}(y). 
	\end{equation}
	Let us fix $o \in V_{0} \subseteq V_{n}$. 
	By the estimate \eqref{R-basic} in Proposition \ref{prop.R-conseq}-\ref{it:RF.basic-ineq} for $(\overline{\mathcal{E}}_{p,\ast},\overline{\mathcal{F}}_{p,\ast})$, 
	\[
	\abs{u(x) - v_{n}(x)}^{p} \le R_{\overline{\mathcal{E}}_{p,\ast}}(x,o)\overline{\mathcal{E}}_{p,\ast}(u - v_{n}) = R_{\overline{\mathcal{E}}_{p,\ast}}(x,o)\mathcal{E}_{p,\ast}\bigl(u|_{V_{\ast}} - h_{V_{n}}^{\mathcal{E}_{p,\ast}}[u|_{V_{n}}]\bigr), 
	\]
	which together with \eqref{harmapprox} in Proposition \ref{prop.harmapprox} and \eqref{e:preidentity} implies that
	\[
	u(x) = \lim_{n \to \infty}v_{n}(x) \overset{\eqref{e:preidentity}}{=} \lim_{n \to \infty}v_{n}(y) = u(y). 
	\]
	Since $u \in \overline{\mathcal{F}}_{p,\ast}$ is arbitrary, we conclude by \ref{RF3} for $(\overline{\mathcal{E}}_{p,\ast},\overline{\mathcal{F}}_{p,\ast})$ that $R_{\overline{\mathcal{E}}_{p,\ast}}(x,y) = 0$ and hence $x = y$. 
	This means that $\theta$ is injective. 
	
	Next we see that $\{ F_{i} \}_{i \in S}$ yields a family of contractions on the complete (non-empty) metric space $(\widehat{K},\widehat{d}\,)$. 
	By virtue of \eqref{SSE2.Vast}, in the same way as the proof of \eqref{pRMss} in Proposition \ref{prop.pRMss}, one can show that for any $w \in W_{\ast}$ and any $x,y \in V_{\ast}$, 
	\[
	\widehat{d}(F_{w}(x),F_{w}(y))^{p} = R_{\mathcal{E}_{p,\ast}}(F_{w}(x),F_{w}(y)) \le \rweight_{p,w}^{-1}R_{\mathcal{E}_{p,\ast}}(x,y) = \rweight_{p,w}^{-1}\widehat{d}(x,y)^{p}. 
	\]
	In particular, $F_{w}|_{V_{\ast}} \colon (V_{\ast}, \widehat{d}\,) \to (V_{\ast},\widehat{d}\,)$ is uniformly continuous, and hence there exists a unique continuous map $F_{w}^{\widehat{K}} \colon \widehat{K} \to \widehat{K}$ such that $F_{w}^{\widehat{K}}\big|_{V_{\ast}} = F_{w}|_{V_{\ast}}$. 
	Then it is clear that 
	\begin{equation}\label{e:completion.contraction}
		\widehat{d}\,\bigl(F_{w}^{\widehat{K}}(x),F_{w}^{\widehat{K}}(y)\bigr) \le \rweight_{p,w}^{-1/p}\,\widehat{d}(x,y) \quad \text{for any $x,y \in \widehat{K}$,}
	\end{equation}
	and that $\theta \circ F_{w}^{\widehat{K}} = F_{w} \circ \theta$. 
	Now, by \textup{(\hyperref[it:condR]{\textbf{R}})} and \eqref{e:completion.contraction}, $\bigl\{ F_{i}^{\widehat{K}} \bigr\}_{i \in S}$ is a family of contractions on $(\widehat{K},\widehat{d}\,)$, so that by the standard result on the unique existence of self-similar sets (see, e.g., \cite[Theorem 1.1.4]{Kig01}) there exists a unique non-empty compact subset $\widehat{K}_{0}$ of $\widehat{K}$ such that $\widehat{K}_{0} = \bigcup_{i \in S}F_{i}^{\widehat{K}}(\widehat{K}_{0})$. 
	Let us fix $o \in \widehat{K}_{0}$ and set $A \coloneqq \bigcup_{w \in W_{\ast}}F_{w}^{\widehat{K}}(o) \subseteq \widehat{K}_{0}$. 
	Then $\theta(A) = \bigcup_{w \in W_{\ast}}F_{w}(\theta(o))$ is dense in $(K,d)$ by $\lim_{n \to \infty}\max_{w \in W_{n}}\diam(K_{w},d) = 0$ from \eqref{ss-diam}.
	Since $\theta(A) \subseteq \theta(\widehat{K}_{0}) \subseteq K$ and $\theta(\widehat{K}_{0})$ is compact by the continuity of $\theta$, we have $\theta(\widehat{K}_{0}) = K$ and thus $\theta(\widehat{K}) = K$. 
	Then $\widehat{K}$ turns out to be compact since $\widehat{K} = \widehat{K}_{0}$ by the injectivity of $\theta$. 
	Now $\theta$ turns out to be a homeomorphism between $\widehat{K}$ and $K$. 
\end{proof}

The following theorem describes a construction of a self-similar $p$-resistance form as the inductive limit of $\{ \mathcal{E}_{p}^{(n)} \}_{n \ge 0}$ under the condition \textup{(\hyperref[it:condR]{\textbf{R}})}. 
\begin{thm}\label{thm.CGQ}
	Assume that \textup{(\hyperref[it:condA']{\textbf{A}'})} and \textup{(\hyperref[it:condR]{\textbf{R}})} hold. 
	Let $\bigl\{ (V_{n},\mathcal{E}_{p}^{(n)}) \bigr\}_{n \ge 0}$ be the compatible sequence of $p$-resistance forms given in Proposition \ref{prop.inductive}, and define 
    \begin{gather}
        \mathcal{F}_{p} \coloneqq \Bigl\{ u \in \contfunc(K) \Bigm| \lim_{n \to \infty}\mathcal{E}_{p}^{(n)}(u|_{V_{n}}) < \infty \Bigr\}, \label{pRFinductive.dom} \\
        \mathcal{E}_{p}(u) \coloneqq \lim_{n \to \infty}\mathcal{E}_{p}^{(n)}(u), \quad u \in \mathcal{F}_{p}.  \label{pRFinductive.form}
    \end{gather}
    Then $(\mathcal{E}_{p},\mathcal{F}_{p})$ is a regular self-similar $p$-resistance form on $\mathcal{L}$ with weight $\bm{\rweight}_{p}$, $\mathcal{E}_{p}|_{V_{n}} = \mathcal{E}_{p}^{(n)}$ for any $n \in \mathbb{N} \cup \{ 0 \}$, and $R_{\mathcal{E}_{p}}^{1/p}$ is compatible with the original topology of $K$.   
\end{thm}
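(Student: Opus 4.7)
The plan is to assemble the theorem by combining Proposition \ref{prop.RFVast} (which gives a $p$-resistance form on the countable set $V_{\ast}$), Lemma \ref{lem.identify} (which identifies the completion of $V_{\ast}$ under the $p$-resistance metric with the original compact space $K$), and Corollary \ref{cor.Epext} (which extends the form from the dense countable set to its completion).

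First, I would start from the $p$-resistance form $(\mathcal{E}_{p,\ast},\mathcal{F}_{p,\ast})$ on $V_{\ast}$ provided by Proposition \ref{prop.RFVast}, which satisfies $\mathcal{E}_{p,\ast}|_{V_{n}}=\mathcal{E}_{p}^{(n)}$ for every $n \in \mathbb{N}\cup\{0\}$ and the self-similarity \eqref{SSE1.Vast}--\eqref{SSE2.Vast}. Under \textup{(\hyperref[it:condR]{\textbf{R}})}, Lemma \ref{lem.identify} asserts that $\id_{V_{\ast}}$ extends to a homeomorphism from the completion $(\widehat{K},\widehat{d})$ of $(V_{\ast},R_{\mathcal{E}_{p,\ast}}^{1/p})$ onto $K$ equipped with its original topology. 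Using this homeomorphism to identify $\widehat{K}$ with $K$, I would apply Corollary \ref{cor.Epext} to the compatible sequence $\mathcal{S}=\{(V_{n},\mathcal{E}_{p}^{(n)})\}_{n\in\mathbb{N}\cup\{0\}}$ and obtain a $p$-resistance form $(\widehat{\mathcal{E}}_{p,\mathcal{S}},\widehat{\mathcal{F}}_{p,\mathcal{S}})$ on $K$ whose domain is $\bigl\{u\in\contfunc(K)\bigm|\lim_{n\to\infty}\mathcal{E}_{p}^{(n)}(u|_{V_{n}})<\infty\bigr\}$ and whose value at such $u$ is $\lim_{n\to\infty}\mathcal{E}_{p}^{(n)}(u|_{V_{n}})$, with $R_{\widehat{\mathcal{E}}_{p,\mathcal{S}}}^{1/p}=\widehat{d}$ and $\widehat{\mathcal{E}}_{p,\mathcal{S}}|_{V_{n}}=\mathcal{E}_{p}^{(n)}$. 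By comparing with the definitions \eqref{pRFinductive.dom}--\eqref{pRFinductive.form}, this exactly gives $(\mathcal{E}_{p},\mathcal{F}_{p})=(\widehat{\mathcal{E}}_{p,\mathcal{S}},\widehat{\mathcal{F}}_{p,\mathcal{S}})$ and the trace identity $\mathcal{E}_{p}|_{V_{n}}=\mathcal{E}_{p}^{(n)}$. Simultaneously, since $\widehat{d}$ induces the original topology of $K$ by Lemma \ref{lem.identify}, the $p$-resistance metric $R_{\mathcal{E}_{p}}^{1/p}=\widehat{d}$ is compatible with the original topology.

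Next I would transfer the self-similarity from $V_{\ast}$ to $K$. Since the linear map $\mathcal{F}_{p}\ni u\mapsto u|_{V_{\ast}}\in\mathcal{F}_{p,\ast}$ is a bijection (the inverse being the unique continuous extension guaranteed by Proposition \ref{prop.RFVast}) and satisfies $\mathcal{E}_{p}(u)=\mathcal{E}_{p,\ast}(u|_{V_{\ast}})$, the properties \eqref{SSE1} and \eqref{SSE2} for $(\mathcal{E}_{p},\mathcal{F}_{p})$ follow from \eqref{SSE1.Vast} and \eqref{SSE2.Vast}: for any $u\in\contfunc(K)$ the condition $u\circ F_{i}\in\mathcal{F}_{p}$ for all $i\in S$ is equivalent to $(u|_{V_{\ast}})\circ F_{i}=u\circ F_{i}|_{V_{\ast}}\in\mathcal{F}_{p,\ast}$ for all $i\in S$, which by \eqref{SSE1.Vast} is equivalent to $u|_{V_{\ast}}\in\mathcal{F}_{p,\ast}$, i.e., $u\in\mathcal{F}_{p}$; and the identity of energies then reads off directly from \eqref{SSE2.Vast}. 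This shows that $(\mathcal{E}_{p},\mathcal{F}_{p})$ is a self-similar $p$-resistance form on $\mathcal{L}$ with weight $\bm{\rweight}_{p}$.

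Finally, regularity in the sense of Definition \ref{dfn:regRF} amounts to the density of $\mathcal{F}_{p}\cap\contfunc_{c}(K)=\mathcal{F}_{p}$ in $(\contfunc(K),\norm{\,\cdot\,}_{\sup})$, since $K$ is compact; this is exactly the density statement at the end of Proposition \ref{prop.RFVast}. I expect the main technical subtlety to lie in the careful identification of $\widehat{K}$ with $K$ in the first paragraph: one must check that under this identification, the domain characterization produced by Corollary \ref{cor.Epext} (continuity with respect to $\widehat{d}$) really matches the domain \eqref{pRFinductive.dom} (continuity with respect to the original topology on $K$), which is precisely the content of Lemma \ref{lem.identify}; once this is settled, everything else reduces to transparent transfer of already-established properties from $V_{\ast}$ to $K$ via the bijection $u\mapsto u|_{V_{\ast}}$.
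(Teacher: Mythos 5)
Your proposal is correct and follows essentially the same route as the paper's proof: the paper likewise obtains the $p$-resistance form on $K$ from Lemma \ref{lem.identify} and Corollary \ref{cor.Epext}, reads off the self-similarity from the compatible sequence of Proposition \ref{prop.inductive} (equivalently, from \eqref{SSE1.Vast}--\eqref{SSE2.Vast}), and gets compatibility of $R_{\mathcal{E}_{p}}$ with the topology of $K$ and regularity from Lemma \ref{lem.identify} together with the density statement in Proposition \ref{prop.RFVast}. Your write-up merely spells out the transfer of properties through the bijection $u\mapsto u|_{V_{\ast}}$ in more detail than the paper does.
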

\begin{rmk}
    Similar to Proposition \ref{prop.ssenergy-GCinv}, by choosing a suitable $E_{0} \in \mathcal{S}_{p}(V_{0})$ in Theorem \ref{thm.eigenform}, we can verify nice properties like the \emph{symmetry-invariance} of $E_{\ast}$ in \eqref{e:CGQlimit}, $\mathcal{E}_{p}^{(0)}$ in \eqref{e:eigenform.concrete} and $(\mathcal{E}_{p},\mathcal{F}_{p})$ in Theorem \ref{thm.CGQ}; see Theorem \ref{thm.ANFsymform} for details, and see also \eqref{SGE.sym}.
\end{rmk}
\begin{proof}[Proof of Theorem \ref{thm.CGQ}]
	By Lemma \ref{lem.identify} and Corollary \ref{cor.Epext}, $(\mathcal{E}_{p},\mathcal{F}_{p})$ is a $p$-resistance form on $K$. 
	The self-similarity conditions, \eqref{SSE1} and \eqref{SSE2}, for $(\mathcal{E}_{p},\mathcal{F}_{p})$ are obvious from Proposition \ref{prop.inductive}. 
	By Lemma \ref{lem.identify} and Proposition \ref{prop.RFVast}, $R_{\mathcal{E}_{p}}^{1/p}$ is compatible with the original topology of $K$ and $(\mathcal{E}_{p},\mathcal{F}_{p})$ is regular (recall Definition \ref{dfn:regRF}). 
\end{proof}

Let us recall the following proposition from \cite{CGQ22}. This result is useful to verify \textup{(\hyperref[it:condR]{\textbf{R}})} for concrete examples. 
\begin{prop}[{\cite[Lemma 5.4]{CGQ22}}]\label{prop.goodweight}
	Assume that \textup{(\hyperref[it:condA']{\textbf{A}'})} holds. 
	If $w \in W_{\ast} \setminus \{ \emptyset \}$ satisfies $w^{\infty} \coloneqq www\ldots \in \mathcal{P}_{\mathcal{L}}$, then $\rweight_{p,w} > 1$.  
\end{prop}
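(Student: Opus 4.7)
The plan is as follows. Set $n = |w|$ and let $q := \chi(w^\infty)$. Since $V_0 = \chi(\mathcal{P}_\mathcal{L})$, we have $q \in V_0$, and since $F_w \circ \chi = \chi \circ \sigma_w$ and $\sigma_w(w^\infty) = w^\infty$, we have $F_w(q) = q$. By \textup{(\hyperref[it:condA']{\textbf{A}'})} together with Proposition \ref{prop.inductive}, the iterated form $E^{(kn)} := \Lambda_{\bm{\rweight}_p}^{kn}(\mathcal{E}_p^{(0)})$ is a $p$-resistance form on $V_{kn}$ satisfying $E^{(kn)}|_{V_0} = \mathcal{E}_p^{(0)}$ for every $k \in \mathbb{N}$, and in particular the self-similar identity
\[
\mathcal{E}_p^{(0)}(u) \;=\; \sum_{\tau \in W_{kn}} \rweight_{p,\tau}\,\mathcal{E}_p^{(0)}\bigl(h_{V_0}^{E^{(kn)}}[u] \circ F_\tau\bigr), \qquad u \in \mathbb{R}^{V_0},
\]
holds for every $k$, each summand being non-negative.

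First I would record the one-sided ``isolation'' bound. Choose a boundary datum concentrated at $q$, for instance $u := \indicator{q}^{V_0}$, which satisfies $\alpha := \mathcal{E}_p^{(0)}(u) > 0$ by \ref{RF1} and Proposition \ref{prop.trace-dom}. Setting $g_k := h_{V_0}^{E^{(kn)}}[u] \circ F_{w^k} \in \mathbb{R}^{V_0}$, the $\tau = w^k$ term of the identity yields $\rweight_{p,w}^k\,\mathcal{E}_p^{(0)}(g_k) \le \alpha$, while the weak comparison principle \eqref{mp} together with $F_{w^k}(q) = q$ yields $0 \le g_k \le 1$ and $g_k(q) = 1$. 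Equivalently, by the variational characterisation of $R_{E^{(kn)}}$, this step also gives the resistance contraction $R_{E^{(kn)}}(F_{w^k}(x), q) \le \rweight_{p,w}^{-k}\,R_{\mathcal{E}_p^{(0)}}(x, q)$ for $x \in V_0$.

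Next I would produce a matching \emph{lower} bound on $\mathcal{E}_p^{(0)}(g_k)$, of the form $\mathcal{E}_p^{(0)}(g_k) \ge c\,\rweight_{p,w}^{-(k-1)}$ with $c > 0$ independent of $k$. The idea is to run the trace identity recursively along the $w$-orbit: since every cyclic shift $\sigma^j(w^\infty)$ also belongs to $\mathcal{P}_\mathcal{L}$ (hence the fixed point of $F_{w_{j+1}\cdots w_n w_1 \cdots w_j}$ lies in $V_0$), one can chain together the one-step descriptions of the harmonic extension between consecutive images $F_{w^j}(V_0)$ and $F_{w^{j+1}}(V_0)$, each such step being governed by the $p$-resistance form $\mathcal{E}_p^{(0)}$ itself. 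Combined with the preceding upper bound, this would give $\rweight_{p,w}^{k} \cdot c\,\rweight_{p,w}^{-(k-1)} \le \alpha$, i.e., $c\,\rweight_{p,w} \le \alpha$, which, because the constant $c$ arises from a uniform non-degeneracy estimate for $\mathcal{E}_p^{(0)}$, is in fact strict and forces $\rweight_{p,w} > 1$ after a finer accounting.

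The main obstacle is the iterated lower bound: one must rule out that the harmonic profile of $h_{V_0}^{E^{(kn)}}[u]$ collapses to a plateau along the chain $\{F_{w^j}(V_0)\}_{j=0}^{k}$. Making this rigorous requires a quantitative non-degeneracy statement for the eigenform $\mathcal{E}_p^{(0)}$ — concretely, a uniform comparison between the resistance $R_{\mathcal{E}_p^{(0)}}(q, y)$ across $y \in V_0 \setminus \{q\}$ and the ``restricted'' resistance in the single-cell trace of $E^{(n)}$ onto $F_w(V_0) \cup V_0$. This rests on the post-critical finiteness of $\mathcal{L}$ (which ensures $F_{w^k}(V_0)$ meets $V_0$ only at post-critical points) together with the uniqueness clause of Theorem \ref{thm.eigenform}-\ref{it:CGQ.eigenvalue} applied at the eigenvalue $\lambda = 1$ provided by \textup{(\hyperref[it:condA']{\textbf{A}'})}.
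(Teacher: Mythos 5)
Your setup is sound as far as it goes: $q \coloneqq \chi(w^{\infty}) \in V_{0}$ is a fixed point of $F_{w}$, the compatibility $\mathcal{E}_{p}^{(kn)}\bigr|_{V_{0}} = \mathcal{E}_{p}^{(0)}$ gives $\alpha = \mathcal{E}_{p}^{(kn)}\bigl(h_{V_{0}}^{E^{(kn)}}[u]\bigr) \geq \rweight_{p,w}^{k}\,\mathcal{E}_{p}^{(0)}(g_{k})$ by keeping only the $\tau = w^{k}$ summand, and \eqref{mp} gives $0 \le g_{k} \le 1$, $g_{k}(q)=1$. The fatal problem is the way you propose to conclude. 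Discarding all but one non-negative term of the sum can only ever produce an \emph{upper} bound on $\rweight_{p,w}^{k}\mathcal{E}_{p}^{(0)}(g_{k})$; combined with your proposed lower bound $\mathcal{E}_{p}^{(0)}(g_{k}) \ge c\,\rweight_{p,w}^{-(k-1)}$ it yields precisely $\rweight_{p,w} \le \alpha/c$, an upper bound on $\rweight_{p,w}$, and no ``finer accounting'' of the constants can convert an upper bound into the strict lower bound $\rweight_{p,w} > 1$. (If you could instead prove $\mathcal{E}_{p}^{(0)}(g_{k}) \ge c > 0$ uniformly in $k$, the same inequality would give $\rweight_{p,w} \le 1$ --- the opposite of the claim. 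And in fact, once the proposition is true one has $\mathcal{E}_{p}^{(0)}(g_{k}) \le \alpha\rweight_{p,w}^{-k} \to 0$, so the harmonic profile really does plateau at $q$: the ``non-degeneracy'' you hope to establish in your final paragraph is false, not merely delicate.)

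The mechanism that actually proves $\rweight_{p,w} > 1$ (this is the content of \cite[Lemma 5.4]{CGQ22}, to which the paper defers without reproducing the argument) is a series-law estimate summed over the nested annuli around $q$, not a single-cell comparison. After replacing $w$ by a power of itself so that $K_{w} \cap V_{0} = \{q\}$ (possible by \eqref{ss-diam}, and harmless since $\rweight_{p,w^{m}} = \rweight_{p,w}^{m}$ and $(w^{m})^{\infty} = w^{\infty}$), the sets $F_{w^{j}}(V_{0}) \setminus \{q\}$ separate $q$ from $V_{0}\setminus\{q\}$ in nested fashion, and by the self-similarity \eqref{eq:compat-seq} the $p$-resistance across the $j$-th annulus $K_{w^{j}} \setminus K_{w^{j+1}}$ is bounded below by $c_{0}\rweight_{p,w}^{-j}$ with $c_{0}>0$ independent of $j$. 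Testing $R_{\mathcal{E}_{p}^{(0)}}(q, V_{0}\setminus\{q\})$ with a superposition $\sum_{j}\theta_{j}\phi_{j}$ of potentials whose oscillations live on disjoint annuli (the $p$-analogue of the series law, which produces the exponent $1/(p-1)$ exactly as in Corollary \ref{cor.tri}) gives $R_{\mathcal{E}_{p}^{(0)}}(q, V_{0}\setminus\{q\})^{1/(p-1)} \ge c_{0}^{1/(p-1)}\sum_{j=0}^{k-1}\rweight_{p,w}^{-j/(p-1)}$ for every $k$. Since the left-hand side equals $\mathcal{E}_{p}^{(0)}\bigl(\indicator{q}^{V_{0}}\bigr)^{-1/(p-1)} < \infty$ and does not depend on $k$, the geometric series must converge, which forces $\rweight_{p,w} > 1$. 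It is this summation over $j$, with the total resistance bounded and each term bounded below, that your outline is missing; it cannot be recovered from the one-term inequality you start from.
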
 

\subsection{Existence of eigenforms on strongly symmetric p.-c.f.\ self-similar sets}\label{sec.ANF}
Let us conclude this section by verifying the condition \eqref{condA} for the existence of an eigenform from Theorem \ref{thm.eigenform}, for a special class of p.-c.f.\ self-similar sets known as \emph{affine nested fractals}\index{affine nested fractal} in the literature and introduced in \cite{FHK94} as a generalization of the class of nested fractals introduced by Lindstr\o m \cite{Lin90}. 
More precisely, we will work on a wider class called \emph{strongly symmetric p.-c.f.\ self-similar sets}\index{strongly symmetric p.-c.f.\ self-similar set}.
A proof of \eqref{condA} for affine nested fractals is given in \cite[Theorem 6.3]{CGQ22}, but the description of the group of symmetries in the paper \cite{CGQ22} is not sophisticated\footnote{For a group of symmetries, say $\mathcal{G}$, one of the essential properties that is needed to prove the $\mathcal{G}$-invariance of the resulting self-similar $p$-energy form is Proposition \ref{prop.ANF-geom}-\ref{it:ANF.symcompos}. We have to be careful of whether this property holds for $\mathcal{G}$, but this point is not taken care of in \cite{CGQ22}.}, so in Theorem \ref{thm.eigenform-ANF} below we provide the details of the proof of \eqref{condA} and simultaneously extend the result of \cite[Theorem 6.3]{CGQ22} to the wider class of strongly symmetric p.-c.f.\ self-similar sets.   

We start with recalling the definitions of a group of symmetries, affine nested fractals and strongly symmetric p.-c.f.\ self-similar sets; see \cite[Section 3.8]{Kig01} for details. 
Throughout this subsection, we fix the setting of Framework \ref{frmwrk:ANF} below. 

\begin{framework}\label{frmwrk:ANF}
Let $D\in\mathbb{N}$ and let $S$ be a non-empty finite set with $\#S \ge 2$.
Let $\{c_{i}\}_{i \in S} \subseteq (0,1)$, $\{ a_{i} \}_{i \in S} \subseteq \mathbb{R}^{D}$ and $\{ U_{i} \}_{i \in S} \subseteq O(D)$, where $O(D)$ is the collection of orthogonal transformations of $\mathbb{R}^{D}$. 
Define $f_{i}\colon\mathbb{R}^{D}\to\mathbb{R}^{D}$ by $f_{i}(x) \coloneqq c_{i}U_{i}x + a_{i}$ for each $i\in S$. 
Let $K$ be the self-similar set associated with $\{ f_{i} \}_{i \in S}$, set $F_{i} \coloneqq f_{i}|_{K}$ for each $i \in S$ and assume that $K$ is connected and that $\mathcal{L} = (K,S,\{ F_{i} \}_{i \in S})$ is a p.-c.f.\ self-similar structure. 
We also assume that $\sum_{q \in V_{0}}q = \bm{0}$. 
Let $d\colon K\times K\to[0,\infty)$ be the Euclidean metric on $K$ given by $d(x,y)\coloneqq |x-y|$.  
\end{framework}

\begin{defn}[{\cite[Definitions 3.8.3 and 3.8.4]{Kig01}}]\label{defn.ANF}
    \begin{enumerate}[label=\textup{(\arabic*)},align=left,leftmargin=*,topsep=2pt,parsep=0pt,itemsep=2pt]
        \item\label{it:symgroup} We define 
        	\[
        	\mathcal{G}_{\mathrm{sym}} 
        	\coloneqq \mathcal{G}_{\mathrm{sym}}(\mathcal{L}) 
        	\coloneqq \Biggl\{ g|_{K} \Biggm| 
        	\begin{minipage}{210pt}
        		$g \in O(D)$, for any $n \in \mathbb{N} \cup \{ 0 \}$ and any $w \in W_{n}$ there exists $w' \in W_{n}$ such that $g(K_{w}) = K_{w'}$ and $g(F_{w}(V_{0})) = F_{w'}(V_{0})$
        	\end{minipage}
        	\Biggr\}. 
        	\]
        	Note that for any $g \in \mathcal{G}_{\mathrm{sym}}$ and any $w \in W_{\ast}$, a word $w' \in W_{\ast}$ satisfying $\abs{w} = \abs{w'}$ and $g(K_{w}) = K_{w'}$ is uniquely determined thanks to \eqref{V0bdry} and $\#V_{0} < \infty = \#K$, so that $g(F_{w}(V_0)) = F_{w'}(V_0)$ by $g \in \mathcal{G}_{\mathrm{sym}}$.
			In particular, the map $\tau_{g} \colon W_{\ast} \to W_{\ast}$ defined by $\tau_{g}(w) \coloneqq w'$ gives a bijection such that $\abs{\tau_{g}(w)} = \abs{w}$ for any $w \in W_{\ast}$. 
        \item\label{it:gxy} For $x,y \in \mathbb{R}^{D}$ with $x \neq y$, let $g_{xy} \colon \mathbb{R}^{D} \to \mathbb{R}^{D}$ be the reflection in the hyperplane $H_{xy} \coloneqq \bigl\{ z \in \mathbb{R}^{D} \bigm| \abs{x - z} = \abs{y - z} \bigr\}$. 
        \item\label{it:distances-V0-ssset} Let $m_{\ast} \coloneqq \#\{ \abs{x-y} \mid \textrm{$x,y \in V_{0}$, $x \neq y$} \}$ and $l_{0} \coloneqq \min\{ \abs{x-y} \mid \textrm{$x,y \in V_{0}$, $x \neq y$} \}$. We define $\{ l_{i} \}_{i = 0}^{m_{\ast} - 1}$ inductively by $l_{i + 1} \coloneqq \min\{ \abs{x-y} \mid \textrm{$x,y \in V_{0}$, $\abs{x - y} > l_{i}$} \}$. 
		\item Let $m,n \in \mathbb{N} \cup \{ 0 \}$ and $(x_{i})_{i = 0}^{n} \in (V_{m})^{n+1}$. Then $(x_{i})_{i = 0}^{n}$ is called an \emph{$m$-walk}\index{$m$-walk} (between $x_{0}$ and $x_{n}$) if and only if there exist $w^{1},\dots,w^{n} \in W_{m}$ such that $\{ x_{i-1},x_{i} \} \subseteq F_{w^i}(V_{0})$ for all $i \in \{ 1,2,\ldots,n \}$. A $0$-walk $(x_{i})_{i = 0}^{n}$ is called a \emph{strict $0$-walk}\index{strict $0$-walk} (between $x_{0}$ and $x_{n}$) if and only if $\abs{x_{i} - x_{i-1}} = l_{0}$ for any $i \in \{ 1,2,\ldots,n \}$. 
		\item\label{it:StrongSym} $\mathcal{L}$ is called a \emph{strongly symmetric p.-c.f.\ self-similar set}\index{strongly symmetric p.-c.f.\ self-similar set} if and only if it satisfies the following four conditions: 
			\begin{enumerate}[label=\textup{(SS\arabic*)},align=left,leftmargin=*,topsep=2pt,parsep=0pt,itemsep=2pt]
				\item\label{SS1} For any $x,y \in V_{0}$ with $x \neq y$, there exists a strict $0$-walk between $x$ and $y$. 
				\item\label{SS2} If $x,y,z \in V_{0}$ and $\abs{x-y} = \abs{x-z}$, then there exists $g \in \mathcal{G}_{\mathrm{sym}}$ such that $g(x) = x$ and $g(y) = z$. 
				\item\label{SS3} For any $i \in \{ 1,\dots,m_{\ast}-1 \}$, there exist $x,y,z \in V_{0}$ such that $l_{i} = \abs{x - y} > \abs{x - z} > 0$ and $g_{yz}|_{K} \in \mathcal{G}_{\mathrm{sym}}$.
				\item\label{SS4} $V_{0}$ is $\mathcal{G}_{\mathrm{sym}}$-transitive, i.e., for any $x,y \in V_{0}$ with $x \neq y$, there exists $g \in \mathcal{G}_{\mathrm{sym}}$ such that $g(x) = y$. 
			\end{enumerate}
        \item\label{it:ANF} $\mathcal{L}$ is called an \emph{affine nested fractal}\index{affine nested fractal} if $g_{xy}|_{K} \in \mathcal{G}_{\mathrm{sym}}(\mathcal{L})$ for any $x,y \in V_{0}$ with $x \neq y$.
    \end{enumerate}
\end{defn}
\begin{rmk}\label{rmk:defn.ANF}
	Definition \ref{defn.ANF} involves a few differences from the original definitions in \cite{Kig01}.
	In \cite[Definitions 3.8.3 and 3.8.4]{Kig01}, the following group of symmetries $\mathcal{G}_{s}$ is used instead of $\mathcal{G}_{\mathrm{sym}}$: 
	\[
	\mathcal{G}_{s} \coloneqq \mathcal{G}_{s}(\mathcal{L}) \coloneqq \biggl\{ g|_{K} \biggm| 
	\begin{minipage}{260pt}
		$g \in O(D)$, for any $n \in \mathbb{N} \cup \{ 0 \}$ and any $w \in W_{n}$ there exists $w' \in W_{n}$ such that $g(F_{w}(V_{0})) = F_{w'}(V_{0})$
	\end{minipage}
	\biggr\};  
	\]
	note that $\mathcal{G}_{\mathrm{sym}} \subseteq \mathcal{G}_{s}$. 
	Under the assumption that 
	\begin{equation}\label{e:ANF.joint}
		\#(F_{i}(V_{0}) \cap F_{j}(V_{0})) \le 1 \quad \text{for any $i,j \in S$ with $i \neq j$,}
	\end{equation}
	we know that $\mathcal{G}_{\mathrm{sym}} = \mathcal{G}_{s}$ by \cite[Proposition 3.8.19]{Kig01}. 
	The difference between $\mathcal{G}_{\mathrm{sym}}$ and $\mathcal{G}_{s}$ does not affect the arguments in the parts of \cite{Kig01,CGQ22} that we need (Proposition \ref{prop.ANF-geom} and Theorem \ref{thm.eigenform-ANF} below). 
	Also, our version of \ref{SS3} is weaker than \cite[(SS3) in Definition 3.8.4-(1)]{Kig01}. Consequently, any strongly symmetric p.-c.f.\ self-similar set in the sense of \cite[Definition 3.8.4]{Kig01} is also a strongly symmetric p.-c.f.\ self-similar set in our sense.
\end{rmk} 

Let us recall a few properties of $\mathcal{G}_{\mathrm{sym}}$ and of affine nested fractals in the following proposition, which can be shown in the same ways as in \cite[Section 3.8]{Kig01}.
Let us emphasize that \emph{we do NOT assume the condition \eqref{e:ANF.joint}} unlike in \cite[Section 3.8]{Kig01}. 
\begin{prop}[{\cite[Propositions 3.8.7, 3.8.20 and Lemma 3.8.23]{Kig01}}]\label{prop.ANF-geom}
	\begin{enumerate}[label=\textup{(\arabic*)},align=left,leftmargin=*,topsep=2pt,parsep=0pt,itemsep=2pt]
		\item\label{it:ANF.ss} If $\mathcal{L}$ is an affine nested fractal, then it is a strongly symmetric p.-c.f.\ self-similar set.
		\item\label{it:ANF.symcompos} Let $w \in W_{\ast}$, $g \in \mathcal{G}_{\mathrm{sym}}$ and set 
		\[
		U_{g,w} \coloneqq F_{w'}^{-1} \circ g \circ F_{w}, 
		\] 
		where $w' \in W_{\ast}$ is the unique word satisfying $K_{w'} = g(K_{w})$. 
		Then $U_{g,w} \in \mathcal{G}_{\mathrm{sym}}$. 
		\item\label{it:ANF.reflection} Assume that $\mathcal{L}$ satisfies \ref{SS4} in Definition \ref{defn.ANF}-\ref{it:StrongSym}, and let $a,b \in V_{0}$ satisfy $a \not= b$ and $g_{ab}|_{K} \in \mathcal{G}_{\mathrm{sym}}$. If $w \in W_{\ast}$ satisfies $\abs{x - b} < \abs{x - a}$ and $\abs{y - b} > \abs{y - a}$ for some $x,y \in F_{w}(V_{0})$, then $g_{ab}(K_{w}) = K_{w}$ and $g_{ab}(F_{w}(V_0)) = F_{w}(V_{0})$.
	\end{enumerate}
\end{prop}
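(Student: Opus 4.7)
The plan is to establish parts (1)--(3) by adapting the arguments of \cite[Propositions 3.8.7, 3.8.20, and Lemma 3.8.23]{Kig01}, respectively, with the important caveat emphasized in Remark \ref{rmk:defn.ANF}: I work with $\mathcal{G}_{\mathrm{sym}}$ throughout rather than the larger group $\mathcal{G}_{s}$ used in \cite{Kig01}, so I do not need to invoke \cite[Proposition 3.8.19]{Kig01} (which would require the non-overlap condition \eqref{e:ANF.joint}). Fortunately each of the three assertions concerns how $\mathcal{G}_{\mathrm{sym}}$ itself acts on the cell structure, so the additional information carried by membership in $\mathcal{G}_{\mathrm{sym}}$ (as opposed to $\mathcal{G}_{s}$) is exactly what is needed to close the argument.

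For (1), I would verify the four conditions (SS1)--(SS4) directly from the affine nested fractal hypothesis. Transitivity (SS4) is immediate because $g_{xy}|_{K}$ swaps $x$ and $y$ for any distinct $x,y \in V_{0}$. Condition (SS2) follows by observing that $\abs{x-y}=\abs{x-z}$ places $x$ on the mirror hyperplane of $g_{yz}$, so $g_{yz}$ fixes $x$ and sends $y$ to $z$. Condition (SS3) holds since for any $i$, any triple $x,y,z \in V_{0}$ with $\abs{x-y}=l_{i}$ and $\abs{x-z}=l_{i+1}$ furnishes a reflection $g_{yz} \in \mathcal{G}_{\mathrm{sym}}$ with the required property. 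The delicate piece is (SS1), which I would prove as in \cite[Proposition 3.8.7]{Kig01}: starting from any pair of $V_{0}$-vertices at distance $l_{0}$ and iteratively pivoting with reflections $g_{uv} \in \mathcal{G}_{\mathrm{sym}}$, one builds a strict $0$-walk linking any prescribed pair of points in $V_{0}$, using (SS4) to position the initial edge appropriately.

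For (2), I would extend $g$ to an element $\tilde g \in O(D)$ and set $\tilde U_{g,w} \coloneqq f_{w'}^{-1} \circ \tilde g \circ f_{w} \colon \mathbb{R}^{D} \to \mathbb{R}^{D}$. This is an affine map whose linear part is $c_{w'}^{-1}c_{w}\,U_{w'}^{-1}\tilde g U_{w}$; since $g$ maps $F_{w}(V_{0})$ isometrically to $F_{w'}(V_{0})$ and both are obtained from $V_{0}$ via Euclidean similitudes of ratios $c_{w}$ and $c_{w'}$, the rigidity of $V_{0}$ forces $c_{w}=c_{w'}$, so $\tilde U_{g,w} \in O(D)$. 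The cell-preservation property is then inherited: for any $v \in W_{\ast}$ and any $n$, the bijection $\tau_{g}$ together with word concatenation yields a unique $v^{\#} \in W_{\abs{v}}$ such that $\tilde U_{g,w}(K_{v})=K_{v^{\#}}$ and $\tilde U_{g,w}(F_{v}(V_{0}))=F_{v^{\#}}(V_{0})$, so $U_{g,w} \in \mathcal{G}_{\mathrm{sym}}$.

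For (3), which I expect to be the main obstacle, let $H \coloneqq \{z \in \mathbb{R}^{D} \mid \abs{z-a}=\abs{z-b}\}$ and let $w' \in W_{\abs{w}}$ be the unique word with $g_{ab}(K_{w}) = K_{w'}$; the goal is $w'=w$. The hypotheses place $x$ strictly on the $b$-side and $y$ strictly on the $a$-side of $H$, and since $K_{w}$ is connected (as the image of connected $K$ under the homeomorphism $F_{w}$), the set $K_{w} \cap H$ is non-empty. Because $g_{ab}$ pointwise fixes $H$, we have $K_{w} \cap H = g_{ab}(K_{w}) \cap H = K_{w'} \cap H$, so $K_{w} \cap K_{w'}$ contains this non-empty intersection. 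If $w \neq w'$, then $\Sigma_{w} \cap \Sigma_{w'} = \emptyset$ and \eqref{V0bdry} forces $K_{w} \cap K_{w'} = F_{w}(V_{0}) \cap F_{w'}(V_{0})$, a finite subset of $V_{\abs{w}}$. The hard part is converting this finiteness into a contradiction: following \cite[Lemma 3.8.23]{Kig01}, I would use the strong symmetry (specifically a strict $0$-walk inside $F_{w}(V_{0})$ joining $x$ to $y$, provided by (SS1) transported into $K_{w}$ via the self-similarity) to produce an $\abs{w}$-walk in $K_{w}$ from the $b$-side to the $a$-side whose crossing of $H$ must occur at points of $F_{w}(V_{0}) \cap H$; analysing how $g_{ab}$ permutes these boundary points (fixing precisely the ones on $H$) and comparing with the analogous structure of $F_{w'}(V_{0})$ then yields the required contradiction, forcing $w'=w$ and hence $g_{ab}(K_{w})=K_{w}$.
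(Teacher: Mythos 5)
The paper itself offers no proof here: Proposition \ref{prop.ANF-geom} is stated with the remark that it ``can be shown in the same ways as in [Kig01, Section 3.8]'', the only twist being the use of $\mathcal{G}_{\mathrm{sym}}$ in place of $\mathcal{G}_{s}$ so that \cite[Proposition 3.8.19]{Kig01} (and hence \eqref{e:ANF.joint}) is never invoked. Your overall strategy matches this exactly, and your sketches of (1) and (2) are essentially correct. Two small points you gloss over: in (1), condition \ref{SS3} requires that a triple $x,y,z$ with $\abs{x-y}=l_{i}$ and $\abs{x-z}=l_{i+1}$ actually \emph{exists}, which you should extract from transitivity (map a pair realizing $l_{i+1}$ so that it shares the vertex $x$ with a pair realizing $l_{i}$); and in (2), $c_{w}=c_{w'}$ only makes $\tilde U_{g,w}$ an affine isometry --- to land in $O(D)$ you also need it to fix the origin, which follows because $U_{g,w}(V_{0})=V_{0}$ and the barycenter of $V_{0}$ is normalized to $0$ in Framework \ref{frmwrk:ANF}.

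The genuine gap is in the conclusion of (3). Your setup is right and is the same as Kigami's: $K_{w}\cap H\neq\emptyset$ by connectedness, every point of $K_{w}\cap H$ is fixed by $g_{ab}$ and hence lies in $K_{w}\cap K_{w'}$, and if $w'\neq w$ then \eqref{V0bdry} forces $K_{w}\cap H\subseteq F_{w}(V_{0})\cap F_{w'}(V_{0})$, a finite set. But the step that turns this finiteness into a contradiction is exactly the content of \cite[Lemma 3.8.23]{Kig01}, and the argument you propose for it does not work as stated. A strict $0$-walk transported into $F_{w}(V_{0})$ is a \emph{discrete} sequence; at the index where it passes from the $b$-side to the $a$-side, the two consecutive points may simply straddle $H$ with neither of them on it, so it is false that ``the crossing of $H$ must occur at points of $F_{w}(V_{0})\cap H$''. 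The subsequent sentence (``analysing how $g_{ab}$ permutes these boundary points \dots yields the required contradiction'') is a placeholder rather than an argument, and you flag it yourself as the main obstacle. Note also that mere finiteness of $K_{w}\cap H$ is not by itself absurd for a connected compact set meeting both open half-spaces, nor is it obviously incompatible with the p.-c.f.\ structure (removing finitely many boundary points can disconnect such fractals), so some additional input from the symmetry --- the actual content of Kigami's lemma --- is indispensable here. As it stands, part (3) of your proposal is incomplete.
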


Now we can present the following theorem establishing the existence of an eigenform on $V_{0}$ for strongly symmetric p.-c.f.\ self-similar sets and thereby improving the preceding result \cite[Theorem 6.3]{CGQ22} treating only the case of affine nested fractals. 
Note that the case of $p = 2$ corresponds to the existence of a harmonic structure on $\mathcal{L}$ in \cite[Theorem 3.8.10]{Kig01}.   
\begin{thm}\label{thm.eigenform-ANF}
	Assume that $\mathcal{L}$ is strongly symmetric, and let $\bm{\rweight}_{p} = (\rweight_{p,i})_{i \in S} \in (0,\infty)^{S}$. 
   	If
    \begin{equation}\label{rscale.sym}
        \rweight_{p,i} = \rweight_{p,i'} \quad \text{for any $i \in S$ and any $g \in \mathcal{G}_{\mathrm{sym}}$,}
    \end{equation}
    where $i' \in S$ is the unique element satisfying $K_{i'} = g(K_{i})$, then \eqref{condA} holds. 
    In particular, if $\rweight_{p,i} = \rweight_{p}$ for any $i \in S$ for some $\rweight_{p} \in (0,\infty)$, then \textup{(\hyperref[it:condA']{\textbf{A}'})} and \textup{(\hyperref[it:condR]{\textbf{R}})} with $\bigl(\lambda(\bm{\rweight}_{p})^{-1}\rweight_{p}\bigr)_{i \in S}$ in place of $\bm{\rweight}_{p}$ hold, where $\lambda(\bm{\rweight}_{p})$ is the number given in Theorem \ref{thm.eigenform}-\ref{it:CGQ.eigenvalue}. 
\end{thm}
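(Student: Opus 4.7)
The plan is to adapt Kigami's proof of \cite[Theorem 3.8.10]{Kig01} (the existence of harmonic structures on affine nested fractals in the case $p=2$) to general $p \in (1,\infty)$, making essential use of the $p$-resistance metric from Corollary \ref{cor.tri} and the symmetry conditions \ref{SS1}--\ref{SS4}. I will first exhibit a $\mathcal{G}_{\mathrm{sym}}$-invariant initial $p$-resistance form $E$ on $V_{0}$, namely
\[
E(u) \coloneqq \sum_{i=0}^{m_{*}-1} a_{i} \sum_{\substack{x,y \in V_{0} \\ |x-y|=l_{i}}} |u(x)-u(y)|^{p}, \quad u \in \mathbb{R}^{V_{0}},
\]
with $a_{i} > 0$; this lies in $\mathcal{S}_{p}(V_{0})$ and is a $p$-resistance form on $V_{0}$ by Example \ref{ex.pRF}-\ref{RF-graph} (its underlying graph is connected by \ref{SS1}), and it is $\mathcal{G}_{\mathrm{sym}}$-invariant since every $g \in \mathcal{G}_{\mathrm{sym}}$ restricts to a Euclidean isometry of $V_{0}$ that preserves each distance class.

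Next, I will prove by induction on $n$ that $E_{n} \coloneqq \mathcal{R}_{\bm{\rweight}_{p}}^{n}(E)$ is $\mathcal{G}_{\mathrm{sym}}$-invariant. For $g \in \mathcal{G}_{\mathrm{sym}}$ and $u \in \mathbb{R}^{V_{0}}$, the change of variables $v \mapsto v \circ g^{-1}$ in the infimum defining $\mathcal{R}_{\bm{\rweight}_{p}}$, combined with the factorization $g \circ F_{i} = F_{\tau_{g}(i)} \circ U_{g,i}$ with $U_{g,i} \in \mathcal{G}_{\mathrm{sym}}$ from Proposition \ref{prop.ANF-geom}-\ref{it:ANF.symcompos}, the weight invariance \eqref{rscale.sym}, and the inductive hypothesis, yields $E_{n}(u \circ g|_{V_{0}}) = E_{n}(u)$. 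Combined with \ref{SS2} and \ref{SS4}, this invariance implies that $R_{E_{n}}(x,y)$ depends only on $|x-y| \in \{l_{0}, \ldots, l_{m_{*}-1}\}$; writing $\hat{R}_{n}^{(i)}$ for the common value of $R_{E_{n}}(x,y)^{1/(p-1)}$ when $|x-y|=l_{i}$, the condition \eqref{condA} is equivalent to the uniform-in-$n$ comparability of the quantities $\hat{R}_{n}^{(0)}, \ldots, \hat{R}_{n}^{(m_{*}-1)}$.

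The upper bound $\max_{i} \hat{R}_{n}^{(i)} \leq N_{0}\hat{R}_{n}^{(0)}$ with $N_{0}$ independent of $n$ is immediate from the triangle inequality for the $p$-resistance metric (Corollary \ref{cor.tri}) applied along the strict $0$-walks guaranteed by \ref{SS1}. The main obstacle is the matching lower bound $\hat{R}_{n}^{(0)} \leq C\min_{i}\hat{R}_{n}^{(i)}$ with $C$ independent of $n$, and this is where the reflections furnished by \ref{SS3} enter decisively. For each $i \in \{0, \ldots, m_{*}-2\}$ I will pick $x, y, z \in V_{0}$ as in \ref{SS3} and consider the $E_{n}$-harmonic extension $h_{n}$ on $V_{0}$ with $h_{n}(y)=1, h_{n}(z)=0$: by $\mathcal{G}_{\mathrm{sym}}$-invariance of $E_{n}$ the function $w \mapsto 1 - h_{n}(g_{yz}(w))$ is also such a minimizer, so the uniqueness in Theorem \ref{thm.RF-exist} forces the reflection identity $h_{n}(w) + h_{n}(g_{yz}(w)) = 1$. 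Combining this identity with the weak comparison principle (Proposition \ref{prop.cp1}) and a careful analysis of $h_{n}$ at reflected pairs of vertices in $V_{0}$, I will derive, via a telescoping argument over $i = 0, \ldots, m_{*}-2$, the uniform comparability $\hat{R}_{n}^{(i)} \asymp \hat{R}_{n}^{(i+1)}$, completing the proof of \eqref{condA}.

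For the final assertion, if $\bm{\rweight}_{p} = (\rweight_{p})_{i \in S}$ is constant then \eqref{rscale.sym} is automatic, so by the above and Theorem \ref{thm.eigenform}-\ref{it:CGQ.eigenvalue},\ref{it:CGQ.eigenform} there exist $\lambda \coloneqq \lambda(\bm{\rweight}_{p}) \in (0,\infty)$ and $\mathcal{E}_{p}^{(0)}$ with $\mathcal{R}_{\bm{\rweight}_{p}}(\mathcal{E}_{p}^{(0)}) = \lambda\mathcal{E}_{p}^{(0)}$, which translates to \textup{(\hyperref[it:condA']{\textbf{A}'})} for the rescaled weight $\bm{\rweight}_{p}/\lambda$. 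The strict inequality $\rweight_{p}/\lambda > 1$ required for \textup{(\hyperref[it:condR]{\textbf{R}})} then follows from Proposition \ref{prop.goodweight} applied to any $w \in W_{*}\setminus\{\emptyset\}$ with $w^{\infty} \in \mathcal{P}_{\mathcal{L}}$; such a $w$ exists because the p.-c.f.\ set $\mathcal{P}_{\mathcal{L}}$ is non-empty (as $K$ is connected with $\#S \geq 2$, forcing $\mathcal{C}_{\mathcal{L}} \neq \emptyset$) and is closed under the shift $\sigma$, hence contains a periodic point.
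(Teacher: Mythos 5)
Your overall architecture matches the paper's: a $\mathcal{G}_{\mathrm{sym}}$-invariant initial form in $\mathcal{S}_{p}(V_{0})$, invariance of the iterates via Proposition \ref{prop.ANF-geom}-\ref{it:ANF.symcompos} and \eqref{rscale.sym}, reduction via \ref{SS2} and \ref{SS4} to comparing resistances across the distance classes $l_{0},\dots,l_{m_{*}-1}$, the upper bound from strict $0$-walks and the triangle inequality, and the concluding assertion via Theorem \ref{thm.eigenform} and Proposition \ref{prop.goodweight} (your argument that $\mathcal{P}_{\mathcal{L}}$ contains a periodic point is correct and adequately justifies \textup{(\hyperref[it:condR]{\textbf{R}})}). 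All of that is sound.

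The gap is in the lower bound, which is the heart of the proof. The mechanism you name --- the reflection identity $h_{n}+h_{n}\circ g_{yz}=\indicator{V_{0}}$ for the $E_{n}$-harmonic extension with boundary data on $\{y,z\}$, combined with the weak comparison principle --- does not by itself yield $\hat{R}_{n}^{(i+1)}\gtrsim\hat{R}_{n}^{(i)}$. That identity controls $E_{n}(h_{n})=R_{E_{n}}(y,z)^{-1}$, i.e.\ the resistance of the pair $\{y,z\}$, whose distance class is not the one you are trying to compare; plugging $h_{n}$ into \eqref{R-def} only gives $\max\bigl(R_{E_{n}}(x,y),R_{E_{n}}(x,z)\bigr)\geq 2^{-p}R_{E_{n}}(y,z)$, which is not the needed inequality and leads in circles. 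What actually closes the argument (and what the paper does) is a test-function construction carried out on the fine level $V_{n}$ with the un-traced form $\Lambda_{\bm{\rweight}_{p}}^{n}(E)$: one takes the harmonic extension $h$ for the \emph{shorter} pair, defines $u$ on $V_{n}$ to equal $h$ on the half-space $\{a:\abs{a-y}\leq\abs{a-z}\}$ and $h\circ g_{yz}$ on the other half-space, and then verifies $u$ separates the \emph{longer} pair while $\Lambda_{\bm{\rweight}_{p}}^{n}(E)(u)\leq 2\,\Lambda_{\bm{\rweight}_{p}}^{n}(E)(h)$. The factor $2$ is not formal: it requires Proposition \ref{prop.ANF-geom}-\ref{it:ANF.reflection} to see that every $n$-cell straddling the bisecting hyperplane is $g_{yz}$-invariant, and the minimality of $l_{0}$ to conclude that each $l_{0}$-edge of such a cell crossing the hyperplane is mapped to itself by $g_{yz}$, hence contributes zero to $E(u)$ (so the two halves only double-count, never produce uncontrolled cross terms). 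None of this appears in your proposal, and it cannot be recovered working only with the traced forms $E_{n}$ on $V_{0}$, since the gluing must be performed cell-by-cell on $V_{n}$. You should replace the ``careful analysis of $h_{n}$ at reflected pairs'' with this explicit construction and energy estimate.
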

\begin{proof}
	The proof is essentially the same as \cite[Proof of Theorem 6.3]{CGQ22}, but we give its details since we weaken the assumption of \cite[Theorem 6.3]{CGQ22}. 
	For $n \in \mathbb{N} \cup \{ 0 \}$, define $E_{p,n} \in \mathcal{S}_{p}(V_{n})$ by 
	\[
	E_{p,n}(u) \coloneqq \sum_{w \in W_{n}}\rweight_{p,w}\sum_{x,y \in V_{0}; \abs{x-y} = l_{0}}\abs{u(F_{w}(x)) - u(F_{w}(y))}^{p}, \quad u \in \mathbb{R}^{V_{n}}. 
	\]
	Note that, by Proposition \ref{prop.ANF-geom}-\ref{it:ANF.symcompos} and \eqref{rscale.sym}, $E_{p,n}$ is $\mathcal{G}_{\mathrm{sym}}$-invariant, i.e., $E_{p,n}(u \circ g|_{V_{n}}) = E_{p,n}(u)$ for any $u \in \mathbb{R}^{V_{n}}$ and any $g \in \mathcal{G}_{\mathrm{sym}}$. 
	Moreover, the following property holds: 
	\begin{equation}\label{e:RM.invariance}
		R_{E_{p,n}}(x,y) = R_{E_{p,n}}(x',y') \quad \text{if $x,y,x',y' \in V_{0}$ and $\abs{x-y} = \abs{x'-y'}$.}
	\end{equation}
	Indeed, by \ref{SS4} there exists $g_{1} \in \mathcal{G}_{\mathrm{sym}}$ such that $g_{1}(x) = x'$. 
	Since $g_{1} \in O(D)$ implies $\abs{x' - g_1(y)} = \abs{x - y} = \abs{x' - y'}$, we have from \ref{SS2} that $g_{2}(x') = x'$ and $g_{2}(g_{1}(y)) = y'$ for some $g_{2} \in \mathcal{G}_{\mathrm{sym}}$. 
	Define $g \coloneqq g_{2} \circ g_{1} \in \mathcal{G}_{\mathrm{sym}}$, so that $g(x) = x'$ and $g(y) = y'$.  
	From the $\mathcal{G}_{\mathrm{sym}}$-invariance of $E_{p,n}$, we obtain \eqref{e:RM.invariance}.
	
	We fix $a_{1},a_{2} \in V_{0}$ that satisfy $\abs{a_1 - a_2} = l_{0}$ and claim that for any $n \in \mathbb{N}$ and any $x,y \in V_{0}$ with $x \neq y$, 
	\begin{equation}\label{e:condA.ANF}
		\frac{1}{2^{m_{\ast} - 1}}R_{E_{p,n}}(a_{1},a_{2}) \le R_{E_{p,n}}(x,y) \le (\#V_{0})^{p}R_{E_{p,n}}(a_{1},a_{2}), 
	\end{equation} 
	which implies \eqref{condA} for $\bm{\rweight}_{p}$ with $c = 2^{1-m_{\ast}}(\#V_{0})^{-p}$. 
	To show the upper estimate in \eqref{e:condA.ANF}, let $(x_{i})_{i=0}^{k} \in (V_{0})^{k+1}$ be a strict $0$-walk between $x$ and $y$, which exists by \ref{SS1}. 
	Then, by \eqref{e:RM.invariance}, we have $R_{E_{p,n}}(x_{i-1},x_{i}) = R_{E_{p,n}}(a_{1},a_{2})$ for any $i \in \{ 1,\dots,k \}$. 
	Hence 
	\[
	R_{E_{p,n}}(x,y)^{1/p} 
	\le \sum_{i=1}^{k}R_{E_{p,n}}(x_{i-1},x_{i})^{1/p} 
	= kR_{E_{p,n}}(a_{1},a_{2})^{1/p}
	\le (\#V_{0})R_{E_{p,n}}(a_{1},a_{2})^{1/p}.  
	\] 
	Next we prove the lower estimate in \eqref{e:condA.ANF}. 
	The case of $\abs{x-y} = l_{0}$ is clear by \eqref{e:RM.invariance}, so we assume that $\abs{x-y} > l_{0}$.  
	By \ref{SS3} and \eqref{e:RM.invariance}, it suffices to consider the case where there exists $z \in V_0$ such that $0 < \abs{x - z} < \abs{x - y}$ and $g_{yz}|_{K} \in \mathcal{G}_{\mathrm{sym}}$. 
	Now we define $u \in \mathbb{R}^{V_{n}}$ by 
	\[
	u(a) \coloneqq 
	\begin{cases}
		h_{\{ x,z \}}^{E_{p,n}}[\indicator{x}](a) \quad &\text{if $a \in V_{n}$ satisfies $\abs{a-z} \le \abs{a-y}$,} \\
		h_{\{ x,z \}}^{E_{p,n}}[\indicator{x}](g_{yz}(a)) \quad &\text{if $a \in V_{n}$ satisfies $\abs{a-z} \ge \abs{a-y}$.} 
	\end{cases}
	\]   
	Since $\abs{x-z} < \abs{x-y}$, we have $u(x) = h_{\{ x,z \}}^{E_{p,n}}[\indicator{x}](x) = 1$. 
	Also, $u(y) = h_{\{ x,z \}}^{E_{p,n}}[\indicator{x}](z) = 0$. 
	Hence $R_{E_{p,n}}(x,y) \ge E_{p,n}(u)^{-1}$. 
	If we set $H_{1,n} \coloneqq \{ a \in V_{n} \mid \abs{a-z} \le \abs{a-y} \}$ and $H_{2,n} \coloneqq \{ a \in V_{n} \mid \abs{a-z} \ge \abs{a-y} \}$, then we see from Proposition \ref{prop.ANF-geom}-\ref{it:ANF.symcompos} and the $\mathcal{G}_{\mathrm{sym}}$-invariance of $E_{p,n}$ that 
	\begin{align*}
		E_{p,n}(u)
		&= \left(\sum_{\substack{w \in W_{n}; \\ F_{w}(V_{0}) \subseteq H_{1,n}}} + \sum_{\substack{w \in W_{n}; \\ F_{w}(V_{0}) \subseteq H_{2,n}}} + \sum_{\substack{w \in W_{n}; \\ F_{w}(V_{0}) \not\subseteq H_{1,n} \\ \text{and } F_{w}(V_{0}) \not\subseteq H_{2,n}}}\right)\rweight_{p,w}E_{p,0}(u \circ F_{w}|_{V_{0}}) \\
		&= 2\sum_{\substack{w \in W_{n}; \\ F_{w}(V_{0}) \subseteq H_{1,n}}}\rweight_{p,w}E_{p,0}\Bigl(h_{\{ x,z \}}^{E_{p,n}}[\indicator{x}] \circ F_{w}|_{V_{0}}\Bigr) + \sum_{\substack{w \in W_{n}; \\ F_{w}(V_{0}) \not\subseteq H_{1,n} \\ \text{and } F_{w}(V_{0}) \not\subseteq H_{2,n}}}\rweight_{p,w}E_{p,0}(u \circ F_{w}|_{V_{0}}). 
	\end{align*} 
	To estimate the second term in the right-hand side in the above equality, let $w = w_{1} \dots w_{n} \in W_{n}$ satisfy $F_{w}(V_{0}) \not\subseteq H_{1,n}$ and $F_{w}(V_{0}) \not\subseteq H_{2,n}$. Then there exist $a,b \in V_{0}$ such that $\abs{a-b} = l_{0}$, $\abs{F_{w}(a) - z} < \abs{F_{w}(a) - y}$ and $\abs{F_{w}(b) - z} > \abs{F_{w}(b) - y}$, thereby that, by Proposition \ref{prop.ANF-geom}-\ref{it:ANF.reflection}, it holds that $g_{yz}(F_{w}(V_{0})) = F_{w}(V_{0})$. 
	We next claim that $g_{yz}(F_{w}(a)) = F_{w}(b)$. 
	If $\dist_{d}(\{F_{w}(a), F_{w}(b) \}, H_{yz}) < c_{w}l_{0}/2$ (recall that $c_{w} = c_{w_{1}} \cdots c_{w_{n}} \in (0,1)$ is the Lipschitz constant of $F_{w}$), then the combination of $\{ g_{yz}(F_{w}(a)), g_{yz}(F_{w}(b)) \} \subseteq F_{w}(V_{0})$ and $\abs{a - F_{w}^{-1}(g_{yz}(F_{w}(a)))} \wedge \abs{b - F_{w}^{-1}(g_{yz}(F_{w}(b)))} < l_{0}$ contradicts the minimality of $l_{0}$. 
	Hence $\{ F_{w}(a), F_{w}(b) \} \subseteq \{ \alpha \in V_{n} \mid \dist_{d}(\alpha, H_{yz}) \ge c_{w}l_{0}/2 \}$. 
	Since $\abs{F_{w}(a) - F_{w}(b)} = c_{w}l_{0}$, it turns out that $\{ F_{w}(a), F_{w}(b) \} \subseteq \{ \alpha \in V_{n} \mid \dist_{d}(\alpha, H_{yz}) = c_{w}l_{0}/2 \}$. 
	This together with the fact that $\{ \alpha \in \mathbb{R}^{D} \mid \abs{\alpha - z} > \abs{\alpha - y}, \abs{F_{w}(a) - \alpha} = c_{w}l_{0} \} = \{ g_{yz}(F_{w}(a)) \}$ implies $g_{yz}(F_{w}(a)) = F_{w}(b)$. 
	In particular, we have $u(F_{w}(a)) = u(F_{w}(b))$. 
	Combining this with the $\mathcal{G}_{\mathrm{sym}}$-invariance of $E_{p,n}$ and Proposition \ref{prop.ANF-geom}-\ref{it:ANF.symcompos}, we have 
	\begin{align*}
		&\sum_{\substack{w \in W_{n}; \\ F_{w}(V_{0}) \not\subseteq H_{1,n} \\ \text{and } F_{w}(V_{0}) \not\subseteq H_{2,n}}}\rweight_{p,w}E_{p,0}(u \circ F_{w}|_{V_{0}}) \\
		&= \sum_{\substack{w \in W_{n}; \\ F_{w}(V_{0}) \not\subseteq H_{1,n} \\ \text{and } F_{w}(V_{0}) \not\subseteq H_{2,n}}}\rweight_{p,w}\sum_{\substack{a,b \in V_{0}; \abs{a-b} = l_{0}, \\ \{ F_{w}(a), F_{w}(b) \} \subseteq H_{1,n} \\ \text{or } \{ F_{w}(a), F_{w}(b) \} \subseteq H_{2,n}}}\abs{u(F_{w}(a)) - u(F_{w}(b))}^{p} \\
		&\le 2\sum_{\substack{w \in W_{n}; \\ F_{w}(V_{0}) \not\subseteq H_{1,n} \\ \text{and } F_{w}(V_{0}) \not\subseteq H_{2,n}}}\rweight_{p,w}E_{p,0}\Bigl(h_{\{ x,z \}}^{E_{p,n}}[\indicator{x}] \circ F_{w}|_{V_{0}}\Bigr),  
	\end{align*} 
	and we then deduce that 
	\[
	R_{E_{p,n}}(x,y) 
	\ge E_{p,n}(u)^{-1} 
	\ge \frac{1}{2}E_{p,n}\Bigl(h_{\{ x,z \}}^{E_{p,n}}[\indicator{x}]\Bigr)^{-1}
	= \frac{1}{2}R_{E_{p,n}}(x,z).   
	\]
	Repeating this estimate and using \eqref{e:RM.invariance}, we obtain the desired lower estimate in \eqref{e:condA.ANF}. 
\end{proof}

Combined with Theorem \ref{thm.eigenform-ANF}, the following theorem ensures the existence of $\mathcal{G}_{\mathrm{sym}}$-invariant self-similar $p$-resistance forms on strongly symmetric p.-c.f.\ self-similar sets. 
\begin{thm}\label{thm.ANFsymform}
	Let $\bm{\rweight}_{p} = (\rweight_{p,i})_{i \in S} \in (0,\infty)^{S}$ and assume that \textup{(\hyperref[it:condA']{\textbf{A}'})}, \textup{(\hyperref[it:condR]{\textbf{R}})} and \eqref{rscale.sym} hold.
	Then there exists a regular self-similar $p$-resistance form $(\mathcal{E}_{p},\mathcal{F}_{p})$ on $\mathcal{L}$ with weight $\bm{\rweight}_{p}$ and with $R_{\mathcal{E}_{p}}^{1/p}$ compatible with the original topology of $K$ such that it is $\mathcal{G}_{\mathrm{sym}}$-invariant, i.e., $u \circ g \in \mathcal{F}_{p}$ and $\mathcal{E}_{p}(u \circ g) = \mathcal{E}_{p}(u)$ for any $u \in \mathcal{F}_{p}$ and any $g \in \mathcal{G}_{\mathrm{sym}}$. 
\end{thm}
\begin{proof}
	Define $E_{0} \in \mathcal{S}_{p}(V_{0})$ by $E_{0}(u) \coloneqq \sum_{x,y \in V_{0}}\abs{u(x) - u(y)}^{p}$ for $u \in \mathbb{R}^{V_{0}}$. 
	Then $E_{0}(u) = E_{0}(u \circ g)$ for any $u \in \mathbb{R}^{V_{0}}$ and $g \in \mathcal{G}_{\mathrm{sym}}$. 
	By Theorem \ref{thm.eigenform} and in particular the explicit expressions \eqref{e:defn.CGGQ-En}--\eqref{e:eigenform.concrete} in Theorem \ref{thm.eigenform}-\ref{it:CGQ.newinitial},\ref{it:CGQ.eigenform}, there exists a $p$-resistance form $\mathcal{E}_{p}^{(0)}$ on $V_{0}$ such that $\mathcal{R}_{\bm{\rweight}_{p}}(\mathcal{E}_{p}^{(0)}) = \mathcal{E}_{p}^{(0)}$ and $\mathcal{E}_{p}^{(0)}(u) = \mathcal{E}_{p}^{(0)}(u \circ g)$ for any $u \in \mathbb{R}^{V_{0}}$ and any $g \in \mathcal{G}_{\mathrm{sym}}$. 
	Then $(\mathcal{E}_{p},\mathcal{F}_{p})$ defined by \eqref{pRFinductive.dom} and \eqref{pRFinductive.form} in Theorem \ref{thm.CGQ} has the desired properties;
	its $\mathcal{G}_{\mathrm{sym}}$-invariance follows from \eqref{rscale.sym}, Proposition \ref{prop.ANF-geom}-\ref{it:ANF.symcompos}, the fact that $\tau_{g}|_{W_{n}} \colon W_{n} \to W_{n}$ is a bijection for any $n \in \mathbb{N} \cup \{ 0 \}$, and the expressions \eqref{pRFinductive.dom} and \eqref{pRFinductive.form} for $(\mathcal{E}_{p},\mathcal{F}_{p})$.  
\end{proof}

\section{\texorpdfstring{$p$}{p}-Walk dimension of Sierpi\'{n}ski carpets/gaskets}\label{sec.p-walk}
In this section, we prove the strict inequality $\pwalk > p$ for the generalized Sierpi\'{n}ski carpets and the $D$-dimensional level-$l$ Sierpi\'{n}ski gasket as an application of the nonlinear potential theory developed in Sections \ref{sec.p-harm} and \ref{sec.compatible}.
In particular, we remove the \emph{planarity} in the hypothesis of the previous result \cite[Theorem 2.27]{Shi24}.  


\subsection{Generalized Sierpi\'{n}ski carpets}
Following \cite[Section 2]{Kaj23}, we recall the definition of the generalized Sierpi\'{n}ski carpets.
\begin{framework}\label{frmwrk:GSC}
Let $D,l\in\mathbb{N}$, $D\geq 2$, $l\geq 3$ and set $Q_{0} \coloneqq [0,1]^{D}$.
Let $S\subsetneq\{0,1,\ldots,l-1\}^{D}$ be non-empty, define
$f_{i}\colon\mathbb{R}^{D}\to\mathbb{R}^{D}$ by $f_{i}(x) \coloneqq l^{-1}i+l^{-1}x$ for each $i\in S$
and set $Q_{1} \coloneqq \bigcup_{i\in S}f_{i}(Q_{0})$, so that $Q_{1}\subsetneq Q_{0}$.
Let $K$ be the self-similar set associated with $\{f_{i}\}_{i\in S}$.
Note that $K\subsetneq Q_{0}$.
Set $F_{i} \coloneqq f_{i}|_{K}$ for each $i\in S$ and $\GSC(D,l,S) \coloneqq (K,S,\{F_{i}\}_{i\in S})$.
Let $d\colon K\times K\to[0,\infty)$ be the Euclidean metric on $K$ normalized so that $\diam(K,d) = 1$,
set $d_{\mathrm{f}}\coloneqq\log_{l}(\#S)$, and let $m$ be the self-similar measure on
$\GSC(D,l,S)$ with uniform weight $(1/\#S)_{i\in S}$.
\end{framework}
Recall that $d_{\mathrm{f}}$ is the Hausdorff dimension of $(K,d)$ and that
$m$ is a constant multiple of the $d_{\mathrm{f}}$-dimensional Hausdorff measure
on $(K,d)$; see, e.g., \cite[Proposition 1.5.8 and Theorem 1.5.7]{Kig01}.
Note that $d_{\mathrm{f}}<D$ by $S\subsetneq\{0,1,\ldots,l-1\}^{D}$.

The following definition is due to Barlow and Bass \cite[Section 2]{BB99}, except that
the non-diagonality condition in \cite[Hypotheses 2.1]{BB99} has been strengthened
later in \cite{BBKT} to fill a gap in \cite[Proof of Theorem 3.19]{BB99};
see \cite[Remark 2.10-1.]{BBKT} for some more details of this correction.
\begin{defn}[Generalized Sierpi\'{n}ski carpet]\label{dfn:GSC}
$\GSC(D,l,S)$ is called a \emph{generalized Sierpi\'{n}ski carpet}\index{generalized Sierpi\'{n}ski carpet}
if and only if the following four conditions are satisfied:
\begin{enumerate}[label=\textup{(GSC\arabic*)},align=left,leftmargin=*,topsep=2pt,parsep=0pt,itemsep=2pt]
\item\label{GSC1}(Symmetry) $f(Q_{1})=Q_{1}$ for any isometry $f$ of $\mathbb{R}^{D}$ with $f(Q_{0})=Q_{0}$.
\item\label{GSC2}(Connectedness) $Q_{1}$ is connected.
\item\label{GSC3}(Non-diagonality)
	$\interior_{\mathbb{R}^{D}}\bigl(Q_{1}\cap \prod_{k=1}^{D}[(i_{k}-\varepsilon_{k})l^{-1},(i_{k}+1)l^{-1}]\bigr)$
	is either empty or connected for any $(i_{k})_{k=1}^{D}\in\mathbb{Z}^{D}$ and
	any $(\varepsilon_{k})_{k=1}^{D}\in\{0,1\}^{D}$.
\item\label{GSC4}(Borders included) $[0,1]\times\{0\}^{D-1}\subseteq Q_{1}$.
\end{enumerate}
\end{defn}
%

See \cite[Remark 2.2]{BB99} for a description of the meaning of each of the four
conditions \ref{GSC1}, \ref{GSC2}, \ref{GSC3} and \ref{GSC4} in Definition \ref{dfn:GSC}.
To be precise, \ref{GSC3} is slightly different from the formulation of the non-diagonality
condition in \cite[Subsection 2.2]{BBKT}, but they have been proved to be equivalent to
each other in \cite[Theorem 2.4]{Kaj10}; see \cite[Section 2]{Kaj10} for some other equivalent
formulations of the non-diagonality condition.

In this subsection, we assume that $\GSC(D,l,S)=(K,S,\{F_{i}\}_{i\in S})$
as introduced in Framework \ref{frmwrk:GSC} is a generalized Sierpi\'{n}ski carpet
as defined in Definition \ref{dfn:GSC}.

We next ensure the existence of a symmetry-invariant $p$-resistance form on $\GSC(D,l,S)$ for $p > \dim_{\textup{ARC}}(K,d)$ by applying Theorem \ref{thm.KSgood-ss}.
\begin{defn}\label{dfn:GSC-isometry}
We define
\begin{equation}\label{eq:GSC-isometry}
\mathcal{G}_{0}\coloneqq\{f|_{K}\mid\textrm{$f$ is an isometry of $\mathbb{R}^{D}$, $f(Q_{0})=Q_{0}$}\},
\end{equation}
which forms a finite subgroup of the group of homeomorphisms of $K$ by virtue of \ref{GSC1}.
\end{defn}
\begin{cor}\label{cor.GSCE}
Let $p \in (\dim_{\textup{ARC}}(K,d),\infty)$.
Then Assumption \ref{assum.BFss} holds with $r_{\ast} = l^{-1}$, $K$ is $p$-conductively homogeneous, and there exists a regular self-similar $p$-resistance form $(\mathcal{E}_{p},\mathcal{W}^{p})$ on $\GSC(D,l,S)$ with weight $(\sigma_{p})_{i \in S}$ such that it satisfies the conditions \ref{Kigss.equiv}--\ref{Kigss.RF} of Theorem \ref{thm.KSgood-ss}.
Moreover, $(\mathcal{E}_{p},\mathcal{W}^{p})$ has the following $\mathcal{G}_{0}$-invariance:
\begin{equation}\label{GSCE.sym}
\text{If $u\in \mathcal{W}^{p}$ and $g\in\mathcal{G}_{0}$
    then $u\circ g\in\mathcal{W}^{p}$ and $\mathcal{E}_{p}(u\circ g)=\mathcal{E}_{p}(u)$.}
\end{equation}
\end{cor}
\begin{proof}
    Assumption \ref{assum.BFss} and the $p$-conductive homogeneity for the generalized Sierpi\'{n}ski carpets in the case $p \in (d_{\mathrm{ARC}},\infty)$ follow from \cite[Theorem 4.13]{Kig23} or \cite[Proposition 4.5 and Theorem 4.14]{Shi24}.
    Let $(\mathcal{E}_{p},\mathcal{W}^{p})$ be a self-similar $p$-resistance form given in Theorem \ref{thm.KSgood-ss}.
    Then the desired properties except for \eqref{GSCE.sym} are already proved. 
    We can easily see that $\widetilde{\mathcal{E}}_{p}^{n}(f \circ g) = \widetilde{\mathcal{E}}_{p}^{n}(f)$ for any $f \in L^p(K,m)$, any $g \in \mathcal{G}_{0}$ and any $n \in \mathbb{N}\cup\{ 0 \}$, and that the conditions \eqref{e:trans.bi-level}--\eqref{e:trans.weightinv} with $\mathcal{G}_{0}$ in place of $\mathscr{T}$ hold. 
	Hence the desired symmetry-invariance \eqref{GSCE.sym} follows Theorem \ref{t:Kig-good}-\ref{Kig.inv}, \eqref{e:defn.Kigss} and Proposition \ref{prop.ssenergy-GCinv}-\ref{it:ssenergy.inv}. 
\end{proof}

Recall that $\sigma_{p}$ and $\pwalk$ are defined for any $p \in (0,\infty)$ (under Assumption \ref{assum.BFss}).
We know the following monotonicity on $\pwalk/p$ in $p \in (0,\infty)$. 
\begin{prop}\label{p:pwalk-mono.GSC}
    $\pwalk/p \ge d_{\mathrm{w},q}/q$ for any $p,q \in (0,\infty)$ with $p \le q$.
\end{prop}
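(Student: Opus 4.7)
The plan is to establish the equivalent monotonicity statement that the map $p \mapsto \pwalk/p$ is non-increasing on $(0,\infty)$ by applying H\"{o}lder's inequality to the discrete conductance constants $\mathcal{E}_{M_{\ast},p,k}$ appearing in the characterization \eqref{p-factor} of the $p$-scaling factor $\sigma_{p}$, and then converting the resulting inequality between $\sigma_{p}$ and $\sigma_{q}$ into an inequality for $\pwalk = \hdim + \tau_{p}$ via the definition $\tau_{p} = \log\sigma_{p}/\log r_{\ast}^{-1}$.

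More concretely, fix $0 < p \le q < \infty$ and $k \in \mathbb{N} \cup \{0\}$. For any $n \in \mathbb{N}$ and any $f \colon S^{k}(T_{n}) \to \mathbb{R}$ realizing (or nearly realizing) the infimum defining $\mathcal{E}_{M_{\ast},q,k}$ with a test pair $(\{w\},T_{n}\setminus\Gamma_{M_{\ast}}(w))$, I would apply H\"{o}lder's inequality with exponents $q/p \ge 1$ and $q/(q-p)$ to the sum $\sum_{\{u,v\}\in E^{\ast}_{n+k}(S^{k}(T_{n}))}\lvert f(u)-f(v)\rvert^{p}$ to obtain
\begin{equation*}
\mathcal{E}_{M_{\ast},p,k} \le \bigl(\#E_{n+k}^{\ast}(S^{k}(T_{n}))\bigr)^{1-p/q}\bigl(\mathcal{E}_{M_{\ast},q,k}\bigr)^{p/q}.
\end{equation*}
Using the uniform finiteness of $\{K_{w}\}_{w \in T}$ (so that each vertex of $T_{n+k}$ has bounded degree in $E_{n+k}^{\ast}$) together with the bound $\#T_{n+k} \le C r_{\ast}^{-(n+k)\hdim}$ that follows from Assumption \ref{assum.BFss} and Proposition \ref{prop.ARss}, the number of edges is at most $C' r_{\ast}^{-k\hdim}$ times a constant depending on $n$ and $M_{\ast}$ but independent of $k$ (after taking the supremum over the base vertex $w \in T$ that defines $\mathcal{E}_{M_{\ast},p,k}$).

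Taking logarithms, dividing by $-k$, and letting $k \to \infty$ on both sides of the displayed inequality, the constants drop out and \eqref{p-factor} yields
\begin{equation*}
\log\sigma_{p} \ge -(1-p/q)\hdim\log r_{\ast}^{-1} + (p/q)\log\sigma_{q},
\end{equation*}
which, after dividing by $\log r_{\ast}^{-1} > 0$, is exactly $\tau_{p} + (1-p/q)\hdim \ge (p/q)\tau_{q}$. Multiplying through by $q$ and rearranging gives $q(\hdim+\tau_{p}) \ge p(\hdim+\tau_{q})$, i.e., $\pwalk/p \ge d_{\mathrm{w},q}/q$, as desired.

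The main technical obstacle is ensuring that the counting factor $\#E_{n+k}^{\ast}$ has the correct exponential growth rate $r_{\ast}^{-k\hdim}$ uniformly in the base vertex $w \in T$ used to define $\mathcal{E}_{M_{\ast},p,k}$; this is where the uniform finiteness of the partition in Assumption \ref{assum.BF}-\ref{BF1} and the Ahlfors regularity of $m$ from Proposition \ref{prop.ARss} play their role. Once this growth rate is secured, the rest is a clean limit computation, and one indeed recovers the slightly stronger statement $\tau_{p} \ge -(1-p/q)\hdim + (p/q)\tau_{q}$ rather than merely the equivalent form of the monotonicity of $\pwalk/p$.
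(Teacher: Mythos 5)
Your overall strategy is sound and is in fact a reconstruction of the content of \cite[Lemma 4.7.4]{Kig20}: the paper's own proof of this proposition is a one-line citation of that lemma together with $\hdim = \log_{l}(\#S)$, and Kigami's lemma is itself proved by exactly the H\"{o}lder-inequality manipulation of the conductance constants $\mathcal{E}_{M_{\ast},p,k}$ that you propose, followed by the limit $k \to \infty$ via \eqref{p-factor}. The algebra at the end (passing from $\log\sigma_{p} \ge -(1-p/q)\hdim\log r_{\ast}^{-1} + (p/q)\log\sigma_{q}$ to $q(\hdim+\tau_{p}) \ge p(\hdim+\tau_{q})$) is correct.

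However, the edge-counting step, which you yourself flag as ``the main technical obstacle,'' is a genuine gap as written and cannot be left in its current form. You bound the full edge set $E_{n+k}^{\ast}$ of the level-$(n+k)$ graph by $C_{n}\,r_{\ast}^{-k\hdim}$ with $C_{n}$ depending on $n=\abs{w}$; but $\mathcal{E}_{M_{\ast},p,k}$ is a supremum over \emph{all} $w \in T$, so $n$ is unbounded and the constant $C_{n} \asymp (\#S)^{n}$ blows up, making the resulting inequality between $\mathcal{E}_{M_{\ast},p,k}$ and $\mathcal{E}_{M_{\ast},q,k}$ vacuous. The fix is to observe that only edges meeting $S^{k}(\Gamma_{M_{\ast}}(w))$ can contribute: the admissible test function for $\mathrm{cap}_{q}^{\abs{w}+k}\bigl(S^{k}(w),S^{k}(T_{\abs{w}}\setminus\Gamma_{M_{\ast}}(w));T_{\abs{w}+k}\bigr)$ vanishes identically on $S^{k}(T_{\abs{w}}\setminus\Gamma_{M_{\ast}}(w))$, so every edge with both endpoints there contributes zero to both the $p$- and the $q$-energy, and H\"{o}lder's inequality need only be applied over the remaining edges. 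By uniform finiteness one has $\#\Gamma_{M_{\ast}}(w) \le L_{\ast}^{M_{\ast}+1}$ and the graph $(T_{\abs{w}+k},E_{\abs{w}+k}^{\ast})$ has degree at most $L_{\ast}$, while for $\GSC(D,l,S)$ each $v \in T$ satisfies $\#S^{k}(v) = (\#S)^{k} = r_{\ast}^{-k\hdim}$; hence the number of contributing edges is at most $L_{\ast}^{M_{\ast}+2}(\#S)^{k}$, a bound that is uniform in $w$ and of the exact exponential rate you need. With this localization the supremum over $w$ passes through harmlessly and your limit computation closes the argument.
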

\begin{proof}
	This follows from a H\"{o}lder-type estimate for conductance constants proved in \cite[Lemma 4.7.4]{Kig20} and the fact that $\hdim = \log_{l}(\#S)$. 
\end{proof}

The following definition is exactly the same as part of \cite[Definition 3.6]{Kaj23}.
\begin{defn}\label{dfn:GSC-V00V01-isometry-subgroup}
\begin{enumerate}[label=\textup{(\arabic*)},align=left,leftmargin=*,topsep=2pt,parsep=0pt,itemsep=2pt]
\item\label{it:GSC-V00V01} We set $V_{0}^{\varepsilon} \coloneqq K\cap(\{\varepsilon\}\times\mathbb{R}^{D-1})$ for each
	$\varepsilon\in\{0,1\}$ and $U_{0} \coloneqq K\setminus(V_{0}^{0}\cup V_{0}^{1})$.
\item\label{it:GSC-isometry-subgroup} We define $g_{\varepsilon}\in\mathcal{G}_{0}$ by $g_{\varepsilon}\coloneqq\tau_{\varepsilon}|_{K}$
	for each $\varepsilon=(\varepsilon_{k})_{k=1}^{D}\in\{0,1\}^{D}$,
	where $\tau_{\varepsilon}\colon\mathbb{R}^{D}\to\mathbb{R}^{D}$ is given by
	$\tau_{\varepsilon}((x_{k})_{k=1}^{D}) \coloneqq (\varepsilon_{k}+(1-2\varepsilon_{k})x_{k})_{k=1}^{D}$,
	and define a subgroup $\mathcal{G}_{1}$ of $\mathcal{G}_{0}$ by
	\begin{equation}\label{eq:GSC-isometry-subgroup}
	\mathcal{G}_{1}:=\{g_{\varepsilon}\mid\varepsilon\in\{0\}\times\{0,1\}^{D-1}\}.
	\end{equation}
\end{enumerate}
\end{defn}
In the rest of this subsection, we fix $p \in (d_{\textup{ARC}}, \infty)$ and a self-similar $p$-resistance form $(\mathcal{E}_{p},\mathcal{W}^{p})$ in Corollary \ref{cor.GSCE}.
Recall Theorem \ref{thm.RF-exist} and let $h_{0} \coloneqq h_{V_{0}^{0} \cup V_{0}^{1}}^{\mathcal{E}_{p}}\bigl[\indicator{V_{0}^{1}}\bigr] \in \mathcal{W}^{p}$. 
The strategy to prove $\pwalk > p$ is very similar to \cite{Kaj23}, that is, we will prove the \emph{non}-$\mathcal{E}_{p}$-harmonicity on $U_{0}$ of $h_{2}\coloneqq\sum_{w\in W_{2}}(F_{w})_{*}(l^{-2}h_{0}+q^{w}_{1}\indicator{K})\in \mathcal{W}^{p}$, which also satisfies $h_{2}|_{V_{0}^{i}} = i \, (i = 0, 1)$.
(See \cite[Figures 2 and 3]{Kaj23} for illustrations of $h_{0}$ and of $h_{2}$.)
Then the strict estimate $\pwalk > p$ will follow from $\mathcal{E}_{p}(h_{0})<\mathcal{E}_{p}(h_{2})$ and the self-similarity of $\mathcal{E}_{p}$.
Our arguments will be easier than that of \cite{Kaj23} by virtue of $\mathcal{W}^{p} \subseteq \contfunc(K)$.

The next proposition is a key ingredient.
Note that it requires our standing assumption that $S\not=\{0,1,\ldots,l-1\}^{D}$, which excludes the case of $K=[0,1]^{D}$ from the present framework.
\begin{prop}\label{prop:h2-non-harmonic}
Let $h_{2}\coloneqq\sum_{w\in W_{2}}(F_{w})_{*}(l^{-2}h_{0}+q^{w}_{1}\indicator{K})\in \mathcal{W}^{p}$.
Then $h_{2}$ is not $\mathcal{E}_{p}$-harmonic on $U_{0}$ and $h_{2}|_{V_{0}^{i}} = i$ for each $i \in \{ 0, 1 \}$.
\end{prop}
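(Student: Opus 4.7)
The proof splits into verifying the boundary values and deriving a contradiction from the harmonicity assumption. The claim $h_{2}|_{V_{0}^{i}}=i$ is immediate from the construction: any $x\in V_{0}^{0}=K\cap(\{0\}\times\mathbb{R}^{D-1})$ lies in some level-$2$ cell $K_{w}$ with $q^{w}_{1}=0$ and $F_{w}^{-1}(x)\in V_{0}^{0}\subseteq K$, so $h_{2}(x)=l^{-2}h_{0}(F_{w}^{-1}(x))+q^{w}_{1}=0$; the case of $V_{0}^{1}$ is analogous. Well-definedness of $h_{2}$ at points shared by multiple cells relies on the $\mathcal{G}_{1}$-invariance of $h_{0}$ from \eqref{GSCE.sym}, which makes the local definitions on adjacent cells agree on their common faces.

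To argue the main assertion, suppose for contradiction that $h_{2}$ is $\mathcal{E}_{p}$-harmonic on $U_{0}$. Since $h_{0}=h_{V_{0}^{0}\cup V_{0}^{1}}^{\mathcal{E}_{p}}[\indicator{V_{0}^{1}}]$ and $h_{2}|_{V_{0}^{0}\cup V_{0}^{1}}=\indicator{V_{0}^{1}}$, the uniqueness of the harmonic extension in Theorem \ref{thm.RF-exist} forces $h_{2}=h_{0}$, i.e., the functional identity $h_{0}\circ F_{w}=l^{-2}h_{0}+q^{w}_{1}\indicator{K}$ holds for every $w\in W_{2}$. Iterating gives $h_{0}\circ F_{w}=l^{-2n}h_{0}+q^{w}_{1}\indicator{K}$ for every $n\in\mathbb{N}$ and $w\in W_{2n}$. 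Evaluating on $V_{0}^{0}\cup V_{0}^{1}$ of the level-$0$ cell shows that $h_{0}(F_{w}(q))$ equals the first coordinate of $F_{w}(q)$ for all such $(n,w,q)$. Since $\diam(K_{w},d)\to 0$ as $|w|\to\infty$ by \eqref{BF.bi-Lip}, the union $\bigcup_{n\geq 1}\bigcup_{w\in W_{2n}}F_{w}(V_{0}^{0}\cup V_{0}^{1})$ is dense in $K$; continuity of $h_{0}\in\mathcal{W}^{p}\subseteq\contfunc(K)$ (Theorem \ref{thm.Wp} and Corollary \ref{cor.GSCE}) then yields $h_{0}(x)=x_{1}$ for every $x\in K$.

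It remains to show that the restriction $\pi_{1}|_{K}$ of the first coordinate projection $\pi_{1}\colon x\mapsto x_{1}$ is not the $\mathcal{E}_{p}$-minimizer under the boundary condition $\indicator{V_{0}^{1}}$ on $V_{0}^{0}\cup V_{0}^{1}$; this is the technical core of the argument and the main obstacle. The plan is to exhibit an explicit competitor $u\in\mathcal{W}^{p}$ with $u|_{V_{0}^{0}\cup V_{0}^{1}}=\indicator{V_{0}^{1}}$ and $\mathcal{E}_{p}(u)<\mathcal{E}_{p}(\pi_{1}|_{K})$, contradicting the minimality of $h_{0}=\pi_{1}|_{K}$ in the variational characterization of Theorem \ref{thm.RF-exist}. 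Such a competitor should be produced by exploiting the non-trivial internal topology of $K$ afforded by non-diagonality \ref{GSC3} and connectedness \ref{GSC2}: since $K$ contains holes transverse to the first axis, a function that reroutes the gradient through the perpendicular directions near these holes attains strictly lower energy than the affine profile $\pi_{1}$. Concretely, following the shorting-and-cutting strategy of \cite[Theorem 2.27]{Shi24} (itself developed from \cite{Kaj23} and \cite{BB99}), one combines a suitable interior second-level cell configuration with an averaging argument based on the perpendicular symmetries in $\mathcal{G}_{0}\setminus\mathcal{G}_{1}$ to produce the desired strict energy reduction.
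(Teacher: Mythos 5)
Your boundary-value check and your first reduction are correct: if $h_{2}$ were $\mathcal{E}_{p}$-harmonic on $U_{0}$, then by Proposition \ref{prop.equiv} and the uniqueness in Theorem \ref{thm.RF-exist} (applied with $B=V_{0}^{0}\cup V_{0}^{1}$) one indeed gets $h_{2}=h_{0}$, and your iteration plus the density of $\bigcup_{n}\bigcup_{w\in W_{2n}}F_{w}(V_{0}^{0}\cup V_{0}^{1})$ and continuity of $h_{0}$ correctly yield $h_{0}=\pi_{1}|_{K}$. The gap is the final step. You explicitly defer "the technical core" — producing a competitor $u$ with $u|_{V_{0}^{0}\cup V_{0}^{1}}=\indicator{V_{0}^{1}}$ and $\mathcal{E}_{p}(u)<\mathcal{E}_{p}(\pi_{1}|_{K})$ — to a "plan" invoking shorting-and-cutting and averaging over $\mathcal{G}_{0}\setminus\mathcal{G}_{1}$, without any construction or estimate. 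For $p\neq 2$ there is no explicit formula for $\mathcal{E}_{p}$ (it is a subsequential $\Gamma$-limit), so verifying a strict energy inequality between two explicit functions is not routine; nothing in the paper you cite supplies it in this form. Worse, the remaining task is essentially equivalent in strength to the section's goal: if $h_{0}=\pi_{1}|_{K}$ then $h_{2}=h_{0}$ and the self-similarity \eqref{SSE2} gives $\mathcal{E}_{p}(h_{0})=\bigl(\sigma_{p}(\#S)l^{-p}\bigr)^{2}\mathcal{E}_{p}(h_{0})$, i.e.\ $d_{\mathrm{w},p}=p$; so disproving $h_{0}=\pi_{1}|_{K}$ by an energy comparison is circular unless you prove $d_{\mathrm{w},p}\neq p$ by independent means.

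The paper's proof takes a different and self-contained route that avoids comparing energies of explicit functions. Assuming $h_{2}$ is $\mathcal{E}_{p}$-harmonic on $U_{0}$, it shows that $h_{0}$ would be $\mathcal{E}_{p}$-harmonic on the strictly larger set $K\setminus V_{0}^{0}$: an arbitrary test function $v\in\mathcal{W}^{p}\cap\contfunc_{c}(K\setminus V_{0}^{0})$ is split into pieces $v_{\varepsilon}$ supported near the faces $\varepsilon\in\{1\}\times\{0,1\}^{D-1}$, and each piece is transplanted, via level-$2$ cells $ii^{\varepsilon,\eta}$ chosen so that $i_{1}<l-1$ and $i+\varepsilon^{(0)}\notin S$ (here the hole $S\subsetneq\{0,1,\ldots,l-1\}^{D}$ is used), into a test function $v_{\varepsilon,2}\in\mathcal{W}^{p}\cap\contfunc_{c}(U_{0})$ with $\mathcal{E}_{p}(h_{2};v_{\varepsilon,2})=(\#I^{\varepsilon})\sigma_{p}^{2}l^{-2(p-1)}\mathcal{E}_{p}(h_{0};v_{\varepsilon})$. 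Harmonicity of $h_{2}$ on $U_{0}$ then forces $\mathcal{E}_{p}(h_{0};v_{\varepsilon})=0$ for each $\varepsilon$, hence $\mathcal{E}_{p}(h_{0};v)=0$; taking $\varphi=h_{0}$ (which vanishes on $V_{0}^{0}$) gives $\mathcal{E}_{p}(h_{0})=\mathcal{E}_{p}(h_{0};h_{0})=0$, contradicting \ref{RF1} since $h_{0}$ is non-constant. You would need either to adopt an argument of this kind or to actually carry out the competitor construction with a rigorous strict inequality for the abstract $p$-energy before your proposal can be considered complete.
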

\begin{proof}
The proof is a straightforward modification of \cite[Proposition 3.11]{Kaj23} thanks to Theorem \ref{thm.RF-exist}. We present here a self-contained proof for the reader's convenience.

We claim that, if $h_{2}$ were $\mathcal{E}_{p}$-harmonic on $U_{0}$,
then $h_{0}\in\mathcal{W}^{p}$ would
turn out to be $\mathcal{E}_{p}$-harmonic on $K\setminus V_{0}^{0}$, which together with Proposition \ref{prop.equiv} would imply that $\mathcal{E}_{p}(h_{0}) = \mathcal{E}_{p}(h_{0};h_{0})=0$, which would be a contradiction by \ref{RF1} and $\mathcal{W}^{p} \subseteq \contfunc(K)$.

For each $\varepsilon=(\varepsilon_{k})_{k=1}^{D}\in\{1\}\times\{0,1\}^{D-1}$,
set $U^{\varepsilon} \coloneqq K\cap\prod_{k=1}^{D}(\varepsilon_{k}-1,\varepsilon_{k}+1)$ and
$K^{\varepsilon} \coloneqq K\cap\prod_{k=1}^{D}[\varepsilon_{k}-1/2,\varepsilon_{k}+1/2]$.
Fix $\varphi_{\varepsilon}\in \mathcal{W}^{p} \cap \contfunc_{c}(U^{\varepsilon})$ so that $\varphi_{\varepsilon}|_{K^{\varepsilon}}=\indicator{K^{\varepsilon}}$, which exists by \eqref{RM.comp}, \ref{RF1} and \ref{RF5}.
Let $v\in \mathcal{W}^{p} \cap \contfunc_{c}(K\setminus V_{0}^{0})$ and, taking an enumeration $\{\varepsilon^{(k)}\}_{k=1}^{2^{D-1}}$
of $\{1\}\times\{0,1\}^{D-1}$ and recalling Proposition \ref{prop.GC-list}-\ref{GC.leibniz}, define
$v_{\varepsilon}\in\mathcal{W}^{p} \cap \contfunc_{c}(U^{\varepsilon})$ for $\varepsilon\in\{1\}\times\{0,1\}^{D-1}$
by $v_{\varepsilon^{(1)}} \coloneqq v\varphi_{\varepsilon^{(1)}}$ and
$v_{\varepsilon^{(k)}} \coloneqq v\varphi_{\varepsilon^{(k)}}\prod_{j=1}^{k-1}(\indicator{K}-\varphi_{\varepsilon^{(j)}})$
for $k\in\{2,\ldots,2^{D-1}\}$.
Then $v-\sum_{\varepsilon\in\{1\}\times\{0,1\}^{D-1}}v_{\varepsilon}
	=v\prod_{\varepsilon\in\{1\}\times\{0,1\}^{D-1}}(\indicator{K}-\varphi_{\varepsilon})
	\in\mathcal{W}^{p}\cap\contfunc_{c}(U_{0})$,
hence $\mathcal{E}_{p}(h_{0};v)=\sum_{\varepsilon\in\{1\}\times\{0,1\}^{D-1}}\mathcal{E}_{p}(h_{0};v_{\varepsilon})$
by Proposition \ref{prop.equiv} (with $Y = K \setminus U_{0}$).
Therefore the desired $\mathcal{E}_{p}$-harmonicity of $h_{0}$ on $K\setminus V_{0}^{0}$ would be
obtained by deducing that $\mathcal{E}(h_{0};v_{\varepsilon})=0$ for any
$\varepsilon\in\{1\}\times\{0,1\}^{D-1}$.

To this end, set $\varepsilon^{(0)}\coloneqq(\indicator{\{1\}}(k))_{k=1}^{D}$, take
$i=(i_{k})_{k=1}^{D}\in S$ with $i_{1}<l-1$ and $i+\varepsilon^{(0)}\not\in S$,
which exists by $\emptyset\not=S\subsetneq\{0,1,\ldots,l-1\}^{D}$ and \ref{GSC1},
and let $\varepsilon=(\varepsilon_{k})_{k=1}^{D}\in\{1\}\times\{0,1\}^{D-1}$.
We will choose $i^{\varepsilon}\in S$ with $F_{ii^{\varepsilon}}(\varepsilon)\in F_{i}(K\cap(\{1\}\times(0,1)^{D-1}))$
and assemble $v_{\varepsilon}\circ g_{w}\circ F_{w}^{-1}$ with a suitable
$g_{w}\in\mathcal{G}_{1}$ for $w\in W_{2}$ with $F_{ii^{\varepsilon}}(\varepsilon)\in K_{w}$
into a function $v_{\varepsilon,2}\in \mathcal{W}^{p}\cap\contfunc_{c}(U_{0})$.
Specifically, set
$i^{\varepsilon,\eta}\coloneqq\bigl((l-1)(\indicator{\{1\}}(k)+1-\varepsilon_{k})+(2\varepsilon_{k}-1)\eta_{k}\bigr)_{k=1}^{D}$
for each $\eta=(\eta_{k})_{k=1}^{D}\in\{0\}\times\{0,1\}^{D-1}$ and
$I^{\varepsilon}\coloneqq\{\eta\in\{0\}\times\{0,1\}^{d-1}\mid i^{\varepsilon,\eta}\in S\}$,
so that $i^{\varepsilon} \coloneqq i^{\varepsilon,\zero_{D}}\in S$ by \ref{GSC4} and \ref{GSC1} and hence
$\zero_{D}\in I^{\varepsilon}$. Thanks to $v_{\varepsilon}\in \mathcal{W}^{p}\cap\contfunc_{c}(U^{\varepsilon})$ and
$i+\varepsilon^{(0)}\not\in S$ we can define $v_{\varepsilon,2}\in\contfunc(K)$ by setting
\begin{equation}\label{eq:vepsilon2}
v_{\varepsilon,2}|_{K_{w}}\coloneqq
	\begin{cases}
	v_{\varepsilon}\circ g_{\eta}\circ F_{w}^{-1}
		& \textrm{if $\eta\in I^{\varepsilon}$ and $w=ii^{\varepsilon,\eta}$}\\
	0 & \textrm{if $w\not\in\{ii^{\varepsilon,\eta}\mid\eta\in I^{\varepsilon}\}$}
	\end{cases}
	\quad\textrm{for each $w\in W_{2}$.}
\end{equation}
Then $\supp_{K}[v_{\varepsilon,2}]\subseteq K_{i}\setminus V_{0}^{0}\subseteq U_{0}$
by \eqref{eq:vepsilon2} and $i_{1}<l-1$.
In addition, $v_{\varepsilon,2}\circ F_{w}\in\mathcal{W}^{p}$ for any
$w\in W_{2}$ by \eqref{eq:vepsilon2}, $v_{\varepsilon}\in\mathcal{W}^{p}$
and the $\mathcal{G}_{0}$-invariance \eqref{GSCE.sym} of $(\mathcal{E}_{p},\mathcal{W}^{p})$.
Thus $v_{\varepsilon,2}\in\mathcal{F}_{p}$ by the self-similarity \eqref{SSE1} of $\mathcal{W}^{p}$ and therefore
$v_{\varepsilon,2}\in\mathcal{W}^{p}\cap\contfunc_{c}(U_{0})$.
Recall that $h_{2}\circ F_{w}=l^{-2}h_{0}+q^{w}_{1}\indicator{K}$ for any $w\in W_{2}$ and note that, by the uniqueness in Theorem \ref{thm.RF-exist}, $h_{0} \circ g_{\eta} = h_{0}$ for any $\eta \in I^{\varepsilon}$.
Then we have
\begin{align}\label{eq:Eh2vepsilon2}
    \mathcal{E}_{p}(h_{2};v_{\varepsilon,2})
    &= \sum_{\eta\in I^{\varepsilon}}\sigma_{p}^{2}l^{-2(p - 1)}\mathcal{E}_{p}(h_{0};v_{\varepsilon}\circ g_{\eta}) \nonumber\\
    &= \sum_{\eta\in I^{\varepsilon}}\sigma_{p}^{2}l^{-2(p - 1)}\mathcal{E}_{p}(h_{0}\circ g_{\eta};v_{\varepsilon})
    = (\#I^{\varepsilon})\sigma_{p}^{2}l^{-2(p - 1)}\mathcal{E}_{p}(h_{0};v_{\varepsilon}).
\end{align}
Now, supposing that $h_{2}$ were $\mathcal{E}_{p}$-harmonic on $U_{0}$, from \eqref{eq:Eh2vepsilon2},
$\#I^{\varepsilon}>0$, $v_{\varepsilon,2}\in\mathcal{F}_{p}\cap\contfunc_{c}(U_{0})$ and Proposition \ref{prop.equiv}, we would obtain
$\mathcal{E}_{p}(h_{0};v_{\varepsilon})=\sigma_{p}^{-2}l^{2(p - 1)}(\#I^{\varepsilon})^{-1}\mathcal{E}_{p}(h_{2};v_{\varepsilon,2})=0$,
which would imply a contradiction as explained in the last two paragraphs.
\end{proof}

\begin{thm}
    $\pwalk > p$ for any $p \in (0, \infty)$.
\end{thm}
\begin{proof}
It suffices to prove the case of $p \in (d_{\textup{ARC}},\infty)$ by Proposition \ref{p:pwalk-mono.GSC}. 
Let $h_{0}, h_{2}\in\mathcal{W}^{p}$ be as in Proposition \ref{prop:h2-non-harmonic}.
By Proposition \ref{prop:h2-non-harmonic}, we have $\mathcal{E}_{p}(h_{0})<\mathcal{E}_{p}(h_{2})$.
This strict inequality combined with the self-similarity \eqref{SSE2} of $\mathcal{E}_{p}$ shows that
\begin{equation*}
\mathcal{E}_{p}(h_{0})<\mathcal{E}_{p}(h_{2})=\bigl(\sigma_{p}(\#S)l^{-p}\bigr)^{2}\mathcal{E}_{p}(h_{0}),
\end{equation*}
whence $l^{p}< \sigma_{p}(\#S)$.
Since $\sigma_{p} = l^{\pwalk - \hdim}$ and $\hdim = \log{(\#S)}/\log{l}$, we get $\pwalk=\log{\bigl(\sigma_{p}(\#S)\bigr)}/\log{l}>p$.
\end{proof}

\subsection{\texorpdfstring{$D$}{D}-dimensional level-\texorpdfstring{$l$}{l} Sierpi\'{n}ski gasket}\label{sec.SG}
Following \cite[Example 5.1]{Kaj13}, we introduce the $D$-dimensional level-$l$ Sierpi\'{n}ski gasket.
\begin{framework}[$D$-dimensional level-$l$ Sierpi\'{n}ski gasket\index{Sierpi\'{n}ski gasket}]\label{frmwrk:SG}
Let $D,l\in\mathbb{N}$, $D\geq 2$, $l\geq 2$ and let $\{ q_{k} \}_{k = 0}^{D} \subseteq \mathbb{R}^{D}$ be the set of the vertices of a regular $D$-dimensional simplex so that $q_{0}, \dots, q_{D - 1} \in \{ (x_{1},\dots,x_{D}) \in \mathbb{R}^{D} \mid x_{1} = 0\}$ and $q_{D} \in \{ (x_{1},\dots,x_{D}) \in \mathbb{R}^{D} \mid x_{1} \ge 0\}$.
Further let $S \coloneqq \bigl\{ (i_{k})_{k = 1}^{D} \bigm| i_{k} \in \mathbb{N} \cup \{ 0 \}, \sum_{k = 1}^{D}i_{k} \le l - 1 \bigr\}$, and for each $i = (i_{k})_{k = 1}^{D} \in S$ we set $q_{i} \coloneqq q_{0} + \sum_{k = 1}^{D}l^{-1}i_{k}(q_{k} - q_{0})$ and define $f_{i} \colon \mathbb{R}^{D} \to \mathbb{R}^{D}$ by $f_{i}(x) \coloneqq q_{i} + l^{-1}(x - q_{0})$.
Let $K$ be the self-similar set associated with $\{ f_{i} \}_{i \in S}$ and set $F_{i} \coloneqq f_{i}|_{K}$.
Let $\SG(D,l,S) = (K, S, \{ F_{i} \}_{i \in S})$, which is a self-similar structure.
Let $d\colon K\times K\to[0,\infty)$ be the Euclidean metric on $K$ normalized so that $\diam(K,d) = 1$,
set $d_{\mathrm{f}}\coloneqq\log_{l}(\#S)$, and let $m$ be the self-similar measure on
$\SG(D,l,S)$ with uniform weight $(1/\#S)_{i\in S}$.
\end{framework}
$\SG(D,l,S)$ is clearly an affine nested fractal (recall Framework \ref{frmwrk:ANF} and Definition \ref{defn.ANF}), and called the \emph{$D$-dimensional level-$l$ Sierpi\'{n}ski gasket}\index{Sierpi\'{n}ski gasket}. 
In the rest of this subsection, we fix the Sierpi\'{n}ski gasket $\SG(D,l,S)$ and the self-similar measure $m$ as in Framework \ref{frmwrk:SG}.
We can easily verify \cite[Assumption 2.15]{Kig23} for $\SG(D,l,S)$. 
In addition, it is well known that $m$ is $d_{\textrm{f}}$-Ahlfors regular (see \cite[Proposition E.7]{Kig23} for example).
Similar to Corollary \ref{cor.GSCE}, we have a symmetry-invariant $p$-resistance form on $\SG(D,l,S)$ for any $p \in (1, \infty)$.
(The Ahlfors regular conformal dimension of $(K,d)$ is $1$; see Theorem \ref{thm.dARC-ANF}.)  

\begin{defn}\label{dfn:SG-isometry}
We define
\begin{equation}\label{eq:SG-isometry}
\mathcal{G}_{0}\coloneqq\{f|_{K}\mid\textrm{$f$ is an isometry of $\mathbb{R}^{D}$, $f(V_{0})=V_{0}$}\},
\end{equation}
which forms a finite subgroup of the group of homeomorphisms of $K$.
\end{defn}
\begin{cor}\label{cor.SGE}
Let $p \in (1,\infty)$.
Then Assumption \ref{assum.BFss} holds with $r_{\ast} = l^{-1}$, $K$ is $p$-conductively homogeneous, and there exists a regular self-similar $p$-resistance form $(\mathcal{E}_{p},\mathcal{W}^{p})$ on $\SG(D,l,S)$ with weight $(\sigma_{p})_{i \in S}$ such that it satisfies the conditions \ref{Kigss.equiv}--\ref{Kigss.RF} in Theorem \ref{thm.KSgood-ss}.
Moreover, $(\mathcal{E}_{p},\mathcal{W}^{p})$ has the following $\mathcal{G}_{0}$-invariance:
\begin{equation}\label{SGE.sym}
\text{If $u\in \mathcal{W}^{p}$ and $g\in\mathcal{G}_{0}$
    then $u\circ g\in\mathcal{W}^{p}$ and $\mathcal{E}_{p}(u\circ g)=\mathcal{E}_{p}(u)$.}
\end{equation}
\end{cor}

Similar to Proposition \ref{p:pwalk-mono.GSC}, we have the following monotonicity of $\pwalk/p$ in $p$. 
\begin{prop}\label{p:pwalk-mono.SGs}
    $\pwalk/p \ge d_{\mathrm{w},q}/q$ for any $p,q \in (0,\infty)$ with $p \le q$.
\end{prop}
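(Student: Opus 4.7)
The plan is to reduce this to the H\"{o}lder-type monotonicity result for scaling factors proved in Kigami's monograph, which is precisely the tool used to prove the analogous Proposition \ref{p:pwalk-mono.GSC} for the generalized Sierpi\'{n}ski carpets. Indeed, since the quantity $d_{\mathrm{w},p} = \hdim + \tau_{p}$ decomposes into a $p$-independent Hausdorff-dimension term $\hdim = \log_{l}(\#S)$ and the $p$-scaling exponent $\tau_{p} = \log \sigma_{p} / \log r_{\ast}^{-1}$, the desired inequality $d_{\mathrm{w},p}/p \ge d_{\mathrm{w},q}/q$ is equivalent to the statement that $p \mapsto d_{\mathrm{w},p}/p$ is non-increasing, which is exactly the content of \cite[Lemma 4.7.4]{Kig20} in Kigami's framework.

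To apply that lemma, first I would verify that the $p$-scaling factor $\sigma_{p}$ is well-defined for every $p \in (0,\infty)$, not only for those $p$ for which a $p$-energy form has been constructed. This is guaranteed by Remark \ref{rmk.p-factor}: the limit defining $\sigma_{p}$ exists whenever Assumption \ref{assum.BF}-\ref{BF1},\ref{BF2},\ref{BF4},\ref{BF5} hold, and for $\SG(D,l,S)$ this holds by Corollary \ref{cor.SGE} (with $r_{\ast} = l^{-1}$ and the natural partition into cells $\{K_{w}\}_{w \in W_{\ast}}$). Hence $d_{\mathrm{w},p}$ is defined for every $p \in (0,\infty)$, and Kigami's setting applies.

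Given this, the remainder of the proof is a one-line citation: \cite[Lemma 4.7.4]{Kig20} yields the monotonicity of $p \mapsto d_{\mathrm{w},p}/p$, and rearranging gives $d_{\mathrm{w},p}/p \ge d_{\mathrm{w},q}/q$ for $p \le q$. In particular, there is no genuine obstacle here: the proof is an immediate adaptation of that of Proposition \ref{p:pwalk-mono.GSC}, the only difference being that the underlying self-similar structure is now $\SG(D,l,S)$ rather than a generalized Sierpi\'{n}ski carpet. No properties specific to the gasket (such as post-critical finiteness, or the eigenform construction of Section \ref{sec.ANF}) are needed for this monotonicity, which is purely a consequence of the general theory of conductance constants on approximating graphs.
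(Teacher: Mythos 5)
Your proposal is correct and matches the paper's own treatment: the paper proves Proposition \ref{p:pwalk-mono.GSC} by citing \cite[Lemma 4.7.4]{Kig20} together with $\hdim = \log_{l}(\#S)$, and then states Proposition \ref{p:pwalk-mono.SGs} as "similar," exactly as you argue. Your additional check that $\sigma_{p}$ (hence $\pwalk$) is defined for all $p\in(0,\infty)$ via Remark \ref{rmk.p-factor} and the $p$-independent geometric hypotheses is a reasonable piece of due diligence that the paper leaves implicit.
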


We can prove the following main result by using compatible sequences.
\begin{thm}
    $\pwalk > p$ for any $p \in (0, \infty)$.
\end{thm}
\begin{proof}
    Let $p \in (1,\infty)$ and let $(\mathcal{E}_{p},\mathcal{W}^{p})$ be a self-similar $p$-resistance form on $\SG(D,l,S)$ as given in Corollary \ref{cor.SGE}.
    Define $u \in \contfunc(K)$ by $u(x_{1},\dots,x_{D}) \coloneqq x_{1}$ for any $(x_{1},\dots,x_{D}) \in K \subseteq \mathbb{R}^{D}$.
    Then $u|_{V_{n}} \in \mathcal{W}^{p}|_{V_{n}}$ for any $n \in \mathbb{N} \cup \{ 0 \}$ by Proposition \ref{prop.trace-dom}.
    We claim that if $u|_{V_{1}}$ were $\mathcal{E}_{p}|_{V_{1}}$-harmonic on $V_{1} \setminus V_{0}$, then $\mathcal{E}_{p}|_{V_{0}}(u|_{V_{0}}) = 0$, which would contradict \ref{RF1}.

    Suppose that $\mathcal{E}_{p}|_{V_{1}}(u|_{V_{1}};\varphi) = 0$ for every $\varphi \in \mathbb{R}^{V_{1}}$ with $\varphi|_{V_{0}} = 0$.
    Noting that $(u|_{V_{1}} \circ F_{i})|_{V_{0}} = l^{-1}u|_{V_{0}} + c_{i}\indicator{V_{0}}$ for some constant $c_{i} \in \mathbb{R}$ and using \eqref{eq:compat-seq}, we have
    \begin{equation}\label{SG-nonharm1}
        \mathcal{E}_{p}|_{V_{1}}(u|_{V_{1}};\varphi)
        = \sigma_{p}\sum_{i \in S}\mathcal{E}_{p}|_{V_{0}}(u|_{V_{1}} \circ F_{i};\varphi \circ F_{i})
        = l^{-(p - 1)}\sigma_{p}\sum_{i \in S}\mathcal{E}_{p}|_{V_{0}}(u|_{V_{0}}; \varphi \circ F_{i}).
    \end{equation}
    It is easy to see that $(V_{1} \setminus V_{0}) \cap \{ (x_{1},\dots,x_{D}) \in \mathbb{R}^{D} \mid x_{1} = 0 \} \neq \emptyset$.
    Let $z \in V_{1} \setminus V_{0}$ with $z \in \{ x_{1} = 0 \}$ and let $\varphi \coloneqq \indicator{\{ z \}}^{V_{1}} \in \mathbb{R}^{V_{1}}$.
    Since $u \circ g = u$ for any $g \in \mathcal{G}_{0}$ with $g(\{ x_{1} = 0 \} \cap K) = \{ x_{1} = 0 \} \cap K$, the $\mathcal{G}_{0}$-invariance \eqref{SGE.sym} implies $\mathcal{E}_{p}|_{V_{0}}\bigl(u|_{V_{0}}; \indicator{\{ q_{i} \}}^{V_{0}}\bigr) = \mathcal{E}_{p}|_{V_{0}}\bigl(u|_{V_{0}}; \indicator{\{ q_{j} \}}^{V_{0}}\bigr)$ for any $i,j \in \{ 0, \dots, D - 1 \}$.
    Since $\varphi \circ F_{j} = 0 \in \mathbb{R}^{V_{0}}$ for any $j \in S$ with $z \not\in K_{j}$, we have from \eqref{SG-nonharm1} that
    \begin{align*}
        0
        = \mathcal{E}_{p}|_{V_{1}}(u|_{V_{1}};\varphi)
        &= l^{-(p - 1)}\sigma_{p}\sum_{i \in S; z \in K_{i}}\mathcal{E}_{p}|_{V_{0}}(u|_{V_{0}}; \varphi \circ F_{i}) \nonumber \\
        &= l^{-(p - 1)}\sigma_{p}\cdot\bigl(\#\{ i \in S \mid z \in K_{i} \}\bigr)\mathcal{E}_{p}|_{V_{0}}\bigl(u|_{V_{0}}; \indicator{\{ q_{0}\}}^{V_{0}}\bigr).
    \end{align*}
    Hence we get $\mathcal{E}_{p}|_{V_{0}}\bigl(u|_{V_{0}}; \indicator{\{ q_{j} \}}^{V_{0}}\bigr) = 0$ for every $j \in \{ 0, \dots, D - 1 \}$.
    Therefore,
    \[
    \mathcal{E}_{p}|_{V_{0}}\bigl(u|_{V_{0}}; \indicator{\{ q_{D} \}}^{V_{0}}\bigr)
    = \mathcal{E}_{p}|_{V_{0}}\bigl(u|_{V_{0}}; \indicator{V_{0}}\bigr) = \sum_{j = 0}^{D - 1}\mathcal{E}_{p}|_{V_{0}}\bigl(u|_{V_{0}}; \indicator{\{ q_{j} \}}^{V_{0}}\bigr)
    = 0,
    \]
    which yields $\mathcal{E}_{p}|_{V_{0}}(u|_{V_{0}}; v) = 0$ for every $v \in \mathbb{R}^{V_{0}}$.
    In particular, $\mathcal{E}_{p}|_{V_{0}}(u|_{V_{0}}) = 0$, which is a contradiction and hence we conclude that $u|_{V_{1}}$ is \emph{not} $\mathcal{E}_{p}|_{V_{1}}$-harmonic on $V_{1} \setminus V_{0}$.
    Combining this observation with Proposition \ref{prop.trace-comp}, we see that
    \begin{equation}\label{SG-strict}
        \mathcal{E}_{p}|_{V_{0}}(u|_{V_{0}})
        = \mathcal{E}_{p}|_{V_{1}}|_{V_{0}}(u|_{V_{0}})
        = \mathcal{E}_{p}|_{V_{1}}\Bigl(h_{V_{0}}^{\mathcal{E}_{p}|_{V_{1}}}[u|_{V{0}}]\Bigr)
        < \mathcal{E}_{p}|_{V_{1}}(u|_{V_{1}}).
    \end{equation}
    Similar to \eqref{SG-nonharm1}, we have $\mathcal{E}_{p}|_{V_{1}}(u|_{V_{1}}) = l^{-p}\sigma_{p}(\#S)\mathcal{E}_{p}|_{V_{0}}(u|_{V_{0}})$.
    Hence the strict inequality \eqref{SG-strict} yields $1 < l^{-p}l^{\pwalk - \hdim}(\#S) = l^{\pwalk - p}$, which proves $\pwalk > p$ for any $p \in (1,\infty)$.
    By Proposition \ref{p:pwalk-mono.SGs}, we complete the proof.
\end{proof}


\section*{Acknowledgements}
The authors would like to thank Prof.\ Jun Kigami and Ms.\ Yuka Ota for some illuminating conversations related to this work; in particular, part of Subsection \ref{sec.ANF} is based on discussions with them. The authors also would like to thank Prof.\ Kazuhiro Kuwae for showing a version of draft of \cite{Kuw23+} during the preparation of this paper. 
Naotaka Kajino was supported in part by JSPS KAKENHI Grant Numbers JP21H00989, JP22H01128, JP23K22399. Ryosuke Shimizu was supported in part by JSPS KAKENHI Grant Numbers JP20J20207, JP23KJ2011. This work was supported by the Research Institute for Mathematical Sciences, an International Joint Usage/Research Center located in Kyoto University.

\appendix 
\section*{Appendix}
\addcontentsline{toc}{section}{Appendix}
\section{Symmetric Dirichlet forms and the generalized \texorpdfstring{$2$}{2}-contraction property}\label{sec.bilinear}
Recall the generalized $p$-contraction property \ref{GC} introduced in Definition \ref{defn.GC}.
In this section, we verify the generalized $p$-contraction property for various $p$-energy forms resulting from symmetric Dirichlet forms. 
\subsection{Symmetric Dirichlet forms satisfy the generalized \texorpdfstring{$2$-contraction}{2-contraction} property}
In this subsection, we show that any symmetric Dirichlet form satisfies \hyperref[GC]{(GC)$_{2}$}.
Throughout this subsection, we fix a measure space $(X,\mathcal{B},m)$. 

Let us recall the definition of the notion of symmetric Dirichlet form. 
See, e.g., \cite{CF,FOT,MR} for details of the theory of Dirichlet forms. 
\begin{defn}[Symmetric Dirichlet form\index{Dirichlet form}]
	Let $\mathcal{F}$ be a dense linear subspace of $L^{2}(X,m)$ and let $\mathcal{E} \colon \mathcal{F} \times \mathcal{F} \to \mathbb{R}$ be a non-negative definite symmetric bilinear form on $\mathcal{F}$. 
	The pair $(\mathcal{E},\mathcal{F})$ is said to be a \emph{symmetric Dirichlet form} on $L^{2}(X,m)$ if and only if $\mathcal{F}$ equipped with the inner product $\mathcal{E} + \langle \,\cdot\,,\,\cdot\,\rangle_{L^{2}(X,m)}$ is a Hilbert space, $u^{+} \wedge 1 \in \mathcal{F}$ and $\mathcal{E}(u^{+} \wedge 1,,u^{+} \wedge 1) \le \mathcal{E}(u,u)$ for any $u \in \mathcal{F}$. 
\end{defn}

We can show that a symmetric Dirichlet form $(\mathcal{E},\mathcal{F})$ satisfies \hyperref[GC]{(GC)$_{2}$} by modifying the proof of \cite[Theorem I.4.12]{MR}. 
\begin{prop}\label{prop.GC.DF}
	Let $(\mathcal{E},\mathcal{F})$ be a symmetric Dirichlet form on $L^{2}(X,m)$. 
	Then $(\mathcal{E}_{2},\mathcal{F})$ given by $\mathcal{E}_{2}(u) \coloneqq \mathcal{E}(u,u)$ is a $2$-energy form on $(X,m)$ satisfying \hyperref[GC]{\textup{(GC)$_{2}$}}.   
\end{prop}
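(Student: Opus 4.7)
The plan is to reduce \hyperref[GC]{\textup{(GC)$_{2}$}} for $(\mathcal{E}_{2},\mathcal{F})$ to its validity for the resolvent approximations of $\mathcal{E}_{2}$, which admit an explicit Beurling--Deny-type integral representation for which \hyperref[GC]{\textup{(GC)$_{2}$}} can be verified by direct Minkowski-type arguments, and then to pass to the pointwise limit. Concretely, for each $\beta \in (0,\infty)$, let $G_{\beta}$ denote the $\beta$-resolvent of $(\mathcal{E},\mathcal{F})$ on $L^{2}(X,m)$ and set $\mathcal{E}^{(\beta)}(u) \coloneqq \beta(u - \beta G_{\beta} u, u)_{L^{2}(X,m)}$ for $u \in L^{2}(X,m)$; standard Dirichlet form theory yields that $\mathcal{E}^{(\beta)}$ is a bounded non-negative symmetric quadratic form on $L^{2}(X,m)$ with $\mathcal{E}^{(\beta)}(u) \uparrow \mathcal{E}_{2}(u)$ as $\beta \to \infty$ for every $u \in \mathcal{F}$. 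Moreover, since $\beta G_{\beta}$ is a bounded symmetric Markovian operator on $L^{2}(X,m)$, the classical Beurling--Deny representation theorem for bounded Markovian quadratic forms provides a symmetric $\sigma$-finite measure $J_{\beta}$ on $X \times X$ (off the diagonal) and a $\sigma$-finite measure $k_{\beta} \ge 0$ on $X$ with
\[
\mathcal{E}^{(\beta)}(u) = \int\!\!\int_{X \times X} (u(x) - u(y))^{2}\,J_{\beta}(dx,dy) + \int_{X} u(x)^{2}\,k_{\beta}(dx).
\]

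Next I would verify \hyperref[GC]{\textup{(GC)$_{2}$}} directly for any ``integral form'' of the type $\mathscr{E}(u) = D(u) + K(u)$, where $D(u) \coloneqq \int\!\!\int (u(x) - u(y))^{2}\,d\sigma$ and $K(u) \coloneqq \int u^{2}\,dk$. Given $\mathbf{u} = (u_{k})_{k=1}^{n_{1}} \in \mathcal{F}^{n_{1}}$ and $T = (T_{l})_{l=1}^{n_{2}}$ satisfying \eqref{GC-cond}, two successive applications of Minkowski's integral inequality---the first using $q_{2} \ge 2$ to exchange the $\ell^{q_{2}}(l)$ and $L^{2}(\sigma)$ norms in the direction ``outer $\ell^{q_{2}}$'' $\le$ ``outer $L^{2}$'', and the second using $q_{1} \le 2$ to exchange $\ell^{q_{1}}(k)$ and $L^{2}(\sigma)$ in the direction ``outer $L^{2}$'' $\le$ ``outer $\ell^{q_{1}}$''---combined with the pointwise bounds $\|T(\mathbf{u}(x)) - T(\mathbf{u}(y))\|_{\ell^{q_{2}}} \le \|\mathbf{u}(x) - \mathbf{u}(y)\|_{\ell^{q_{1}}}$ and (via $T(0)=0$) $\|T(\mathbf{u}(x))\|_{\ell^{q_{2}}} \le \|\mathbf{u}(x)\|_{\ell^{q_{1}}}$ yield separately
\[
\bigl\|\bigl(D(T_{l}(\mathbf{u}))^{1/2}\bigr)_{l=1}^{n_{2}}\bigr\|_{\ell^{q_{2}}} \le \bigl\|\bigl(D(u_{k})^{1/2}\bigr)_{k=1}^{n_{1}}\bigr\|_{\ell^{q_{1}}}
\]
and the analogous inequality for $K$. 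Combining these via the triangle inequality for the $\ell^{q_{2}/2}$-norm on the left (valid since $q_{2}/2 \ge 1$) and the reverse Minkowski inequality of Proposition \ref{prop.reverse} for the $\ell^{q_{1}/2}$ ``norm'' on the right (valid since $q_{1}/2 \le 1$), exactly in the manner of the argument \eqref{GC.sum} used in the proof of Proposition \hyperref[GC.cone]{\ref{prop.cone-gen}}-\ref{GC.cone}, then produces \hyperref[GC]{\textup{(GC)$_{2}$}} for $\mathcal{E}^{(\beta)}$.

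Finally, passing to the pointwise limit $\beta \to \infty$ via Proposition \hyperref[GC.cone]{\ref{prop.cone-gen}}-\ref{GC.pwlimit} transfers \hyperref[GC]{\textup{(GC)$_{2}$}} from the family $\{\mathcal{E}^{(\beta)}\}_{\beta > 0}$ to $(\mathcal{E}_{2},\mathcal{F})$. The main subtlety I expect lies in establishing the Beurling--Deny representation of $\mathcal{E}^{(\beta)}$ in the full generality of an arbitrary measure space $(X,\mathcal{B},m)$: without topological regularity the Markovian operator $\beta G_{\beta}$ need not be given by a kernel, so extracting $(J_{\beta},k_{\beta})$ requires either invoking the abstract Beurling--Deny theorem for bounded Markovian quadratic forms on $L^{2}$, or first reducing to a quasi-regular realization of $(\mathcal{E},\mathcal{F})$ on a suitable topological space.
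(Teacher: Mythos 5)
Your overall strategy---approximating $\mathcal{E}_{2}$ by the resolvent forms $\mathcal{E}^{(\beta)}(u)=\beta\langle u-\beta G_{\beta}u,u\rangle_{L^{2}(X,m)}$, decomposing each approximant into a ``jump'' part and a ``killing'' part with non-negative weights, running the two Minkowski exchanges, and passing to the limit---is exactly the skeleton of the paper's proof. The gap is the step you flag yourself and then leave unresolved: the existence of genuine measures $J_{\beta}$ on $X\times X$ and $k_{\beta}$ on $X$ representing $\mathcal{E}^{(\beta)}$. The proposition is stated for an arbitrary measure space $(X,\mathcal{B},m)$, and in that generality neither of your two escape routes is available: there is no off-the-shelf ``abstract Beurling--Deny theorem'' for bounded Markovian forms on a general $L^{2}$ (the set function $(A,B)\mapsto\langle\beta G_{\beta}\indicator{A},\indicator{B}\rangle_{L^{2}(X,m)}$ is only a positive bimeasure on rectangles, and extending a bimeasure to a countably additive measure on the product $\sigma$-algebra is a genuine problem that can fail without topological or standard-Borel hypotheses), nor is it clear that an arbitrary symmetric Dirichlet form on an arbitrary $L^{2}(X,m)$ admits a quasi-regular topological realization.

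The repair is to push the reduction one step further, which is what the paper does: by \eqref{GC-cond} and the boundedness of $\mathcal{E}^{(\beta)}$ on $L^{2}(X,m)$ it suffices to treat $u_{k}=\sum_{i=1}^{N}\alpha_{ki}\indicator{A_{i}}$ with $\{A_{i}\}_{i=1}^{N}$ disjoint and of finite measure. On the span of such simple functions the identity
\begin{equation*}
\langle(1-\alpha G_{\alpha})u,u\rangle_{L^{2}(X,m)}=\sum_{i<j}a_{ij}(z_{i}-z_{j})^{2}+\sum_{j=1}^{N}m_{j}z_{j}^{2},\qquad u=\sum_{i=1}^{N}z_{i}\indicator{A_{i}},
\end{equation*}
with $a_{ij}=\langle\alpha G_{\alpha}\indicator{A_{i}},\indicator{A_{j}}\rangle_{L^{2}(X,m)}\geq 0$ and $m_{j}=m(A_{j})-\sum_{i}a_{ij}\geq 0$ (both signs coming from the positivity and sub-Markov property of $\alpha G_{\alpha}$), is elementary linear algebra and plays the role of your pair $(J_{\beta},k_{\beta})$ with finite sums in place of integrals; your two Minkowski exchanges and the combination via the $\ell^{q_{2}/2}$ triangle inequality and Proposition \ref{prop.reverse} then go through verbatim. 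One further small imprecision: at the end you cannot quite quote Proposition \ref{prop.cone-gen}-\ref{GC.pwlimit}, since \hyperref[GC]{\textup{(GC)$_{2}$}} for $\mathcal{E}^{(\beta)}$ on all of $L^{2}(X,m)$ does not by itself give \hyperref[GC]{\textup{(GC)$_{2}$}} for its restriction to $\mathcal{F}$ (the membership $T_{l}(\bm{u})\in\mathcal{F}$ is part of what must be proved). Instead, take the supremum over $\beta$ in the inequality for the approximants: finiteness of the right-hand side forces $\sup_{\beta}\mathcal{E}^{(\beta)}(T_{l}(\bm{u}))<\infty$, which by \cite[Theorem I.2.13]{MR} is precisely membership of $T_{l}(\bm{u})$ in $\mathcal{F}$ together with the identification of $\mathcal{E}_{2}(T_{l}(\bm{u}))$ as the limit.
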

\begin{proof}
	It is clear that $\mathcal{E}_{2}^{1/2}$ is a seminorm on $\mathcal{F}$, so we shall prove \hyperref[GC]{(GC)$_{2}$} for $(\mathcal{E}_{2},\mathcal{F})$. 
	Let $n_{1},n_{2} \in \mathbb{N}$, $q_{1} \in (0,2]$, $q_{2} \in [2,\infty]$ and $T = (T_{1},\dots,T_{n_{2}}) \colon \mathbb{R}^{n_{1}} \to \mathbb{R}^{n_{2}}$ satisfy \eqref{GC-cond}. 
	We consider the case of $q_{2} < \infty$ (the case of $q_{2} = \infty$ is similar). 
	Let $\{ G_{\alpha} \}_{\alpha > 0}$ be the strongly continuous resolvent on $L^{2}(X,m)$ associated with $(\mathcal{E},\mathcal{F})$; see, e.g., \cite[Theorem I.2.8]{MR}. 
	By \cite[Theorem I.2.13-(ii)]{MR}, it suffices to prove that for any $\bm{u} = (u_{1},\dots,u_{n_{1}}) \in L^{2}(X,m)^{n_{1}}$ and any $\alpha \in (0,\infty)$, 
	\begin{equation}\label{e:DF.GC.reduction}
		\left(\sum_{l = 1}^{n_{2}}\langle (1 - \alpha G_{\alpha})T_{l}(\bm{u}),T_{l}(\bm{u})\rangle_{L^2(X,m)}^{q_{2}/2}\right)^{1/q_{2}}
		\le \left(\sum_{k = 1}^{n_{1}}\langle (1 - \alpha G_{\alpha})u_{k},u_{k}\rangle_{L^2(X,m)}^{q_{1}/2}\right)^{1/q_{1}}. 
	\end{equation}
	By the linearity of $G_{\alpha}$ and \eqref{GC-cond}, it is enough to prove \eqref{e:DF.GC.reduction} in the case where $u_{k}$ is a simple function for each $k \in \{ 1,\dots,n_{1} \}$, so we assume that 
	\begin{equation}\label{e:uk.simple}
		u_{k} = \sum_{i = 1}^{N}\alpha_{ki}\indicator{A_{i}}, \quad k \in \{ 1,\dots,n_{1} \}, 
	\end{equation}
	where $N \in \mathbb{N}$, $(\alpha_{ki})_{i = 1}^{N} \subseteq \mathbb{R}$, $\{ A_{i} \}_{i = 1}^{N} \subseteq \mathcal{B}(X)$ with $m(A_{i}) < \infty$ and $A_{i} \cap A_{j} = \emptyset$ for $i \neq j$. 
	Fix $\alpha \in (0,\infty)$ and, for $i,j \in \{ 1,\dots,N \}$, we define 
	\[
	b_{i,j} \coloneqq \langle (1 - \alpha G_{\alpha})\indicator{A_{i}}, \indicator{A_{j}} \rangle_{L^2(X,m)}, 
	\quad \lambda_{i} \coloneqq m(A_{i})
	\quad \text{and} \quad a_{ij} \coloneqq \langle \alpha G_{\alpha}\indicator{A_{i}}, \indicator{A_{j}} \rangle_{L^2(X,m)}. 
	\]
	Then $b_{ij} = \lambda_{i}\delta_{ij} - a_{ij}$ by a simple calculation, and $a_{ij} = a_{ji}$ since $G_{\alpha}$ is a symmetric operator on $L^{2}(X,m)$ (see, e.g., \cite[Theorem I.2.8]{MR}). 
	Hence for any $(z_{1},\dots,z_{N}) \in \mathbb{R}^{N}$, 
	\begin{equation}\label{e:DF.GC.simple}
		\sum_{i,j = 1}^{N}z_{i}z_{j}b_{ij} = \sum_{i < j}a_{ij}(z_{i} - z_{j})^{2} + \sum_{j = 1}^{N}m_{j}z_{j}^{2}, 
	\end{equation}
	where $m_{j} \coloneqq \lambda_{j} - \sum_{i = 1}^{N}a_{ij}$. 
	Note that $a_{ij} \ge 0$ for any $i,j \in \{ 1,\dots,N \}$ since $\alpha G_{\alpha}\indicator{A_{i}} \ge 0$ by \cite[Theorem I.4.4]{MR}. 
	We set $A \coloneqq \bigcup_{i = 1}^{N}A_{i}$, and then we have $\alpha G_{\alpha}(\indicator{A}) \le 1$ by \cite[Theorem I.4.4]{MR} and 
	\begin{align*}
		\sum_{u = 1}^{N}a_{ij}
		= \alpha\int_{X}\indicator{A}G_{\alpha}(\indicator{A_{j}})\,dm
		= \alpha\int_{X}G_{\alpha}(\indicator{A})\indicator{A_{j}}\,dm 
		\le \int_{X}\indicator{A_{j}}\,dm
		= \lambda_{j}, 
	\end{align*} 
	whence $m_{j} \ge 0$. 
	With these preparations, we show \eqref{e:DF.GC.reduction} for $\bm{u}$ defined in \eqref{e:uk.simple}. 
	Set $T_{l,i} \coloneqq T_{l}(\alpha_{1i},\dots,\alpha_{u_{1}i})$ for each $l \in \{ 1,\dots,n_{2} \}$. 
	We see that 
	\begin{align*}
        &\sum_{l = 1}^{n_{2}}\langle (1 - \alpha G_{\alpha})T_{l}(\bm{u}),T_{l}(\bm{u})\rangle_{L^2(X,m)}^{q_{2}/2}
        = \sum_{l = 1}^{n_{2}}\left(\sum_{i,j = 1}^{N}T_{l,i}T_{l,j}b_{ij}\right)^{q_{2}/2} \\
        &\overset{\eqref{e:DF.GC.simple}}{=} \sum_{l = 1}^{n_{2}}\left(\sum_{i < j}a_{ij}(T_{l,i} - T_{l,j})^{q_{2} \cdot \frac{2}{q_{2}}} + \sum_{j = 1}^{N}m_{j}T_{l,j}^{q_{2} \cdot \frac{2}{q_{2}}}\right)^{q_{2}/2} \\
        &\overset{\eqref{reverse}}{\le} \left(\sum_{i < j}\left(a_{ij}^{q_{2}/2}\sum_{l = 1}^{n_{2}}(T_{l,i} - T_{l,j})^{q_{2}}\right)^{2/q_{2}} + \sum_{j = 1}^{N}\left(m_{j}^{q_{2}/2}\sum_{l = 1}^{n_{2}}T_{l,j}^{q_{2}}\right)^{2/q_{2}}\right)^{q_{2}/2} \nonumber\\ 
        &\overset{\eqref{GC-cond}}{\le} \left(\sum_{i < j}\left(a_{ij}^{q_{2}/2}\left(\sum_{k = 1}^{n_{1}}(\alpha_{ki} - \alpha_{kj})^{q_{1}}\right)^{q_{2}/q_{1}}\right)^{2/q_{2}} + \sum_{j = 1}^{N}\left(m_{j}^{q_{2}/2}\left(\sum_{k = 1}^{n_{1}}\alpha_{kj}^{q_{1}}\right)^{q_{2}/q_{1}}\right)^{2/q_{2}}\right)^{q_{2}/2} \nonumber\\ 
        &= \left(\sum_{i < j}\left(\sum_{k = 1}^{n_{1}}\bigl(a_{ij}(\alpha_{ki} - \alpha_{kj})^{2}\bigr)^{q_{1}/2}\right)^{2/q_{1}} + \sum_{j = 1}^{N}\left(\sum_{k = 1}^{n_{1}}\bigl(m_{j}\alpha_{kj}^{2}\bigr)^{q_{1}/2}\right)^{2/q_{1}}\right)^{\frac{q_{1}}{2}\cdot\frac{q_{2}}{q_{1}}} \nonumber\\ 
        &\overset{\textup{($\ast$)}}{\le} \left(\left(\sum_{k = 1}^{n_{1}}\left(\sum_{i < j}a_{ij}(\alpha_{ki} - \alpha_{kj})^{2} + \sum_{j = 1}^{N}m_{j}\alpha_{kj}^{2}\right)^{q_{1}/2}\right)^{2/q_{1}}\right)^{\frac{q_{1}}{2}\cdot\frac{q_{2}}{q_{1}}} \nonumber\\ 
         &= \left(\sum_{k = 1}^{n_{1}}\left(\sum_{i < j}a_{ij}(\alpha_{ki} - \alpha_{kj})^{2} + \sum_{j = 1}^{N}m_{j}\alpha_{kj}^{2}\right)^{q_{1}/2}\right)^{q_{2}/q_{1}} \nonumber\\ 
        &\overset{\eqref{e:DF.GC.simple}}{=} \left(\sum_{k = 1}^{n_{1}}\left(\sum_{i, = 1}^{N}\alpha_{ki}\alpha_{kj}b_{ij}\right)^{q_{1}/2}\right)^{q_{2}/q_{1}} 
        = \left(\sum_{k = 1}^{n_{1}}\langle (1 - \alpha G_{\alpha})u_{k},u_{k}\rangle_{L^2(X,m)}^{q_{1}/2}\right)^{\frac{2}{q_1}\cdot\frac{q_2}{2}},
    \end{align*}
    where we used the triangle inequality for $\ell^{2/q_{1}}$-norm in ($\ast$). 
    The proof is completed. 
\end{proof}

Next we will extend \hyperref[GC]{\textup{(GC)$_{2}$}} to $(\mathcal{E}_{2},\mathcal{F}_{e})$, where $\mathcal{F}_{e}$ is the \emph{extended Dirichlet space}; see Definition \ref{defn.extDF} below. 
(See, e.g., \cite[Section 1.5]{FOT} or \cite[Section 1.1]{CF} for details on the extended Dirichlet space.)
We need to recall the following result. 

\begin{prop}[{\cite[Proposition 2]{Sch.fatou}} and {\cite[Lemma 1]{Sch.ext}}\footnote{In \cite[Lemma 1]{Sch.ext}, Proposition \ref{prop.Fe-well} is stated and proved for a much wider class of $(\mathcal{E},\mathcal{F})$. The assumptions made in \cite[Lemma 1]{Sch.ext} are that $(X,\mathcal{B},m)$ is an arbitrary measure space, that $\mathcal{F}$ is a linear subspace of $L^{0}(X,m)$ and that $\mathcal{E} \colon \mathcal{F} \times \mathcal{F} \to \mathbb{R}$ is a non-negative definite bilinear form satisfying the \emph{strong sector condition} (see \cite[Definition 1]{Sch.ext}) and the Fatou property (see \cite[Definition 2]{Sch.ext}), both of which are satisfied if $(\mathcal{E},\mathcal{F})$ is a symmetric Dirichlet form. Indeed, the strong sector condition is immediate from the Cauchy--Schwarz inequality for $\mathcal{E}$ and the Fatou property for $(\mathcal{E},\mathcal{F})$ holds by \cite[Proposition 2]{Sch.fatou}.}]\label{prop.Fe-well}
	Let $(\mathcal{E},\mathcal{F})$ be a symmetric Dirichlet form on $L^{2}(X,m)$.
	If $\{ u_{n} \}_{n \in \mathbb{N}} \subseteq \mathcal{F}$ converges $m$-a.e.\ to $0$ as $n \to \infty$ and $\lim_{k \wedge l \to \infty}\mathcal{E}(u_{k} - u_{l},u_{k} - u_{l}) = 0$, then $\lim_{n \to \infty}\mathcal{E}(u_{n},u_{n}) = 0$. 
\end{prop}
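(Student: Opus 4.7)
The proposition is the Fatou property underlying the well-definedness of extended Dirichlet spaces, and my plan is to adapt Schmuland's argument in \cite{Sch.ext} by regularizing with the sub-Markovian resolvent $\{G_{\alpha}\}_{\alpha > 0}$ associated with $(\mathcal{E},\mathcal{F})$. First, since $\{u_{n}\}$ is $\mathcal{E}^{1/2}$-Cauchy, the limit $L \coloneqq \lim_{n \to \infty}\mathcal{E}(u_{n},u_{n}) \in [0,\infty)$ exists and the goal is to show $L = 0$; I may also pass to a subsequence satisfying $\mathcal{E}(u_{n+1}-u_{n}, u_{n+1}-u_{n}) \le 4^{-n}$ to enable quantitative tail estimates.

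For each $\alpha > 0$ I would set $v_{n}^{(\alpha)} \coloneqq \alpha G_{\alpha} u_{n} \in \mathcal{F}$. The resolvent identity $\mathcal{E}_{\alpha}(\alpha G_{\alpha} u_{n}, w) = \alpha \langle u_{n}, w\rangle_{L^{2}(X,m)}$ for $w \in \mathcal{F}$ yields
\[
\mathcal{E}(v_{n}^{(\alpha)}, v_{n}^{(\alpha)}) = \alpha \langle u_{n} - v_{n}^{(\alpha)}, v_{n}^{(\alpha)}\rangle_{L^{2}(X,m)},
\]
together with the $\mathcal{E}$-contractivity $\mathcal{E}(v_{n}^{(\alpha)}, v_{n}^{(\alpha)}) \le \mathcal{E}(u_{n},u_{n})$ and the standard $\alpha G_{\alpha}$-convergence $\sup_{n} \mathcal{E}(u_{n} - v_{n}^{(\alpha)}, u_{n} - v_{n}^{(\alpha)}) \to 0$ as $\alpha \to \infty$ (applied uniformly in $n$ using $\sup_{n} \mathcal{E}(u_{n},u_{n}) < \infty$). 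This reduces the problem to showing $\lim_{n \to \infty} \mathcal{E}(v_{n}^{(\alpha)}, v_{n}^{(\alpha)}) = 0$ for each fixed $\alpha > 0$.

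For the latter, the plan is to establish $v_{n}^{(\alpha)} \to 0$ in $L^{2}(X,m)$. The sub-Markovian bound $|v_{n}^{(\alpha)}| \le \alpha G_{\alpha}|u_{n}|$ together with the $m$-a.e.\ convergence $u_{n} \to 0$ should give $v_{n}^{(\alpha)} \to 0$ $m$-a.e.; combined with the Markov truncation $u_{n}^{(M)} \coloneqq (-M) \vee (u_{n} \wedge M)$ (which satisfies $\mathcal{E}(u_{n}^{(M)}, u_{n}^{(M)}) \le \mathcal{E}(u_{n},u_{n})$ and $|\alpha G_{\alpha} u_{n}^{(M)}| \le M$), a dominated-convergence argument localized to sets of finite $m$-measure yields $\alpha G_{\alpha} u_{n}^{(M)} \to 0$ in $L^{2}(X,m)$, which via the resolvent identity above forces $\mathcal{E}(\alpha G_{\alpha} u_{n}^{(M)}, \alpha G_{\alpha} u_{n}^{(M)}) \to 0$ as $n \to \infty$. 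Strong subadditivity then controls the residual $\mathcal{E}(u_{n} - u_{n}^{(M)}, u_{n} - u_{n}^{(M)})$ uniformly in $n$ as $M \to \infty$, using the $\mathcal{E}^{1/2}$-Cauchy hypothesis on $\{u_{n}\}$.

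The main obstacle is the absence of any a priori $L^{2}(X,m)$-bound on $\{u_{n}\}$: only $\mathcal{E}^{1/2}$-Cauchyness and $m$-a.e.\ convergence are assumed, so direct $L^{2}$-compactness arguments are unavailable and the $L^{2}$-mollification by $\alpha G_{\alpha}$ cannot be used without first truncating. The delicate technical step is the interchange of the three limits $\alpha \to \infty$, $n \to \infty$, $M \to \infty$, which requires quantifying each approximation via the Markov property, the sub-Markovian nature of the resolvent, and the $\mathcal{E}^{1/2}$-Cauchy hypothesis to ensure all tails vanish in the correct order.
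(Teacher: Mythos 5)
First, a remark on the target: the paper does not prove this proposition at all — it is quoted from Schmuland ({\cite[Proposition 2]{Sch.fatou}} and {\cite[Lemma 1]{Sch.ext}}), and the footnote only explains why a symmetric Dirichlet form satisfies Schmuland's hypotheses. Your attempt therefore has to stand entirely on its own, and as written it has genuine gaps. The most serious one is the last step: you assert that strong subadditivity together with the $\mathcal{E}^{1/2}$-Cauchy hypothesis controls $\mathcal{E}(u_{n}-u_{n}^{(M)},u_{n}-u_{n}^{(M)})$ \emph{uniformly in $n$} as $M\to\infty$. The truncation $u\mapsto u^{(M)}=(-M)\vee(u\wedge M)$ is nonlinear, and the Markov property only gives $\mathcal{E}(u_{n}^{(M)}-u_{l}^{(M)},u_{n}^{(M)}-u_{l}^{(M)})^{1/2}\le\mathcal{E}(u_{n},u_{n})^{1/2}+\mathcal{E}(u_{l},u_{l})^{1/2}$, \emph{not} a bound by $\mathcal{E}(u_{n}-u_{l},u_{n}-u_{l})^{1/2}$ (one can make $\mathcal{E}(u^{(M)}-v^{(M)},u^{(M)}-v^{(M)})$ large while $u-v$ is constant). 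Hence the $\varepsilon/3$ argument that would upgrade the single-function fact $\lim_{M\to\infty}\mathcal{E}(u-u^{(M)},u-u^{(M)})=0$ to a statement uniform over the Cauchy sequence is unavailable; this uniform truncation estimate is essentially as hard as the proposition itself. A second concrete error: $u_{n}\to 0$ $m$-a.e.\ together with $\abs{\alpha G_{\alpha}u_{n}}\le\alpha G_{\alpha}\abs{u_{n}}$ does \emph{not} give $\alpha G_{\alpha}u_{n}\to 0$ $m$-a.e.; a positivity-preserving operator does not preserve a.e.\ convergence of a sequence admitting no dominating function, and here no $L^{2}$ or pointwise bound on $\{u_{n}\}$ is assumed. (For the truncated $u_{n}^{(M)}$ the domination $\abs{u_{n}^{(M)}}\le M$ saves this step, but then you are back to the truncation-error problem.)

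Two further soft spots. The claim $\sup_{n}\mathcal{E}(u_{n}-\alpha G_{\alpha}u_{n},u_{n}-\alpha G_{\alpha}u_{n})\to 0$ as $\alpha\to\infty$ is \emph{not} a consequence of $\sup_{n}\mathcal{E}(u_{n},u_{n})<\infty$: by spectral calculus this quantity is $\int_{0}^{\infty}\lambda^{3}(\alpha+\lambda)^{-2}\,d\mu_{u_{n}}(\lambda)$, whose supremum over the $\mathcal{E}$-unit ball equals $\sup_{\lambda}\lambda^{2}(\alpha+\lambda)^{-2}=1$ whenever the generator is unbounded. The conclusion is nevertheless true for your sequence, but only because $\{u_{n}\}$ is $\mathcal{E}^{1/2}$-Cauchy and $\alpha G_{\alpha}$ is a linear $\mathcal{E}^{1/2}$-contraction, so that $\mathcal{E}(u_{n}-\alpha G_{\alpha}u_{n},\,\cdot\,)^{1/2}\le 2\mathcal{E}(u_{n}-u_{N},\,\cdot\,)^{1/2}+\mathcal{E}(u_{N}-\alpha G_{\alpha}u_{N},\,\cdot\,)^{1/2}$ and an $\varepsilon/3$ argument applies; this needs to be said. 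Finally, when $m(X)=\infty$ the passage from dominated convergence on finite-measure sets to $\alpha G_{\alpha}u_{n}^{(M)}\to 0$ in $L^{2}(X,m)$ globally is unjustified (no tightness at infinity is available), and even granting it, the identity $\mathcal{E}(\alpha G_{\alpha}w,\alpha G_{\alpha}w)\le\alpha\langle w,\alpha G_{\alpha}w\rangle$ requires an $L^{2}$-bound on $w=u_{n}^{(M)}$ — which fails for infinite $m$ — before the $L^{2}$-convergence of $\alpha G_{\alpha}u_{n}^{(M)}$ alone can kill the right-hand side. I would either reproduce Schmuland's actual argument (which works with the approximating forms $\beta\langle w-\beta G_{\beta}w,v\rangle$ and the Fatou property rather than with truncations) or simply cite it, as the paper does.
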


Now we define the extended Dirichlet form $(\mathcal{E},\mathcal{F}_{e})$. 
\begin{defn}[Extended Dirichlet form\index{extended Dirichlet form}]\label{defn.extDF}
	Let $(\mathcal{E},\mathcal{F})$ be a symmetric Dirichlet form on $L^{2}(X,m)$. 
	We define the \emph{extended Dirichlet form} $(\mathcal{E},\mathcal{F}_{e})$ of $(\mathcal{E},\mathcal{F})$ by 
	\begin{align}
	\mathcal{F}_{e} &\coloneqq 
	\biggl\{ f \in L^{0}(X,m) \biggm|
	\begin{minipage}{225pt}
		$\lim_{n \to \infty}f_{n} = f$ $m$-a.e.\ for some $\{ f_{n} \}_{n \in \mathbb{N}} \subseteq \mathcal{F}$ with $\lim_{k \wedge l \to \infty}\mathcal{E}(f_{k} - f_{l}, f_{k} - f_{l}) = 0$
	\end{minipage} 
	\biggr\}, \label{e:defn.extDF} \\
	\mathcal{E}(f,f) &\coloneqq \lim_{n \to \infty}\mathcal{E}(f_{n},f_{n}), \quad \text{$f \in \mathcal{F}_{e}$, where $\{ f_{n} \}_{n \in \mathbb{N}}$ is a sequence as in \eqref{e:defn.extDF}.}
	\label{e:defn.extDF.form}
	\end{align}
	Each such $\{ f_{n} \}_{n \in \mathbb{N}}$ as in \eqref{e:defn.extDF} is called an \emph{approximating sequence for $f$}. 
	By Proposition \ref{prop.Fe-well}, the limit $\lim_{n \to \infty}\mathcal{E}(f_{n},f_{n})$ in \eqref{e:defn.extDF.form} does not depend on a particular choice of $\{ f_{n} \}_{n \in \mathbb{N}}$, and we have $\mathcal{F} = \mathcal{F}_{e} \cap L^{2}(X,m)$ by \cite[Proposition 2]{Sch.fatou}; see also \cite[Theorem 1.1.5]{CF}. 
\end{defn}

We also need the following proposition, which is proved by utilizing a version \cite[Theorem A.4.1-(ii)]{CF} of the Banach--Saks theorem. 
\begin{prop}[{\cite[Lemma 2]{Sch.ext}}\footnote{In \cite[Lemma 2]{Sch.ext}, Proposition \ref{prop.extcriteria} is stated and proved for the same class of bilinear forms $(\mathcal{E},\mathcal{F})$ as Proposition \ref{prop.Fe-well} is in \cite[Lemma 1]{Sch.ext}.}]\label{prop.extcriteria}
	Let $(\mathcal{E},\mathcal{F})$ be a symmetric Dirichlet form on $L^{2}(X,m)$, and let $\{ u_{n} \}_{n \in \mathbb{N}} \subseteq \mathcal{F}$. 
	If $\liminf_{n \to \infty}\mathcal{E}(u_{n},u_{n}) < \infty$ and $\{ u_{n} \}_{n \in \mathbb{N}}$ converges $m$-a.e.\ to $u \in L^{0}(X,m)$ as $n \to \infty$, then $u \in \mathcal{F}_{e}$ and $\mathcal{E}(u,u) \le \liminf_{n \to \infty}\mathcal{E}(u_{n},u_{n})$. 
\end{prop}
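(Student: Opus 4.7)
The plan is to combine the a.e.\ convergence $u_{n} \to u$ with the finite $\liminf$ of $\mathcal{E}(u_{n},u_{n})$ by a Banach--Saks argument that produces an approximating sequence for $u$ in the sense of Definition \ref{defn.extDF}, and then to estimate its $\mathcal{E}$-limit.

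First, set $L \coloneqq \liminf_{n \to \infty}\mathcal{E}(u_{n},u_{n}) < \infty$ and extract a subsequence, still denoted $\{u_{n}\}_{n \in \mathbb{N}}$, along which $\mathcal{E}(u_{n},u_{n}) \to L$. In particular, $\{u_{n}\}_{n \in \mathbb{N}}$ is bounded in the seminorm $\mathcal{E}(\,\cdot\,,\,\cdot\,)^{1/2}$ on $\mathcal{F}$. The Banach--Saks-type result \cite[Theorem A.4.1-(ii)]{CF} (whose applicability in this $\mathcal{E}$-bounded setting is the reason it is quoted here) yields a further subsequence $\{u_{n_{k}}\}_{k \in \mathbb{N}}$ such that its Ces\`{a}ro means
\begin{equation*}
    v_{N} \coloneqq \frac{1}{N}\sum_{k = 1}^{N} u_{n_{k}} \in \mathcal{F}, \quad N \in \mathbb{N},
\end{equation*}
form a Cauchy sequence in $(\mathcal{F},\mathcal{E}(\,\cdot\,,\,\cdot\,)^{1/2})$, i.e., $\lim_{N \wedge M \to \infty}\mathcal{E}(v_{N} - v_{M},v_{N} - v_{M}) = 0$.

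Second, observe that since $u_{n_{k}} \to u$ $m$-a.e.\ as $k \to \infty$, an elementary Ces\`{a}ro-mean argument (Stolz--Ces\`{a}ro applied pointwise on the conull set of convergence) gives $v_{N} \to u$ $m$-a.e.\ as $N \to \infty$. Thus $\{v_{N}\}_{N \in \mathbb{N}} \subseteq \mathcal{F}$ is an approximating sequence for $u$ in the sense of \eqref{e:defn.extDF}, so $u \in \mathcal{F}_{e}$ and, by \eqref{e:defn.extDF.form}, $\mathcal{E}(u,u) = \lim_{N \to \infty}\mathcal{E}(v_{N},v_{N})$.

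Finally, by the bilinearity of $\mathcal{E}$ and the Cauchy--Schwarz inequality,
\begin{equation*}
    \mathcal{E}(v_{N},v_{N})
    = \frac{1}{N^{2}}\sum_{k,l = 1}^{N}\mathcal{E}(u_{n_{k}},u_{n_{l}})
    \le \left(\frac{1}{N}\sum_{k = 1}^{N}\mathcal{E}(u_{n_{k}},u_{n_{k}})^{1/2}\right)^{2},
\end{equation*}
and, since $\mathcal{E}(u_{n_{k}},u_{n_{k}}) \to L$, the right-hand side tends to $L$ as $N \to \infty$. Therefore $\mathcal{E}(u,u) \le L = \liminf_{n \to \infty}\mathcal{E}(u_{n},u_{n})$, completing the proof.

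The main obstacle is locating and applying the correct Banach--Saks-type theorem: the standard form requires boundedness in a Hilbert \emph{norm}, whereas here we only have boundedness in the \emph{seminorm} $\mathcal{E}(\,\cdot\,,\,\cdot\,)^{1/2}$ (we have no control on $\norm{u_{n}}_{L^{2}(X,m)}$). The version in \cite[Theorem A.4.1-(ii)]{CF} is tailored precisely to produce the $\mathcal{E}$-Cauchy Ces\`{a}ro subsequence from an $\mathcal{E}$-bounded one, and this is exactly what bridges the a.e.\ hypothesis to the extended Dirichlet space framework via Proposition \ref{prop.Fe-well}.
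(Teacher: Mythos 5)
Your proof is correct and follows exactly the route the paper intends: the paper only cites \cite[Lemma 2]{Sch.ext} for this proposition and remarks that it "is proved by utilizing a version \cite[Theorem A.4.1-(ii)]{CF} of the Banach--Saks theorem", which is precisely your Ces\`{a}ro-mean argument (the same device the paper uses explicitly in the proof of Lemma \ref{lem.DF.fatoulike}). Your concluding Cauchy--Schwarz estimate on $\mathcal{E}(v_{N},v_{N})$ correctly closes the energy bound, so nothing is missing.
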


Now we can show that the extended Dirichlet form $(\mathcal{E},\mathcal{F}_{e})$ satisfies \hyperref[GC]{\textup{(GC)$_{2}$}}. 
\begin{prop}\label{prop.GC.DFext}
	Let $(\mathcal{E},\mathcal{F})$ be a symmetric Dirichlet form on $L^{2}(X,m)$. 
	Then $(\mathcal{E}_{2},\mathcal{F}_{e})$ given by $\mathcal{E}_{2}(u) \coloneqq \mathcal{E}(u,u)$ is a $2$-energy form on $(X,m)$ satisfying \hyperref[GC]{\textup{(GC)$_{2}$}}.  
\end{prop}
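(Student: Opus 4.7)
\medskip

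\noindent\textbf{Proof proposal.} The plan is to bootstrap \hyperref[GC]{\textup{(GC)$_{2}$}} from $(\mathcal{E}_{2},\mathcal{F})$, already proved in Proposition \ref{prop.GC.DF}, to $(\mathcal{E}_{2},\mathcal{F}_{e})$ by a standard approximation argument based on the Fatou-type lower semicontinuity provided by Proposition \ref{prop.extcriteria}. Fix $n_{1},n_{2}\in\mathbb{N}$, $q_{1}\in(0,2]$, $q_{2}\in[2,\infty]$, and $T=(T_{1},\ldots,T_{n_{2}})\colon\mathbb{R}^{n_{1}}\to\mathbb{R}^{n_{2}}$ satisfying \eqref{GC-cond}, and take $\bm{u}=(u_{1},\ldots,u_{n_{1}})\in\mathcal{F}_{e}^{n_{1}}$. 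For each $k$, pick an approximating sequence $\{u_{k,n}\}_{n\in\mathbb{N}}\subseteq\mathcal{F}$ for $u_{k}$ in the sense of Definition \ref{defn.extDF}, so that $u_{k,n}\to u_{k}$ $m$-a.e.\ (after replacing by a common full-measure subset, we may assume this convergence happens simultaneously for all $k$) and $\mathcal{E}(u_{k,n},u_{k,n})\to\mathcal{E}(u_{k},u_{k})$ as $n\to\infty$.

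Set $\bm{u}_{n}\coloneqq(u_{1,n},\ldots,u_{n_{1},n})$. By Proposition \ref{prop.GC.DF} applied to $(\mathcal{E}_{2},\mathcal{F})$, we have $T_{l}(\bm{u}_{n})\in\mathcal{F}$ for every $l\in\{1,\ldots,n_{2}\}$, and
\begin{equation}\label{e:ext.pre}
\norm{\bigl(\mathcal{E}(T_{l}(\bm{u}_{n}),T_{l}(\bm{u}_{n}))^{1/2}\bigr)_{l=1}^{n_{2}}}_{\ell^{q_{2}}}
\le \norm{\bigl(\mathcal{E}(u_{k,n},u_{k,n})^{1/2}\bigr)_{k=1}^{n_{1}}}_{\ell^{q_{1}}}.
\end{equation}
From $T(0)=0$ and the Lipschitz condition in \eqref{GC-cond}, the continuity of $T$ yields $T_{l}(\bm{u}_{n})\to T_{l}(\bm{u})$ $m$-a.e., and \eqref{e:ext.pre} together with the convergence $\mathcal{E}(u_{k,n},u_{k,n})\to\mathcal{E}(u_{k},u_{k})$ shows that $\liminf_{n\to\infty}\mathcal{E}(T_{l}(\bm{u}_{n}),T_{l}(\bm{u}_{n}))<\infty$ for each $l$.

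Proposition \ref{prop.extcriteria} then gives $T_{l}(\bm{u})\in\mathcal{F}_{e}$ and $\mathcal{E}(T_{l}(\bm{u}),T_{l}(\bm{u}))\le\liminf_{n\to\infty}\mathcal{E}(T_{l}(\bm{u}_{n}),T_{l}(\bm{u}_{n}))$ for every $l$. Combining with \eqref{e:ext.pre}, for $q_{2}<\infty$ (the case $q_{2}=\infty$ being analogous with a maximum) we get
\begin{align*}
\norm{\bigl(\mathcal{E}(T_{l}(\bm{u}),T_{l}(\bm{u}))^{1/2}\bigr)_{l=1}^{n_{2}}}_{\ell^{q_{2}}}^{q_{2}}
&\le \sum_{l=1}^{n_{2}}\liminf_{n\to\infty}\mathcal{E}(T_{l}(\bm{u}_{n}),T_{l}(\bm{u}_{n}))^{q_{2}/2}\\
&\le \liminf_{n\to\infty}\sum_{l=1}^{n_{2}}\mathcal{E}(T_{l}(\bm{u}_{n}),T_{l}(\bm{u}_{n}))^{q_{2}/2}\\
&\le \lim_{n\to\infty}\norm{\bigl(\mathcal{E}(u_{k,n},u_{k,n})^{1/2}\bigr)_{k=1}^{n_{1}}}_{\ell^{q_{1}}}^{q_{2}}
= \norm{\bigl(\mathcal{E}(u_{k},u_{k})^{1/2}\bigr)_{k=1}^{n_{1}}}_{\ell^{q_{1}}}^{q_{2}},
\end{align*}
which is \hyperref[GC]{\textup{(GC)$_{2}$}} for $(\mathcal{E}_{2},\mathcal{F}_{e})$. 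Finally, the fact that $(\mathcal{E}_{2},\mathcal{F}_{e})$ is a $2$-energy form on $(X,m)$, in particular that $\mathcal{E}_{2}^{1/2}$ is a seminorm on $\mathcal{F}_{e}$, is a special case of the inequality just established (take $(n_{1},n_{2},q_{1},q_{2})=(2,1,1,2)$ with $T(x,y)=x+y$ for the triangle inequality, and homogeneity is obvious).

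The only real subtlety is ensuring that the two approximation devices are used in compatible ways: Proposition \ref{prop.Fe-well} is what makes $\mathcal{E}(u_{k},u_{k})$ well-defined via approximating sequences, while Proposition \ref{prop.extcriteria} is what delivers membership in $\mathcal{F}_{e}$ and the lower semicontinuity bound for $T_{l}(\bm{u})$. Neither step requires identifying a single approximating sequence for $T_{l}(\bm{u})$ explicitly; the inequality \eqref{e:ext.pre} for the \emph{given} approximations of the $u_{k}$ already drives everything, and the argument does not rely on any bilinear structure beyond what is encoded in the Dirichlet form theory invoked in Propositions \ref{prop.Fe-well} and \ref{prop.extcriteria}.
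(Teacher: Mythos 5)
Your proposal is correct and follows essentially the same route as the paper's proof: approximate each $u_{k}$ by an approximating sequence in $\mathcal{F}$, apply Proposition \ref{prop.GC.DF} at the discrete level, use the $m$-a.e.\ convergence of $T_{l}(\bm{u}_{n})$ together with Proposition \ref{prop.extcriteria} to get membership in $\mathcal{F}_{e}$ and lower semicontinuity, and then pass to the limit in the $\ell^{q_{2}}$/$\ell^{q_{1}}$ inequality. No gaps.
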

\begin{proof}
	It is clear that $\mathcal{E}_{2}^{1/2}$ is a seminorm on $\mathcal{F}_{e}$.  
	Let us show \hyperref[GC]{\textup{(GC)$_{2}$}} for $(\mathcal{E}_{2},\mathcal{F}_{e})$. 
	As in the proof of Proposition \ref{prop.GC.DF}, let $n_{1},n_{2} \in \mathbb{N}$, $q_{1} \in (0,2]$, $q_{2} \in [2,\infty]$ and $T = (T_{1},\dots,T_{n_{2}}) \colon \mathbb{R}^{n_{1}} \to \mathbb{R}^{n_{2}}$ satisfy \eqref{GC-cond}. 
	Let $\bm{u} = (u_{1},\dots,u_{n_{1}}) \in \mathcal{F}_{e}^{n_{1}}$. 
	For each $k \in \{ 1,\dots,n_{1} \}$, let $\{ u_{k,n} \}_{n \in \mathbb{N}} \subseteq \mathcal{F}$ be an approximating sequence for $u_{k}$. 
	Set $\bm{u}_{n} \coloneqq (u_{1,n},\dots,u_{n_{1},n})$. 
	Since $T_{l} \in \contfunc(\mathbb{R}^{n_{1}})$ and $(\mathcal{E}_{2},\mathcal{F})$ satisfies \hyperref[GC]{\textup{(GC)$_{2}$}} (Proposition \ref{prop.GC.DF}), $\lim_{n \to \infty}T_{l}(\bm{u}_{n}) = T_{l}(\bm{u})$ $m$-a.e.\ and $\{ \mathcal{E}_{2}(T_{l}(\bm{u}_{n})) \}_{n \in \mathbb{N}}$ is bounded. 
	Then we have $T_{l}(\bm{u}) \in \mathcal{F}_{e}$ and $\mathcal{E}_{2}(T_{l}(\bm{u})) \le \liminf_{n \to \infty}\mathcal{E}_{2}(T_{l}(\bm{u}_{n}))$ by Proposition \ref{prop.extcriteria}, and see by \hyperref[GC]{\textup{(GC)$_{2}$}} for $(\mathcal{E}_{2},\mathcal{F})$ from Proposition \ref{prop.GC.DF} that 
	\begin{align*}
		\norm{\bigl(\mathcal{E}_{2}(T_{l}(\bm{u}))^{1/2}\bigr)_{l = 1}^{n_{2}}}_{\ell^{q_{2}}} 
		&\le \norm{\bigl(\liminf_{n \to \infty}\mathcal{E}_{2}(T_{l}(\bm{u}_{n}))^{1/2}\bigr)_{l = 1}^{n_{2}}}_{\ell^{q_{2}}} \\
		&\le \liminf_{n \to \infty}\norm{\bigl(\mathcal{E}_{2}(T_{l}(\bm{u}_{n}))^{1/2}\bigr)_{l = 1}^{n_{2}}}_{\ell^{q_{2}}} \\
		&\le \liminf_{n \to \infty}\norm{\bigl(\mathcal{E}_{2}(u_{k,n})^{1/2}\bigr)_{k = 1}^{n_{1}}}_{\ell^{q_{1}}}
		= \norm{\bigl(\mathcal{E}_{2}(u_{k})^{1/2}\bigr)_{k = 1}^{n_{1}}}_{\ell^{q_{1}}}, 
	\end{align*}
	proving that $(\mathcal{E}_{2},\mathcal{F}_{e})$ satisfies \hyperref[GC]{\textup{(GC)$_{2}$}}. 
\end{proof}

\subsection{The generalized \texorpdfstring{$2$}{2}-contraction property for energy measures}
In this subsection, under the standard topological assumptions on $(X,m)$, we verify \hyperref[GC]{\textup{(GC)$_{2}$}} for the ($2$-)energy measures associated with any regular symmetric Dirichlet form and for their densities and, under the strong locality, also \ref{GC} for the $p$-energy form defined as the integral of the $\frac{p}{2}$-th power of those densities, which has recently been studied by Kuwae \cite{Kuw23+}.

Throughout this subsection, we assume that $X$ and $m$ are as specified in \eqref{a:loccpt} and \eqref{a:fullRadon}, which are precisely the topological assumption \cite[(1.1.7)]{FOT} made almost throughout the book \cite{FOT}, and that $(\mathcal{E},\mathcal{F})$ is a symmetric Dirichlet form on $L^{2}(X,m)$ which is \emph{regular}, i.e., possesses a core in the sense of Definition \ref{d:core}-\ref{it:d-core}. 

A regular symmetric Dirichlet form is known to satisfy the following representation.
\begin{thm}[Beurling--Deny expression\index{Beurling--Deny expression (of regular symmetric Dirichlet form)}; see, e.g., {\cite[Theorem 3.2.1]{FOT}}]
	There exist a symmetric bilinear form $\mathcal{E}^{(c)}$ on $\mathcal{F} \cap \contfunc_{c}(X)$ satisfying $\mathcal{E}^{(c)}(u,v) = 0$ for any $u,v \in \mathcal{F} \cap \contfunc_{c}(X)$ with $v$ constant on a neighborhood of $\supp_{X}[u]$, a symmetric Radon measure $J$ on $X \times X \setminus \{ (x,x) \mid x \in X \}$ and a Radon measure $k$ on $X$ such that 
	\begin{equation}\label{e:BDdecomp}
		\mathcal{E}(u,v)
		= \mathcal{E}^{(c)}(u,v) + \mathcal{E}^{(j)}(u,v) + \mathcal{E}^{(k)}(u,v) \quad \text{for any $u,v \in \mathcal{F} \cap \contfunc_{c}(X)$,} 
	\end{equation}
	where 
	\[
	\mathcal{E}^{(j)}(u,v) \coloneqq \int_{X \times X}(u(x) - u(y))(v(x) - v(y))\,J(dxdy), \quad \mathcal{E}^{(k)}(u,v) \coloneqq \int_{X}u(x)v(x)\,k(dx). 
	\]
	Moreover, such $\mathcal{E}^{(c)}$, $J$ and $k$ are uniquely determined by $\mathcal{E}$. 
	We call $\mathcal{E}^{(c)}$ the \emph{strongly local part}\index{strongly local part (of regular symmetric Dirichlet form)} of $\mathcal{E}$, $J$ the \emph{jumping measure}\index{jumping measure (of regular symmetric Dirichlet form)} associated with $\mathcal{E}$ and $k$ the \emph{killing measure}\index{killing measure (of regular symmetric Dirichlet form)} associated with $\mathcal{E}$. 
\end{thm}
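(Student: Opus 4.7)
The plan is to reproduce the classical Beurling--Deny decomposition as in \cite[Theorem 3.2.1]{FOT}, successively extracting the jumping and killing contributions and identifying what remains as the strongly local part. By Proposition \ref{prop.canonicalcore}, I fix a special core $\mathscr{C}$ of $(\mathcal{E},\mathcal{F})$ contained in $\mathcal{F} \cap \contfunc_c(X)$. The first lemma to establish is the following sign property: for any $u,v \in \mathscr{C}$ with $u,v \geq 0$ and $uv = 0$, one has $\mathcal{E}(u,v) \leq 0$. This follows by applying the unit contractivity to $u - tv$ for small $t > 0$: since $u$ and $v$ have disjoint supports, $(u - tv)^{+} \wedge \|u\|_{\sup} = u$, so $\mathcal{E}(u,u) \leq \mathcal{E}(u - tv, u - tv) = \mathcal{E}(u,u) - 2t\mathcal{E}(u,v) + t^2\mathcal{E}(v,v)$, forcing $\mathcal{E}(u,v) \leq 0$.

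Next, I would construct the jumping measure $J$. The bilinear map $(u,v) \mapsto -\mathcal{E}(u,v)$ defined on pairs of non-negative functions in $\mathscr{C}$ with disjoint compact supports is non-negative and extends by bilinearity to a positive linear functional $\Lambda_0$ on the algebraic tensor product restricted to such pairs. Using the density of $\mathscr{C}$ in $(\contfunc_c(X),\|\,\cdot\,\|_{\sup})$ and a partition-of-unity argument (chopping the support of a test function $\varphi \in \contfunc_c(X \times X \setminus \{(x,x) \mid x\in X\})$ along a cover by products of disjoint open sets whose closures avoid the diagonal), $\Lambda_0$ extends to a positive linear functional on $\contfunc_c(X \times X \setminus \{(x,x)\})$. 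The Riesz--Markov--Kakutani theorem then yields a unique Radon measure $J_0$ representing $\Lambda_0$, and I take $J \coloneqq J_0 / 2$ so that, using $(u(x) - u(y))(v(x) - v(y)) = -u(x)v(y) - u(y)v(x)$ for $uv = 0$ and the symmetry of $\mathcal{E}$ (which passes to $J_0$), one gets $\mathcal{E}(u,v) = \mathcal{E}^{(j)}(u,v)$ on disjoint-support pairs. The finiteness of $J$ on compact subsets of $X \times X \setminus \{(x,x)\}$ is ensured by using cutoff functions from $\mathscr{C}$ and the continuity of $\mathcal{E}$ in its $(\mathcal{E},1)$-norm.

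For the killing measure, consider the modified form $\mathcal{E}^{(1)} \coloneqq \mathcal{E} - \mathcal{E}^{(j)}$, which by construction vanishes on pairs in $\mathscr{C}$ with disjoint supports. For any $u \in \mathscr{C}$ with $u \geq 0$ and any $\psi \in \mathscr{C}$ such that $\psi = 1$ on a neighborhood of $\supp_X[u]$ (such $\psi$ exists since $\mathscr{C}$ is a special core), I define $L(u) \coloneqq \mathcal{E}^{(1)}(u,\psi)$. The locality already proved for $\mathcal{E}^{(1)}$, applied to $\psi_1 - \psi_2$ which has support disjoint from $\supp_X[u]$, shows $L(u)$ is independent of the choice of $\psi$. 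A further Markovian-type calculation (comparing $\mathcal{E}(u,\psi)$ with $\mathcal{E}(u, \psi \wedge 1)$ and using $u \geq 0$) yields $L(u) \geq 0$, and bilinearity extends $L$ to a positive linear functional on $\mathscr{C}$; Riesz--Markov--Kakutani then produces the Radon measure $k$ with $L(u) = \int u\,dk$, i.e.\ $\mathcal{E}^{(1)}(u,\psi) = \mathcal{E}^{(k)}(u,\psi)$ for such pairs.

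Defining $\mathcal{E}^{(c)} \coloneqq \mathcal{E} - \mathcal{E}^{(j)} - \mathcal{E}^{(k)}$, the required locality property is that $\mathcal{E}^{(c)}(u,v) = 0$ whenever $v$ is constant (say $= c$) on a neighborhood $V$ of $\supp_X[u]$. Choosing $\psi \in \mathscr{C}$ with $\psi = 1$ on $\supp_X[u]$ and $\supp_X[\psi] \subseteq V$, one writes $v = c\psi + (v - c\psi)$; the function $v - c\psi$ has support disjoint from $\supp_X[u]$, so $\mathcal{E}^{(c)}(u, v - c\psi) = \mathcal{E}^{(1)}(u, v - c\psi) - \mathcal{E}^{(k)}(u, v-c\psi) = 0$ by construction of $J$ and $k$, while $\mathcal{E}^{(c)}(u, c\psi) = c(\mathcal{E}^{(1)}(u,\psi) - \mathcal{E}^{(k)}(u,\psi)) = 0$ by the defining property of $k$. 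The main obstacle here, and the step I expect to be most delicate, is the extension of $\Lambda_0$ from tensor-product functions to all of $\contfunc_c(X \times X \setminus \{(x,x)\})$, since it requires careful quantitative control via the $(\mathcal{E},1)$-norm of how $\mathcal{E}(u,v)$ depends on $u,v$ for disjoint-support pairs whose support is localized in a prescribed compact set; this is typically handled by fixing compact $K_1,K_2 \subseteq X$ with $K_1 \cap K_2 = \emptyset$ and bounding $|\mathcal{E}(u,v)|$ by a constant depending on $K_1,K_2$ times $\|u\|_{\sup}\|v\|_{\sup}$ using cutoff functions and \eqref{leibniz}. Uniqueness of $\mathcal{E}^{(c)}$, $J$, and $k$ is then immediate from testing $\mathcal{E}$ against disjoint-support pairs (forcing $J$) and against pairs where one function is a cutoff equal to one on the support of the other (forcing $k$).
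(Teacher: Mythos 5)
The paper does not prove this statement at all: it is quoted, with attribution, as \cite[Theorem 3.2.1]{FOT}, so the only meaningful comparison is with the classical proof there. Your outline reproduces the right architecture (a sign lemma for disjoint supports producing the jumping measure via Riesz representation, testing the remainder against cutoffs to produce the killing measure, and identifying what is left as the local part), and the sign lemma via the normal contraction $w \mapsto w^{+}\wedge\norm{u}_{\sup}$ applied to $u - tv$ is fine. But two steps are genuinely missing, and they are exactly the steps where \cite{FOT} leans on the approximating forms $\mathcal{E}^{(\beta)}(u,v)=\beta\langle u-\beta G_{\beta}u,v\rangle_{L^{2}}$, which you never introduce. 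First, before you can even form $\mathcal{E}^{(1)}=\mathcal{E}-\mathcal{E}^{(j)}$ you must know that $\mathcal{E}^{(j)}(u,v)$ is finite for \emph{all} $u,v$ in the core, i.e.\ that $\int(u(x)-u(y))^{2}\,J(dxdy)<\infty$. Your construction only controls $J$ on products of disjoint compact sets, and $J$ may (and typically does) have infinite mass accumulating at the diagonal. In \cite{FOT} this integrability is extracted from the uniform bound of $\int(u(x)-u(y))^{2}$ against the manifestly non-negative off-diagonal kernels of $\mathcal{E}^{(\beta)}$ by $2\mathcal{E}(u,u)$, followed by Fatou along the vague limit; I do not see how you obtain it from the form alone.

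Second, the positivity of the killing functional $L(u)=\mathcal{E}(u,\psi)-\mathcal{E}^{(j)}(u,\psi)$ for $u\geq 0$ is asserted via a parenthetical ``Markovian-type calculation'' that does not work as stated: the contraction argument applied to $\psi+tu$ gives $\mathcal{E}(u,\psi)\geq 0$, but one also has $\mathcal{E}^{(j)}(u,\psi)=2\int_{\supp_{X}[u]\times(X\setminus\supp_{X}[u])}u(x)(1-\psi(y))\,J(dxdy)\geq 0$, so you are subtracting one non-negative quantity from another and no sign comes out. In \cite{FOT} the non-negativity of $k$ comes from sub-Markovianity of the resolvent ($k$ is a vague limit of the non-negative measures $\beta(\indicator{X}-\beta G_{\beta}\indicator{X})\,m$), not from a form-level contraction. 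A smaller issue of the same flavour: positivity of $\Lambda_{0}$ on the \emph{span} of tensor products (that $\sum_{i}u_{i}\otimes v_{i}\geq 0$ pointwise off the diagonal forces $\sum_{i}(-\mathcal{E}(u_{i},v_{i}))\geq 0$) does not follow from positivity on product pairs alone and needs either an iterated Riesz representation on each product of disjoint compacts or, again, the positive approximating kernels. None of this affects the paper, which uses the theorem as a black box, but your proposal as written does not constitute a proof.
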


In the next two propositions, we extend each of $\mathcal{E}^{(c)},\mathcal{E}^{(j)},\mathcal{E}^{(k)}$ in \eqref{e:BDdecomp} to $\mathcal{F}_{e}$ and associate energy measures to each of them; see \cite[Section 3.2]{FOT} for their proofs. 
\begin{prop}
	Let $u \in \mathcal{F}_{e}$ and $\{ u_{n} \}_{n \in \mathbb{N}} \subseteq \mathcal{F}$ be an approximating sequence for $u$. 
	Then, for any $\mathcal{E}^{\#} \in \{ \mathcal{E}^{(c)}, \mathcal{E}^{(j)}, \mathcal{E}^{(k)} \}$, $\{ \mathcal{E}^{\#}(u_n,u_n) \}_{n \in \mathbb{N}}$ is a Cauchy sequence in $[0,\infty)$ and the limit $\lim_{n \to \infty}\mathcal{E}^{\#}(u_{n},u_{n}) \in [0,\infty)$ does not depend on a particular choice of an approximating sequence $\{ u_{n} \}_{n \in \mathbb{N}}$ for $u$. 
\end{prop}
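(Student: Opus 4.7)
The plan is to first extend each of $\mathcal{E}^{(c)}, \mathcal{E}^{(j)}, \mathcal{E}^{(k)}$ from $\mathcal{F} \cap \contfunc_{c}(X)$ to a nonnegative definite symmetric bilinear form on all of $\mathcal{F}$ satisfying the domination $0 \le \mathcal{E}^{\#}(u,u) \le \mathcal{E}(u,u)$. Nonnegativity on $\mathcal{F} \cap \contfunc_{c}(X)$ is manifest for $\mathcal{E}^{(j)}$ and $\mathcal{E}^{(k)}$ from their defining integrals, and for $\mathcal{E}^{(c)}$ it is part of the Beurling--Deny theorem (the local part of a regular symmetric Dirichlet form is itself nonnegative definite). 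Since the three forms sum to $\mathcal{E}$ and each is nonnegative, the bound $\mathcal{E}^{\#}(u,u) \le \mathcal{E}(u,u)$ on $\mathcal{F} \cap \contfunc_{c}(X)$ is immediate, and by Cauchy--Schwarz for the nonnegative form $\mathcal{E}^{\#}$ this upgrades to $|\mathcal{E}^{\#}(u,v)| \le \mathcal{E}(u,u)^{1/2}\mathcal{E}(v,v)^{1/2}$ on $\mathcal{F} \cap \contfunc_{c}(X)$. Since $(\mathcal{E},\mathcal{F})$ is regular, $\mathcal{F} \cap \contfunc_{c}(X)$ is dense in $(\mathcal{F},\mathcal{E}_{1}^{1/2})$, so $\mathcal{E}^{\#}$ extends uniquely to a symmetric bilinear form on $\mathcal{F}$ with the bound $0 \le \mathcal{E}^{\#}(u,u) \le \mathcal{E}(u,u)$ preserved.

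Given this extension, the Cauchy property of $\{\mathcal{E}^{\#}(u_{n},u_{n})\}_{n \in \mathbb{N}}$ for any approximating sequence $\{u_{n}\}_{n \in \mathbb{N}} \subseteq \mathcal{F}$ of $u \in \mathcal{F}_{e}$ follows from the triangle inequality for the seminorm $\mathcal{E}^{\#}(\,\cdot\,,\,\cdot\,)^{1/2}$ combined with the domination $\mathcal{E}^{\#} \le \mathcal{E}$:
\[
\bigl|\mathcal{E}^{\#}(u_n,u_n)^{1/2} - \mathcal{E}^{\#}(u_m,u_m)^{1/2}\bigr|
\le \mathcal{E}^{\#}(u_n - u_m, u_n - u_m)^{1/2}
\le \mathcal{E}(u_n - u_m, u_n - u_m)^{1/2} \to 0,
\]
so that $\lim_{n \to \infty}\mathcal{E}^{\#}(u_n,u_n)$ exists in $[0,\infty)$.

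For independence from the choice of approximating sequence, let $\{u_{n}\}_{n \in \mathbb{N}}$ and $\{v_{n}\}_{n \in \mathbb{N}}$ both approximate $u$, and set $w_{n} \coloneqq u_{n} - v_{n} \in \mathcal{F}$. Then $w_{n} \to 0$ $m$-a.e., and $\{w_{n}\}$ is $\mathcal{E}$-Cauchy since $w_{n} - w_{m} = (u_{n} - u_{m}) - (v_{n} - v_{m})$ gives
\[
\mathcal{E}(w_n - w_m, w_n - w_m)^{1/2}
\le \mathcal{E}(u_n - u_m, u_n - u_m)^{1/2} + \mathcal{E}(v_n - v_m, v_n - v_m)^{1/2} \to 0.
\]
Proposition \ref{prop.Fe-well} therefore yields $\mathcal{E}(w_n, w_n) \to 0$, whence $\mathcal{E}^{\#}(w_n, w_n) \to 0$ by domination. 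Combining with Cauchy--Schwarz for $\mathcal{E}^{\#}$ and the boundedness of $\mathcal{E}^{\#}(u_n + v_n, u_n + v_n)^{1/2} \le \mathcal{E}(u_n,u_n)^{1/2} + \mathcal{E}(v_n,v_n)^{1/2}$ (each term being convergent, hence bounded),
\[
|\mathcal{E}^{\#}(u_n,u_n) - \mathcal{E}^{\#}(v_n,v_n)|
= |\mathcal{E}^{\#}(w_n, u_n + v_n)|
\le \mathcal{E}^{\#}(w_n, w_n)^{1/2}\,\mathcal{E}^{\#}(u_n + v_n, u_n + v_n)^{1/2} \to 0,
\]
as desired. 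The main subtlety lies in the preliminary extension step, specifically in invoking nonnegativity and symmetry of $\mathcal{E}^{(c)}$ on $\mathcal{F} \cap \contfunc_{c}(X)$ from the Beurling--Deny decomposition; once this is granted, everything else is a routine application of Cauchy--Schwarz and Proposition \ref{prop.Fe-well}.
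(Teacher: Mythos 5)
Your proof is correct and follows essentially the same route as the reference the paper defers to for this statement (\cite[Section 3.2]{FOT}): extend each Beurling--Deny part from $\mathcal{F} \cap \contfunc_{c}(X)$ to $\mathcal{F}$ by density using the domination $0 \le \mathcal{E}^{\#} \le \mathcal{E}$, deduce the Cauchy property from the triangle inequality for $\mathcal{E}^{\#}(\,\cdot\,,\,\cdot\,)^{1/2}$, and obtain independence of the approximating sequence from Proposition \ref{prop.Fe-well} combined with Cauchy--Schwarz. The one point worth flagging is that the nonnegativity of $\mathcal{E}^{(c)}$ on the diagonal, which your domination argument requires, is not stated explicitly in the paper's formulation of the Beurling--Deny expression, but it is indeed part of \cite[Theorem 3.2.1]{FOT} (where $\mathcal{E}^{(c)}(u,u) = \frac{1}{2}\mu^{c}_{\langle u \rangle}(X) \ge 0$), so your appeal to it is legitimate.
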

%

\begin{prop}\label{prop.DFEM}
	Let $\mathcal{E}^{\#} \in \{ \mathcal{E}, \mathcal{E}^{(c)}, \mathcal{E}^{(j)}, \mathcal{E}^{(k)} \}$. 
	For any $u \in \mathcal{F} \cap \contfunc_{c}(X)$, there exists a unique Radon measure $\mu_{\langle u \rangle}^{\#}$ on $X$ such that 
	\begin{equation}\label{e:defn.DFEM}
		\int_{X}\varphi\,d\mu_{\langle u \rangle}^{\#} = \mathcal{E}^{\#}(u, u\varphi) - \frac{1}{2}\mathcal{E}^{\#}(u^{2},\varphi) \quad \text{for any $\varphi \in \mathcal{F} \cap \contfunc_{c}(X)$.}
	\end{equation}
	Moreover, for any Borel measurable function $\varphi \colon X \to [0,\infty)$ with $\norm{\varphi}_{\sup} < \infty$, any $u \in \mathcal{F}_{e}$ and any approximating sequence $\{ u_{n} \}_{n \in \mathbb{N}} \subseteq \mathcal{F} \cap \contfunc_{c}(X)$ for $u$, $\bigl\{ \int_{X}\varphi\,d\mu_{\langle u_{n} \rangle}^{\#} \bigr\}_{n \in \mathbb{N}}$ is a Cauchy sequence in $[0,\infty)$,  $\lim_{n \to \infty}\int_{X}\varphi\,d\mu_{\langle u_{n} \rangle}^{\#}$ does not depend on the choice of $\{ u_{n} \}_{n}$, and $\int_{X}\varphi\,d\mu_{\langle u \rangle}^{\#} = \lim_{n \to \infty}\int_{X}\varphi\,d\mu_{\langle u_{n} \rangle}^{\#}$, where $\mu_{\langle u_{n} \rangle}^{\#}$ is the Radon measure on $X$ defined by $\mu_{\langle u \rangle}^{\#}(A) \coloneqq \lim_{n \to \infty}\mu_{\langle u_{n} \rangle}^{\#}(A)$ for $A \in \mathcal{B}(X)$. 
\end{prop}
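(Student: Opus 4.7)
The plan is to first construct $\mu_{\langle u\rangle}^{\#}$ for $u\in\mathcal{F}\cap\contfunc_{c}(X)$ via a Riesz--Markov--Kakutani argument on a positive linear functional, and then pass to $u\in\mathcal{F}_{e}$ by a Cauchy-sequence argument based on the seminorm structure of $\bigl(\int_{X}\varphi\,d\mu_{\langle\,\cdot\,\rangle}^{\#}\bigr)^{1/2}$.

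First, I would verify that $u^{2}\in\mathcal{F}\cap\contfunc_{c}(X)$ and $u\varphi\in\mathcal{F}\cap\contfunc_{c}(X)$ for any $u,\varphi\in\mathcal{F}\cap\contfunc_{c}(X)$, which follows from Proposition \ref{prop.GC.DF} combined with Proposition \ref{prop.GC-list}-\ref{GC.leibniz} (Leibniz rule), so that both sides of \eqref{e:defn.DFEM} are meaningful. Then I would introduce
\[
\Psi_{u}^{\#}\colon\mathcal{F}\cap\contfunc_{c}(X)\ni\varphi\mapsto \mathcal{E}^{\#}(u,u\varphi)-\tfrac{1}{2}\mathcal{E}^{\#}(u^{2},\varphi)\in\mathbb{R},
\]
which is linear in $\varphi$ by the bilinearity of $\mathcal{E}^{\#}$. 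The crucial step is to show that $\Psi_{u}^{\#}(\varphi)\geq 0$ whenever $\varphi\geq 0$, and more quantitatively that $|\Psi_{u}^{\#}(\varphi)|\leq\|\varphi\|_{\sup}\mathcal{E}^{\#}(u,u)$. For $\mathcal{E}^{(j)}$ and $\mathcal{E}^{(k)}$ this is a direct computation using the explicit integral expressions, yielding the explicit densities $d\mu^{(j)}_{\langle u\rangle}(x)=\int_{y\neq x}(u(x)-u(y))^{2}\,J(dx,dy)$ and $d\mu^{(k)}_{\langle u\rangle}=u^{2}\,dk$. For $\mathcal{E}^{(c)}$ the positivity is reduced to an approximation by Lipschitz compositions $\Phi_{n}(u)$ and polarization together with the truncation property of $\mathcal{E}$; equivalently, one invokes \eqref{lipcont} and \eqref{leibniz} to deduce $\Psi_{u}^{\#}(\varphi)=\lim_{n\to\infty}\Psi_{u_{n}}^{\#}(\varphi)$ where $u_{n}$ are suitable cut-offs, reducing to the bounded smooth case where the inequality is classical.

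Second, using the regularity of $(\mathcal{E},\mathcal{F})$, the subalgebra $\mathcal{F}\cap\contfunc_{c}(X)$ is $\|\cdot\|_{\sup}$-dense in $\contfunc_{c}(X)$. Combined with the bound $|\Psi_{u}^{\#}(\varphi)|\leq\|\varphi\|_{\sup}\mathcal{E}^{\#}(u,u)$ from the previous step and a standard localization using cut-offs from Proposition \ref{prop.canonicalcore}, the functional $\Psi_{u}^{\#}$ extends uniquely to a positive linear functional on $\contfunc_{c}(X)$ whose restriction to each relatively compact open set is bounded. The Riesz--Markov--Kakutani representation theorem then produces a unique Radon measure $\mu_{\langle u\rangle}^{\#}$ on $X$ satisfying \eqref{e:defn.DFEM}, with $\mu_{\langle u\rangle}^{\#}(X)\leq \mathcal{E}^{\#}(u,u)$.

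Third, to extend to $u\in\mathcal{F}_{e}$, let $\varphi\colon X\to[0,\infty)$ be Borel measurable with $\|\varphi\|_{\sup}<\infty$ and let $\{u_{n}\}_{n\in\mathbb{N}}\subseteq\mathcal{F}\cap\contfunc_{c}(X)$ be an approximating sequence for $u$ (such a sequence exists since $\mathcal{F}\cap\contfunc_{c}(X)$ is dense in $\mathcal{F}_{e}$ by regularity, which can be proved as in \cite[Corollary 1.6.3]{FOT}). The key estimate is
\[
\Bigl|\bigl(\textstyle\int_{X}\varphi\,d\mu_{\langle u_{k}\rangle}^{\#}\bigr)^{1/2}-\bigl(\int_{X}\varphi\,d\mu_{\langle u_{l}\rangle}^{\#}\bigr)^{1/2}\Bigr|\leq \|\varphi\|_{\sup}^{1/2}\mathcal{E}^{\#}(u_{k}-u_{l},u_{k}-u_{l})^{1/2},
\]
which holds because $\bigl(\int_{X}\varphi\,d\mu_{\langle\,\cdot\,\rangle}^{\#},\mathcal{F}\cap\contfunc_{c}(X)\bigr)$ is a $2$-energy form on $(X,m)$ with $\int_{X}\varphi\,d\mu_{\langle v\rangle}^{\#}\leq\|\varphi\|_{\sup}\mathcal{E}^{\#}(v,v)$ (a consequence of Proposition \ref{prop.GC.DF} and $\mu_{\langle v\rangle}^{\#}(X)\le\mathcal{E}^{\#}(v,v)$). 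This Cauchy property yields a well-defined limit independent of the approximating sequence (by the usual interlacing trick), and the countable additivity of $A\mapsto\lim_{n\to\infty}\mu_{\langle u_{n}\rangle}^{\#}(A)$ on $\mathcal{B}(X)$ follows as in the proof of Proposition \ref{prop.em-ext}.

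The main obstacle will be establishing the non-negativity and the $\|\varphi\|_{\sup}$-bound for $\Psi_{u}^{(c)}$; the jumping and killing parts are immediate from their integral representations, but the local part $\mathcal{E}^{(c)}$ has no explicit kernel and requires an approximation-and-chain-rule argument, for which I would use Corollary \ref{cor.approx-measure} and the strong locality of $\mathcal{E}^{(c)}$ (which is essentially built into its definition as the part vanishing on functions constant near the support of a test function) to reduce to bounded $u$ and then polarize.
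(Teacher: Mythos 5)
Your overall architecture is exactly the one the paper relies on: the paper gives no proof of its own here but refers to \cite[Section 3.2]{FOT}, and the argument there is precisely your three-step scheme (show $\varphi\mapsto\mathcal{E}^{\#}(u,u\varphi)-\tfrac{1}{2}\mathcal{E}^{\#}(u^{2},\varphi)$ is a positive linear functional bounded by $\norm{\varphi}_{\sup}\mathcal{E}^{\#}(u,u)$, apply Riesz--Markov--Kakutani, then extend to $\mathcal{F}_{e}$ via the Cauchy estimate coming from the triangle inequality for $\bigl(\int_{X}\varphi\,d\mu_{\langle\,\cdot\,\rangle}^{\#}\bigr)^{1/2}$, which follows from positivity of the measures plus polarization). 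The jumping and killing parts, the Riesz step, and the $\mathcal{F}_{e}$-extension are all fine as you describe them.

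The one place your sketch would not survive being written out is the positivity and $\norm{\varphi}_{\sup}$-bound for the local part $\mathcal{E}^{(c)}$. Your proposed reduction ``by Lipschitz compositions $\Phi_{n}(u)$ and polarization\ldots to the bounded smooth case where the inequality is classical'' does not apply: $u\in\mathcal{F}\cap\contfunc_{c}(X)$ is already bounded, there is no smooth structure in this abstract setting to reduce to, and positivity of $\Psi^{(c)}_{u}$ cannot be extracted from positivity of $\Psi_{u}$, $\Psi^{(j)}_{u}$, $\Psi^{(k)}_{u}$ by subtraction. The standard route (the one in \cite[Section 3.2]{FOT}, and the one this paper itself quotes in the proof of Proposition \ref{prop.GC.DF.emintext} via the representation \eqref{e:DF.emfunc.local}) is to express $\mathcal{E}^{(c)}$ and the functional $\Psi^{(c)}_{u}$ as limits of the near-diagonal localizations $\frac{\beta_{n}}{2}\int_{\{d(x,y)<\delta_{l}\}}\abs{u(x)-u(y)}^{2}\varphi(x)\,\sigma_{\beta_{n}}(dxdy)$ of the approximating resolvent kernels; positivity and the bound by $\norm{\varphi}_{\sup}\mathcal{E}^{(c)}(u,u)$ are then immediate from this explicit nonnegative expression. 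Replacing your local-part paragraph by this kernel-approximation argument closes the gap; everything else in your proposal is the paper's (i.e.\ FOT's) proof.
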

\begin{defn}[Energy measures]
	Let $u \in \mathcal{F}_{e}$. 
	Let $\mu_{\langle u \rangle}$ denote the measure in the above proposition in the case $\mathcal{E}^{\#} = \mathcal{E}$. 
	We call $\mu_{\langle u \rangle}$ the \emph{energy measure} of $u$. 
	For each $w \in \{ c,j,k \}$, let $\mu_{\langle u \rangle}^{w}$ denote the measure in the above proposition in the case $\mathcal{E}^{\#} = \mathcal{E}^{(w)}$.
	For $u,v \in \mathcal{F}_{e}$, we also define $\mu_{\langle u,v \rangle}^{\#} \coloneqq \frac{1}{4}\bigl(\mu_{\langle u + v \rangle}^{\#} - \mu_{\langle u - v \rangle}^{\#}\bigr)$, where $\mu_{\langle \,\cdot\, \rangle}^{\#} \in \bigl\{ \mu_{\langle \,\cdot\, \rangle},\mu_{\langle \,\cdot\, \rangle}^{c},\mu_{\langle \,\cdot\, \rangle}^{j},\mu_{\langle \,\cdot\, \rangle}^{k} \bigr\}$.
\end{defn}

The following lemma gives a Fatou-type property for energy measures. 
\begin{lem}\label{lem.DF.fatoulike}
	Let $\varphi \colon X \to [0,\infty)$ be a Borel measurable function with $\norm{\varphi}_{\sup} < \infty$ and let $\mu_{\langle \,\cdot\, \rangle}^{\#} \in \bigl\{ \mu_{\langle \,\cdot\, \rangle},\mu_{\langle \,\cdot\, \rangle}^{c},\mu_{\langle \,\cdot\, \rangle}^{j},\mu_{\langle \,\cdot\, \rangle}^{k} \bigr\}$. 
	If $\{ u_{n} \}_{n \in \mathbb{N}} \subseteq \mathcal{F}$ and $u \in \mathcal{F}_{e}$ satisfy $\lim_{n \to \infty}u_{n} = u$ $m$-a.e.\ and $\sup_{n \in \mathbb{N}}\mathcal{E}(u_{n},u_{n}) < \infty$, then 
	\begin{equation}\label{e:DF.fatoulike}
		\int_{X}\varphi\,d\mu_{\langle u \rangle}^{\#} \le \liminf_{n \to \infty}\int_{X}\varphi\,d\mu_{\langle u_{n} \rangle}^{\#}. 
	\end{equation}
\end{lem}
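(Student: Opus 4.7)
The plan is to reduce the statement to the already-established facts that (i) $\mu^{\#}_{\langle u \rangle}(A)^{1/2}$ is a seminorm in $u \in \mathcal{F}$ for each fixed Borel set $A$ (which holds because $\mu^{\#}_{\langle u \rangle}$ is quadratic in $u$ as a polarization of the bilinear $\mathcal{E}^{\#}$), and hence $\bigl(\int_X \varphi \, d\mu^{\#}_{\langle \cdot \rangle}\bigr)^{1/2}$ is a seminorm on $\mathcal{F}$, and (ii) if $\{w_n\}_{n \in \mathbb{N}} \subseteq \mathcal{F}$ is an approximating sequence for $u \in \mathcal{F}_e$, then $\int_X \varphi \, d\mu^{\#}_{\langle w_n \rangle} \to \int_X \varphi \, d\mu^{\#}_{\langle u \rangle}$ (Proposition \ref{prop.DFEM}). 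The bridge between the hypothesis (almost everywhere convergence of $\{u_n\}$) and the notion of an approximating sequence (Cauchy in $\mathcal{E}^{1/2}$) will be supplied by a Banach--Saks-type argument, which is exactly the mechanism already used in the proof of Proposition \ref{prop.extcriteria}.

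First I would extract a subsequence, still denoted $\{u_n\}$, so that $\lim_n \int_X \varphi \, d\mu^{\#}_{\langle u_n \rangle} = \liminf_n \int_X \varphi \, d\mu^{\#}_{\langle u_n \rangle}$, and this subsequence still has $\sup_n \mathcal{E}(u_n, u_n) < \infty$ and converges $m$-a.e.\ to $u$. Applying \cite[Theorem A.4.1-(ii)]{CF} (the Banach--Saks-type theorem underlying Proposition \ref{prop.extcriteria}), I obtain indices $N_k \geq k$ and non-negative weights $\{\lambda_{k,l}\}_{k \le l \le N_k}$ with $\sum_{l=k}^{N_k} \lambda_{k,l} = 1$ such that the convex combinations $v_k \coloneqq \sum_{l=k}^{N_k} \lambda_{k,l} u_l \in \mathcal{F}$ satisfy $\mathcal{E}(v_k - v_j, v_k - v_j) \to 0$ as $k, j \to \infty$ and $v_k \to u$ $m$-a.e. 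In particular $\{v_k\}$ is an approximating sequence for $u$ in the sense of Definition \ref{defn.extDF}.

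Next, the seminorm property of $\bigl(\int_X \varphi \, d\mu^{\#}_{\langle \cdot \rangle}\bigr)^{1/2}$ together with positive $p$-homogeneity (for $p=2$) yields, for each $k$,
\begin{equation*}
\biggl(\int_X \varphi \, d\mu^{\#}_{\langle v_k \rangle}\biggr)^{1/2}
\le \sum_{l = k}^{N_k} \lambda_{k,l} \biggl(\int_X \varphi \, d\mu^{\#}_{\langle u_l \rangle}\biggr)^{1/2}.
\end{equation*}
Letting $k \to \infty$, the left-hand side tends to $\bigl(\int_X \varphi \, d\mu^{\#}_{\langle u \rangle}\bigr)^{1/2}$ by Proposition \ref{prop.DFEM} applied to the approximating sequence $\{v_k\}$, while the right-hand side, being a convex combination of a convergent sequence with $l \geq k$, tends to the common value $\lim_l \bigl(\int_X \varphi \, d\mu^{\#}_{\langle u_l \rangle}\bigr)^{1/2} = \bigl(\liminf_n \int_X \varphi \, d\mu^{\#}_{\langle u_n \rangle}\bigr)^{1/2}$. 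Squaring gives \eqref{e:DF.fatoulike}.

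The main obstacle is ensuring that the Banach--Saks-style extraction in the second step delivers the two features simultaneously: the Cauchy property in $\mathcal{E}^{1/2}$ and the preservation of $m$-a.e.\ convergence to $u$. This is precisely the content of \cite[Theorem A.4.1-(ii)]{CF}, on which Proposition \ref{prop.extcriteria} rests; however some care is needed because $u$ itself need not lie in $L^2(X,m)$, so the extraction must take place in the quotient $\mathcal{F}/\mathcal{E}^{-1}(0)$ (or on the differences $u_n - u_{n_0}$ for some fixed $n_0$, which do lie in $\mathcal{F}$ and share the same $\mathcal{E}$-Cauchy structure). Modulo this technical point, the argument above is routine.
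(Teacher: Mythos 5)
Your proposal is correct and follows essentially the same route as the paper's proof: pass to a subsequence realizing the $\liminf$, apply the Banach--Saks theorem \cite[Theorem A.4.1-(ii)]{CF} to produce convex combinations $v_k$ that are $\mathcal{E}^{1/2}$-Cauchy and still converge $m$-a.e.\ to $u$ (so that $\{v_k\}$ is an approximating sequence for $u$ and Proposition \ref{prop.DFEM} gives $\int_X\varphi\,d\mu^{\#}_{\langle v_k\rangle}\to\int_X\varphi\,d\mu^{\#}_{\langle u\rangle}$), and conclude via the triangle inequality for $\bigl(\int_X\varphi\,d\mu^{\#}_{\langle\,\cdot\,\rangle}\bigr)^{1/2}$. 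The ``technical point'' you flag at the end is moot: an approximating sequence for $u\in\mathcal{F}_e$ is by definition a sequence in $\mathcal{F}$ that is $\mathcal{E}$-Cauchy and converges to $u$ only $m$-a.e., so no membership of $u$ in $L^{2}(X,m)$ is needed anywhere in the argument.
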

\begin{proof}
	By extracting a subsequence of $\{ u_{n} \}_{n}$ if necessary, we can assume that the limit $\lim_{n \to \infty}\int_{X}\varphi\,d\mu_{\langle u_{n} \rangle}^{\#}$ exists. 
	By using a version \cite[Theorem A.4.1-(ii)]{CF} of the Banach--Saks theorem, we can find a subsequence $\{ u_{n_{k}} \}_{k \in \mathbb{N}}$ such that $\{ v_{l} \}_{l \in \mathbb{N}} \subseteq \mathcal{F}$ defined by $v_{l} \coloneqq l^{-1}\sum_{k = 1}^{l}u_{n_{k}}$ satisfies $\lim_{k \wedge l \to \infty}\mathcal{E}(v_{k} - v_{l},v_{k} - v_{l}) = 0$. 
	Noting that $\lim_{l \to \infty}v_{l} = u$ $m$-a.e.\ and using Proposition \ref{prop.Fe-well}, we have $\lim_{l \to \infty}\mathcal{E}(u - v_{l},u - v_{l}) = 0$. 
	Hence $\lim_{l \to \infty}\int_{X}\varphi\,d\mu_{\langle v_{l} \rangle}^{\#} = \int_{X}\varphi\,d\mu_{\langle u \rangle}^{\#}$ by Proposition \ref{prop.DFEM}. 
	By the triangle inequality for $\left(\int_{X}\varphi\,d\mu_{\langle \,\cdot\, \rangle}^{\#}\right)^{1/2}$, 
    \[
    \left(\int_{X}\varphi\,d\mu_{\langle v_{l} \rangle}^{\#}\right)^{1/2} 
    \le \frac{1}{l}\sum_{k = 1}^{l}\left(\int_{X}\varphi\,d\mu_{\langle u_{n_k} \rangle}^{\#}\right)^{1/2},  
    \]
    which implies \eqref{e:DF.fatoulike} by letting $l \to \infty$.
\end{proof}

Now we can show that the integrals of non-negative bounded Borel measurable functions with respect to energy measures give $2$-energy forms satisfying \hyperref[GC]{\textup{(GC)$_{2}$}}. 
\begin{prop}\label{prop.GC.DF.emintext}
	Let $\varphi \colon X \to [0,\infty)$ be a Borel measurable function with $\norm{\varphi}_{\sup} < \infty$ and let $\mu_{\langle \,\cdot\, \rangle}^{\#} \in \bigl\{ \mu_{\langle \,\cdot\, \rangle},\mu_{\langle \,\cdot\, \rangle}^{c},\mu_{\langle \,\cdot\, \rangle}^{j},\mu_{\langle \,\cdot\, \rangle}^{k} \bigr\}$. 
	Then $(\int_{X}\varphi\,d\mu_{\langle \,\cdot\, \rangle}^{\#},\mathcal{F}_{e})$ is a $2$-energy form on $(X,m)$ satisfying \hyperref[GC]{\textup{(GC)$_{2}$}}.
\end{prop}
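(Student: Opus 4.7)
The plan is to reduce the problem, via the Beurling--Deny decomposition, to establishing \hyperref[GC]{\textup{(GC)$_{2}$}} for each of the three energy measures $\mu^{(c)}$, $\mu^{(j)}$, $\mu^{(k)}$ separately. Since $\mu_{\langle u\rangle}=\mu_{\langle u\rangle}^{(c)}+\mu_{\langle u\rangle}^{(j)}+\mu_{\langle u\rangle}^{(k)}$ for $u\in\mathcal{F}_{e}$ (this identity holds for $u\in\mathcal{F}\cap\contfunc_{c}(X)$ by \eqref{e:BDdecomp} and extends to $\mathcal{F}_{e}$ via Proposition \ref{prop.DFEM}), once the three cases are handled the assertion for $\mu_{\langle\cdot\rangle}$ will follow from Proposition \hyperref[GC.cone]{\ref{prop.cone-gen}}-\ref{GC.cone}. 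The general bounded Borel $\varphi\ge 0$ is further reduced to $\varphi=\indicator{A}$ for $A\in\mathcal{B}(X)$: first to simple $\varphi$ by monotone convergence combined with Lemma \ref{lem.DF.fatoulike}, and then to $\varphi=\indicator{A}$ by the reverse Minkowski inequality (Proposition \ref{prop.reverse}), exactly as in the proof of Proposition \ref{prop:EMGC.intver}.

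For the jump and killing parts, the defining formula \eqref{e:defn.DFEM} yields, for $u\in\mathcal{F}\cap\contfunc_{c}(X)\cap L^{\infty}(X,m)$, the explicit representations
\begin{equation*}
\mu^{(j)}_{\langle u\rangle}(A)=\int_{X\times X}(u(x)-u(y))^{2}\,w_{A}(dxdy), \qquad \mu^{(k)}_{\langle u\rangle}(A)=c\int_{A}u^{2}\,dk,
\end{equation*}
where $w_{A}$ is a positive Borel measure on $X\times X$ built from $J$ and $A$, and $c$ is a positive constant; these formulas extend to $u\in\mathcal{F}_{e}$ via Proposition \ref{prop.DFEM} and Lemma \ref{lem.DF.fatoulike}. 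Then \hyperref[GC]{\textup{(GC)$_{2}$}} for $\mu^{(j)}_{\langle\cdot\rangle}(A)$ and $\mu^{(k)}_{\langle\cdot\rangle}(A)$ follows from the contractions $\|T(\bm{z})-T(\bm{w})\|_{\ell^{q_{2}}}\le\|\bm{z}-\bm{w}\|_{\ell^{q_{1}}}$ and $\|T(\bm{z})\|_{\ell^{q_{2}}}\le\|\bm{z}\|_{\ell^{q_{1}}}$ (the latter by $T(0)=0$), combined with Minkowski's integral inequality applied to the $\ell^{q_{2}/2}$-norm (for $q_{2}\ge 2$) and its counterpart for the $\ell^{q_{1}/2}$-quasi-norm (for $q_{1}\le 2$, obtained from Proposition \ref{prop.reverse}).

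The local part $\mu^{(c)}_{\langle\cdot\rangle}$ is the main difficulty, since no comparable explicit formula is available. My plan is to use the derivation property (chain rule) for $\mu^{(c)}$: for bounded $\bm{u}=(u_{1},\ldots,u_{n_{1}})\in(\mathcal{F}_{e}\cap L^{\infty}(X,m))^{n_{1}}$ and $T=(T_{1},\ldots,T_{n_{2}})\in\contfunc^{1}(\mathbb{R}^{n_{1}};\mathbb{R}^{n_{2}})$ with $T(0)=0$ and globally Lipschitz, one has $T_{l}(\bm{u})\in\mathcal{F}_{e}\cap L^{\infty}(X,m)$ and
\begin{equation*}
d\mu^{(c)}_{\langle T_{l}(\bm{u})\rangle}=\sum_{k,k'=1}^{n_{1}}(\partial_{k}T_{l})(\bm{u})(\partial_{k'}T_{l})(\bm{u})\,d\mu^{(c)}_{\langle u_{k},u_{k'}\rangle}.
\end{equation*}
Since \eqref{GC-cond} implies, for $T\in\contfunc^{1}(\mathbb{R}^{n_{1}};\mathbb{R}^{n_{2}})$, the pointwise estimate $\bigl\|\bigl(\sum_{k}\partial_{k}T_{l}(\bm{z})y_{k}\bigr)_{l}\bigr\|_{\ell^{q_{2}}}\le\|(y_{k})_{k}\|_{\ell^{q_{1}}}$ for all $\bm{z},(y_{k})_{k}\in\mathbb{R}^{n_{1}}$ (compare \eqref{e:GC-operatornorm.special}), combining this with the Cauchy--Schwarz bound on the signed measures $\mu^{(c)}_{\langle u_{k},u_{k'}\rangle}$ (a consequence of the Cauchy--Schwarz inequality for $\mathcal{E}^{(c)}$) and the same Minkowski/reverse-Minkowski arguments as in the jump case should yield \hyperref[GC]{\textup{(GC)$_{2}$}} for $\mu^{(c)}_{\langle\cdot\rangle}(A)$ in the smooth and bounded case. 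For general $T$ satisfying \eqref{GC-cond}, the mollifications $T_{\epsilon}\coloneqq(T\ast j_{\epsilon})-(T\ast j_{\epsilon})(0)$ are $\contfunc^{\infty}$ with $T_{\epsilon}(0)=0$ and satisfy \eqref{GC-cond} with the same exponents (cf.\ \eqref{e:chain-GC.mollifier}), and the limit $\epsilon\to 0$ can be taken using Lemma \ref{lem.DF.fatoulike}; finally, the truncation $u\mapsto(-n)\vee(u\wedge n)$ together with one more application of Lemma \ref{lem.DF.fatoulike} removes the boundedness assumption on $\bm{u}$.

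The hardest step will be the rigorous justification of the chain rule and the Cauchy--Schwarz bound for $\mu^{(c)}_{\langle\cdot\rangle}$ at the required level of generality, since classical references (e.g., \cite[Section 3.2]{FOT}) state these for $u\in\mathcal{F}\cap L^{\infty}(X,m)$ with $T\in\contfunc^{1}$, and extending to $\bm{u}\in(\mathcal{F}_{e}\cap L^{\infty}(X,m))^{n_{1}}$ together with the uniform bounds needed for passing to the limit along mollifications and truncations requires careful bookkeeping. However, the uniform $\mathcal{E}$-bounds on the approximating sequences are automatic from \hyperref[GC]{\textup{(GC)$_{2}$}} for the ambient form (Proposition \ref{prop.GC.DFext}), so this obstacle should be manageable.
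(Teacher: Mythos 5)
Your proposal is correct in outline, and for the jump and killing parts it is essentially the paper's argument (explicit integral representation plus pointwise contraction of $T$, reverse Minkowski on $\ell^{q_{1}/2}$ and Minkowski's integral inequality on $\ell^{q_{2}/2}$, then extension to $\mathcal{F}_{e}$ via Lemma \ref{lem.DF.fatoulike}). Where you genuinely diverge is the local part $\mu^{(c)}_{\langle\cdot\rangle}$. The paper does \emph{not} invoke the chain rule at all: it uses the approximating expressions \eqref{e:DF.emfunc.total} and \eqref{e:DF.emfunc.local} (from \cite[(3.2.13), (3.2.19), (3.2.23)]{FOT}), which represent $\int_{X}\varphi\,d\mu^{\#}_{\langle v\rangle}$ for every one of the four measures as a limit of explicit forms built from $\abs{v(x)-v(y)}^{2}$ and $\abs{v(x)}^{2}$, so the pointwise contraction \eqref{GC-cond} applies directly and all four cases are handled by one uniform computation for $\bm{u}\in(\mathcal{F}\cap\contfunc_{c}(X))^{n_{1}}$, $\varphi\in\mathcal{F}\cap\contfunc_{c}(X)$; the Beurling--Deny decomposition is then not needed as a reduction device. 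Your chain-rule route for $\mu^{(c)}$ is viable, but be aware that ``Cauchy--Schwarz plus Minkowski'' is not quite enough to pass from the derivation identity to the $\ell^{q_{1}}\to\ell^{q_{2}}$ inequality when $n_{2}\geq 2$: you need the tensor-type inequality for nonnegative definite bilinear forms (Proposition \ref{prop.tensorineq}) applied to the matrix $(\partial_{k}T_{l}(\bm{u}(x)))_{l,k}$, whose $\ell^{q_{1}}\to\ell^{q_{2}}$ operator norm is at most $1$ by \eqref{e:GC-operatornorm}; this is exactly the machinery the paper deploys in Theorem \ref{thm.GCp-kuwae}, where no FOT-style approximating expression is available. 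Also, you can sidestep the extension of the chain rule to $\mathcal{F}_{e}\cap L^{\infty}(X,m)$ entirely: it suffices to prove the inequality for $\bm{u}\in(\mathcal{F}\cap\contfunc_{c}(X))^{n_{1}}$ (where \cite[Theorem 3.2.2]{FOT} applies after mollifying $T$) and then extend to $\mathcal{F}_{e}$ by Proposition \ref{prop.extcriteria} and Lemma \ref{lem.DF.fatoulike}, as the paper does in its final paragraph of extensions. In short: your route costs more machinery but is the one that generalizes to $p\neq 2$; the paper's route is shorter here because the quadratic case admits the explicit approximating forms.
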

\begin{proof}
	Let $n_{1},n_{2} \in \mathbb{N}$, $q_{1} \in (0,2]$, $q_{2} \in [2,\infty]$ and $T = (T_{1},\dots,T_{n_{2}}) \colon \mathbb{R}^{n_{1}} \to \mathbb{R}^{n_{2}}$ satisfy \eqref{GC-cond}.
	It suffices to prove that for any $\bm{u} = (u_{1},\dots,u_{n_{1}}) \in (\mathcal{F} \cap \contfunc_{c}(X))^{n_{1}}$ and any $\varphi \in \mathcal{F} \cap \contfunc_{c}(X)$, 
	\begin{equation}\label{e:GC.DFEM.funcwise}
		\norm{\left(\left(\int_{X}\varphi\,d\mu_{\langle T_{l}(\bm{u}) \rangle}^{\#}\right)^{1/2}\right)_{l = 1}^{n_{2}}}_{\ell^{q_{2}}} 
		\le \norm{\left(\left(\int_{X}\varphi\,d\mu_{\langle u_{k} \rangle}^{\#}\right)^{1/2}\right)_{k = 1}^{n_{1}}}_{\ell^{q_{1}}}. 
	\end{equation}
	Indeed, we can extend \eqref{e:GC.DFEM.funcwise} to any $\bm{u} \in \mathcal{F}_{e}^{n_{1}}$ and any Borel measurable function $\varphi \colon X \to [0,\infty]$ as follows. 
    Let us start with the case of $\varphi = \indicator{A}$, where $A \in \mathcal{B}(X)$. 
    By \cite[Theorem 2.18]{Rud}, there exist sequences $\{ K_{n} \}_{n \in \mathbb{N}}$ and $\{ U_{n} \}_{n \in \mathbb{N}}$ such that $K_{n} \subseteq A \subseteq U_{n}$, $K_{n}$ is compact, $U_{n}$ is open and $\lim_{n \to \infty}\max_{v \in \{ T_{l}(\bm{u}) \}_{l} \cup \{ u_{k} \}_{k}}\mu_{\langle v \rangle}^{\#}(U_{n} \setminus K_{n}) = 0$. 
    By Urysohn's lemma, we can pick $\varphi_{n} \in \contfunc_{c}(X)$ so that $0 \le \varphi_{n} \le 1$, $\varphi_{n}|_{K_{n}} = 1$ and $\supp_{X}[\varphi_{n}] \subseteq U_{n}$. 
    By \eqref{e:GC.DFEM.funcwise} with $\varphi_{n}$ in place of $\varphi$, we obtain $\norm{\left(\mu_{\langle T_{l}(\bm{u}) \rangle}^{\#}(K_{n})^{1/2}\right)_{l = 1}^{n_{2}}}_{\ell^{q_{2}}} 
    \le \norm{\left(\mu_{\langle u_{k} \rangle}^{\#}(U_{n})^{1/2}\right)_{k = 1}^{n_{1}}}_{\ell^{q_{1}}}$. 
    By letting $n \to \infty$, we get \eqref{e:GC.DFEM.funcwise} with $\varphi = \indicator{A}$, i.e., 
	\begin{equation}\label{e:GC.DFEM.setwise}
		\norm{\bigl(\mu_{\langle T_{l}(\bm{u}) \rangle}^{\#}(A)^{1/2}\bigr)_{l = 1}^{n_{2}}}_{\ell^{q_{2}}} 
		\le \norm{\bigl(\mu_{\langle u_{k} \rangle}^{\#}(A)^{1/2}\bigr)_{k = 1}^{n_{1}}}_{\ell^{q_{1}}}. 
	\end{equation}
	By the reverse Minkowski inequality on $\ell^{q_{1}/2}$ and the Minkowski inequality on $\ell^{q_{2}/2}$ (see also \eqref{GC.sum}), we can extend \eqref{e:GC.DFEM.setwise} to \eqref{e:GC.DFEM.funcwise} for any non-negative Borel measurable simple function $\varphi$ on $X$,
	By the monotone convergence theorem, \eqref{e:GC.DFEM.funcwise} holds for any Borel measurable function $\varphi \colon X \to [0,\infty]$. 
	Next we will extend \eqref{e:GC.DFEM.funcwise} to $\bm{u} = (u_{1},\dots,u_{n_{1}}) \in \mathcal{F}_{e}^{n_{1}}$. 
	Since $\mathcal{F} \cap \contfunc_{c}(X)$ is dense in $(\mathcal{F}, \norm{\,\cdot\,}_{\mathcal{E},1})$, there exists an approximating sequence $\{ u_{k,n} \}_{n \in \mathbb{N}} \subseteq \mathcal{F} \cap \contfunc_{c}(X)$ for $u_{k}$ for each $k \in \{ 1,\dots,n_{1} \}$. 
	Set $\bm{u}_{n} \coloneqq (u_{1,n},\dots,u_{n_{1},n})$. 
	Then, for each $l \in \{ 1,\dots,n_{2} \}$, $\lim_{n \to \infty}T_{l}(\bm{u}_{n}) = T_{l}(\bm{u})$ $m$-a.e., $T_{l}(\bm{u}_{n}) \in \mathcal{F}$ and $\sup_{n \in \mathbb{N}}\mathcal{E}(T_{l}(\bm{u}_{n}),T_{l}(\bm{u}_{n})) < \infty$ by Proposition \ref{prop.GC.DF}. 
	Hence $T_{l}(\bm{u}) \in \mathcal{F}_{e}$ by Proposition \ref{prop.extcriteria}, and 
	\begin{align*}
		\norm{\left(\left(\int_{X}\varphi\,d\mu_{\langle T_{l}(\bm{u}) \rangle}^{\#}\right)^{1/2}\right)_{l = 1}^{n_{2}}}_{\ell^{q_{2}}}
		&\le \norm{\left(\left(\liminf_{n \to \infty}\int_{X}\varphi\,d\mu_{\langle T_{l}(\bm{u}_{n}) \rangle}^{\#}\right)^{1/2}\right)_{l = 1}^{n_{2}}}_{\ell^{q_{2}}} \\
		&\le \liminf_{n \to \infty}\norm{\left(\left(\int_{X}\varphi\,d\mu_{\langle T_{l}(\bm{u}_{n}) \rangle}^{\#}\right)^{1/2}\right)_{l = 1}^{n_{2}}}_{\ell^{q_{2}}} \\
		&\overset{\eqref{e:GC.DFEM.funcwise}}{\le} \liminf_{n \to \infty}\norm{\left(\left(\int_{X}\varphi\,d\mu_{\langle u_{k,n} \rangle}^{\#}\right)^{1/2}\right)_{k = 1}^{n_{1}}}_{\ell^{q_{1}}} \\
		&= \norm{\left(\left(\int_{X}\varphi\,d\mu_{\langle u_{k} \rangle}^{\#}\right)^{1/2}\right)_{k = 1}^{n_{1}}}_{\ell^{q_{1}}}, 
	\end{align*}
	where we used Lemma \ref{lem.DF.fatoulike} in the first inequality and Proposition \ref{prop.DFEM} in the last equality. 
	This implies that $(\int_{X}\varphi\,d\mu_{\langle \,\cdot\, \rangle}^{\#},\mathcal{F}_{e})$ is a $2$-energy form on $(X,m)$ satisfying \hyperref[GC]{\textup{(GC)$_{2}$}}. 
	
	Let us go back to the proof of \eqref{e:GC.DFEM.funcwise} in the case where $\bm{u} = (u_{1},\dots,u_{n_{1}}) \in (\mathcal{F} \cap \contfunc_{c}(X))^{n_{1}}$ and $\varphi \in \mathcal{F} \cap \contfunc_{c}(X)$. 
	Fix a metric $d$ on $X$ which is compatible with the given topology of $X$, an increasing sequence of relatively open sets $\{ G_{l} \}_{l \in \mathbb{N}}$ with $\bigcup_{l \in \mathbb{N}}G_{l} = X$ and a sequence of positive numbers $\{ \delta_{l} \}_{l \in \mathbb{N}}$ with $\delta_{l} \downarrow 0$ as $l \to \infty$. 
	Then there exist a sequence of positive numbers $\{ \beta_{n} \}_{n \in \mathbb{N}}$ with $\beta_{n} \uparrow \infty$ as $n \to \infty$, a family of Radon measures $\{ \sigma_{\beta} \}_{\beta > 0}$ on $X \times X$ and a family of Radon measures $\{ m_{\beta} \}_{\beta > 0}$ on $X$ with $m_{\beta} \ll m$ such that for any $v \in \mathcal{F} \cap \contfunc_{c}(X)$,
	\begin{equation}\label{e:DF.emfunc.total}
		\int_{X}\varphi\,d\mu_{\langle v \rangle} 
		= \lim_{\beta \to \infty}\left(\frac{\beta}{2}\int_{X \times X}\abs{v(x) - v(y)}^{2}\varphi(x)\,\sigma_{\beta}(dx,dy) + \frac{\beta}{2}\int_{X}\abs{v(x)}^{2}\varphi(x)\,m_{\beta}(dx)\right), 
	\end{equation}
	and 
	\begin{equation}\label{e:DF.emfunc.local}
		\int_{X}\varphi\,d\mu_{\langle v \rangle}^{c} 
		= \lim_{l \to \infty}\lim_{n \to \infty}\frac{\beta_{n}}{2}\int_{\{(x,y) \in G_{l} \times G_{l} \mid d(x,y) < \delta_{l} \}}\abs{v(x) - v(y)}^{2}\varphi(x)\,\sigma_{\beta_{n}}(dx,dy). 
	\end{equation}
	See \cite[the equations just before (3.2.13) and (3.2.19)]{FOT} for details. 
	Note that $T_{l}(\bm{u}) \in \mathcal{F} \cap \contfunc_{c}(X)$ for each $l \in \{ 1,\dots,n_{2} \}$ by Proposition \ref{prop.GC.DF} and $T_{l}(0) = 0$. 
	If $q_{2} < \infty$, then we have from \eqref{e:DF.emfunc.total} that 
	\begin{align*}
        &\sum_{l = 1}^{n_{2}}\left(\int_{X}\varphi\,d\mu_{\langle T_{l}(\bm{u}) \rangle}\right)^{q_{2}/2} \\
        &\overset{\eqref{reverse}}{\le} \lim_{\beta \to \infty}\biggl(\frac{\beta}{2}\int_{X \times X}\norm{T(\bm{u}(x)) - T(\bm{u}(y))}_{\ell^{q_{2}}}^{2}\varphi(x)\,\sigma_{\beta}(dx,dy) \nonumber \\
        &\hspace*{200pt}+ \frac{\beta}{2}\int_{X}\norm{T(\bm{u}(x))}_{\ell^{q_{2}}}^{2}\varphi(x)\,m_{\beta}(dx)\biggr)^{q_{2}/2} \nonumber\\ 
        &\overset{\eqref{GC-cond}}{\le} \lim_{\beta \to \infty}\left(\frac{\beta}{2}\int_{X \times X}\norm{\bm{u}(x) - \bm{u}(y)}_{\ell^{q_{1}}}^{2}\varphi(x)\,\sigma_{\beta}(dx,dy) + \frac{\beta}{2}\int_{X}\norm{\bm{u}(x)}_{\ell^{q_{1}}}^{2}\varphi(x)\,m_{\beta}(dx)\right)^{q_{2}/2} \nonumber\\
        &\overset{\textup{($\ast$)}}{\le} \lim_{\beta \to \infty}\Biggl(\sum_{k = 1}^{n_{1}}\Biggl[\frac{\beta}{2}\int_{X \times X}\abs{u_{k}(x) - u_{k}(y)}^{2}\varphi(x)\,\sigma_{\beta}(dx,dy) \nonumber \\
        &\hspace*{200pt}+ \frac{\beta}{2}\int_{X}\abs{u_{k}(x)}^{2}\varphi(x)\,m_{\beta}(dx)\Biggr]^{q_{1}/2}\Biggr)^{\frac{2}{q_{1}} \cdot \frac{q_{2}}{2}} \nonumber\\
        &= \left(\sum_{k = 1}^{n_{1}}\left(\int_{X}\varphi\,d\mu_{\langle u_{k} \rangle}\right)^{q_{1}/2}\right)^{q_{2}/q_{1}}, 
    \end{align*}
    where we used the triangle inequality for a suitable $L^{2/q_{1}}$-norm on $(X \times X) \sqcup X$ in ($\ast$) (here $\sqcup$ denotes the disjoint union). 
    The case of $q_{2} = \infty$ is similar, so we obtain the desired estimate \eqref{e:GC.DFEM.funcwise} for $\mu_{\langle \,\cdot\, \rangle}^{\#} = \mu_{\langle \,\cdot\, \rangle}$. 
    The other case $\mu_{\langle \,\cdot\, \rangle}^{\#} \in \{ \mu_{\langle \,\cdot\, \rangle}^{c}, \mu_{\langle \,\cdot\, \rangle}^{j}, \mu_{\langle \,\cdot\, \rangle}^{k} \}$ can be shown in a similar way by virtue of the expression in \cite[(3.2.23)]{FOT}. 
\end{proof}

Next we see that ``$\abs{\nabla u}$'' also satisfies a similar contraction property. 
To present the precise definition of the density, we recall the notion of minimal energy-dominant measure.  
\begin{defn}[Minimal energy-dominant measure; {\cite[Definition 2.1]{Hin10}}]
	A $\sigma$-finite Borel measure $\mu$ on $X$ is called a \emph{minimal energy-dominant measure}\index{minimal energy-dominant measure} of $(\mathcal{E},\mathcal{F})$ if and only if the following two conditions hold.
	\begin{enumerate}[label=\textup{(\roman*)},align=left,leftmargin=*,topsep=2pt,parsep=0pt,itemsep=2pt]
		\item\label{it:MED.dom} For any $f \in \mathcal{F}$, we have $\mu_{\langle f \rangle} \ll \mu$.
		\item\label{it:MED.min} If another $\sigma$-finite Borel measure $\mu'$ on $X$ satisfies \ref{it:MED.dom} with $\mu$ in place of $\mu'$, then $\mu \ll \mu'$. 
	\end{enumerate}
\end{defn}

The existence of a minimal energy-dominant measure is proved in \cite[Lemma 2.2]{Nak85} (see also \cite[Lemma 2.3]{Hin10}). 
For any minimal energy-dominant measure $\mu$ of $(\mathcal{E},\mathcal{F})$, the same argument as in \cite[Proof of Lemma 2.2]{Hin10} implies that $\mu_{\langle f \rangle} \ll \mu$ for any $f \in \mathcal{F}_{e}$. 
In addition, for $\mu_{\langle \,\cdot\, \rangle}^{\#} \in \{ \mu_{\langle \,\cdot\, \rangle}, \mu_{\langle \,\cdot\, \rangle}^{c}, \mu_{\langle \,\cdot\, \rangle}^{j}, \mu_{\langle \,\cdot\, \rangle}^{k} \}$, we easily see that $\mu_{\langle f,g \rangle}^{\#} \ll \mu$ for any $f,g \in \mathcal{F}_{e}$. 
We define $\Gamma_{\mu}^{\#}(u,v) \coloneqq \frac{d\mu^{\#}_{\langle u,v \rangle}}{d\mu}$ and $\Gamma_{\mu}^{\#}(u) \coloneqq \Gamma_{\mu}^{\#}(u,u)$ for $u,v \in \mathcal{F}_{e}$. 

\begin{prop}\label{prop.GCkuwae}
	Let $\mu$ be a minimal energy-dominant measure of $(\mathcal{E},\mathcal{F})$ and for each $f \in \mathcal{F}_{e}$, let $\Gamma_{\mu}(f) \coloneqq d\mu_{\langle f \rangle}/d\mu$ and $\Gamma_{\mu}^{w}(f) \coloneqq d\mu_{\langle f \rangle}^{w}/d\mu$ for each $w \in \{ c,j,k \}$. 
	Let $\Gamma_{\mu}^{\#}(\,\cdot\,) \in \{ \Gamma_{\mu}(\,\cdot\,), \Gamma_{\mu}^{c}(\,\cdot\,), \Gamma_{\mu}^{j}(\,\cdot\,), \Gamma_{\mu}^{k}(\,\cdot\,) \}$. 
	Then for any $n_{1},n_{2} \in \mathbb{N}$, $q_{1} \in (0,2]$, $q_{2} \in [2,\infty]$ and $T = (T_{1},\dots,T_{n_{2}}) \colon \mathbb{R}^{n_{1}} \to \mathbb{R}^{n_{2}}$ satisfying \eqref{GC-cond} and any $\bm{u} = (u_{1},\dots,u_{n_{1}}) \in \mathcal{F}_{e}^{n_{1}}$, 
	\begin{equation}\label{e:GC.DF.density}
		\norm{\bigl(\Gamma_{\mu}^{\#}(T_{l}(\bm{u}))(x)^{1/2}\bigr)_{l = 1}^{n_{2}}}_{\ell^{q_{2}}} 
		\le \norm{\bigl(\Gamma_{\mu}^{\#}(u_{k})(x)^{1/2}\bigr)_{k = 1}^{n_{1}}}_{\ell^{q_{1}}} \quad \text{for $\mu$-a.e.\ $x \in X$,}
	\end{equation}
	and in particular, for any $p \in [q_{1}, q_{2}] \cap (0,\infty)$ and any Borel measurable function $\varphi \colon X \to [0,\infty]$,
	\begin{equation}\label{e:GC2.DF.int}
		\norm{\Biggl(\biggl(\int_{X} \varphi \Gamma_{\mu}^{\#}(T_{l}(\bm{u}))^{\frac{p}{2}}\,d\mu\biggr)^{1/p}\Biggr)_{l = 1}^{n_{2}}}_{\ell^{q_{2}}} 
		\le \norm{\Biggl(\biggl(\int_{X} \varphi \Gamma_{\mu}^{\#}(u_{k})^{\frac{p}{2}}\,d\mu\biggr)^{1/p}\Biggr)_{k = 1}^{n_{1}}}_{\ell^{q_{1}}}. 
	\end{equation}
\end{prop}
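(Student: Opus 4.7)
The proof splits into establishing the pointwise inequality \eqref{e:GC.DF.density} on $X$ $\mu$-a.e., and then integrating it to obtain \eqref{e:GC2.DF.int}. For the pointwise step, I would adapt the cell-average/martingale argument used in the proof of Theorem \ref{thm.chain-basic}-\ref{it:em.chain-GC}-\ref{it:em.chain-GC-C1}. Specifically, fix a countable open base $\{A_{k}\}_{k\in\mathbb{N}}$ of $X$, set $A_{k}^{0}\coloneqq X\setminus A_{k}$, $A_{k}^{1}\coloneqq A_{k}$, and let $\mathcal{A}_{k}$ be the finite $\sigma$-algebra of unions of the atoms $A_{k}^{\alpha}\coloneqq\bigcap_{i=1}^{k}A_{i}^{\alpha_{i}}$, as in \eqref{e:defn.cylinder}. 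For each choice of $\Gamma_{\mu}^{\#}\in\{\Gamma_{\mu},\Gamma_{\mu}^{c},\Gamma_{\mu}^{j},\Gamma_{\mu}^{k}\}$ and each $u\in\mathcal{F}_{e}$, define the cell averages
\[
\Upsilon_{k}^{\#}\langle u\rangle(x)\coloneqq\mu(A_{k}^{\alpha})^{-1}\mu_{\langle u\rangle}^{\#}(A_{k}^{\alpha})\quad\text{for }x\in A_{k}^{\alpha}
\]
(with the convention $\Upsilon_{k}^{\#}\langle u\rangle(x)\coloneqq 0$ on any atom of $\mu$-measure zero). Then $\Upsilon_{k}^{\#}\langle u\rangle$ is a version of $\mathbb{E}_{\mu}[\Gamma_{\mu}^{\#}(u)\mid\mathcal{A}_{k}]$, so the martingale convergence theorem yields $\lim_{k\to\infty}\Upsilon_{k}^{\#}\langle u\rangle=\Gamma_{\mu}^{\#}(u)$ $\mu$-a.e.\ since $\bigcup_{k}\mathcal{A}_{k}$ generates $\mathcal{B}(X)$.

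Next, for each $k$ and each atom $A_{k}^{\alpha}$ with $\mu(A_{k}^{\alpha})\in(0,\infty)$, Proposition \ref{prop.GC.DF.emintext} applied with $\varphi=\indicator{A_{k}^{\alpha}}$ gives
\[
\norm{\bigl(\mu_{\langle T_{l}(\bm{u})\rangle}^{\#}(A_{k}^{\alpha})^{1/2}\bigr)_{l=1}^{n_{2}}}_{\ell^{q_{2}}}\le\norm{\bigl(\mu_{\langle u_{k'}\rangle}^{\#}(A_{k}^{\alpha})^{1/2}\bigr)_{k'=1}^{n_{1}}}_{\ell^{q_{1}}},
\]
and dividing by $\mu(A_{k}^{\alpha})^{1/2}$ yields the same inequality with $\Upsilon_{k}^{\#}\langle\,\cdot\,\rangle$ in place of $\mu_{\langle\,\cdot\,\rangle}^{\#}(A_{k}^{\alpha})/\mu(A_{k}^{\alpha})^{1/2}$, hence for every $x\in X$ off a $\mu$-null set. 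Passing to the limit $k\to\infty$ along a subsequence realising $\mu$-a.e.\ convergence gives \eqref{e:GC.DF.density}.

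For the integrated inequality \eqref{e:GC2.DF.int}, with $p\in[q_{1},q_{2}]$ and $q_{2}<\infty$ (the case $q_{2}=\infty$ is similar), I would raise \eqref{e:GC.DF.density} to the $p$-th power, multiply by $\varphi$, and integrate. Using Minkowski's integral inequality for the exponent $q_{2}/p\ge 1$ to pull the sum over $l$ outside the integral, and then the reverse Minkowski inequality (Proposition \ref{prop.reverse}) for the exponent $q_{1}/p\le 1$ to bring the sum over $k$ outside, yields after algebraic manipulation
\[
\Biggl[\sum_{l=1}^{n_{2}}\biggl(\int_{X}\varphi\Gamma_{\mu}^{\#}(T_{l}(\bm{u}))^{\frac{p}{2}}\,d\mu\biggr)^{\!q_{2}/p}\Biggr]^{\!p/q_{2}}\!\!\!\le\int_{X}\varphi\Bigl(\sum_{k=1}^{n_{1}}\Gamma_{\mu}^{\#}(u_{k})^{\frac{q_{1}}{2}}\Bigr)^{\!p/q_{1}}\!d\mu\le\Biggl[\sum_{k=1}^{n_{1}}\biggl(\int_{X}\varphi\Gamma_{\mu}^{\#}(u_{k})^{\frac{p}{2}}\,d\mu\biggr)^{\!q_{1}/p}\Biggr]^{\!p/q_{1}}\!\!.
\]
Taking $(1/q_{2})$-th and $(1/q_{1})$-th roots gives \eqref{e:GC2.DF.int}.

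The main technical point is step 1: verifying that $\Upsilon_{k}^{\#}\langle u\rangle\to\Gamma_{\mu}^{\#}(u)$ $\mu$-a.e.\ simultaneously for the finitely many functions appearing in the statement (so that the pointwise inequality transfers). This is harmless because the exceptional null sets from the martingale convergence theorem applied to each of $T_{l}(\bm{u})$, $u_{k'}$ can be unioned away without losing $\mu$-a.e. The $\sigma$-finiteness of $\mu$ (built into the definition of a minimal energy-dominant measure) is what makes the conditional expectation and martingale convergence available.
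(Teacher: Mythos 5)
Your proposal follows essentially the same route as the paper's proof: cell averages over the dyadic-type atoms generated by a countable open base, the martingale convergence theorem to recover the densities $\mu$-a.e., Proposition \ref{prop.GC.DF.emintext} applied to the atoms to get the inequality at the level of averages, and then the reverse Minkowski/Minkowski pair to pass to the integrated inequality \eqref{e:GC2.DF.int}. The one point where the paper is more careful is that $\mu$ is only $\sigma$-finite, so it first fixes an exhaustion $\{X_{n}\}_{n}$ with $\mu(X_{n})<\infty$ and conditions with respect to the normalized measures $\mu(X_{n})^{-1}\mu((\cdot)\cap X_{n})$; your averages $\mu(A_{k}^{\alpha})^{-1}\mu_{\langle u\rangle}^{\#}(A_{k}^{\alpha})$ are not literally conditional expectations on atoms of infinite $\mu$-measure, and this localization is the standard (and easy) fix.
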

\begin{proof}
	We first construct a good $\mu$-version of $\Gamma_{\mu}^{\#}(v)$ for each $v \in \mathcal{F}_{e}$. 
	Fix $\{ X_{n} \}_{n \in \mathbb{N}} \subseteq \mathcal{B}(X)$ such that $X_{n} \subseteq X_{n + 1}$, $X = \bigcup_{n \in \mathbb{N}}X_{n}$ and $\mu(X_{n}) \in (0,\infty)$ for each $n \in \mathbb{N}$. 
	Let $\{ A_{k} \}_{k \in \mathbb{N}}$ be a countable open base for the topology of $X$. 
	Set $A_{k}^{0} \coloneqq X \setminus A_{k}$ and $A_{k}^{1} \coloneqq A_{k}$ for each $k \in \mathbb{N}$, and define a non-decreasing sequence $\{ \mathcal{A}_{k} \}_{k \in \mathbb{N}}$ of $\sigma$-algebras in $X$ in the same way as \eqref{e:defn.cylinder}. 
	For $v \in \mathcal{F}_{e}$, $n,k \in \mathbb{N}$, $\alpha \in \{ 0,1 \}^{k}$, define $\Gamma_{\mu}^{\#}(v)_{n,k} \colon X \to [0,\infty)$ by, for $x \in A_{k}^{\alpha}$, 
	\begin{equation}\label{e:defn.goodversion}
		\Gamma_{\mu}^{\#}(v)_{n,k}(x) 
		\coloneqq 
		\begin{cases}
			\mu(A_{k}^{\alpha} \cap X_{n})^{-1}\mu_{\langle v \rangle}^{\#}(A_{k}^{\alpha} \cap X_{n}) \quad &\text{if $\mu(A_{k}^{\alpha} \cap X_{n}) > 0$,} \\
			0 \quad &\text{if $\mu(A_{k}^{\alpha} \cap X_{n}) = 0$.}
		\end{cases}
	\end{equation}
	We also set $\mu_{n} \coloneqq \mu(X_{n})^{-1}\mu((\cdot) \cap X_{n})$ and $v_{n}^{\#} \coloneqq \frac{d\mu_{\langle v \rangle}^{\#}((\cdot) \cap X_{n})}{\mu((\cdot) \cap X_{n})}$. 
	Then we easily see that $\mathbb{E}_{\mu_{n}}[v_{n}^{\#}\,|\,\mathcal{A}_{k}] = \Gamma_{\mu}^{\#}(v)_{n,k}$ $\mu$-a.e.\ on $X_{n}$ and hence $\lim_{k \to \infty}\Gamma_{\mu}^{\#}(v)_{n,k} = v_{n}^{\#}$ $\mu$-a.e.\ on $X_{n}$ by the martingale convergence theorem (see, e.g., \cite[Theorem 10.5.1]{Dud}) and the fact that $\bigcup_{k \in \mathbb{N}}\mathcal{A}_{k}$ generates $\mathcal{B}(X)$.  
	Now we define $\widetilde{\Gamma}_{\mu}^{\#}(v) \colon X \to [0,\infty)$ by $\widetilde{\Gamma}_{\mu}^{\#}(v)(x) \coloneqq v_{n}^{\#}(x)$ for $n \in \mathbb{N}$ and $x \in X_{n} \setminus X_{n - 1}$, where $X_{0} \coloneqq \emptyset$. 
	Then $\widetilde{\Gamma}_{\mu}^{\#}(v) = \Gamma_{\mu}^{\#}(v)$ $\mu$-a.e.\ on $X$. 
	
	Next we show \eqref{e:GC.DF.density}. 
	Let $n_{1},n_{2} \in \mathbb{N}$, $q_{1} \in (0,2]$, $q_{2} \in [2,\infty]$, $\bm{u} = (u_{1},\dots,u_{n_{1}}) \in \mathcal{F}_{e}^{n_{1}}$ and let $T = (T_{1},\dots,T_{n_{2}}) \colon \mathbb{R}^{n_{1}} \to \mathbb{R}^{n_{2}}$ satisfy \eqref{GC-cond} with $2$ in place of $p$. 
	By Proposition \ref{prop.GC.DF.emintext} and \eqref{e:defn.goodversion}, for any $n,m \in \mathbb{N}$ and any $x \in X$, 
	\[
	\norm{\bigl(\Gamma_{\mu}^{\#}(T_{l}(\bm{u}))_{n,m}(x)^{1/2}\bigr)_{l = 1}^{n_{2}}}_{\ell^{q_{2}}} 
		\le \norm{\bigl(\Gamma_{\mu}^{\#}(u_{k})_{n,m}(x)^{1/2}\bigr)_{k = 1}^{n_{1}}}_{\ell^{q_{1}}}. 
	\]
	By letting $m \to \infty$, we obtain 
	\[
	\norm{\Bigl(\widetilde{\Gamma}_{\mu}^{\#}(T_{l}(\bm{u}))(x)^{1/2}\Bigr)_{l = 1}^{n_{2}}}_{\ell^{q_{2}}} 
		\le \norm{\Bigl(\widetilde{\Gamma}_{\mu}^{\#}(u_{k})(x)^{1/2}\Bigr)_{k = 1}^{n_{1}}}_{\ell^{q_{1}}} \quad \text{for $\mu$-a.e.\ $x \in X$,} 
	\]
	whence \eqref{e:GC.DF.density} holds. 
	Lastly, if $p \in [q_{1},q_{2}] \cap (0,\infty)$ and $q_{2} < \infty$, then we see that for any Borel measurable function $\varphi \colon X \to [0,\infty]$,  
	\begin{align}\label{e:GC2forint}
		\sum_{l = 1}^{n_{2}}\biggl(\int_{X} \varphi \Gamma_{\mu}^{\#}(T_{l}(\bm{u}))^{\frac{p}{2}}\,d\mu\biggr)^{q_{2}/p}
		&\overset{\eqref{reverse}}{\le} \biggr(\int_{X} \varphi \norm{\bigl(\Gamma_{\mu}^{\#}(T_{l}(\bm{u}))(x)^{1/2}\bigr)_{l = 1}^{n_{2}}}_{\ell^{q_{2}}}^{p}\,\mu(dx)\biggr)^{q_{2}/p} \nonumber \\
		&\overset{\eqref{e:GC.DF.density}}{\le} \biggr(\int_{X} \varphi \norm{\bigl(\Gamma_{\mu}^{\#}(u_{k})(x)^{1/2}\bigr)_{k = 1}^{n_{1}}}_{\ell^{q_{1}}}^{p}\,\mu(dx)\biggr)^{q_{2}/p} \nonumber \\
		&\overset{\textup{($\ast$)}}{\le} \Biggl(\sum_{k = 1}^{n_{1}}\biggl(\int_{X} \varphi \Gamma_{\mu}^{\#}(u_{k})^{\frac{p}{2}}\,d\mu\biggr)^{q_{1}/p}\Biggr)^{q_{2}/q_{1}}, 
	\end{align}
	where we used the triangle inequality for the norm of $L^{p/q_{1}}(X,\varphi\,d\mu)$ in ($\ast$). 
	The case of $q_{2} = \infty$ is similar, so we obtain \eqref{e:GC2.DF.int}. 
\end{proof}

If $(\mathcal{E},\mathcal{F})$ is strongly local, then under a certain set of assumptions we can show \hyperlink{GC-em}{\textup{(GC)$^{\textup{EM}}_{p}$}} for $\Gamma_{\mu}(\,\cdot\,)^{\frac{p}{2}}\,d\mu$ on a suitable domain. 
To prove it, we need some preparations. 
The following proposition is the standard Minkowski integral inequality\index{Minkowski integral inequality} (see, e.g., \cite[Appendix B5]{DF}).
\begin{prop}\label{prop.intMin}
	Let $(X_{i},\mathcal{B}_{i},m_{i})$ be a $\sigma$-finite measure space for each $i \in \{ 1,2 \}$. 
	Let $q \in [1,\infty)$ and let $f \colon X_{1} \times X_{2} \to [-\infty,\infty]$ be measurable with respect to the product $\sigma$-algebra of $\mathcal{B}_{1}$ and $\mathcal{B}_{2}$. 
	Then 
	\begin{align}\label{e:intMin}
		\biggl(\int_{X_{1}}\biggl(\int_{X_{2}}\abs{f(x_{1},x_{2})}\,m_{2}(dx_{2})\biggr)^{q}\,m_{1}(dx_{1})\biggr)^{\frac{1}{q}} 
		\le \int_{X_{2}}\biggl(\int_{X_{1}}\abs{f(x_{1},x_{2})}^{q}\,m_{1}(dx_{1})\biggr)^{\frac{1}{q}}m_{2}(dx_{2}). 
	\end{align}
\end{prop}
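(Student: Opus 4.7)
The plan is to reduce to the classical duality argument for the $L^{q}$-norm on $(X_{1},\mathcal{B}_{1},m_{1})$. The case $q=1$ is immediate: by Tonelli's theorem applied to the nonnegative function $|f|$, both sides of \eqref{e:intMin} are equal to $\int_{X_{1}\times X_{2}}|f|\,d(m_{1}\otimes m_{2})$, where $m_{1}\otimes m_{2}$ denotes the product measure (well-defined thanks to $\sigma$-finiteness). So the real content is the case $q\in(1,\infty)$, which I handle next.

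For $q\in(1,\infty)$, set $F(x_{1})\coloneqq\int_{X_{2}}|f(x_{1},x_{2})|\,m_{2}(dx_{2})$; by Tonelli, $F$ is $\mathcal{B}_{1}$-measurable. Denote the right-hand side of \eqref{e:intMin} by $M$. I may assume $M<\infty$ (otherwise the inequality is trivial) and first treat the case $\|F\|_{L^{q}(X_{1},m_{1})}\in(0,\infty)$. Writing $q'\coloneqq q/(q-1)$ and using Tonelli on the nonnegative integrand followed by H\"older's inequality on the inner $m_{1}$-integral, I compute
\begin{align*}
\|F\|_{L^{q}(X_{1},m_{1})}^{q}
&=\int_{X_{1}}F(x_{1})^{q-1}\int_{X_{2}}|f(x_{1},x_{2})|\,m_{2}(dx_{2})\,m_{1}(dx_{1})\\
&=\int_{X_{2}}\int_{X_{1}}F(x_{1})^{q-1}|f(x_{1},x_{2})|\,m_{1}(dx_{1})\,m_{2}(dx_{2})\\
&\le\int_{X_{2}}\|F^{q-1}\|_{L^{q'}(X_{1},m_{1})}\biggl(\int_{X_{1}}|f(x_{1},x_{2})|^{q}\,m_{1}(dx_{1})\biggr)^{1/q}m_{2}(dx_{2})\\
&=\|F\|_{L^{q}(X_{1},m_{1})}^{q-1}\cdot M.
\end{align*}
Dividing by $\|F\|_{L^{q}(X_{1},m_{1})}^{q-1}\in(0,\infty)$ yields \eqref{e:intMin}. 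The case $\|F\|_{L^{q}(X_{1},m_{1})}=0$ is vacuous.

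The remaining subtlety is to exclude the pathological case $\|F\|_{L^{q}(X_{1},m_{1})}=\infty$ (which would invalidate the division above). Here I use the $\sigma$-finiteness of $m_{1}$: pick a non-decreasing sequence $\{A_{n}\}_{n\in\mathbb{N}}\subseteq\mathcal{B}_{1}$ with $m_{1}(A_{n})<\infty$ and $\bigcup_{n\in\mathbb{N}}A_{n}=X_{1}$, and set $f_{n}(x_{1},x_{2})\coloneqq(|f(x_{1},x_{2})|\wedge n)\indicator{A_{n}}(x_{1})$. For each $n$, the associated $F_{n}\coloneqq\int_{X_{2}}f_{n}(\,\cdot\,,x_{2})\,m_{2}(dx_{2})$ is bounded by $n\,m_{2}(X_{2})\indicator{A_{n}}$ on the part where $m_{2}(X_{2})<\infty$ — but $m_{2}(X_{2})$ may be infinite, so instead I simultaneously truncate in $x_{2}$ with a sequence $\{B_{k}\}_{k\in\mathbb{N}}\subseteq\mathcal{B}_{2}$ exhausting $X_{2}$ with $m_{2}(B_{k})<\infty$, considering $f_{n,k}\coloneqq(|f|\wedge n)\indicator{A_{n}\times B_{k}}$. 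For each $(n,k)$, $\|F_{n,k}\|_{L^{q}(X_{1},m_{1})}\le n\,m_{2}(B_{k})m_{1}(A_{n})^{1/q}<\infty$, so the case already proved applies and gives \eqref{e:intMin} for $f_{n,k}$ in place of $f$. Letting $k\to\infty$ and then $n\to\infty$ and invoking the monotone convergence theorem on both sides (the inner integrals converge monotonically, and composing with the nondecreasing continuous functions $t\mapsto t^{q}$ and $t\mapsto t^{1/q}$ preserves monotone limits) yields \eqref{e:intMin} in full generality.

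There is no substantial obstacle here; this is a classical result and the only care needed is the truncation argument to avoid $\infty/\infty$ when dividing by $\|F\|_{L^{q}}^{q-1}$. The $\sigma$-finiteness hypothesis is used precisely at this step (and implicitly to make Tonelli applicable for the product measure).
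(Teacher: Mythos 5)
Your proof is correct. The paper itself offers no proof of Proposition \ref{prop.intMin}; it simply records the statement as the standard Minkowski integral inequality and cites \cite[Appendix B5]{AF}-style literature (specifically the reference [DF]), so there is nothing to compare your argument against line by line. What you give is the classical duality proof: Tonelli for $q=1$, then for $q\in(1,\infty)$ the identity $\lVert F\rVert_{L^{q}}^{q}=\int F^{q-1}F$, a Tonelli swap, and H\"older with exponents $q'$ and $q$ on the inner $m_{1}$-integral, followed by division by $\lVert F\rVert_{L^{q}}^{q-1}$. The one genuine subtlety — that this division is illegitimate when $\lVert F\rVert_{L^{q}}=\infty$ — is exactly where $\sigma$-finiteness enters, and your double truncation $f_{n,k}=(\abs{f}\wedge n)\indicator{A_{n}\times B_{k}}$ together with monotone convergence handles it cleanly (note that $t\mapsto t^{q}$ and $t\mapsto t^{1/q}$ are continuous and nondecreasing on $[0,\infty]$, so they commute with the monotone limits as you claim). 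No gaps.
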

 
Next we show a tensor-type inequality for non-negative definite symmetric bilinear forms. 
\begin{prop}\label{prop.tensorineq}
	Let $V$ be a finite-dimensional vector space over $\mathbb{R}$, let $E \colon V \times V \to \mathbb{R}$ be a non-negative definite symmetric bilinear form, let $n_{1},n_{2} \in \mathbb{N}$ and let $A = (A_{lk})_{1 \le l \le n_{2}, 1 \le k \le n_{1}}$ be a real matrix. 
	If $(u_{1},\dots,u_{n_{1}}) \in V^{n_{1}}$, $q_{1} \in (0,\infty)$, $q_{2} \in (0,\infty]$ and $q_{1} \le q_{2}$, then 
	\begin{equation}\label{e:tensorineq}
		\norm{\Biggl(E\biggl(\sum_{k = 1}^{n_{1}}A_{lk}u_{k}\biggr)^{1/2}\Biggr)_{l =1}^{n_{2}}}_{\ell^{q_{2}}} 
		\le \norm{A}_{\ell^{q_{1}}_{n_{1}} \to \ell^{q_{2}}_{n_{2}}}\norm{\bigl(E(u_{k})^{1/2}\bigr)_{k = 1}^{n_{1}}}_{\ell^{q_{1}}},  
	\end{equation}
	where we set $E(u) \coloneqq E(u,u)$ for $u \in V$ and $\norm{A}_{\ell^{q_{1}}_{n_{1}} \to \ell^{q_{2}}_{n_{2}}} \coloneqq \sup_{x \in \mathbb{R}^{n_{1}},\, \norm{x}_{\ell^{q_{1}}} \leq 1} \norm{Ax}_{\ell^{q_{2}}}$. 
\end{prop}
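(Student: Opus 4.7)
My plan is to reduce the bilinear form $E$ to the standard Euclidean inner product on $\mathbb{R}^M$ for some $M$, then linearize the resulting $\ell^2$-norm via a spherical average so that the scalar operator norm $\norm{A}_{\ell^{q_1}\to\ell^{q_2}}$ can be applied directly. By diagonalizing $E$ on the finite-dimensional space $V$, I obtain $M \in \mathbb{N} \cup \{0\}$ and a linear map $L \colon V \to \mathbb{R}^M$ with $E(u,v) = \langle Lu, Lv\rangle_{\ell^2_M}$ for all $u,v \in V$; the case $M=0$ being trivial, I set $v_k \coloneqq Lu_k$ and $w_l \coloneqq \sum_{k} A_{lk} v_k \in \mathbb{R}^M$, whereupon the inequality reduces to its Hilbert-valued version
\begin{equation*}
\biggl(\sum_{l=1}^{n_2} \norm{w_l}_{\ell^2_M}^{q_2}\biggr)^{1/q_2} \leq \norm{A}_{\ell^{q_1}_{n_1}\to\ell^{q_2}_{n_2}} \biggl(\sum_{k=1}^{n_1} \norm{v_k}_{\ell^2_M}^{q_1}\biggr)^{1/q_1},
\end{equation*}
with the obvious modification if $q_2 = \infty$.

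Next, I will invoke the spherical-average identity: letting $\sigma$ denote the uniform probability measure on $S^{M-1}$ and $c_{p,M} \coloneqq \int_{S^{M-1}} \abs{\eta_1}^p\,d\sigma > 0$, rotational invariance gives
\begin{equation*}
\norm{w}_{\ell^2_M}^p = c_{p,M}^{-1}\int_{S^{M-1}}\abs{\langle w,\eta\rangle}^p\,d\sigma(\eta) \quad \text{for any $w \in \mathbb{R}^M$ and $p \in (0,\infty)$.}
\end{equation*}
Setting $b(\eta) \coloneqq (\langle v_k,\eta\rangle)_{k=1}^{n_1} \in \mathbb{R}^{n_1}$, one has $(\langle w_l,\eta\rangle)_{l=1}^{n_2} = Ab(\eta)$; hence for $q_2 < \infty$ the definition of operator norm gives
\begin{equation*}
\sum_l \norm{w_l}_{\ell^2_M}^{q_2} = c_{q_2,M}^{-1}\int \norm{Ab(\eta)}_{\ell^{q_2}}^{q_2}\,d\sigma \leq c_{q_2,M}^{-1}\norm{A}_{\ell^{q_1}\to\ell^{q_2}}^{q_2}\int \norm{b(\eta)}_{\ell^{q_1}}^{q_2}\,d\sigma.
\end{equation*}
The remaining integral will then be controlled by Minkowski's integral inequality (Proposition \ref{prop.intMin}) applied with exponent $q_2/q_1 \geq 1$, which furnishes
\begin{equation*}
\biggl(\int \norm{b(\eta)}_{\ell^{q_1}}^{q_2}\,d\sigma\biggr)^{q_1/q_2} \leq \sum_k \biggl(\int \abs{\langle v_k,\eta\rangle}^{q_2}\,d\sigma\biggr)^{q_1/q_2} = c_{q_2,M}^{q_1/q_2}\sum_k\norm{v_k}_{\ell^2_M}^{q_1}.
\end{equation*}
Substituting back causes the two factors of $c_{q_2,M}$ to cancel, and taking a $q_2$-th root completes the proof. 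The case $q_2 = \infty$ is handled by a direct exchange of suprema: $\sup_l\norm{w_l}_{\ell^2_M} = \sup_{\eta,l}\abs{\langle w_l,\eta\rangle} = \sup_\eta\norm{Ab(\eta)}_{\ell^\infty} \leq \norm{A}_{\ell^{q_1}\to\ell^\infty}\bigl(\sum_k\norm{v_k}_{\ell^2_M}^{q_1}\bigr)^{1/q_1}$, using $\abs{b(\eta)_k} \leq \norm{v_k}_{\ell^2_M}$.

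The main obstacle will be to preserve the scalar operator norm $\norm{A}_{\ell^{q_1}\to\ell^{q_2}}$ rather than the generally strictly larger quantity produced by the naive triangle-inequality estimate $\norm{w_l}_{\ell^2_M} \leq \sum_k \abs{A_{lk}}\norm{v_k}_{\ell^2_M}$: the latter yields the operator norm of the entrywise-absolute-value matrix $\abs{A}$, which can exceed $\norm{A}_{\ell^{q_1}\to\ell^{q_2}}$ strictly when $A$ has signed entries. The spherical-averaging identity is precisely the device that avoids this loss, because for each fixed direction $\eta$ the vector $Ab(\eta)$ is a genuine signed linear combination produced by $A$ acting on the scalar sequence $b(\eta)$, so that $\norm{A}_{\ell^{q_1}\to\ell^{q_2}}$ (rather than the operator norm of $\abs{A}$) controls it.
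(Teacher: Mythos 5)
Your proof is correct and follows essentially the same route as the paper's: the paper linearizes the $\ell^{2}$-norm $E(u)^{1/2}$ by embedding into $L^{q_{1}}$ of Gaussian measure (the Beckner-type identity \eqref{e:gaussisom}), applies the operator norm of $A$ pointwise, and concludes with Minkowski's integral inequality (Proposition \ref{prop.intMin}) with exponent $q_{2}/q_{1}$ — exactly the structure of your argument. The only differences are cosmetic: you average over the sphere $S^{M-1}$ instead of against the Gaussian measure (both work for the same rotational-invariance reason), and you apply the operator-norm bound before Minkowski rather than after; your diagnosis that the averaging device is what preserves $\norm{A}_{\ell^{q_{1}}\to\ell^{q_{2}}}$ in place of the norm of $\abs{A}$ is precisely the point of the paper's construction.
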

\begin{proof}
	The desired inequality follows from a Beckner-like result in \cite[7.9.]{DF}\ (see also \cite[Lemma 2]{Bec75}).
	We present a complete proof for convenience. 
	Let $\gamma_{n}$ be the Gaussian measure on $\mathbb{R}^{n}$, i.e., $\gamma_{n}(dx) \coloneqq (2\pi)^{-n/2}\exp{\bigl(-\abs{x}^{2}/2\bigr)}\,dx$, for each $n \in \mathbb{N}$ and set $n \coloneqq \dim(V/E^{-1}(0)) \in \mathbb{N} \cup \{ 0 \}$. 
	If $n = 0$, i.e., $E(u) = 0$ for any $u \in V$, then \eqref{e:tensorineq} is clear. 
	Hence we assume that $n \ge 1$ in the rest of the proof. 
	Let $\pi_{j} \colon \mathbb{R}^{n} \to \mathbb{R}$ be the projection map to the $j$-th coordinate for each $j \in \{ 1,\dots,n \}$. 
	Then we have from \cite[Proposition in 8.7.]{DF} that for any $(\alpha_{j})_{j = 1}^{n} \in \mathbb{R}^{n}$, 
	\begin{equation}\label{e:gaussisom}
		\norm{\pi_{1}}_{L^{q_{1}}(\mathbb{R},\gamma_{1})}^{-1}\left(\int_{\mathbb{R}^{n}}\abs{\sum_{j = 1}^{n}\alpha_{j}\pi_{j}(x)}^{q_{1}}\,d\gamma_{n}(dx)\right)^{1/q_{1}}
		= \norm{(\alpha_{j})_{j = 1}^{n}}_{\ell^{2}}. 
	\end{equation}
	Indeed, \eqref{e:gaussisom} is obviously true in the case of $(\alpha_{j})_{j} = (\delta_{1j})_{j}$ and this together with the invariance of $\gamma_{n}$ under $\ell^{2}_{n}$-isometries implies the desired equality \eqref{e:gaussisom}.  
	
	Let us fix a basis $\{ e_{j} \}_{j = 1}^{n} \subseteq V$ of $V$ satisfying $E(e_{j},e_{j'}) = \delta_{jj'}$ for each $j,j' \in \{ 1,\dots,n \}$, which exists by the Gram--Schmidt orthonormalization. 
	Now we define $\iota \colon V \to L^{q_{1}}(\mathbb{R}^{n}, \gamma_{n})$ by 
	\begin{equation}\label{e:gaussemb}
		\iota(u) \coloneqq \norm{\pi_{1}}_{L^{q_{1}}(\mathbb{R},\gamma_{1})}^{-1}\sum_{j = 1}^{n}E(u,e_{j})^{1/2}\pi_{j}, \quad u \in V. 
	\end{equation}
	Then $\norm{\iota(u)}_{L^{q_{1}}(\mathbb{R}^{n},\gamma_{n})} = \bigl(\sum_{j = 1}^{n}E(u,e_{j})\bigr)^{1/2} = E(u,u)^{1/2}$ by \eqref{e:gaussisom}. 
	If $q_{2} < \infty$, then 
	\begin{align*}
		\norm{\left(E\left(\sum_{k = 1}^{n_{1}}A_{lk}u_{k}\right)^{1/2}\right)_{l =1}^{n_{2}}}_{\ell^{q_{2}}} 
		&= \left(\sum_{l = 1}^{n_{2}}\left(\int_{\mathbb{R}^{n}}\abs{\sum_{k = 1}^{n_{1}}A_{lk}\iota(u_{k})}^{q_{1}}\,d\gamma_{n}\right)^{q_{2}/q_{1}}\right)^{\frac{q_{1}}{q_{2}} \cdot \frac{1}{q_{1}}} \\
		&\overset{\textup{($\ast$)}}{\le} \left(\int_{\mathbb{R}^{n}}\left(\sum_{l = 1}^{n_{2}}\abs{\sum_{k = 1}^{n_{1}}A_{lk}\iota(u_{k})}^{q_{2}}\right)^{q_{1}/q_{2}}\,d\gamma_{n}\right)^{1/q_{1}} \\
		&\le \norm{A}_{\ell^{q_{1}}_{n_{1}} \to \ell^{q_{2}}_{n_{2}}}\left(\int_{\mathbb{R}^{n}}\sum_{k = 1}^{n_{1}}\abs{\iota(u_{k})}^{q_{1}}\,d\gamma_{n}\right)^{1/q_{1}} \\
		&= \norm{A}_{\ell^{q_{1}}_{n_{1}} \to \ell^{q_{2}}_{n_{2}}}\left(\sum_{k = 1}^{n_{1}}E(u_{k})^{q_{1}/2}\right)^{1/q_{1}}, 
	\end{align*}
	where we used \eqref{e:intMin} with $q = q_{1}/q_{2}$ in \textup{($\ast$)}. 
	Since the case of $q_{2} = \infty$ is similar, so we obtain \eqref{e:tensorineq}. 
\end{proof}

Let us recall the definition of $p$-energy forms introduced by Kuwae in \cite{Kuw23+}
\begin{defn}[{\cite[Definition 1.4]{Kuw23+}}]\label{defn.kuwae-form}
	Let $\mu$ be a minimal energy-dominant measure of $(\mathcal{E},\mathcal{F})$, $p \in (1,\infty)$ and $\mathscr{D} \subseteq \{ u \in \mathcal{F} \cap L^{p}(X,m) \mid \Gamma_{\mu}(u)^{\frac{1}{2}} \in L^{p}(X,\mu) \}$ a linear subspace. 
	Assume that $(\mathcal{E},\mathcal{F})$ is strongly local and that 
	\begin{equation}\label{assum.closability}
		\begin{minipage}{325pt}
			$\lim_{n \to \infty}\int_{X}\Gamma_{\mu}(u_{n})^{\frac{p}{2}}\,d\mu = 0$ for any $\{ u_{n} \}_{n \in \mathbb{N}} \subseteq \mathscr{D}$ satisfying $\lim_{n \wedge k \to \infty}\int_{X}\Gamma_{\mu}(u_{n} - u_{k})^{\frac{p}{2}}\,d\mu = 0$ and $\lim_{n \to \infty}\norm{u_{n}}_{L^{p}(X,m)} = 0$. 
		\end{minipage}
	\end{equation}
	We define the norm $\norm{\,\cdot\,}_{H^{1,p}}$ on $\mathscr{D}$ by $\norm{u}_{H^{1,p}} \coloneqq \bigl(\norm{u}_{L^{p}(X,m)}^{p} + \int_{X}\Gamma_{\mu}(u)^{\frac{p}{2}}\,d\mu\bigr)^{1/p}$, and let $(H^{1,p}(X),\norm{\,\cdot\,}_{H^{1,p}})$ denote the completion of $(\mathscr{D},\norm{\,\cdot\,}_{H^{1,p}})$, so that we may and do consider $H^{1,p}(X)$ as a linear subspace of $L^{p}(X,m)$ since the canonical bounded linear map from $H^{1,p}(X)$ to $L^{p}(X,m)$ extending $\id_{\mathscr{D}}$ is injective by \eqref{assum.closability}. 
	Then we can uniquely extend $\Gamma_{\mu}$ to $H^{1,p}(X)$ by defining $\Gamma_{\mu}(u)^{\frac{1}{2}} \in L^{p}(X,\mu)$ for $u \in H^{1,p}(X)$ as the $L^{p}(X,\mu)$-limit of $\Gamma_{\mu}(u_{n})^{\frac{1}{2}}$, where $\{ u_{n} \}_{n \in \mathbb{N}} \subseteq \mathscr{D}$ satisfies $\lim_{n \wedge k \to \infty}\int_{X}\Gamma_{\mu}(u_{n} - u_{k})^{\frac{p}{2}}\,d\mu = 0$ and $\lim_{n \to \infty}\norm{u - u_{n}}_{L^{p}(X,m)} = 0$. 
\end{defn}
\begin{rmk} \label{rmk.kuwae-form}
	The condition \eqref{assum.closability} always holds if $p \ge 2$ and $\mu(F_{n}) < \infty$ for any $n \in \mathbb{N}$ for some $\mathcal{E}$-nest $\{F_{n}\}_{n \in \mathbb{N}}$\footnote{Namely, a non-decreasing sequence $\{F_{n}\}_{n \in \mathbb{N}}$ of closed subsets of $X$ such that $\bigcup_{n \in \mathbb{N}}\mathcal{F}_{F_{n}}$ is dense in $(\mathcal{F},\norm{\,\cdot\,}_{\mathcal{E},1})$, where $\mathcal{F}_{F_{n}} := \{ u \in \mathcal{F} \mid \text{$u=0$ $m$-a.e.\ on $X \setminus F_{n}$} \}$; see, e.g., \cite[Definition 1.2.12-(i) and Theorem 1.3.14-(ii)]{CF}.} as proved in \cite[Proposition 1.1]{Kuw23+}; 
	the latter condition\footnote{Note that a minimal energy-dominant measure $\mu$ of $(\mathcal{E},\mathcal{F})$ does not satisfy this condition in general. Indeed, consider the case where $X = \mathbb{R}$, $m$ is the Lebesgue measure on $\mathbb{R}$ and $(\mathcal{E},\mathcal{F})$ is the Dirichlet form of the Brownian motion on $\mathbb{R}$, and let $\mu$ be a Borel measure on $\mathbb{R}$. Then it is easy to see from \cite[Theorem 9.9]{Kig12} that $\mu$ satisfies the condition in Remark \ref{rmk.kuwae-form} if and only if $\mu$ is a Radon measure on $\mathbb{R}$. On the other hand, since $\mathcal{F} = W^{1,2}(\mathbb{R})$ and $d\mu_{\langle u \rangle} = \abs{u'}^{2}\,dm$ for any $u \in W^{1,2}(\mathbb{R})$, it is clear that $\mu$ is a minimal energy-dominant measure of $(\mathcal{E},\mathcal{F})$ if and only if $\mu$ is $\sigma$-finite and satisfies $\mu \ll m$ and $m \ll \mu$. Of course, the latter class of $\mu$ contains plenty of measures which are not Radon measures on $\mathbb{R}$ and thereby are minimal energy-dominant measures of $(\mathcal{E},\mathcal{F})$ failing to satisfy the condition in Remark \ref{rmk.kuwae-form}.} on $\mu$ is not assumed there, but is necessary for \cite[Proof of Proposition 1.1]{Kuw23+} to make sense. 
\end{rmk}

Now we can conclude that the family $\bigl\{\Gamma_{\mu}(u)^{\frac{p}{2}}\,d\mu\bigr\}_{u \in H^{1,p}(X)}$ of $p$-energy measures on $(X,\mathcal{B}(X))$ dominated by $\bigl(\int_{X}\Gamma_{\mu}(\,\cdot\,)^{\frac{p}{2}}\,d\mu,H^{1,p}(X)\bigr)$ satisfies \hyperlink{GC-em}{\textup{(GC)$^{\textup{EM}}_{p}$}}.
More precisely, we obtain the following theorem.
\begin{thm}\label{thm.GCp-kuwae}
	Let $\mu$ be a minimal energy-dominant measure of $(\mathcal{E},\mathcal{F})$, $p \in (1,\infty)$ and $\mathscr{D} \subseteq \{ u \in \mathcal{F} \cap L^{p}(X,m) \mid \Gamma_{\mu}(u)^{\frac{1}{2}} \in L^{p}(X,\mu) \}$ a linear subspace. 
	Assume that $(\mathcal{E},\mathcal{F})$ is strongly local and that \eqref{assum.closability} holds. 
	In addition, we assume that 
	\begin{equation}\label{assum.corecontraction}
		\begin{minipage}{275pt}
			$\widehat{T}(u) \in \mathscr{D}$ for any $u \in \mathscr{D}^{n}$ and any $\widehat{T} \in \contfunc^{\infty}(\mathbb{R}^{n})$ satisfying $\widehat{T}(0) = 0$ and $\sup_{x,y \in \mathbb{R}^{n}; x \neq y}\frac{\abs{\widehat{T}(x) - \widehat{T}(y)}}{\abs{x - y}} < \infty$. 
		\end{minipage}
	\end{equation}
	If $n_{1},n_{2} \in \mathbb{N}$, $q_{1} \in (0,p]$, $q_{2} \in [p,\infty]$ and $T = (T_{1},\dots,T_{n_{2}}) \colon \mathbb{R}^{n_{1}} \to \mathbb{R}^{n_{2}}$ satisfy \eqref{GC-cond} and $\bm{u} = (u_{1},\dots,u_{n_{1}}) \in H^{1,p}(X)^{n_{1}}$, then $T(\bm{u}) \in H^{1,p}(X)^{n_{2}}$ and  
	\begin{equation}\label{e:GCp.DF.density}
		\norm{\bigl(\Gamma_{\mu}(T_{l}(\bm{u}))(x)^{1/2}\bigr)_{l = 1}^{n_{2}}}_{\ell^{q_{2}}} 
		\le \norm{\bigl(\Gamma_{\mu}(u_{k})(x)^{1/2}\bigr)_{k = 1}^{n_{1}}}_{\ell^{q_{1}}} \quad \text{for $\mu$-a.e.\ $x \in X$}.
	\end{equation}
	In particular, $\bigl\{\Gamma_{\mu}(u)^{\frac{p}{2}}\,d\mu\bigr\}_{u \in H^{1,p}(X)}$ is a family of $p$-energy measures on $(X,\mathcal{B}(X))$ dominated by $\bigl(\int_{X}\Gamma_{\mu}(\,\cdot\,)^{\frac{p}{2}}\,d\mu,H^{1,p}(X)\bigr)$ and satisfies \hyperlink{GC-em}{\textup{(GC)$^{\textup{EM}}_{p}$}}. 
\end{thm}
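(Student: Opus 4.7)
\medskip

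The plan is to prove \eqref{e:GCp.DF.density} first for $\bm{u} \in \mathscr{D}^{n_1}$ and smooth $T$ via the chain rule for strongly local Dirichlet forms, then remove these two restrictions by separate approximation arguments, following the general spirit of the proofs of Propositions \ref{prop.GCkuwae} and \ref{prop.tensorineq}. The key observation is that, thanks to strong locality, the density $\Gamma_{\mu}(T_l(\bm{u}))(x)$ can be realized as a pointwise bilinear form evaluated at $\nabla T_l(\bm{u}(x))$, so that the contractivity of $T$ translates into a bound on the $\ell^{q_{1}}_{n_1} \to \ell^{q_{2}}_{n_2}$ operator norm of the Jacobian, allowing a direct application of Proposition \ref{prop.tensorineq}.

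First I would treat the case of $T \in \contfunc^{\infty}(\mathbb{R}^{n_1};\mathbb{R}^{n_2})$ with $T(0)=0$ and globally bounded partial derivatives, and $\bm{u} = (u_1,\dots,u_{n_1}) \in \mathscr{D}^{n_1}$. Then $T_l(\bm{u}) \in \mathscr{D}$ by \eqref{assum.corecontraction}, and the classical chain rule for the energy measures of a strongly local symmetric Dirichlet form gives
\[
d\mu_{\langle T_l(\bm{u}), T_l(\bm{u})\rangle} = \sum_{j,k=1}^{n_1}\partial_{j}T_l(\bm{u})\partial_{k}T_l(\bm{u})\,d\mu_{\langle u_j, u_k\rangle}.
\]
Choose a $\mu$-null set off which all densities $\Gamma_{\mu}(u_j,u_k)$ are well defined and the chain rule holds pointwise. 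For such $x \in X$, define the non-negative definite symmetric bilinear form $E_x$ on $\mathbb{R}^{n_1}$ by $E_x(\alpha,\beta) \coloneqq \sum_{j,k}\alpha_j\beta_k\Gamma_{\mu}(u_j,u_k)(x)$, so that $E_x(e_k,e_k) = \Gamma_{\mu}(u_k)(x)$ for the standard basis $\{e_k\}$ and $\Gamma_{\mu}(T_l(\bm{u}))(x) = E_x\bigl(\sum_k \partial_k T_l(\bm{u}(x))e_k\bigr)$. The Lipschitz condition on $T$ implies $\norm{(\partial_k T_l(\bm{u}(x)))_{l,k}}_{\ell^{q_1}_{n_1}\to\ell^{q_2}_{n_2}} \le 1$ for every $x$ (by the same argument as \eqref{e:GC-operatornorm.special}). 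Applying Proposition \ref{prop.tensorineq} to $E_x$ then yields \eqref{e:GCp.DF.density} pointwise.

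Next I would remove the smoothness of $T$ by standard mollification: given $T$ satisfying \eqref{GC-cond}, set $T_\varepsilon(x) \coloneqq \int_{\mathbb{R}^{n_1}}\bigl(T(x-y) - T(-y)\bigr)j_\varepsilon(y)\,dy$ with a standard mollifier $j_\varepsilon$, so that $T_\varepsilon \in \contfunc^{\infty}(\mathbb{R}^{n_1};\mathbb{R}^{n_2})$, $T_\varepsilon(0) = 0$, $T_\varepsilon$ still satisfies \eqref{GC-cond} (cf.\ \eqref{e:chain-GC.mollifier}), and $T_\varepsilon \to T$ uniformly on compacta. The already-established smooth case applies to $T_\varepsilon$; passing $\varepsilon \downarrow 0$ at $\mu$-a.e.\ $x$ (after a diagonal argument identifying a common null set through the choice of a countable subsequence of $\varepsilon$'s) transfers the pointwise inequality to $T$, where the convergence of $\Gamma_\mu(T_{\varepsilon,l}(\bm{u}))(x)$ is obtained by interpreting $T_\varepsilon(\bm{u}) - T(\bm{u})$ via \eqref{assum.closability} combined with the Lipschitz estimate on $T_\varepsilon - T$ on a set where $\bm{u}$ is bounded (and then exhausting $X$ by such sets).

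Finally I would promote the conclusion from $\mathscr{D}^{n_1}$ to $H^{1,p}(X)^{n_1}$. For $\bm{u} = (u_k)_{k=1}^{n_1} \in H^{1,p}(X)^{n_1}$, choose approximating sequences $\{u_k^{(n)}\}_{n\in\mathbb{N}} \subseteq \mathscr{D}$ with $u_k^{(n)} \to u_k$ in the $H^{1,p}$-norm; set $\bm{u}^{(n)} \coloneqq (u_k^{(n)})_k$. Integration of the $p$-th power of the pointwise inequality, together with the reverse Minkowski/Minkowski argument used in \eqref{e:GC2forint}, yields the integrated inequality \eqref{e:GC2.DF.int} (with $\Gamma_\mu$ in place of $\Gamma_\mu^\#$) for $\bm{u}^{(n)}$, so $T_l(\bm{u}^{(n)})$ is bounded in the $H^{1,p}$-seminorm, while $T_l(\bm{u}^{(n)}) \to T_l(\bm{u})$ in $L^p(X,m)$ by \eqref{T.Lp}. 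I expect the main obstacle here to be verifying that $\{T_l(\bm{u}^{(n)})\}_n$ is actually Cauchy in $H^{1,p}(X)$ (not merely bounded): the natural attempt, of applying the pointwise inequality to $T(\bm{u}^{(n)}) - T(\bm{u}^{(m)})$, does not directly work because this difference is not itself of the form $T'(\bm{v})$ with $T'$ satisfying \eqref{GC-cond}. This will be handled by a Banach--Saks / Mazur-type argument in the uniformly convex space $L^p(X,\mu)$ (analogous to the proof of Proposition \ref{prop.extcriteria}) combined with the Fatou-type property of Proposition \ref{prop.em-int-fatou} and the closability \eqref{assum.closability}, which together pin down $T_l(\bm{u})$ as the unique $H^{1,p}$-limit and deliver the pointwise inequality \eqref{e:GCp.DF.density} in the limit.
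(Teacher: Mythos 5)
Your overall architecture is essentially the paper's: establish the pointwise inequality for smooth $T$ and $\bm{u}\in\mathscr{D}^{n_{1}}$ via the chain rule, realize $\Gamma_{\mu}(T_{l}(\bm{u}))(x)$ as a non-negative definite bilinear form evaluated on the rows of the Jacobian, bound the Jacobian's $\ell^{q_{1}}_{n_{1}}\to\ell^{q_{2}}_{n_{2}}$ operator norm by $1$, apply Proposition \ref{prop.tensorineq}, and then remove the two restrictions by mollifying $T$ and by approximating $\bm{u}$ in $H^{1,p}(X)$. (The paper is slightly more careful at the tensor-inequality step, approximating the real entries $\partial_{k}T_{l,n}(\widetilde{\bm{u}}(x))$ by elements of a countable ring so that bilinearity and non-negative definiteness of $\Gamma_{\mu}(\,\cdot\,,\,\cdot\,)(x)$ hold off a single $\mu$-null set; your version needs the same routine countability argument, e.g.\ checking on rational coefficient vectors.)

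The one step that does not work as you describe it is the mollification limit $\varepsilon\downarrow 0$ for fixed $\bm{u}\in\mathscr{D}^{n_{1}}$. You propose to control $\Gamma_{\mu}(T_{\varepsilon,l}(\bm{u})-T_{l}(\bm{u}))$ via ``the Lipschitz estimate on $T_{\varepsilon}-T$ on a set where $\bm{u}$ is bounded,'' but uniform convergence of $T_{\varepsilon}$ to $T$ on compacta does \emph{not} make the Lipschitz constant of $T_{\varepsilon}-T$ small (the derivatives of $T_{\varepsilon}$ need not converge, and $T$ need not be differentiable), so this quantity stays merely bounded and \eqref{assum.closability} cannot be invoked through it. The correct mechanism is the one you already deploy for the final step: $\{T_{\varepsilon,l}(\bm{u})\}_{\varepsilon}$ is bounded in $H^{1,p}(X)$ and converges to $T_{l}(\bm{u})$ in $L^{p}(X,m)$, so by reflexivity a subsequence converges weakly, the weak limit is identified as $T_{l}(\bm{u})$ (giving $T_{l}(\bm{u})\in H^{1,p}(X)$), Mazur's lemma produces convex combinations converging strongly, and the pointwise inequality passes through the convex combinations by the triangle inequality for $\Gamma_{\mu}(\,\cdot\,)^{1/2}$ together with $\mu$-a.e.\ convergence of the densities along a further subsequence. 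This is exactly how the paper argues — it simply runs the two limits ($T_{l,n}\to T_{l}$ and $\bm{u}^{(n)}\to\bm{u}$) in a single combined weak-compactness/Mazur passage rather than in two stages. With that repair your plan goes through; note also that Cauchyness of $\{T_{l}(\bm{u}^{(n)})\}_{n}$ in $H^{1,p}(X)$ is never needed, and that Mazur's lemma should be applied in the direct sum $H^{1,p}(X)^{\oplus n_{2}}$ so that the same convex coefficients serve all $l$ simultaneously.
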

\begin{proof}
	Let us consider the same mollifiers as in \cite[the last paragraph in p.~10]{Kuw23+}, i.e., define $j \colon \mathbb{R}^{n_{1}} \to \mathbb{R}$ by $j(x) \coloneqq \exp{\bigl(-\frac{1}{1 - \abs{x}^{2}}\bigr)}$ for $\abs{x} \le 1$ and $j(x) \coloneqq 0$ for $\abs{x} > 1$, set $j_{m}(x) \coloneqq m^{n_{1}}j(mx)$ for each $m \in \mathbb{N}$.
	We define $T_{l,n}(x) \coloneqq \int_{\mathbb{R}^{n_{1}}}(j_{n}(x - y) - j_{n}(y))T_{l}(y)\,dy = \int_{\mathbb{R}^{n_{1}}}j_{n}(y)(T_{l,n}(x - y) - T_{l,n}(y))\,dy$ so that $T_{l,n} \in \contfunc^{\infty}(\mathbb{R}^{n_{1}})$, $T_{l,n}(0) = 0$ and $\lim_{n \to \infty}T_{l,n}(x) = T_{l}(x)$ for any $x \in \mathbb{R}^{n_{1}}$. 
	Then \eqref{GC-cond} with $T^{(n)} \coloneqq (T_{1,n},\dots,T_{n_{2},n})$ in place of $T$ holds; indeed, for any $x,y \in \mathbb{R}^{n_{1}}$, 
	\begin{align}\label{e:GCcond.mollifier}
		\norm{T^{(n)}(x) - T^{(n)}(y)}_{\ell^{q_{2}}}
		&= \norm{\left(\int_{\mathbb{R}^{n_{1}}}j_{n}(z)(T_{l}(x - z) - T_{l}(y - z))\,dz\right)_{l = 1}^{n_{2}}}_{\ell^{q_{2}}} \nonumber \\
		&\overset{\textup{($\ast$)}}{\le} \int_{\mathbb{R}^{n_{1}}}j_{n}(z)\norm{T(x - z) - T(y - z)}_{\ell^{q_{2}}}\,dz \nonumber \\
		&\overset{\eqref{GC-cond}}{\le} \norm{x - y}_{\ell^{q_{1}}}\int_{\mathbb{R}^{n_{1}}}j_{n}(z)\,dz
		= \norm{x - y}_{\ell^{q_{1}}}, 
	\end{align}
	where we used \eqref{e:intMin} with $q = q_{2}$ in \textup{($\ast$)}. 
	Moreover, 
	\begin{equation}\label{e:GC-operatornorm}
		\norm{\left(\sum_{k = 1}^{n_{1}}\partial_{k}T_{l,n}(x)y_{k}\right)_{l = 1}^{n_{2}}}_{\ell^{q_{2}}}
		= \lim_{\varepsilon \downarrow 0}\varepsilon^{-1}\norm{T^{(n)}(x) - T^{(n)}(x + \varepsilon y)}_{\ell^{q_{2}}}
		\overset{\eqref{e:GCcond.mollifier}}{\le} \norm{y}_{\ell^{q_{1}}}, 
	\end{equation}
	whence $\norm{(\partial_{k}T_{l,n}(x))}_{\ell^{q_{1}}_{n_{1}} \to \ell^{q_{2}}_{n_{2}}} \le 1$ for any $x \in \mathbb{R}^{n_{1}}$. 
	
	We first prove \eqref{e:GCp.DF.density} with $T^{(n)}$ in place of $T$ under the assumption that $\bm{u} = (u_{1},\dots,u_{n_{1}}) \in \mathscr{D}^{n_{1}}$. 
	Set $\widetilde{\bm{u}} = (\widetilde{u}_{1},\dots,\widetilde{u}_{n_{1}})$ where $\widetilde{u}_{k}$ is a $\mathcal{E}$-quasicontinuous $m$-version of $u_{k}$ (see \cite[p.~69 and Theorem 2.1.3]{FOT}).
	We have $T_{l,n}(\bm{u}) \in \mathscr{D}$ by \eqref{assum.corecontraction} and  
	\begin{equation}\label{e:gammamu.chain}
		\Gamma_{\mu}(T_{l,n}(\bm{u}))(x) = \sum_{i,j = 1}^{n_{1}}\partial_{i}T_{l,n}(\widetilde{\bm{u}}(x))\partial_{j}T_{l,n}(\widetilde{\bm{u}}(x))\Gamma_{\mu}(u_{i},u_{j})(x) \quad \text{for $\mu$-a.e.\ $x \in X$}
	\end{equation} 
	by the chain rule in \cite[(7) in p.~2]{Kuw23+}. 
	Let $\{ f_{\lambda} \}_{\lambda \in \Lambda} \subseteq \mathcal{F}$ be an algebraic basis of $\mathcal{F}$ over $\mathbb{R}$. 
	Then there exist $n \in \mathbb{N}$, $\{ \alpha_{k,j} \}_{j = 1}^{n} \subseteq \mathbb{R}$, $k \in \{ 1,\dots,n_{1} \}$, and $\{ g_{j} \}_{j = 1}^{n} \subseteq \{ f_{\lambda} \}_{\lambda \in \Lambda}$ such that $u_{k} = \sum_{j = 1}^{n}\alpha_{k,j}g_{j}$ for each $k \in \{ 1,\dots,n_{1} \}$. 
	Let $R$ be the finitely generated algebra over $\mathbb{Q}$ generated by $\{ \alpha_{k,j} \}_{1 \le j \le n, 1 \le k \le n_{1}} \cup \{ 1 \}$ so that $\mathbb{Q} \subseteq R$ and $R$ is countable. 
	We set 
	\[
	\mathcal{U} \coloneqq \Biggl\{ \sum_{j = 1}^{n}a_{j}g_{j} \Biggm| a_{j} \in R \text{ for each $j \in \{ 1,\dots,n \}$} \Biggr\}
	\] 
	so that $\{ u_{k} \}_{k = 1}^{n_{1}} \subseteq \mathcal{U}$ and $\mathcal{U}$ is countable. 
	Since $R$ is dense in $\mathbb{R}$, for any $x \in X$, $N \in \mathbb{N}$, $k \in \{ 1,\dots,n_{1} \}$ and $l \in \{ 1,\dots,n_{2} \}$, there exists $A_{lk,n}^{x,N} \in R$ such that $\abs{\partial_{k}T_{l,n}(\widetilde{\bm{u}}(x)) - A_{lk,n}^{x,N}} \le N^{-1}$. 
	Note that $\Gamma_{\mu}(\,\cdot\,,\,\cdot\,)(x) \colon \mathcal{U} \times \mathcal{U} \to \mathbb{R}$ is a non-negative definite symmetric bilinear form for $\mu$-a.e.\ $x \in X$ since $\mathcal{U}$ is countable. 
	By Proposition \ref{prop.tensorineq}, for $\mu$-a.e.\ $x \in X$, 
	\begin{align*}
		&\norm{\left(\left(\sum_{i,j = 1}^{n_{1}}A_{li,n}^{x,N}A_{lj,n}^{x,N}\Gamma_{\mu}(u_{i},u_{j})(x)\right)^{1/2}\right)_{l =1}^{n_{2}}}_{\ell^{q_{2}}} \nonumber \\
		&= \norm{\left(\Gamma_{\mu}\left(\sum_{k = 1}^{n_{1}}A_{lk,n}^{x,N}u_{k}\right)(x)^{1/2}\right)_{l =1}^{n_{2}}}_{\ell^{q_{2}}} \nonumber \\
		&\le \biggl(1 + \norm{(\partial_{k}T_{l,n}(\widetilde{\bm{u}}(x)))_{l,k} - (A_{lk,n}^{x,N})_{l,k}}_{\ell^{q_{1}}_{n_{1}} \to \ell^{q_{2}}_{n_{2}}}\biggr)\norm{\bigl(\Gamma_{\mu}(u_{k})(x)^{1/2}\bigr)_{k = 1}^{n_{1}}}_{\ell^{q_{1}}}. 
	\end{align*}
	Letting $N \to \infty$ in the estimate above and recalling \eqref{e:gammamu.chain}, we obtain 
	\begin{equation}\label{e:GCp.DF.density.pre}
		\norm{\bigl(\Gamma_{\mu}(T_{l,n}(\bm{u}))(x)^{1/2}\bigr)_{l = 1}^{n_{2}}}_{\ell^{q_{2}}} 
		\le \norm{\bigl(\Gamma_{\mu}(u_{k})(x)^{1/2}\bigr)_{k = 1}^{n_{1}}}_{\ell^{q_{1}}} \quad \text{for $\mu$-a.e.\ $x \in X$,}
	\end{equation}
	under the assumption that $\bm{u} \in \mathscr{D}^{n_{1}}$. 
	
	Next let $\bm{u} = (u_{1},\dots,u_{n_{1}}) \in H^{1,p}(X)^{n_{1}}$ and fix $\{ \bm{u}^{(n)} = (u_{1,n},\dots,u_{n_{1},n}) \}_{n \in \mathbb{N}} \subseteq \mathscr{D}^{n_{1}}$ so that $\lim_{n \to \infty}\max_{k \in \{ 1,\dots,n_{1} \}}\norm{u_{k} - u_{k,n}}_{H^{1,p}} = 0$. 
	Then \eqref{e:GCp.DF.density.pre} together with the same argument as in \eqref{e:GC2forint} implies that 
	\[
	\norm{\left(\left(\int_{X}\Gamma_{\mu}(T_{l,n}(\bm{u}^{(n)}))^{\frac{p}{2}}\,d\mu\right)^{1/p}\right)_{l = 1}^{n_{2}}}_{\ell^{q_{2}}} 
		\le \norm{\left(\left(\int_{X}\Gamma_{\mu}(u_{k,n})^{\frac{p}{2}}\,d\mu\right)^{1/p}\right)_{k = 1}^{n_{1}}}_{\ell^{q_{1}}}. 
	\]
	In particular, $\{ T_{l,n}(\bm{u}^{(n)}) \}_{n \in \mathbb{N}}$ is bounded in $H^{1,p}(X)$. 
	Noting that $H^{1,p}(X)$ is reflexive (\cite[Theorem 1.7]{Kuw23+}) and that $\lim_{n \to \infty}\int_{X}\Gamma_{\mu}(u_{k} - u_{k,n})^{\frac{p}{2}}\,d\mu = 0$, we find $\{ n_{j} \}_{j \in \mathbb{N}} \subseteq \mathbb{N}$ with $\inf_{j \in \mathbb{N}}(n_{j + 1} - n_{j}) \ge 1$ such that $T^{(n_{j})}(\bm{u}^{(n_{j})})$ converges weakly in $H^{1,p}(X)^{\oplus n_{2}}$ to some $v = (v_{1},\dots,v_{n_{2}}) \in H^{1,p}(X)^{\oplus n_{2}}$ and $\max_{k \in \{ 1,\dots,n_{1} \}}\Gamma_{\mu}(u_{k} - u_{k,n_{j}})(x) \to 0$ for $\mu$-a.e.\ $x \in X$ as $j \to \infty$.\footnote{The direct sum $H^{1,p}(X)^{\oplus n_{2}}$ is equipped with the norm $\norm{f}_{H^{1,p}(X)^{\oplus n_{2}}} \coloneqq \sum_{l = 1}^{n_{2}}\norm{f_{j}}_{H^{1,p}(X)}$ for any $f = (f_{1},\dots,f_{n_{2}}) \in H^{1,p}(X)^{\oplus n_{2}}$.}  
	Since $\lim_{n \to \infty}\norm{T_{l,n}(\bm{u}^{(n)}) - T_{l}(\bm{u})}_{L^{p}(X,m)} = 0$ by \eqref{e:GCcond.mollifier} and the dominated convergence theorem, we have $v_{l} = T_{l}(\bm{u})$. 
	By Mazur's lemma (Lemma \ref{lem.mazur}), there exist $\{ N(i) \}_{i \in \mathbb{N}} \subseteq \mathbb{N}$ and $\{ \alpha_{j} \} \subseteq [0,1]$ with $\inf_{i \in \mathbb{N}}(N(i) - i) \ge 1$ and $\sum_{j = i}^{N(i)}\alpha_{i,j} = 1$ such that $\widehat{v}_{l,i} \coloneqq \sum_{j = i}^{N(i)}\alpha_{i,j}T_{l,n_{j}}(\bm{u}^{(n_{j})})$ converges strongly in $H^{1,p}(X)$ to $T_{l}(\bm{u})$ for any $l \in \{ 1,\dots,n_{2} \}$ as $i \to \infty$. 
	Then we easily see that for $\mu$-a.e.\ $x \in X$ and any $i \in \mathbb{N}$, 
	\begin{align}\label{e:gammamu.GCp.mazur}
		\norm{\bigl(\Gamma_{\mu}(\widehat{v}_{l,i})(x)^{1/2}\bigr)_{l = 1}^{n_{2}}}_{\ell^{q_{2}}} 
		&\le \norm{\left(\sum_{j = i}^{N(i)}\alpha_{i,j}\Gamma_{\mu}(T_{l,n_{j}}(\bm{u}^{(n_{j})}))(x)^{1/2}\right)_{l = 1}^{n_{2}}}_{\ell^{q_{2}}} \nonumber \\
		&\le \sum_{j = i}^{N(i)}\alpha_{i,j}\norm{\bigl(\Gamma_{\mu}(T_{l,n_{j}}(\bm{u}^{(n_{j})}))(x)^{1/2}\bigr)_{l = 1}^{n_{2}}}_{\ell^{q_{2}}} \nonumber \\
		&\overset{\eqref{e:GCp.DF.density.pre}}{\le} \sum_{j = i}^{N(i)}\alpha_{i,j}\norm{\bigl(\Gamma_{\mu}(u_{k,n_{j}})(x)^{1/2}\bigr)_{k = 1}^{n_{1}}}_{\ell^{q_{1}}}, 
	\end{align}
	where we used the triangle inequality for the norm of $\ell^{q_{2}}$ in the second inequality. 
	Note that for $\mu$-a.e.\ $x \in X$, 
	\[
	\lim_{i \to \infty}\sum_{j = i}^{N(i)}\alpha_{i,j}\norm{\bigl(\Gamma_{\mu}(u_{k,n_{j}})(x)^{1/2}\bigr)_{k = 1}^{n_{1}}}_{\ell^{q_{1}}}
	= \norm{\bigl(\Gamma_{\mu}(u_{k})(x)^{1/2}\bigr)_{k = 1}^{n_{1}}}_{\ell^{q_{1}}}. 
	\]
	Since $\lim_{i \to \infty}\int_{X}\Gamma_{\mu}(\widehat{v}_{l,i} - T_{l}(\bm{u}))^{\frac{p}{2}}\,d\mu = 0$, there exists $\{ m_{i} \}_{i \in \mathbb{N}} \subseteq \mathbb{N}$ with $\inf_{i \in \mathbb{N}}(m_{i + 1} - m_{i}) \ge 1$ such that $\lim_{i \to \infty}\Gamma_{\mu}(\widehat{v}_{l,m_{i}} - T_{l}(\bm{u}))(x) = 0$ for $\mu$-a.e.\ $x \in X$ and any $l \in \{ 1,\dots,n_{2} \}$. 
	In view of the triangle inequality for $\Gamma_{\mu}(\,\cdot\,)^{\frac{1}{2}}$ (see \cite[(3) in p.~2]{Kuw23+}), we have $\lim_{i \to \infty}\max_{l \in \{ 1,\dots,n_{2} \}}\abs{\Gamma_{\mu}(\widehat{v}_{l,m_{i}})(x) - \Gamma_{\mu}(T_{l}(\bm{u}))(x)} = 0$ for $\mu$-a.e.\ $x \in X$.
	Hence we obtain \eqref{e:GCp.DF.density} by \eqref{e:gammamu.GCp.mazur}.
	Once we get \eqref{e:GCp.DF.density}, we easily see by the same argument as in \eqref{e:GC2forint} that $\bigl\{\Gamma_{\mu}(u)^{\frac{p}{2}}\,d\mu\bigr\}_{u \in H^{1,p}(X)}$, which is obviously a family of $p$-energy measures on $(X,\mathcal{B}(X))$ dominated by $\bigl(\int_{X}\Gamma_{\mu}(\,\cdot\,)^{\frac{p}{2}}\,d\mu,H^{1,p}(X)\bigr)$, satisfies \hyperlink{GC-em}{\textup{(GC)$^{\textup{EM}}_{p}$}}. 
\end{proof}

\section{Some results for \texorpdfstring{$p$}{p}-resistance forms on p.-c.f.\ self-similar structures}\label{sec:pcfcollect}

\subsection{Existence of \texorpdfstring{$p$}{p}-resistance forms with non-arithmetic weights}\label{sec:gap}
In this subsection, we discuss a gap between the frameworks in Subsection \ref{sec.Kigss} and in Subsection \ref{sec.pcf} for p.-c.f.\ self-similar structures. 
As in Subsection \ref{sec.pcf}, we fix $p \in (1,\infty)$ and a p.-c.f.\ self-similar structure $\mathcal{L} = (K,S,\{ F_{i} \}_{i \in S})$ with $\#S \ge 2$ and $K$ connected. 

The following proposition about the ``eigenvalue'' $\lambda(\bm{\rweight}_{p})$ in Theorem \ref{thm.eigenform} is a key result. 
\begin{prop}\label{prop.eigenvalue}
	Let $\bm{\rweight}_{p} = (\rweight_{p,i})_{i \in S} \in (0,\infty)^{S}$. 
	Assume that $\bm{\rweight}_{p}$ satisfies \eqref{condA} \textup{(recall Remark \ref{rmk.condA}-\ref{it:A-weight})}. 
	\begin{enumerate}[label=\textup{(\alph*)},align=left,leftmargin=*,topsep=2pt,parsep=0pt,itemsep=2pt]
		\item\label{it:eigenvalue.homogeneous} For any $a \in (0,\infty)$, $a\bm{\rweight}_{p} \coloneqq (a\rweight_{p,i})_{i \in S}$ satisfies  \eqref{condA} and $\lambda(a\bm{\rweight}_{p}) = a\lambda(\bm{\rweight}_{p})$. 
		\item\label{it:eigenvalue.mono} Let $\widetilde{\bm{\rweight}}_{p} = (\widetilde{\rweight}_{p,i})_{i \in S} \in (0,\infty)^{S}$. If $\widetilde{\bm{\rweight}}_{p}$ satisfies \eqref{condA} and $\rweight_{p,i} \le \widetilde{\rweight}_{p,i}$ for any $i \in S$, then $\lambda(\bm{\rweight}_{p}) \le \lambda(\widetilde{\bm{\rweight}}_{p})$.
	\end{enumerate}
\end{prop}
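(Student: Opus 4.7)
The plan is to exploit the explicit formula
\[
\mathcal{R}_{\bm{\rweight}_{p}}^{n}(E)(u) = \inf\Biggl\{ \sum_{w \in W_{n}}\rweight_{p,w}\,E(v \circ F_{w}) \Biggm| v \in \mathbb{R}^{V_{n}},\, v|_{V_{0}} = u \Biggr\}
\]
recalled just after \eqref{defn.renorm}, which makes both claims essentially tautological once one unwinds the definitions.

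For \ref{it:eigenvalue.homogeneous}, the key observation is that replacing $\bm{\rweight}_{p}$ by $a\bm{\rweight}_{p}$ multiplies each $\rweight_{p,w}$ by $a^{|w|}$, so by the homogeneity of the infimum one gets $\mathcal{R}_{a\bm{\rweight}_{p}}^{n}(E) = a^{n}\mathcal{R}_{\bm{\rweight}_{p}}^{n}(E)$ for every $n \in \mathbb{N} \cup \{0\}$ and every $p$-resistance form $E$ on $V_{0}$. Since multiplying a $p$-resistance form by a positive constant $a^{n}$ multiplies its $p$-resistance metric by $a^{-n}$ and hence leaves the $\min$/$\max$ ratio appearing in \eqref{condA} unchanged, $a\bm{\rweight}_{p}$ satisfies \eqref{condA} with the same constant $c$. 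I would then take any $E \in \mathcal{M}_{p}(V_{0})$ and apply \eqref{e:charact.eigenvalue} to both $\bm{\rweight}_{p}$ and $a\bm{\rweight}_{p}$: the relation $\mathcal{R}_{a\bm{\rweight}_{p}}^{n}(E) = a^{n}\mathcal{R}_{\bm{\rweight}_{p}}^{n}(E)$ forces the two eigenvalues to satisfy $C^{-1}(a\lambda(\bm{\rweight}_{p}))^{n} \leq C'\lambda(a\bm{\rweight}_{p})^{n}$ and the symmetric inequality for all $n$, which by the uniqueness of $\lambda$ in Theorem \ref{thm.eigenform}-\ref{it:CGQ.eigenvalue} yields $\lambda(a\bm{\rweight}_{p}) = a\lambda(\bm{\rweight}_{p})$.

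For \ref{it:eigenvalue.mono}, the same infimum formula shows that if $0 < \rweight_{p,i} \leq \widetilde{\rweight}_{p,i}$ for every $i \in S$, then $\rweight_{p,w} \leq \widetilde{\rweight}_{p,w}$ for every $w \in W_{n}$, and since $E \geq 0$ the monotonicity of the infimum gives $\mathcal{R}_{\bm{\rweight}_{p}}^{n}(E)(u) \leq \mathcal{R}_{\widetilde{\bm{\rweight}}_{p}}^{n}(E)(u)$ for every $u \in \mathbb{R}^{V_{0}}$. Fixing any $E \in \mathcal{M}_{p}(V_{0})$ and any $u \in \mathbb{R}^{V_{0}}$ with $E(u) > 0$, and applying \eqref{e:charact.eigenvalue} to each weight, one obtains
\[
C_{1}^{-1}\lambda(\bm{\rweight}_{p})^{n}E(u) \leq \mathcal{R}_{\bm{\rweight}_{p}}^{n}(E)(u) \leq \mathcal{R}_{\widetilde{\bm{\rweight}}_{p}}^{n}(E)(u) \leq C_{2}\lambda(\widetilde{\bm{\rweight}}_{p})^{n}E(u)
\]
for all $n \in \mathbb{N} \cup \{0\}$, whence $(\lambda(\bm{\rweight}_{p})/\lambda(\widetilde{\bm{\rweight}}_{p}))^{n} \leq C_{1}C_{2}$ for all $n$, which forces $\lambda(\bm{\rweight}_{p}) \leq \lambda(\widetilde{\bm{\rweight}}_{p})$.

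No step looks like a genuine obstacle; the only point that requires even a moment's thought is the scale-invariance of \eqref{condA} under $\bm{\rweight}_{p} \mapsto a\bm{\rweight}_{p}$, which (as noted above) follows immediately from the scaling behavior of the $p$-resistance metric of a $p$-resistance form under multiplication of the form by a positive constant.
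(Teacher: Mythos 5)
Your proposal is correct and follows essentially the same route as the paper: the scaling identity $\mathcal{R}_{a\bm{\rweight}_{p}}^{n}(E) = a^{n}\mathcal{R}_{\bm{\rweight}_{p}}^{n}(E)$ together with the uniqueness in Theorem \ref{thm.eigenform}-\ref{it:CGQ.eigenvalue} for part \ref{it:eigenvalue.homogeneous}, and the monotonicity of the renormalization operator sandwiched between the two-sided eigenvalue bounds \eqref{e:charact.eigenvalue} for part \ref{it:eigenvalue.mono}. (You even state the exponent $a^{n}$ correctly, which the paper's displayed identity miswrites as $a$.)
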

\begin{proof}
	Throughout this proof, we fix a $p$-resistance form $E_0$ on $V_0$. 
	
	\ref{it:eigenvalue.homogeneous}: 
	Since $\mathcal{R}_{a\bm{\rweight}_{p}}^{n}(E_{0}) = a\mathcal{R}_{\bm{\rweight}_{p}}^{n}(E_{0})$ for any $n \in \mathbb{N} \cup \{ 0 \}$, we easily see that $a\bm{\rweight}_{p}$ satisfies \eqref{condA}. 
	Recall from Theorem \ref{thm.eigenform}-\ref{it:CGQ.eigenvalue} that $\lambda(a\bm{\rweight}_{p}) \in (0,\infty)$ is the unique number satisfying the following: there exists $C \in [1,\infty)$ such that 
	\begin{equation}\label{e:unique.eigenvalue}
		C^{-1}\lambda(a\bm{\rweight}_{p})^{n}E_{0}(u) \le \mathcal{R}_{a\bm{\rweight}_{p}}^{n}(E_{0})(u) \le C\lambda(a\bm{\rweight}_{p})^{n}E_{0}(u) \quad \text{for any $n \in \mathbb{N} \cup \{ 0 \}$, $u \in \mathbb{R}^{V_0}$.}
	\end{equation}
	Therefore, $\lambda(a\bm{\rweight}_{p}) = a\lambda(\bm{\rweight}_{p})$. 
	
	\ref{it:eigenvalue.mono}:
	Since $\mathcal{R}_{\bm{\rweight}_{p}}^{n}(E_{0})(u) \le \mathcal{R}_{\widetilde{\bm{\rweight}}_{p}}^{n}(E_{0})(u)$ for any $u \in \mathbb{R}^{V_{0}}$, by \eqref{e:unique.eigenvalue}, there exists $C \in [1,\infty)$ such that for any $n \in \mathbb{N} \cup \{ 0 \}$ and any $u \in \mathbb{R}^{V_0}$, 
	\[
	C^{-1}\lambda(\bm{\rweight}_{p})^{n}E_{0}(u) \le \mathcal{R}_{\bm{\rweight}_{p}}^{n}(E_{0})(u) \le \mathcal{R}_{\widetilde{\bm{\rweight}}_{p}}^{n}(E_{0})(u) \le C\lambda(\widetilde{\bm{\rweight}}_{p})^{n}E_{0}(u). 
	\]
	Since $n \in \mathbb{N} \cup \{ 0 \}$ is arbitrary and $E_{0}(u) > 0$ for $u \in \mathbb{R}^{V_{0}} \setminus \mathbb{R}\indicator{V_{0}}$, we conclude that $\lambda(\bm{\rweight}_{p}) \le \lambda(\widetilde{\bm{\rweight}}_{p})$. 
\end{proof}

Now we can show the existence of $p$-resistance forms with non-arithmetic weights on a class of strongly symmetric p.-c.f.\ self-similar sets as follows. (Recall the notation in Subsection \ref{sec.ANF}.) 
\begin{prop}\label{prop.irrational}
	Let $\mathcal{L}$ be a strongly symmetric p.-c.f.\ self-similar set. 
	Assume that there exists $i \in S$ such that 
	\begin{equation}\label{e:ANF.orbit}
		\bigcup_{g \in \mathcal{G}}\tau_{g}(i) \neq S. 
	\end{equation}
	Then there exists $\bm{\rweight}_{p} = (\rweight_{p,i})_{i \in S} \in (0,\infty)^{S}$ such that $\lambda(\bm{\rweight}_{p}) = 1$, $\rweight_{p,i} > 1$ for any $i \in S$, $\bm{\rweight}_{p}$ satisfies \eqref{rscale.sym} and 
	\begin{equation}\label{e:irrational}
		\frac{\log{\rweight_{p,i}}}{\log{\rweight_{p,j}}} \not\in \mathbb{Q} \quad \text{for some $i,j \in S$.}
	\end{equation}
	In particular, there exists a self-similar $p$-resistance form $(\mathcal{E}_{p},\mathcal{F}_{p})$ on $\mathcal{L}$ with weight $\bm{\rweight}_{p}$.  
\end{prop}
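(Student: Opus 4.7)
\smallskip

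\noindent\textbf{Proof plan.} Let $\mathcal{G} = \mathcal{G}_{\mathrm{sym}}(\mathcal{L})$ and let $O_{1}, \dots, O_{N}$ denote the distinct $\mathcal{G}$-orbits in $S$; the hypothesis \eqref{e:ANF.orbit} guarantees $N \geq 2$. A weight satisfying \eqref{rscale.sym} is parametrized by $(a_{1}, \dots, a_{N}) \in (0,\infty)^{N}$ with $\rweight_{p,i} = a_{k}$ for $i \in O_{k}$. For every such symmetric weight, Theorem \ref{thm.eigenform-ANF} ensures \eqref{condA} holds, so $\lambda(\bm{\rweight}_{p})$ is well-defined. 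First, set $c_{0} \coloneqq \lambda((1)_{i \in S})^{-1}$; applying Theorem \ref{thm.eigenform-ANF} to the constant weight $(1)_{i \in S}$ gives $c_{0} > 1$, and by Proposition \hyperref[it:eigenvalue.homogeneous]{\ref{prop.eigenvalue}}-\ref{it:eigenvalue.homogeneous} we get $\lambda((c_{0})_{i \in S}) = 1$. The key preliminary observation is the continuity of $\lambda$ on the symmetric weights: by Proposition \ref{prop.eigenvalue}-\ref{it:eigenvalue.homogeneous},\ref{it:eigenvalue.mono}, if $(1-\varepsilon)\rweight_{p,i} \leq \widetilde{\rweight}_{p,i} \leq (1+\varepsilon)\rweight_{p,i}$ for all $i$, then $(1-\varepsilon)\lambda(\bm{\rweight}_{p}) \leq \lambda(\widetilde{\bm{\rweight}}_{p}) \leq (1+\varepsilon)\lambda(\bm{\rweight}_{p})$, so $\lambda$ is locally Lipschitz.

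The next step is to perturb along a single orbit and rescale. Define $\gamma(\varepsilon) \in (0,\infty)^{S}$ by $\rweight_{p,i}(\varepsilon) \coloneqq c_{0}(1+\varepsilon)$ for $i \in O_{1}$ and $\rweight_{p,i}(\varepsilon) \coloneqq c_{0}$ otherwise, where $\varepsilon > -1$. This is symmetric (constant on each orbit), so $\mu(\varepsilon) \coloneqq \lambda(\gamma(\varepsilon))$ is well-defined and continuous in $\varepsilon$ with $\mu(0) = 1$. Setting $\widetilde{\gamma}(\varepsilon) \coloneqq \mu(\varepsilon)^{-1}\gamma(\varepsilon)$, homogeneity gives $\lambda(\widetilde{\gamma}(\varepsilon)) = 1$, and since $\widetilde{\gamma}(\varepsilon) \to (c_{0})_{i \in S}$ as $\varepsilon \to 0$ with $c_{0} > 1$, there exists $\delta > 0$ such that all components of $\widetilde{\gamma}(\varepsilon)$ exceed $1$ and $\mu(\varepsilon) < c_{0}$ for every $\varepsilon \in (0, \delta)$.

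Next, fix $i \in O_{1}$ and $j \in O_{2}$ (using $N \geq 2$) and define
\[
    r(\varepsilon) \coloneqq \frac{\log \widetilde{\rweight}_{p,i}(\varepsilon)}{\log \widetilde{\rweight}_{p,j}(\varepsilon)} = 1 + \frac{\log(1+\varepsilon)}{\log c_{0} - \log \mu(\varepsilon)}, \quad \varepsilon \in (0, \delta).
\]
This is continuous on $(0,\delta)$ with $\lim_{\varepsilon \downarrow 0} r(\varepsilon) = 1$ and $r(\varepsilon) > 1$ on $(0,\delta)$ (because $\mu(\varepsilon) \geq \mu(0) = 1 < c_{0}$ by Proposition \hyperref[it:eigenvalue.mono]{\ref{prop.eigenvalue}}-\ref{it:eigenvalue.mono}). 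I claim $r$ is not constant on $(0,\delta)$: otherwise continuity at $0$ forces $r \equiv 1$, hence $\log(1+\varepsilon) = 0$ for $\varepsilon \in (0,\delta)$, a contradiction. Therefore by the intermediate value theorem, $r((0,\delta))$ is a non-degenerate interval and so contains irrational numbers; pick $\varepsilon_{0} \in (0,\delta)$ with $r(\varepsilon_{0}) \notin \mathbb{Q}$ and set $\bm{\rweight}_{p} \coloneqq \widetilde{\gamma}(\varepsilon_{0})$. The weight $\bm{\rweight}_{p}$ then satisfies \eqref{rscale.sym}, $\lambda(\bm{\rweight}_{p}) = 1$, $\min_{i} \rweight_{p,i} > 1$, and \eqref{e:irrational}. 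Finally, since $\lambda(\bm{\rweight}_{p}) = 1$ yields an eigenform via Theorem \ref{thm.eigenform}-\ref{it:CGQ.eigenform}, conditions \textup{(\hyperref[it:condA']{\textbf{A}'})} and \textup{(\hyperref[it:condR]{\textbf{R}})} both hold, and Theorem \ref{thm.CGQ} (or Theorem \ref{thm.ANFsymform}) produces the desired self-similar $p$-resistance form.

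The delicate step will be the non-constancy of $r$ in the third paragraph; this is the only place where one needs to use that $c_{0} > 1$ in an essential way, and it is resolved by propagating the continuity of $\mu$ down to $\varepsilon = 0$. All other steps are routine applications of homogeneity, monotonicity, and the continuity of $\lambda$ established in the first paragraph.
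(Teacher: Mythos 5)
Your proof is correct and follows essentially the same route as the paper's: perturb a constant symmetric weight along a single $\mathcal{G}$-orbit, use the homogeneity and monotonicity of $\lambda$ from Proposition \ref{prop.eigenvalue} to get continuity, and observe that the resulting ratio of logarithms of the normalized weights is a non-constant continuous function of the perturbation parameter tending to $1$, hence takes an irrational value. If anything, your version is slightly more careful than the paper's in explicitly verifying $\min_{i}\rweight_{p,i}>1$ for small $\varepsilon$ via $c_{0}>1$, a point the paper leaves implicit.
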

\begin{rmk}
	\begin{enumerate}[label=\textup{(\arabic*)},align=left,leftmargin=*,topsep=2pt,parsep=0pt,itemsep=2pt]
		\item Any weight $\bm{\rweight}_{p} = (\rweight_{p,i})_{i \in S}$ of a $p$-energy form constructed in Theorem \ref{thm.KSgood-ss} must satisfy $\rweight_{p,i} = \sigma_{p}^{n_{i}}$ for some $n_{i} \in \mathbb{N}$, where $\sigma_{p} \in (0,\infty)$ is the $p$-scaling factor. 
			Hence constructions of self-similar $p$-energy forms with weight $\rweight_{p}$ which satisfies \eqref{e:irrational} are not covered by Theorem \ref{thm.KSgood-ss} (or by \cite[Theorem 4.6]{Kig23}). 
		\item The condition \eqref{e:ANF.orbit} is not very restrictive. See Figure \ref{fig.nonallorbit} for examples of self-similar sets satisfying this condition. In Figure \ref{fig.allorbit}, we present examples of self-similar sets that do not satisfy  \eqref{e:ANF.orbit}. 
	\end{enumerate}
\end{rmk}
\begin{proof}[Proof of Proposition \ref{prop.irrational}]
	Fix $i \in S$ and set $S_{1} \coloneqq \bigcup_{g \in \mathcal{G}}\tau_{g}(i)$ and $S_{2} \coloneqq S \setminus S_{1}$, which is non-empty by \eqref{e:ANF.orbit}. 
	For $t \in \mathbb{R}$, we define $\bm{\rweight}_{p}(t) \coloneqq  (\rweight_{p,s}(t))_{s \in S}$ by 
	\[
	\rweight_{p,s}(t) \coloneqq 1 + t\indicator{S_{2}}(s) \quad \text{for $s \in S$.} 
	\]
	It is easy to see that $\rweight_{p}(t)$ satisfies \eqref{rscale.sym}. 
	Set $\lambda_{p}(t) \coloneqq \lambda(\bm{\rweight}_{p}(t))$ for simplicity. 
	By Proposition \ref{prop.eigenvalue}, for any $t \in \mathbb{R}$, any $\delta \in (0,\infty)$ and any $s \in S$, 
	\begin{align*}
		(1 - t - \delta)\lambda_{p}(0) 
		\le \lambda_{p}(t - \delta) 
		\le \lambda_{p}(t)  
		\le \lambda_{p}(t + \delta)  
		\le (1 + t + \delta)\lambda_{p}(0) ,  
	\end{align*}
	whence $\lambda_{p}(t)$ is continuous in $t$. \
	
	Fix $j \in S_{2}$ and define 
	\[
	r_{i,j}(t) \coloneqq \frac{\log{(\rweight_{p,i}(t)/\lambda_{p}(t))}}{\log{(\rweight_{p,j}(t)/\lambda_{p}(t)})} = \frac{-\log{(\lambda_{p}(t))}}{\log{(1 + t)} - \log{(\lambda_{p}(t))}}, \quad t \in \mathbb{R}. 
	\]
	Since $r_{i,j}(0) = 1$ and $r_{i,j}(t)$ is continuous in $t$, there exists $t_{\ast} \in \mathbb{R} \setminus \{ 0 \}$ such that $r_{i,j}(t_{\ast}) \not\in \mathbb{Q}$. 
	The existence of a self-similar $p$-resistance form on $\mathcal{L}$ with weight $\bm{\rweight}_{p}$ follows from Theorems \ref{thm.eigenform-ANF} and \ref{thm.ANFsymform}, so we complete the proof. 
\end{proof}
\begin{figure}[tb]\centering
	\includegraphics[width=100pt]{fig_SG2_reduced.pdf}\hspace*{10pt}
	\includegraphics[width=100pt]{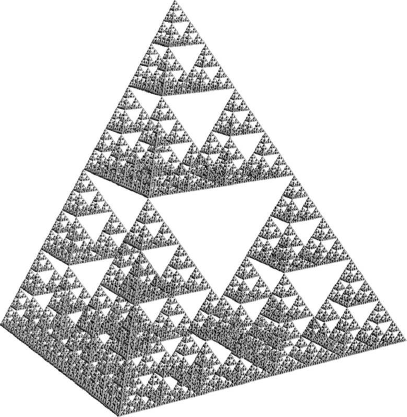}\hspace*{10pt}
	\includegraphics[width=100pt]{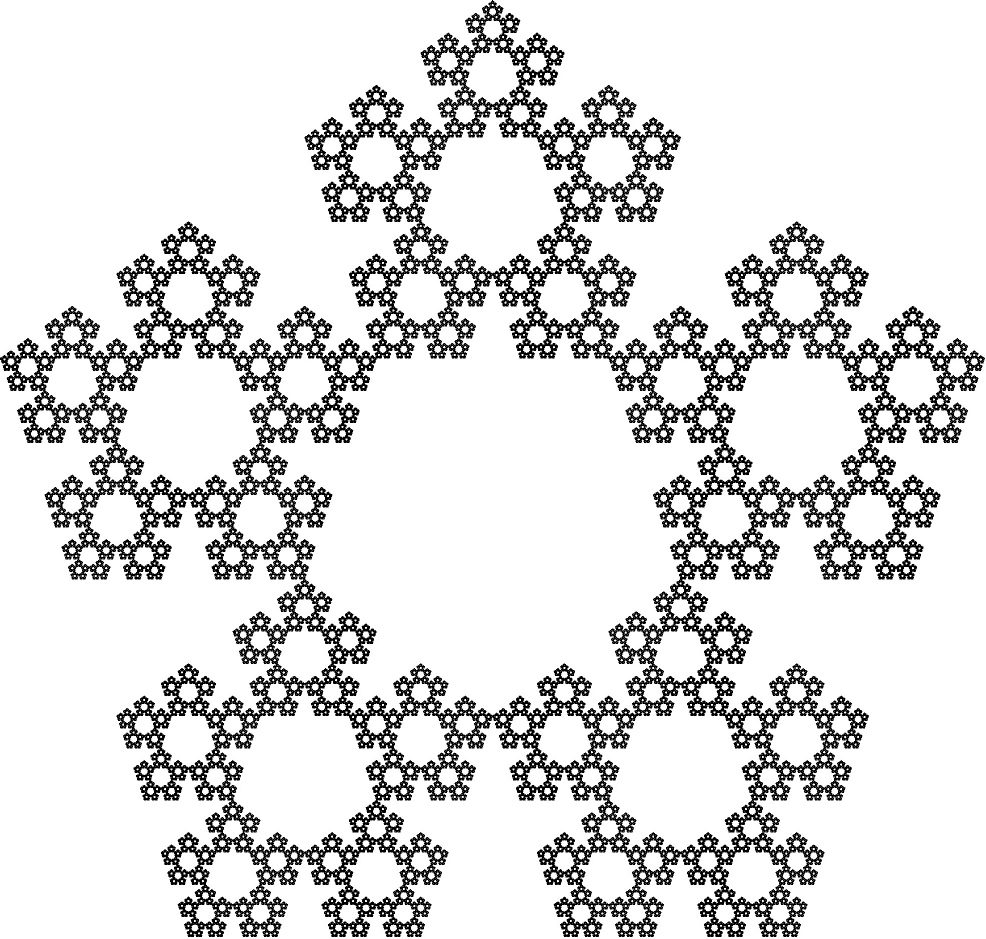}\hspace*{10pt}
    \includegraphics[width=100pt]{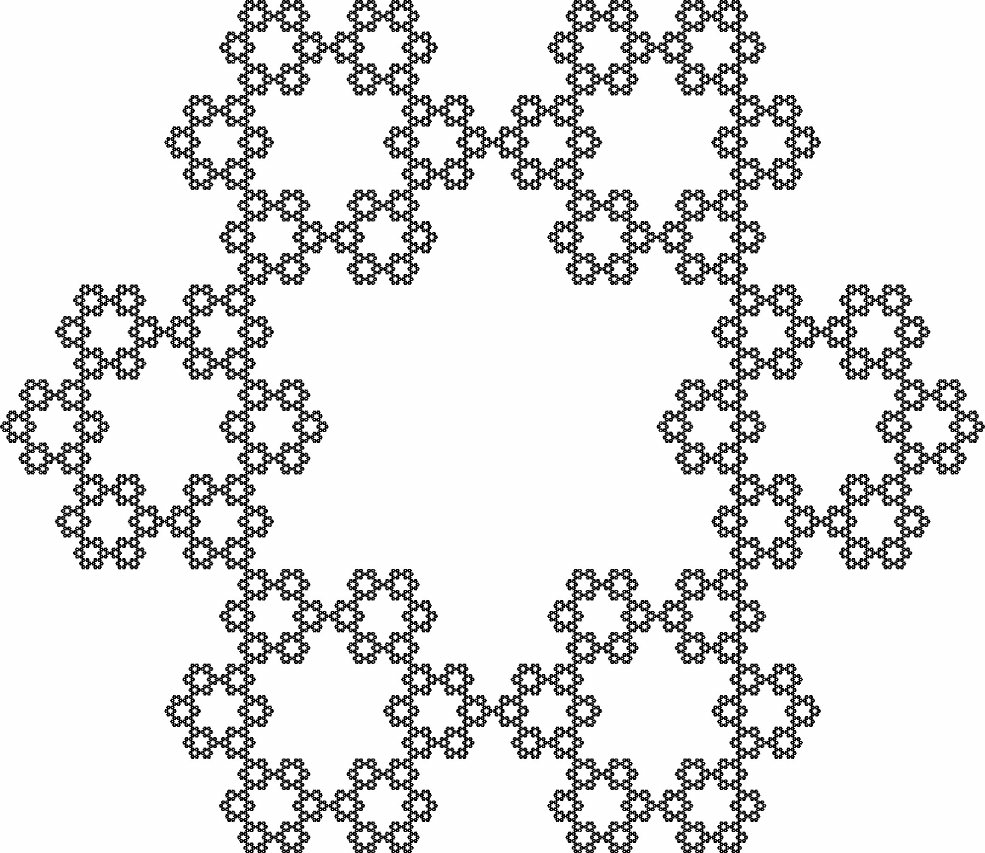}
	\caption{Examples of affine nested fractals that do \emph{NOT} satisfy \eqref{fig.nonallorbit}. From the left, $D$-dimensional level-$2$ Sierpi\'{n}ski gasket ($D = 2,3$), pentakun and hexagasket.}\label{fig.allorbit}
\end{figure}
\begin{figure}[tb]\centering
	\includegraphics[width=100pt]{fig_SG3_reduced.pdf}\hspace*{10pt}
	\includegraphics[width=100pt]{fig_SG4_reduced.pdf}\hspace*{10pt}
	\includegraphics[width=100pt]{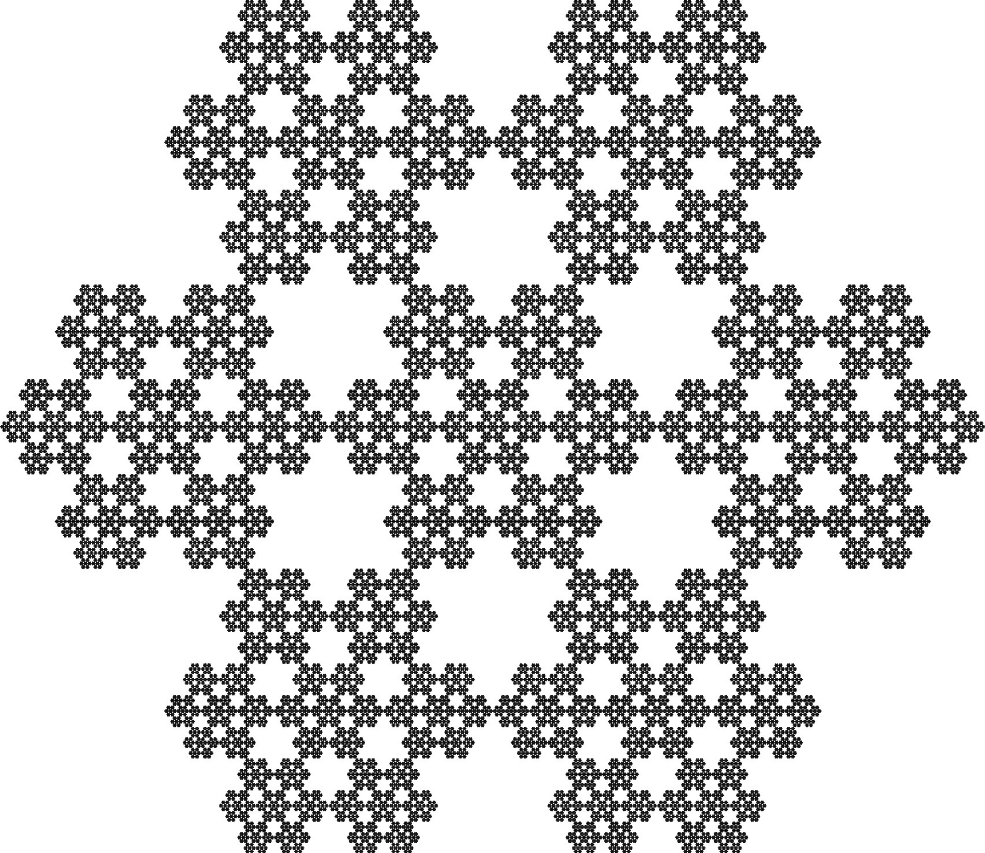}\hspace*{10pt}
    \includegraphics[width=100pt]{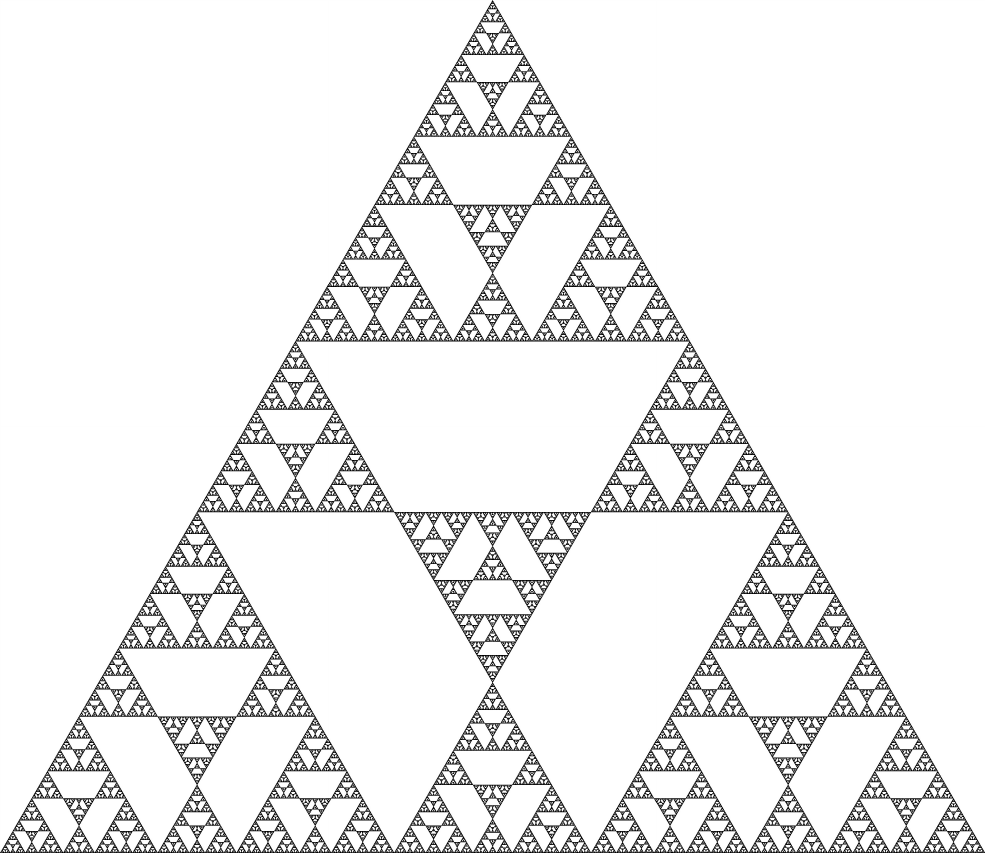}
	\caption{Examples of affine nested fractals that satisfy \eqref{fig.nonallorbit}. From the left, $2$-dimensional level-$l$ Sierpi\'{n}ski gasket ($l = 3,4$), snowflake and a Sierpi\'{n}ski gasket-type fractal.}\label{fig.nonallorbit}
\end{figure}

\subsection{Ahlfors regular conformal dimension of affine nested fractals}\label{sec.confdimANF}
In this subsection, we prove that the Ahlfors regular conformal dimension of any strongly symmetric self-similar set equipped with the $p$-resistance metric for any $p \in (1,\infty)$ is equal to one (Theorem \ref{thm.ARCpRM}). 
We also show that the Ahlfors regular conformal dimension with respect to the Euclidean metric is also equal to one under some geometric condition (Theorem \ref{thm.dARC-ANF}). 

Very similar results are already known in the literature.
Indeed, Tyson and Wu \cite[Theorems 1.3--1.5]{TW06} showed that the (quasi)conformal dimensions (as defined in \cite[p.~206]{TW06}) of the $D$-dimensional level-$2$ Sierpi\'nski gasket and of the $N$-polygasket with $N/4 \notin \mathbb{Z}$ are equal to one\footnote{According to \cite[the paragraph after Theorem 1.3]{TW06}, T.~J.~Laakso had shown before the work \cite{TW06} that the conformal dimension of the $2$-dimensional level-$2$ Sierpi\'nski gasket (equipped with the Euclidean metric) is equal to one.}. (The values of the conformal dimension and the Ahlfors regular conformal dimension coincide if the underlying metric space is compact, quasiself-similar \cite[Definition 2.4]{EB24}, connected and locally connected \cite[Theorem 1.6]{EB24}.) 
Also, Carrasco Piaggio \cite[Theorem 1.2]{CP14} provided a general criterion for a compact and metric doubling metric space to have Ahlfors regular conformal dimension one. 
This subsection is aimed at giving a new proof of a variant of these results in \cite{TW06,CP14} based on the existence of self-similar $p$-resistance forms proved in Theorem \ref{thm.eigenform-ANF}. 

Throughout this section, we assume that $\mathcal{L} = (K,S,\{ F_{i} \}_{i \in S})$ is a strongly symmetric p.-c.f.\ self-similar set (recall Framework \ref{frmwrk:ANF} and Definition \ref{defn.ANF}). 
Let $c_{i} \in (0,1)$ be the contraction ratio of $F_{i}$ for each $i \in S$. 
Note that $(c_{i})_{i \in S} \in (0,1)^{S}$ must satisfy 
\begin{equation}\label{contraction.sym} 
    c_{i} = c_{\tau_{g}(i)} \quad \text{for any $i \in S$ and any $g \in \mathcal{G}_{\mathrm{sym}}$,}
\end{equation}
because of the symmetry of $\mathcal{L}$.  
For each $p \in (1,\infty)$, we also fix a self-similar $p$-resistance form $(\mathcal{E}_{p}^{\#},\mathcal{F}_{p}^{\#})$ on $\mathcal{L}$ with weight $(\rweight_{\#,p})_{i \in S}$ for some $\rweight_{\#,p} \in (1,\infty)$, i.e., a $p$-resistance form $(\mathcal{E}_{p}^{\#},\mathcal{F}_{p}^{\#})$ on $K$ such that 
\begin{gather*}
	\mathcal{F}_{p}^{\#} = \{ u \in \contfunc(K) \mid u \circ F_{i} \in \mathcal{F}_{p}^{\#} \text{ for any $i \in S$} \}, \\
	\mathcal{E}_{p}^{\#}(u) = \rweight_{\#,p}\sum_{i \in S}\mathcal{E}_{p}^{\#}(u \circ F_{i}) \quad \text{for any $u \in \mathcal{F}_{p}^{\#}$.}
\end{gather*}
By Theorem \ref{thm.eigenform-ANF}, such a self-similar $p$-resistance form on $\mathcal{L}$ exists and the number $\rweight_{\#,p}$ is uniquely determined. 
Let $\pmetric_{p}^{\#}$ denote the $p$-resistance metric of $(\mathcal{E}_{p}^{\#},\mathcal{F}_{p}^{\#})$ (recall Definition \ref{defn:pResmet}).

The next proposition ensures that $\pmetric_{p}^{\#}$ is quasisymmetric (recall Definition \ref{defn.AR}-\ref{it:QS}) to the $q$-resistance metric of \emph{any} self-similar $q$-resistance form arising from Theorem \ref{thm.eigenform-ANF}.
\begin{prop}\label{prop.RpQS}
   Let $p,q \in (1,\infty)$ and assume that $\bm{\rweight}_{q} = (\rweight_{q,i})_{i \in S} \in (0,\infty)^{S}$ satisfies \eqref{rscale.sym}, $\rweight_{q,i} > 1$ for any $i \in S$ and $\lambda(\bm{\rweight}_{q}) = 1$, where $\lambda(\bm{\rweight}_{q}) \in (0,\infty)$ is the unique number given in Theorem \ref{thm.eigenform-ANF}.
   Let $(\mathcal{E}_{q},\mathcal{F}_{q})$ be a self-similar $q$-resistance form on $\mathcal{L}$ with weight $\bm{\rweight}_{q}$, which exists by Theorems \ref{thm.eigenform-ANF}, and let $\pmetric_{q}$ be the $q$-resistance metric associated with $(\mathcal{E}_{q},\mathcal{F}_{q})$.
   Then $\pmetric_{q,\mathcal{E}_{q}}$ is quasisymmetric to $\pmetric_{p}^{\#}$.
\end{prop}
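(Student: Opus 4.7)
The plan is to verify that $\id_K : (K, \pmetric_p^\#) \to (K, \pmetric_q)$ is quasisymmetric, which amounts to exhibiting a homeomorphism $\eta : [0,\infty) \to [0,\infty)$ such that
\[
\frac{\pmetric_q(x,b)}{\pmetric_q(x,a)} \leq \eta\!\left(\frac{\pmetric_p^{\#}(x,b)}{\pmetric_p^{\#}(x,a)}\right)
\]
for any $x,a,b \in K$ with $x \neq a$ (and the symmetric statement).

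First, the hypotheses $\rweight_{\#,p} > 1$, $\rweight_{q,i} > 1$ for all $i \in S$, and $\lambda(\bm{\rweight}_{q}) = 1$, combined with Proposition \ref{prop.pRMss}-\ref{pRMcompatible}, guarantee that both $\pmetric_p^{\#}$ and $\pmetric_q$ are compatible with the original topology of $K$. Proposition \ref{prop:Rp-geom}-\ref{it:Rp.adapted} then shows that $\pmetric_p^{\#}$ is $1$-adapted to the weight function $g_p : W_\ast \to (0,\infty)$ defined by $g_p(w) := \rweight_{\#,p}^{-|w|/(p-1)}$ and $\pmetric_q$ is $1$-adapted to $g_q(w) := \rweight_{q,w}^{-1/(q-1)}$. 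Moreover, both $g_p$ and $g_q$ are \emph{gentle} in the sense that $g_\ast(wi)/g_\ast(w)$ is bounded between two constants in $(0,1)$ uniformly in $\ast \in \{p,q\}$, $w \in W_\ast$, and $i \in S$, which follows from the strict positivity of $\log \rweight_{\#,p}$ and $\log \rweight_{q,i}$ together with the finiteness of $S$.

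With both metrics adapted to gentle weight functions on the same tree $W_\ast$ indexing the cell partition, the quasisymmetry reduces to a cell-by-cell comparison of the two scales $g_p$ and $g_q$. Specifically, for $x, a \in K$ with $x \neq a$, adaptedness of $\pmetric_p^{\#}$ allows us to find a word $w_a \in W_\ast$ (essentially one of the words in the partition $\Lambda_{\pmetric_p^{\#}(x,a)}^{\pmetric_p^{\#}}$ whose cell touches $x$) such that $a \in K_{w_a}$ and $\pmetric_p^{\#}(x,a) \asymp g_p(w_a)$, with an analogous word $w_b$ for $b$; adaptedness of $\pmetric_q$ then yields $\pmetric_q(x,a) \asymp g_q(w_a)$ and $\pmetric_q(x,b) \asymp g_q(w_b)$ (up to passing through finitely many neighboring cells under the $\mathcal{G}_{\mathrm{sym}}$-invariance of $\bm{\rweight}_{q}$ coming from \eqref{rscale.sym}). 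The desired comparison between $g_q(w_b)/g_q(w_a)$ and $g_p(w_b)/g_p(w_a)$ is then obtained by tracking the level difference $|w_b| - |w_a|$ and invoking gentleness of both $g_p$ and $g_q$, which guarantees that the ratio on the $\pmetric_q$ side is controlled by a universal function of the ratio on the $\pmetric_p^{\#}$ side.

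The main obstacle lies in the potential non-uniformity of $\rweight_{q,w}$ across words of the same length, which may cause cells $K_w$ at level $|w| = n$ to have drastically different $\pmetric_q$-diameters while their $\pmetric_p^{\#}$-diameters coincide: indeed, chaining the extremal weights $\min_{i}\rweight_{q,i}$ or $\max_{i}\rweight_{q,i}$ yields a variation of order $(\max_{i}\rweight_{q,i} / \min_{i}\rweight_{q,i})^{n/(q-1)}$. The partitions $\Lambda_s^{\pmetric_p^{\#}}$ and $\Lambda_s^{\pmetric_q}$ therefore refine the tree differently at the same scale $s$, and the crux of the argument is in controlling the discrepancy between the ``levels'' in these two partitions via the gentleness of the respective weight functions, thereby establishing a uniform-in-$x$ comparison of $\pmetric_q$-balls to $\pmetric_p^{\#}$-balls of suitably matched radii.
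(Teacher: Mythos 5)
Your outline correctly identifies the two structural inputs (adaptedness of both metrics to their respective weight functions, and exponentiality), and you even put your finger on where the difficulty lies; but the proposal does not actually close that difficulty, and the step where it fails is precisely the one the paper's proof is built around. The assertion ``adaptedness of $\pmetric_{q}$ then yields $\pmetric_{q}(x,a)\asymp g_{q}(w_{a})$'' for a word $w_{a}$ chosen from the partition $\Lambda^{\pmetric_{p}^{\#}}_{s}$ is unjustified: adaptedness of $\pmetric_{q}$ only compares $\pmetric_{q}$-balls with unions of cells from its \emph{own} partitions $\Lambda^{\pmetric_{q}}_{s}$, and a word that is extremal for the $p$-scale at the separation of $x$ and $a$ can sit at a wildly different depth of the $q$-scale. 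What is needed is not the ``gentleness'' you invoke — which in your formulation ($g_{\ast}(wi)/g_{\ast}(w)$ bounded above and below) is a property of each weight function \emph{separately} and is really just exponentiality — but the \emph{mutual} gentleness of $\pmetric_{q}$ with respect to $\pmetric_{p}^{\#}$: whenever two cells belonging to the same partition of one scale intersect, their weights under the other scale must be uniformly comparable. This is a genuine condition that can fail for two adapted exponential metrics on the same tree, and it is exactly the content of the criterion \cite[Corollary 3.6.7]{Kig20} that the paper uses. The paper then verifies this mutual gentleness via \cite[Theorem 1.6.6]{Kig09}, using that $\mathcal{L}$ is rationally ramified with a relation set supported on the fixed-point indices $S_{\mathrm{fix}}$ together with the symmetry \eqref{rscale.sym}; your proposal mentions \eqref{rscale.sym} only in passing and supplies no mechanism that replaces this step.

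A secondary gap: exponentiality of $\pmetric_{q}$ is not a property of the weight function alone but requires the two-sided diameter estimate $\diam(K_{w},\pmetric_{q})\asymp\rweight_{q,w}^{-1/(q-1)}$ (the paper's \eqref{pRMdiam}). The upper bound is \eqref{pRMss}, but the lower bound needs an argument — the paper produces, uniformly in $w$, two subcells of $K_{w}$ at bounded relative depth that are disjoint and then invokes the separation estimate from the proof of Proposition \ref{prop:Rp-geom}-\ref{it:Rp.adapted}. Your appeal to ``strict positivity of $\log\rweight_{q,i}$ together with finiteness of $S$'' establishes only properties of $g_{q}$, not of the metric.
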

\begin{proof}
    We will use \cite[Corollary 3.6.7]{Kig20} to show the desired statement. 
    We first show that there exist $\alpha_{1},\alpha_{2} \in (0,\infty)$ such that 
    \begin{equation}\label{pRMdiam}
    	\alpha_{1}\rweight_{q,w}^{-1/(p - 1)} \le \diam(K_{w},\pmetric_{q}) \le \alpha_{2}\rweight_{q,w}^{-1/(p - 1)} \quad \text{for any $w \in W_{\ast}$.}
    \end{equation}
    The upper estimate in \eqref{pRMdiam} is immediate from \eqref{pRMss}. 
	To prove the lower estimate in \eqref{pRMdiam}, note that we can easily find $m_{0} \in \mathbb{N}$ such that for any $w \in W_{\ast}$ there exist $v^{1},v^{2} \in W_{\abs{w} + m_{0}}$ with $v^{i} \le w$, $i = 1,2$, and $K_{v^{1}} \cap K_{v^{2}} = \emptyset$.  
    (It is enough to choose $m_{0}$ satisfying $2(\max_{i \in S}c_{i})^{m_{0}} < 1$.)
    Then, by the proof of Proposition \ref{prop:Rp-geom}-\ref{it:Rp.adapted} and $\rweight_{p,v^{i}} \le \rweight_{q,w}(\max_{i \in S}\rweight_{q,i})^{m_{0}}$, there exists $\alpha_{1} \in (0,\infty)$ that is independent of $w \in W_{\ast}$ such that 
    \[
    \inf_{(x,y) \in K_{v^{1}} \times K_{v^{2}}}\pmetric_{q}(x,y) \ge \alpha_{1}\rweight_{q,w}^{-1/(p - 1)}, 
    \]
    which implies the desired lower estimate in \eqref{pRMdiam}. 
    
    Next we note that $\mathcal{L}$ is a rationally ramified self-similar structure by  \cite[Proposition 1.6.12]{Kig09}; moreover, by combining \cite[Proposition 1.6.12]{Kig09}, $K_{v} \cap K_{w} = F_{v}(V_{0}) \cap F_{w}(V_{0})$ for any $v,w \in W_{\ast}$ with $\Sigma_{v} \cap \Sigma_{w} = \emptyset$ (see \cite[Proposition 1.3.5-(2)]{Kig01}) and the fact that each element of $V_{0}$ is a fixed point of $F_{i}$ for some $i \in S_{\mathrm{fix}} \coloneqq \{ i \in S \mid K_{i} \cap V_{0} \neq \emptyset \}$, $\mathcal{L}$ is rationally ramified with a relation set 
	\begin{equation}\label{ANF.relation}
		\mathcal{R}
	= \bigl\{ \{ (\{ w(j) \}, \{ v(j) \}, \varphi_{j}, x(j), y(j)) \mid w(j),v(j),x(j),y(j) \in W_{\ast} \setminus \{ \emptyset \} \} \bigr\}_{j = 1}^{k} 
	\end{equation}
	satisfying $w(j),v(j) \in S_{\mathrm{fix}}$.  
 	(See \cite[Sections 1.5 and 1.6 and Chapter 8]{Kig09} for details about rationally ramified self-similar structures.)
 	
 	With these preparations, we will apply \cite[Corollary 3.6.7]{Kig20} to $\pmetric_{q,\mathcal{E}_{q}}$ and $\pmetric_{p}^{\#}$. 
 	By Proposition \ref{prop:Rp-geom}-\ref{it:Rp.adapted} and \eqref{pRMdiam}, $\pmetric_{q,\mathcal{E}_{q}}$ is \emph{$1$-adapted} and \emph{exponential} (see \cite[Definition 2.4.7 and 3.1.15-(2)]{Kig20} for these definitions; see also Remark in \cite[p.\ 108]{Kig20}). 
 	Similarly, $\pmetric_{p}^{\#}$ is also $1$-adapted and exponential. 
 	Hence, by \cite[Corollary 3.6.7]{Kig20}, $\pmetric_{q,\mathcal{E}_{q}}$ is quasisymmetric to $\pmetric_{p}^{\#}$ if and only if $\pmetric_{q,\mathcal{E}_{q}}$ is \emph{gentle} with respect to $\pmetric_{p}^{\#}$ (see \cite[Definition 3.3.1]{Kig20} for the definition of the gentleness). 
 	Define $g_{q}(w) \coloneqq \rweight_{q,w}^{-1/(q - 1)}$ and $g_{\#,p}(w) \coloneqq \rweight_{\#,p}^{-\abs{w}}$ for $w \in W_{\ast}$. 
 	Since $g_{q}$ and $g_{\#,p}$ satisfy the condition (R1) in \cite[Theorem 1.6.6]{Kig09} by \eqref{rscale.sym} and \eqref{ANF.relation}, we obtain the desired gentleness by \cite[Theorem 1.6.6]{Kig09} and \eqref{pRMdiam}. 
 	This completes the proof. 
\end{proof}

Now we can determine the Ahlfors regular conformal dimension of $(K,\pmetric_{p}^{\#})$ by using the discrete characterization of the Ahlfors regular conformal dimension due to Keith and Kleiner (see \cite[the paragraph before Corollary 1.4]{CP13}).
\begin{thm}\label{thm.ARCpRM}
	$\dim_{\mathrm{ARC}}(K,\pmetric_{p}^{\#}) = 1$. 
\end{thm}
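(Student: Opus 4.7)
The lower bound $\dim_{\mathrm{ARC}}(K,\pmetric_p^\#)\ge 1$ is automatic, since $K$ is connected with more than one point (a property preserved under quasisymmetries) and the Hausdorff dimension of a connected metric space with more than one point is at least $1$. So the task is to prove $\dim_{\mathrm{ARC}}(K,\pmetric_p^\#)\le 1$.

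By Proposition \ref{prop.RpQS} and the quasisymmetric invariance of $\dim_{\mathrm{ARC}}$, for every $p'\in(1,\infty)$ we have $\dim_{\mathrm{ARC}}(K,\pmetric_p^\#) = \dim_{\mathrm{ARC}}(K,\pmetric_{p'})$, where $\pmetric_{p'}$ is the $p'$-resistance metric of any self-similar $p'$-resistance form furnished by Theorem \ref{thm.eigenform-ANF} (e.g., the one with constant weight $\rweight_{\#,p'}$, normalized so that $\lambda(\bm{\rweight}_{p'})=1$; the condition $\rweight_{\#,p'}>1$ is given by Proposition \ref{prop.goodweight}). It therefore suffices to prove $\dim_{\mathrm{ARC}}(K,\pmetric_{p'})\le p'$ for every $p'>1$, as then letting $p'\downarrow 1$ yields the desired bound.

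I would establish this via the Keith--Kleiner discrete characterization of $\dim_{\mathrm{ARC}}$ (\cite[the paragraph before Corollary 1.4]{CP13}): for a compact metric-doubling space $(X,d)$, $\dim_{\mathrm{ARC}}(X,d) = \inf\{p''\ge 1 : \liminf_{n\to\infty}\mathrm{Mod}_{p''}(n;X,d)=0\}$, where $\mathrm{Mod}_{p''}(n;\cdot)$ denotes the combinatorial $p''$-modulus at scale $n$. Apply this to $(K,\pmetric_{p'})$, which is metric doubling by Proposition \ref{prop:Rp-geom}-\ref{it:Rp.AR}. By the $1$-adaptedness Proposition \ref{prop:Rp-geom}-\ref{it:Rp.adapted} together with the uniform finiteness of the cell-adjacency graph $(V_n,E_n^*)$ (\cite[Lemma 4.2.3]{Kig01}), the natural scale-$n$ objects are the cells $\{K_w\}_{w\in W_n}$, and $\mathrm{Mod}_{p'}(n;K,\pmetric_{p'})$ is comparable, up to constants independent of $n$, to a discrete combinatorial $p'$-modulus on $(V_n,E_n^*)$.

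To bound this discrete modulus, construct an explicit admissible weight on $(V_n,E_n^*)$ by pasting self-similar translates of a fixed ``building block'' function $\varphi\in\mathcal{F}_{p'}^\#$ separating two prescribed subsets of $V_0$, using \eqref{V0bdry}, \eqref{SSE2}, Proposition \ref{prop.inductive} and the fixed-point identity $\mathcal{R}_{\bm{\rweight}_{p'}}(\mathcal{E}_{p'}^\#|_{V_0})=\mathcal{E}_{p'}^\#|_{V_0}$. Since $\rweight_{\#,p'}>1$, a suitable normalization of this assembly yields geometric decay of the total $p'$-cost in $n$, giving $\liminf_{n\to\infty}\mathrm{Mod}_{p'}(n;K,\pmetric_{p'})=0$ and hence $\dim_{\mathrm{ARC}}(K,\pmetric_{p'})\le p'$ by the Keith--Kleiner characterization. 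The main obstacle is to make precise the comparison between the metric-space Keith--Kleiner combinatorial $p'$-modulus on $(K,\pmetric_{p'})$ and the discrete $p'$-modulus on $(V_n,E_n^*)$, and to verify that the self-similarly assembled weight is genuinely admissible for the metric modulus with the predicted decay rate; once this dictionary between metric modulus and discrete energy is in place, the rest of the argument is a clean application of the self-similar structure.
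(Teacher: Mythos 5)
Your proposal is correct in outline and rests on the same core mechanism as the paper's proof --- namely, that for each exponent $q$ the existence of a self-similar $q$-resistance form with weight $\rweight_{\#,q}>1$ forces the discrete $q$-capacities across $k$ generations of cells to decay like $\rweight_{\#,q}^{-k}$ --- but it routes the conclusion differently. The paper never leaves the single metric $\pmetric_{p}^{\#}$: it invokes \cite[Theorem 4.6.9]{Kig20}, which characterizes $\dim_{\mathrm{ARC}}(K,\pmetric_{p}^{\#})=1$ by the vanishing, for \emph{every} $q\in(1,\infty)$, of the discrete capacity \eqref{ARCcharacterization} on the cell-adjacency graphs $(V_{n},E_{n})$, and verifies this for each $q$ by comparing the graph energy $\mathcal{E}_{q}^{G_{n}}$ with $\rweight_{\#,q}^{-n}\mathcal{E}_{q}^{\#}\bigr|_{V_{n}}$ (two norms on the finite-dimensional space $\mathbb{R}^{V_{0}}/\mathbb{R}\indicator{V_{0}}$ at level $0$, propagated by self-similarity) together with the capacity upper bound \eqref{Rp-capu}; notably it uses only the estimate \eqref{pRMdiam} from the proof of Proposition \ref{prop.RpQS}, not the quasisymmetry statement itself. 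You instead use the quasisymmetric invariance of $\dim_{\mathrm{ARC}}$ and Proposition \ref{prop.RpQS} to reduce to the ``diagonal'' bound $\dim_{\mathrm{ARC}}(K,\pmetric_{p'})\leq p'$ for each $p'$ and then let $p'\downarrow 1$, which is conceptually appealing (one energy form per exponent, and only the matching exponent is needed for each metric) but introduces an extra moving part. The step you flag as the ``main obstacle'' --- passing between the Keith--Kleiner metric-space combinatorial modulus and the discrete modulus/capacity on $(V_{n},E_{n})$ --- is a genuine technical point, but it is precisely what is packaged in Kigami's theory of proper systems of horizontal networks (\cite[Theorem 4.6.9]{Kig20}, whose hypotheses are checked via Proposition \hyperref[it:Rp.adapted]{\ref{prop:Rp-geom}}) or in the hypotheses of \cite{CP13}; so it should be handled by citation rather than re-derived, and once that is done your argument closes. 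Your lower bound $\dim_{\mathrm{ARC}}\geq 1$ from connectedness is fine and is implicit in the ``if and only if'' formulation the paper quotes from \cite[Theorem 4.6.9]{Kig20}.
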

\begin{proof}
	We will use a version of the characterization of $\dim_{\mathrm{ARC}}(K,\pmetric_{p}^{\#})$ in \cite[Theorem 4.6.9]{Kig20}.
	Note that $(K,\pmetric_{p}^{\#})$ satisfies (BF1) and (BF2) in \cite[Section 4.3]{Kig20} by Proposition \ref{prop:Rp-geom}-\ref{it:Rp.adapted}, \eqref{pRMdiam}, \cite[Proposition 1.6.12, Lemmas 1.3.6 and 1.3.12]{Kig09}. 
 	We define a graph $G_{n} = (V_{n},E_{n})$ and $q$-energy $\mathcal{E}_{p}^{G_{n}}$, $q \in (1,\infty)$, on $G_{n}$ by
    \[
    E_{n} \coloneqq \{ (x,y) \mid \text{$x,y \in F_{w}(V_{0})$ for some $w \in W_{n}$} \}, 
    \]
    and 
    \[
    \mathcal{E}_{q}^{G_{n}}(f) \coloneqq \frac{1}{2}\sum_{(x,y) \in E_{n}}\abs{f(x) - f(y)}^{q}, \quad f \in \mathbb{R}^{V_{n}}. 
    \]
    Note that $\{ G_{n} \}_{n \ge 0}$ is a \emph{proper system of horizontal networks} with indices $(1,2(\#V_{0} - 1)\#V_{0},1,1)$ in the sense of \cite[Definition 4.6.5]{Kig20}. 
    Therefore by \cite[Theorem 4.6.9]{Kig20}, $\dim_{\mathrm{ARC}}(K,\pmetric_{p}^{\#}) = 1$ if and only if the following holds: for any $q \in (1,\infty)$, 
    \begin{equation}\label{ARCcharacterization}
    	\liminf_{k \to \infty}\sup_{w \in W_{\ast}}\inf\Bigl\{ \mathcal{E}_{q}^{G_{\abs{w} + k}}(f) \Bigm| f \in \mathbb{R}^{V_{\abs{w} + k}}, f|_{F_{w}(V_{k})} = 1, f|_{Z_{w,k}} = 0 \Bigr\} = 0, 	
    \end{equation}
	where $Z_{w,k} \coloneqq \{ x \in V_{\abs{w} + n} \mid \text{$x \in F_{v}(V_{k})$ for some $v \in W_{\abs{w}}$ with $K_{v} \cap K_{w} = \emptyset$} \}$.
	Since both $\mathcal{E}_{q}^{\#}\bigr|_{V_{0}}(\,\cdot\,)^{1/q}$ and $\mathcal{E}_{q}^{G_{0}}(\,\cdot\,)^{1/q}$ are norms on the finite-dimensional vector space $\mathbb{R}^{V_{0}}/\mathbb{R}\indicator{V_{0}}$, there exists $C \ge 1$ such that $C^{-1}\mathcal{E}_{q}^{\#}\bigr|_{V_{0}}(u) \le \mathcal{E}_{q}^{G_{0}}(u) \le C\mathcal{E}_{q}^{\#}\bigr|_{V_{0}}(u)$ for any $u \in \mathbb{R}^{V_{0}}$. 
	Hence, by Propositions \ref{prop.pRMss}-\ref{pRMcompatible} and \ref{prop.compatible}, we obtain $C^{-1}\mathcal{E}_{q}^{\#}\bigr|_{V_{n}}(u) \le \rweight_{\#,q}^{n}\mathcal{E}_{q}^{G_{n}}(u) \le C\mathcal{E}_{q}^{\#}\bigr|_{V_{n}}(u)$ for any $n \in \mathbb{N} \cup \{ 0 \}$ and any $u \in \mathbb{R}^{V_{n}}$. 
	Recall that $\Gamma_{1}(w) = \{ v \in W_{\abs{w}} \mid K_{v} \cap K_{w} \neq \emptyset \}$ for $w \in W_{\ast}$ (Definition \ref{defn.h-networks}). 
	Let $h_{q,w} \in \mathcal{F}_{q}^{\#}$ be the unique function satisfying $h_{q,w}|_{K_{w}} = 1$, $h_{q,w}|_{K_{v}} = 0$ for any $v \in W_{\abs{w}} \setminus \Gamma_{1}(w)$ and 
	\[
	\mathcal{E}_{q}^{\#}(h_{q,w}) = \inf\Bigl\{ \mathcal{E}_{q}^{\#}(u) \Bigm| u|_{K_{w}} = 1, \text{$u|_{K_{v}} = 0$ for any $v \in W_{\abs{w}} \setminus \Gamma_{1}(w)$} \Bigr\}. 
	\]
	Then we see from \eqref{Rp-capu}, \eqref{Rp-adapted} and \eqref{pRMdiam} that 
	\begin{align*}
		&\sup_{w \in W_{\ast}}\inf\Bigl\{ \mathcal{E}_{q}^{G_{\abs{w} + k}}(f) \Bigm| f \in \mathbb{R}^{V_{\abs{w} + k}}, f|_{F_{w}(V_{k})} = 1, f|_{Z_{w,k}} = 0 \Bigr\} \\
		&\le C\rweight_{\#,q}^{-(\abs{w} + k)}\sup_{w \in W_{\ast}}\mathcal{E}_{q}^{\#}\bigr|_{V_{\abs{w} + k}}(h_{q,w}|_{V_{\abs{w} + k}}) 	
		\le C\rweight_{\#,q}^{-(\abs{w} + k)}\sup_{w \in W_{\ast}}\mathcal{E}_{q}^{\#}(h_{q,w}) 
		\lesssim \rweight_{\#,q}^{-k}. 
	\end{align*}
	Since $\rweight_{\#,q} \in (1,\infty)$ for any $q \in (0,1)$, we obtain \eqref{ARCcharacterization}. 
	The proof is completed. 
\end{proof}

To discuss the Ahlfors regular conformal dimension of $K$ with respect to the Euclidean metric, we need the following assumption.
\begin{assum}\label{assum.ANF}
	We define $\Lambda_{1}^{d} \coloneqq \{ \emptyset \}$, 
	\[
	\Lambda_{s}^{d} \coloneqq \{ w \mid w = w_{1} \dots w_{n} \in W_{\ast} \setminus \{ \emptyset \}, \diam(K_{w_{1} \dots w_{n - 1}},d) > s \ge \diam(K_{w},d) \}
	\]
	for each $s \in (0,1)$. 
	For $s \in (0,1]$, $M \in \mathbb{N} \cup \{ 0 \}$ and $x \in K$, define 
	\[
	\Lambda_{s,M}^{d}(x) \coloneqq 
	\Biggl\{ v \Biggm|
	\begin{minipage}{280pt}
		$v \in \Lambda_{s}^{d}$, there exists $w \in \Lambda_{s}^{d}$ with $x \in K_{w}$ and $\{ z(j) \}_{j = 1}^{k} \subseteq \Lambda_{s}^{d}$ with $k \le M + 1$, $z(1) = w$, $z(k) = v$ such that $K_{z(j)} \cap K_{z(j + 1)} \neq \emptyset$ for any $j \in \{ 1,\dots, k - 1 \}$
	\end{minipage} 
	\Biggr\}, 
	\]
	and $U_{M}^{d}(x,s) \coloneqq \bigcup_{w \in \Lambda_{s,M}^{d}(x)}K_{w}$. 
	Then there exist $M_{\ast} \in \mathbb{N}$, $\alpha_{0},\alpha_{1} \in (0,\infty)$ such that 
	\[
	U_{M_{\ast}}^{d}(x,\alpha_{0}s) \subseteq B_{d}(x,s) \subseteq U_{M_{\ast}}^{d}(x,\alpha_{1}s) \quad \text{for any $(x,s) \in K \times (0,1]$.}
	\] 
    (Equivalently, $d$ is $M_{\ast}$-adapted; see \cite[Definition 2.4.1]{Kig20}.) 
\end{assum}
\begin{rmk}\label{rmk.ANFadapted}
	We do not know whether Assumption \ref{assum.ANF} is true for any strongly symmetric self-similar set. 
	Even for nested fractals, being $1$-adapted with respect to the Euclidean metric is required as an additional assumption in \cite[Assumption 4.41]{Kig23}.	
\end{rmk}

Now we can show the main result in this section under Assumption \ref{assum.ANF}. 
\begin{thm}\label{thm.dARC-ANF}
    Assume that Assumption \ref{assum.ANF} holds.
    Then $\dim_{\mathrm{ARC}}(K,d) = 1$.
\end{thm}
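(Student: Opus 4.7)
The plan is to deduce the theorem from Theorem \ref{thm.ARCpRM}, which gives $\dim_{\mathrm{ARC}}(K,\widehat{R}_{p}^{\#}) = 1$, by showing that the Euclidean metric $d$ is quasisymmetric to the $p$-resistance metric $\widehat{R}_{p}^{\#}$. Since the Ahlfors regular conformal dimension is a quasisymmetric invariant (it depends only on the quasisymmetry class of the metric), this would immediately yield $\dim_{\mathrm{ARC}}(K,d) = \dim_{\mathrm{ARC}}(K,\widehat{R}_{p}^{\#}) = 1$.

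To establish this quasisymmetric equivalence, I would follow closely the argument used in the proof of Proposition \ref{prop.RpQS}. Namely, I would apply \cite[Corollary 3.6.7]{Kig20}, which asserts that two metrics on $K$, both $1$-adapted and exponential with respect to appropriate weight functions on $W_{\ast}$, are quasisymmetric if and only if they are gentle with respect to each other. First I would check that $d$ is $M_{\ast}$-adapted by Assumption \ref{assum.ANF} (after passing to the scale $\{\Lambda_{s}^{d}\}_{s \in (0,1]}$ induced by the weight function $g_{d}(w) \coloneqq c_{w} = \prod_{k}c_{w_{k}}$, noting that $\diam(K_{w},d) \asymp c_{w}$ by the similitude structure; the equivalence of adaptedness for different but comparable scales is standard in \cite[Chapter 2]{Kig20}). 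On the other hand, $\widehat{R}_{p}^{\#}$ is $1$-adapted by Proposition \ref{prop:Rp-geom}-\ref{it:Rp.adapted}, exponential with respect to $g_{\#,p}(w) \coloneqq \rho_{\#,p}^{-|w|}$, and both $d$ and $\widehat{R}_{p}^{\#}$ satisfy (BF1) and (BF2) in \cite[Section 4.3]{Kig20} thanks to \cite[Proposition 1.6.12, Lemmas 1.3.6 and 1.3.12]{Kig09} combined with the estimate \eqref{pRMdiam}.

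The key step is the verification of gentleness, which I would reduce via \cite[Theorem 1.6.6]{Kig09} to verifying condition (R1) there for the pair of weight functions $g_{d}$ and $g_{\#,p}$ on the rationally ramified self-similar structure $\mathcal{L}$. Recall from the proof of Proposition \ref{prop.RpQS} that $\mathcal{L}$ admits a relation set of the form \eqref{ANF.relation} with $w(j),v(j) \in S_{\mathrm{fix}}$. The crucial symmetry input is \eqref{contraction.sym}: the contraction ratio $c_{i}$ is constant along each orbit of $\mathcal{G}_{\mathrm{sym}}$ acting on $S$. This is precisely the same type of invariance property used in Proposition \ref{prop.RpQS} to verify (R1) for $g_{q}$ and $g_{\#,p}$, so the same verification applies here with $g_{q}$ replaced by $g_{d}$. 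Together with the relation set having the form \eqref{ANF.relation}, this yields the required (R1) condition, hence the gentleness, hence the quasisymmetry $d \underset{\textrm{QS}}{\sim} \widehat{R}_{p}^{\#}$.

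The main obstacle I anticipate is the careful comparison between the Euclidean diameters $\diam(K_{w},d)$ and the weights $c_{w}$: while the upper bound $\diam(K_{w},d) \leq c_{w}\diam(K,d)$ is immediate from $F_{w}$ being a similitude with ratio $c_{w}$, the matching lower bound requires an argument analogous to the proof of the lower estimate in \eqref{pRMdiam}, using the strong symmetry and the fact that $K_{w}$ contains at least two sub-cells of the next few levels that are separated in the Euclidean metric by a definite distance. A secondary (more technical) hurdle is the bookkeeping required to confirm that the formulation of exponentiality and adaptedness in \cite{Kig20} for the weight $g_{d}$ matches what Assumption \ref{assum.ANF} provides; this is essentially a translation between the two natural scales on $W_{\ast}$ (one via $\diam(K_{w},d)$ and one via $c_{w}$), which are comparable under the considerations above.
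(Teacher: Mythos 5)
Your proposal follows essentially the same route as the paper: reduce to Theorem \ref{thm.ARCpRM} by showing $d \underset{\textrm{QS}}{\sim} \pmetric_{p}^{\#}$ via \cite[Corollary 3.6.7]{Kig20}, using Assumption \ref{assum.ANF} for adaptedness of $d$, exponentiality of $d$, and gentleness deduced from \eqref{contraction.sym} by the same argument as in Proposition \ref{prop.RpQS}. The one obstacle you anticipate is vacuous: since each $F_{w}$ is a similitude with ratio $c_{w}$, one has the exact equality $\diam(K_{w},d) = c_{w}\diam(K,d)$, so no separate lower bound argument is needed.
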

%
%
\begin{proof} 
	Thanks to Theorem \ref{thm.ARCpRM}, it suffices to prove that $\pmetric_{p}^{\#}$ is quasisymmetric to $d$. 
	Obviously, $d$ is exponential since $\diam(K_{w},d) = c_{w}\diam(K,d)$.
	By \eqref{contraction.sym}, a similar argument as in the proof of Proposition \ref{prop.RpQS} implies that $\pmetric_{p}^{\#}$ is gentle with respect to $d$. 
	Hence \cite[Corollary 3.6.7]{Kig20} together with Assumption \ref{assum.ANF} implies that $\pmetric_{p}^{\#}$ is quasisymmetric to $d$.  
\end{proof}

\subsection{An estimate on self-similar regular \texorpdfstring{$p$}{p}-resistance forms on p.-c.f.\ self-similar structures}\label{sec:pcf-contraction}
This subsection is devoted to proving the following theorem, which is a generalization of \cite[Theorem A.1]{Kig03}. 
\begin{thm}\label{thm.pcf-contraction}
	Let $p \in (1,\infty)$, let $\mathcal{L} = (K,S,\{ F_{i} \}_{i \in S})$ be a p.-c.f.\ self-similar structure with $\#S \ge 2$ and $K$ connected, and let $(\mathcal{E},\mathcal{F})$ be a self-similar $p$-resistance form on $\mathcal{L}$ with weight $\bm{\rweight} = (\rweight_{i})_{i \in S} \in (1,\infty)^{S}$. 
	Then there exists $c \in (0,1)$ such that for any $x,y \in K$ and any $w \in W_{\ast}$, 
	\begin{equation}\label{e:pcf-contraction}
		c\rweight_{w}^{-1}R_{\mathcal{E}}(x,y) \le R_{\mathcal{E}}(F_{w}(x),F_{w}(y)) \le \rweight_{w}^{-1}R_{\mathcal{E}}(x,y). 
	\end{equation}
\end{thm}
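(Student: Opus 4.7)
The upper bound is immediate from Proposition \hyperref[pRMcontraction]{\ref{prop.pRMss}}-\ref{pRMcontraction}, which in turn follows directly from \eqref{SSE2}: for any $u \in \mathcal{F}$, the pull-back $u \circ F_w$ belongs to $\mathcal{F}$ by \eqref{SSE1}, satisfies $\mathcal{E}(u \circ F_w) \le \rweight_w^{-1}\mathcal{E}(u)$ by \eqref{SSE2}, and $(u \circ F_w)(x) = u(F_w(x))$, $(u \circ F_w)(y) = u(F_w(y))$, so taking the supremum in \eqref{R-def} yields $R_\mathcal{E}(F_w(x),F_w(y)) \le \rweight_w^{-1}R_\mathcal{E}(x,y)$. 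Note also that the case $w = \emptyset$ of \eqref{e:pcf-contraction} is trivial, and that the assumption $\bm{\rweight} \in (1,\infty)^S$ ensures $\rweight_w \ge \min_{i \in S}\rweight_i > 1$ for any $w \in W_\ast \setminus \{\emptyset\}$.

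For the lower bound, the plan is to construct, for each $x,y \in K$ with $x \neq y$ and each $w \in W_\ast$, an admissible test function $u \in \mathcal{F}$ with $u(F_w(x)) = 1$, $u(F_w(y)) = 0$, and $\mathcal{E}(u) \le c^{-1}\rweight_w \mathcal{E}(\tilde{u})$ for some constant $c \in (0,1)$ independent of $w,x,y$, where $\tilde{u} \in \mathcal{F}$ attains $R_\mathcal{E}(x,y) = |\tilde{u}(x) - \tilde{u}(y)|^p / \mathcal{E}(\tilde{u})$ (existence by the reflexivity of $\mathcal{F}/\mathbb{R}\indicator{K}$ via Proposition \ref{prop.R-conseq}), normalized so that $\tilde{u}(x) = 1$, $\tilde{u}(y) = 0$, and $\mathcal{E}(\tilde{u}) = R_\mathcal{E}(x,y)^{-1}$. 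By truncation via \eqref{compos}, I may further assume $0 \le \tilde{u} \le 1$. Set $h := h_{V_0}^\mathcal{E}[\tilde{u}|_{V_0}]$, so that $0 \le h \le 1$ by the weak comparison principle (Proposition \ref{prop.cp1}) and $\mathcal{E}(h) \le \mathcal{E}(\tilde{u})$; put $v := \tilde{u} - h$, which vanishes on $V_0$ and satisfies $\mathcal{E}(v) \le 2^p \mathcal{E}(\tilde{u})$ by the triangle inequality for $\mathcal{E}^{1/p}$.

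In the easy subcase $\abs{h(x) - h(y)} \le 1/2$, we have $\abs{v(x) - v(y)} = \abs{1 - (h(x) - h(y))} \ge 1/2$; defining $u_A \in \mathcal{F}$ by $u_A|_{K_w} := v \circ F_w^{-1}$ and $u_A|_{K \setminus K_w} :\equiv 0$ yields a well-defined element of $\mathcal{F}$ (continuity across the interface $F_w(V_0)$ holds because $v|_{V_0} = 0$, and membership in $\mathcal{F}$ follows from \eqref{SSE1}), with $\mathcal{E}(u_A) = \rweight_w \mathcal{E}(v) \le 2^p \rweight_w \mathcal{E}(\tilde{u})$ by \eqref{SSE2} applied to the partition $\{w\} \cup (W_{\abs{w}} \setminus \{w\})$, giving the lower bound with $c = 4^{-p}$. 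The hard subcase $\abs{h(x) - h(y)} > 1/2$ (WLOG $h(x) - h(y) > 1/2$, else replace $\tilde{u}$ by $1 - \tilde{u}$) is the main obstacle, since the above $u_A$ may have arbitrarily small value difference at $F_w(x), F_w(y)$; here I plan to construct $u_B \in \mathcal{F}$ by taking $u_B|_{K_w} := \alpha\, (\tilde{u} \circ F_w^{-1}) + \beta$ with $\alpha, \beta$ chosen so that $u_B(F_w(x)) = 1, u_B(F_w(y)) = 0$, and extending to $K \setminus K_w$ via the $\mathcal{E}$-energy-minimizing (harmonic) extension with the resulting boundary values on $F_w(V_0)$.

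The crux will be to bound the extension energy on $K \setminus K_w$ by $C\rweight_w \mathcal{E}(\tilde{u})$ uniformly in $w$. I expect to do so by combining: (i) the self-similar decomposition \eqref{SSE2} to write $\mathcal{E}(u_B) = \rweight_w \mathcal{E}(u_B \circ F_w) + \sum_{v \in W_{\abs{w}}, v \neq w} \rweight_v \mathcal{E}(u_B \circ F_v)$ and identifying the first summand with $\rweight_w \mathcal{E}(\alpha\tilde{u} + \beta) \le c_p \rweight_w \mathcal{E}(\tilde{u})$ since $|\alpha| \le 2$; (ii) the weak comparison principle (Proposition \ref{prop.cp1}) to bound $u_B$ on each external cell $K_v$ by the oscillation of its boundary data; and (iii) the sharp H\"older continuity estimate of Theorem \ref{t:lip-harm}, together with the $p$-resistance estimate $R_\mathcal{E}(F_w(q),F_w(q')) \le \rweight_w^{-1}R_\mathcal{E}(q,q')$ already established in the upper-bound step, to transfer the control of $\mathcal{E}|_{F_w(V_0)}$ back to $\mathcal{E}|_{V_0}$ with uniform constants independent of $w$. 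The hardest technical ingredient is to show that, uniformly in $w$, $\mathcal{E}|_{F_w(V_0)}(\psi) \le C \rweight_w \mathcal{E}|_{V_0}(\psi \circ F_w)$ for every $\psi \in \mathbb{R}^{F_w(V_0)}$, which expresses that the trace of $\mathcal{E}$ on $F_w(V_0)$ is comparable (after scaling by $\rweight_w$) to $\mathcal{E}|_{V_0}$ pulled back via $F_w$; this is the genuinely nonlinear counterpart of the Green's function argument used in \cite[Theorem A.1]{Kig03} for $p = 2$ and requires the p.-c.f.\ property, which ensures that $K_w$ meets its complement only along the finite set $F_w(V_0)$.
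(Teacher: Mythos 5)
Your upper bound and your Case A are correct, but Case B --- which you yourself identify as the main obstacle --- is only a plan, and the plan as written has a real defect: you propose to decompose $\mathcal{E}(u_{B})$ over the generation partition $W_{\abs{w}}$ and then control the exterior terms $\rweight_{v}\mathcal{E}(u_{B} \circ F_{v})$ for the neighbours $v \in W_{\abs{w}}$ of $w$. When the weights $\rweight_{i}$ are not all equal, a neighbouring cell of the same generation can have $\rweight_{v}/\rweight_{w}$ arbitrarily large (e.g.\ $w = 11\cdots 1$ versus $v = 22\cdots 2$ with $\rweight_{2} \gg \rweight_{1}$), so no bound of the form $\rweight_{v}\mathcal{E}|_{V_{0}}(u_{B}\circ F_{v}) \le C\rweight_{w}\,\mathcal{E}(\tilde{u})$ with $C$ uniform in $w$ can come out of an oscillation estimate alone. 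The fix is to replace $W_{\abs{w}}$ by the partition $\Lambda$ of $\Sigma$ at resistance scale $\rweight_{w}^{-1}$, i.e.\ the set of words $\tau$ with $\rweight_{w} \le \rweight_{\tau} < \rweight_{w}\max_{i\in S}\rweight_{i}$ and $w \in \Lambda$; this is exactly the content of Lemma \ref{lem.ss-special}, which gives the trace identity $\mathcal{E}|_{V}(u) = \rweight_{w}\mathcal{E}|_{U}(u \circ F_{w}) + \sum_{\tau \in \Lambda\setminus\{w\}}\rweight_{\tau}\mathcal{E}|_{V_{0}}(u \circ F_{\tau})$ with $U = V_{0}\cup\{x,y\}$, and only the at most $\#(\mathcal{C}_{\mathcal{L}})\#(V_{0})$ cells of $\Lambda$ touching $K_{w}$ contribute, each with $\rweight_{\tau} \le \rweight_{w}\max_{i}\rweight_{i}$.

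With that partition in hand the case split becomes unnecessary, and this is how the paper proceeds: take $u_{\ast}$ minimizing $\mathcal{E}$ subject to $u_{\ast}\circ F_{w} = \psi_{xy}$ on $U$ (so the value difference at $F_{w}(x),F_{w}(y)$ is exactly $1$) and $u_{\ast}$ constant on $V\setminus F_{w}(U)$; then $0 \le u_{\ast} \le 1$, the interior term is $\rweight_{w}\mathcal{E}|_{U}(\psi_{xy}|_{U}) \le \rweight_{w}R_{\mathcal{E}}(x,y)^{-1}$, and the exterior terms are bounded by $C\rweight_{w}$ via the finite-dimensional oscillation estimate $\mathcal{E}|_{V_{0}}(u)^{1/p}\le C\osc_{V_{0}}[u]$. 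The remaining point you would still need even after fixing the partition is how to absorb the additive error $C\rweight_{w}$ into a multiple of $\rweight_{w}R_{\mathcal{E}}(x,y)^{-1}$: this is done by writing $C\rweight_{w} \le C\bigl(\sup_{z,z'\in K}R_{\mathcal{E}}(z,z')\bigr)\rweight_{w}R_{\mathcal{E}}(x,y)^{-1}$, which is legitimate because $\min_{i}\rweight_{i}>1$ forces $\sup_{z,z'}R_{\mathcal{E}}(z,z')<\infty$ (Proposition \ref{prop.pRMss}). Your ``hardest technical ingredient'' $\mathcal{E}|_{F_{w}(V_{0})}(\psi)\le C\rweight_{w}\mathcal{E}|_{V_{0}}(\psi\circ F_{w})$ is true and provable by the same two devices (scale-adapted partition plus oscillation bound), but as stated your proposal neither proves it nor supplies the absorption step, so the lower bound in Case B remains open in your write-up.
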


Since the upper estimate in \eqref{e:pcf-contraction} is obtained in \eqref{pRMss}, what matters is the lower estimate in \eqref{e:pcf-contraction}. 
To prove it, we need the following lemma. 
\begin{lem}\label{lem.ss-special}
	Assume the same conditions as in Theorem \ref{thm.pcf-contraction}. 
	Let $x,y \in K$ and $w \in W_{\ast}$. 
	Set $\Lambda \coloneqq \{ \tau = \tau_{1}\dots\tau_{n} \in W_{\ast} \mid (\rweight_{\tau_{1}\cdots\tau_{n - 1}})^{-1} > \rweight_{w} \ge \rweight_{\tau}^{-1} \}$, $U \coloneqq V_{0} \cup \{ x,y \}$, $V_{\Lambda} \coloneqq \bigcup_{w \in \Lambda}F_{w}(V_{0})$ and $V \coloneqq V_{\Lambda} \cup \{ F_{w}(x), F_{w}(y) \}$. 
	Then $\Lambda$ is a partition of $\Sigma$ and 
	\begin{equation}\label{e:ss-special}
		\mathcal{E}|_{V}(u) = \rweight_{w}\mathcal{E}|_{U}(u \circ F_{w}) + \sum_{\tau \in \Lambda \setminus \{ w \}}\rweight_{\tau}\mathcal{E}|_{V_{0}}(u \circ F_{\tau}) \quad \text{for any $u \in \mathcal{F}|_{V}$.} 
 	\end{equation} 
\end{lem}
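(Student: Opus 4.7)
The plan is to derive \eqref{e:ss-special} from the variational characterization of traces in Theorem \ref{thm.RF-exist} combined with the self-similarity \eqref{ss.partition} and the p.-c.f.\ identity \eqref{V0bdry}.

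First I would verify that $\Lambda$ is a partition of $\Sigma$ containing $w$. Because $\min_{i \in S} \rweight_{i} > 1$, the quantity $\rweight_{[\omega]_{n}}$ is strictly increasing in $n$ and tends to $\infty$ along any $\omega \in \Sigma$; hence each $\omega$ has a unique prefix $\tau = [\omega]_{n} \in \Lambda$, so $\Lambda$ is a finite partition, and the choice $\tau = w$ satisfies the defining inequalities. Then \eqref{ss.partition} applies and, combined with Theorem \ref{thm.RF-exist}, gives
\[
\mathcal{E}|_{V}(u) = \inf\biggl\{\sum_{\tau \in \Lambda} \rweight_{\tau}\,\mathcal{E}(v \circ F_{\tau}) \biggm| v \in \mathcal{F},\ v|_{V} = u\biggr\}.
\]

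The heart of the proof is to show that this minimization decouples cell-by-cell. The constraint $v|_{V} = u$ splits as $(v \circ F_{\tau})|_{V_{0}} = (u \circ F_{\tau})|_{V_{0}}$ for $\tau \in \Lambda \setminus \{w\}$ and $(v \circ F_{w})|_{U} = (u \circ F_{w})|_{U}$ for $\tau = w$. By \eqref{V0bdry}, distinct $\tau, \tau' \in \Lambda$ satisfy $K_{\tau} \cap K_{\tau'} = F_{\tau}(V_{0}) \cap F_{\tau'}(V_{0}) \subseteq V_{\Lambda} \subseteq V$, so on any such intersection the values of any admissible $v$ are already prescribed by $u$, consistently from both sides. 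Hence $v|_{K_{\tau}}$ can be chosen freely on each cell subject only to its own local boundary constraint, and each summand $\rweight_{\tau}\,\mathcal{E}(v \circ F_{\tau})$ is independently minimized (again by Theorem \ref{thm.RF-exist}) to $\rweight_{\tau}\,\mathcal{E}|_{V_{0}}(u \circ F_{\tau})$ for $\tau \neq w$ and to $\rweight_{w}\,\mathcal{E}|_{U}(u \circ F_{w})$ for $\tau = w$, which gives the lower bound on $\mathcal{E}|_{V}(u)$.

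For the matching upper bound, I would construct the explicit minimizer $\widetilde{v} \in \contfunc(K)$ by gluing harmonic extensions: $\widetilde{v} \circ F_{\tau} \coloneqq h_{V_{0}}^{\mathcal{E}}[(u \circ F_{\tau})|_{V_{0}}]$ for $\tau \in \Lambda \setminus \{w\}$ and $\widetilde{v} \circ F_{w} \coloneqq h_{U}^{\mathcal{E}}[(u \circ F_{w})|_{U}]$. Continuity at shared boundary points follows from \eqref{V0bdry}: any such point $z \in F_{\tau}(V_{0}) \cap F_{\tau'}(V_{0})$ lies in $V$, and both harmonic extensions reproduce the value $u(z)$ there since $V_{0} \subseteq U$. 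Membership $\widetilde{v} \in \mathcal{F}$ then follows from \eqref{SSE1} applied iteratively along the finite partition $\Lambda$. By construction $\widetilde{v}|_{V} = u$ and $\mathcal{E}(\widetilde{v})$ equals the right-hand side of \eqref{e:ss-special}, completing the proof. The main technical point, rather than a deep obstacle, is the bookkeeping in the decoupling argument: one must verify that the inter-cell constraints are consistent and entirely captured by the prescribed values on $V$, which is precisely what the p.-c.f.\ identity \eqref{V0bdry} supplies.
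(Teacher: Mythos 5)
Your proof is correct and follows essentially the same route as the paper's: express $\mathcal{E}|_{V}(u)$ via the variational characterization, apply the self-similar decomposition \eqref{ss.partition} over the partition $\Lambda$, get the lower bound by term-wise minimization, and get the matching upper bound by gluing the harmonic extensions $h_{U}^{\mathcal{E}}[u \circ F_{w}]$ and $h_{V_{0}}^{\mathcal{E}}[u \circ F_{\tau}]$, with \eqref{V0bdry} ensuring the glued function is well-defined and \eqref{SSE1} giving membership in $\mathcal{F}$. No gaps.
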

\begin{proof}
    The proof is very similar to Proposition \ref{prop.compatible}. 
    It is clear that $\Lambda$ is a partition of $\Sigma$. 
    Note that, by Proposition \ref{prop.pRMss}-\ref{pRMcompatible}, $R_{\mathcal{E}}^{1/p}$ is compatible with the original topology of $K$ and thereby $\diam(K,R_{\mathcal{E}}^{1/p}) < \infty$. 
    For any $u \in \mathcal{F}|_{V}$,
    \begin{align*}
        &\mathcal{E}|_{V}(u) \\
        &= \min\bigl\{ \mathcal{E}(v) \bigm| v \in \mathcal{F}, v|_{V} = u \bigr\} \\
        &\overset{\eqref{ss.partition}}{=} \min\Biggl\{ \rweight_{w}\mathcal{E}(v \circ F_{w}) + \sum_{\tau \in \Lambda \setminus \{ w \}}\rweight_{\tau}\mathcal{E}(v \circ F_{\tau}) \Biggm| \text{$v \in \mathcal{F}$, $v|_{V} = u$} \Biggr\} \\
        &\ge \min\Biggl\{ \rweight_{w}\mathcal{E}(v \circ F_{w}) \Biggm| v \in \mathcal{F}, v|_{V} = u \Biggr\} + \min\Biggl\{\sum_{\tau \in \Lambda \setminus \{ w \}}\rweight_{\tau}\mathcal{E}(v \circ F_{\tau}) \Biggm| v \in \mathcal{F}, v|_{V} = u \Biggr\} \\
        &\ge \rweight_{w}\min\{ \mathcal{E}(v) \mid v \in \mathcal{F}, v|_{U} = u \circ F_{w} \} + \sum_{\tau \in\Lambda \setminus \{ w \}}\rweight_{\tau}\min\{\mathcal{E}(v) \mid v \in \mathcal{F}, v|_{V_{0}} = u \circ F_{\tau} \} \\
        &= \rweight_{w}\mathcal{E}|_{U}(u \circ F_{w}) + \sum_{\tau \in \Lambda \setminus \{ w \}}\rweight_{\tau}\mathcal{E}|_{V_{0}}(u \circ F_{\tau}).
    \end{align*}
    To prove the converse, let $v \in C(K)$ satisfy $v \circ F_{w} = h_{U}^{\mathcal{E}}[u \circ F_{w}]$ and, for $\tau \in \Lambda \setminus \{ w \}$, $v \circ F_{\tau} = h_{V_{0}}^{\mathcal{E}}[u \circ F_{\tau}]$.
    Such $v$ is well-defined since $K_{w} \cap K_{\tau} = F_{w}(V_{0}) \cap F_{\tau}(V_{0})$. 
    Also, we have $v|_{V} = u$ and $v \in \mathcal{F}$ by \eqref{SSE1}. 
    Moreover,
    \begin{align*}
        \mathcal{E}|_{V}(u)
        \le \mathcal{E}(v)
        &\overset{\eqref{ss.partition}}{=} \sum_{\tau \in \Lambda}\rweight_{\tau}\mathcal{E}(v \circ F_{\tau})
        = \rweight_{w}\mathcal{E}|_{U}(u \circ F_{w}) + \sum_{\tau \in \Lambda \setminus \{ w \}}\rweight_{\tau}\mathcal{E}|_{V_{0}}(u \circ F_{\tau}).
    \end{align*}
    This completes the proof. 
\end{proof}

\begin{proof}[Proof of Theorem \ref{thm.pcf-contraction}]
	Let $\Lambda,U,V_{\Lambda},V$ be the same as in Lemma \ref{lem.ss-special}. 
	Set $\Gamma_{1}(w; \Lambda) \coloneqq \{ \tau \in \Lambda \mid w \neq \tau, K_{w} \cap K_{\tau} \neq \emptyset \}$ for simplicity. 
	Then $\#\Gamma_{1}(w; \Lambda) \le \#(\mathcal{C}_{\mathcal{L}})\#(V_{0})$ by \cite[Lemma 4.2.3]{Kig01}. 
    Let $\psi_{xy} \in \mathcal{F}$ satisfy $\psi_{xy}(x) = 1$, $\psi_{xy}(y) = 0$ and $\mathcal{E}(\psi_{xy}) = R_{\mathcal{E}}(x,y)^{-1}$.
    Let $u_{\ast} \in \mathcal{F}$ satisfy $u_{\ast}(x) = 1$, $u_{\ast}(y) = 0$, $u|_{V \setminus F_{w}(U)} \in \mathbb{R}\indicator{V \setminus F_{w}(U)}$ and 
    \[
    \mathcal{E}(u_{\ast}) = \inf\{ \mathcal{E}(v) \mid v \in \mathcal{F}, (v \circ F_{w})|_{U} = \psi_{xy}, v|_{V \setminus F_{w}(U)} \in \mathbb{R}\indicator{V \setminus F_{w}(U)} \}.
    \]
    Such $u_{\ast}$ is uniquely exists by a standard argument in the variational analysis.
    Also, by Proposition \ref{prop.GC-list}-\ref{GC.lip}, we easily see that $0 \le u_{\ast} \le 1$.
    Since $\mathbb{R}^{V_{0}}/\mathbb{R}\indicator{V_{0}}$ is a finite dimensional vector space, there exists a constant $C \in (0,\infty)$ such that
    \begin{equation}\label{e:EpV0-osc}
        \mathcal{E}|_{V_{0}}(u)^{1/p} \le C\max_{z,z' \in V_{0}}\abs{u(z) - u(z')} \quad \text{for any $u \in \mathbb{R}^{V_{0}}$.}
    \end{equation}
    Then, by using Lemma \ref{lem.ss-special}, we see that 
    \begin{align*}
        R_{\mathcal{E}}(F_{w}(x),F_{w}(y))^{-1}
        \le \mathcal{E}(u_{\ast})
        &= \mathcal{E}|_{V}(u_{\ast}) \\
        &= \rweight_{w}\mathcal{E}|_{U}(u_{\ast} \circ F_{w}) + \sum_{\tau \in \Lambda \setminus \{ w \}}\rweight_{\tau}\mathcal{E}|_{V_{0}}(u_{\ast} \circ F_{\tau}) \\
        &= \rweight_{w}\mathcal{E}|_{U}(u_{\ast} \circ F_{w}) + \sum_{\tau \in \Gamma_{1}(w; \Lambda)}\rweight_{\tau}\mathcal{E}|_{V_{0}}(u_{\ast} \circ F_{\tau}) \\
        &\overset{\eqref{e:EpV0-osc}}{\le} \frac{\rweight_{w}}{R_{\mathcal{E}}(x,y)} + C^{p}\sum_{\tau \in \Gamma_{1}(w; \Lambda)}\rweight_{\tau} \\
        &\le \rweight_{w}\left(\frac{1}{R_{\mathcal{E}}(x,y)} + C^{p}\Bigl(\max_{i \in S}\rweight_{i}\Bigr)(\#\Gamma_{1}(w; \Lambda))\right) \\
        &= \rweight_{w}\left(\frac{1}{R_{\mathcal{E}}(x,y)} + C'\frac{R_{\mathcal{E}}(x,y)}{R_{\mathcal{E}}(x,y)}\right) \\
        &\le \rweight_{w}\biggl(1 + C'\sup_{z,z' \in K}R_{\mathcal{E}}(z,z')\biggr)R_{\mathcal{E}}(x,y)^{-1}, 
    \end{align*}
    which shows the desired lower estimate in \eqref{e:pcf-contraction}. 
\end{proof}


\noindent \textbf{Naotaka Kajino} \\
Research Institute for Mathematical Sciences, Kyoto University, Kitashirakawa-Oiwakecho, Sakyo-ku, Kyoto 606-8502, Japan.\\
E-mail: \texttt{nkajino@kurims.kyoto-u.ac.jp}\smallskip\\

\noindent \textbf{Ryosuke Shimizu} \\
Waseda Research Institute for Science and Engineering, Waseda University, 3-4-1 Okubo, Shinjuku-ku, Tokyo 169-8555, Japan. \\
Graduate School of Informatics, Kyoto University, Yoshida-honmachi, Sakyo-ku, Kyoto 606-8501, Japan (current address). \\
E-mail: \texttt{r.shimizu@acs.i.kyoto-u.ac.jp}

\section*{Index of symbols}
\addcontentsline{toc}{section}{Index of symbols}
\begin{enumerate}[label=\textup{$\bullet$},align=left,leftmargin=*,topsep=2pt,parsep=0pt,itemsep=2pt]
	\item $\indicator{A}=\indicator{A}^{X}$: indicator function of a subset $A$ of a set $X$ --- Notation \ref{notation.intro}-\ref{it:id-indicator}
	\item $\#A$: cardinality of a set $A$ --- Notation \ref{notation.intro}-\ref{it:cardinality}
	\item $\closure{A}^{X}$: closure of $A \subseteq X$ in $X$ --- Notation \ref{notation.intro}-\ref{it:Borel-contfunc}
	\item $a^{+}$: positive part of a number or function $a$ --- Notation \ref{notation.intro}-\ref{it:supinf}
	\item $a \vee b$: maximum of numbers or functions $a$ and $b$ --- Notation \ref{notation.intro}-\ref{it:supinf}
	\item $a \wedge b$: minimum of numbers or functions $a$ and $b$ --- Notation \ref{notation.intro}-\ref{it:supinf}
	\item $\mathcal{B}(X)$: Borel $\sigma$-algebra of a topological space $X$ --- Notation \ref{notation.intro}-\ref{it:Borel-contfunc}
	\item $\mathcal{B}|_{A}$: trace of a $\sigma$-algebra $\mathcal{B}$ on a subset $A$ --- Notation \ref{notation.intro}-\ref{it:measure-sp}
	\item $B_{d}(x,r)$: open metric ball of radius $r$ centered at $x$ in a metric space $(X,d)$ --- Notation \ref{notation.intro}-\ref{it:metric-sp}
	\item $B^{\mathcal{F}}$ --- Definition \ref{defn.RFp-general}
	\item $\contfunc(X)$: $\mathbb{R}$-linear space of $\mathbb{R}$-valued continuous functions on $X$ --- Notation \ref{notation.intro}-\ref{it:Borel-contfunc}
	\item $\contfunc_{b}(X)$: $\mathbb{R}$-linear space of bounded functions in $\contfunc(X)$ --- Notation \ref{notation.intro}-\ref{it:Borel-contfunc}
	\item $\contfunc_{c}(X)$: $\mathbb{R}$-linear space of functions in $\contfunc(X)$ with compact supports in $X$ --- Notation \ref{notation.intro}-\ref{it:Borel-contfunc}
	\item $\mathcal{C}_{\mathcal{L}}$: critical set of $\mathcal{L}$ --- \eqref{e:C-P} in Definition \ref{d:V0Vstar}-\ref{it:C-P}
	\item $\mathbb{C}_{\mathcal{F}}$ --- Proposition \ref{p:genbord}-\ref{fcn-top}
	\item $\mathrm{cap}_{p}^{n}(A_{0},A_{1};A)$: condenser capacity between $A_0$ and $A_1$ in $A \subseteq T_n$ --- Definition \ref{defn.con-const}-\ref{it:concap} 
	\item $\mathcal{D}^{\#}$: $\mathcal{E}$-closure of $\mathcal{D} \subseteq \mathcal{F}$ for a $p$-energy form $(\mathcal{E},\mathcal{F})$ --- \eqref{e:defn.ExtD}
	\item $\mathcal{D}_{\mathrm{loc}}(U)$: the set of functions belonging locally in $U$ to $\mathcal{D}$ --- \eqref{e:defn.Floc} in Definition \ref{defn.Flocal-ss}-\ref{it:Flocal-ss}
	\item $\diam(A,d)$: diameter of $A \subseteq X$ in $d$ for a metric space $(X,d)$ --- Notation \ref{notation.intro}-\ref{it:metric-sp}
	\item $\dim_{\mathrm{ARC}}(K,d)$: Ahlfors regular conformal dimension of $(K,d)$ --- Definition \ref{defn.AR}-\ref{it:ARCdim}
	\item $\dist_{d}(A,B)$: distance between $A,B \subseteq X$ in $d$ for a metric space $(X,d)$ --- Notation \ref{notation.intro}-\ref{it:metric-sp}
	\item $d_{\mathrm{f}}(\bm{\rho})$ --- Proposition \ref{prop:Rp-geom}-\ref{it:Rp.AR}
	\item $\hdim$: Hausdorff dimension of $(K,d)$ --- Assumption \ref{assum.BFss}
	\item $\pwalk$: $p$-walk dimension --- \eqref{eq:pwalk} in Definition \ref{d:values}
	\item $d_{\widetilde{M}_{p}(A)}$: metric on $\widetilde{M}_{p}(A)$ of local uniform convergence in $\mathbb{R}^{A}/\mathbb{R}\indicator{A}$ --- Definition \ref{defn.CGQ-Qp}
	\item $E_{n}^{\ast}, E_{n}^{\ast}(A)$: edge sets of $T_{n}$ --- Definition \ref{defn.h-networks}
	\item $\mathcal{E}(f;g)$: derivative of $\frac{1}{p}\mathcal{E}$ for a $p$-energy form $(\mathcal{E},\mathcal{F})$ --- \eqref{exist-deriva} in Theorem \ref{thm.p-form}
	\item $\mathfrak{E}_{p}(\mathcal{F})$: the set of $p$-energy functionals $\mathcal{E}$ on $\mathcal{F}$ --- line before Definition \ref{defn.ssenergyoperator}
	\item $\mathcal{E}|_{B}$: trace of $\mathcal{E}$ on $B$ --- \eqref{eq:dfn-trace} in Theorem \ref{thm.RF-exist}
	\item $(\mathcal{E}_{p,\ast},\mathcal{F}_{p,\ast})$: limit $p$-resistance form on $V_{\ast}$ --- \eqref{defn:Fpast}, \eqref{defn:Epast} in Proposition \ref{prop.RFVast} 
	\item $(\mathcal{E}_{\mathcal{S}},\mathcal{F}_{\mathcal{S}})$: the $p$-resistance form associated to a compatible sequence $\mathcal{S}$ --- \eqref{e:defn.compat.dom}, \eqref{e:defn.compat.form} in Theorem \ref{thm.Epcountable} 
	\item $(\overline{\mathcal{E}},\overline{\mathcal{F}})$: the completion of a $p$-resistance form $(\mathcal{E},\mathcal{F})$ --- \eqref{e:defn.completion.dom}, \eqref{e:defn.completion.form} in Theorem \ref{thm.Rpcompletion} 
	\item $(\overline{\mathcal{E}}_{\mathcal{S}},\overline{\mathcal{F}}_{\mathcal{S}})$: the completion of $(\mathcal{E}_{\mathcal{S}},\mathcal{F}_{\mathcal{S}})$ --- \eqref{e:defn.compatext.dom}, \eqref{e:defn.compatext.form} in Corollary \ref{cor.Epext} 
	\item $\mathcal{E}_{p,A}^{n}, \mathcal{E}_{p}^{n}$: discrete $p$-energy form on  $T_n$ --- Definition \ref{defn.con-const}-\ref{it:sub.discreteform}
	\item $\mathcal{E}_{M, p, k}$: conductance constant --- Definition \ref{defn.con-const}-\ref{it:con-const}
	\item $\widetilde{\mathcal{E}}_{p,A}^{n}, \widetilde{\mathcal{E}}_{p}^{n}$: rescaled discrete $p$-energy form on $T_n$ --- Definition \ref{d:Kig-sob}-\ref{it:defn.rescaled-discrete-p-energy}
	\item $F_{w}$: self-similarity mapping corresponding to a word $w$ --- Definition \ref{d:shift}-\ref{it:FwKw}
	\item $F_{w}^{\ast}$: pulling-back operator by $F_{w}$ --- Remark \ref{rmk:pullback}
	\item $\CoreClosure$: the closure of $\mathcal{F} \cap C(K)$ in $\mathcal{F}$ --- the first paragraph of Subsection \ref{subsec:ext-ssem}
	\item $\mathcal{F}_{\mathrm{loc}}^{0}(U)$: the set of functions belonging locally in $U$ to $\CoreClosure$ --- Definition \ref{defn.Flocal-ss}-\ref{it:Flocal-ss-sspem}
	\item $\mathcal{F}_{\mathrm{loc}}(U)$: the set of functions belonging locally in $U$ to $\mathcal{F}$ --- Definition \ref{defn.RFlocal}-\ref{it:RF.localDir} 
	\item $\mathcal{F}|_{B}$: the set of the restrictions to $B$ of functions in $\mathcal{F}$ --- Definition \ref{dfn:restrdom}
	\item $\mathcal{F}^{0}(B)$: the set of functions in $\mathcal{F}$ that are zero on the complement of $B$ --- Definition \ref{defn.RFp-general}
	\item $\mathcal{F}_{e}$: extended Dirichlet space --- \eqref{defn.extDF} in Definition \ref{defn.extDF}
	\item $\GSC(D,l,S)$ --- Framework \ref{frmwrk:GSC} 
	\item $\mathcal{G}_{0}$: the set of Euclidean isometries preserving $V_0$ --- \eqref{eq:GSC-isometry} in Definition \ref{dfn:GSC-isometry}, \eqref{eq:SG-isometry} in Definition \ref{dfn:SG-isometry}
	\item $\mathcal{G}_{1}$ --- \eqref{eq:GSC-isometry-subgroup} in Definition \ref{dfn:GSC-V00V01-isometry-subgroup}-\ref{it:GSC-isometry-subgroup}
	\item $\mathcal{G}_{s} = \mathcal{G}_{s}(\mathcal{L})$: a variant of $\mathcal{G}_{\mathrm{sym}}$ --- Remark \ref{rmk:defn.ANF}
	\item $\mathcal{G}_{\textrm{sym}} = \mathcal{G}_{\mathrm{sym}}(\mathcal{L})$: group of symmetries of a self-similar set $\mathcal{L}$ --- Definition \ref{defn.ANF}-\ref{it:symgroup}
	\item $g_{xy}$: reflection in the hyperplane $H_{xy}$ --- Definition \ref{defn.ANF}-\ref{it:gxy}
	\item $\mathcal{H}_{\mathcal{E},B}$: the set of functions that are $\mathcal{E}$-harmonic on the complement of $B$ --- Definition \ref{dfn:part-harmonic}
	\item $h_{B}^{\mathcal{E}}[u]$: $\mathcal{E}$-harmonic extension of $u$ from $B$ --- Theorem \ref{thm.RF-exist}
	\item $H_{xy}$: perpendicular bisector hyperplane between $x$ and $y$ --- Definition \ref{defn.ANF}-\ref{it:gxy}
	\item $\id_{X}$: identity map of a set $X$ --- Notation \ref{notation.intro}-\ref{it:id-indicator}
	\item $K_{w}$: image of $F_{w}$ --- Definition \ref{d:shift}-\ref{it:FwKw}
	\item $\mathcal{L} = (K,S,\{ F_i \}_{i \in S})$: self-similar structure --- Definition \ref{d:sss}
	\item $L^{0}(X,\mathcal{B},m)$: the set of $m$-equivalence classes of $\mathcal{B}$-measurable functions on $X$ --- \eqref{L0-dfn}
	\item $L^{0}(X,m)$: abbreviation of $L^{0}(X,\mathcal{B},m)$ --- paragraph before \eqref{L0-dfn}
	\item $\{ l_{i} \}_{i = 0}^{m_{\ast} - 1}$: increasing sequence of the values of Euclidean distances between distinct points of $V_{0}$ --- Definition \ref{defn.ANF}-\ref{it:distances-V0-ssset} 
	\item $\mathcal{M}_{p}(A), \widetilde{\mathcal{M}}_{p}(A)$ --- Definition \ref{defn.CGQ-Mp} 
	\item $m|_{A}$: restriction of a measure $m$ to a measurable subset $A$ --- Notation \ref{notation.intro}-\ref{it:measure-sp}
	\item $\mathfrak{m}_{\mathcal{E}}^{(n)}\langle f \rangle, \mathfrak{m}_{\mathcal{E}}\langle f \rangle$ --- paragraph before Proposition \ref{prop.sspem-pre}
	\item $\mathfrak{m}_{\mathcal{E}}\langle f; g \rangle$: derivative of $\mathfrak{m}_{\mathcal{E}}\langle \,\cdot\, \rangle$ --- \eqref{e:defn.emss.pre} in Proposition \ref{prop.sspem-pre}-\ref{it:sspem-pre.signed}
	\item $m_{\ast}$: number of the values of Euclidean distances between distinct points of $V_{0}$ --- Definition \ref{defn.ANF}-\ref{it:distances-V0-ssset}
	\item $\mathcal{N}_{p}(f)$ --- Definition \ref{d:Kig-sob}-\ref{it:NpWp}
	\item $\osc_{X}[u]$: oscillation over $X$ of a function $u$ on a set $X$ --- Notation \ref{notation.intro}-\ref{it:id-indicator}
	\item $\mathcal{P}_{\mathcal{L}}$: critical set of $\mathcal{L}$ --- \eqref{e:C-P} in Definition \ref{d:V0Vstar}-\ref{it:C-P}
	\item $P_{n,k}f$ --- Definition \ref{defn.nei-const}-\ref{it:Pnkf}
	\item $P_{n}f$ --- Definition \ref{d:Kig-sob}-\ref{it:Pnf}
	\item $\mathcal{Q}_{p}'(A), \mathcal{Q}_{p}(A)$ --- Definition \ref{defn.CGQ-Qp}-\ref{it:Qp} 
	\item $\mathbb{R}\indicator{X}$: $\mathbb{R}$-linear space of constant functions on a set $X$ --- Notation \ref{notation.intro}-\ref{it:id-indicator}
	\item $R_{\mathcal{E}}$: (pre-)$p$-resistance associated to a $p$-resistance form $(\mathcal{E},\mathcal{F})$ --- \eqref{R-def} in Definition \ref{defn.RFp}
	\item $\widehat{R}_{p,\mathcal{E}}$: $p$-resistance metric of $(\mathcal{E},\mathcal{F})$ --- Definition \ref{defn:pResmet}
	\item $\mathcal{R}_{\bm{\rho}_{p}}(E)$: renormalization operator --- \eqref{defn.renorm} in Definition \ref{defn.renorm-op}
	\item $\SG(D,l,S)$: $D$-dimensional level-$l$ Sierpi\'{n}ski gasket --- Framework \ref{frmwrk:SG}
	\item $S(w), S^{k}(w)$: sets of children of $w$ in a rooted tree $T$ --- Definition \ref{defn.tree-notation}-\ref{it:tree-vertices}
	\item $\mathcal{S}_{p}(A)$ --- Definition \ref{defn.CGQ-Qp}-\ref{it:Sp}
	\item $\mathcal{S}_{\bm{\rho},n}$ --- \eqref{e:defn.renorm} in Definition \ref{defn.ssenergyoperator}
	\item $\sgn(a)$: the sign of $a$ --- Notation \ref{notation.intro}-\ref{it:sgn}
	\item $\supp_{X}[u]$: topological support in $X$ of $u \in \contfunc(X)$ --- Notation \ref{notation.intro}-\ref{it:Borel-contfunc}
	\item $\supp_{X}[m]$: topological support in $X$ of a Borel measure $m$ on $X$ --- Notation \ref{notation.intro}-\ref{it:support}
	\item $\supp_{m}[f]$: $m$-support of (the $m$-equivalence class of) a measurable function $f$ --- Notation \ref{notation.intro}-\ref{it:support}
	\item $T_{n}$: the set of elements in the $n$-th generation in $T$ --- Definition \ref{defn.tree-notation}-\ref{it:tree-levels}
	\item $T_{k}^{(r)}$: the set of elements in the $k$-th generation in $T^{(r)}$ --- Definition \ref{d:scale}
	\item $(T^{(r)},E_{T^{(r)}})$: the rooted tree associated with $\Lambda_{r^k}^{g}$ --- Definition \ref{d:scale}
	\item $\mathcal{U}_{p}^{\mathrm{GC}}$: the set of $p$-homogeneous functionals satisfying the generalized $p$-contraction property --- Definition \ref{d:GC-cone}
	\item $\mathcal{U}_{p,T}$: the set of $p$-homogeneous functionals contractive with respect to $T$ --- Definition \ref{d:GC-cone}
	\item $U_{M}^{\widehat{R}_{p}}(x,s)$ --- Definition \ref{defn.scale-Rp}-\ref{it:scale-Rp-UMxs}
	\item $\mathcal{U}_{p}(\mathcal{D})$ --- Definition \ref{d:Kig-sob}-\ref{it:defn.basecone}
	\item $U_{g,w}$ --- Proposition \ref{prop.ANF-geom}-\ref{it:ANF.symcompos}
	\item $U_{M}^{d}(x,s)$ --- Assumption \ref{assum.ANF}
	\item $V_{0}$: ``boundary'' of $K$ for a self-similar structure $\mathcal{L} = (K,S,\{ F_i \}_{i \in S})$ --- Definition \ref{d:V0Vstar}-\ref{it:V0Vstar} 
	\item $V_{n}, V_{\ast}$ --- Definition \ref{d:V0Vstar}-\ref{it:V0Vstar} 
	\item $V_{n,\Lambda}$ --- Proposition \ref{prop.compatible} 
	\item $W_{n}$: set of words of length $n$ --- Definition \ref{d:shift}-\ref{it:word}
	\item $W_{\ast}$: set of words of finite length --- Definition \ref{d:shift}-\ref{it:word}
	\item $\mathcal{W}^{p}$ --- Definition \ref{d:Kig-sob}-\ref{it:NpWp}
	\item $\Gamma\langle f \rangle$: $p$-energy measure dominated by a $p$-energy form $(\mathcal{E},\mathcal{F})$ --- Definition \ref{defn.em-Cp}
	\item $\Gamma\langle f;g \rangle$: derivative of $\Gamma\langle \,\cdot\, \rangle$ --- \eqref{e:defn.emss} in Definition \ref{defn.emss}
	\item $\Gamma_{\mathcal{E}}\langle f \rangle$: self-similar $p$-energy measure of $f$ associated with a self-similar $p$-energy form $(\mathcal{E}, \mathcal{F})$ --- \eqref{e:defn.em.one}
	\item $\Gamma_{\mathcal{E}}\langle f; g \rangle$: derivative of $\Gamma_{\mathcal{E}}\langle \,\cdot\, \rangle$ --- \eqref{ss-emform} in Proposition \ref{prop.ss-pform-em}-\ref{ssem-Cp}
	\item $\Lambda_{\rho}^{n}(E)$ --- \eqref{RFrenorm} in Proposition \ref{prop.RFrenorm}
	\item $\Lambda_{s}^{\widehat{R}_{p}}$: scale associated with $\widehat{R}_{p}$ --- Definition \ref{defn.scale-Rp}-\ref{it:scale.res}
	\item $\Lambda_{s,M}^{\widehat{R}_{p}}(x)$ --- Definition \ref{defn.scale-Rp}-\ref{it:scale-Rp-UMxs}
	\item $\Lambda_{s}^{d}$,$\Lambda_{s,M}^{d}$ --- Assumption \ref{assum.ANF}
	\item $\Lambda_{r^{k}}^{g}$ --- Definition \ref{d:scale}
	\item $\Lambda_{\bm{\rho}_{p}}(E)$ --- \eqref{defn.renorm} in Definition \ref{defn.renorm-op}
	\item $\lambda(\bm{\rho}_{p})$: eigenvalue of $\mathcal{R}_{\bm{\rweight}_{p}}$ --- Theorem \ref{thm.eigenform}-\ref{it:CGQ.eigenvalue},\ref{it:CGQ.newinitial},\ref{it:CGQ.eigenform}
	\item $\pi$: map giving the parent vertex of each vertex in a rooted tree $T$ --- Definition \ref{defn.tree-notation}-\ref{it:tree-vertices}
	\item $\bm{\rweight} = (\rweight_{i})_{i \in S}$: weight of a self-similar $p$-energy form --- Definition \ref{defn.ssform}
	\item $\Sigma_{w}$: the set of elements of $\Sigma$ descended from $w$ --- Definitions \ref{d:shift}-\ref{it:FwKw}, \ref{defn.tree-notation}-\ref{it:tree-bdry-sections}
	\item $\Sigma_{A}$: the union of $\Sigma_{w}$ over $w \in A$ --- Definition \ref{defn.tree-notation}-\ref{it:tree-bdry-sections}
	\item $\sigma_{w}$: the map from $\Sigma = S^{\mathbb{N}}$ to itself adding a word $w$ in front --- Definition \ref{d:shift}-\ref{it:shift},\ref{it:FwKw} 
	\item $\sigma_{p,k}(A)$: neighbor disparity constant --- Definition \ref{defn.nei-const}-\ref{it:nei-const}
	\item $\sigma_{p,k,n}^{\mathscr{J}},\sigma_{p,k}^{\mathscr{J}}$ --- Definition \ref{defn.nei-const}-\ref{it:sigmaJpk}
	\item $\sigma_{p}$: $p$-scaling factor --- \eqref{p-factor} in Theorem \ref{t:pCH}
	\item $\bm{\sigma}_{p} = (\sigma_{p}^{j_s})_{s \in S}$: weight determined by the $p$-scaling factor --- Theorem \ref{thm.KSgood-ss}
	\item $\tau_p$ (for a triple $(K,d,\{ K_{w} \}_{w \in T})$) --- \eqref{e:defn.taup} in Definition \ref{d:values}
	\item $\tau_{g}$: canonical action of $g \in \mathcal{G}_{\mathrm{sym}}$ on $W_{\ast}$ --- Definition \ref{defn.ANF}-\ref{it:symgroup}
	\item $\chi$: canonical projection map from $\Sigma = S^{\mathbb{N}}$ to $K$ for a self-similar structure $\mathcal{L} = (K,S,\{ F_i \}_{i \in S})$ --- Definition \ref{d:sss} 
	\item $[\omega]_{n}$: initial length-$n$ word or $n$-th level vertex of $\omega \in \Sigma$ --- Definitions \ref{d:shift}-\ref{it:shift}, \ref{defn.tree-notation}-\ref{it:tree-bdry-sections}
	\item $\partial_{k}\Phi$: first-order partial derivative of $\Phi$ in the $k$-th coordinate --- Notation \ref{notation.intro}-\ref{it:ellp-norm}
	\item $\nabla\Phi$: gradient of $\Phi$ --- Notation \ref{notation.intro}-\ref{it:ellp-norm}
	\item $\fint_{A}f\,dm$: mean value of an $m$-integrable function $f$ on a measurable subset $A$ --- Notation \ref{notation.intro}-\ref{it:measure-sp}
	\item $\abs{x}$: Euclidean norm of $x \in \mathbb{R}^{n}$ --- Notation \ref{notation.intro}-\ref{it:ellp-norm}
	\item $\abs{w}$: length of a word $w \in W_{\ast}$ or level of a vertex $w$ in a rooted tree $T$ --- Definitions \ref{d:shift}-\ref{it:word}, \ref{defn.tree-notation}-\ref{it:tree-levels}
	\item $\abs{\,\cdot\,}_{\widetilde{M}_{p}(A)}$ --- Definition \ref{defn.CGQ-Qp}
	\item $\norm{u}_{\sup} = \norm{u}_{\sup,X}$: supremum of $\abs{u}$ over $X$ for a function $u$ on a set $X$ --- Notation \ref{notation.intro}-\ref{it:id-indicator}
	\item $\norm{\,\cdot\,}_{\mathcal{E},\alpha}$: $(\mathcal{E},\alpha)$-norm --- \eqref{e:defn-e1norm} in Definition \ref{d:e1norm} 
	\item $\norm{x}_{\ell^{p}_{n}} = \norm{x}_{\ell^{p}}$: $\ell^{p}$-norm of $x \in \mathbb{R}^{n}$ --- Notation \ref{notation.intro}-\ref{it:ellp-norm}
	\item $\norm{\,\cdot\,}_{L^{p}(X,m)}$: norm of $L^{p}(X,m)$
	\item $\norm{\,\cdot\,}_{\mathcal{W}^{p}}$ --- Definition \ref{d:Kig-sob}-\ref{it:NpWp}
	\item $\langle \,\cdot\,,\,\cdot\,\rangle_{L^{2}(X,m)}$: inner product of $L^{2}(X,m)$
	\item $\underset{\textrm{QS}}{\sim}$: equivalence relation given by quasisymmetry on the set of metrics on a set --- Definition \ref{defn.AR}-\ref{it:QS}
	\item $\leq$ (for partitions of $\Sigma = S^{\mathbb{N}}$): one is a refinement of the other --- Definition \ref{d:shift}-\ref{it:partition}
	\item $\lesssim$: inequality up to positive constant multiple --- Notation \ref{notation.intro}-\ref{it:ineq-up-to-const}
	\item $\asymp$: comparability up to positive constant multiple --- Notation \ref{notation.intro}-\ref{it:ineq-up-to-const}
	\item $\ll$: absolutely continuity of measures --- Notation \ref{notation.intro}-\ref{it:measure-sp}
\end{enumerate}

\section*{Index of abbreviations}
\addcontentsline{toc}{section}{Index of abbreviations}
\begin{enumerate}[label=\textup{$\bullet$},align=left,leftmargin=*,topsep=2pt,parsep=0pt,itemsep=2pt]
	\item \textup{(\textbf{A})}: a (necessary and sufficient) condition for the existence of an eigenform --- Theorem \ref{thm.eigenform} 
	\item \textup{(\textbf{A}')}: the existence of an eigenform with eigenvalue one --- paragraph following Remark \ref{rmk.condA} 
	\item \textup{(CF)$_{m}$}: a condition on cutoff functions for $p$-energy forms --- Definition \ref{defn.CF-measure}
	\item \textup{(CL1)},\textup{(CL2)}: chain rules for $p$-energy measures --- Definition \ref{defn.chainrule} 
	\item \textup{(Cla)$_{p}$}: $p$-Clarkson's inequality for $p$-energy forms --- Definition \ref{d:Cp}
	\item \textup{(Cla)$^{\prime}_{p}$}: weaker version of $p$-Clarkson's inequality --- Remark \ref{rmk:Cp-weak}
	\item \textup{(Cla)$^{\textup{EM}}_{p}$}: $p$-Clarkson's inequality for $p$-energy measures --- Definition \ref{defn.em-Cp}
	\item \textup{(EM1)$_{p}$},\textup{(EM2)$_{p}$}: requirements for $\{ \Gamma\langle f \rangle \}_{f \in \mathcal{F}}$ to be a family of $p$-energy measures dominated by $(\mathcal{E},\mathcal{F})$ --- Definition \ref{defn.em-Cp}
	\item \textup{(GC)$_{p}$}: generalized $p$-contraction property for $p$-energy forms --- Definition \ref{defn.GC}
	\item \textup{(GC)$^{\textup{EM}}_{p}$}: generalized $p$-contraction property for $p$-energy measures --- Definition \ref{defn.em-Cp}
	\item $\GSC(D,l,S)$ --- Framework \ref{frmwrk:GSC}
	\item \textup{(GSC1)},\textup{(GSC2)},\textup{(GSC3)},\textup{(GSC4)}: requirements for $\GSC(D,l,S)$ to be a generalized Sierpi\'nski carpet --- Definition \ref{dfn:GSC} 
	\item \textup{(\textbf{R})}: a condition for $(\rweight_{p,i})_{i \in S}$ --- paragraph following Remark \ref{rmk.condA}
	\item \textup{(RF1)$_{p}$},\textup{(RF2)$_{p}$},\textup{(RF3)$_{p}$},\textup{(RF4)$_{p}$},\textup{(RF5)$_{p}$}: requirements for $(\mathcal{E},\mathcal{F})$ to be a $p$-resistance form --- Definition \ref{defn.RFp}
	\item $\SG(D,l,S)$: $D$-dimensional level-$l$ Sierpi\'{n}ski gasket --- Framework \ref{frmwrk:SG}
	\item \textup{(SL1)},\textup{(SL2)}: strong local properties of $p$-energy forms --- Definition \ref{defn.Epsl}
	\item \textup{(SL1)$_{\textup{s}}$},\textup{(SL2)$_{\textup{s}}$},\textup{(SL1)$_{\textup{w}}$},\textup{(SL2)$_{\textup{w}}$}: strong local properties of $p$-resistance forms --- Definition \ref{defn.RF-sl}
	\item \textup{(SS1)},\textup{(SS2)},\textup{(SS3)},\textup{(SS4)}: requirements for $\mathcal{L}$ to be a strongly symmetric p.-c.f.\ self-similar set --- Definition \ref{defn.ANF}-\ref{it:StrongSym}
\end{enumerate}

\printindex


\begin{thebibliography}{ACFP19}
\addcontentsline{toc}{section}{\refname}
\setlength{\itemsep}{-1.5pt}

    \bibitem[AF]{AF} R.~A.~Adams and J.~J.~F.~Fournier,
    \emph{Sobolev spaces},
    Pure and Applied Mathematics 140, second edition, Academic Press, Amsterdam, 2013.  \mr{2424078}

    \bibitem[ACFP19]{ACFP19} N.~Albin, J.~Clemens, N.~Fernando and P.~Poggi-Corradini,
	Blocking duality for $p$-modulus on networks and applications,
	\emph{Ann. Mat. Pura Appl. (4)}\ \textbf{198} (2019),
	no.\ 3, 973--999. \mr{3954401}
	
	\bibitem[AHM23]{AHM23} R.~Alvarado, P.~Haj\l asz and L.~Mal\'{y},
    A simple proof of reflexivity and separability of $N^{1, p}$ Sobolev spaces,
    \emph{Ann. Fenn. Math.} \textbf{48} (2023), no. 1, 255--275. \mr{4559212}

    \bibitem[ACD15]{ACD15} L.~Ambrosio, M.~Colombo and S.~Di Marino,
	Sobolev spaces in metric measure spaces: reflexivity and lower semicontinuity of slope, in: \emph{Variational methods for evolving objects}, 1--58.
    Adv. Stud. Pure Math., 67 \emph{Mathematical Society of Japan, [Tokyo]}, 2015. \mr{3587446}

    \bibitem[BCL94]{BCL94} K.~Ball, E.~A.~Carlen and E.~H.~Lieb,
	Sharp uniform convexity and smoothness inequalities for trace norms,
	\emph{Invent. Math.}\ \textbf{115} (1994),
	no.\ 3, 463--482. \mr{1262940} 

	\bibitem[Bar13]{Bar13} M.~T.~Barlow,
	Analysis on the Sierpinski carpet, in: \emph{Analysis and geometry of metric measure spaces}, 27--53. CRM Proc. Lecture Notes, 56
	\emph{American Mathematical Society, Providence, RI}, 2013. \mr{3060498} 
	
	\bibitem[BB89]{BB89} M.~T.~Barlow and R.~F.~Bass,
	The construction of Brownian motion on the Sierpi\'{n}ski carpets,
	\emph{Ann. Inst. H. Poincar\'{e} Probab. Statist. \textbf{25} (1989), no.\ 3, 225--257.} \mr{1023950}
	
	\bibitem[BB99]{BB99} M.~T.~Barlow and R.~F.~Bass,
	Brownian motion and harmonic analysis on Sierpi\'{n}ski carpets,
	\emph{Canad.\ J.\ Math.}\ \textbf{51} (1999), no.\ 4, 673--744. \mr{1701339}

    \bibitem[BBKT]{BBKT} M.~T.~Barlow, R.~F.~Bass, T.~Kumagai and A.~Teplyaev,
	Uniqueness of Brownian motion on Sierpi\'{n}ski carpets,
	\emph{J. Eur. Math. Soc. (JEMS)} \textbf{12} (2010), no.\ 3, 655--701. \mr{2639315}
	
	\bibitem[BP88]{BP88} M.~T.~Barlow and E.~A.~Perkins,
	Brownian motion on the Sierpi\'{n}ski gasket,
	\emph{Probab. Theory Related Fields} \textbf{79} (1988), no.\ 4, 543--623. \mr{0966175} 
	
	\bibitem[Bec75]{Bec75} W.~Beckner, 
	Inequalities in Fourier analysis,
	\emph{Ann. of Math. (2)}\ \textbf{102} (1975), no.\ 1, 159--182. \mr{0385456}

    \bibitem[BBR25]{BBR23+} C.~Beznea, L.~Beznea and M.~R\"{o}ckner,
    Nonlinear Dirichlet forms associated with quasiregular mappings, 
    \emph{Potential Anal.} \textbf{62} (2025), no. 3, 509--533. \mr{4877477}

	\bibitem[BV05]{BV05} M.~Biroli and P.~G.~Vernole,
	Strongly local nonlinear Dirichlet functionals and forms,
	\emph{Adv. Math. Sci. Appl.}\ \textbf{15} (2005),
	no.\ 2, 655--682. \mr{2198582}
	
	\bibitem[BjBj]{BjBj} 
	A.~Bj\"{o}rn and J.~Bj\"{o}rn,
	{\em Nonlinear potential theory on metric spaces}, 
	EMS Tracts in Mathematics, \textbf{17}, {\em European Mathematical Society (EMS), Z\"{u}rich}, 2011. \mr{2867756} 
	
	\bibitem[CCK24]{CCK23+} S.~Cao, Z.-Q.~Chen and T.~Kumagai,
	On Kigami's conjecture of the embedding $\mathcal{W}^{p} \subset C(K)$,
	\emph{Proc. Amer. Math. Soc.} \textbf{152} (2024), no. 8, 3393--3402. \mr{4767270}

	\bibitem[CGQ22]{CGQ22} S.~Cao, Q.~Gu and H.~Qiu,
	$p$-energies on p.c.f. self-similar sets,
	\emph{Adv. Math.}\ \textbf{405} (2022), Paper No. 108517, 58 pp. \mr{4437617}

	\bibitem[Cap03]{Cap03} R.~Capitanelli,
	Homogeneous $p$-Lagrangians and self-similarity,
	\emph{Rend. Accad. Naz. Sci. XL Mem. Mat. Appl. (5)}\ \textbf{27} (2003),
	215--235. \mr{2056420}

    \bibitem[Cap07]{Cap07} R.~Capitanelli,
	Harnack inequality for $p$-Laplacians associated to homogeneous $p$-Lagrangians,
	\emph{Nonlinear Anal.}\ \textbf{66} (2007), no.6, 1302--1317. \mr{2294440}
	
	\bibitem[CP13]{CP13} M.~Carrasco Piaggio,
	On the conformal gauge of a compact metric space,
	\emph{Ann. Sci. \'{E}c. Norm. Sup\'{e}r.} (4) \textbf{46} (2013), no. 3, 495--548. \mr{3099984}
	
	\bibitem[CP14]{CP14} M.~Carrasco Piaggio,
    Conformal dimension and canonical splittings of hyperbolic groups,
    \emph{Geom. Funct. Anal.} \textbf{24} (2014), no. 3, 922--945. \mr{3213834}

	\bibitem[CF]{CF} Z.-Q.~Chen and M.~Fukushima,
	\emph{Symmetric Markov Processes, Time Change, and Boundary Theory},
	London Mathematical Society Monographs Series, vol.\ 35,
	Princeton University Press, Princeton, NJ, 2012. \mr{2849840}

	\bibitem[CG03]{CG03} F.~Cipriani and G.~Grillo,
	Nonlinear Markov semigroups, nonlinear Dirichlet forms and applications to minimal surfaces,
	\emph{J. Reine Angew. Math.}\ \textbf{562} (2003),
	201--235. \mr{2011336}

	\bibitem[Cla36]{Cla36} J.~A.~Clarkson,
	Uniformly convex spaces,
	\emph{Trans. Amer. Math. Soc.}\ \textbf{40} (1936),
	no.\ 3, 396--414. \mr{1501880}

    \bibitem[Cla23]{Cla23} B.~Claus,
	Energy spaces, Dirichlet forms and capacities in a nonlinear setting,
	\emph{Potential Anal.}\ \textbf{58} (2023), no.\ 1, 159--179. \mr{4535921}
	
	\bibitem[CH]{CH} R.~Courant and D.~Hilbert, 
	Methods of mathematical physics. Vol. I.
	\emph{Interscience Publishers, Inc., New York}, 1953. \mr{0065391}

	\bibitem[Dal]{Dal} G.~Dal Maso,
	\emph{An introduction to $\Gamma$-convergence},
	Progress in Nonlinear Differential Equations and their Applications, vol.\ 8,
	\emph{Birkh\"{a}user Boston, Inc., Boston, MA}, 1993. \mr{1201152}
	
	\bibitem[DF]{DF} A.~Defant, and K.~Floret,
	\emph{Tensor norms and operator ideals},
	North-Holland Mathematics Studies, 176,
	\emph{North-Holland Publishing Co., Amsterdam}, 1993. \mr{1209438}
	
	\bibitem[Dud]{Dud} 
	R.~M.~Dudley, 
	{\em Real analysis and probability}, Revised reprint of the 1989 original, 
	Cambridge Studies in Advanced Mathematics, \textbf{74}.
	Cambridge University Press, Cambridge, 2002. \mr{1932358}
	
	\bibitem[EB24]{EB24} S.~Eriksson-Bique,
    Equality of different definitions of conformal dimension for quasiself-similar and CLP spaces,
    \emph{Ann. Fenn. Math.} \textbf{49} (2024), no.\ 2, 405--436. \mr{4767001} 
	
	\bibitem[FHK94]{FHK94} P.~J.~Fitzsimmons, B.~M.~Hambly and T.~Kumagai,
    Transition density estimates for Brownian motion on affine nested fractals,
    \emph{Comm. Math. Phys.}\ \textbf{165} (1994), no.\ 3, 595--620. \mr{1301625}

	\bibitem[FOT]{FOT} M.~Fukushima, Y.~Oshima, and M.~Takeda,
	\emph{Dirichlet Forms and Symmetric Markov Processes},
	Second revised and extended edition, de Gruyter Studies in Mathematics, vol.\ 19,
	Walter de Gruyter \& Co., Berlin, 2011. \mr{2778606}
	
	\bibitem[FS92]{FS92} M.~Fukushima and T.~Shima,
    On a spectral analysis for the Sierpi\'{n}ski gasket,
    \emph{Potential Anal.} \textbf{1} (1992), no.\ 1, 1--35. \mr{1245223}  

	\bibitem[Gol87]{Gol87} S.~Goldstein,
	Random walks and diffusions on fractals, in: \emph{Percolation theory and ergodic theory of infinite particle systems (Minneapolis, Minn., 1984–1985)}, 121--129. 
	IMA Vol. Math. Appl., 8 \emph{Springer-Verlag, New York}, 1987. \mr{0894545} 
	
	\bibitem[Han56]{Han56} O.~Hanner, 
	On the uniform convexity of $L^p$ and $l^p$, 
	\emph{Ark. Mat.} \textbf{3} (1956), 239--244. \mr{0077087}

	\bibitem[Hei]{Hei} J.~Heinonen, 
	Lectures on Analysis on Metric Spaces, 
	\emph{Universitext. Springer-Verlag}, New York, 2001. \mr{1800917}
	
	\bibitem[HK98]{HK98}
	J.~Heinonen and P.~Koskela, 
	Quasiconformal maps in metric spaces with controlled geometry,
	\emph{Acta Math.} \textbf{181} (1998), no. 1, 1--61. \mr{1654771}

    \bibitem[HKST]{HKST}
    J.~Heinonen, P.~Koskela, N.~Shanmugalingam and J.~T.~Tyson.
    Sobolev spaces on metric measure spaces.
    An approach based on upper gradients. \emph{New Mathematical Monographs}, \textbf{27}. Cambridge, University Press, Cambridge, 2015. \mr{3363168}
    
    \bibitem[Her10]{Her10} M.~Herbster,
	A Triangle Inequality for $p$-Resistance. In: \emph{NIPS Workshop 2010: Networks Across Disciplines: Theory and Applications.} 2010. 

	\bibitem[HPS04]{HPS04} P.~E.~Herman, R.~Peirone and R.~S.~Strichartz,
	$p$-energy and $p$-harmonic functions on Sierpinski gasket type fractals,
	\emph{Potential Anal.}\ \textbf{20} (2004), no.2, 125--148. \mr{2032945}

    \bibitem[Hin05]{Hin05} M.~Hino,
	On singularity of energy measures on self-similar sets,
	\emph{Probab. Theory Related Fields} \textbf{231} (2005), no. 2, 265--290. \mr{2199293}

	\bibitem[Hin10]{Hin10} M.~Hino, 
	Energy measures and indices of Dirichlet forms, with applications to derivatives on some fractals,   
	\emph{Proc. Lond. Math. Soc.} (3) \textbf{100} (2010), no. 1, 269--302. \mr{2578475}
	
	\bibitem[HRT13]{HRT13} M.~Hinz, M.~R\"{o}ckner and A.~Teplyaev, 
	Vector analysis for Dirichlet forms and quasilinear PDE and SPDE on metric measure spaces, 
	\emph{Stochastic Process. Appl.} \textbf{123} (2013), no. 12, 4373--4406. \mr{3096357}
	
    \bibitem[Kaj]{Kaj.ln} N.~Kajino,
    \emph{Introduction to Laplacians and heat equations on fractals} (in Japansese),
    notes for an intensive lecture course at Faculty of Science, Nara Women's University, 2018.
    \url{https://www.kurims.kyoto-u.ac.jp/~nkajino/lectures/2018/Fractal2018/Fractal2018_Notes.pdf} 

    \bibitem[Kaj10]{Kaj10} N.~Kajino,
	Remarks on non-diagonality conditions for Sierpinski carpets, In: {\em Probabilistic approach to geometry}, 231--241,
	Adv. Stud. Pure Math., \textbf{57}, \emph{Math. Soc. Japan, Tokyo}, 2010. \mr{2648262}

    \bibitem[Kaj13]{Kaj13} N.~Kajino,
	On-diagonal oscillation of the heat kernels on post-critically finite self-similar fractals,
	\emph{Probab. Theory Related Fields} \textbf{156} (2013), no. 1-2, 51--74. \mr{3055252}

	\bibitem[Kaj23]{Kaj23} N.~Kajino,
	An elementary proof that walk dimension is greater than two for Brownian motion on Sierpi\'{n}ski carpets,
	\emph{Bull. Lond. Math. Soc.} \textbf{55} (2023), no. 1, 508--521. \mr{4568357}

    \bibitem[KM20]{KM20} N.~Kajino and M.~Murugan,
	On singularity of energy measures for symmetric diffusions with full off-diagonal heat kernel estimates,
	\emph{Ann. Probab.} \textbf{48} (2020), no. 6, 2920--2951. \mr{4164457}

	\bibitem[KM23]{KM23} N.~Kajino and M.~Murugan,
	On the conformal walk dimension: quasisymmetric uniformization for symmetric diffusions,
	\emph{Invent. Math.} \textbf{231} (2023), no. 1, 263--405. \mr{4526824}

    \bibitem[KS26a]{KS.survey} N.~Kajino and R.~Shimizu,
    $p$-Energy forms on fractals: recent progress, 
    in: \emph{From Classical Analysis to Analysis on Fractals: A Tribute to Robert Strichartz, Volume 2}, Applied and Numerical Harmonic Analysis, Springer (to appear). \arxiv{2501.09002v1}

    \bibitem[KS26b]{KS.lim} N.~Kajino and R.~Shimizu,
	Korevaar--Schoen $p$-energy forms and associated energy measures on fractals, 
	in: \emph{Facets of Contemporary Analysis, Geometry and Non-Euclidean Statistics: HeKKSaGOn Mathematics Group Meetings, Kyoto and Sendai, September 2023}, Tohoku Series in Mathematical Sciences (TSMC, volume 1), Springer (to appear). \arxiv{2404.13435v2}

    \bibitem[KS+a]{KS.scp} N.~Kajino and R.~Shimizu,
    Strong comparison principle for $p$-harmonic functions and uniqueness of symmetry-invariant $p$-energy forms on p.-c.f.\ self-similar sets,
	in preparation.

    \bibitem[KS+b]{KS.sing} N.~Kajino and R.~Shimizu,
	On singularity of $p$-energy measures among distinct values of $p$ for strongly symmetric p.-c.f.\ self-similar sets,
	in preparation. 

	\bibitem[Kig89]{Kig89} J.~Kigami,
    A harmonic calculus on the Sierpi\'{n}ski spaces,
    \emph{Japan J.\ Appl.\ Math.}\ \textbf{6} (1989), no.\ 2, 259--290. \mr{1001286}
	
	\bibitem[Kig93]{Kig93} J.~Kigami,
    Harmonic calculus on p.c.f. self-similar sets,
    \emph{Trans.\ Amer.\ Math.\ Soc.}\ \textbf{335} (1993), no.\ 2, 721--755. \mr{1076617}
	
	\bibitem[Kig95]{Kig95} J.~Kigami,
	Harmonic calculus on limits of networks and its application to dendrites,
	\emph{J.\ Funct.\ Anal.}\ \textbf{128} (1995), no.~1, 48--86.

	\bibitem[Kig00]{Kig00} J.~Kigami,
	Markov property of Kusuoka-Zhou's Dirichlet forms on self-similar sets,
	\emph{J. Math. Sci. Univ. Tokyo}\ \textbf{7} (2000), no.\ 1, 27--33. \mr{1749979}

	\bibitem[Kig01]{Kig01} J.~Kigami,
	Analysis on fractal,
	\emph{Cambridge Tracts in Mathematics}\ \textbf{143},
	Cambridge University Press, Cambridge, 2001. \mr{1840042}

    \bibitem[Kig03]{Kig03} J.~Kigami,
	Harmonic analysis for resistance forms,
	\emph{J. Funct. Anal.}\ \textbf{204} (2003), no. 2, 399--444. \mr{2017320}

    \bibitem[Kig09]{Kig09} J.~Kigami,
	Volume doubling measures and heat kernel estimates on self-similar sets,
	\emph{Mem.\ Amer.\ Math.\ Soc.}\ \textbf{199}, no.\ 932 (2009). \mr{2512802}

    \bibitem[Kig12]{Kig12} J.~Kigami,
	Resistance forms, quasisymmetric maps and heat kernel estimates,
	\emph{Mem. Amer. Math. Soc.}\ \textbf{216} (2012), no. 1015. \mr{2919892}

	\bibitem[Kig20]{Kig20} J.~Kigami,
	Geometry and analysis of metric spaces via weighted partitions,
	\emph{Lecture Notes in Mathematics}\ \textbf{2265},
	Springer, Cham, 2020. \mr{4175733}

    \bibitem[Kig23]{Kig23} J.~Kigami,
	Conductive homogeneity of compact metric spaces and construction of $p$-energy,
	\emph{Mem. Eur. Math. Soc.}\ Vol.\ 5, European Mathematical Society (EMS), Berlin, 2023. \mr{4615714}
	
	\bibitem[KO25+]{KO+} J.~Kigami and Y.~Ota,
	Conductive homogeneity of locally symmetric polygon-based self-similar sets,
	preprint (2025). \arxiv{2505.23057}
	
	\bibitem[Kus87]{Kus87} S.~Kusuoka,
	A diffusion process on a fractal, in: \emph{Probabilistic methods in mathematical physics (Katata/Kyoto, 1985)}, 251--274. 
	\emph{Academic Press, Inc., Boston, MA}, 1987. \mr{0933827} 
	
	\bibitem[KZ92]{KZ92} S.~Kusuoka and X.~Y.~Zhou,
	Dirichlet forms on fractals: Poincar\'{e} constant and resistance,
	\emph{Probab.\ Theory Related Fields} \textbf{93} (1992), no.\ 2, 169--196. \mr{1176724}

    \bibitem[Kuw24]{Kuw23+} K.~Kuwae,
    $(1,p)$-Sobolev spaces based on strongly local Dirichlet forms, 
    \emph{Math. Nachr.} \textbf{297} (2024), no.\ 10, 3723--3740. \mr{4816789}
    
    \bibitem[LT]{LT} J.~Lindenstrauss and L.~Tzafriri,
	\emph{Classical Banach spaces. II. Function spaces},
	Ergeb. Math. Grenzgeb., 97[Results in Mathematics and Related Areas]
	\emph{Springer-Verlag, Berlin-New York}, 1979. \mr{0540367}
	
	\bibitem[Lin90]{Lin90} T.~Lindstr\o m,
    Brownian motion on nested fractals,
    \emph{Mem. Amer. Math. Soc.}\ \textbf{83} (1990), no.\ 420. \mr{0988082}

    \bibitem[MR]{MR} Z.-M.~Ma and M.~R\"{o}ckner,
	\emph{Introduction to the theory of (nonsymmetric) Dirichlet forms},
    Universitext,
    \emph{Springer-Verlag, Berlin}, 1992. \mr{1214375}
    
    \bibitem[MT]{MT} J.~M.~Mackay and J.~T.~Tyson, 
	\emph{Conformal dimension. Theory and application.}, 
	University Lecture Series, 54. American Mathematical Society, Providence, RI, 2010. \mr{2662522}

    \bibitem[Miy]{Miya} I.~Miyadera,
	\emph{Nonlinear semigroups},
    Translated from the 1977 Japanese original by Choong Yun Cho
    Transl. Math. Monogr., 109 \emph{American Mathematical Society, Providence, RI}, 1992. \mr{1192132}
    
    \bibitem[MS25a]{MS.long} M.~Murugan and R.~Shimizu,
	First-order Sobolev spaces, self-similar energies and energy measures on the Sierpi\'{n}ski carpet (long version), 
	preprint (2025). 
    \arxiv{2308.06232v4}
    
    \bibitem[MS25]{MS+} M.~Murugan and R.~Shimizu,
	First-order Sobolev spaces, self-similar energies and energy measures on the Sierpi\'{n}ski carpet, 
	\emph{Comm. Pure Appl. Math.} \textbf{78} (2025), no. 9, 1523--1608. 
	\mr{4932378}
    
    \bibitem[Nak85]{Nak85} S.~Nakao,
	Stochastic calculus for continuous additive functionals of zero energy,
	\emph{Z. Wahrsch. Verw. Gebiete}\ \textbf{69} (1985), no.\ 4, 557--578. \mr{0772199}
	
	\bibitem[Puc25+]{Puc25+} S.~Puchert, 
	Nonlinear Beurling--Deny criteria, 
	preprint (2025). \arxiv{2502.03691v2}
	
	\bibitem[PS25+]{PS25+} S.~Puchert and M.~Schmidt, 
	Nonlinear resistance forms, 
	preprint (2025). \arxiv{2507.03426v1}
    
    \bibitem[Rud]{Rud} W.~Rudin,
    Real and complex analysis. Third edition. 
    \emph{McGraw-Hill Book Co., New York}\ 1987. \mr{0924157}

    \bibitem[Sas23]{Sas23} K.~Sasaya,
	Systems of dyadic cubes of complete, doubling, uniformly perfect metric spaces without detours,
	\emph{Colloq. Math.} \textbf{172} (2023), no. 1, 49--64. \mr{4565993}
	
	\bibitem[Sas26]{Sas26} K.~Sasaya,
	Construction of $p$-energy measures associated with strongly local $p$-energy forms, 
	\emph{J. Funct. Anal.} (in press). 
	
	\bibitem[Sch99a]{Sch.ext} B.~Schmuland,
	Extended Dirichlet spaces,
	{\em C. R. Math. Acad. Sci. Soc. R. Can.} \textbf{21} (1999), no. 4, 146--152. \mr{1728267}
	
	\bibitem[Sch99b]{Sch.fatou} B.~Schmuland,
	Positivity preserving forms have the Fatou property,
	{\em Potential Anal.} \textbf{10} (1999), no. 4, 373--378. \mr{1697255} 

    \bibitem[Shi21]{Shi21} R.~Shimizu,
	Parabolic index of an infinite graph and Ahlfors regular conformal dimension of a self-similar set, in: \emph{Analysis and partial differential equations on manifolds, fractals and graphs}, 201--274,
    Adv. Anal. Geom., \textbf{3}, \emph{De Gruyter, Berlin}, 2021. \mr{4320091}

    \bibitem[Shi24]{Shi24} R.~Shimizu,
	Construction of $p$-energy and associated energy measures on Sierpi\'{n}ski carpets,
    \emph{Trans. Amer. Math. Soc.} \textbf{377} (2024), no.\ 2, 951--1032. \mr{4688541}

	\bibitem[SW04]{SW04} R.~S.~Strichartz and C.~Wong,
	The $p$-Laplacian on the Sierpinski gasket,
	\emph{Nonlinearity} \textbf{17} (2004), no.\ 2, 595--616. \mr{2039061}

	\bibitem[Smu40]{Smu40} V.~\v{S}mulian,
	Sur la d\'{e}rivabilit\'{e} de la norme dans l'espace de Banach,
	\emph{C. R. (Doklady) Acad. Sci. URSS (N. S.)} \textbf{27}, (1940), 643--648. \mr{0002704}
	
	\bibitem[TW06]{TW06} J.~T.~Tyson and J.-M.~Wu, 
	Quasiconformal dimensions of self-similar fractals,
	\emph{Rev. Mat. Iberoam.} \textbf{22} (2006), no. 1, 205--258. \mr{2268118}  
	
	\bibitem[Yos]{Yos} K.~Yosida, 
	\emph{Functional Analysis}. Reprint of the Sixth 1980 Edition. 
	Classics in Mathematics, 
	\emph{Springer, Berlin} (1995) \mr{1336382} 
	
	
	\bibitem[Yan25+]{Yan25+.sing} M.~Yang,
	On singularity of $p$-energy measures on metric measure spaces, 
	preprint (2025). \arxiv{2505.12468} 
	
\end{thebibliography}
\end{document}